\newcommand {\ctn}{\citet} 
\newcommand{\blambda}{\boldsymbol{\lambda}}
\newcommand{\bPhi}{\boldsymbol{\Phi}}
\newcommand{\bS}{\boldsymbol{S}}
\newcommand{\bx}{\bm{x}}
\newcommand{\bX}{\boldsymbol{X}}
\newcommand{\bZ}{\boldsymbol{Z}}
\newcommand{\bzero}{\boldsymbol{0}}
\newtheorem{theorem}{Theorem}
\newtheorem{corollary}[theorem]{Corollary}
\newtheorem{lemma}[theorem]{Lemma}
\newtheorem{remark}[theorem]{Remark}
\newenvironment{proof}[1][Proof]{\textbf{#1.} }{\ \rule{0.5em}{0.5em}}
\newcommand{\bm}{\mathbf}
\numberwithin{equation}{section}
\numberwithin{algo}{section}
\numberwithin{table}{section}
\numberwithin{figure}{section}
\begin{document}


\title{\vspace{-0.8in}
Bayesian Characterizations of Properties of Stochastic Processes with Applications}
\author{Sucharita Roy and Sourabh Bhattacharya\thanks{
Sucharita Roy is an Assistant Professor and HOD of Department of Mathematics, St. Xavier's College, Kolkata, pursuing PhD 
in Interdisciplinary Statistical Research Unit, Indian Statistical Institute, 203, B. T. Road, Kolkata 700108.
Sourabh Bhattacharya is an Associate Professor in Interdisciplinary Statistical Research Unit, Indian Statistical
Institute, 203, B. T. Road, Kolkata 700108.
Corresponding e-mail: sourabh@isical.ac.in.}}
\date{\vspace{-0.5in}}
\maketitle%

\begin{abstract}
In this article, we primarily propose a novel Bayesian characterization of stationary and nonstationary stochastic processes. 
In practice, this theory aims to distinguish between global stationarity and nonstationarity for both parametric and nonparametric stochastic processes. 
Interestingly, our theory builds on our previous work on Bayesian characterization of infinite series, which was applied to verification of the (in)famous Riemann Hypothesis. 
Thus, there seems to be interesting and important connections between pure mathematics and Bayesian statistics, with respect to our proposed ideas. 
We validate our proposed method with simulation and real data experiments associated with different setups. 
In particular, applications of our method include stationarity and nonstationarity determination in various time series models, spatial and spatio-temporal setups, and 
convergence diagnostics of Markov Chain Monte Carlo.
Our results demonstrate very encouraging performance, even in very subtle situations.

Using similar principles, we also provide a novel Bayesian characterization of mutual independence among any number of random variables, using which we characterize
the properties of point processes, including characterizations of Poisson point processes, complete spatial randomness, stationarity and nonstationarity. Applications to 
simulation experiments with ample Poisson and non-Poisson point process models again indicate quite encouraging performance of our proposed ideas.

We further propose a novel recursive Bayesian method for determination of frequencies of oscillatory stochastic processes, based on our general principle.
Simulation studies and real data experiments with varieties of time series models consisting of single and multiple frequencies bring out the worth of our method. 
\\[2mm]
{\it {\bf Keywords:} Bayesian theory; Markov Chain Monte Carlo; Spatial and spatio-temporal processes; Stationary and nonstationary stochastic processes;  
Tests for stationarity; Time series}
\end{abstract}

\tableofcontents

\pagebreak

\section{Introduction}
\label{sec:intro}

In various areas of statistics dealing with stochastic processes, ascertainment of stationarity or nonstationarity of the process behind the observed data, 
is the primary requirement before postulating a stochastic model. In statistics, empirical plots of the data for visualizing stationarity is quite popular, particularly
in the time series context. However, rigorous ascertainment of stationarity is likely to be carried out via appropriate hypotheses testing procedures. 
In the parametric time series context, stationarity is usually characterized by specific parameters, and by devising suitable testing methods, inference regarding
stationarity can be obtained. Using the result of such a test, appropriate stationarity or nonstationary models can then be built for statistical analysis of the given data. 
Although many tests exist in the time series literature, both parametric and nonparametric, they are meant for specific types of time series. 
Some related works in this regard are \ctn{Dickey79}, \ctn{KPSS92}, \ctn{Philips88}, \ctn{Breitung02}, \ctn{Basu09}, \ctn{Cardinali18}, \ctn{Delft18}. 
In the real data scenario, where the parametric form may itself be called in question, reliability of the tests for stationarity need not be taken for granted.

A very important time series example where studying stationarity property is of utmost importance, is the Markov time series generated by 
Markov Chain Monte Carlo (MCMC) methods, particularly in the Bayesian posterior context. Although in principle there exist many formal theories for addressing
MCMC convergence, they are usually difficult to establish for realistic problems. As a result, plenty of empirical (mostly ad-hoc) methods emerged
for diagnosis of convergence of the MCMC sample to the target posterior distribution, and many such methods are based on visualizing the graphical plots of the
MCMC sample. The available empirical diagnostic tools have the ill reputation of give false impressions about convergence or non-convergence in realistic situations. Moreover,
in reality, the target posteriors can often be multimodal, and in such cases, the performances of such diagnostic tools can be even poorer. For more
about MCMC convergence diagnostics, see, for example, \ctn{Gelman92}, \ctn{Geweke92}, \ctn{Raftery92}, \ctn{Robert95}, \ctn{Gilks96}, \ctn{Cowles96}, 
\ctn{Brooks98a}, \ctn{Brooks98b}, \ctn{Brooks11}, \ctn{Robert04}, \ctn{VRoy19}.

Compared to the time series literature, tests for stationarity in the spatial and spatio-temporal statistics domains are much less developed, and confined 
to checking covariance stationarity only, under assumptions that are often difficult to check in practice. Some relevant works in this regard are
\ctn{Ephraty01}, \ctn{Fuentes02}, \ctn{Guan04}, \ctn{Fuentes05}, \ctn{Li08}, \ctn{Jun12}, \ctn{Soutir17}, \ctn{Soutir17b}. 

In the point process literature, except some simple tests for complete spatial randomness (see, for example, \ctn{OSullivan03},
\ctn{Waller04} and \ctn{Schab05}), there does not seem to exist any formal method to test for Poisson versus
non-Poisson point process, or stationarity versus nonstationarity.

Motivated by the aforementioned problems, we seek a general principle that can attempt to effectively address all such issues. Interestingly, the recursive Bayesian idea
proposed in \ctn{Roy20} to characterize infinite series, turned out to have fruitful extension to our current situations. 
Indeed, the recursive Bayesian concept of \ctn{Roy20} enabled them to study convergence of infinite series whose convergence properties are hitherto unknown. One such
infinite series is also a characterization of the most difficult unsolved problem of mathematics, namely, the Riemann hypothesis. The most surprising result obtained by
\ctn{Roy20} is failure to accept Riemann hypothesis, based on their theory, method and implementation. 
Since the idea of \ctn{Roy20} is primarily about studying deterministic
infinite series, one may be left wondering how this can be useful from the statistical perspective. However, the key concept there is to view the deterministic terms
of the series as realizations from some general stochastic process, then to relate convergence of the series to a quantity that can be interpreted as probability
of convergence of the series under the stochastic process, and finally to build a recursive Bayesian procedure such that the posterior distribution of the probability
of convergence tends to one if and only if the series converges and to zero if and only if it diverges.

From the above summary of the idea of \ctn{Roy20} it can be perceived that the deterministic terms of the infinite series can be easily replaced with random elements
if necessary. For study of stationarity and nonstatonarity, we again relate stationarity to a quantity that admits interpretation as probability that the process is stationary,
and apply the same concept of recursive Bayesian method for characterizations of stationarity and nonstationarity.

In the point process setup, we apply similar principles to characterize complete spatial randomness, using properties of Poisson point process. To characterize
Poisson point process, we first need to characterize mutual independence among a set of random variables. Once we characterize such mutual independence, again using
similar principles and recursive Bayesian concept as before, as we show, characterization of Poisson point process is not difficult to achieve.
For mutual independence we make use of simple break-ups of joint distribution of random variables into products of conditional distributions and Bayesian nonparametrics
based on Dirichlet process (\ctn{Ferguson74}). The latter particularly improves computational efficiency.

Our Bayesian idea can be used in another seemingly unrelated setup, namely, determination of frequencies of oscillations of oscillating stochastic processes.
The basic idea here is to first provide a appropriate bijective transformation to the data such that the transformed process takes values on $0,1]$. The transformed process
can then be raised to some appropriate power such that the oscillations become as explicit as possible. Dividing up the interval $[0,1]$ into appropriate
sub-intervals, we consider the proportions of oscillations contained in the sub-intervals. These can then be related to the frequencies of oscillation of the
underlying stochastic process, and again facilitates characterization with our recursive Bayesian principle. We characterize single and multiple frequencies, as well as 
infinite number of frequencies of oscillation.

The basic aim of this paper is to render our characterization theories amenable to practical applications. To this end, we provide ample illustrations
of our methods and implementations with simulated and real data sets, in each of the aforementioned areas of statistics. Most of our codes are written in C, parallelised
using MPI (Message Passing Interface), and implemented in parallel architectures. Some parallelized R codes are also used in conjunction with our parallel C codes.
Very fast computation is the result of our efforts.

The rest of our article is structured as follows. We begin our treatise in Section \ref{sec:prelude} with some necessary definitions and prove results associated with them.  
With these, we elucidate the key concept behind our proposed ideas in Section \ref{sec:key_concept}, and then introduce our recursive Bayesian procedure for 
studying stationarity in Section \ref{sec:recursive1}. In Section \ref{sec:characterization}, we characterize stationarity and nonstationarity using the recursive
Bayesian procedure. Some relevant computational techniques and their theoretical validation are provided in Section \ref{sec:comp_sup_norm}, and issues related to
discretization associated with our method are discussed in Section \ref{sec:cardinality}. Characterization of second order stationarity, that is stationarity of covariance
structure, is considered in Section \ref{sec:covariance}. Discussion of the role of non-recursive Bayesian procedures for characterizations is provided in Section
\ref{sec:non_recursive}. 

In Section \ref{sec:ar1}, we provide detailed illustration of our theory on characterization of stationarity and nonstationarity with AR(1) models,
along with comparisons with classical tests for stationarity. In Section \ref{sec:other_time_series}, we illustrate our theory and methods on more 
complicated time series models, such as AR(2), ARCH(1) and GARCH(1,1). 
MCMC convergence diagnostics with our Bayesian method is considered in Section \ref{sec:mcmc}. 

In Section \ref{sec:spatial} we illustrate detection of strict and covariance stationarity and nonstationarity in spatial setups, along with comparisons with
existing tests for covariance stationarity. Section \ref{sec:spatio_temporal} is about application of our ideas in spatio-temporal contexts, with comparisons
with existing tests for covariance stationarity. Applications to real spatial and spatio-temporal data sets are considered in Section \ref{sec:realdata_spacetime}.

In Section \ref{sec:point_processes}, using our main principles, we provide Bayesian characterizations of properties of point processes, such as complete
spatial randomness, Poisson point processes along with stationarity and nonstationarity. As a necessary part of such characterizations, we also characterize
mutual independence among a set of random variables, in the same section. In different subsections of the same section we illustrate our theories and methods
with various instances of point processes.

Our Bayesian characterization associated with (multiple) frequency determination of oscillating stochastic processes is detailed in Section \ref{sec:osc_mult},
and illustrated with many examples.

\section{Requisite definitions and associated results -- prelude to the key concept}
\label{sec:prelude}
Consider a stochastic process $\bX=\left\{X_s:s\in\mathcal S\right\}$, where $\mathcal S$ is some arbitrary index set. 
We assume that $\mathcal S=\cup_{i=1}^{\infty}\mathcal M_i$ such that $\mathcal M_i$ are disjoint, and
$\{X_s:s\in\mathcal M_i\}$ is stationary. 
In other words, we assume that $\bX$ is locally stationary. 
We show below that most stochastic processes are approximately locally stationary.
For simplicity of exposition, we consider the case where $s$ is one-dimensional; the higher-dimensional case is a simple generalization.
\begin{theorem}
\label{theorem:local}
For any $(s_1,\ldots,s_m)$, for $m\geq 1$, let $F_{s_1,\ldots,s_m}$ denote the joint distribution function of $\left(X_{s_1},\ldots,X_{s_m}\right)$.	
Assume that for any $(x_1,\ldots,x_m)$, $F_{s_1,\ldots,s_m}$ is differentiable in sufficiently small neighborhoods of $(x_1,\ldots,x_m)$, and that
for $i=1,\ldots,m$, $X_{s_i+h}=X_{s_i}+O_P(h)$, as $h\rightarrow 0$.
Then for any $(x_1,\ldots,x_m)$, $F_{s_1+h,\ldots,s_m+h}\left(x_1,\ldots,x_m\right) = F_{s_1,\ldots,s_m}\left(x_1,\ldots,x_m\right)+O_P(h)$, as $h\rightarrow 0$.
\end{theorem}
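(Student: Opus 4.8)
The plan is to bound the difference of the two joint distribution functions by the probability of a symmetric difference of events, reduce that estimate to one coordinate at a time, and then convert the coordinatewise increment bound $X_{s_i+h}-X_{s_i}=O_P(h)$ into the asserted $O_P(h)$ control using the differentiability hypothesis to bound the local mass carried by each marginal.

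First I would write $F_{s_1,\ldots,s_m}(x_1,\ldots,x_m)=P(B)$ with $B=\bigcap_{i=1}^m\{X_{s_i}\le x_i\}$, and $F_{s_1+h,\ldots,s_m+h}(x_1,\ldots,x_m)=P(A)$ with $A=\bigcap_{i=1}^m\{X_{s_i+h}\le x_i\}$, and set $R_i=X_{s_i+h}-X_{s_i}$, so that $R_i=O_P(h)$ by hypothesis. Since $\abs{P(A)-P(B)}\le P(A\triangle B)$ and $A\triangle B\subseteq\bigcup_{i=1}^m\big(\{X_{s_i+h}\le x_i\}\triangle\{X_{s_i}\le x_i\}\big)$, the union bound reduces everything to estimating, for each fixed $i$, the probability of the event on which exactly one of $X_{s_i}\le x_i$ and $X_{s_i}+R_i\le x_i$ holds. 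A short case analysis on the sign of $R_i$ shows this event forces $X_{s_i}$ into an interval of length $\abs{R_i}$ with endpoint $x_i$, hence it is contained in $\{\abs{X_{s_i}-x_i}\le\abs{R_i}\}$.

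Next I would truncate the increment. For any $M>0$,
\[
P\big(\abs{X_{s_i}-x_i}\le\abs{R_i}\big)\le P\big(\abs{X_{s_i}-x_i}\le Mh\big)+P\big(\abs{R_i}>Mh\big).
\]
The differentiability of $F_{s_1,\ldots,s_m}$ in a neighborhood of the evaluation point yields a locally bounded marginal density for $X_{s_i}$ near $x_i$ (equivalently, $\partial_{x_i}F$ is bounded there), so the first term is at most $2C_i\,Mh$ for all small $h$, where $C_i$ bounds that density. The second term is handled by the increment condition: because $R_i=O_P(h)$, the family $R_i/h$ is tight, so $M$ may be chosen, uniformly in small $h$, to make $P(\abs{R_i}>Mh)$ as small as desired. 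Summing over $i$ and feeding this back through the union bound gives $\abs{F_{s_1+h,\ldots,s_m+h}(x)-F_{s_1,\ldots,s_m}(x)}\le 2Mh\sum_i C_i+\sum_i P(\abs{R_i}>Mh)$, which is the claimed $O_P(h)$ bound once the tightness of the increments is used to absorb the residual tail into an arbitrarily small constant.

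The main obstacle is the dependence between each increment $R_i$ and the value $X_{s_i}$, compounded by the fact that $O_P(h)$ supplies only tightness of $R_i/h$ and not a moment bound, so one cannot simply condition on $R_i$ and integrate the density. The truncation at level $Mh$ is precisely what sidesteps this: it decouples the density estimate, which then involves only the marginal law of $X_{s_i}$ and is free of $R_i$, from the residual tail $P(\abs{R_i}>Mh)$. The one point requiring care in a fully rigorous account is the passage from differentiability of the \emph{joint} distribution function to a local bound on the \emph{marginal} density of $X_{s_i}$; I would either take this local boundedness as implicit in the stated hypothesis, or strengthen the increment condition to $E\abs{R_i}=O(h)$, under which the truncation residual is itself $O(h)$ and the conclusion sharpens to a clean deterministic $O(h)$ bound. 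I would also remark that the formal Taylor expansion $F_{s+h}(x)\approx F_s(x)-\sum_i R_i\,\partial_{x_i}F_s(x)$ reproduces the same answer heuristically, but the symmetric-difference route avoids the illegitimate step of moving the random $R_i$ inside the argument of the distribution function.
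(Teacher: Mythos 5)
Your reduction to $P(A\triangle B)$, the union bound over coordinates, and the inclusion of each coordinatewise symmetric difference in $\{|X_{s_i}-x_i|\le |R_i|\}$ are all correct, and this route is genuinely different from the paper's proof, which disposes of \emph{deterministic} increments by a first-order Taylor expansion and then passes to random increments by citing Theorem 7.15 of Schervish (1995). The gap is in your final step. The quantity you are estimating, $\Delta(h)=F_{s_1+h,\ldots,s_m+h}(x_1,\ldots,x_m)-F_{s_1,\ldots,s_m}(x_1,\ldots,x_m)$, is deterministic, so the conclusion $\Delta(h)=O_P(h)$ is exactly the statement $\Delta(h)=O(h)$. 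Your bound reads $|\Delta(h)|\le 2Mh\sum_i C_i+\sum_i P(|R_i|>Mh)$, and tightness of $R_i/h$ only lets you make each tail probability an arbitrarily small \emph{constant} $\epsilon$, uniformly in small $h$; it does not make it $O(h)$. Since $\epsilon$ enters additively rather than as a factor of $h$, what you actually obtain is $\limsup_{h\to 0}|\Delta(h)|\le m\epsilon$ for every $\epsilon>0$, i.e.\ $\Delta(h)=o(1)$, not $O(h)$: the residual tail cannot be ``absorbed.''

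Your proposed repair, $E|R_i|=O(h)$, does not close the gap either: Markov's inequality gives $P(|R_i|>Mh)\le E|R_i|/(Mh)\le C/M$, again a constant, and balancing the two terms over $M$ yields only $O(\sqrt{h})$. The obstruction is precisely the dependence you flagged, and under these hypotheses it is fatal. Take $X_s\sim N(0,1)$ and $X_{s+h}=X_s-2\sqrt{h}\,\psi(X_s)+hZ$, where $\psi$ is a smooth approximation to the indicator of $(x,x+\sqrt{h}\,]$ and $Z$ is an independent standard normal (the noise term makes all the joint distribution functions differentiable). Then $X_{s+h}-X_s=O_P(h)$ and $E|X_{s+h}-X_s|=O(h)$, yet $F_{s+h}(x)-F_s(x)=\Phi(x+\sqrt{h})-\Phi(x)+o(\sqrt{h})$, which is of exact order $\sqrt{h}$. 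So under the tightness (or first-moment) reading of $O_P(h)$ the theorem itself fails, and no completion of your argument is possible. What does suffice, and fits your framework with no truncation at all, is the stronger reading $|R_i|\le Ch$ almost surely for small $h$, since then $P(|X_{s_i}-x_i|\le|R_i|)\le P(|X_{s_i}-x_i|\le Ch)\le 2C_iCh$; alternatively, a uniform local bound on the conditional density of $X_{s_i}$ given $R_i$ lets you condition on $R_i$ and integrate to get $P(|X_{s_i}-x_i|\le|R_i|)=O(E|R_i|)=O(h)$. One of these strengthenings is what the paper's Taylor-plus-lifting device is implicitly assuming; the merit of your decomposition is that it makes the needed assumption visible, where the paper's citation hides it.
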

\begin{proof}
Let us first assume that $X_s$ are deterministic variables satisfying $X_{s_i+h}=X_{s_i}+O(h)$, as $h\rightarrow 0$, $i=1,\ldots,m$.
Then by Taylor's series expansion up to the first order, using the above condition, reveals that 
$F_{s_1+h,\ldots,s_m+h}\left(x_1,\ldots,x_m\right) = F_{s_1,\ldots,s_m}\left(x_1,\ldots,x_m\right)+O(h)$. Hence, the result follows
by an application of Theorem 7.15 of \ctn{Schervish95}.
\end{proof}
\begin{remark}
\label{remark:local}
The condition $X_{s+h}=X_{s}+O_P(h)$, as $h\rightarrow 0$ is satisfied by stochastic processes $X_s$ with almost surely differentiable paths,
for example, Gaussian processes, with sufficiently smooth covariance structure (see, for example, \ctn{Adler81}, \ctn{Adler07}). Also, non-smooth processes that are 
mean square continuous, in the sense
that $E\left(X_{s+h}-X_s\right)^2\rightarrow 0$, as $h\rightarrow 0$, for any $s$, also satisfy the property. Furthermore, discrete processes such as Poisson processes
satisfy the above property.
Also note that the differentiability condition of $F_{s_1,\ldots,s_m}$ is satisfied by most distribution functions, including the step functions
corresponding to discrete distributions.
\end{remark}

Note that local stationarity does not imply that the entire process is even asymptotically stationary.
However, as we show below, global stationarity is also possible under our setup. Our goal is to distinguish between global (asymptotic) stationarity and nonstationarity.

For all practical purposes, we shall consider realizations of $\bX$ at discrete index points, that is, points on the set 
$\tilde{\mathcal S}=\cup_{i=1}^{\infty}\mathcal N_i$, where
$\mathcal N_i$ is a discretization of $\mathcal M_i$ and
$\{X_s:s\in\mathcal N_i, |\mathcal N_i|=n_i\}$, where $|\mathcal N_i|$ is the cardinality of $\mathcal N_i$, is stationary.
We assume that $|\mathcal N_i|\rightarrow\infty$, for each $i$.
In particular, if $s$ is one-dimensional, then 
$\mathcal N_i=\left\{s_r:\sum_{k=1}^{i-1}n_k\leq r\leq\sum_{k=1}^in_k\right\}$, 
and $|\mathcal N_i|=n_i\rightarrow\infty$ for each $i$; we set $n_0=0$. 

In practice, one can not observe the entire stochastic process $\bX$, even on the discrete set $\tilde{\mathcal S}$. 
Hence, let us assume that only $\bX_K=\left\{X_s:s\in \cup_{i=1}^K\mathcal N_i\right\}$
has been observed, for sufficiently large $K$.

For any Borel set $C$, consider
\begin{equation}
\hat P_i(C)=n_i^{-1}\sum_{s\in\mathcal N_i}I(X_s\in C).
\label{eq:eq1}
\end{equation}

Now let
\begin{align}
\tilde P_K(C)&=\frac{\sum_{s\in\cup_{i=1}^K\mathcal N_i}I(X_s\in C)}{\sum_{i=1}^Kn_i}\notag\\
&=\frac{\sum_{i=1}^Kn_i\hat P_i(C)}{\sum_{i=1}^Kn_i}=\sum_{i=1}^K\hat p_{iK}\hat P_i(C),
\label{eq:eq2}
\end{align}
where $\hat p_{ik}=n_i/\sum_{j=1}^Kn_j$.
By the Glivenko-Cantelli theorem for stationary random variables (see \ctn{Stute80})
\begin{equation}
\underset{C}{\sup}~\left|\hat P_i(C)-P_i(C)\right|\stackrel{a.s.}{\longrightarrow} 0,~\mbox{as}~n_i\rightarrow\infty,
\label{eq:eq3}
\end{equation}
where $P_i(C)$ is the probability that any random variable in $\mathcal N_i$ belongs to $C$.
Note that $P_i(C)$ may itself be a random variable unless $\{X_s:s\in\mathcal N_i, |\mathcal N_i|=n_i\}$ is also ergodic.
Randomness of $P_i(C)$ is not a cause for concern, however, for the methodology that we propose.

Let us now assume that 
\begin{equation}
\hat p_{iK}=\frac{n_i}{\sum_{j=1}^Kn_j}\rightarrow p_{iK}=\frac{p_i}{\sum_{j=1}^Kp_j},
\label{eq:eq4}
\end{equation}
as $n_j\rightarrow\infty$, for $j=1,\ldots,K$. Here $0\leq p_i\leq 1$, such that $\sum_{i=1}^{\infty}p_i=1$.

Let $P_\infty(C)=\sum_{i=1}^\infty p_iP_i(C)$. Then we have the following theorem.
\begin{theorem}
\label{theorem:gc1}
\begin{equation}
	\underset{K\rightarrow\infty}{\lim}\underset{n_i\rightarrow\infty,i=1,\ldots,K}{\lim}~\underset{C}{\sup}~\left|\tilde P_K(C)-P_\infty(C)\right|=0,~\mbox{almost surely}.
\label{eq:gc1}
\end{equation}
\end{theorem}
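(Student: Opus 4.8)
The plan is to handle the iterated limit in two stages, matching the order in which it is written: first fix $K$ and let $n_i\to\infty$ for all $i\leq K$, then let $K\to\infty$. To organize the inner stage, I would introduce the inner-limit object $P^*_K(C)=\sum_{i=1}^K p_{iK}P_i(C)$, where $p_{iK}=p_i/\sum_{j=1}^K p_j$ as in \eqref{eq:eq4}. The strategy is to prove (i) that for each fixed $K$ one has $\sup_C|\tilde P_K(C)-P^*_K(C)|\to 0$ almost surely under the inner limit, so that the inner limit of $\sup_C|\tilde P_K(C)-P_\infty(C)|$ equals $\sup_C|P^*_K(C)-P_\infty(C)|$; and (ii) that this latter quantity tends to $0$ as $K\to\infty$. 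The only ingredients needed are the Glivenko--Cantelli convergence \eqref{eq:eq3}, the weight convergence \eqref{eq:eq4}, the elementary bound $0\leq P_i(C)\leq 1$, and $\sum_{i=1}^\infty p_i=1$.

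For step (i), I would decompose each summand of \eqref{eq:eq2} as $\hat p_{iK}\hat P_i(C)-p_{iK}P_i(C)=\hat p_{iK}\bigl(\hat P_i(C)-P_i(C)\bigr)+\bigl(\hat p_{iK}-p_{iK}\bigr)P_i(C)$. Summing over the finitely many $i\leq K$, taking the supremum over $C$, and applying the triangle inequality gives
\[
\sup_C\bigl|\tilde P_K(C)-P^*_K(C)\bigr|\leq\sum_{i=1}^K\hat p_{iK}\,\sup_C\bigl|\hat P_i(C)-P_i(C)\bigr|+\sum_{i=1}^K\bigl|\hat p_{iK}-p_{iK}\bigr|.
\]
Here I used $\hat p_{iK}\leq 1$ and $\sup_C|P_i(C)|\leq 1$. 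Each of the $K$ terms in the first sum vanishes almost surely by \eqref{eq:eq3}, and each term in the second sum vanishes by \eqref{eq:eq4}; since a finite intersection of almost-sure events is almost sure, the whole bound tends to $0$ almost surely. Because $\big|\sup_C|\tilde P_K(C)-P_\infty(C)|-\sup_C|P^*_K(C)-P_\infty(C)|\big|\leq\sup_C|\tilde P_K(C)-P^*_K(C)|$, the inner limit of $\sup_C|\tilde P_K(C)-P_\infty(C)|$ is exactly $\sup_C|P^*_K(C)-P_\infty(C)|$.

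For step (ii), write $S_K=\sum_{j=1}^K p_j$, so that $P^*_K(C)=S_K^{-1}\sum_{i=1}^K p_i P_i(C)$ while $P_\infty(C)=\sum_{i=1}^\infty p_iP_i(C)$. Splitting off the tail and using $\sum_{i=1}^K p_iP_i(C)\leq S_K$ together with $P_i(C)\leq 1$ gives
\[
\bigl|P^*_K(C)-P_\infty(C)\bigr|\leq\left(S_K^{-1}-1\right)\sum_{i=1}^K p_iP_i(C)+\sum_{i=K+1}^\infty p_iP_i(C)\leq(1-S_K)+(1-S_K).
\]
Hence $\sup_C|P^*_K(C)-P_\infty(C)|\leq 2(1-S_K)=2\sum_{i=K+1}^\infty p_i\to 0$ as $K\to\infty$, since $\sum_{i=1}^\infty p_i=1$ converges. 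Combining the two stages yields \eqref{eq:gc1}.

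The main obstacle is the careful bookkeeping of the \emph{iterated} limit: one must justify that uniform-in-$C$ almost-sure convergence at the inner stage legitimately permits replacing $\tilde P_K$ by $P^*_K$ before sending $K\to\infty$, rather than attempting a single joint limit. I expect the remaining estimates to be routine. One point worth flagging explicitly is that $P_i(C)$ may itself be random when the local blocks are non-ergodic, as noted after \eqref{eq:eq3}; this causes no difficulty, because every inequality above invokes only the pathwise bound $0\leq P_i(C)\leq 1$.
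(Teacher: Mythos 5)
Your proof is correct and follows essentially the same route as the paper: a triangle-inequality decomposition into an empirical-vs-population term killed by the Glivenko--Cantelli bound (\ref{eq:eq3}), a weight-discrepancy term killed by (\ref{eq:eq4}), and tail terms killed by $\sum_{i=K+1}^{\infty}p_i\rightarrow 0$. The only difference is bookkeeping: you compare $\tilde P_K$ with the normalized mixture $P^*_K=\sum_{i=1}^{K}p_{iK}P_i(C)$, so the inner limit $n_i\rightarrow\infty$ is disposed of completely before sending $K\rightarrow\infty$, whereas the paper compares against the unnormalized $\sum_{i=1}^{K}p_iP_i(C)$, so its middle term, which converges to $\sum_{i=1}^{K}\left|p_{iK}-p_i\right|=\sum_{i=K+1}^{\infty}p_i$, still requires the outer limit --- your split handles the iterated limit slightly more cleanly, but the substance is identical.
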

\begin{proof}
\begin{align}
&\underset{C}{\sup}~\left|\tilde P_K(C)-P_\infty(C)\right|\notag\\
&\quad = \underset{C}{\sup}~\left|\sum_{i=1}^K\hat p_{iK}\hat P_i(C)-\sum_{i=1}^Kp_iP_i(C)-\sum_{i=K+1}^{\infty}p_iP_i(C)\right|\notag\\
&\quad\leq \underset{C}{\sup}~\left|\sum_{i=1}^K\hat p_{iK}\hat P_i(C)-\sum_{i=1}^Kp_iP_i(C)\right|+\underset{C}{\sup}~\left|\sum_{i=K+1}^{\infty}p_iP_i(C)\right|\notag\\
&\quad\leq\sum_{i=1}^Kp_i\left[\underset{C}{\sup}~\left|\hat P_i(C)-P_i(C)\right|\right]+\sum_{i=1}^K\left[\underset{C}{\sup}~\hat P_i(C)\right]\left|\hat p_{iK}-p_i\right|
+\sum_{i=K+1}^{\infty}p_i\left[\underset{C}{\sup}~P_i(C)\right].
\label{eq:eq5}
\end{align}
Now, due to (\ref{eq:eq3}), given $K$, 
\begin{equation*}
\sum_{i=1}^Kp_i\left[\underset{C}{\sup}~\left|\hat P_i(C)-P_i(C)\right|\right]\rightarrow 0,~\mbox{almost surely as}~n_i\rightarrow\infty,~i=1,\ldots,K.
\end{equation*}
Hence, 
\begin{equation}
\underset{K\rightarrow\infty}{\lim}\underset{n_i\rightarrow\infty,i=1,\ldots,K}{\lim}~\sum_{i=1}^Kp_i\left[\underset{C}{\sup}~\left|\hat P_i(C)-P_i(C)\right|\right]= 0,
	~\mbox{almost surely}.
\label{eq:lim1}
\end{equation}

As $n_i\rightarrow\infty$ for $j=1,\ldots,K$ and $K\rightarrow\infty$, the second term of (\ref{eq:eq5}) can be shown to converge to zero in the following way:
\begin{align}
&\underset{n_i\rightarrow\infty,i=1,\ldots,K}{\lim}~\sum_{i=1}^K\left[\underset{C}{\sup}~\hat P_i(C)\right]\left|\hat p_{iK}-p_i\right|\notag\\
&\leq\underset{n_i\rightarrow\infty,i=1,\ldots,K}{\lim}~\sum_{i=1}^K\left|\hat p_{iK}-p_i\right|
=\sum_{i=1}^K\left|p_{iK}-p_i\right|
=\sum_{i=K+1}^{\infty}p_i\rightarrow 0,~\mbox{as}~K\rightarrow\infty.
\label{eq:eq7}
\end{align}
For the third term of (\ref{eq:eq5}), note that
\begin{equation}
\sum_{i=K+1}^{\infty}p_i\left[\underset{C}{\sup}~P_i(C)\right]
\leq \sum_{i=K+1}^{\infty}p_i\rightarrow 0,~\mbox{as}~K\rightarrow\infty.
\label{eq:eq8}
\end{equation}

The result follows by combining (\ref{eq:eq5}), (\ref{eq:lim1}), (\ref{eq:eq7}) and (\ref{eq:eq8}).
\end{proof}

Note that stationarity of the process $\bX$ is characterized by $P_i=P$ for $i=1,2,\ldots$, in which case $P_{\infty}=P$. 
Observe that if $P_i=P_{\infty}$ for $i=1,\ldots,\infty$, it then follows that $P_{\infty}=P$.
Asymptotic stationarity is characterized by $P_i=P$ for $i\geq i_0$, for some $i_0>1$. In this case, if $P_j=P_{i_0,\infty}=
\frac{\sum_{i=i_0+1}^{\infty}p_iP_i}{\sum_{i=i_0+1}^{\infty}p_i}$, for $j>i_0$, then $P_i=P$ for $i>i_0$.
On the other hand, if $\bX$ is nonstationary and not even asymptotically stationary, then $P_i\neq P_j$ for infinitely many $j\neq i$. The latter condition also implies
that there does not exist $i_0>1$ such that $P_j=P_{i_0,\infty}$ for $j>i_0$. Hence, there exists no $i_0>1$ such that $P_i=P$ for $i>i_0$.

\begin{theorem}
\label{theorem:gc2}
$\bX$ is stationary if and only if for $i\geq 1$, $\underset{C}{\sup}~\left|\hat P_i(C)-\tilde P_K(C)\right|\rightarrow 0$ almost surely, as $n_i\rightarrow\infty$
satisfying (\ref{eq:eq4}), $i=1,\ldots,K$, $K\rightarrow\infty$.
\end{theorem}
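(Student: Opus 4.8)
The plan is to derive both implications from the triangle inequality, using only the two convergence facts already in hand: the stationary Glivenko--Cantelli statement (\ref{eq:eq3}) and Theorem \ref{theorem:gc1}. Throughout, the limits are taken in the order specified in the statement, namely first $n_i\rightarrow\infty$ for $i=1,\ldots,K$ subject to (\ref{eq:eq4}) and then $K\rightarrow\infty$, and every convergence below is almost sure.

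For the forward implication, I would assume $\bX$ is stationary, so that $P_i=P$ for every $i$ and hence $P_\infty=P$, exactly as recorded in the paragraph preceding the theorem. For each fixed $i\geq 1$ I would then split
\[
\underset{C}{\sup}~\left|\hat P_i(C)-\tilde P_K(C)\right|\leq \underset{C}{\sup}~\left|\hat P_i(C)-P(C)\right|+\underset{C}{\sup}~\left|\tilde P_K(C)-P(C)\right|.
\]
The first term tends to zero by (\ref{eq:eq3}), since $P_i=P$, as soon as $n_i\rightarrow\infty$; the second tends to zero by Theorem \ref{theorem:gc1} in the iterated limit, since $P_\infty=P$. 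This gives the desired convergence for every $i\geq 1$.

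For the converse, I would insert $\hat P_i(C)$ and $\tilde P_K(C)$ between $P_i(C)$ and $P_\infty(C)$ to obtain
\[
\underset{C}{\sup}~\left|P_i(C)-P_\infty(C)\right|\leq \underset{C}{\sup}~\left|P_i(C)-\hat P_i(C)\right|+\underset{C}{\sup}~\left|\hat P_i(C)-\tilde P_K(C)\right|+\underset{C}{\sup}~\left|\tilde P_K(C)-P_\infty(C)\right|.
\]
On the right, the first term vanishes in the limit by (\ref{eq:eq3}), the third by Theorem \ref{theorem:gc1}, and the middle term by the hypothesis of the theorem. Since the left-hand side depends on neither $K$ nor the $n_i$, it must therefore equal zero almost surely, so $P_i=P_\infty$ for every $i\geq 1$. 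By the observation stated just before the theorem, $P_i=P_\infty$ for all $i$ forces $P_\infty=P$; equivalently, all the $P_i$ coincide, which is precisely stationarity of $\bX$.

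The two triangle-inequality steps are routine; the points that require care are structural. First, I must check that the order of limits assumed in the hypothesis is exactly the order in which (\ref{eq:eq3}) and Theorem \ref{theorem:gc1} supply their almost sure convergence, noting that $\tilde P_K$ is itself a weighted average containing $\hat P_i$ as a summand for $i\leq K$, so the self-reference is harmless but must be tracked. Second, since the converse produces one almost sure event per index $i$, I would intersect these over the countable family of indices so that $P_i=P_\infty$ holds simultaneously for all $i$ on a single almost sure set, which is what is needed to conclude global stationarity.
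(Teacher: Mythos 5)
Your proof is correct and follows essentially the same route as the paper's: a triangle inequality combined with the stationary Glivenko--Cantelli result (\ref{eq:eq3}) and Theorem \ref{theorem:gc1}. In fact your converse direction, which inserts $\hat P_i$ and $\tilde P_K$ between $P_i$ and $P_\infty$ and then intersects the resulting almost-sure events over the countably many indices $i$, spells out explicitly what the paper's terser ``if and only if'' assertion leaves implicit.
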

\begin{proof}
Note that
$\underset{C}{\sup}~\left|\hat P_i(C)-\tilde P_K(C)\right|\leq \underset{C}{\sup}~\left|\hat P_i(C)-P_{\infty}(C)\right|
+\underset{C}{\sup}~\left|\tilde P_K(C)-P_{\infty}(C)\right|$. 
The first part of the right hand side tends to zero almost surely as $n_i\rightarrow\infty$ satisfying (\ref{eq:eq4}), $i=1,\ldots,K$, $K\rightarrow\infty$,
if and only if $\bX$ is stationary, and the second part tends to zero almost surely by Theorem \ref{theorem:gc1}.
\end{proof}

\begin{theorem}
\label{theorem:gc3}
$\bX$ is nonstationary if and only if $\underset{C}{\sup}~\left|\hat P_i(C)-\tilde P_K(C)\right|> 0$ almost surely, as $n_i\rightarrow\infty$
satisfying (\ref{eq:eq4}), $i=1,\ldots,K$, $K\rightarrow\infty$.
\end{theorem}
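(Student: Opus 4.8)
The plan is to obtain Theorem \ref{theorem:gc3} as the logical complement of Theorem \ref{theorem:gc2}, the crucial point being to show that the iterated limit of $\underset{C}{\sup}~|\hat P_i(C)-\tilde P_K(C)|$ always \emph{exists} almost surely, so that its failure to vanish is genuinely equivalent to its being strictly positive. First I would establish the limit identity
\begin{equation*}
\underset{K\rightarrow\infty}{\lim}\underset{n_i\rightarrow\infty,\,i=1,\ldots,K}{\lim}~\underset{C}{\sup}~\left|\hat P_i(C)-\tilde P_K(C)\right|=\underset{C}{\sup}~\left|P_i(C)-P_\infty(C)\right|\quad\mbox{almost surely}.
\end{equation*}
This follows from the reverse-triangle estimate
$\left|\underset{C}{\sup}~|\hat P_i(C)-\tilde P_K(C)|-\underset{C}{\sup}~|P_i(C)-P_\infty(C)|\right|\leq \underset{C}{\sup}~|\hat P_i(C)-P_i(C)|+\underset{C}{\sup}~|\tilde P_K(C)-P_\infty(C)|$,
whose right-hand side tends to zero almost surely under the stated iterated limit by the Glivenko--Cantelli convergence (\ref{eq:eq3}) for the first term and by Theorem \ref{theorem:gc1} for the second.

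With the limit in hand, the second step is a simple dichotomy: the nonnegative quantity $\underset{C}{\sup}~|P_i(C)-P_\infty(C)|$ equals zero exactly when $P_i(C)=P_\infty(C)$ for every Borel $C$, i.e. $P_i=P_\infty$, and is strictly positive otherwise. I would then invoke the characterization recorded just before Theorem \ref{theorem:gc2}: $\bX$ is stationary precisely when $P_i=P_\infty$ for all $i$ (equivalently $P_i=P$). Hence $\bX$ is nonstationary precisely when $P_i\neq P_\infty$ for some (indeed, by the discussion preceding Theorem \ref{theorem:gc2}, infinitely many) $i$, which by the dichotomy is exactly when the iterated limit above is strictly positive for that $i$. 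Combining the two steps yields the asserted equivalence.

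The main obstacle I anticipate is precisely the step separating this theorem from a naive negation of Theorem \ref{theorem:gc2}: the bare negation of ``the limit is $0$'' is only ``the limit is not $0$,'' which a priori permits oscillation of $\underset{C}{\sup}~|\hat P_i(C)-\tilde P_K(C)|$ with no genuine limit, or a limit inferior equal to zero. Securing the limit identity is what rules this out, since it pins the iterated limit to the single value $\underset{C}{\sup}~|P_i(C)-P_\infty(C)|$; only then does ``does not vanish'' upgrade to ``is strictly positive almost surely.'' A minor point to handle carefully is the possible randomness of $P_i(C)$ noted after (\ref{eq:eq3}): since the whole argument is carried out on the almost-sure event where both (\ref{eq:eq3}) and Theorem \ref{theorem:gc1} hold, the conclusion, including the strict positivity, is asserted on that same event and is unaffected.
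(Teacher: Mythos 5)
Your proposal is correct, and it takes a genuinely cleaner route than the paper's own proof, although it is built from the same two ingredients: the Glivenko--Cantelli convergence (\ref{eq:eq3}) and Theorem \ref{theorem:gc1}. The paper never establishes a limit identity; it fixes $i$ and proves only the forward implication, lower-bounding $\underset{C}{\sup}~|\hat P_i(C)-\tilde P_K(C)|$ through two nested applications of the reverse triangle inequality and $\epsilon$-arguments, and the converse direction is left to be read off from Theorem \ref{theorem:gc2}. Your identity $\underset{K\rightarrow\infty}{\lim}\underset{n_i\rightarrow\infty}{\lim}\underset{C}{\sup}~|\hat P_i(C)-\tilde P_K(C)|=\underset{C}{\sup}~|P_i(C)-P_\infty(C)|$ is strictly more informative: it delivers both directions of the equivalence at once and, as you correctly emphasize, it is what upgrades ``does not converge to zero'' to ``has a strictly positive limit,'' a subtlety the paper glosses over. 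There is also one point of substance where the two arguments differ: the paper asserts, for the given $i$, that $|P_i(C)-P_\infty(C)|>0$ for some $C$ \emph{because} $P_i\neq P_j$ for infinitely many $j\neq i$; this inference has a gap, since a particular $P_i$ could coincide with the mixture $P_\infty=\sum_j p_jP_j$ while still differing from infinitely many $P_j$. Your argument avoids this by quantifying over $i$: you derive ``$P_i\neq P_\infty$ for some (indeed infinitely many) $i$'' directly as the negation of ``$P_i=P_\infty$ for all $i$,'' which is exactly what the characterization preceding Theorem \ref{theorem:gc2} supports. The trade-off is that your version establishes positivity only for those $i$ with $P_i\neq P_\infty$, whereas the paper claims it for every $i$ but at the cost of the unjustified step; given the ambiguity of the quantifier on $i$ in the theorem statement, your reading is the one the stated hypotheses actually sustain.
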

\begin{proof}
Note that
\begin{equation}
\left|\hat P_i(C)-\tilde P_K(C)\right|\geq \left|\left|\hat P_i(C)-P_{\infty}(C)\right|
-\left|\tilde P_K(C)-P_{\infty}(C)\right|\right|.
\label{eq:gc3_1}
\end{equation}
By Theorem \ref{theorem:gc1}, for any $\epsilon_1>0$, 
\begin{equation}
\left|\tilde P_K(C)-P_{\infty}(C)\right|<\epsilon_1,
\label{eq:gc3_2}
\end{equation}
for all $C$, for sufficiently large $n_i$
satisfying (\ref{eq:eq4}) and sufficiently large $K$. Also, 
\begin{equation}
\left|\hat P_i(C)-P_{\infty}(C)\right|\geq \left|\left|P_i(C)-P_{\infty}(C)\right|-
\left|\hat P_i(C)-P_i(C)\right|\right|. 
\label{eq:gc3_3}
\end{equation}
By (\ref{eq:eq3}), for any $\epsilon_2>0$, $\left|\hat P_i(C)-P_i(C)\right|<\epsilon_2$, for all $C$, as $n_i\rightarrow\infty$.
But $\left|P_i(C)-P_{\infty}(C)\right|>0$, at least for some $C$, since $P_i\neq P_j$ for infinitely many $j\neq i$. Since $\epsilon_2~(>0)$
is arbitrary, it follows from these arguments and (\ref{eq:gc3_3}), that
\begin{equation}
\left|\hat P_i(C)-P_{\infty}(C)\right|>0,~\mbox{for some}~C,~\mbox{for sufficiently large}~n_i.
\label{eq:gc3_4}
\end{equation}
Since $\epsilon_1~(>0)$ in (\ref{eq:gc3_2}) is also arbitrary, combining (\ref{eq:gc3_4}), (\ref{eq:gc3_2}) and (\ref{eq:gc3_1}) it is evident that
the right hand side of (\ref{eq:gc3_1}) is positive for some $C$ for sufficiently large $n_i$ satisfying (\ref{eq:eq4}) and sufficiently large $K$.
Hence, $$\underset{C}{\sup}~\left|\hat P_i(C)-\tilde P_K(C)\right|>0$$
almost surely, as $n_i\rightarrow\infty$ satisfying (\ref{eq:eq4}), $i=1,\ldots,K$, $K\rightarrow\infty$.
\end{proof}

\section{The key concept}
\label{sec:key_concept}

Let $p_{j,n_j}=P\left(\underset{C}{\sup}~\left|\hat P_j(C)-\tilde P_K(C)\right|\leq c_j\right)$. As will be seen later, this can be interpreted as 
the probability that the underlying process is stationary when the observed data is $\mathbb I\left\{\underset{C}{\sup}~\left|\hat P_j(C)-\tilde P_K(C)\right|\leq c_j\right\}$.
Note that, for stationarity, due to Theorem \ref{theorem:gc2}, for $j=1,\ldots,K$, as $n_j\rightarrow\infty$, $K\rightarrow\infty$, 
the latter converges to one almost surely. 
Since $p_{j,n_j}=E\left[\mathbb I\left\{\underset{C}{\sup}~\left|\hat P_j(C)-\tilde P_K(C)\right|\leq c_j\right\}\right]$,
uniform integrability leads one to expect that for $j\geq 1$, 
for any choice of the non-negative monotonically decreasing sequence $\{c_j\}_{j=1}^{\infty}$,
\begin{align}
&\underset{K\rightarrow\infty}{\lim}\underset{n_j\rightarrow\infty,j=1,\ldots,K}{\lim}~p_{j,n_j}\notag\\
&\qquad=\underset{K\rightarrow\infty}{\lim}\underset{n_j\rightarrow\infty,j=1,\ldots,K}{\lim}
~P\left(\underset{C}{\sup}~\left|\hat P_i(C)-\tilde P_K(C)\right|\leq c_j\right)\notag\\
&\qquad=\underset{K\rightarrow\infty}{\lim}\underset{n_j\rightarrow\infty,j=1,\ldots,K}{\lim}~
E\left[\mathbb I\left\{\underset{C}{\sup}~\left|\hat P_i(C)-\tilde P_K(C)\right|\leq c_j\right\}\right]\notag\\
&\qquad=1.\notag
\end{align}
Similarly, for nonstationarity, we expect, using Theorem \ref{theorem:gc3} that for $j\geq j_0\geq 1$,
\begin{equation*}
\underset{K\rightarrow\infty}{\lim}\underset{n_j\rightarrow\infty,j=1,\ldots,K}{\lim}~p_{j,n_j}=0
\end{equation*}
almost surely, for any choice of the non-negative monotonically decreasing sequence $\{c_j\}_{j=1}^{\infty}$.

In reality it is not known if $p_{j,n_j}$ converges to zero or one, since it is not known if $\bX$ is stationary or nonstationary.
Thus, we consider learning about $p_{j,n_j}$ from the data $\bX_K$ and some appropriate prior on $p_{j,n_j}$ in the form of 
the posterior $\pi\left(p_{j,n_j}|\bX_K\right)$. As we will show, 
$$\underset{K\rightarrow\infty}{\lim}\underset{n_j\rightarrow\infty,j=1,\ldots,K}{\lim}~\pi\left(p_{j,n_j}|\bX_K\right)=1,~\mbox{almost surely}$$ for $j\geq 1$ and 
any choice of the non-negative monotonically decreasing sequence $\{c_j\}_{j=1}^{\infty}$, characterizes stationarity of $\bX$ and
$$\underset{K\rightarrow\infty}{\lim}\underset{n_j\rightarrow\infty,j=1,\ldots,K}{\lim}~\pi\left(p_{j,n_j}|\bX_K\right)=0,~\mbox{almost surely}$$ for $j\geq j_0\geq 1$,
for any choice of the non-negative monotonically decreasing sequence $\{c_j\}_{j=1}^{\infty}$, characterizes nonstationarity of $\bX$.


In Section \ref{sec:recursive1} we devise a recursive Bayesian methodology that achieves the goal discussed above.

\section{A recursive Bayesian procedure for studying stationarity}
\label{sec:recursive1}

Since we view $X_i$ as realizations from some random process, we first formalize the notion
in terms of the relevant probability space.
Let $(\Omega,\mathcal A,\mu)$ be a probability space, where $\Omega$ is the sample space,
$\mathcal A$ is the Borel $\sigma$-field on $\Omega$, and $\mu$ is some probability measure.
Let, for $i=1,2,3,\ldots$, $X_i:\Omega\mapsto\mathbb R$ be real valued random variables
measurable with respect to the Borel $\sigma$-field $\mathcal B$ on $\mathbb R$.
As in \ctn{Schervish95}, we can then define a $\sigma$-field of subsets of $\mathbb R^{\infty}$ with
respect to which $X=(X_1,X_2,\ldots)$ is measurable. Indeed, let us define $\mathbb B^{\infty}$ to be
the smallest $\sigma$-field containing sets of the form 
\begin{align}
B&=\left\{X:X_{i_1}\leq r_1,X_{i_2}\leq r_2,\ldots,X_{i_p}\leq r_p,~\mbox{for some}~p\geq 1,\right.\notag\\
&\quad\quad\left.~\mbox{some integers}~
i_1,i_2,\ldots,i_p,~\mbox{and some real numbers}~r_1,r_2,\ldots,r_p\right\}.\notag
\end{align}
Since $B$ is an intersection of finite number of sets  
of the form $\left\{X:X_{i_j}\leq r_j\right\}$; $j=1,\ldots,p$, all of which belong to $\mathcal A$ (since
$X_{i_j}$ are measurable)
it follows that $X^{-1}(B)\in\mathcal A$, so that $X$ is measurable with respect to 
$(\mathbb R^{\infty},\mathbb B^{\infty},P)$, where $P$ is the probability measure induced by $\mu$.

Alternatively, note that it is possible to represent any stochastic process $\{X_i:i\in \mathfrak I\}$, for fixed
$i$ as a random variable $\omega\mapsto X_i(\omega)$, where $\omega\in\mathfrak S$;
$\mathfrak S$ being the set of all functions from $\mathfrak I$ into $\mathbb R$. 
Also, fixing $\omega\in\mathfrak S$, the function $i\mapsto X_i(\omega);~i\in \mathfrak I$,
represents a path of $X_i;~i\in\mathfrak I$. Indeed, we can identify $\omega$ with the function
$i\mapsto X_i(\omega)$ from $\mathfrak I$ to $\mathbb R$; see, for example, \ctn{Oksendal00}, for
a lucid discussion.

This latter identification will be convenient for our purpose, and we adopt this in this article.
Note that the $\sigma$-algebra $\mathcal F$ induced by $X$
is generated by sets of the form
\[
\left\{\omega:\omega(i_1)\in B_1,\omega(i_2)\in B_2,\ldots,\omega(i_k)\in B_k\right\},
\]
where $B_j\subset\mathbb R;~j=1,\ldots,k$, are Borel sets in $\mathbb R$.   

\subsection{Development of the stage-wise likelihoods}
\label{subsec:Bayesian_method}

Let $\{c_j\}_{j=1}^{\infty}$ be a non-negative decreasing sequence and
\begin{equation}
Y_{j,n_j}=\mathbb I{\left\{\underset{C}{\sup}~\left|\hat P_j(C)-\tilde P_K(C)\right|\leq c_j\right\}}.
\label{eq:Y_j_n}
\end{equation}
Let, for $j\geq 1$,
\begin{equation}
P\left(Y_{j,n_j}=1\right)=p_{j,n_j}.
\label{eq:p_j_n}
\end{equation}
Hence, the likelihood of $p_{j,n_j}$, given $y_{j,n_j}$, is given by
\begin{equation}
L\left(p_{j,n_j}\right)=p^{y_{j,n_j}}_{j,n_j}\left(1-p_{j,n_j}\right)^{1-y_{j,n_j}}
\label{eq:likelihood}
\end{equation}
It is important to relate $p_{j,n_j}$ to stationarity of the underlying series.
Note that $p_{j,n_j}$ is the probability that $\underset{C}{\sup}~\left|\hat P_j(C)-\tilde P_K(C)\right|$ falls below $c_j$. Thus, 
$p_{j,n_j}$ can be interpreted as the probability that the
process $\bX$ is stationary when the data observed is $Y_{j,n_j}$. 
If $\bX$ is stationary, then due to Theorem \ref{theorem:gc2} it is to be expected {\it a posteriori}, that for $j\geq 1$, 
for any non-negative decreasing sequence $\{c_j\}_{j=1}^{\infty}$, 
\begin{equation}
p_{j,n_j}\rightarrow 1\quad\mbox{as}~n_j\rightarrow\infty,~\mbox{satisfying}~(\ref{eq:eq4}).
\label{eq:convergent_p}
\end{equation}
Indeed, as we will formally show, condition (\ref{eq:convergent_p}) 
is both necessary and sufficient for stationarity of $\bX$. 

On the other hand, if $\bX$ is nonstationary, then there exists $j_0\geq 1$ 
such that for every $j>j_0$, as $n_j\rightarrow\infty$ satisfying (\ref{eq:eq4}), $\underset{C}{\sup}~\left|\hat P_j(C)-\tilde P_K(C)\right|>c_j$, 
for any non-negative decreasing sequence $\{c_j\}_{j=1}^{\infty}$, due to Theorem \ref{theorem:gc3}. Here we expect, 
{\it a posteriori}, that
\begin{equation}
p_{j,n_j}\rightarrow 0\quad\mbox{as}~n_j\rightarrow\infty,~\mbox{satisfying}~(\ref{eq:eq4}),
\label{eq:divergent_p}
\end{equation}
for $j\geq j_0\geq 1$.
Again, we will prove formally that the above condition is both necessary and sufficient for divergence.

In what follows we shall first construct a recursive Bayesian methodology that formally characterizes
convergence and divergence in terms of formal posterior convergence related to (\ref{eq:convergent_p})
and (\ref{eq:divergent_p}).


\subsection{Development of recursive Bayesian posteriors}
\label{subsec:recursive_posteriors}


We assume that $\left\{y_{j,n_j};j=1,2,\ldots\right\}$ is observed successively at stages indexed by $j$.
That is, we first observe $y_{1,n_1}$, and based on our prior belief regarding the first stage probability, 
$p_{1,n_1}$, compute the posterior distribution of $p_{1,n_1}$ given $y_{1,n_1}$, which we denote by
$\pi(p_{1,n_1}|y_{1,n_1})$.
Based on this posterior we construct a prior for the second stage, and compute the posterior
$\pi(p_{2,n_2}|y_{2,n_2})$. We continue this procedure for as many stages as we desire.
Details follow.

Consider the sequences $\left\{\alpha_j\right\}_{j=1}^{\infty}$ and $\left\{\beta_j\right\}_{j=1}^{\infty}$,
where $\alpha_j=\beta_j=1/j^2$ for $j=1,2,\ldots$.
At the first stage of our recursive Bayesian algorithm, that is, when $j=1$, 
let us assume that the prior is given by
\begin{equation}
\pi(p_{1,n_1})\equiv Beta(\alpha_1,\beta_1),
\label{eq:prior_stage_1}
\end{equation}
where, for $a>0$ and $b>0$, $Beta(a,b)$ denotes the Beta distribution with mean $a/(a+b)$
and variance $(ab)/\left\{(a+b)^2(a+b+1)\right\}$.
Combining this prior with the
likelihood (\ref{eq:likelihood}) (with $j=1$), we obtain the following posterior of $p_{1,n_1}$ given $y_{1,n_1}$:
\begin{equation}
\pi(p_{1,n_1}|y_{1,n_1})\equiv Beta\left(\alpha_1+y_{1,n_1},\beta_1+1-y_{1,n_1}\right).
\label{eq:posterior_stage_1}
\end{equation}
At the second stage (that is, for $j=2$), for the prior of $p_{2,n_2}$ we consider the posterior
of $p_{1,n_1}$ given $y_{1,n_1}$ associated with the $Beta(\alpha_1+\alpha_2,\beta_1+\beta_2)$ prior.
That is, our prior on $p_{2,n_2}$ is given by:
\begin{equation}
\pi(p_{2,n_2})\equiv Beta\left(\alpha_1+\alpha_2+y_{1,n_1},\beta_1+\beta_2+1-y_{1,n_1}\right).
\label{eq:prior_stage_2}
\end{equation}
The reason for such a prior choice is that the uncertainty regarding convergence of the series
is reduced once we obtain the posterior at the first stage, so that at the second stage the uncertainty 
regarding the prior is expected to be lesser compared to the first stage posterior. With our choice, it 
is easy to see that the prior variance at the second stage, given by 
$$\left\{(\alpha_1+\alpha_2+y_{1,n_1})(\beta_1+\beta_2+1-y_{1,n_1})\right\}/\left\{(\alpha_1+\alpha_2+\beta_1+\beta_2+1)^2
(\alpha_1+\alpha_2+\beta_1+\beta_2+2)\right\},$$ 
is smaller than the first stage posterior variance, given by
$$\left\{(\alpha_1+y_{1,n_1})(\beta_1+1-y_{1,n_1})\right\}/\left\{(\alpha_1+\beta_1+1)^2
(\alpha_1+\beta_1+2)\right\}.$$ 

The posterior of $p_{2,n_2}$ given $y_{2,n_2}$ is then obtained by combining the second stage prior
(\ref{eq:prior_stage_2}) with (\ref{eq:likelihood}) (with $j=2$). The form of the posterior
at the second stage is thus given by
\begin{equation}
\pi(p_{2,n_2}|y_{2,n_2})\equiv Beta\left(\alpha_1+\alpha_2+y_{1,n_1}+y_{2,n_2},\beta_1+\beta_2+2-y_{1,n_1}-y_{2,n_2}\right).
\label{eq:posterior_stage_2}
\end{equation}

Continuing this way, at the $k$-th stage, where $k>1$, we obtain the following posterior of $p_{k,n_k}$:
\begin{equation}
\pi(p_{k,n_k}|y_{k,n_k})\equiv Beta\left(\sum_{j=1}^k\alpha_j+\sum_{j=1}^ky_{j,n_j},
k+\sum_{j=1}^k\beta_j-\sum_{j=1}^ky_{j,n_j}\right).
\label{eq:posterior_stage_k}
\end{equation}

It follows from (\ref{eq:posterior_stage_k}) that
\begin{align}
E\left(p_{k,n_k}|y_{k,n_k}\right)&=\frac{\sum_{j=1}^k\alpha_j
+\sum_{j=1}^ky_{j,n_j}}{k+\sum_{j=1}^k\alpha_j+\sum_{j=1}^k\beta_j};
\label{eq:postmean_p_k}\\
Var\left(p_{k,n_k}|y_{k,n_k}\right)&=
\frac{(\sum_{j=1}^k\alpha_j+\sum_{j=1}^ky_{j,n_j})(k+\sum_{j=1}^k\beta_j-\sum_{j=1}^ky_{j,n_j})}
{(k+\sum_{j=1}^k\alpha_j+\sum_{j=1}^k\beta_j)^2(1+k+\sum_{j=1}^k\alpha_j+\sum_{j=1}^k\beta_j)}.
\label{eq:postvar_p_k}
\end{align}
Since $\sum_{j=1}^k\alpha_j=\sum_{j=1}^k\beta_j=\sum_{j=1}^k\frac{1}{j^2}$, (\ref{eq:postmean_p_k})
and (\ref{eq:postvar_p_k}) admit the following simplifications:
\begin{align}
E\left(p_{k,n_k}|y_{k,n_k}\right)&=\frac{\sum_{j=1}^k\frac{1}{j^2}+\sum_{j=1}^ky_{j,n_j}}
{k+2\sum_{j=1}^k\frac{1}{j^2}};
\label{eq:postmean_p_k_2}\\
Var\left(p_{k,n_k}|y_{k,n_k}\right)&=
\frac{(\sum_{j=1}^k\frac{1}{j^2}+\sum_{j=1}^ky_{j,n_j})(k+\sum_{j=1}^k\frac{1}{j^2}-\sum_{j=1}^ky_{j,n_j})}
{(k+2\sum_{j=1}^k\frac{1}{j^2})^2(1+k+2\sum_{j=1}^k\frac{1}{j^2})}.
\label{eq:postvar_p_k_2}
\end{align}

\section{Characterization of stationarity properties of the underlying process}
\label{sec:characterization}
Based on our recursive Bayesian theory we have the following theorem that characterizes 
stationarity of $\bX$ in terms of the limit of the posterior
probability of $p_{k,n_k}$, as $n_k\rightarrow\infty$ satisfying (\ref{eq:eq4}) and $K\rightarrow\infty$.
We also assume, for the sake of generality, that for any $\omega\in\mathfrak S\cap\mathfrak N^c$, where
$\mathfrak N~(\subset\mathfrak S)$ has zero probability measure, the non-negative monotonically 
decreasing sequence $\{c_j\}_{j=1}^{\infty}$
depends upon $\omega$, so that we shall denote the sequence by $\{c_j(\omega)\}_{j=1}^{\infty}$.
In other words, we allow $\left\{c_j(\omega)\right\}_{j=1}^{\infty}$ to depend upon the corresponding data $X(\omega)$. 
Since $\underset{C}{\sup}~\left|\hat P_j(C)-\tilde P_K(C)\right|\leq 1$ and tends to zero in the case of stationarity, 
there exists a 
monotonically decreasing sequence $\left\{c_j(\omega)\right\}_{j=1}^{\infty}$ such that for $n_j;~j=1,\ldots,K$ sufficiently large
satisfying (\ref{eq:eq4}),
\begin{equation}
\underset{C}{\sup}~\left|\hat P_j(C)(\omega)-\tilde P_K(C)(\omega)\right|\leq c_j(\omega),~\mbox{for}~j\geq 1. 
\label{eq:bound_S}
\end{equation}

\begin{theorem}
\label{theorem:convergence}
For all $\omega\in\mathfrak S\cap\mathfrak N^c$, where $\mathfrak N$ is some null set having probability measure zero,
$\bX$ is stationary if and only if 
for any monotonically decreasing sequence 
$\left\{c_j(\omega)\right\}_{j=1}^{\infty}$,
\begin{equation}
\pi\left(\mathcal N_1|y_{k,n_k}(\omega)\right)\rightarrow 1,
\label{eq:consistency_at_1}
\end{equation}
as $k\rightarrow\infty$ and $n_j\rightarrow\infty$ for $j=1,\ldots,K$ satisfying (\ref{eq:eq4}) and $K\rightarrow\infty$, 
where $\mathcal N_1$ is any neighborhood of 1 (one).
\end{theorem}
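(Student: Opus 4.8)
The plan is to exploit the explicit Beta form of the stage-$k$ posterior in (\ref{eq:posterior_stage_k}) and reduce the whole question to the limiting behaviour of the partial sums $\sum_{j=1}^k y_{j,n_j}$, which is controlled by Theorems \ref{theorem:gc2} and \ref{theorem:gc3}. Write the posterior as $Beta(A_k,B_k)$ with $A_k=\sum_{j=1}^k j^{-2}+\sum_{j=1}^k y_{j,n_j}$ and $B_k=k+\sum_{j=1}^k j^{-2}-\sum_{j=1}^k y_{j,n_j}$, and recall that $\sum_{j=1}^k j^{-2}$ converges to a finite limit and so stays bounded. The key observation is that $\pi(\mathcal N_1\mid y_{k,n_k})\to 1$ for every neighbourhood $\mathcal N_1$ of $1$ precisely when $A_k\to\infty$ while $B_k$ remains bounded, i.e.\ precisely when $y_{j,n_j}=1$ for all $j$ in the relevant limit.

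For the ``only if'' direction, assume $\bX$ is stationary. By Theorem \ref{theorem:gc2}, $\sup_C|\hat P_j(C)-\tilde P_K(C)|\to 0$ almost surely under the stated limits, so for $\omega\in\mathfrak S\cap\mathfrak N^c$ and any monotonically decreasing positive sequence $\{c_j(\omega)\}$, each fixed stage $j$ eventually satisfies (\ref{eq:bound_S}), forcing $y_{j,n_j}(\omega)=1$. Hence $\sum_{j=1}^k y_{j,n_j}=k$ in the limit, $A_k=k+\sum_{j=1}^k j^{-2}\to\infty$, and $B_k=\sum_{j=1}^k j^{-2}$ stays bounded. I would then read off concentration at $1$ directly from the simplified moments (\ref{eq:postmean_p_k_2}) and (\ref{eq:postvar_p_k_2}): the posterior mean tends to $1$ and the variance to $0$, so a single application of Chebyshev's inequality gives $\pi(\mathcal N_1\mid y_{k,n_k}(\omega))\to 1$ for any neighbourhood $\mathcal N_1$ of $1$.

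For the ``if'' direction I would argue by contraposition. If $\bX$ is nonstationary, Theorem \ref{theorem:gc3} together with the discussion preceding (\ref{eq:divergent_p}) supplies a $j_0\geq 1$ such that, for every $j>j_0$, $\sup_C|\hat P_j(C)-\tilde P_K(C)|>c_j(\omega)$ in the limit, whence $y_{j,n_j}(\omega)=0$ for all $j>j_0$. Then $\sum_{j=1}^k y_{j,n_j}\leq j_0$ is bounded, so $A_k$ stays bounded while $B_k\to\infty$; the same moment computation now yields posterior mean $\to 0$ and variance $\to 0$, so the posterior concentrates at $0$ and $\pi(\mathcal N_1\mid y_{k,n_k}(\omega))\to 0\neq 1$. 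This contradicts the assumed convergence to $1$, so posterior convergence to $1$ forces stationarity. For the bare equivalence it would in fact suffice to exhibit one decreasing sequence with $\pi(\mathcal N_1\mid y_{k,n_k})\not\to 1$, but Theorem \ref{theorem:gc3} delivers the stronger statement for every such sequence.

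The step I expect to be the main obstacle is the careful handling of the nested limits: the indicators $y_{j,n_j}$ depend on both $n_j$ and $K$, whereas the posterior is indexed by the stage $k$, so I must justify that letting $n_j\to\infty$ (subject to (\ref{eq:eq4})) and $K\to\infty$ first stabilises each $y_{j,n_j}$ at its limiting value before $k\to\infty$ is taken in the Beta concentration. The genuinely delicate point, and where Theorem \ref{theorem:gc3} is essential, is that in the nonstationary case $\sup_C|\hat P_j(C)-\tilde P_K(C)|$ must stay bounded away from $0$ so as to exceed $c_j(\omega)$ for $j>j_0$ no matter how fast $\{c_j(\omega)\}$ decreases; establishing this uniform lower bound, implicit in the chain (\ref{eq:gc3_1})--(\ref{eq:gc3_4}), is where I would focus most of the technical effort.
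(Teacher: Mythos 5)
Your proposal is correct and follows essentially the same route as the paper's own proof: stationarity forces $y_{j,n_j}(\omega)=1$ eventually (via (\ref{eq:bound_S})/Theorem \ref{theorem:gc2}), so $\sum_{j=1}^k y_{j,n_j}=k$ and the Beta posterior moments (\ref{eq:postmean_p_k_2})--(\ref{eq:postvar_p_k_2}) plus Chebyshev give concentration at $1$, while the converse is obtained by contradiction, using Theorem \ref{theorem:gc3} to get $\sum_{j=1}^k y_{j,n_j}\leq j_0(\omega)$ under nonstationarity and hence posterior concentration at $0$. Your framing in terms of the Beta parameters $A_k,B_k$ and your explicit flagging of the nested-limit issue are cosmetic refinements of the same argument, not a different method.
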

\begin{proof}
Let, for $\omega\in\mathfrak S\cap\mathfrak N^c$, where
$\mathfrak N$ is some null set having probability measure zero, $\bX$ be stationary. 
Then, by (\ref{eq:bound_S}), $\underset{C}{\sup}~\left|\hat P_j(C)(\omega)-\tilde P_K(C)(\omega)\right|\leq c_j(\omega)$ for $n_j$ sufficiently large
satisfying (\ref{eq:eq4}), given any choice of the monotonically decreasing sequence $\left\{c_j(\omega)\right\}_{j=1}^{\infty}$. Hence, $y_{j,n_j}(\omega)=1$ 
for sufficiently large $n_j$, satisfying (\ref{eq:eq4}), for $j\geq 1$. 
Hence, in this case, $\sum_{j=1}^ky_{j,n_j}(\omega)=k$,
Also, $\sum_{j=1}^k\frac{1}{j^2}\rightarrow\frac{\pi^2}{6}$, as $k\rightarrow\infty$. 
Consequently, it is easy to see that
\begin{align}
\mu_k=E\left(p_{k,n_k}|y_{k,n_k}(\omega)\right)&\sim\frac{\frac{\pi^2}{6}+k}{k+\frac{\pi^2}{3}}
\rightarrow 1,~\mbox{as}~k\rightarrow\infty,~\mbox{and},
\label{eq:postmean_p_k_limit}\\
\sigma^2_k=Var\left(p_{k,n_k}|y_{k,n_k}(\omega)\right)&\sim
\frac{(\frac{\pi^2}{6}+k)(\frac{\pi^2}{6})}{(k+\frac{\pi^2}{3})^2(1+k+\frac{\pi^2}{3})}
\rightarrow 0~\mbox{as}~k\rightarrow\infty.
\label{eq:postvar_p_k_limit}
\end{align}
In the above, for any two sequences $\left\{a_k\right\}_{k=1}^{\infty}$ and $\left\{b_k\right\}_{k=1}^{\infty}$,
$a_k\sim b_k$ indicates $\frac{a_k}{b_k}\rightarrow 1$, as $k\rightarrow\infty$. 
Now let $\mathcal N_1$ denote any neighborhood of 1, and let $\epsilon>0$ be sufficiently small such that
$\mathcal N_1\supseteq\left\{1-p_{k,n_k}<\epsilon\right\}$. Combining (\ref{eq:postmean_p_k_limit})
and (\ref{eq:postvar_p_k_limit}) with Chebychev's inequality ensures
that (\ref{eq:consistency_at_1}) holds. 

Now assume that (\ref{eq:consistency_at_1}) holds. 
Then for any given $\epsilon>0$, 
\begin{equation}
\pi\left(p_{k,n_k}>1-\epsilon|y_{k,n_k}(\omega)\right)\rightarrow 1,~\mbox{as}~k\rightarrow\infty.
\label{eq:post1}
\end{equation}
Hence,
\begin{align}
E\left(p_{k,n_k}|y_{k,n_k}(\omega)\right)&\rightarrow 1;
\label{eq:postmean1}\\
Var\left(p_{k,n_k}|y_{k,n_k}(\omega)\right)&\rightarrow 0,
\label{eq:postvar1}
\end{align}
as $k\rightarrow\infty$.
If $\bX$ is nonstationary, then there exists $j_0(\omega)$ such that for each 
$j\geq j_0(\omega)$, for sufficiently large $n_j$ satisfying 
$\underset{C}{\sup}~\left|\hat P_j(C)(\omega)-\tilde P_K(C)(\omega)\right|>c_j(\omega)$, for $j\geq j_0(\omega)$, for
any choice of non-negative sequence $\{c_j(\omega)\}_{j=1}^{\infty}$ monotonically converging to zero. 
Hence, in this situation, 
$0\leq \sum_{j=1}^ky_{j,n_j}(\omega)\leq j_0(\omega)$.
Substituting this in (\ref{eq:postmean_p_k_2}) and (\ref{eq:postvar_p_k_2}), it is easy to see that,
as $k\rightarrow\infty$,
\begin{align}
E\left(p_{k,n_k}|y_{k,n_k}(\omega)\right)\rightarrow 0;
\label{eq:postmean_div}\\
Var\left(p_{k,n_k}|y_{k,n_k}(\omega)\right)\rightarrow 0,
\label{eq:postvar_div}
\end{align}
so that (\ref{eq:postmean1}) is contradicted.

\end{proof}

We now prove the following theorem that provides necessary and sufficient conditions for
nonstationarity of $\bX$ in terms of the limit of the posterior
probability of $p_{k,n_k(\omega)}$, as $n_k\rightarrow\infty$ satisfying (\ref{eq:eq4}).
\begin{theorem}
\label{theorem:divergence}
$\bX$ is nonstationary if and only if 
for any $\omega\in\mathfrak S\cap\mathfrak N^c$ where $\mathfrak N$ is some null set having probability measure zero, 
for any choice of the non-negative, monotonically decreasing sequence $\{c_j(\omega)\}_{j=1}^{\infty}$,
\begin{equation}
\pi\left(\mathcal N_0|y_{k,n_k(\omega)}(\omega)\right)\rightarrow 1,
\label{eq:consistency_at_0}
\end{equation}
as $k\rightarrow\infty$ and $n_j\rightarrow\infty$, $j=1,\ldots,K$ satisfying (\ref{eq:eq4}), and $K\rightarrow\infty$, 
where $\mathcal N_0$ is any neighborhood of 0 (zero).
\end{theorem}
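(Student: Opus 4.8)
The plan is to mirror the proof of Theorem \ref{theorem:convergence}, exploiting the fact that the two statements are dual: there, stationarity forced $y_{j,n_j}(\omega)=1$ eventually and pushed the posterior mass toward $1$, whereas here nonstationarity will force $y_{j,n_j}(\omega)=0$ eventually and push the posterior mass toward $0$. In fact, most of the forward implication has already been recorded inside the proof of Theorem \ref{theorem:convergence} as (\ref{eq:postmean_div}) and (\ref{eq:postvar_div}); I would simply repackage that computation here.

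For the forward implication, suppose $\bX$ is nonstationary. By Theorem \ref{theorem:gc3}, for $\omega\in\mathfrak S\cap\mathfrak N^c$ there is an index $j_0(\omega)$ such that, for every $j\geq j_0(\omega)$ and all sufficiently large $n_j$ satisfying (\ref{eq:eq4}), one has $\sup_C|\hat P_j(C)(\omega)-\tilde P_K(C)(\omega)|>c_j(\omega)$ for every choice of the non-negative decreasing sequence $\{c_j(\omega)\}_{j=1}^{\infty}$ (here the required sample sizes depend on $\omega$, hence the notation $n_k(\omega)$). Consequently $y_{j,n_j}(\omega)=0$ for all $j\geq j_0(\omega)$, so that $0\leq\sum_{j=1}^k y_{j,n_j}(\omega)\leq j_0(\omega)$ stays bounded. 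Substituting this bound into the simplified posterior moments (\ref{eq:postmean_p_k_2}) and (\ref{eq:postvar_p_k_2}), and using $\sum_{j=1}^k j^{-2}\to\pi^2/6$, the numerators remain bounded while the denominators grow like $k$ and $k^3$ respectively; hence the posterior mean and variance both tend to $0$, exactly as in (\ref{eq:postmean_div}) and (\ref{eq:postvar_div}). A Markov (or Chebychev) inequality then yields $\pi(p_{k,n_k}<\epsilon\mid y_{k,n_k(\omega)}(\omega))\to 1$ for every $\epsilon>0$, which is precisely (\ref{eq:consistency_at_0}).

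For the converse, assume (\ref{eq:consistency_at_0}) holds for every neighborhood $\mathcal N_0$ of $0$. Taking $\mathcal N_0=[0,\epsilon)$ and using that the posterior is supported on $[0,1]$, the bound $E(p_{k,n_k}\mid y_{k,n_k(\omega)}(\omega))\leq \epsilon + \pi(p_{k,n_k}\geq\epsilon\mid\cdot)$ forces $E(p_{k,n_k}\mid y_{k,n_k(\omega)}(\omega))\to 0$. I would then argue by contradiction: were $\bX$ stationary, the forward computation in the proof of Theorem \ref{theorem:convergence} would apply verbatim, giving $\sum_{j=1}^k y_{j,n_j}(\omega)=k$ and hence $E(p_{k,n_k}\mid y_{k,n_k(\omega)}(\omega))\to 1$ as in (\ref{eq:postmean_p_k_limit}); this contradicts the limit $0$ just obtained. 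Therefore $\bX$ must be nonstationary.

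I do not expect a serious analytic obstacle, since both the moment asymptotics and the concentration step are already available from Theorem \ref{theorem:convergence}. The only points demanding care are bookkeeping ones: handling the nested limits correctly (first $n_j\to\infty$ to pin down $y_{j,n_j}(\omega)=0$, then $k,K\to\infty$), making sure the bound $\sum_{j=1}^k y_{j,n_j}(\omega)\leq j_0(\omega)$ is genuinely uniform over all admissible decreasing sequences $\{c_j(\omega)\}_{j=1}^{\infty}$ as the quantifier in the statement demands, and confirming that the exceptional null set $\mathfrak N$ can be taken to be the same as in Theorems \ref{theorem:gc3} and \ref{theorem:convergence}.
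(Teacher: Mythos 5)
Your proposal is correct and follows essentially the same route as the paper's own proof: the forward direction uses Theorem \ref{theorem:gc3} to force $\sum_{j=1}^k y_{j,n_j}(\omega)\leq j_0(\omega)$, substitutes into (\ref{eq:postmean_p_k_2})--(\ref{eq:postvar_p_k_2}) to recover (\ref{eq:postmean_div})--(\ref{eq:postvar_div}), and concludes by Chebychev, while the converse derives $E\left(p_{k,n_k}|y_{k,n_k}(\omega)\right)\rightarrow 0$ and contradicts the stationary case of Theorem \ref{theorem:convergence}. The only cosmetic difference is that you spell out the bound $E\left(p_{k,n_k}|\cdot\right)\leq \epsilon+\pi\left(p_{k,n_k}\geq\epsilon|\cdot\right)$, which the paper leaves implicit.
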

\begin{proof}
Assume that $\bX$ is nonstationary. Then 
there exists $j_0(\omega)\geq 1$ such that for every $j\geq j_0(\omega)$, 
$\underset{C}{\sup}~\left|\hat P_j(C)(\omega)-\tilde P_K(C)(\omega)\right|>c_j(\omega)$, for sufficiently large $n_j$, 
for any choice of non-negative sequence $\{c_j(\omega)\}_{j=1}^{\infty}$
monotonically converging to zero. 
From the proof of the sufficient condition of Theorem \ref{theorem:convergence} it follows that
(\ref{eq:postmean_div}) and (\ref{eq:postvar_div}) hold.
Let $\epsilon>0$ be small enough so that $\mathcal N_0\supseteq\left\{p_{k,n_k}<\epsilon\right\}$. Then
combining Chebychev's inequality with (\ref{eq:postmean_div}) and (\ref{eq:postvar_div})
it is easy to see that (\ref{eq:consistency_at_0}) holds.

Now assume that (\ref{eq:consistency_at_0}) holds. 
Then for any given $\epsilon>0$, 
\begin{equation}
\pi\left(p_{k,n_k}<\epsilon|y_{k,n_k}(\omega)\right)\rightarrow 1,~\mbox{as}~k\rightarrow\infty.
\label{eq:post2}
\end{equation}
It follows that 
\begin{align}
E\left(p_{k,n_k}|y_{k,n_k}(\omega)\right)&\rightarrow 0; 
\label{eq:postmean2}\\
Var\left(p_{k,n_k}|y_{k,n_k}(\omega)\right)&\rightarrow 0,
\label{eq:postvar2}
\end{align}
as $k\rightarrow\infty$.  

If $\bX$ is stationary, then by Theorem \ref{theorem:convergence}, 
$\pi\left(\mathcal N_1|y_{k,n_k}(\omega)\right)\rightarrow 1$ as $k\rightarrow\infty$, for
all sequences $\{n_j\}_{j=1}^{\infty}$, so that
$E\left(p_{k,n_k}|y_{k,n_k}(\omega)\right)\rightarrow 1$, which is a contradiction to (\ref{eq:postmean2}). 

\end{proof}


\section{Computation of the sup norm between empirical distribution functions associated with $\hat P_j$ and $\tilde P_K$}
\label{sec:comp_sup_norm}
In all practical applications that involves identifying stationarity or nonstationarity by our method, it is needed to compute the sup norms
$\underset{C}{\sup}~|\hat P_j(C)-\tilde P_K(C)|$; $j\geq 1$. For this purpose, it is sufficient to compute 
$\underset{-\infty<x<\infty}{\sup}~|\hat F_j(x)-\tilde F_K(x)|$, where $\hat F_j(x)$ and $\tilde F_K(x)$ stand for
the empirical distribution functions corresponding to $\hat P_j$ and $\tilde P_K$. Lemma \ref{lemma:lemma_tv} provides the formula for
the desired sup norm. 
\begin{lemma}
\label{lemma:lemma_tv}	
Let $\hat F_j(x)$ and $\tilde F_K(x)$ denote the empirical distribution functions corresponding to empirical probability distributions 
$\hat P_j$ and $\tilde P_K$, respectively. Then it holds that	
\begin{equation}
\underset{-\infty<x<\infty}{\sup}~|\hat F_j(x)-\tilde F_K(x)|=1-\tilde F_K(\hat x_j),
\label{eq:tv_norm1}
\end{equation}
where $\hat x_j=\max \mathcal N_j$, provided that $\hat x_j\neq \max \left\{\cup_{k=1}^K\mathcal N_k\right\}$.
\end{lemma}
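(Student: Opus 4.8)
The plan is to exploit that both $\hat F_j$ and $\tilde F_K$ are right-continuous, non-decreasing step functions, constant between consecutive distinct observed points. Consequently $x\mapsto \hat F_j(x)-\tilde F_K(x)$ is piecewise constant, and its absolute value attains its supremum on the finite set of jump locations, namely the points carrying mass under $\hat P_j$ or $\tilde P_K$. This reduces the supremum over $\mathbb R$ in (\ref{eq:tv_norm1}) to a maximum over finitely many data points, so I would work entirely with these jump points.

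First I would evaluate the difference at $\hat x_j=\max\mathcal N_j$. Since $\hat x_j$ is the largest point carrying mass under $\hat P_j$, we have $\hat F_j(x)=1$ for every $x\ge \hat x_j$, and in particular $\hat F_j(\hat x_j)=1$. The hypothesis $\hat x_j\neq\max\{\cup_{k=1}^K\mathcal N_k\}$ guarantees that at least one pooled point exceeds $\hat x_j$, whence $\tilde F_K(\hat x_j)<1$ and the difference there equals $1-\tilde F_K(\hat x_j)>0$. On the right tail this is already optimal: for $x\ge \hat x_j$ one has $|\hat F_j(x)-\tilde F_K(x)|=1-\tilde F_K(x)\le 1-\tilde F_K(\hat x_j)$ because $\tilde F_K$ is non-decreasing, so no jump point to the right of $\hat x_j$ can exceed the claimed value.

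It remains to bound $|\hat F_j(x)-\tilde F_K(x)|$ by $1-\tilde F_K(\hat x_j)$ for all $x<\hat x_j$, and this is the step I expect to be the main obstacle. A convenient reformulation is to write $\tilde F_K=\hat p_{jK}\hat F_j+(1-\hat p_{jK})\bar F_{-j}$, where $\bar F_{-j}$ is the empirical distribution function pooled over the remaining blocks $\{1,\dots,K\}\setminus\{j\}$ and $\hat p_{jK}=n_j/\sum_{i=1}^K n_i$; then $\hat F_j-\tilde F_K=(1-\hat p_{jK})(\hat F_j-\bar F_{-j})$, and the identity (\ref{eq:tv_norm1}) reduces to showing $\sup_x|\hat F_j(x)-\bar F_{-j}(x)|=1-\bar F_{-j}(\hat x_j)$. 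The positive excursion $\hat F_j-\bar F_{-j}$ is clearly maximized at $\hat x_j$ by the tail argument above; the difficulty is the negative excursions, i.e. points $x<\hat x_j$ where the pooled mass of the other blocks accumulated below $x$ makes $\bar F_{-j}(x)-\hat F_j(x)$ large. Ruling these out appears to require the way the $\mathcal N_j$ are laid out relative to the pooled support -- concretely, that the pooled mass lying below block $j$ does not exceed the mass lying above it -- since otherwise a lower excursion near $\min\mathcal N_j$ can dominate. I would therefore try to close the argument by invoking this ordering/mass property of the construction of the blocks; the stated condition $\hat x_j\neq\max\{\cup_{k=1}^K\mathcal N_k\}$ only secures positivity of the right-hand side, so the control of the left region is where the real work lies.
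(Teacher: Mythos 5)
Your reduction to jump points, the right-tail argument, and the mixture identity $\tilde F_K=\hat p_{jK}\hat F_j+(1-\hat p_{jK})\bar F_{-j}$ are all correct, and they frame the problem more transparently than the paper does. But the obstacle you stop at is not a gap that a cleverer step could close: it is a hole in the lemma itself. Reading $\max\mathcal N_j$ as the largest observation in block $j$ (as the paper intends), take $K=2$, let block $j$ contribute the values $\{5,7\}$ and the other block the values $\{1,2,3,4,6,8\}$. Then $\hat x_j=7\neq 8=\max\left\{\cup_{k=1}^K\mathcal N_k\right\}$, so the hypothesis of the lemma holds, and the right-hand side of (\ref{eq:tv_norm1}) equals $1-\tilde F_K(7)=1/8$; yet at $x=4$ one has $|\hat F_j(4)-\tilde F_K(4)|=|0-4/8|=1/2$, so the supremum strictly exceeds the claimed value. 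Exactly as you suspected, the negative excursion to the left of $\hat x_j$, where the pooled sample has accumulated mass that block $j$ has not, dominates whenever block $j$ sits high relative to the pooled data, and nothing in the stated hypotheses rules this out.

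This is also precisely where the paper's own proof fails. After correctly evaluating the difference at $\hat x_j$, it asserts ``by inspection'' the inequality (\ref{eq:tv_norm3}), namely $1-m/\sum_{k=1}^K n_k\geq\left|\ell/n_j-r/\sum_{k=1}^K n_k\right|$, with no justification; the example above violates it (with $m=7$, $\ell=0$, $r=4$ it reads $1/8\geq 1/2$). The accompanying claim that $\ell$ lies strictly between $1$ and $n_j$ is also wrong, since $\ell=0$ at any order statistic below $\min\mathcal N_j$. So your diagnosis is the right one: equality in (\ref{eq:tv_norm1}) requires an additional structural condition on how block $j$'s observations interleave with the pooled sample --- roughly, that the pooled mass lying strictly below block $j$'s support never exceeds $1-\tilde F_K(\hat x_j)$ --- which holds approximately for blocks of a (locally) stationary sample, but is not a consequence of the stated hypothesis $\hat x_j\neq\max\left\{\cup_{k=1}^K\mathcal N_k\right\}$. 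In short, your proposal does not prove the lemma because the lemma, as stated, is not provable; the missing assumption you isolate is the substantive content the paper's proof silently presupposes.
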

\begin{proof}
Since both $\hat F_j(x)$ and $\tilde F_K(x)$ are empirical distribution functions, their jumps occur at the order statistics associated with the
sample data. Now, by inspection it can be seen that, if $\hat x_j\neq \max \left\{\cup_{k=1}^K\mathcal N_k\right\}$, then 
\begin{equation}
|\hat F_j(\hat x_j)-\tilde F_K(\hat x_j)=1-\tilde F_K(\hat x_j). 
\label{eq:tv_norm2}
\end{equation}
For the $r$-th order statistic value $x_{(t)}$, $t\geq 1$ such that $x_{(t)}\neq \hat x_j$,
$|\hat F_j(\hat x_j)-\tilde F_K(\hat x_j)$ is of the form $\left|\frac{\ell}{n_j}-\frac{r}{\sum_{k=1}^Kn_k}\right|$, where 
$1<\ell<n_j$, $1<r<\sum_{k=1}^Kn_k$. But, for $1\leq m\leq\sum_{k=1}^Kn_k$, 
\begin{equation}
1-\frac{m}{\sum_{k=1}^Kn_k}\geq\left|\frac{\ell}{n_j}-\frac{r}{\sum_{k=1}^Kn_k}\right|.
\label{eq:tv_norm3}
\end{equation}	
Since $1-\tilde F_K(\hat x_j)$ in (\ref{eq:tv_norm2}) is of the form $1-\frac{m}{\sum_{k=1}^Kn_k}$, it follows from (\ref{eq:tv_norm3})
that (\ref{eq:tv_norm1}) holds. 	
\end{proof}
\begin{remark}
\label{remark:remark_tv}
Lemma \ref{lemma:lemma_tv} gives the formula for the sup norm when $\hat x_j\neq \max \left\{\cup_{k=1}^K\mathcal N_k\right\}$. In fact, (\ref{eq:tv_norm1})
is no longer valid when $\hat x_j=\max \left\{\cup_{k=1}^K\mathcal N_k\right\}$. Note that there exists exactly one $k\geq 1$ such that
$\hat x_{j^*}=\max \left\{\cup_{k=1}^K\mathcal N_k\right\}$. For that $j^*$, there is no direct formula for the sup norm, and it is desirable to compute the sup norm by 
evaluating the differences between the empirical distribution functions at all the sample order statistics. However, just for a single $k$, such elaborate computation 
is not worthwhile. Instead it makes sense to construct $\hat F_{j^*}$ based on all the observations in $\mathcal N_{j^*}$ except $\hat x_{j^*}$.
Hence, if $\tilde x_{j^*}$ is the maximum of $\mathcal N_{j^*}\backslash\left\{\hat x_{j^*}\right\}$, then in that case,
$\underset{-\infty<x<\infty}{\sup}~|\hat F_{j^*}(x)-\tilde F_K(x)|=1-\tilde F_K(\tilde x_{j^*})$, which is what we shall use in our practical applications.
\end{remark}

\section{Choice of the cardinality of $\mathcal N_i$}
\label{sec:cardinality}
An important ingredient of our method, particularly tied to practical implementation, is the choice of the number of random variables in the sets $\mathcal N_i$.
Recall that $\mathcal N_i$ is discretization of an index set ${\mathcal M_i}$, on which $s$ varies
continuously, such that $\left\{X_s:s\in{\mathcal M_i}\right\}$ is stationary. Let the closure of $\mathcal M_i$, denoted by $\overline{\mathcal M_i}$, be compact.

Let the index $s\in\mathbb R^p$, for $p\geq 1$. For $j=1,2,\ldots$, 
consider $p$-dimensional balls $B_p(c_j,r)$ with centers $c_j$ and radius $r>0$ such that for any $s\in\overline{\mathcal M_i}$, there exists $j\geq 1$ such that
$s\in B_p(c_j,\epsilon)$. Then the set $\left\{B_p(c_j,\epsilon):j\geq 1\right\}$ constitutes an open cover for $\overline{\mathcal M_i}$. By compactness, there exists a set
$\left\{B_p(c_{j_k},\epsilon):k=1,\ldots,n_i\right\}$, for finite $n_i\geq 1$ such that $\overline{\mathcal M_i}\subseteq\cup_{k=1}^{n_i}B_p(c_{j_k},\epsilon)$.
It follows that 
\begin{equation}
\mbox{Vol}\left(\overline{\mathcal M_i}\right)\leq\sum_{k=1}^{n_i}\mbox{Vol}\left(B_p(c_{j_k},\epsilon)\right), 
\label{eq:vol1}
\end{equation}
where for any set $S$, $\mbox{Vol}(S)$ denotes the volume of $S$. Since $\mbox{Vol}\left(B_p(c_{j_k},\epsilon)\right)=\mbox{Vol}\left(B_p(\bzero,\epsilon)\right)$, 
the $p$-dimensional
ball with center $\bzero$, and since $\mbox{Vol}\left(B_p(\bzero,\epsilon)\right)=\frac{\pi^{p/2}}{\Gamma(p/2+1)}\epsilon^p$, it follows from (\ref{eq:vol1}) that
\begin{equation}
n_i\geq\left(\frac{\mbox{Vol}\left(\overline{\mathcal M_i}\right)}{\epsilon^p}\right)\left(\frac{\Gamma\left(p/2+1\right)}{\pi^{p/2}}\right).
\label{eq:vol2}
\end{equation}
For example, if $\mathcal M_i$ is a $p$-dimensional hypercube with $c_i~(>0)$ being the length of each edge, then it follows from (\ref{eq:vol2}) that
$n_i\geq\left(\frac{c_i}{\epsilon}\right)^p\left(\frac{\Gamma\left(p/2+1\right)}{\pi^{p/2}}\right)$.
For example, if $p=1$ and $c=3\epsilon$, then $n\geq 1.5$; if $p=2$ and $c=3\epsilon$, then $n\geq 2.865$; $p=3$ and $c=3\epsilon$, implies $n\geq 6.446$, etc.
Similar idea has been considered in Section 1.2.1 of \ctn{Giraud15}, in the context of large $p$.
In our illustrations, the total number of observations are allocated to a substantially large number of cubes of dimensions one, two and three.
Consequently, $c/\epsilon$ is not expected to be significantly larger than one. 
As such, we take care such that the cube containing the minimum number of observations has at least three observations.

\section{Stationarity of covariance structure}
\label{sec:covariance}

Let $Y_{(s_1,s_2)}=X_{s_1}X_{s_2}$, $\mathcal N_{ih}=\left\{(s_1,s_2)\in\mathcal N_i:\|s_1-s_2\|=h\right\}$, and $n_{ih}=\left|\mathcal N_{ih}\right|$.
\begin{equation}
\widehat {Cov}_{ih}=\frac{\sum_{(s_1,s_2)\in\mathcal N_{ih}}Y_{(s_1,s_2)}}{2n_{ih}}-\left(\frac{\sum_{s_1\in\mathcal N_{ih}}X_{s_1}}{n_{ih}}\right)
\left(\frac{\sum_{s_2\in\mathcal N_{ih}}X_{s_2}}{n_{ih}}\right).
\label{eq:cov1}
\end{equation}
Noting that $Y_{(s_1,s_2)}$, where $(s_1,s_2)\in\mathcal N_i$, is stationary, it follows by the ergodic theorem that
\begin{equation}
\widehat {Cov}_{ih}\stackrel{a.s.}{\longrightarrow}{Cov}_{ih}=Cov\left(X_{s_1},X_{s_2}\right)~\mbox{where}~\|s_1-s_2\|=h.
\label{eq:cov2}
\end{equation}
Let
\begin{equation}
\widetilde {Cov}_{Kh}=\sum_{i=1}^K\tilde p_{iKh}\widehat {Cov}_{ih},
\label{eq:cov3}
\end{equation}
where $\tilde p_{iKh}=n_{ih}/\sum_{j=1}^Kn_{jh}$, with $\sum_{i=1}^{\infty}p_{ih}=1$,
and
\begin{equation}
{Cov}_{\infty,h}=\sum_{i=1}^\infty\tilde p_{ih}{Cov}_{ih},
\label{eq:cov4}
\end{equation}

We assume that
\begin{equation}
\tilde p_{iKh}\rightarrow p_{iKh}=\frac{p_{ih}}{\sum_{j=1}^Kp_{jh}},~\mbox{as}~n_{ih}\rightarrow\infty;~i=1,\ldots,K.
\label{eq:cov5}
\end{equation}

\begin{theorem}
\label{theorem:cov1}
Let 
\begin{equation}
\sum_{i=1}^{\infty}p_{ih}\left|Cov_{ih}\right|<\infty.
\label{eq:cov6}
\end{equation}
Then
\begin{equation}
\underset{K\rightarrow\infty}{\lim}\underset{n_{ih}\rightarrow\infty;i=1,\ldots,K}{\lim}~\left|\widetilde {Cov}_{Kh}-{Cov}_{\infty,h}\right|=0.
\label{eq:cov7}
\end{equation}
\end{theorem}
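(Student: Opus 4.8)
The plan is to mirror the proof of Theorem~\ref{theorem:gc1}, since $\widetilde{Cov}_{Kh}$ is a weighted average of the $\widehat{Cov}_{ih}$ in exactly the way $\tilde P_K(C)$ is a weighted average of the $\hat P_i(C)$. The one structural difference, which is also where the hypothesis (\ref{eq:cov6}) is needed, is that the averaged quantities are now covariances rather than probabilities and hence are not bounded by one.

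First I would add and subtract $\sum_{i=1}^K p_{ih}\widehat{Cov}_{ih}$ and peel off the tail $\sum_{i=K+1}^\infty$, writing
\begin{align*}
\widetilde{Cov}_{Kh}-{Cov}_{\infty,h}
&=\sum_{i=1}^K\tilde p_{iKh}\widehat{Cov}_{ih}-\sum_{i=1}^\infty p_{ih}Cov_{ih}\\
&=\sum_{i=1}^K(\tilde p_{iKh}-p_{ih})\widehat{Cov}_{ih}
+\sum_{i=1}^K p_{ih}(\widehat{Cov}_{ih}-Cov_{ih})
-\sum_{i=K+1}^\infty p_{ih}Cov_{ih}.
\end{align*}
Taking absolute values and using the triangle inequality produces three terms $A_K$, $B_K$, $C_K$, to be handled under the iterated limit in which $n_{ih}\rightarrow\infty$ ($i=1,\ldots,K$) is taken first and $K\rightarrow\infty$ afterwards.

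The terms $B_K$ and $C_K$ are routine. For $B_K\leq\sum_{i=1}^K p_{ih}\,|\widehat{Cov}_{ih}-Cov_{ih}|$, fixing $K$ and letting $n_{ih}\rightarrow\infty$ kills each summand almost surely by the ergodic convergence (\ref{eq:cov2}), so this finite sum vanishes, and the subsequent limit $K\rightarrow\infty$ leaves it at zero. For $C_K\leq\sum_{i=K+1}^\infty p_{ih}\,|Cov_{ih}|$, the summability hypothesis (\ref{eq:cov6}) makes this a tail of a convergent series, so it tends to zero as $K\rightarrow\infty$.

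The main obstacle is the mixing-weight term $A_K\leq\sum_{i=1}^K|\tilde p_{iKh}-p_{ih}|\,|\widehat{Cov}_{ih}|$, where I can no longer use a uniform bound on the averaged quantity as in Theorem~\ref{theorem:gc1}. Instead I would first send $n_{ih}\rightarrow\infty$ for fixed $K$, invoking (\ref{eq:cov5}) so that $\tilde p_{iKh}\rightarrow p_{iKh}=p_{ih}/\sum_{j=1}^K p_{jh}$ and (\ref{eq:cov2}) so that $|\widehat{Cov}_{ih}|\rightarrow|Cov_{ih}|$, which bounds the limit by $\sum_{i=1}^K|p_{iKh}-p_{ih}|\,|Cov_{ih}|$. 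Since $p_{iKh}-p_{ih}=p_{ih}\bigl(\sum_{j=K+1}^\infty p_{jh}\bigr)\big/\bigl(\sum_{j=1}^K p_{jh}\bigr)\geq 0$, this equals $\bigl(\sum_{j=K+1}^\infty p_{jh}\bigr)\big/\bigl(\sum_{j=1}^K p_{jh}\bigr)\cdot\sum_{i=1}^K p_{ih}\,|Cov_{ih}|$. As $K\rightarrow\infty$ the first factor tends to zero, the denominator tends to one because $\sum_j p_{jh}=1$, and the remaining sum converges to the finite value $\sum_{i=1}^\infty p_{ih}\,|Cov_{ih}|$ provided by (\ref{eq:cov6}); hence $A_K\rightarrow 0$. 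Combining the three limits gives (\ref{eq:cov7}). The only delicate point is this treatment of $A_K$: the unboundedness of the covariances forces me to factor out the summable weighting $\sum_i p_{ih}\,|Cov_{ih}|$ instead of relying on the trivial bound available in the probability case.
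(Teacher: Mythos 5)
Your proposal is correct, and it starts from exactly the paper's decomposition (the paper's (\ref{eq:cov8})): the same three terms, with identical treatment of the estimation-error term via (\ref{eq:cov2}) and of the tail term via (\ref{eq:cov6}). Where you genuinely depart from the paper is the weight-difference term $A_K$. The paper disposes of it by asserting that (\ref{eq:cov6}) implies $\left|Cov_{ih}\right|<L$ for some $L>0$ and all $i$, and then bounding $\sum_{i=1}^K\left|Cov_{ih}\right|\left|p_{iKh}-p_{ih}\right|\leq L\sum_{i=K+1}^{\infty}p_{ih}\rightarrow 0$. That inference is not valid in general: summability of $p_{ih}\left|Cov_{ih}\right|$ does not force the covariances to be bounded (take $p_{ih}$ proportional to $i^{-2}$ and $\left|Cov_{ih}\right|=i^{1/2}$, so that $p_{ih}\left|Cov_{ih}\right|$ is summable while $\left|Cov_{ih}\right|\rightarrow\infty$), so the paper's argument, read literally, needs either an extra boundedness hypothesis or exactly the repair you supply. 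Your treatment --- writing $p_{iKh}-p_{ih}=p_{ih}\bigl(\sum_{j=K+1}^{\infty}p_{jh}\bigr)\big/\bigl(\sum_{j=1}^{K}p_{jh}\bigr)\geq 0$, factoring the limiting sum as $\bigl(\sum_{j=K+1}^{\infty}p_{jh}\bigr)\big/\bigl(\sum_{j=1}^{K}p_{jh}\bigr)\cdot\sum_{i=1}^{K}p_{ih}\left|Cov_{ih}\right|$, and letting (\ref{eq:cov6}) control the last factor while the first factor vanishes and the denominator tends to one --- uses only the stated hypotheses. So the two routes share the skeleton, but yours buys rigor precisely at the point you flag: the covariances, unlike the probabilities in Theorem \ref{theorem:gc1}, admit no trivial uniform bound, and your exact factoring of the weights replaces the paper's unjustified boundedness step.
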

\begin{proof}
\begin{equation}
\left|\widetilde {Cov}_{Kh}-{Cov}_{\infty,h}\right|
\leq\sum_{i=1}^K\left|\widehat{Cov}_{ih}\right|\left|\tilde p_{iKh}-p_{ih}\right|+\sum_{i=1}^Kp_{ih}\left|\widehat{Cov}_{ih}-Cov_{ih}\right|
+\sum_{K+1}^{\infty}p_i\left|Cov_{ih}\right|.
\label{eq:cov8}
\end{equation}
Due to (\ref{eq:cov5}), $\sum_{i=1}^K\left|\widehat{Cov}_{ih}\right|\left|\tilde p_{iKh}-p_{ih}\right|\rightarrow
\sum_{i=1}^K\left|{Cov}_{ih}\right|\left|p_{iKh}-p_{ih}\right|$ as $n_{ih}\rightarrow\infty$; $i=1,\ldots,K$.
Due to (\ref{eq:cov6}), $\left|Cov_{ih}\right|<L$, for some $L>0$, for all $i\geq 1$. Hence, the first term on the right hand side of (\ref{eq:cov8}) is
bounded above by $L\sum_{j=K+1}^{\infty}p_i$, which tends to zero, as $K\rightarrow\infty$, since $\sum_{i=1}^{\infty}p_i=1$.

Using (\ref{eq:cov2}), it is seen that the second term of the right hand side of (\ref{eq:cov8}) also tends to zero as $n_{ih}\rightarrow\infty$; $i=1,\ldots,K$,
satisfying (\ref{eq:cov5}) and as $K\rightarrow\infty$.

The last term on the right hand side of (\ref{eq:cov8}) tends to zero as $K\rightarrow\infty$ due to (\ref{eq:cov6}).
\end{proof}

Note that the covariance structure of $\bX$ is stationary if any only if $Cov_{ih}=Cov_{\infty,h}$ for all $i\geq 1$ and all $h>0$, and is nonstationary if and only if
$Cov_{ih}\neq Cov_{\infty,h}$ for all $i\geq 1$ for some $h>0$.
\begin{theorem}
\label{theorem:cov2}
The covariance structure of $\bX$ is stationary if and only if for $i\geq 1$, for all $h>0$,
\begin{equation*}
\underset{K\rightarrow\infty}{\lim}\underset{n_{jh}\rightarrow\infty;j=1,\ldots,K}{\lim}~\left|\widehat{Cov}_{ih}-\widetilde {Cov}_{Kh}\right|=0.
\end{equation*}
\end{theorem}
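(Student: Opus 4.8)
The plan is to follow the template of the proof of Theorem \ref{theorem:gc2}, replacing the sup-norm between empirical measures by the absolute difference of empirical covariances and using Theorem \ref{theorem:cov1} in the role of Theorem \ref{theorem:gc1}. First I would insert the limiting quantity $Cov_{\infty,h}$ and split via the triangle inequality,
$$\left|\widehat{Cov}_{ih}-\widetilde{Cov}_{Kh}\right|\leq\left|\widehat{Cov}_{ih}-Cov_{\infty,h}\right|+\left|\widetilde{Cov}_{Kh}-Cov_{\infty,h}\right|.$$
The second term on the right is disposed of at once: by Theorem \ref{theorem:cov1} it tends to zero under the iterated limit, provided the summability condition (\ref{eq:cov6}) holds so that $Cov_{\infty,h}$ is well defined.

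For the first term I would invoke the ergodic-theorem convergence (\ref{eq:cov2}), namely $\widehat{Cov}_{ih}\stackrel{a.s.}{\longrightarrow}Cov_{ih}$ as $n_{ih}\rightarrow\infty$, so that $\left|\widehat{Cov}_{ih}-Cov_{\infty,h}\right|\rightarrow\left|Cov_{ih}-Cov_{\infty,h}\right|$ almost surely. By the characterization of covariance stationarity recorded just before the statement, $Cov_{ih}=Cov_{\infty,h}$ for every $i\geq 1$ and every $h>0$ exactly when the covariance structure is stationary. Hence in the stationary case the first term vanishes in the limit and, combined with the second, yields the asserted convergence to zero; this settles the direction in which stationarity implies that the stated limit is zero.

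For the converse I would argue by contraposition using the reverse triangle inequality,
$$\left|\widehat{Cov}_{ih}-\widetilde{Cov}_{Kh}\right|\geq\left|\left|\widehat{Cov}_{ih}-Cov_{\infty,h}\right|-\left|\widetilde{Cov}_{Kh}-Cov_{\infty,h}\right|\right|,$$
in exact parallel with the step used in Theorem \ref{theorem:gc3}. If the covariance structure is nonstationary, then for some $h>0$ one has $Cov_{ih}\neq Cov_{\infty,h}$, so the first inner term converges almost surely to the strictly positive value $\left|Cov_{ih}-Cov_{\infty,h}\right|$ while the second converges to zero by Theorem \ref{theorem:cov1}; taking the iterated limit then forces the left side to be at least $\left|Cov_{ih}-Cov_{\infty,h}\right|>0$, contradicting the hypothesis that the limit is zero. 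Thus the limit being zero for all $i\geq 1$ and all $h>0$ compels stationarity.

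The step I expect to be the main obstacle is not any single estimate but the careful bookkeeping of the two nested limits ($n_{ih}\rightarrow\infty$ followed by $K\rightarrow\infty$) against the almost-sure convergence supplied by the ergodic theorem, together with securing the summability hypothesis (\ref{eq:cov6}) in both directions so that $Cov_{\infty,h}$ exists and Theorem \ref{theorem:cov1} is applicable. In the stationary direction (\ref{eq:cov6}) is automatic, since $Cov_{ih}\equiv Cov_{\infty,h}$ makes $\sum_{i=1}^{\infty}p_{ih}\left|Cov_{ih}\right|=\left|Cov_{\infty,h}\right|<\infty$; in the nonstationary direction it must be retained as a standing regularity assumption, and I would flag this explicitly.
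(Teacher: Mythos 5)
Your proof is correct and follows essentially the same route as the paper, whose proof of this theorem simply states that it follows as in Theorem \ref{theorem:gc2}, using Theorem \ref{theorem:cov1} in place of Theorem \ref{theorem:gc1} and covariances in place of probabilities: your forward direction reproduces exactly that triangle-inequality decomposition through $Cov_{\infty,h}$, and your converse reproduces the reverse-triangle-inequality argument of Theorem \ref{theorem:gc3}. Your explicit remark that the summability condition (\ref{eq:cov6}) is automatic under stationarity but must stand as a hypothesis in the nonstationary direction is a point the paper leaves implicit, and is worth flagging as you do.
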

\begin{proof}
Using Theorem \ref{theorem:cov1}, the proof follows in the same way as the proof of Theorem \ref{theorem:gc2}, with the probabilities replaced with the respective covariances.


\end{proof}

\begin{theorem}
\label{theorem:cov3}
The covariance structure of $\bX$ is nonstationary if and only if for $i\geq 1$, for some $h>0$,
\begin{equation*}
\underset{K\rightarrow\infty}{\lim}\underset{n_{jh}\rightarrow\infty;j=1,\ldots,K}{\lim}~\left|\widehat{Cov}_{ih}-\widetilde {Cov}_{Kh}\right|>0.
\end{equation*}
\end{theorem}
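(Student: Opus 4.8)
The plan is to mirror the proof of Theorem \ref{theorem:gc3}, with the empirical and limiting probabilities replaced by the corresponding empirical and limiting covariances at a fixed lag, and with the supremum over Borel sets $C$ suppressed (each quantity here is a scalar indexed by the lag $h$). The ingredients needed are precisely the covariance analogues of those used there: Theorem \ref{theorem:cov1} in place of Theorem \ref{theorem:gc1}, and the ergodic limit (\ref{eq:cov2}) in place of the Glivenko--Cantelli convergence.

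First I would fix a lag $h>0$ at which the covariance nonstationarity is witnessed, so that $Cov_{ih}\neq Cov_{\infty,h}$ for all $i\geq 1$, whence $\left|Cov_{ih}-Cov_{\infty,h}\right|>0$ is a fixed positive number. The starting point, the analogue of (\ref{eq:gc3_1}), is the reverse triangle inequality
\begin{equation*}
\left|\widehat{Cov}_{ih}-\widetilde{Cov}_{Kh}\right|\geq\left|\left|\widehat{Cov}_{ih}-Cov_{\infty,h}\right|-\left|\widetilde{Cov}_{Kh}-Cov_{\infty,h}\right|\right|.
\end{equation*}
By Theorem \ref{theorem:cov1}, the term $\left|\widetilde{Cov}_{Kh}-Cov_{\infty,h}\right|$ can be made smaller than any $\epsilon_1>0$ for all $n_{jh}$ sufficiently large satisfying (\ref{eq:cov5}) and all $K$ sufficiently large, exactly as in (\ref{eq:gc3_2}).

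Next I would bound the remaining term from below by a second reverse triangle inequality, the analogue of (\ref{eq:gc3_3}),
\begin{equation*}
\left|\widehat{Cov}_{ih}-Cov_{\infty,h}\right|\geq\left|\left|Cov_{ih}-Cov_{\infty,h}\right|-\left|\widehat{Cov}_{ih}-Cov_{ih}\right|\right|.
\end{equation*}
By the ergodic convergence (\ref{eq:cov2}), the term $\left|\widehat{Cov}_{ih}-Cov_{ih}\right|$ can be made smaller than any $\epsilon_2>0$ as $n_{ih}\rightarrow\infty$, while $\left|Cov_{ih}-Cov_{\infty,h}\right|$ stays a fixed positive constant by the nonstationarity assumption; since $\epsilon_2$ is arbitrary this keeps $\left|\widehat{Cov}_{ih}-Cov_{\infty,h}\right|$ bounded below by a positive number. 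Feeding this into the first display, with $\epsilon_1$ chosen small, yields a net positive gap, so that
\begin{equation*}
\underset{K\rightarrow\infty}{\lim}\underset{n_{jh}\rightarrow\infty;j=1,\ldots,K}{\lim}~\left|\widehat{Cov}_{ih}-\widetilde{Cov}_{Kh}\right|>0,
\end{equation*}
establishing the forward implication. For the converse I would argue by contraposition through Theorem \ref{theorem:cov2}: were the covariance structure stationary, that theorem would force the limit to vanish for every $i\geq 1$ and every $h>0$, contradicting positivity at the chosen $h$; hence a positive limit entails nonstationarity.

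The transcription is essentially routine once the scalar analogues are in place, so I expect no genuine conceptual obstacle. The one point requiring care is the bookkeeping of the iterated limits: I must arrange that the thresholds satisfy $\epsilon_1+\epsilon_2<\left|Cov_{ih}-Cov_{\infty,h}\right|$ \emph{before} letting $n_{jh}\rightarrow\infty$ and then $K\rightarrow\infty$, so that the positive gap survives both passages to the limit in the prescribed order. Because the convergence in (\ref{eq:cov2}) is almost sure, this positivity, and hence the stated conclusion, holds almost surely.
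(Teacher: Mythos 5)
Your proposal is correct and is essentially the paper's own proof: the paper disposes of this theorem by noting that it "follows in the same way as the proof of Theorem \ref{theorem:gc3}, with the probabilities replaced with the respective covariances," and your argument is precisely that transcription, using Theorem \ref{theorem:cov1} in place of Theorem \ref{theorem:gc1} and the ergodic limit (\ref{eq:cov2}) in place of the Glivenko--Cantelli convergence (\ref{eq:eq3}), with the same two reverse-triangle-inequality steps. Your explicit handling of the converse via Theorem \ref{theorem:cov2} and the remark on choosing $\epsilon_1+\epsilon_2<\left|Cov_{ih}-Cov_{\infty,h}\right|$ before passing to the iterated limits are faithful elaborations of details the paper leaves implicit.
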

\begin{proof}
Using Theorem \ref{theorem:cov1}, the proof follows in the same way as the proof of Theorem \ref{theorem:gc3}, with the probabilities replaced with the respective covariances.
\end{proof}

Now define $Y_{j,n_{jh}}=\mathbb I\left\{\left|\widehat{Cov}_{ih}-\widetilde {Cov}_{Kh}\right|<c_{jh}\right\}$.
Then the following characterization theorems hold, the proofs of which are the similar to those of Theorems \ref{theorem:convergence} and \ref{theorem:divergence}.
\begin{theorem}
\label{theorem:convergence2}
For all $\omega\in\mathfrak S\cap\mathfrak N^c$, where $\mathfrak N$ is some null set having probability measure zero,
$\bX$ is stationary if and only if 
for any $h>0$, there exists a monotonically decreasing sequence 
$\left\{c_{jh}(\omega)\right\}_{j=1}^{\infty}$ such that
\begin{equation}
\pi\left(\mathcal N_1|y_{k,n_{kh}}(\omega)\right)\rightarrow 1,
\label{eq:consistency_at_1_h}
\end{equation}
as $k\rightarrow\infty$ and $n_{jh}\rightarrow\infty$ for $j=1,\ldots,K$ satisfying (\ref{eq:eq4}) and $K\rightarrow\infty$, 
where $\mathcal N_1$ is any neighborhood of 1 (one).
\end{theorem}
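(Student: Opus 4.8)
The plan is to follow the templates of the proofs of Theorems~\ref{theorem:convergence} and~\ref{theorem:divergence} almost verbatim, replacing the uniform discrepancies $\underset{C}{\sup}~|\hat P_j(C)-\tilde P_K(C)|$ by the covariance discrepancies $|\widehat{Cov}_{jh}-\widetilde{Cov}_{Kh}|$, the Glivenko--Cantelli limit of Section~\ref{sec:prelude} by its covariance counterpart Theorem~\ref{theorem:cov1}, and the characterizations of Theorems~\ref{theorem:gc2}--\ref{theorem:gc3} by Theorems~\ref{theorem:cov2}--\ref{theorem:cov3}. Because the recursive Beta updating that produces the posterior of $p_{k,n_{kh}}$ is structurally identical to the one used for $p_{k,n_k}$, its posterior mean and variance are again given by the right-hand sides of (\ref{eq:postmean_p_k_2}) and (\ref{eq:postvar_p_k_2}) with $\sum_{j=1}^k y_{j,n_j}$ replaced by $\sum_{j=1}^k y_{j,n_{jh}}$; I would record this once and reuse it for both directions and for each fixed $h>0$. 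Throughout, the thresholds $\{c_{jh}(\omega)\}_{j=1}^{\infty}$ are understood, as in the earlier theorems, to decrease to zero.

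For the forward implication, fix $h>0$ and assume the covariance structure of $\bX$ is stationary. By Theorem~\ref{theorem:cov2}, $|\widehat{Cov}_{jh}(\omega)-\widetilde{Cov}_{Kh}(\omega)|\rightarrow 0$ along the iterated limit, for every $j\geq 1$ and for all $\omega$ outside a null set $\mathfrak N$. Exactly as in the passage leading to (\ref{eq:bound_S}), I would then construct, for each such $\omega$, a monotonically decreasing $\{c_{jh}(\omega)\}_{j=1}^{\infty}$ dominating these discrepancies (for instance, by taking $c_{jh}(\omega)$ to be the supremum of the tail of the discrepancy sequence, which is finite because stationarity together with (\ref{eq:cov6}) keeps the covariances bounded). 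This forces $y_{j,n_{jh}}(\omega)=1$ for all $j\geq 1$ once the $n_{jh}$ are sufficiently large, whence $\sum_{j=1}^k y_{j,n_{jh}}(\omega)=k$. Substituting into the posterior moments and using $\sum_{j=1}^k j^{-2}\rightarrow\pi^2/6$ gives posterior mean $\rightarrow 1$ and variance $\rightarrow 0$, and Chebychev's inequality then yields (\ref{eq:consistency_at_1_h}), precisely as in Theorem~\ref{theorem:convergence}.

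For the converse I would argue by contradiction. Suppose (\ref{eq:consistency_at_1_h}) holds for every $h>0$ along admissible sequences $\{c_{jh}(\omega)\}$, so that in particular $E(p_{k,n_{kh}}|y_{k,n_{kh}}(\omega))\rightarrow 1$. If the covariance structure were nonstationary, then by Theorem~\ref{theorem:cov3} there is at least one $h^*>0$ for which the limiting discrepancy is bounded below by some $\delta>0$ at every stage; since $c_{jh^*}(\omega)\downarrow 0$, there is $j_0(\omega)$ with $|\widehat{Cov}_{jh^*}(\omega)-\widetilde{Cov}_{Kh^*}(\omega)|>c_{jh^*}(\omega)$ for all $j\geq j_0(\omega)$ and all large $n_{jh^*}$, so $y_{j,n_{jh^*}}(\omega)=0$ for $j\geq j_0(\omega)$ and $0\leq\sum_{j=1}^k y_{j,n_{jh^*}}(\omega)\leq j_0(\omega)$. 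Plugging this bounded numerator into the posterior-mean formula forces $E(p_{k,n_{kh^*}}|y_{k,n_{kh^*}}(\omega))\rightarrow 0$, contradicting concentration at $1$ for $h=h^*$. This mirrors the concluding step of the proof of Theorem~\ref{theorem:convergence}.

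I expect the only nonroutine step to be the almost-sure construction of the dominating sequence $\{c_{jh}(\omega)\}$: since Theorem~\ref{theorem:cov1} supplies an iterated limit ($n_{jh}\rightarrow\infty$, then $K\rightarrow\infty$) rather than a single one, one must verify that, for $\mu$-almost every $\omega$, a genuinely monotone sequence can be extracted bounding the discrepancies simultaneously at all stages $j$, and that the null sets accumulated over the countably many $j$ (and, for uniformity, over a countable dense set of $h$) still union to a single null set $\mathfrak N$. Here boundedness of $|\widehat{Cov}_{jh}|$, guaranteed by (\ref{eq:cov2}) with (\ref{eq:cov6}), is what makes the tail-supremum construction well defined, and it replaces the automatic bound $\underset{C}{\sup}~|\hat P_j(C)-\tilde P_K(C)|\leq 1$ that was available in the probability setting.
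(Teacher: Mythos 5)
Your proposal is correct and takes essentially the same route the paper prescribes: the paper gives no separate argument for Theorem \ref{theorem:convergence2}, stating only that the proof mirrors those of Theorems \ref{theorem:convergence} and \ref{theorem:divergence} with covariance discrepancies in place of the sup-norm probability discrepancies, which is exactly your template. Your additional observation—that the automatic bound $\underset{C}{\sup}~|\hat P_j(C)-\tilde P_K(C)|\leq 1$ must be replaced by the boundedness of $|\widehat{Cov}_{jh}|$ supplied by (\ref{eq:cov2}) and (\ref{eq:cov6}) when constructing the dominating sequence $\{c_{jh}(\omega)\}$—is a legitimate detail the paper glosses over, and it strengthens rather than deviates from the intended argument.
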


\begin{theorem}
\label{theorem:divergence2}
$\bX$ is nonstationary if and only if for some $h>0$, and 
for any $\omega\in\mathfrak S\cap\mathfrak N^c$ where $\mathfrak N$ is some null set having probability measure zero, 
for any choice of the non-negative, monotonically decreasing sequence $\left\{c_{jh}(\omega)\right\}_{j=1}^{\infty}$,
\begin{equation}
\pi\left(\mathcal N_0|y_{k,n_{kh}(\omega)}(\omega)\right)\rightarrow 1,
\label{eq:consistency_at_0_h}
\end{equation}
as $k\rightarrow\infty$ and $n_{jh}\rightarrow\infty$, $j=1,\ldots,K$ satisfying (\ref{eq:eq4}), and $K\rightarrow\infty$, 
where $\mathcal N_0$ is any neighborhood of 0 (zero).
\end{theorem}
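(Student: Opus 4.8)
The plan is to mirror the proof of Theorem~\ref{theorem:divergence}, replacing the empirical probabilities $\hat P_j, \tilde P_K$ and Theorem~\ref{theorem:gc3} by the empirical covariances $\widehat{Cov}_{jh}, \widetilde{Cov}_{Kh}$ and Theorem~\ref{theorem:cov3}. The crucial observation is that the indicator variables $Y_{j,n_{jh}}$ are fed into exactly the same recursive Beta updating scheme of Section~\ref{subsec:recursive_posteriors}, so the posterior mean and variance of $p_{k,n_{kh}}$ are again given by the expressions (\ref{eq:postmean_p_k_2}) and (\ref{eq:postvar_p_k_2}), now with $y_{j,n_j}$ replaced throughout by $y_{j,n_{jh}}$.

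For necessity, suppose the covariance structure of $\bX$ is nonstationary. By the characterization preceding Theorem~\ref{theorem:cov2}, there is some $h>0$ with $Cov_{jh}\neq Cov_{\infty,h}$ for every $j\geq 1$, and Theorem~\ref{theorem:cov3} then guarantees that $|\widehat{Cov}_{jh}(\omega)-\widetilde{Cov}_{Kh}(\omega)|$ stays bounded away from zero almost surely in the iterated limit. Hence, for any non-negative monotonically decreasing sequence $\{c_{jh}(\omega)\}_{j=1}^{\infty}$ converging to zero, there exists $j_0(\omega)\geq 1$ such that $|\widehat{Cov}_{jh}(\omega)-\widetilde{Cov}_{Kh}(\omega)|>c_{jh}(\omega)$ for all $j\geq j_0(\omega)$ once $n_{jh}$ is large enough satisfying (\ref{eq:eq4}); consequently $y_{j,n_{jh}}(\omega)=0$ for $j\geq j_0(\omega)$, so that $0\leq\sum_{j=1}^k y_{j,n_{jh}}(\omega)\leq j_0(\omega)$. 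Substituting this bounded sum into (\ref{eq:postmean_p_k_2}) and (\ref{eq:postvar_p_k_2}), exactly as in the nonstationary branch of the proof of Theorem~\ref{theorem:convergence}, yields (\ref{eq:postmean_div}) and (\ref{eq:postvar_div}), namely $E(p_{k,n_{kh}}|y_{k,n_{kh}}(\omega))\rightarrow 0$ and $Var(p_{k,n_{kh}}|y_{k,n_{kh}}(\omega))\rightarrow 0$ as $k\rightarrow\infty$. Choosing $\epsilon>0$ small enough that $\mathcal N_0\supseteq\{p_{k,n_{kh}}<\epsilon\}$ and applying Chebychev's inequality then gives (\ref{eq:consistency_at_0_h}).

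For sufficiency, assume (\ref{eq:consistency_at_0_h}) holds. Then $\pi(p_{k,n_{kh}}<\epsilon\,|\,y_{k,n_{kh}}(\omega))\rightarrow 1$ for every $\epsilon>0$, which forces $E(p_{k,n_{kh}}|y_{k,n_{kh}}(\omega))\rightarrow 0$. If $\bX$ were covariance-stationary, Theorem~\ref{theorem:convergence2} would give $\pi(\mathcal N_1\,|\,y_{k,n_{kh}}(\omega))\rightarrow 1$ and hence $E(p_{k,n_{kh}}|y_{k,n_{kh}}(\omega))\rightarrow 1$, contradicting the previous limit; therefore $\bX$ must be covariance-nonstationary.

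The main obstacle I anticipate is not the Bayesian bookkeeping, which is identical to the scalar-probability case, but justifying the uniform, almost-sure lower bound on $|\widehat{Cov}_{jh}-\widetilde{Cov}_{Kh}|$ across the nested limits $n_{jh}\rightarrow\infty$, $K\rightarrow\infty$ and $j\rightarrow\infty$, and in particular pinning down the finite threshold index $j_0(\omega)$ off a single null set $\mathfrak N$ valid simultaneously for all $j$. This is precisely where Theorem~\ref{theorem:cov3} (the covariance analogue of Theorem~\ref{theorem:gc3}) must be invoked carefully, since it supplies the strict positivity of the limiting discrepancy that lets every vanishing sequence $\{c_{jh}(\omega)\}_{j=1}^{\infty}$ eventually be overtaken.
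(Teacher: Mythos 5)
Your proof is correct and follows exactly the route the paper intends: the paper gives no separate argument for Theorem~\ref{theorem:divergence2}, stating only that its proof is similar to those of Theorems~\ref{theorem:convergence} and~\ref{theorem:divergence} with the probabilities replaced by the respective covariances, which is precisely the substitution you carry out (Theorem~\ref{theorem:cov3} in place of Theorem~\ref{theorem:gc3}, the same recursive Beta posterior formulas (\ref{eq:postmean_p_k_2})--(\ref{eq:postvar_p_k_2}), Chebychev's inequality for necessity, and contradiction via Theorem~\ref{theorem:convergence2} for sufficiency). The difficulty you flag at the end---extracting the finite threshold $j_0(\omega)$, off a single null set, from the strict positivity supplied by Theorem~\ref{theorem:cov3} so that every sequence $c_{jh}(\omega)$ decreasing to zero is eventually overtaken---is present to exactly the same degree in the paper's own proof of Theorem~\ref{theorem:divergence}, so your write-up matches the paper's argument and its level of rigor.
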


\section{Characterization of stationarity and nonstationarity using non-recursive Bayesian posteriors}
\label{sec:non_recursive}
Observe that it is not strictly necessary for the prior at any stage to depend upon the previous stage.
Indeed, we may simply assume that $\pi\left(p_{j,n_j}\right)\equiv Beta\left(\alpha_j,\beta_j\right)$,
for $j=1,2,\ldots$. In this case, the posterior of $p_{k,n_k}$ given $y_{k,n_k}$ is simply
$Beta\left(\alpha_k+y_{k,n_k},1+\beta_k-y_{k,n_k}\right)$. The posterior mean and variance are then given by
\begin{align}
E\left(p_{k,n_k}|y_{k,n_k}(\omega)\right)&=\frac{\alpha_k+y_{k,n_k}(\omega)}
{1+\alpha_k+\beta_k};
\label{eq:postmean_p_k_3}\\
Var\left(p_{k,n_k}|y_{k,n_k}(\omega)\right)&=
\frac{(\alpha_k+y_{k,n_k}(\omega))(1+\beta_k-y_{k,n_k}(\omega))}
{(1+\alpha_k+\beta_k)^2(2+\alpha_k+\beta_k)}.
\label{eq:postvar_p_k_3}
\end{align}
Since $y_{k,n_k}(\omega)$ (or $y_{k,n_{kh}}(\omega)$) converges to $1$ or $0$ as $n_k\rightarrow\infty$, accordingly as
$\bX$ is stationary or nonstationary (or the covariance structure of $\bX$ is stationary or nonstationary), it is easily seen, provided that $\alpha_k\rightarrow 0$
and $\beta_k\rightarrow 0$ as $k\rightarrow\infty$, that (\ref{eq:postmean_p_k_3}) converges to $1$ (respectively, $0$) 
if and only if $\bX$ is (covariance) stationary (respectively, (covariance) nonstationary). 
Importantly, if we choose $\alpha_k=\beta_k=0$ for all $k\geq 1$, then $k\rightarrow\infty$ is no longer needed, and the results continue to hold
if $n_k\rightarrow\infty$.

Thus, characterization of stationarity or nonstationarity of $\bX$ is possible even with the non-recursive approach.
Indeed, note that the prior parameters $\alpha_k$ and $\beta_k$ are more flexible compared to those
associated with the recursive approach. This is because, in the non-recursive approach 
we only require $\alpha_k\rightarrow 0$ and $\beta_k\rightarrow 0$ as 
$k\rightarrow\infty$, so that convergence of the series $\sum_{j=1}^{\infty}\alpha_j$ and $\sum_{j=1}^{\infty}\beta_j$
are not necessary, unlike the recursive approach. However, choosing $\alpha_k$ and $\beta_k$  to be of sufficiently
small order ensures much faster convergence of the posterior mean and variance as compared to the recursive approach.

Unfortunately, an important drawback of the non-recursive approach is that it does not admit extension
to the case of general oscillatory stochastic processes. On the other hand, as we show subsequently, the principles of our recursive theory   
can be easily adopted to develop a Bayesian theory for determining (multiple) frequencies of oscillating stochastic processes. 
In other words, the recursive
approach seems to be more powerful from the perspective of development of a general Bayesian principle for learning about the basic characteristics
of the underlying stochastic process. 
Moreover, as our examples demonstrate, the recursive posteriors converge 
sufficiently fast to the correct degenerate distributions, obviating the need to consider the non-recursive approach.
Consequently, we do not further pursue the non-recursive approach in this article but reserve the topic for further
investigation in the future.


\section{First illustration: AR(1) model}
\label{sec:ar1}

Let us consider the following AR(1) model: $X_t=\rho X_{t-1}+\epsilon_t$; $t\geq 1$, where $\epsilon_t\stackrel{iid}{\sim}N(0,1)$, and
$X_0\sim U(-1,1)$, the uniform distribution on $(-1,1)$. It is well-known that $\left\{X_t:t\geq 1\right\}$ is (asymptotically) stationary if and only if
$|\rho|<1$. We illustrate the performance of our methodology after generating the data from the above AR(1) model for
various values of $\rho$, which we pretend to be unknown for illustration. In particular, we consider three different setups in this regard.
In the first setup, we consider samples of sizes $2\times 10^8$ from from the AR(1) model, and assume that the form of the true model is known, and that only
$\rho$ is unknown. In the second setup, we generate samples of sizes $2500$ from from the AR(1) model, and assume as before that only $\rho$ is unknown.
In the last setup, we draw samples of sizes $2500$ from from the AR(1) model, and assume that the entire data-generating model is unknown.

\subsection{Case 1: Large sample size, form of the model known}
\label{subsec:ar1_case1}

\subsubsection{Sample size}
We draw samples of sizes $2\times 10^8$ from the AR(1) model for various values of $\rho$ and evaluate the performance of our Bayesian methodology, setting
$n=10^4$ and $K=2\times 10^4$.

\subsubsection{Construction of bound}
An important ingredient of our proposed method is the construction of the bounds $c_j(\omega)$. In this case, we construct the bounds as follows.
We first draw a sample of size $2\times 10^8$ from the AR(1) model with $\rho=0.99999$. With this sample, for $j=1,\ldots,K$, we form the sup norms
$\tilde c_j=\underset{-\infty<x<\infty}{\sup}~|\hat F_j(x)-\tilde F_K(x)|$ according to Lemma \ref{lemma:lemma_tv} and Remark \ref{remark:remark_tv}.
We then set $c_j$ as
\begin{equation}
c_j=\tilde c_j+10^6\times\left(0.99999-|\hat\rho|\right)/\log(\log(j+1)),
\label{eq:ar1_bound1}
\end{equation}	
where $\hat\rho$ is the maximum likelihood estimator (MLE) of $\rho$ based on the observed sample. If the MLE of $\rho$ does not exist, we set $\hat\rho\equiv 1$.

To explain the strategy behind (\ref{eq:ar1_bound1}), note that for $\rho=0.99999$, the AR(1) process, although stationary, is very close to nonstationarity.
So, for any value of $\rho$ such that $|\rho|<0.99999$, $\tilde c_j$ is expected to be larger than $c_j$. Hence, in such cases, stationarity is to be expected.
On the other hand, if $|\rho|\geq 1$, $\tilde c_j$ is expected to be smaller than $c_j$, so that nonstationarity is implied. For simplicity we assume that
values of $\rho$ such that $0.99999<|\rho|<1$ are not of interest.

To further improve the bound, we add the quantity
$10^6\times\left(0.99999-|\hat\rho|\right)/\log(\log(j+1))$ to $\tilde c_j$. The significance of this addition is as follows. If $|\hat\rho|<0.99999$,
this quantity is positive but tends to zero at a slow rate. This enhances the conclusion of stationarity. Similarly, if $|\hat\rho|>0.99999$, the quantity
is negative and tends to zero slowly, favouring nonstationarity. Multiplication with $10^6$ inflates the quantity for more prominence.

\subsubsection{Implementation}

Note that at each stage $j$, we need to compute the sup norm given by Lemma \ref{lemma:lemma_tv} (also, Remark \ref{remark:remark_tv}). This requires
evaluation of $\tilde F_K$ at $\hat x_j$ (or $\tilde x_{j^*}$). We carry out this evaluations by splitting the summations of the indicator functions
associated with $\tilde F_K$ on $104$ parallel cores on a VMWare, and obtaining the final result on a single node, which also carries out the iterative procedure.
The entire exercise takes about 6 minutes in the case of stationarity and about 3 minutes in the case of nonstationarity.

\subsubsection{Results}
We implement our method when the data is generated from the AR(1) model with $\rho$ randomly selected from $U(-1,1)$, and with $\rho$
taking the values $0.99$, $0.995$, $0.999$, $0.9999$, $1$, $1.00005$, $1.05$ and $2$. Figure \ref{fig:example1} shows that in all the cases, 
our method correctly detects stationarity and nonstationarity.
That even with such subtle differences among the true values of $\rho$ our method performs so well, is quite encouraging.

\begin{figure}
\centering
\subfigure [Stationary: $|\rho|<1$.]{ \label{fig:abs_rho_less_1}
\includegraphics[width=4.5cm,height=4.5cm]{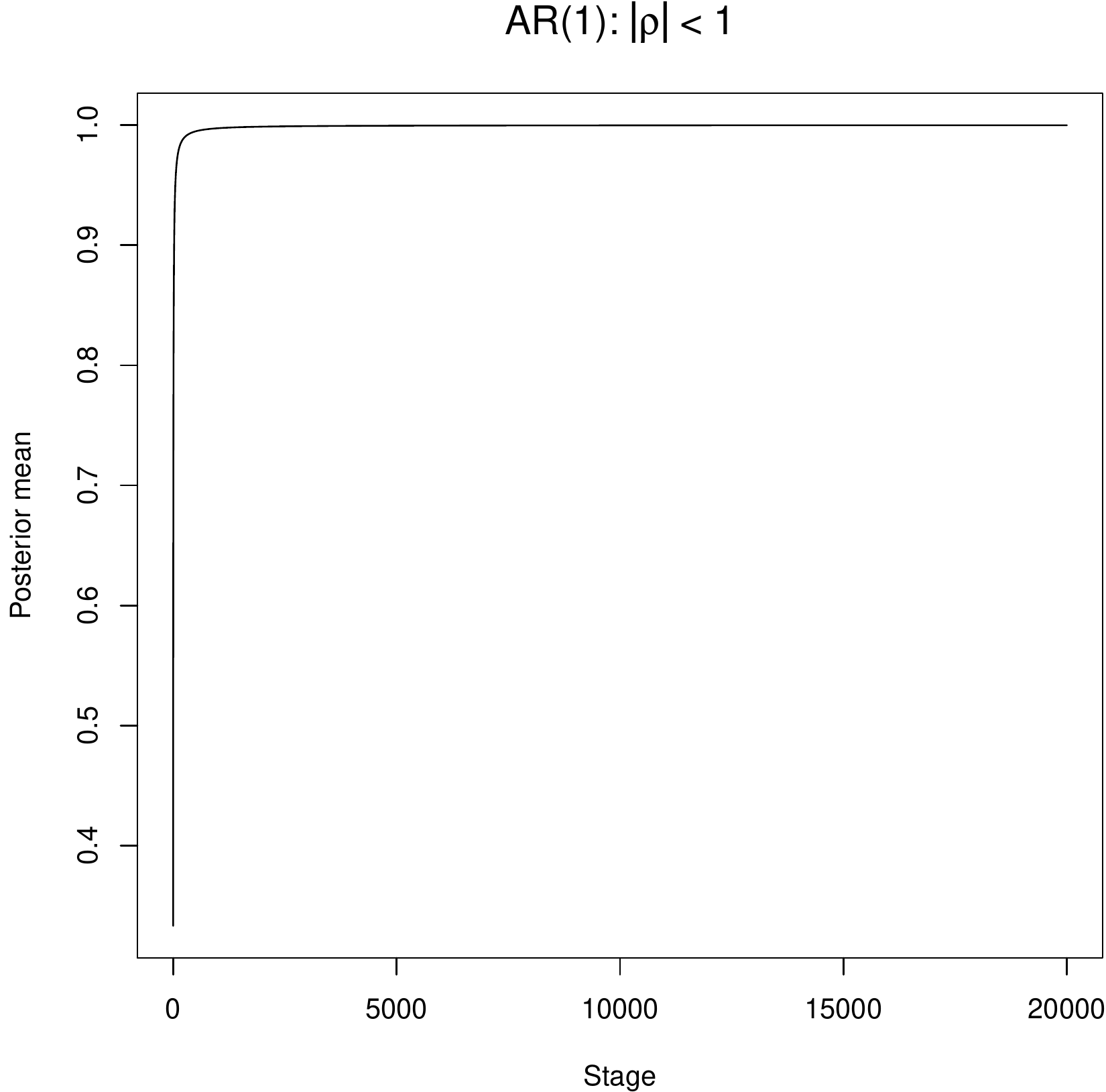}}
\hspace{2mm}
\subfigure [Stationary: $\rho=0.99$.]{ \label{fig:rho_99}
\includegraphics[width=4.5cm,height=4.5cm]{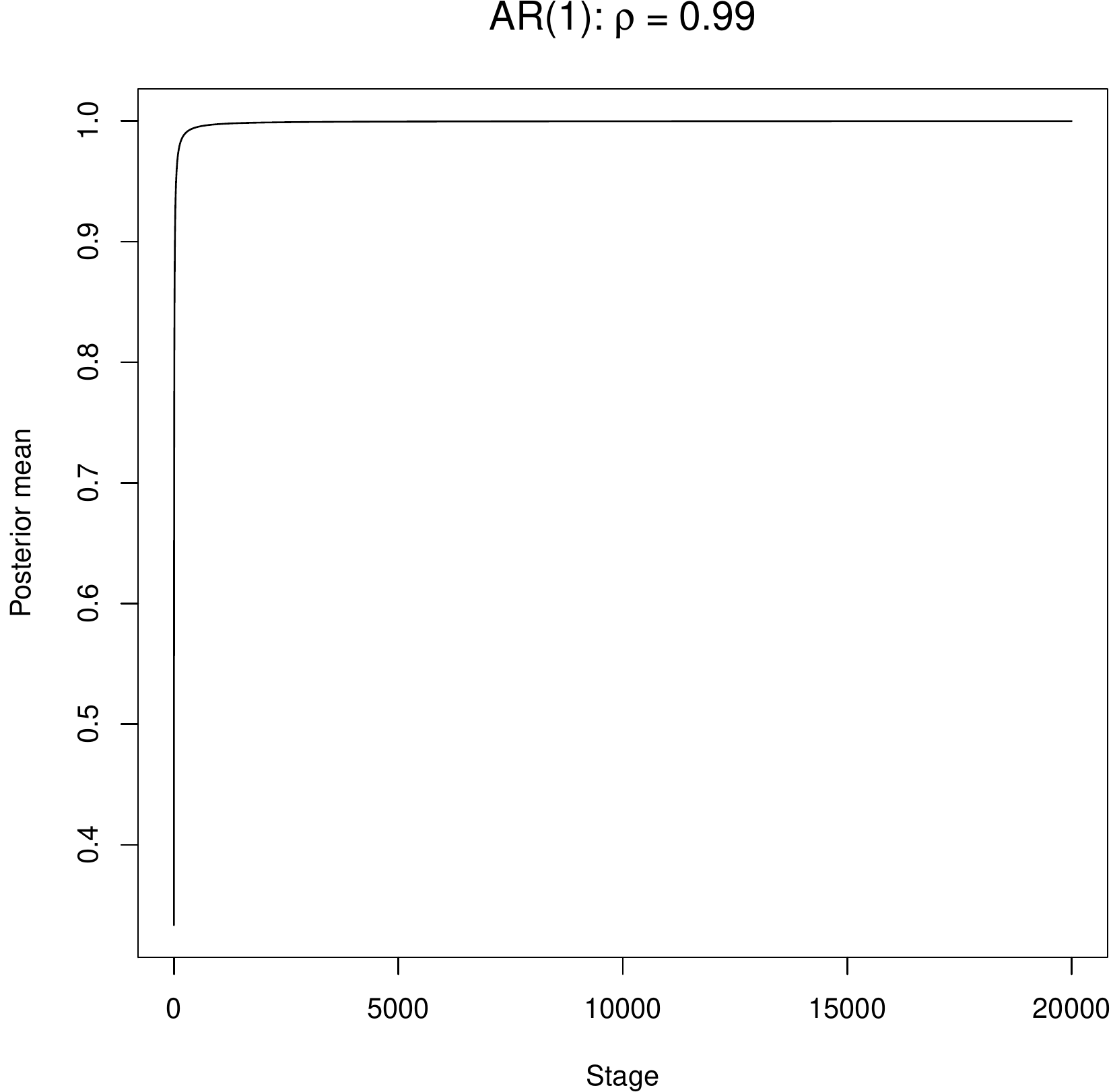}}
\hspace{2mm}
\subfigure [Stationary: $\rho=0.995$.]{ \label{fig:rho_995}
\includegraphics[width=4.5cm,height=4.5cm]{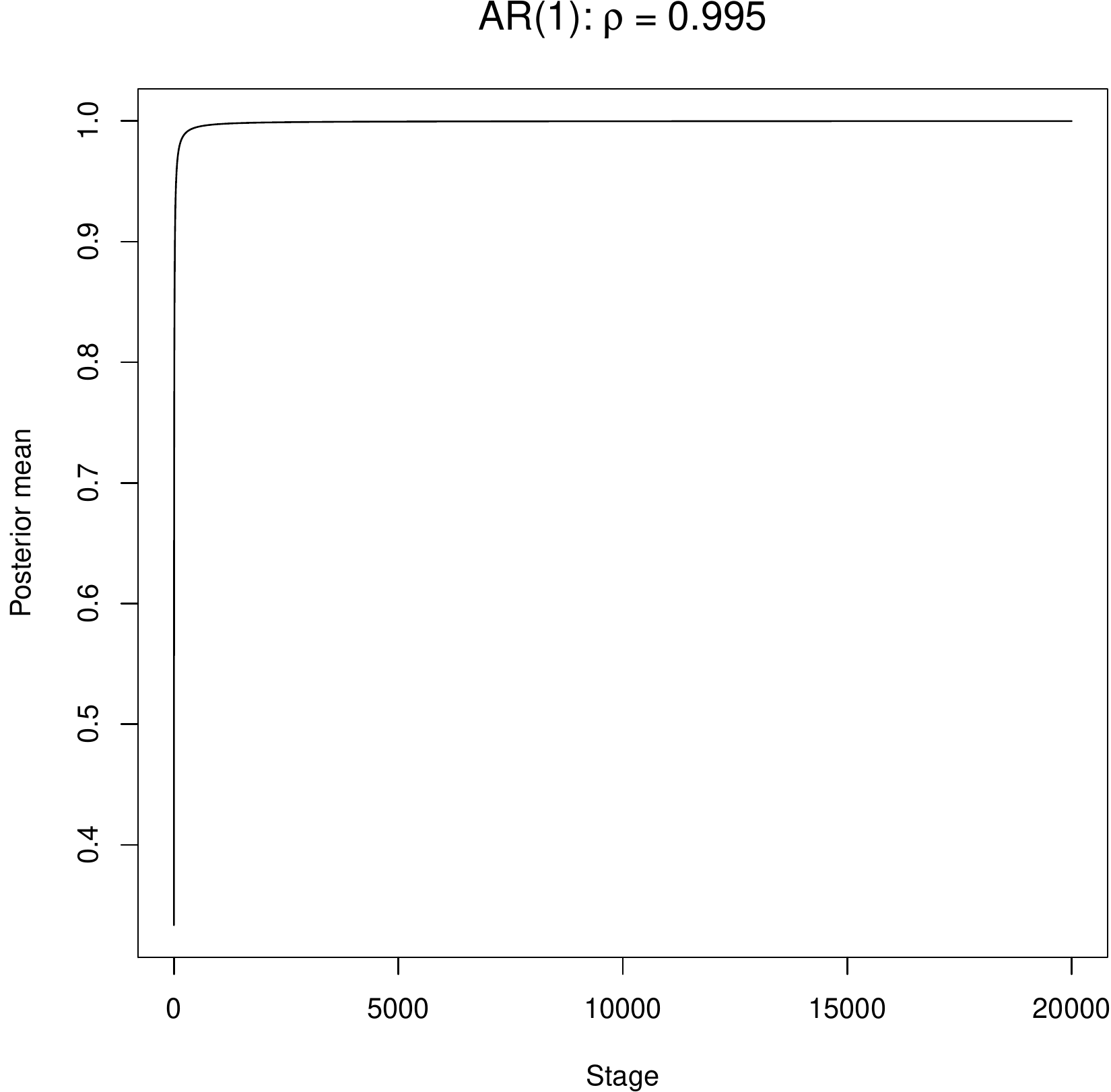}}\\
\vspace{2mm}
\subfigure [Stationary: $\rho=0.999$.]{ \label{fig:rho_999}
\includegraphics[width=4.5cm,height=4.5cm]{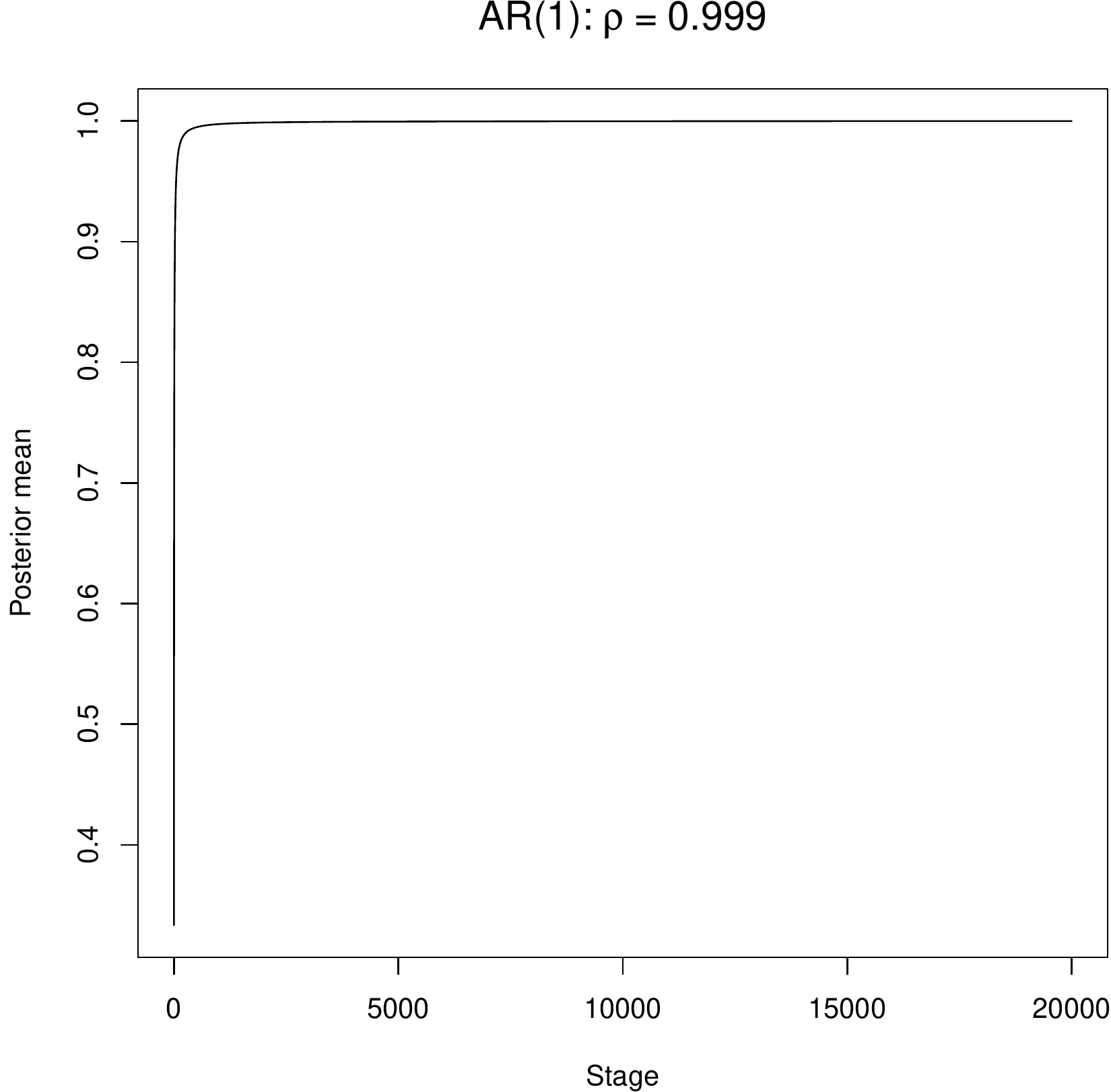}}
\hspace{2mm}
\subfigure [Nonstationary: $\rho=0.9999$.]{ \label{fig:rho_9999}
\includegraphics[width=4.5cm,height=4.5cm]{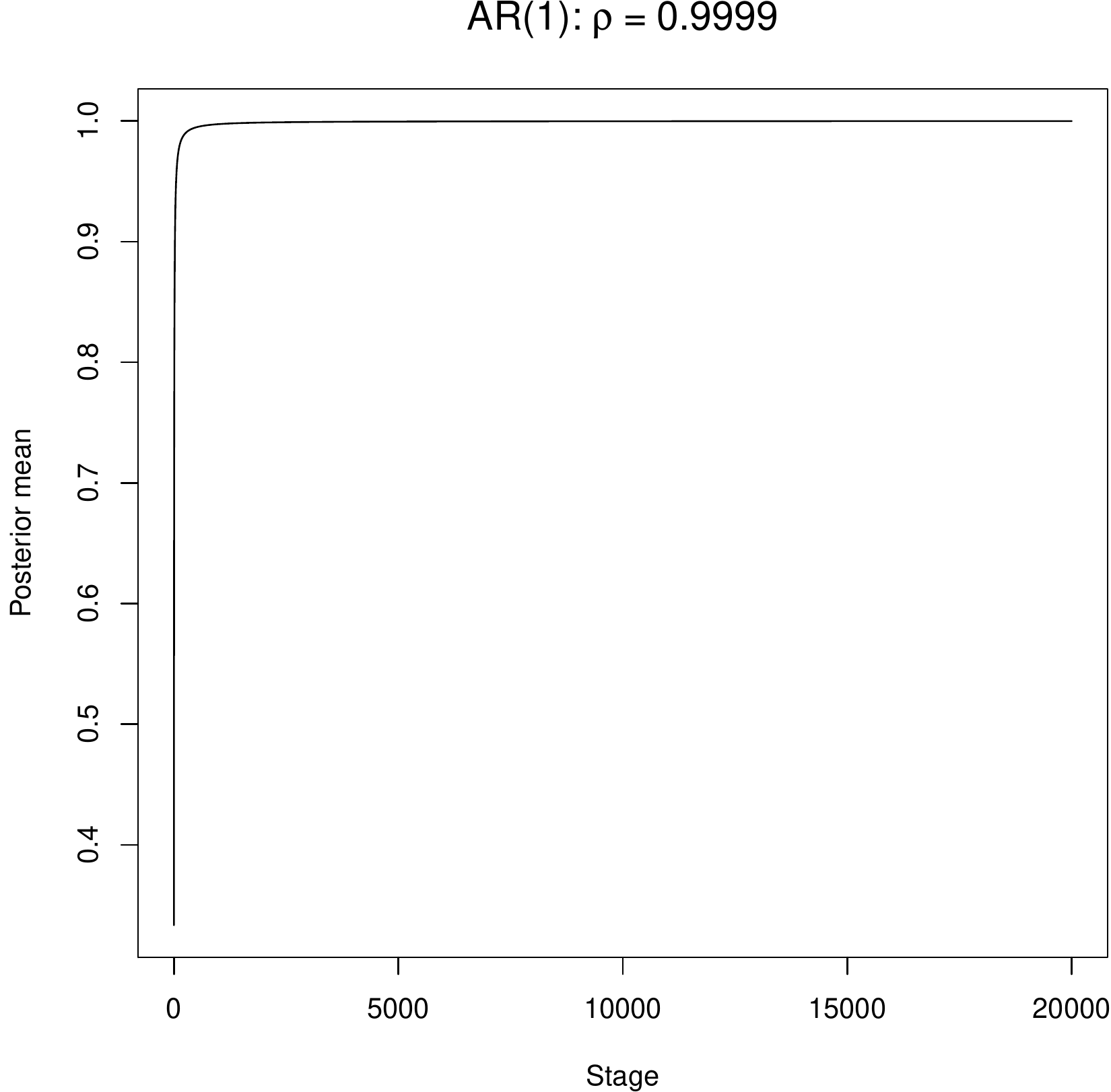}}
\hspace{2mm}
\subfigure [Nonstationary: $\rho=1$.]{ \label{fig:rho_1}
\includegraphics[width=4.5cm,height=4.5cm]{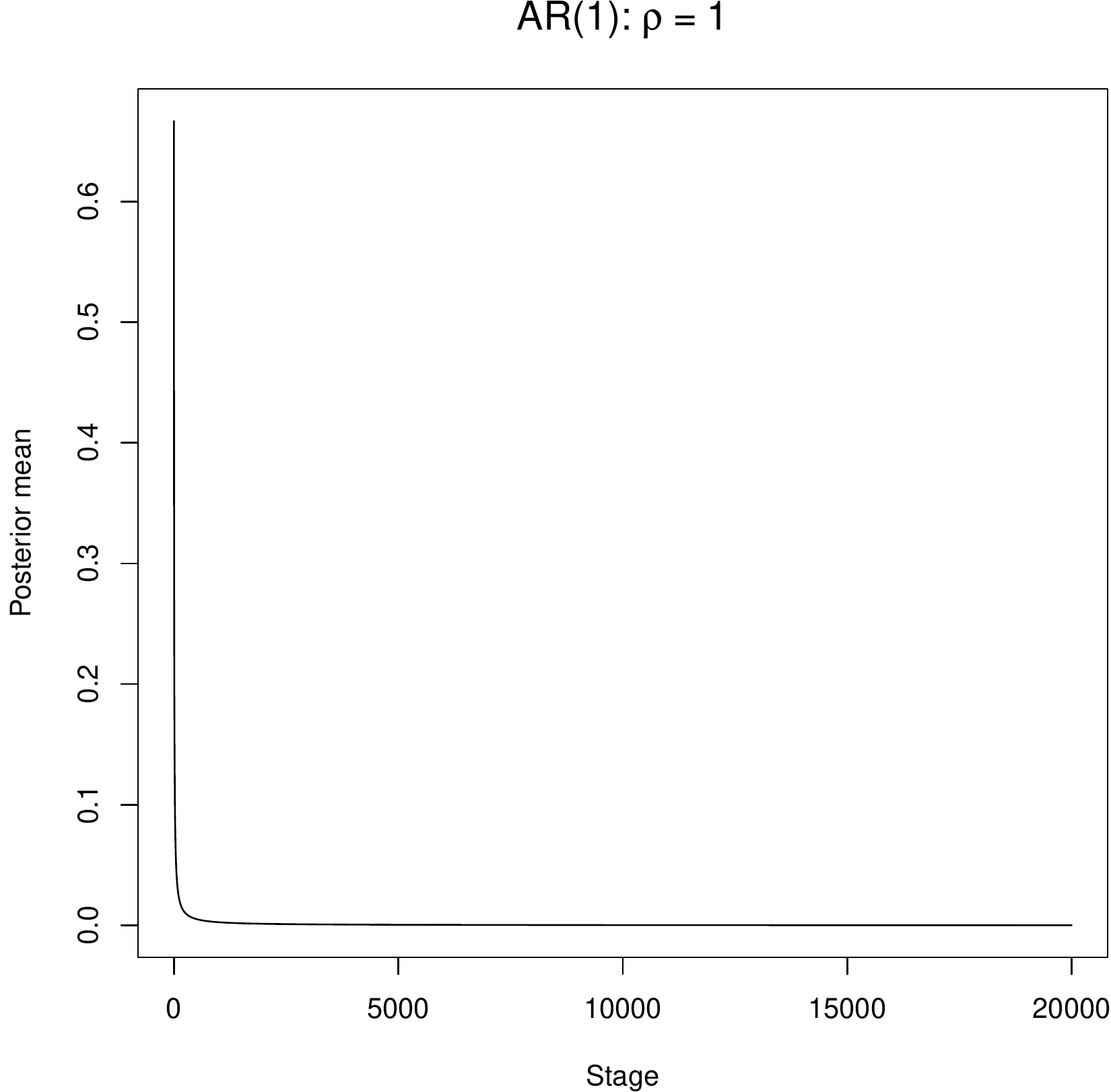}}\\
\vspace{2mm}
\subfigure [Nonstationary: $\rho=1.00005$.]{ \label{fig:rho_100005}
\includegraphics[width=4.5cm,height=4.5cm]{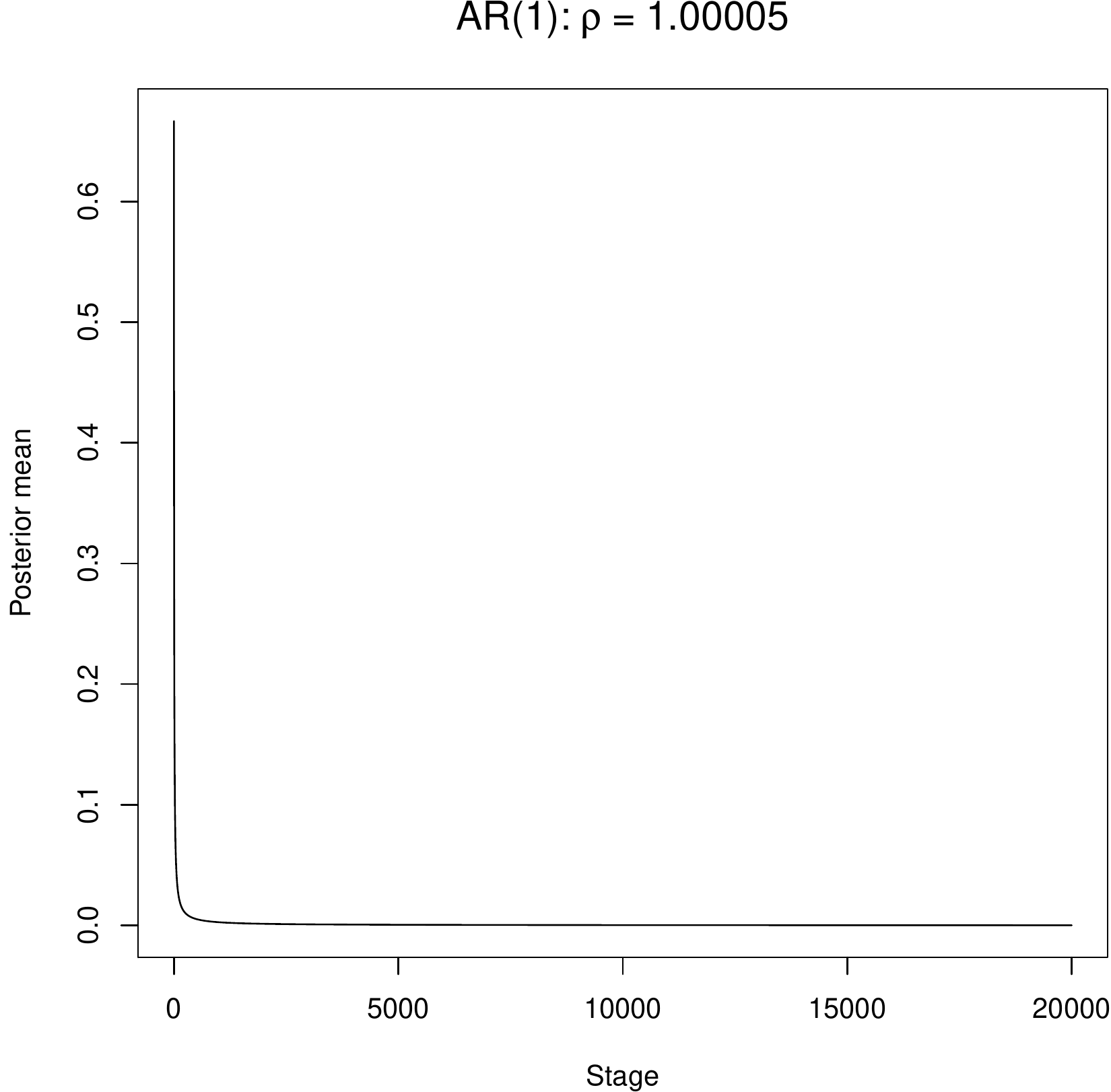}}
\hspace{2mm}
\subfigure [Nonstationary: $\rho=1.05$.]{ \label{fig:rho_105}
\includegraphics[width=4.5cm,height=4.5cm]{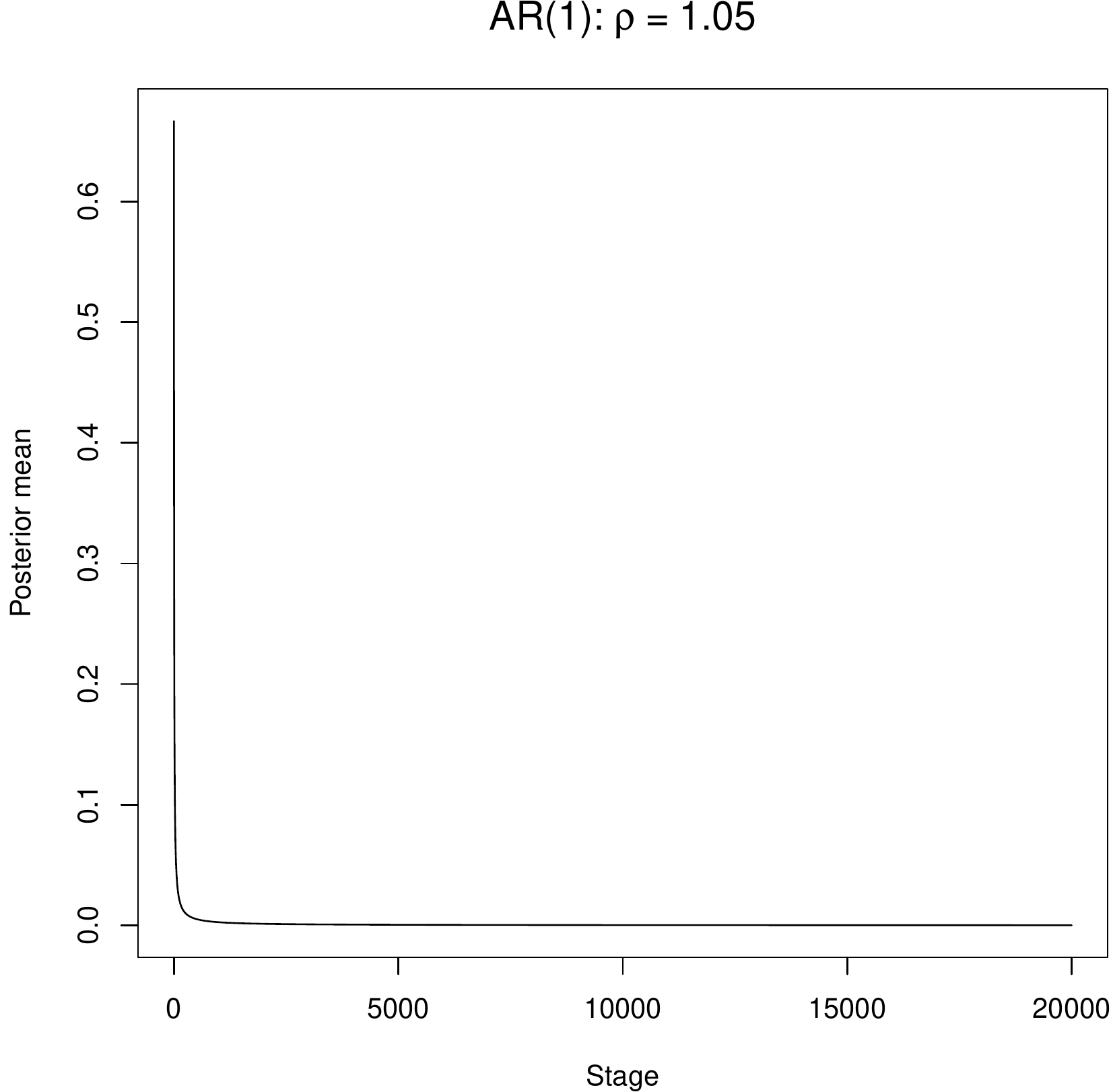}}
\hspace{2mm}
\subfigure [Nonstationary: $\rho=2$.]{ \label{fig:rho_2}
\includegraphics[width=4.5cm,height=4.5cm]{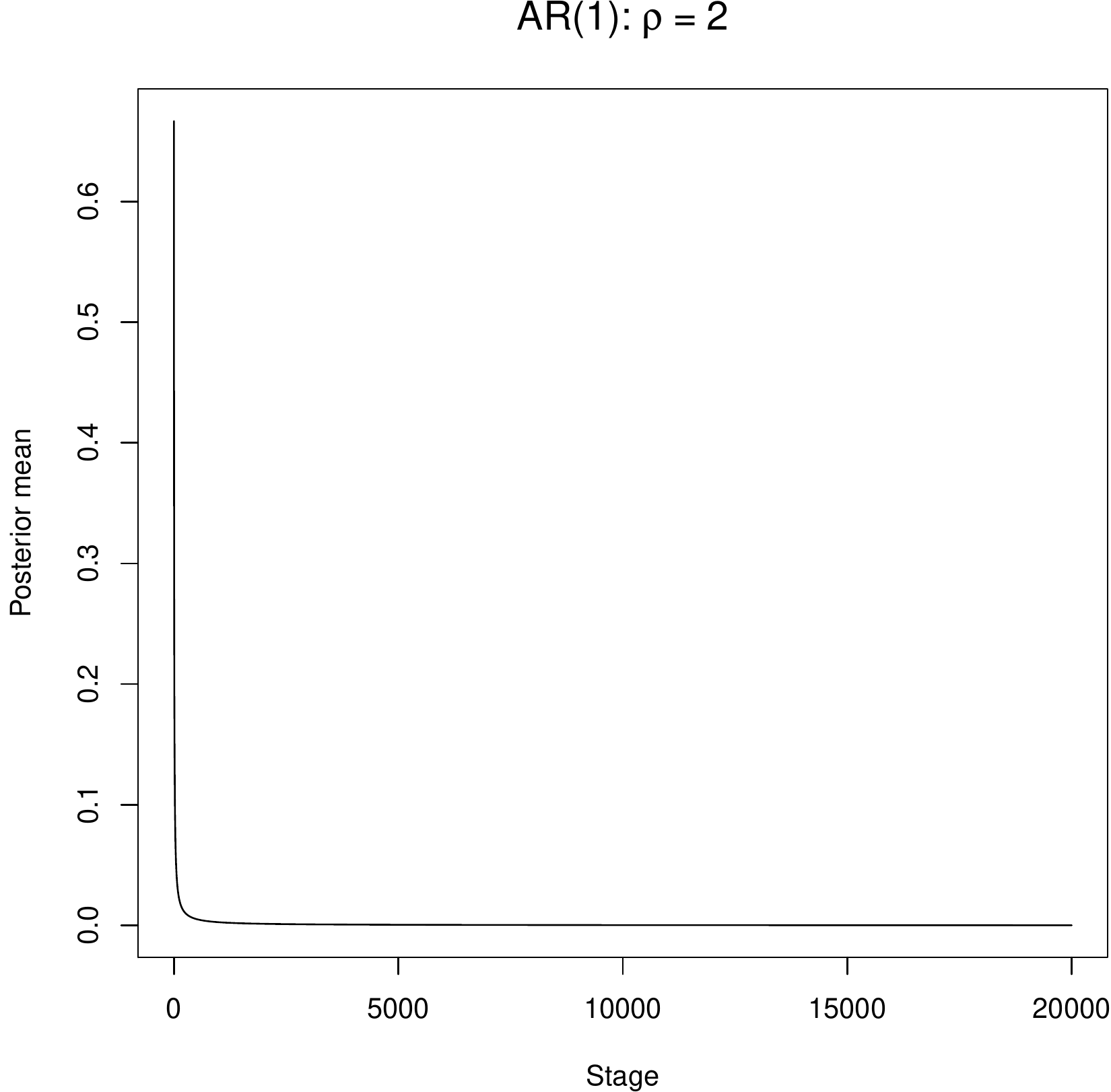}}
\caption{Parametric AR(1) example with $K=20000$ and $n=10000$.}
\label{fig:example1}
\end{figure}

\subsection{Case 2: Relatively small sample size, form of the model known}
\label{subsec:ar1_case2}

\subsubsection{Sample size}
We draw samples of sizes $2500$ from the AR(1) model for those values of $\rho$ as in Section \ref{subsec:ar1_case1} 
and evaluate the performance of our Bayesian methodology, setting $n=50$ and $K=50$.

\subsubsection{Construction of bound}
In this case, we choose the basic form of the bounds in a similar manner as in Section \ref{subsec:ar1_case1}, but make it adaptive with the iterations
to suit the small sample situation.

As before, we first draw a sample of size $2\times 10^8$ from the AR(1) model with $\rho=0.99999$. With this sample, for $j=1,\ldots,K$, we form the sup norms
$\tilde c_j=\underset{-\infty<x<\infty}{\sup}~|\hat F_j(x)-\tilde F_K(x)|$ according to Lemma \ref{lemma:lemma_tv} and Remark \ref{remark:remark_tv}.
We then set $c_j$ as
\begin{equation}
c_j=\tilde c_j+\hat C_j\times\left(0.99999-|\hat\rho|+\hat\epsilon_j\right)/\log(\log(j+1)),
\label{eq:ar1_bound2}
\end{equation}	
where $\hat C_1=1$, $\hat\epsilon_1=0$, and for $j>1$, we adaptively modify these values as follows:
\begin{itemize}
\item If $|\hat\rho|>0.9985$, 
	\begin{enumerate}
	 \item  If $y_j=1$, then $\hat\epsilon_{j+1}=\hat\epsilon_j+0.001$ and $\hat C_{j+1}=\hat C_j+1$.
	 \item  If $y_j=0$, then $\hat\epsilon_{j+1}=\hat\epsilon_j-0.001$ and $\hat C_{j+1}=\hat C_j+1$.
	\end{enumerate}
\item If $0.9955<|\hat\rho|\leq 0.9985$, 
	\begin{enumerate}
	\item $y_j=1$, then $\hat\epsilon_{j+1}=\hat\epsilon_j+0.01$ and $\hat C_{j+1}=\hat C_j+1$.
	\item $y_j=0$, then $\hat\epsilon_{j+1}=\hat\epsilon_j-0.01$ and $\hat C_{j+1}=\hat C_j+1$.
	\end{enumerate}
\item If $0<|\hat\rho|\leq 0.9955$, 
	\begin{enumerate}
         \item If $y_j=1$, then $\hat\epsilon_{j+1}=\hat\epsilon_j+0.05$ and $\hat C_{j+1}=\hat C_j+1$.
         \item If $y_j=0$, then $\hat\epsilon_{j+1}=\hat\epsilon_j-0.05$ and $\hat C_{j+1}=\hat C_j+1$.
	\end{enumerate}
\end{itemize}
To appreciate the above strategy, first note that for small samples, the MLE of $\rho$ need not be adequately close to the true value of $\rho$, and hence
we need to add a quantity $\hat\epsilon_j$ to make up for the inadequacy. We select $\hat\epsilon_j$ adaptively, increasing its value for the next iteration
if $y_j=1$, so that in the next iteration stationarity is preferred, given the current value of $y_j$. If $y_j=0$ in the current iteration, we decrease the current
value of $\hat\epsilon_j$, so that nonstationarity is favoured in the next iteration. We also increase the value of $\hat C_j$ by one, at every iteration, rather
than keeping it constant over the iterations. Thus, the prominence of the quantity $\hat C_j\times\left(0.99999-|\hat\rho|+\hat\epsilon_j\right)/\log(\log(j+1))$
increases with the iterations. 

The increment and decrement of $\hat\epsilon_j$ depends upon the magnitude of $\hat\rho$. If $|\hat\rho|>0.9985$, that is, when the model is close to
nonstationarity, we increase/decrease $\hat\epsilon_j$ by $0.001$ only, since larger quantities, if added, can wrongly indicate stationarity.

When $0.9955<|\hat\rho|\leq 0.9985$, we consider adding/subtracting $0.01$ to $\hat\epsilon_j$; this larger quantity is expected to make up for the uncertainty
associated with stationarity and nonstationarity when $0.9955<|\hat\rho|\leq 0.9985$.

On the other hand, when $0<|\hat\rho|\leq 0.9955$, we add/subtract $0.05$ to $\hat\epsilon_j$, since we expect our algorithm to favour stationarity in this situation.
The choice $0.05$, which is larger than the quantities in the previous cases, is expected to facilitate diagnosis of stationarity.

\subsubsection{Implementation}

The implementation remains the same as before. For this small sample, even with 2 cores, the results are delivered almost instantly.

\subsubsection{Results}
As before, we implement our method when the data is generated from the AR(1) model with $\rho$ randomly selected from $U(-1,1)$, and with $\rho$
taking the values $0.99$, $0.995$, $0.999$, $0.9999$, $1$, $1.00005$, $1.05$ and $2$. Figure \ref{fig:example2} shows that, except
in the case where the true value of $\rho$ is $0.9999$, our method correctly detects stationarity and nonstationarity.
That even with such small sample, and with such subtle differences among the true values of $\rho$, our method performs well, is quite encouraging, despite
its fallibility at $\rho=0.9999$. Indeed, with such small sample, correct detection of stationarity in the case of so subtle difference with nonstationarity is perhaps
not to be expected.
\begin{figure}
\centering
\subfigure [Stationary: $|\rho|<1$.]{ \label{fig:abs_rho_less_1_short}
\includegraphics[width=4.5cm,height=4.5cm]{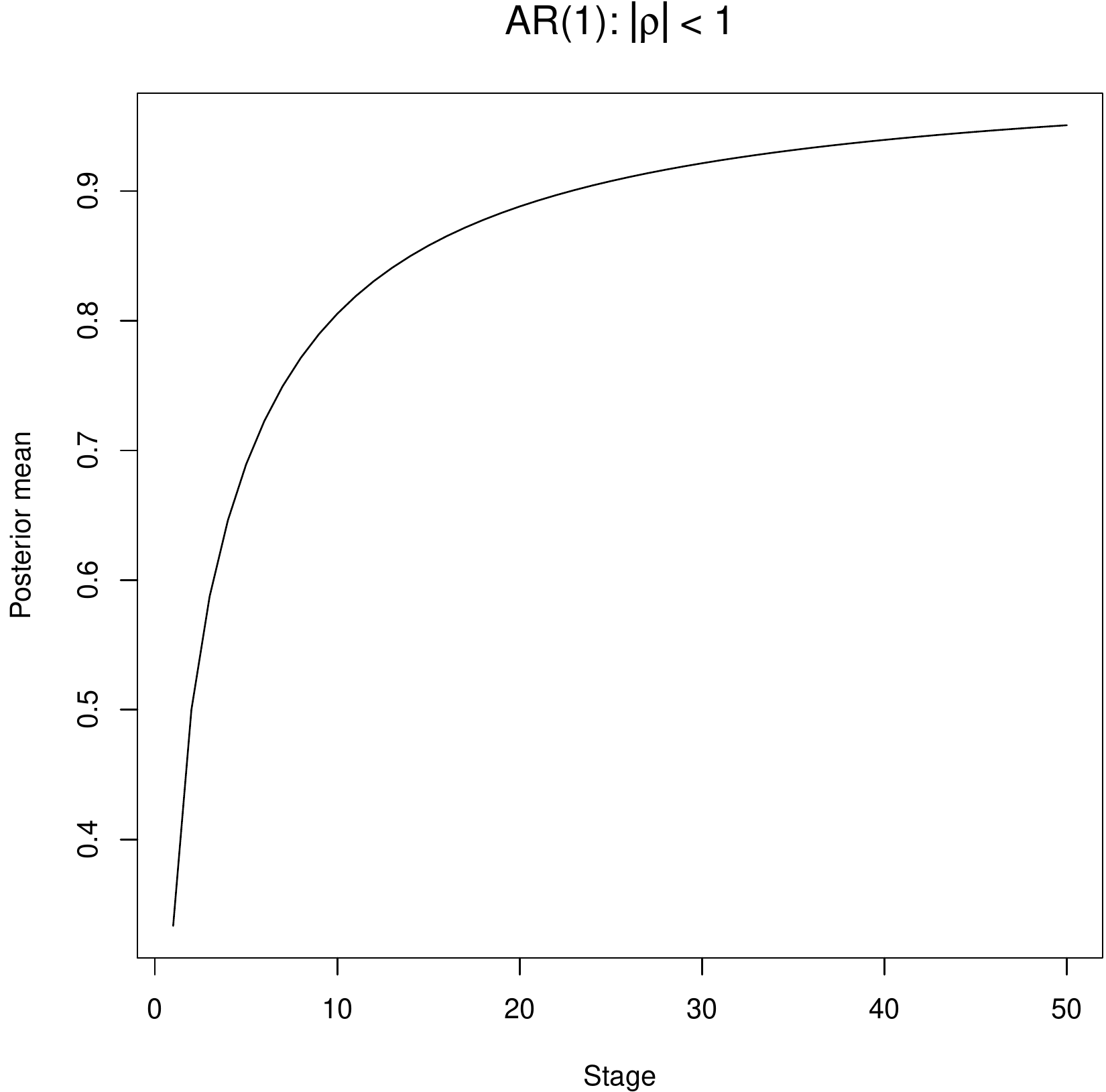}}
\hspace{2mm}
\subfigure [Stationary: $\rho=0.99$.]{ \label{fig:rho_99_short}
\includegraphics[width=4.5cm,height=4.5cm]{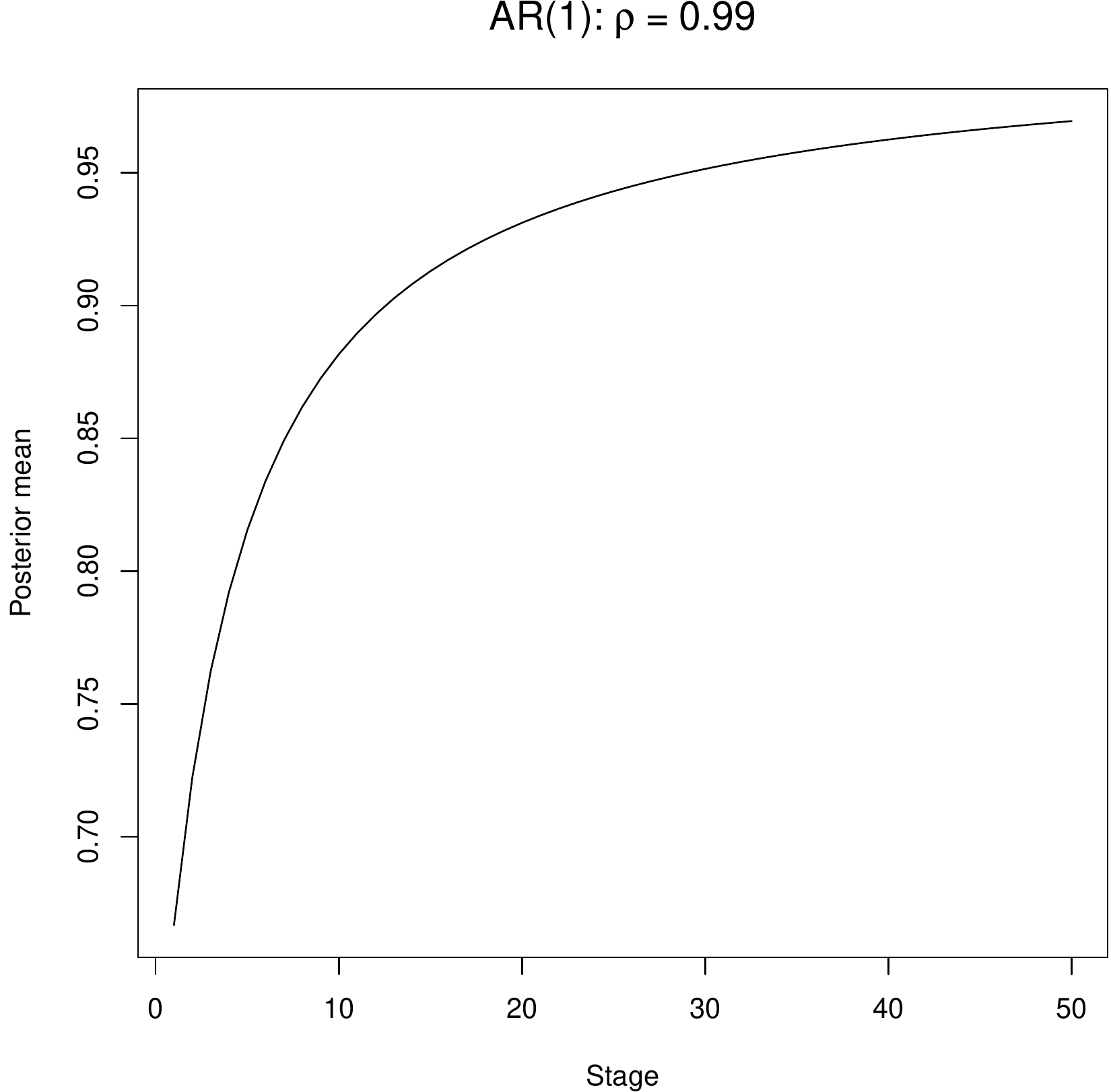}}
\hspace{2mm}
\subfigure [Stationary: $\rho=0.995$.]{ \label{fig:rho_995_short}
\includegraphics[width=4.5cm,height=4.5cm]{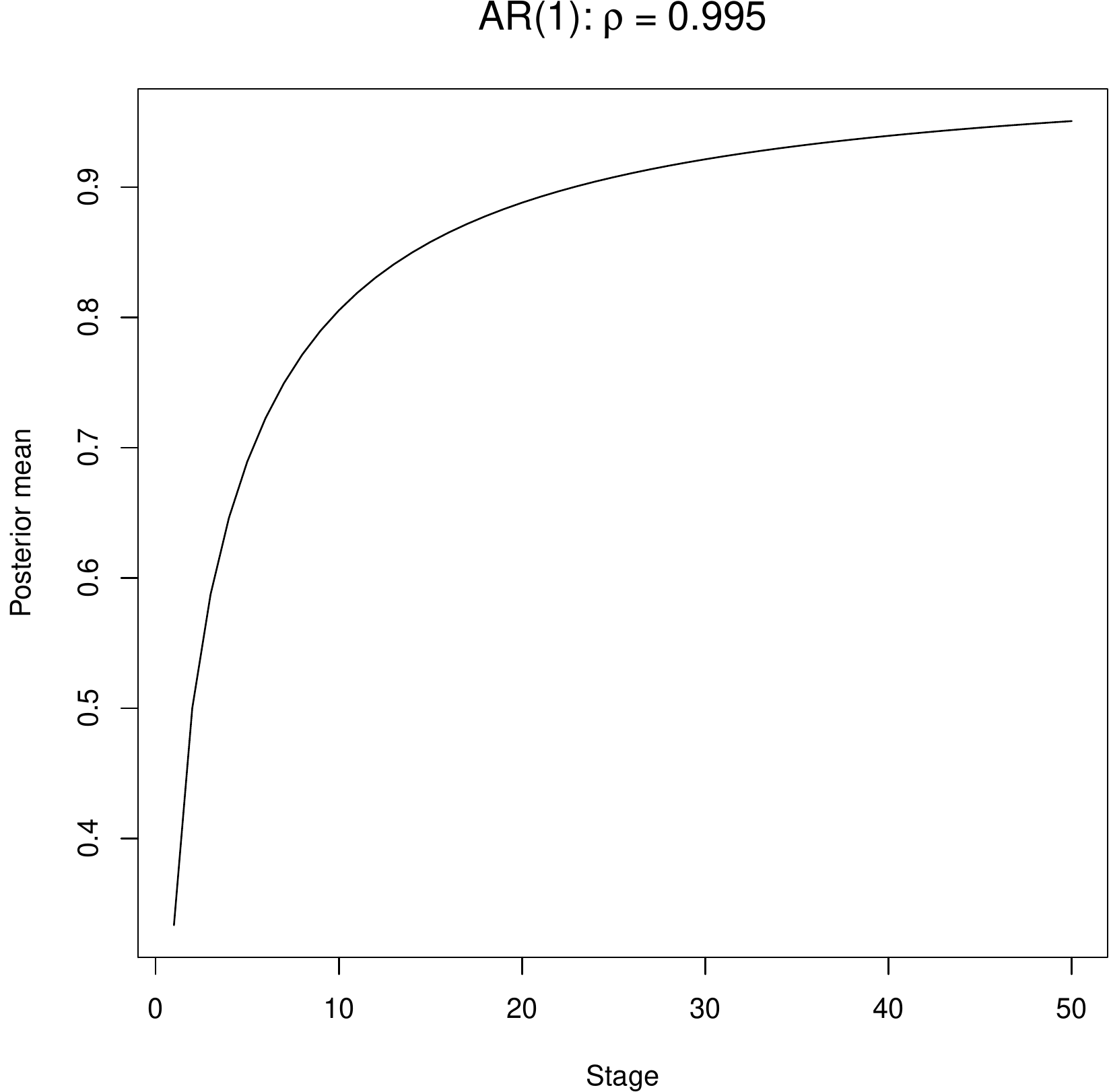}}\\
\vspace{2mm}
\subfigure [Stationary: $\rho=0.999$.]{ \label{fig:rho_999_short}
\includegraphics[width=4.5cm,height=4.5cm]{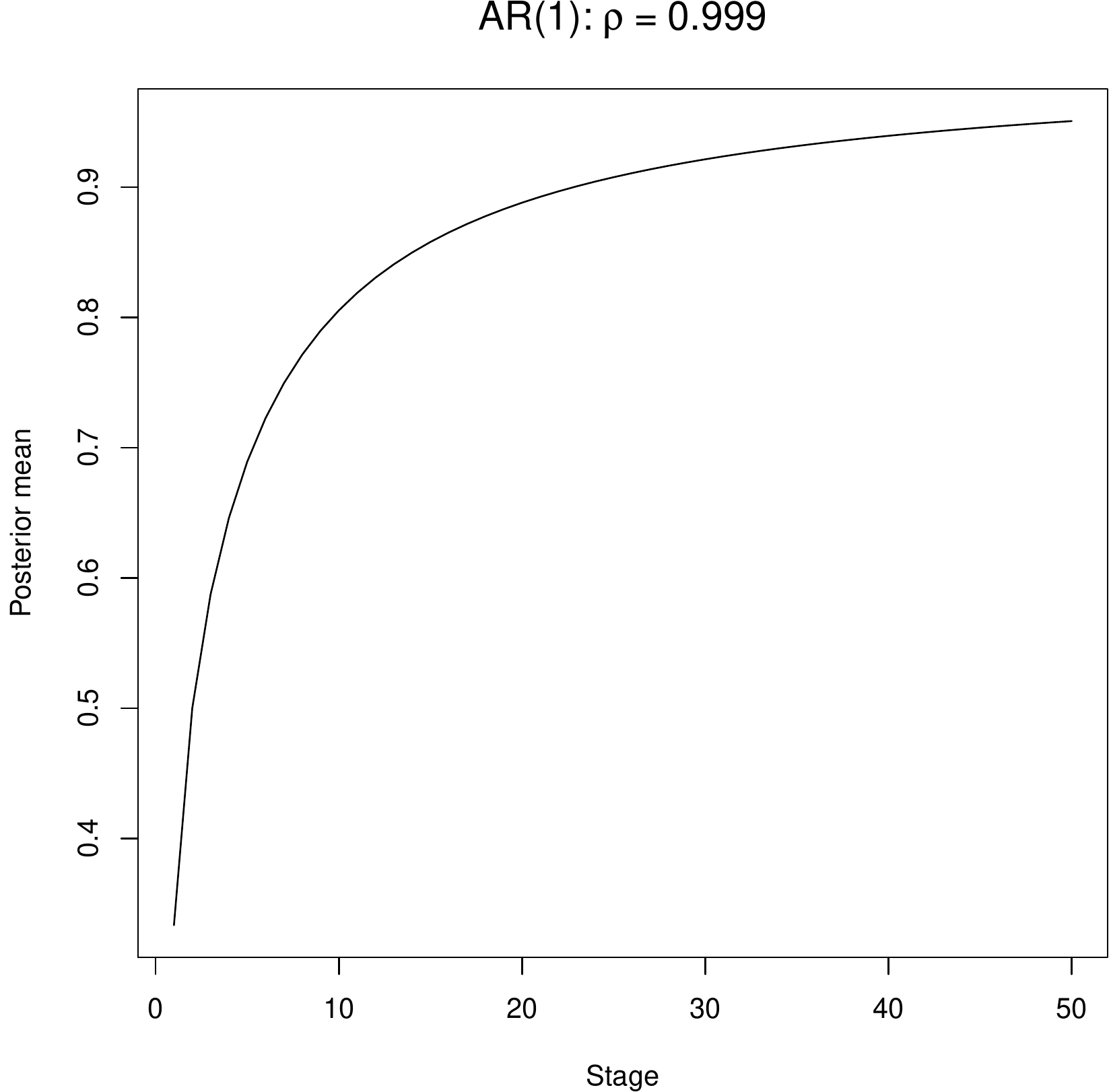}}
\hspace{2mm}
\subfigure [Nonstationary: $\rho=0.9999$.]{ \label{fig:rho_9999_short}
\includegraphics[width=4.5cm,height=4.5cm]{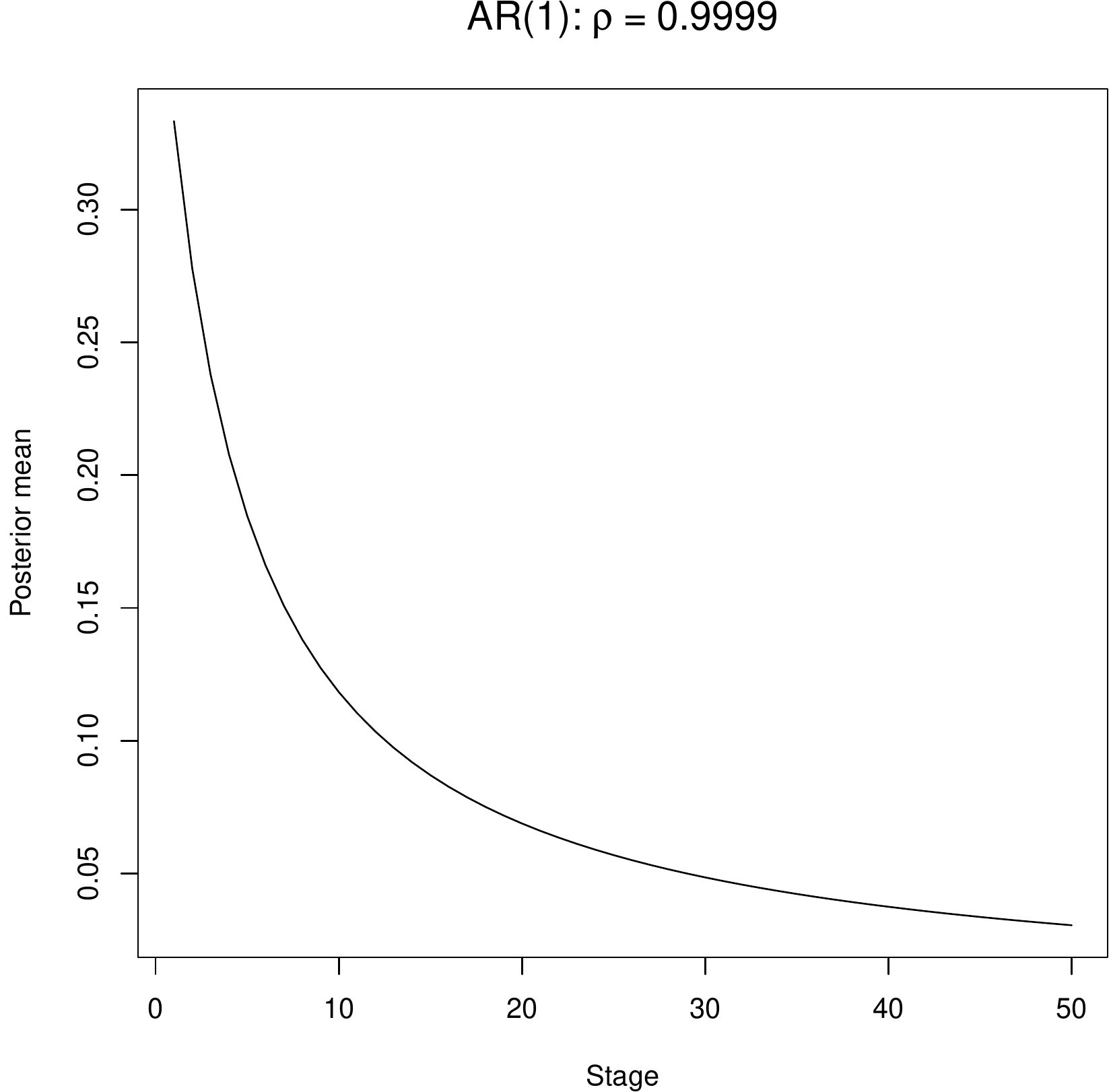}}
\hspace{2mm}
\subfigure [Nonstationary: $\rho=1$.]{ \label{fig:rho_1_short}
\includegraphics[width=4.5cm,height=4.5cm]{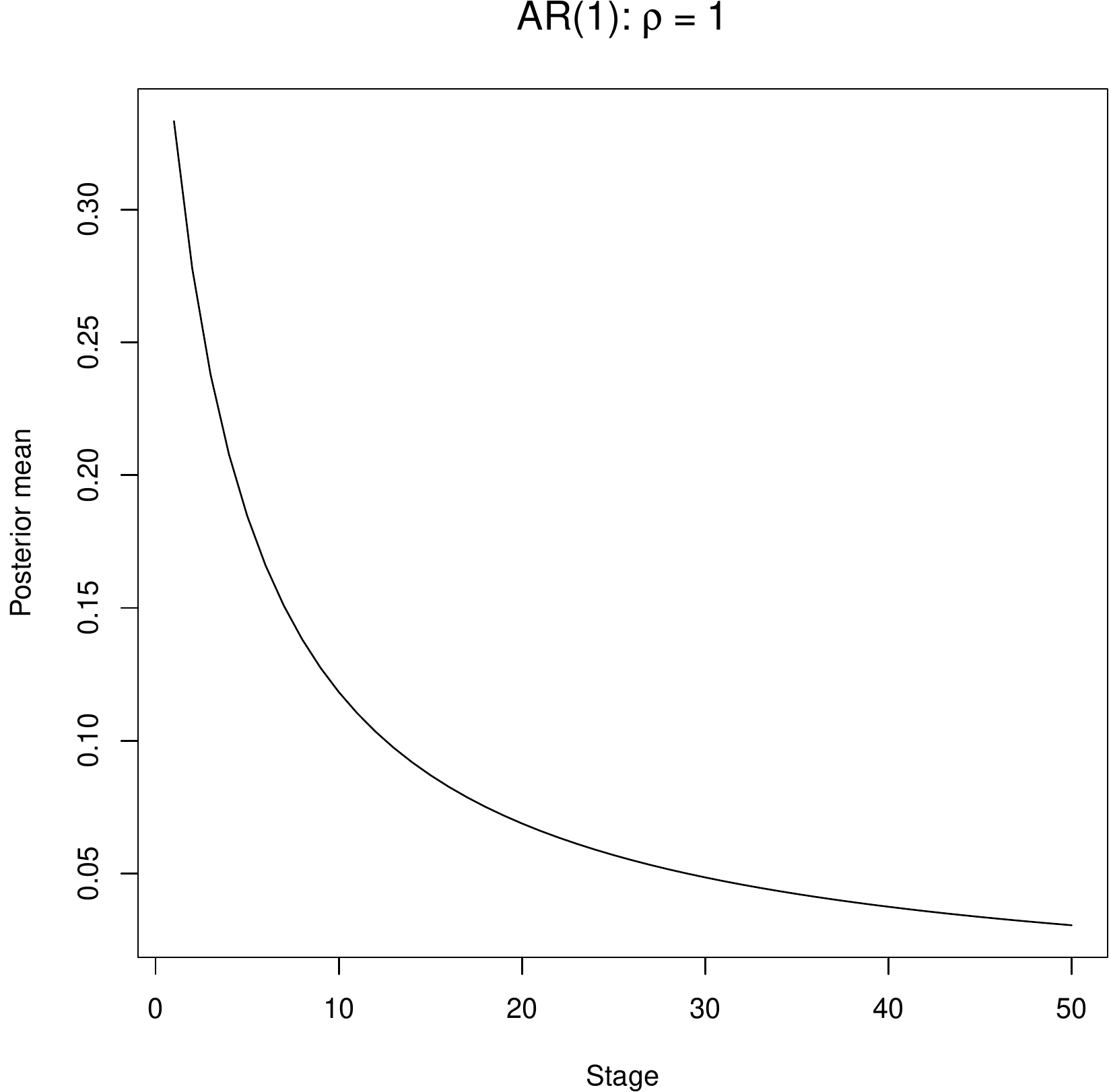}}\\
\vspace{2mm}
\subfigure [Nonstationary: $\rho=1.00005$.]{ \label{fig:rho_100005_short}
\includegraphics[width=4.5cm,height=4.5cm]{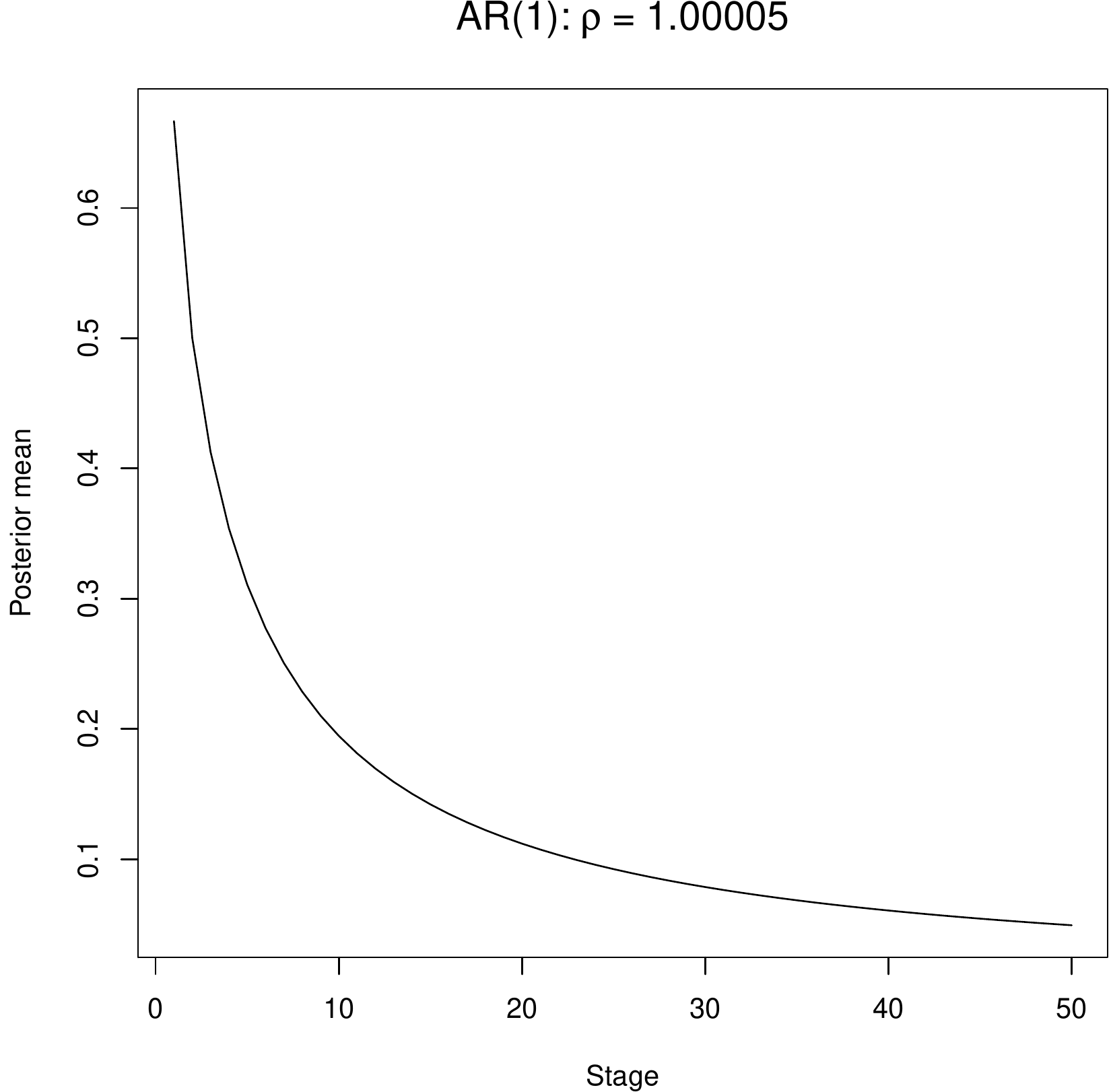}}
\hspace{2mm}
\subfigure [Nonstationary: $\rho=1.05$.]{ \label{fig:rho_105_short}
\includegraphics[width=4.5cm,height=4.5cm]{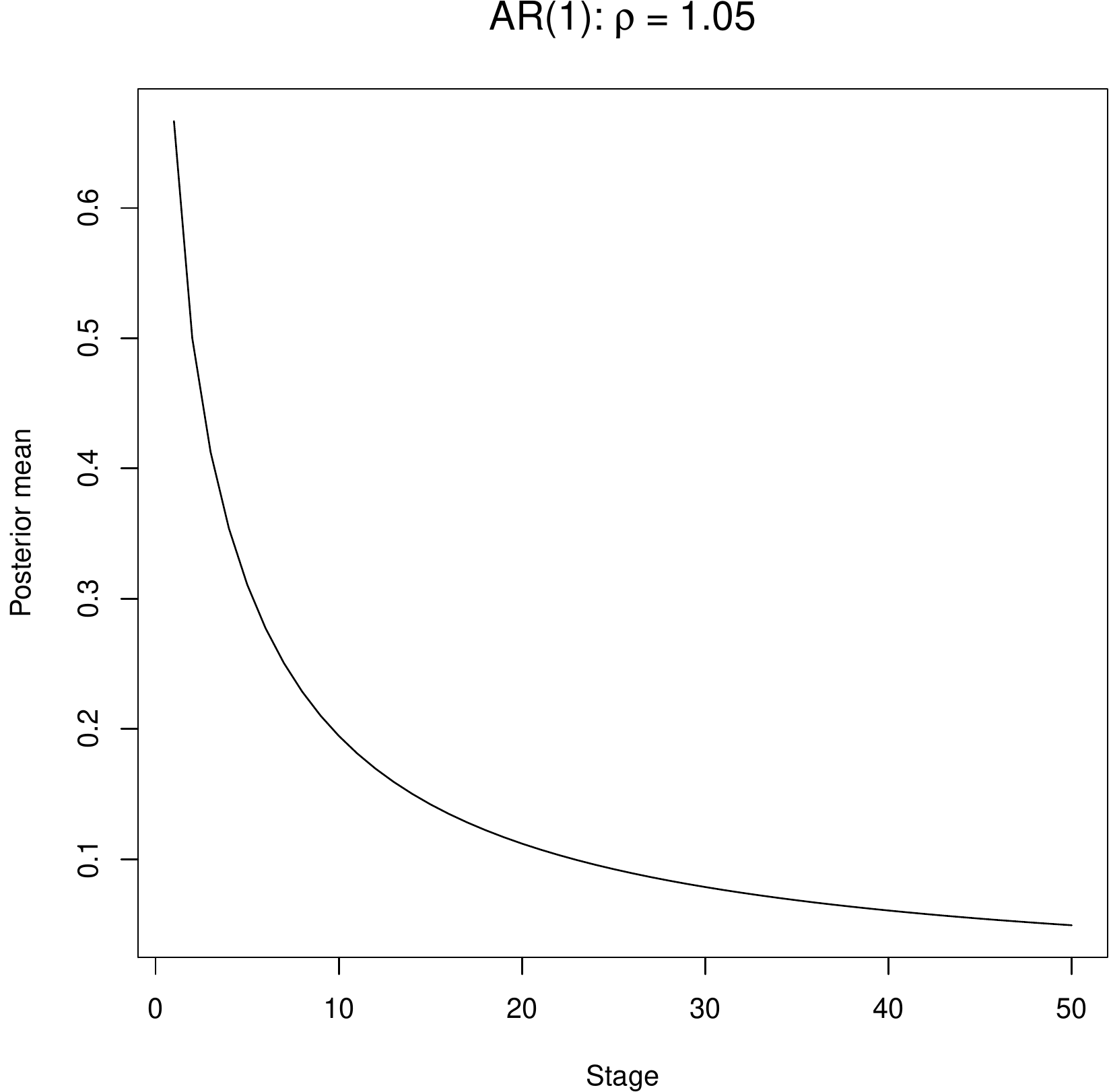}}
\hspace{2mm}
\subfigure [Nonstationary: $\rho=2$.]{ \label{fig:rho_2_short}
\includegraphics[width=4.5cm,height=4.5cm]{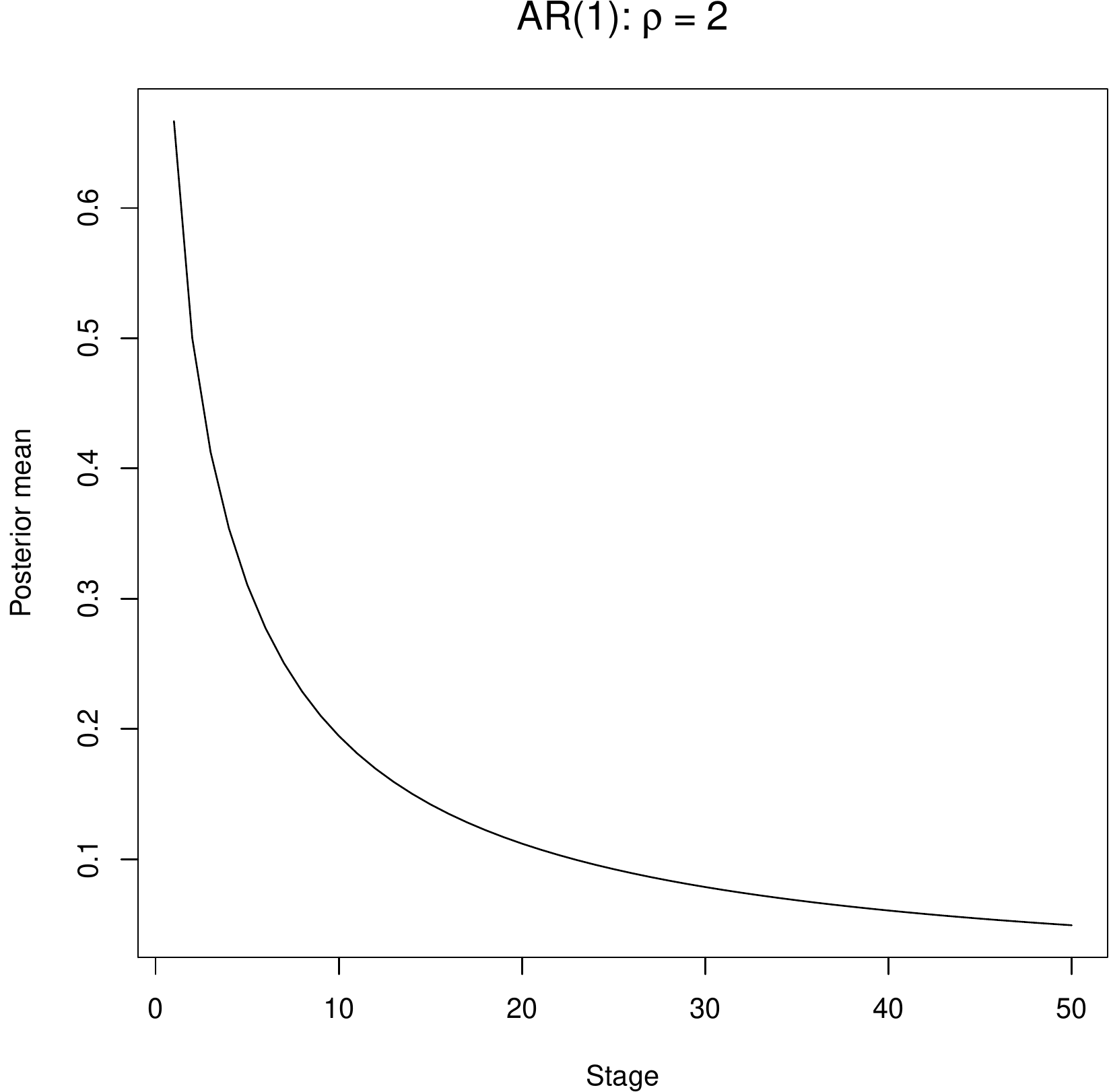}}
\caption{Parametric AR(1) example with $K=50$ and $n=50$.}
\label{fig:example2}
\end{figure}

\subsection{Case 3: Relatively small sample size, form of the model unknown}
\label{subsec:ar1_case3}

\subsubsection{Sample size}
We draw samples of sizes $2500$ from the AR(1) model for those values of $\rho$ as in Sections \ref{subsec:ar1_case1} and \ref{subsec:ar1_case2}
and evaluate the performance of our Bayesian methodology, setting $n=50$ and $K=50$, assuming that the model itself is unknown.

\subsubsection{Construction of bound}

Since we assume now that the model itself is unknown, there is no provision of obtaining the MLE of $\rho$ and constructing bounds on its basis. We also can not
compute $\tilde c_j$, since it requires knowledge of the underlying model. Hence, in the absence of such information, we set
\begin{equation}
c_j=\hat C_j/\log(j+1),
\label{eq:ar1_bound3}
\end{equation}	
where $\hat C_1=1$, and for $j>1$, $\hat C_j=\hat C_{j-1}+0.05$ if $y_{j-1}=1$ and $\hat C_j=\hat C_{j-1}-0.05$ if $y_{j-1}=0$. 

Thus, as before, we favour stationarity at the next stage if at the current stage stationarity is favoured ($y_j=1$) and nonstationarity otherwise. 
Note that unlike the previous cases, we have considered $\log(j+1)$ instead of $\log(\log(j+1))$. This faster rate turned out to be more appropriate
in this situation of very less information about the true model.

\subsubsection{Implementation}

The implementation remains the same as before, only that here it is much simpler because of the simple structure of the bound. 
Again, for this small sample, even with 2 cores, the results are delivered almost instantaneously.

\subsubsection{Results}
As before, we implement our method when the data is generated from the AR(1) model with $\rho$ randomly selected from $U(-1,1)$, and with $\rho$
taking the values $0.99$, $0.995$, $0.999$, $0.9999$, $1$, $1.00005$, $1.05$ and $2$. Figure \ref{fig:example3} shows that, again except
in the case where the true value of $\rho$ is $0.9999$, our method correctly detects stationarity and nonstationarity, albeit in a less precise manner
as in Figure \ref{fig:example2}. That even with such small sample, with no assumption about the true model, 
and with such subtle differences among the true values of $\rho$, our method performs well, is quite encouraging, again, despite
its fallibility at $\rho=0.9999$, which is perhaps not expected to be detected correctly in this situation of so less information.
\begin{figure}
\centering
\subfigure [Stationary: $|\rho|<1$.]{ \label{fig:abs_rho_less_1_short_nonpara}
\includegraphics[width=4.5cm,height=4.5cm]{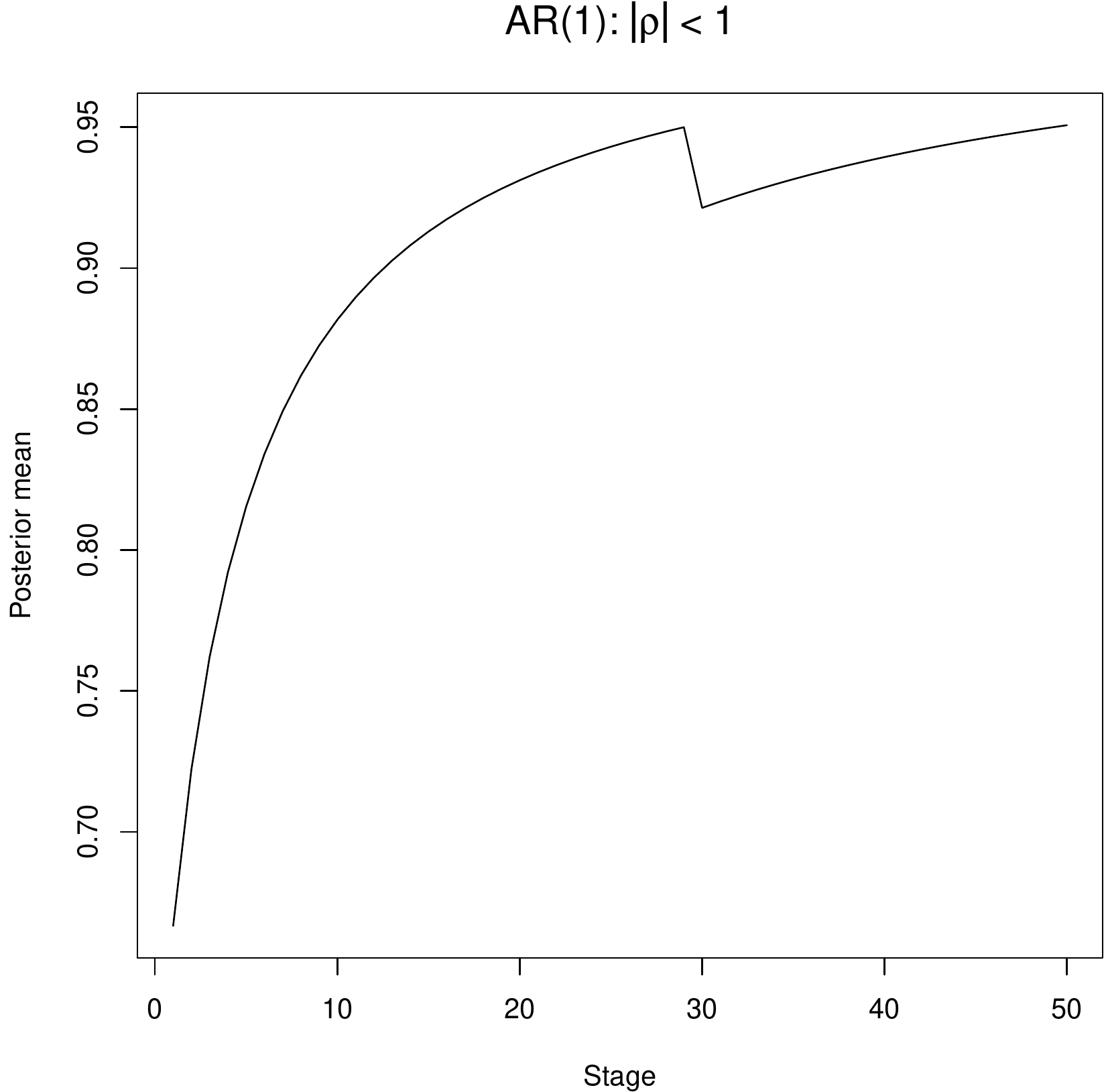}}
\hspace{2mm}
\subfigure [Stationary: $\rho=0.99$.]{ \label{fig:rho_99_short_nonpara}
\includegraphics[width=4.5cm,height=4.5cm]{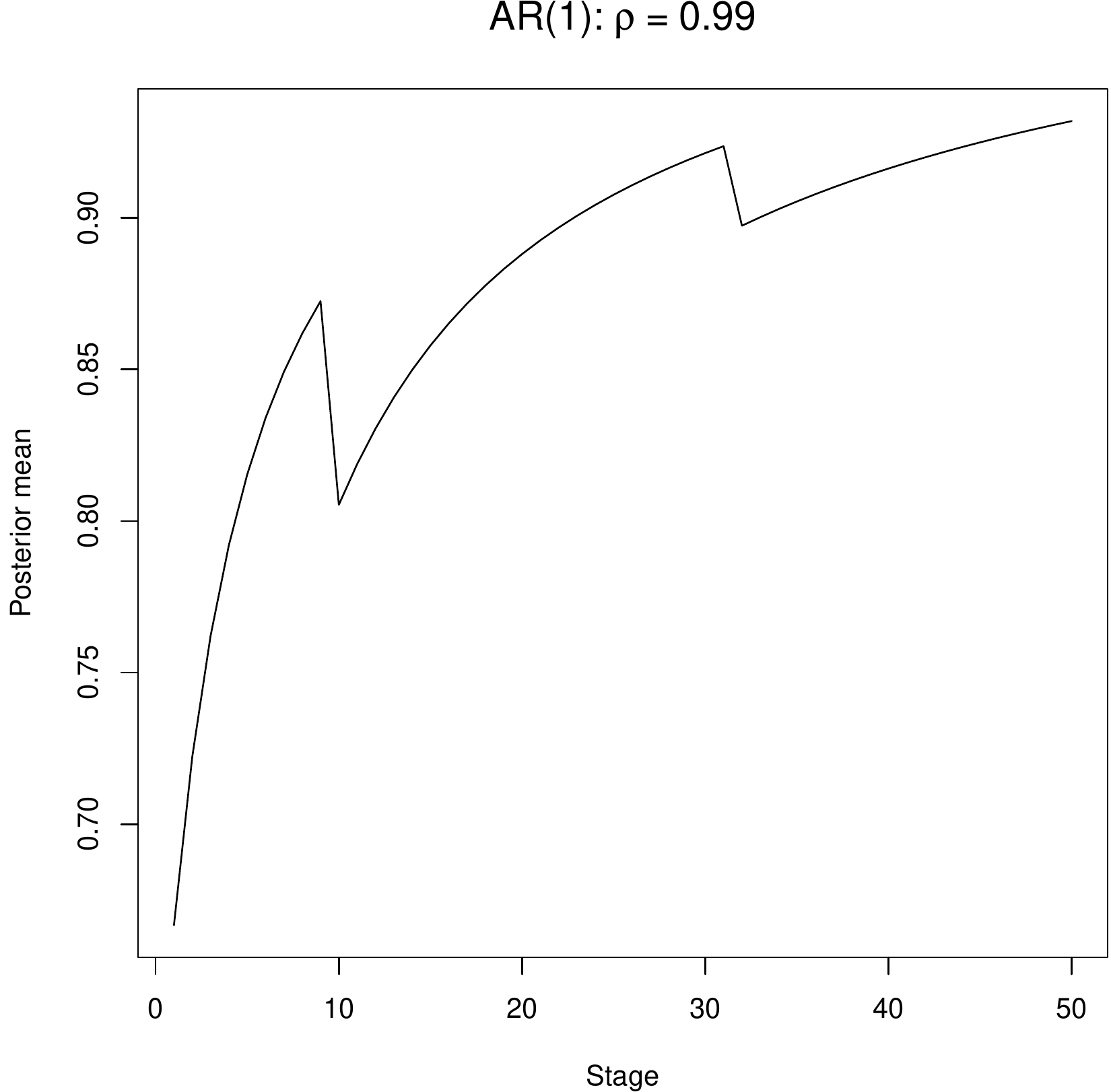}}
\hspace{2mm}
\subfigure [Stationary: $\rho=0.995$.]{ \label{fig:rho_995_short_nonpara}
\includegraphics[width=4.5cm,height=4.5cm]{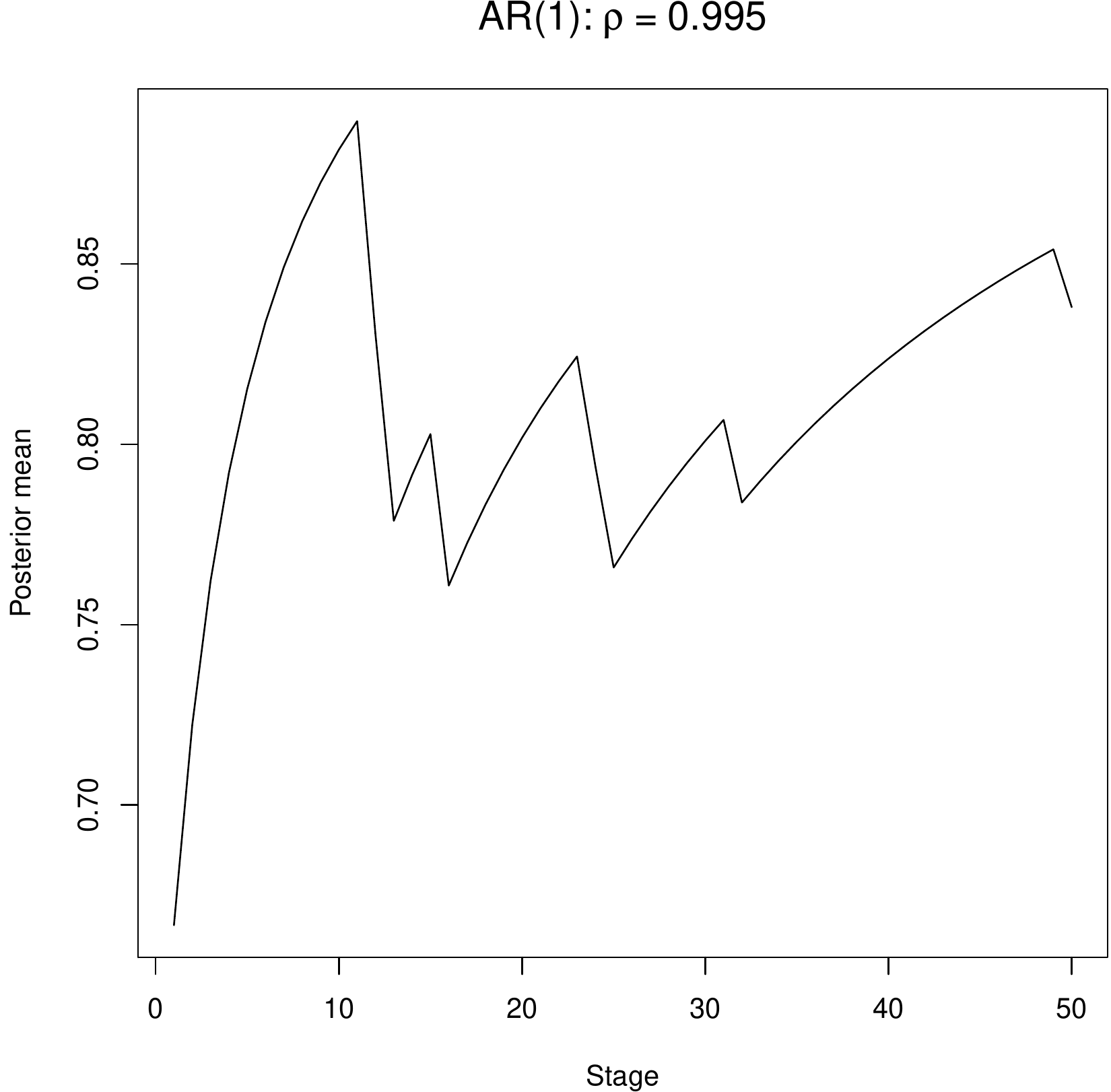}}\\
\vspace{2mm}
\subfigure [Stationary: $\rho=0.999$.]{ \label{fig:rho_999_short_nonpara}
\includegraphics[width=4.5cm,height=4.5cm]{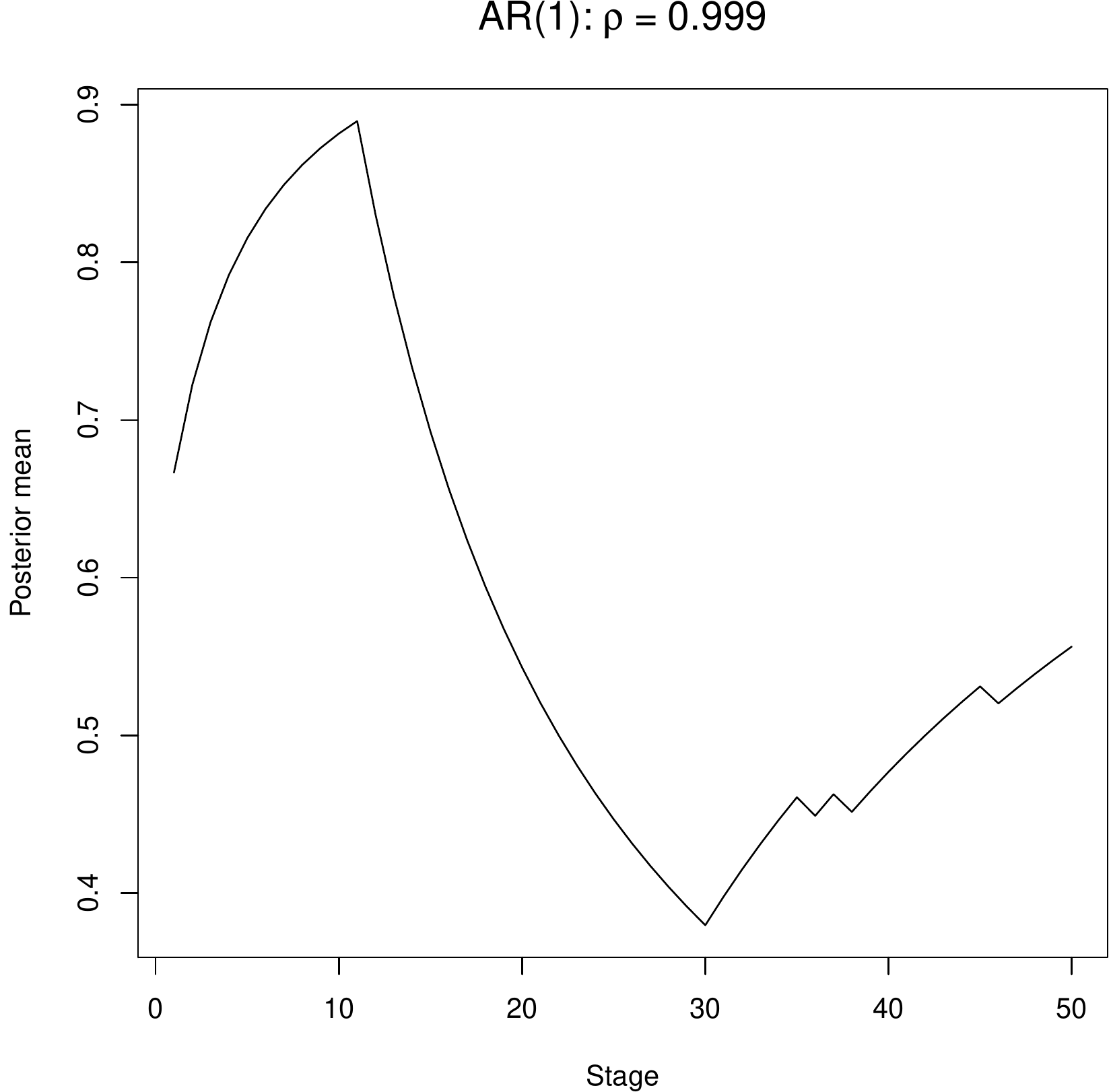}}
\hspace{2mm}
\subfigure [Stationary: $\rho=0.9999$.]{ \label{fig:rho_9999_short_nonpara}
\includegraphics[width=4.5cm,height=4.5cm]{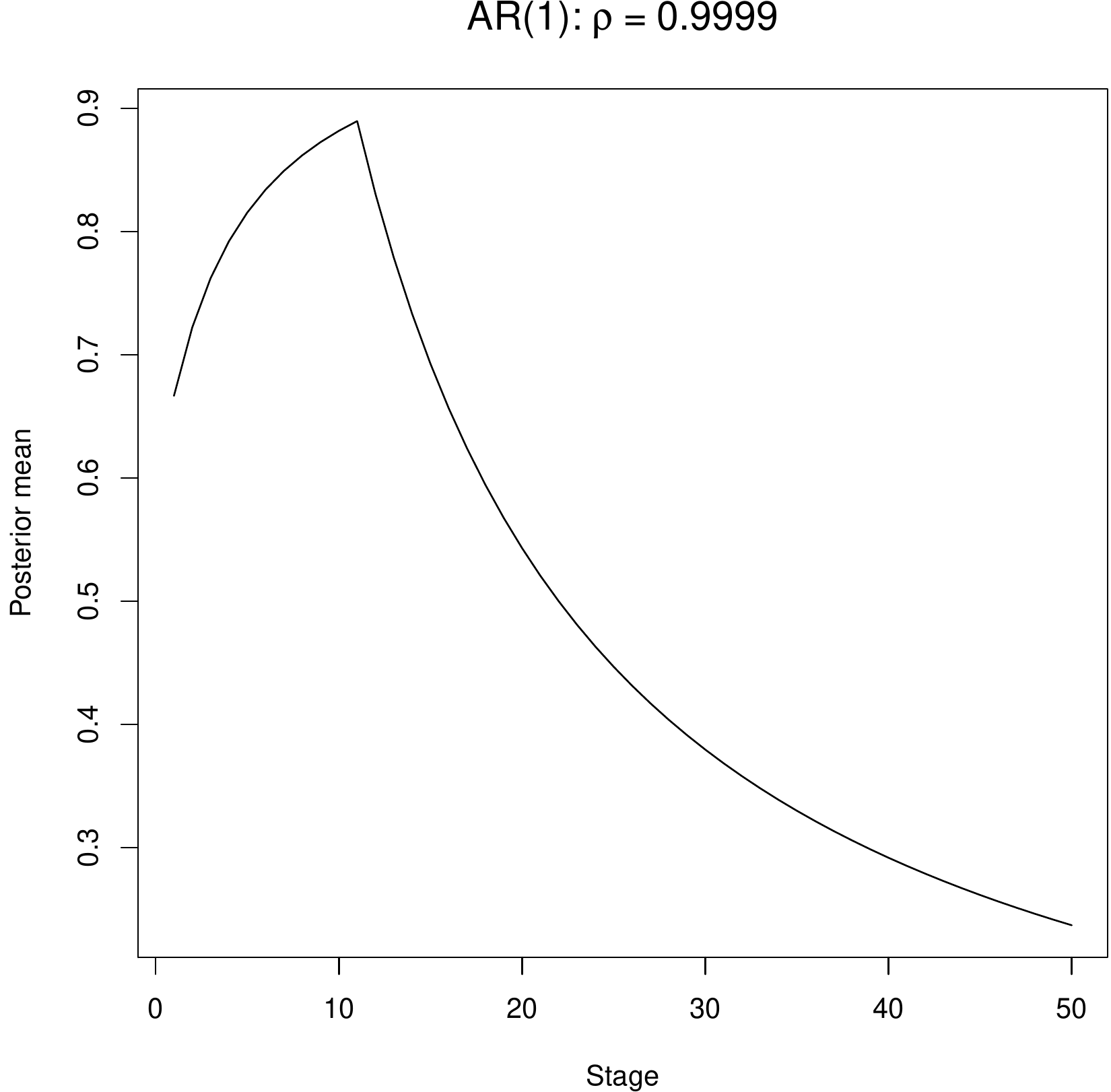}}
\hspace{2mm}
\subfigure [Nonstationary: $\rho=1$.]{ \label{fig:rho_1_short_nonpara}
\includegraphics[width=4.5cm,height=4.5cm]{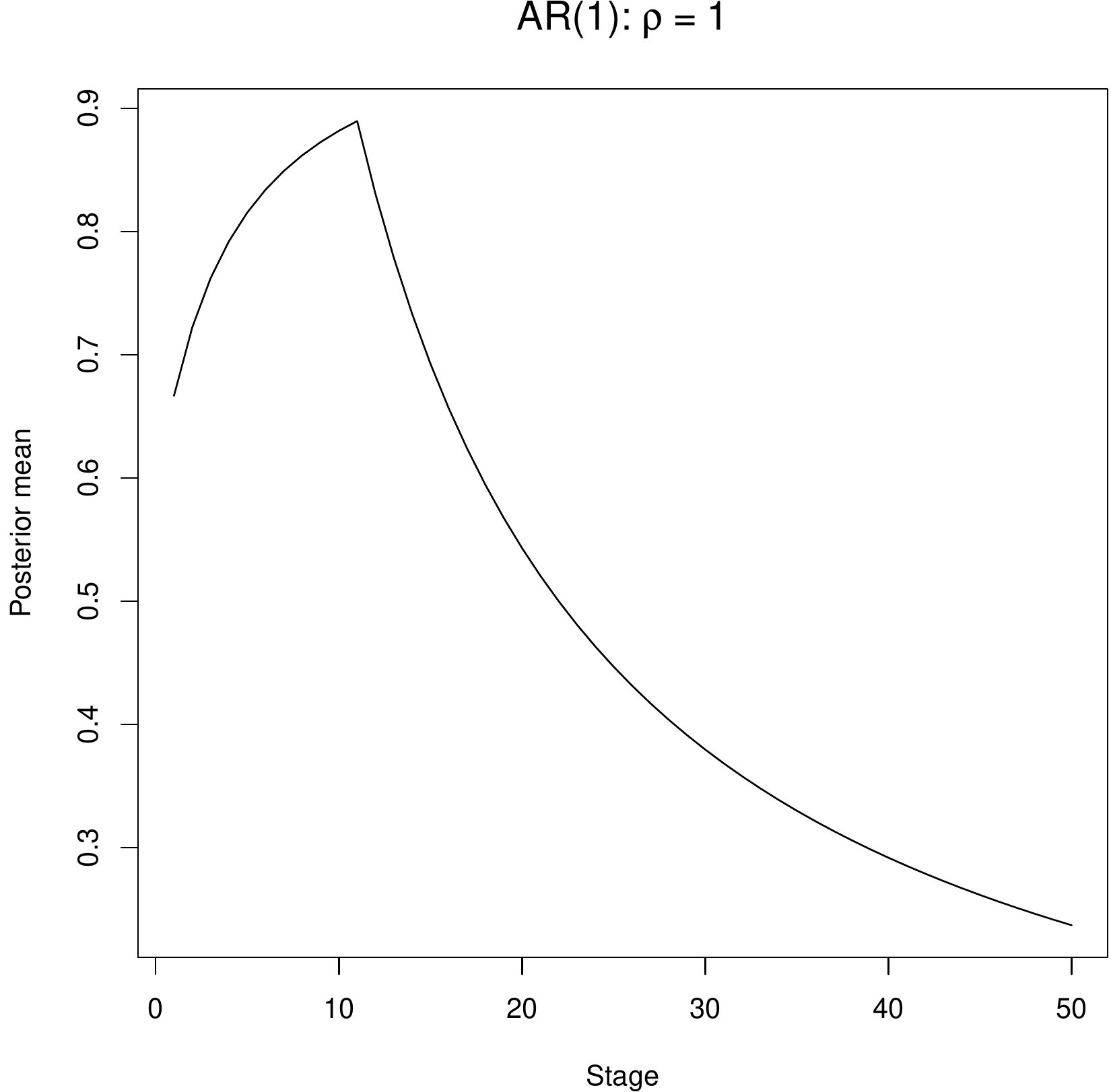}}\\
\vspace{2mm}
\subfigure [Nonstationary: $\rho=1.00005$.]{ \label{fig:rho_100005_short_nonpara}
\includegraphics[width=4.5cm,height=4.5cm]{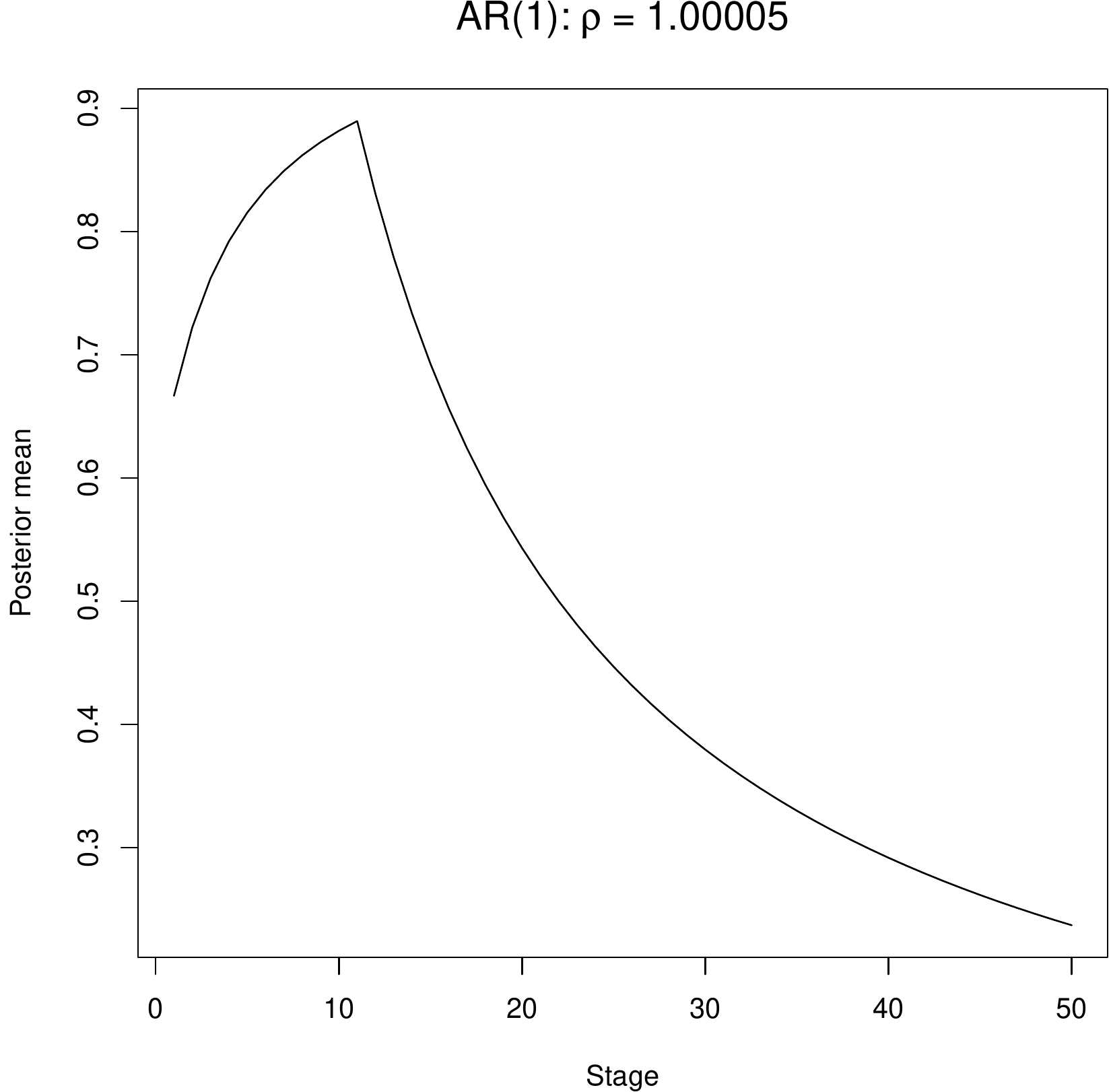}}
\hspace{2mm}
\subfigure [Nonstationary: $\rho=1.05$.]{ \label{fig:rho_105_short_nonpara}
\includegraphics[width=4.5cm,height=4.5cm]{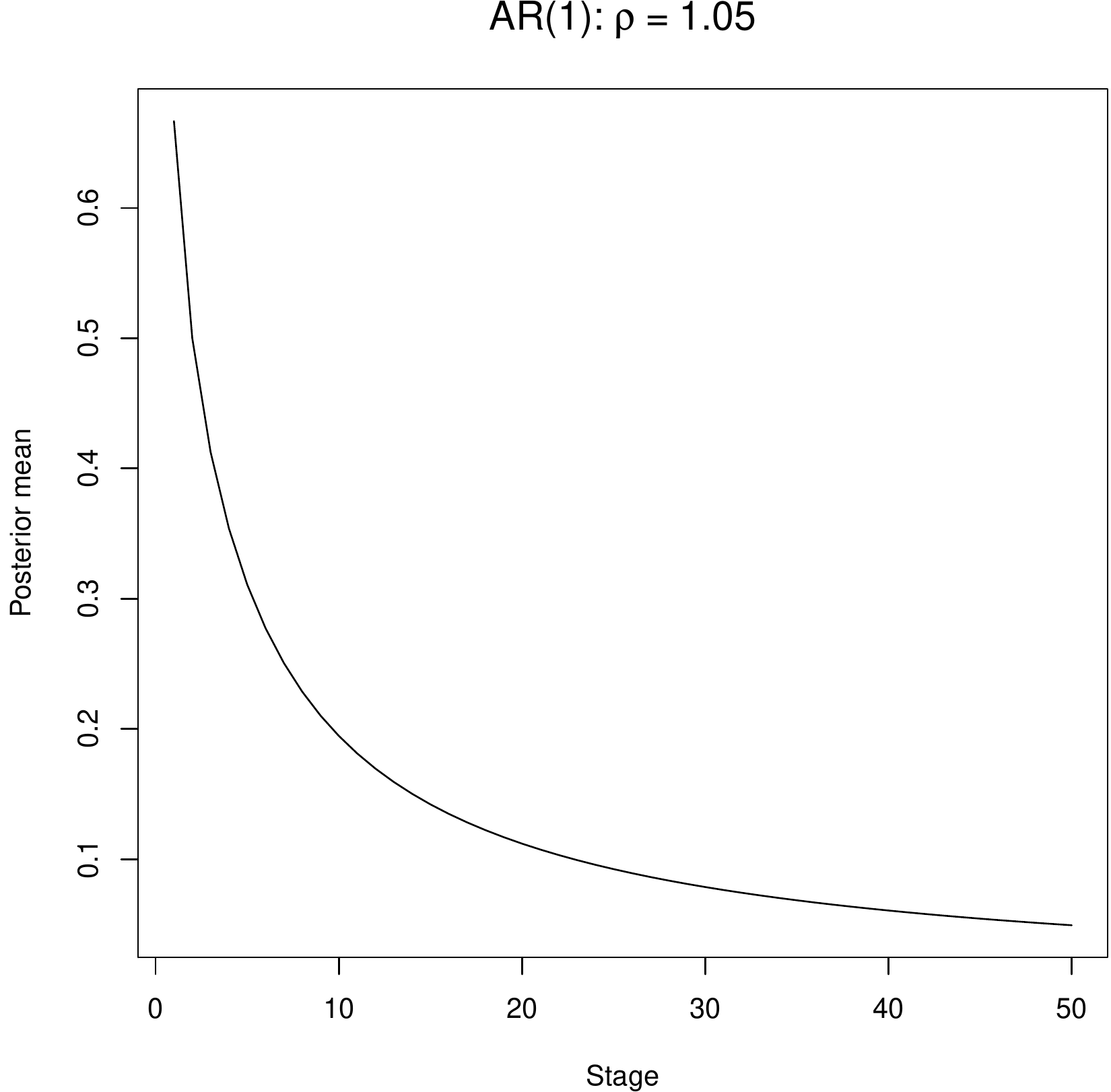}}
\hspace{2mm}
\subfigure [Nonstationary: $\rho=2$.]{ \label{fig:rho_2_short_nonpara}
\includegraphics[width=4.5cm,height=4.5cm]{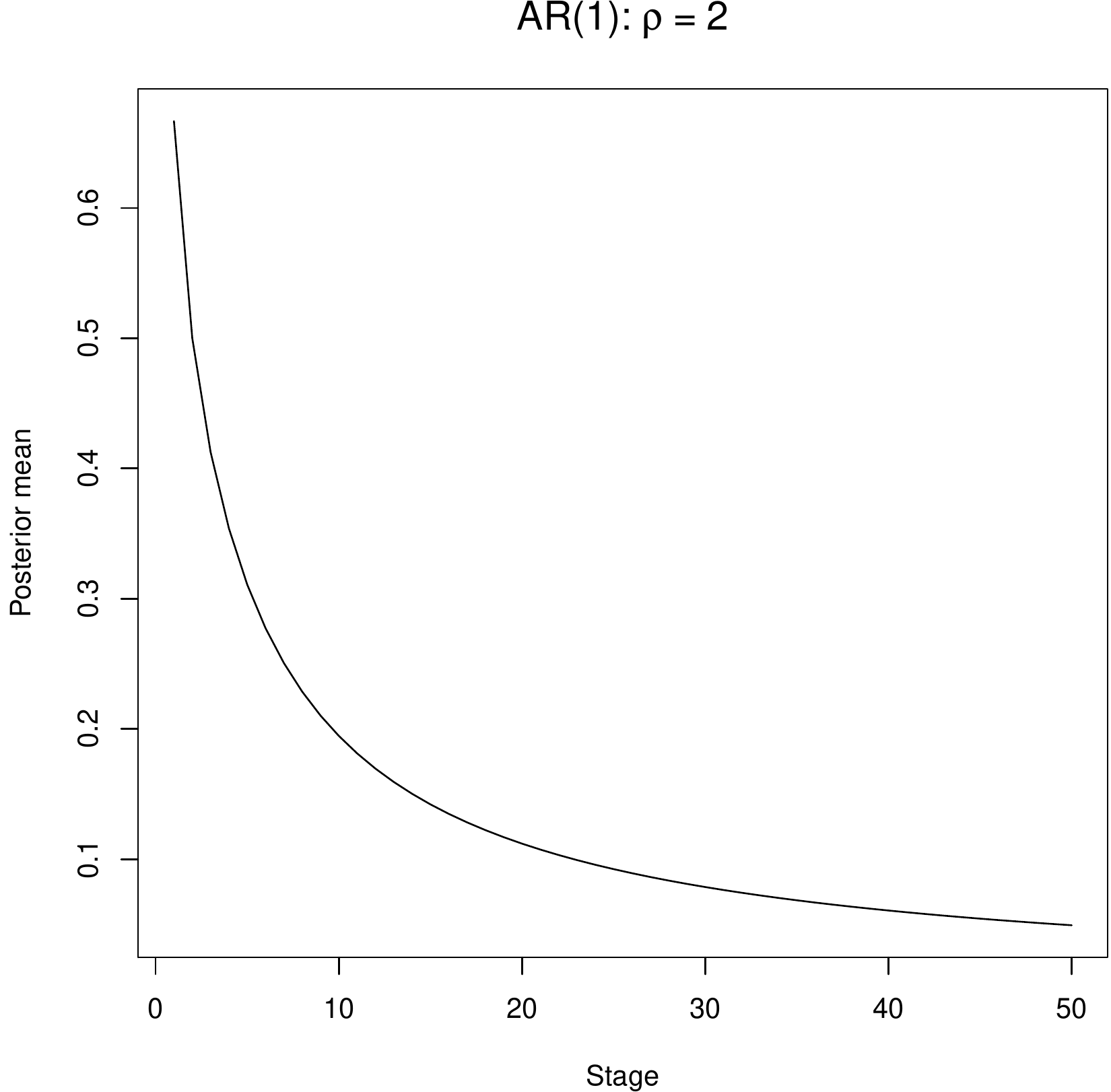}}
\caption{Nonparametric AR(1) example with $K=50$ and $n=50$.}
\label{fig:example3}
\end{figure}

\subsubsection{Comparison with classical tests of nonstationarity}
\label{subsubsec:classical_tests}
To test stationarity of AR(1) model, there are well-known classical hypotheses tests, namely, the augmented Dickey-Fuller (ADF) test (\ctn{Dickey79}), 
the Philips-Perron (PP) test (\ctn{Philips88}), and the Kwiatkowski, Phillips, Schmidt, Shin (KPSS) test (\ctn{KPSS92}).

Researchers have noticed that the first two tests, PP and ADF, are not very efficient in distinguishing between stationarity and nonstationarity when the process
is stationary, but at the verge of stationarity and nonstationarity. Indeed, when we apply these tests on our datasets with sample size 2500,
we find that these two tests correctly determines stationarity/nonstationarity of the process when $\rho$ is randomly chosen between $(-1,1)$, $\rho=0.99$
and $\rho=0.995$, at the $5\%$ level of significance, but fails when $\rho=0.999$, $0.9999$ and $1.05$. However, both these tests correct conclude nonstationarity
when $\rho=1$ and $1.00005$. For $\rho=2$, both the tests turn out to be inapplicable.

On the other hand, at the 5\% level of significance, the KPSS test provides correct answers whenever $|\rho|<1$, but fails when $\rho\geq 1$.

Thus, our proposed method outperforms all the three existing popular methods of testing stationarity in AR(1) models. Here we emphasize that the testing methods
ADF, PP and KPSS are particularly designed to detect stationarity in autoregressive models, while ours is a completely general method. That our method still
managed to outperform the existing specialized testing methods, is very encouraging.

\section{Second illustration: AR(2), ARCH(1) and GARCH(1,1) models}
\label{sec:other_time_series}

We now test our ideas on relatively more complex time series models. In particular, we consider autoregressive models of order $2$ (AR(2)), 
first order autoregressive conditional heteroscedastic model (ARCH(1)) and generalized ARCH of order one (GARCH(1,1)). We consider samples of size $2500$
for our investigation, since the relatively small sample size, as we observed in the context of AR(1), can pose beneficial challenge to our Bayesian method. 

\subsection{Application to AR(2)}
\label{subsec:ar2}
The AR(2) model is given by
\begin{equation}
x_t=\alpha x_{t-1}+\beta x_{t-2}+\epsilon_t;~t=1,2,\ldots,
\label{eq:ar2}
\end{equation}
where we set $x_1=x_2=0$ and $\epsilon_t\stackrel{iid}{\sim}N(0,1)$, for $t=1,2,\ldots$. The necessary and sufficient conditions for stationarity of the AR(2) model 
(\ref{eq:ar2}) are given by (see, for example, \ctn{Shumway06}) 
\begin{align}
	\alpha+\beta<1;\notag\\
	\beta-\alpha<1;\notag\\
	\beta>-1.
\label{eq:ar2_stationarity}
\end{align}
We simulate samples of size $2500$ from (\ref{eq:ar2}) with various fixed values of $\alpha$ and $\beta$ that satisfy and do not satisfy (\ref{eq:ar2_stationarity}),
and apply our Bayesian procedure to ascertain stationarity and nonstationarity, with the bound of the form (\ref{eq:ar1_bound3}), starting
with $\hat C_1=1$. We initially consider $(n=50,K=50)$ but in a few nonstationary cases 
(($\alpha=1,\beta=0$), ($\alpha=0,\beta=1$) and ($\alpha=0.5,\beta=0.5$))
this failed to work satisfactorily, since a relatively large value of $n$ in the context of relatively small sample size 
has the tendency to create overlaps among neighboring regions of local stationarity, in effect, destroying local stationarity which is at the heart of our Bayesian procedure. 
This happens when the underlying time series diverges slowly, as in the aforementioned values of $(\alpha,\beta)$. Figure \ref{fig:slow_divergence}
captures such behaviours of such slowly diverging nonstationary processes in comparison to fast diverging nonstationary processes.
\begin{figure}
\centering
\subfigure [Slow divergence: $\alpha=0.5$, $\beta=0.5$.]{ \label{fig:ar2plot_5_5_short_nonpara2}
\includegraphics[width=4.5cm,height=4.5cm]{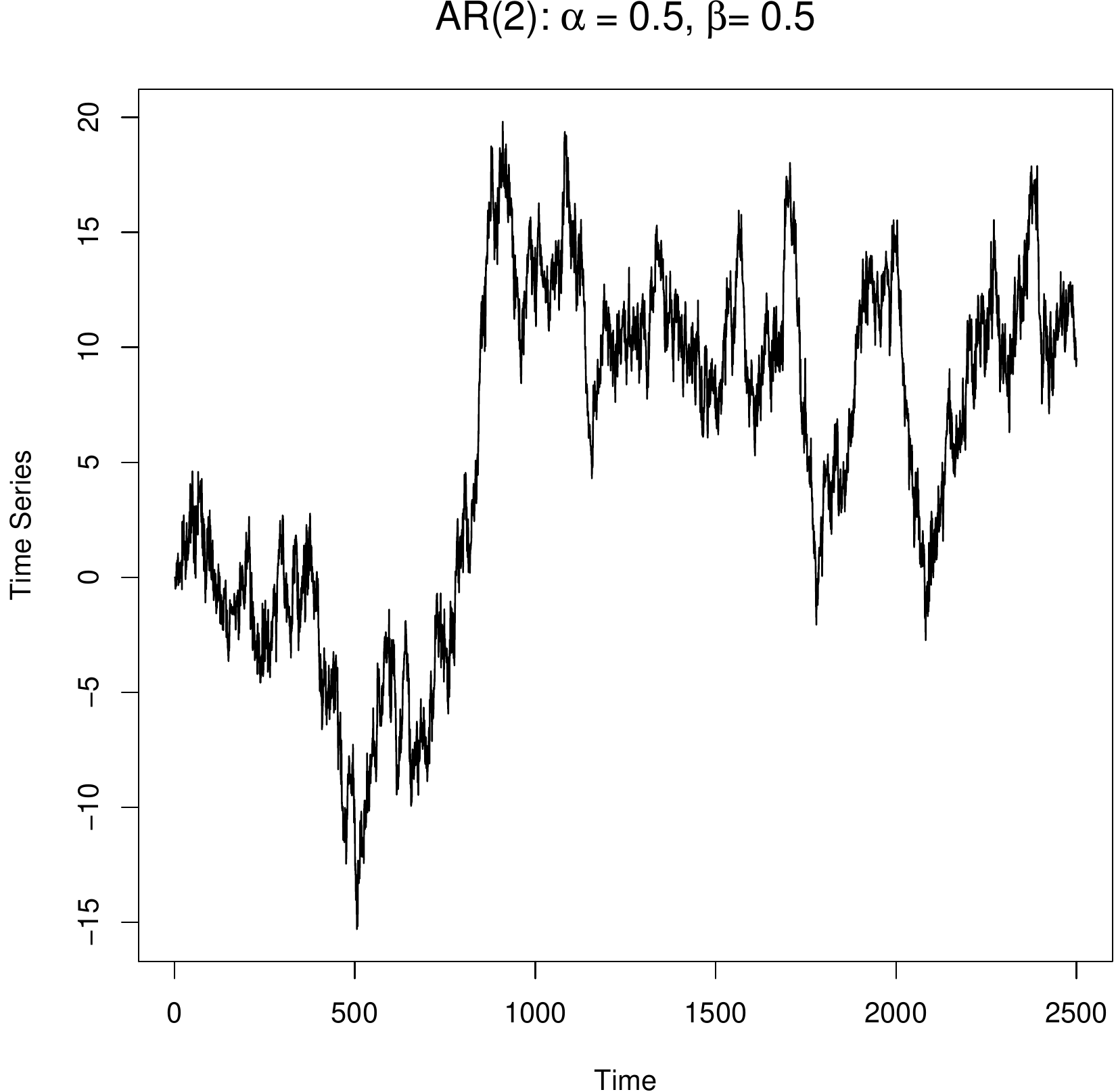}}
\hspace{2mm}
\subfigure [Slow divergence: $\alpha=0$, $\beta=1$.]{ \label{fig:ar2plot_0_1_short_nonpara2}
\includegraphics[width=4.5cm,height=4.5cm]{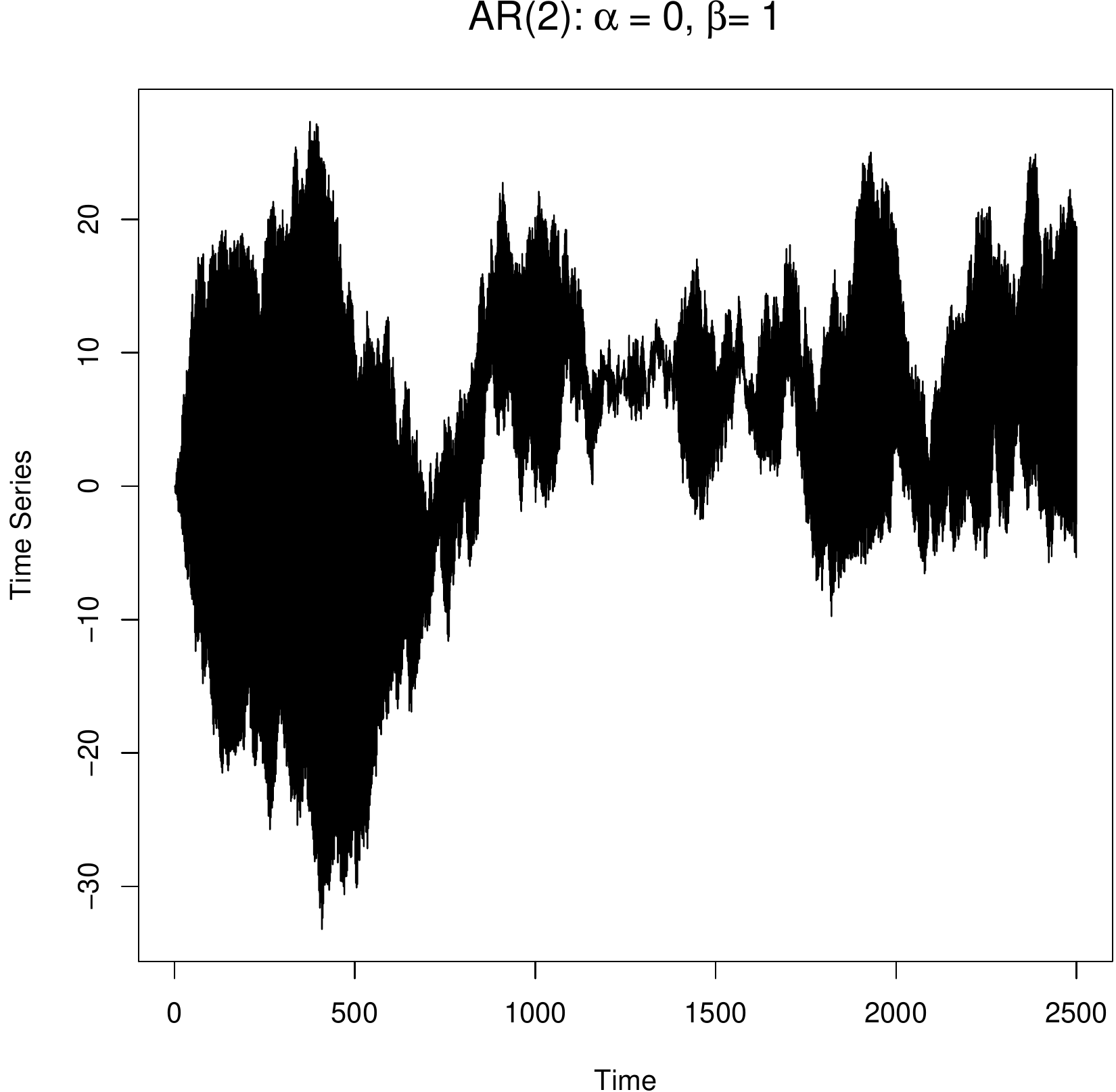}}
\hspace{2mm}
\subfigure [Slow divergence: $\alpha=1$, $\beta=0$.]{ \label{fig:ar2plot_1_0_short_nonpara2}
\includegraphics[width=4.5cm,height=4.5cm]{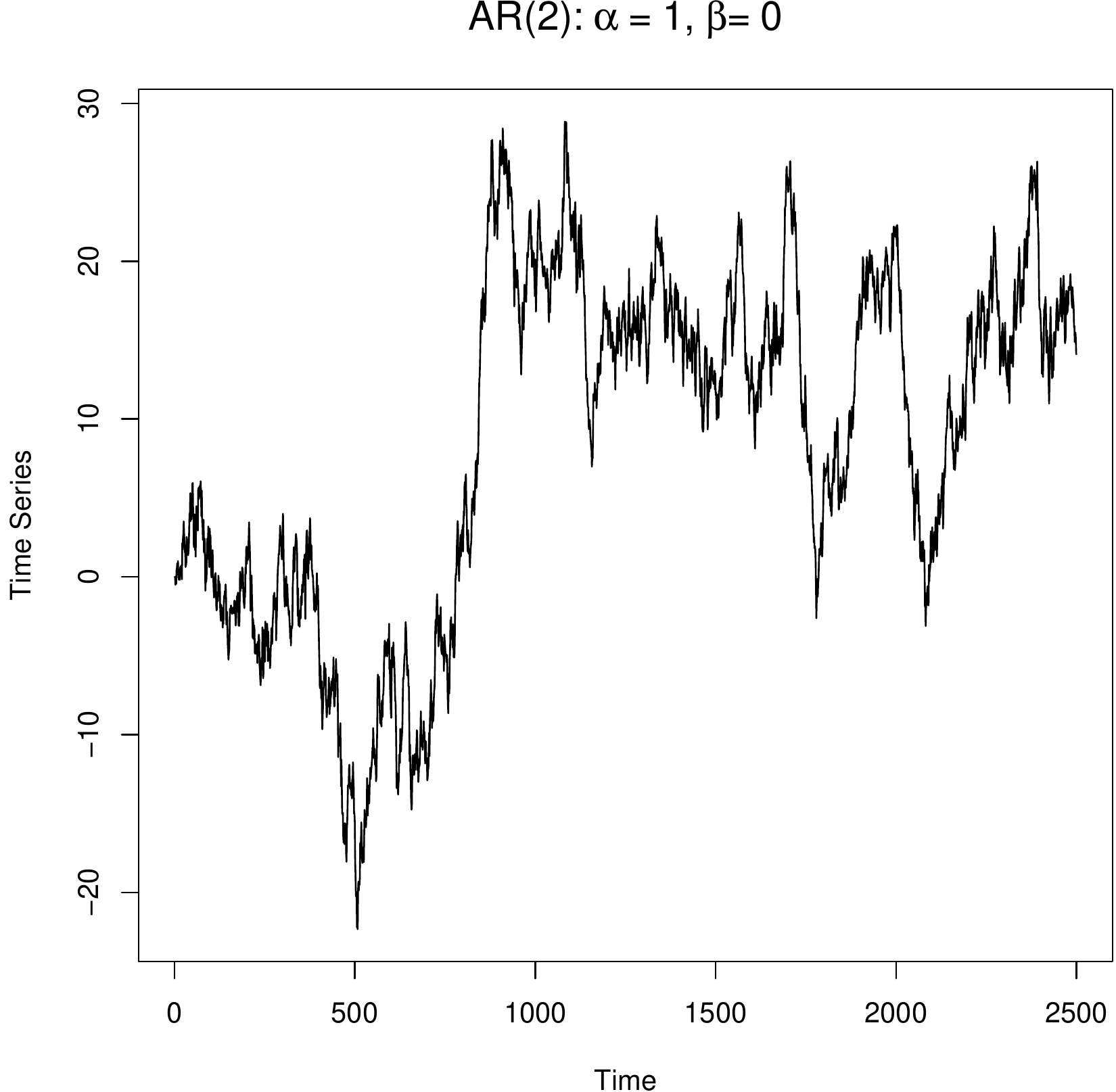}}\\
\vspace{2mm}
\subfigure [Fast divergence: $\alpha=0.5$, $\beta=0.9$.]{ \label{fig:ar2plot_5_9_short_nonpara2}
\includegraphics[width=4.5cm,height=4.5cm]{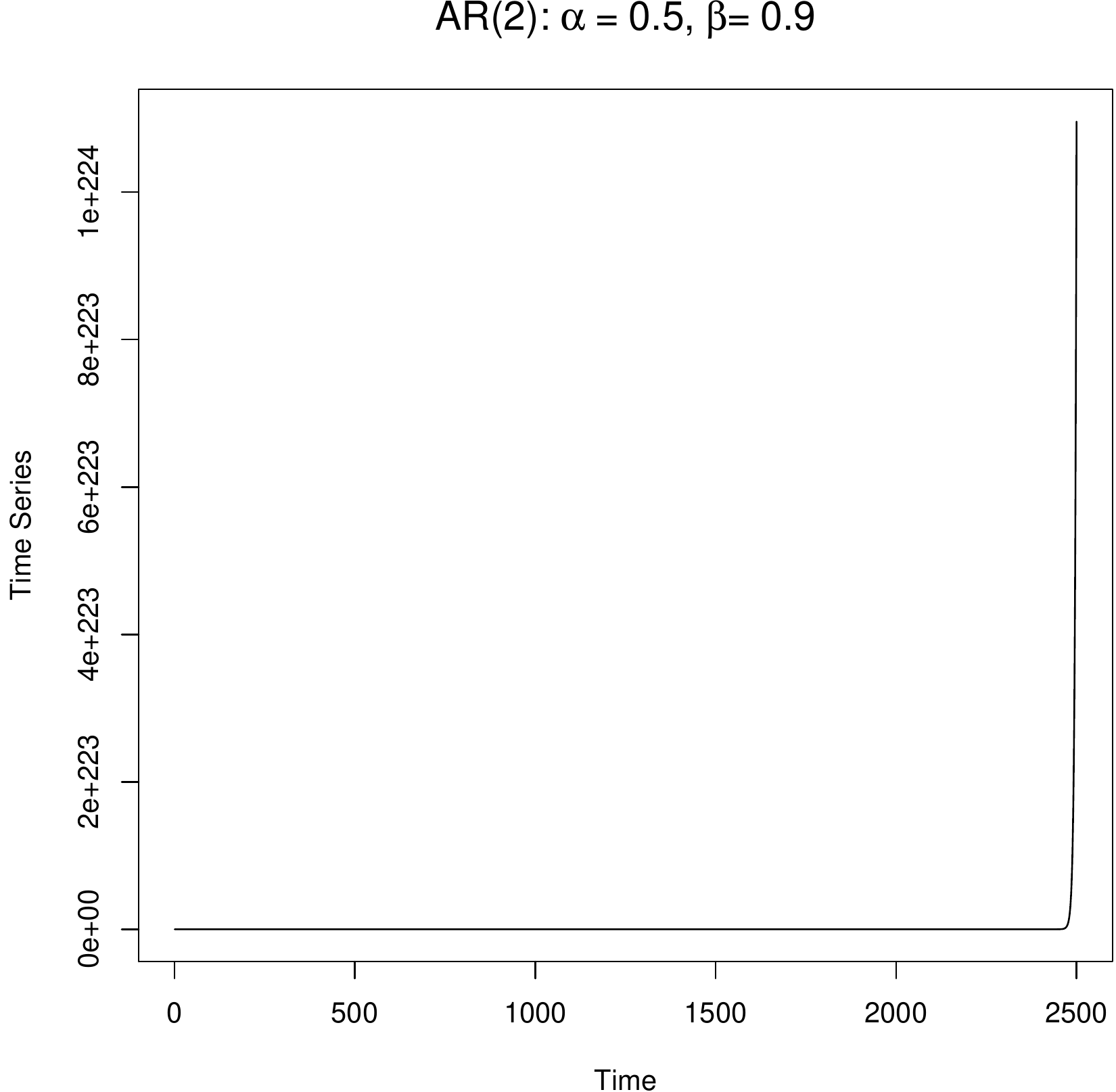}}
\hspace{2mm}
\subfigure [Fast divergence: $\alpha=0.6$, $\beta=0.6$.]{ \label{fig:ar2plot_6_6_short_nonpara2}
\includegraphics[width=4.5cm,height=4.5cm]{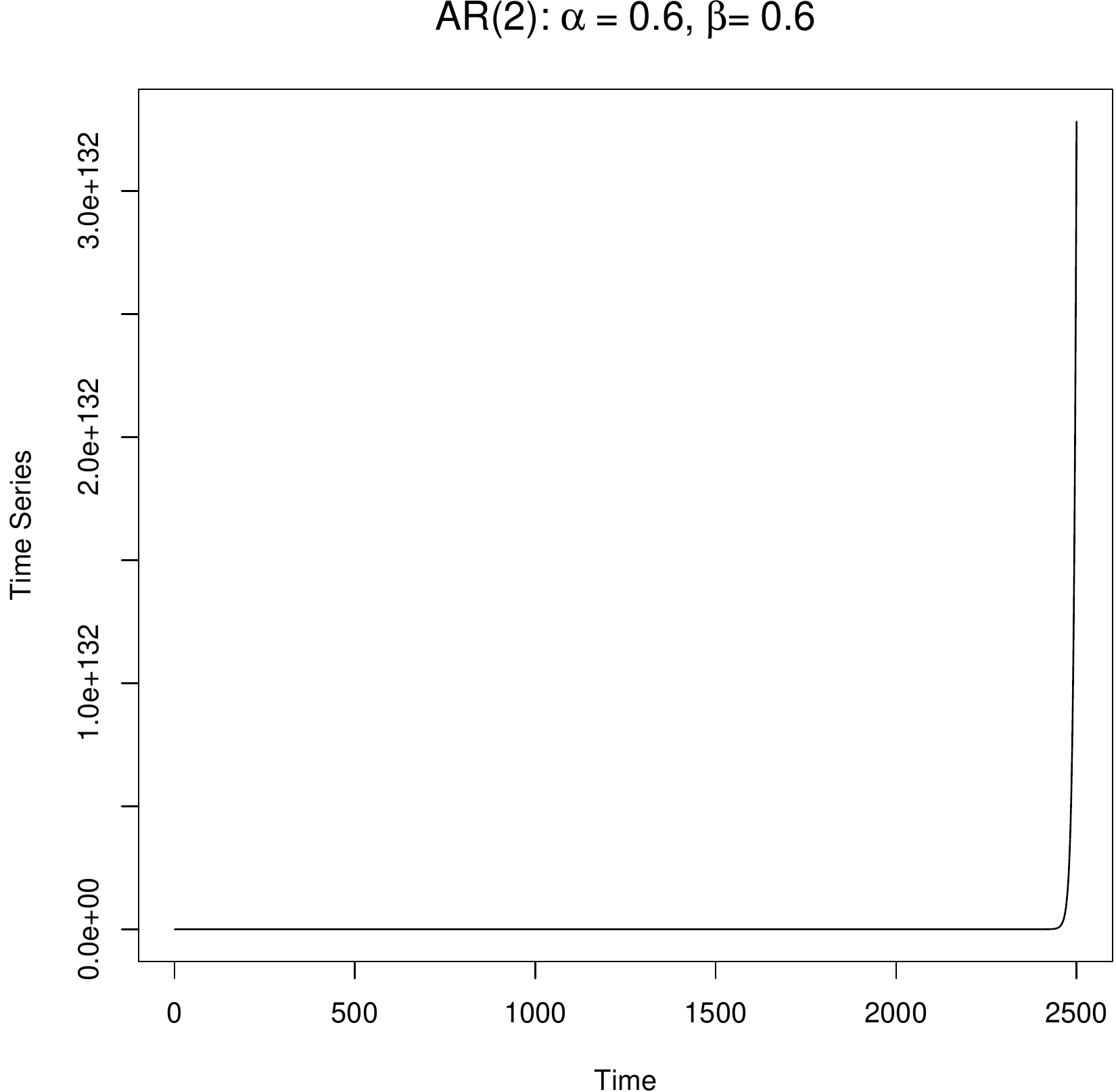}}\\
\caption{Slow and fast divergence tendencies of AR(2) model for several values of $\alpha$ and $\beta$.}
\label{fig:slow_divergence}
\end{figure}

On the other hand, the choice $(n=5,K=500)$ turned out to work very well in all the cases that we considered. Figure \ref{fig:example_ar2}, depicting the results
of our Bayesian method for various values of $\alpha$ and $\beta$ for $(n=5,K=500)$, shows that all the stationarity and nonstationarity situations are correctly
identified.  
\begin{figure}
\centering
\subfigure [Stationary: $\alpha=0.3$, $\beta=0.4$.]{ \label{fig:ar2_3_4_short_nonpara2}
\includegraphics[width=4.5cm,height=4.5cm]{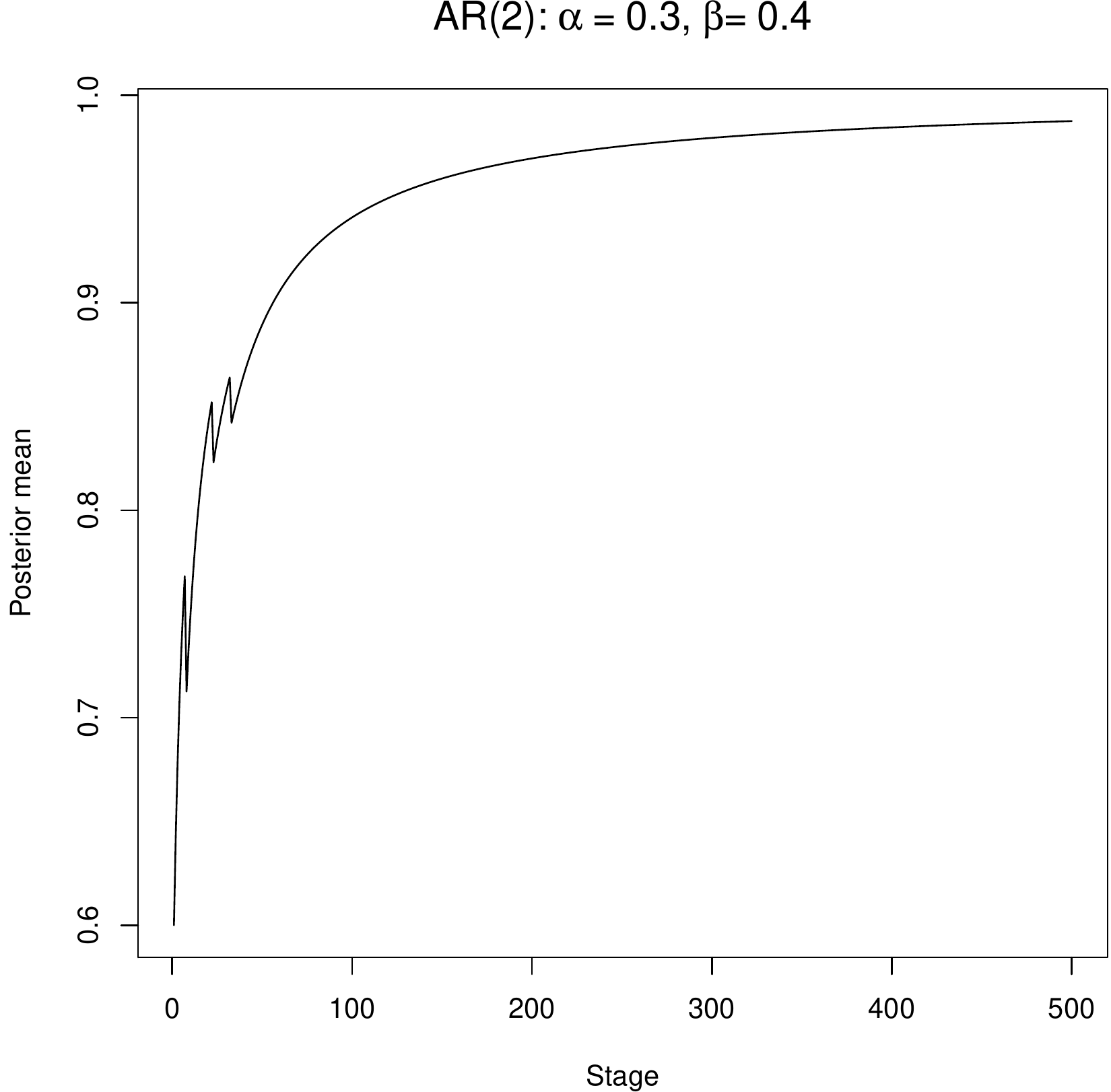}}
\hspace{2mm}
\subfigure [Stationary: $\alpha=0.4$, $\beta=0.3$.]{ \label{fig:ar2_4_3_short_nonpara2}
\includegraphics[width=4.5cm,height=4.5cm]{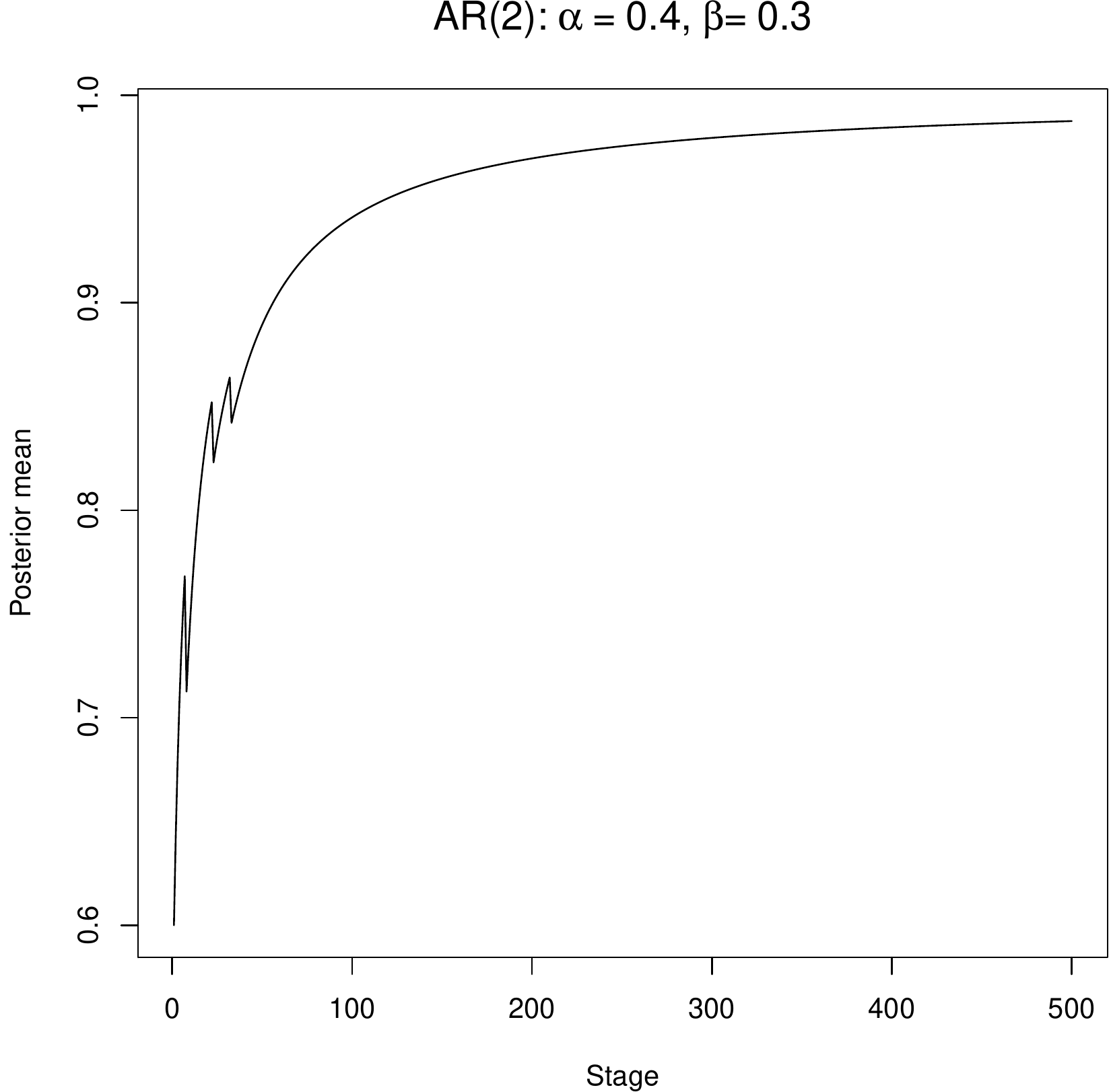}}
\hspace{2mm}
\subfigure [Stationary: $\alpha=0.4$, $\beta=0.5$.]{ \label{fig:ar2_4_5_short_nonpara2}
\includegraphics[width=4.5cm,height=4.5cm]{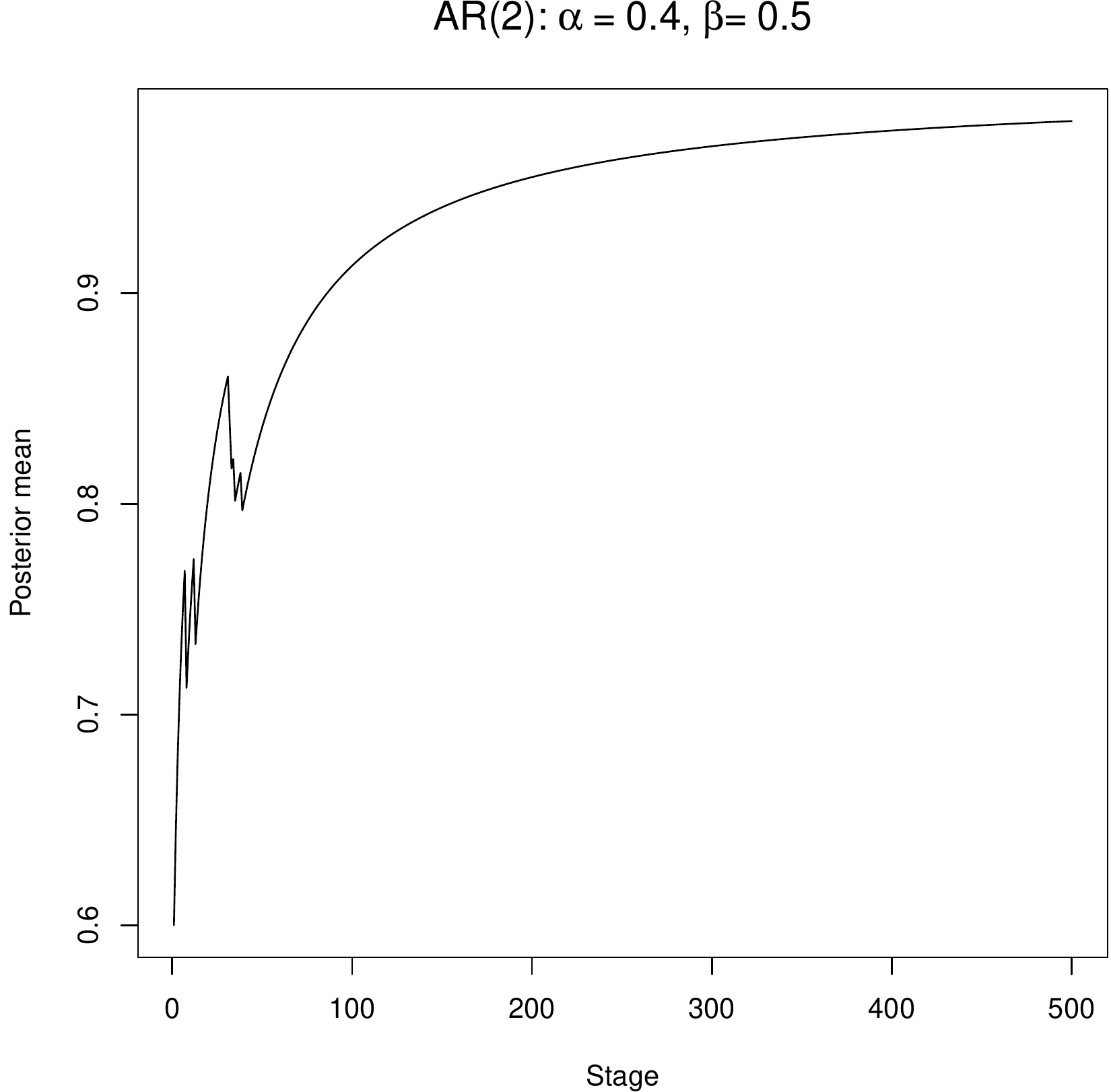}}\\
\vspace{2mm}
\subfigure [Stationary: $\alpha=0.5$, $\beta=0.4$.]{ \label{fig:ar2_5_4_short_nonpara2}
\includegraphics[width=4.5cm,height=4.5cm]{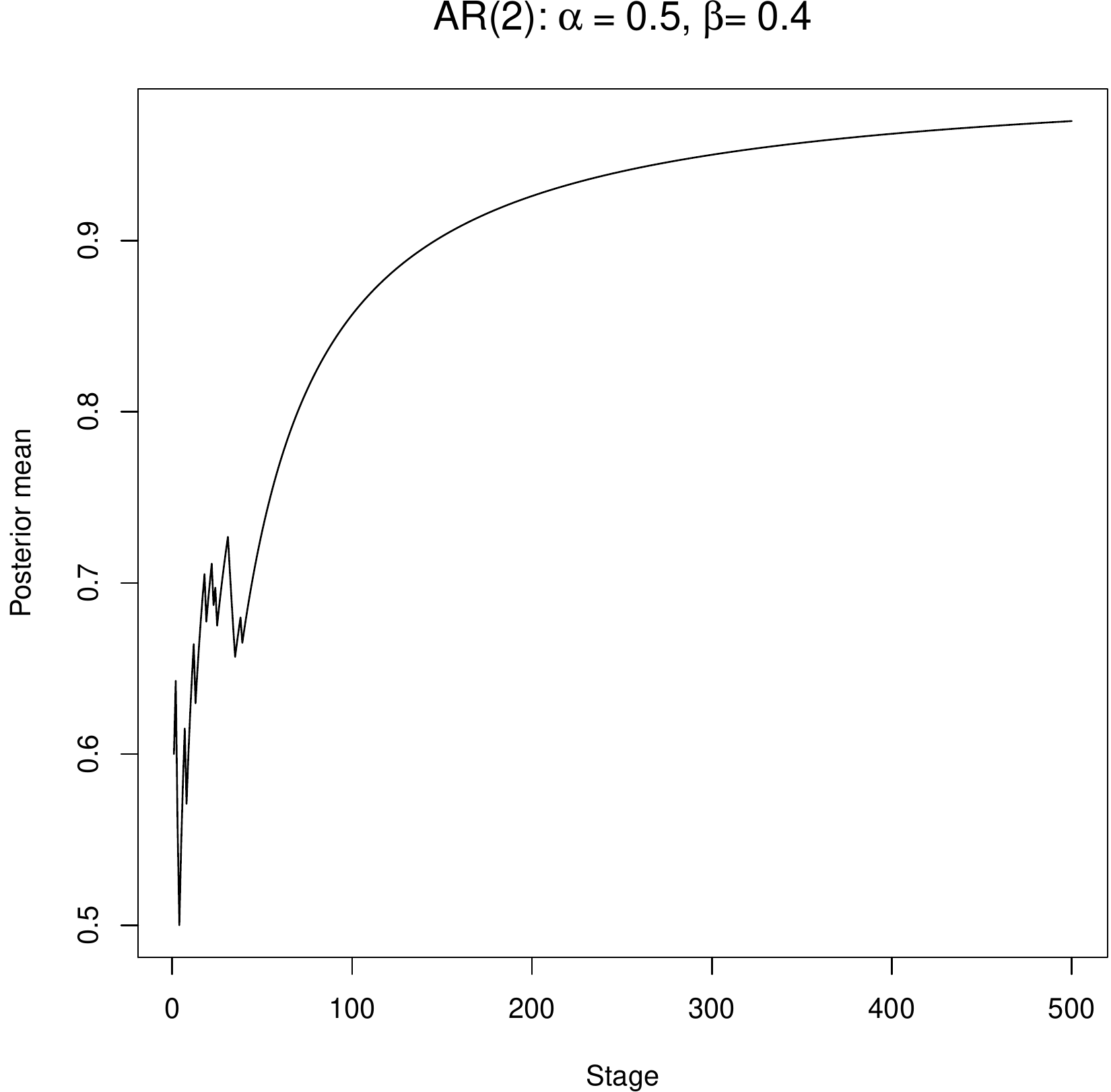}}
\hspace{2mm}
\subfigure [Nonstationary: $\alpha=0.5$, $\beta=0.9$.]{ \label{fig:ar2_5_9_short_nonpara2}
\includegraphics[width=4.5cm,height=4.5cm]{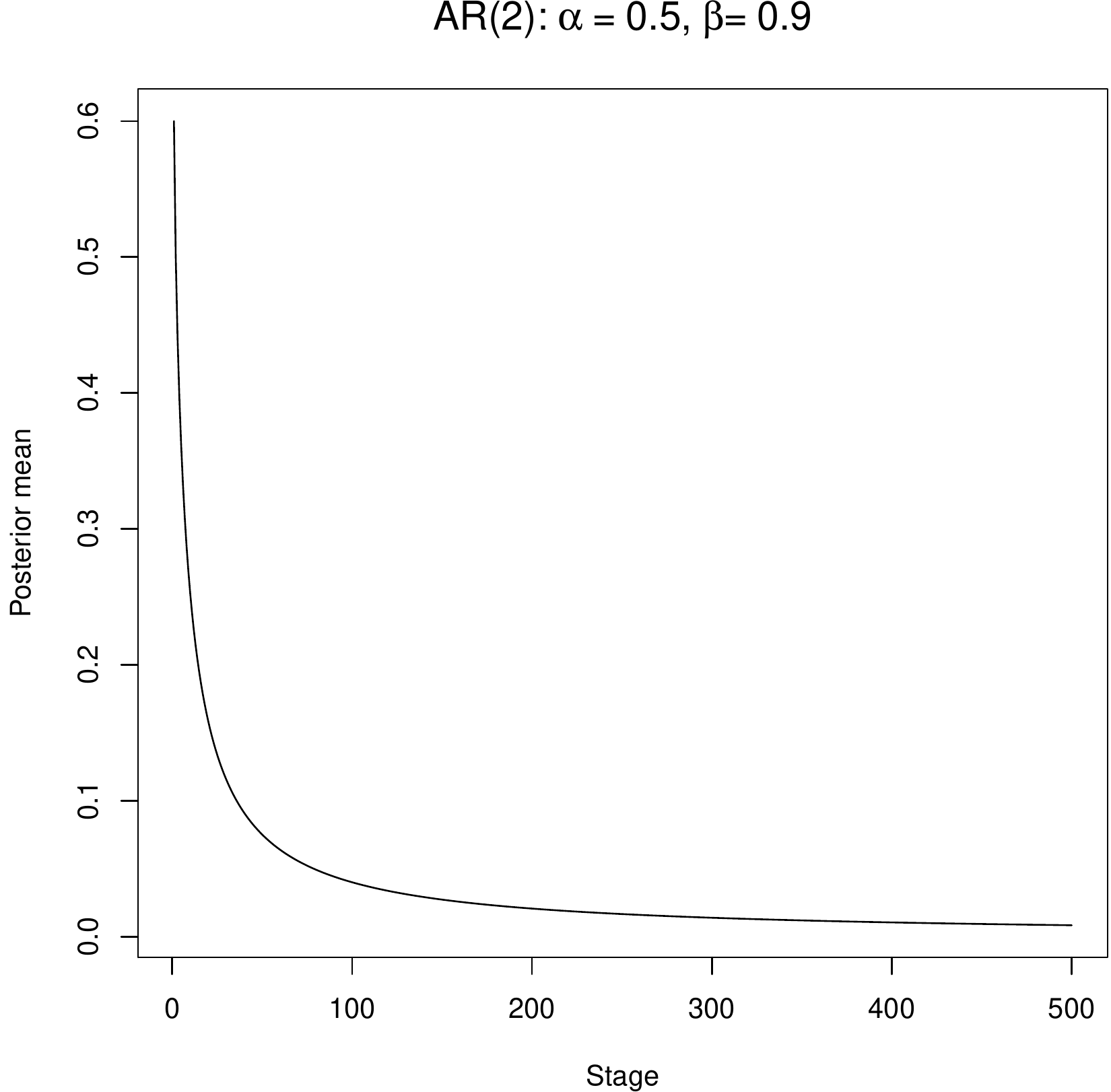}}
\hspace{2mm}
\subfigure [Nonstationary: $\alpha=0.6$, $\beta=0.6$.]{ \label{fig:ar2_6_6_short_nonpara2}
\includegraphics[width=4.5cm,height=4.5cm]{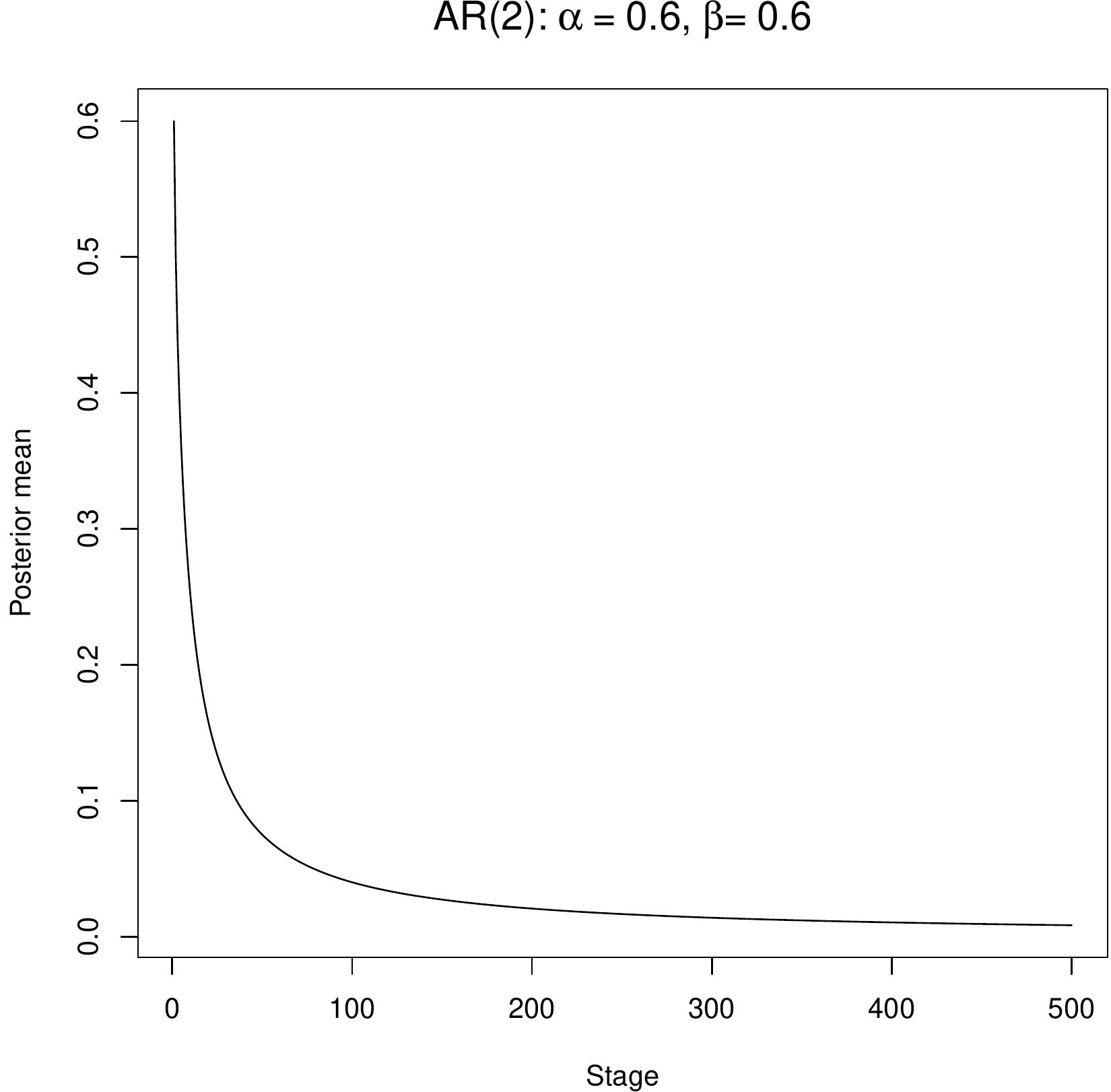}}\\
\vspace{2mm}
\subfigure [Nonstationary: $\alpha=0.5$, $\beta=0.5$.]{ \label{fig:ar2_5_5_short_nonpara2}
\includegraphics[width=4.5cm,height=4.5cm]{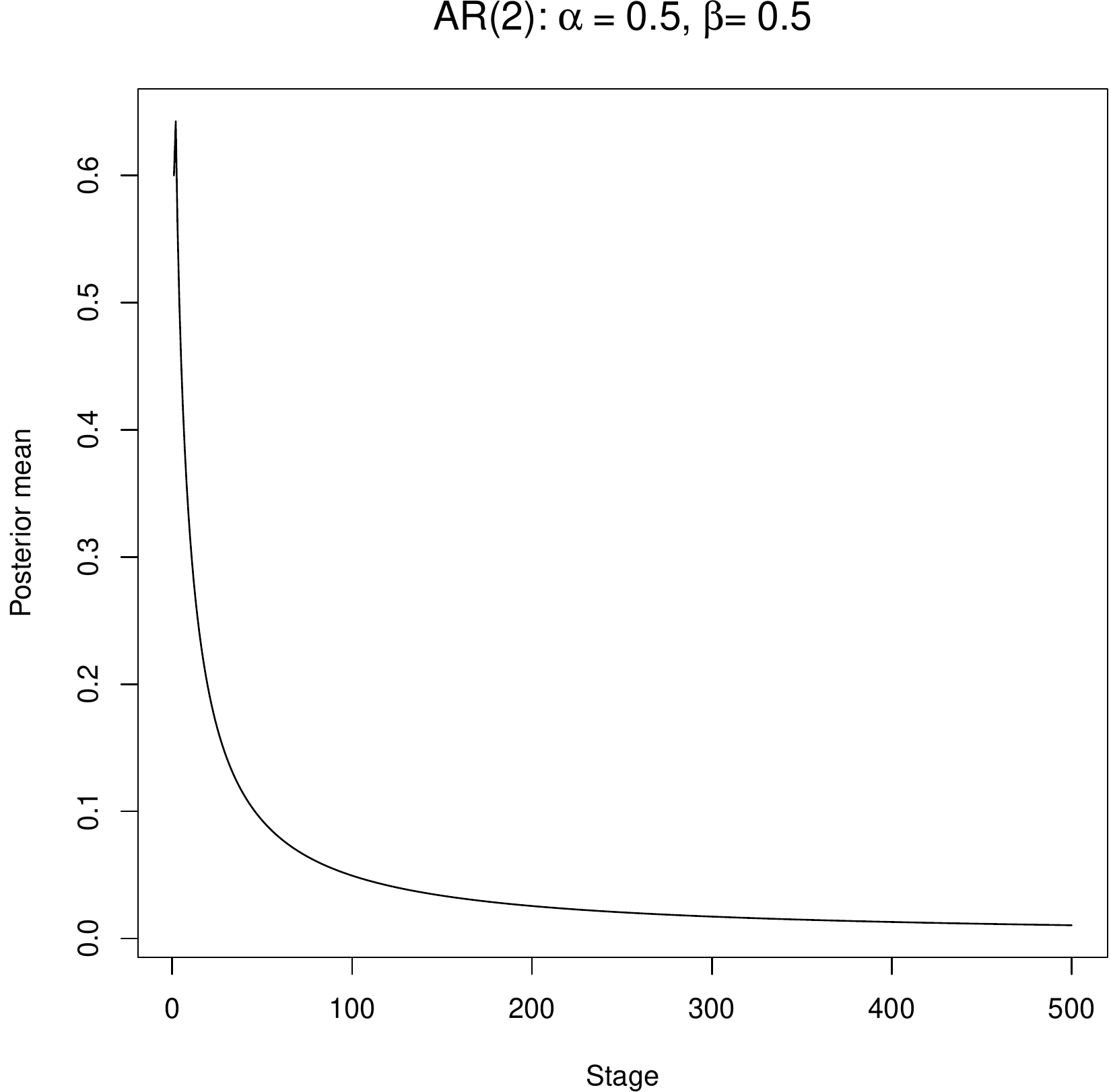}}
\hspace{2mm}
\subfigure [Nonstationary: $\alpha=0$, $\beta=1$.]{ \label{fig:ar2_0_1_short_nonpara2}
\includegraphics[width=4.5cm,height=4.5cm]{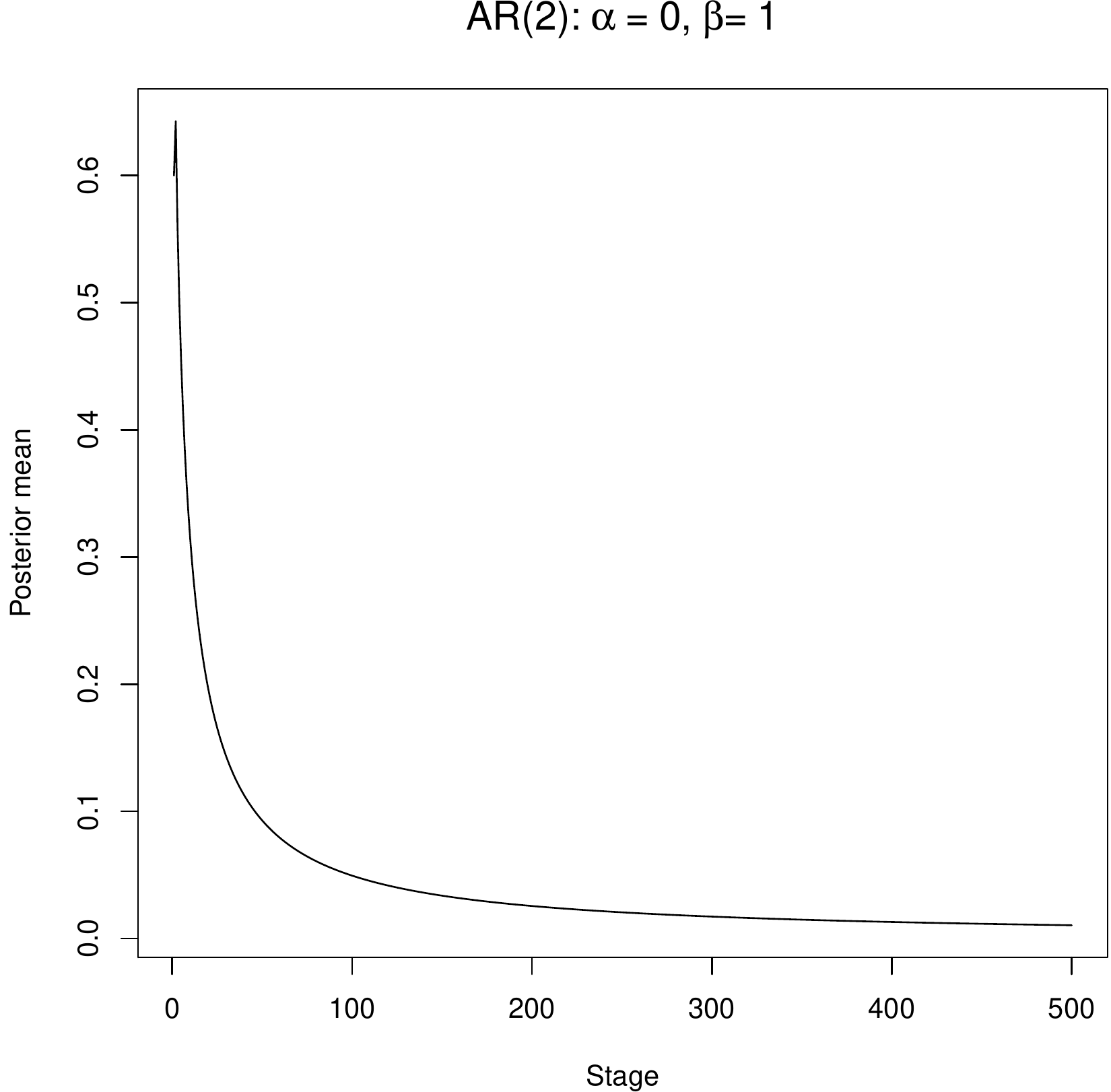}}
\hspace{2mm}
\subfigure [Nonstationary: $\alpha=1$, $\beta=0$.]{ \label{fig:ar2_1_0_short_nonpara2}
\includegraphics[width=4.5cm,height=4.5cm]{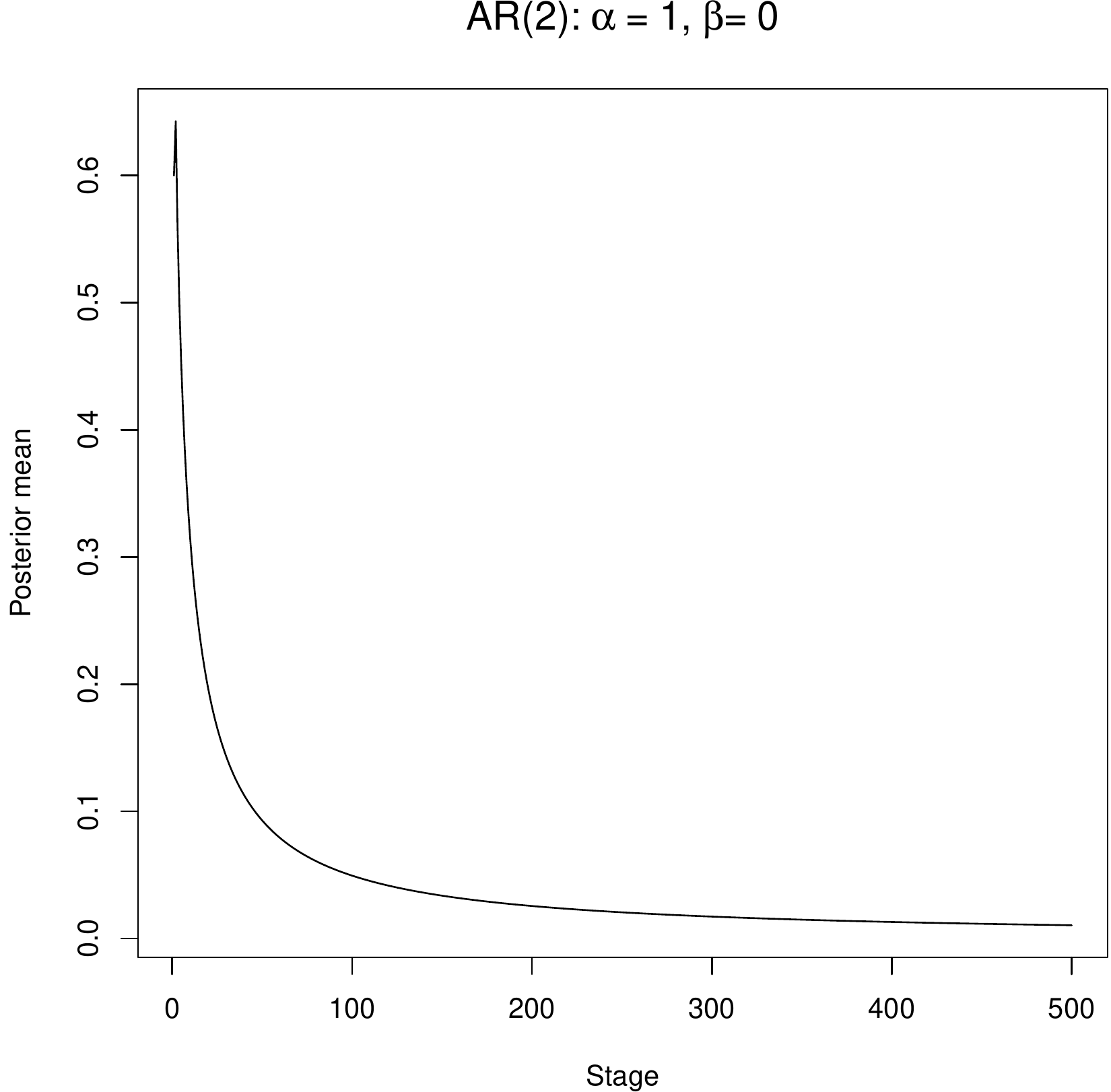}}
\caption{Nonparametric AR(2) example with $K=500$ and $n=5$.}
\label{fig:example_ar2}
\end{figure}

\subsection{Application to ARCH(1)}
\label{subsec:arch}
The ARCH models introduced by \ctn{Engle82} attempts to take into account the heteroscedasticity of financial time series, which is often
ignored by other popular financial models such as Black-Scholes (\ctn{Black73}) and the Ornstein-Uhlenbeck processi (\ctn{OU30}). In the ARCH(p) model,
the conditional variance is modeled as an autoregressive process of order $p$. For details on ARCH models, see \ctn{Bera93}, \ctn{Giraitis05}, \ctn{Straumann05}. 

The ARCH(1) model is of the following form: for $t=1,2,\ldots$,
\begin{align}
	x_t&=\epsilon_t\sigma_t\notag\\
	\sigma^2_t&=\omega+\alpha x^2_{t-1},
	\label{eq:arch}
\end{align}
where $\omega>0$, $\alpha\geq 0$ and $\epsilon_t\stackrel{iid}{\sim}N(0,1)$, for $t=1,2,\ldots$. The necessary and sufficient condition for stationarity of (\ref{eq:arch})
is $0<\alpha<1$. We set $\omega=1$ and $x_1=0$ for our purpose.

As in the AR(2) situations, here we considered $n=5$, $K=500$, and the bound (\ref{eq:ar1_bound3}) with $\hat C_1=1$. With these, Figure \ref{fig:example_arch}
provides the results of our Bayesian analyses of the realizations of (\ref{eq:arch}) for $\omega=1$ and various values of $\alpha$.
Although for $0<\alpha<1$, our method correctly identifies stationarity in all the cases, for $\alpha=1, 1.5, 2$, our procedure falsely declares nonstationarity as
stationarity. 

\begin{figure}
\centering
\subfigure [Stationary: $\alpha=0.5$.]{ \label{fig:arch_5_short_nonpara2}
\includegraphics[width=4.5cm,height=4.5cm]{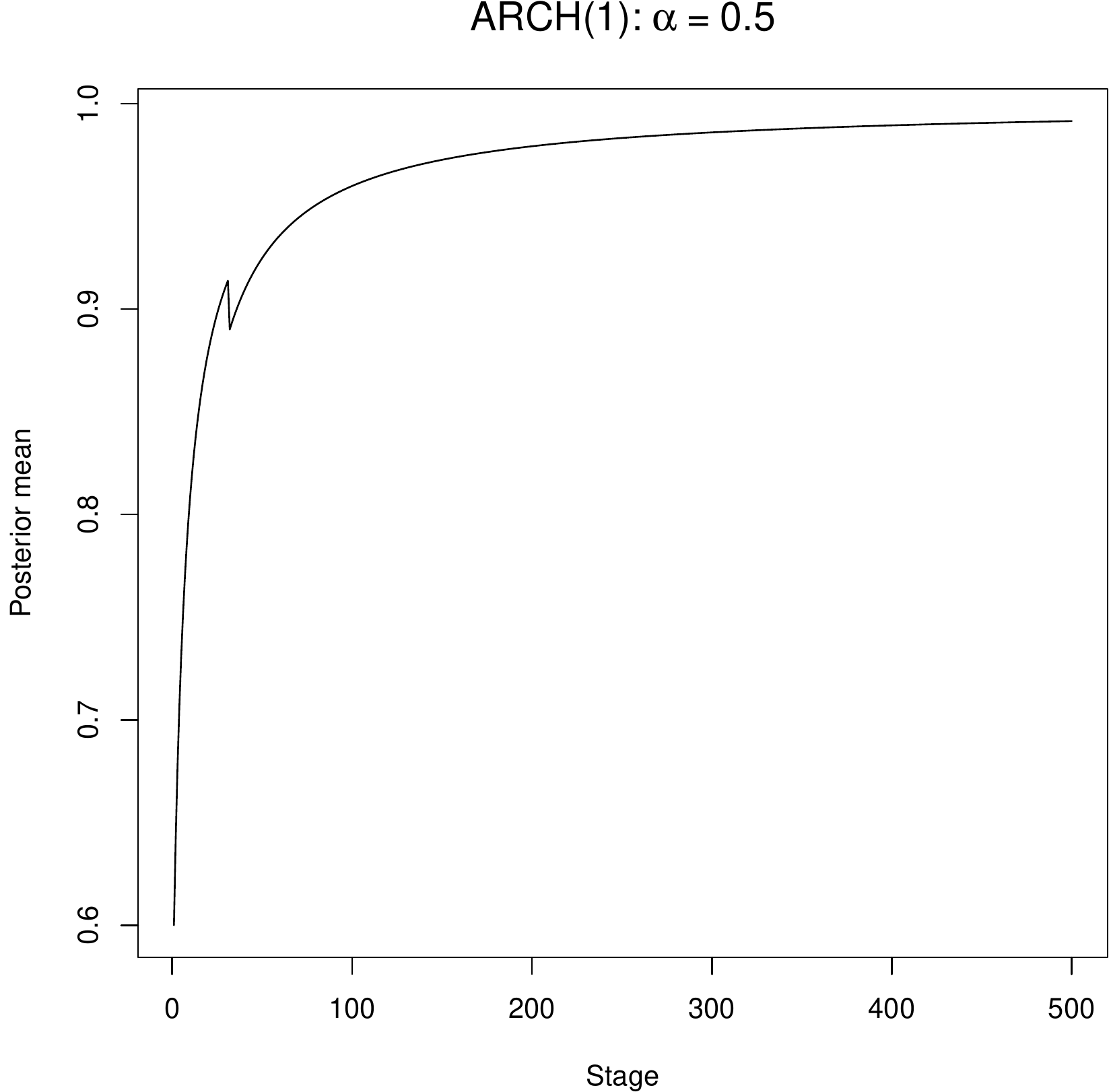}}
\hspace{2mm}
\subfigure [Stationary: $\alpha=0.9$.]{ \label{fig:arch_9_short_nonpara2}
\includegraphics[width=4.5cm,height=4.5cm]{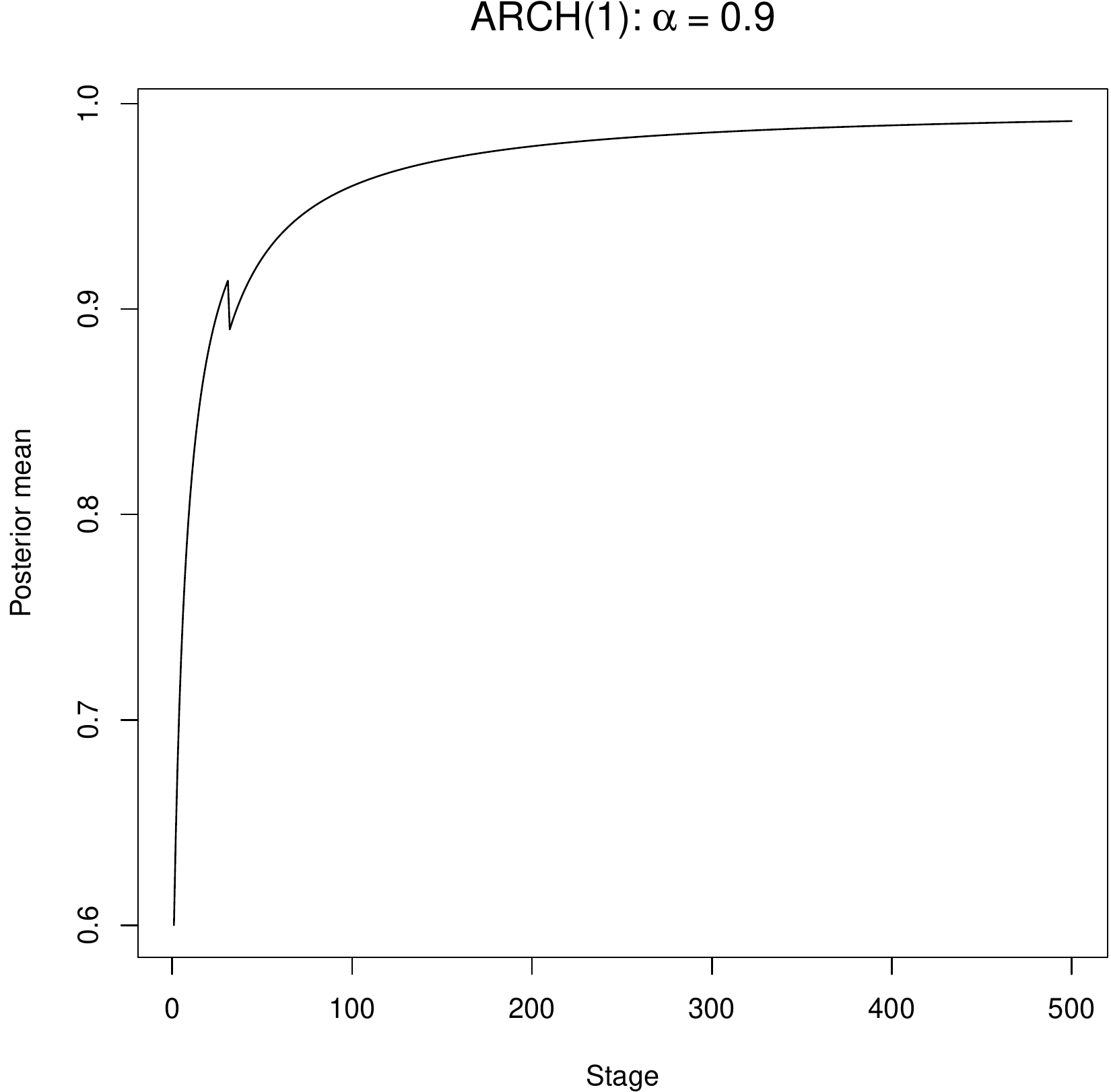}}
\hspace{2mm}
\subfigure [Stationary: $\alpha=99$.]{ \label{fig:arch_99_short_nonpara2}
\includegraphics[width=4.5cm,height=4.5cm]{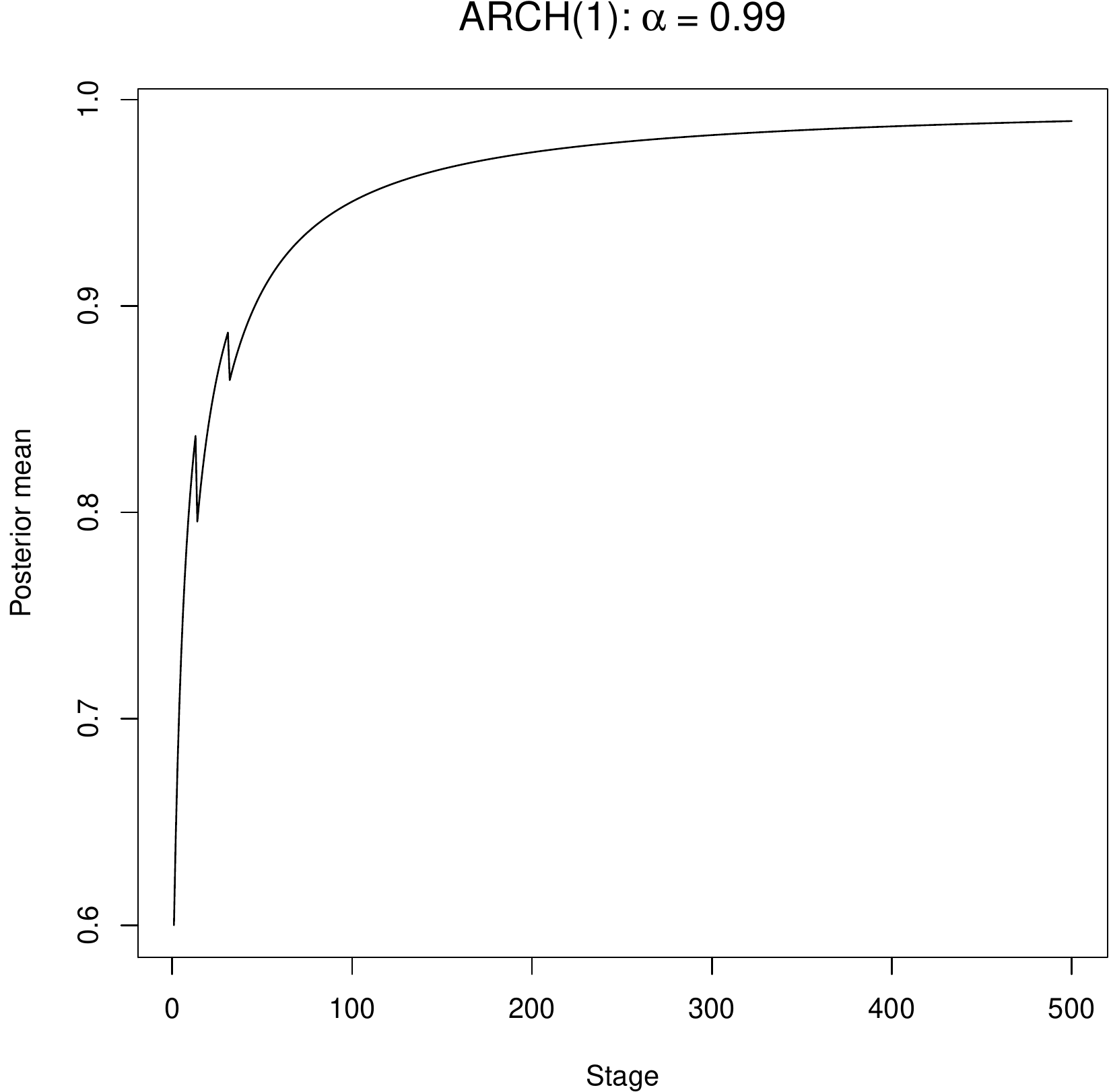}}\\
\vspace{2mm}
\subfigure [Stationary: $\alpha=0.999$.]{ \label{fig:arch_999_short_nonpara2}
\includegraphics[width=4.5cm,height=4.5cm]{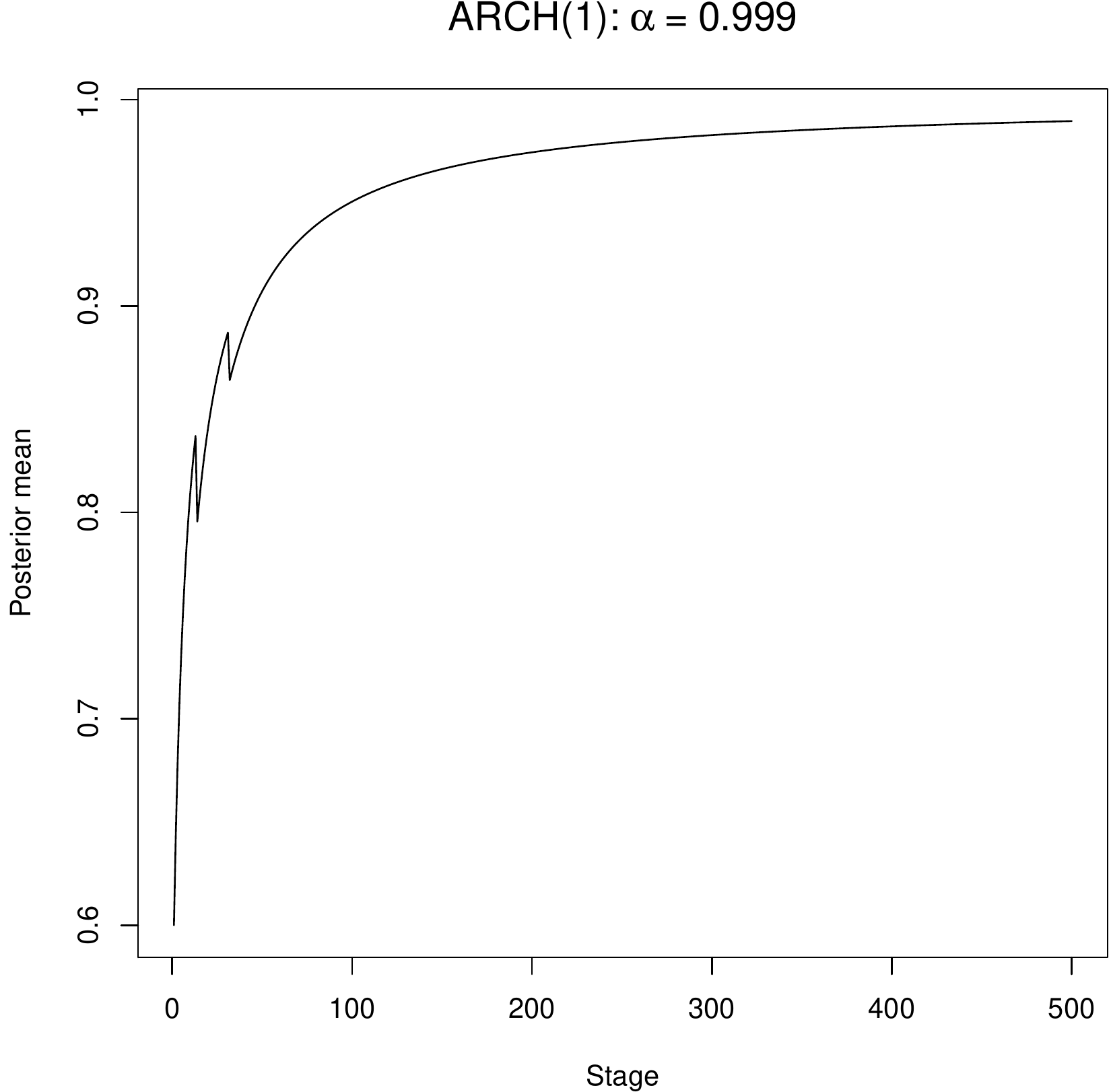}}
\hspace{2mm}
\subfigure [Stationary: $\alpha=0.9999$.]{ \label{fig:arch_9999_short_nonpara2}
\includegraphics[width=4.5cm,height=4.5cm]{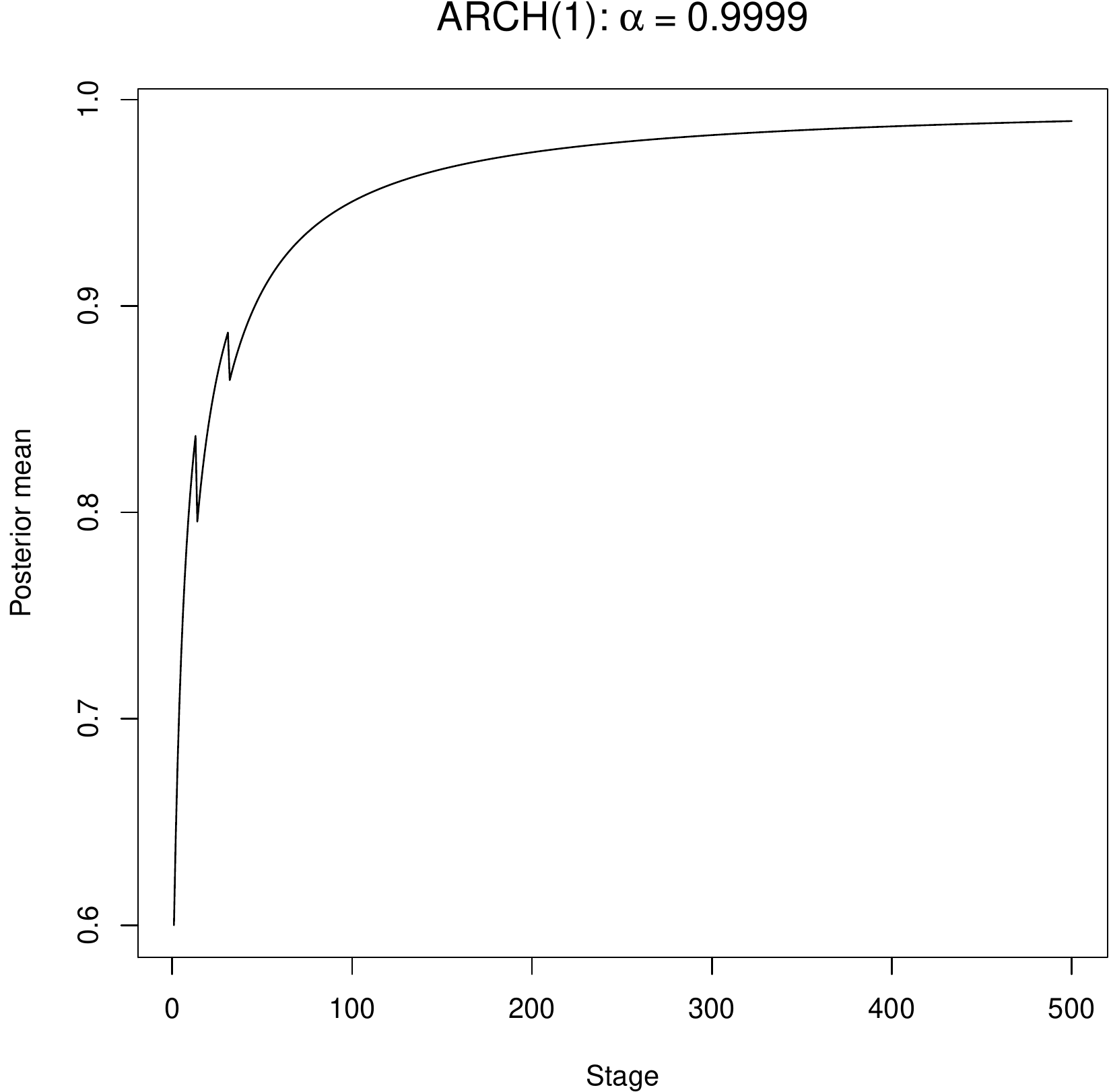}}
\hspace{2mm}
\subfigure [Stationary: $\alpha=0.99999$.]{ \label{fig:arch_99999_short_nonpara2}
\includegraphics[width=4.5cm,height=4.5cm]{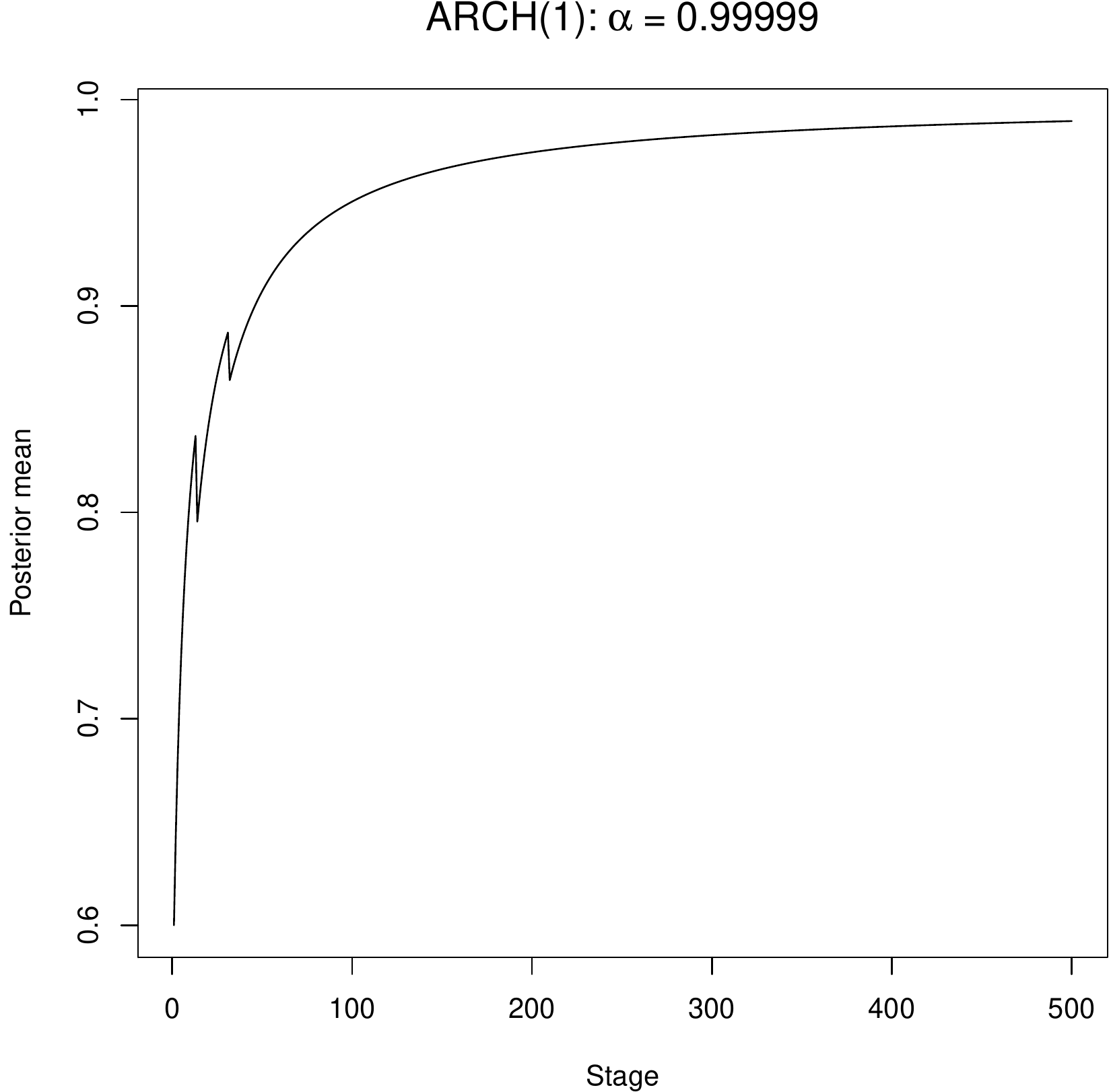}}\\
\vspace{2mm}
\subfigure [Nonstationary: $\alpha=1$.]{ \label{fig:arch_1_short_nonpara2}
\includegraphics[width=4.5cm,height=4.5cm]{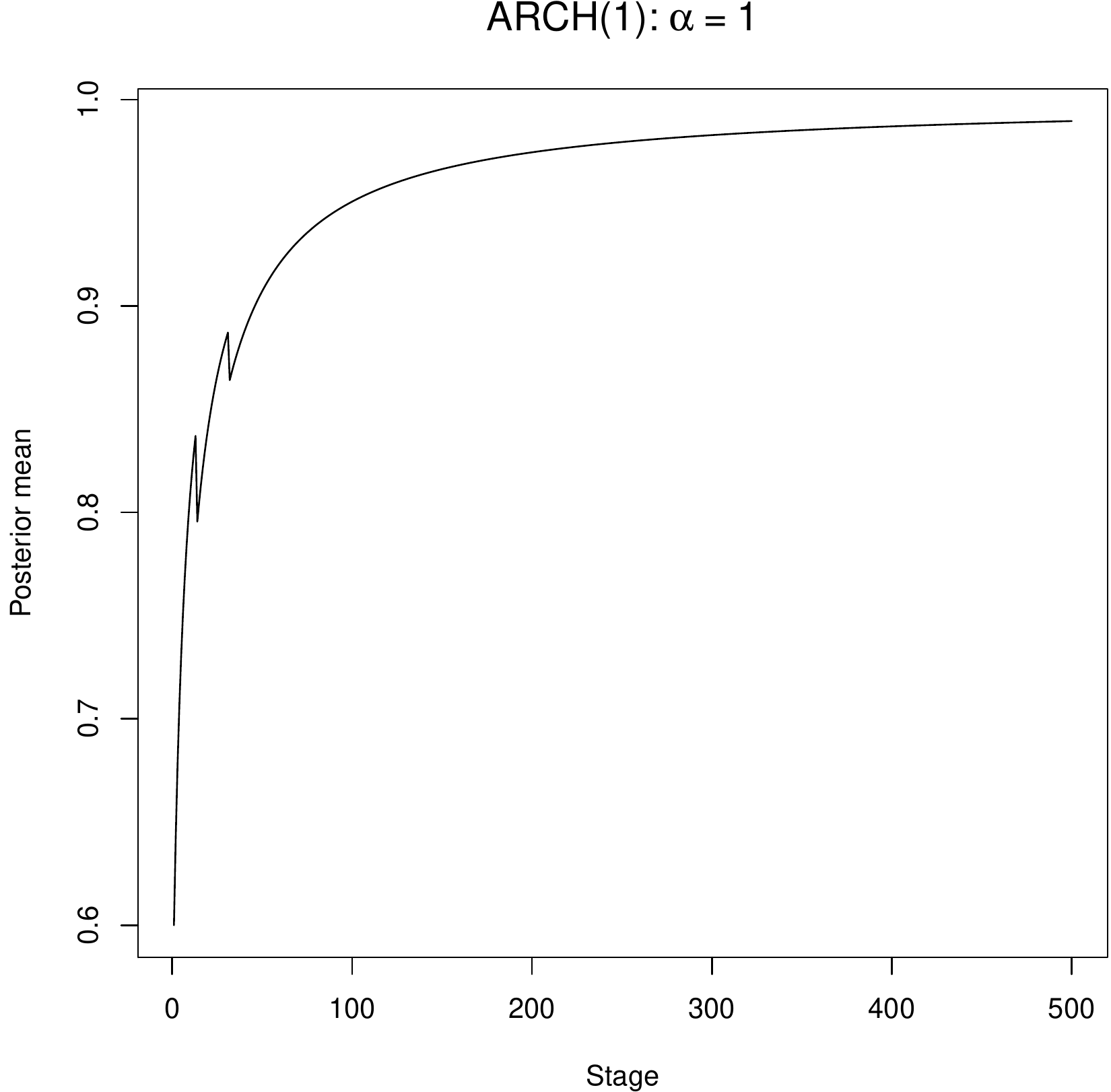}}
\hspace{2mm}
\subfigure [Nonstationary: $\alpha=1.5$.]{ \label{fig:arch_15_short_nonpara2}
\includegraphics[width=4.5cm,height=4.5cm]{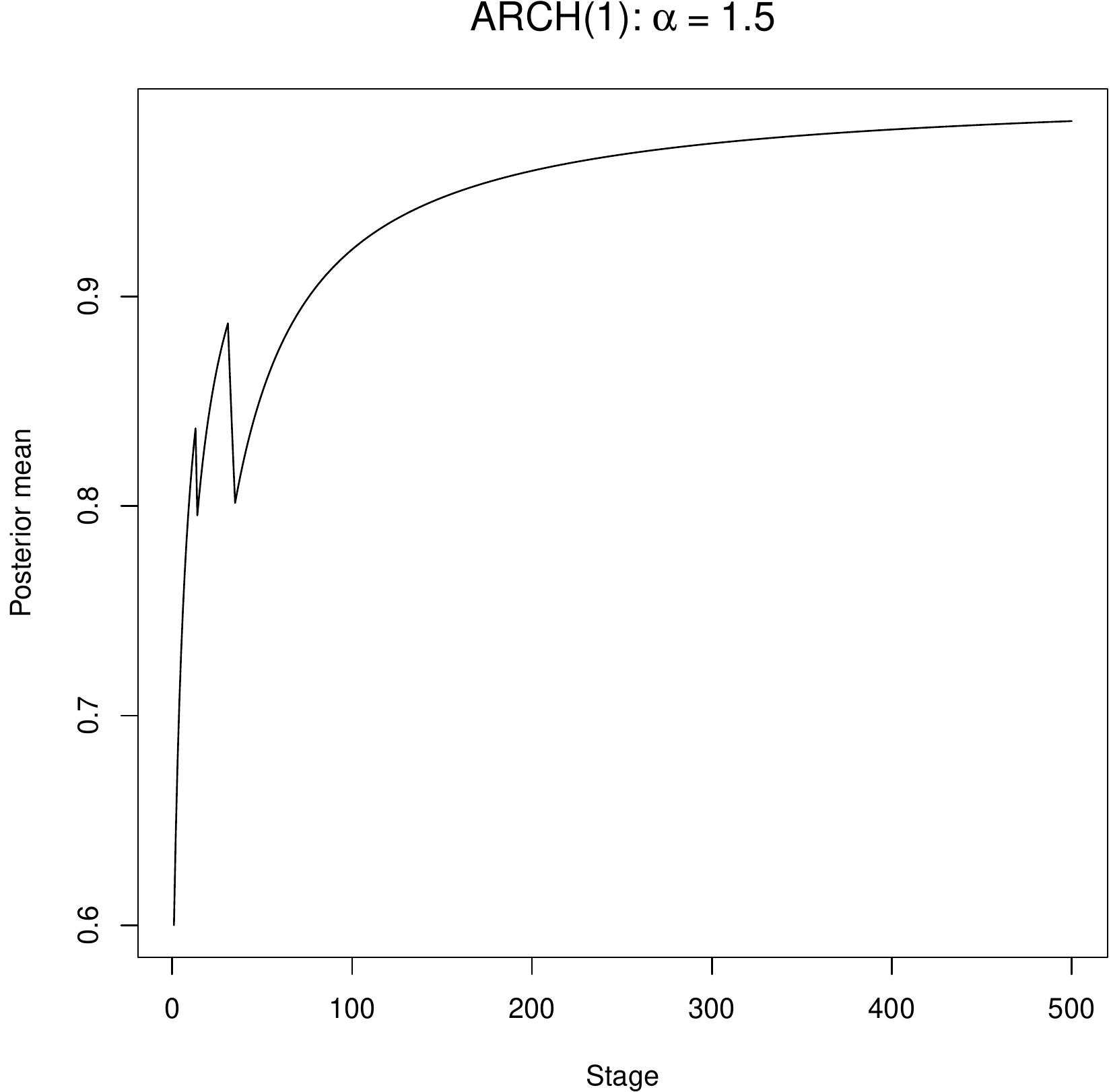}}
\hspace{2mm}
\subfigure [Nonstationary: $\alpha=2$.]{ \label{fig:arch_2_short_nonpara2}
\includegraphics[width=4.5cm,height=4.5cm]{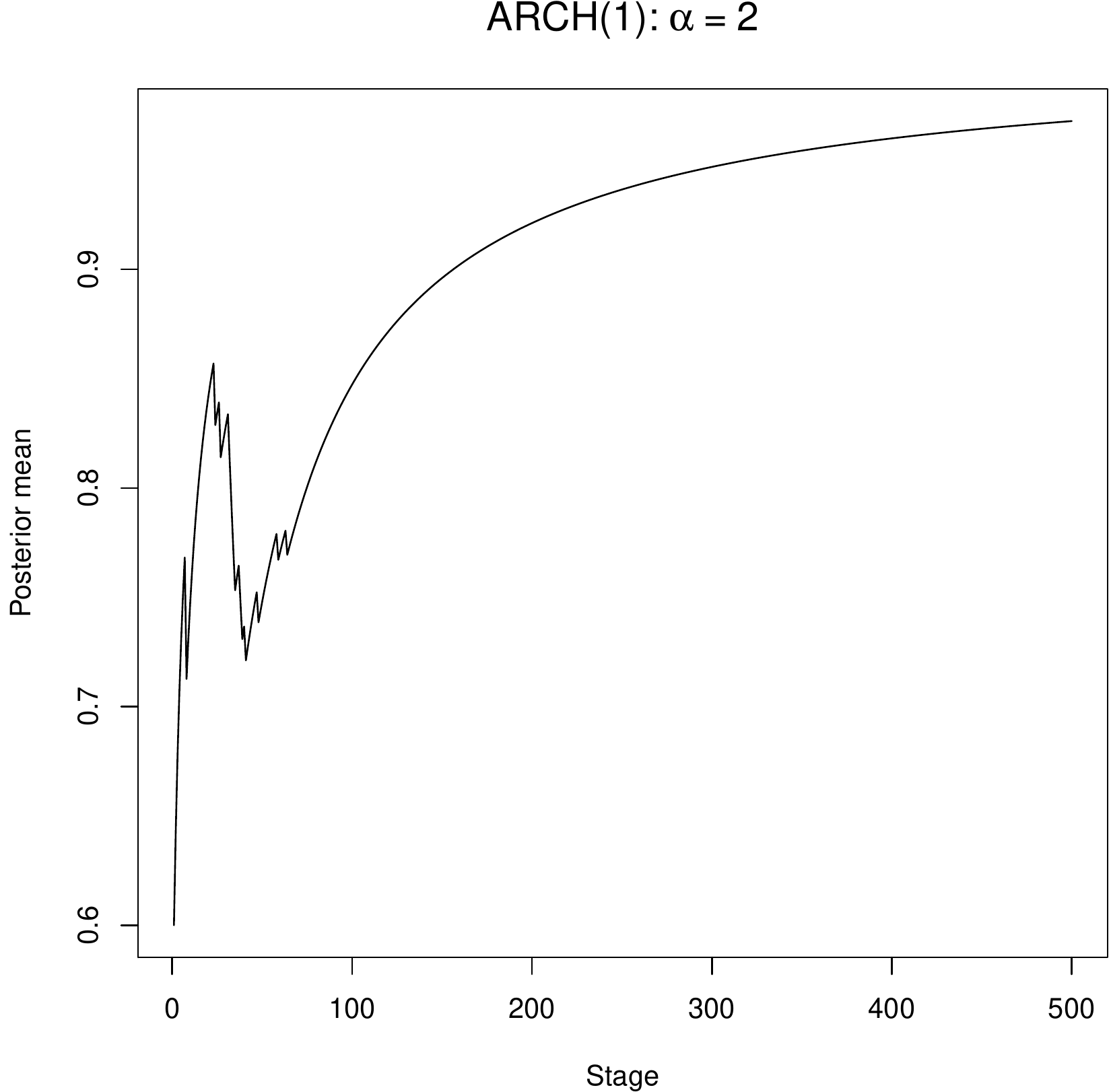}}
\caption{Nonparametric ARCH(1) example with $K=500$ and $n=5$.}
\label{fig:example_arch}
\end{figure}

To understand the reason for this, it is necessary to recall some of the properties of the ARCH(1) model. Note that $E(x_t)=0$ for $t\geq 1$
and for any $t\geq 1$, $Var(x_t)=\frac{\omega}{1-\alpha}$, provided $0<\alpha<1$. For $\alpha\geq 1$, $Var(x_t)$ increases with $t$. Moreover,
$Cov(x_t,x_{t+j})=0$ for $j\geq 1$. The last fact shows that the ARCH(1) model is serially uncorrelated. Thus, even though for $\alpha\geq 1$, 
$Var(x_t)$ increases with $t$, the realizations will be centered around zero and will be serially uncorrelated, and these are instrumental in rendering
the pattern of the realizations seem like stationary time series. Although the variances are increasing in such cases, the realizations need not have 
an increasing range pattern due to absence of serial correlation. Figure \ref{fig:failed_Bayesian_arch} shows ARCH(1) realizations for $\alpha=0.9$, $1$, $1.5$ and $2$. 
Note that none of the realizations exhibit any trend of increasing range, even though only $\alpha=0.9$ corresponds to
stationarity. Moreover, the pattern of the nonstationary realization for $\alpha=1$ is quite similar to that of the stationary realization $\alpha=0.9$.
Indeed, all the four realizations shown in Figure \ref{fig:failed_Bayesian_arch} have similar patterns; they essentially differ only at a few time points,
where the realizations have different ranges. 

In other words, the realizations for $\alpha=1$, $1.5$ and $2$ shown in Figure \ref{fig:failed_Bayesian_arch} do not seem to have enough information 
to distinguish them from stationarity. Hence, it is not surprising that our Bayesian method declared these realizations as stationary.
\begin{figure}
\centering
\subfigure [Stationary: $\alpha=0.9$.]{ \label{fig:archplot_9_short_nonpara2}
\includegraphics[width=4.5cm,height=4.5cm]{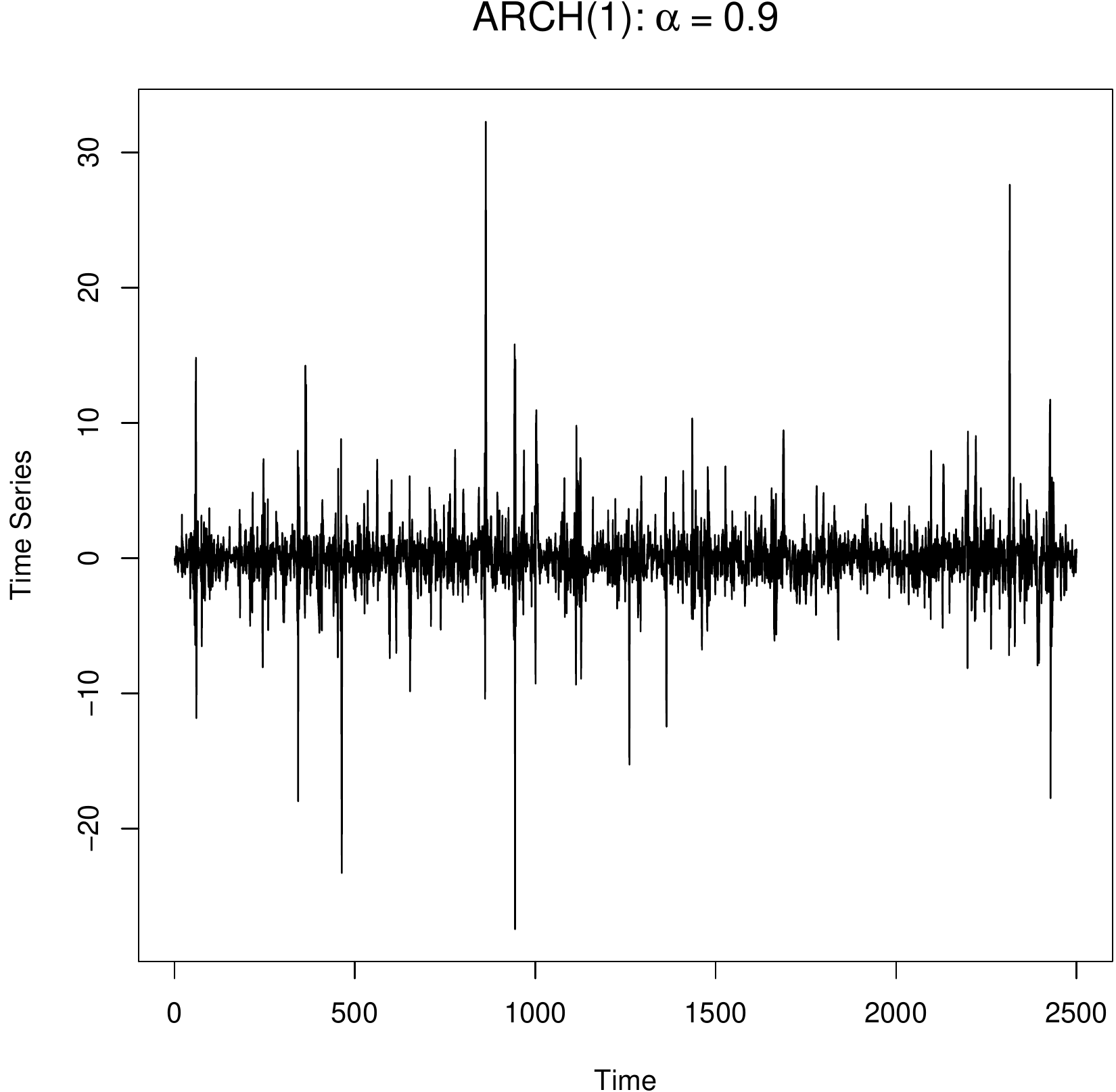}}
\hspace{2mm}
\subfigure [Nonstationary: $\alpha=1$.]{ \label{fig:archplot_1_short_nonpara2}
\includegraphics[width=4.5cm,height=4.5cm]{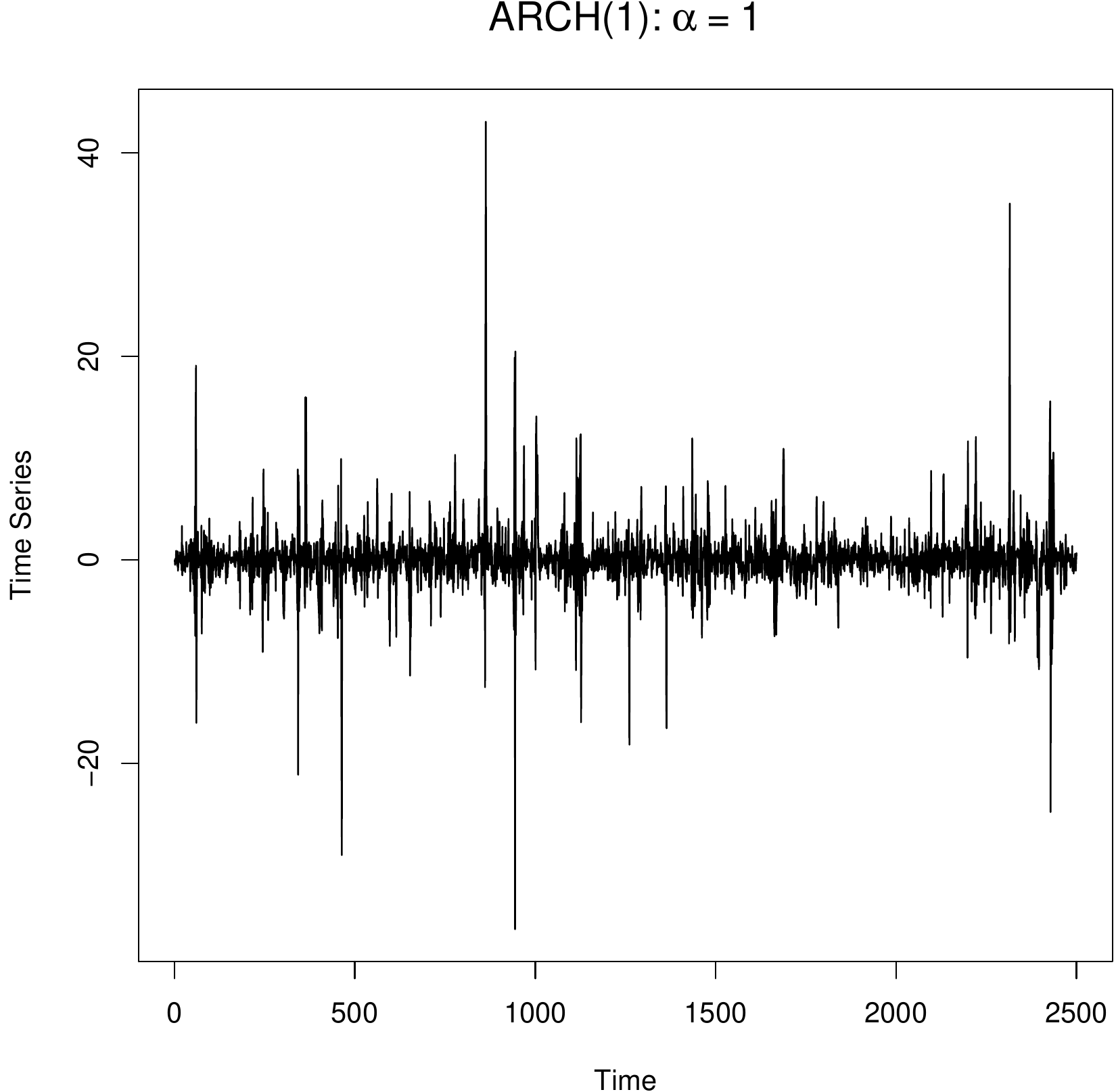}}\\
\vspace{2mm}
\subfigure [Nonstationary: $\alpha=1.5$.]{ \label{fig:archplot_15_short_nonpara2}
\includegraphics[width=4.5cm,height=4.5cm]{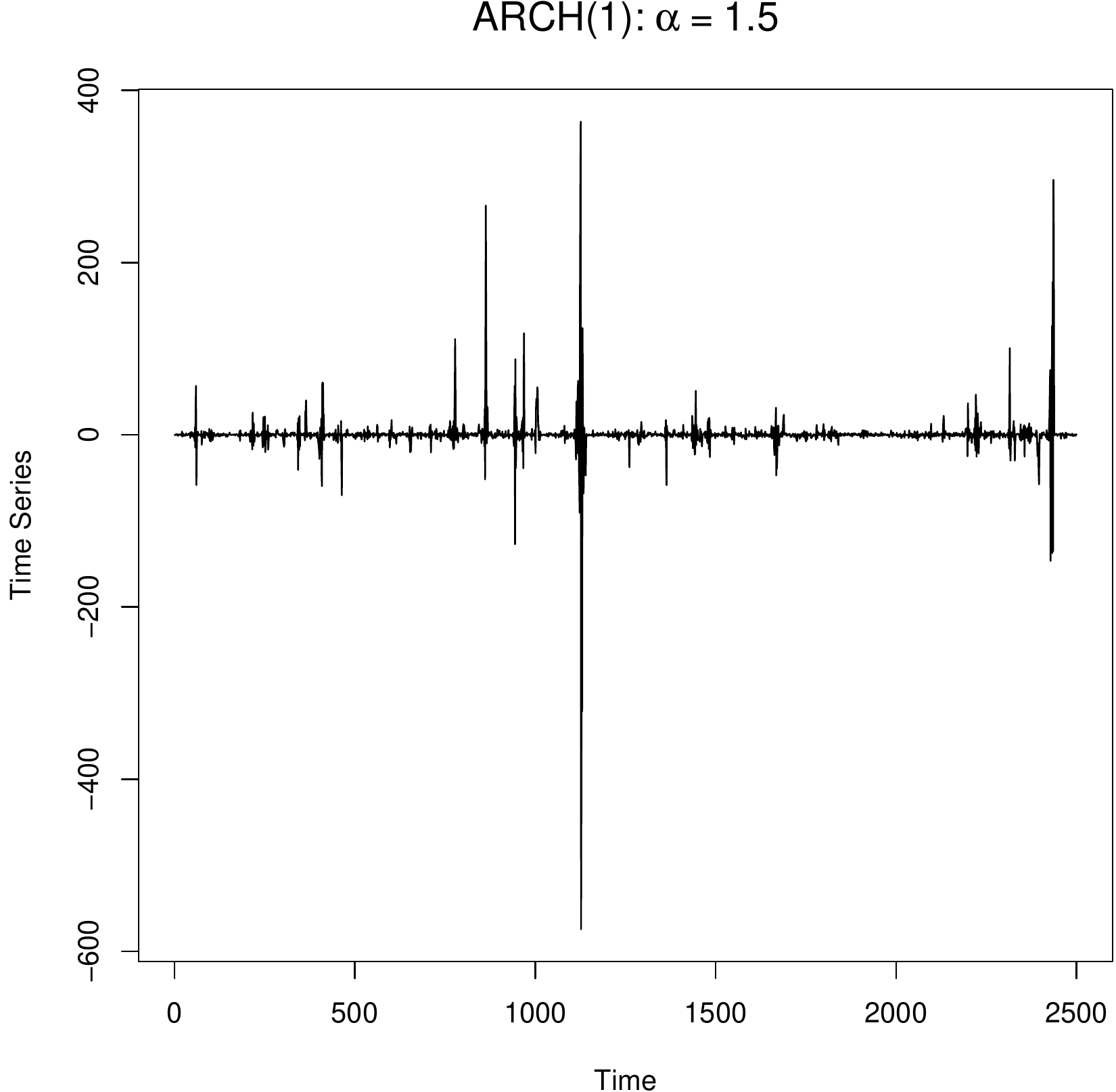}}
\hspace{2mm}
\subfigure [Nonstationary: $\alpha=2$.]{ \label{fig:archplot_2_short_nonpara2}
\includegraphics[width=4.5cm,height=4.5cm]{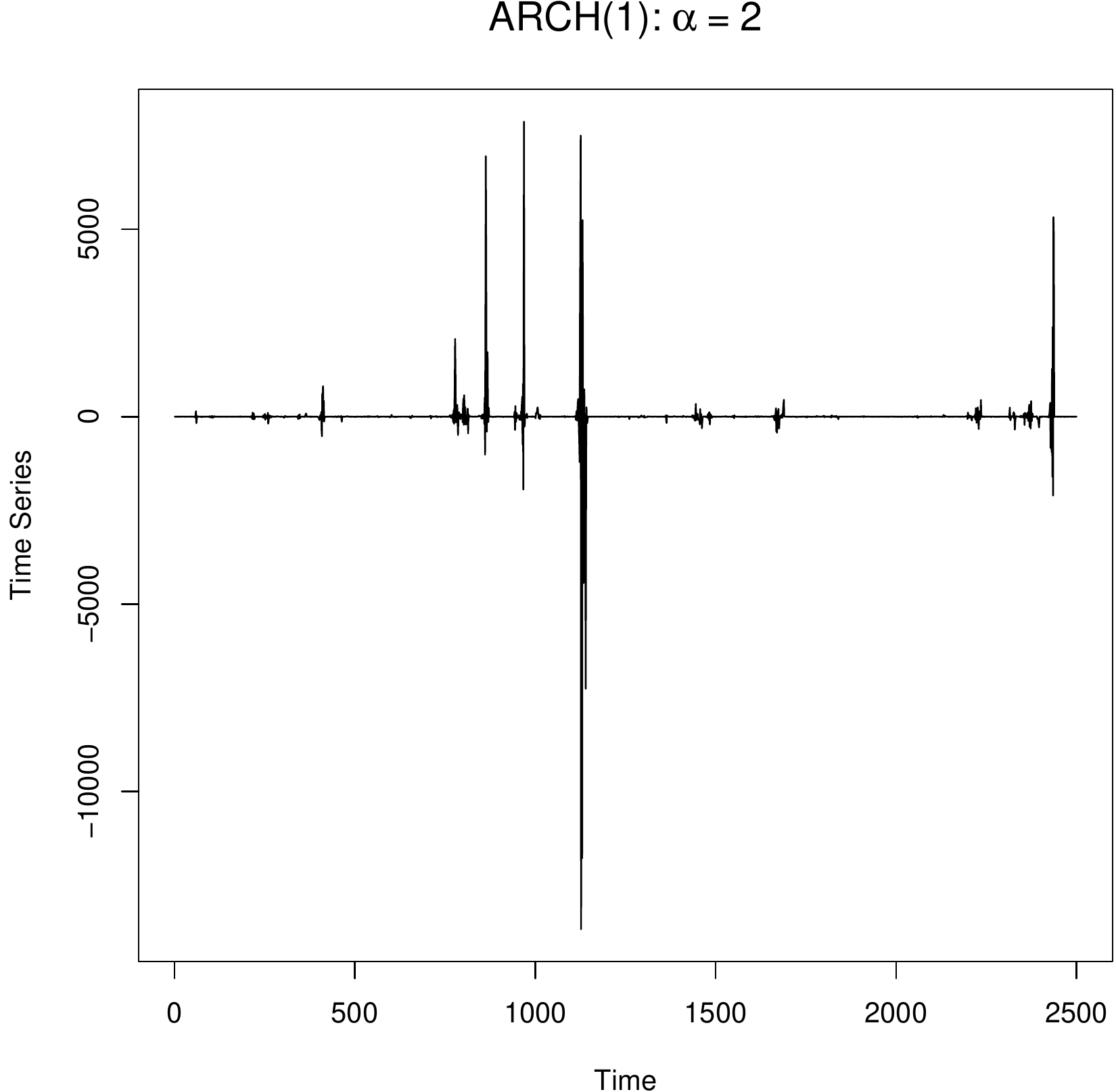}}\\
\caption{Comparison of ARCH(1) samples for several values of $\alpha$ where our Bayesian method failed.}
\label{fig:failed_Bayesian_arch}
\end{figure}

\subsection{Application to GARCH(1,1)}
\label{subsec:garch}

The ARCH model has been generalized by \ctn{Bollerslev86} and \ctn{Taylor86} independently 
to let $\sigma^2_t$ to have an autoregressive structure as well. This generalized ARCH, or GARCH model, 
is arguably the most widely used model in financial time series, particularly, for modeling stochastic volatility.
For details on GARCH, see \ctn{Bougerol92}, \ctn{Giraitis05}, \ctn{Berkes03} and \ctn{Straumann05}.

The GARCH(1,1) model, which generalizes ARCH(1), is of the following form: for $t=1,2,\ldots$,
\begin{align}
	x_t&=\epsilon_t\sigma_t\notag\\
	\sigma^2_t&=\omega+\alpha x^2_{t-1}+\beta\sigma^2_{t-1},
	\label{eq:garch}
\end{align}
where $\omega>0$, $\alpha\geq 0$, $\beta\geq 0$ and $\epsilon_t\stackrel{iid}{\sim}N(0,1)$, for $t=1,2,\ldots$. The necessary and sufficient condition 
for stationarity of (\ref{eq:garch})
is $0<\alpha+\beta<1$. We set $\omega=1$ and $x_1=0$ and $\sigma_1=0$ for our purpose.

Again we set $n=5$ and $K=500$ and consider the nonparametric bound (\ref{eq:ar1_bound3}) for applying our Bayesian idea to model (\ref{eq:garch}) for 
different values of $\alpha$ and $\beta$ leading to stationarity and nonstationarity. Figure \ref{fig:example_garch}, summarizing the results of our
Bayesian experiments, show that all the cases have been correctly identified, except the cases of $(\alpha=1,\beta=0)$ and $(\alpha=0.5,\beta=0.5)$.
Note that the first case is the same as ARCH(1) with $\alpha=1$, and the reason for failure of our Bayesian method for this case has already been explained
in Section \ref{subsec:arch}.

\begin{figure}
\centering
\subfigure [Stationary: $\alpha=0.3$, $\beta=0.4$.]{ \label{fig:garch_3_4_short_nonpara2}
\includegraphics[width=4.5cm,height=4.5cm]{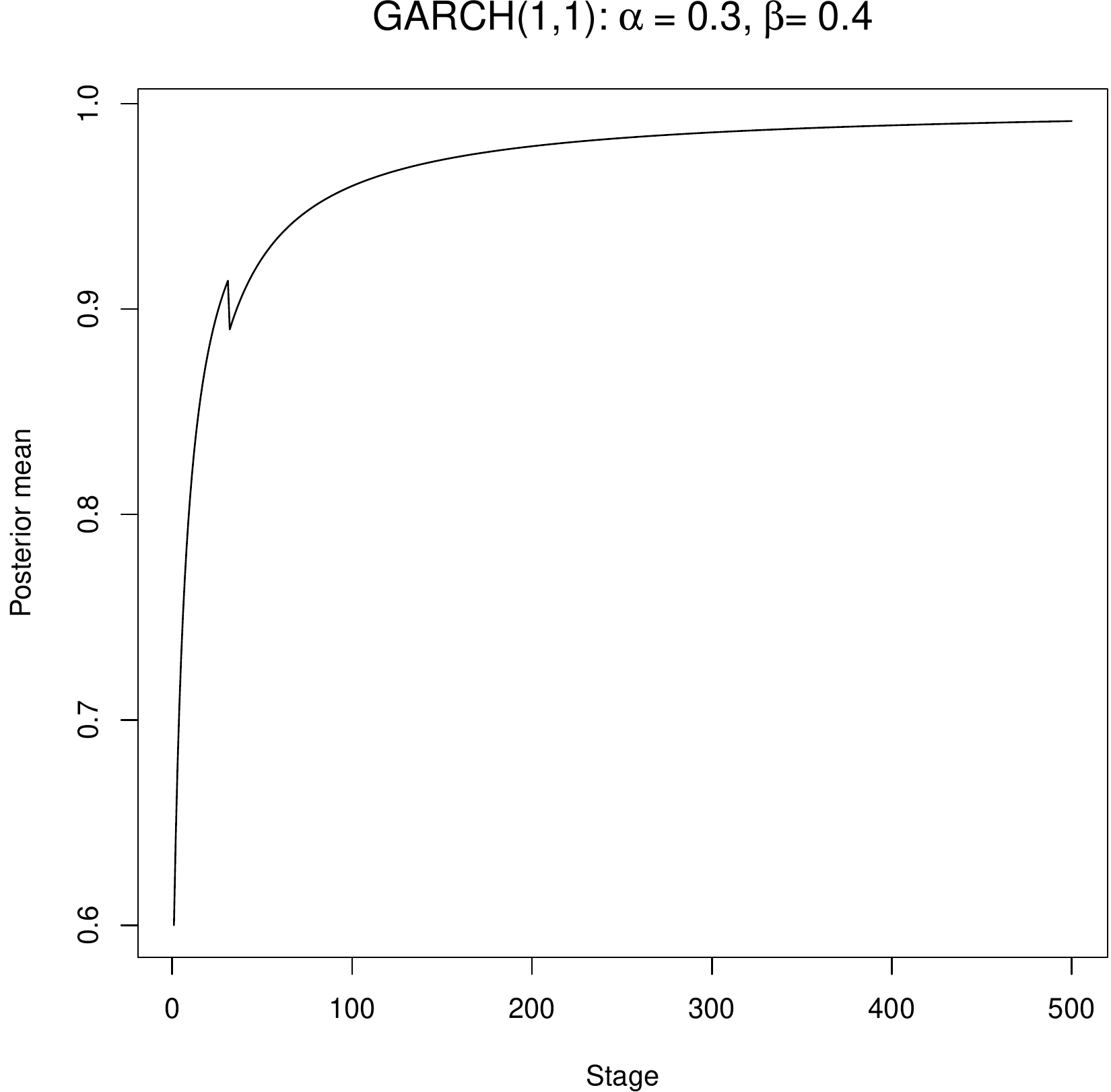}}
\hspace{2mm}
\subfigure [Stationary: $\alpha=0.4$, $\beta=0.3$.]{ \label{fig:garch_4_3_short_nonpara2}
\includegraphics[width=4.5cm,height=4.5cm]{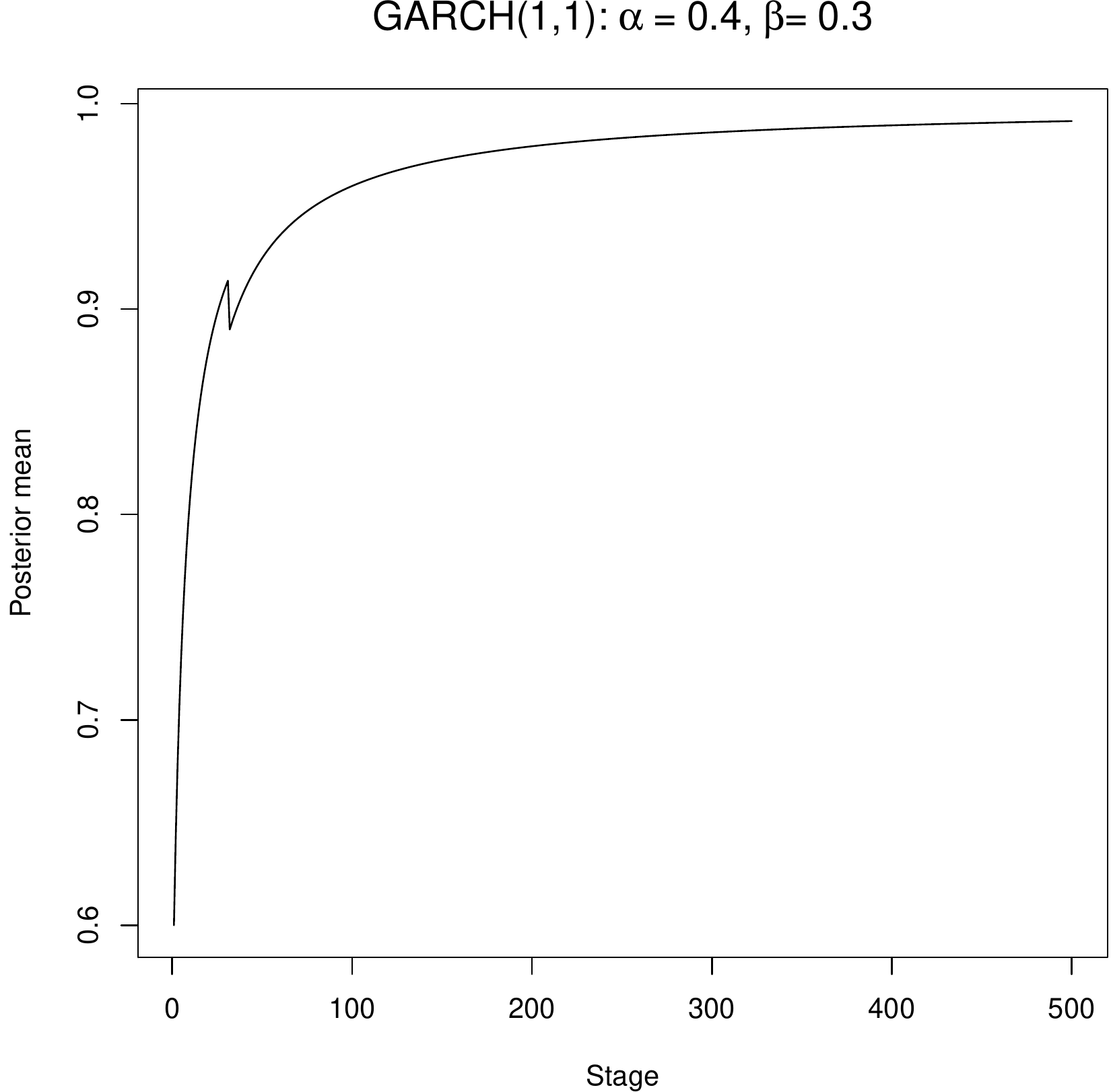}}
\hspace{2mm}
\subfigure [Stationary: $\alpha=0.4$, $\beta=0.5$.]{ \label{fig:garch_4_5_short_nonpara2}
\includegraphics[width=4.5cm,height=4.5cm]{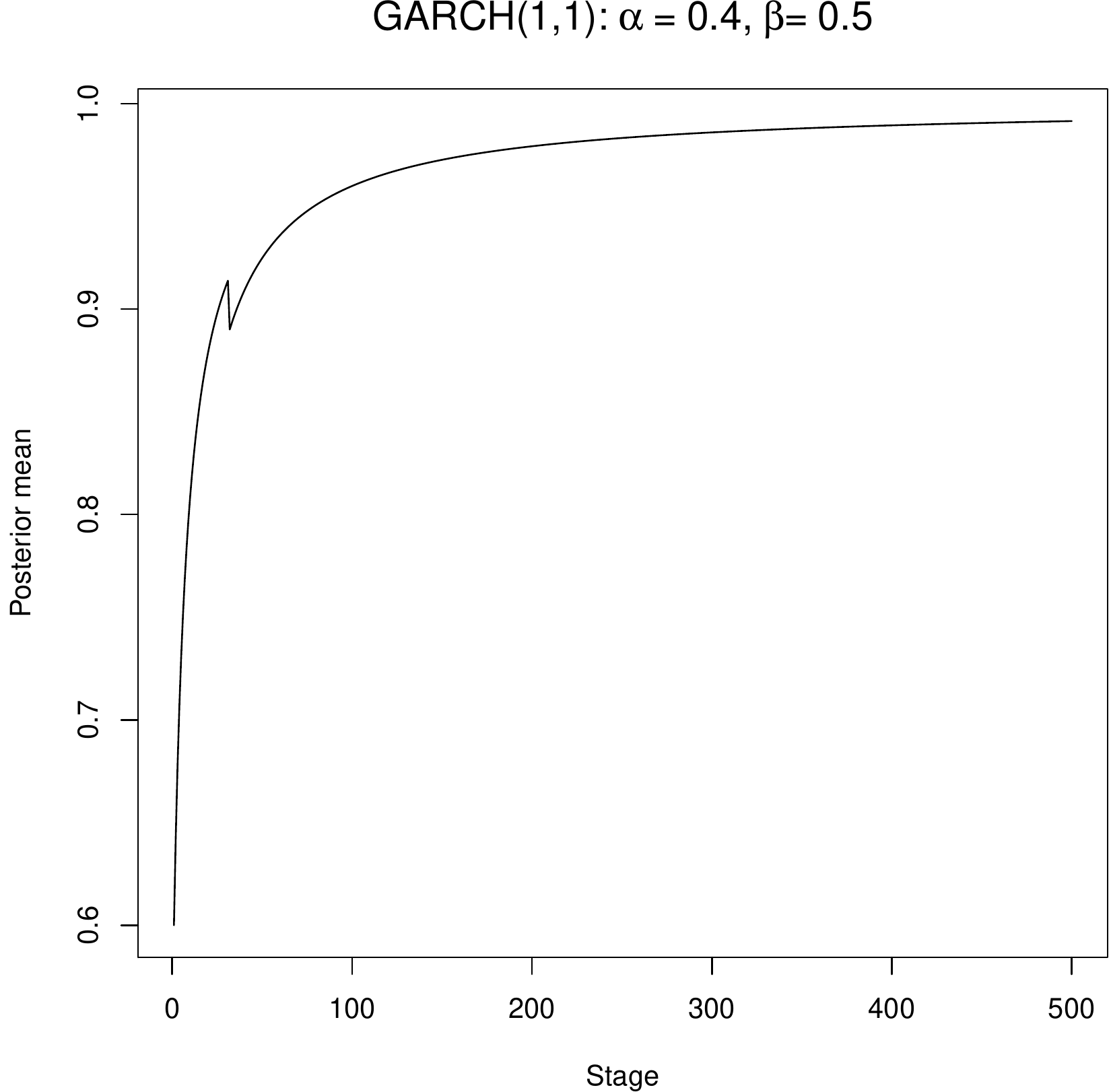}}\\
\vspace{2mm}
\subfigure [Stationary: $\alpha=0.5$, $\beta=0.4$.]{ \label{fig:garch_5_4_short_nonpara2}
\includegraphics[width=4.5cm,height=4.5cm]{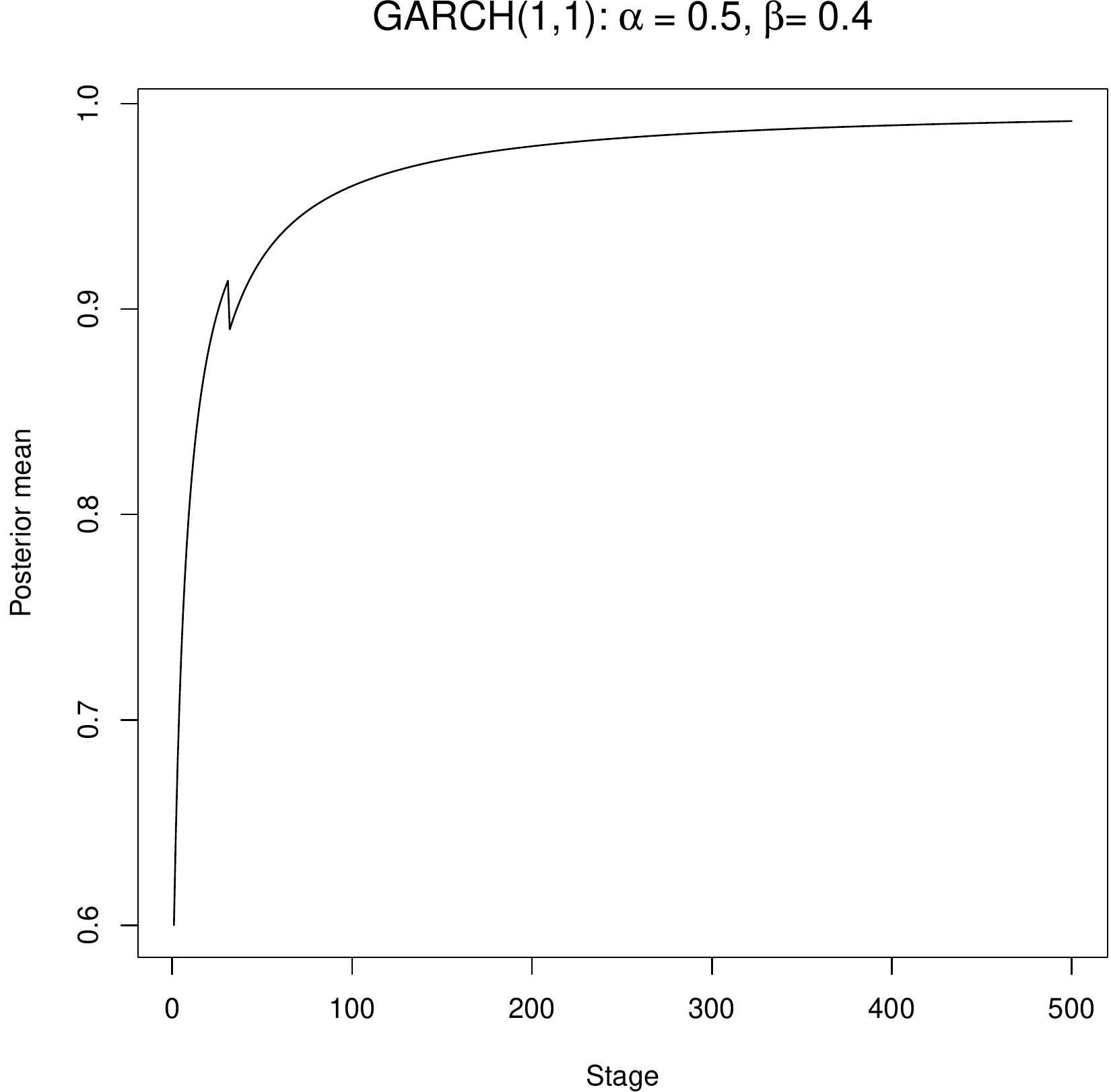}}
\hspace{2mm}
\subfigure [Nonstationary: $\alpha=0.5$, $\beta=0.6$.]{ \label{fig:garch_5_6_short_nonpara2}
\includegraphics[width=4.5cm,height=4.5cm]{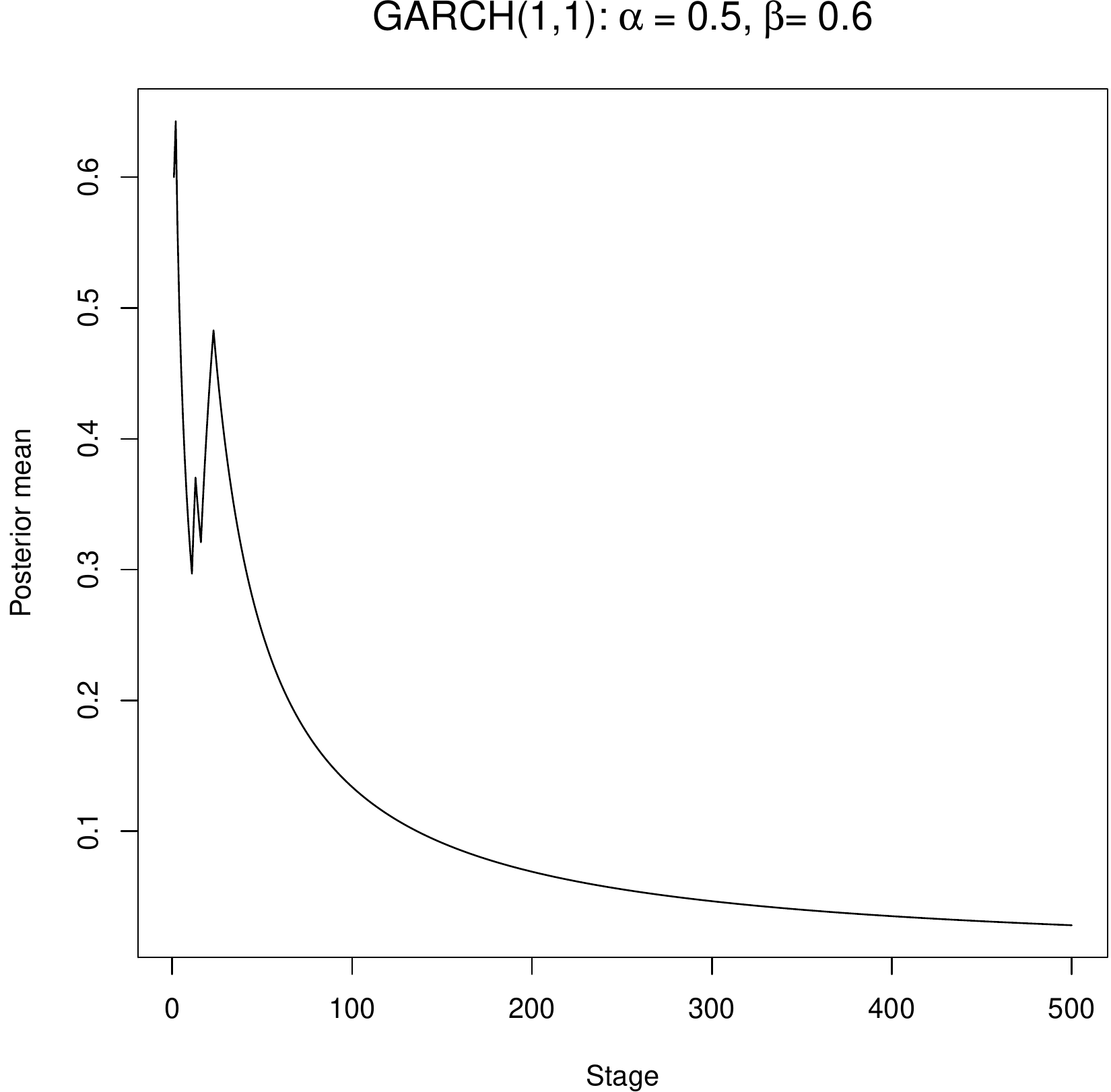}}
\hspace{2mm}
\subfigure [Nonstationary: $\alpha=0.6$, $\beta=0.6$.]{ \label{fig:garch_6_6_short_nonpara2}
\includegraphics[width=4.5cm,height=4.5cm]{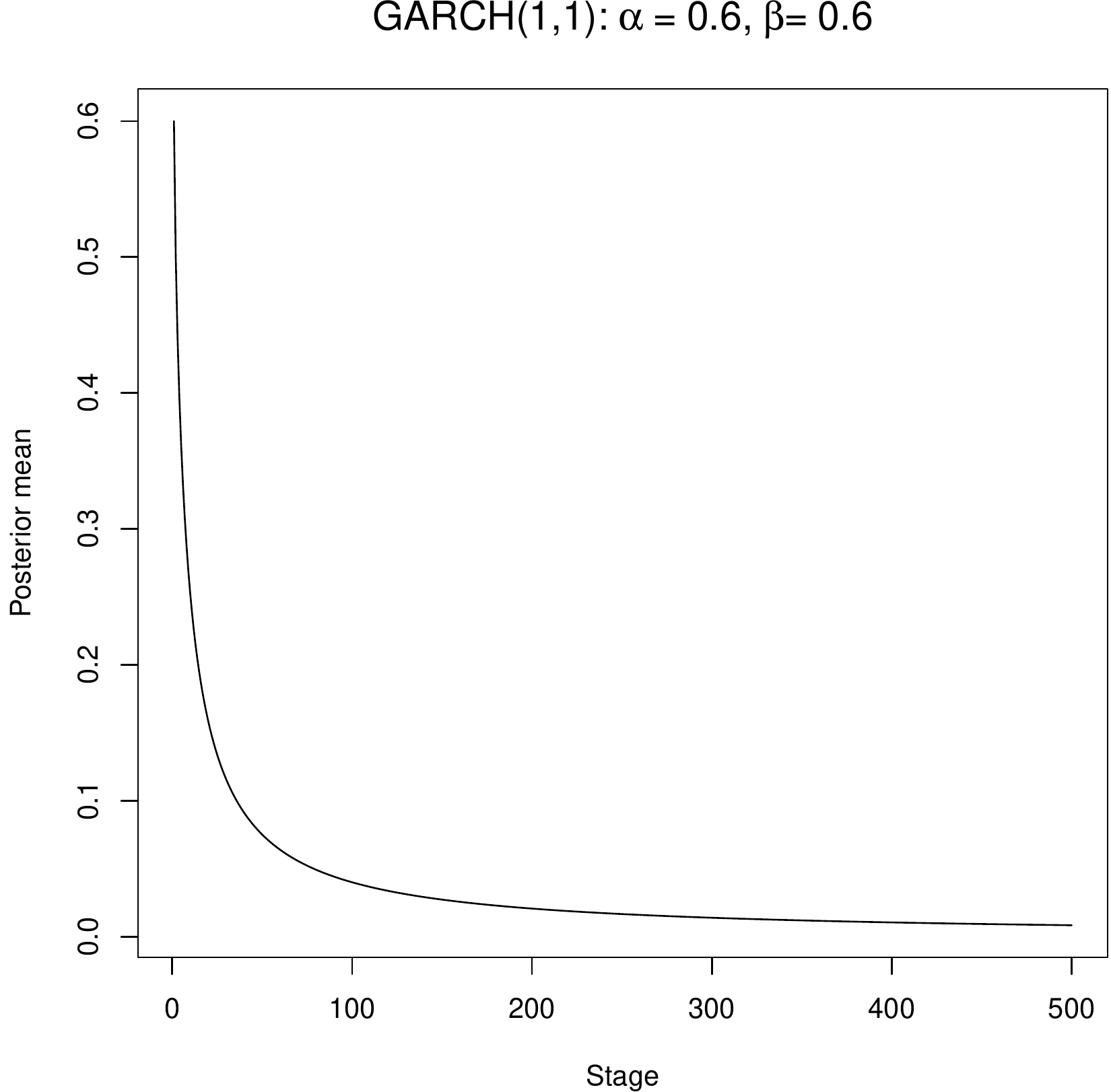}}\\
\vspace{2mm}
\subfigure [Nonstationary: $\alpha=0.5$, $\beta=0.5$.]{ \label{fig:garch_5_5_short_nonpara2}
\includegraphics[width=4.5cm,height=4.5cm]{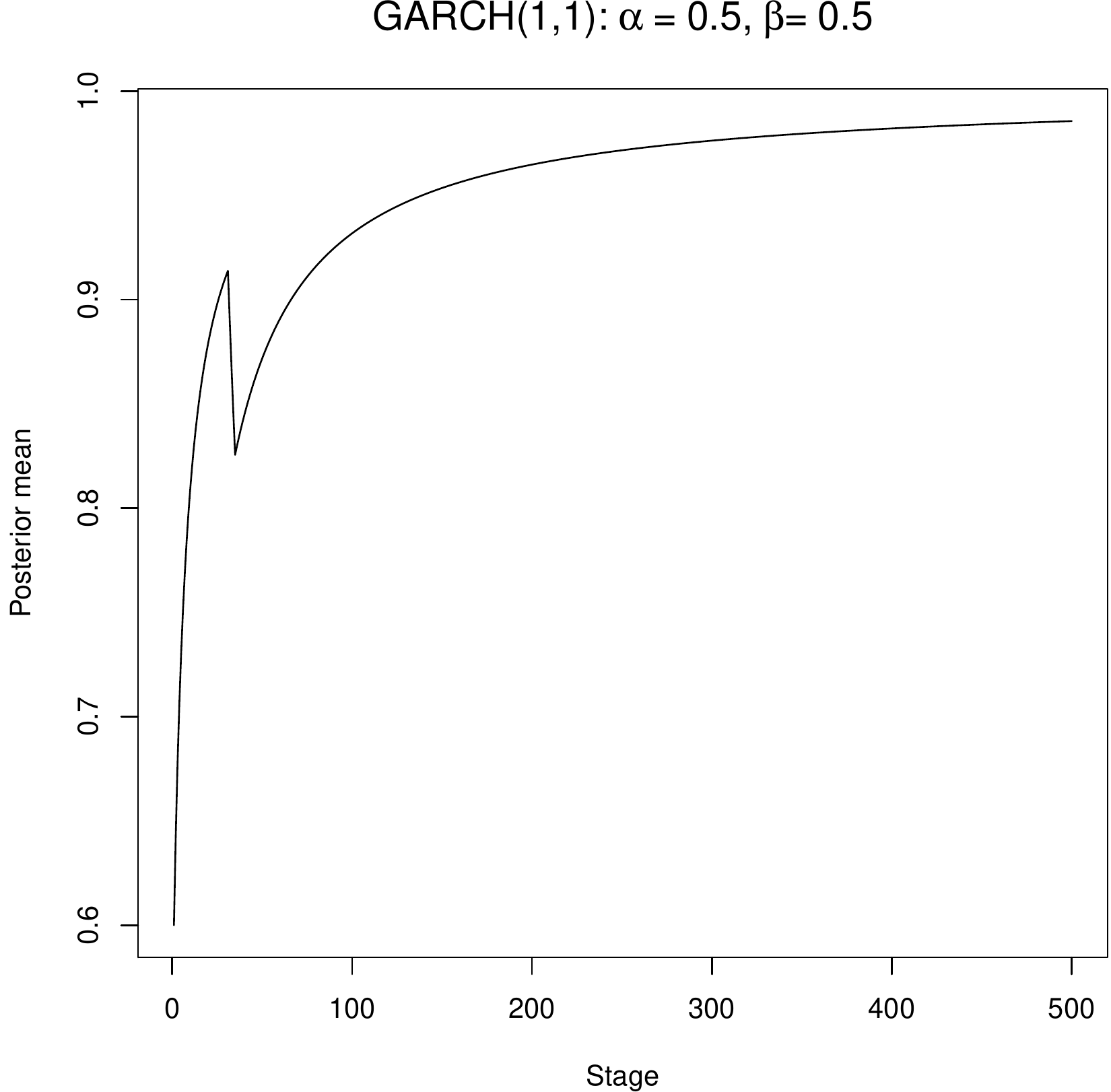}}
\hspace{2mm}
\subfigure [Nonstationary: $\alpha=0$, $\beta=1$.]{ \label{fig:garch_0_1_short_nonpara2}
\includegraphics[width=4.5cm,height=4.5cm]{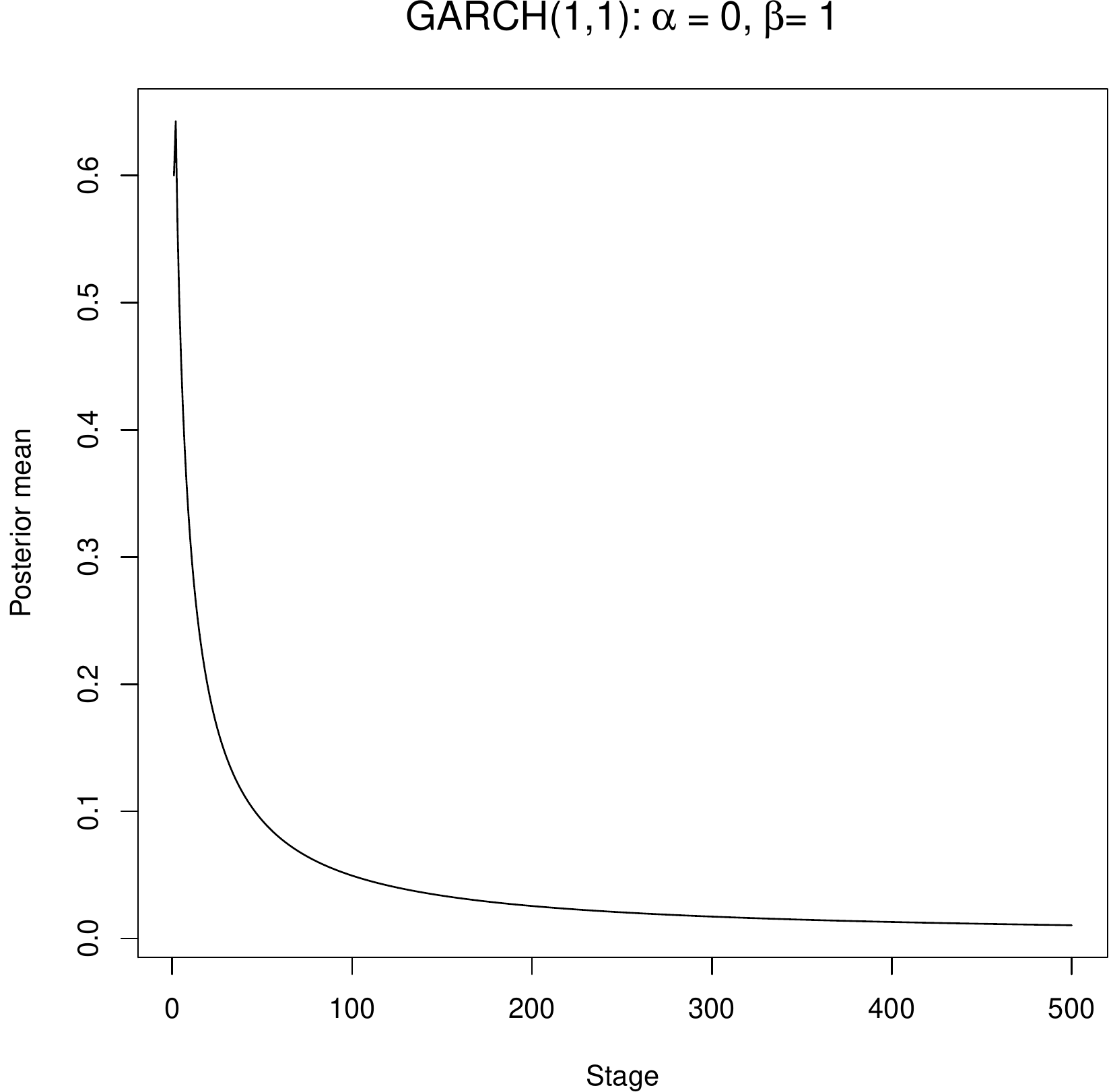}}
\hspace{2mm}
\subfigure [Nonstationary: $\alpha=1$, $\beta=0$.]{ \label{fig:garch_1_0_short_nonpara2}
\includegraphics[width=4.5cm,height=4.5cm]{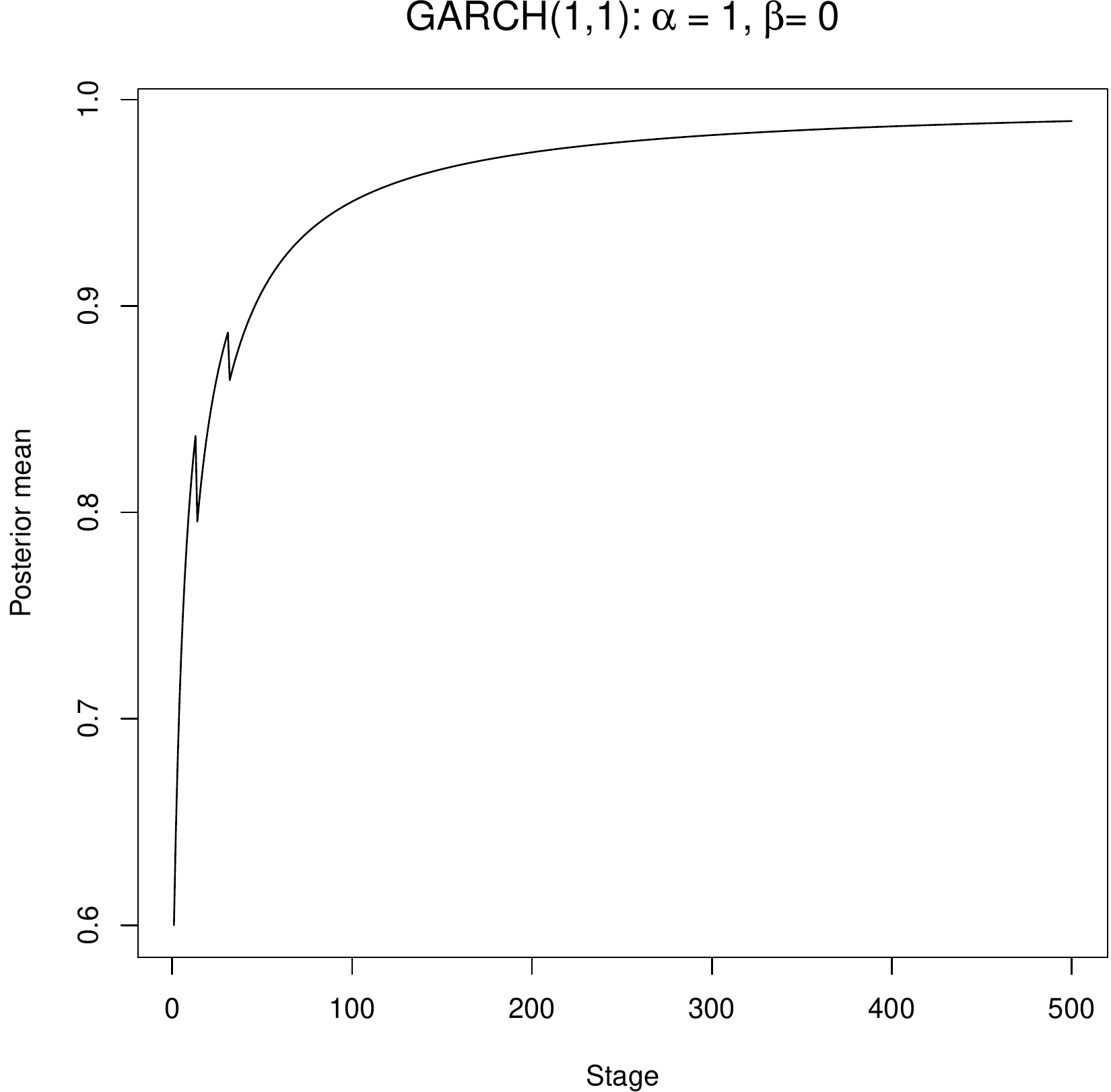}}
\caption{Nonparametric GARCH(1,1) example with $K=500$ and $n=5$.}
\label{fig:example_garch}
\end{figure}

The diagram for the case of $(\alpha=0.5,\beta=0.5)$ is provided in Figure \ref{fig:failed_Bayesian_garch}. Note that
this realization is essentially of the same pattern as panels (a) and (b) of Figure \ref{fig:failed_Bayesian_garch} associated with ARCH(1) models
with $\alpha=0.9$ and $1$, respectively, which do not seem to show any evidence of nonstationarity. Hence, again, quite unsurprisingly, our Bayesian
method declared this case as stationary.
\begin{figure}
\centering
\subfigure [Nonstationary: $\alpha=0.5$, $\beta=0.5$.]{ \label{fig:garchplot_5_5_short_nonpara2}
\includegraphics[width=6.5cm,height=6.5cm]{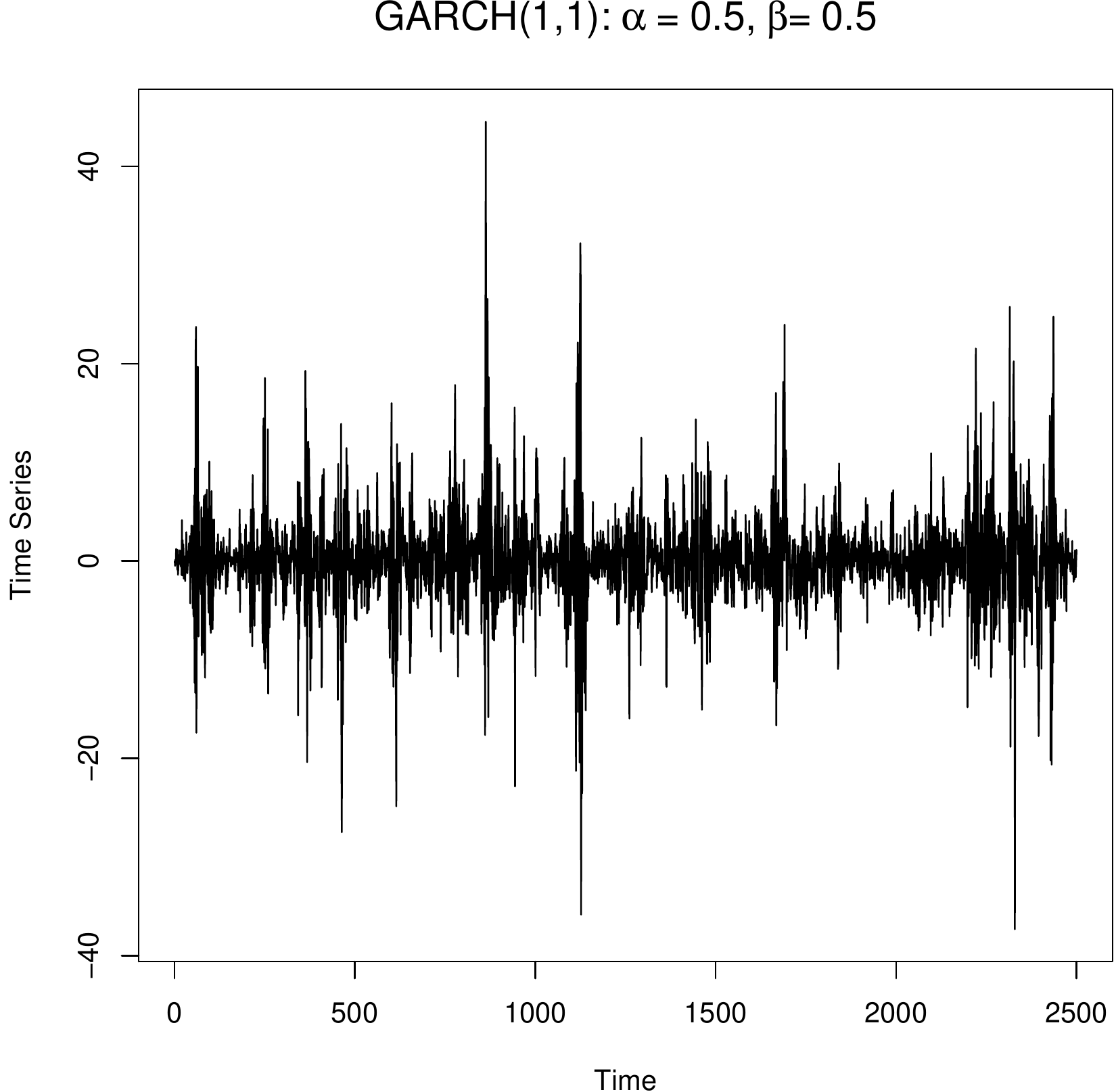}}
\caption{GARCH(1,1) sample for $\alpha=0.5$ and $\beta=0.5$ where our Bayesian method failed.}
\label{fig:failed_Bayesian_garch}
\end{figure}

\section{Third illustration: MCMC convergence diagnostics}
\label{sec:mcmc}

We now test our Bayesian method on the very relevant problem of MCMC convergence diagnosis.
For our purpose, we focus attention on transformation based Markov chain Monte Carlo (TMCMC) introduced by \ctn{Dutta14}. 
We consider three examples: in the first example, we assume that the target distribution is a product 
of $100$ standard normal densities, and consider seven instances of additive TMCMC. Here we make use of the optimal scaling theory for additive TMCMC.
In the next two examples, we consider mixtures of two normal densities. In all the cases, we evaluate convergence of TMCMC using our proposed Bayesian method. 

\subsection{A brief overview of TMCMC}
\label{subsec:tmcmc_overview}
TMCMC enables updating an entire block of parameters using deterministic bijective transformations of some arbitrary low-dimensional random variable. 
Thus very high-dimensional parameter
spaces can be explored using simple transformations of very low-dimensional random variables.
In fact, transformations of some one-dimensional random variable always suffices, which we shall
adopt in our examples. The underlying idea also greatly improves computational
speed and acceptance rate compared to block Metropolis-Hastings methods. Interestingly, the
TMCMC acceptance ratio is independent of the proposal distribution chosen for the arbitrary low-
dimensional random variable. For implementation in our cases, we shall consider the additive
transformation, since it is shown in \ctn{Dutta14} that many fewer number of
``move types" are required by this transformation compared to non-additive transformations.
To elaborate the additive TMCMC mechanism, assume that a block of parameters $\bx = (x_1,\ldots,x_r)$
is to be updated simultaneously using additive TMCMC, where $r~(\geq 2)$ is some positive integer.
At the $t$-th iteration ($t\geq 1$) we shall then simulate $\theta\sim g(x)I_{\left\{x>0\right\}}$ , where $g(\cdot)$ is some arbitrary distribution and 
$I_{\left\{x>0\right\}}$ is the indicator function of the set $\left\{x > 0\right\}$. 

We then propose, for $j = 1,\ldots,r$, $x^{(t)}_j = x^{(t-1)}_j\pm a_j\eta$, with equal probability (although equal probability is
a convenience, not a necessity), where $(a_1,\ldots,a_r)$ are appropriate scaling constants. Thus, using
additive transformations of a single, one-dimensional $x$, we update the entire block $\bx$ at once. 

\subsection{Optimal scaling of TMCMC}
\label{subsec:tmcmc_optimal_scaling}
In our examples we shall choose $r=d$, where $d$ is the 
total number of parameters to be updated. In other words, we shall update all the parameters simultaneously, in a single block.
We shall consider $a_i=1$, for $i=1,\ldots,d$ and $g(\cdot)$ to be the $N(0,\frac{\ell^2}{d})$ density, so that $\eta$ is simulated from a truncated normal distribution, 
with mean zero and variance $\ell^2/d$. The optimum choice of $\ell$ is directly related to the optimal scaling problem (see \ctn{Dey17} and \ctn{Dey19}).
Under appropriate regularity conditions it turns out that the optimal value of $\ell$ corresponds to the optimal additive TMCMC acceptance rate $0.439$.
When the target distribution $\pi(x_1,\ldots,x_d)$ is a product of $d$ $iid$ standard normal densities, as we consider, 
then it turns out that the optimum choice of $\ell$ is $2.4$.

\subsection{TMCMC example 1: product of $100$ standard normal densities}
\label{subsec:product_normal}

We apply additive TMCMC to generate $10^6$ realizations from $\pi(x_1,\ldots,x_d)$ being a product of $d$ standard normal densities with $d=100$.
We consider seven values of $\ell$, and hence seven different TMCMC chains, each corresponding to a value of $\ell$. In particular, we set $\ell=0.001$, $0.01$,
$0.1$, $2.4$, $10$, $100$ and $1000$. Of these, $\ell=2.4$ is the optimum value that maximizes the ``diffusion speed" associated with the TMCMC chain. The values
relatively closer to $2.4$, although not optimal, can still generate TMCMC chains with reasonable convergence properties. Significantly small values of $\ell$ generates TMCMC
chains with very high acceptance rates but with very slow convergence rates, as at each iteration, the chain is allowed to take only small steps for movement. 
On the other hand, for significantly large values of $\ell$, large steps are generally proposed, which are often rejected. Thus, the chain again has slow convergence,
with poor acceptance rate. 

It transpires from the above discussion that for values of $\ell$ equal to, or relatively close to $2.4$, good convergence properties of the TMCMC chains
can be expected, and it is desirable that our Bayesian method indicates convergence to stationarity for such cases. For other values of $\ell$, since
the convergence properties of the chains are expected to be poor, our Bayesian method must reflect so.

Generation of $10^6$ TMCMC realizations from $\pi(x_1,\ldots,x_d)$ with $d=100$ takes less than $0.05$ seconds on an ordinary 64 bit laptop.
For implementation of our Bayesian idea, we need the bounds $c_j$. The general-purpose nonparametric bound (\ref{eq:ar1_bound3}) turned out to be
quite appropriate in all the TMCMC examples that we consider. Indeed, in general there is no provision for parametric bounds in MCMC situations,
as such bounds would require direct generation from $\pi$ or some distribution close to $\pi$, but if such direct generation were at all possible, MCMC would not
be needed in the first place.

For $K=1000$ and $n=1000$, Figures \ref{fig:example4} and \ref{fig:example5} display the trace plots (presented
after thinning the original chain of length $10^6$ by $100$, to reduce the file sizes) and the corresponding Bayesian posterior means
associated with our Bayesian stationarity detection idea, for different values of $\ell$, for the first co-ordinate $x_1$ of $(x_1,\ldots,x_{100})$. 
It takes a few seconds even on a 64-bit dual core laptop for parallel implementation of our Bayesian idea in these cases.

The results are very much in keeping with our prior expectation
that for significantly small and large values of $\ell$ convergence to stationarity for the given sample size is not expected, while for $\ell=2.4$ and
values relatively close to $2.4$, stationarity is expected. Specifically, the figures for Bayesian stationarity detection strongly indicate convergence
for $\ell=0.1$, $2.4$ and $10$, but strongly indicate that the chains corresponding to $\ell=0.001$, $0.01$, $100$ and $1000$, are yet to achieve stationarity.
Note that these results are also in accordance with the visual information obtained from the corresponding trace plots.
\begin{figure}
\centering
\subfigure [TMCMC trace plot ($\ell=0.001$).]{ \label{fig:trace_plot1}
\includegraphics[width=5.5cm,height=5.5cm]{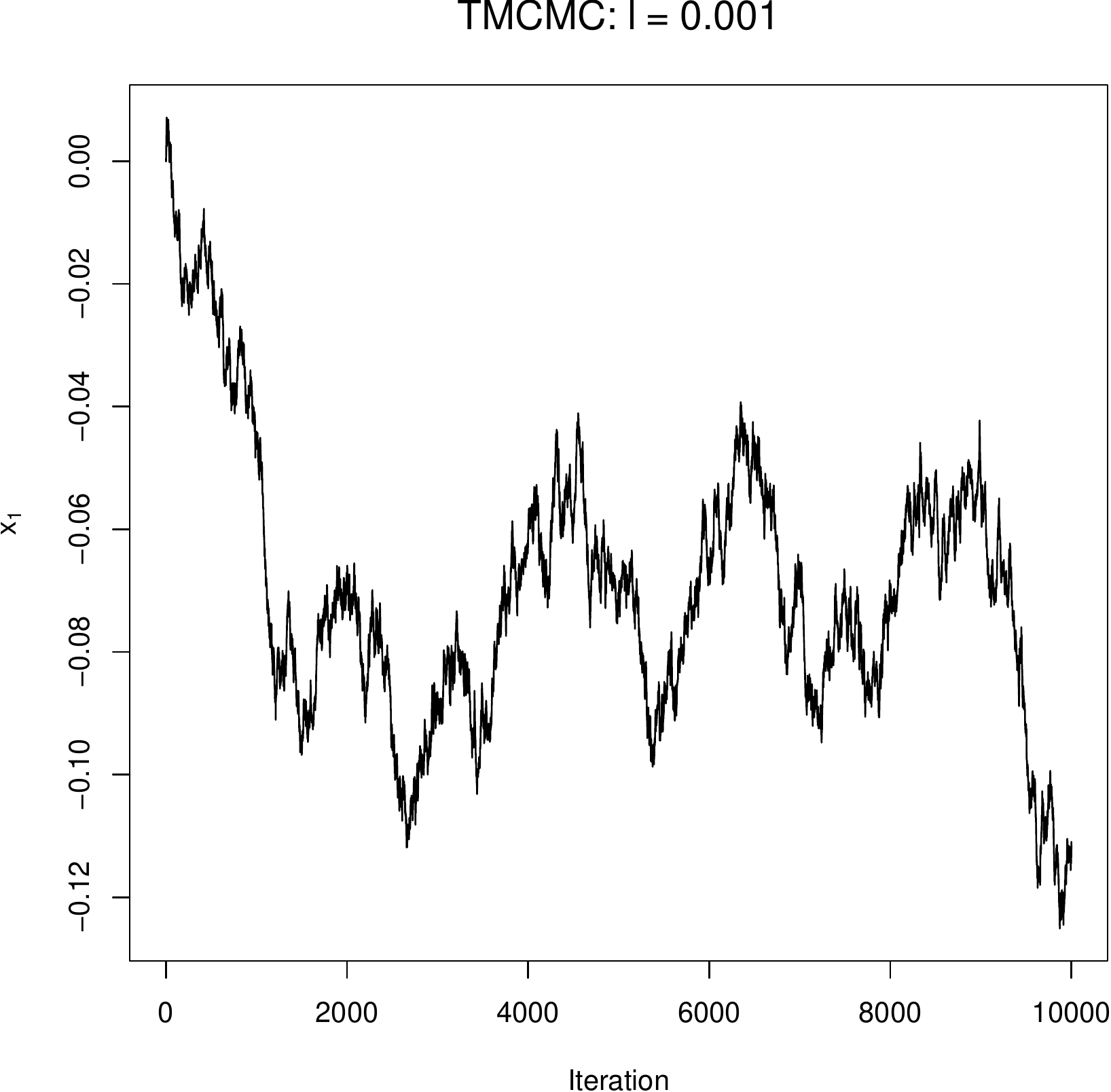}}
\hspace{2mm}
\subfigure [Convergence ($\ell=0.001$): Nonstationary.]{ \label{fig:diag1}
\includegraphics[width=5.5cm,height=5.5cm]{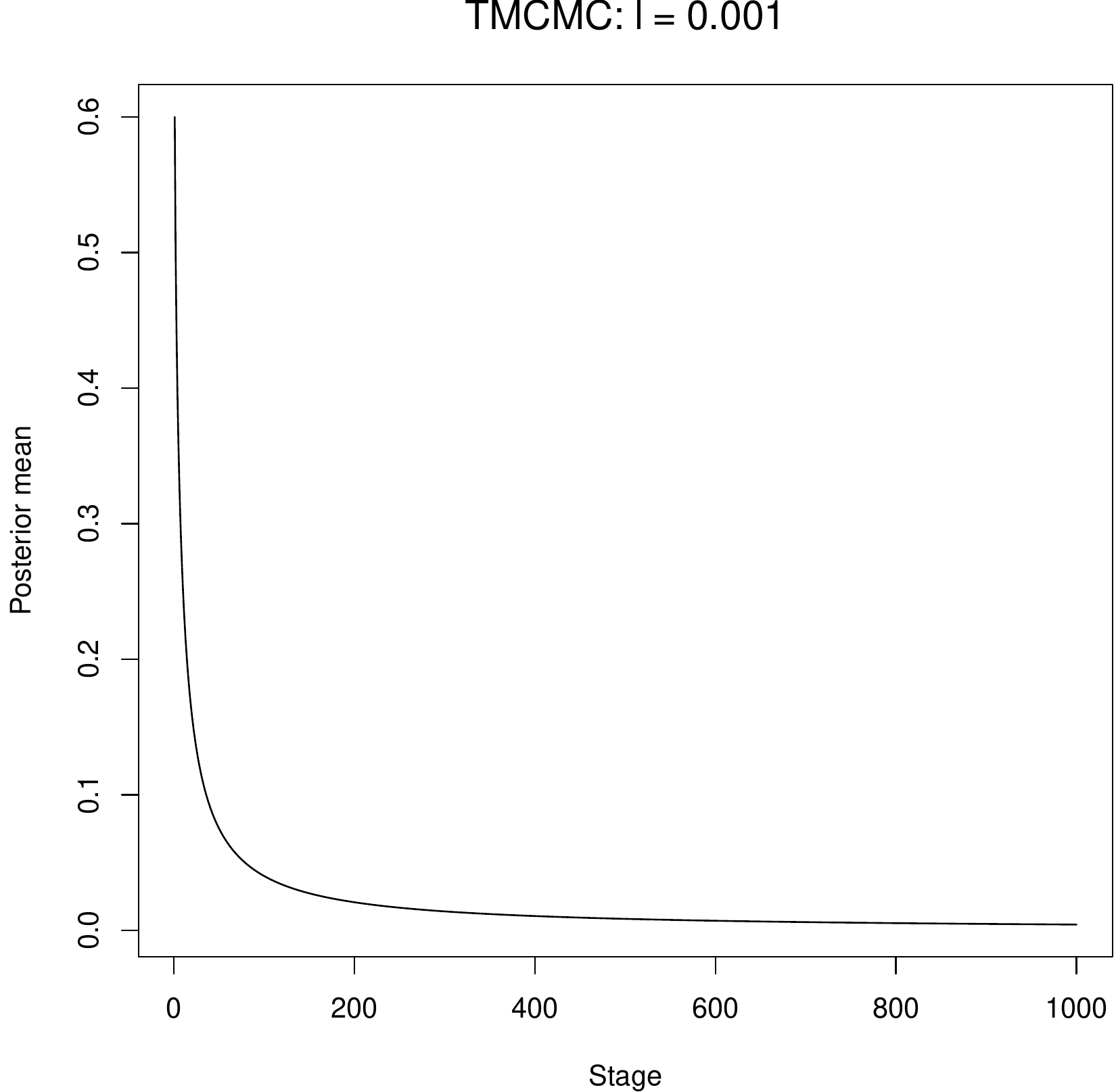}}\\
\vspace{2mm}
\subfigure [TMCMC trace plot ($\ell=0.01$).]{ \label{fig:trace_plot2}
\includegraphics[width=5.5cm,height=5.5cm]{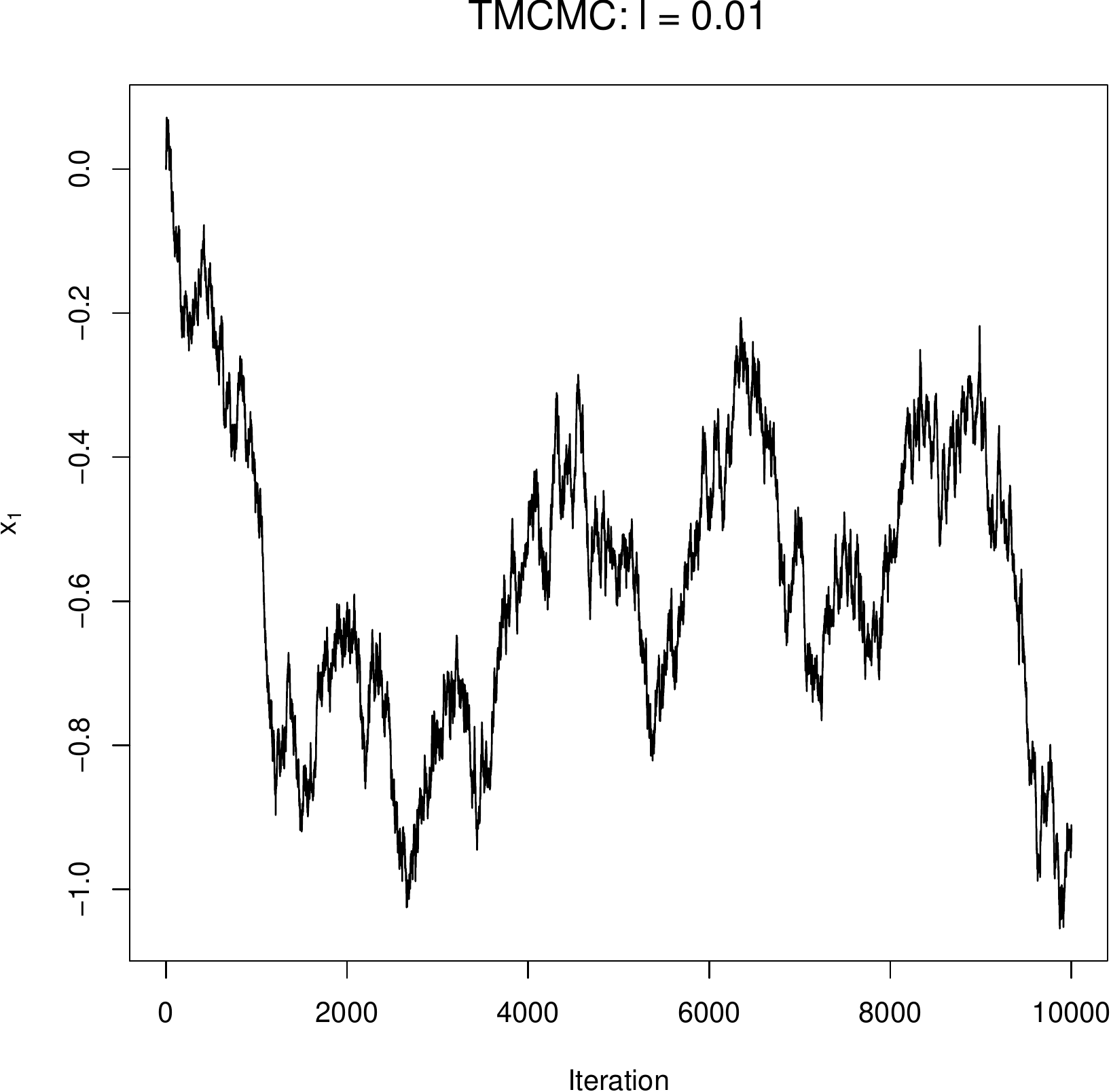}}
\hspace{2mm}
\subfigure [Convergence ($\ell=0.01$): Nonstationary.]{ \label{fig:diag2}
\includegraphics[width=5.5cm,height=5.5cm]{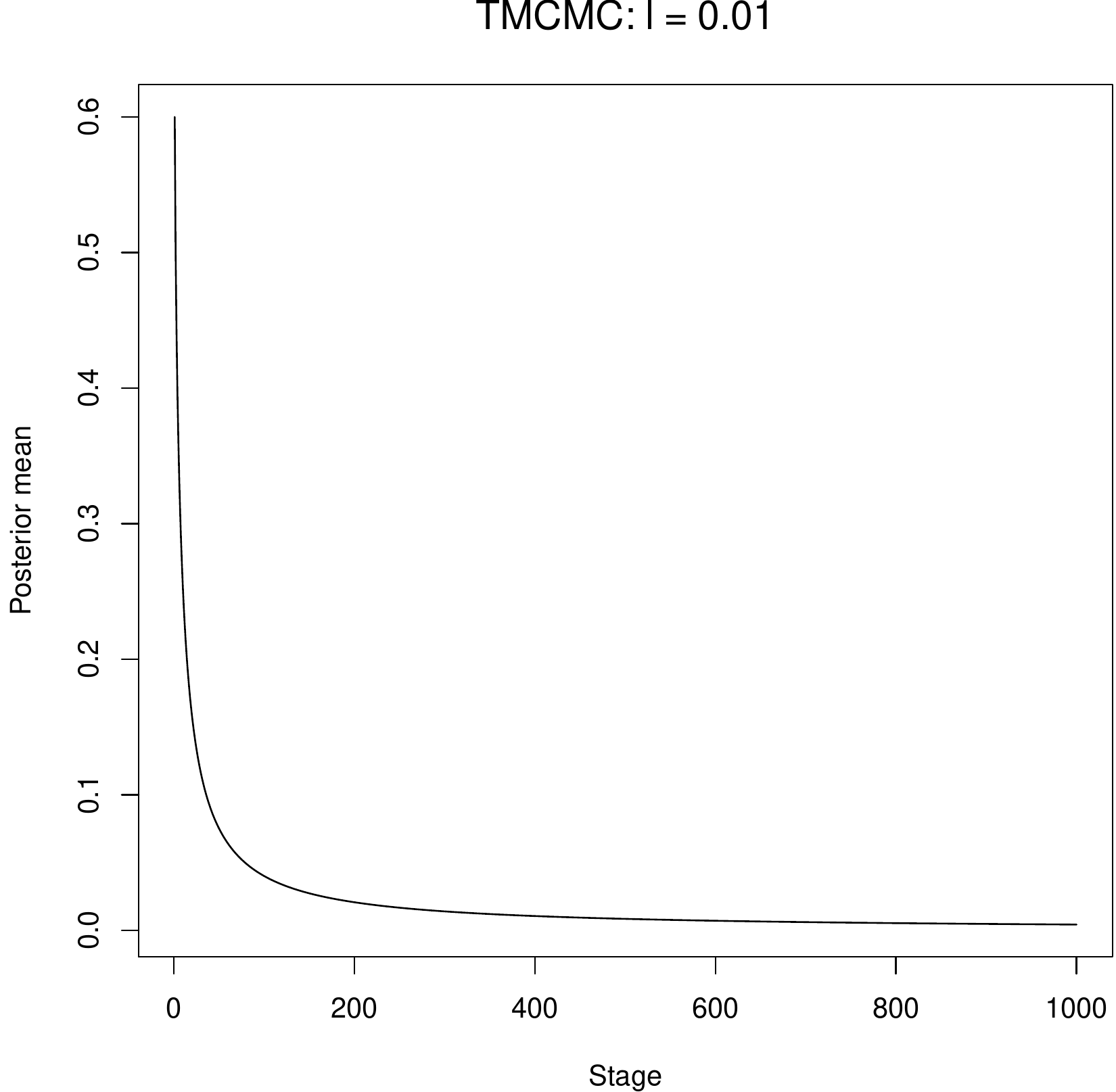}}\\
\vspace{2mm}
\subfigure [TMCMC trace plot ($\ell=0.1$).]{ \label{fig:trace_plot3}
\includegraphics[width=5.5cm,height=5.5cm]{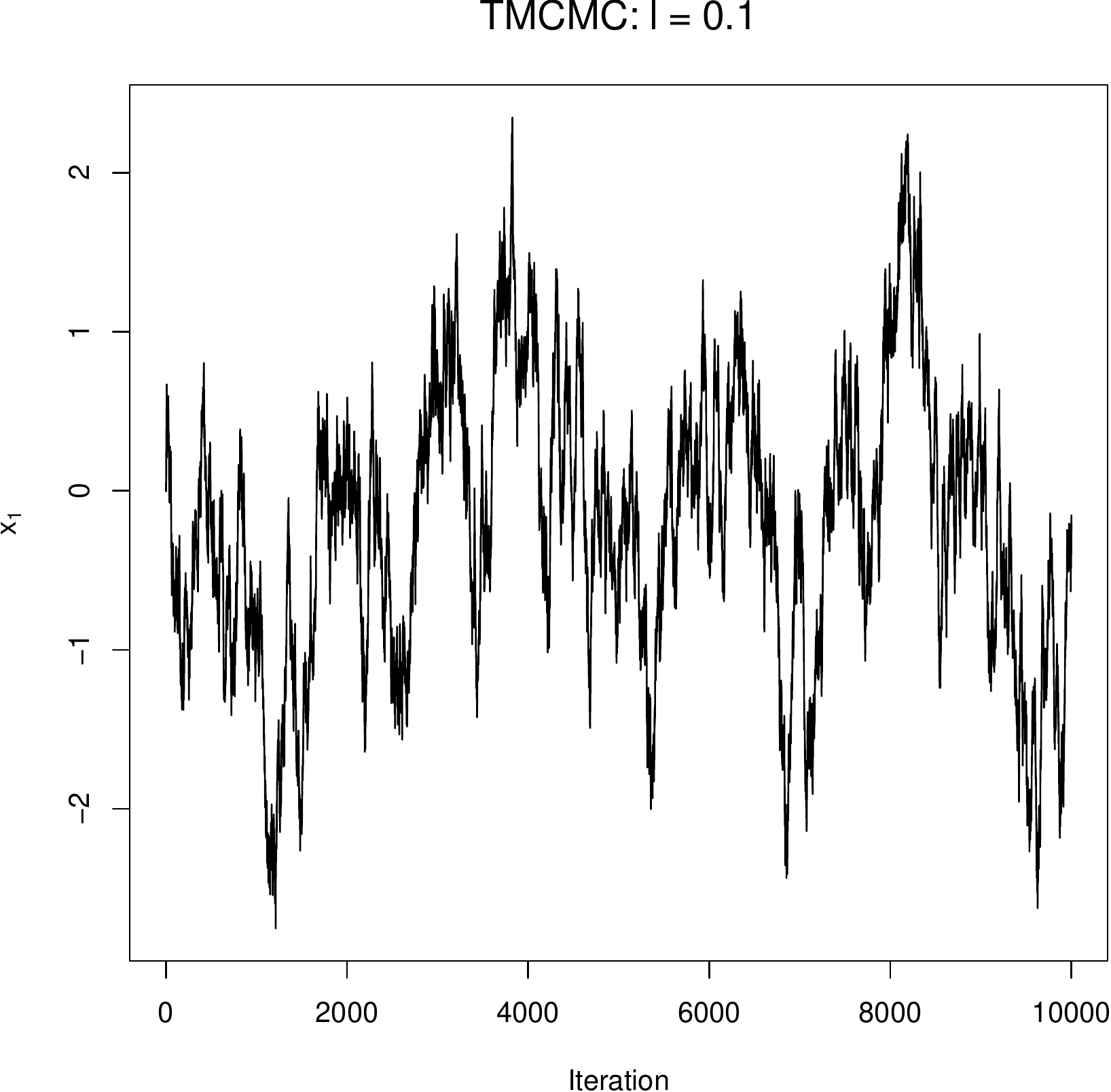}}
\hspace{2mm}
\subfigure [Convergence ($\ell=0.1$): Stationary.]{ \label{fig:diag3}
\includegraphics[width=5.5cm,height=5.5cm]{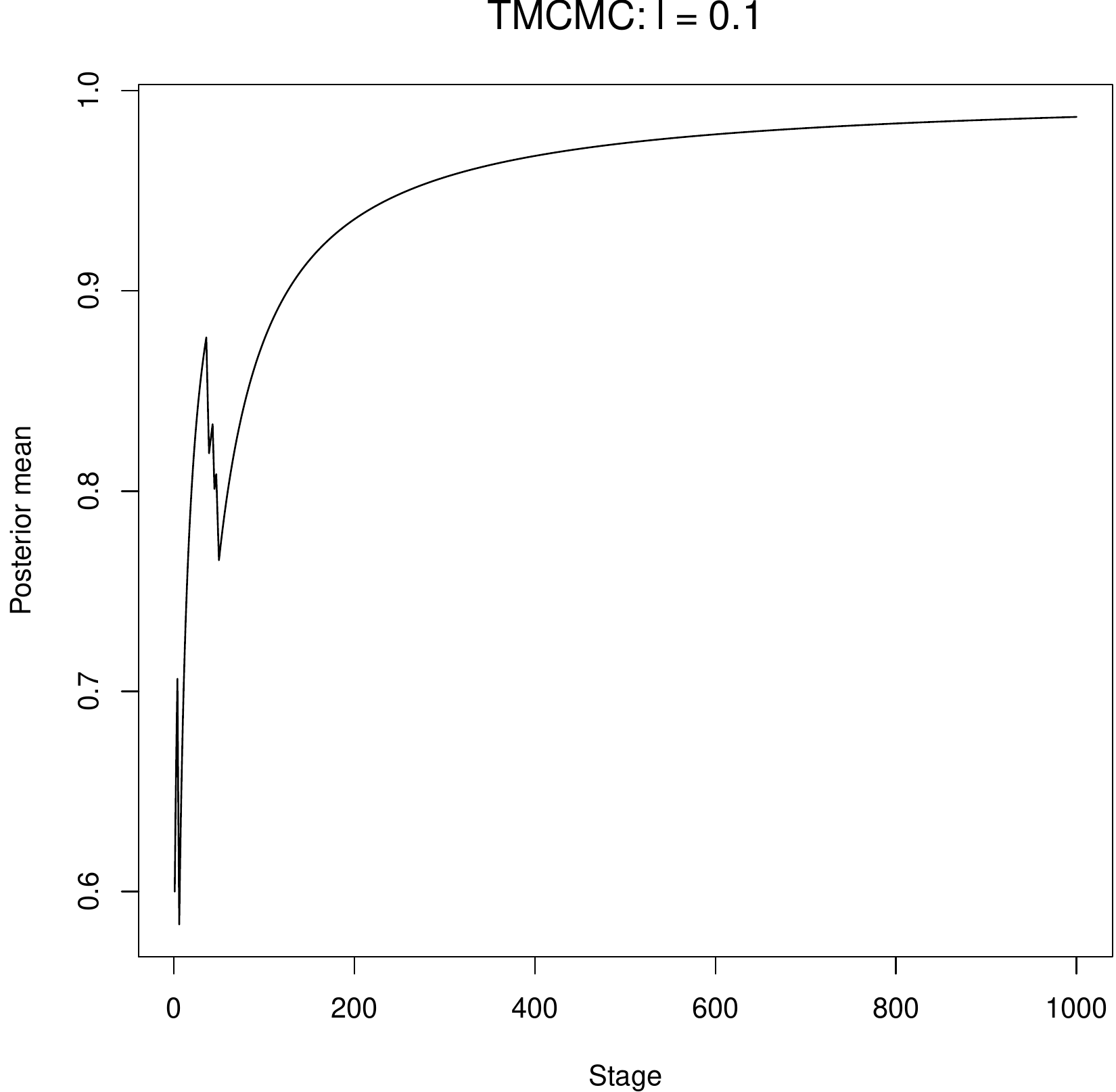}}\\
\vspace{2mm}
\subfigure [TMCMC trace plot ($\ell=2.4$).]{ \label{fig:trace_plot4}
\includegraphics[width=5.5cm,height=5.5cm]{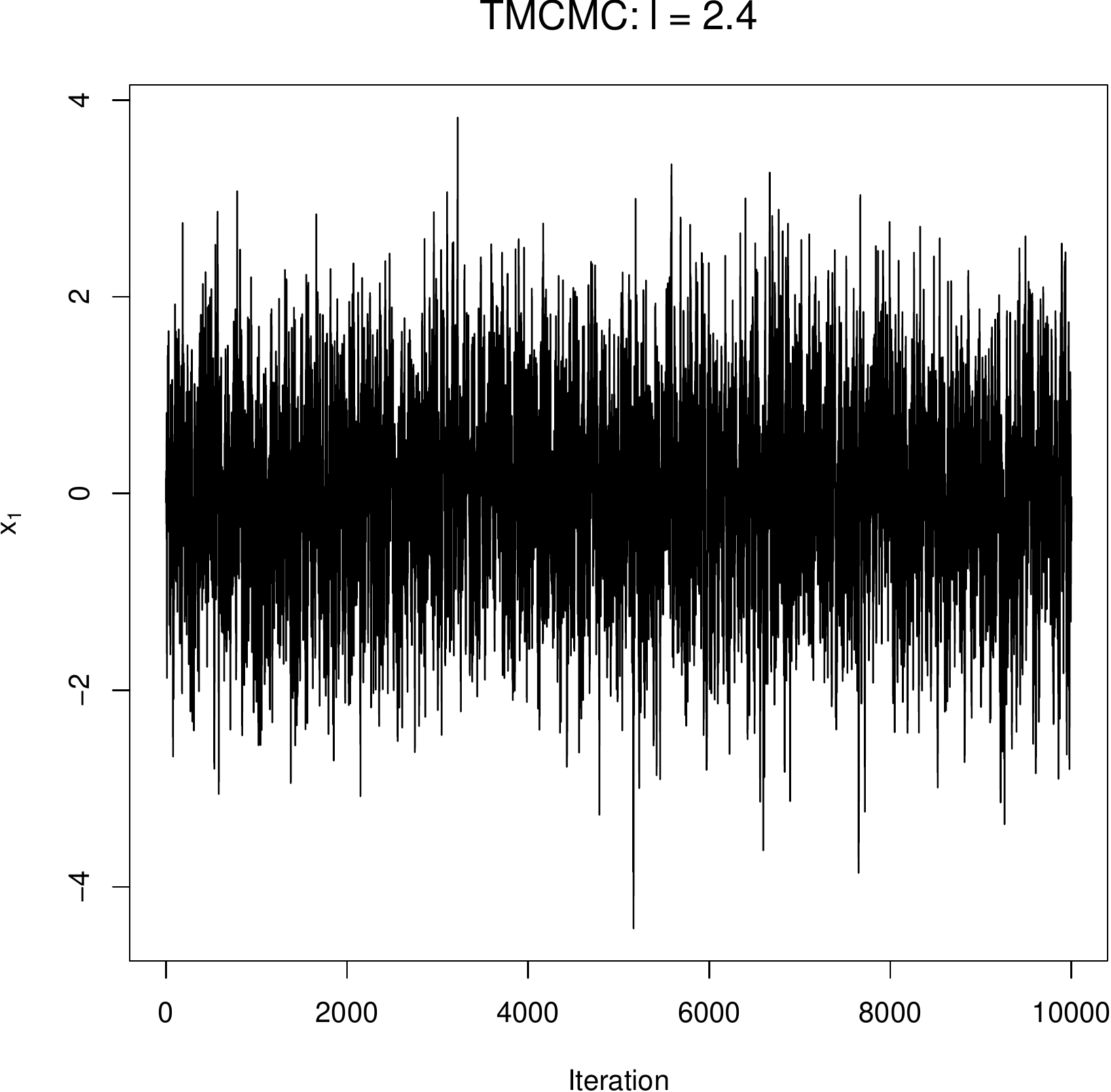}}
\hspace{2mm}
\subfigure [Convergence ($\ell=2.4$): Stationary.]{ \label{fig:diag4}
\includegraphics[width=5.5cm,height=5.5cm]{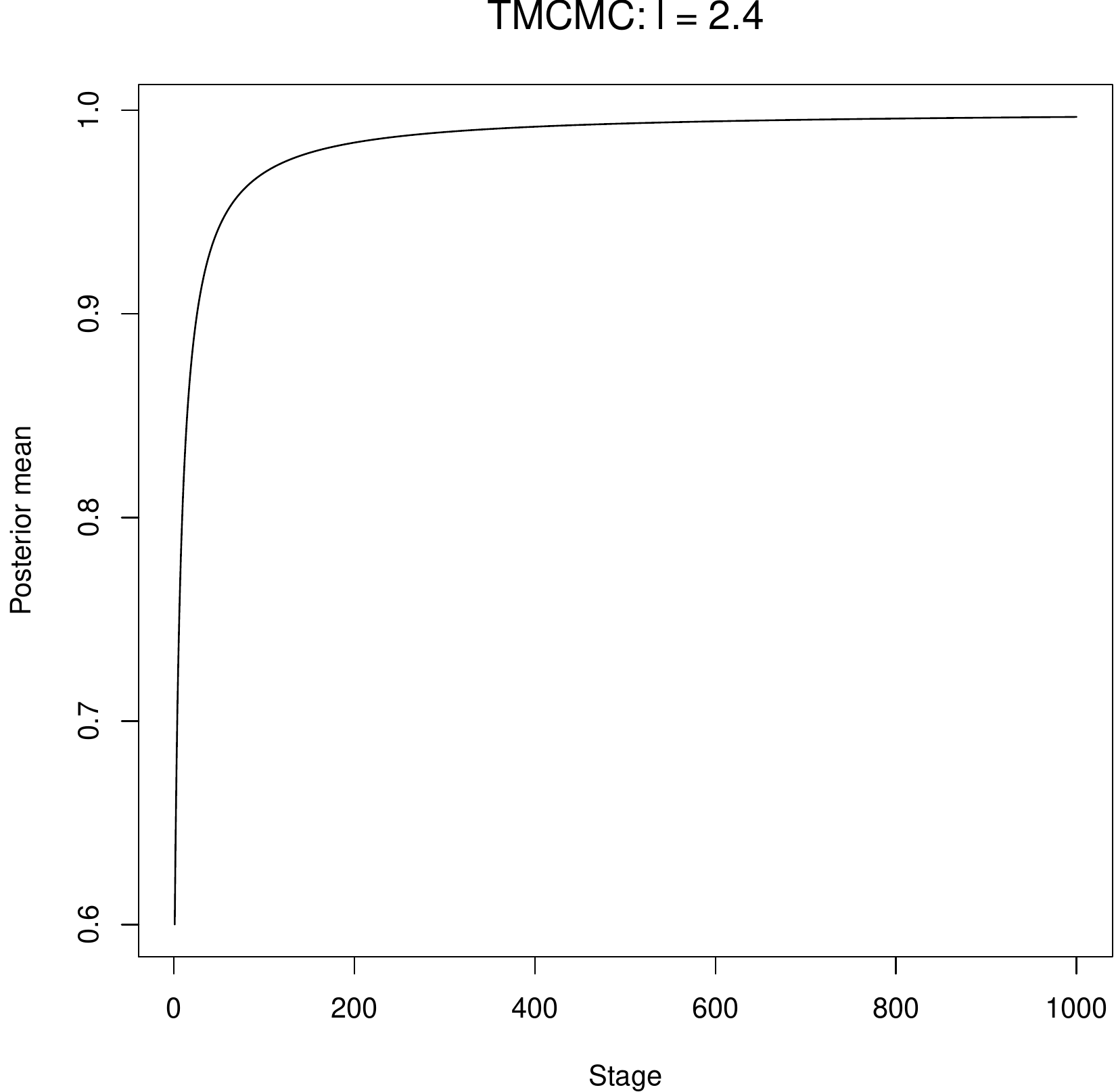}}\\
\caption{Additive TMCMC convergence example, with $K=1000$ and $n=1000$.}
\label{fig:example4}
\end{figure}

\begin{figure}
\centering
\subfigure [TMCMC trace plot ($\ell=10$).]{ \label{fig:trace_plot5}
\includegraphics[width=5.5cm,height=5.5cm]{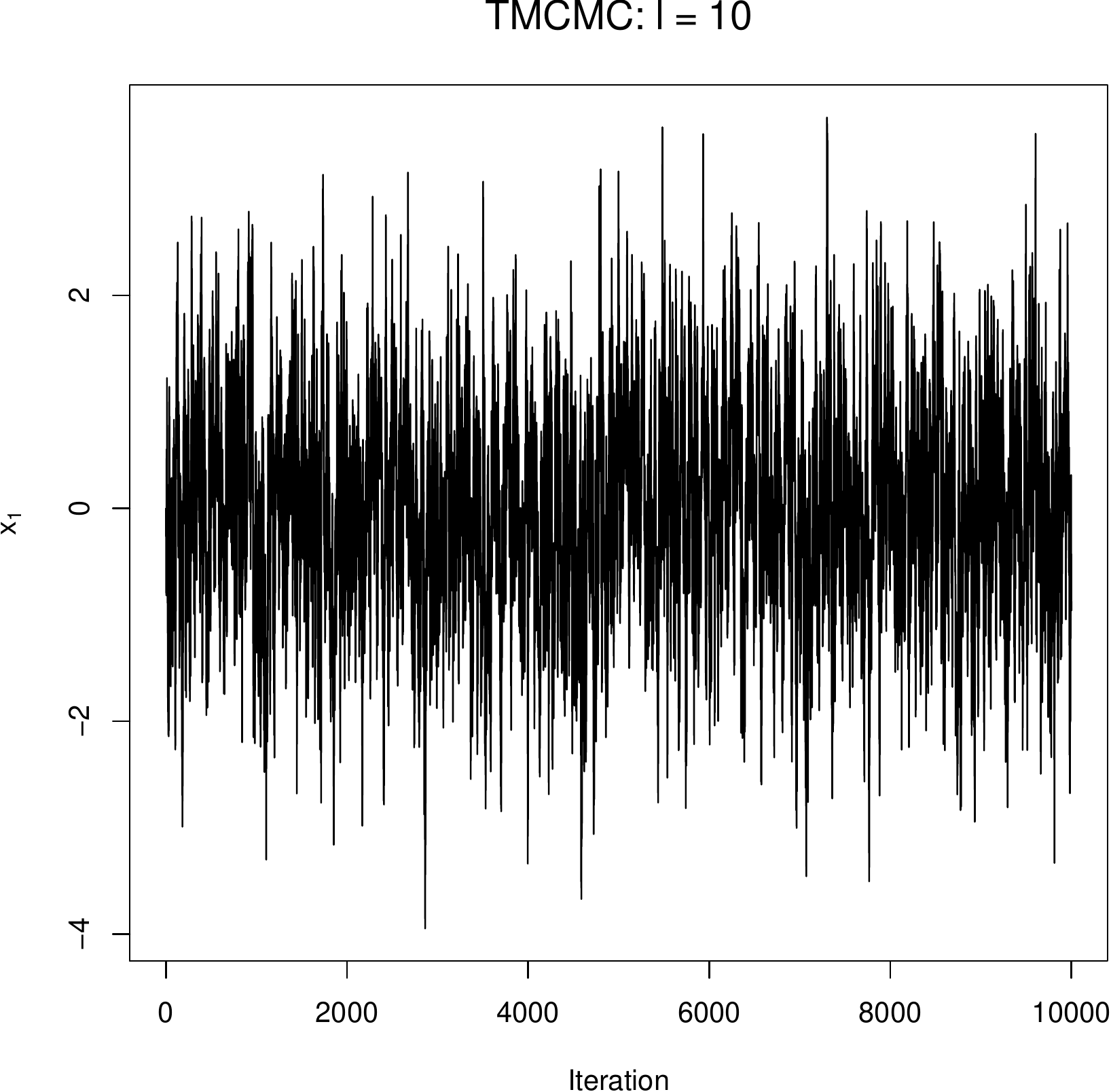}}
\hspace{2mm}
\subfigure [Convergence ($\ell=10$): Stationary.]{ \label{fig:diag5}
\includegraphics[width=5.5cm,height=5.5cm]{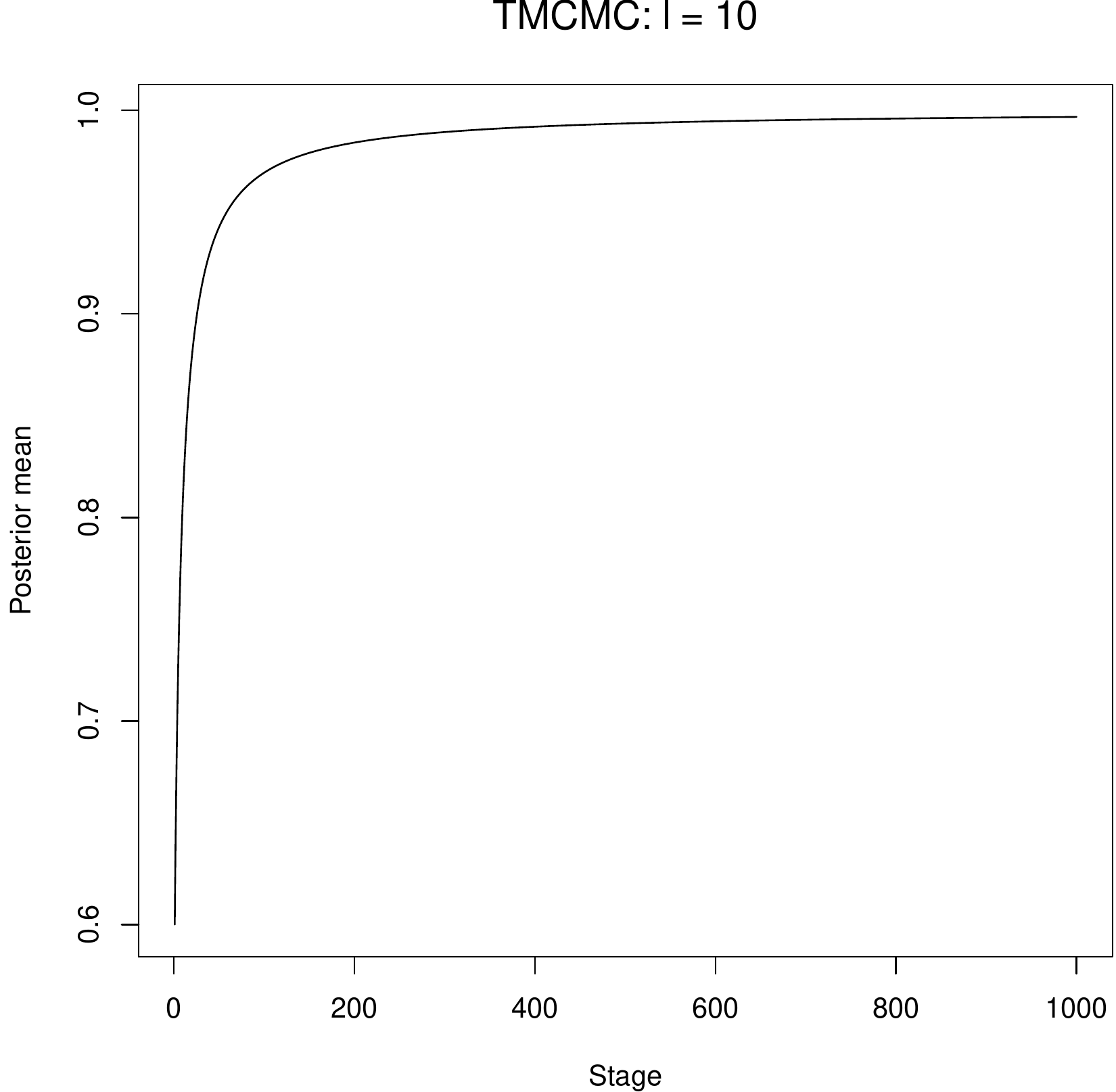}}\\
\vspace{2mm}
\subfigure [TMCMC trace plot ($\ell=100$).]{ \label{fig:trace_plot6}
\includegraphics[width=5.5cm,height=5.5cm]{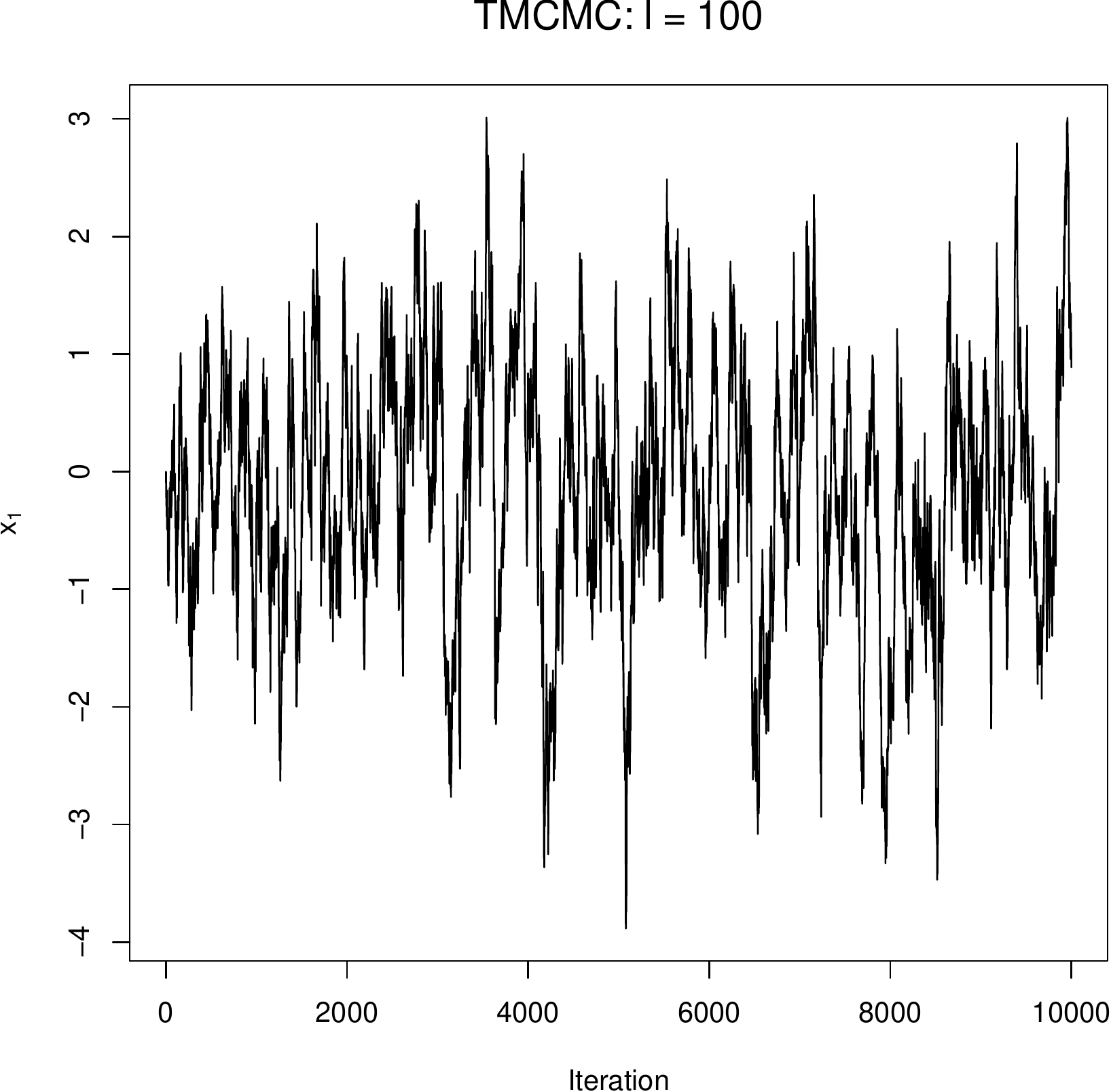}}
\hspace{2mm}
\subfigure [Convergence ($\ell=100$): Nonstationary.]{ \label{fig:diag6}
\includegraphics[width=5.5cm,height=5.5cm]{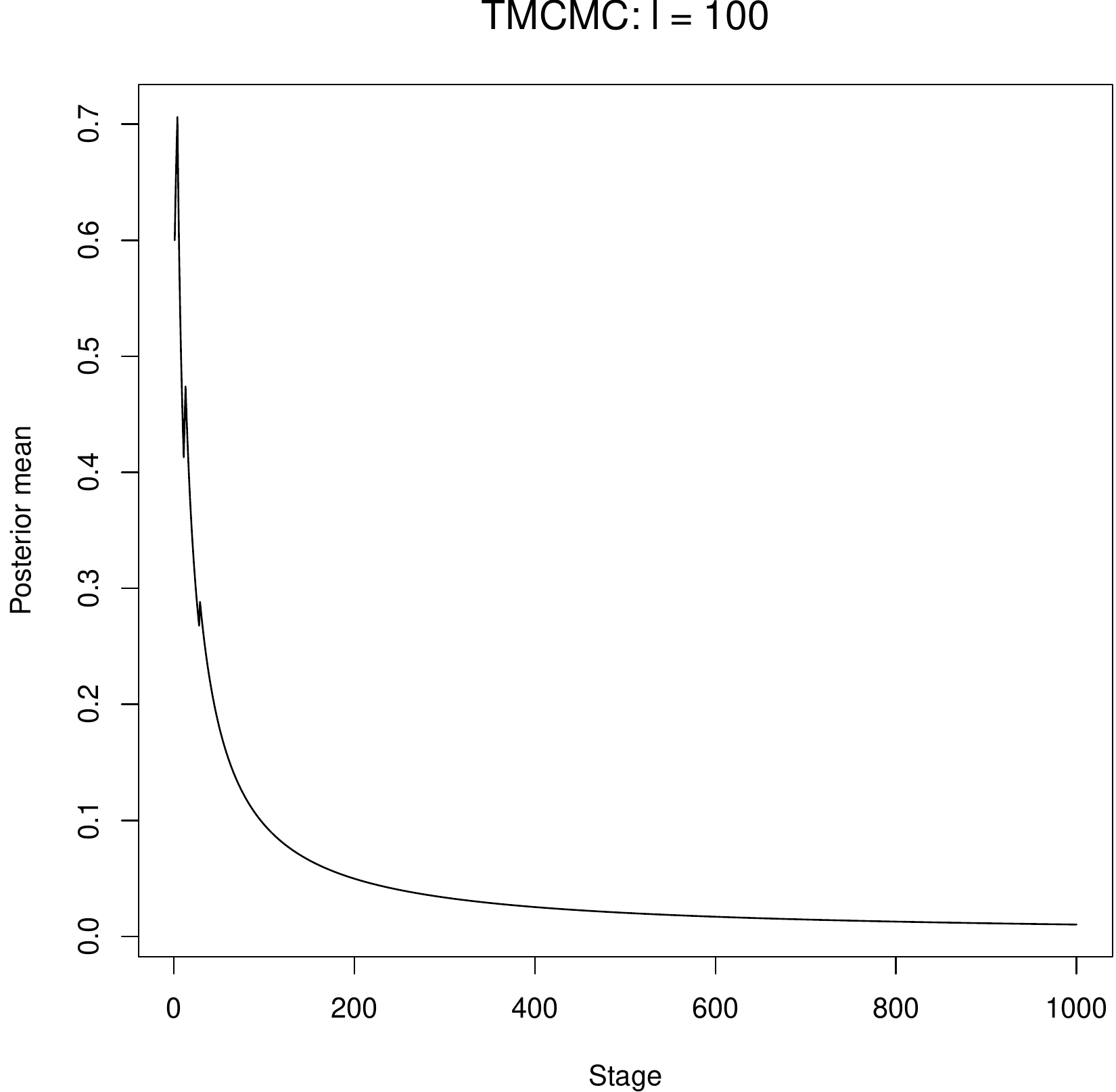}}\\
\vspace{2mm}
\subfigure [TMCMC trace plot ($\ell=1000$).]{ \label{fig:trace_plot7}
\includegraphics[width=5.5cm,height=5.5cm]{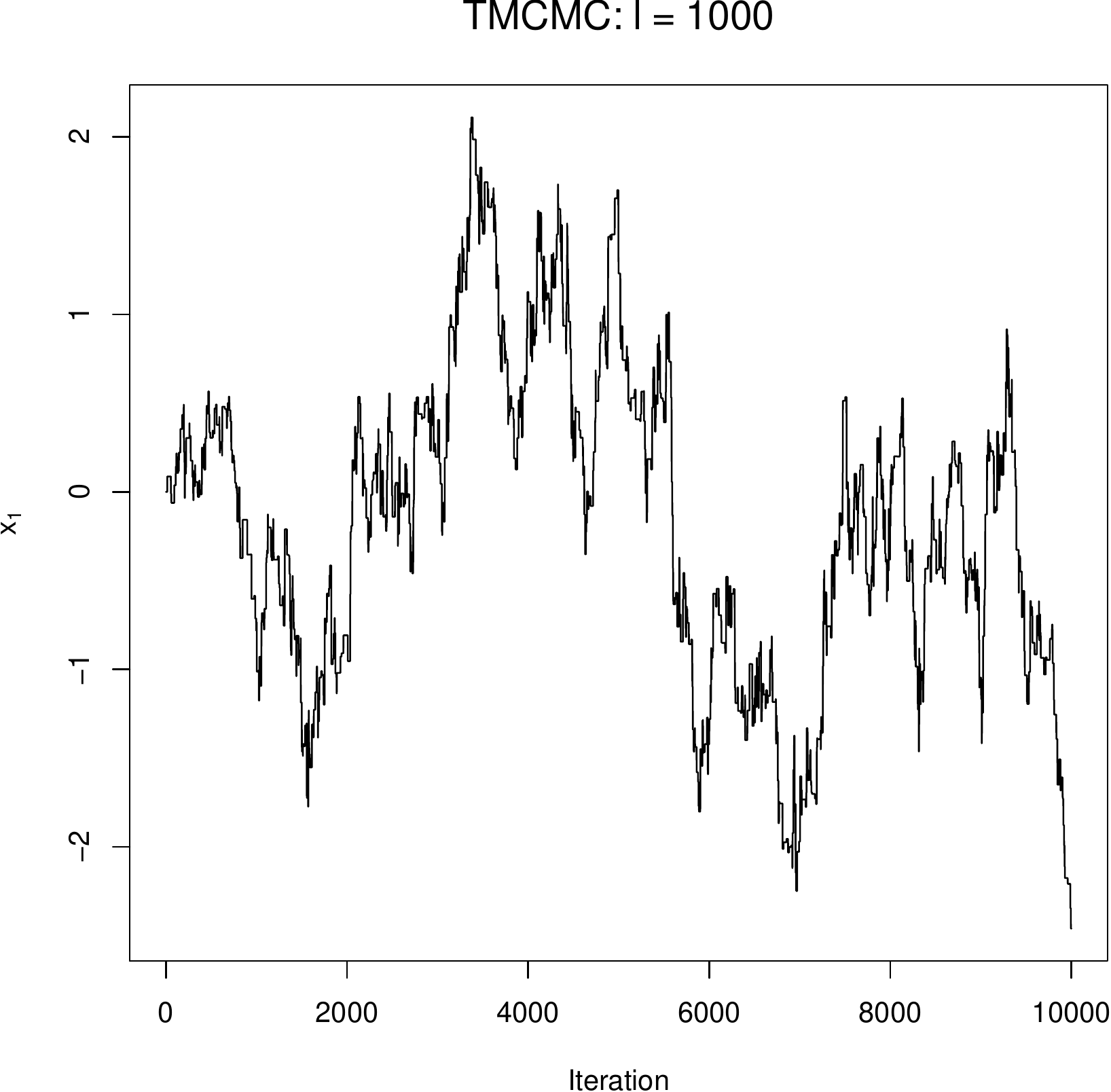}}
\hspace{2mm}
\subfigure [Convergence ($\ell=1000$): Nonstationary.]{ \label{fig:diag7}
\includegraphics[width=5.5cm,height=5.5cm]{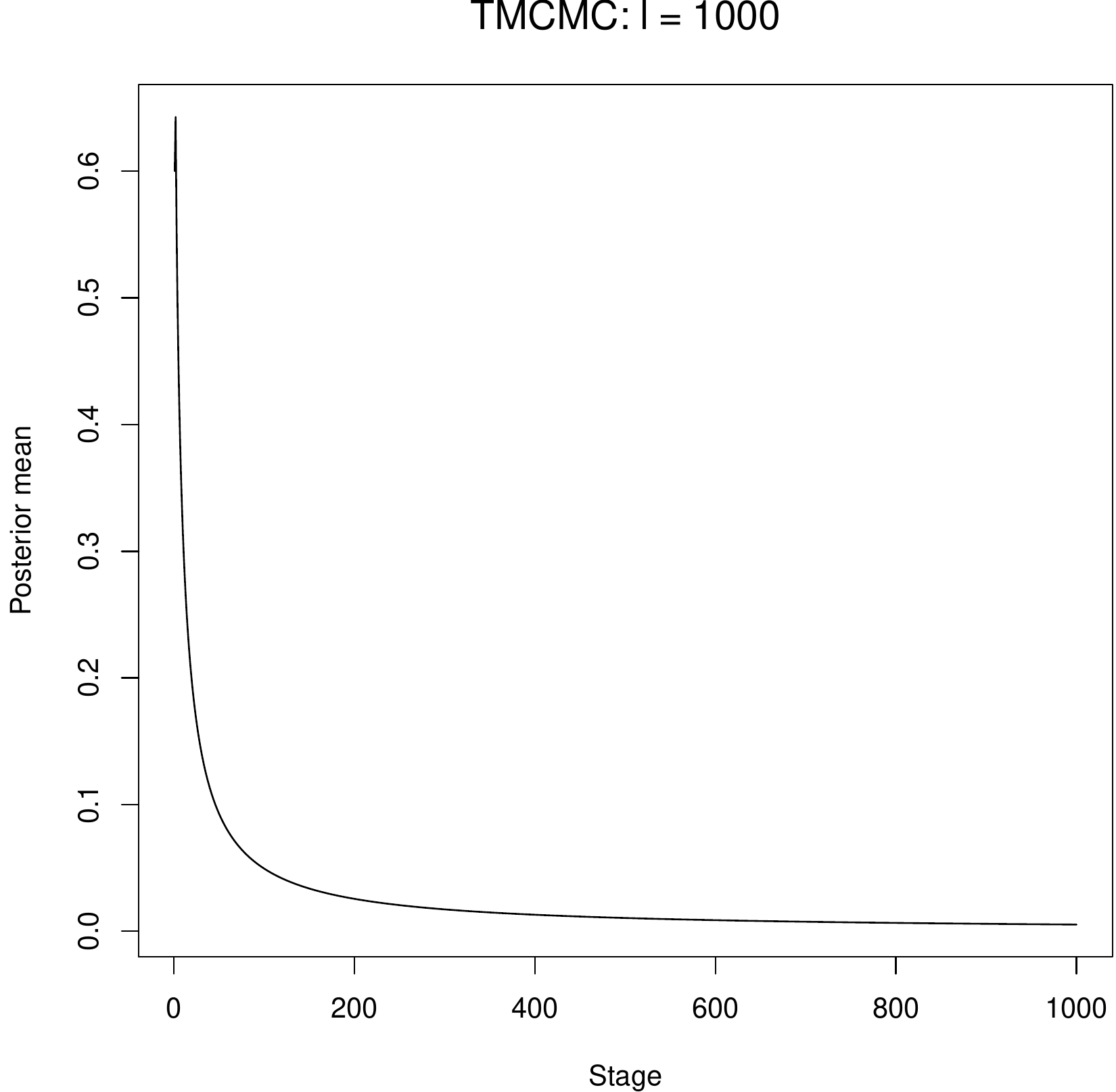}}\\
\caption{Additive TMCMC convergence example, with $K=1000$ and $n=1000$.}
\label{fig:example5}
\end{figure}

\subsection{TMCMC example 2: mixture normal densities}
\label{subsec:mixtures}
We now consider two mixtures of normal densities. The first mixture is of the form
\begin{equation}
\pi(x)=\frac{1}{2}N(x:0,1)+\frac{1}{2}N(x,10,1),
\label{eq:mixture1}
\end{equation}
where $N(x,\mu,\sigma^2)$ denotes the normal density with mean $\mu$ and variance $\sigma^2$, evaluated at $x$.
The second mixture is of the form
\begin{equation}
\pi(x)=\frac{1}{2}N(x:0,1)+\frac{1}{2}N(x,15,1),
\label{eq:mixture2}
\end{equation}
The mixtures differ slightly only in the means of the second mixture, but with TMCMC implementation, they reveal significant
difference.

With the same implementation as before, with $\ell=2.4$, and with the same bound $c_j$, we obtain Figure \ref{fig:example6}.
The TMCMC trace plot and the Bayesian idea of stationarity detection reveals that for (\ref{eq:mixture1}) stationarity is clearly reached.
That this is achieved even though the chain concentrates around two values $0$ and $10$, is quite encouraging.

The trace plot for (\ref{eq:mixture2}), with the same implementation as before displays two instances of very distinct and significant local stationarity. 
Consequently, for stationarity detection for this case, $K=1000$ and $n=1000$ is no 
longer appropriate. Rather, $K=2$ and $n=500000$, seems to be natural and appropriate. With this we obtain the posterior means for the two iterations
(corresponding to $K=2$) to be $0.6$ and $0.5$, respectively, with the associated posterior variances $0.04$ and $0.03125$. This is an indication that the chain
did not yet reach stationarity, which is also evident from the trace plot. Indeed, for just two instances of significant local stationarities, global stationarity
can not be ensured.
\begin{figure}
\centering
\subfigure [TMCMC trace plot for first mixture.]{ \label{fig:trace_plot8}
\includegraphics[width=5.5cm,height=5.5cm]{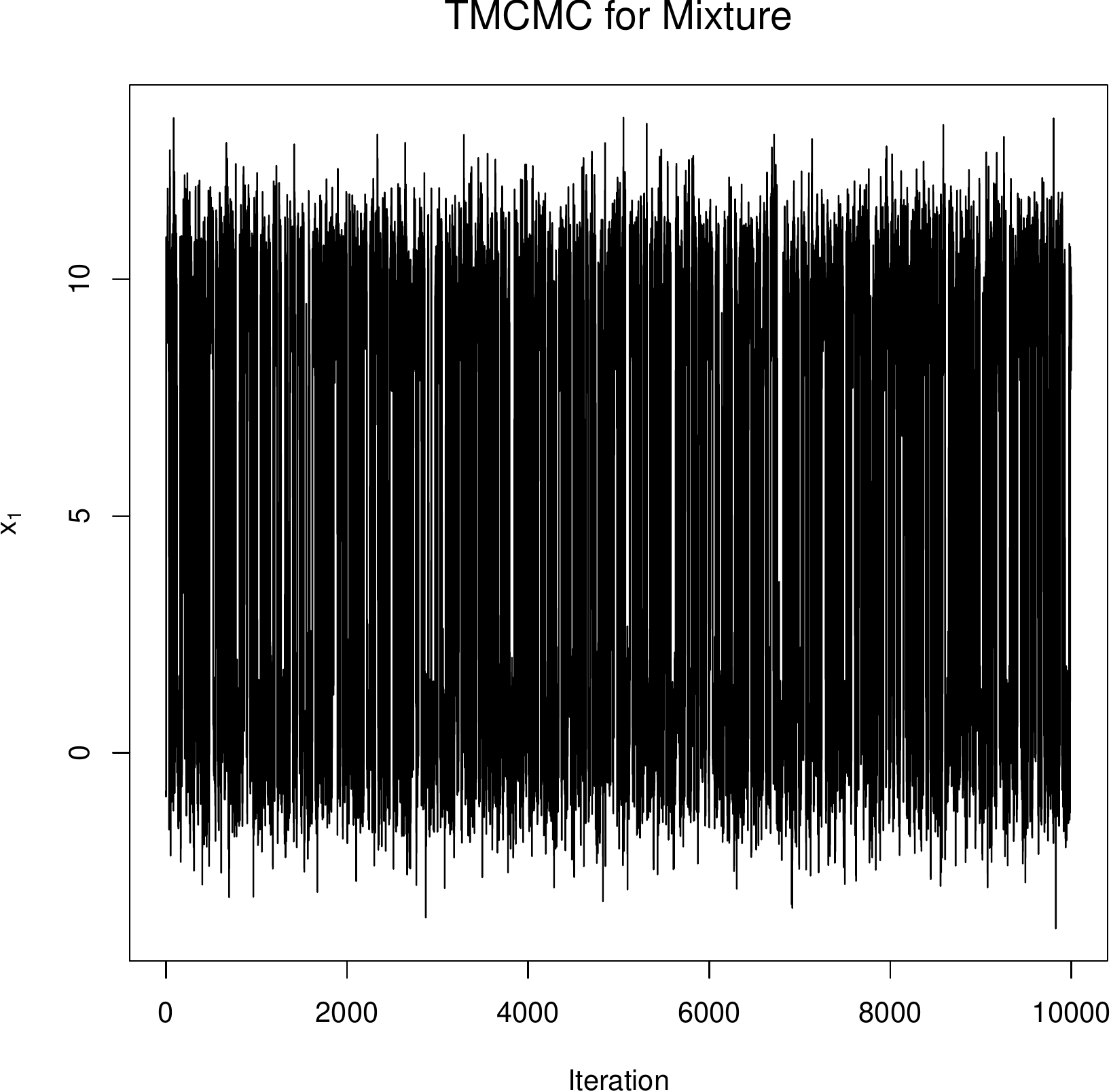}}
\hspace{2mm}
\subfigure [Convergence: Stationary ($K=1000$, $n=1000$).]{ \label{fig:diag8}
\includegraphics[width=5.5cm,height=5.5cm]{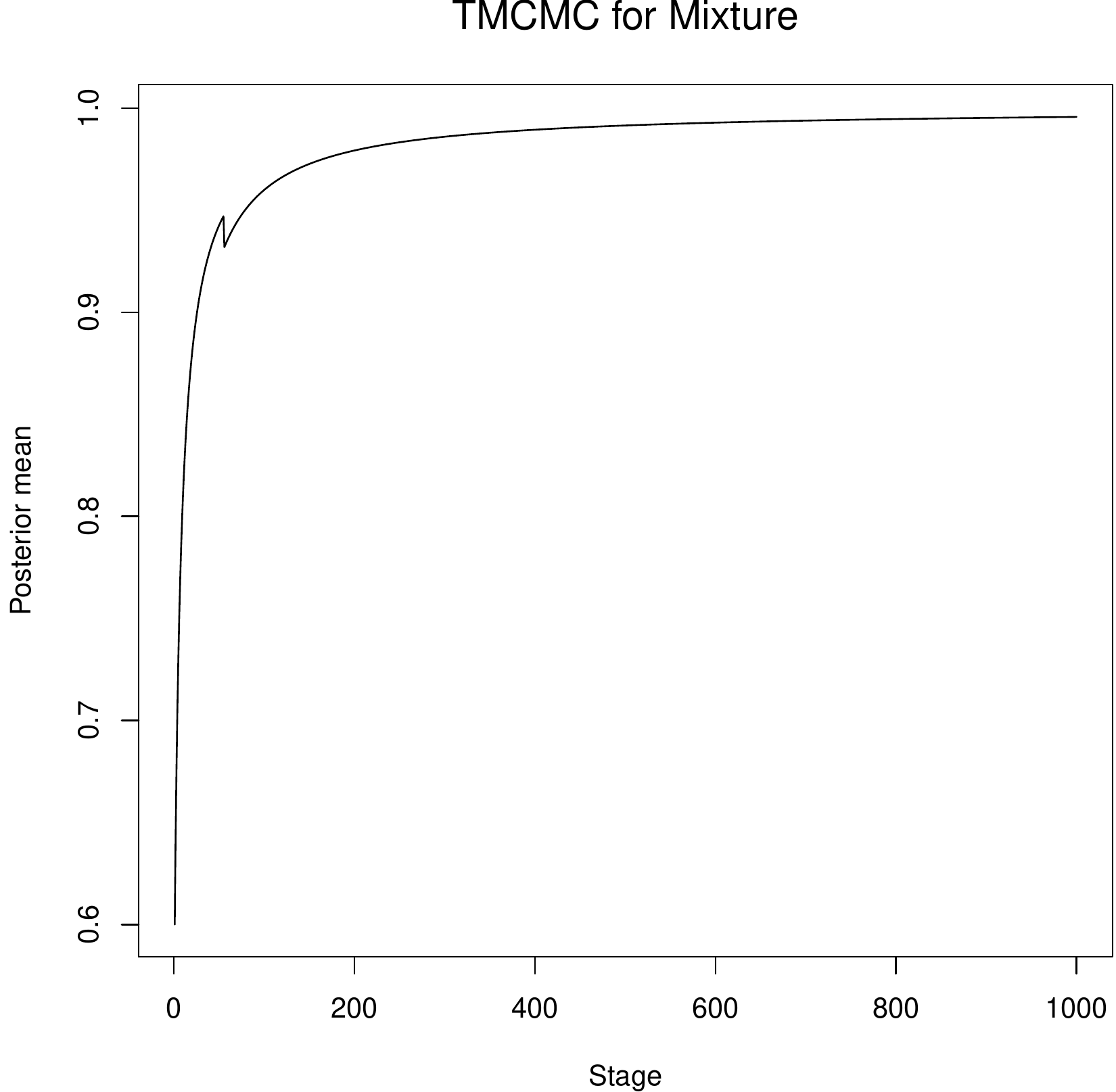}}\\
\vspace{2mm}
\subfigure [TMCMC trace plot for second mixture. Convergence: Nonstationary ($K=2$, $n=500000$)]{ \label{fig:trace_plot9}
\includegraphics[width=5.5cm,height=5.5cm]{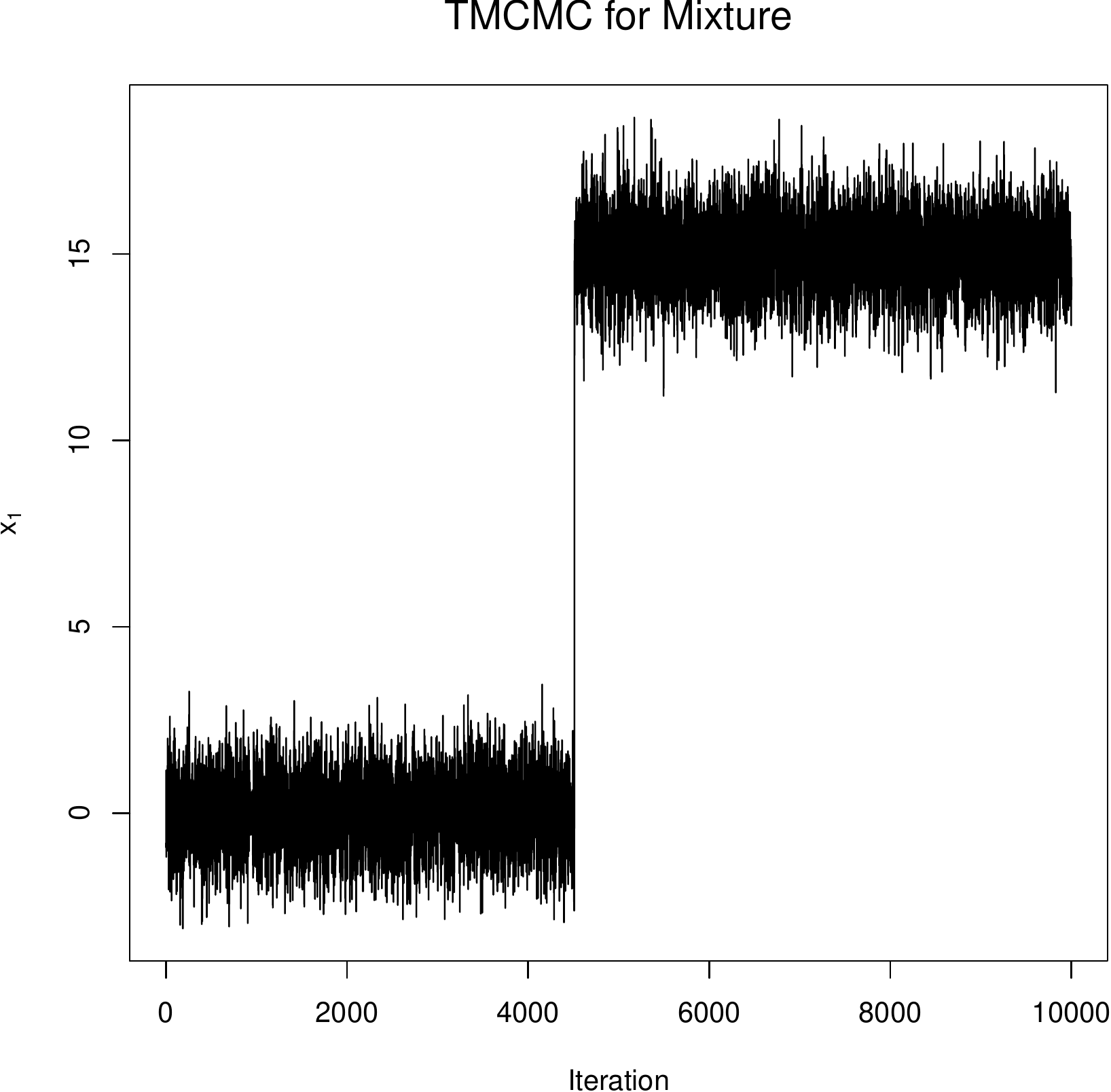}}
\caption{Additive TMCMC convergence example for mixture densities.}
\label{fig:example6}
\end{figure}

\section{Fourth illustration: detection of stationarity and nonstationarity in spatial data}
\label{sec:spatial}
In this illustration, we shall consider detecting both strict and weak stationarity of the spatial processes that gave rise to the observed data.

\subsection{Data generation}
\label{subsec:spatial_data_generation}
We now conduct simulation experiments with our theory for detecting stationarity and nonstationarity in spatial data. 
To conduct the experiment, we simulate two datasets from stationary and nonstationary zero-mean Gaussian processes (GPs)
with covariance functions 
\begin{equation}
Cov(X_{s_1},X_{s_2})=\exp(-5\|s_1-s_2\|^2)
\label{eq:stationary1}
\end{equation}
and 
\begin{equation}
Cov(X_{s_1},X_{s_2})=C_1(\|s_1-s_2\|)=\exp(-5\|\sqrt{s_1}-\sqrt{s_2}\|^2),
\label{eq:nonstationary1}
\end{equation}
for all spatial locations $s_1,s_2\in \mathbb R^2$. For our simulation studies, we restrict the spatial locations to $[0,1]^2$. 
We simulate partial realizations of length 10000
from the two GPs. We begin by simulating first, for $i=1,\ldots,10000$, $\tilde s_i\sim U\left([0,1]^2\right)$, and then setting $s_i=\sqrt{\tilde s_i}$.
Here for any $s=(u,v)^T\in[0,1]^2$, $\sqrt{s}=(\sqrt{u},\sqrt{v})^T$.
The strategy of taking square roots of the components of $\tilde s_i$ ensured numerical stability of the corresponding covariance matrices. 
We then simulate from 10000 zero-mean multivariate normals
with covariance matrices defined by the above stationary and nonstationary covariance functions. Generating from the multivariate normal distributions
by parallelising the required Cholesky decomposition of the covariance matrix and subsequent multiplication of the Cholesky factor with the vector of standard normal 
random variables using ScaLAPACK (Scalable Linear Algebra Package) takes less than $40$ seconds in our C code implementation on our 64 bit laptop
(8 GB RAM and 2.3 GHz CPU speed), with just 4 cores.

\subsection{Implementation of our method to detect strict stationarity}
\label{subsec:spatial_implementation_strict}
For our purpose, we first need to form $\mathcal N_i$; $i=1,\ldots,K$. In the spatial setting, the $K$-means clustering of the locations $s_i$; $i=1,\ldots,10000$,
seems to be very appropriate. The nearby locations based on the distances from the centroid, will be classified within the same cluster, which is desirable
from the spatial perspective. Thus, once we select $K$, the $K$-means clustering yields the $K$ clusters, which are $\mathcal N_i$; $i=1,\ldots,K$ in our notation.
In our example, we select $K=250$, so that there are about $40$ observations per cluster on the average.
We choose the clusterings such that there are at least $15$ observations per cluster.
As before, we consider the general purpose nonparametric bound $c_j$ given by (\ref{eq:ar1_bound3}) for implementation of our method. 

\subsubsection{Choice of $\hat C_1$}
\label{subsubsec:spatial_C1_hat}
For the choice of $\hat C_1$, we first generate a sample of size $10000$ from a zero mean GP with the Whittle covariance function of the form 
\begin{equation}
	Cov(X_{s_1},X_{s_2})=(\|s_1-s_2\|/\psi)\mathcal K_1(\|s_1-s_2\|/\psi), 
\label{eq:spatial_bound}
\end{equation}
where $\mathcal K_1$ is the second kind modified Bessel function of order $1$. For the same value of $\psi$, this covariance function has thicker tails
than exponential correlation functions of the forms $\exp(-\|s_1-s_2\|^2/\psi)$ and $\exp(-\|s_1-s_2\|/\psi)$. We set $\psi=0.8$ to achieve reasonable thickness
of the tail of (\ref{eq:spatial_bound}). With this covariance function, we then use the bound (\ref{eq:ar1_bound3}) and set $\hat C_1$ to be the minimum
positive value such that convergence to $1$ is achieved. This $\hat C_1$ can be interpreted as providing a reasonable bound for spatial processes with
covariance functions with reasonably thick tails, but thinner than that of (\ref{eq:spatial_bound}) with $\psi=0.8$. With this method, we obtain
$\hat C_1=0.89$. This value, being close to $1$, suggests that the default choice $\hat C_1=1$ still makes sense. Indeed, both the choices yielded the same results
regarding the decision on stationarity or nonstationarity of the underlying process.

\subsection{Results}
\label{subsec:spatial_results}
Figure \ref{fig:spatial1} shows the results of implementation of our theory to detect strong stationarity and nonstationarity of the data obtained from
the two GPs. The bounds (\ref{eq:ar1_bound3}) correspond to $\hat C_1=0.89$ obtained using the strategic procedure using (\ref{eq:spatial_bound}). 
Panel (a) correctly asserts strict stationarity when the covariance is of the form (\ref{eq:stationary1}) and correctly detects
strict nonstationarity when the covariance is of the form (\ref{eq:nonstationary1}). 
The entire methodology takes less than a second for parallel implementation on our 64 bit laptop using 4 cores.
\begin{figure}
\centering
\subfigure [Correct detection of stationarity.]{ \label{fig:stationary}
\includegraphics[width=5.5cm,height=5.5cm]{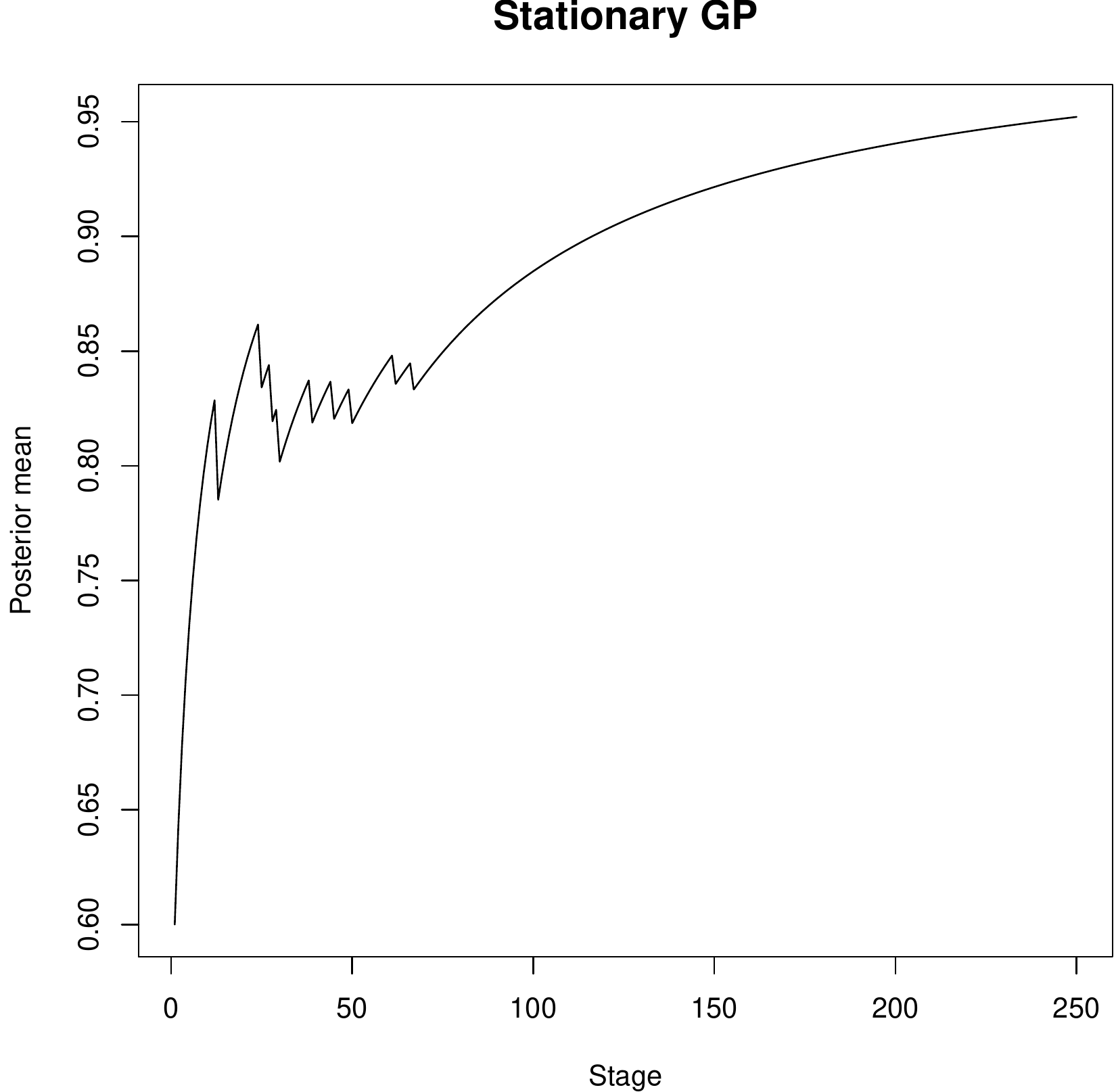}}
\hspace{2mm}
\subfigure [Correct detection of nonstationarity.]{ \label{fig:nonstationary}
\includegraphics[width=5.5cm,height=5.5cm]{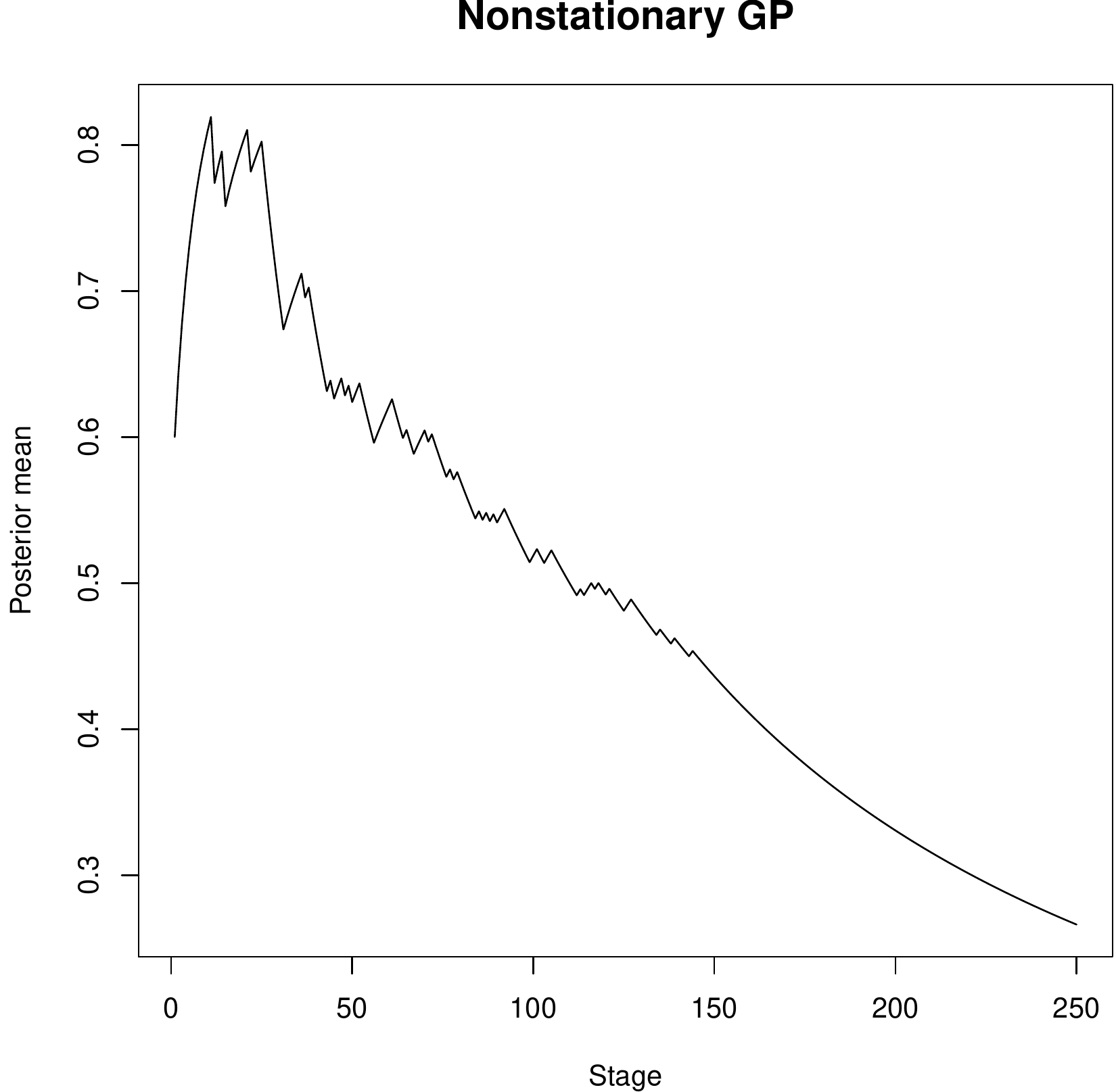}}
\caption{Detection of strong stationarity and nonstationarity in spatial data drawn from GPs.}
\label{fig:spatial1}
\end{figure}

\subsection{Implementation of our method to detect covariance stationarity}
\label{subsec:spatial_implementation_weak}
As we demonstrated, our proposed method does an excellent job in capturing strict stationarity and nonstationarity of the underlying spatial stochastic process. 
In routine spatial modeling, however, strict stationarity and nonstationarity plays little role compared to covariance stationarity and covariance nonstationarity.
Thus, it is more important to detect if the covariance in question is stationary or not. Although in our example it directly follows from our tests of strict stationarity
that the covariances for the two GPs must be stationary and nonstationary, we directly check covariance stationarity using our Bayesian method
formalized in Theorems \ref{theorem:convergence2} and \ref{theorem:divergence2}.

For practical implementation, we convert the covariances $\widehat {Cov}_{ih}$ given by (\ref{eq:cov1}) into correlations by dividing them by the relevant standard errors 
and initially set $\mathcal N_{i,h_j,h_{j+1}}=\left\{(s_1,s_2)\in\mathcal N_i:h_j\leq\|s_1-s_2\|<h_{j+1}\right\}$; $j=1,\ldots,10$, where
$h_1=0$ and $h_j=h_{j-1}+0.1$, for $j=2,\ldots,10$. We consider the nonparametric bound $c_j$ given by (\ref{eq:ar1_bound3}) for all $j=1,\ldots,10$, for both the GPs.
But we found that these $\mathcal N_{i,h_j,h_{j+1}}$ are too large to be useful, as $0<\|s_1-s_2\|<0.04$, for all $(s_1,s_2)$ in most of the $K$-means clusters that we obtained.
Indeed, only three neighborhoods defined by $h_1=0$, $h_2=0.02$, $h_3=0.03$ and $h_4=0.04$, turned out to be appropriate.

We again fix $K=250$ clusters such that each cluster contains at least $15$ observations.

\subsubsection{Choice of $\hat C_1$}
\label{subsubsec:spatial_cov_C1_hat}

To obtain appropriate choice of $\hat C_1$ for detecting covariance stationarity, we consider three strategies. 
Our first method in this regard corresponds to using $\hat C_1$ for strict stationarity. Thus, the first startegy yields $\hat C_1=0.89$. 

For the second strategy, we utilize the GP realization with covariance function (\ref{eq:spatial_bound}). Here we choose the minimum value of $\hat C_1$
such that (\ref{eq:ar1_bound3}) yielded convergence to $1$ for all $\mathcal N_{i,h_j,h_{j+1}}$; $j=1,2,3$. This gave $\hat C_1=0.412$. 

In the third strategy, we chose the minimum value of $\hat C_1$ that yielded convergence to $1$ for all $\mathcal N_{i,h_j,h_{j+1}}$; $j=1,2,3$ for one dataset and 
convergence to $0$ for the other dataset.
In our case, this strategy again gave $\hat C_1=0.412$. 

The strategic choice $\hat C_1=0.412$ 
successfully detected covariance stationarity and nonstationarity. However, the choice $\hat C_1=0.89$ turned out
to be too large to detect covariance nonstationarity. This is in keeping with the issue that detection of strict stationarity requires a bound that
must also ensure covariance stationarity, and hence such a bound must be larger than that for covariance stationarity.

Again, our parallel implementation takes less than a second on our laptop, for each $\mathcal N_{i,h_j,h_{j+1}}$. This quick computation ensures that choice of $\hat C_1$
is not a computationally demanding exercise.

Figure \ref{fig:spatial2} shows the results associated with $\mathcal N_{i,h_1,h_2}$, $\mathcal N_{i,h_2,h_3}$ and $\mathcal N_{i,h_3,h_4}$,
for $i=1,\ldots,K$, where $K=250$ as before, and $\hat C_1=0.89$. The figure shows that whenever the data arises from the GP with covariance of the form (\ref{eq:stationary1}), 
our Bayesian method correctly identifies covariance stationarity for every $j$. Indeed, for all $j=1,2,3$, covariance stationarity is clearly indicated. 
On the other hand, when the data arises from the GP with the nonstationary covariance (\ref{eq:nonstationary1}), 
convergence to $0$ is indicated with $\mathcal N_{i,h_3,h_4}$. As per Theorem \ref{theorem:divergence2},
this shows nonstationarity of the covariance structure. 

\begin{figure}
\centering
\subfigure [$0\leq\|h\|<0.02$]{ \label{fig:covs1}
\includegraphics[width=5.5cm,height=5.5cm]{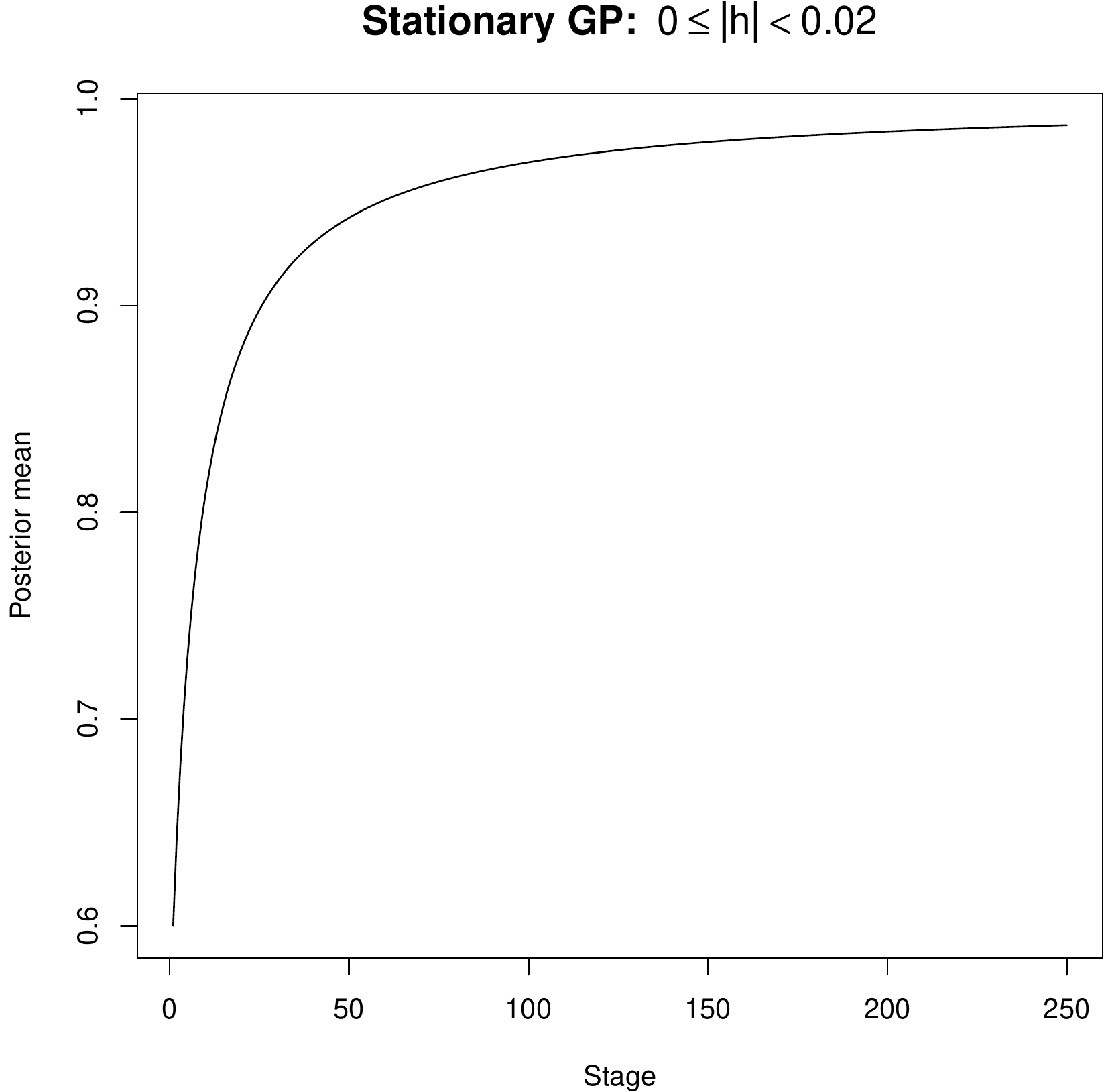}}
\hspace{2mm}
\subfigure [$0\leq\|h\|<0.02$.]{ \label{fig:covns1}
\includegraphics[width=5.5cm,height=5.5cm]{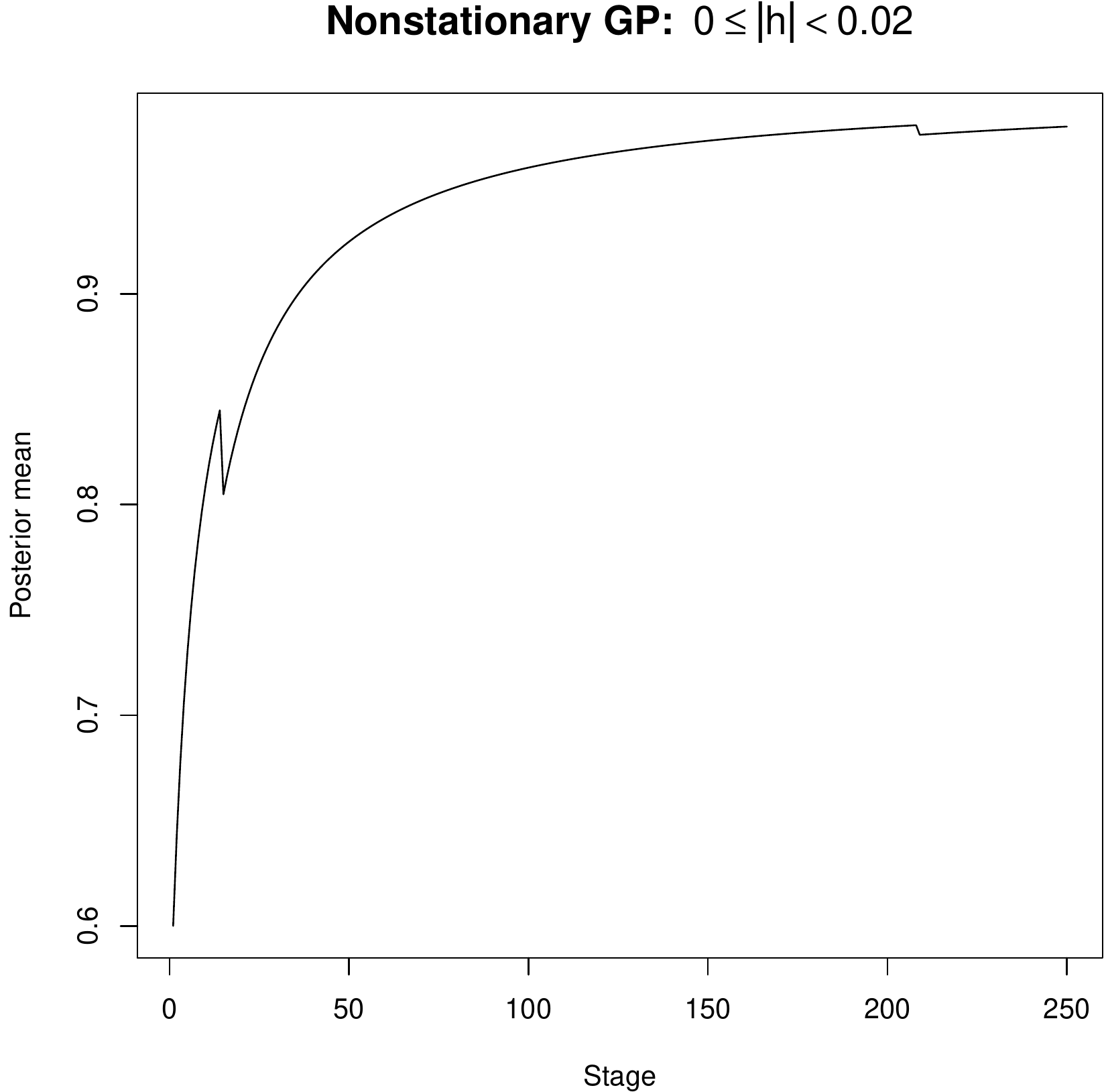}}\\
\vspace{2mm}
\subfigure [$0.02\leq\|h\|<0.03$]{ \label{fig:covs2}
\includegraphics[width=5.5cm,height=5.5cm]{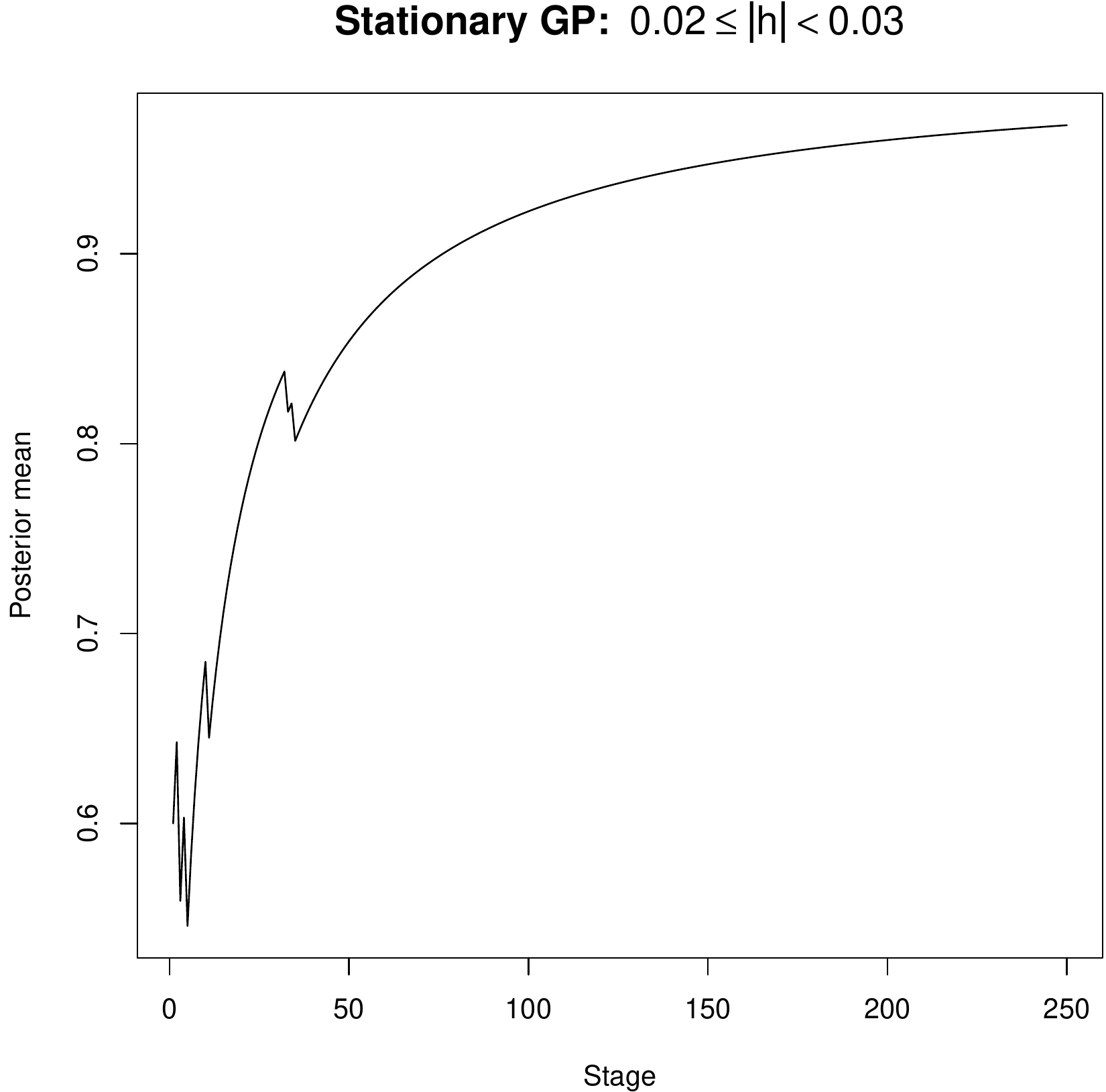}}
\hspace{2mm}
\subfigure [$0.02\leq\|h\|<0.03$.]{ \label{fig:covns2}
\includegraphics[width=5.5cm,height=5.5cm]{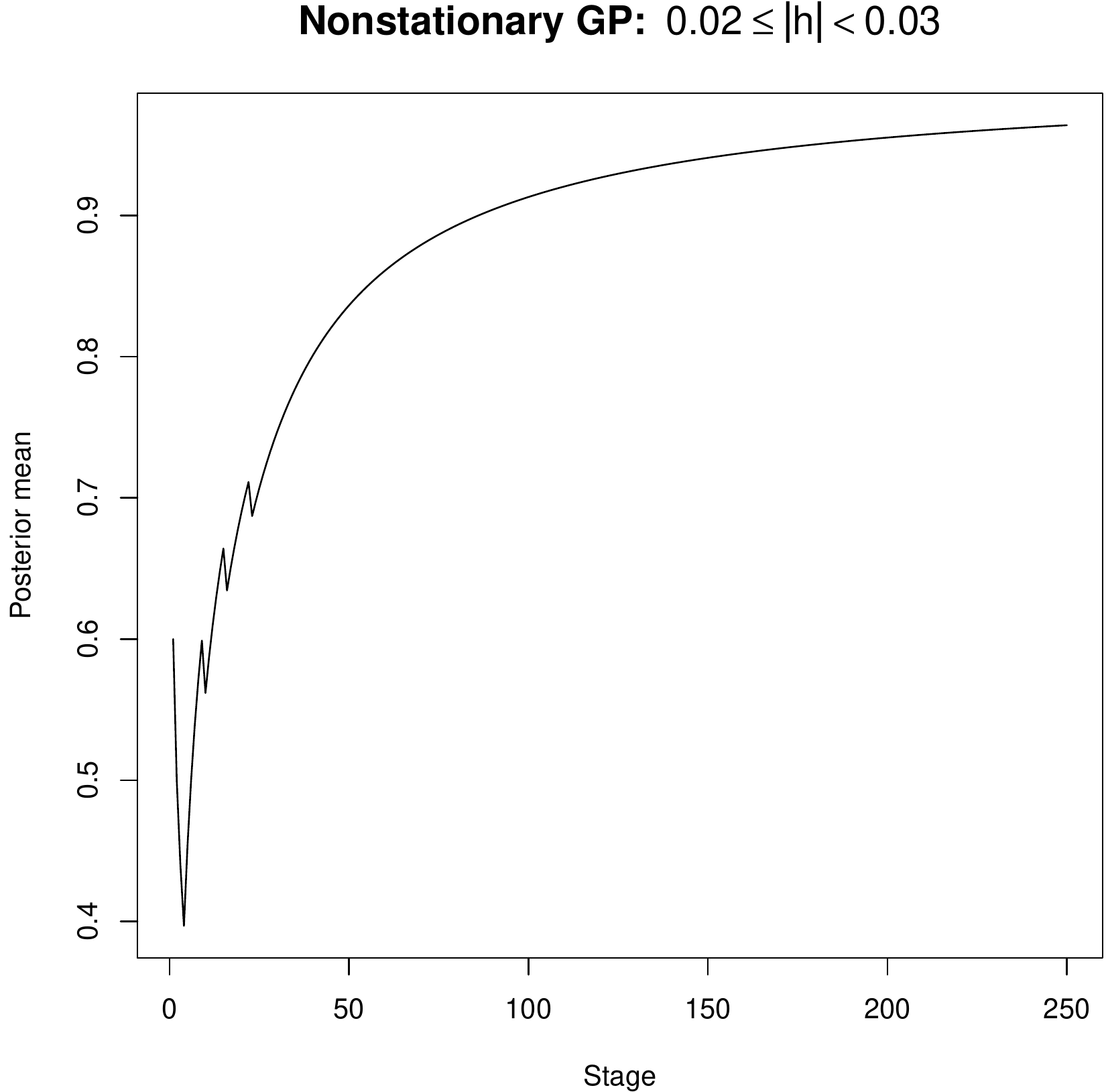}}\\
\vspace{2mm}
\subfigure [$0.03\leq\|h\|<0.04$]{ \label{fig:covs3}
\includegraphics[width=5.5cm,height=5.5cm]{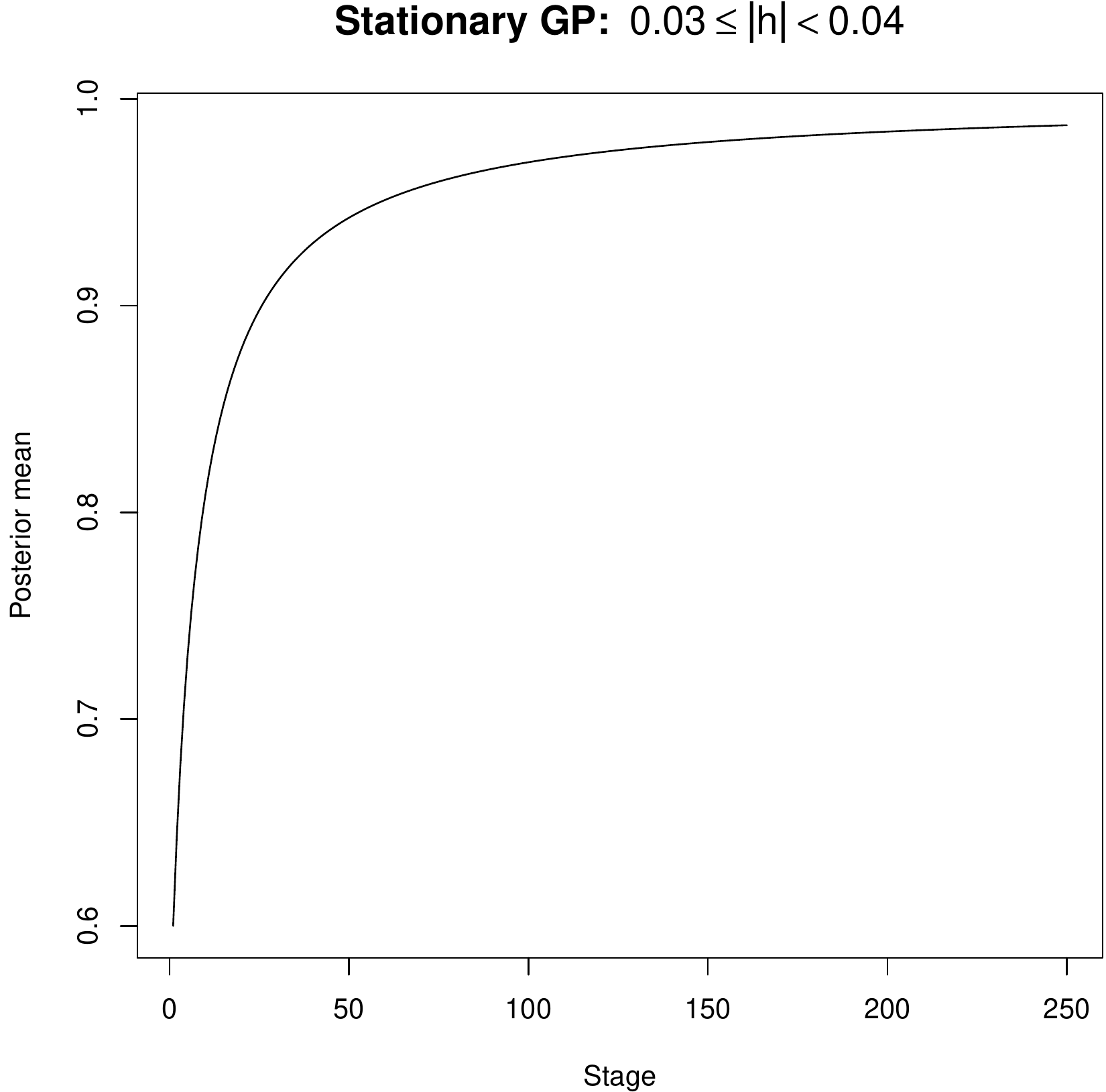}}
\hspace{2mm}
\subfigure [$0.03\leq\|h\|<0.04$.]{ \label{fig:covns3}
\includegraphics[width=5.5cm,height=5.5cm]{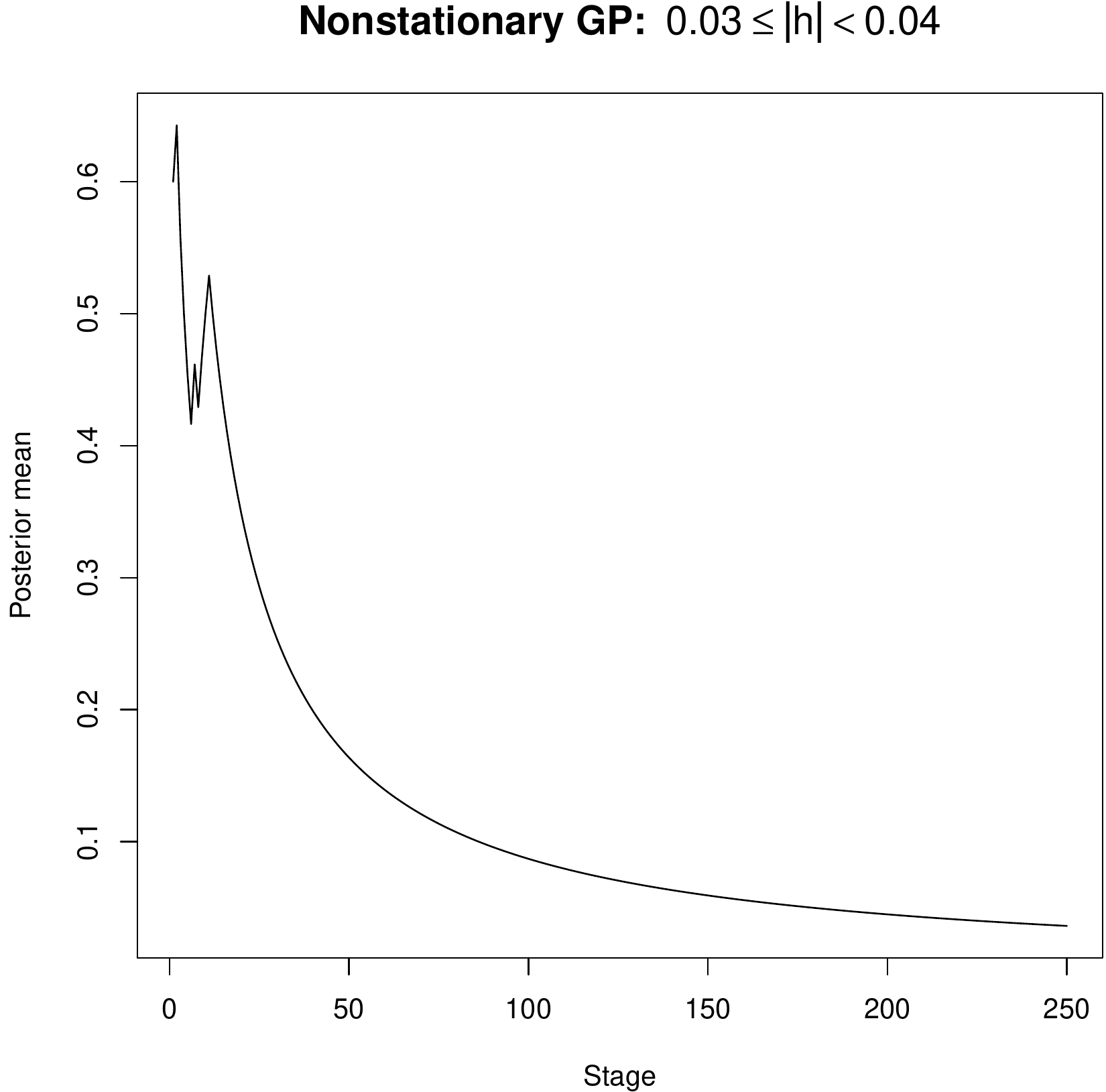}}\\
\caption{Detection of covariance stationarity and nonstationarity in spatial data drawn from GPs.}
\label{fig:spatial2}
\end{figure}

\subsection{Detection of strict nonstationarity in mixtures of stationary and nonstationary covariances}
\label{subsec:mixture_stationarity}
We now consider realizations from zero-mean GPs with covariances of the form
\begin{equation}
Cov(X_{s_1},X_{s_2})=p\exp(-5\|s_1-s_2\|^2)+(1-p)\exp(-5\|\sqrt{s_1}-\sqrt{s_2}\|^2),
\label{eq:nonstationary2}
\end{equation}
where $0<p<1$. In particular, using our Bayesian theory, we attempt to detect strict and weak nonstationarity of the process when $p=0.9,0.99,0.999,0.9999,0.99999$.  
Note that in theses cases, although most of the weight concentrates on the stationary part of (\ref{eq:nonstationary2}), the little mass on the nonstationary part
makes the covariance nonstationary, and it is important to detect such subtle difference between stationarity and nonstationarity. As before, we set $K=250$
clusters with each cluster containing at least $15$ observations.

We consider the same way of data generation from GP as before, and the same way of implementation. We again use the same form of the bound $c_j$ as (\ref{eq:ar1_bound3}),
with $\hat C_1=0.89$ and $\hat C_1=1$ for detection of strict nonstationarity, as before. These choices put up excellent performances and are in agreement
with each other, in spite of the subtlety involved in this exercise.
Figure \ref{fig:spatial3}, corresponding to $\hat C_1=0.89$, shows that our Bayesian method correctly identifies nonstationarity in all the cases.

\begin{figure}
\centering
\subfigure [$p = 0.9$.]{ \label{fig:mixed_9}
\includegraphics[width=5.5cm,height=5.5cm]{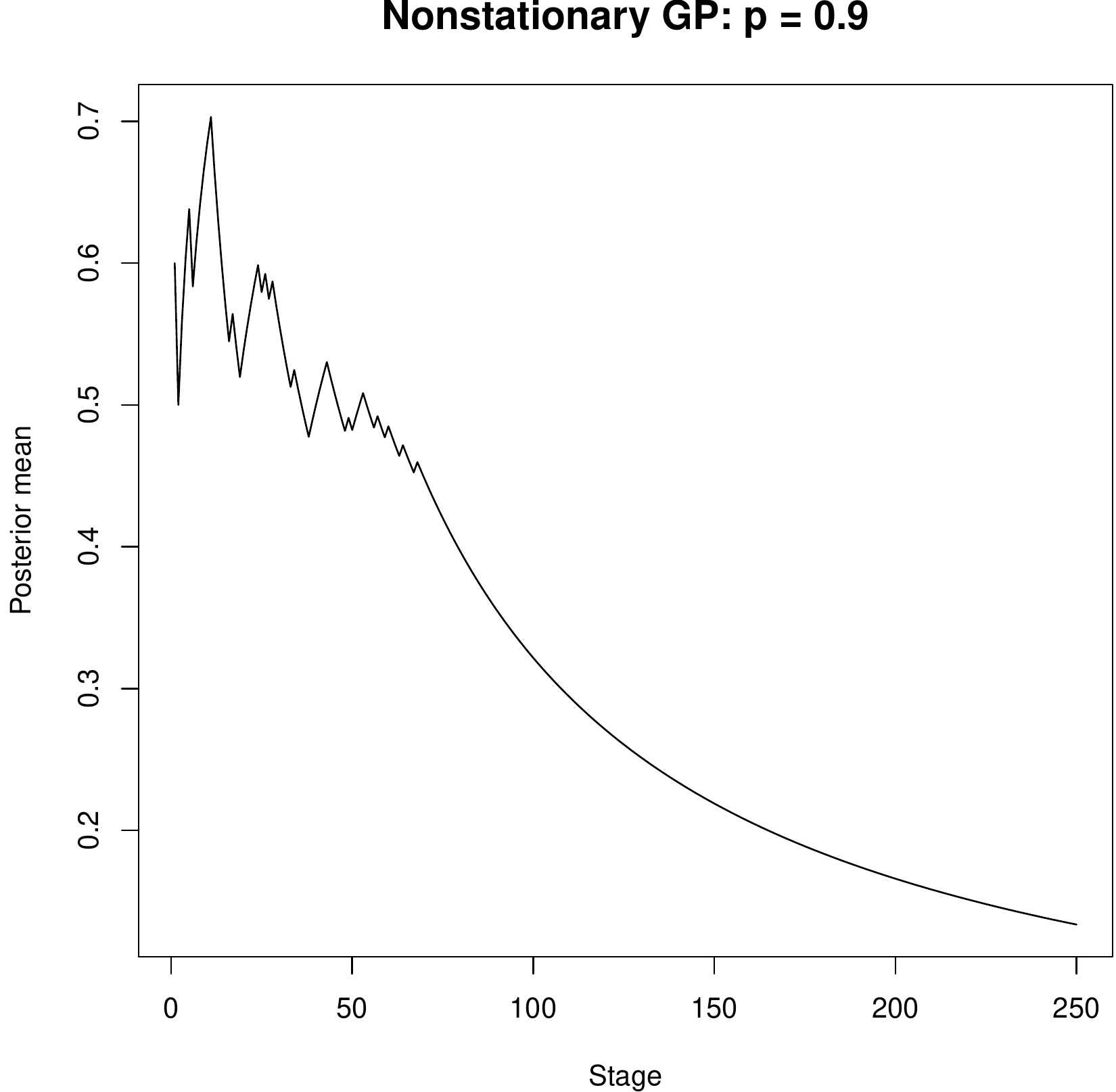}}
\hspace{2mm}
\subfigure [$p = 0.99$.]{ \label{fig:mixed_99}
\includegraphics[width=5.5cm,height=5.5cm]{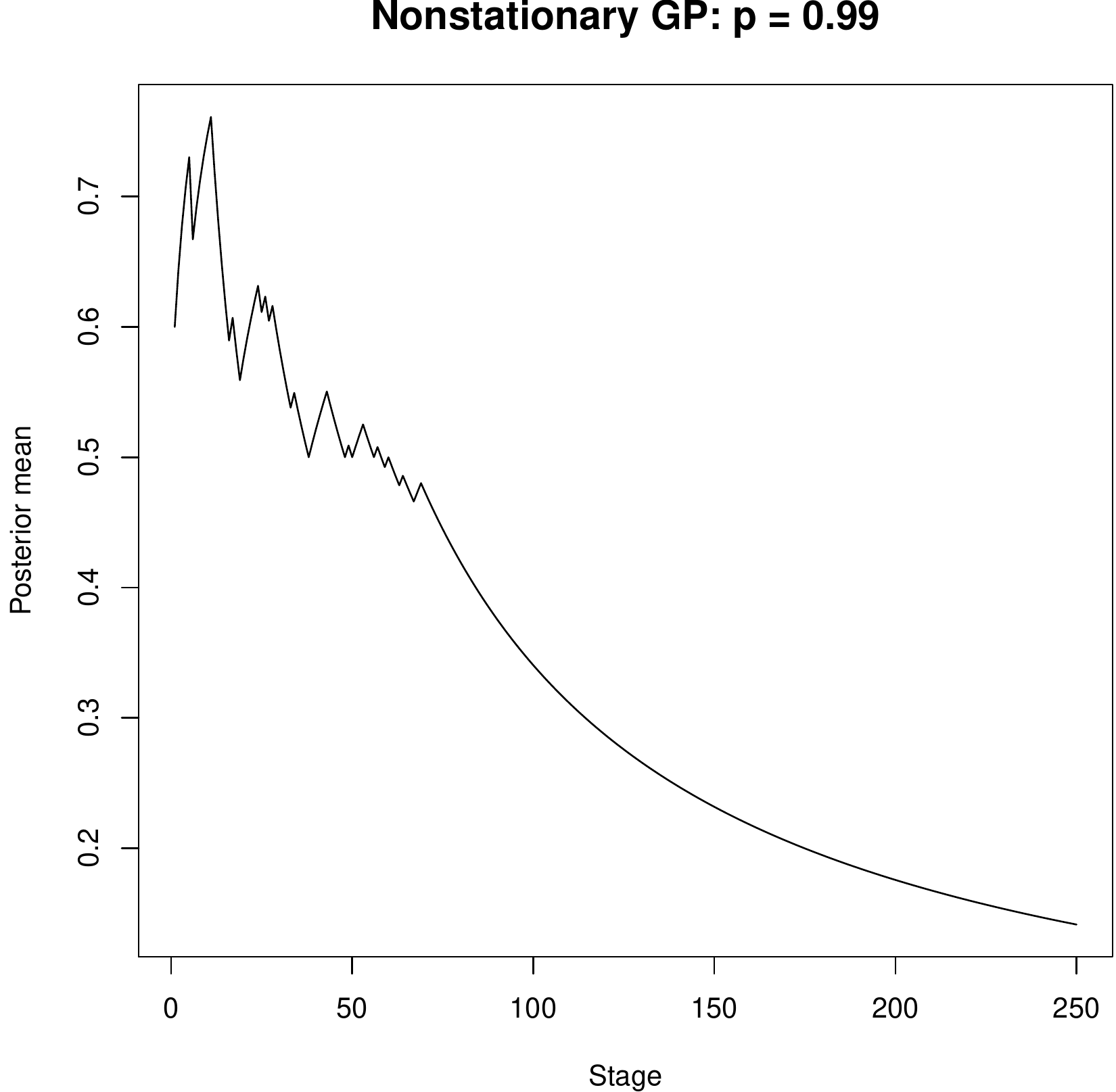}}\\
\vspace{2mm}
\subfigure [$p = 0.999$.]{ \label{fig:mixed_999}
\includegraphics[width=5.5cm,height=5.5cm]{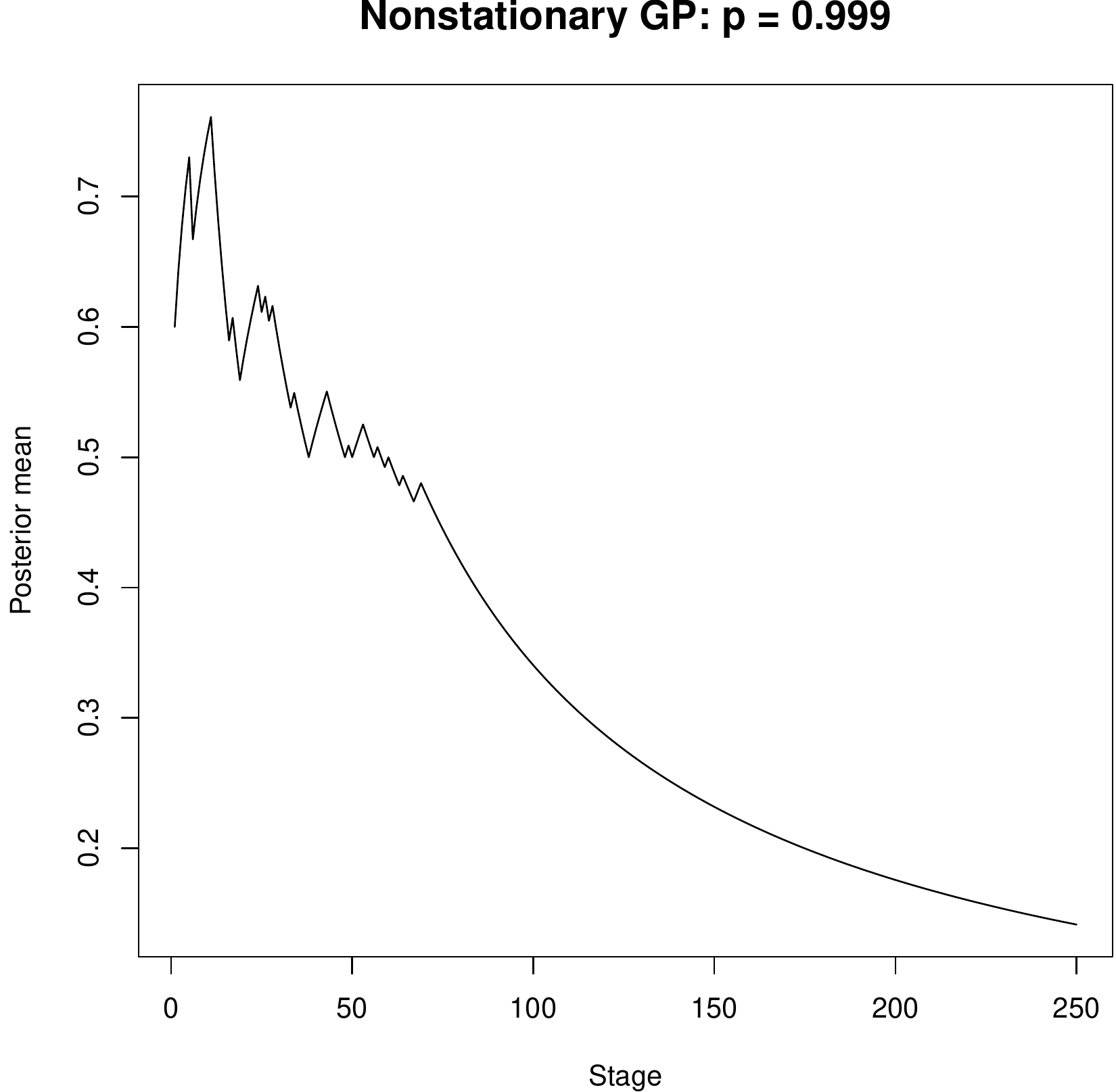}}
\hspace{2mm}
\subfigure [$p = 0.9999$.]{ \label{fig:mixed_9999}
\includegraphics[width=5.5cm,height=5.5cm]{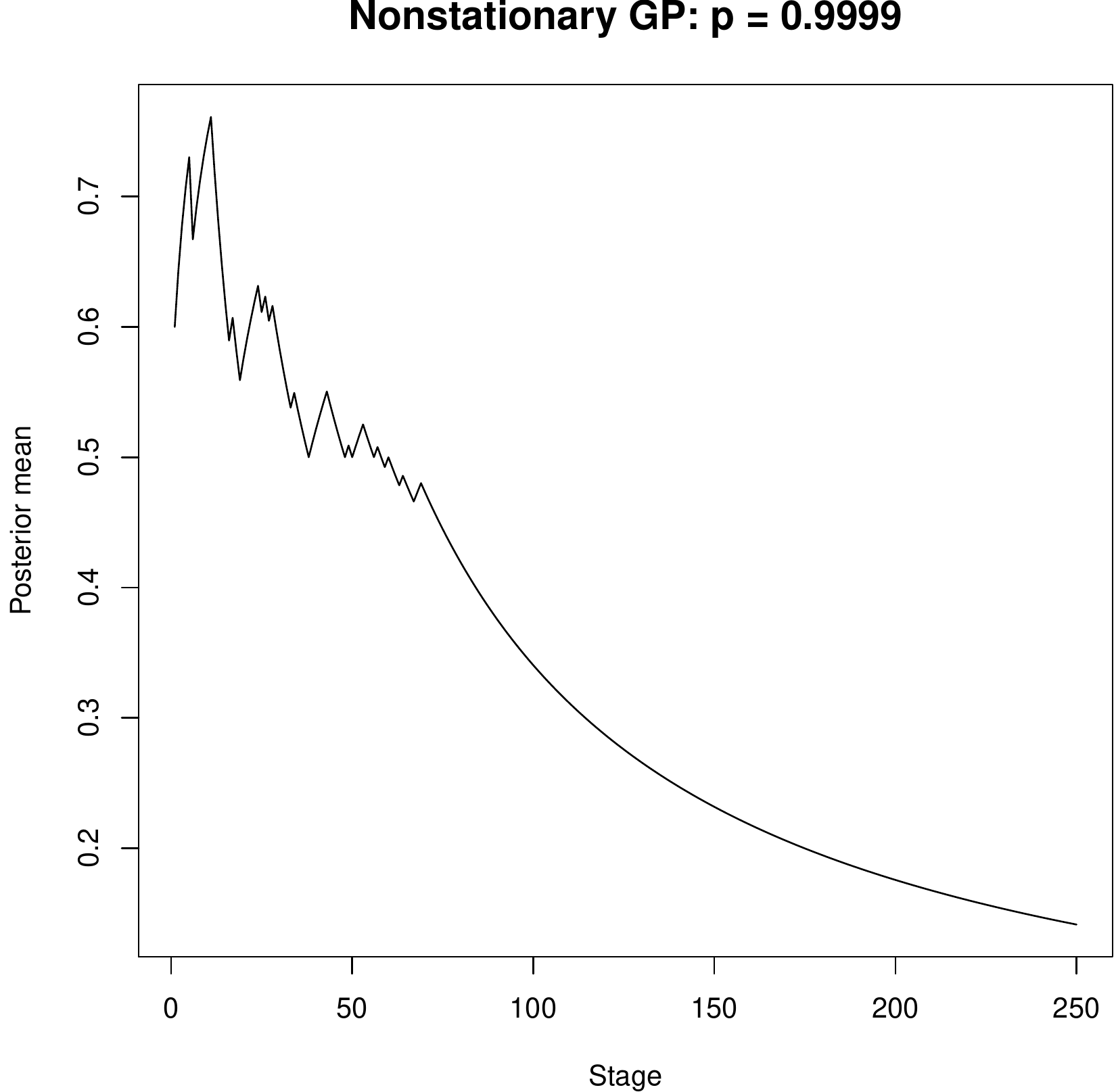}}\\
\vspace{2mm}
\subfigure [$p = 0.99999$.]{ \label{fig:mixed_99999}
\includegraphics[width=5.5cm,height=5.5cm]{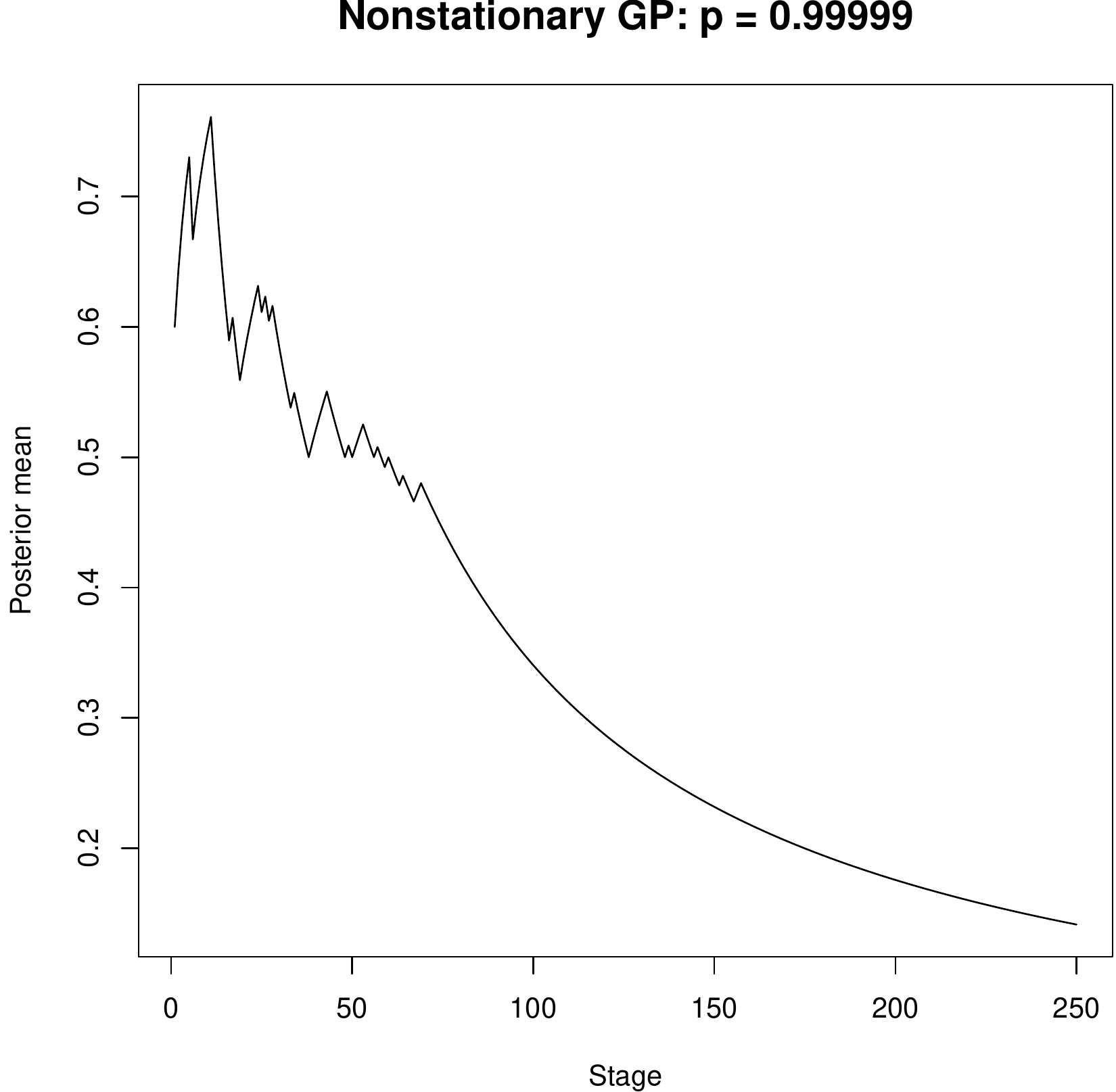}}\\
\caption{Detection of strong nonstationarity in spatial data drawn from GP with covariance structure (\ref{eq:nonstationary2}) with $p=0.99999$.}
\label{fig:spatial3}
\end{figure}

\subsection{Detection of covariance nonstationarity in mixtures of stationary and nonstationary covariances}
\label{subsec:mixture_cov}

The same strategies
discussed in Section \ref{subsec:spatial_implementation_weak}, adapted in this situation, yielded effective bounds of the form (\ref{eq:ar1_bound3}) with 
$\hat C_1=0.412$, 
as before. 
We briefly discuss the second procedure of adapting the strategy to the current scenario. Note that the first procedure does not need any change at all. 

To implement our second strategy in this case, we need a benchmark dataset for which covariance stationarity has been established. We thus consider the GP data with 
covariance of the form (\ref{eq:stationary1}), whose covariance stationarity is established. For any new dataset for which
covariance stationarity needs to be checked, in this case, any dataset
with covariance structure of the form (\ref{eq:nonstationary2}), we consider the same bound starting with $\hat C_1=0.89$. We then gradually decrease $\hat C_1$
for both the datasets until we arrive at a point that discriminates covariance stationarity and nonstationarity, in the same way as discussed in 
Section \ref{subsec:spatial_implementation_weak}. With this method, we obtain $\hat C_1=0.412$, 
which shows covariance stationarity for (\ref{eq:stationary1}) but covariance nonstationarity for (\ref{eq:nonstationary2}). 
Recall that $\hat C_1=0.412$ also resulted with respect to the GP realization for the Whittle covariance function (\ref{eq:spatial_bound}).

Again we set $K=250$, with each cluster consisting of a minimum of $15$ observations.
%
Figure \ref{fig:spatial4}, corresponding to $\hat C_1=0.412$ 
and $p=0.99999$ in the covariance structure (\ref{eq:nonstationary2}), shows that this procedure
does an excellent job in detecting covariance nonstationarity even in such a subtle situation. Indeed, the same $\hat C_1=0.412$ 
very successfully captured covariance nonstationarity for all other values of $p$, namely, $p=0.9,0.99,0.999,0.9999$ (figures omitted for brevity).

\begin{figure}
\centering
\subfigure [$0\leq\|h\|<0.02$]{ \label{fig:mixed_covns1}
\includegraphics[width=5.5cm,height=5.5cm]{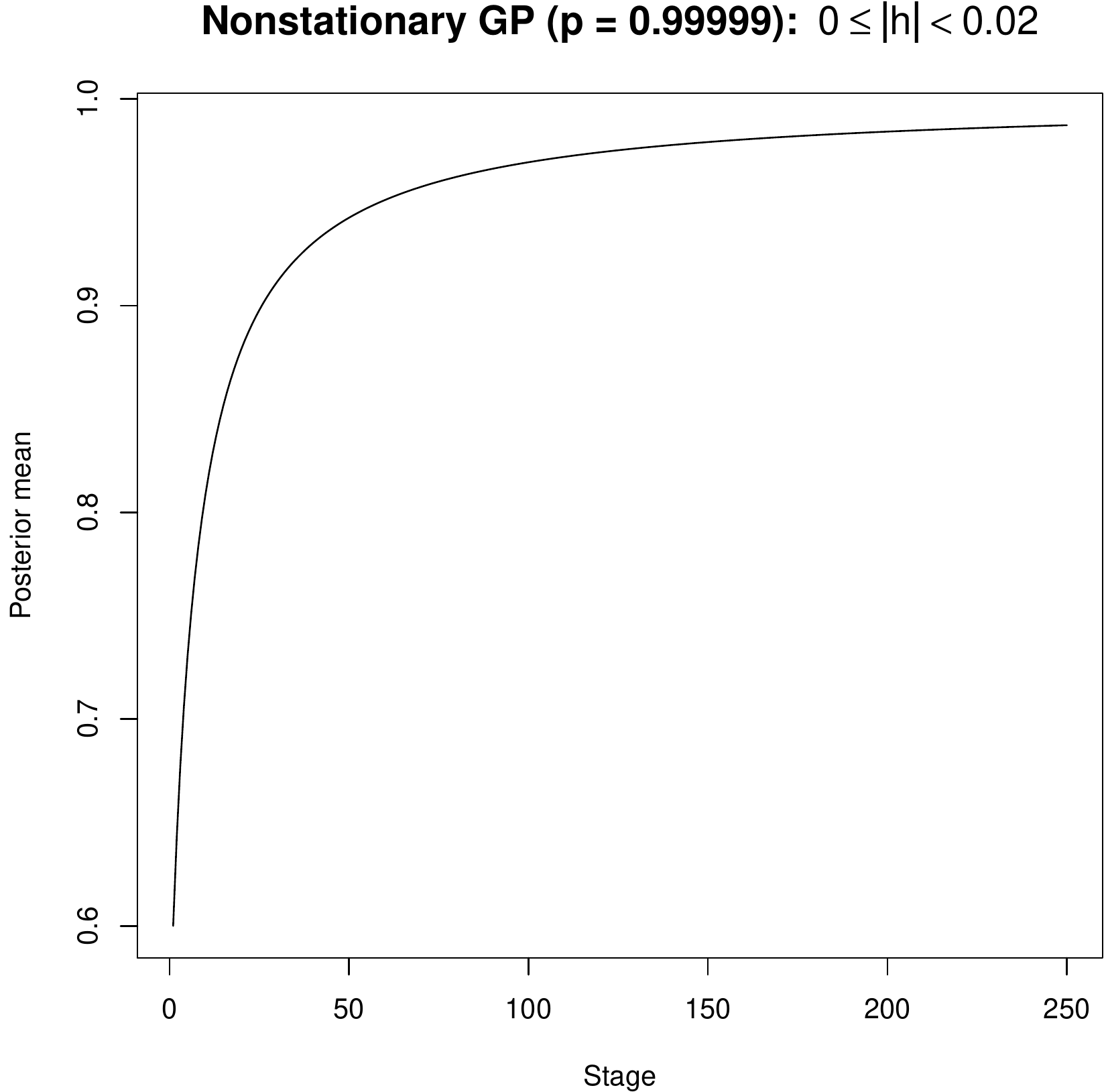}}
\hspace{2mm}
\subfigure [$0.02\leq\|h\|<0.03$.]{ \label{fig:mixed_covns2}
\includegraphics[width=5.5cm,height=5.5cm]{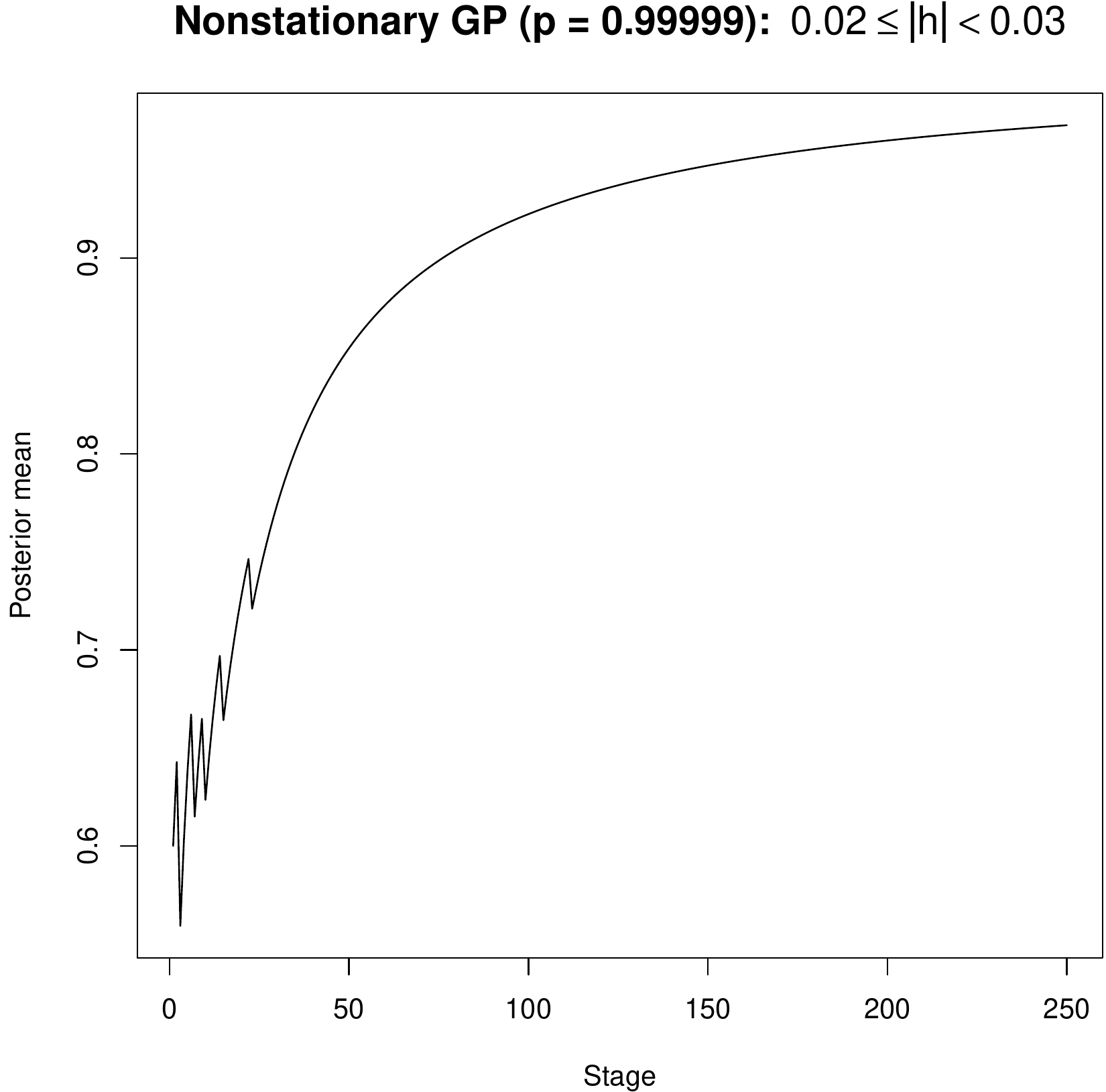}}\\
\vspace{2mm}
\subfigure [$0.03\leq\|h\|<0.04$]{ \label{fig:mixed_covns3}
\includegraphics[width=5.5cm,height=5.5cm]{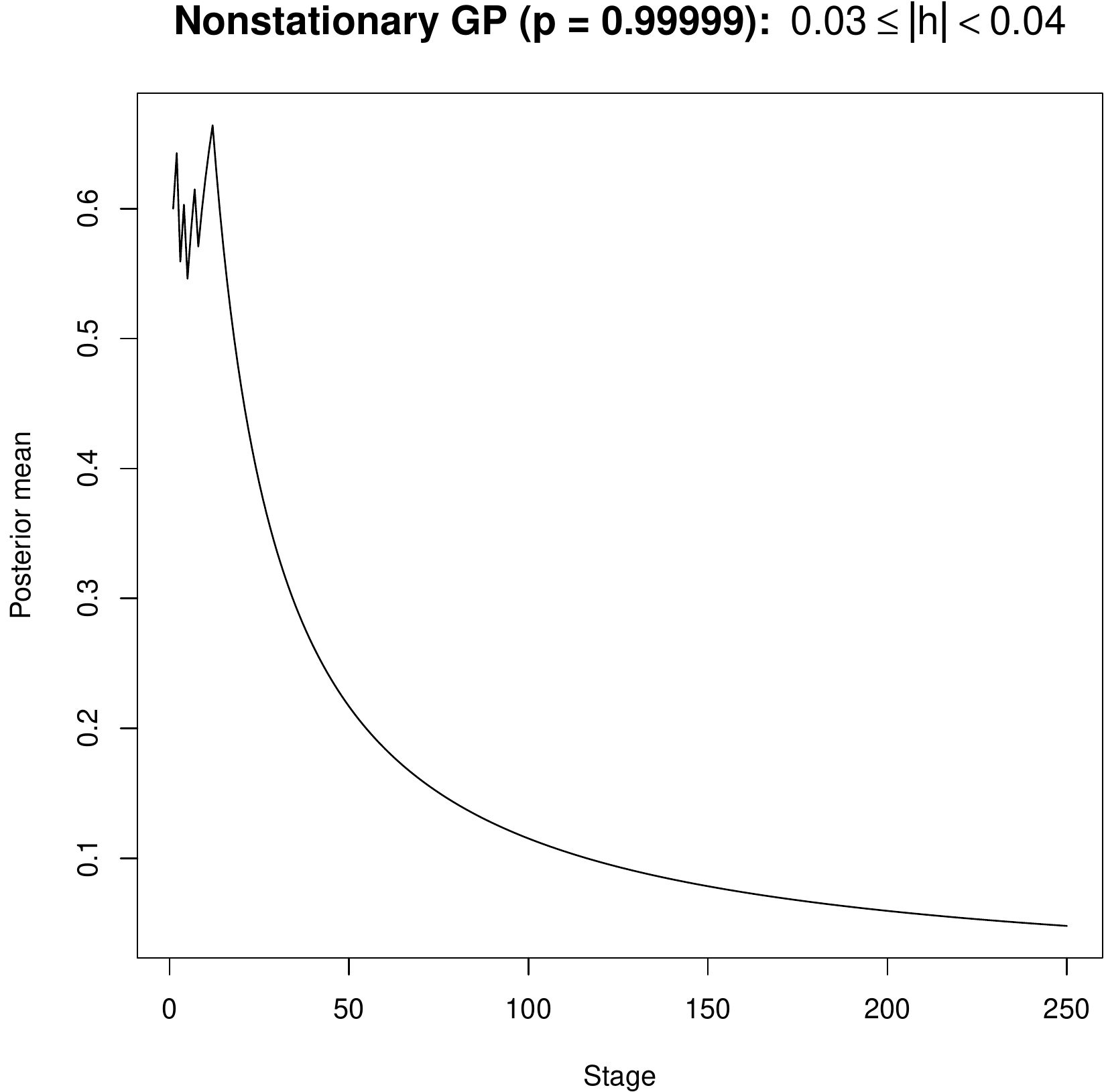}}\\
\caption{Detection of covariance nonstationarity in spatial data drawn from GP with covariance structure (\ref{eq:nonstationary2}) with $p=0.99999$.}
\label{fig:spatial4}
\end{figure}

\subsection{Spatial experiments with smaller data sets}
\label{subsec:spatial_smalldata}
We now repeat all the above experiments with datasets of sizes $1000$. We consider $K=100$ clusters with average cluster size $10$.
For checking strict stationarity, our first strategy of fixing $\hat C_1$, using the Whittle covariance function (\ref{eq:spatial_bound}) yielded $\hat C_1=0.02$, which produced 
too small bounds to be useful. On the other hand, the second procedure gave $\hat C_1=1.24$, which yielded reliable results, even for these small data sets.
Figures \ref{fig:spatial_short} and \ref{fig:spatial_mixture_short} depict the results for $\hat C_1=1.24$. For covariance stationarity, these small data sets 
were able to produce a single valid region $\mathcal N_{i,h_1,h_2}$, defined by $h_1=0$ and $h_2=0.1$, and hence, with only this region, verification of covariance
stationarity or nonstationarity is not possible. But since the underlying model is GP, covariance stationarity is equivalent to strict stationarity, and even for
non-Gaussian processes, strict stationarity would imply covariance stationarity (although strict nonstationarity need not imply covariance nonstationarity).

\begin{figure}
\centering
\subfigure [Correct detection of stationarity.]{ \label{fig:stationary_short}
\includegraphics[width=5.5cm,height=5.5cm]{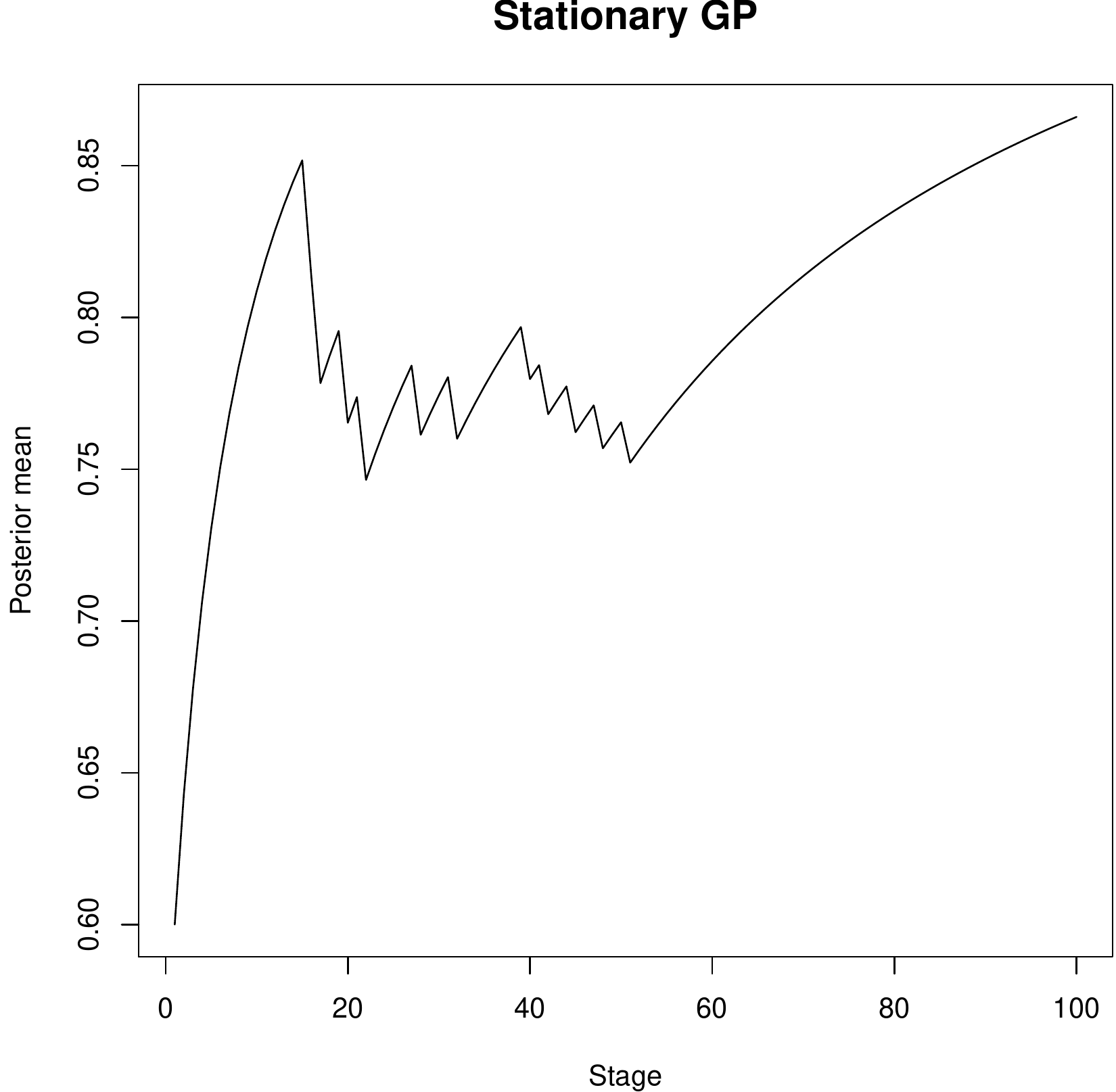}}
\hspace{2mm}
\subfigure [Correct detection of nonstationarity.]{ \label{fig:nonstationary_short}
\includegraphics[width=5.5cm,height=5.5cm]{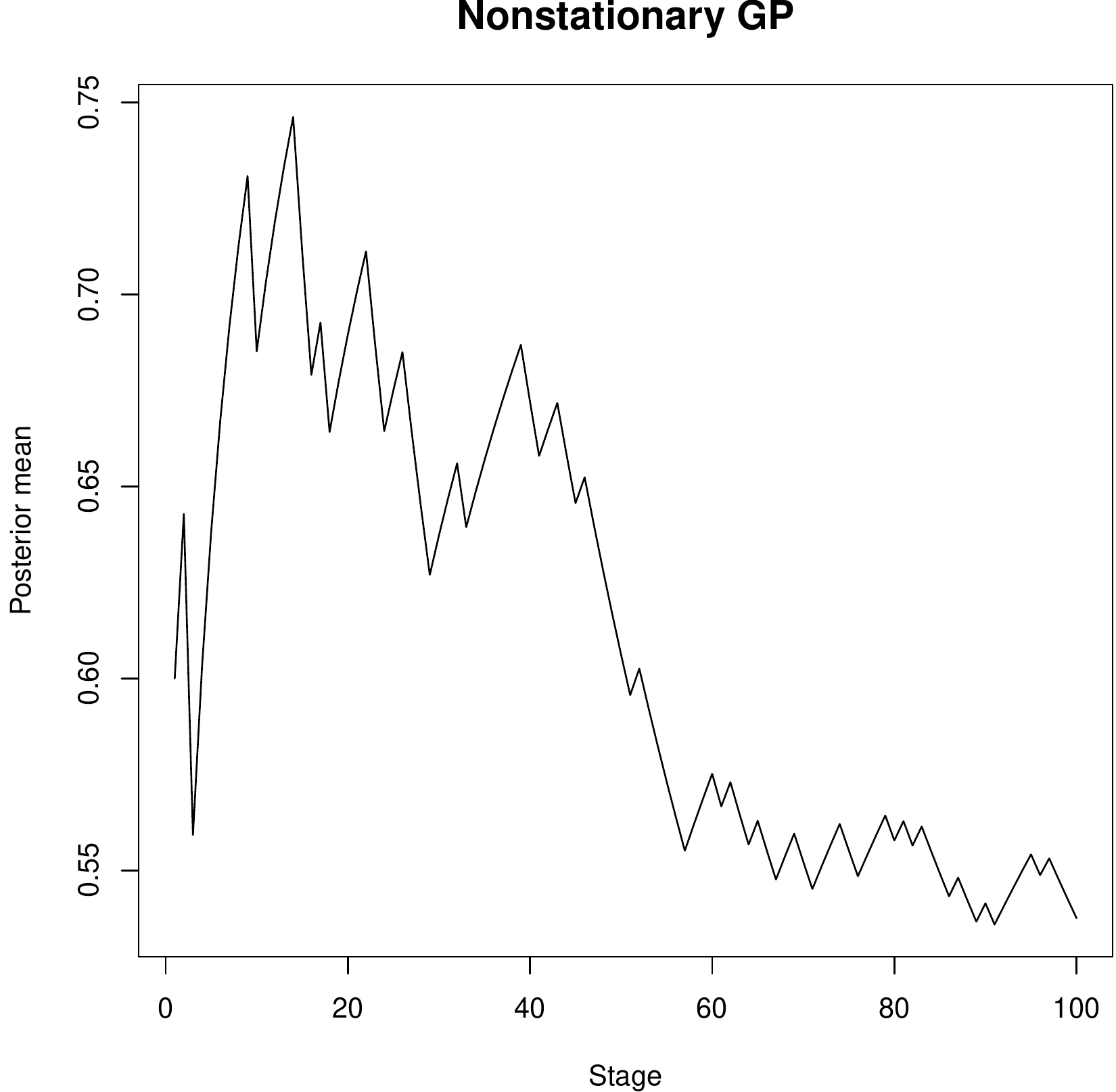}}
\caption{Detection of strong stationarity and nonstationarity in spatial data of size $1000$ drawn from GPs.}
\label{fig:spatial_short}
\end{figure}

\begin{figure}
\centering
\subfigure [$p = 0.9$.]{ \label{fig:mixed_9_short}
\includegraphics[width=5.5cm,height=5.5cm]{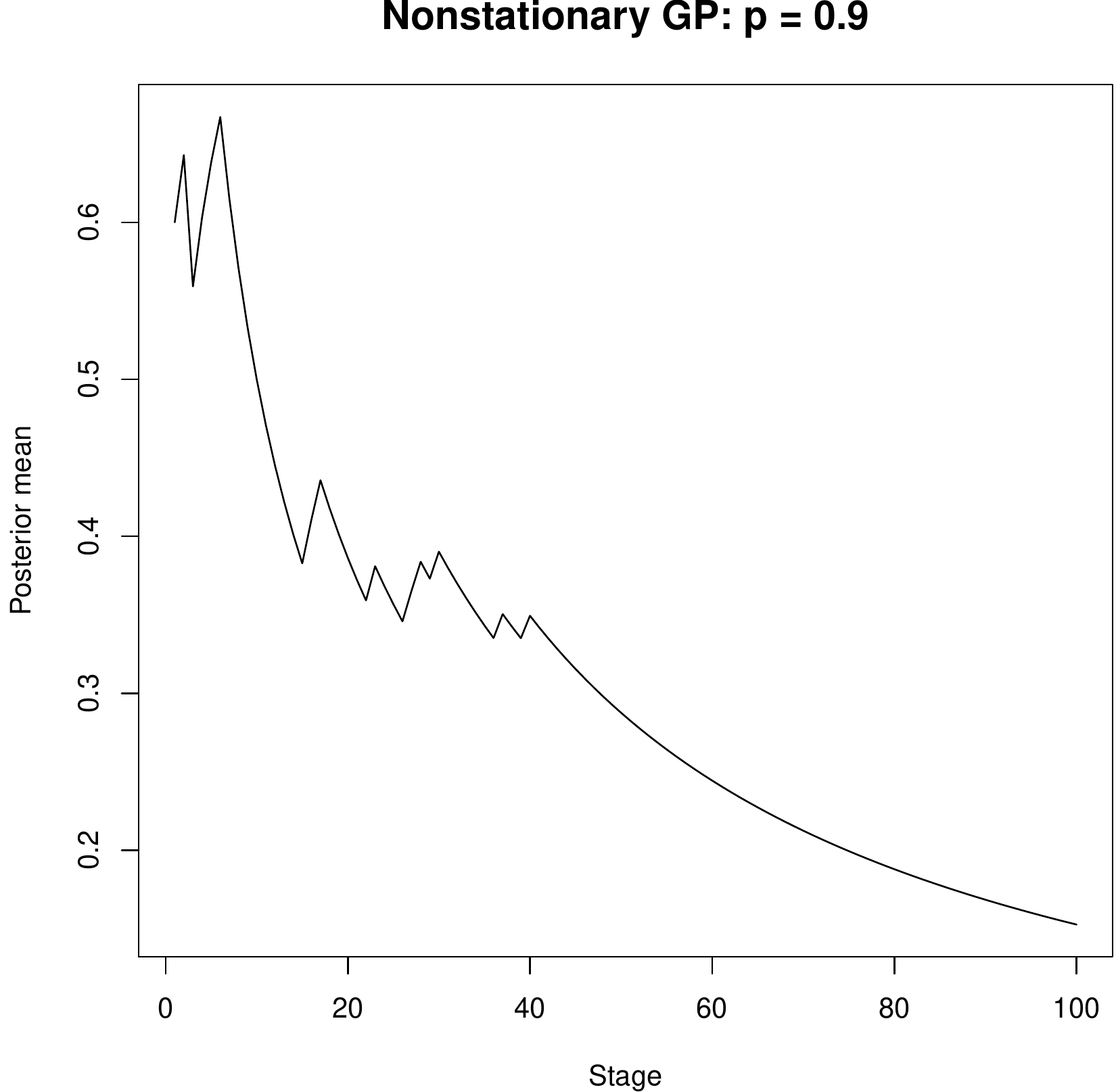}}
\hspace{2mm}
\subfigure [$p = 0.99$.]{ \label{fig:mixed_99_short}
\includegraphics[width=5.5cm,height=5.5cm]{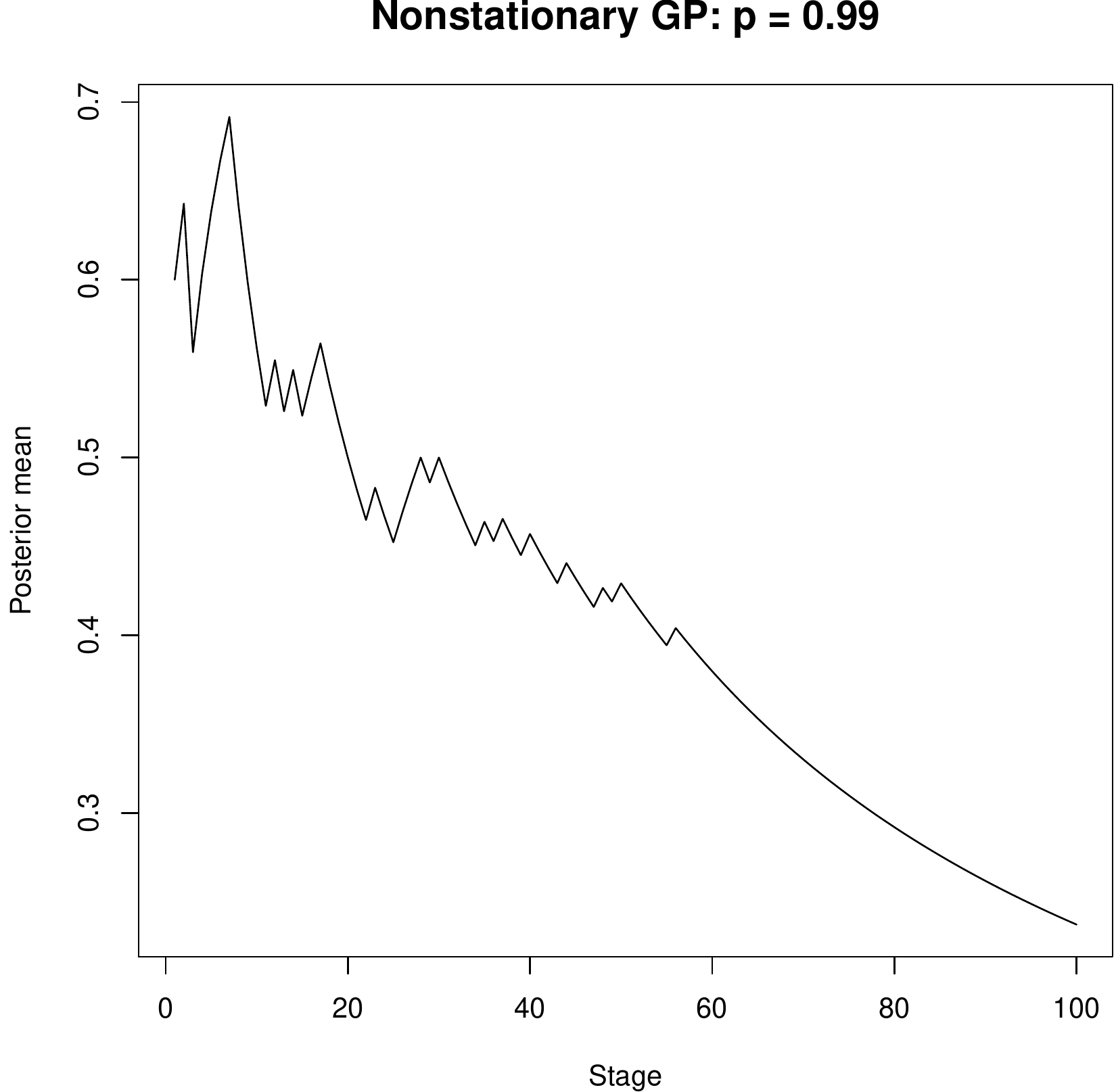}}\\
\vspace{2mm}
\subfigure [$p = 0.999$.]{ \label{fig:mixed_999_short}
\includegraphics[width=5.5cm,height=5.5cm]{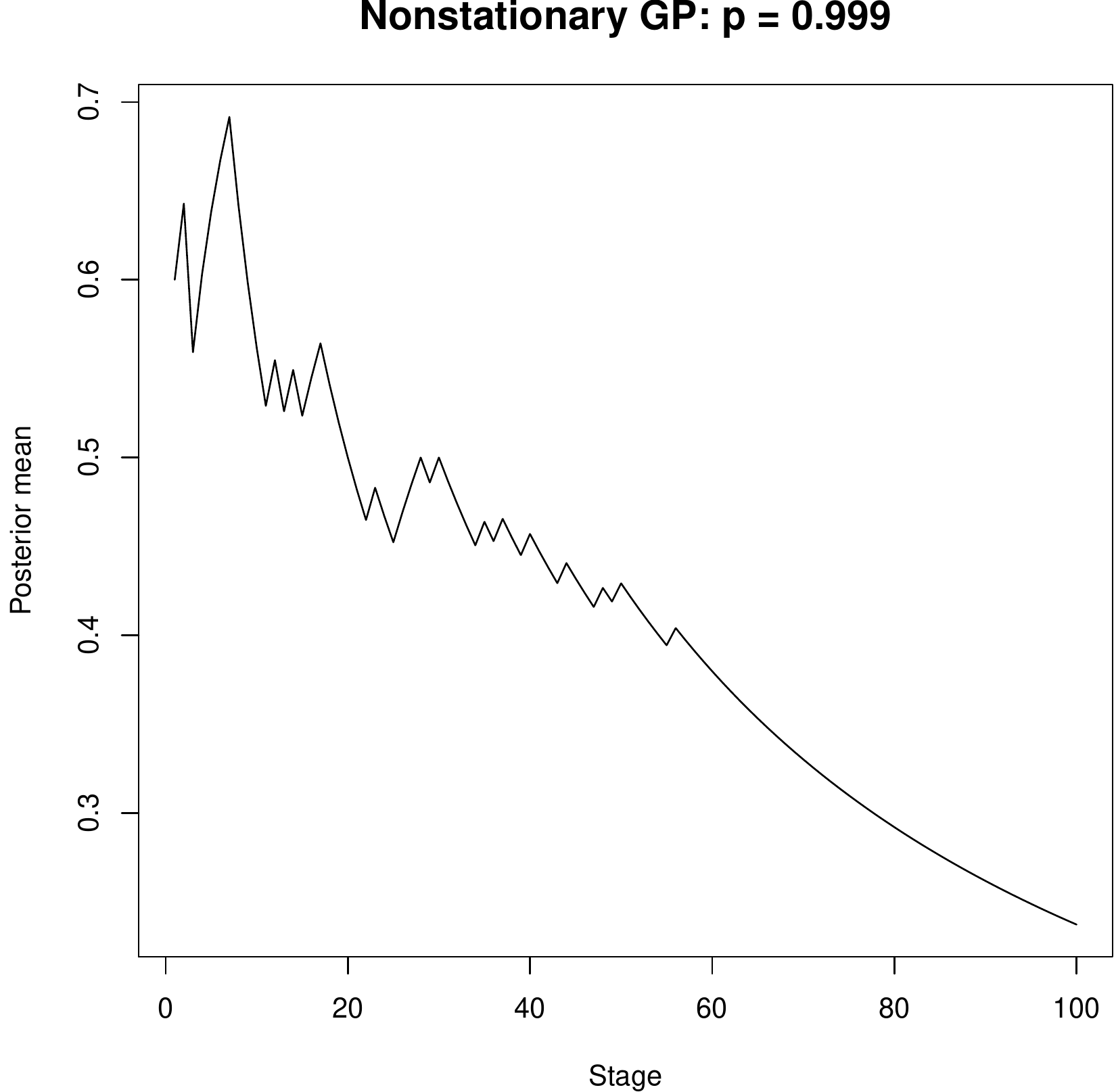}}
\hspace{2mm}
\subfigure [$p = 0.9999$.]{ \label{fig:mixed_9999_short}
\includegraphics[width=5.5cm,height=5.5cm]{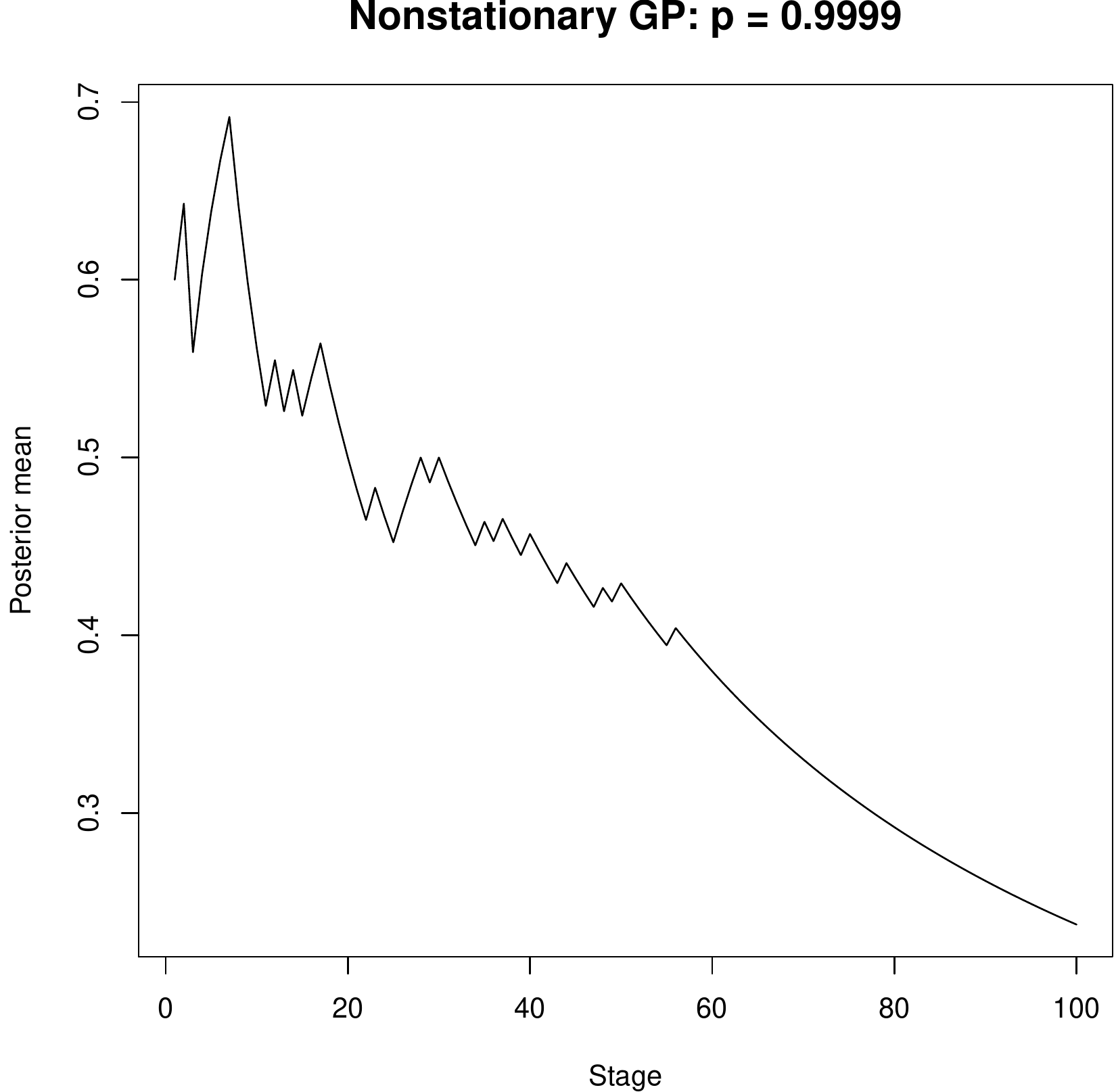}}\\
\vspace{2mm}
\subfigure [$p = 0.99999$.]{ \label{fig:mixed_99999_short}
\includegraphics[width=5.5cm,height=5.5cm]{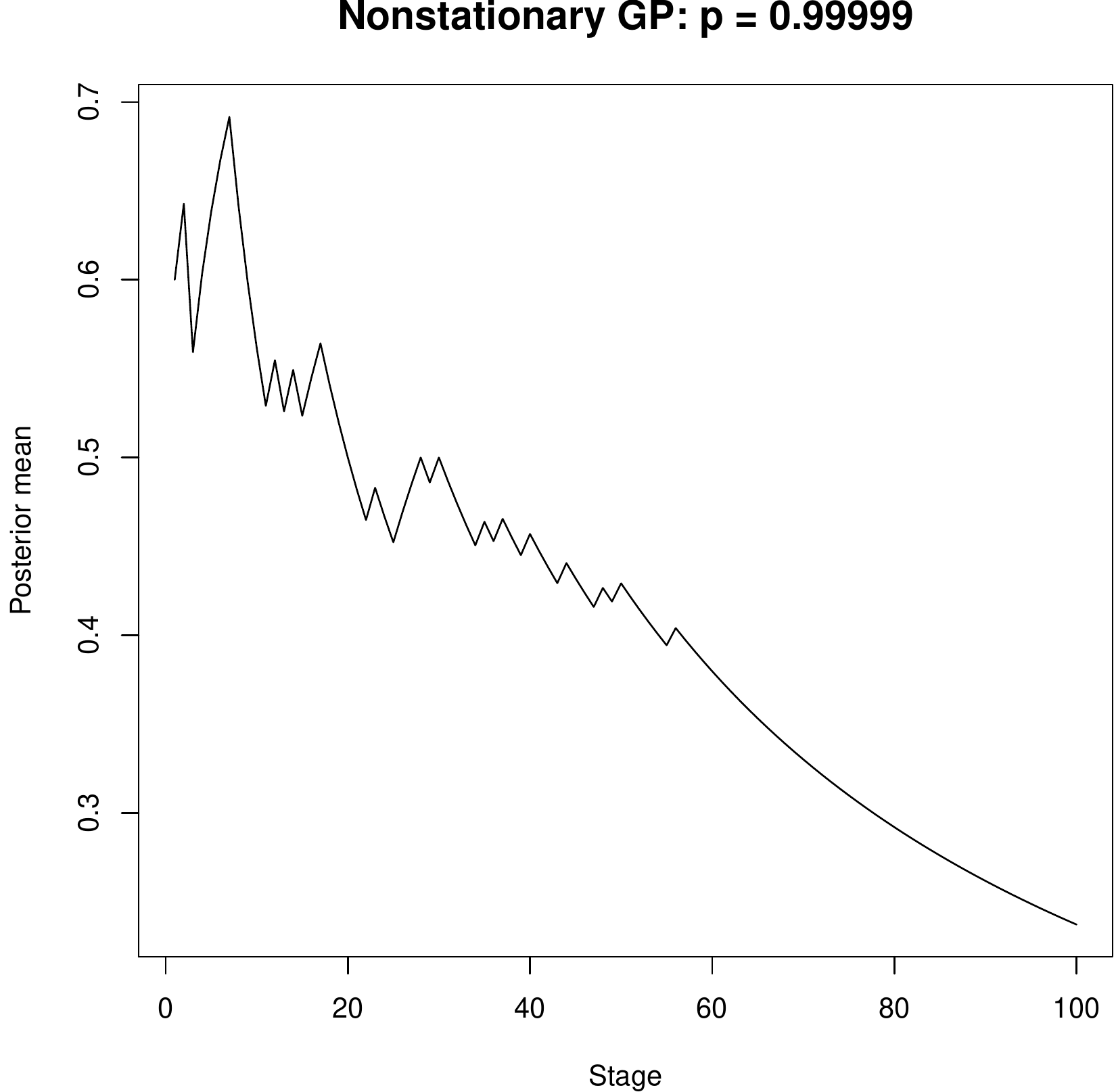}}\\
\caption{Detection of strong nonstationarity in spatial data of size $1000$ drawn from GP with covariance structure (\ref{eq:nonstationary2}) with $p=0.99999$.}
\label{fig:spatial_mixture_short}
\end{figure}

\subsection{Comparison with existing methods}
\label{subsec:spatial_comparison}
In spatial statistics, formal methods of testing stationarity or nonstationarity are rare, and mostly exploratory data analysis is used to informally
check stationarity. However, \ctn{Soutir17} have introduced some tests for checking covariance stationarity, under a variety of assumptions. These methods
seem to be more general compared to the existing ones.
An $R$-code for implementing their method is available at the webpage of the first author. Given a dataset, the code calculates two test statistics,
denoted by $T$ and $V$, along with the corresponding $P$-values under the null hypothesis of stationarity.
The statistic $V$ has been proposed in \ctn{Soutir17b}.

We apply their methods to our simulated spatial datasets in order to compare with our results. However, with data size $10000$, it turned out that
obtaining a result within reasonable time limits with the aforementioned $R$ code is almost infeasible. Instead, we applied their methods to data sets
of sizes $1000$, $3000$ and $5000$. The run times for the $R$ code for these data sizes are about $28$ seconds, $5$ minutes and $12$ minutes, respectively.

Table \ref{tab:table1} presents the results of the tests applied to our simulated datasets. In all the cases, the $T$ statistic failed to reject
the null hypothesis of stationarity, even though there is only one case of true null stationarity. On the other hand, the $V$-statistic performs much better,
with its performance consistently improving with increasing sample size, as vindicated by the corresponding $P$-values. But observe that for sample size
$1000$, even the $V$-statistic fails to reject the null hypothesis of stationarity at the 5\% level for most cases where the actual model is nonstationary.
Moreover, at the 5\% level, this statistic rejects the true null stationary model for sample sizes $3000$ and $5000$.

Thus, compared to our Bayesian idea, the overall performance of both the statistics $T$ and $V$ does not seem to be satisfactory for the models that we considered.

Moreover, from the methodological perspective, the tests of \ctn{Soutir17} check covariance stationarity only, not strict stationarity. Various assumptions,
which may be difficult to verify in practice, are also required. In contrast, our Bayesian method requires the only assumption of local stationarity that is expected
to hold in practice, and allows for identification of both weak and strict stationarity.

\begin{sidewaystable}[h]
\centering
\caption{The performance evaluation of the test statistics $T$ and $V$ of \ctn{Soutir17} and \ctn{Soutir17b} applied to our simulated spatial datasets.}
\vspace{2cm}

\begin{tabular}{|c|c|c|c|c|c|c|c|c|c|c|}
\hline
	& \multicolumn{1}{|c|}{\rule[-5mm]{0mm}{10mm}\backslashbox{Test}{Model}}
	& \multicolumn{1}{|c|}{\rule[-5mm]{0mm}{10mm}\mbox{Stationary}} & \multicolumn{1}{|c|}{\rule[-5mm]{0mm}{10mm}\mbox{Nonstationary}}
	& \multicolumn{1}{|c|}{\rule[-5mm]{0mm}{10mm}$p=0.9$} & \multicolumn{1}{|c|}{\rule[-5mm]{0mm}{10mm}$p=0.99$}
	& \multicolumn{1}{|c|}{\rule[-5mm]{0mm}{10mm}$p=0.999$} & \multicolumn{1}{|c|}{\rule[-5mm]{0mm}{10mm}$p=0.9999$}
	& \multicolumn{1}{|c|}{\rule[-5mm]{0mm}{10mm}$p=0.99999$}\\
	\hline

\multirow{4}{*}{1000} 
& $T$             & 7.692 & 2.444 & 2.290 & 4.717 & 9.151 & 8.105 & 9.254 \\ 
& $P$-value ($T$) & 0.158 & 0.751 & 0.776 & 0.387 & 0.103 & 0.141 & 0.099\\ 
& $V$             & 11.405 & 4.643 & 4.429 & 9.999 & 12.411 & 11.766 & 12.603\\ 
& $P$-value ($V$) & 0.056 & 0.398 & 0.424 & 0.080 & 0.043 & 0.051 & 0.041\\ 
\hline
\multirow{4}{*}{3000} 
& $T$             & 4.921 & 11.466 & 6.743 & 5.286 & 5.162 & 4.964 & 5.106 \\ 
& $P$-value ($T$) & 0.361 & 0.055 & 0.206 & 0.322 & 0.335 & 0.356 & 0.341\\ 
& $V$             & 13.483 & 16.527 & 16.631 & 14.073 & 13.432 & 13.508 & 13.432\\ 
& $P$-value ($V$) & 0.031 & 0.014 & 0.014 & 0.023 & 0.031 & 0.031 & 0.031\\ 
\hline
\multirow{4}{*}{5000} 
& $T$             & 3.307 & 4.234 & 3.196 & 3.313 & 3.385 & 3.342 & 3.385 \\ 
& $P$-value ($T$) & 0.595 & 0.451 & 0.615 & 0.595 & 0.583 & 0.589 & 0.583\\ 
& $V$             & 18.160 & 20.233 & 16.787 & 17.843 & 18.160 & 18.160 & 18.238\\ 
& $P$-value ($V$) & 0.010 & 0.006 & 0.014 & 0.011 & 0.010 & 0.010 & 0.010\\ 
\hline
\end{tabular}
\label{tab:table1}
\end{sidewaystable}

\section{Fifth illustration: detection of stationarity and nonstationarity in spatio-temporal data}
\label{sec:spatio_temporal}

We now apply our techniques in ascertaining stationarity and nonstationarity in spatio-temporal data, where both spatial and temporal components play
important roles. For our simulation studies, we consider covariance functions of the following forms:
\begin{equation}
	Cov(X_{(s_1,t_1)},X_{(s_2,t_2)})=\exp(-5\|s_1-s_2\|^2)\times\frac{\rho^{|t_1-t_2|}}{1-\rho^2},
\label{eq:spacetime1}
\end{equation}
\begin{equation}
	Cov(X_{(s_1,t_1)},X_{(s_2,t_2)})=\exp(-5\|\sqrt{s_1}-\sqrt{s_2}\|^2)\times\frac{\rho^{|t_1-t_2|}}{1-\rho^2},
\label{eq:spacetime2}
\end{equation}
and
\begin{equation}
	Cov(X_{(s_1,t_1)},X_{(s_2,t_2)})=\left(p\exp(-5\|s_1-s_2\|^2)+(1-p)\exp(-5\|\sqrt{s_1}-\sqrt{s_2}\|^2)\right)\times\frac{\rho^{|t_1-t_2|}}{1-\rho^2},
\label{eq:spacetime3}
\end{equation}
for all $s_1,s_2\in \mathbb R^2$, $t_1,t_2\in\mathbb R^+$ and $\rho\in\mathbb R$. Note that $\frac{\rho^{|t_1-t_2|}}{1-\rho^2}$ is the covariance function
associated with an $AR(1)$ model with parameter $\rho$. The forms of the covariance functions (\ref{eq:spacetime1}), (\ref{eq:spacetime2}) and (\ref{eq:spacetime3})
show that the covariance parts associated with spatial and temporal components are separated from each other, thanks to the product forms. Covariance functions 
with such a property are known as separable covariance functions. In (\ref{eq:spacetime3}), $p\in [0,1]$, as before. If $p=0$, then (\ref{eq:spacetime3})
reduces to (\ref{eq:spacetime3}) and to (\ref{eq:spacetime1}) if $p=1$.

Note that if $|\rho|<1$, then (\ref{eq:spacetime1}) is a stationary covariance function, and nonstationary otherwise. On the other hand, (\ref{eq:spacetime2})
and (\ref{eq:spacetime3}) are both nonstationary covariance functions, irrespective of the value of $\rho$. 

For our simulation experiments, we consider zero-mean GPs $X_{(s,t)}$ with the above covariance functions, restricting the spatial locations on $[0,1]^2$
and setting the time points $t_i=i$, for $i\geq 1$. We simulate, for $i=1,\ldots,100$, $\tilde s_i\sim U\left([0,1]^2\right)$ and set $s_i=\sqrt{s_i}$.
We set $t_i=i$, for $i=1,\ldots,100$. This defines covariance matrices for $10000$-dimensional multivariate normal associated with the underlying GPs.
Note that such covariance matrices are Kronecker products of the spatial and temporal covariance matrices, thanks to separability.

Observe that the above separable covariance matrices correspond to separable spatio-temporal processes of the form
\begin{equation}
	X_{(s,t)}=X_{(s,t-1)}+\epsilon_{(s,t)}, 
\label{eq:spatial_ar}
\end{equation}
for $t=1,2,\ldots$, where $X_{(s,0)}=\bzero$ (null vector), and $\epsilon_{(s,t)}$ are zero-mean GPs independent in time, but with spatial covariance
with forms same as the spatial parts in (\ref{eq:spacetime1}), (\ref{eq:spacetime2}) and (\ref{eq:spacetime3}).
With the above representation, generation of $10000$ realization takes about a second, even in $R$.


To construct $\mathcal N_i$, $i=1,\ldots,K$, we consider $K$-means clustering of the points $$\left\{(s_i,t_j); i=1,\ldots,100;j=1,\ldots,100\right\},$$ 
into $K=250$ clusters.

\subsection{Choice of the bound $c_j$ in the spatio-temporal case}
\label{subsec:C1_hat_spacetime}
We consider the bound of the form (\ref{eq:ar1_bound3}) as before. As regards, $\hat C_1$, we found that $\hat C_1=0.5$ performed adequately for the entire
suite of our simulation experiments in the spatio-temporal scenario. However, we also consider a strategy for obtaining $\hat C_1$ using ideas similar to the
spatial setup, detailed below. 

We first generate a sample of size $10000$ from a zero mean GP with the covariance function of the following form: 
\begin{equation}
	Cov(X_{(s_1,t_1)},X_{(s_2,t_2)})=(\|s_1-s_2\|/\psi)\mathcal K_1(\|s_1-s_2\|/\psi)\times\frac{\xi^{|t_1-t_2|}}{1-\xi^2}, 
\label{eq:spacetime_bound}
\end{equation}
with $\psi=0.8$ and $\xi=0.999999$. Note that this covariance function corresponds to a model of the form (\ref{eq:spatial_ar}) with $X_{(s,0)}=\bzero$
and zero-mean GPs $\epsilon_{(s,t)}$ independent in time, with spatial covariance given by the spatial form in (\ref{eq:spacetime_bound}).
The parameter values $\psi=0.8$ and $\xi=0.999999$ are chosen to make the underlying spatio-temporal process reasonably close to nonstationarity
with respect to space and time.

We then choose that minimum value of $\hat C_1$ such that the spatio-temporal process remains stationary. This minimum value, for checking
strict stationarity, is given by $\hat C_1=0.37$, which is reasonably close to $\hat C_1=0.5$ that worked well for our experiments.
Again, we obtained same results for both the values of $\hat C_1$, and we report results for $\hat C_1=0.37$. 

However, for weak stationarity, we again failed to obtain multiple valid intervals 
for realizations of size $10000$ from the zero-mean GP with covariance (\ref{eq:spacetime_bound}). Indeed, we could obtain only a single interval $[0,0.15]$. 
Hence, in that case we consider $\hat C_1=0.5$.

Below we discuss the experimental designs for our various simulation experiments.

\subsection{Spatial and temporal stationarity}
\label{subsec:spacetime_stationary}
We generate partial realizations of length $10000$ from the zero mean GP with covariance function (\ref{eq:spacetime1}) using the formulation (\ref{eq:spatial_ar}), 
with $\rho=0.8$
and also with $\rho=0.99999$. Thus,
the spatio-temporal GPs are strictly stationary, and our Bayesian method is expected to reflect this. 
The latter situation is quite subtle, as the difference with temporal nonstationarity is negligible.

Apart from strict stationarity, we also investigate weak stationarity, focussing on the subtle situation where $\rho=0.99999$.

\subsection{Spatio-temporal nonstationarity}
\label{subsec:spacetime_nonstationary}
Recall that spatio-temporal nonstationarity occurs in our cases when $|\rho|\geq 1$ in (\ref{eq:spacetime1}) and when covariances (\ref{eq:spacetime2})
or (\ref{eq:spacetime3}) are chosen. We experiment with (\ref{eq:spacetime1}) with $\rho=1$, (\ref{eq:spacetime2}) with $\rho=0.8$ and $\rho=1$, 
(\ref{eq:spacetime3}) with $p=0.99999$ and $\rho=0.8$. The latter is a subtle situation where nonstationarity is quite difficult to ascertain.
Note that if nonstationarity can be captured by our Bayesian method in this situation, then so is possible for larger values of $\rho$ taking the temporal
part closer to nonstationarity.
With the last, subtle situation, we also investigate covariance nonstationarity.

\subsection{Results}
\label{subsec:results_spacetime}
Figure \ref{fig:spacetime1}, diagrammatically representing our Bayesian procedure, vindicates that the stochastic processes associated with 
covariance function (\ref{eq:spacetime1}) with $\rho=0.8$ and $\rho=0.99999$, are indeed
strictly stationary. On the other hand, the processes corresponding to 
(\ref{eq:spacetime1}) with $\rho=1$, (\ref{eq:spacetime2}) with $\rho=0.8$ and $\rho=1$, (\ref{eq:spacetime3}) with $p=0.99999$ and $\rho=0.99999$, are all 
correctly detected by our Bayesian method as strictly nonstationary.

Figure \ref{fig:spacetime2} depicts the results of investigation of weak stationarity for the covariance (\ref{eq:spacetime1}) with $\rho=0.99999$.
For the covariance (\ref{eq:spacetime3}) with $p=0.99999$ and $\rho=0.8$, Figure \ref{fig:spacetime3} presents the results of our Bayesian technique.
In both the cases, success of our Bayesian proposal is clearly borne out.

\begin{figure}
\centering
\subfigure [Correct detection of stationarity.]{ \label{fig:stationary1}
\includegraphics[width=5.5cm,height=5.5cm]{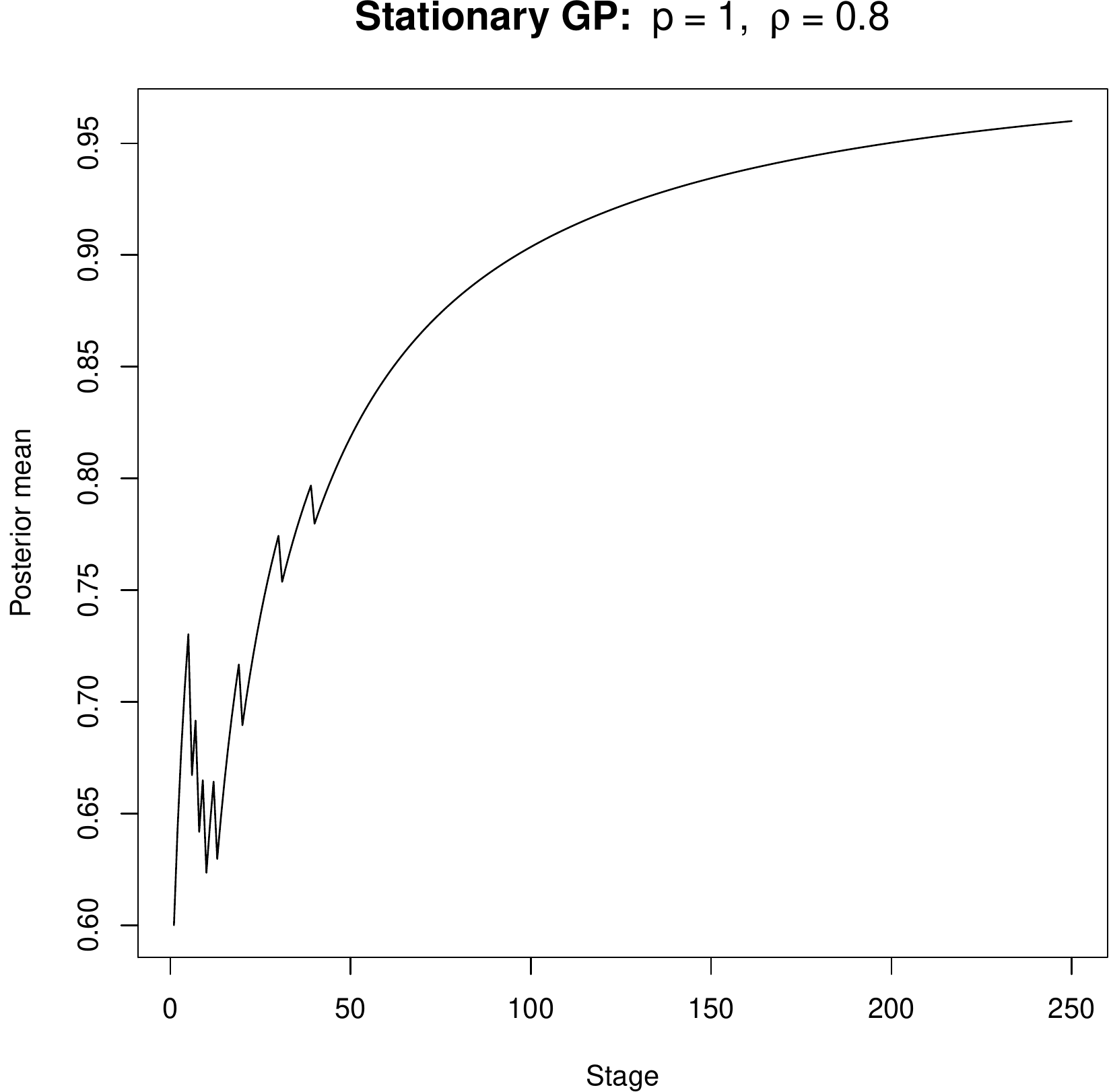}}
\hspace{2mm}
\subfigure [Correct detection of stationarity.]{ \label{fig:stationary2}
\includegraphics[width=5.5cm,height=5.5cm]{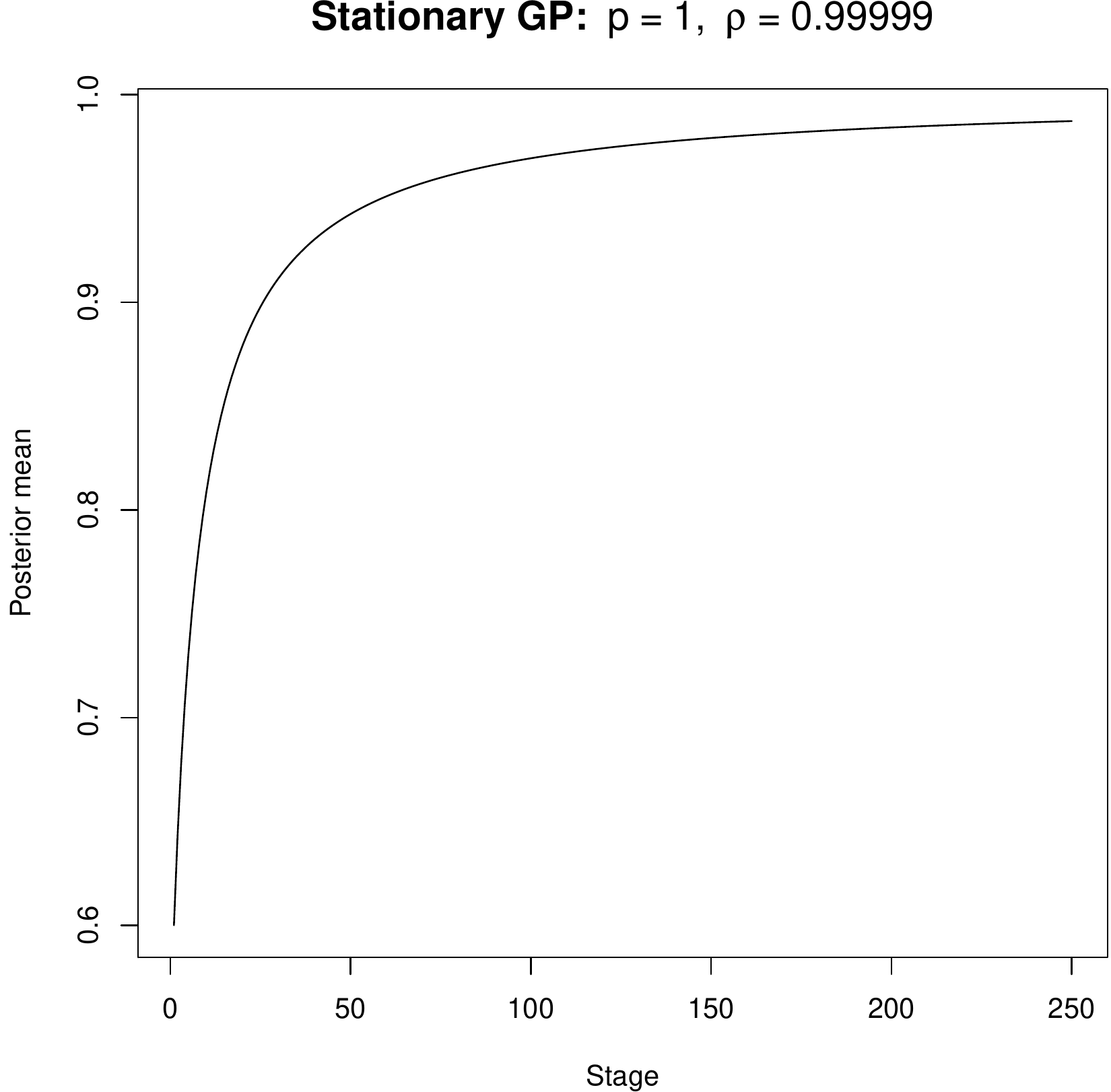}}\\
\vspace{2mm}
\subfigure [Correct detection of nonstationarity.]{ \label{fig:nonstationary1}
\includegraphics[width=5.5cm,height=5.5cm]{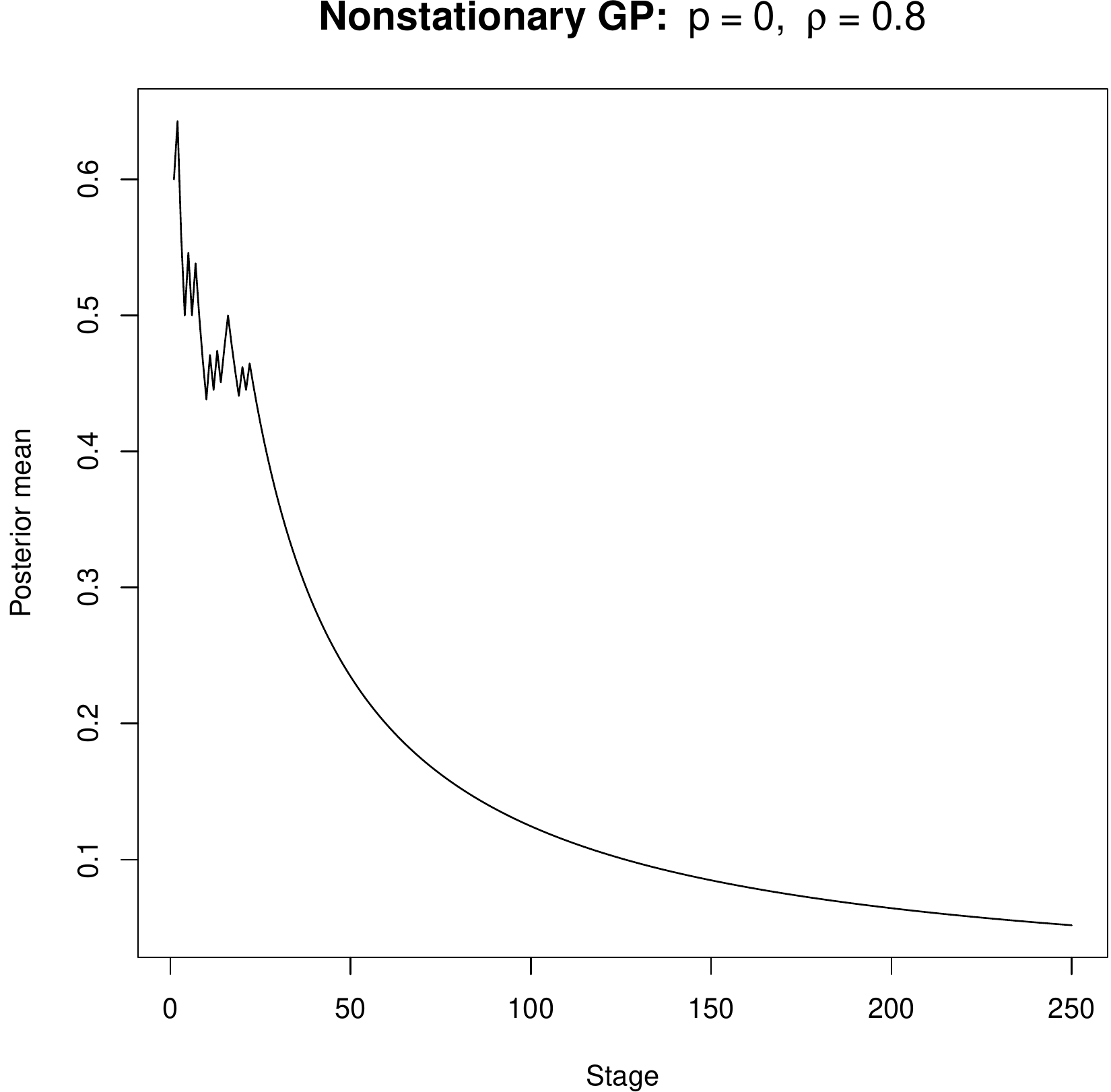}}
\hspace{2mm}
\subfigure [Correct detection of nonstationarity.]{ \label{fig:nonstationary2}
\includegraphics[width=5.5cm,height=5.5cm]{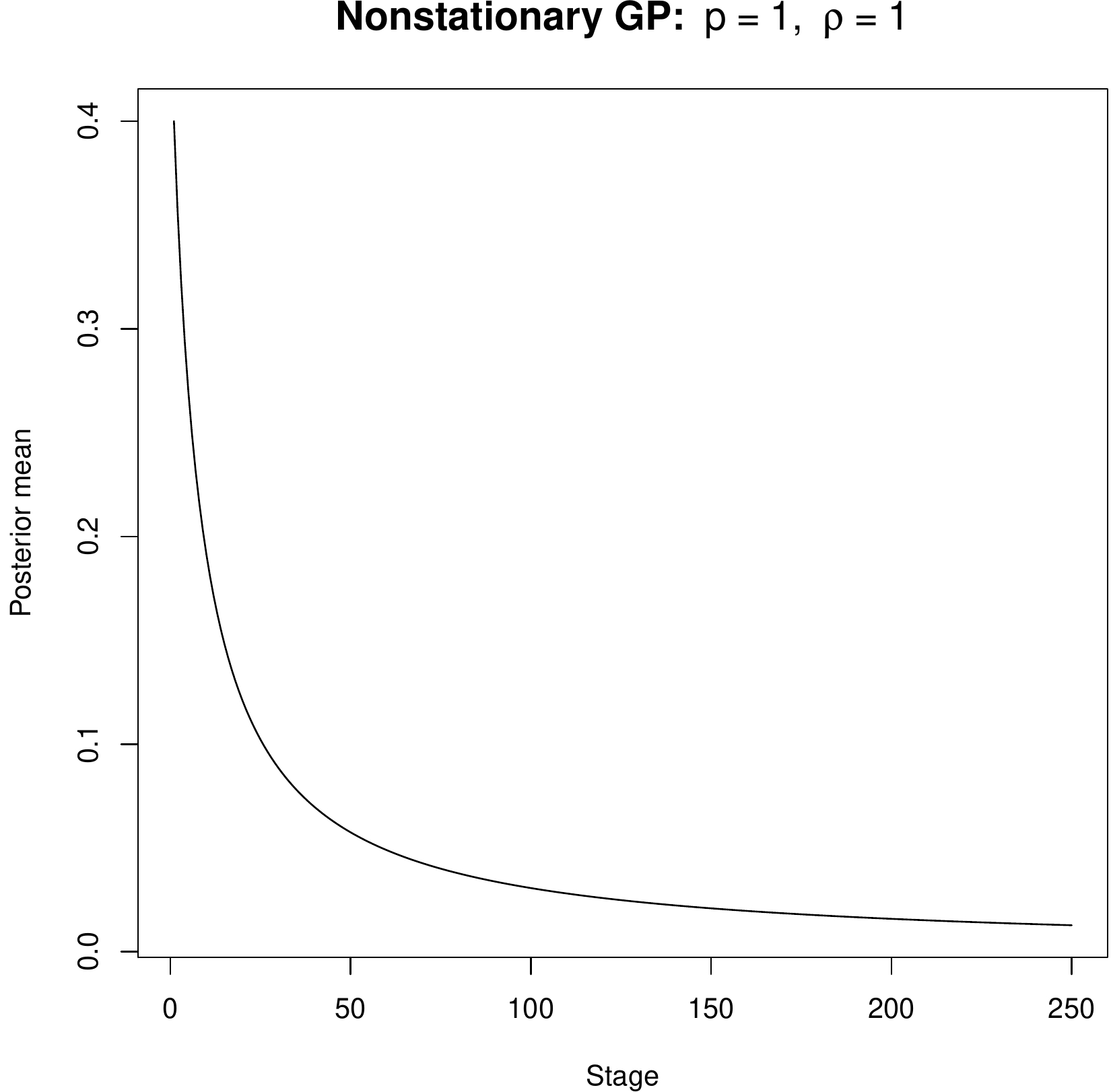}}\\
\vspace{2mm}
\subfigure [Correct detection of nonstationarity.]{ \label{fig:nonstationary3}
\includegraphics[width=5.5cm,height=5.5cm]{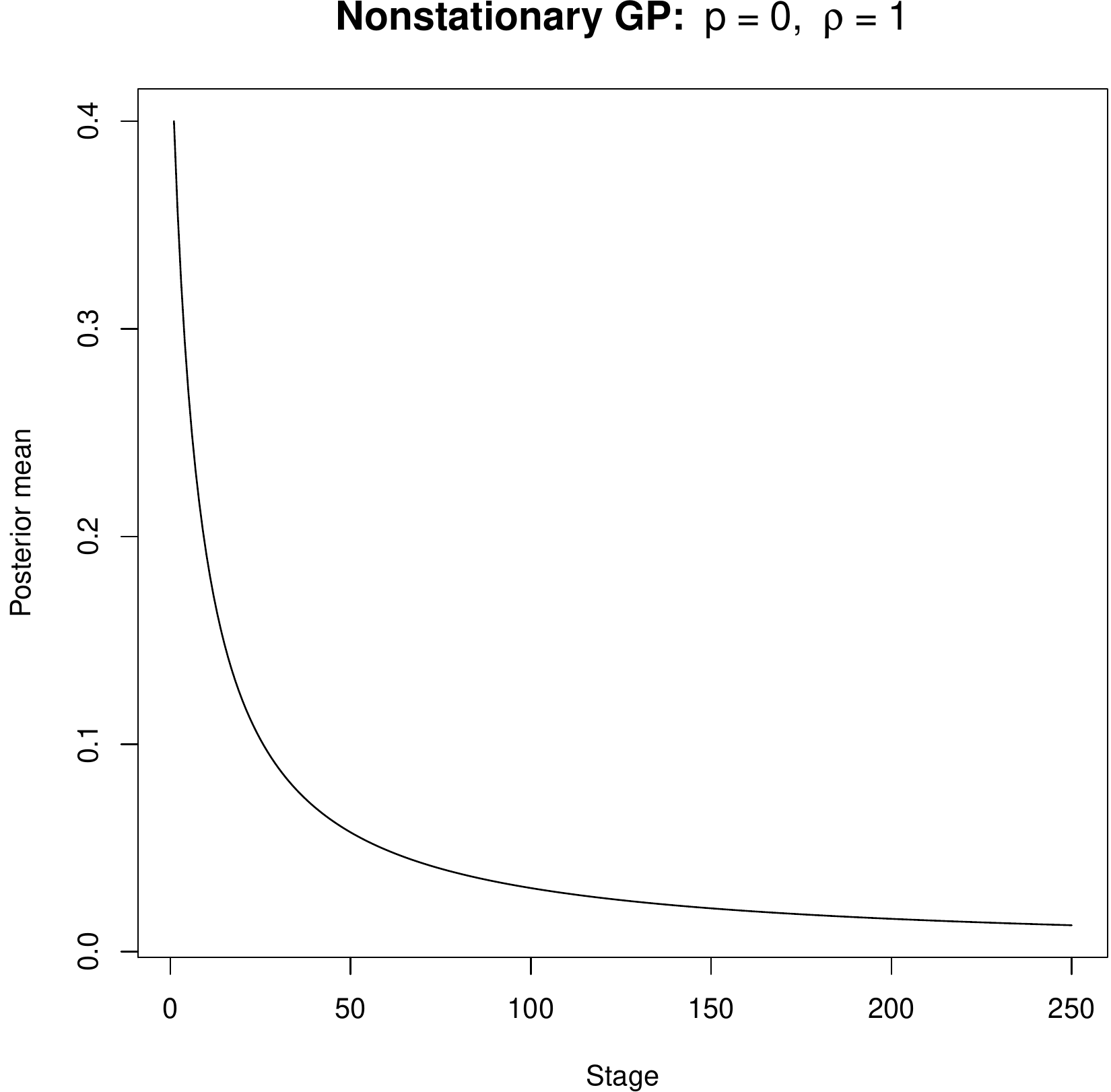}}\\
\caption{Detection of strong stationarity and nonstationarity in spatio-temporal data drawn from GPs.}
\label{fig:spacetime1}
\end{figure}

\begin{figure}
\centering
\subfigure [$0\leq\|h\|<0.15$]{ \label{fig:spacetime_covns1}
\includegraphics[width=5.5cm,height=5.5cm]{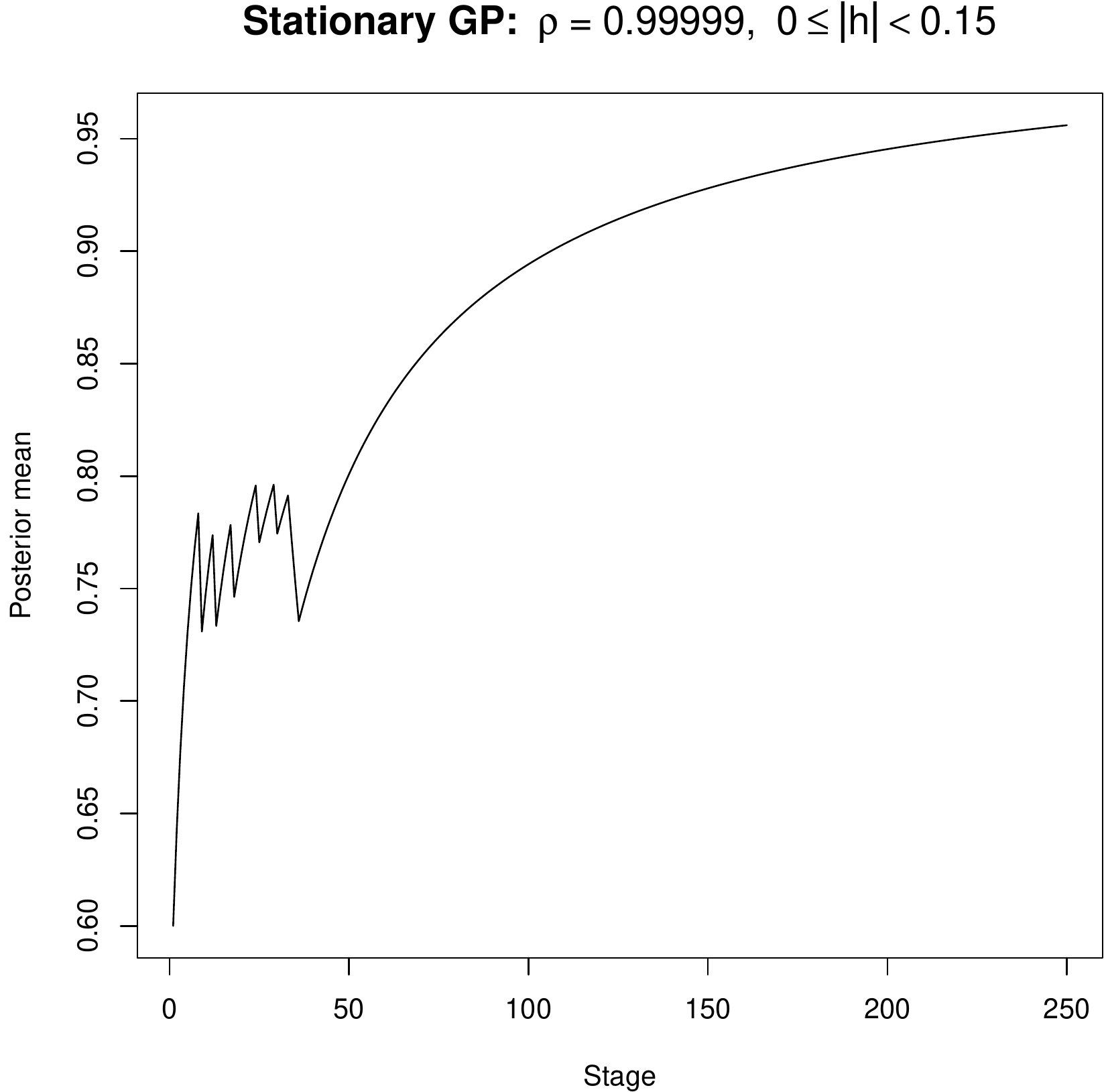}}
\hspace{2mm}
\subfigure [$0.15\leq\|h\|<0.2$.]{ \label{fig:spacetime_covns2}
\includegraphics[width=5.5cm,height=5.5cm]{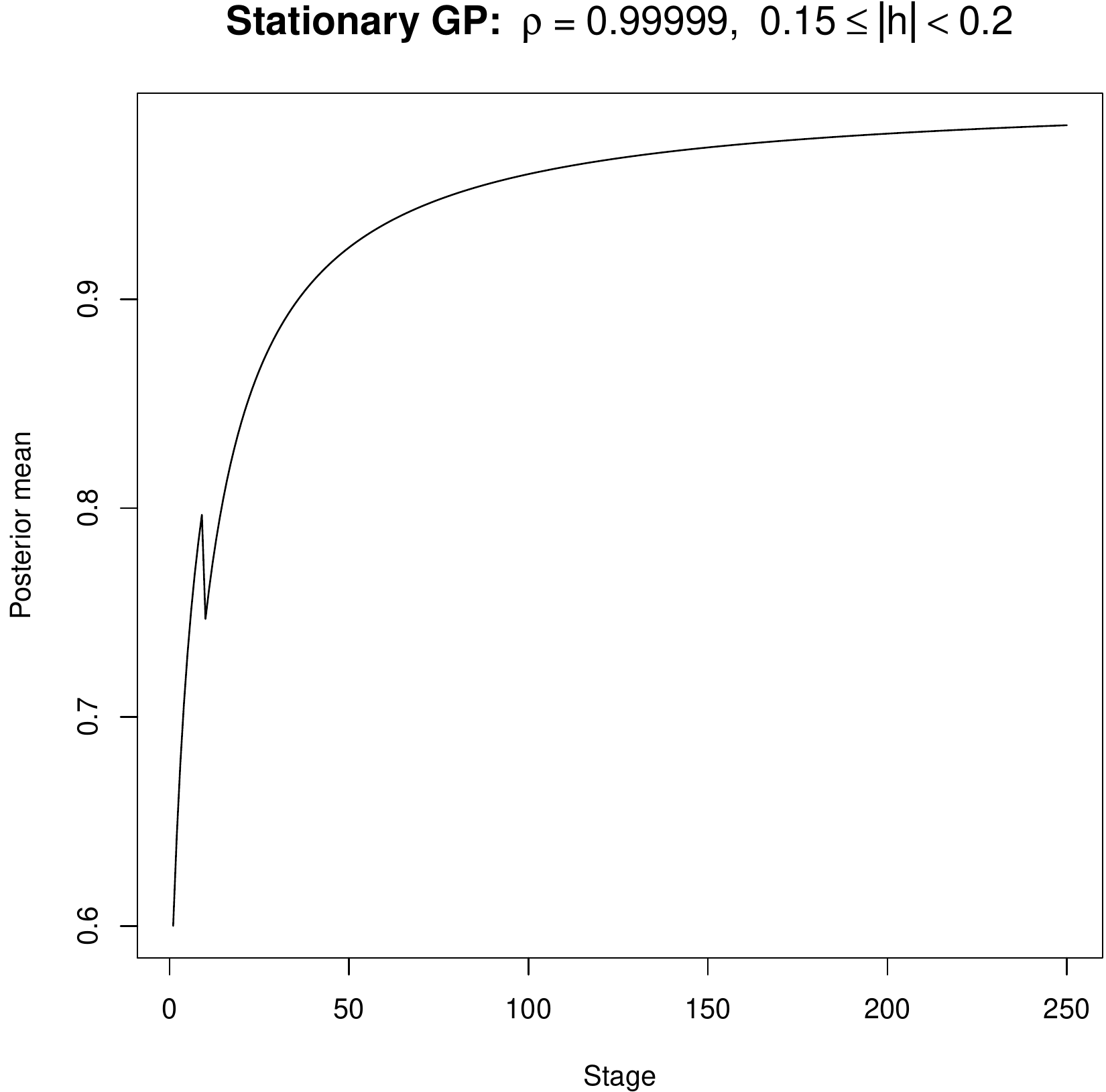}}\\
\vspace{2mm}
\subfigure [$0.2\leq\|h\|<0.35$]{ \label{fig:spacetime_covns3}
\includegraphics[width=5.5cm,height=5.5cm]{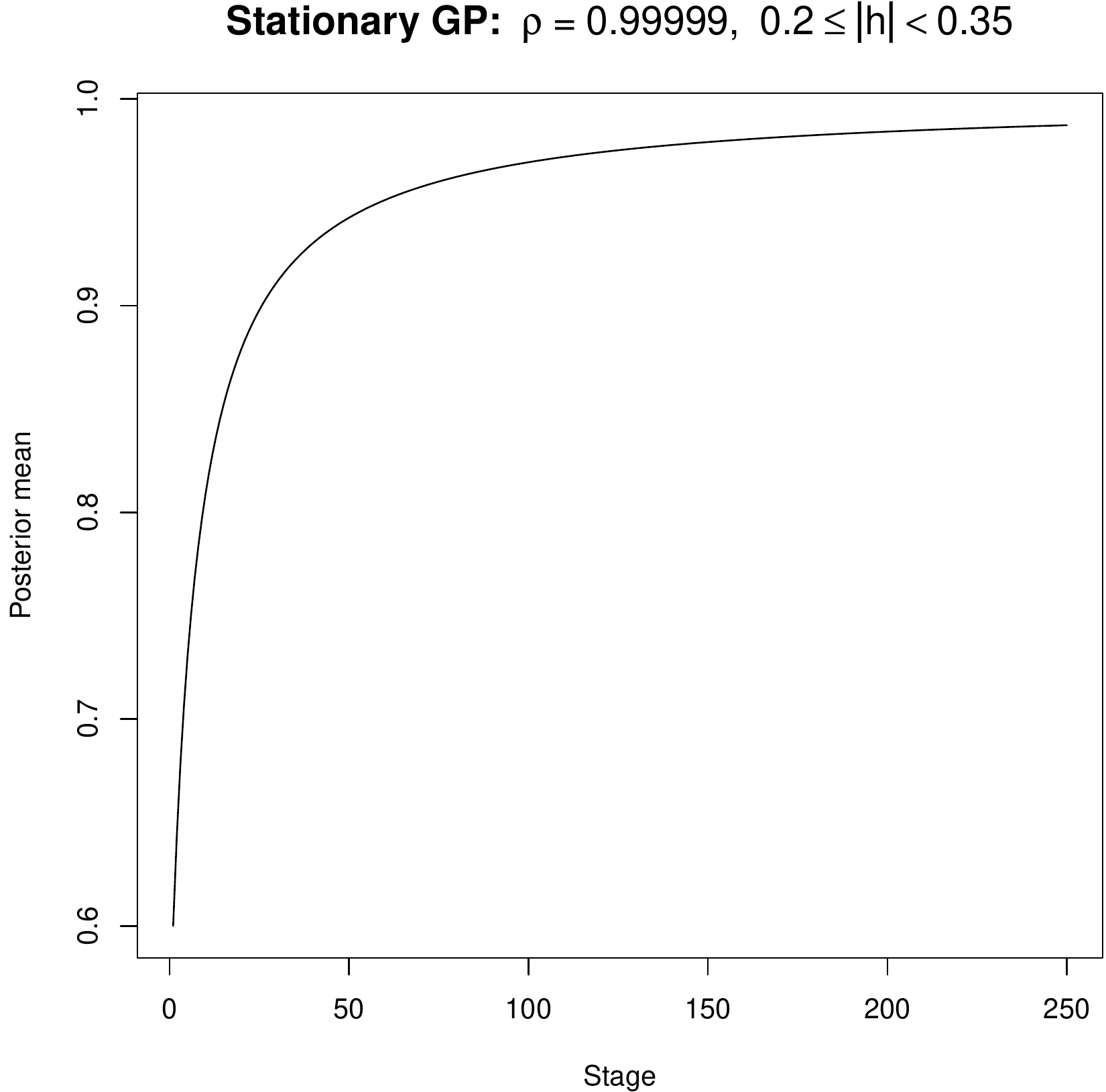}}\\
\caption{Detection of covariance stationarity in spatio-temporal data drawn from GP with covariance structure (\ref{eq:spacetime1}) with $\rho=0.99999$.}
\label{fig:spacetime2}
\end{figure}

\begin{figure}
\centering
\subfigure [$0\leq\|h\|<0.15$]{ \label{fig:spacetime_mixed_covns1}
\includegraphics[width=5.5cm,height=5.5cm]{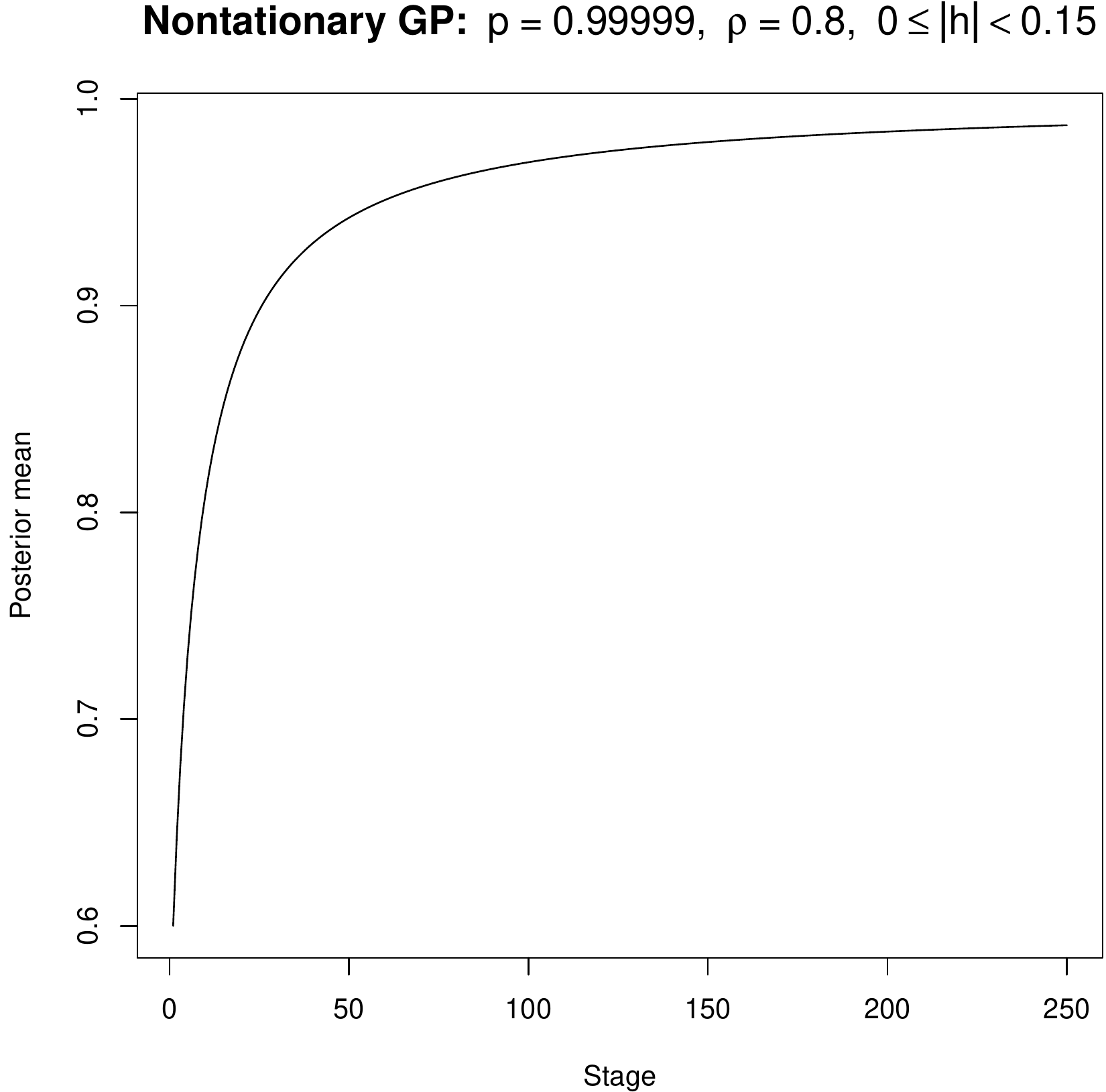}}
\hspace{2mm}
\subfigure [$0.15\leq\|h\|<0.2$.]{ \label{fig:spacetime_mixed_covns2}
\includegraphics[width=5.5cm,height=5.5cm]{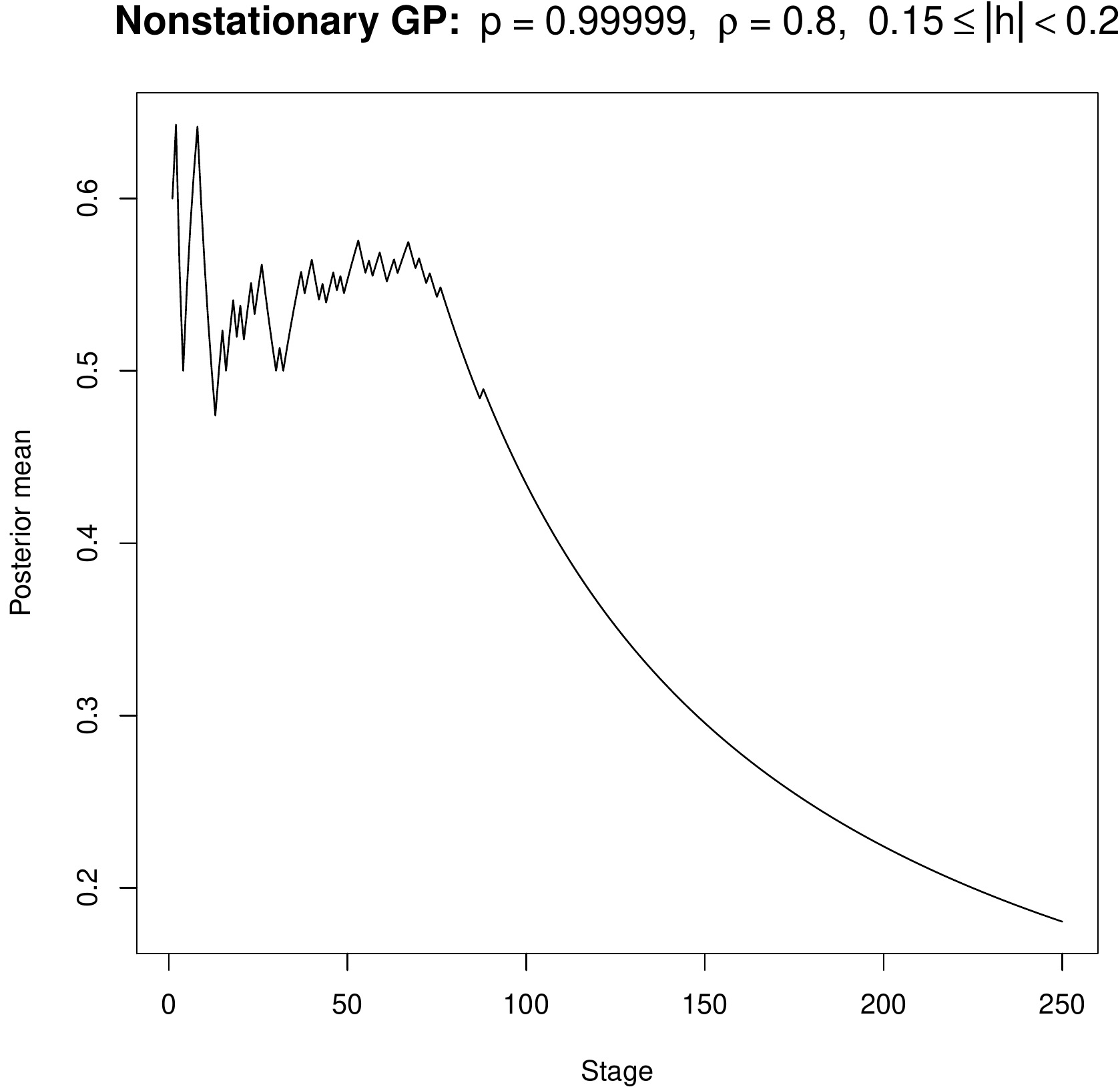}}\\
\vspace{2mm}
\subfigure [$0.2\leq\|h\|<0.35$]{ \label{fig:spacetime_mixed_covns3}
\includegraphics[width=5.5cm,height=5.5cm]{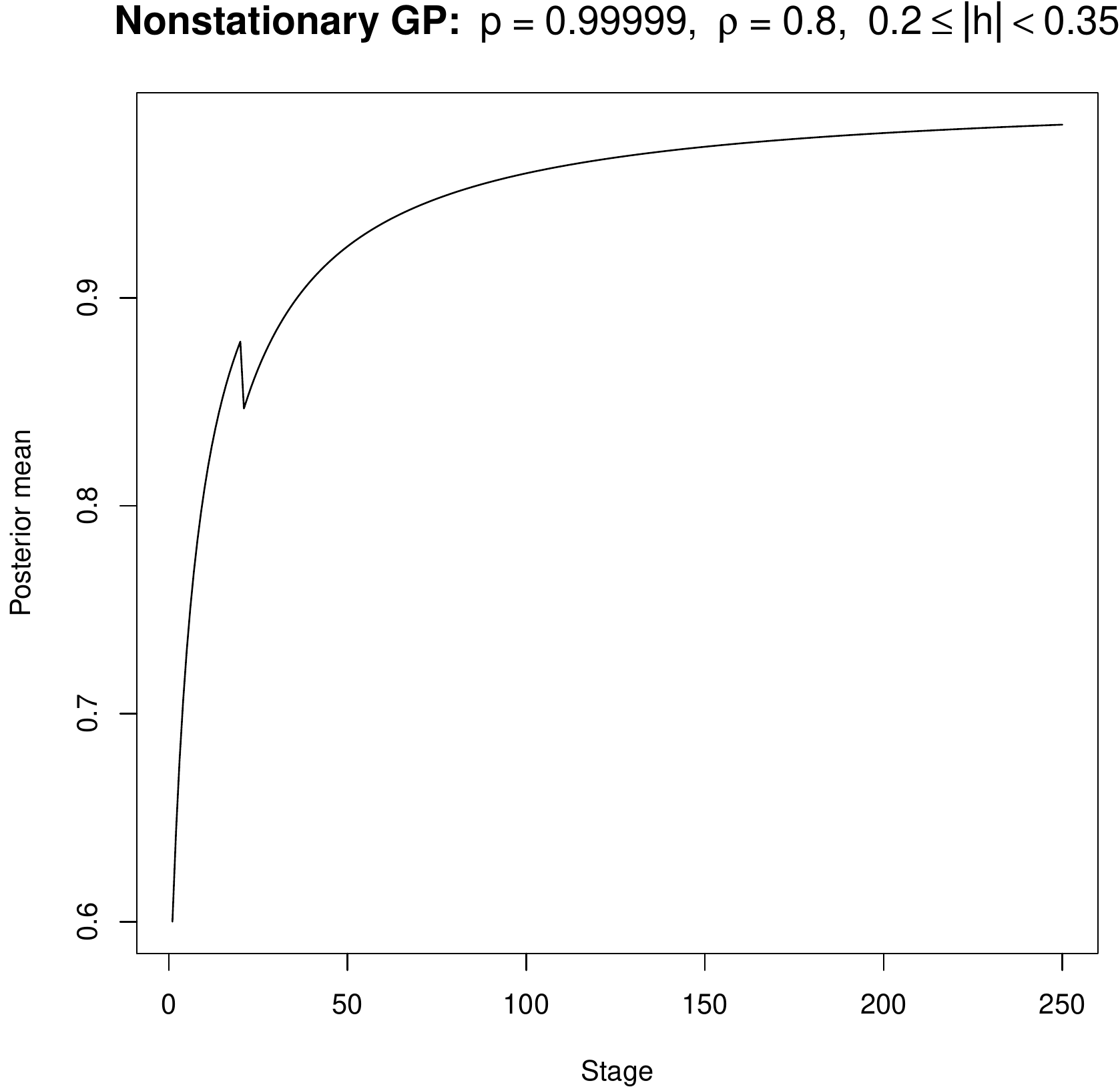}}\\
\caption{Detection of covariance nonstationarity in spatio-temporal data drawn from GP with covariance structure (\ref{eq:spacetime3}) with $p=0.99999$ and $\rho=0.8$.}
\label{fig:spacetime3}
\end{figure}

\subsection{Investigation of spatio-temporal stationarity with smaller sample size}
\label{subsec:spacetime_small_sample}
We now investigate stationarity of the above spatio-temporal models using much smaller sample sizes. In particular, we consider $50$ locations
and $20$ time points only, and $K=100$ clusters. We ensured at least $3$ data points in each cluster. 
Our strategy for choosing $\hat C_1$, detailed in Section \ref{subsec:C1_hat_spacetime}, gave
$\hat C_1=0.87$ for investigating strict stationarity. Again, $\hat C_1=0.5$ yielded the same conclusions.
Figure \ref{fig:spacetime1_short}, depicting the results of our analysis for $\hat C_1=0.87$, indicates correct decisions on strict stationarity and
nonstationarity in all the cases, even for such small data size.

However, validating covariance stationarity could not be achieved for such small samples, as we again ended up with the single interval $\mathcal N_{i,h_1,h_2}$
with $h_1=0$ and $h_2=0.2$.
\begin{figure}
\centering
\subfigure [Correct detection of stationarity.]{ \label{fig:stationary1_short}
\includegraphics[width=5.5cm,height=5.5cm]{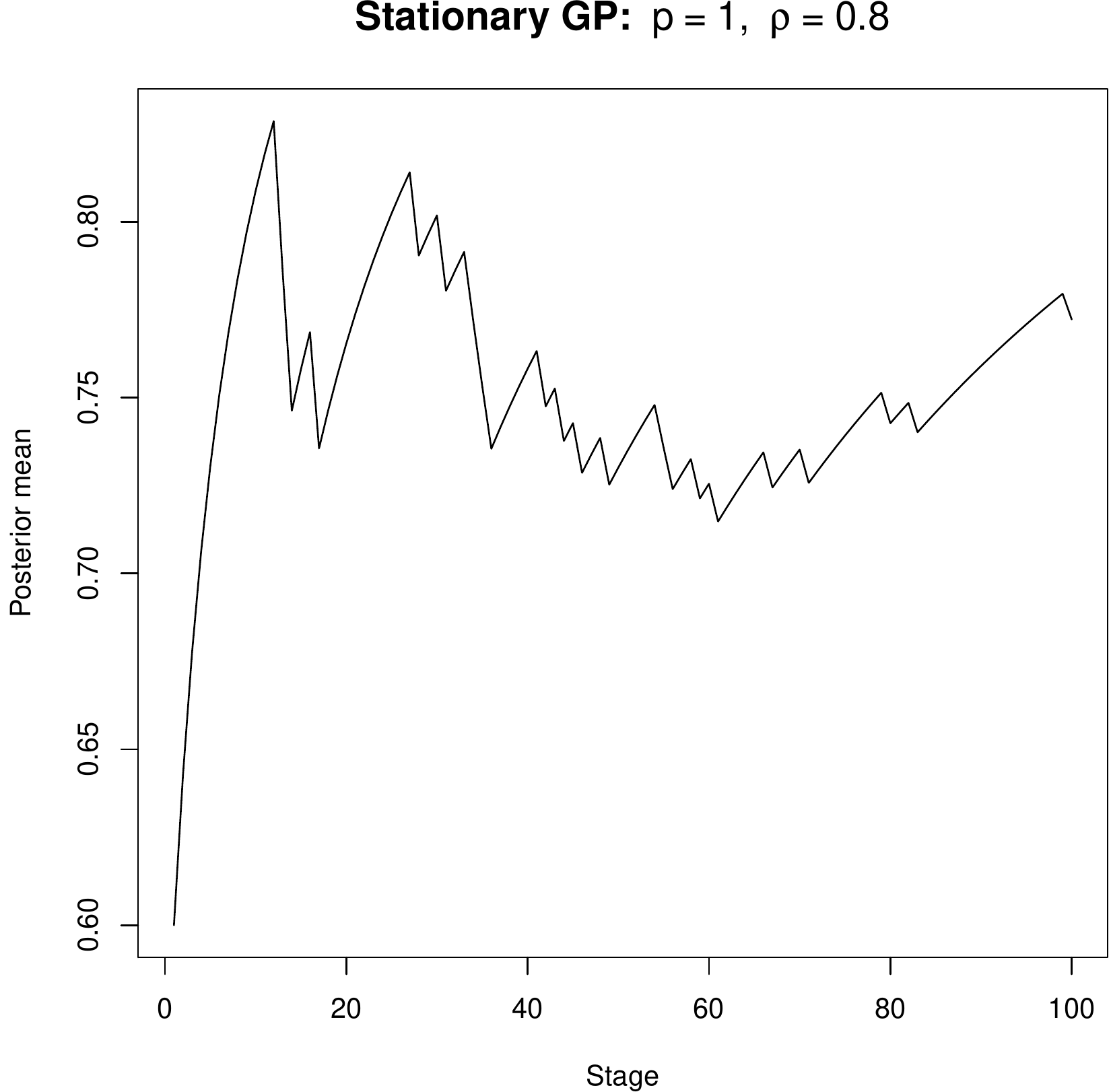}}
\hspace{2mm}
\subfigure [Correct detection of stationarity.]{ \label{fig:stationary2_short}
\includegraphics[width=5.5cm,height=5.5cm]{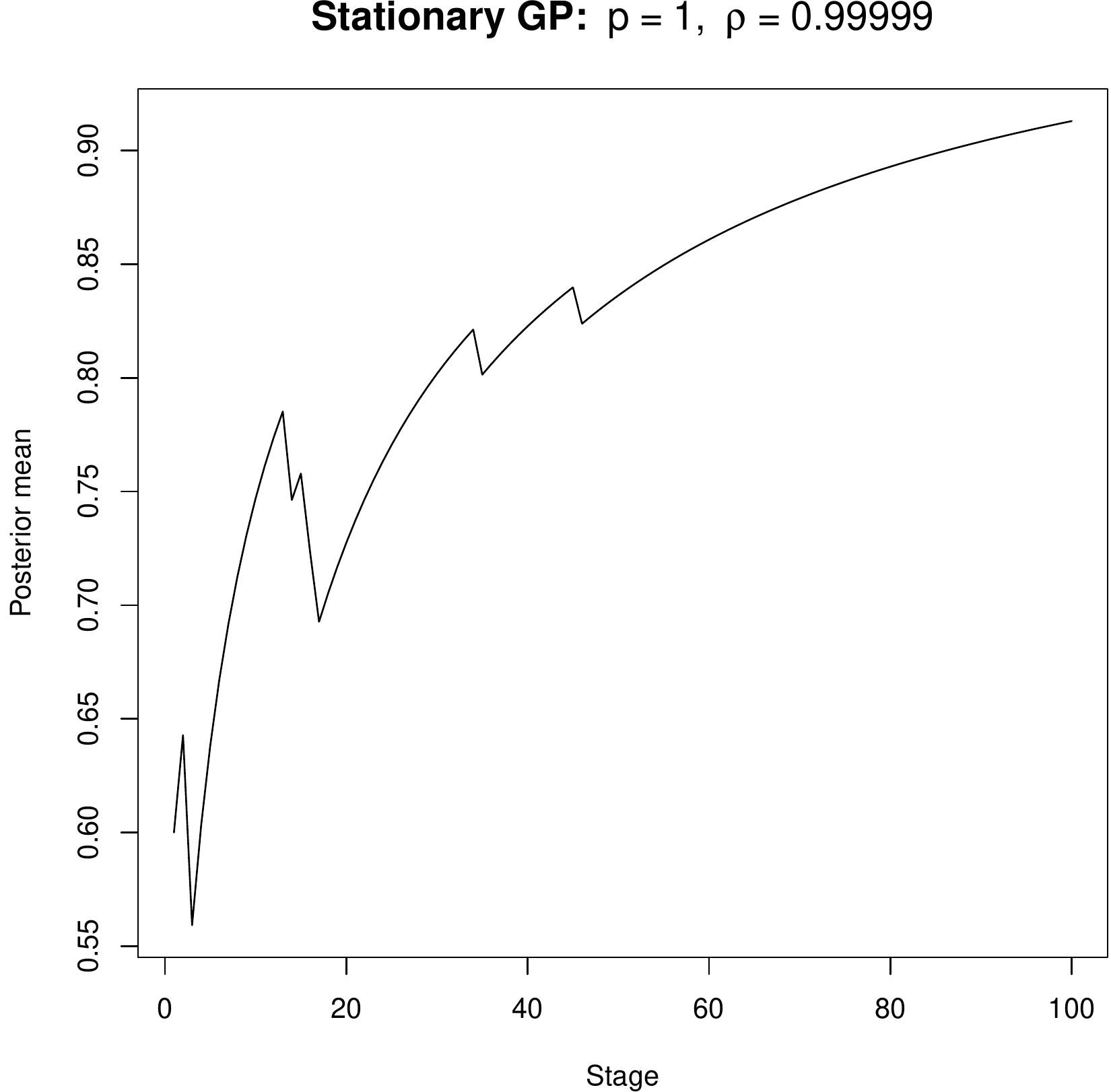}}\\
\vspace{2mm}
\subfigure [Correct detection of nonstationarity.]{ \label{fig:nonstationary1_short}
\includegraphics[width=5.5cm,height=5.5cm]{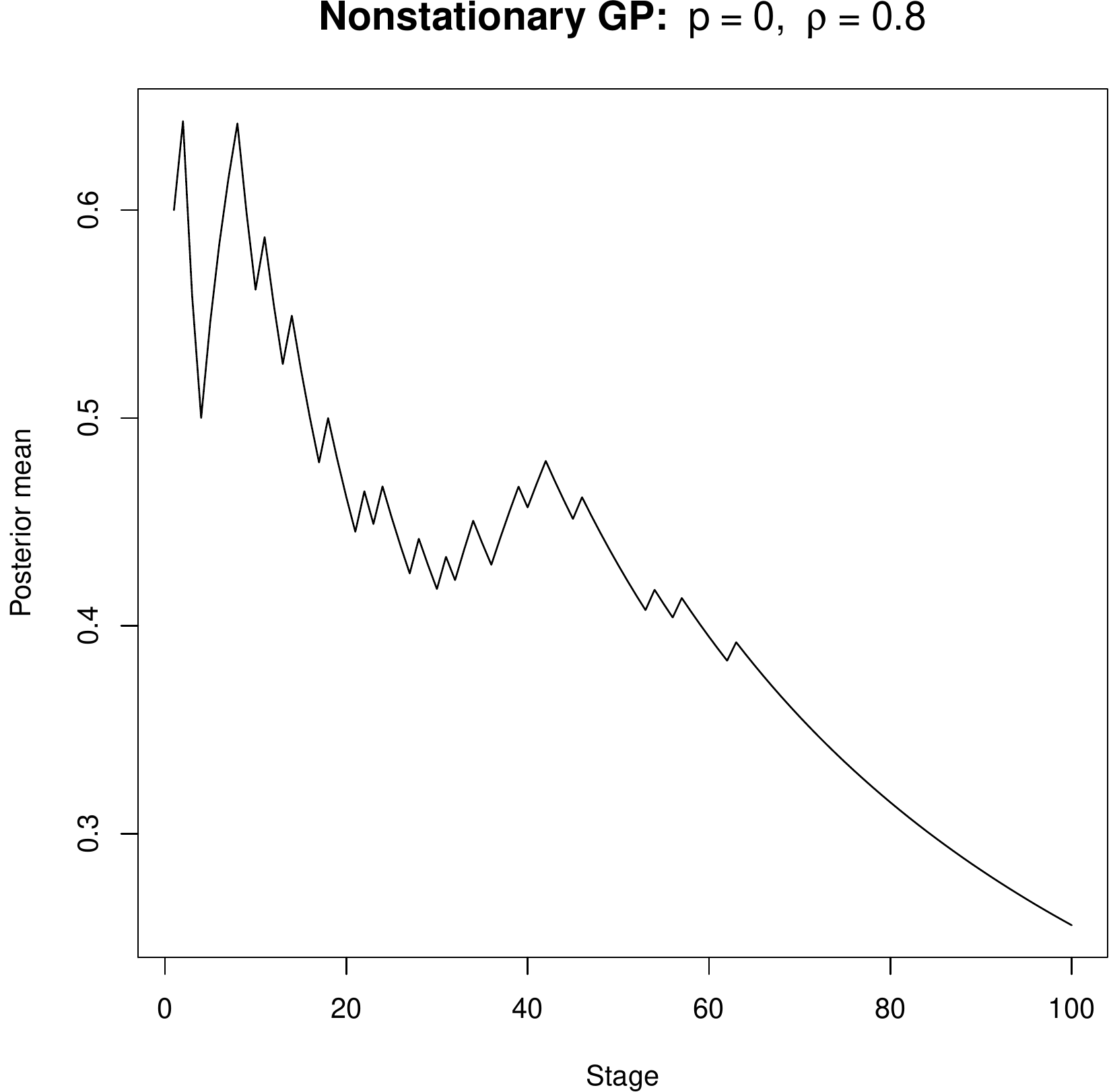}}
\hspace{2mm}
\subfigure [Correct detection of nonstationarity.]{ \label{fig:nonstationary2_short}
\includegraphics[width=5.5cm,height=5.5cm]{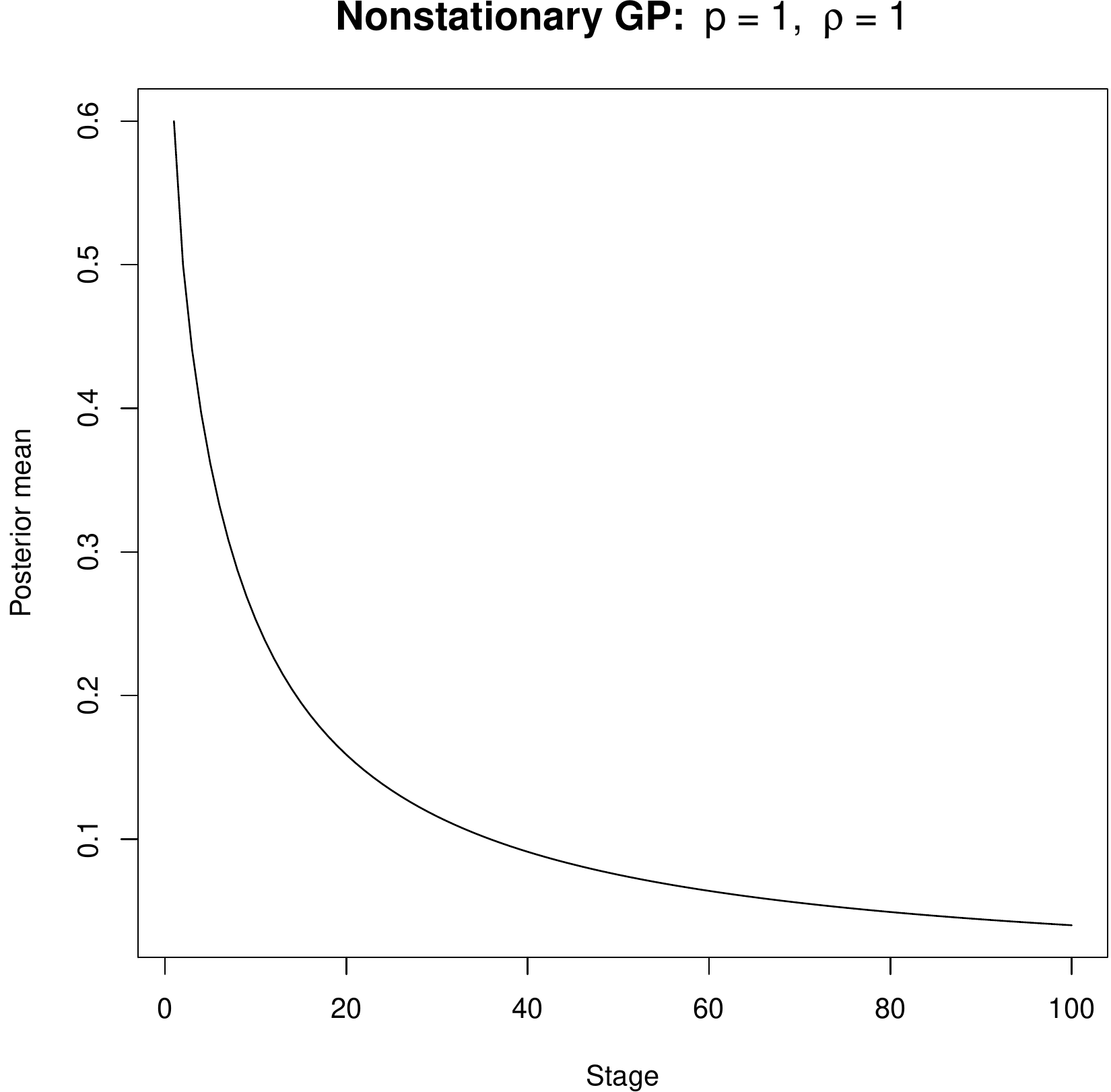}}\\
\vspace{2mm}
\subfigure [Correct detection of nonstationarity.]{ \label{fig:nonstationary3_short}
\includegraphics[width=5.5cm,height=5.5cm]{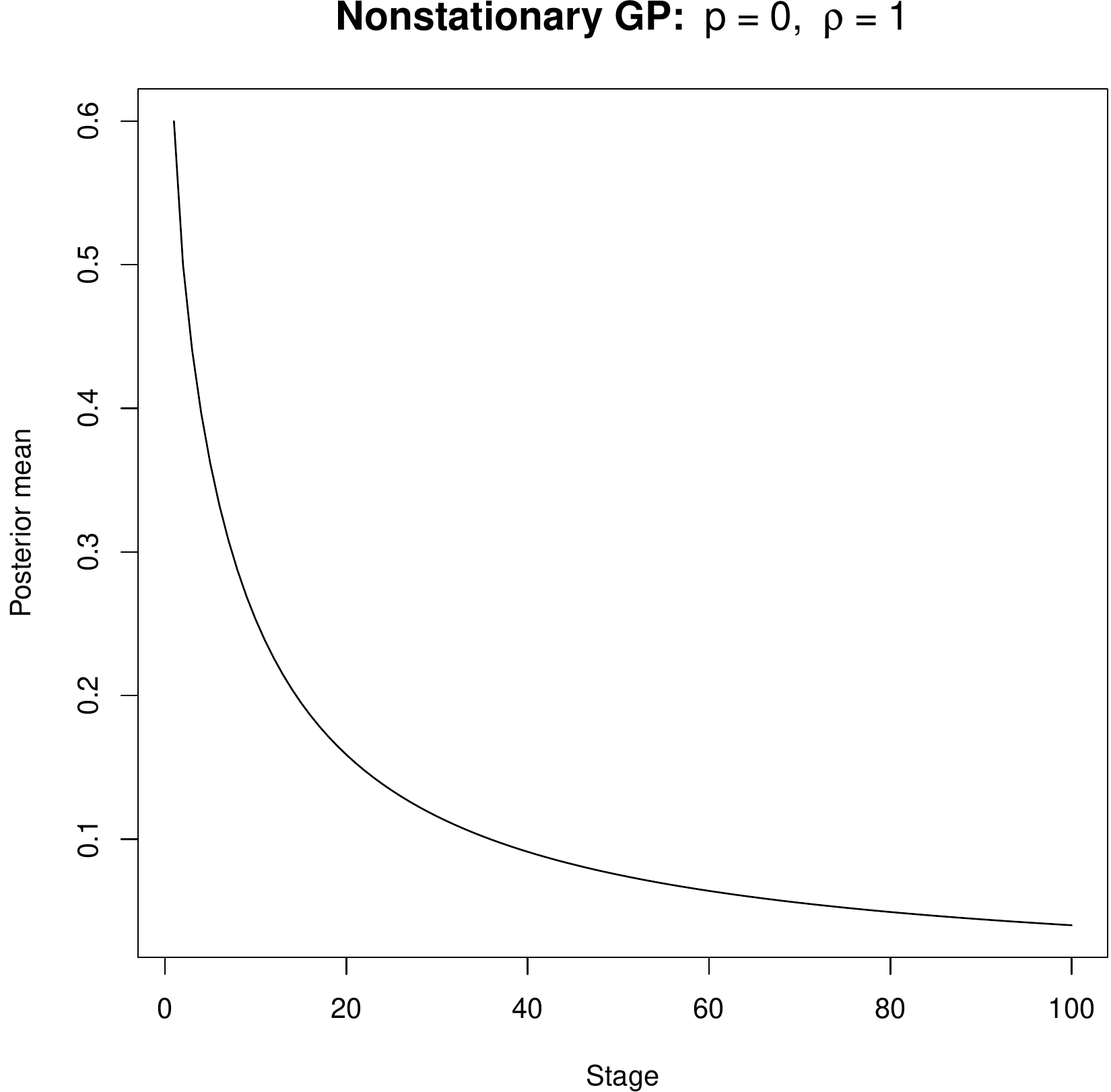}}\\
\caption{Detection of strong stationarity and nonstationarity in spatio-temporal data drawn from GPs with $50$ locations and $20$ time points.}
\label{fig:spacetime1_short}
\end{figure}

\subsection{Comparison with existing methods}
\label{subsec:comp_spacetime}
As in the spatial case, for the spatio-temporal setup, formal methods of testing stationarity are very rare in the literature.
Recently, some methods in this direction are proposed in \ctn{Soutir17b}. Indeed, the authors propose as many as $10$ test statistics to detect
covariance stationarity, under a variety of assumptions. The main ideas are similar to the testing ideas in the spatial setup proposed in \ctn{Soutir17}. 
A relevant $R$ code is provided in the webpage of the first author, but it failed to work for our simulated spatio-temporal datasets, possibly because the methods are
heavily dependent on choices of the underlying parameters involved in their methods. Instead, we apply our Bayesian methodology on the spatio-temporal
models and simulation designs to which \ctn{Soutir17b} applied their testing methods.

Following \ctn{Soutir17b}, we consider zero mean spatio-temporal processes, with $T=200$ time points and $m=100$ or $500$ locations drawn uniformly from
$\left[-\frac{\lambda}{2},\frac{\lambda}{2}\right]$. We then apply our Bayesian procedure to the $5$ spatio-temporal models considered by \ctn{Soutir17b},
under the same setups, described below.

\subsubsection{Simulations under stationarity with exponential spatial covariance function}
\label{subsubsec:spacetime_stationarity}
We generate data from the following stationary models:
\begin{itemize}
\item[(S1)] $X_{(s,t)}=0.5 X_{(s,t-1)}+\epsilon_{(s,t)}$, where $X_{s,0}=\bzero$ and $\epsilon_{(s,t)}$ are zero mean GPs independent over time
with spatial covariance structure 
\begin{equation}
Cov\left(\epsilon_{(s_1,t)},\epsilon_{(s_2,t)}\right)=\exp\left(-\|s_1-s_2\|/\psi\right).
\label{eq:expcov_spacetime}
\end{equation}
The above model defines a spatially and temporally stationary Gaussian random field.

\item[(S2)] $X_{(s,t)}=0.5 X_{(s,t-1)}+0.4 X_{(s,t-1)}\epsilon_{(s,t-1)}+\epsilon_{(s,t)}$, where $X_{s,0}=\bzero$ and $\epsilon_{(s,t)}$ are zero 
mean GPs independent over time with spatial covariance (\ref{eq:expcov_spacetime}). This model is a spatially and temporally non-Gaussian random field.
\end{itemize}
For both the above models, we set $\lambda=5$ for simulating the locations, and fix $\psi=0.5$ and $1$ for two sets of data simulations for each of $(m=100,T=200)$
and ($m=500,T=200$) sample sizes.

For checking strict stationarity, for sample size $(m=100,T=200)$, our strategy for choosing $\hat C_1$, detailed in Section \ref{subsec:C1_hat_spacetime}, 
gave $\hat C_1=0.042$, and for ($m=500,T=200$), we obtained $\hat C_1=0.045$.
As before, we consider $K=250$ clusters in both the cases. 

For covariance stationarity, we obtained $\hat C_1=0.4$ for both $(m=100,T=200)$ and 
$(m=500,T=200)$.
For the first sample size, we obtained $\mathcal N_{i,h_j,h_{j+1}}$ defined by $h_1 = 0$, $h_2 = 0.4$, $h_3 = 0.7$, $h_4=0.9$, $h_5=2$, $h_6=3$.
For the second sample size, we also obtained $h_7=4$ for model $S1$ when $\psi=5$ and for model $S2$ when $\psi=1$ and $\psi=5$.

For brevity we show the strict and weak stationarity convergence results only for $(m=100,T=200)$, with $\psi=1$, depicted as Figures \ref{fig:spacetime_soutir},
\ref{fig:spacetime_cov_soutir1} and \ref{fig:spacetime_cov_soutir2}.

\begin{figure}
\centering
\subfigure [Correct detection of stationarity.]{ \label{fig:stationary1_soutir}
\includegraphics[width=5.5cm,height=5.5cm]{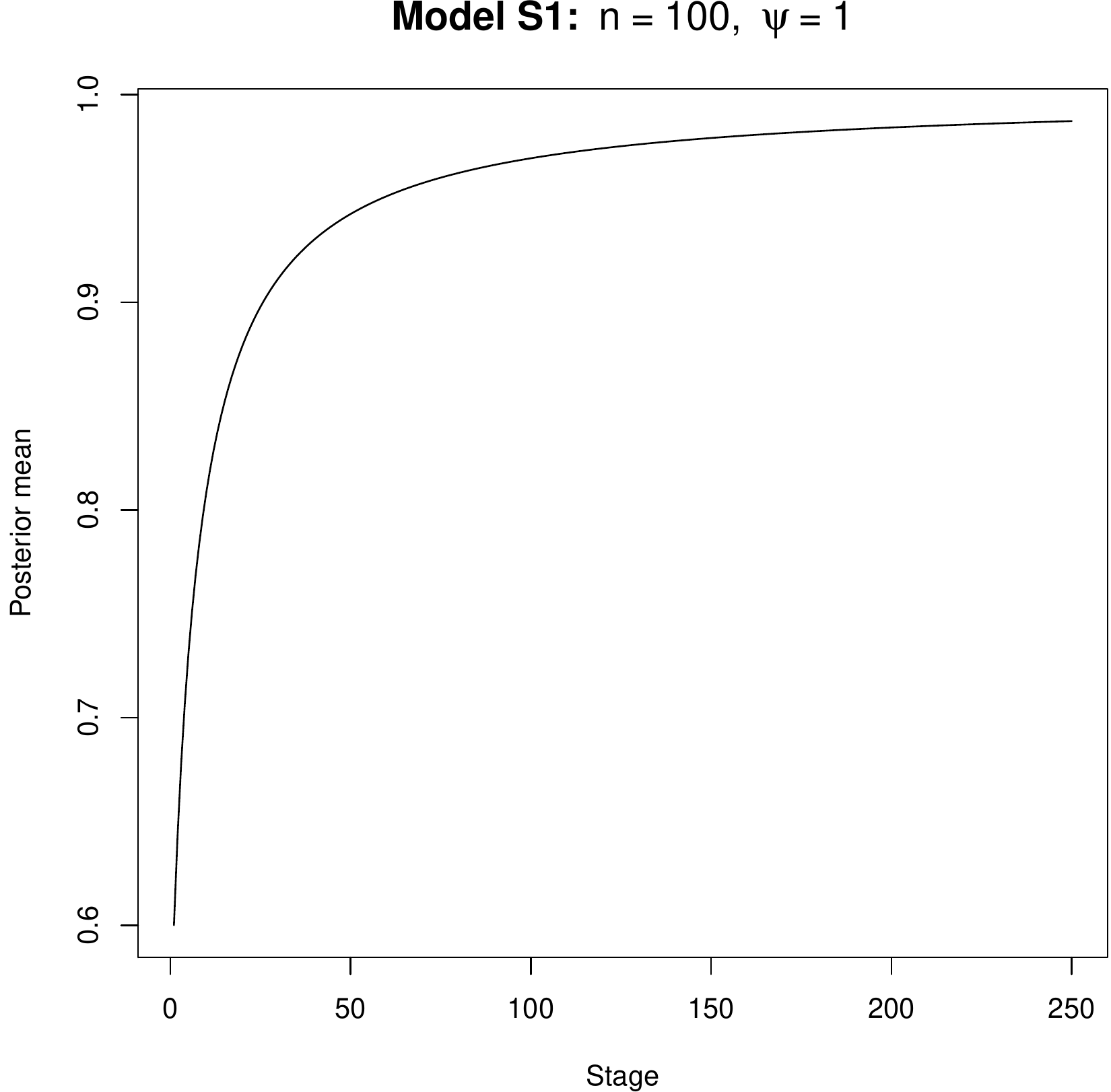}}
\hspace{2mm}
\subfigure [Correct detection of stationarity.]{ \label{fig:stationary2_soutir}
\includegraphics[width=5.5cm,height=5.5cm]{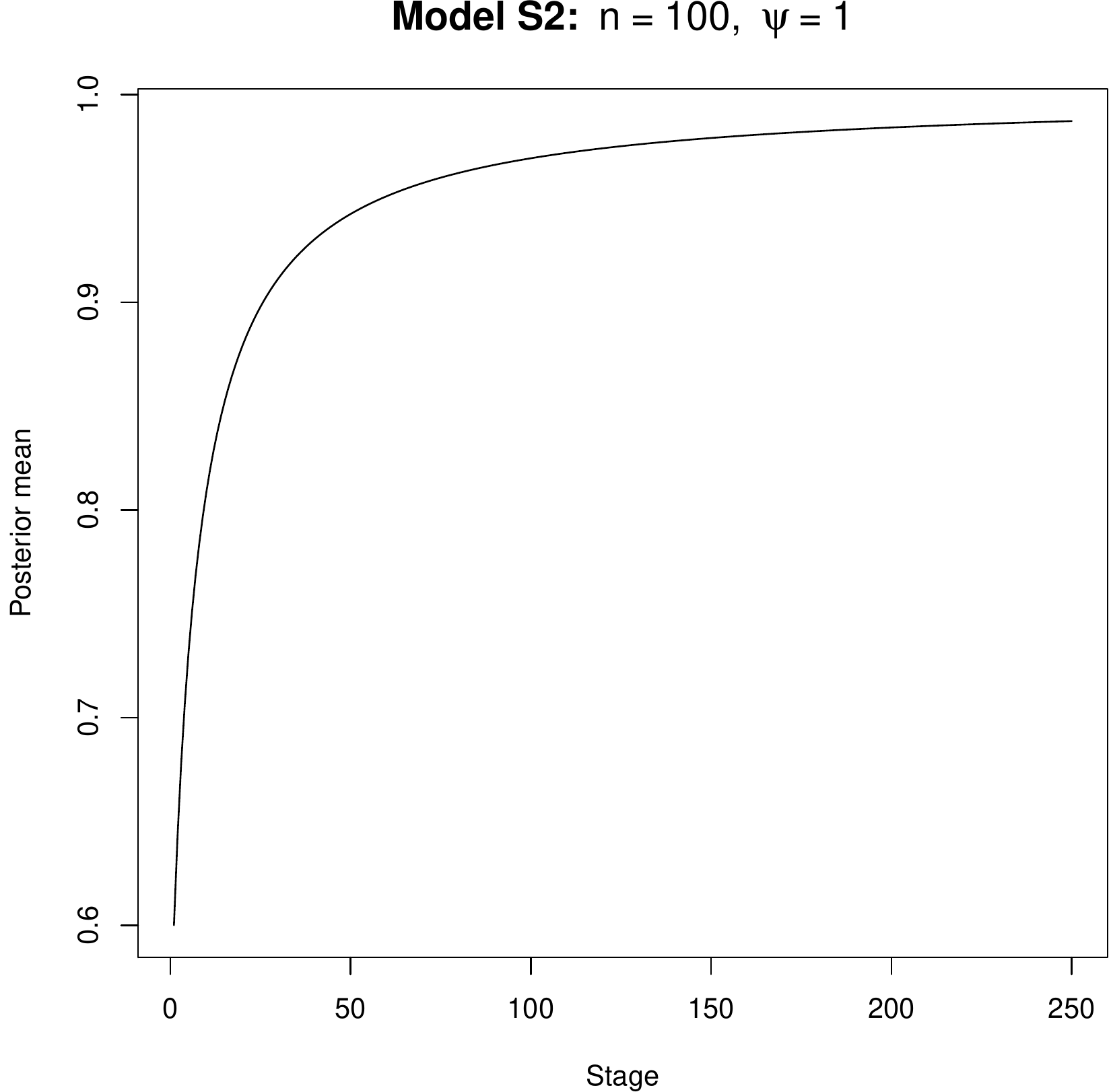}}\\
\caption{Detection of strong stationarity in spatio-temporal data drawn from models $S1$ and $S2$ 
with sample size $100$ locations and $200$ time points, with $\psi=1$ and $\lambda=5$.}
\label{fig:spacetime_soutir}
\end{figure}

\begin{figure}
\centering
\subfigure [$0\leq\|h\|<0.4$]{ \label{fig:spacetime_covns1_soutir1}
\includegraphics[width=5.5cm,height=5.5cm]{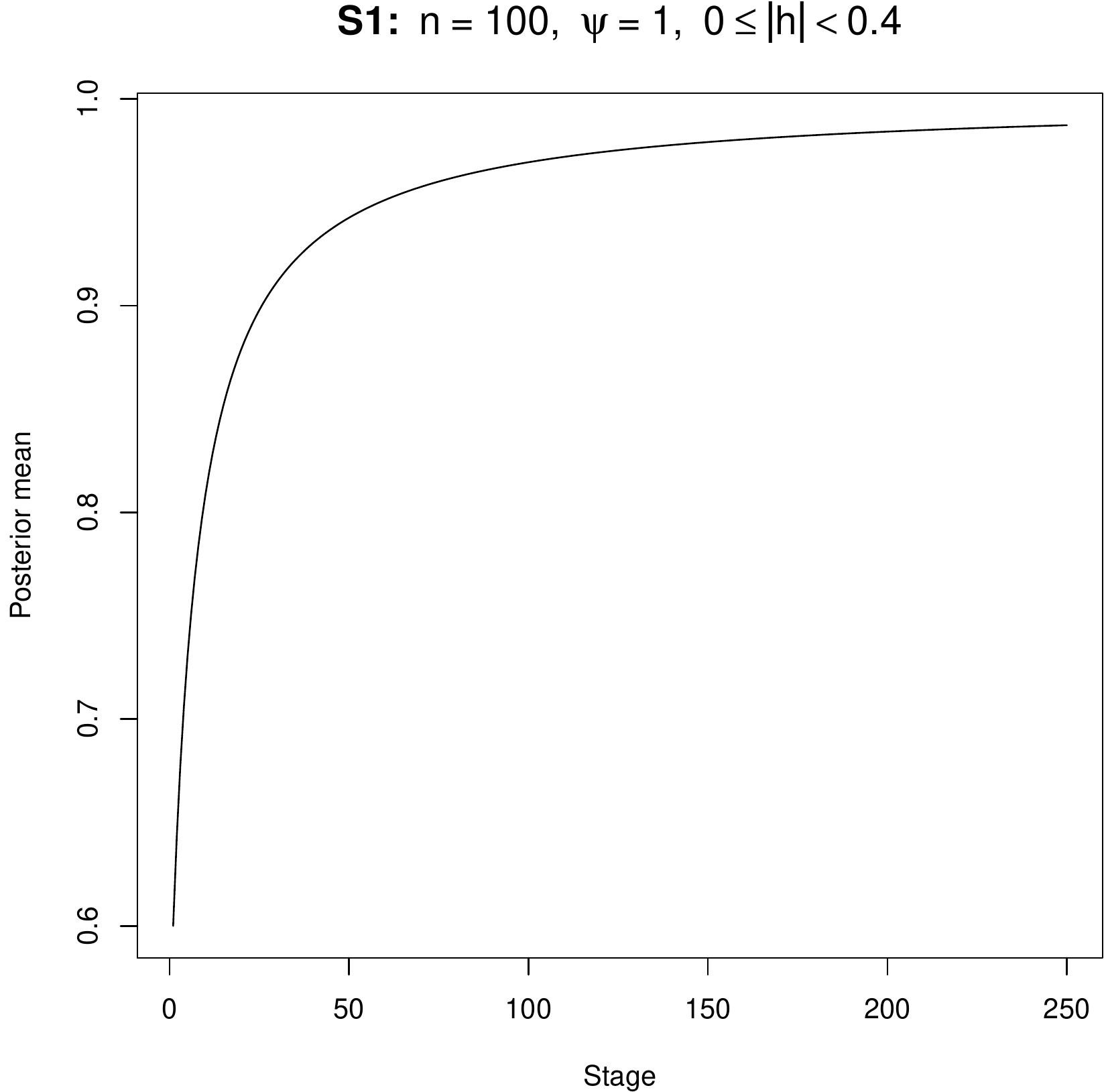}}
\hspace{2mm}
\subfigure [$0.4\leq\|h\|<0.7$.]{ \label{fig:spacetime_covns2_soutir1}
\includegraphics[width=5.5cm,height=5.5cm]{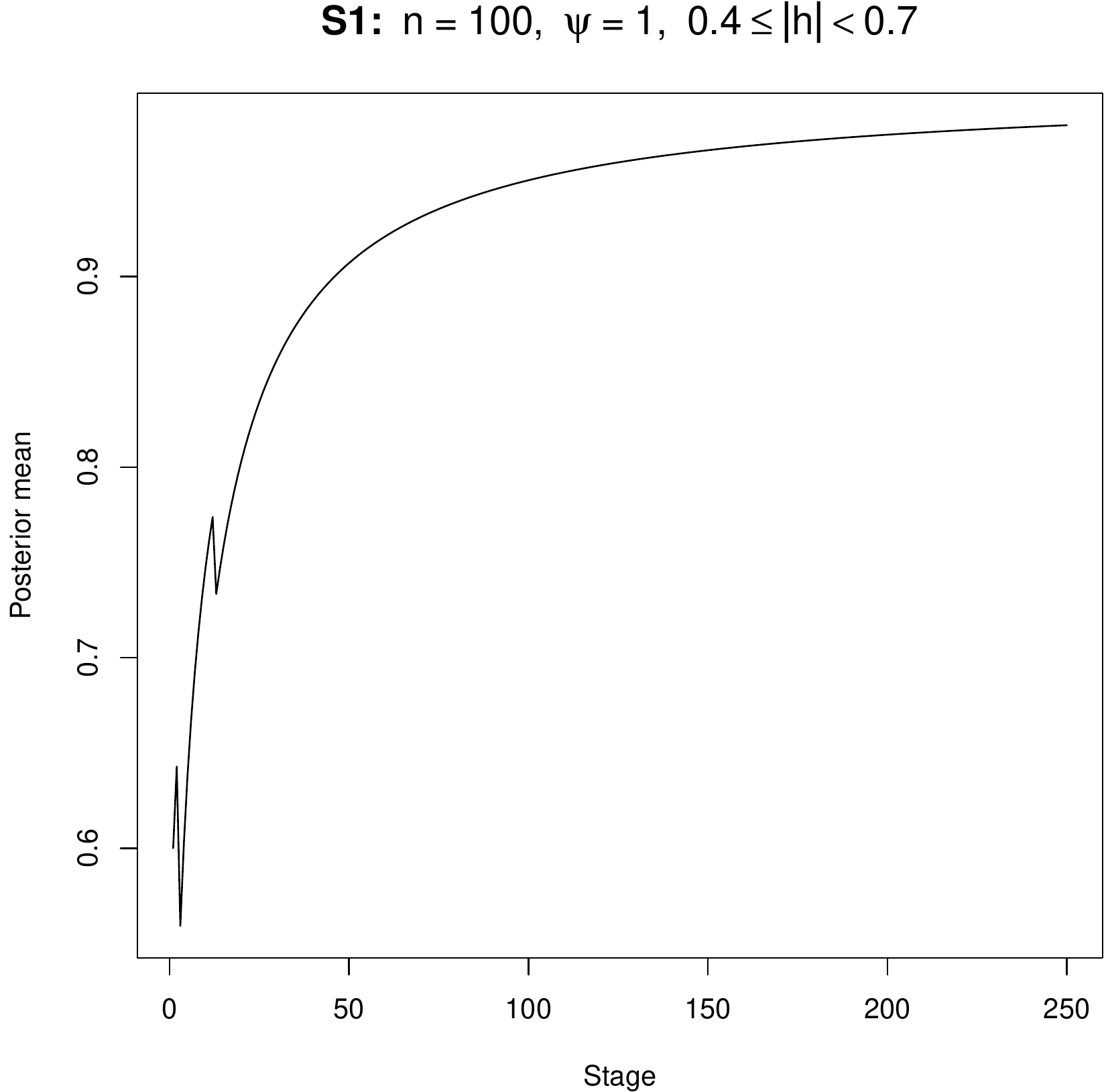}}\\
\vspace{2mm}
\subfigure [$0.7\leq\|h\|<0.9$]{ \label{fig:spacetime_covns3_soutir1}
\includegraphics[width=5.5cm,height=5.5cm]{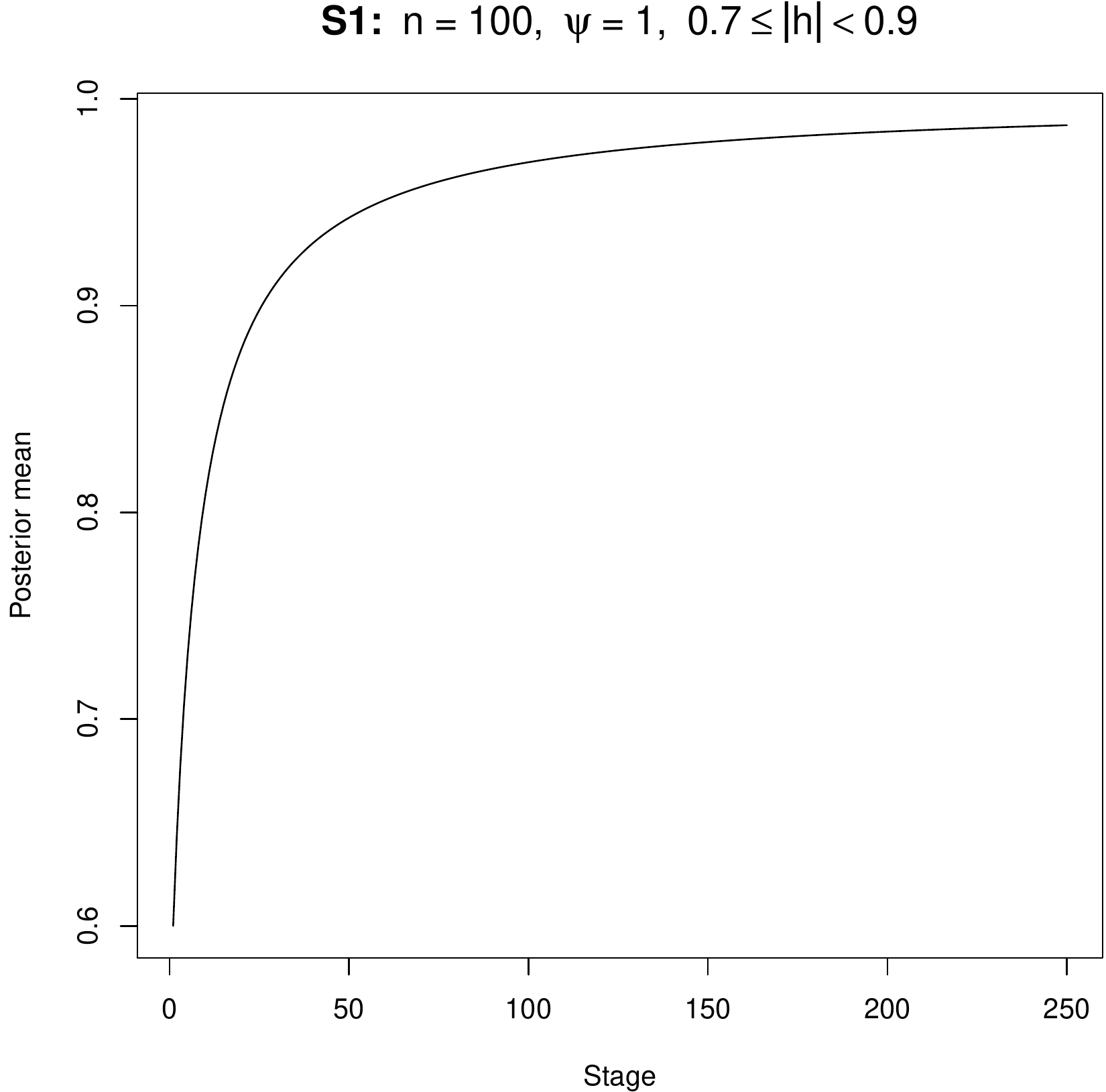}}
\hspace{2mm}
\subfigure [$0.9\leq\|h\|<2$.]{ \label{fig:spacetime_covns4_soutir1}
\includegraphics[width=5.5cm,height=5.5cm]{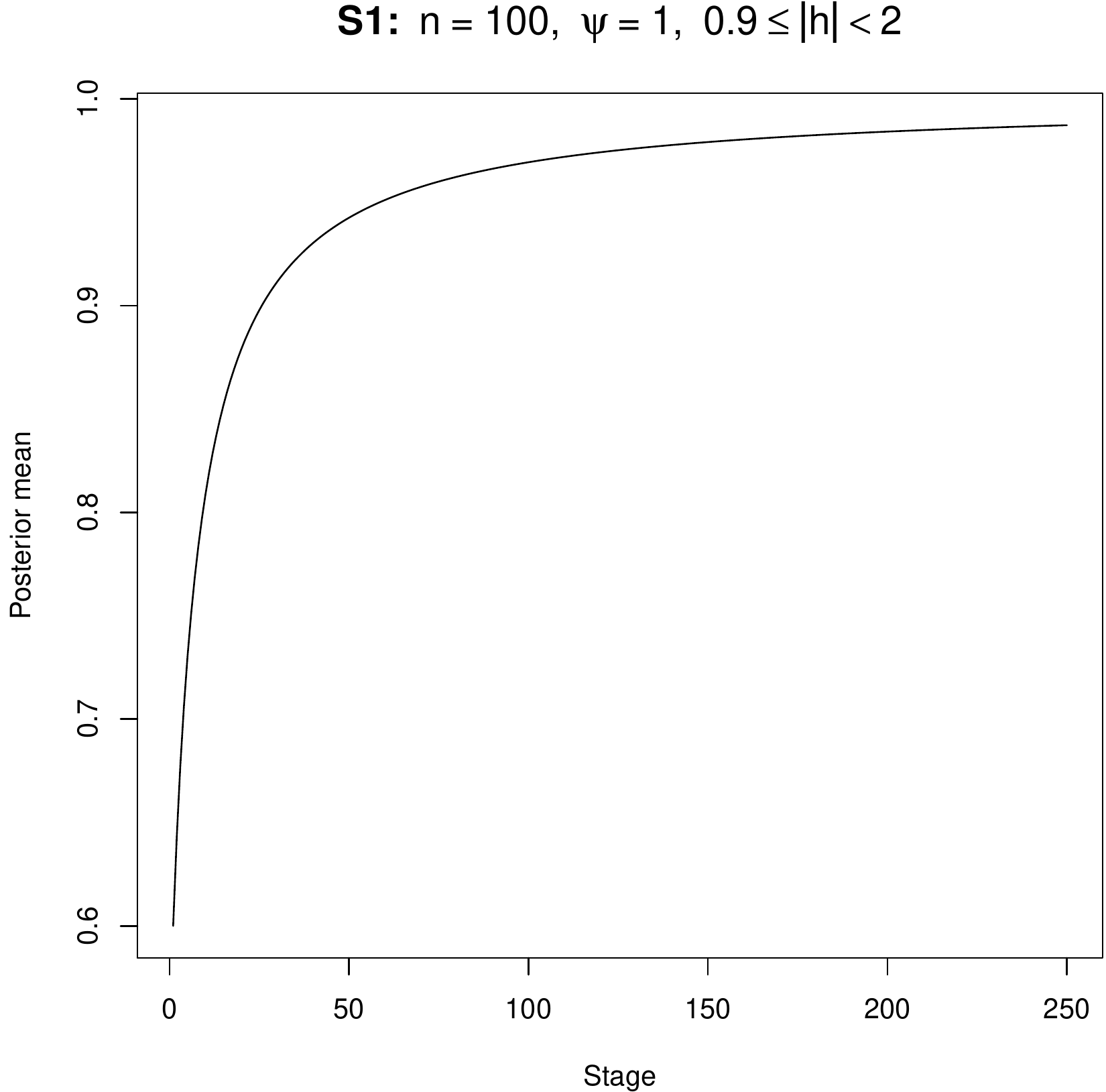}}\\
\vspace{2mm}
\subfigure [$2\leq\|h\|<3$.]{ \label{fig:spacetime_covns5_soutir1}
\includegraphics[width=5.5cm,height=5.5cm]{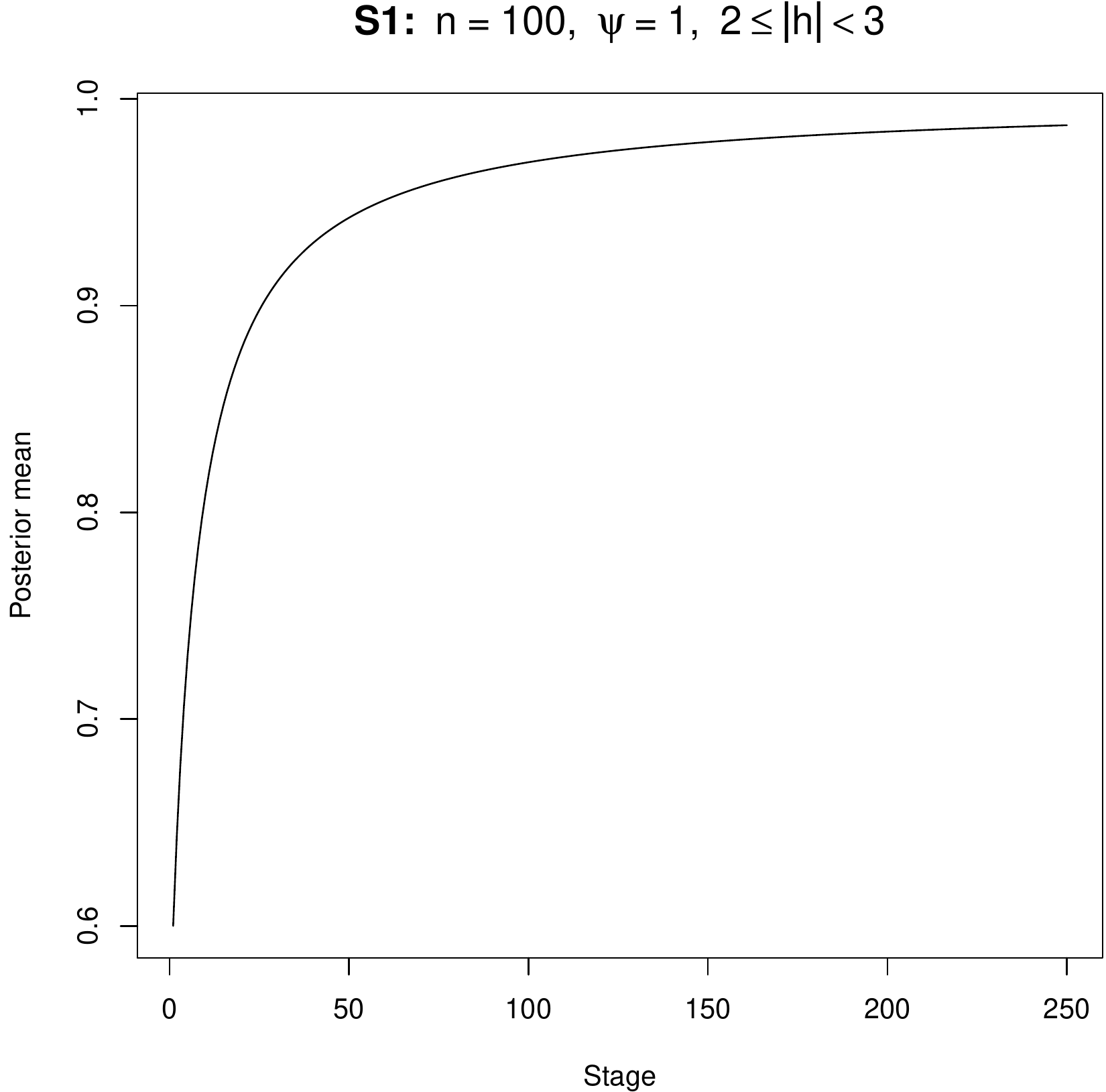}}\\
\caption{Detection of covariance stationarity in spatio-temporal data drawn from model $S1$ with sample size $100$ locations and $200$ time points, 
with $\psi=1$ and $\lambda=5$.}
\label{fig:spacetime_cov_soutir1}
\end{figure}

\begin{figure}
\centering
\subfigure [$0\leq\|h\|<0.4$]{ \label{fig:spacetime_covns1_soutir2}
\includegraphics[width=5.5cm,height=5.5cm]{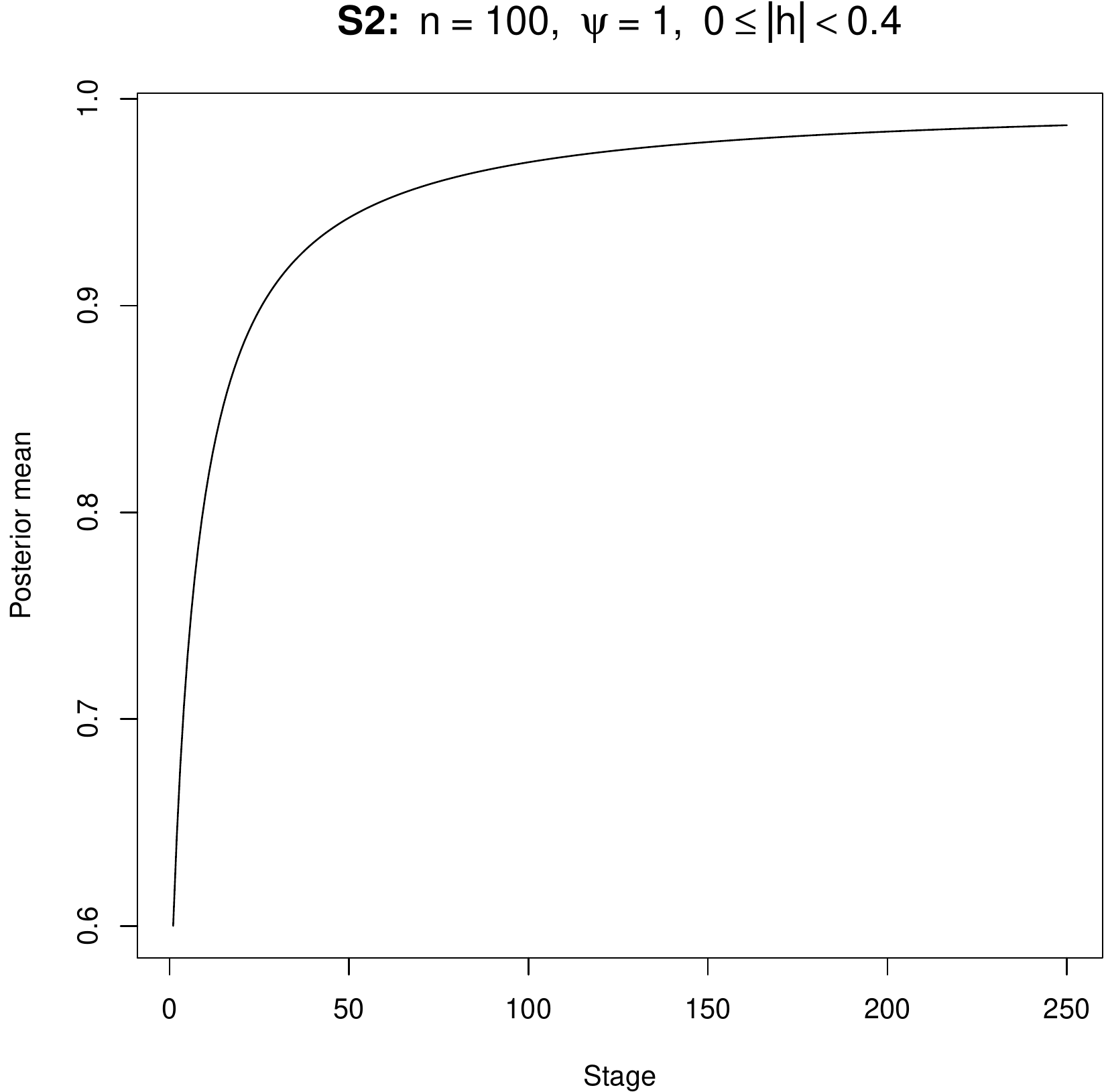}}
\hspace{2mm}
\subfigure [$0.4\leq\|h\|<0.7$.]{ \label{fig:spacetime_covns2_soutir2}
\includegraphics[width=5.5cm,height=5.5cm]{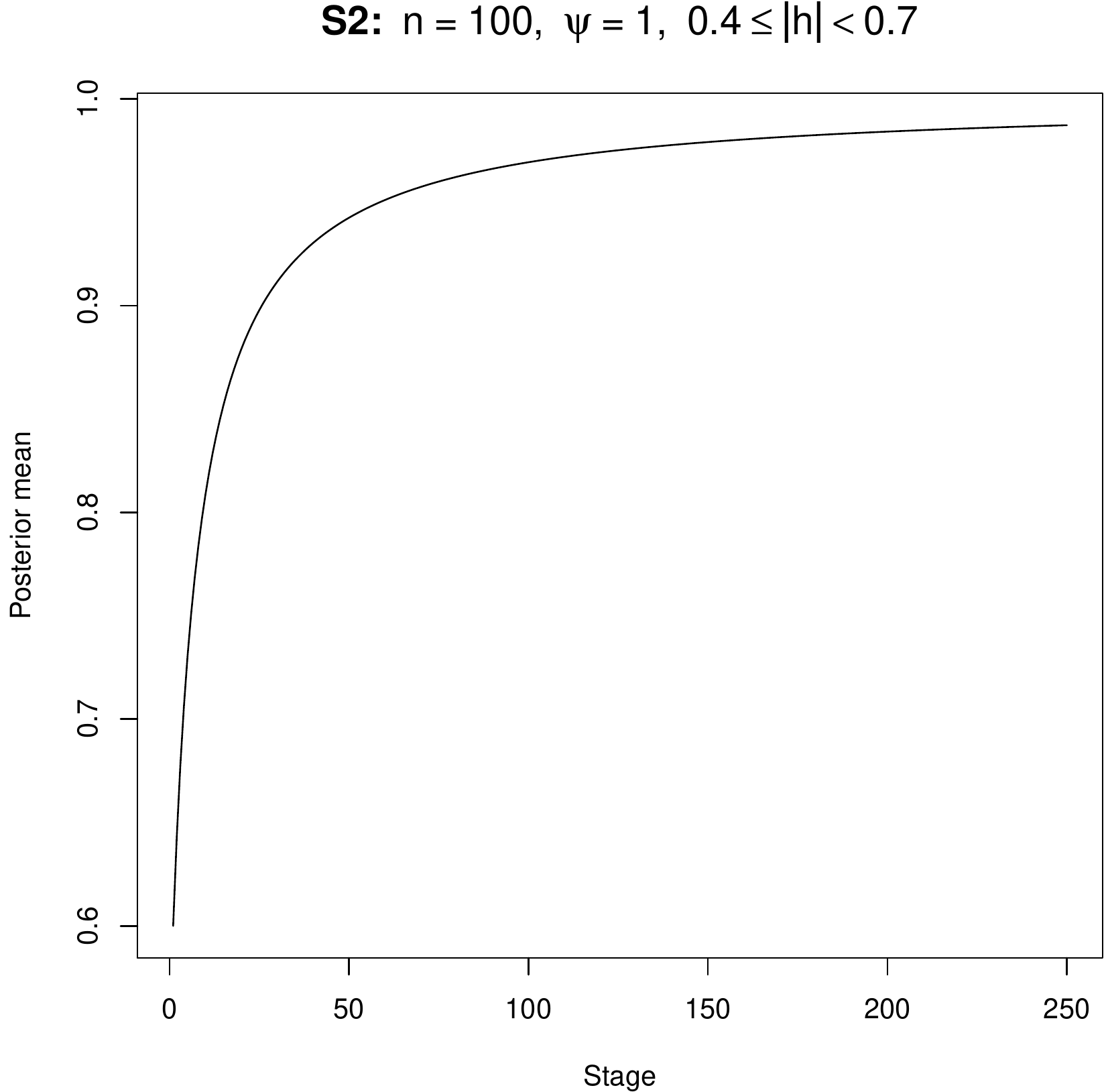}}\\
\vspace{2mm}
\subfigure [$0.7\leq\|h\|<0.9$]{ \label{fig:spacetime_covns3_soutir2}
\includegraphics[width=5.5cm,height=5.5cm]{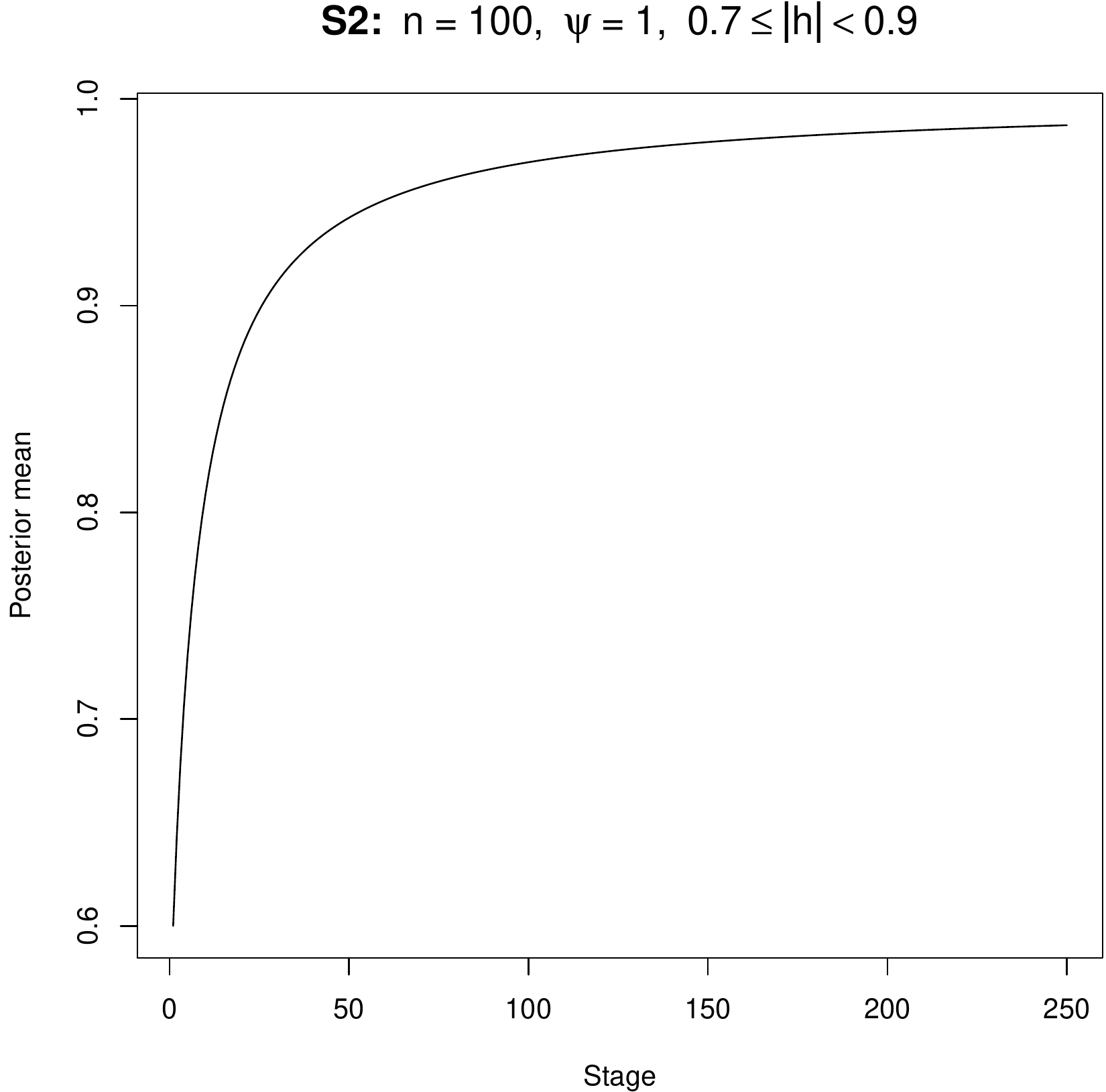}}
\hspace{2mm}
\subfigure [$0.9\leq\|h\|<2$.]{ \label{fig:spacetime_covns4_soutir2}
\includegraphics[width=5.5cm,height=5.5cm]{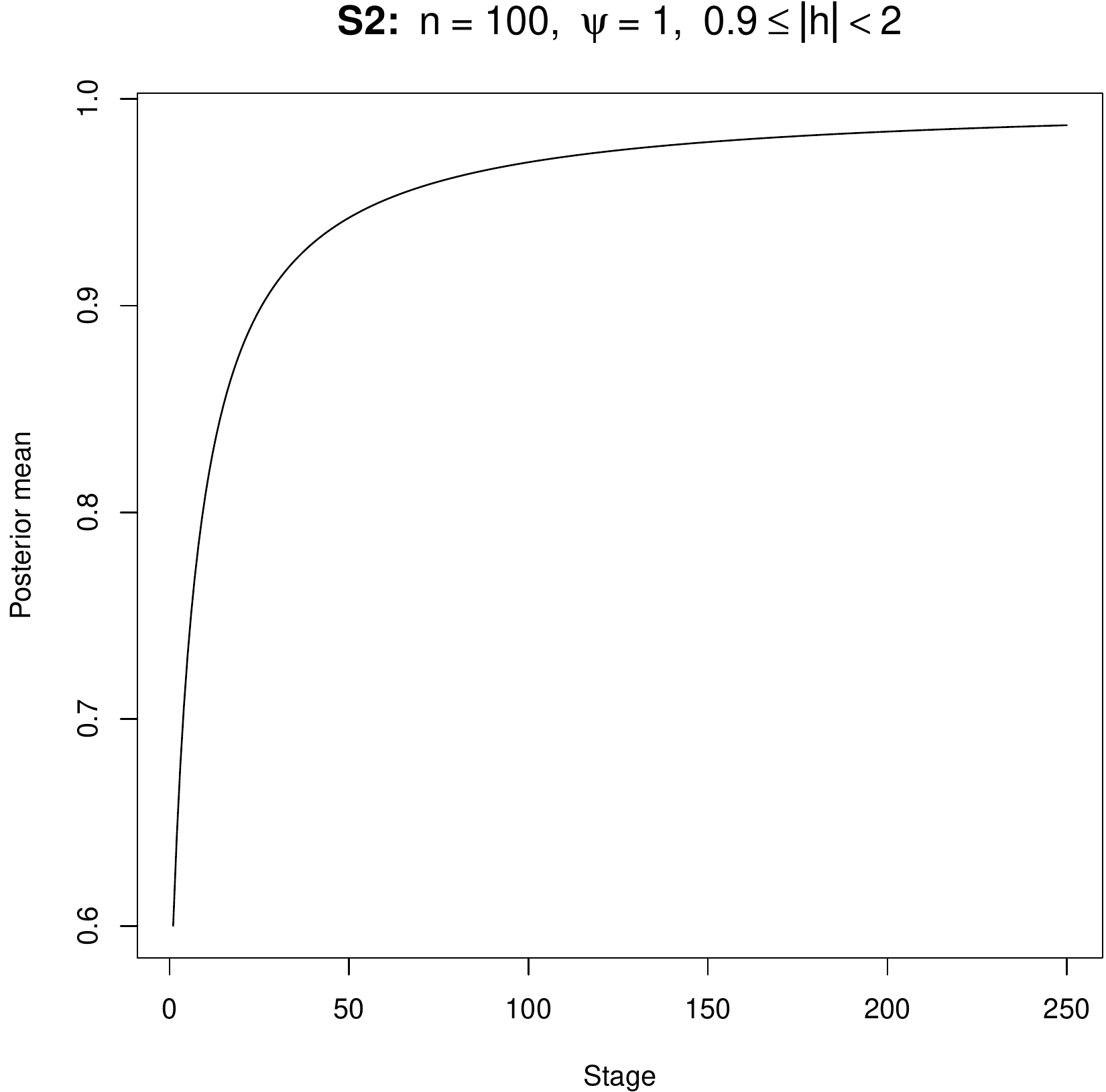}}\\
\vspace{2mm}
\subfigure [$2\leq\|h\|<3$.]{ \label{fig:spacetime_covns5_soutir2}
\includegraphics[width=5.5cm,height=5.5cm]{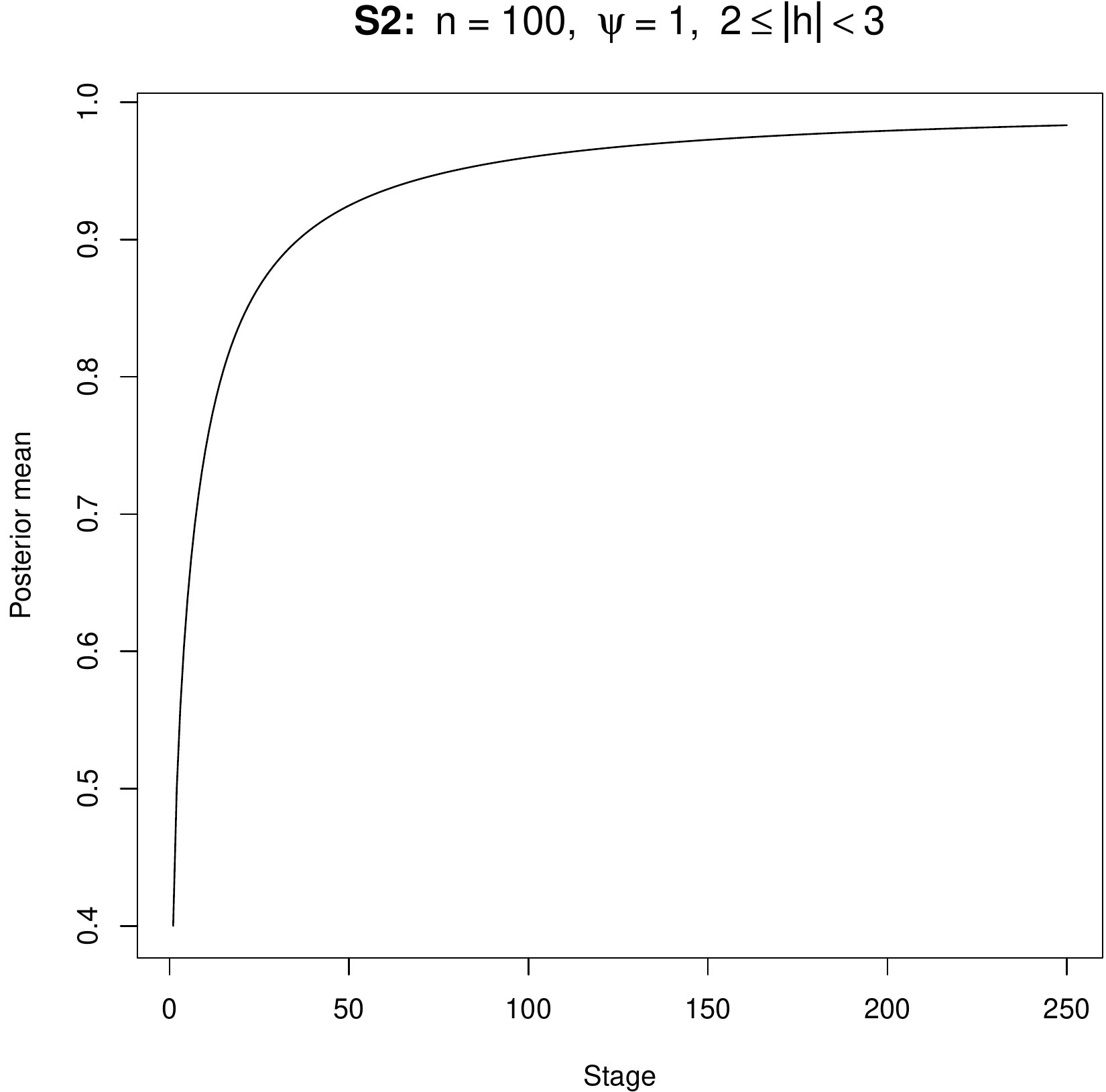}}\\
\caption{Detection of covariance stationarity in spatio-temporal data drawn from model $S2$ with sample size $100$ locations and $200$ time points, 
with $\psi=1$ and $\lambda=5$.}
\label{fig:spacetime_cov_soutir2}
\end{figure}

\subsubsection{Simulations under stationarity with Whittle spatial covariance function}
\label{subsubsec:spacetime_stationarity_whittle}
Following \ctn{Soutir17b} we now repeat the above experiments with the same models $S1$ and $S2$ but with the exponential covariance functions 
replaced with the Whittle covariance function (\ref{eq:spatial_bound}), with $\psi=0.37$ and $0.72$.  
Note that the values of $\hat C_1$ remain the same as before; however, the minimum values of $\hat C_1$ for which covariance stationarities were achieved, varied between
$0.15$, $0.2$ and $0.3$.

As expected, we obtained excellent results in all the cases, but present the results corresponding to $(m=100,T=200)$ and $\psi=0.72$ for brevity.
Figures \ref{fig:spacetime_soutir_w}, \ref{fig:spacetime_cov_soutir1_w} and \ref{fig:spacetime_cov_soutir2_w} depict our Bayesian results regarding
strict and weak stationarities of the models $S1$ and $S2$.

\begin{figure}
\centering
\subfigure [Correct detection of stationarity.]{ \label{fig:stationary1_soutir_w}
\includegraphics[width=5.5cm,height=5.5cm]{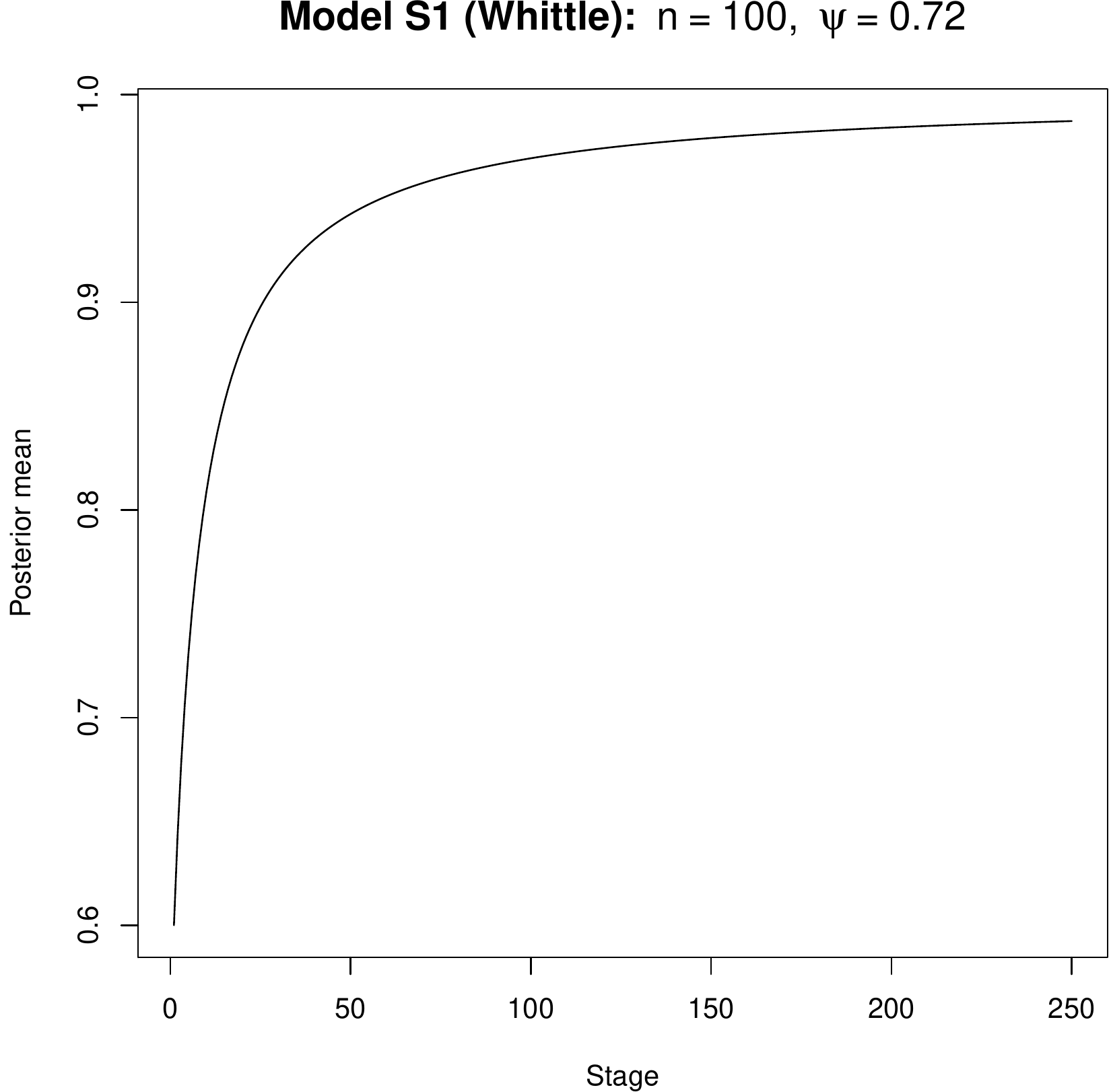}}
\hspace{2mm}
\subfigure [Correct detection of stationarity.]{ \label{fig:stationary2_soutir_w}
\includegraphics[width=5.5cm,height=5.5cm]{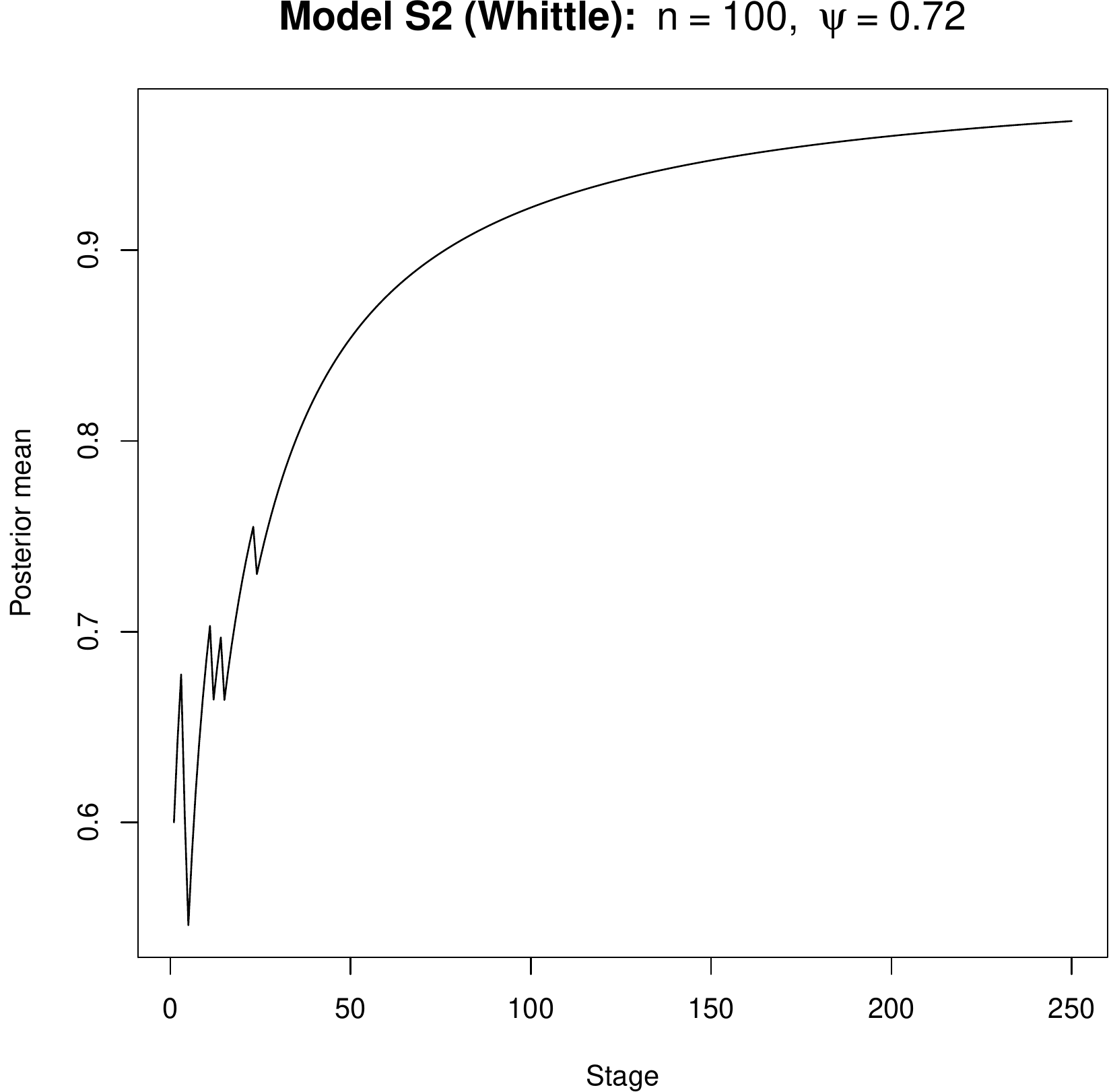}}\\
\caption{Detection of strong stationarity in spatio-temporal data drawn from models $S1$ and $S2$ 
with sample size $100$ locations and $200$ time points, corresponding to Whittle spatial covariance with $\psi=0.72$ and $\lambda=5$.}
\label{fig:spacetime_soutir_w}
\end{figure}

\begin{figure}
\centering
\subfigure [$0\leq\|h\|<0.4$]{ \label{fig:spacetime_covns1_soutir1_w}
\includegraphics[width=5.5cm,height=5.5cm]{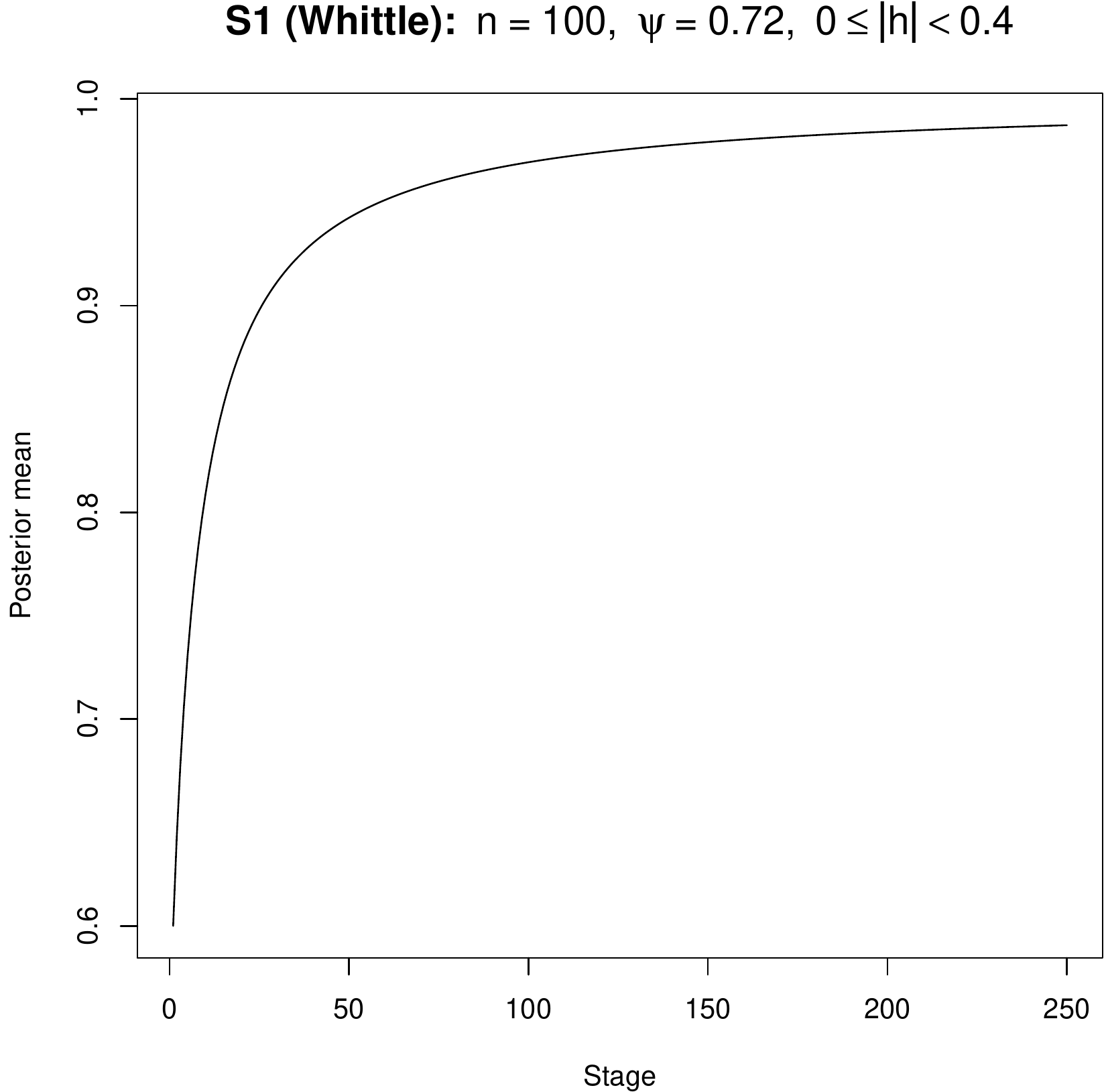}}
\hspace{2mm}
\subfigure [$0.4\leq\|h\|<0.7$.]{ \label{fig:spacetime_covns2_soutir1_w}
\includegraphics[width=5.5cm,height=5.5cm]{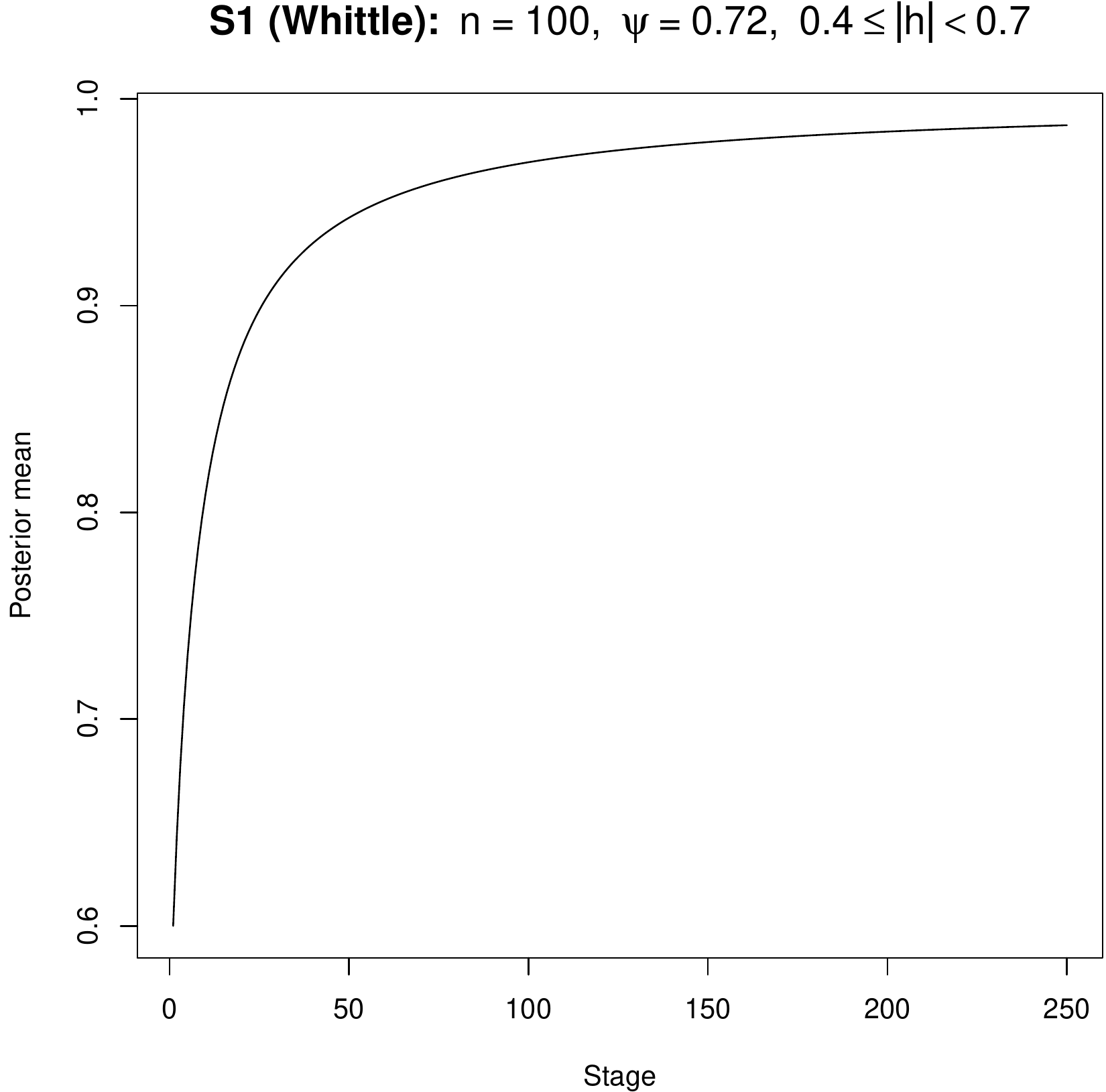}}\\
\vspace{2mm}
\subfigure [$0.7\leq\|h\|<0.9$]{ \label{fig:spacetime_covns3_soutir1_w}
\includegraphics[width=5.5cm,height=5.5cm]{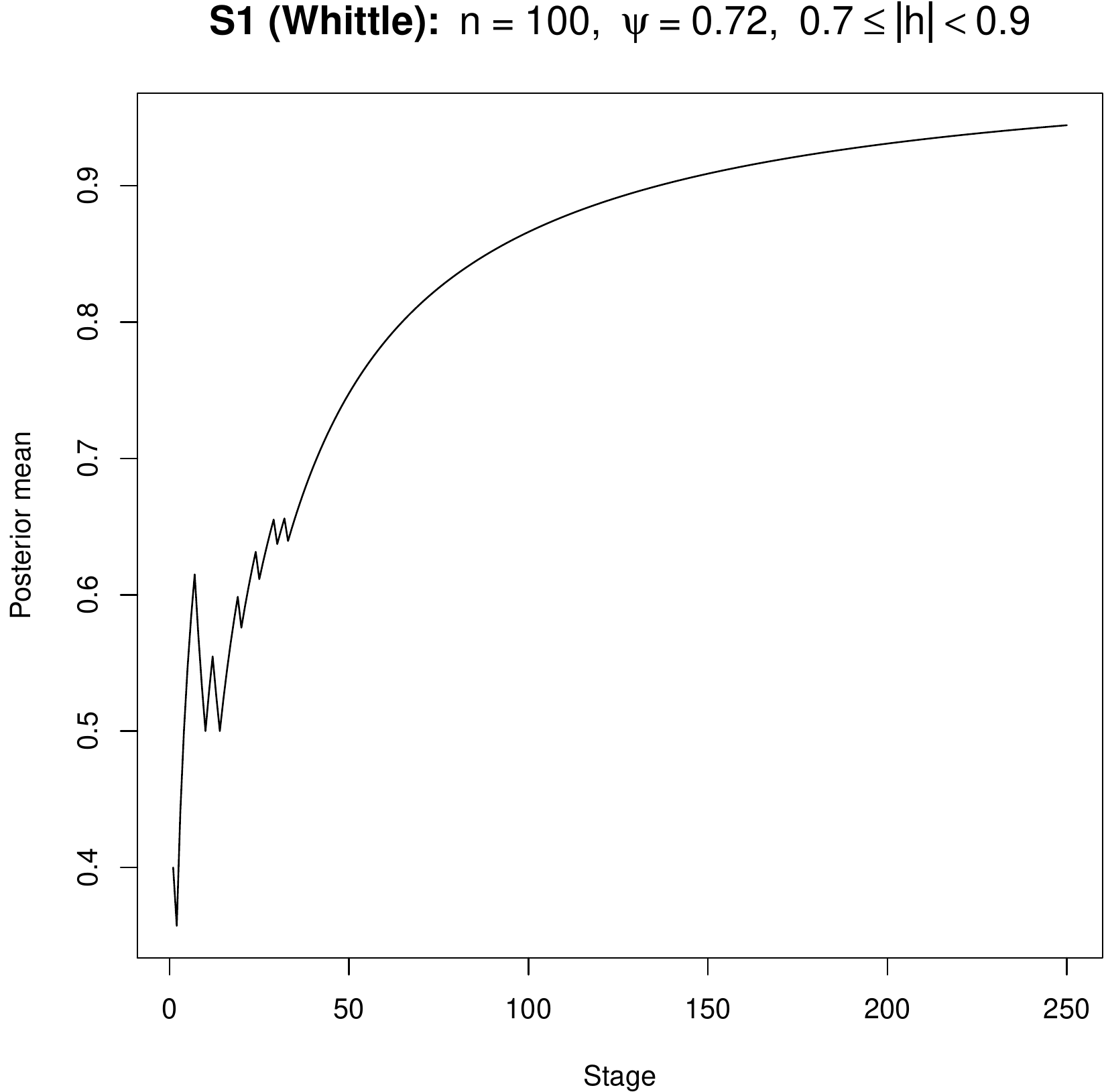}}
\hspace{2mm}
\subfigure [$0.9\leq\|h\|<2$.]{ \label{fig:spacetime_covns4_soutir1_w}
\includegraphics[width=5.5cm,height=5.5cm]{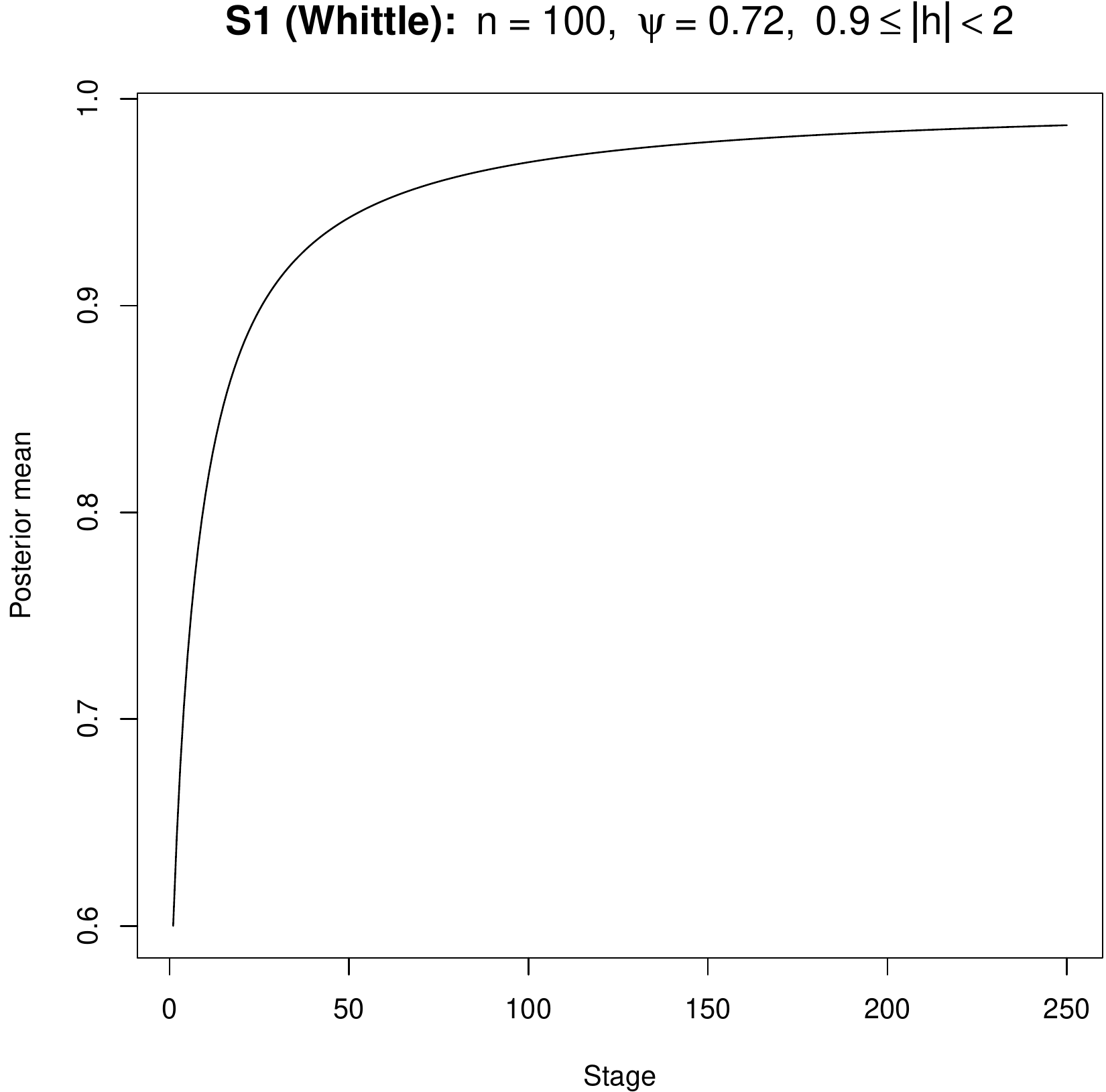}}\\
\vspace{2mm}
\subfigure [$2\leq\|h\|<3$.]{ \label{fig:spacetime_covns5_soutir1_w}
\includegraphics[width=5.5cm,height=5.5cm]{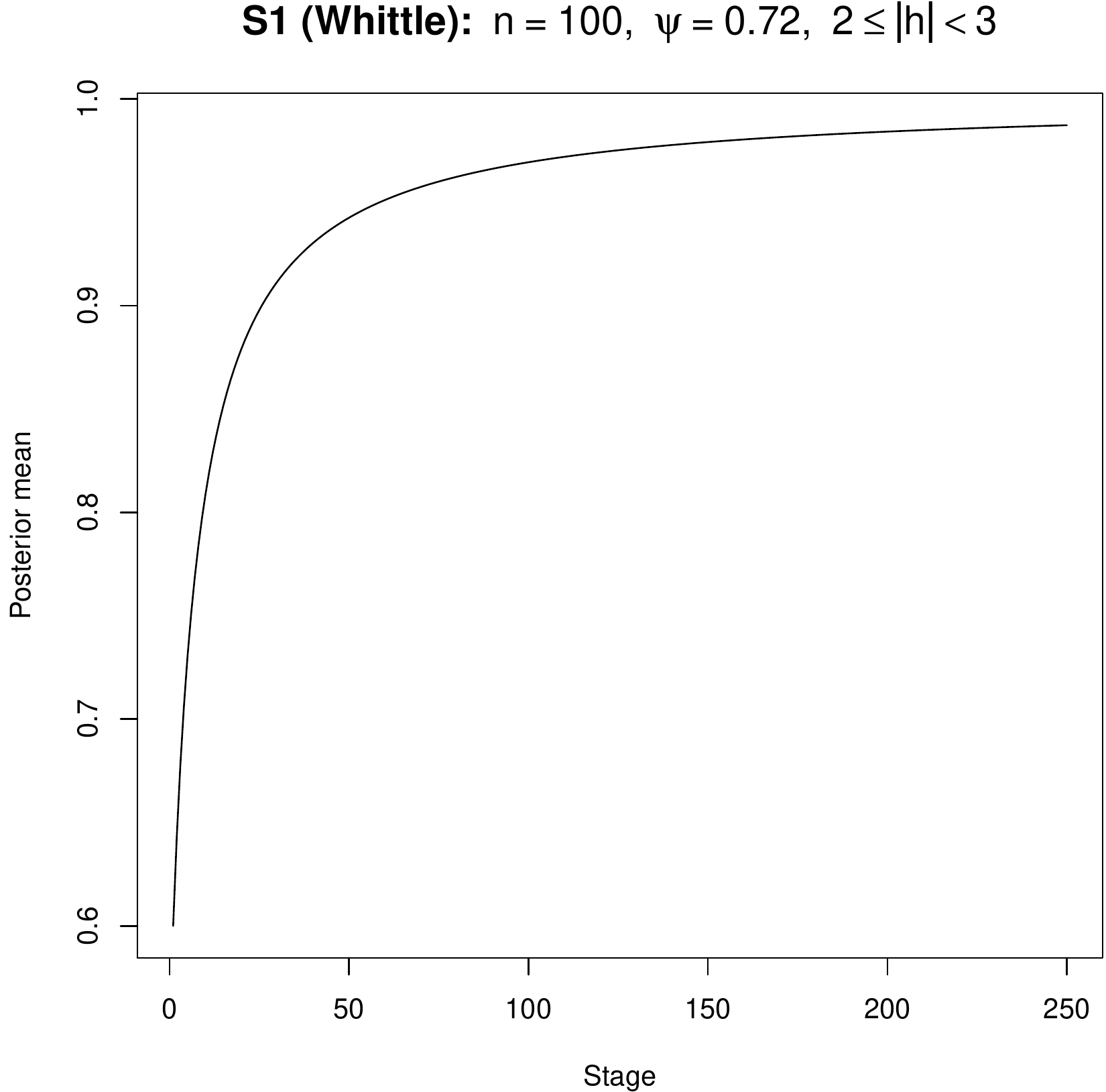}}\\
\caption{Detection of covariance stationarity in spatio-temporal data drawn from model $S1$ with sample size $100$ locations and $200$ time points, 
corresponding to Whittle spatial covariance with $\psi=0.72$ and $\lambda=5$.}
\label{fig:spacetime_cov_soutir1_w}
\end{figure}

\begin{figure}
\centering
\subfigure [$0\leq\|h\|<0.4$]{ \label{fig:spacetime_covns1_soutir2_w}
\includegraphics[width=5.5cm,height=5.5cm]{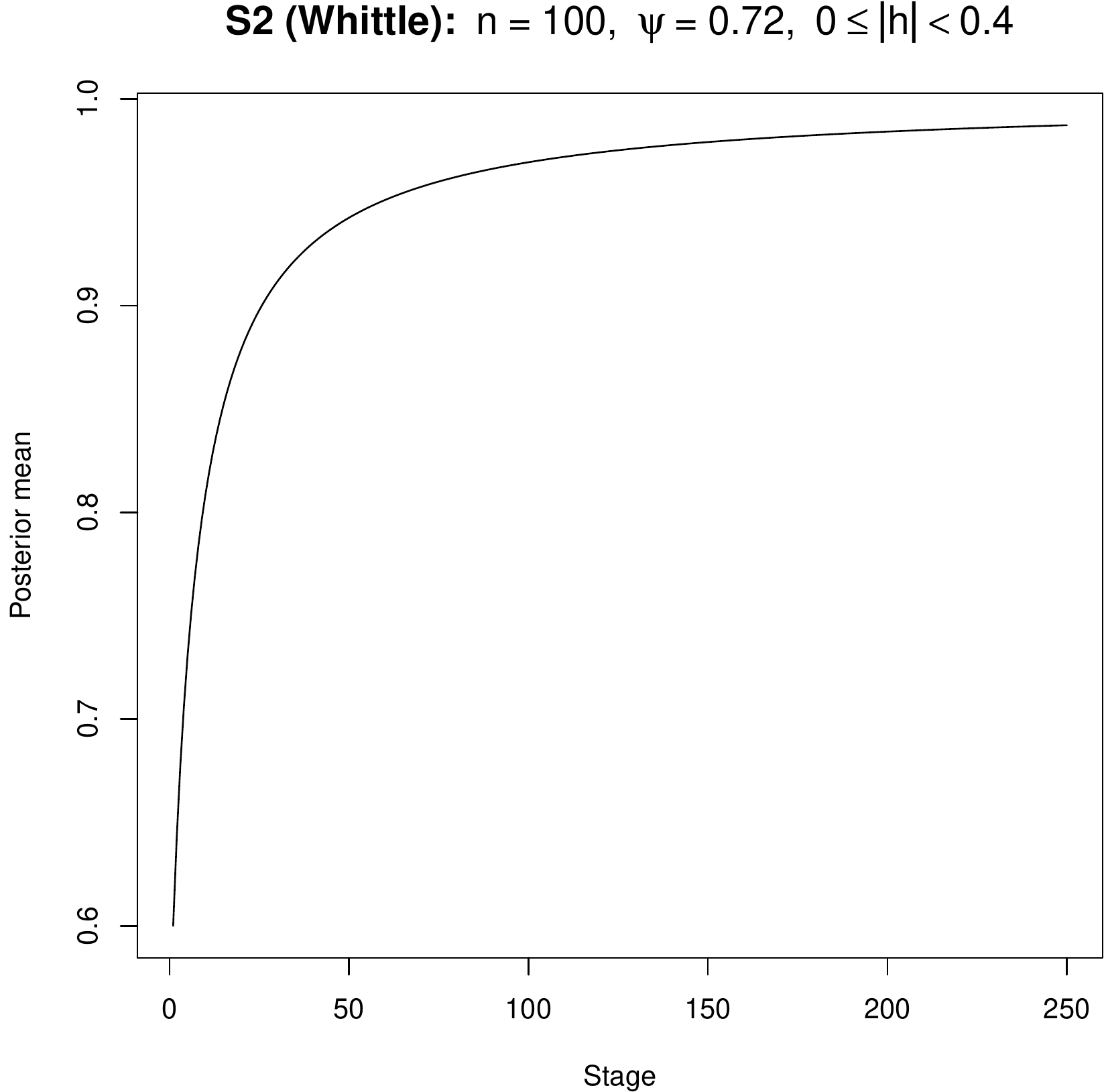}}
\hspace{2mm}
\subfigure [$0.4\leq\|h\|<0.7$.]{ \label{fig:spacetime_covns2_soutir2_w}
\includegraphics[width=5.5cm,height=5.5cm]{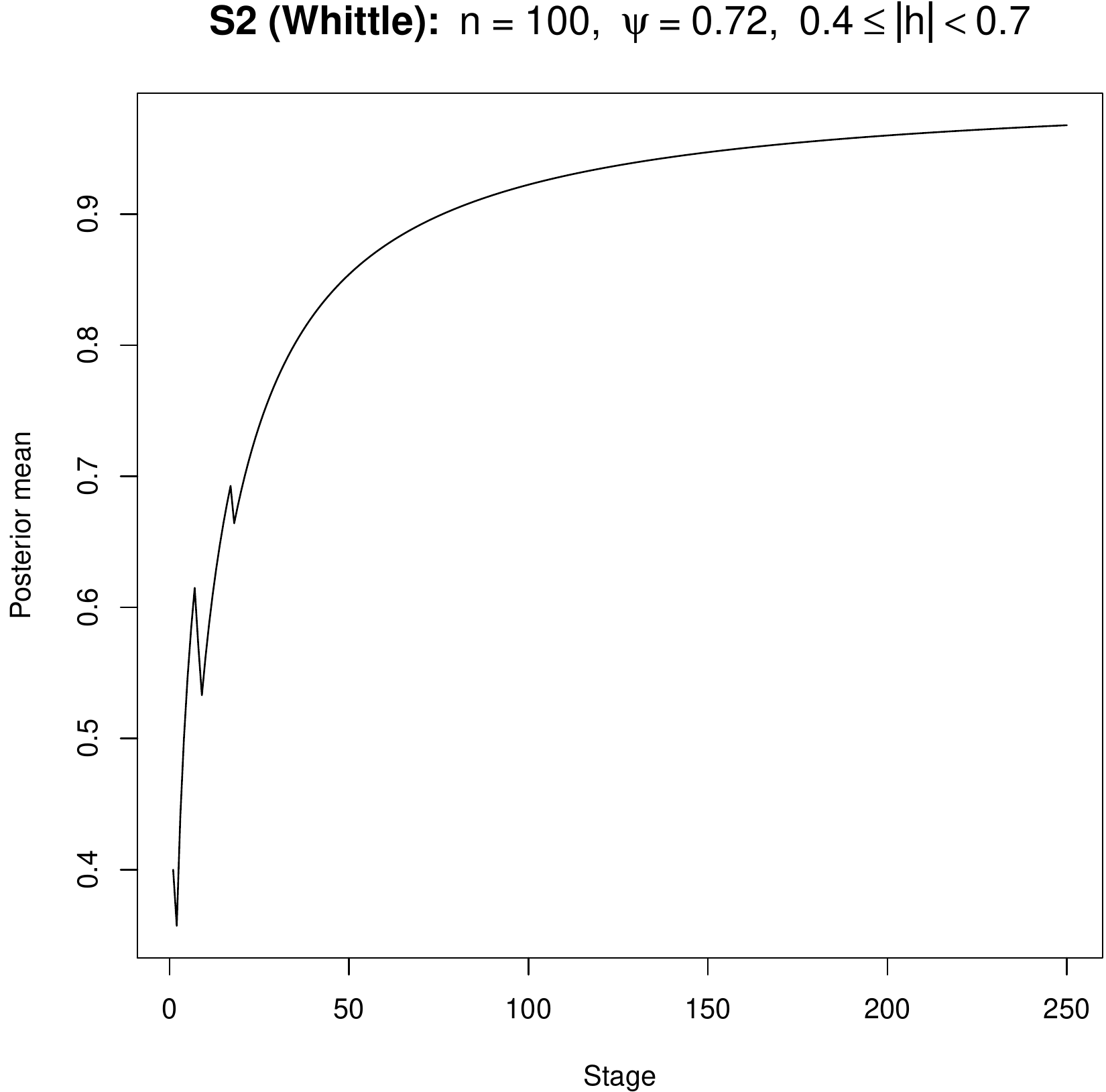}}\\
\vspace{2mm}
\subfigure [$0.7\leq\|h\|<0.9$]{ \label{fig:spacetime_covns3_soutir2_w}
\includegraphics[width=5.5cm,height=5.5cm]{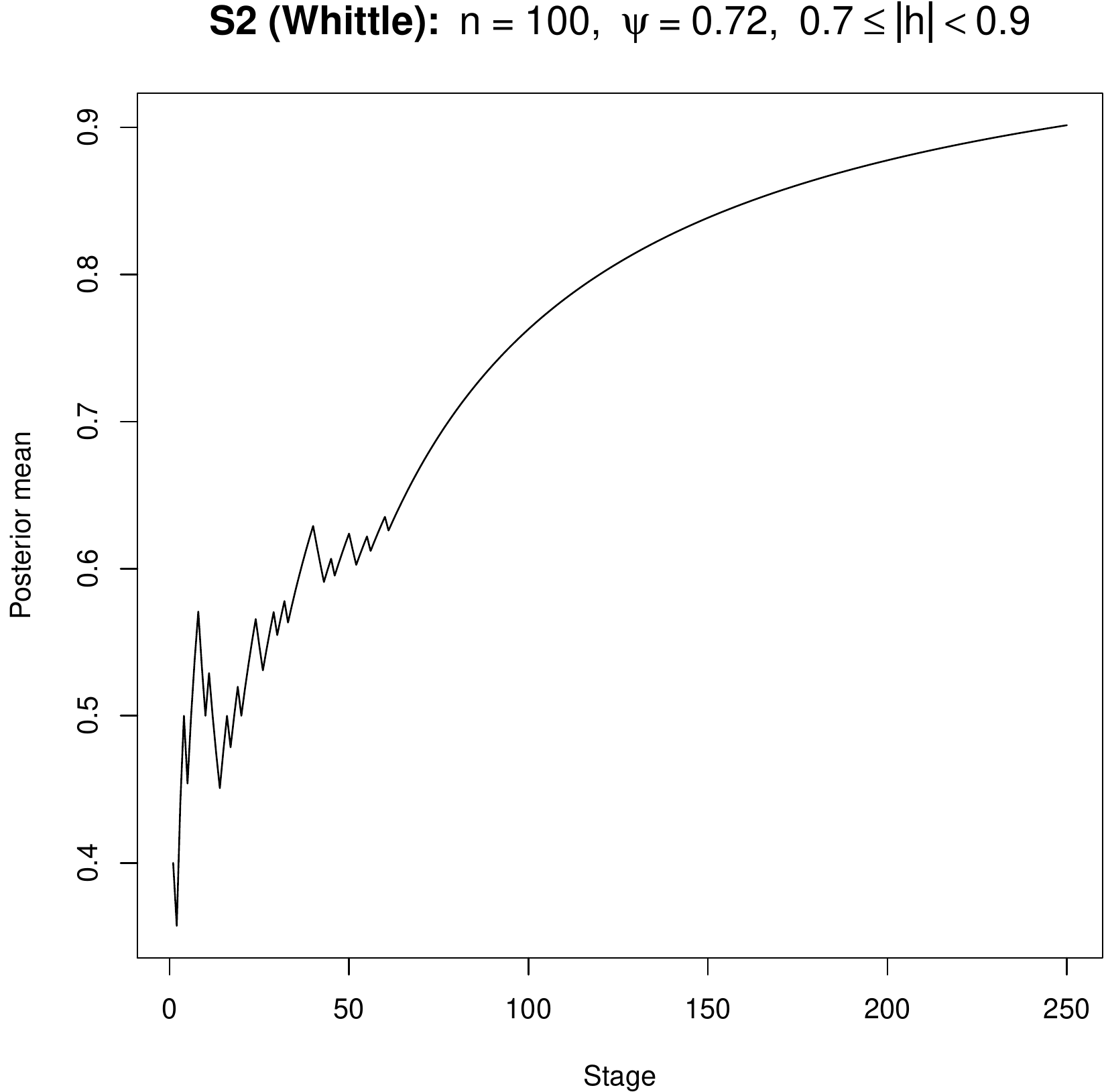}}
\hspace{2mm}
\subfigure [$0.9\leq\|h\|<2$.]{ \label{fig:spacetime_covns4_soutir2_w}
\includegraphics[width=5.5cm,height=5.5cm]{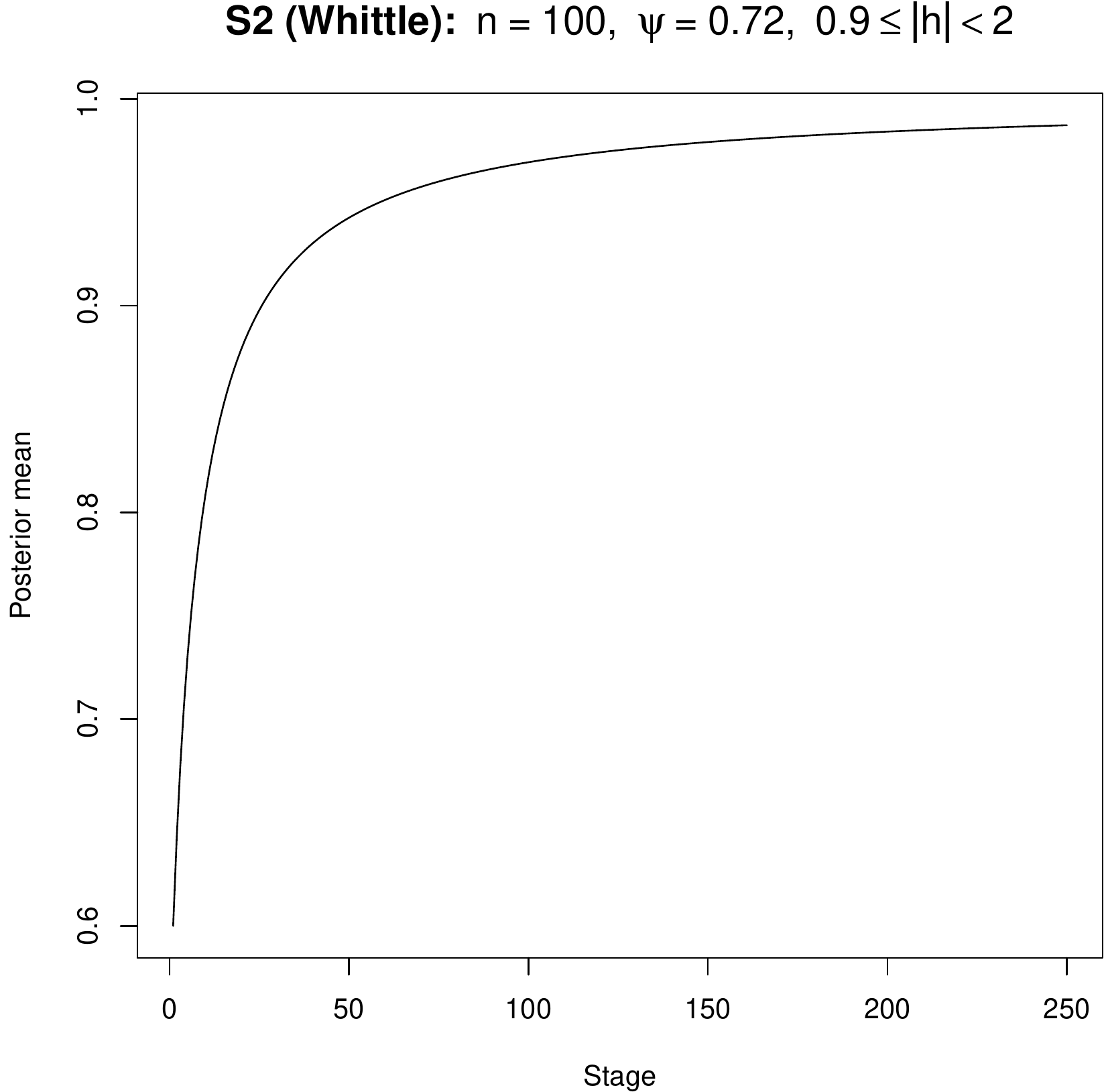}}\\
\vspace{2mm}
\subfigure [$2\leq\|h\|<3$.]{ \label{fig:spacetime_covns5_soutir2_w}
\includegraphics[width=5.5cm,height=5.5cm]{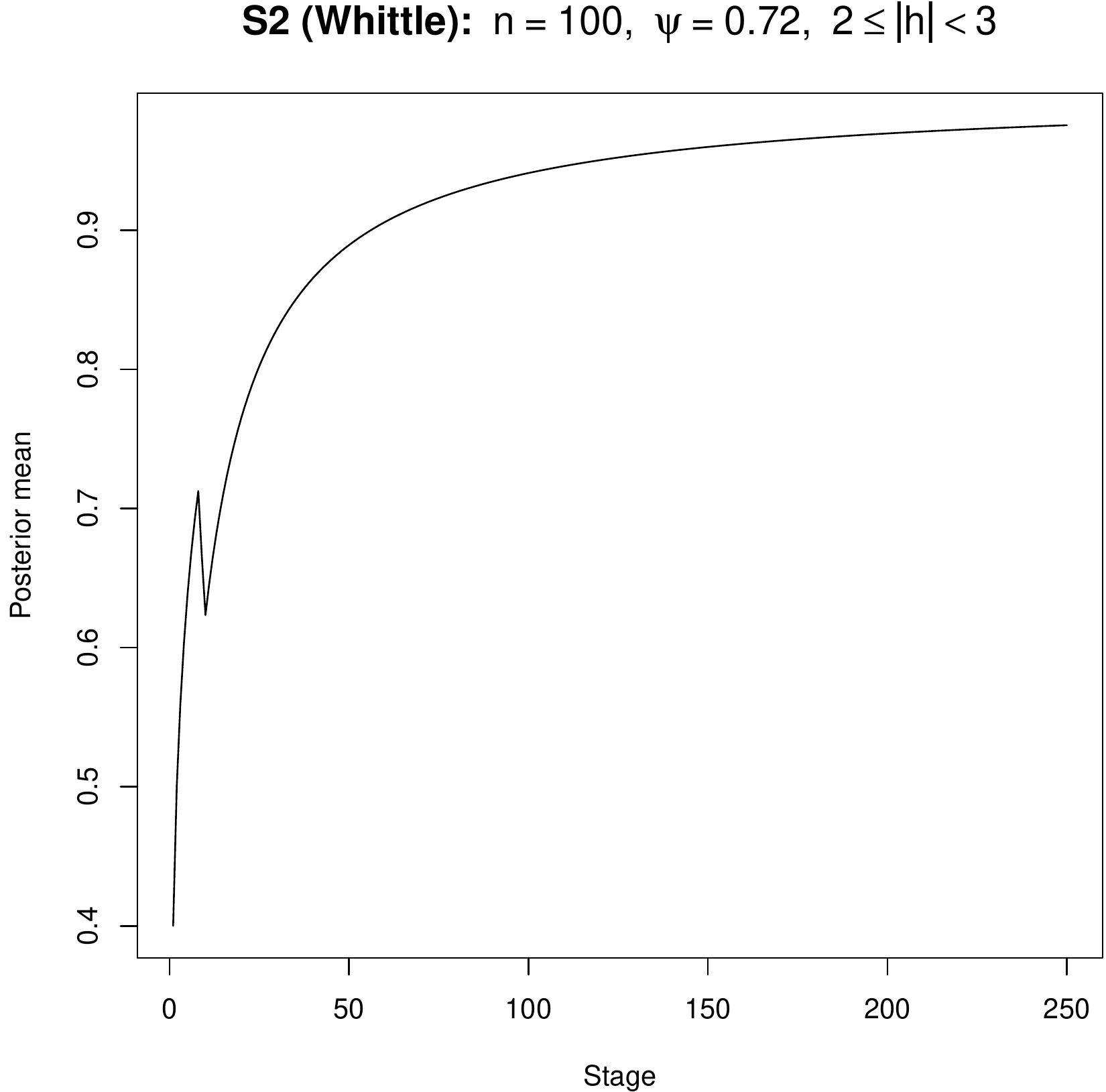}}\\
\caption{Detection of covariance stationarity in spatio-temporal data drawn from model $S2$ with sample size $100$ locations and $200$ time points, 
corresponding to Whittle spatial covariance with $\psi=0.72$ and $\lambda=5$.}
\label{fig:spacetime_cov_soutir2_w}
\end{figure}

\subsubsection{Simulations under nonstationarity}
\label{subsubsec:spacetime_nonstationarity}
We now apply our Bayesian methodology to the three nonstationary models and setups considered by \ctn{Soutir17b}. 
\begin{itemize}
\item[(NS1)] $X_{(s,t)}=0.5 X_{(s,t-1)}+\left(1.3+\sin\left(\frac{2\pi t}{400}\right)\right)\epsilon_{(s,t)}$, 
where $X_{s,0}=\bzero$ and $\epsilon_{(s,t)}$ are zero mean GPs independent over time
with spatial covariance structure (\ref{eq:expcov_spacetime}).
Note that this is a temporally nonstationary but spatially stationary Gaussian random field.
We consider $\psi=0.5$ and $1$, and $\lambda=5$ for the simulations.

\item[(NS2)] $X_{(s,t)}=0.5 X_{(s,t-1)}+0.4 X_{(s,t-1)}\epsilon_{(s,t-1)}+\eta_{(s,t)}$, where $X_{s,0}=\bzero$ and $\eta_{(s,t)}$ are zero 
mean GPs independent over time with nonstationary spatial covariance given as follows. 
\begin{equation}
Cov\left(\eta_{(s_1,t)},\eta_{(s_2,t)}\right)=\left|\Sigma\left(\frac{s_1}{\lambda}\right)\right|^{\frac{1}{4}}
\left|\Sigma\left(\frac{s_2}{\lambda}\right)\right|^{\frac{1}{4}}
\left|\frac{\Sigma\left(\frac{s_1}{\lambda}\right)+\Sigma\left(\frac{s_2}{\lambda}\right)}{2}\right|^{-\frac{1}{2}}\exp\left[-\sqrt{Q_{\lambda}(s_1,s_2)}\right],
\label{eq:spacetime_nonstationary}
\end{equation}
where $Q_{\lambda}(s_1,s_2)=2(s_1-s_2)^T\left[\Sigma\left(\frac{s_1}{\lambda}\right)+\Sigma\left(\frac{s_2}{\lambda}\right)\right]^{-1}(s_1-s_2)$ and
$\Sigma\left(\frac{s}{\lambda}\right)=\Gamma\left(\frac{s}{\lambda}\right)\Lambda\Gamma\left(\frac{s}{\lambda}\right)^T$.
In the above,
\[\Gamma\left(\frac{s}{\lambda}\right)=\left(\begin{array}{cc}\gamma_1\left(\frac{s}{\lambda}\right) & -\gamma_2\left(\frac{s}{\lambda}\right)\\
\gamma_2\left(\frac{s}{\lambda}\right) & \gamma_1\left(\frac{s}{\lambda}\right)\end{array}\right);~~
\Lambda=\left(\begin{array}{cc}1 & 0\\ 0 & \frac{1}{2}\end{array}\right), 
\]
where $\gamma_1\left(\frac{s}{\lambda}\right)=\log\left(u/\lambda+0.75\right)$, $\gamma_2\left(\frac{s}{\lambda}\right)=(u/\lambda)^2+(v/\lambda)^2$,
and $s=(u,v)^T$.

With this, the model is a temporally stationary and spatially nonstationary Gaussian random field. For simulations, we consider $\lambda=20$,
following \ctn{Soutir17b}.

\item[(NS3)] $X_{(s,t)}=0.5 X_{(s,t-1)}+\left(1.3+\sin\left(\frac{2\pi t}{400}\right)\right)\eta_{(s,t)}$, where $X_{s,0}=\bzero$ and $\eta_{(s,t)}$ are zero
mean GPs independent over time with nonstationary spatial covariance given by (\ref{eq:spacetime_nonstationary}).
This defines a temporally and spatially nonstationary Gaussian random field. Again, we set $\lambda=20$ for simulations, following \ctn{Soutir17b}.
\end{itemize}

We obtained the right results in all the cases of nonstationarity, but present the results corresponding to $(m=100,T=200)$ and $\psi=1$ for brevity. 
Figure \ref{fig:spacetime_soutir_ns} provides the results on strong stationarity and the result on covariance stationarity of $NS1$
is depicted in Figure \ref{fig:spacetime_cov_soutir_ns1}.
For detection of strict nonstationarity, $\hat C_1$ varied between $0.04$ and $0.05$. The same values also yielded respective covariance nonstationarities
in these exampples. However, the maximum values of $\hat C_1$ for detecting covariance nonstationarities varied between $0.05$, $0.15$, $0.2$ and $0.3$.

\begin{figure}
\centering
\subfigure [Correct detection of nonstationarity.]{ \label{fig:nonstationary1_soutir_ns}
\includegraphics[width=5.5cm,height=5.5cm]{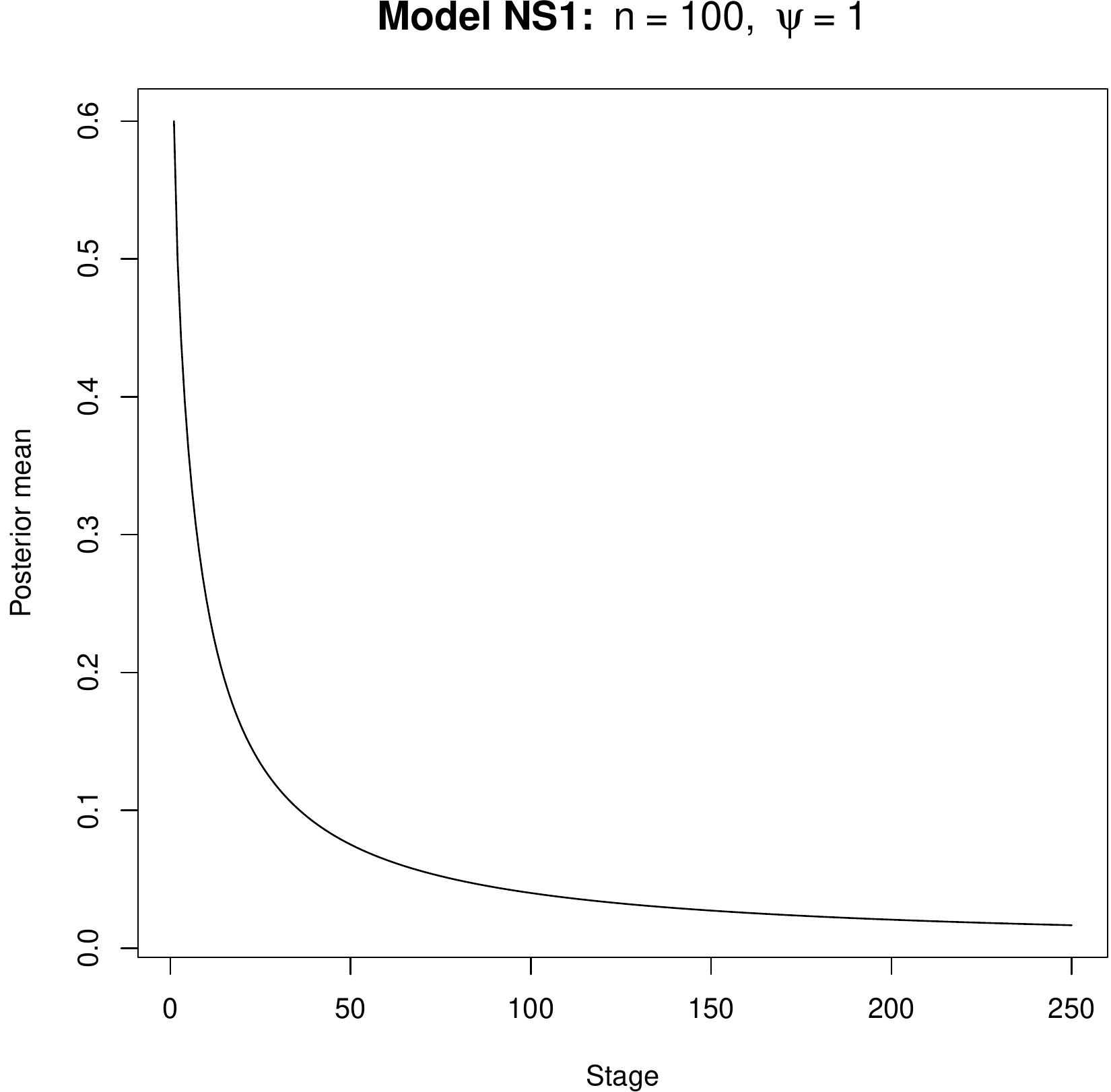}}
\hspace{2mm}
\subfigure [Correct detection of nonstationarity.]{ \label{fig:nonstationary2_soutir_ns}
\includegraphics[width=5.5cm,height=5.5cm]{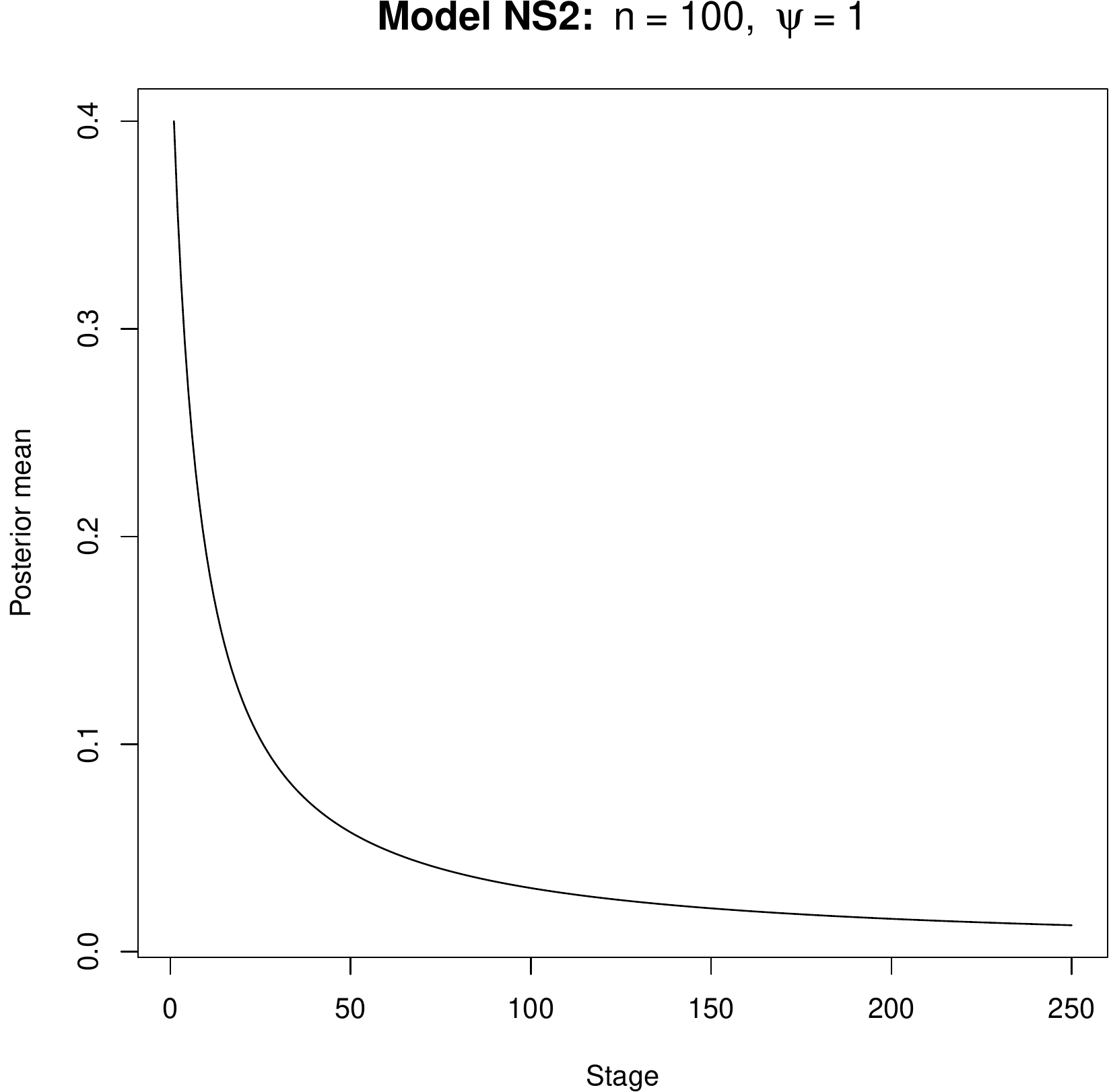}}\\
\vspace{2mm}
\subfigure [Correct detection of nonstationarity.]{ \label{fig:nonstationary3_soutir_ns}
\includegraphics[width=5.5cm,height=5.5cm]{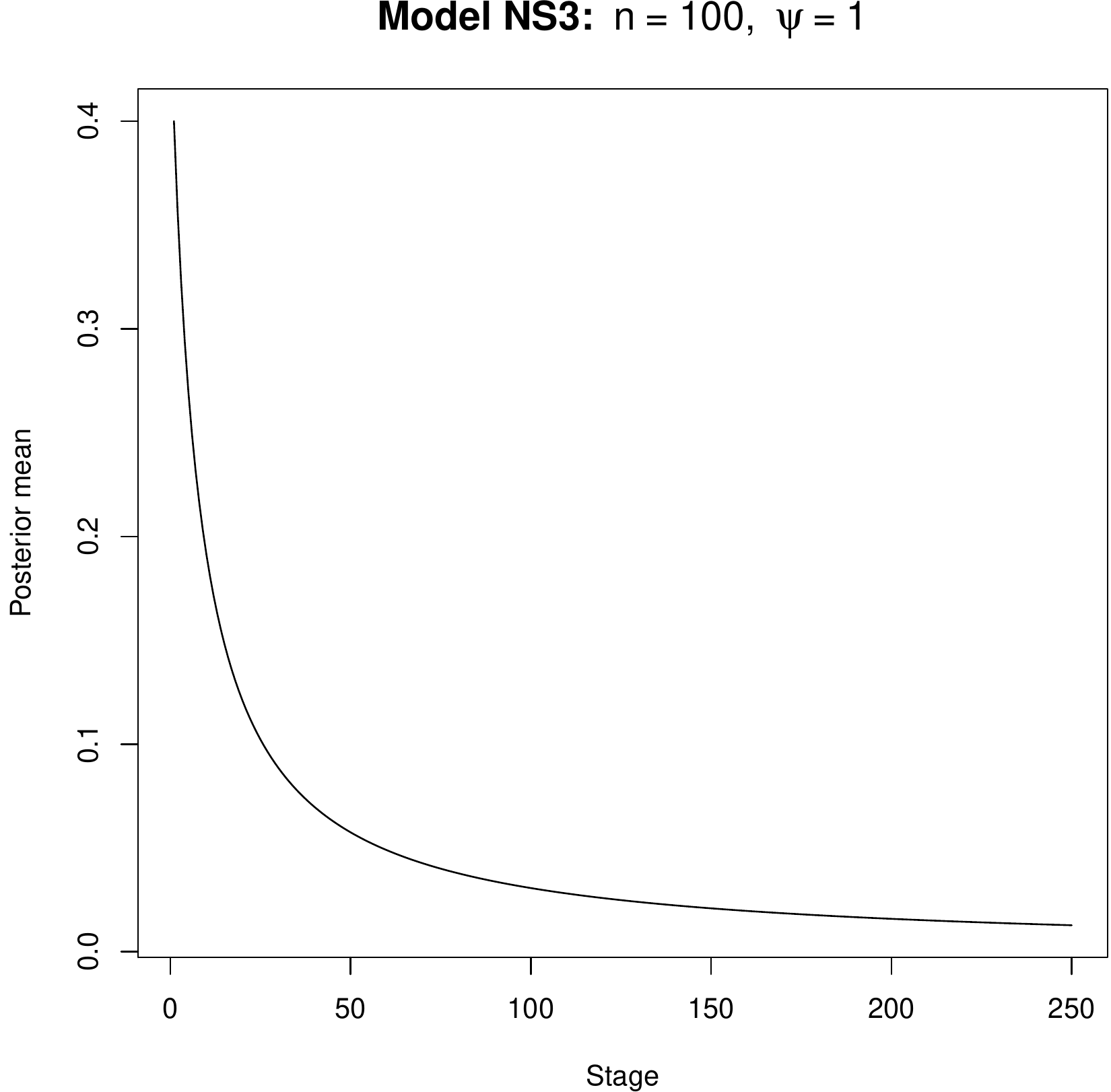}}\\
\caption{Detection of strong nonstationarity in spatio-temporal data drawn from models $NS1$, $NS2$ and $NS3$ 
with sample size $100$ locations and $200$ time points.}
\label{fig:spacetime_soutir_ns}
\end{figure}

\begin{figure}
\centering
\subfigure [$0\leq\|h\|<0.4$]{ \label{fig:spacetime_covns1_soutir_ns1}
\includegraphics[width=5.5cm,height=5.5cm]{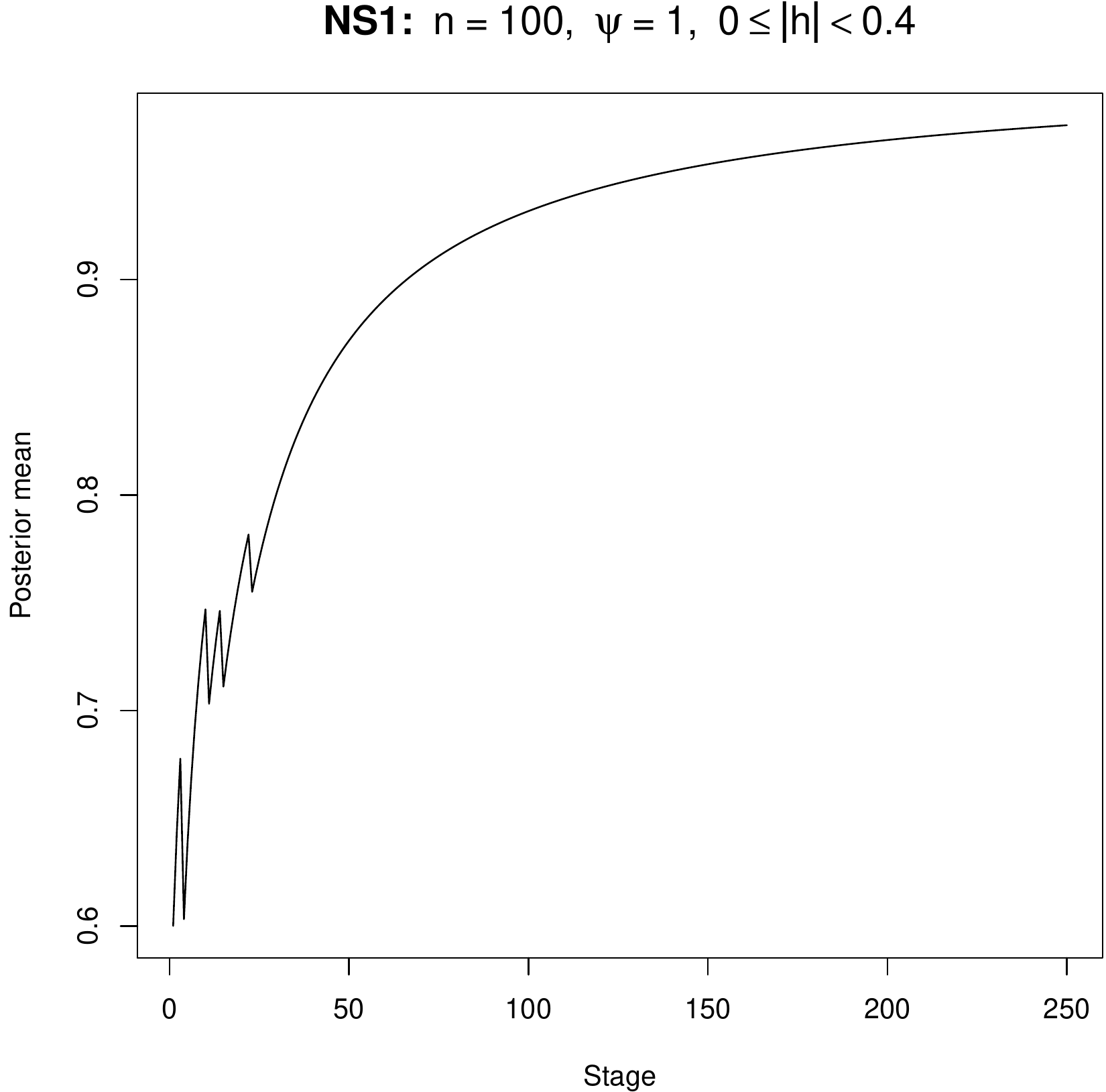}}
\hspace{2mm}
\subfigure [$0.4\leq\|h\|<0.7$.]{ \label{fig:spacetime_covns2_soutir_ns1}
\includegraphics[width=5.5cm,height=5.5cm]{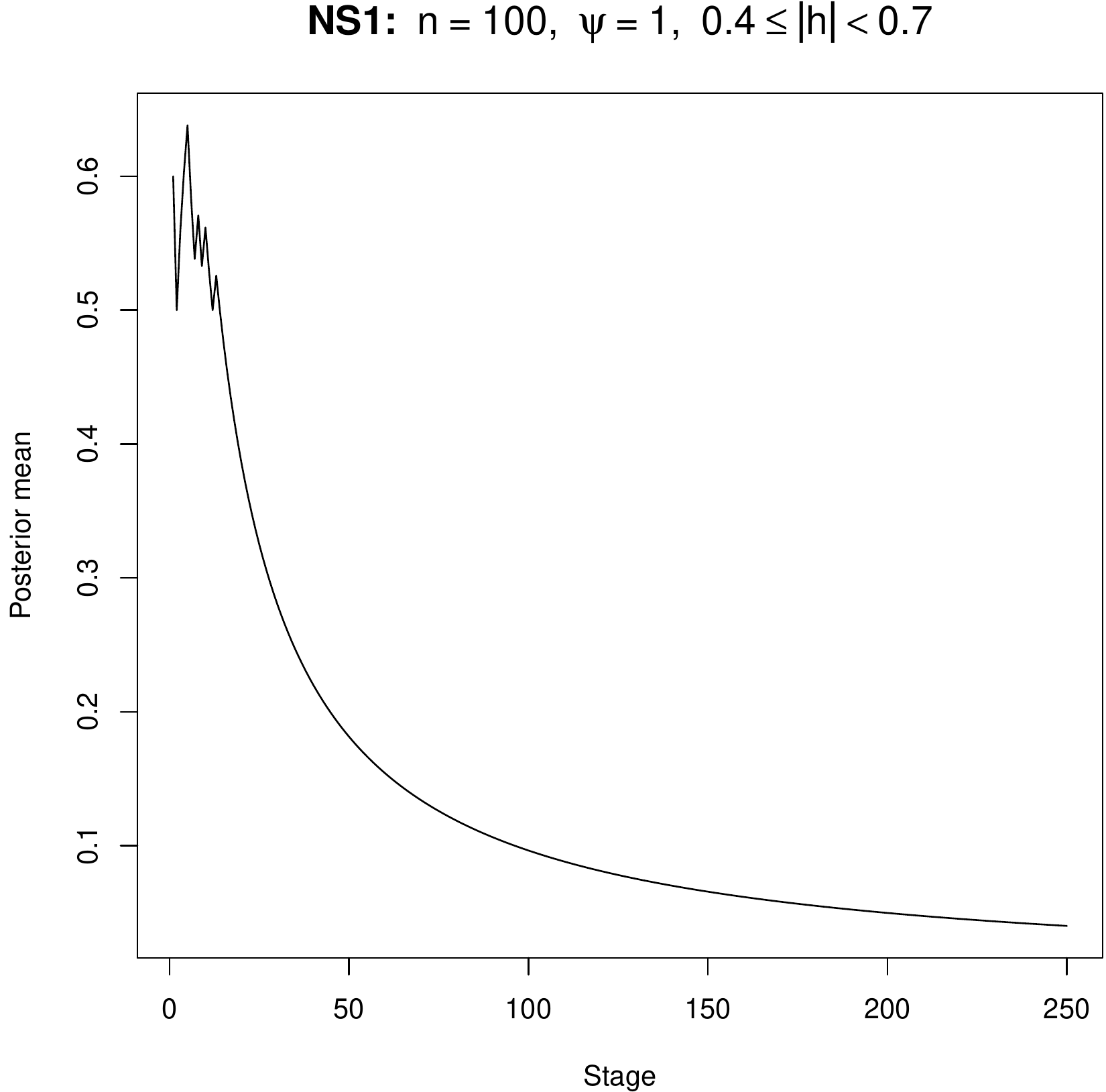}}\\
\vspace{2mm}
\subfigure [$0.7\leq\|h\|<0.9$]{ \label{fig:spacetime_covns3_soutir_ns1}
\includegraphics[width=5.5cm,height=5.5cm]{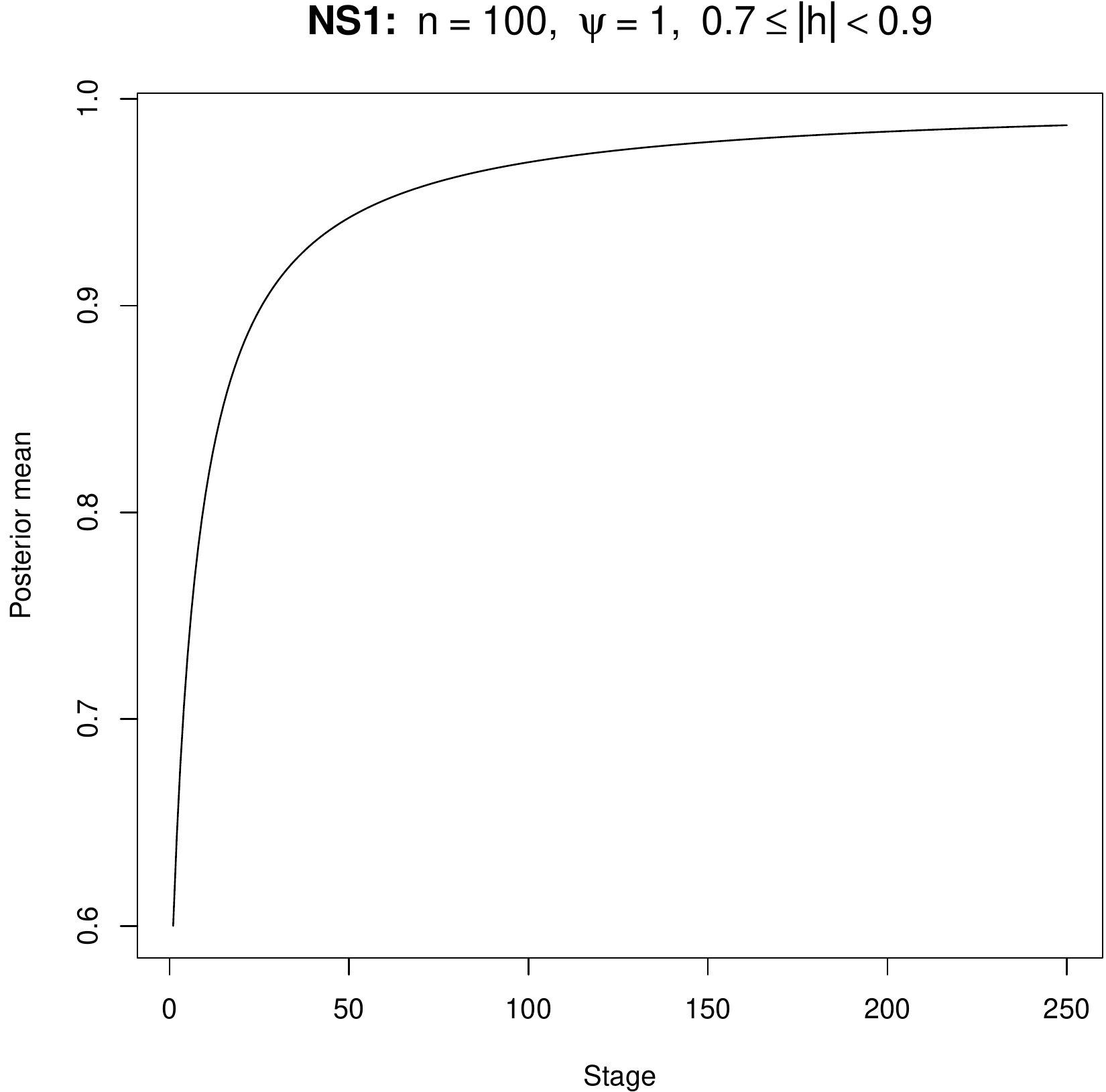}}
\hspace{2mm}
\subfigure [$0.9\leq\|h\|<2$.]{ \label{fig:spacetime_covns4_soutir_ns1}
\includegraphics[width=5.5cm,height=5.5cm]{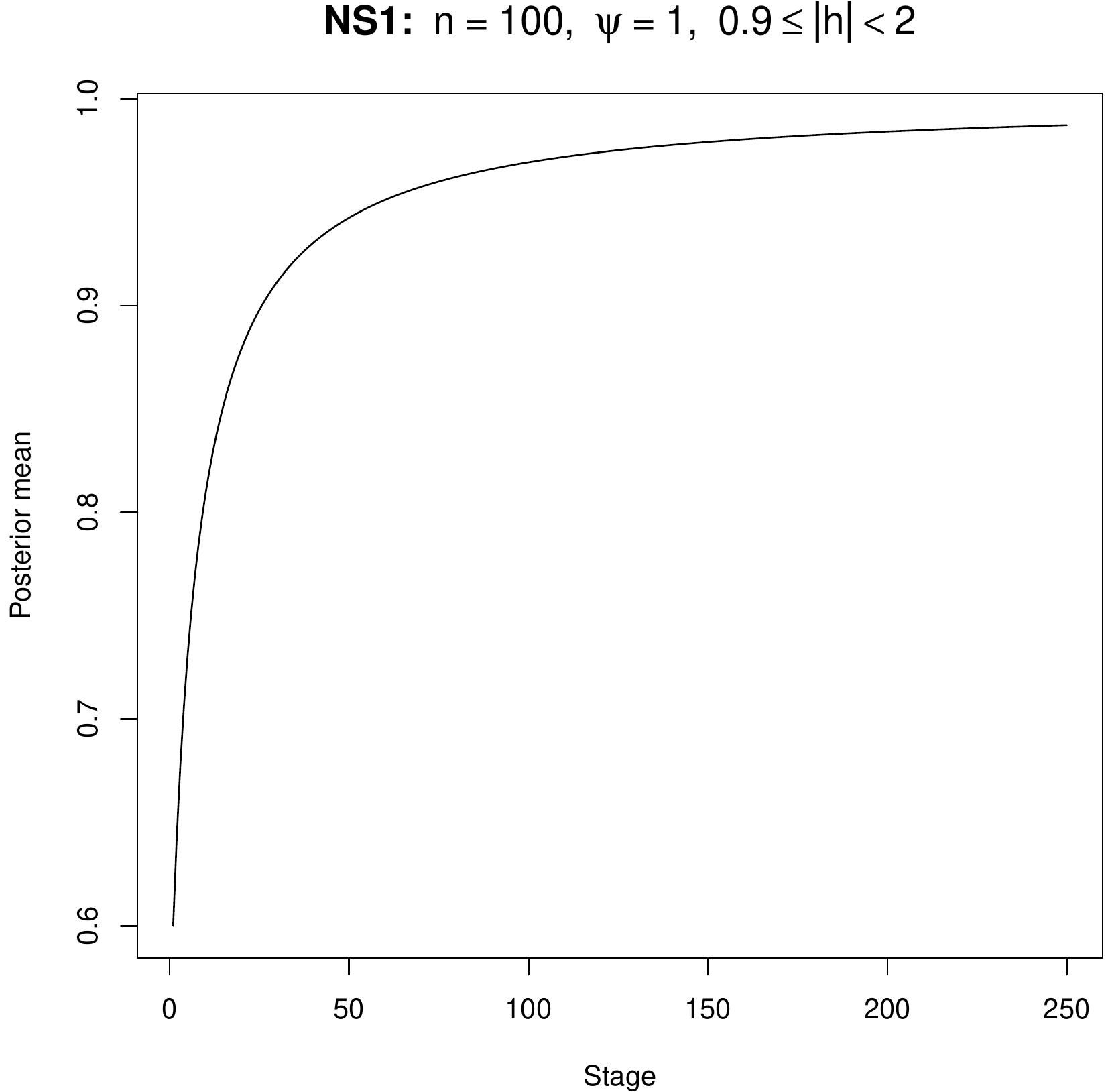}}\\
\caption{Detection of covariance nonstationarity in spatio-temporal data drawn from model $NS1$ with sample size $100$ locations and $200$ time points.} 
\label{fig:spacetime_cov_soutir_ns1}
\end{figure}

\subsubsection{Overall comparison of our results with those of \ctn{Soutir17b}}
\label{subsubsec:soutir_spacetime_comparison}

First, our Bayesian procedure is designed to identify both weak and strict stationarity of the underlying spatio-temporal process, while
the methods of \ctn{Soutir17b} are meant for detection of weak stationarity only, and not for strict stationarity. 

Second, our method requires the only assumption of local stationarity, which is expected to hold in general. In contrast, the methods of \ctn{Soutir17b}
require a variety of assumptions, which may be difficult to verify in practice.

Overall, our Bayesian procedure worked adequately for all the strict stationarity and nonstationarity cases that we considered. The method also performed 
satisfactorily whenever there existed well-defined regions $\mathcal N_{i,h_j,h_{j+1}}$ in the data set. On the other hand, the methods of \ctn{Soutir17b}
did not yield satisfactory results particularly when the underlying process is non-Gaussian.

\section{Real data analyses for spatial and spatio-temporal data}
\label{sec:realdata_spacetime}
\ctn{Das20} considered three real spatial and spatio-temporal data sets on pollutants for illustration of their new general nonparametric 
spatial and spatio-temporal model and methods.
One is an ozone data set, which is a spatial data. Initially, \ctn{Das20} fitted a stationary model, a special case of their general model, to the ozone data, but
obtained unsatisfactory fit. This prompted them to fit the nonstationary instance of their model, which yielded adequate results. 
Thus, nonstationarity of the ozone data seems to be more plausible than stationarity. Here we establish with our Bayesian method that this is indeed the case.

The other two data sets are spatio-temporal data sets on particulate matters (PM), which are mixtures of solid particles and 
liquid droplets found in the air. The data sets correspond to measurements of air concentrations of two different size ranges -- PM 10 and PM 2.5. 
The first one, PM 10, is suspected to be nonstationary, while PM 2.5 is suspected to be stationary in the literature (see, for example, \ctn{Paciorek09}). 
With our Bayesian method for characterizing stationarity and nonstationarity, we establish that such intuitions are correct.

For details regarding the three data sets, see \ctn{Das20}. There are also covariaites associated with the three data sets, which have been utilized
by \ctn{Das20} for their modeling purpose. However, for checking stationarity and nonstationarity, only the responses are necessary. Hence, for our
current purpose, the covariates are unnecessary. We evaluate all the final responses in their log scales.

\subsection{Spatial ozone data}
\label{subsec:ozone}

After appropriate data transformations (see \ctn{Das20}), we obtain $76$ observations, evaluated in the log scale. 
To obtain $\hat C_1$, we first generate $76$ observations from a GP with the Whittle covariance function given by
(\ref{eq:spatial_bound}), with $\psi=0.8$, and with the same set of locations as the ozone data. We set $K=20$ for this small data set, 
and obtain the minimum value of $\hat C_1$ that ensured
stationarity for this GP data with our Bayesian method, to be $0.38$. With this value of $\hat C_1$ and larger (even with $\hat C_1=0.43$), 
we obtained clear evidence of nonstationarity for the ozone data, as depicted
in Figure \ref{fig:ozone_nonstationary}.
\begin{figure}
\centering
\subfigure [Nonstationarity (ozone data).]{ \label{fig:ozone_nonstat}
\includegraphics[width=6.5cm,height=6.5cm]{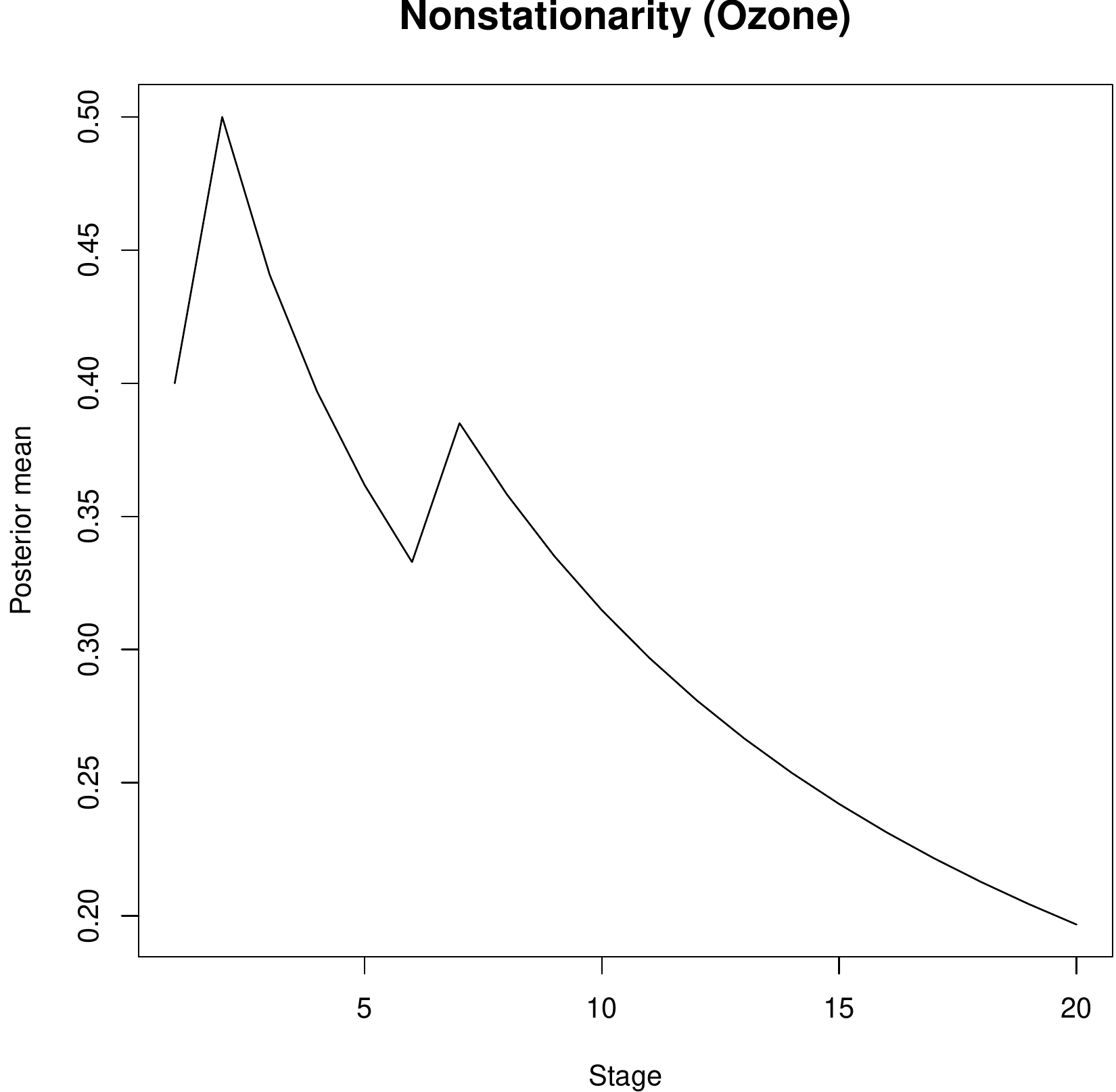}}
\caption{Detection of nonstationarity of the ozone data with our Bayesian method.} 
\label{fig:ozone_nonstationary}
\end{figure}

To check covariance stationarity, we obtain four neighborhoods $\mathcal N_{i,h_j,h_{j+1}}$, for $j=1,2,3,4$, where $h_1=0.0$, $h_2=0.2$, $h_3=0.4$, $h_4=0.6$
and $h_5=0.8$. With $K=20$ and the same Whittle covariance based GP data for strict stationarity, the same value $\hat C_1=0.38$ turned out to be the
minimum value ensuring covariance statonarity for the GP data. Figure \ref{fig:ozone_cov_nonstationary} shows covariance nonstationarity for the ozone data
with $\hat C_1=0.38$. Indeed, convergence to zero is indicated with $\mathcal N_{i,h_2,h_3}$.
\begin{figure}
\centering
\subfigure [$0\leq\|h\|<0.2$.]{ \label{fig:ozone_covns1}
\includegraphics[width=5.5cm,height=5.5cm]{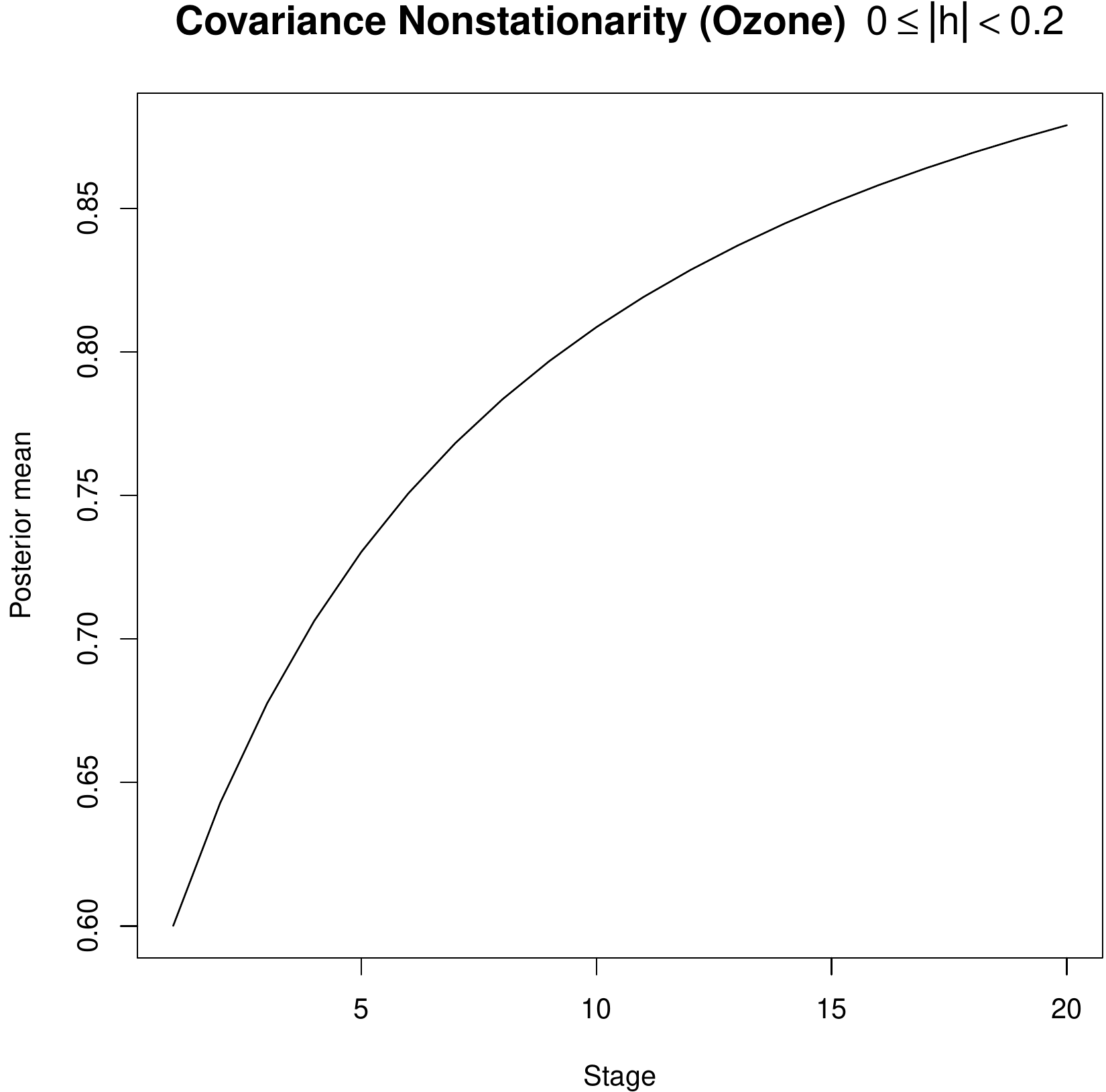}}
\hspace{2mm}
\subfigure [$0.2\leq\|h\|<0.4$.]{ \label{fig:ozone_covns2}
\includegraphics[width=5.5cm,height=5.5cm]{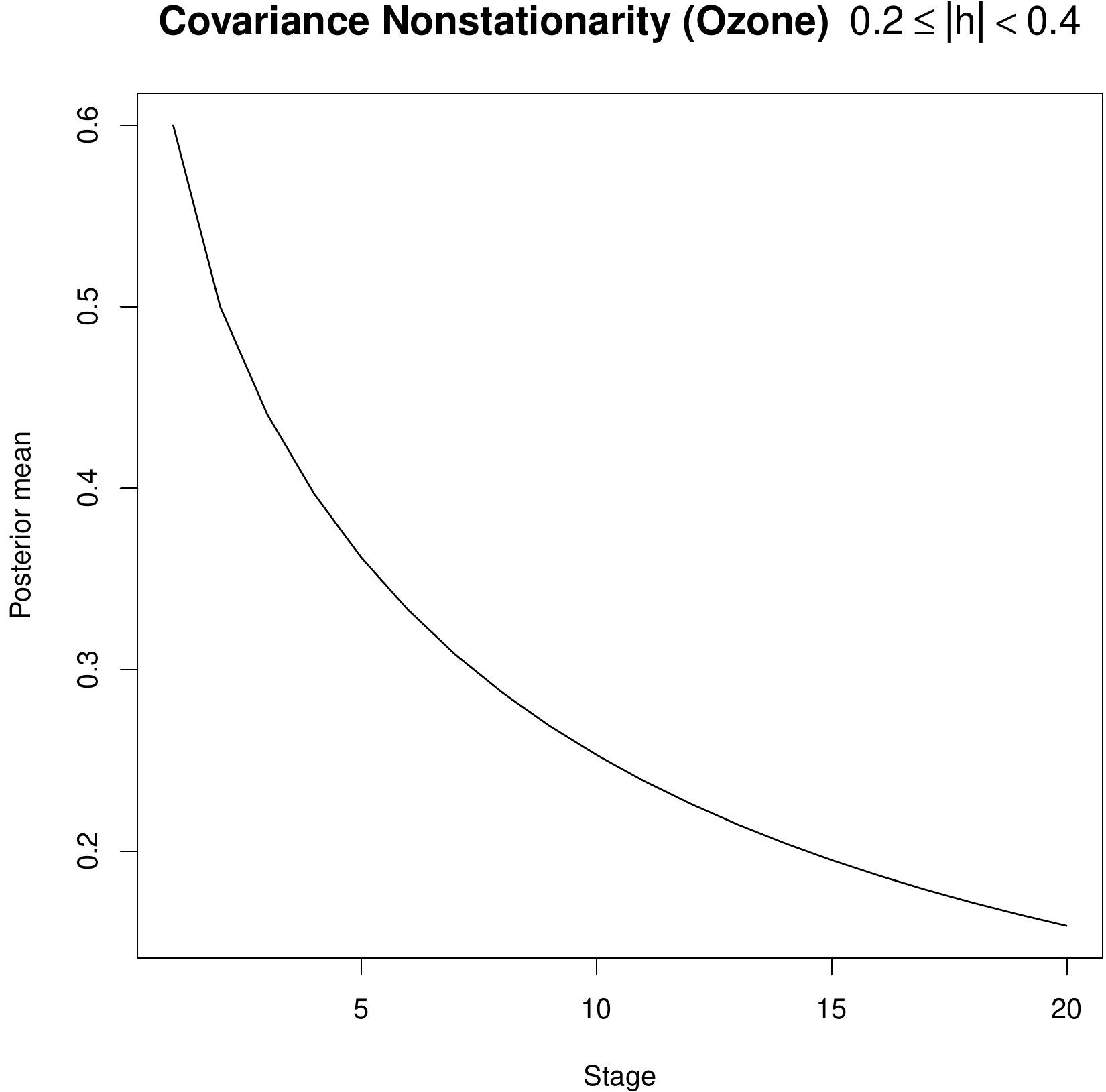}}\\
\vspace{2mm}
\subfigure [$0.4\leq\|h\|<0.6$.]{ \label{fig:ozone_covns3}
\includegraphics[width=5.5cm,height=5.5cm]{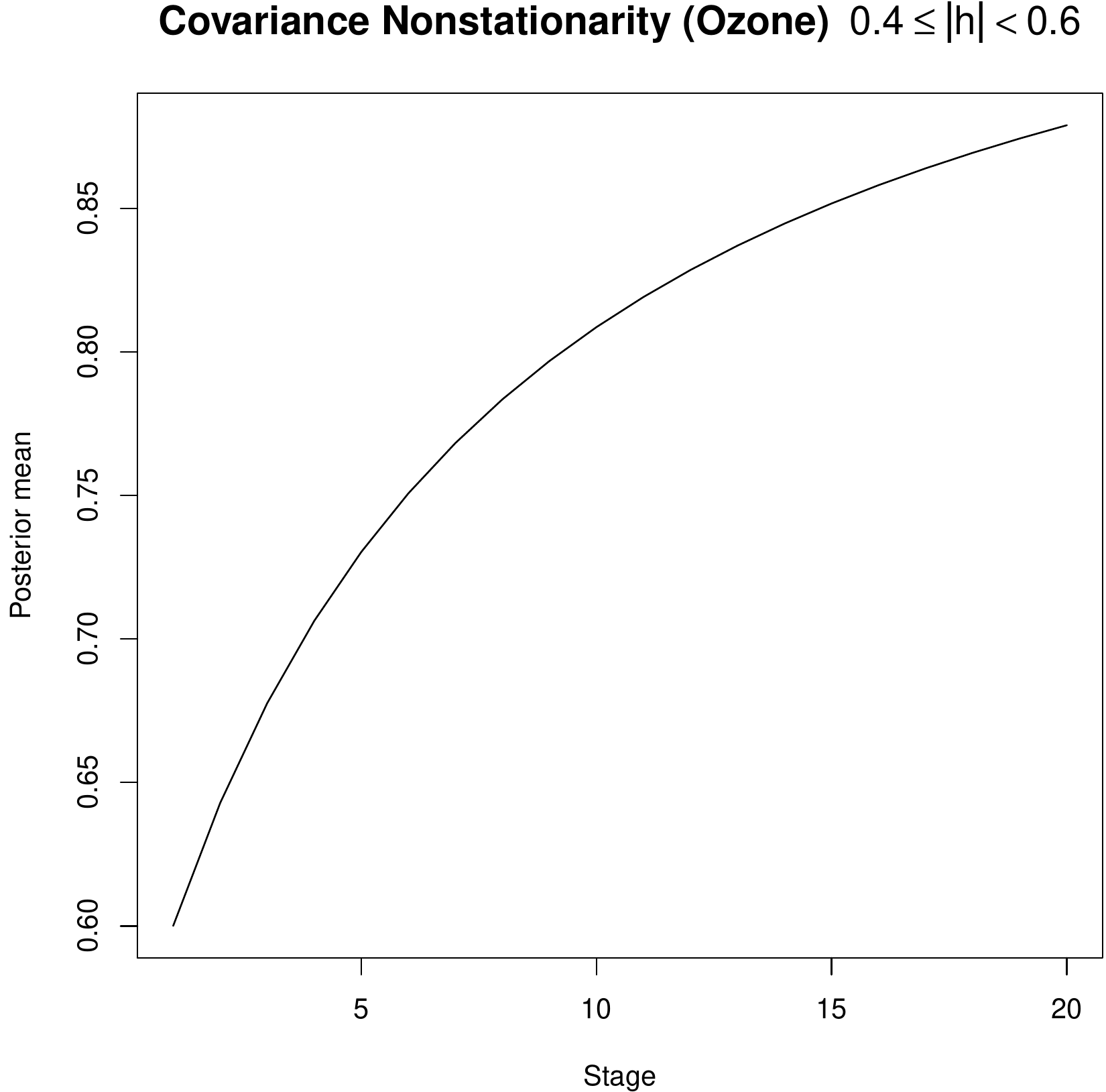}}
\hspace{2mm}
\subfigure [$0.6\leq\|h\|<0.8$.]{ \label{fig:ozone_covns4}
\includegraphics[width=5.5cm,height=5.5cm]{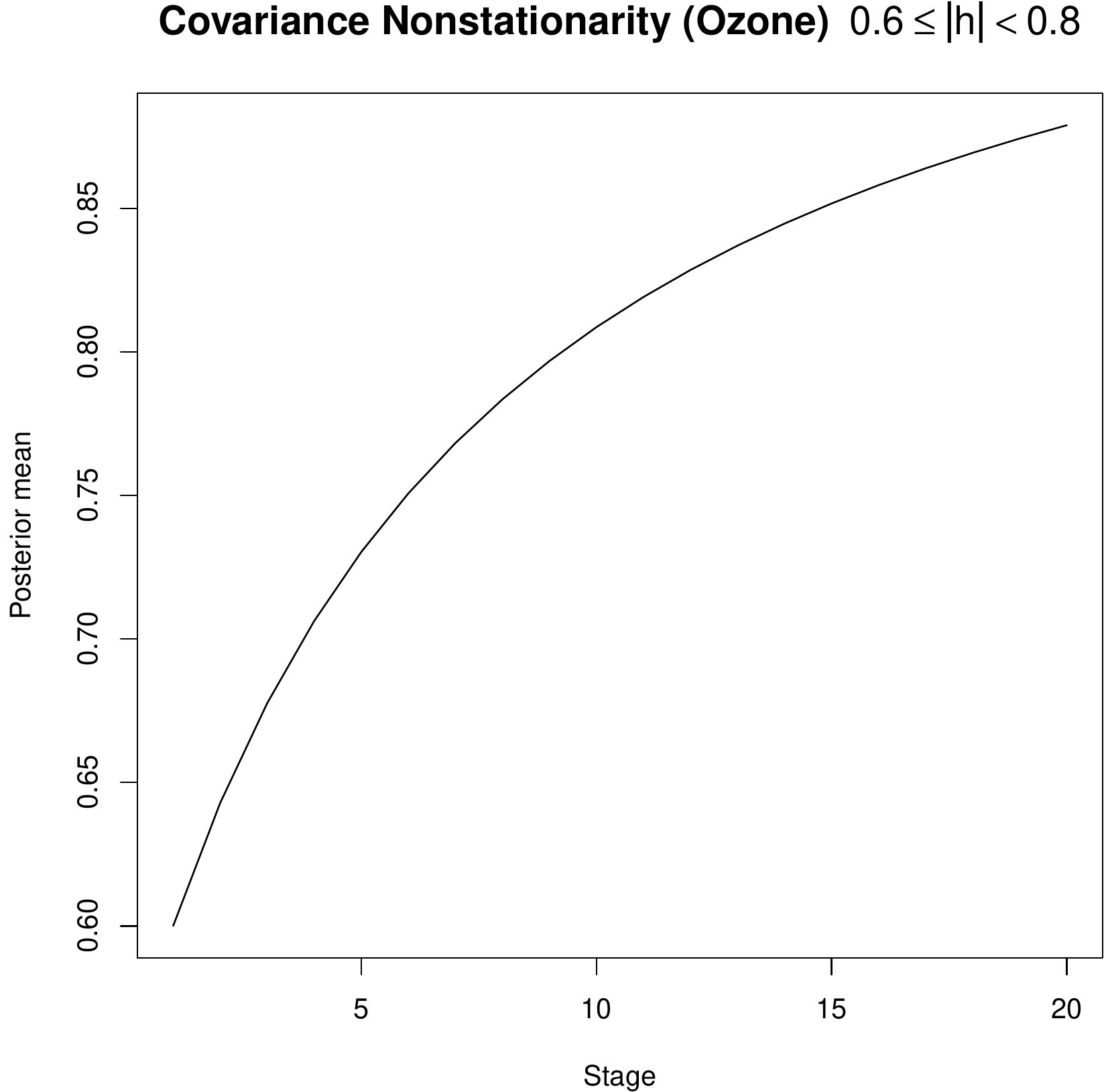}}
\caption{Detection of covariance nonstationarity of the ozone data.}
\label{fig:ozone_cov_nonstationary}
\end{figure}

\subsection{Spatio-temporal PM 10 data}
\label{subsec:PM10}

This data set consists of $70572$ observations, a part of which has been used by \ctn{Das20} for model fitting. However, here we use all $70572$ log-response values
to check strict and covariance stationarity. To obtain $\hat C_1$, we need to generate GP samples of size $70572$ with the Whittle covariance function
and the locations, time points corresponding to the real PM 10 data set. However,
generation of such a large GP sample turned out to be prohibitive with our current infrastructure. But more of concern is the issue that the 
stability of the covariance matrix turned out to steadily deteriorate for dimensions larger than $100000$. Figure \ref{fig:PM10_gp_samples} shows two GP samples
of sizes $10000$ and $20000$ generated using the $R$-package ``mvnfast", using $80$ parallel cores. Although the sample of size $10000$ is stable, the other
shows increasing variability from index $10000$ onwards.
\begin{figure}
\centering
\subfigure [GP sample size $10000$.]{ \label{fig:gpsamp1}
\includegraphics[width=7.5cm,height=5.5cm]{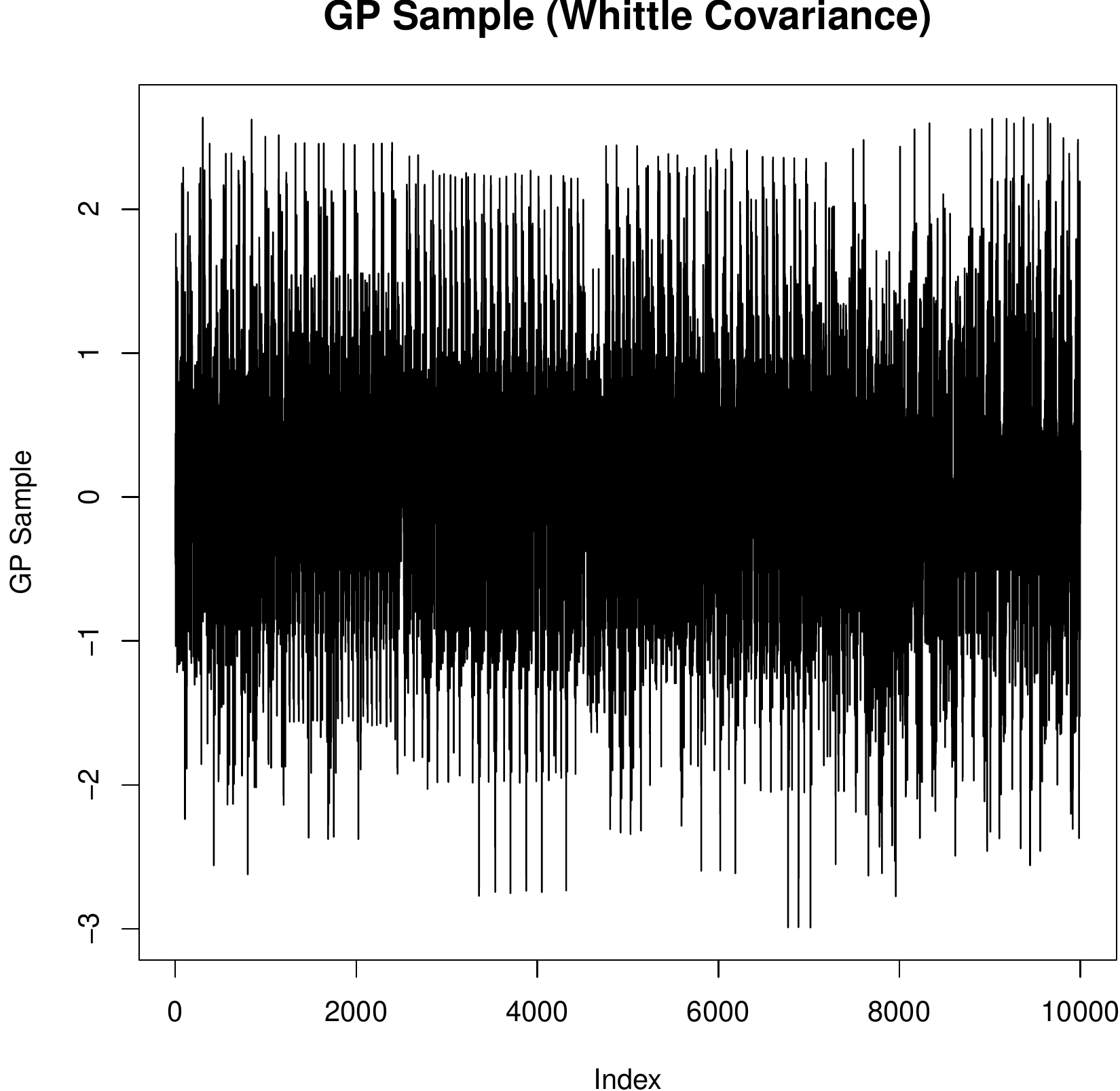}}\\
\vspace{2mm}
\subfigure [GP sample size $20000$.]{ \label{fig:gpsamp2}
\includegraphics[width=7.5cm,height=5.5cm]{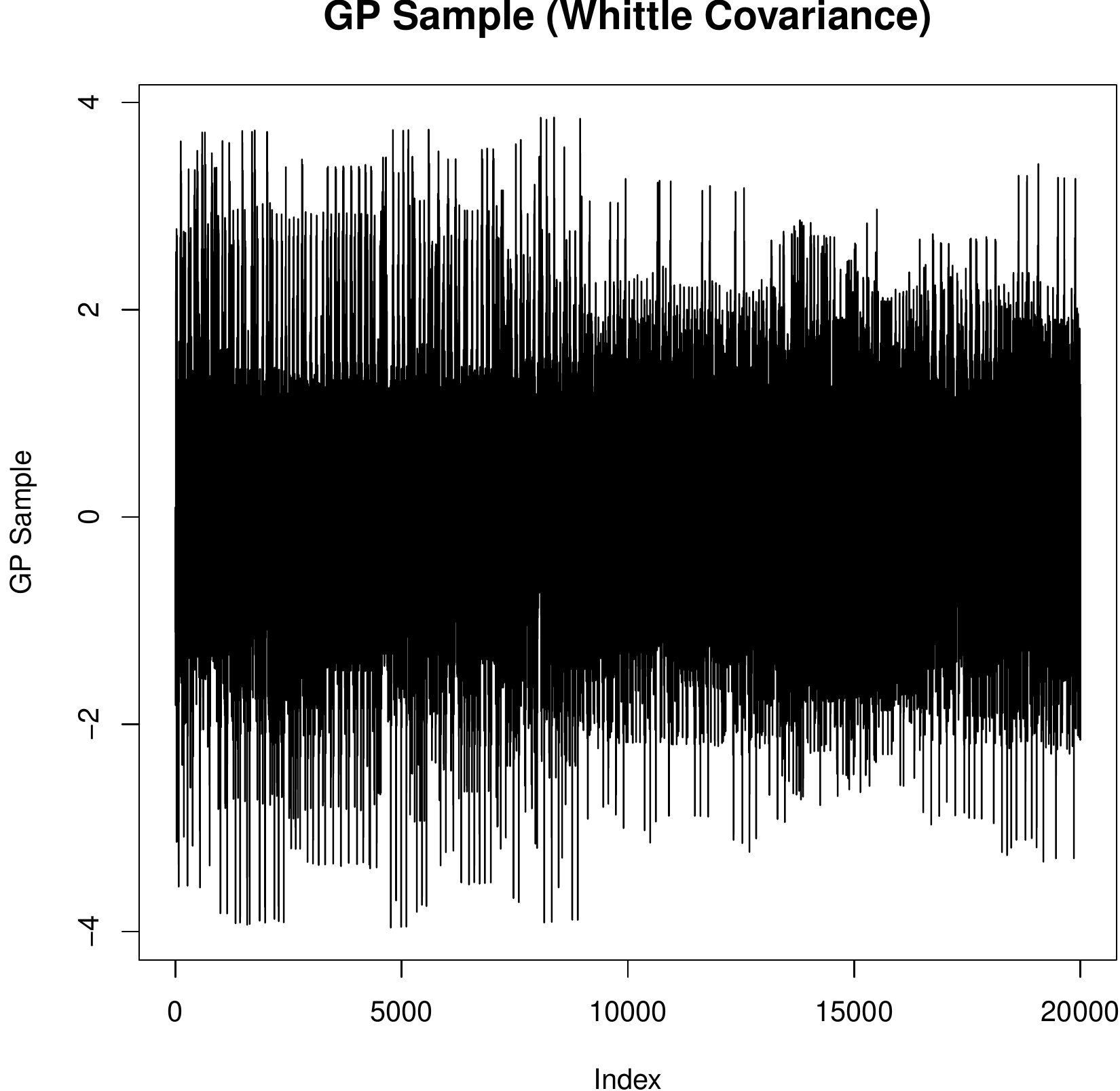}}
\caption{GP samples of sizes $10000$ and $20000$ for Whittle covariance with $\psi=0.8$ for PM 10 data.}
\label{fig:PM10_gp_samples}
\end{figure}
Hence, to obtain $\hat C_1$ we consider the GP sample of size $10000$. Setting $K=250$ as in the simulation studies, we obtain $\hat C_1=0.16$ for checking
strict stationarity. For the real PM 10 data of size $70572$, we then set $\hat C_1=0.16$ and $K=1764$. The latter is chosen such that the number of observations
per cluster is on the average $40$, to match the average number of observations per cluster in the simulated GP data. Figure \ref{fig:PM10_nonstationary}
clearly indicates strict nonstationarity of the PM 10 data.
\begin{figure}
\centering
\subfigure [Nonstationarity (PM 10 data).]{ \label{fig:PM10_nonstat}
\includegraphics[width=6.5cm,height=6.5cm]{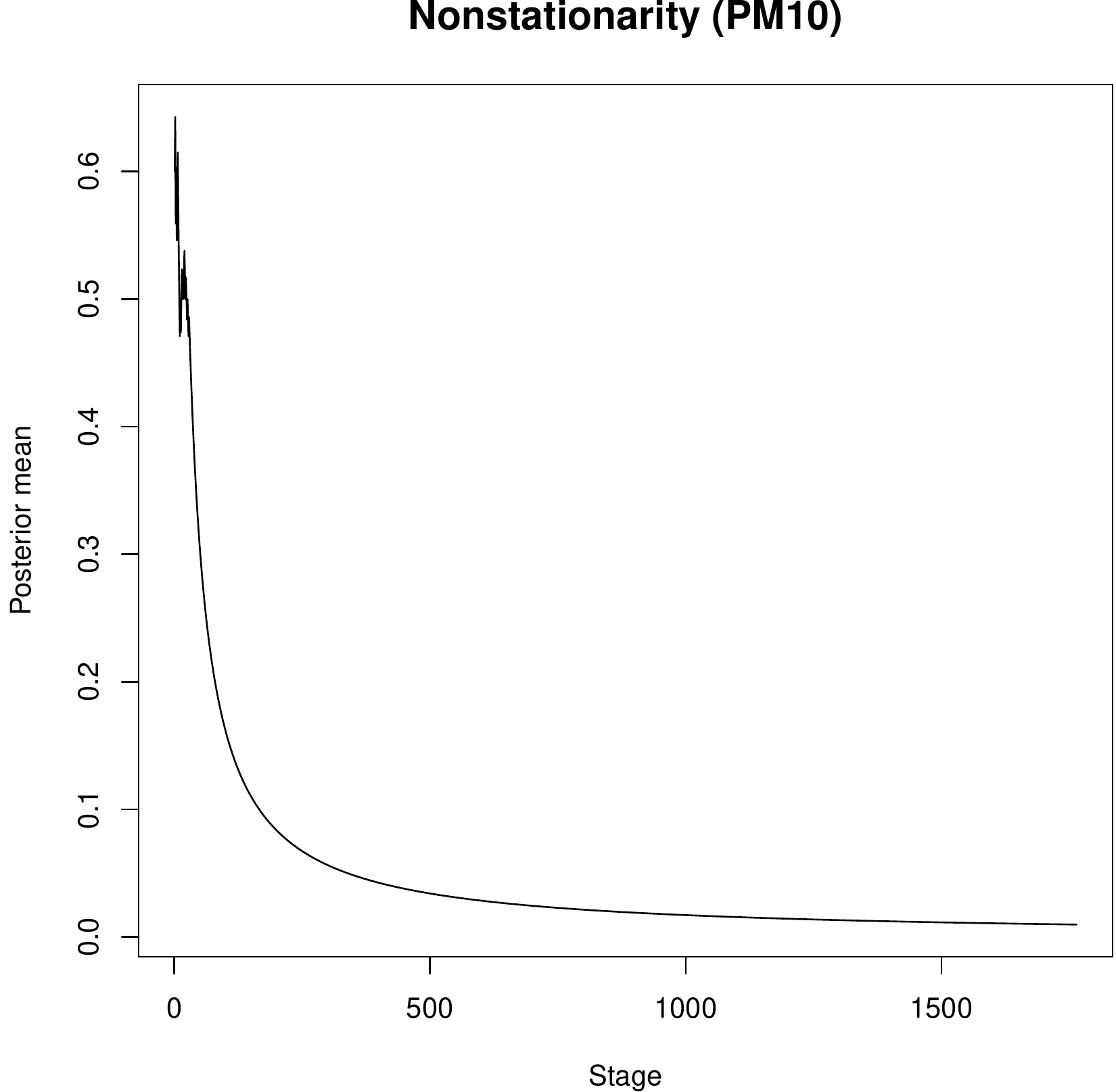}}
\caption{Detection of nonstationarity of the PM 10 data with our Bayesian method.} 
\label{fig:PM10_nonstationary}
\end{figure}

For checking covariance stationarity, our method with Whittle covariance failed to yield a valid $\hat C_1$ since we could obtain only a single neighborhood
$\mathcal N_{i,h_1,h_2}$, with $h_1=0.0$ and $h_2=0.15$. Hence, we set $\hat C_1=0.16$, the same value obtained for checking strict stationarity. Again,
for obtaining valid intervals, we needed to decrease the number of clusters and increase the number of observations per cluster. In this regard, setting
$K=500$ let us obtain four valid neighborhoods $\mathcal N_{i,h_j,h_{j+1}}$; $j=1,2,3,4$, with $h_1=0.0$, $h_2=0.1$, $h_3=0.2$, $h_4=0.3$ and $h_5=0.4$.
Figure \ref{fig:PM10_cov_nonstationary} shows covariance nonstationarity for the PM 10 data, as convergence to zero is 
indicated with $\mathcal N_{i,h_1,h_2}$ and $\mathcal N_{i,h_2,h_3}$.
\begin{figure}
\centering
\subfigure [$0\leq\|h\|<0.1$.]{ \label{fig:PM10_covns1}
\includegraphics[width=5.5cm,height=5.5cm]{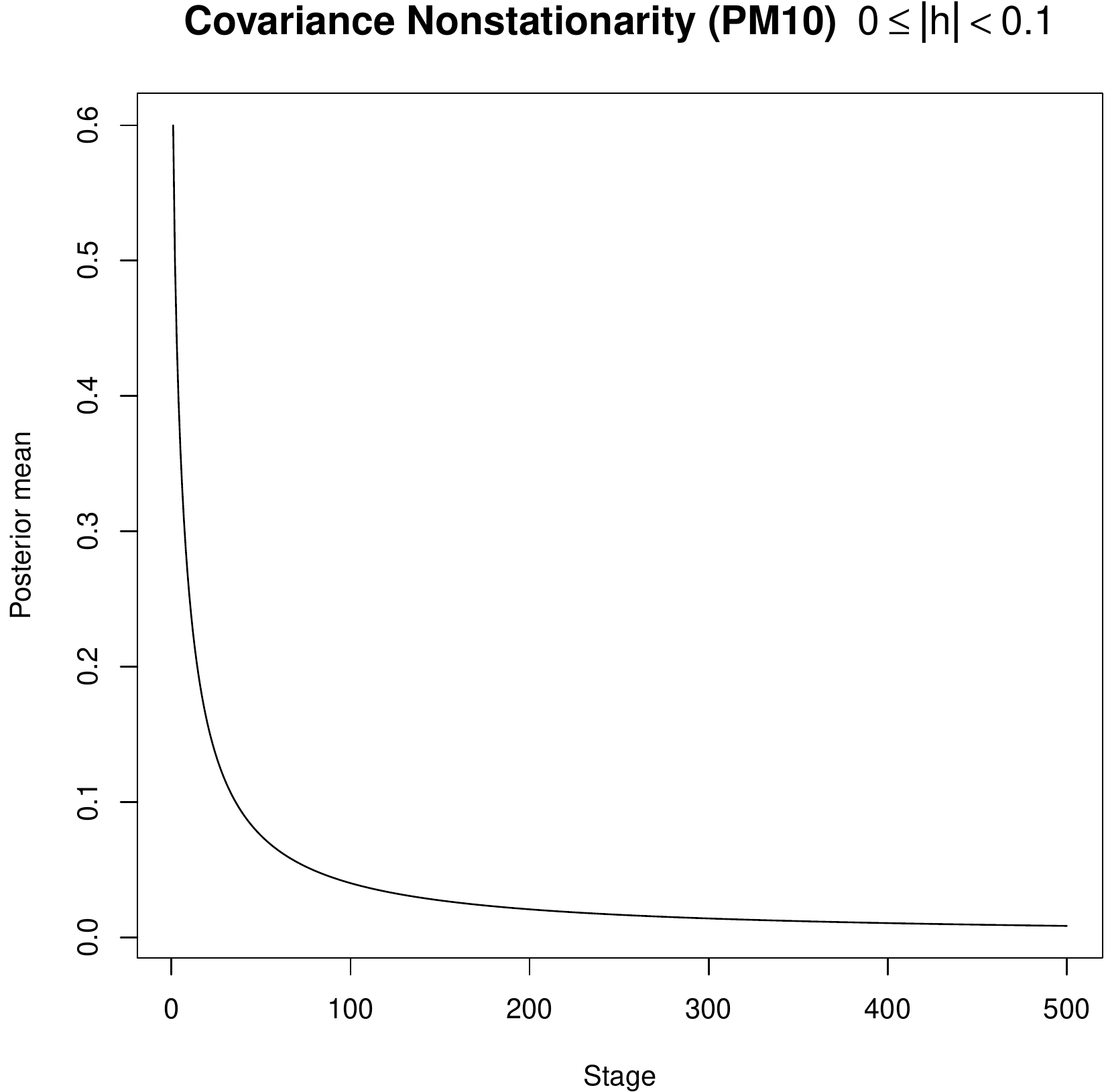}}
\hspace{2mm}
\subfigure [$0.1\leq\|h\|<0.2$.]{ \label{fig:PM10_covns2}
\includegraphics[width=5.5cm,height=5.5cm]{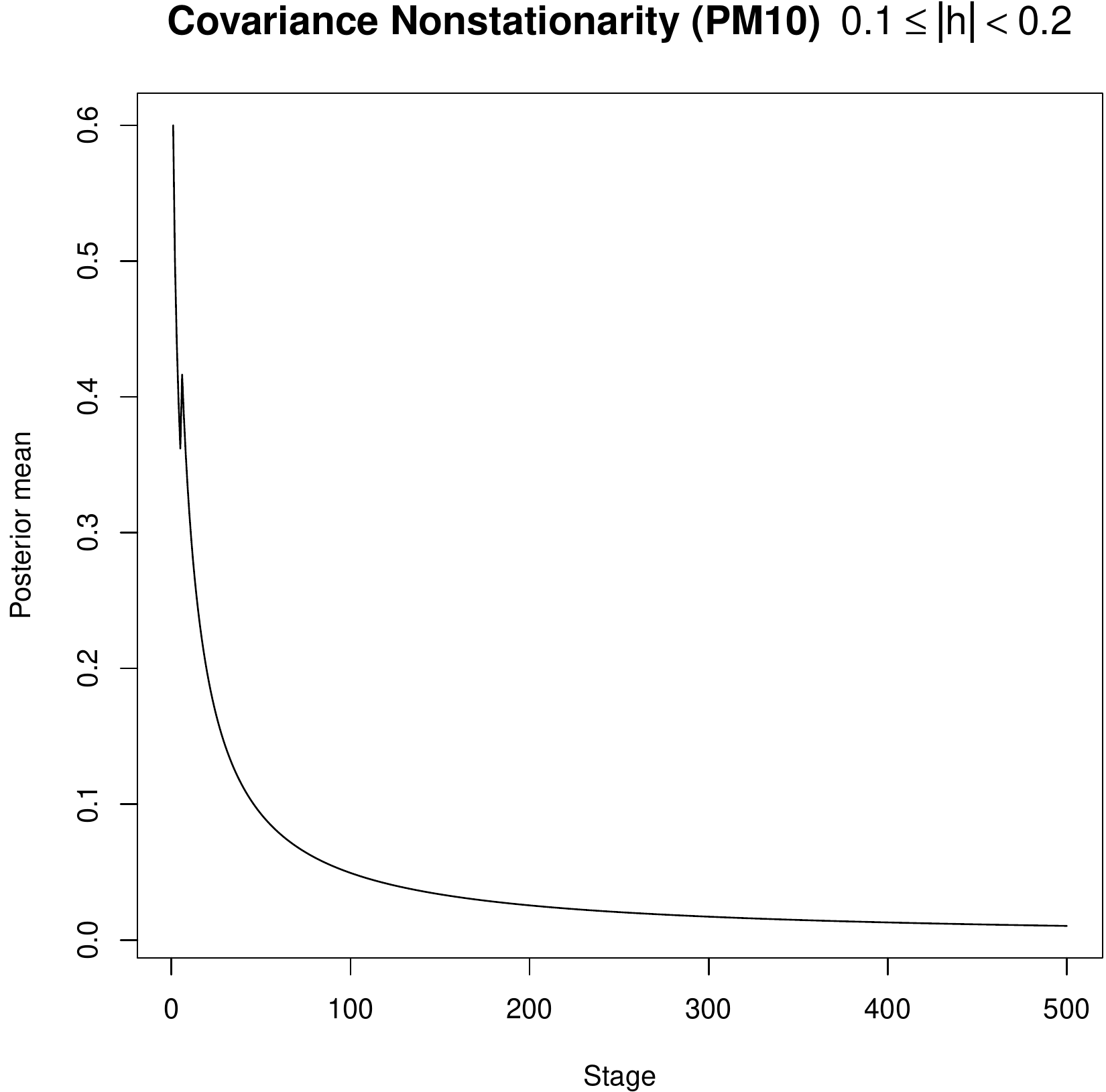}}\\
\vspace{2mm}
\subfigure [$0.2\leq\|h\|<0.3$.]{ \label{fig:PM10_covns3}
\includegraphics[width=5.5cm,height=5.5cm]{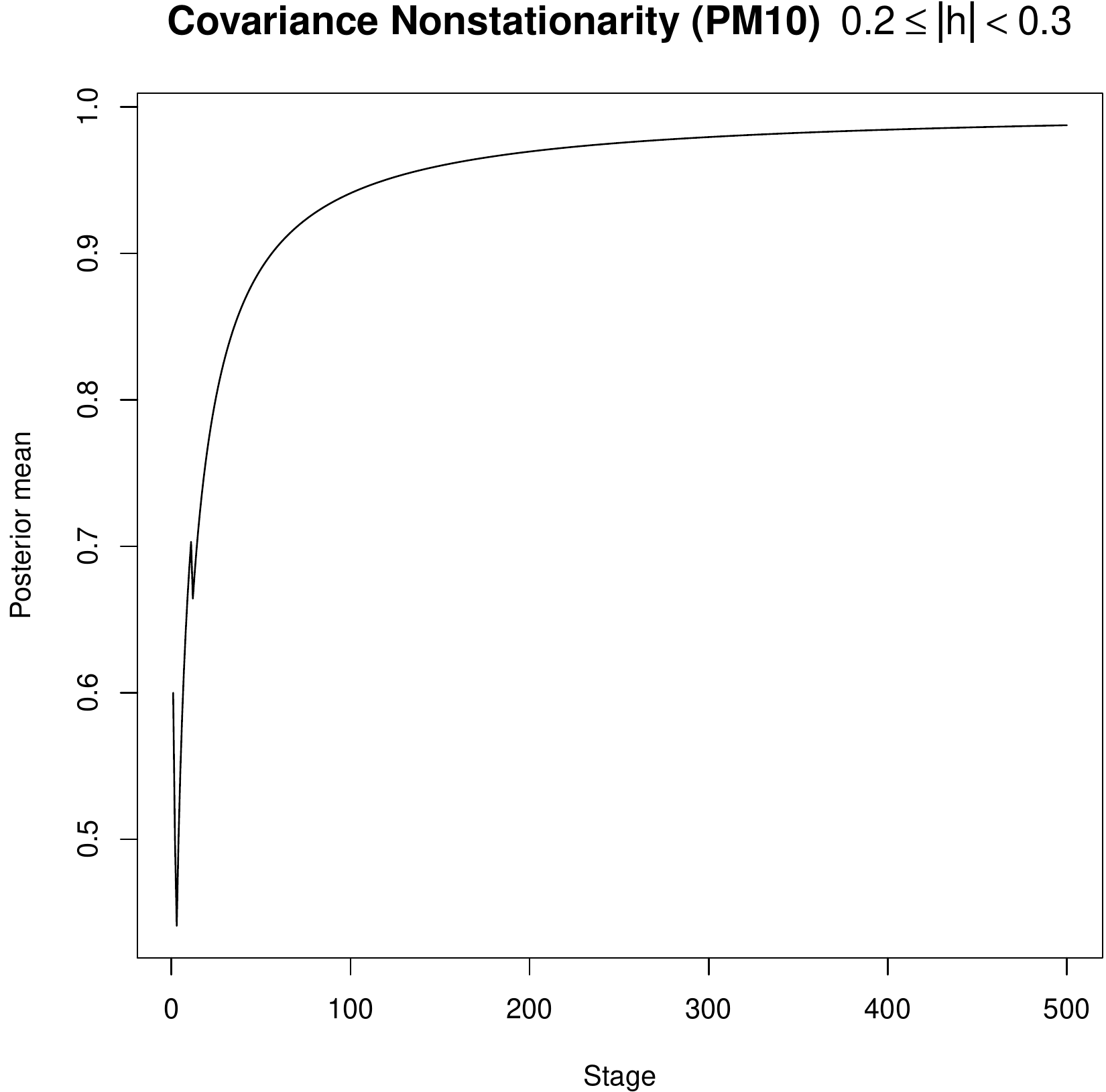}}
\hspace{2mm}
\subfigure [$0.3\leq\|h\|<0.4$.]{ \label{fig:PM10_covns4}
\includegraphics[width=5.5cm,height=5.5cm]{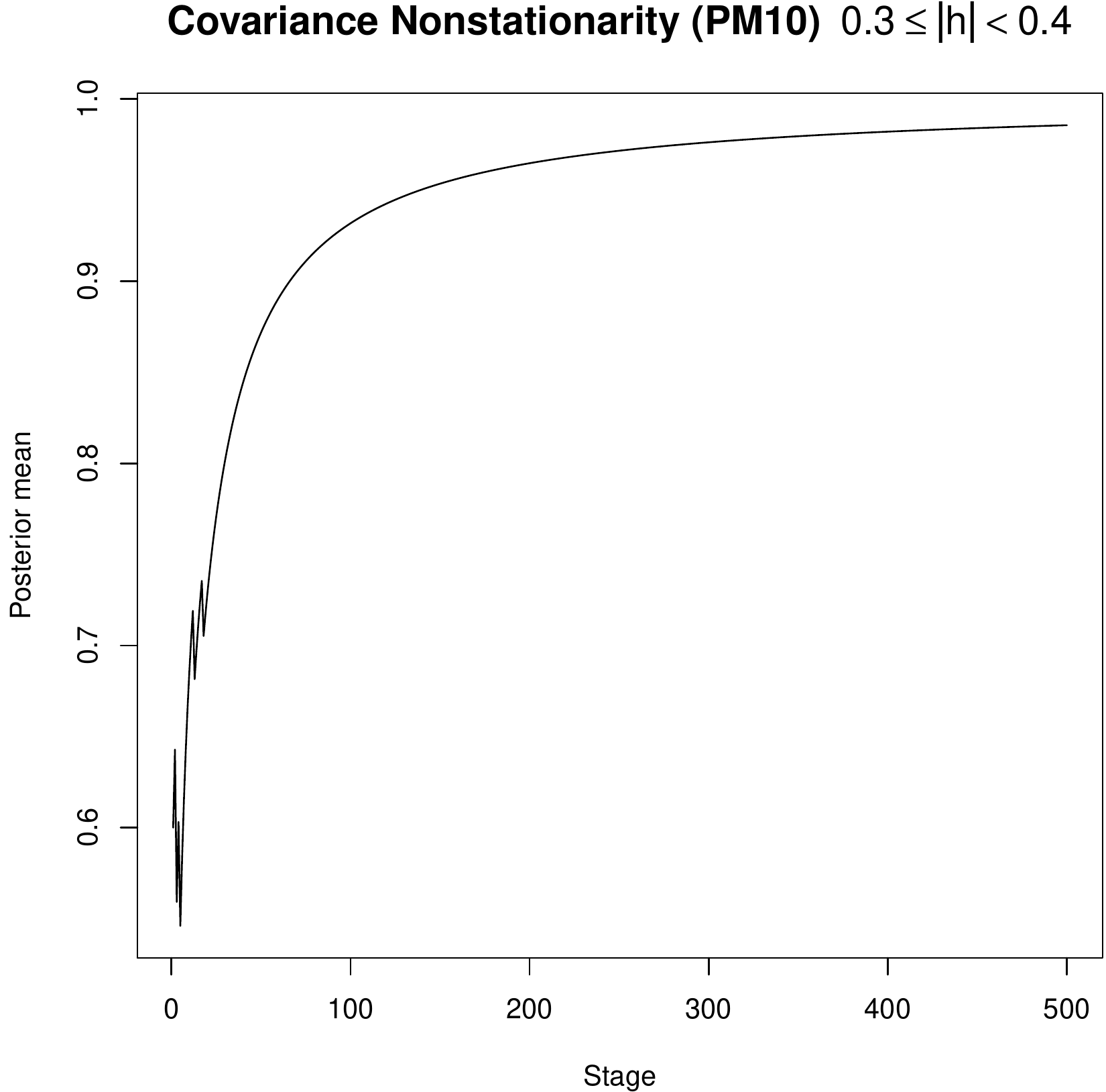}}
\caption{Detection of covariance nonstationarity of the PM 10 data.}
\label{fig:PM10_cov_nonstationary}
\end{figure}

\subsection{Spatio-temporal PM 2.5 data}
\label{subsec:PM25}

The PM 2.5 data set consists of $17496$ observations. For checking strict stationarity, we generated a GP sample 
of size $17496$ with the Whittle covariance function with $\psi=0.8$, with the same locations and time points as the real PM 2.5 data. Unlike the PM 10 case,
here the GP sample turned out to be stable, as shown in Figure \ref{fig:PM25_gp_sample}.
\begin{figure}
\centering
\subfigure [GP sample size $17496$.]{ \label{fig:gpsamp3}
\includegraphics[width=7.5cm,height=5.5cm]{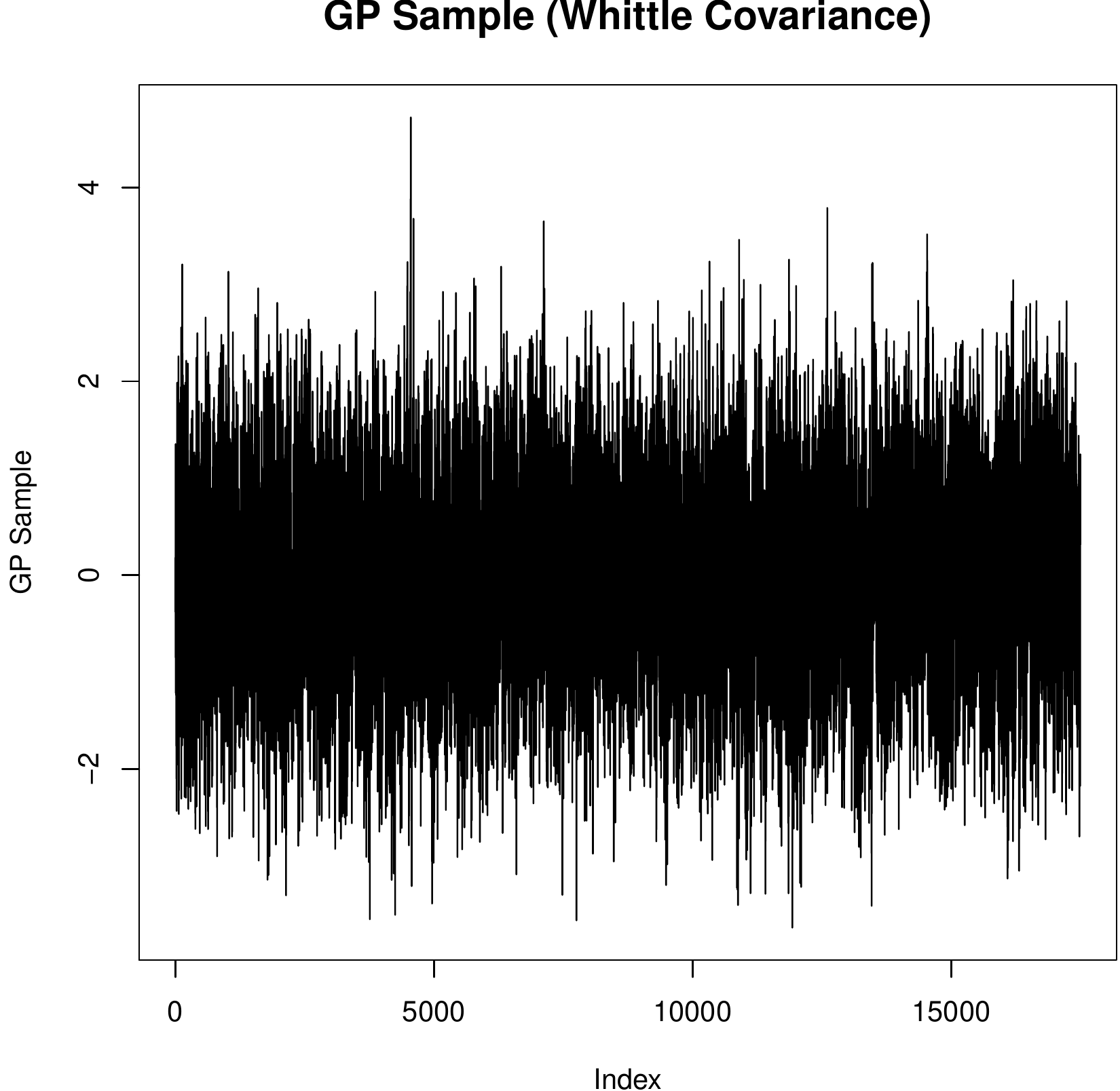}}
\caption{GP sample of size $17496$ for Whittle covariance with $\psi=0.8$ for PM 2.5 data.}
\label{fig:PM25_gp_sample}
\end{figure}
Setting $K=437$, so that there are $40$ observations on the average in each cluster, we obtained $\hat C_1=0.02$ with the Whittle based GP sample.
Figure \ref{fig:PM25_stationary} shows that the PM 2.5 data is strongly stationary. Hence, it is not necessary to check covariance statioanrity of this data.
\begin{figure}
\centering
\subfigure [Stationarity (PM 2.5 data).]{ \label{fig:PM25_stat}
\includegraphics[width=6.5cm,height=6.5cm]{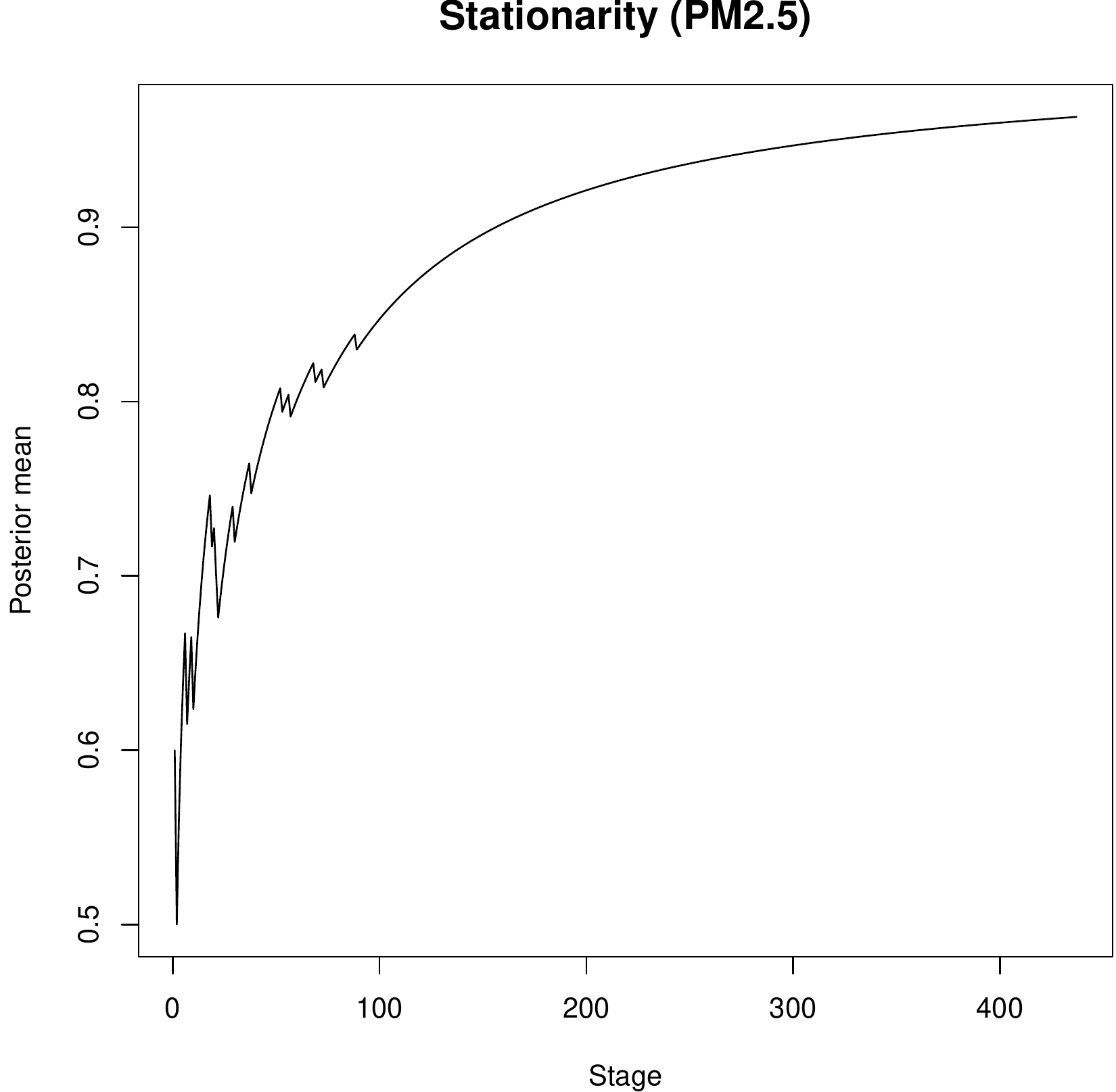}}
\caption{Detection of stationarity of the PM 2.5 data with our Bayesian method.} 
\label{fig:PM25_stationary}
\end{figure}

\section{Bayesian characterization of point processes}
\label{sec:point_processes}

Point pattern analysis is the study involving analysis of the spatial distribution of the observed events and to infer about the underlying data-generating process.
In this regard, an important question to ask is whether or not interactions exist between the events. Hence, a pertinent test that is often used in point pattern
analysis is the test of complete spatial randomness (CSR), that is, if the points are independently and uniformly distributed over the study area.
Theoretically, homogeneous Poisson point process (HPP) corresponds to CSR, and thus tests for CSR can be devised on such basis, assuming the Poisson process framework for
independent disjoint sets of events. However, rejecting CSR only rejects the HPP assumption and does not facilitate conclusion of stationarity
or nonstationarity, Poisson or non-Poisson process. Bayesian characterization of stationarity and nonstationarity can be achieved as before, while 
Bayesian characterization of CSR and Poisson assumption require further work. To characterize the Poisson assumption we exploit mutual independence of
disjoint sets of events, under the assumption of orderliness and almost sure boundedly finite property of the process without fixed atoms.

Testing for CSR can be found in \ctn{OSullivan03}, \ctn{Waller04} and \ctn{Schab05}. The key ingredient in such tests is the so-called $G$ function that
provides the distribution of the distance from any arbitrary event to its nearest event. Specifically, let $d_{ij}$ denote the distance between the
$i$-th and $j$-th events in a set of $n$ events, and for $s=1,\ldots,n$, let $d_s=\min\left\{d_{st}:t\neq s\right\}$. Consider the empirical distribution function
\begin{equation}
\hat G(x)=\frac{\sum_{s=1}^nI(d_s\leq x)}{n}.
\label{eq:hat_G}
\end{equation}
Under CSR, that is, under the assumption of homogeneous Poisson point process, $\hat G(x)$ has expectation
\begin{equation}
G(x)=1-\exp\left(-\lambda\pi x^2\right),
\label{eq:G_pp}
\end{equation}
the $G$-function. Here $\lambda$ is the intensity, or the number of events per unit area, the maximum likelihood estimator of which is given by
$\tilde\lambda=n/|W|$, where $W$ is the bounded region where the points are observed, and $|W|$ denotes the volume of $W$. 
Indeed, the entire point process $\bX$ defined on some region $\bS\subset\mathbb R^d$, for some $d\geq 1$ can not be observed, 
and hence a bounded region $W\subset \bS$ is considered where points are observed.
Let 
\begin{equation}
\tilde G(x)=1-\exp\left(-\tilde\lambda\pi x^2\right),
\label{eq:tilde_G}
\end{equation}

Let us assume that $\bX_K=\left\{X_s:s\in \cup_{i=1}^K\mathcal N_i\right\}$
has been observed, for $K>1$. Here $\cup_{i=1}^K\mathcal N_i$ corresponds to the observation window $W$.
For the purpose of asymptotics, we assume that $|\bS|$, the volume of $\bS$ tends to infinity, so that even though $|W|$ remains finite, $n$, the number
of points in $W$ tends to infinity, almost surely.

For any $x>0$, consider
\begin{equation}
\hat G_i(x)=n_i^{-1}\sum_{s\in\mathcal N_i}I(d_s\leq x),
\label{eq:eqG1}
\end{equation}
where $n_i=|\mathcal N_i|$, as before. Note that $n=\sum_{i=1}^kn_i$.

Now let
\begin{align}
\hat G_K(x)&=\frac{\sum_{s\in\cup_{i=1}^K\mathcal N_i}I(d_s\leq x)}{\sum_{i=1}^Kn_i}\notag\\
&=\frac{\sum_{i=1}^Kn_i\hat G_i(x)}{\sum_{i=1}^Kn_i}=\sum_{i=1}^K\hat p_{iK}\hat G_i(x),
\label{eq:eqG2}
\end{align}
where $\hat p_{ik}=n_i/\sum_{j=1}^Kn_j$, as before.
Let us now assume (\ref{eq:eq4}), which we recall as 
\begin{equation*}
\hat p_{iK}=\frac{n_i}{\sum_{j=1}^Kn_j}\rightarrow p_{iK}=\frac{p_i}{\sum_{j=1}^Kp_j},
\end{equation*}
as $n_j\rightarrow\infty$, for $j=1,\ldots,K$. Here $0\leq p_i\leq 1$, such that $\sum_{i=1}^{\infty}p_i=1$.

Let $W_d$ denote the space where the distances $d_i$, $i=1,\ldots,n$, associated with the observation window $W$, lie upon. 
%
However, for the asymptotic theory, we must let the window $W$ and corresponding $W_d$ to grow, otherwise the number of points $n$ can not tend to infinity. 
Indeed, for fixed $W$, even the MLE $\tilde\lambda=n/|W|$ is not a consistent estimator for $\lambda$ in the HPP case. Thus, in this regard, we consider
the sequences $W_r$, $W_{dr}$, $K=K_r$, $n_{ir}$, $n_r$, $K_r$, $\hat p_{iK_r}$ and $\tilde\lambda_r$, 
for $r=1,2,\ldots$, where the suffix $r$ is incorporated to our previous notation to signify sequences. Let $|W_r|\rightarrow\infty$ as $r\rightarrow\infty$.
Note that $K_r$ may remain finite even as $r\rightarrow\infty$.
Let us also denote by $G_{true}$ the true point process generating the data. Note that for HPP, $G_{true}=G$. In reality, the true point process, and hence $G_{true}$,
is unknown.

A problem associated with HPP is that it is hard to establish $\underset{x\in W_{dr}}{\sup}~\left|\hat G_K-\tilde G(x)\right|\rightarrow 0$, in either weak or strong sense. 
To see this, note that
\begin{align}
	\underset{x\in W_{dr}}{\sup}~\left|\tilde G(x)- G(x)\right|
	&=\underset{x\in W_{dr}}{\sup}~\left|\exp\left(-\lambda\pi x^2\right)\left(1-\exp\left(-\pi x^2\left(\tilde\lambda_r-\lambda\right)\right)\right)\right|\notag\\
	&\leq 1-\underset{x\in W_{dr}}{\inf}~\exp\left(-\pi x^2\left|\tilde\lambda_r-\lambda\right|\right).\notag
	\label{eq:nonconv1}
\end{align}
Since $\exp\left(-\pi x^2\left|\tilde\lambda_r-\lambda\right|\right)$ is decreasing in $x^2$ and $W_{dr}$ is bounded, the infimum over $W_{dr}$ is given by
$\exp\left(-\pi \xi^2_r\left|\tilde\lambda_r-\lambda\right|\right)$, where $\xi_r$ is the maximum interpoint distance in $W_{dr}$. 
In other words,
\begin{equation}
\underset{x\in W_d}{\sup}~\left|\tilde G(x)- G(x)\right|
\leq 1-\exp\left(-\pi \xi^2_r\left|\tilde\lambda_r-\lambda\right|\right).
\label{eq:nonconv2}
\end{equation}
By Markov's inequality, for any $\epsilon>0$, 
$$P\left(\xi^2_r\left|\tilde\lambda_r-\lambda\right|>\epsilon\right)<\epsilon^{-2}\xi^4_rE\left(\frac{n_r}{|W_r|}-\lambda\right)^2
=\epsilon^{-2}\lambda\frac{\xi^4_r}{|W_r|},$$
which tends to zero if $\frac{\xi^4_r}{|W_r|}\rightarrow 0$ as $r\rightarrow\infty$. But as can be easily verified, this does not hold for regular window shapes
such as squares, rectangles, circles, triangles, etc. Indeed, for these shapes, $\frac{\xi^4_r}{|W_r|}\rightarrow \infty$ as $r\rightarrow\infty$.

Instead of $\underset{x\in W_{dr}}{\sup}~\left|\hat G_K(x)- \tilde G(x)\right|$ we shall thus deal with 
$\int_{W_{dr}}\left|\hat G_K(x)- \tilde G(x)\right|dG_{true}(x)$ in the following theorem.


\begin{theorem}
\label{theorem:gc1_pp}
Assume that $\bX$ follows homogeneous Poisson point process, and that the points are observed in the window $W_r$, where $|W_r|\rightarrow\infty$
as $r\rightarrow\infty$. Let $W_{dr}$ denote the space of the distances associated with $W_r$. 
Then, for all values of $K_{\infty}=\underset{r\rightarrow\infty}{\lim}~K_r$,
\begin{equation}
	\underset{r\rightarrow\infty,n_{ir}\rightarrow\infty,i=1,\ldots,K_r}{\lim}~
	~\int_{W_{dr}}\left|\hat G_{K_r}(x)-\tilde G(x)\right|dG_{true}(x)=0,
\label{eq:gc1_pp}
\end{equation}
	almost surely if $\sum_{r=1}^{\infty}|W_r|^{-1}<\infty$.
\end{theorem}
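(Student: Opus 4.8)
The plan is to split the integrand via the triangle inequality into a sampling-error part (the empirical $G$-function versus its true mean $G$) and an intensity-estimation part ($G$ versus the plug-in estimate $\tilde G$), and then to observe that integrating against $dG_{true}$ — which equals $dG$ under the homogeneous Poisson assumption — controls the quadratic factor $x^2$ that made the supremum-norm approach fail in (\ref{eq:nonconv2}). Concretely, since $G_{true}=G$ under HPP, I would first write
\begin{align}
\int_{W_{dr}}\left|\hat G_{K_r}(x)-\tilde G(x)\right|\,dG(x)
&\leq \int_{W_{dr}}\left|\hat G_{K_r}(x)-G(x)\right|\,dG(x)\notag\\
&\quad+\int_{W_{dr}}\left|G(x)-\tilde G(x)\right|\,dG(x).\notag
\end{align}

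For the first integral I would bound it above by $\underset{x}{\sup}~\left|\hat G_{K_r}(x)-G(x)\right|$, using $\int_{W_{dr}}dG\leq 1$. Because $\bX$ is homogeneous Poisson, the nearest-neighbour distance distribution within every cell $\mathcal N_i$ is exactly $G$, so $G_i=G$ for all $i$, and hence the mixture $\sum_i p_iG_i$ collapses to $G$ whether $K_\infty$ is finite or infinite (the weights $p_{iK}$ sum to one). The supremum therefore tends to zero almost surely by the Glivenko--Cantelli argument already employed in Theorem \ref{theorem:gc1}, with $\hat P_i,P_i,\tilde P_K,P_\infty$ replaced by $\hat G_i,G,\hat G_{K_r},G$, applied to the stationary sequence of interpoint nearest-neighbour distances through \ctn{Stute80}.

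For the second integral the decisive step is the elementary Lipschitz bound $|e^{-a}-e^{-b}|\leq|a-b|$ for $a,b\geq 0$, which gives $\left|G(x)-\tilde G(x)\right|=\left|\exp(-\tilde\lambda_r\pi x^2)-\exp(-\lambda\pi x^2)\right|\leq\pi x^2\left|\tilde\lambda_r-\lambda\right|$. A direct computation with $dG(x)=2\lambda\pi x\exp(-\lambda\pi x^2)\,dx$ gives $\int_0^{\infty}x^2\,dG(x)=1/(\lambda\pi)$, whence
\begin{equation*}
\int_{W_{dr}}\left|G(x)-\tilde G(x)\right|\,dG(x)\leq\pi\left|\tilde\lambda_r-\lambda\right|\int_0^{\infty}x^2\,dG(x)=\frac{\left|\tilde\lambda_r-\lambda\right|}{\lambda}.
\end{equation*}
This is precisely where integrating against $dG_{true}$ pays off: the exponentially light tail of $G$ replaces the diameter factor $\xi_r^2$, which blew up in the supremum bound (\ref{eq:nonconv2}), by the finite constant $1/(\lambda\pi)$, leaving only the intensity error $\left|\tilde\lambda_r-\lambda\right|$ to be controlled.

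It then remains to show $\tilde\lambda_r\to\lambda$ almost surely. Since $n_r$ is Poisson with mean $\lambda|W_r|$, we have $E(\tilde\lambda_r)=\lambda$ and $Var(\tilde\lambda_r)=\lambda/|W_r|$, so Chebyshev's inequality yields $P\left(\left|\tilde\lambda_r-\lambda\right|>\epsilon\right)\leq\lambda/(\epsilon^2|W_r|)$ for every $\epsilon>0$. The hypothesis $\sum_{r=1}^{\infty}|W_r|^{-1}<\infty$ makes these probabilities summable, so Borel--Cantelli gives $\tilde\lambda_r\to\lambda$ almost surely; combining this with the first bound proves (\ref{eq:gc1_pp}). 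I expect the main obstacle to lie not in the second term — which reduces cleanly to intensity estimation once the $x^2$-integrability of $G$ is noted — but in the bookkeeping of the first term: justifying the Glivenko--Cantelli convergence uniformly across the joint limit $r\to\infty$, $n_{ir}\to\infty$, and handling the mixture weights $\hat p_{iK_r}$ exactly as in Theorem \ref{theorem:gc1} for both the finite and infinite values of $K_\infty$.
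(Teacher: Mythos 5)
Your proposal is correct, and its skeleton — triangle-inequality split into an empirical-versus-$G$ term and a $G$-versus-$\tilde G$ term, Glivenko--Cantelli via \ctn{Stute80} with the mixture weights $\hat p_{iK_r}$ for the first term, and Borel--Cantelli from $\sum_r|W_r|^{-1}<\infty$ for consistency of $\tilde\lambda_r$ — coincides with the paper's proof. Where you genuinely depart is in the second term. The paper bounds $\left|\tilde G(x)-G(x)\right|\leq 1-\exp\left(-\pi x^2\left|\tilde\lambda_r-\lambda\right|\right)$, integrates to get $G_{true}(W_{dr})-\int_{W_{dr}}\exp\left(-\pi x^2\left|\tilde\lambda_r-\lambda\right|\right)dG_{true}(x)$, and then needs two further steps: $G_{true}(W_{dr})\rightarrow 1$ and an application of the dominated convergence theorem after the almost sure convergence $\tilde\lambda_r\rightarrow\lambda$ is established. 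You instead use the Lipschitz bound $|e^{-a}-e^{-b}|\leq|a-b|$ together with the explicit second moment $\int_0^{\infty}x^2\,dG(x)=1/(\lambda\pi)$ (legitimate here because the theorem's hypothesis gives $G_{true}=G$, as the paper itself notes), obtaining the closed-form estimate $\int_{W_{dr}}\left|G(x)-\tilde G(x)\right|dG(x)\leq\left|\tilde\lambda_r-\lambda\right|/\lambda$. This buys you something the paper's argument does not: an explicit rate in terms of the intensity error, and no appeal to dominated convergence or to $G_{true}(W_{dr})\rightarrow 1$ — once Borel--Cantelli gives $\tilde\lambda_r\rightarrow\lambda$ almost surely, the term vanishes immediately. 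The paper's route, by contrast, never uses the specific functional form of $G$ beyond monotonicity of the exponential, so it is marginally more robust to the choice of reference measure; but under the stated HPP hypothesis your computation is fully justified and cleaner. Your handling of the first term is also slightly more careful than the paper's own display (\ref{eq:gv_pp2}), which drops the weights $\hat p_{iK_r}$ in the middle expression (a typo) before restoring them in the bound; you carry them correctly throughout.
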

\begin{proof}
%
Observe that
\begin{align}
	&\int_{W_{dr}}\left|\hat G_{K_r}(x)-\tilde G(x)\right|dG_{true}(x)\notag\\
	&\qquad\leq \underset{x\in W_{dr}}{\sup}~\left|\hat G_{K_r}(x)-G(x)\right|G_{true}(W_{dr}) +\int_{W_{dr}}\left|\tilde G(x)- G(x)\right|dG_{true}(x)\notag\\
	&\qquad\leq \underset{x\in W_{dr}}{\sup}~\left|\hat G_{K_r}(x)-G(x)\right| +\int_{W_{dr}}\left|\tilde G(x)- G(x)\right|dG_{true}(x).
\label{eq:pp1}
\end{align}

Since 
\begin{equation}
	\underset{x\in W_{dr}}{\sup}~\left|\hat G_{K_r}(x)-G(x)\right|
	=\underset{x\in W_{dr}}{\sup}~\left|\sum_{i=1}^{K_r}\left(\hat G_i(x)-G(x)\right)\right|
	\leq \sum_{i=1}^{K_r}\hat p_{iK_r}\underset{x\in W_{dr}}{\sup}~\left|\hat G_i(x)-G(x)\right|.
	\label{eq:gv_pp2}	
\end{equation}
Now, as $r\rightarrow\infty$, the right hand side of (\ref{eq:gv_pp2}) converges almost surely to 
	\begin{equation}
		\sum_{i=1}^{K_{\infty}} p_{iK_{\infty}}\underset{r\rightarrow\infty}{\lim}~\underset{x\in W_{dr}}{\sup}~\left|\hat G_i(x)-G(x)\right|,
		\label{eq:conv_pp3}
	\end{equation}
since $\hat p_{iK_r}\rightarrow p_{iK_{\infty}}$ in the same way as (\ref{eq:eq4}). 
	Also, $\underset{x\in W_{dr}}{\sup}~\left|\hat G_i(x)-G(x)\right|\stackrel{a.s.}{\longrightarrow}0$, 
	as $r\rightarrow\infty$ and $n_{ir}\rightarrow\infty$ by
Glivenko-Cantelli theorem for stationary random variables (\ctn{Stute80}). 
That is, given any $K_{\infty}$, (\ref{eq:conv_pp3}) converges to zero almost surely. Thus, (\ref{eq:conv_pp3}) converges to zero almost surely, even as
$K_{\infty}\rightarrow\infty$. 
Hence, it follows from these arguments and (\ref{eq:gv_pp2}) that for all values of $K_{\infty}$,
\begin{equation*}
	\underset{x\in W_{dr}}{\sup}~\left|\hat G_K(x)-G(x)\right|\stackrel{a.s.}{\longrightarrow}0,~\mbox{as}~n_{ir}\rightarrow\infty,~i=1,\ldots,K_r,~r\rightarrow\infty,
\end{equation*}
and hence
\begin{equation}
	\int_{W_{dr}}\left|\hat G_K(x)-G(x)\right|dG_{true}(x)\stackrel{a.s.}{\longrightarrow}0,~\mbox{as}~n_{ir}\rightarrow\infty,~i=1,\ldots,K_r,~r\rightarrow\infty.
\label{eq:gv_pp3}
\end{equation}
Now note that, for $\tilde\lambda_r=n_r/|W_r|$,
\begin{align}
	\int_{W_{dr}}\left|\tilde G(x)- G(x)\right|dG_{true}(x)
	&=\int_{W_{dr}}\left|\exp\left(-\lambda\pi x^2\right)\left(1-\exp\left(-\pi x^2\left(\tilde\lambda-\lambda\right)\right)\right)\right|dG_{true}(x)\notag\\
	&\leq G_{true}\left(W_{dr}\right)-\int_{W_{dr}}\exp\left(-\pi x^2\left|\tilde\lambda-\lambda\right|\right)dG_{true}(x)
	\label{eq:l2_conv1}.
\end{align}
In (\ref{eq:l2_conv1}), 
	\begin{equation}	
		G_{true}\left(W_{dr}\right)\rightarrow 1,~\mbox{as}~r\rightarrow\infty. 
		\label{eq:l2_conv2}
	\end{equation}
Now, by Markov's inequality, for any $\epsilon>0$,
\begin{align}
	&\sum_{r=1}^{\infty}P\left(\left|\tilde\lambda_r-\lambda\right|>\epsilon\right)
	=\sum_{r=1}^{\infty}P\left(\left|\frac{n_r}{|W_r|}-\lambda\right|>\epsilon\right)\notag\\
	&\qquad<\epsilon^{-2}\sum_{r=1}^{\infty}E\left(\frac{n_r}{|W_r|}-\lambda\right)^2=\epsilon^{-2}\lambda\sum_{r=1}^{\infty}\frac{1}{|W_r|}
	<\infty,\notag
\end{align}
where the last step is due to our assumption.
Hence, by Borel-Cantelli lemma, $\left|\tilde\lambda_r-\lambda\right|\stackrel{a.s.}{\longrightarrow}0,~\mbox{as}~r\rightarrow\infty$.
By dominated convergence theorem, it follows that 
\begin{equation}
	\int_{W_{dr}}\exp\left(-\pi x^2\left|\tilde\lambda-\lambda\right|\right)dG_{true}(x)\stackrel{a.s.}{\longrightarrow}1,~\mbox{as}~r\rightarrow\infty.
\label{eq:l2_conv3}
\end{equation}	

It follows from (\ref{eq:l2_conv1}), (\ref{eq:l2_conv2}) and (\ref{eq:l2_conv3}) that
	\begin{equation}
		\int_{W_{dr}}\left|\tilde G(x)- G(x)\right|dG_{true}(x)\stackrel{a.s.}{\longrightarrow}0,~\mbox{as}~r\rightarrow\infty.
		\label{eq:gv_pp5}
	\end{equation}
The result follows by combining (\ref{eq:pp1}), (\ref{eq:gv_pp3}) and (\ref{eq:gv_pp5}).	
\end{proof}

\begin{remark}
\label{remark:on_K}
	Note that unlike in the previous cases where we required $K\rightarrow\infty$, here we did not require the assumption $K_{\infty}\rightarrow\infty$.
	Theorem \ref{theorem:gc1_pp} explicitly mentions that the result holds for all values of $K_{\infty}$. This difference is due to the fact that
	in the asymptotics of point process we assumed that the observation window $W_r$ is growing with $r$, and with such growing observation window,
	the entire point process can be ultimately captured. Hence increasing the number of clusters is not required. 
	From a more mathematical perspective, note that $\hat G_K$ uses all the observations in the observation window, and so the value of $K$ is irrelevant
	mathematically.
\end{remark}

\begin{remark}
\label{remark:gv_pp}
Note that by direct application of Glivenko-Cantelli theorem for stationary random variables we can obtain, 
\begin{equation}
	\underset{x\in W_{dr}}{\sup}~\left|\hat G_{K_r}(x)-G(x)\right|	\stackrel{a.s.}{\longrightarrow}0,~\mbox{as}~r\rightarrow\infty.
\label{eq:gv_pp1}
\end{equation}
This does not require breaking up the observation window $W_r$ into sub-regions $\mathcal N_1,\ldots,\mathcal N_{K_r}$, and the
assumption $n_{ir}\rightarrow\infty$ for $i=1,\ldots,K_r$.
However, it is important to detect which sub-regions of $W_r$ are not representatives of CSR. From this perspective, it is important to consider 
the sub-regions $\mathcal N_1,\ldots,\mathcal N_{K_r}$, and consideration of the form (\ref{eq:eqG2}), which we formalize in our Bayesian characterization.
\end{remark}

Let $\{c_j\}_{j=1}^{\infty}$ be a non-negative decreasing sequence and
\begin{equation}
	Y_{j,n_{jr}}=\mathbb I{\left\{\int_{W_{dr}}\left|\hat G_j(x)-\tilde G(x)\right|dG(x)\leq c_j\right\}}.
\label{eq:Y_j_n_pp}
\end{equation}
In practice, we shall approximate $\int_{W_{dr}}\left|\hat G_j(x)-\tilde G(x)\right|dG(x)$ by $\frac{1}{n_{jr}}\sum_{i=1}^{n_{jr}}\left|\hat G_j(d_i)-\tilde G(d_i)\right|$,
where the distances $d_i$ are assumed to correspond to the true data-generating point process $G_{true}$.

As before, let, for $j\geq 1$,
\begin{equation}
	P\left(Y_{j,n_{jr}}=1\right)=p_{j,n_{jr}}.
\label{eq:p_j_n_pp}
\end{equation}
Hence, the likelihood of $p_{j,n_{jr}}$, given $y_{j,n_{jr}}$, is given by the form (\ref{eq:likelihood}).

As before, we construct a recursive Bayesian methodology that formally characterizes
homogeneous Poisson process and otherwise in terms of formal posterior convergence. 
The relevant theorems in this regard, the proofs of which are similar to stationarity and nonstationarity characterizations, are presented
below as Theorems \ref{theorem:convergence_pp} and \ref{theorem:divergence_pp}.


\begin{theorem}
\label{theorem:convergence_pp}
For all $\omega\in\mathfrak S\cap\mathfrak N^c$, where $\mathfrak N$ is some null set having probability measure zero,
$\bX\cap W$ follows homogeneous Poisson process if and only if 
for any monotonically decreasing sequence 
$\left\{c_j(\omega)\right\}_{j=1}^{\infty}$,
\begin{equation}
	\pi\left(\mathcal N_1|y_{k,n_{kr}}(\omega)\right)\rightarrow 1,
\label{eq:consistency_at_1_pp}
\end{equation}
as $k\rightarrow\infty$ and $n_{jr}\rightarrow\infty$ for $j=1,\ldots,K_r$ satisfying (\ref{eq:eq4}) and $K_r\rightarrow\infty$ as $r\rightarrow\infty$, 
where $\mathcal N_1$ is any neighborhood of 1 (one).
\end{theorem}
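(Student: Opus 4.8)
The plan is to replicate the argument of Theorem \ref{theorem:convergence} step for step, with the sup-norm discrepancy $\underset{C}{\sup}~|\hat P_j(C)-\tilde P_K(C)|$ replaced by the integrated discrepancy $D_{j,r}:=\int_{W_{dr}}\left|\hat G_j(x)-\tilde G(x)\right|dG(x)$ that defines the indicator (\ref{eq:Y_j_n_pp}), and with Theorem \ref{theorem:gc1_pp} playing the role that Theorem \ref{theorem:gc1} played there. Since the likelihood (\ref{eq:likelihood}) and the recursive $Beta$ prior are identical to the stationarity setup, the $k$-th stage posterior of $p_{k,n_{kr}}$ is again $Beta\left(\sum_{j\leq k}\alpha_j+\sum_{j\leq k}y_{j,n_{jr}},~k+\sum_{j\leq k}\beta_j-\sum_{j\leq k}y_{j,n_{jr}}\right)$ with $\alpha_j=\beta_j=1/j^2$, so its mean and variance are exactly (\ref{eq:postmean_p_k_2}) and (\ref{eq:postvar_p_k_2}).

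\emph{Forward direction.} Suppose $\bX\cap W$ follows HPP, so that $G_{true}=G$. Each cluster $\mathcal N_j$ inherits the Poisson property, whence by the Glivenko-Cantelli theorem for stationary sequences (\ctn{Stute80}) $\underset{x\in W_{dr}}{\sup}~|\hat G_j(x)-G(x)|\stackrel{a.s.}{\longrightarrow}0$ as $n_{jr}\rightarrow\infty$. Combining this with $\left|\tilde\lambda_r-\lambda\right|\stackrel{a.s.}{\longrightarrow}0$, established within the proof of Theorem \ref{theorem:gc1_pp} via Borel-Cantelli under $\sum_r|W_r|^{-1}<\infty$ (so that $\tilde G\rightarrow G$ uniformly on the bounded set $W_{dr}$), forces $D_{j,r}\stackrel{a.s.}{\longrightarrow}0$ for every fixed $j$. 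Hence, exactly as in (\ref{eq:bound_S}), for any monotonically decreasing sequence $\{c_j(\omega)\}_{j=1}^{\infty}$ we have $D_{j,r}\leq c_j(\omega)$ for all $n_{jr}$ sufficiently large, so $y_{j,n_{jr}}(\omega)=1$ for every $j\geq 1$ and therefore $\sum_{j=1}^ky_{j,n_{jr}}(\omega)=k$. Substituting this together with $\sum_{j=1}^k 1/j^2\rightarrow\pi^2/6$ into (\ref{eq:postmean_p_k_2}) and (\ref{eq:postvar_p_k_2}) yields posterior mean $\rightarrow 1$ and posterior variance $\rightarrow 0$, as in (\ref{eq:postmean_p_k_limit}) and (\ref{eq:postvar_p_k_limit}), and Chebychev's inequality then delivers (\ref{eq:consistency_at_1_pp}).

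\emph{Reverse direction.} Assume (\ref{eq:consistency_at_1_pp}) holds; then, as in the derivation of (\ref{eq:postmean1}) and (\ref{eq:postvar1}), the posterior mean tends to $1$. I would argue by contraposition: if $\bX\cap W$ is not HPP, then the true local $G$-function differs from the CSR form $G(x)=1-\exp\left(-\lambda\pi x^2\right)$ on a set of positive $G$-measure for infinitely many clusters, so that $\hat G_j$ converges to a limit distinct from $G$ and $D_{j,r}$ stays bounded away from $0$ for those clusters. Consequently there exists $j_0(\omega)\geq 1$ with $y_{j,n_{jr}}(\omega)=0$ for all $j\geq j_0(\omega)$, whence $0\leq\sum_{j=1}^ky_{j,n_{jr}}(\omega)\leq j_0(\omega)$; feeding this bounded sum into (\ref{eq:postmean_p_k_2}) forces the posterior mean to $0$, as in (\ref{eq:postmean_div}), contradicting posterior mean $\rightarrow 1$.

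The main obstacle I anticipate lies in the reverse direction: unlike the stationarity setting, where Theorem \ref{theorem:gc3} supplies the required ``bounded-away-from-zero'' lower bound directly, no analogous lemma is proved here for point processes. I would need to establish rigorously that for a non-Poisson but orderly, boundedly finite, atomless process the empirical $\hat G_j$ converges to a local $G$-function genuinely distinct from the CSR $G$ for infinitely many sub-windows, and that this distinctness survives the comparison against the estimated $\tilde G$, which itself converges to $G$. A secondary technical point is reconciling the growing-window asymptotics of Theorem \ref{theorem:gc1_pp}, in which $K_\infty$ may remain finite (Remark \ref{remark:on_K}), with the requirement $k\rightarrow\infty$ needed for the recursive posterior to concentrate; I would make explicit that the recursion is indexed by the cluster label, so that imposing $K_r\rightarrow\infty$ is precisely what renders both limits compatible.
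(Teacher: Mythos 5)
Your proposal follows precisely the route the paper intends: the paper offers no separate argument for this theorem, saying only that the proofs ``are similar to stationarity and nonstationarity characterizations,'' i.e., to Theorems \ref{theorem:convergence} and \ref{theorem:divergence}, with the integrated discrepancy $D_{j,r}=\int_{W_{dr}}|\hat G_j(x)-\tilde G(x)|\,dG(x)$ replacing the sup-norm and Theorem \ref{theorem:gc1_pp} replacing Theorem \ref{theorem:gc1}; your ``secondary technical point'' about $K_r\rightarrow\infty$ is exactly the content of Remark \ref{remark:remark2}. One correction to your forward direction: you justify $D_{j,r}\rightarrow 0$ partly by asserting that $\tilde G\rightarrow G$ uniformly on $W_{dr}$, but the paper explicitly shows this uniform convergence is unavailable --- by (\ref{eq:nonconv2}) the sup-norm error is controlled only through $\xi^2_r\left|\tilde\lambda_r-\lambda\right|$, and the Markov bound $\epsilon^{-2}\lambda\xi^4_r/|W_r|$ diverges for regular window shapes. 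The correct step, as in the proof of Theorem \ref{theorem:gc1_pp}, is to combine $\left|\tilde\lambda_r-\lambda\right|\stackrel{a.s.}{\longrightarrow}0$ (Borel--Cantelli under $\sum_r|W_r|^{-1}<\infty$) with the dominated convergence theorem applied to the $G_{true}$-integral; this is precisely why the integrated discrepancy was adopted in place of the sup-norm in the first place. With that repair your forward direction is complete.

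The obstacle you flag in the reverse direction is a genuine gap, and the paper does not close it either. In the stationarity setting the contraposition step is licensed by Theorem \ref{theorem:gc3}, which keeps the discrepancy bounded away from zero under nonstationarity; Section \ref{sec:point_processes} states and proves no analogue for the Poisson problem (Theorem \ref{theorem:gc1_pp} covers only the HPP side). Moreover, the missing lemma is not merely unproven but doubtful at this level of generality: it would amount to claiming that the nearest-neighbour distance distribution identifies the homogeneous Poisson process, whereas distance-based summary functions of this kind are well known not to characterize point processes (in the spirit of the Baddeley--Silverman counterexample for the $K$-function). A non-Poisson process whose local $G$-functions coincide with the CSR form $1-\exp\left(-\lambda\pi x^2\right)$ would make every $y_{j,n_{jr}}$ eventually equal to $1$ and drive the posterior to $1$, contradicting the ``only if'' claim. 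Any complete proof must therefore either restrict the class of alternatives so that departure from HPP forces the local $G$-functions away from the CSR form, or replace the $G$-function by a discrepancy that genuinely identifies Poissonness --- which is presumably why the paper later characterizes the Poisson property through mutual independence (Theorem \ref{theorem:convergence_indep_pp2}) under assumptions (A1)--(A2) rather than through the $G$-function alone.
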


\begin{theorem}
\label{theorem:divergence_pp}
$\bX\cap W$ does not follow homogeneous Poisson process if and only if 
for any $\omega\in\mathfrak S\cap\mathfrak N^c$ where $\mathfrak N$ is some null set having probability measure zero, 
for any choice of the non-negative, monotonically decreasing sequence $\{c_j(\omega)\}_{j=1}^{\infty}$,
\begin{equation}
	\pi\left(\mathcal N_0|y_{k,n_{kr}(\omega)}(\omega)\right)\rightarrow 1,
\label{eq:consistency_at_0_pp}
\end{equation}
as $k\rightarrow\infty$ and $n_{jr}\rightarrow\infty$, $j=1,\ldots,K_r$ satisfying (\ref{eq:eq4}), and $K_r\rightarrow\infty$ as $r\rightarrow\infty$, 
where $\mathcal N_0$ is any neighborhood of 0 (zero).
\end{theorem}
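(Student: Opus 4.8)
The plan is to mirror the argument used for Theorem~\ref{theorem:divergence}, since the likelihood~(\ref{eq:likelihood}) and the recursive Beta prior/posterior construction are identical here, with $Y_{j,n_{jr}}$ in~(\ref{eq:Y_j_n_pp}) playing the role of the stationarity indicator. In particular, the posterior of $p_{k,n_{kr}}$ is again Beta, with mean and variance given by~(\ref{eq:postmean_p_k_2}) and~(\ref{eq:postvar_p_k_2}) with $y_{j,n_j}$ replaced by $y_{j,n_{jr}}$. Thus the whole argument reduces to controlling the partial sums $\sum_{j=1}^k y_{j,n_{jr}}(\omega)$ in the two directions, and the roles played by Theorems~\ref{theorem:gc1} and~\ref{theorem:convergence} in the stationarity case are taken over by Theorem~\ref{theorem:gc1_pp} and Theorem~\ref{theorem:convergence_pp}, respectively.

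For the forward implication, suppose $\bX\cap W$ does not follow a homogeneous Poisson process. First I would show that there exists $j_0(\omega)\geq 1$ such that for every $j\geq j_0(\omega)$ the per-cluster integral $\int_{W_{dr}}|\hat G_j(x)-\tilde G(x)|\,dG(x)$ converges to a strictly positive limit as $n_{jr}\rightarrow\infty$ and $r\rightarrow\infty$. Granting this, for any non-negative monotonically decreasing sequence $\{c_j(\omega)\}_{j=1}^{\infty}$ (which necessarily tends to $0$) the event inside~(\ref{eq:Y_j_n_pp}) eventually fails for each such $j$, so $y_{j,n_{jr}}(\omega)=0$ for all $j\geq j_0(\omega)$ once $n_{jr}$ and $r$ are large. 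Consequently $0\leq\sum_{j=1}^k y_{j,n_{jr}}(\omega)\leq j_0(\omega)$ stays bounded while $k\rightarrow\infty$ and $\sum_{j=1}^k 1/j^2\rightarrow\pi^2/6$. Substituting into~(\ref{eq:postmean_p_k_2}) and~(\ref{eq:postvar_p_k_2}) exactly as in the derivation of~(\ref{eq:postmean_div}) and~(\ref{eq:postvar_div}) gives $E(p_{k,n_{kr}}|y_{k,n_{kr}}(\omega))\rightarrow 0$ and $Var(p_{k,n_{kr}}|y_{k,n_{kr}}(\omega))\rightarrow 0$, and Chebychev's inequality then yields~(\ref{eq:consistency_at_0_pp}) for any neighbourhood $\mathcal N_0$ of $0$.

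For the converse, I would assume~(\ref{eq:consistency_at_0_pp}) and read off, as in Theorem~\ref{theorem:divergence}, that $E(p_{k,n_{kr}}|y_{k,n_{kr}}(\omega))\rightarrow 0$. If $\bX\cap W$ were a homogeneous Poisson process, Theorem~\ref{theorem:convergence_pp} would force $\pi(\mathcal N_1|y_{k,n_{kr}}(\omega))\rightarrow 1$ and hence $E(p_{k,n_{kr}}|y_{k,n_{kr}}(\omega))\rightarrow 1$, a contradiction; therefore $\bX\cap W$ does not follow a homogeneous Poisson process.

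The main obstacle is the positivity claim used at the start of the forward direction, that is, the point-process analogue of the condition ``$P_i\neq P_\infty$ for infinitely many $i$''. To establish it I would combine three ingredients: (i) the Glivenko--Cantelli theorem for stationary sequences (\ctn{Stute80}) applied within each locally stationary cluster, which gives $\hat G_j(x)\rightarrow G_{j,true}(x)$, the true nearest-neighbour distance distribution on $\mathcal N_j$; (ii) the almost sure convergence $\tilde\lambda_r\rightarrow\lambda$, obtained as in the proof of Theorem~\ref{theorem:gc1_pp} via Markov's inequality and the Borel--Cantelli lemma under $\sum_{r}|W_r|^{-1}<\infty$, which gives $\tilde G(x)\rightarrow 1-\exp(-\lambda\pi x^2)=G(x)$; and (iii) the structural fact that a non-Poisson process must exhibit a departure of $G_{j,true}$ from the Poisson form $G$ on a set of positive $G$-measure for all sufficiently large $j$, so that $\int_{W_{dr}}|\hat G_j-\tilde G|\,dG\rightarrow\int|G_{j,true}-G|\,dG>0$. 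Ingredient (iii) is the genuinely delicate step, since it is where the absence of the Poisson property must be translated into a quantitative, cluster-wise discrepancy of nearest-neighbour distributions; the remaining manipulations are routine given the moment formulas already available.
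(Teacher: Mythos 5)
Your proposal takes essentially the same route as the paper: the paper gives no separate argument for Theorem~\ref{theorem:divergence_pp}, stating only that its proof is ``similar to stationarity and nonstationarity characterizations,'' i.e., precisely the transcription of Theorem~\ref{theorem:divergence} (bounded partial sums of the $y_{j,n_{jr}}$, the moment formulas~(\ref{eq:postmean_p_k_2})--(\ref{eq:postvar_p_k_2}), Chebychev, and contradiction via Theorem~\ref{theorem:convergence_pp}) that you wrote out. The one step you rightly flag as delicate --- translating the failure of the Poisson property into a strictly positive limiting discrepancy $\int_{W_{dr}}|\hat G_j-\tilde G|\,dG>0$ for all large $j$, the point-process analogue of Theorem~\ref{theorem:gc3} --- is also left unproved (indeed unstated) in the paper, so your treatment is, if anything, more candid about where the real work lies.
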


\begin{remark}
\label{remark:remark2}
Note that Theorems \ref{theorem:convergence_pp} and \ref{theorem:divergence_pp} require $K_r\rightarrow\infty$ as $r\rightarrow\infty$, even though	
Theorem \ref{theorem:gc1_pp} does not have this requirement. But this arises entirely for convergence of the recursive Bayesian algorithm
as the stage number $k\rightarrow\infty$.
\end{remark}	

\subsection{Discussion on edge correction}
\label{subsec:computation_edge}


Since the data are observed in the bounded window $W$, the minimum distance $d_i$ in the window may be larger than the true minimum distance had the complete
point process $\bX$ been observed. In classical point process analysis, this may induce a bias in estimating the true distribution function, which is known as edge effect. 
Needless to mention, various corrections for such edge effect is available in the literature. 

However, in the way we proceed with our Bayesian method, the edge effects do not influence our final results. The reason for this is the following.
We partition the point pattern in the observation window $W$ into $K$ clusters using the K-means clustering algorithm. Thus, within each cluster
in the interior of $W$, the edge effect is minimized. This is because the K-means clustering algorithm guarantees that within cluster variation
is minimized and the between cluster variation is maximized, which entails that the minimum distance $d_i$ of any point $i$ within each cluster is
often indeed the minimum when all the points are considered. Note that this is actually the case for `empty distances', if the distances are measured from
the centroid of each cluster. 
Our experiments demonstrate the validity of our aforementioned arguments in this regard.


\subsection{Characterization of stationarity and nonstationarity of point processes}
\label{subsec:pp_stationarity}
The characterization of stationarity and nonstationarity in the point process setup remains essentially the same as in the general situation, with the
conceptual difference being consideration of $W_r$ in the point process setup, with $|W_r|\rightarrow\infty$. We present the main results regarding
stationarity and nonstationarity in the point process setup, which are slight modifications of Theorems \ref{theorem:gc1}, \ref{theorem:gc2}, \ref{theorem:gc3},
\ref{theorem:convergence} and \ref{theorem:divergence}.

\begin{theorem}
\label{theorem:gc1_spp}
Let $K_r\rightarrow\infty$ as $r\rightarrow\infty$. Then
\begin{equation*}
%
\underset{r\rightarrow\infty}{\lim}{\lim}\underset{n_{ir}\rightarrow\infty,i=1,\ldots,K_r}{\lim}
	~\underset{C}{\sup}~\left|\tilde P_{K_r}(C)-P_\infty(C)\right|=0,~\mbox{almost surely}.
\end{equation*}
\end{theorem}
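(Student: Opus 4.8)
The plan is to transcribe the proof of Theorem~\ref{theorem:gc1} almost verbatim, replacing the fixed truncation level $K$ by the window-dependent sequence $K_r$ and taking the outer limit as $r\rightarrow\infty$, which by hypothesis forces $K_r\rightarrow\infty$. First I would write the same triangle-inequality decomposition as in (\ref{eq:eq5}), namely
\begin{align}
\underset{C}{\sup}~\left|\tilde P_{K_r}(C)-P_\infty(C)\right|
&\leq\sum_{i=1}^{K_r}p_i\left[\underset{C}{\sup}~\left|\hat P_i(C)-P_i(C)\right|\right]
+\sum_{i=1}^{K_r}\left[\underset{C}{\sup}~\hat P_i(C)\right]\left|\hat p_{iK_r}-p_i\right|\notag\\
&\quad+\sum_{i=K_r+1}^{\infty}p_i\left[\underset{C}{\sup}~P_i(C)\right].\notag
\end{align}
The only formal change from (\ref{eq:eq5}) is that every $K$ becomes $K_r$ and $\hat p_{iK}$ becomes $\hat p_{iK_r}$; the algebra producing this bound is identical.

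Next I would dispose of the three terms exactly as in the original argument. For the first term, the Glivenko--Cantelli theorem for stationary random variables (\ctn{Stute80}) gives $\underset{C}{\sup}~|\hat P_i(C)-P_i(C)|\stackrel{a.s.}{\longrightarrow}0$ as $n_{ir}\rightarrow\infty$ for each fixed $i$; since this is a finite sum of $K_r$ terms for $r$ held fixed, the inner limit already drives it to zero almost surely, reproducing (\ref{eq:lim1}) with $K_r$ in place of $K$. For the second term I would use $\underset{C}{\sup}~\hat P_i(C)\leq 1$ as in (\ref{eq:eq7}), so that the expression is dominated by $\sum_{i=1}^{K_r}|\hat p_{iK_r}-p_i|$; letting $n_{ir}\rightarrow\infty$ turns this into $\sum_{i=1}^{K_r}|p_{iK_r}-p_i|=\sum_{i=K_r+1}^{\infty}p_i$, which vanishes as $r\rightarrow\infty$ because $K_r\rightarrow\infty$ and $\sum_{i=1}^{\infty}p_i=1$. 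The third term is bounded by the same tail $\sum_{i=K_r+1}^{\infty}p_i$ and disappears for the identical reason, as in (\ref{eq:eq8}). Combining the three bounds then yields the asserted almost-sure convergence to zero.

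The only point demanding care is the ordering of the two limits: for each fixed $r$, hence fixed $K_r$, one must first send $n_{ir}\rightarrow\infty$ for $i=1,\ldots,K_r$, and only afterward let $r\rightarrow\infty$. I do not expect a genuine obstacle here, since once $K_r$ grows the controlling quantities are the tail sums $\sum_{i=K_r+1}^{\infty}p_i$, whose convergence to zero rests entirely on the summability $\sum_{i=1}^{\infty}p_i=1$. In effect the hypothesis $K_r\rightarrow\infty$ plays precisely the role that $K\rightarrow\infty$ played in Theorem~\ref{theorem:gc1}, so the growing-window asymptotics make the statement no harder than its original counterpart.
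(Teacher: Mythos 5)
Your proposal is correct and coincides with the paper's intent: the paper gives no separate proof of Theorem~\ref{theorem:gc1_spp}, stating only that it is a slight modification of Theorem~\ref{theorem:gc1}, and your argument is exactly that modification — the same three-term decomposition, the same use of the Glivenko--Cantelli theorem of \ctn{Stute80} for the first term, and the same tail bounds $\sum_{i=K_r+1}^{\infty}p_i$ for the second and third terms, with $K_r\rightarrow\infty$ as $r\rightarrow\infty$ playing the role of $K\rightarrow\infty$. Your remark on the ordering of limits (inner limit in $n_{ir}$ for fixed $r$, then outer limit in $r$) is also the correct reading of the iterated-limit statement.
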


\begin{theorem}
\label{theorem:gc2_spp}
The point process $\bX$ is stationary if and only if  
$\underset{C}{\sup}~\left|\hat P_j(C)-\tilde P_{K_r}(C)\right|\rightarrow 0$ almost surely, as $n_{jr}\rightarrow\infty$
satisfying (\ref{eq:eq4}), $j=1,\ldots,K_r$, $K_r\rightarrow\infty$, as $r\rightarrow\infty$.
\end{theorem}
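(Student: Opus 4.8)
The plan is to follow the proof of Theorem \ref{theorem:gc2} almost verbatim, replacing the indices $(n_i,K)$ by the growing-window indices $(n_{jr},K_r)$ and invoking Theorem \ref{theorem:gc1_spp} in place of Theorem \ref{theorem:gc1}. First I would insert $P_\infty(C)$ and apply the triangle inequality to obtain
\begin{equation*}
\underset{C}{\sup}~\left|\hat P_j(C)-\tilde P_{K_r}(C)\right|\leq \underset{C}{\sup}~\left|\hat P_j(C)-P_\infty(C)\right|+\underset{C}{\sup}~\left|\tilde P_{K_r}(C)-P_\infty(C)\right|.
\end{equation*}
By Theorem \ref{theorem:gc1_spp}, the second summand on the right converges to zero almost surely as $n_{jr}\rightarrow\infty$ satisfying (\ref{eq:eq4}), $j=1,\ldots,K_r$, and $K_r\rightarrow\infty$ with $r\rightarrow\infty$, irrespective of whether $\bX$ is stationary. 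Hence the statement reduces entirely to controlling the first summand $\underset{C}{\sup}~\left|\hat P_j(C)-P_\infty(C)\right|$.

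For the forward implication, I would use stationarity to write $P_j=P_\infty$ for every $j$ (recall that stationarity is equivalent to $P_i=P_\infty$ for all $i$, as noted just before Theorem \ref{theorem:gc2}). The Glivenko--Cantelli theorem for stationary sequences, equation (\ref{eq:eq3}), due to \ctn{Stute80}, then gives $\underset{C}{\sup}~\left|\hat P_j(C)-P_j(C)\right|\rightarrow 0$ almost surely as $n_{jr}\rightarrow\infty$, so the first summand vanishes and the whole bound does too. For the converse I would argue that if $\underset{C}{\sup}~\left|\hat P_j(C)-\tilde P_{K_r}(C)\right|\rightarrow 0$ for every $j$, then, combining this with the almost-sure vanishing of the second summand and the reverse triangle inequality, we must have $\underset{C}{\sup}~\left|\hat P_j(C)-P_\infty(C)\right|\rightarrow 0$; but (\ref{eq:eq3}) forces $\hat P_j(C)\rightarrow P_j(C)$, whence $P_j(C)=P_\infty(C)$ for all $j$ and all Borel $C$, which is precisely the stationarity characterization recorded before Theorem \ref{theorem:gc2}.

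The only genuinely new technical content, namely that the growing-window regime ($|W_r|\rightarrow\infty$, $n_{jr}\rightarrow\infty$, $K_r\rightarrow\infty$) is compatible with the almost-sure Glivenko--Cantelli convergence and with the interchange of the nested limits, is already packaged inside Theorem \ref{theorem:gc1_spp}. Consequently I expect the main obstacle to be essentially bookkeeping: verifying that the limiting operations in the two summands are taken in the order stipulated in the statement, so that the two almost-sure null sets can be merged into a single null set on whose complement both convergences hold simultaneously. Everything else is the same triangle-inequality argument as in Theorem \ref{theorem:gc2}, with $\tilde P_K$ and $K$ replaced throughout by $\tilde P_{K_r}$ and $K_r$.
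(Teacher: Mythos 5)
Your proposal is correct and takes essentially the same route as the paper: the paper offers no separate proof for this point-process version, stating only that it is a slight modification of Theorem \ref{theorem:gc2}, whose proof is exactly your decomposition — the triangle inequality through $P_\infty$, with Theorem \ref{theorem:gc1_spp} (in place of Theorem \ref{theorem:gc1}) controlling the $\tilde P_{K_r}$ term and the Glivenko--Cantelli theorem plus the identification $P_j=P_\infty$ handling the other. If anything, your write-up is more careful than the paper's terse original, since you make the converse direction explicit via the reverse triangle inequality rather than leaving it implicit.
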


\begin{theorem}
\label{theorem:gc3_spp}
	$\bX$ is nonstationary if and only if $\underset{C}{\sup}~\left|\hat P_j(C)-\tilde P_{K_r}(C)\right|> 0$ almost surely, 
	as $n_{jr}\rightarrow\infty$
satisfying (\ref{eq:eq4}), $j=1,\ldots,K_r$, $K_r\rightarrow\infty$, as $r\rightarrow\infty$.
\end{theorem}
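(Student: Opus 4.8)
The plan is to mirror the proof of Theorem \ref{theorem:gc3}, substituting the sequence-indexed empirical measure $\tilde P_{K_r}$ for $\tilde P_K$ and invoking Theorem \ref{theorem:gc1_spp} in place of Theorem \ref{theorem:gc1} to control the deviation of $\tilde P_{K_r}$ from $P_\infty$. The asymptotic regime now couples $r\to\infty$ with $n_{ir}\to\infty$ and $K_r\to\infty$, but the algebraic skeleton is unchanged.

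First, for the forward direction I would assume $\bX$ is nonstationary and start from the reverse triangle inequality
\begin{equation*}
\left|\hat P_j(C)-\tilde P_{K_r}(C)\right|\geq \left|\left|\hat P_j(C)-P_{\infty}(C)\right| -\left|\tilde P_{K_r}(C)-P_{\infty}(C)\right|\right|,
\end{equation*}
which is the exact analogue of (\ref{eq:gc3_1}). By Theorem \ref{theorem:gc1_spp}, for any $\epsilon_1>0$ we have $\sup_C|\tilde P_{K_r}(C)-P_\infty(C)|<\epsilon_1$ almost surely once $r$ (hence $n_{jr}$ and $K_r$) is large enough, which plays the role of (\ref{eq:gc3_2}).

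Next I would bound the first term from below using a second reverse triangle inequality,
\begin{equation*}
\left|\hat P_j(C)-P_{\infty}(C)\right|\geq \left|\left|P_j(C)-P_{\infty}(C)\right|-\left|\hat P_j(C)-P_j(C)\right|\right|,
\end{equation*}
as in (\ref{eq:gc3_3}). The Glivenko--Cantelli theorem (\ref{eq:eq3}) for stationary blocks gives $|\hat P_j(C)-P_j(C)|<\epsilon_2$ for all $C$ once $n_{jr}$ is large, while nonstationarity ($P_i\neq P_j$ for infinitely many $j\neq i$) forces $|P_j(C)-P_\infty(C)|>0$ for some $C$. Since $\epsilon_2>0$ is arbitrary, this yields $|\hat P_j(C)-P_\infty(C)|>0$ for some $C$ for sufficiently large $n_{jr}$, the analogue of (\ref{eq:gc3_4}). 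Combining the two displayed inequalities with the arbitrariness of $\epsilon_1$ then gives $\sup_C|\hat P_j(C)-\tilde P_{K_r}(C)|>0$ almost surely. For the converse I would argue by contraposition through Theorem \ref{theorem:gc2_spp}: if the sup norm failed to stay positive in the limit, then stationarity of $\bX$ would follow, contradicting the assumed nonstationarity.

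The main obstacle is essentially bookkeeping rather than conceptual: one must check that all the almost-sure statements survive the coupled triple limit $r\to\infty$, $n_{ir}\to\infty$, $K_r\to\infty$, and that the possible randomness of $P_i(C)$ (flagged after (\ref{eq:eq3}), arising when the local blocks are not ergodic) does not disturb the pointwise-in-$C$ separation $|P_j(C)-P_\infty(C)|>0$. Because Theorem \ref{theorem:gc1_spp} already absorbs the delicate convergence of $\tilde P_{K_r}$ in the growing-window regime, the remaining work reduces to verifying that the constants $\epsilon_1,\epsilon_2$ can be taken uniformly small along the sequence, exactly as in the proof of Theorem \ref{theorem:gc3}.
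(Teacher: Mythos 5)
Your proposal is correct and follows essentially the same route as the paper: the paper presents Theorem \ref{theorem:gc3_spp} as a slight modification of Theorem \ref{theorem:gc3}, whose proof proceeds exactly via the two reverse triangle inequalities (\ref{eq:gc3_1}) and (\ref{eq:gc3_3}), the Glivenko--Cantelli bound (\ref{eq:eq3}), and the uniform control of $\tilde P_{K_r}$ from the point-process analogue of Theorem \ref{theorem:gc1}, with the converse handled implicitly by contraposition through the stationarity characterization. Your adaptation to the coupled limit $r\to\infty$, $n_{jr}\to\infty$, $K_r\to\infty$ is precisely the intended modification.
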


Let $\{c_j\}_{j=1}^{\infty}$ be a non-negative decreasing sequence and
\begin{equation*}
	Y_{j,n_{jr}}=\mathbb I{\left\{\underset{C}{\sup}~\left|\hat P_j(C)-\tilde P_{K_r}(C)\right|\leq c_j\right\}}.
\end{equation*}
Let, for $j\geq 1$,
\begin{equation*}
	P\left(Y_{j,n_{jr}}=1\right)=p_{j,n_{jr}}.
\end{equation*}

\begin{theorem}
\label{theorem:convergence_spp}
For all $\omega\in\mathfrak S\cap\mathfrak N^c$, where $\mathfrak N$ is some null set having probability measure zero,
$\bX$ is stationary if and only if 
for any monotonically decreasing sequence 
$\left\{c_j(\omega)\right\}_{j=1}^{\infty}$,
\begin{equation*}
	\pi\left(\mathcal N_1|y_{k,n_{kr}}(\omega)\right)\rightarrow 1,
\end{equation*}
	as $k\rightarrow\infty$ and $n_{jr}\rightarrow\infty$ for $j=1,\ldots,K_r$ satisfying (\ref{eq:eq4}) and $K_r\rightarrow\infty$ as $r\rightarrow\infty$, 
where $\mathcal N_1$ is any neighborhood of 1 (one).
\end{theorem}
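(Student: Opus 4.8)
The plan is to adapt the proof of Theorem \ref{theorem:convergence} to the point process setting, replacing the Glivenko--Cantelli results of Theorems \ref{theorem:gc2} and \ref{theorem:gc3} by their window-growing analogues, Theorems \ref{theorem:gc2_spp} and \ref{theorem:gc3_spp}, while keeping the recursive Bayesian posterior formulas (\ref{eq:postmean_p_k_2}) and (\ref{eq:postvar_p_k_2}) intact, since the recursive updating mechanism is identical to the general case and only the underlying convergence statements change.

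\textbf{Necessity.} First I would show that stationarity forces the posterior to concentrate at one. Suppose $\bX$ is stationary. By Theorem \ref{theorem:gc2_spp}, $\underset{C}{\sup}~|\hat P_j(C)-\tilde P_{K_r}(C)|\to 0$ almost surely as $n_{jr}\to\infty$ satisfying (\ref{eq:eq4}), $j=1,\ldots,K_r$, and $K_r\to\infty$ as $r\to\infty$. Exactly as in (\ref{eq:bound_S}), since this sup norm is bounded by one and vanishes in the limit under stationarity, for any monotonically decreasing sequence $\{c_j(\omega)\}_{j=1}^{\infty}$ there is, for each fixed $j$, a threshold beyond which the sup norm falls below $c_j(\omega)$; hence $y_{j,n_{jr}}(\omega)=1$ for all $j\geq 1$ once $n_{jr}$ is large enough. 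Consequently $\sum_{j=1}^k y_{j,n_{jr}}(\omega)=k$. Substituting this into (\ref{eq:postmean_p_k_2}) and (\ref{eq:postvar_p_k_2}) and using $\sum_{j=1}^k 1/j^2\to\pi^2/6$, the posterior mean tends to one and the posterior variance to zero as $k\to\infty$. Chebychev's inequality, applied as in the proof of Theorem \ref{theorem:convergence}, then yields $\pi(\mathcal N_1|y_{k,n_{kr}}(\omega))\to 1$.

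\textbf{Sufficiency.} Conversely, suppose $\pi(\mathcal N_1|y_{k,n_{kr}}(\omega))\to 1$, which forces $E(p_{k,n_{kr}}|y_{k,n_{kr}}(\omega))\to 1$. I would argue by contradiction: if $\bX$ were nonstationary, then Theorem \ref{theorem:gc3_spp} guarantees the existence of $j_0(\omega)\geq 1$ such that for every $j\geq j_0(\omega)$ one has $\underset{C}{\sup}~|\hat P_j(C)-\tilde P_{K_r}(C)|>c_j(\omega)$ for sufficiently large $n_{jr}$, for any monotonically decreasing $\{c_j(\omega)\}_{j=1}^{\infty}$. Thus $y_{j,n_{jr}}(\omega)=0$ for all $j\geq j_0(\omega)$, so that $0\leq\sum_{j=1}^k y_{j,n_{jr}}(\omega)\leq j_0(\omega)$. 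Feeding this bounded partial sum into (\ref{eq:postmean_p_k_2}) drives the posterior mean to zero as $k\to\infty$, contradicting the convergence of the posterior mean to one. Hence $\bX$ must be stationary.

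\textbf{Main obstacle.} The delicate point will be the simultaneous handling of the three limiting operations $n_{jr}\to\infty$, $K_r\to\infty$, and $r\to\infty$: I would need to ensure that the almost sure statements supplied by Theorems \ref{theorem:gc2_spp} and \ref{theorem:gc3_spp} hold off a single common null set $\mathfrak N$, so that the pathwise argument yielding $y_{j,n_{jr}}(\omega)=1$ (respectively $=0$) is legitimate for every $\omega\in\mathfrak S\cap\mathfrak N^c$. Establishing the window-growing analogue of the uniform bound (\ref{eq:bound_S}) in this regime, and verifying that the threshold $j_0(\omega)$ in the nonstationary case may be taken uniformly over the growing windows $W_r$, is where the genuine care is required; the recursive posterior computation itself is routine and identical to the general case.
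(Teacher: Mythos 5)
Your proposal is correct and follows essentially the same route as the paper: the paper offers no separate proof of Theorem \ref{theorem:convergence_spp}, stating only that it is a slight modification of Theorem \ref{theorem:convergence} with the window-growing results (Theorems \ref{theorem:gc2_spp} and \ref{theorem:gc3_spp}) replacing Theorems \ref{theorem:gc2} and \ref{theorem:gc3}, which is precisely the adaptation you carry out, including the $\sum_{j=1}^k y_{j,n_{jr}}(\omega)=k$ computation for necessity and the contradiction via a bounded partial sum for sufficiency. The limit-interchange and common-null-set issues you flag as the main obstacle are real, but the paper glosses over them in exactly the same way.
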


\begin{theorem}
\label{theorem:divergence_spp}
$\bX$ is nonstationary if and only if 
for any $\omega\in\mathfrak S\cap\mathfrak N^c$ where $\mathfrak N$ is some null set having probability measure zero, 
for any choice of the non-negative, monotonically decreasing sequence $\{c_j(\omega)\}_{j=1}^{\infty}$,
\begin{equation*}
	\pi\left(\mathcal N_0|y_{k,n_{kr}(\omega)}(\omega)\right)\rightarrow 1,
\end{equation*}
	as $k\rightarrow\infty$ and $n_{jr}\rightarrow\infty$, $j=1,\ldots,K_r$ satisfying (\ref{eq:eq4}), and $K_r\rightarrow\infty$ as $r\rightarrow\infty$, 
where $\mathcal N_0$ is any neighborhood of 0 (zero).
\end{theorem}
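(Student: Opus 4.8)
The plan is to follow the structure of the proof of Theorem \ref{theorem:divergence} almost verbatim, substituting the point process asymptotic regime $n_{jr}\to\infty,\,K_r\to\infty$ as $r\to\infty$ for the original $n_j\to\infty,\,K\to\infty$, and invoking the point process versions Theorems \ref{theorem:gc3_spp} and \ref{theorem:convergence_spp} in place of Theorems \ref{theorem:gc3} and \ref{theorem:convergence}. Since the recursive Bayesian machinery (the Beta prior updates and the resulting posterior mean and variance formulas) is identical in form, only the interpretation of the cardinalities and the limiting operations changes.

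For the necessity direction, I would first assume that $\bX$ is nonstationary. By Theorem \ref{theorem:gc3_spp}, outside a null set $\mathfrak N$ we have $\underset{C}{\sup}~\left|\hat P_j(C)-\tilde P_{K_r}(C)\right|>0$ almost surely in the stated limits. Because every admissible sequence $\{c_j(\omega)\}_{j=1}^{\infty}$ decreases monotonically to zero, this strict positivity guarantees the existence of an index $j_0(\omega)\geq 1$ such that for every $j\geq j_0(\omega)$ the sup norm eventually exceeds $c_j(\omega)$ for sufficiently large $n_{jr}$, forcing $y_{j,n_{jr}}(\omega)=0$. Hence $0\leq\sum_{j=1}^k y_{j,n_{jr}}(\omega)\leq j_0(\omega)$ remains bounded. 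Substituting this bound into the recursive posterior mean and variance expressions (\ref{eq:postmean_p_k_2}) and (\ref{eq:postvar_p_k_2}), now read with $n_{jr}$ in place of $n_j$, and letting $k\to\infty$ yields exactly the limits (\ref{eq:postmean_div}) and (\ref{eq:postvar_div}), namely posterior mean $\to 0$ and posterior variance $\to 0$. An application of Chebyshev's inequality, choosing $\epsilon>0$ small enough that $\mathcal N_0\supseteq\{p_{k,n_{kr}}<\epsilon\}$, then delivers $\pi\left(\mathcal N_0|y_{k,n_{kr}(\omega)}(\omega)\right)\to 1$, which is the claimed consistency at zero.

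For the sufficiency direction, I would assume the posterior concentration on every neighborhood $\mathcal N_0$ of zero holds, so that for any $\epsilon>0$ the posterior mass on $\{p_{k,n_{kr}}<\epsilon\}$ tends to one, whence posterior mean and variance both tend to zero as $k\to\infty$. If $\bX$ were stationary, Theorem \ref{theorem:convergence_spp} would instead force $\pi\left(\mathcal N_1|y_{k,n_{kr}}(\omega)\right)\to 1$ and consequently the posterior mean $\to 1$, directly contradicting the posterior mean $\to 0$ just obtained. Therefore $\bX$ must be nonstationary.

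The main obstacle will be bookkeeping the interaction of the three limiting operations, $k\to\infty$, $n_{jr}\to\infty$, and $K_r\to\infty$ driven by the expanding window $r\to\infty$, and confirming that the threshold index $j_0(\omega)$ can be extracted uniformly outside a single common null set from the almost-sure statement of Theorem \ref{theorem:gc3_spp}. In particular I would need to check that the bounded partial-sum estimate $\sum_{j=1}^k y_{j,n_{jr}}(\omega)\leq j_0(\omega)$ is stable under the growing-window asymptotics, since here the cardinalities $n_{jr}$ depend on $r$ through $W_r$; everything else reduces to the same elementary Chebyshev and contradiction arguments already established for Theorems \ref{theorem:convergence} and \ref{theorem:divergence}.
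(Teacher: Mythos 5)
Your proposal is correct and takes essentially the same route as the paper: the paper gives no separate proof of Theorem \ref{theorem:divergence_spp}, stating only that the point process characterizations are slight modifications of Theorems \ref{theorem:gc3}, \ref{theorem:convergence} and \ref{theorem:divergence}, and your argument (bounded partial sums of $y_{j,n_{jr}}$ via $j_0(\omega)$, posterior mean and variance tending to zero, Chebyshev for necessity, and contradiction with Theorem \ref{theorem:convergence_spp} for sufficiency) is exactly that adaptation.
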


\subsection{Characterization of mutual independence among random variables}
\label{subsec:mutual_independence}
In this section we first characterize mutual independence among a general set of random variables $\bX_K=\left(X_1,\ldots,X_K\right)$, as $K\rightarrow\infty$,
and then specialize the characterization in the point process setup.
Indeed, although characterizations and tests for mutual independence among a set of random variables is available in the literature
(see, for example, \ctn{Puri71}, \ctn{Gieser97}, \ctn{Um01}, \ctn{Cleroux95}, \ctn{Bilodeau05}, \ctn{Hoeffding48}, \ctn{Blum61}, \ctn{Ghoudi01}, \ctn{Beran07},
\ctn{Bilodeau17}), they are meant for a finite set of random variables.
Moreover, such characterizations are often not computationally manageable. Here we attempt to provide a characterization for number of random variables tending to
infinity, with manageable computation. Also, unlike the previous approaches, we need only asymptotic stationarity of the realizations of the random variables, not
even independence.

The key idea is to consider the differences 
\begin{equation}
\zeta_i=\underset{t_1,\ldots,t_i\in\mathbb R}{\sup}~\left|P\left(X_i\leq t_i|X_1\leq t_1,\ldots,X_{i-1}\leq t_{t-1}\right)-P\left(X_i\leq t_i\right)\right|,
	\label{eq:diff1}
\end{equation}
for $i=2,\ldots,K$, with $\zeta_1=0$. If all $\zeta_i$; $i=2,\ldots,K$, are sufficiently small, 
then the random variables $\left(X_1,\ldots,X_K\right)$ are mutually independent.
For practical purposes, we must replace $$P\left(X_i\leq t_i|X_1\leq t_1,\ldots,X_{i-1}\leq t_{t-1}\right)$$ and $P\left(X_i\leq t_i\right)$
with their corresponding empirical probabilities. In other words, we write 
\begin{equation}
P\left(X_i\leq t_i|X_1\leq t_1,\ldots,X_{i-1}\leq t_{t-1}\right)
	=\frac{P\left(X_1\leq t_1,\ldots,X_{i-1}\leq t_{t-1},X_i\leq t_i\right)}{P\left(X_1\leq t_1,\ldots,X_{i-1}\leq t_{t-1}\right)},
\label{eq:diff2}
\end{equation}
and replace $P\left(X_1\leq t_1,\ldots,X_{i-1}\leq t_{t-1},X_i\leq t_i\right)$ and $P\left(X_1\leq t_1,\ldots,X_{i-1}\leq t_{t-1}\right)$
with their corresponding empirical distribution functions $$F_{n,1:i}\left(X_1\leq t_1,\ldots,X_{i-1}\leq t_{t-1},X_i\leq t_i\right)$$
and $$F_{n,1:(i-1)}\left(X_1\leq t_1,\ldots,X_{i-1}\leq t_{t-1}\right),$$ respectively.
We also replace $P\left(X_i\leq t_i\right)$ with its empirical distribution function $F_n,i\left(X_i\leq t_i\right)$.
We denote the differences of the empirical distribution functions corresponding to (\ref{eq:diff1}) by $\hat \zeta_i$; $i=2,\ldots,k$, with $\hat\zeta_1=0$. 

However, computation of the joint empirical distribution functions $F_{n,1:i}$ often turn out to be zero numerically, even if $i$ is not too large. To address
this, we resort to Bayesian nonparametrics, with Dirichlet process prior for the joint distribution of $\bX_K$. In fact, more generally, we consider a stochastic process
prior for the sequence of random variables $\bX=\left(X_1,X_2,X_3,\ldots\right)$. Let $G_0$ denote the expected parametric stochastic process for $\bX$.
Specifically, we assume that $\bX\sim G$ and $G\sim DP\left(\alpha G_0\right)$, where $DP\left(\alpha G_0\right)$ stands for Dirichlet process with base measure $G_0$
and strength parameter $\alpha>0$. More transparently, let $\bX_{i_1,i_2,\ldots,i_K}=\left(X_{i_1},X_{i_2},\ldots,X_{i_K}\right)$, for any set of indices
$i_1,\ldots,i_K$. Then $\bX_{i_1,i_2,\ldots,i_K}\sim G_{i_1,i_2,\ldots,i_K}$ and $G_{i_1,i_2,\ldots,i_K}\sim DP\left(\alpha G_{0,i_1,i_2,\ldots,i_K}\right)$,
where $G_{i_1,i_2,\ldots,i_K}$ and $G_{0,i_1,i_2,\ldots,i_K}$ are $k$-dimensional distributions associated with $\bX_{i_1,i_2,\ldots,i_K}$.

Now, if data $\bX^j_{i_1,i_2,\ldots,i_K}$; $j=1,2,\ldots$, are available which are not necessarily $iid$ or not even independent, we consider the following recursive
strategy for sequentially updating the posterior distribution of the Dirichlet process. We assume that
\begin{equation}
	\bX^1_{i_1,i_2,\ldots,i_K}\sim G_1;~G_1\sim DP\left(\alpha G_{0,i_1,i_2,\ldots,i_K}\right).
	\label{eq:dp_indep1}
\end{equation}
so that the posterior distribution of the random distribution given $\bX^1_{i_1,i_2,\ldots,i_K}$ is given by
\begin{equation}
	[G_1|\bX^1_{i_1,i_2,\ldots,i_K}]\sim DP\left(\alpha G_{0,i_1,i_2,\ldots,i_K}+\delta_{\bX^1_{i_1,i_2,\ldots,i_K}}\right).
	\label{eq:dp_indep2}
\end{equation}
Now, assuming $[G_1|\bX^1_{i_1,i_2,\ldots,i_K}]$ to be the prior for the distribution of $\bX^2_{i_1,i_2,\ldots,i_K}$, we have 
\begin{equation}
	[G_2|\bX^2_{i_1,i_2,\ldots,i_K}]\sim DP\left(\alpha G_{0,i_1,i_2,\ldots,i_K}
	+\delta_{\bX^1_{i_1,i_2,\ldots,i_K}}+\delta_{\bX^2_{i_1,i_2,\ldots,i_K}}\right).
	\label{eq:dp_indep3}
\end{equation}
Continuing as (\ref{eq:dp_indep1}), (\ref{eq:dp_indep2}) and (\ref{eq:dp_indep3}), we obtain in general, for $j\geq 1$,
\begin{equation}
[G_j|\bX^j_{i_1,i_2,\ldots,i_K}]\sim DP\left(\alpha G_{0,i_1,i_2,\ldots,i_K}
	+\sum_{r=1}^j\delta_{\bX^r_{i_1,i_2,\ldots,i_K}}\right).
	\label{eq:dp_indep4}
\end{equation}
Note that the posterior in this case is of the same form as that of
$[G_j|\bX^r_{i_1,i_2,\ldots,i_K};r=1,\ldots,j]$, had $\bX^r_{i_1,i_2,\ldots,i_K};r=1,\ldots,j$ been $iid$ with distribution 
$G_{i_1,i_2,\ldots,i_K}$ and $G_{i_1,i_2,\ldots,i_K}\sim DP\left(\alpha G_{0,i_1,i_2,\ldots,i_K}\right)$.

In particular, for $n$ data points $\left\{\bX^j_K; j=1,2,\ldots,n\right\}$, following (\ref{eq:dp_indep4}) we obtain the posterior mean as
\begin{equation}
	E[G_n|\bX^n_K]=\frac{\alpha G_{0,1:K}+\sum_{r=1}^n\delta_{\bX^r_K}}{\alpha+n},
	\label{eq:dp_mean1}
\end{equation}
which involves all the available data points $\left\{\bX^j_K; j=1,2,\ldots,n\right\}$. 
With (\ref{eq:dp_mean1}), we deal with the following form of the conditional distribution function of $[X_j|X_1,\ldots,X_{j-1}]$ for $j\geq 1$:
\begin{equation}
	\tilde \zeta_{jn}(t_1,\ldots,t_j)=\frac{E[G_n\left(X_1\leq t_1,\ldots,X_j\leq t_j\right)|\bX^n_j]}{E[G_n\left(X_1\leq t_1,\ldots,X_{j-1}\leq t_{j-1}\right)|\bX^n_{j-1}]}.
	\label{eq:dp_cond1}
\end{equation}
The marginal distribution of $X_j$ in this case that we shall consider is 
\begin{equation}
	\tilde \zeta_{jn}(t_j)=\frac{\alpha G_{0,j}(X_j\leq t_j)+\sum_{r=1}^n\delta_{X^r_j}(X^r_j\leq t_j)}{\alpha+n}
	\label{eq:marginal1}
\end{equation}

With these, we have the following result.
\begin{theorem}
\label{theorem:dp_cdf1}
For any $K\geq 2$, let $\bX^j_K$; $j\geq 1$, be stationary. Then $(X_1,\ldots,X_K)$ are mutually independent if and only if, for $j=1,\ldots,K$,
\begin{equation}
	\underset{t_1,\ldots,t_j\in\mathbb R}{\sup}~\left|\tilde \zeta_{jn}(t_1,\ldots,t_j)-\tilde \zeta_{jn}(t_j)\right|\stackrel{a.s.}{\longrightarrow}0,~n\rightarrow\infty.
	\label{eq:dp_diff_conv1}
\end{equation}
\end{theorem}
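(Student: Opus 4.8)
The plan is to reduce the assertion to a population-level characterization of mutual independence and then to transfer that characterization to the posterior-mean estimators by a Glivenko--Cantelli argument. First I would record the population statement. By the chain-rule factorization of a joint distribution function,
\[
P\left(X_1\le t_1,\ldots,X_K\le t_K\right)=\prod_{j=1}^K P\left(X_j\le t_j\mid X_1\le t_1,\ldots,X_{j-1}\le t_{j-1}\right),
\]
so that $(X_1,\ldots,X_K)$ are mutually independent if and only if, for every $j=2,\ldots,K$, the conditional distribution function coincides with the marginal, i.e.\ $\zeta_j=0$ with $\zeta_j$ as in (\ref{eq:diff1}). The ``only if'' direction is immediate, since mutual independence renders $\sigma(X_1,\ldots,X_{j-1})$ independent of $X_j$; the ``if'' direction follows by multiplying the equalities across $j$. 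This recasts the theorem as: for each $j$, $\underset{t_1,\ldots,t_j}{\sup}\left|\tilde\zeta_{jn}(t_1,\ldots,t_j)-\tilde\zeta_{jn}(t_j)\right|\to 0$ almost surely if and only if $\zeta_j=0$.

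Next I would establish the engine of the proof, namely that the posterior-mean estimators converge uniformly and almost surely to the corresponding population distribution functions. Writing the posterior mean (\ref{eq:dp_mean1}) as $\frac{\alpha}{\alpha+n}G_{0,1:j}+\frac{n}{\alpha+n}\hat F^n_{1:j}$, where $\hat F^n_{1:j}$ is the empirical joint distribution function of the stationary samples $\bX^1_j,\ldots,\bX^n_j$, the base-measure term is $O(n^{-1})$ and hence vanishes uniformly. The Glivenko--Cantelli theorem for stationary sequences (\ctn{Stute80}) then gives $\underset{t}{\sup}\left|\hat F^n_{1:j}(t)-F_{1:j}(t)\right|\stackrel{a.s.}{\longrightarrow}0$, and likewise for the $(j-1)$-dimensional and the univariate marginals; here $F_{1:j}$ is the common population joint distribution function, which is the almost sure limit of the empirical distribution (deterministic under ergodicity of $\{\bX^r_K\}$, and otherwise interpreted as the limit furnished by the stationary Glivenko--Cantelli theorem, consistent with the convention adopted after (\ref{eq:eq3})). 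Consequently the numerator and denominator of (\ref{eq:dp_cond1}) converge uniformly to $F_{1:j}$ and $F_{1:(j-1)}$, and (\ref{eq:marginal1}) converges uniformly to the marginal $F_j$.

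It then remains to pass to the ratio. Writing $N_n,D_n$ for the numerator and denominator of (\ref{eq:dp_cond1}) and $N,D$ for their limits $F_{1:j},F_{1:(j-1)}$, on any region where $F_{1:(j-1)}\ge\epsilon$ for fixed $\epsilon>0$ the elementary bound
\[
\left|\frac{N_n}{D_n}-\frac{N}{D}\right|\le\frac{|N_n-N|+|D_n-D|}{D_n}
\]
(using $N\le D$) shows that $\tilde\zeta_{jn}(t_1,\ldots,t_j)$ converges uniformly to the population conditional distribution function there. Combined with the uniform convergence $\tilde\zeta_{jn}(t_j)\to F_j$, this yields $\underset{\{F_{1:(j-1)}\ge\epsilon\}}{\sup}\left|\tilde\zeta_{jn}(t_1,\ldots,t_j)-\tilde\zeta_{jn}(t_j)\right|\to\underset{\{F_{1:(j-1)}\ge\epsilon\}}{\sup}\left|F_{j\mid 1:(j-1)}-F_j\right|$ almost surely; letting $\epsilon\downarrow 0$ recovers $\zeta_j$ in the limit. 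This delivers both implications simultaneously: the supremum tends to $0$ for all $j$ precisely when $\zeta_j=0$ for all $j$, that is, precisely under mutual independence. The backward direction in fact needs only the pointwise version of this convergence at points with $F_{1:(j-1)}>0$, followed by right-continuity.

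The main obstacle is exactly the truncation step: controlling the ratio uniformly where the conditioning distribution function $F_{1:(j-1)}$, equivalently the empirical denominator $D_n$, approaches zero in the joint tails. There the conditional is estimated from very few effective observations and need not track $F_j$, so a crude supremum over all of $\mathbb R^j$ is genuinely delicate. I would handle this through the $\{F_{1:(j-1)}\ge\epsilon\}$ restriction above together with a monotonicity and right-continuity argument to recover the full supremum as $\epsilon\downarrow 0$, arguing that the contribution of the vanishing-denominator region is asymptotically negligible relative to $\zeta_j$. Making this tail argument fully rigorous, rather than the uniform convergence on the bulk of the support, is where the real work lies.
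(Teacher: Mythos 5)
Your proposal follows the same route as the paper's own proof: write the Dirichlet-process posterior mean (\ref{eq:dp_mean1}) as an $O(n^{-1})$ base-measure term plus an empirical joint distribution function, apply the Glivenko--Cantelli theorem for stationary sequences (\ctn{Stute80}) to the numerator and denominator of (\ref{eq:dp_cond1}) and to (\ref{eq:marginal1}), pass to the ratio, and compare the resulting conditional with the marginal. Within that common scheme you do two things more carefully. First, the paper proves only the forward implication in detail and disposes of the converse in a single sentence (``this clearly implies mutual independence''); your two-sided formulation --- the estimated supremum converges to the population discrepancy $\zeta_j$ of (\ref{eq:diff1}), hence vanishes for all $j$ precisely when every $\zeta_j=0$, which by the chain-rule factorization is mutual independence --- supplies the argument the paper leaves implicit. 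Second, you make the ratio step explicit via the truncation to $\left\{F_{1:(j-1)}\geq\epsilon\right\}$, whereas the paper jumps from convergence that is pointwise in $(t_1,\ldots,t_{j-1})$ (and uniform only in $t_j$) to the full supremum over $\mathbb R^j$ by asserting that $\tilde\zeta_{jn}(t_1,\ldots,t_j)$ is ``asymptotically independent'' of $(t_1,\ldots,t_{j-1})$.

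The tail difficulty you flag at the end is genuine, and you should be aware that it is not resolved in the paper either; it is exactly where the paper's proof is loosest. If $t_1$ lies below every observed $X^r_1$, the indicator sums in the numerator and denominator of (\ref{eq:dp_cond1}) both vanish, and $\tilde\zeta_{jn}(t_1,\ldots,t_j)$ collapses to the base-measure conditional $G_{0,1:j}(t_1,\ldots,t_j)/G_{0,1:(j-1)}(t_1,\ldots,t_{j-1})$, while $\tilde\zeta_{jn}(t_j)\rightarrow F_j(t_j)$. For a generic base measure (for instance the correlated Gaussian $G_0$ the paper itself employs in Section \ref{subsubsec:poisson}) these differ by a fixed positive amount, so the supremum over all of $\mathbb R^j$ need not tend to zero even under exact independence. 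Consequently your $\epsilon\downarrow 0$ recovery step cannot be completed as stated --- and neither can the paper's passage to the unrestricted supremum: the statement is salvageable only if the supremum is restricted to a region where the conditioning probability is bounded away from zero (equivalently, the vanishing-denominator region is excised), or if a compatibility condition is imposed on $G_0$. Your write-up has the virtue of localizing this obstruction; the paper's proof passes over it silently.
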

\begin{proof}
Let $(X_1,\ldots,X_K)$ be mutually independent. Then $[X_j|X_1,\ldots,X_{j-1}]=[X_j]$, for $j\geq 2$. In other words,
it holds that $P\left(X_j\leq t_j|X_1\leq t_1,\ldots,X_{j-1}\leq t_{j-1}\right)=P\left(X_j\leq t_j\right)$, for all $t_1,\ldots,t_j\in\mathbb R$, and $j\geq 2$.
Now, 
\begin{equation}
	\underset{t_j\in\mathbb R}{\sup}~\left|\tilde \zeta_{jn}(t_1,\ldots,t_j)-\tilde \zeta_{jn}(t_j)\right|
	\leq \underset{t_j\in\mathbb R}{\sup}~\left|\tilde \zeta_{jn}(t_1,\ldots,t_j)-P(X_j\leq t_j)\right|
	+\underset{t_j\in\mathbb R}{\sup}~\left|P(X_j\leq t_j)-\tilde \zeta_{jn}(t_j)\right|.
	\label{eq:dpdiff1}
\end{equation}
Let us first focus on the first term of (\ref{eq:dpdiff1}). For fixed $\alpha$, as $n\rightarrow\infty$, due to Glivenko-Cantelli theorem for stationarity,
it is easily seen that
\begin{equation}
	E[G_n\left(X_1\leq t_1,\ldots,X_{j-1}\leq t_{j-1}\right)|\bX^n_{j-1}]\stackrel{a.s.}{\longrightarrow}P\left(X_1\leq t_1,\ldots,X_{j-1}\leq t_{j-1}\right),
	\label{eq:dpdiff2}
\end{equation}
for any $t_1,\ldots,t_{j-1}\in\mathbb R$. Also, for any $t_1,\ldots,t_{j-1}\in\mathbb R$, again due to Glivenko-Cantelli theorem for stationarity,
\begin{equation}
	\underset{t_j\in\mathbb R}{\sup}~\left|E[G_n\left(X_1\leq t_1,\ldots,X_j\leq t_j\right)|\bX^n_j]-
	P\left(X_1\leq t_1,\ldots,X_j\leq t_j\right)\right|\stackrel{a.s.}{\longrightarrow}0,~\mbox{as}~n\rightarrow\infty.
	\label{eq:dpdiff3}
\end{equation}
Combining (\ref{eq:dpdiff2}) and (\ref{eq:dpdiff3}) yields
\begin{equation*}
\underset{t_j\in\mathbb R}{\sup}~\left|\frac{E[G_n\left(X_1\leq t_1,\ldots,X_j\leq t_j\right)|\bX^n_j]}{E[G_n\left(X_1\leq t_1,\ldots,X_{j-1}\leq t_{j-1}\right)|\bX^n_{j-1}]}-
	\frac{P\left(X_1\leq t_1,\ldots,X_j\leq t_j\right)}{P\left(X_1\leq t_1,\ldots,X_{j-1}\leq t_{j-1}\right)}\right|\stackrel{a.s.}{\longrightarrow}0,
	~\mbox{as}~n\rightarrow\infty,
\end{equation*}
for all $t_1,\ldots,t_{j-1}\in\mathbb R$.
That is, for all $t_1,\ldots,t_{j-1}\in\mathbb R$,
\begin{equation*}
\underset{t_j\in\mathbb R}{\sup}~\left|\tilde \zeta_{jn}(t_1,\ldots,t_j)-P\left(X_j\leq t_j|X_1\leq t_1,\ldots,X_{j-1}\leq t_{j-1}\right)\right|
\stackrel{a.s.}{\longrightarrow}0,
\end{equation*}
and since under mutual independence, $P\left(X_j\leq t_j|X_1\leq t_1,\ldots,X_{j-1}\leq t_{j-1}\right)=P\left(X_j\leq t_j\right)$,
\begin{equation*}
\underset{t_j\in\mathbb R}{\sup}~\left|\tilde \zeta_{jn}(t_1,\ldots,t_j)-P\left(X_j\leq t_j\right)\right|
	\stackrel{a.s.}{\longrightarrow}0,~\mbox{as}~n\rightarrow\infty,
\end{equation*}
for all $t_1,\ldots,t_{j-1}\in\mathbb R$, under mutual independence. More transparently, since $\tilde \zeta_{jn}(t_1,\ldots,t_j)$ is asymptotically independent of 
$t_1,\ldots,t_{j-1}$, for any $\epsilon>0$ under mutual independence, there exists $n_0(\epsilon)\geq 1$
such that for $n>n_0(\epsilon)$,
\begin{equation*}
\underset{t_j\in\mathbb R}{\sup}~\left|\tilde \zeta_{jn}(t_1,\ldots,t_j)-P\left(X_j\leq t_j\right)\right|
<\epsilon,
\end{equation*}
for all $t_1,\ldots,t_{j-1}\in\mathbb R$. That is,
	(\ref{eq:dp_diff_conv1})
\begin{equation}
\underset{t_1,\ldots,t_j\in\mathbb R}{\sup}~\left|\tilde \zeta_{jn}(t_1,\ldots,t_j)-P\left(X_j\leq t_j\right)\right|
	\stackrel{a.s.}{\longrightarrow}0,~\mbox{as}~n\rightarrow\infty.
\label{eq:dpdiff4}
\end{equation}

For the second term of (\ref{eq:dpdiff1}), note that
\begin{equation}
\underset{t_j\in\mathbb R}{\sup}~\left|P(X_j\leq t_j)-\tilde \zeta_{jn}(t_j)\right|\stackrel{a.s.}{\longrightarrow}0,
\label{eq:dpdiff5}
\end{equation}
as $n\rightarrow\infty$, due to Glivenko-Cantelli theorem for stationarity,

Combining (\ref{eq:dpdiff1}), (\ref{eq:dpdiff4}) and (\ref{eq:dpdiff5}) yields (\ref{eq:dp_diff_conv1}) under mutual independence.

Now if (\ref{eq:dp_diff_conv1}) holds for $j\geq 2$, then this clearly implies mutual independence of the random variables.
\end{proof}
\begin{remark}
\label{remark:remark_indep1}
Apart from being much more stable numerically compared to the approach of comparison between classical empirical conditional and marginal distributions, our DP-based approach 	
also allows incorporation of the dependence structure, if any, through the base measure $G_0$. This can be achieved by empirically estimating the dependence structure
from the data, and incorporating it in $G_0$. For example, if $G_0$ corresponds to Gaussian process, then its mean and the covariance structure can be estimated from
the data. This is expected to improve efficiency of inference regarding mutual independence. Note that such dependence structure can not be exploited in the
approach of comparison between classical empirical conditional and marginal distributions.
\end{remark}

For our Bayesian characterization of mutual independence, let $n_j$ denote the minimum number of observations associated with $(X_1,\ldots,X_j)$, for $j\geq 2$.
Now let $\{c_j\}_{j=1}^{\infty}$ be a non-negative decreasing sequence and
\begin{equation*}
	Y_{j,n_{j}}=\mathbb I{\left\{\underset{t_1,\ldots,t_j\in\mathbb R}{\sup}~\left|\tilde \zeta_{jn_j}(t_1,\ldots,t_j)-\tilde \zeta_{jn_j}(t_j)\right|\leq c_j\right\}}.
\end{equation*}
Let, for $j\geq 1$,
\begin{equation*}
	P\left(Y_{j,n_{j}}=1\right)=p_{j,n_{j}}.
\end{equation*}
Let the rest of the recursive Bayesian procedure be the same as in Section \ref{sec:recursive1}.
Then, using Theorem \ref{theorem:dp_cdf1}, the following theorem can be proved in almost the same way as Theorem \ref{theorem:convergence}.

\begin{theorem}
\label{theorem:convergence_indep}
Let $\bX^i;i=1,2,\ldots$, be stationary. Then 
$(X_1,X_2,\ldots)$ are mutually independent if and only if 
for all $\omega\in\mathfrak S\cap\mathfrak N^c$, where $\mathfrak N$ is some null set having probability measure zero,
for any monotonically decreasing sequence 
$\left\{c_j(\omega)\right\}_{j=1}^{\infty}$,
\begin{equation*}
	\pi\left(\mathcal N_1|y_{k,n_{k}}(\omega)\right)\rightarrow 1,
\end{equation*}
	as $k\rightarrow\infty$ and $n_{j}\rightarrow\infty$ for $k=2,3,\ldots,K$ and $K\rightarrow\infty$, 
where $\mathcal N_1$ is any neighborhood of 1 (one).
\end{theorem}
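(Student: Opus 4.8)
The plan is to mirror the proof of Theorem \ref{theorem:convergence} essentially verbatim, with Theorem \ref{theorem:dp_cdf1} playing the role that Theorems \ref{theorem:gc2} and \ref{theorem:gc3} played there, and with the sup-distance $\sup_{t_1,\ldots,t_j}|\tilde \zeta_{jn_j}(t_1,\ldots,t_j)-\tilde \zeta_{jn_j}(t_j)|$ replacing $\sup_C|\hat P_j(C)-\tilde P_K(C)|$ throughout. Because the recursive Bayesian scheme is exactly that of Section \ref{sec:recursive1}, the posterior mean and variance formulas (\ref{eq:postmean_p_k_2}) and (\ref{eq:postvar_p_k_2}) carry over unchanged; the only book-keeping difference is that the informative stages are indexed by $j\geq 2$, since $\hat\zeta_1=0$ renders stage one trivial, and this affects none of the limits.

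For the ``if'' direction (mutual independence $\Rightarrow$ posterior consistency at $1$) I would first invoke Theorem \ref{theorem:dp_cdf1}: under mutual independence, for each fixed $j\geq 2$ and for all $\omega$ outside a null set $\mathfrak N$, the quantity $\sup_{t_1,\ldots,t_j}|\tilde \zeta_{jn_j}(t_1,\ldots,t_j)-\tilde \zeta_{jn_j}(t_j)|$ tends to $0$ almost surely as $n_j\to\infty$. Since this quantity is bounded above by $1$ and vanishes stagewise, for any prescribed non-negative monotonically decreasing sequence $\{c_j(\omega)\}_{j=1}^{\infty}$ the sup eventually falls below $c_j(\omega)$ once $n_j$ is large, so $y_{j,n_j}(\omega)=1$ eventually for every $j\geq 2$; since $\hat\zeta_1=0$ gives $y_{1,n_1}(\omega)=1$ as well, we obtain $\sum_{j=1}^k y_{j,n_j}(\omega)=k$ in the iterated limit, exactly as in the stationary case. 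Substituting into (\ref{eq:postmean_p_k_2}) and (\ref{eq:postvar_p_k_2}) and using $\sum_{j=1}^k 1/j^2\to\pi^2/6$ reproduces (\ref{eq:postmean_p_k_limit}) and (\ref{eq:postvar_p_k_limit}), so that $E(p_{k,n_k}|y_{k,n_k}(\omega))\to 1$ and $Var(p_{k,n_k}|y_{k,n_k}(\omega))\to 0$; Chebychev's inequality then yields $\pi(\mathcal N_1|y_{k,n_k}(\omega))\to 1$ for every neighborhood $\mathcal N_1$ of one.

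For the ``only if'' direction I would argue by contradiction, again following Theorem \ref{theorem:convergence}. Concentration of the posterior at $1$ forces $E(p_{k,n_k}|y_{k,n_k}(\omega))\to 1$. If the sequence were not mutually independent, Theorem \ref{theorem:dp_cdf1} guarantees at least one stage at which $[X_j\mid X_1,\ldots,X_{j-1}]\neq[X_j]$, so the corresponding indicator vanishes for large $n_j$; provided such violations occur at all sufficiently large stages, $\sum_{j=1}^k y_{j,n_j}(\omega)$ stays bounded, whence (\ref{eq:postmean_p_k_2}) forces $E(p_{k,n_k}|y_{k,n_k}(\omega))\to 0$, contradicting the previous conclusion and completing the equivalence.

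The hard part will be precisely this last step. In the stationarity setting the failure of stationarity was guaranteed, via Theorem \ref{theorem:gc3} and the surrounding discussion that no $i_0$ renders $P_i=P$ for $i>i_0$, to persist at infinitely many stages, and that persistence is what bounded the partial sums $\sum y_{j,n_j}$. For mutual independence the analogous persistence is not automatic: a sequence in which only $X_1$ and $X_2$ are dependent while all later coordinates are independent fails mutual independence yet violates the indicator condition only at the single stage $j=2$, so that $\sum_{j=1}^k y_{j,n_j}=k-1$ and the posterior still converges to $1$. The clean equivalence therefore hinges on non-independence of the infinite sequence manifesting as $[X_j\mid X_1,\ldots,X_{j-1}]\neq[X_j]$ for infinitely many $j$; I would either adopt this as the operative notion of global dependence or strengthen Theorem \ref{theorem:dp_cdf1} to supply infinitely many failing stages, and this is the point at which the argument genuinely departs from a verbatim transcription of the proof of Theorem \ref{theorem:convergence}.
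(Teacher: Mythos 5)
Your plan coincides with what the paper actually does: the paper offers no separate argument for this theorem, saying only that using Theorem \ref{theorem:dp_cdf1} it ``can be proved in almost the same way as Theorem \ref{theorem:convergence},'' and your first three paragraphs are precisely that transcription. The forward direction is sound: under mutual independence, Theorem \ref{theorem:dp_cdf1} drives every stagewise sup-distance to zero almost surely, so all indicators eventually equal one, and the computations (\ref{eq:postmean_p_k_limit})--(\ref{eq:postvar_p_k_limit}) together with Chebychev's inequality give posterior concentration at $1$.

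The obstruction you raise against the converse is genuine, and it is a defect of the paper's own one-line proof rather than of your transcription. In the stationarity theorems the contradiction argument survives because of the standing dichotomy (see the discussion following Theorem \ref{theorem:gc1} and the proof of Theorem \ref{theorem:gc3}): nonstationarity there means $P_i\neq P_j$ for infinitely many $j\neq i$, so the violation occurs at every stage $j\geq j_0$, which is what keeps $\sum_{j=1}^k y_{j,n_j}$ bounded and forces the posterior mean to $0$. Failure of mutual independence has no such built-in persistence: in your example where only $X_1$ and $X_2$ are dependent and every $X_j$ with $j\geq 3$ is independent of its predecessors, one has $[X_j\mid X_1,\ldots,X_{j-1}]=[X_j]$ for all $j\geq 3$, hence $\sum_{j=1}^k y_{j,n_j}\geq k-1$ in the iterated limit, and the posterior mean $\frac{\sum_{j=1}^k j^{-2}+k-1}{k+2\sum_{j=1}^k j^{-2}}\rightarrow 1$ for every choice of decreasing $\left\{c_j(\omega)\right\}$, although the sequence is not mutually independent. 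So the ``only if'' direction is false as stated unless failure of independence is assumed to manifest as $[X_j\mid X_1,\ldots,X_{j-1}]\neq[X_j]$ at all sufficiently large stages (or Theorem \ref{theorem:dp_cdf1} is strengthened to supply such persistence) --- exactly the repair you propose, and one the paper never supplies. Your proposal is therefore exactly as complete as the paper's own proof, with the added merit of making the missing hypothesis explicit.
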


\subsection{Mutual independence in the point process setup}
\label{subsec:pp_indep}

Recall that for a Poisson point process, if for any set of disjoint regions $C_i$; $i=1,\ldots,K$, where $C_i\subset\bS$, $\bX_{C_i}$, denoting
the set of points in $C_i$, are independent, for any $K>1$. This is referred to as the complete independence property in \ctn{Daley03}. 
However, complete independence alone is not sufficient to characterize Poisson point process.
In this regard, let us consider the following assumptions.

\begin{itemize}
\item[(A1)]
Let $N(A)$, the number of points in the set $A$, be defined and finite for every bounded set $A$ in the Borel sigma-field generated by the open spheres
of $\bS$. This can be simply expressed by saying that the trajectories of $N(\cdot)$ are almost surely boundedly finite (\ctn{Daley03}). 

\item[(A2)] $P_r\left\{N\left(S_{\epsilon}(x)\right)>1\right\}=o\left(P_r\left\{N\left(S_{\epsilon}(x)\right)>1\right\}\right)$, as $\epsilon\rightarrow 0$.
Here $S_{\epsilon}(x)$ denotes the open sphere with radius $\epsilon$ and center $x$.
This property is called orderliness.	
\end{itemize}

With these, the Poisson process can be characterized as follows.
\begin{theorem}[\ctn{Daley03}]
\label{theorem:poisson1}
Let $N(\cdot)$ be almost surely boundedly finite and without fixed atoms. Then $N(\cdot)$ is a Poisson process if and only if it is orderly
and has the complete independence property.
\end{theorem}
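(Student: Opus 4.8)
The plan is to prove the two implications separately, with the forward direction (Poisson $\Rightarrow$ orderly and completely independent) being routine and the converse carrying essentially all the weight.

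For the forward direction, I would first recall that a Poisson process with intensity measure $\Lambda$ has, by definition, counts $N(A_1),\ldots,N(A_k)$ that are mutually independent whenever $A_1,\ldots,A_k$ are disjoint, which is exactly the complete independence property; the no-fixed-atoms hypothesis forces $\Lambda$ to be non-atomic. Orderliness then follows from the explicit Poisson probabilities: since $P_r\{N(S_\epsilon(x))\geq 1\}=1-e^{-\Lambda(S_\epsilon(x))}$ and $P_r\{N(S_\epsilon(x))\geq 2\}=1-e^{-\Lambda(S_\epsilon(x))}\left(1+\Lambda(S_\epsilon(x))\right)$, a Taylor expansion gives $P_r\{N(S_\epsilon(x))\geq 2\}=O\left(\Lambda(S_\epsilon(x))^2\right)$ while $P_r\{N(S_\epsilon(x))\geq 1\}\sim\Lambda(S_\epsilon(x))$. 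As $\epsilon\to 0$, non-atomicity forces $\Lambda(S_\epsilon(x))\to 0$, yielding the required $o(\cdot)$ relation between the two probabilities.

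For the converse, the central object is the void (avoidance) function $v(A)=P_r\{N(A)=0\}$. Complete independence makes $\Lambda(A):=-\log v(A)$ finitely additive over disjoint bounded Borel sets, and I would use the boundedly-finite hypothesis together with a monotone-continuity argument to upgrade finite additivity to countable additivity, so that $\Lambda$ is a genuine measure, non-atomic by the no-fixed-atoms assumption. The heart of the argument is then to show that each count $N(A)$ is $\mathrm{Poisson}(\Lambda(A))$ distributed. For this I would fix a bounded $A$, take partitions $A=\bigsqcup_{i=1}^{m_n}A_{n,i}$ with $\max_i\mathrm{diam}(A_{n,i})\to 0$, and write $N(A)=\sum_{i=1}^{m_n}N(A_{n,i})$ as a sum of independent summands. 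Orderliness guarantees $\sum_i P_r\{N(A_{n,i})\geq 2\}\to 0$ and $\max_i P_r\{N(A_{n,i})\geq 1\}\to 0$, so the triangular array satisfies the hypotheses of the classical Poisson limit theorem (law of rare events), giving $N(A)\stackrel{d}{\longrightarrow}\mathrm{Poisson}(\lambda_A)$ with $\lambda_A=\lim_n\sum_i P_r\{N(A_{n,i})=1\}$, which one then identifies with $\Lambda(A)$. Joint Poisson-ness over finitely many disjoint sets follows at once from complete independence, establishing that $N(\cdot)$ is a Poisson process.

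The main obstacle I anticipate is the measure-theoretic bookkeeping in the converse: justifying that the partition refinement is legitimate (that the summands are genuinely independent and that the negligibility estimates from orderliness hold uniformly enough along the refining partitions), and reconciling the two descriptions of the intensity, namely the one coming from the void function $\Lambda(A)=-\log v(A)$ and the one $\lambda_A$ extracted from the Poisson limit theorem. Ensuring $\Lambda$ is boundedly finite and non-atomic, so that $\lambda_A$ is finite and the limit theorem applies, is precisely where the three regularity hypotheses (boundedly finite, orderly, without fixed atoms) are consumed. As this is a classical result, I would otherwise defer the full technical development to \ctn{Daley03}.
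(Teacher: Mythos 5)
The paper offers no proof of this theorem at all: it is imported verbatim as a known result from \ctn{Daley03} (where it appears with the same hypotheses), and your sketch correctly reconstructs the standard argument given in that reference — the forward direction by Taylor expansion of the Poisson tail probabilities plus non-atomicity of $\Lambda$, and the converse via the avoidance function $\Lambda(A)=-\log P\{N(A)=0\}$ and the law of rare events along refining partitions. Since you, like the paper, ultimately defer the genuinely delicate step (upgrading pointwise orderliness to the uniform negligibility needed for the triangular-array limit, handled in \ctn{Daley03} through dissecting systems) to the same reference, your treatment is consistent with the paper's and contains no error.
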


We also note the following lemma.
\begin{lemma}[\ctn{Daley03}]
\label{lemma:poisson1}
A point $x_0$ is an atom of the parameter measure $\Lambda$ if and only if it is a fixed atom of the process.
\end{lemma}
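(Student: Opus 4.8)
The plan is to unwind the two definitions and to observe that the equivalence reduces to the elementary fact that a non-negative, integer-valued random variable has strictly positive expectation if and only if it is strictly positive with positive probability. Recall that the parameter (first-moment) measure $\Lambda$ of the process $N(\cdot)$ is defined by $\Lambda(A)=E[N(A)]$ for Borel sets $A$, so that $x_0$ being an atom of $\Lambda$ means $\Lambda(\{x_0\})=E[N(\{x_0\})]>0$; on the other hand, $x_0$ being a fixed atom of the process means $P\left(N(\{x_0\})\geq 1\right)>0$, i.e.\ a point occurs exactly at $x_0$ with positive probability. Since the process is assumed almost surely boundedly finite (Assumption (A1)) and the singleton $\{x_0\}$ is a bounded Borel set, $N(\{x_0\})$ is a well-defined, finite, non-negative integer-valued random variable, so both of the quantities above are meaningful.

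First I would establish the pointwise bound $N(\{x_0\})\geq\mathbb I\left\{N(\{x_0\})\geq 1\right\}$, which upon taking expectations yields
\begin{equation*}
P\left(N(\{x_0\})\geq 1\right)\leq E[N(\{x_0\})]=\Lambda(\{x_0\}).
\end{equation*}
This immediately settles one direction: if $x_0$ is a fixed atom, so that $P(N(\{x_0\})\geq 1)>0$, then $\Lambda(\{x_0\})>0$ and $x_0$ is an atom of $\Lambda$. For the converse I would invoke the standard fact that, for a non-negative random variable $Z$, one has $E[Z]=0$ if and only if $Z=0$ almost surely. Applying this to $Z=N(\{x_0\})$: if $x_0$ is \emph{not} a fixed atom, then $P(N(\{x_0\})\geq 1)=0$, whence $N(\{x_0\})=0$ almost surely and therefore $\Lambda(\{x_0\})=E[N(\{x_0\})]=0$, so $x_0$ is not an atom of $\Lambda$. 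Taking the contrapositive gives that every atom of $\Lambda$ is a fixed atom, which completes the equivalence.

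There is essentially no hard step here: the statement is a definitional consequence of $\Lambda=E[N(\cdot)]$ once one recognises that $N(\{x_0\})$ takes values in $\{0,1,2,\ldots\}$, and the argument does not even require orderliness or simplicity of the process. The only point meriting care is the justification that $N(\{x_0\})$ is a genuine, finite-valued random variable and that $\omega\mapsto N(\{x_0\},\omega)$ is measurable; both are guaranteed by Assumption (A1) together with the construction of $N$ as a boundedly finite point process on the Borel $\sigma$-field generated by the open spheres of $\bS$. Thus the principal "obstacle" is purely one of setting up the measure-theoretic definitions correctly, after which the proof is a two-line expectation estimate.
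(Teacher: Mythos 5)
Your proof is correct, but note first that the paper itself offers no proof of this lemma: it is quoted directly from \ctn{Daley03}, where the statement is made for a process already assumed to be Poisson with parameter measure $\Lambda$, and the proof there is a one-line void-probability computation: $N(\{x_0\})$ is Poisson distributed with mean $\Lambda(\{x_0\})$, so $P\left(N(\{x_0\})\geq 1\right)=1-\exp\left(-\Lambda(\{x_0\})\right)$, which is positive if and only if $\Lambda(\{x_0\})>0$. Your route is genuinely different: you never invoke the Poisson law of the counts, only the identity $\Lambda(A)=E[N(A)]$ together with the two elementary facts that $P(Z\geq 1)\leq E[Z]$ and that $E[Z]=0$ forces $Z=0$ almost surely for non-negative $Z$. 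This buys generality -- your argument characterizes atoms of the first-moment (intensity) measure of an arbitrary almost surely boundedly finite point process, Poisson or not, and as you observe it needs no orderliness -- at the cost of a definitional caveat: in \ctn{Daley03} the ``parameter measure'' of a Poisson process is by definition the measure appearing in the Poisson counting distribution, and its coincidence with the expectation measure $E[N(\cdot)]$ is a consequence (the mean of a Poisson variable equals its parameter) rather than a definition. You should state that one-line identification explicitly, since it is the only place the Poisson hypothesis enters your argument; with it, your proof is complete, and it is perfectly adapted to the one direction the paper actually uses in Corollary \ref{cor:poisson1}, namely that non-atomicity of $\Lambda$ rules out fixed atoms. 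The Daley--Vere-Jones computation buys slightly more (the exact value of $P\left(N(\{x_0\})\geq 1\right)$), but nothing the lemma or its application requires.
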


\begin{corollary}
\label{cor:poisson1}
Theorem \ref{theorem:poisson1} and Lemma \ref{lemma:poisson1} together imply that if $\Lambda$ corresponds to a continuous distribution, then
(A1)-(A2) along with complete independence characterize Poisson process. 
\end{corollary}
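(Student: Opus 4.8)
The plan is to chain the two cited results, using the Lemma as the bridge that converts a hypothesis on the parameter measure $\Lambda$ into the hypothesis on the process that Theorem \ref{theorem:poisson1} actually requires. First I would observe that saying $\Lambda$ corresponds to a continuous distribution is precisely saying that $\Lambda$ has no atoms. By Lemma \ref{lemma:poisson1}, a point is an atom of $\Lambda$ if and only if it is a fixed atom of the process $N(\cdot)$; hence the absence of atoms in $\Lambda$ is equivalent to $N(\cdot)$ having no fixed atoms.

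Next I would check that the standing hypotheses of Theorem \ref{theorem:poisson1} are met under the corollary's assumptions. Assumption (A1) states exactly that $N(\cdot)$ is almost surely boundedly finite, and the previous paragraph supplies the remaining requirement that $N(\cdot)$ be without fixed atoms. Thus, combining continuity of $\Lambda$ with (A1), both standing conditions ``boundedly finite'' and ``without fixed atoms'' of Theorem \ref{theorem:poisson1} hold.

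With these in place, Theorem \ref{theorem:poisson1} asserts that $N(\cdot)$ is a Poisson process if and only if it is orderly and possesses the complete independence property. Since orderliness is exactly assumption (A2), this equivalence reads: $N(\cdot)$ is Poisson if and only if (A2) and complete independence hold. Folding in (A1) and continuity of $\Lambda$, which were already used to secure the standing hypotheses, yields precisely the statement that (A1)--(A2) together with complete independence characterize the Poisson process, establishing the corollary.

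Because the argument is a direct substitution of equivalent hypotheses rather than a fresh analytic estimate, there is no real obstacle; the only point demanding care is to preserve the biconditional. In particular, for the forward implication one must note that a Poisson process with non-atomic $\Lambda$ does satisfy (A1), (A2), and complete independence, but this is immediate from the properties recorded in Theorem \ref{theorem:poisson1} and Lemma \ref{lemma:poisson1}, so the equivalence is genuinely two-sided.
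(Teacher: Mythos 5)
Your proof is correct and is exactly the argument the paper intends: the corollary is stated as an immediate consequence of Theorem \ref{theorem:poisson1} and Lemma \ref{lemma:poisson1}, with the lemma serving precisely as your bridge from non-atomicity of $\Lambda$ to the process having no fixed atoms, after which the theorem's biconditional (orderliness plus complete independence) delivers the characterization. Nothing is missing, and your care in noting that the equivalence is two-sided matches how the paper later uses the corollary in Theorem \ref{theorem:convergence_indep_pp2}.
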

We now characterize Poisson process in a recursive Bayesian framework using our Bayesian characterization of mutual independence
assuming (A1)--(A2) and non-atomicity of the process. 
In all our examples, we consider $\Lambda$ to be associated with continuous distributions, hence non-atomic; (A1)--(A2) also hold in all our  
simulation studies.

Assume that $\bX_{C_i}$ are locally stationary and let $D_{C_i}$ denote the set of minimum inter-point distances associated with $\bX_{C_i}$.
As before, for $r=1,2,\ldots$, let $W_r$ and $W_{dr}$ be the observation window and the space of inter-point distances corresponding to $W_r$ at the $r$-th stage,
where $|W_r|\rightarrow\infty$ as $r\rightarrow\infty$.
Let us also replace $n_j$ and $K$ with $n_{jr}$ and $K_r$, respectively, as before. 

Now let $\{c_j\}_{j=1}^{\infty}$ be a non-negative decreasing sequence and
\begin{equation*}
Y_{j,n_{jr}}=\mathbb I{\left\{\underset{t_1,\ldots,t_j\in\mathbb R}{\sup}~\left|\tilde \zeta_{jn_{jr}}(t_1,\ldots,t_j)-\tilde \zeta_{jn_{jr}}(t_j)\right|\leq c_j\right\}},
\end{equation*}
and, for $j\geq 1$,
\begin{equation*}
	P\left(Y_{j,n_{jr}}=1\right)=p_{j,n_{jr}}.
\end{equation*}
Then we have the following result for point processes corresponding to Theorem \ref{theorem:convergence_indep}.

\begin{theorem}
\label{theorem:convergence_indep_pp}
Let $\bX$ be a point process in $\bS$. Assume that for the disjoint regions $C_i\subset \bS$; $i=1,\ldots,K_r$, $\bX_{C_i}$ are locally stationary.  
Then $\left(D_{C_1},\ldots,D_{C_{K_r}}\right)$ are mutually independent if and only if 
for all $\omega\in\mathfrak S\cap\mathfrak N^c$, where $\mathfrak N$ is some null set having probability measure zero,
for any monotonically decreasing sequence 
$\left\{c_j(\omega)\right\}_{j=1}^{\infty}$, and any set of disjoint regions $C_i$; $i=1,\ldots,K_r$, where $C_i\subset\bS$,
\begin{equation}
	\pi\left(\mathcal N_1|y_{k,n_{kr}}(\omega)\right)\rightarrow 1,
	\label{eq:convergence_indep_pp}
\end{equation}
	as $k\rightarrow\infty$ and $n_{kr}\rightarrow\infty$ for $k=2,3,\ldots,K_r$ and $K_r\rightarrow\infty$ as $r\rightarrow\infty$, 
where $\mathcal N_1$ is any neighborhood of 1 (one).
\end{theorem}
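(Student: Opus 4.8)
The plan is to treat this result as the point-process specialization of Theorem \ref{theorem:convergence_indep}, with the generic random variables $(X_1, X_2, \ldots)$ there replaced by the minimum inter-point distances $(D_{C_1}, D_{C_2}, \ldots)$ attached to the disjoint regions $C_i$. The analytic heart of the matter is already supplied by Theorem \ref{theorem:dp_cdf1}, which characterizes mutual independence through almost sure vanishing of the sup-norm gap between the Dirichlet-process based conditional estimate $\tilde\zeta_{jn}(t_1, \ldots, t_j)$ and the marginal estimate $\tilde\zeta_{jn}(t_j)$; what remains is to couple this characterization with the recursive Bayesian posterior machinery of Section \ref{sec:recursive1} under the growing-window asymptotics $|W_r| \to \infty$. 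First I would check that local stationarity of each $\bX_{C_i}$, together with $|W_r| \to \infty$, forces $n_{ir} \to \infty$ for every fixed $i$, so that the Glivenko-Cantelli theorem for stationary sequences (\ctn{Stute80}) applies region by region to the distances $D_{C_i}$, which is precisely the stationarity hypothesis under which Theorem \ref{theorem:dp_cdf1} was proved.

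For the necessity direction, assume $(D_{C_1}, \ldots, D_{C_{K_r}})$ are mutually independent. Theorem \ref{theorem:dp_cdf1}, applied to the distance variables, then gives for each $j$ that $\underset{t_1, \ldots, t_j}{\sup}~|\tilde\zeta_{jn_{jr}}(t_1, \ldots, t_j) - \tilde\zeta_{jn_{jr}}(t_j)| \to 0$ almost surely as $n_{jr} \to \infty$. Hence, for any monotonically decreasing $\{c_j(\omega)\}$, the indicator $Y_{j,n_{jr}}(\omega) = 1$ for all sufficiently large $n_{jr}$ (recalling $\hat\zeta_1 = 0$, so that $Y_{1,n_{1r}} = 1$ trivially), and the recursive partial sum $\sum_{j} y_{j,n_{jr}}(\omega)$ attains its maximal value. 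Substituting this into the recursive posterior mean and variance and passing to the limit exactly as in the proof of Theorem \ref{theorem:convergence} shows the posterior mean tends to $1$ and the variance to $0$; Chebyshev's inequality then delivers $\pi(\mathcal N_1 | y_{k,n_{kr}}(\omega)) \to 1$.

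For the sufficiency direction, I would reproduce the contradiction argument from the sufficiency half of Theorem \ref{theorem:convergence}. Assuming $\pi(\mathcal N_1 | y_{k,n_{kr}}(\omega)) \to 1$ forces the posterior mean to tend to $1$. If $(D_{C_1}, D_{C_2}, \ldots)$ were not mutually independent, Theorem \ref{theorem:dp_cdf1} would provide an index $j_0$ beyond which the sup-norm gap remains bounded away from zero, so that $Y_{j,n_{jr}}(\omega) = 0$ for $j \geq j_0$ and sufficiently large $n_{jr}$; the resulting bounded partial sum $\sum_{j} y_{j,n_{jr}}(\omega) \leq j_0$ would drive the recursive posterior mean to $0$, contradicting its convergence to $1$. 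Hence mutual independence must hold.

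The hard part, and what genuinely separates this from Theorem \ref{theorem:convergence_indep}, is the simultaneous management of the three limits $k \to \infty$, $n_{jr} \to \infty$, and $K_r \to \infty$ (as $r \to \infty$) within the growing-window regime. Each new region $C_i$ introduces a fresh distance variable whose conditional law is estimated from the Dirichlet-process posterior mean (\ref{eq:dp_mean1}), so I must ensure that the almost sure convergences furnished by Theorem \ref{theorem:dp_cdf1} hold uniformly enough across the expanding collection of regions that the exceptional null set $\mathfrak N$ survives the union over $i$ with probability measure zero. Verifying this, and confirming that the edge-effect reasoning of Section \ref{subsec:computation_edge} leaves the region-wise minima $D_{C_i}$ undisturbed, is where the real care is needed; the recursive Bayesian bookkeeping itself is then routine.
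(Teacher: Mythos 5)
Your proposal follows exactly the route the paper itself takes: the paper's entire proof reads ``Using Theorem \ref{theorem:dp_cdf1}, the proof follows in almost the same way as that of Theorem \ref{theorem:convergence},'' and your necessity/sufficiency arguments (independence $\Rightarrow$ sup-norm gaps vanish $\Rightarrow$ $y_{j,n_{jr}}=1$ eventually $\Rightarrow$ posterior mean $\to 1$ via Chebychev; conversely a contradiction via bounded partial sums driving the posterior mean to $0$) are precisely the transplanted argument of Theorem \ref{theorem:convergence} with Theorem \ref{theorem:dp_cdf1} supplying the analytic input in place of Theorems \ref{theorem:gc2}--\ref{theorem:gc3}. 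The additional cautions you raise (uniformity of the null set over the growing collection of regions, edge effects) go beyond what the paper addresses, but they do not change the substance: this is the same proof.
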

\begin{proof}
Using Theorem \ref{theorem:dp_cdf1}, the proof follows in almost the same way as that of Theorem \ref{theorem:convergence}.
\end{proof}

\begin{theorem}
\label{theorem:convergence_indep_pp2}
Consider any point process $\bX\in\bS$.
Assume that the $\sigma$-algebra for $\bS$ is separable and generated by the mutually disjoint sets $\left\{C_i;i\geq 1\right\}$,
	and that $\bX_{C_i}$ are locally stationary. Then, provided that (A1)--(A2) hold and the process is non-atomic, 
$\bX$ is a Poisson point process if and only if (\ref{eq:convergence_indep_pp}) holds. 
\end{theorem}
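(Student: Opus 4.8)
The plan is to obtain the stated equivalence by chaining two results already at our disposal: Corollary \ref{cor:poisson1}, which characterizes a Poisson process through orderliness together with the complete independence property, and Theorem \ref{theorem:convergence_indep_pp}, which characterizes mutual independence of the distance vectors $\left(D_{C_1},\ldots,D_{C_{K_r}}\right)$ through the posterior convergence (\ref{eq:convergence_indep_pp}). Since (A1)--(A2) and non-atomicity are assumed, orderliness holds and $\Lambda$ corresponds to a continuous distribution, so Corollary \ref{cor:poisson1} reduces the problem to showing that $\bX$ possesses the complete independence property over the generating partition if and only if (\ref{eq:convergence_indep_pp}) holds. The bridge I would establish is therefore the equivalence between complete independence of $\left\{\bX_{C_i}\right\}$ and mutual independence of the associated distance sets $\left\{D_{C_i};i\geq 1\right\}$.

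First I would treat necessity. If $\bX$ is a Poisson point process, then by Corollary \ref{cor:poisson1} it enjoys complete independence, so for any disjoint $C_1,\ldots,C_{K_r}$ the restrictions $\bX_{C_i}$ are mutually independent. Using the interior-cluster and edge-correction considerations of Section \ref{subsec:computation_edge} together with the local stationarity of each $\bX_{C_i}$, one treats $D_{C_i}$ as a measurable function of $\bX_{C_i}$ alone, whence the vectors $\left(D_{C_1},\ldots,D_{C_{K_r}}\right)$ inherit mutual independence. Invoking Theorem \ref{theorem:convergence_indep_pp} then delivers (\ref{eq:convergence_indep_pp}), closing this direction.

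For sufficiency I would run the chain in reverse. Assuming (\ref{eq:convergence_indep_pp}), Theorem \ref{theorem:convergence_indep_pp} returns mutual independence of $\left(D_{C_1},\ldots,D_{C_{K_r}}\right)$ for every admissible family of disjoint regions, and I would then upgrade this to complete independence of $\left\{\bX_{C_i}\right\}$. Here the separability hypothesis is crucial: because the $\sigma$-algebra of $\bS$ is generated by the mutually disjoint sets $\left\{C_i;i\geq 1\right\}$, the regions may be refined arbitrarily, and under orderliness (A2) each sufficiently fine cell contains at most one point almost surely, so the configuration on such a cell is captured by the presence-and-location information encoded in the minimum-distance statistics. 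Passing to successively finer partitions, independence of the distance sets propagates to independence of the counting variables $N(C_i)$ over disjoint regions, which is exactly the complete independence property; a final appeal to Corollary \ref{cor:poisson1} identifies $\bX$ as Poisson.

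The main obstacle I anticipate is precisely this upgrade, from mutual independence of the minimum inter-point distances to genuine complete independence of the point configurations, since independence of the distance statistics is a priori weaker than independence of the full restrictions $\bX_{C_i}$. Making the limiting refinement argument rigorous --- verifying that orderliness and non-atomicity force the distance information to determine the configuration in the fine-partition limit --- is the delicate step, and it also underlies the functional-dependence claim used in the necessity direction. I would handle it by combining Lemma \ref{lemma:poisson1} (fixed atoms coincide with atoms of $\Lambda$, which are absent here) with a monotone-class argument on the generating partition, so that independence established on the generating sets $\left\{C_i;i\geq 1\right\}$ extends to the full $\sigma$-algebra of $\bS$.
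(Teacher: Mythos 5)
Your proposal follows essentially the same route as the paper's proof: it chains Theorem \ref{theorem:convergence_indep_pp} (posterior convergence (\ref{eq:convergence_indep_pp}) if and only if mutual independence of the distance sets) with the complete-independence characterization of Poisson processes in Theorem \ref{theorem:poisson1} and Corollary \ref{cor:poisson1}, bridged by the equivalence between mutual independence of $\left(D_{C_1},\ldots,D_{C_{K_r}}\right)$ and of $\left(\bX_{C_1},\ldots,\bX_{C_{K_r}}\right)$ together with the extension from the generating sets $\left\{C_i;i\geq 1\right\}$ to arbitrary finite families of disjoint sets in the $\sigma$-field. The only difference is one of emphasis: the step you single out as the main obstacle --- upgrading independence of the minimum-distance statistics to independence of the full configurations --- is exactly the step the paper disposes of with ``it is easy to see,'' so your refinement/orderliness/monotone-class sketch supplies detail the paper leaves implicit rather than taking a genuinely different path.
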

\begin{proof}
By Theorem \ref{theorem:convergence_indep_pp}, $\left(D_{C_1},\ldots,D_{C_{K_r}}\right)$ are mutually independent if and only if (\ref{eq:convergence_indep_pp}) holds.
Since the mutually disjoint sets $\left\{C_i;i\geq 1\right\}$ generates the $\sigma$-field for $\bS$, it follows that any set of mutually disjoint
sets $\left\{B_1,\ldots,B_{\ell}\right\}$ in the $\sigma$-field for $\bS$, for any $\ell>1$, $\left(D_{B_1},\ldots,D_{B_{\ell}}\right)$, are mutually independent. 

Also, it is easy to see that $\left(D_{B_1},\ldots,D_{B_{\ell}}\right)$ are mutually independent if and only if
$\left(\bX_{B_1},\ldots,\bX_{B_{\ell}}\right)$ are mutually independent.

Hence, by the hypothesis of the theorem it follows that $\bX$ is a Poisson point process if and only if (\ref{eq:convergence_indep_pp}) holds. 
\end{proof}

\subsection{Computational strategy for mutual independence assessment} 
\label{subsec:compute_diff}

Note that for relatively large $j$, it may not be feasible to directly compute 
$\underset{t_1,\ldots,t_j\in\mathbb R}{\sup}~\left|\tilde \zeta_{jn_{j}}(t_1,\ldots,t_j)-\tilde \zeta_{jn_{j}}(t_j)\right|$.
Hence we consider the following strategy.
For $j=2$, let $\tilde t_1,\tilde t_2$ be the maximizers of $\left|\tilde \zeta_{jn_{j}}(t_1,\ldots,t_j)-\tilde \zeta_{jn_{j}}(t_j)\right|$, and for $j\geq 3$,
let
\begin{align}
	&\underset{t_1,\ldots,t_j\in\mathbb R}{\sup}~\left|\tilde \zeta_{jn_{j}}(t_1,\ldots,t_j)-\tilde \zeta_{jn_{j}}(t_j)\right|\notag\\
	&= \underset{t_j\in\mathbb R}{\sup}~\left|\frac{E[G_n\left(X_1\leq \tilde t_1,\ldots,X_{j-1}\leq\tilde t_{j-1},X_j\leq t_j\right)|\bX^n_j]}
	{E[G_n\left(X_1\leq \tilde t_1,\ldots,X_{j-1}\leq \tilde t_{j-1}\right)|\bX^n_{j-1}]}-
	\frac{\alpha G_{0,j}(X_j\leq t_j)+\sum_{r=1}^n\delta_{X^r_j}(X^r_j\leq t_j)}{\alpha+n}\right|,
\end{align}
where $\tilde t_3,\ldots,\tilde t_{j-1}$ are the maximizers of 
$\left|\tilde \zeta_{{j-1}n_{j-1}}(t_1,\ldots,t_{j-1})-\tilde \zeta_{{j-1}n_{j-1}}(t_{j-1})\right|$, for $j\geq 3$.

\subsection{Example 1: Detection of HPP and IHPP and their properties}
\label{subsec:example_pp1}

We generate a HPP with intensity $\lambda=1$ on a window of the form $[0,100]\times[0,100]$, using the R package ``spatstat" (\ctn{Baddeley05}),
and obtain $9949$ points in this exercise. 
We also simulate an IHPP using the spatstat package with $\lambda(x,y)=100(x+y)$ on $[0,5]\times[0,5]$, generating $12447$ observations. 
The plots of the point patterns are provided in Figure \ref{fig:pp1_plots}.
Observe that while the HPP pattern in panel (a) is reasonably uniform on the observed window, the IHPP pattern in panel (b) shows sparsity in the bottom
left corner and density in the top right corner of the observation window. 
\begin{figure}
\centering
\subfigure [Homogeneous Poisson point pattern.]{ \label{fig:hpp}
\includegraphics[width=5.5cm,height=5.5cm]{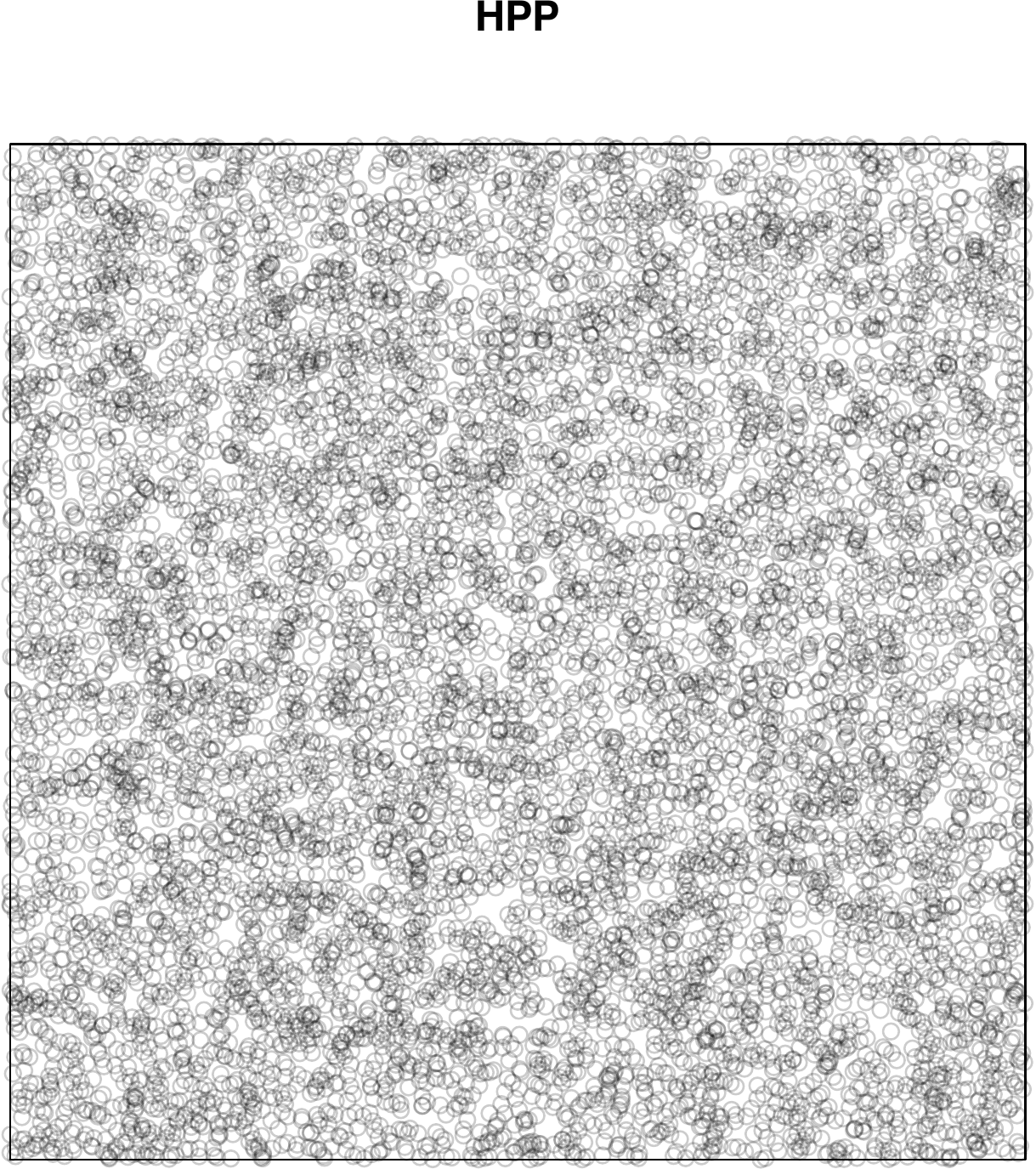}}
\hspace{2mm}
\subfigure [Inhomogeneous Poisson point pattern.]{ \label{fig:ihpp}
\includegraphics[width=5.5cm,height=5.5cm]{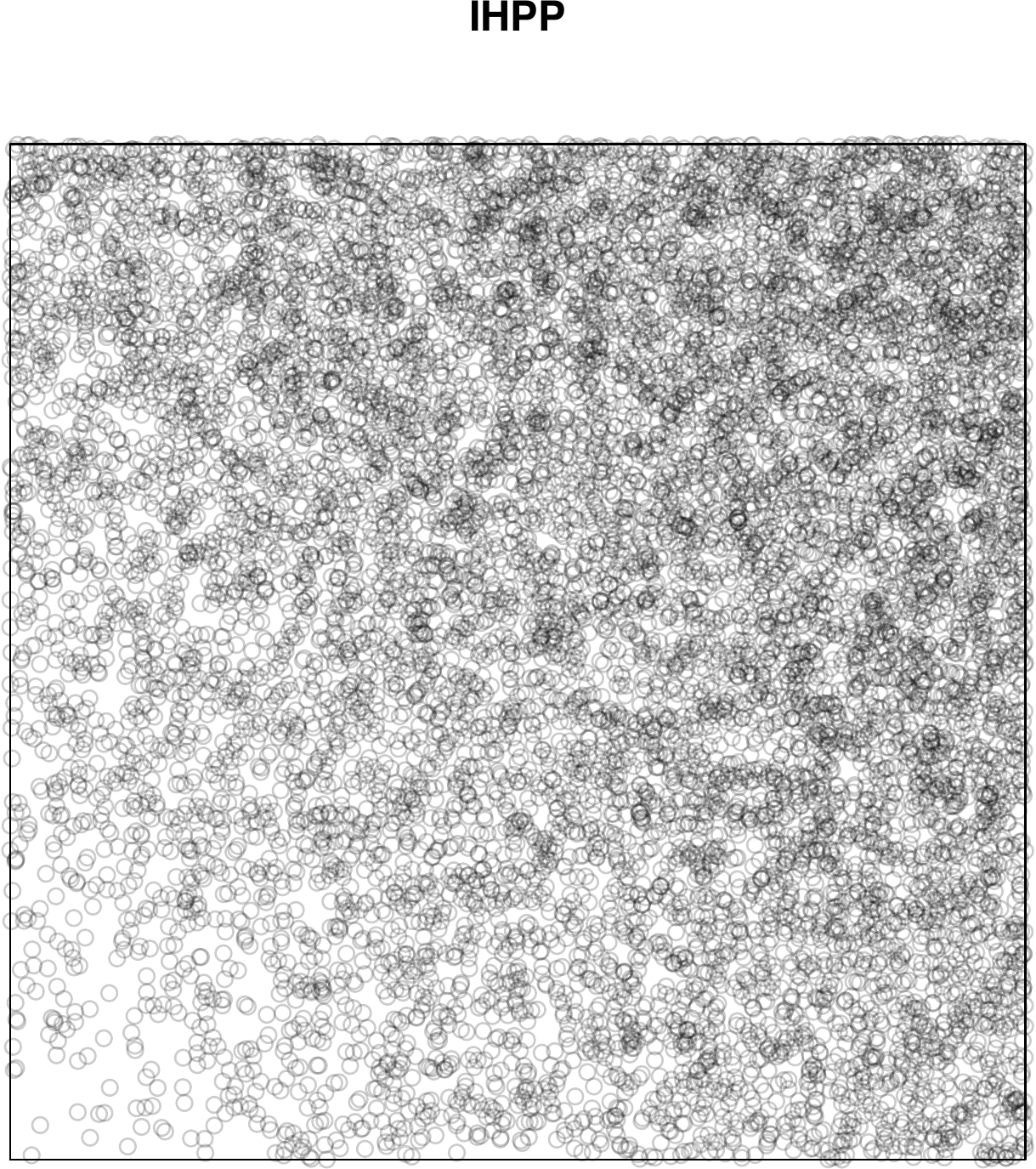}}
\caption{Homogeneous and inhomogeneous Poisson point processes.} 
\label{fig:pp1_plots}
\end{figure}
Our goal is to identify the true point processes that generated the data, pretending that they are unknown and that only the data are observed.

\subsubsection{Homogeneity detection}
\label{subsubsec:homogeneity}
Let us first concentrate on the HPP data.
With $K=1000$ clusters, we use bound (\ref{eq:ar1_bound3}) 
and obtain $\hat C_1=0.25$ as the minimum value of $\hat C_1$ that led to convergence of our recursive Bayesian algorithm to $1$.
The result is depicted in panel (a) of Figure \ref{fig:pp1}. Panel (b) of Figure \ref{fig:pp1} is the simultaneous critical envelope 
associated with classical test of HPP, prepared using spatstat with $1000$ simulations of CSR. Here $r$ stands for the distance argument, 
and $\hat G_{obs}(r)$, $\hat G_{theo}(r)$, $\hat G_{lo}(r)$ and $\hat G_{hi}(r)$ stand for the observed empirical distribution function for the distances
with Kaplan-Meier edge correction, the theoretical distribution function under CSR, the lower critical boundary and the upper critical boundary for the distribution
functions under CSR, respectively. Here the significance level of simultaneous Monte Carlo test is given by $0.000999$. 
Since the observed distribution function fall well within the lower and upper critical boundaries, the result is in agreement with our Bayesian result and indeed, the truth.

We now analyse the point pattern obtained from the IHPP.
Panel (c) of Figure \ref{fig:pp1}
shows the result of our Bayesian analysis with $K=1000$ clusters and $\hat C_1=0.25$. Divergence to zero, that is, inhomogeneity is clearly indicated. However,
this does not validate or invalidate Poisson process. To validate Poisson process, we need to create a characterization of mutual independence between the points
contained in the $K$ clusters. Panel (d) of Figure \ref{fig:pp1} is similar to panel (b) except that the observed distribution function in this case now corresponds
to IHPP. Note that the observed distribution function $\hat G_{obs}(r)$ falls almost entirely within the limits $\hat G_{lo}(r)$ and $\hat G_{hi}(r)$, which makes
it considerably difficult to distinguish this IHPP from HPP. The advantage of our Bayesian method depicted in panel (c) is clearly pronounced over this classical method
in this regard.

\begin{figure}
\centering
\subfigure [HPP detection with Bayesian method.]{ \label{fig:hpp_bayesian}
\includegraphics[width=5.5cm,height=5.5cm]{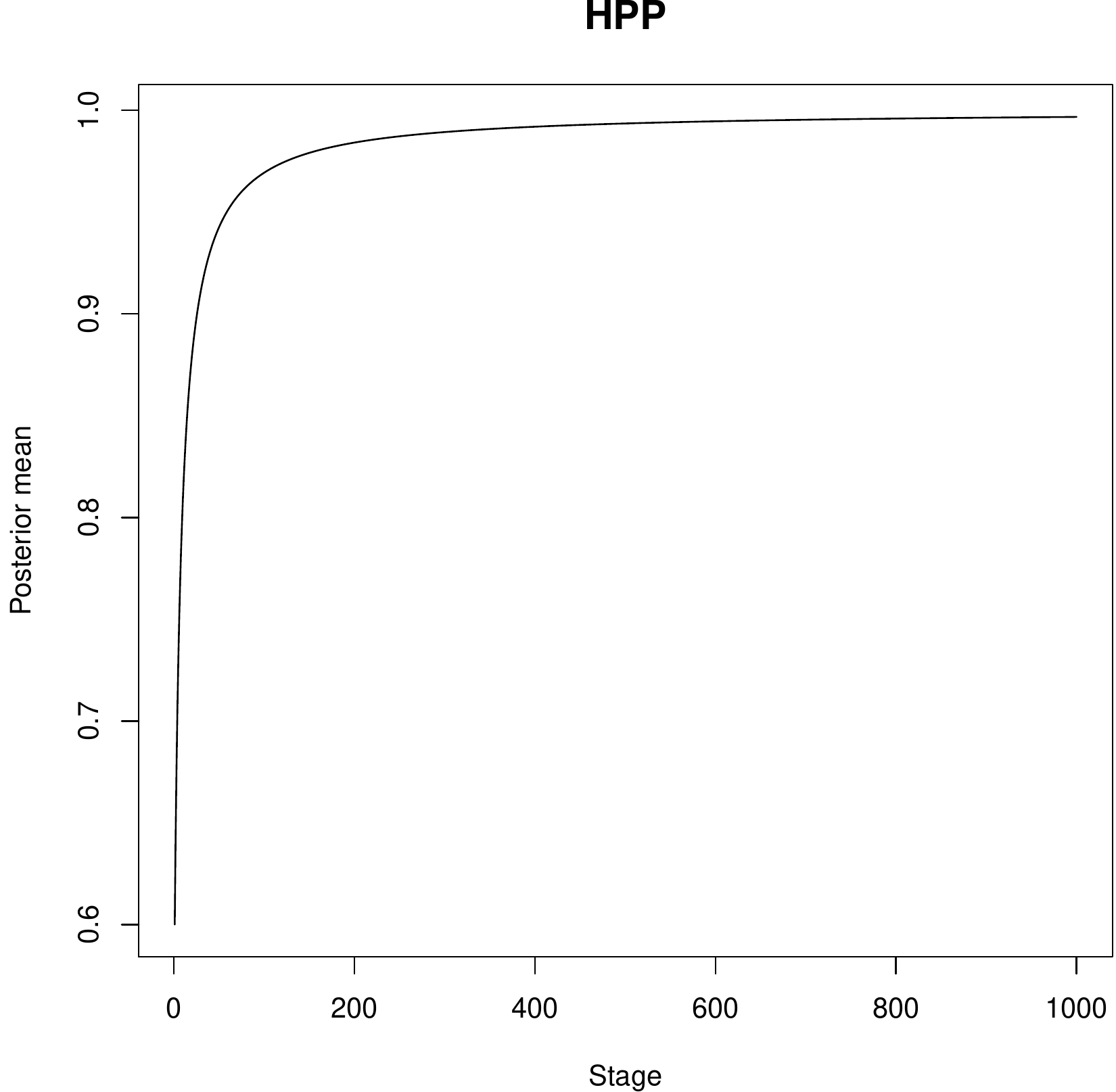}}
\hspace{2mm}
\subfigure [HPP detection with classical method.]{ \label{fig:hpp_classical}
\includegraphics[width=5.5cm,height=5.5cm]{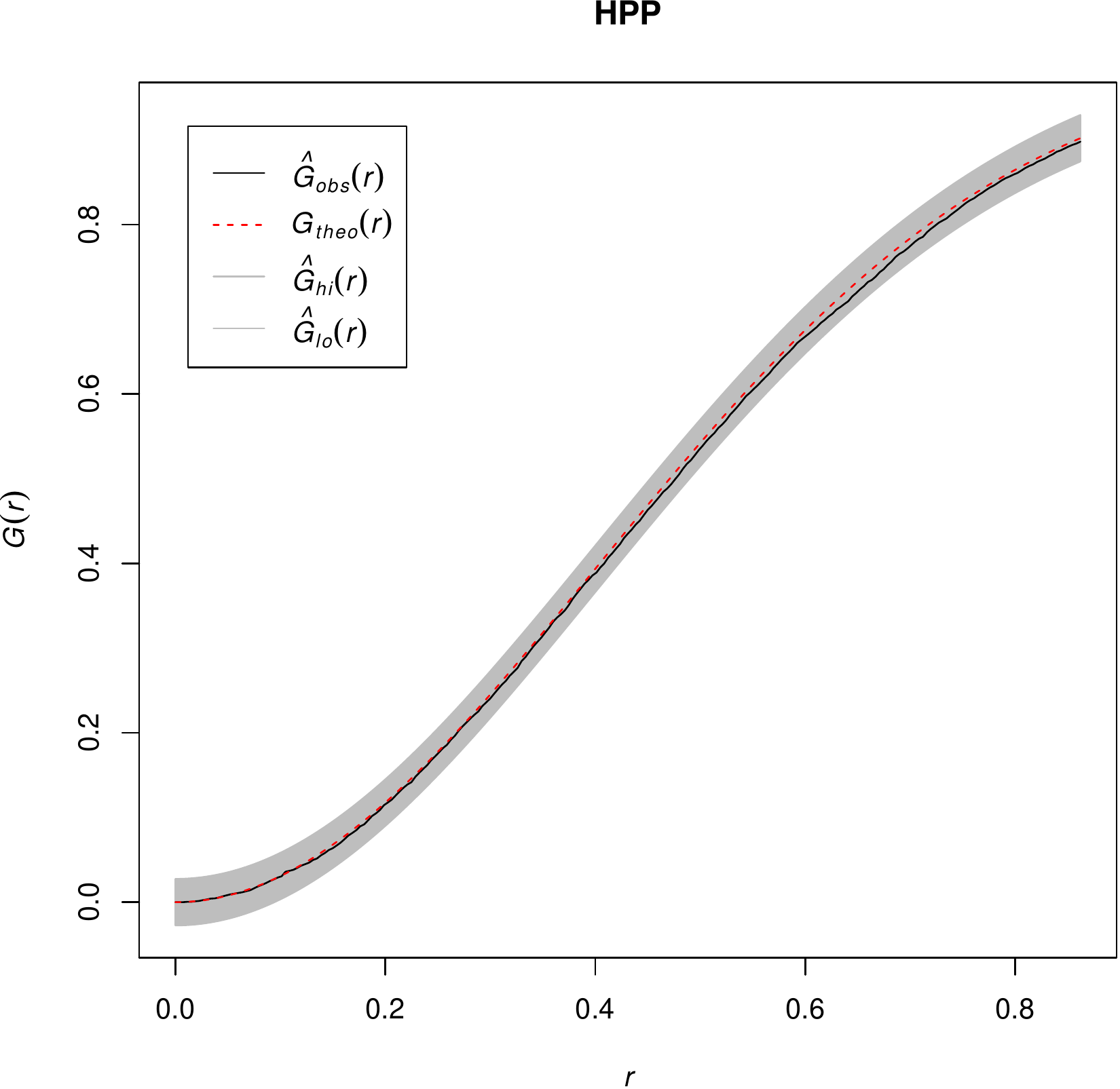}}\\
\vspace{2mm}
\subfigure [IHPP detection with Bayesian method.]{ \label{fig:ihpp_bayesian}
\includegraphics[width=5.5cm,height=5.5cm]{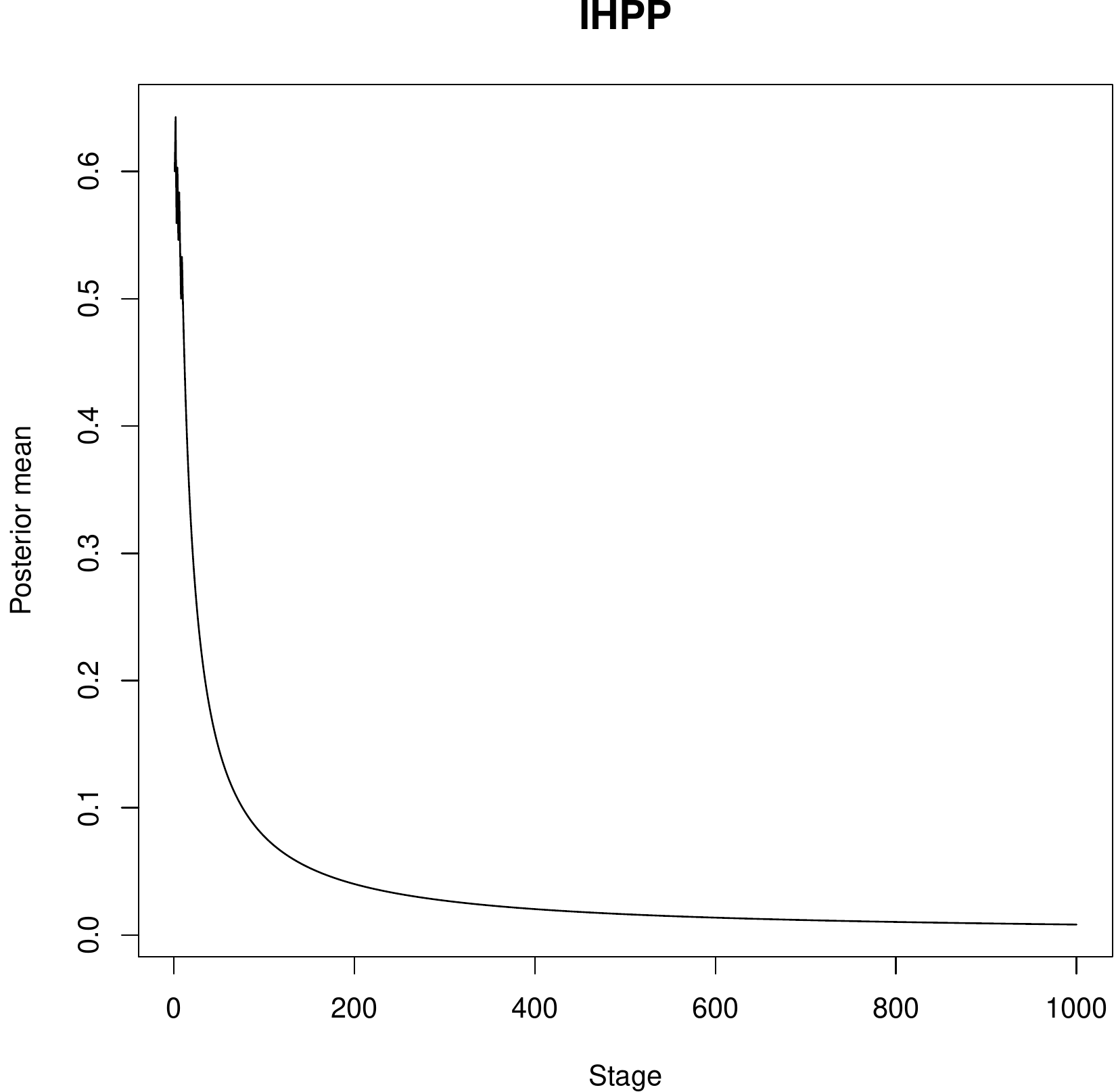}}
\hspace{2mm}
\subfigure [IHPP detection with classical method.]{ \label{fig:ihpp_classical}
\includegraphics[width=5.5cm,height=5.5cm]{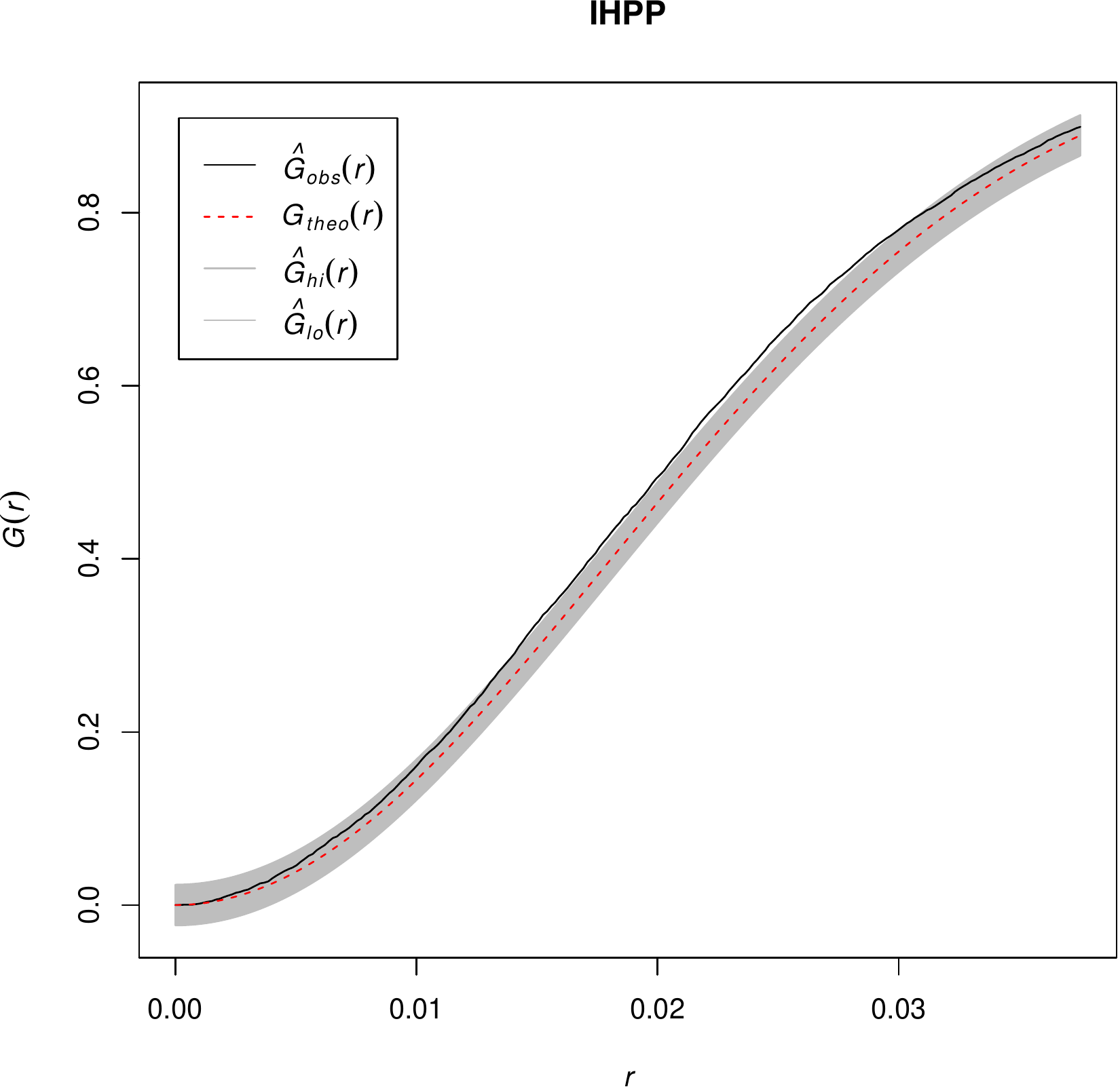}}
\caption{Detection of CSR with our Bayesian method and traditional classical method.} 
\label{fig:pp1}
\end{figure}

\subsubsection{Stationarity detection}
\label{subsubsec:stationarity}
The traditional tests of CSR tests for HPP only. But inhomogeneity neither rejects the Poisson assumption, nor either of stationarity and nonstationarity.
In this regard, we first address the question of stationarity and nonstationarity with our Bayesian method in our current examples of HPP and IHPP. 
Recall that for point processes, we regard the minimum distances $d_i$; $i=1,\ldots,n$, as the spatial data, along with their corresponding locations.
Indeed, with this, we obtain the correct results with $K=1000$ clusters, bound (\ref{eq:ar1_bound3}) with $\hat C_1=0.06$, the minimum value for which convergence to $1$
is obtained under the HPP example. The results presented in Figure \ref{fig:pp1_stationarity}, correctly identifies HPP and IHPP as stationary and nonstationary,
respectively. Larger values of $\hat C_1$, such as $\hat C_1=0.1$ led to the same result.

\begin{figure}
\centering
\subfigure [Stationary point process (HPP).]{ \label{fig:hpp_bayesian_stationary}
\includegraphics[width=5.5cm,height=5.5cm]{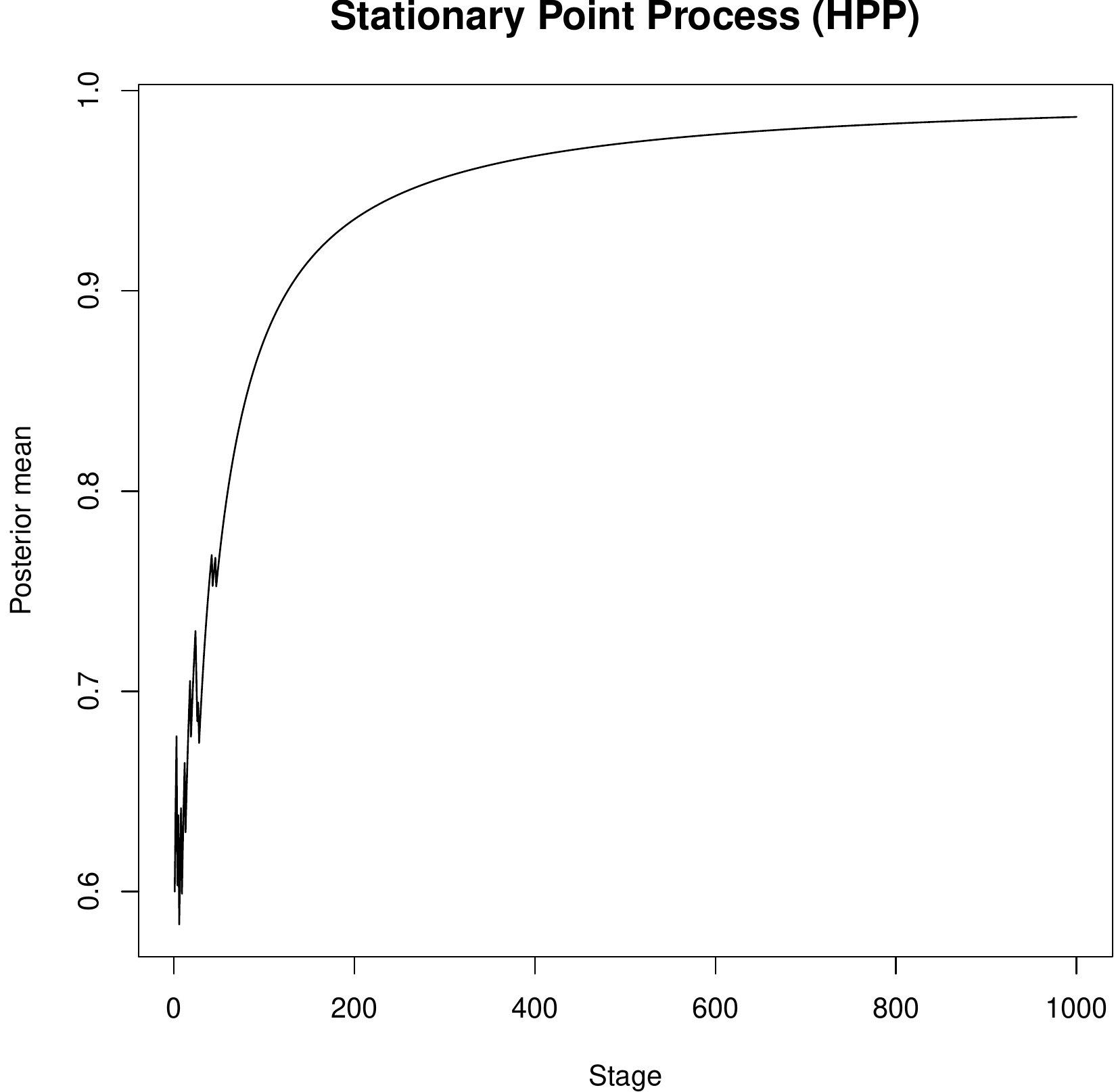}}
\hspace{2mm}
\subfigure [Nonstationary point process (IHPP).]{ \label{fig:ihpp_bayesian_nonstationary}
\includegraphics[width=5.5cm,height=5.5cm]{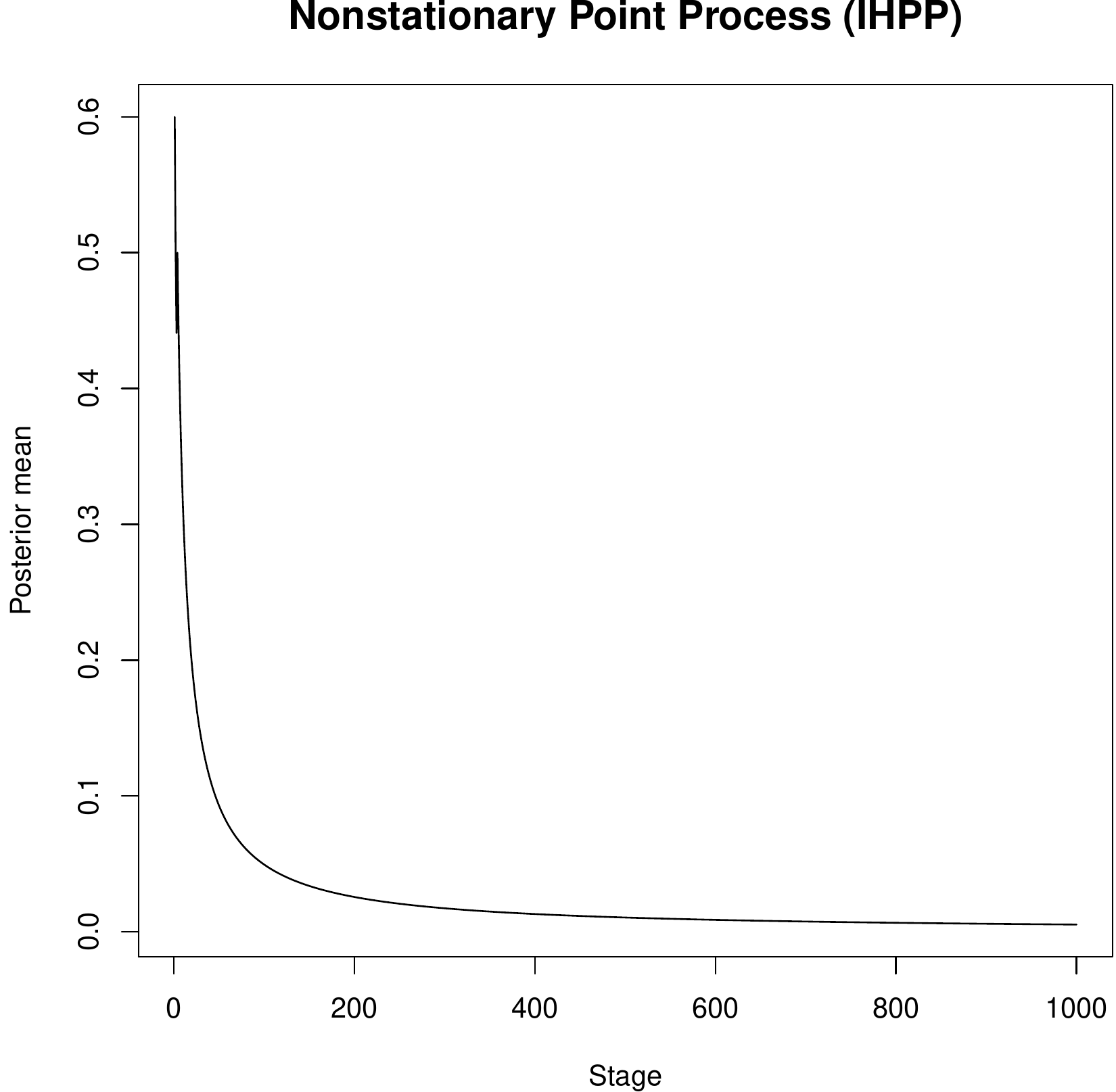}}
\caption{Detection of stationarity and nonstationarity of point processes (here HPP and IHPP) with our Bayesian method.} 
\label{fig:pp1_stationarity}
\end{figure}

\subsubsection{Validation of Poisson assumption}
\label{subsubsec:poisson}
We finally examine, with our recursive Bayesian method for characterizing mutual independence, if the two point patterns that we
generated can be safely assumed to be Poisson point patterns. Note that Poisson point process is equivalent to mutual independence of the points 
in disjoint subsets of $W$. In this regard, for $i=1,\ldots,K$, let $\bX_{C_i}$ denote the points in cluster $C_i$. If $\bX_{C_i}$ are mutually independent
for all possible clusters $C_i$ and $K$, then $\bX$ can be regarded as Poisson point process. For practical purposes, we restrict attention to a single set
of clusters $C_1,\ldots,C_K$. For numerical stability of the computations, we set $K=50$, so that in most cases we investigate mutual independence among $K=50$ variables,
where each variable is considered to take values in one and only one of the clusters. 
We set the strength parameter $\alpha$ of the Dirichlet process to $1$,
which is quite standard, and use the `emcdf' function of the `Emcdf' package in R to parallelise the computations of the joint empirical distribution functions
required for our Bayesian method. Here the joint distribution functions are those of the log-distances associated with the clusters. For the base distribution
$G_0$ of the Dirichlet process, we considered the multivariate normal distribution with mean vector and covariance matrices obtained empirically from the log-distances
associated with $\bX_{C_i}$'s. Specifically, for $K$ dimensions, $G_0$ is a $K$-variate normal distribution with mean vector being the $K$-component vector
obtained by taking the means of the log-distances in $\bX_{C_i}$; $i=1,\ldots,K$ and the covariance matrix being the empirical covariance obtained from
the log-distances in the $K$ clusters. The lower-dimensional distributions are then simply the marginalized versions of the higher-dimensional cases.

The entire exercise beginning from clustering the observed point pattern to yielding the maximum absolute differences between the conditional distribution functions
and the marginal distribution functions, takes about $20$ minutes in a 4-core laptop. The results of our Bayesian analyses 
with the bound (\ref{eq:ar1_bound3}) and $\hat C_1=0.5$, the minimum value for convergence in the HPP case,  are provided in Figure \ref{fig:pp1_independence}.
Indeed, both the panels indicate convergence, and hence independence. Hence, both the point processes can be safely assumed to be Poisson point processes.
\begin{figure}
\centering
\subfigure [Independence (HPP).]{ \label{fig:hpp_bayesian_independence}
\includegraphics[width=5.5cm,height=5.5cm]{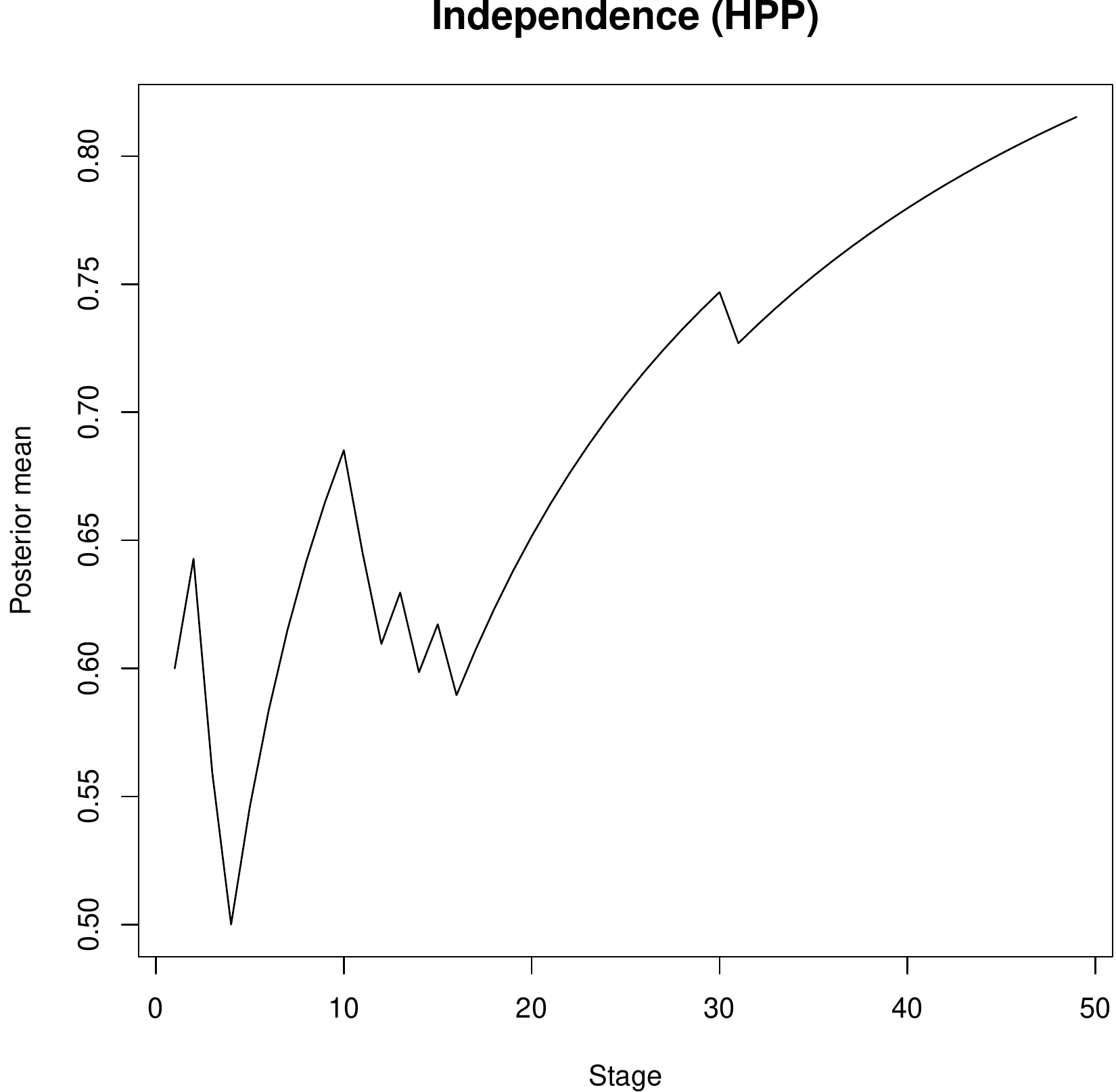}}
\hspace{2mm}
\subfigure [Independence (IHPP).]{ \label{fig:ihpp_bayesian_independence}
\includegraphics[width=5.5cm,height=5.5cm]{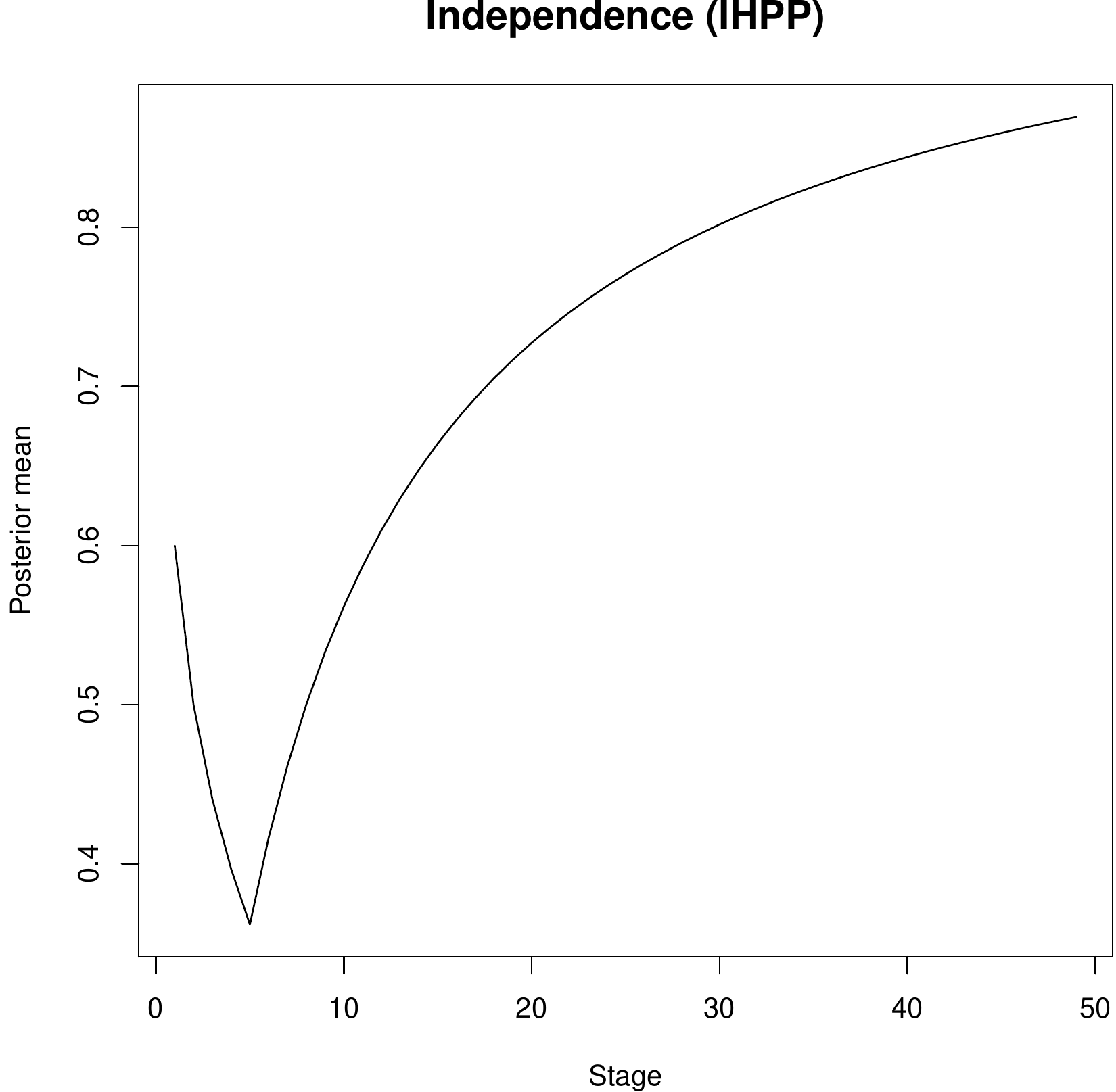}}
\caption{Detection of independence in point patterns (here HPP and IHPP) with our Bayesian method, suggesting that both the point processes are Poisson
point processes.} 
\label{fig:pp1_independence}
\end{figure}

\subsection{Example 2: Homogeneous log-Gaussian Cox process}
\label{subsec:lgcp1}
We now consider analyses of simulated data obtained from log-Gaussian Cox process. $\bX$ is a Cox process if conditional on a non-negative process 
$\left\{\Lambda(u):u\in\bS\right\}$, $\bX$ is a Poisson process with intensity function $\Lambda$ (see, for example, \ctn{Daley03}), and $\bX$
is a log-Gaussian Cox process if $\log\Lambda$ is a Gaussian process. In this example, let us consider a log-Gaussian Cox process with 
mean function $E\left[\log\Lambda(u)\right]=\mu(u)=3$ for all $u$, and exponential covariance function given by 
$Cov\left(\log\Lambda(u),\log\Lambda(v)\right)=\sigma^2\times\exp\left(a\left\|u-v\right\|\right)$, where $\|\cdot\|$ denotes Euclidean distance, $\sigma^2=0.2$ and
$a=10$. This is a stationary non-Poisson point process, and homogeneous in the sense that the marginalized intensity $E\left[\log\Lambda(u)\right]$, integrating
out the random function $\Lambda$, is constant. 

We choose $W=[0,15]\times[0,20]$ and obtain $6553$ observations from this point process using spatstat, which are displayed in Figure \ref{fig:pp11_lgcp}. 

\begin{figure}
\centering
\includegraphics[width=5.5cm,height=5.5cm]{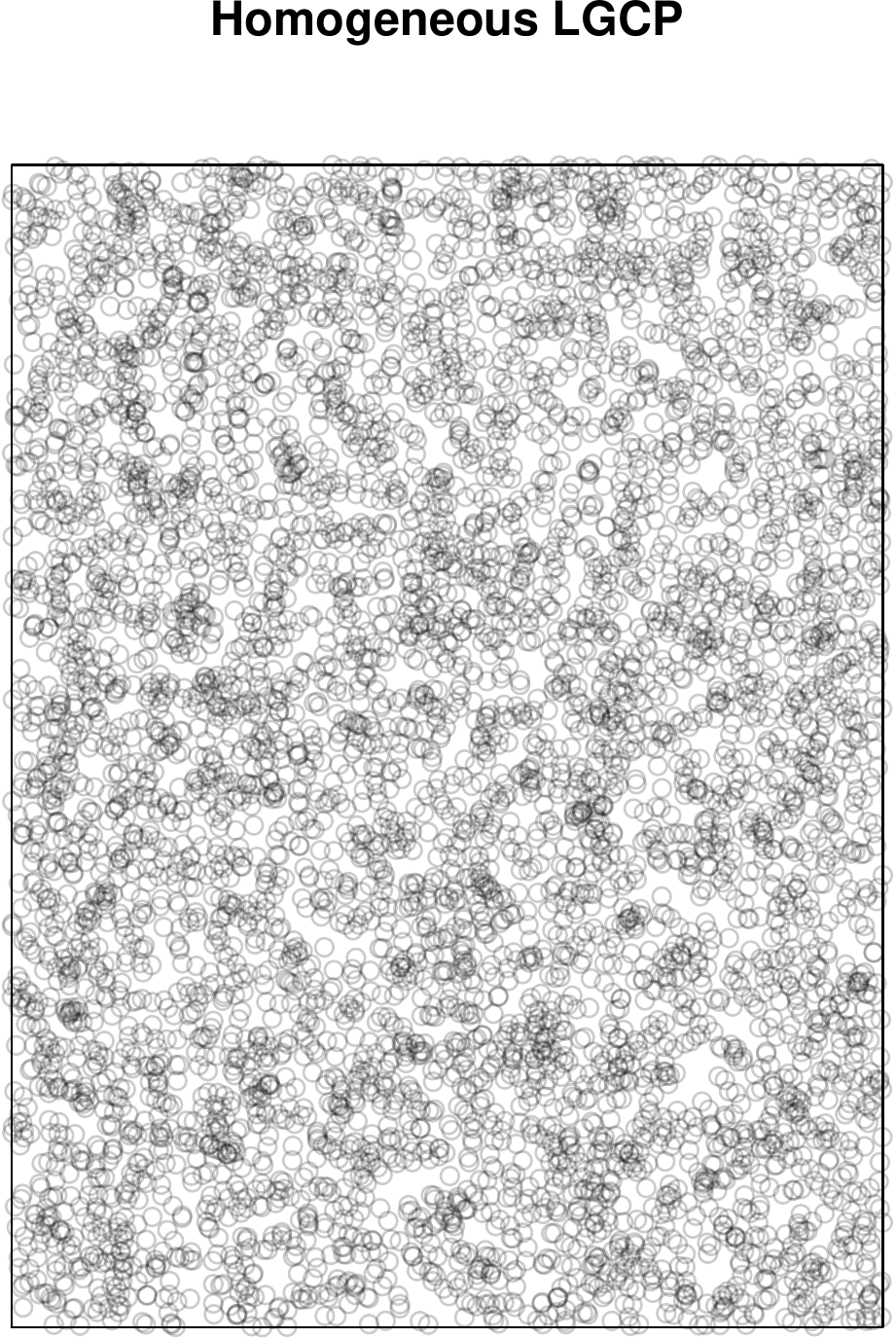}
\caption{Homogeneous LGCP.} 
\label{fig:pp11_lgcp}
\end{figure}

We consider $K=800$ and algorithm (\ref{eq:ar1_bound3}) with $\hat C_1=0.24$ for our Bayesian method.
Figure \ref{fig:pp11} compares our Bayesian method with the classical method regarding CSR detection. Observe that the Bayesian method correctly identifies
that the point process is not CSR, while the classical method fails to correctly recognize the process.
\begin{figure}
\centering
\subfigure [HPP detection with Bayesian method for LGCP.]{ \label{fig:hpp_bayesian11}
\includegraphics[width=5.5cm,height=5.5cm]{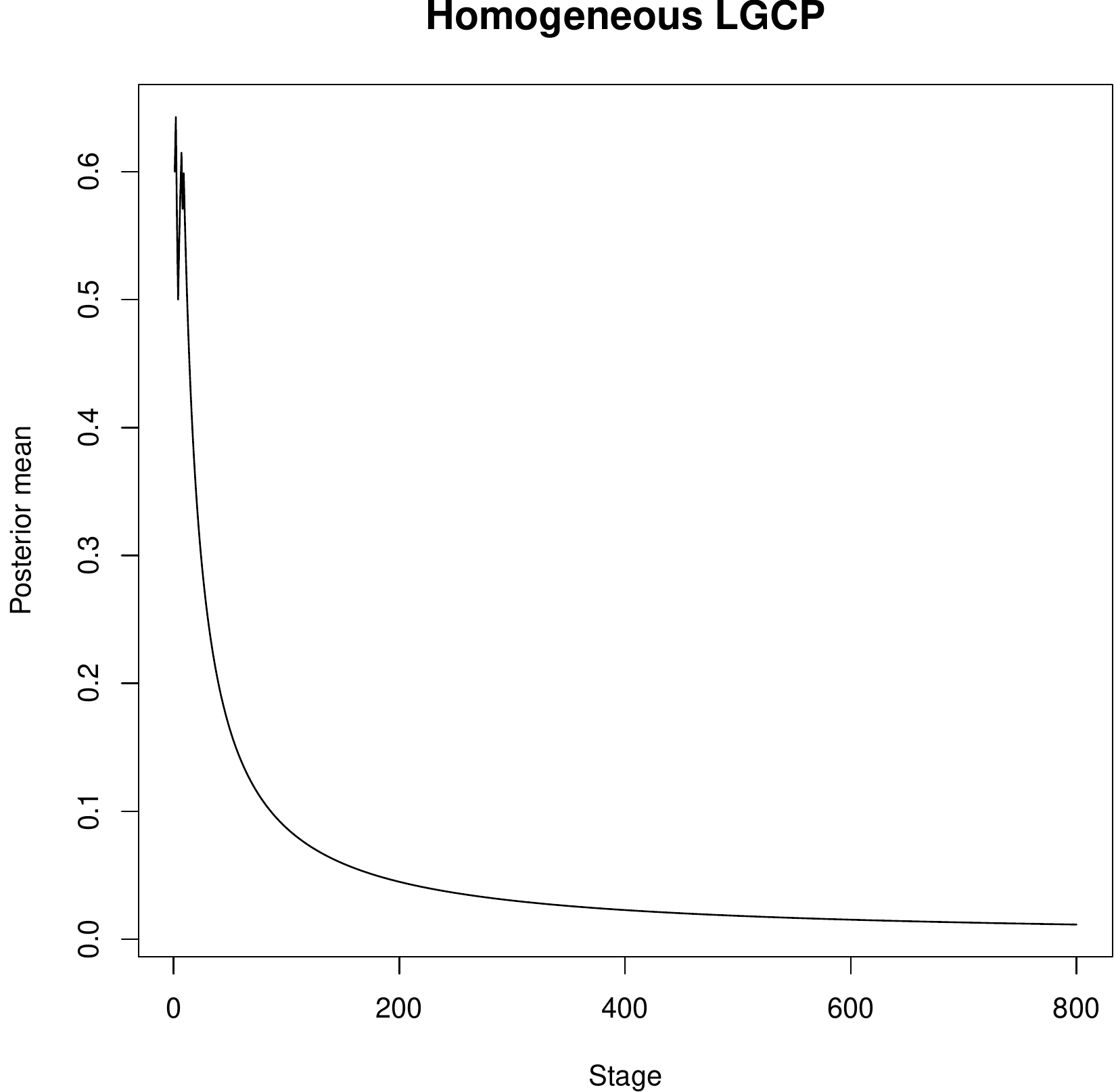}}
\hspace{2mm}
\subfigure [HPP detection with classical method for LGCP.]{ \label{fig:hpp_classical11}
\includegraphics[width=5.5cm,height=5.5cm]{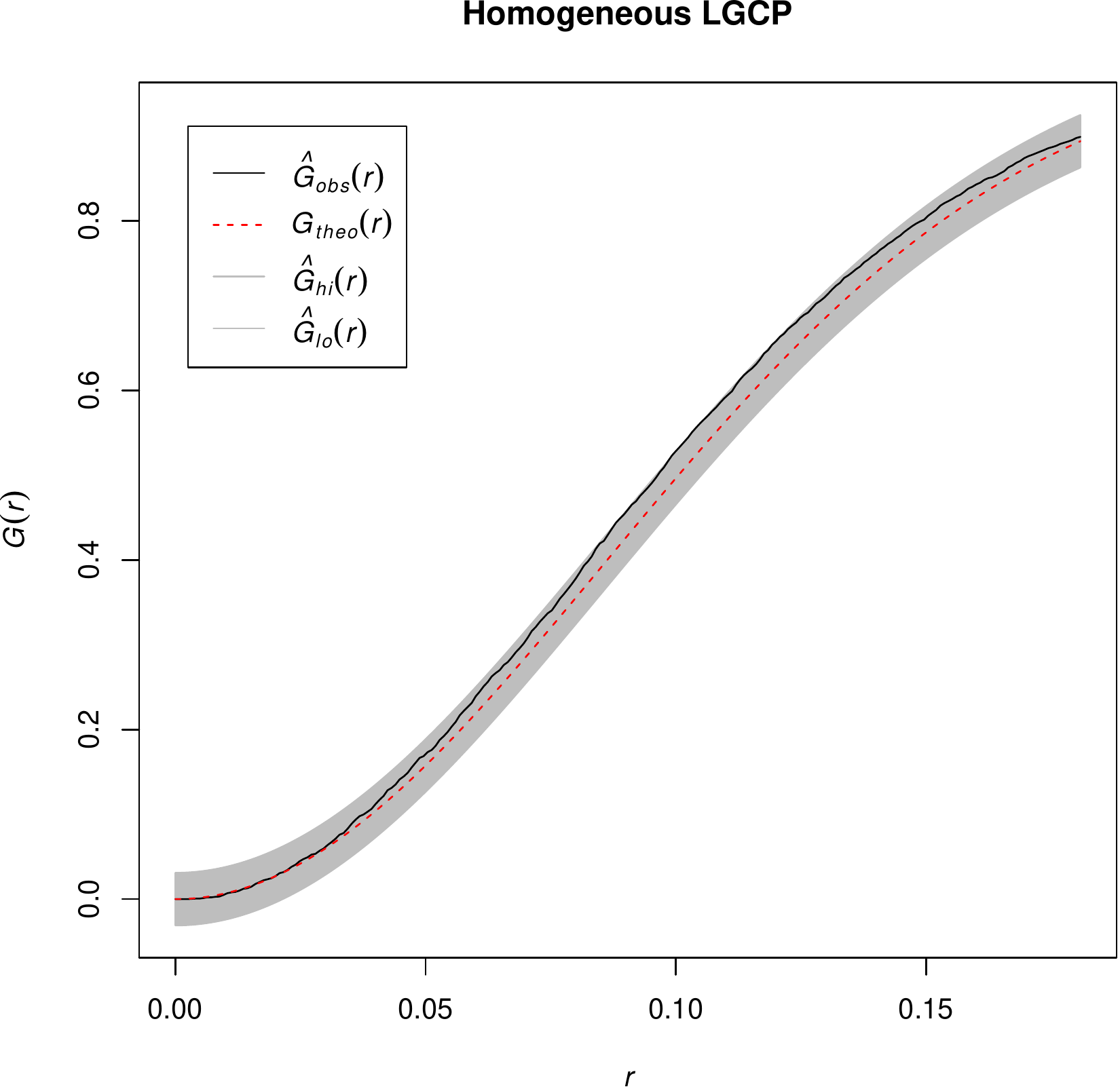}}
\caption{Detection of CSR with our Bayesian method and traditional classical method for LGCP. The Bayesian method correctly
identifies that the underlying point process is not CSR, but the classical method falsely indicates CSR.} 
\label{fig:pp11}
\end{figure}

For addressing stationarity, we set $K=800$ and $\hat C_1=0.15$. Panel (a) of Figure \ref{fig:pp11_stationarity_indep} shows that stationarity is clearly
indicated by our Bayesian approach.

For testing if the underlying point process is Poisson process, we test independence as before, among $K=70$ random variables $\bX_{C_i}$; $i=1,\ldots,K$.
With $\hat C_1=0.5$, panel (b) of Figure \ref{fig:pp11_stationarity_indep} indicates independence, validating the Poisson assumption, given $\Lambda$, as mentioned above.
\begin{figure}
\centering
\subfigure [Stationary LGCP.]{ \label{fig:lgcp1_bayesian_stationary}
\includegraphics[width=5.5cm,height=5.5cm]{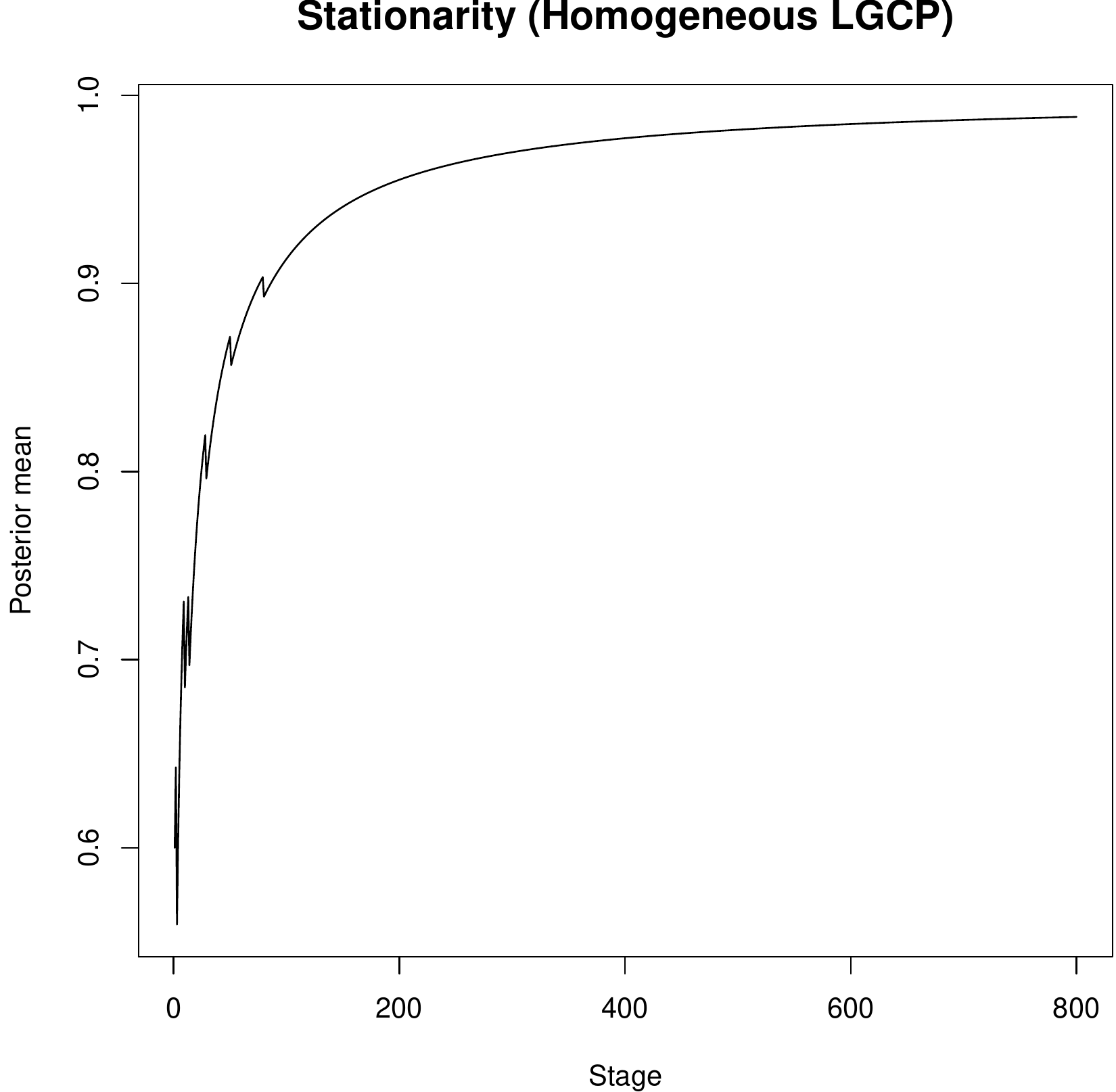}}
\hspace{2mm}
\subfigure [Dependent point process (LGCP).]{ \label{fig:lgcp1_bayesian_dependent}
\includegraphics[width=5.5cm,height=5.5cm]{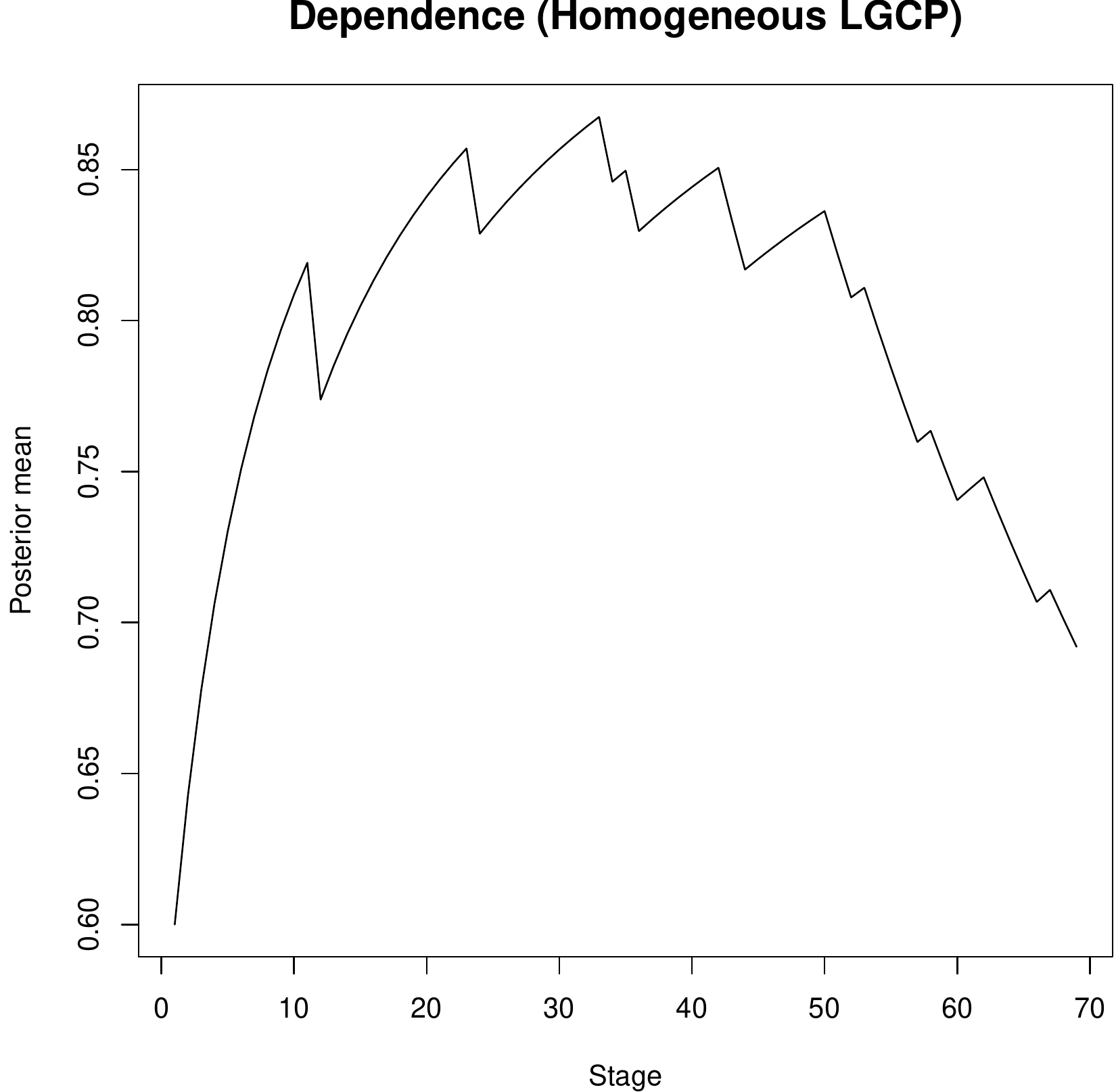}}
\caption{Detection of stationarity and dependence of homogeneous LGCP with our Bayesian method.} 
\label{fig:pp11_stationarity_indep}
\end{figure}

\subsection{Example 3: Inhomogeneous log-Gaussian Cox process}
\label{subsec:lgcp2}

We now consider a log-Gaussian Cox process where the covariance is now of the Mat\'{e}rn form
\begin{equation}
Cov\left(\log\Lambda(u),\log\Lambda(v)\right)=\sigma^2\frac{2^{1-\nu}}{\Gamma(\nu)}\left(\sqrt{2\nu}\frac{\|u-v\|}{\rho}\right)^{\nu}
\mathcal K_{\nu}\left(\sqrt{2\nu}\frac{\|u-v\|}{\rho}\right),
\label{eq:lgcp_matern1}
\end{equation}
where $\Gamma$ is the gamma function, $\mathcal K_{\nu}$ is the modified Bessel function of the second kind of the order $\nu$, and $\rho^{-1}$ is the scale parameter.
We chose $\sigma^2=2$, $\rho^{-1}=0.7$ and $\nu=0.5$. For the mean function, we chose $\mu(u_1,u_2)=5 - 1.5(u_1 - 0.5)^2 + 2 (u_2 - 0.5)^2$. Thus, the underlying
LGCP is nonstationary. Since the expected intensity is not constant, the point process is inhomogeneous from this perspective.

Using spatstat, we obtained $8814$ observations on $W=[0,3]\times[0,2.2]$, displayed in Figure \ref{fig:pp12_lgcp}.
\begin{figure}
\centering
\includegraphics[width=5.5cm,height=5.5cm]{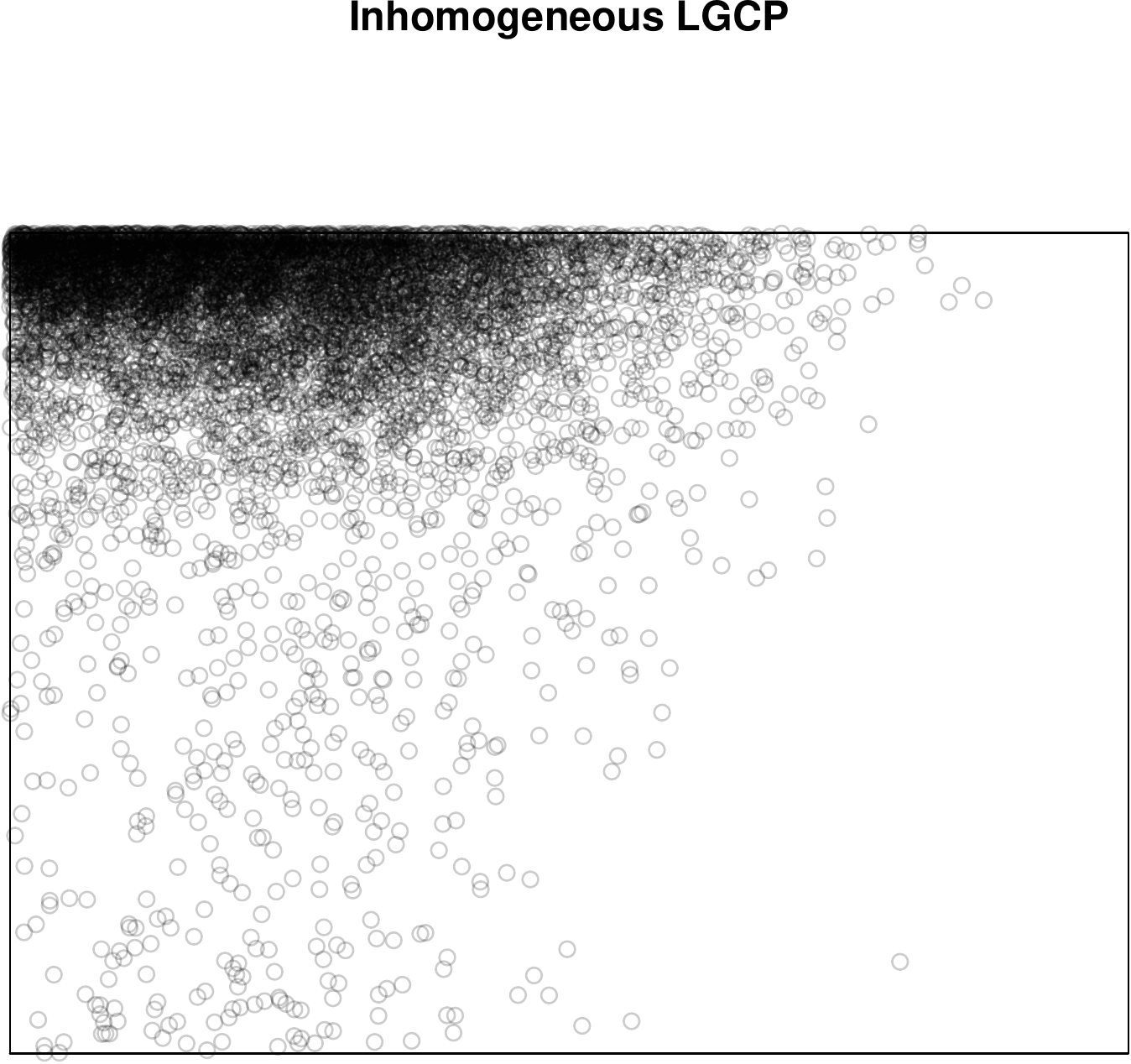}
\caption{Inhomogeneous LGCP.} 
\label{fig:pp12_lgcp}
\end{figure}

Panel (a) of Figure \ref{fig:pp12} shows the result of our Bayesian approach to CSR detection With $K=800$ and $\hat C_1=0.24$, while
panel (b) shows the result of the classical method. Both the methods successfully identify that the underlying point process is not CSR. 
\begin{figure}
\centering
\subfigure [HPP detection with Bayesian method for inhomogeneous LGCP.]{ \label{fig:hpp_bayesian12}
\includegraphics[width=5.5cm,height=5.5cm]{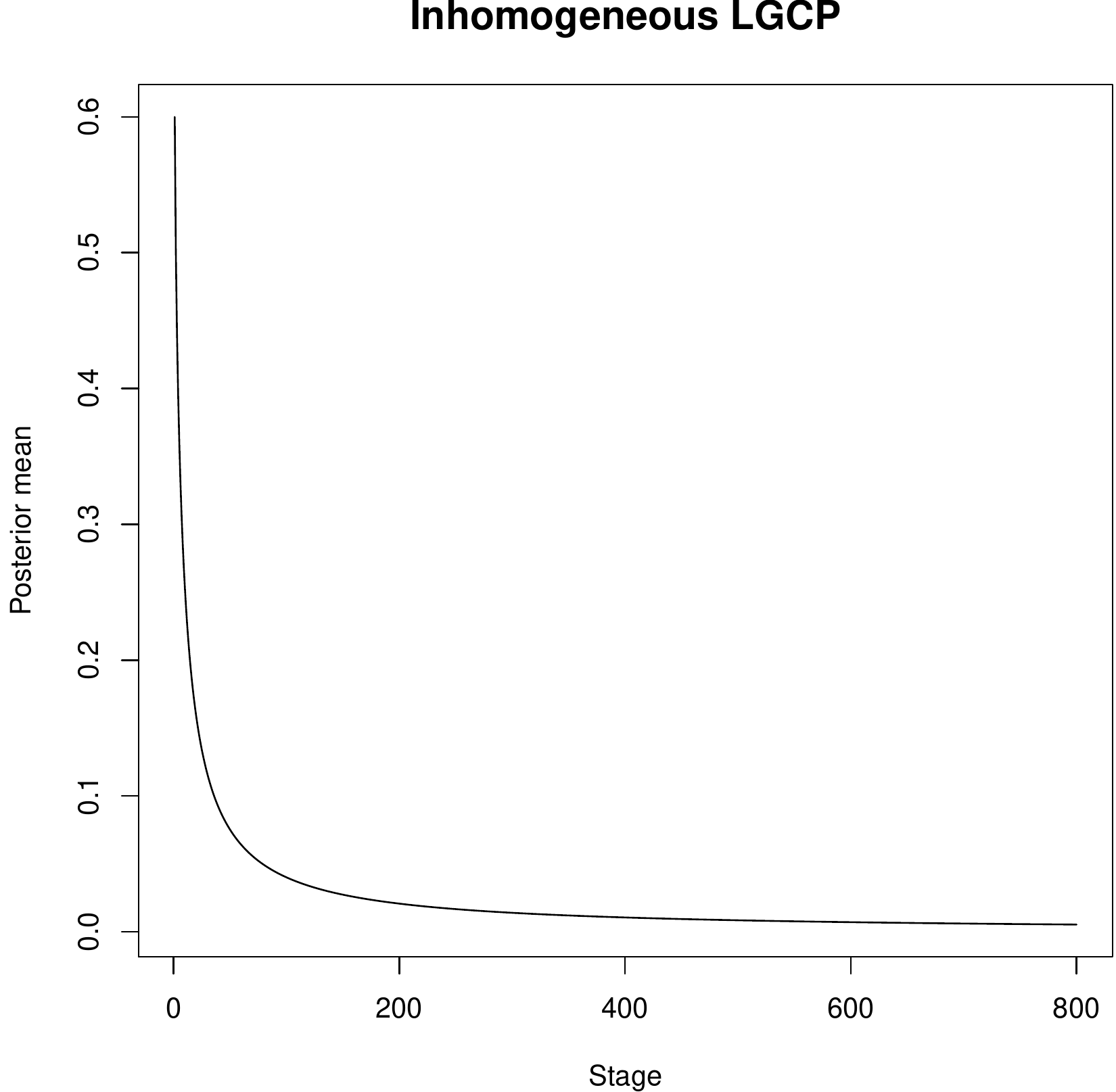}}
\hspace{2mm}
\subfigure [HPP detection with classical method for inhomogeneous LGCP.]{ \label{fig:hpp_classical12}
\includegraphics[width=5.5cm,height=5.5cm]{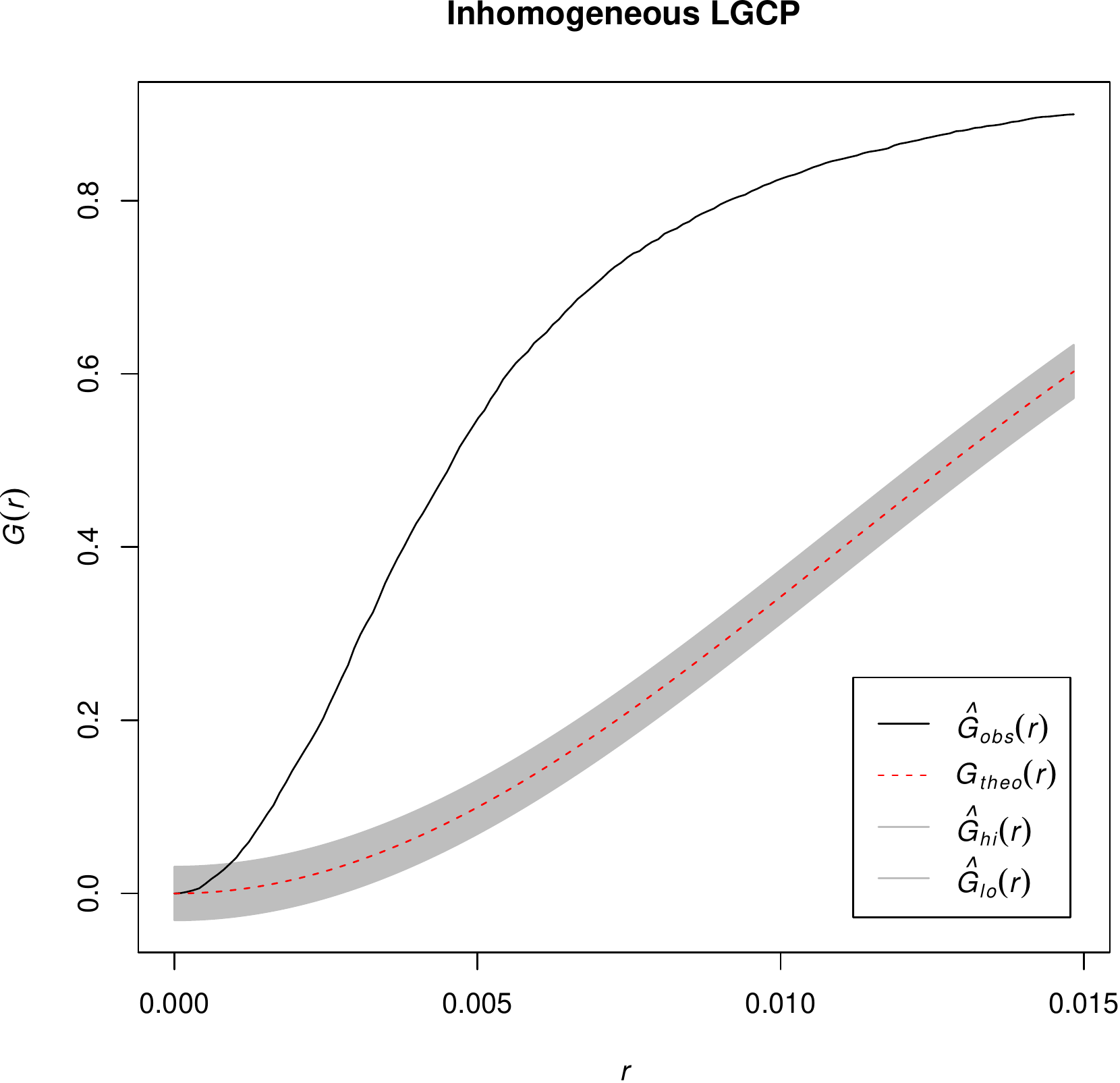}}
\caption{Detection of CSR with our Bayesian method and traditional classical method for LGCP. Both the methods correctly
identify that the underlying point process is not CSR.} 
\label{fig:pp12}
\end{figure}

As shown in panel (a) of Figure \ref{fig:pp12_stationarity_indep}, our Bayesian approach captures nonstationarity of the point process. 
As before, for detection of nonstationarity, we set $K=800$ and $\hat C_1=0.15$.

To test mutual independence among $\bX_{C_i}$, for $i=1,\ldots,K$, we set $K=45$ (due to reasons of numerical stability) and $\hat C_1=0.5$,
as before. Panel (b) of Figure \ref{fig:pp12_stationarity_indep} shows approximately stable behaviour around $0.6$ till the last few points, where steady decrease
is noticed. The stability around the relatively large value $0.6$ for most part of the series indicates mutual independence among most of the random variables
$\bX_{C_i}$, but the last few values of the series suggest that the entire set of random variables $\bX_{C_i}$; $i=1,\ldots,45$, are perhaps not mutually independent.
Hence, the entire set of random variables can not be regarded as mutually independent, leading to non-Poisson conclusion.

\begin{figure}
\centering
\subfigure [Nonstationary LGCP.]{ \label{fig:lgcp2_bayesian_stationary}
\includegraphics[width=5.5cm,height=5.5cm]{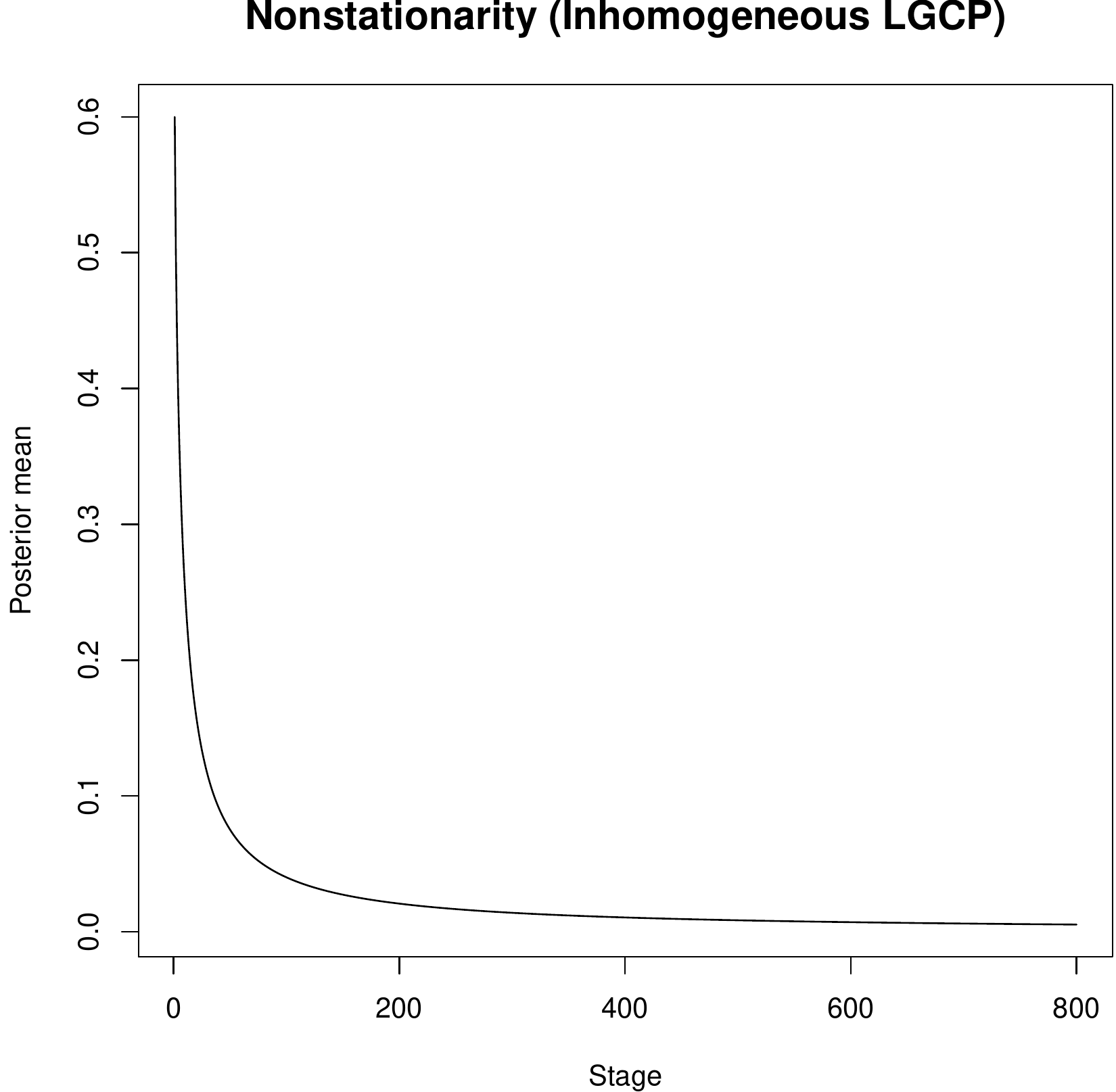}}
\hspace{2mm}
\subfigure [Dependent point process (inhomogeneous LGCP).]{ \label{fig:lgcp2_bayesian_dependent}
\includegraphics[width=5.5cm,height=5.5cm]{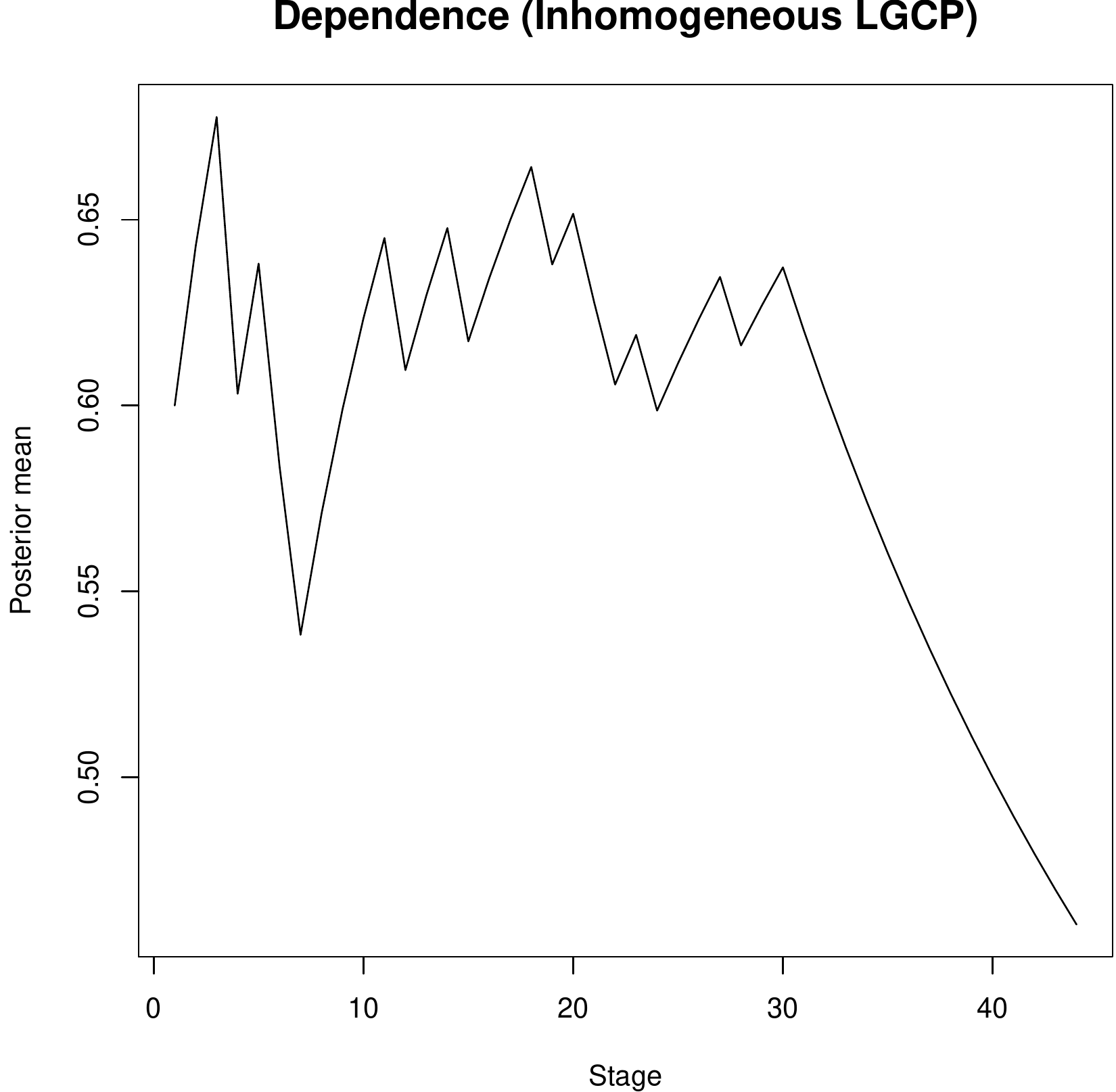}}
\caption{Detection of nonstationarity and dependence of inhomogeneous LGCP with our Bayesian method.} 
\label{fig:pp12_stationarity_indep}
\end{figure}

\subsection{Example 4: Inhomogeneous log-Gaussian Cox process}
\label{subsec:lgcp3}

In this example, we choose the same Mat\'{e}rn covariance function (\ref{eq:lgcp_matern1}), with the same values of $\sigma^2$, $\rho$ and $\nu$ as before, but
now we set $\mu(u_1,u_2)=1-0.4u_1$. The resulting inhomogeneous LGCP obtained using spatstat, consisting of $7245$ points, is depicted in Figure \ref{fig:pp13_lgcp}. 
\begin{figure}
\centering
\includegraphics[width=5.5cm,height=5.5cm]{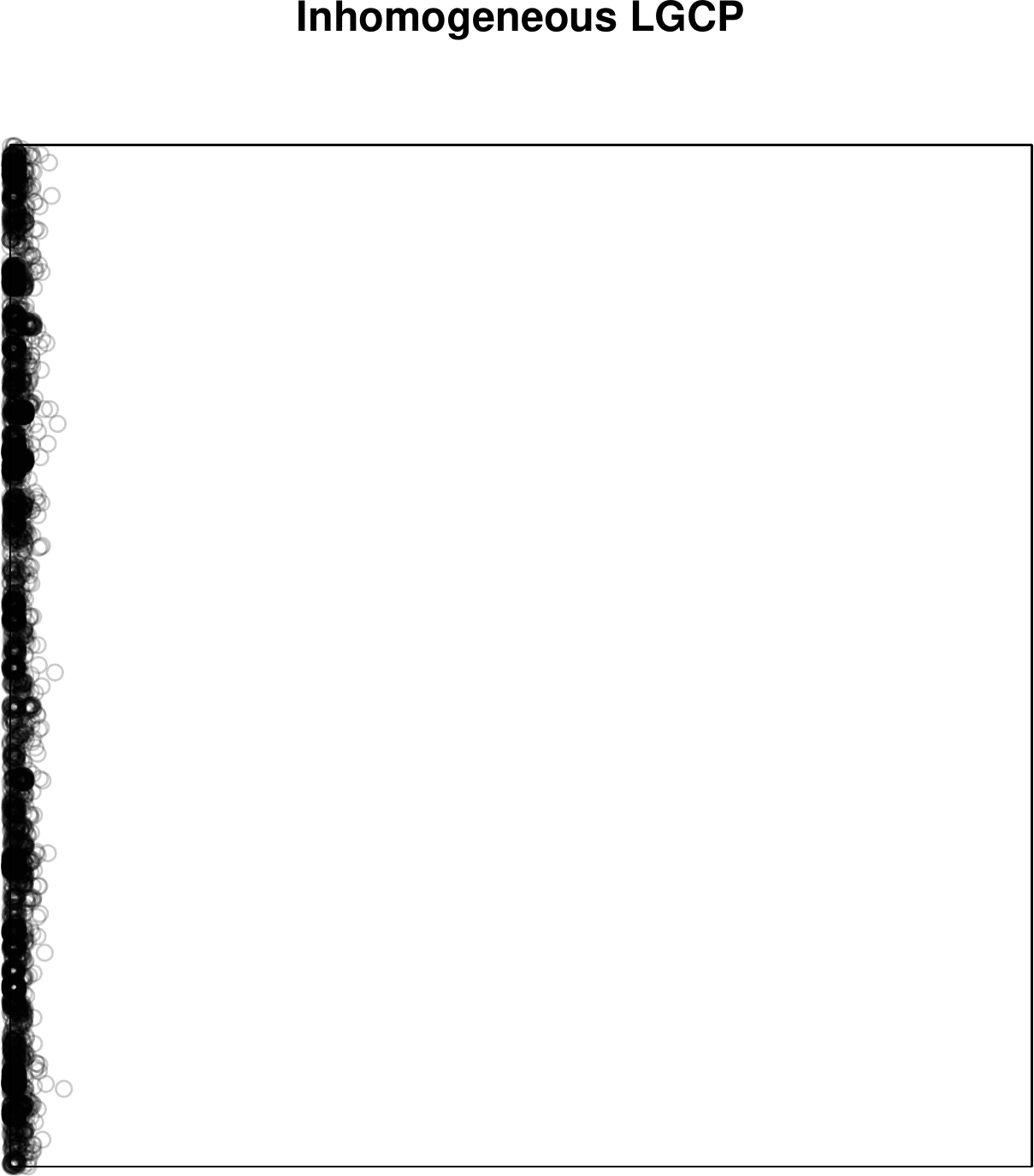}
\caption{Inhomogeneous LGCP.} 
\label{fig:pp13_lgcp}
\end{figure}

With $K=800$ and $\hat C_1=0.24$, our Bayesian method successfully identifies the process as not CSR. The classical method is also successful in this regard.
The results are shown in Figure \ref{fig:pp13}.
\begin{figure}
\centering
\subfigure [HPP detection with Bayesian method for inhomogeneous LGCP.]{ \label{fig:hpp_bayesian13}
\includegraphics[width=5.5cm,height=5.5cm]{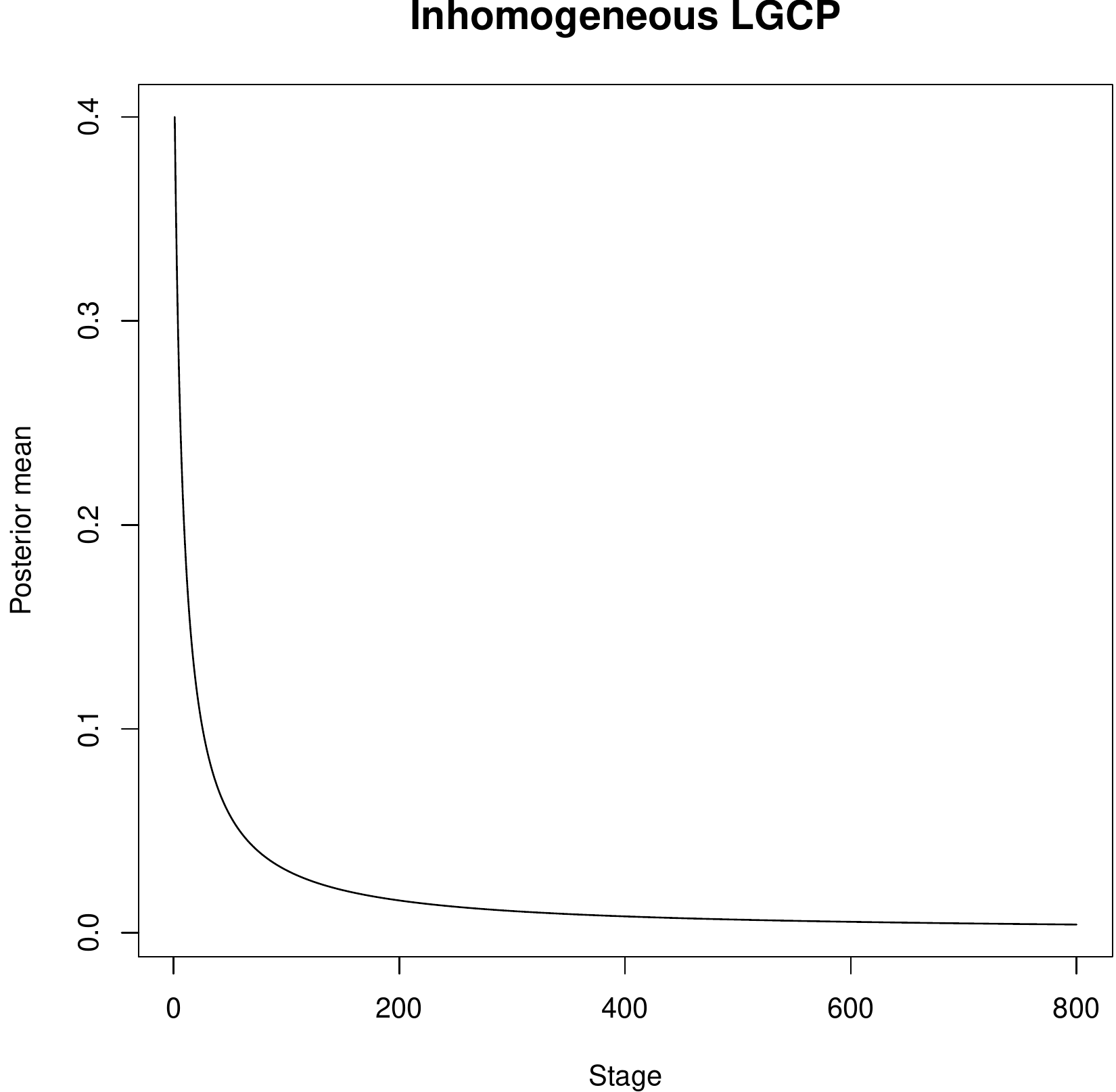}}
\hspace{2mm}
\subfigure [HPP detection with classical method for inhomogeneous LGCP.]{ \label{fig:hpp_classical13}
\includegraphics[width=5.5cm,height=5.5cm]{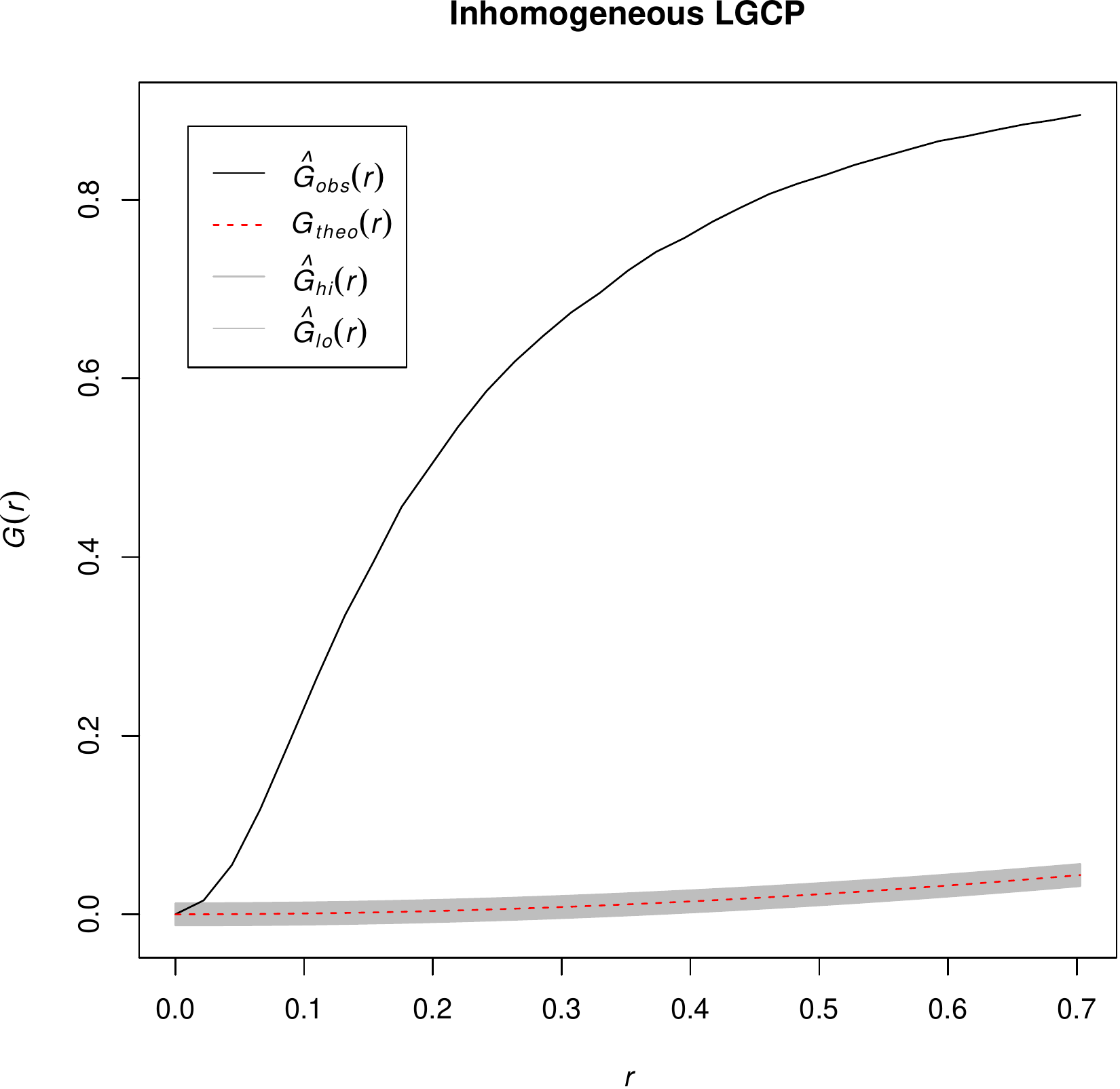}}
\caption{Detection of CSR with our Bayesian method and traditional classical method for LGCP. Both the methods correctly
identify that the underlying point process is not CSR.} 
\label{fig:pp13}
\end{figure}

Again with $K=800$ and $\hat C_1=0.15$, our Bayesian method detects nonstationarity of the underlying point process. Also, with $K=40$ and $\hat C_1=0.5$ as before,
our method correctly detects dependence among $\bX_{C_i}$; $i=1,\ldots,K$.
\begin{figure}
\centering
\subfigure [Nonstationary LGCP.]{ \label{fig:lgcp3_bayesian_stationary}
\includegraphics[width=5.5cm,height=5.5cm]{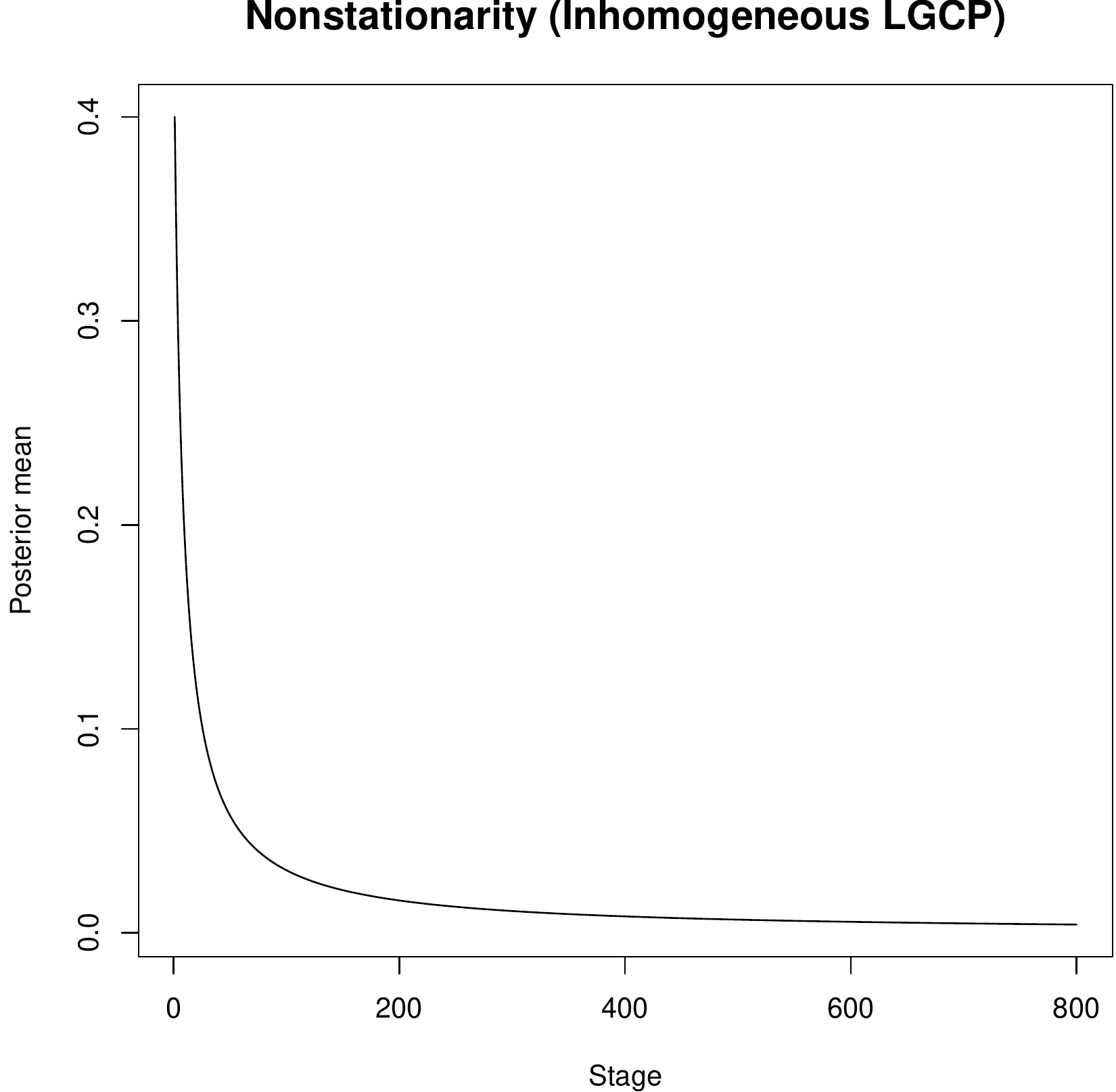}}
\hspace{2mm}
\subfigure [Dependent point process (inhomogeneous LGCP).]{ \label{fig:lgcp3_bayesian_dependent}
\includegraphics[width=5.5cm,height=5.5cm]{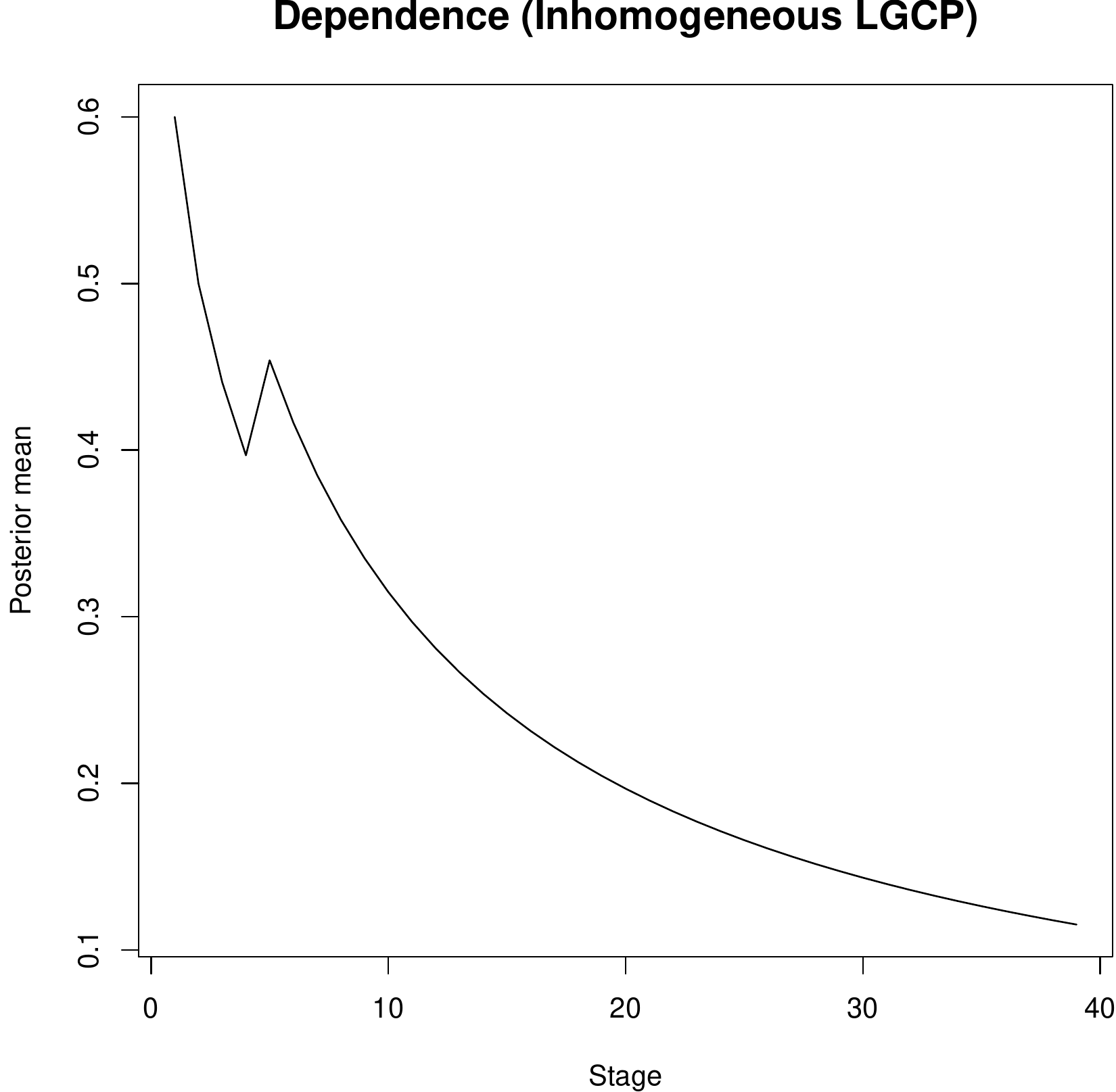}}
\caption{Detection of nonstationarity and dependence of inhomogeneous LGCP with our Bayesian method.} 
\label{fig:pp13_stationarity_indep}
\end{figure}

\subsection{Example 5: Homogeneous Mat\'{e}rn cluster process}
\label{subsec:matern}
The Mat\'{e}rn cluster process is a special case of shot-noise Cox process 
where the offspring points are distributed uniformly inside a disc around the cluster center.
To clarify, first consider a Poisson point process with intensity $\kappa$. Then each `parent' point of this Poisson point process is replaced with a 
random cluster of `offspring' points,
where the number of points per cluster is distributed as Poisson with intensity $\mu$ on a disc with center being the parent point. This point process is non-Poisson. 
Mathematically, consider 
\begin{equation}
	\Lambda(u)=\sum_{(c,\gamma)\in\bPhi}\gamma k(c,u),
	\label{eq:shotnoise1}
\end{equation}
where $c\in\mathbb R^2$, $\gamma>0$, $\bPhi$ is a Poisson process on $\mathbb R^2\times(0,\infty)$, and $k(c,\cdot)$ is a density for a two-dimensional
continuous random variable. Then $\bX$ is a shot noise Cox process if given $\blambda$ defined by (\ref{eq:shotnoise1}), $\bX$ is a Poisson process with intensity
function $\Lambda$. It follows that $\bX$ is the superposition (union) of independent Poisson processes $\bX_{(c,\gamma)}$ with intensity functions $\gamma k(c,\cdot)$,
where $(c,\gamma)\in\bPhi$. If $\gamma$ is a variable (either random or non-random), then $\bX_{(c,\gamma)}$ can be thought of as a cluster with center $c$ 
and mean number of points $\gamma$. In this sense, $\bX$ is a Poisson cluster process. 

The Mat\'{e}rn cluster process is a special case of the above process, where the centre points $c$ arise from a Poisson process with intensity function $\kappa$
and $\gamma\equiv\mu$, a positive non-random function, and $k(c,\cdot)$ is the density of the uniform distribution on a disc of radius $r$, with center $c$. 

In this example, we simulate a Mat\'{e}rn cluster process on a window $W=[0,10]\times[0,10]$, $\kappa=10$, $\mu=5$, and disc radius $r=0.1$, and obtain $4882$ points,
shown in Figure \ref{fig:pp2_plots}. As can be easily verified from (\ref{eq:shotnoise1}) and the following expositions, 
the random intensity function $\Lambda$ in this case is stationary, and hence, $\bX$ is stationary. 

\begin{figure}
\centering
\includegraphics[width=5.5cm,height=5.5cm]{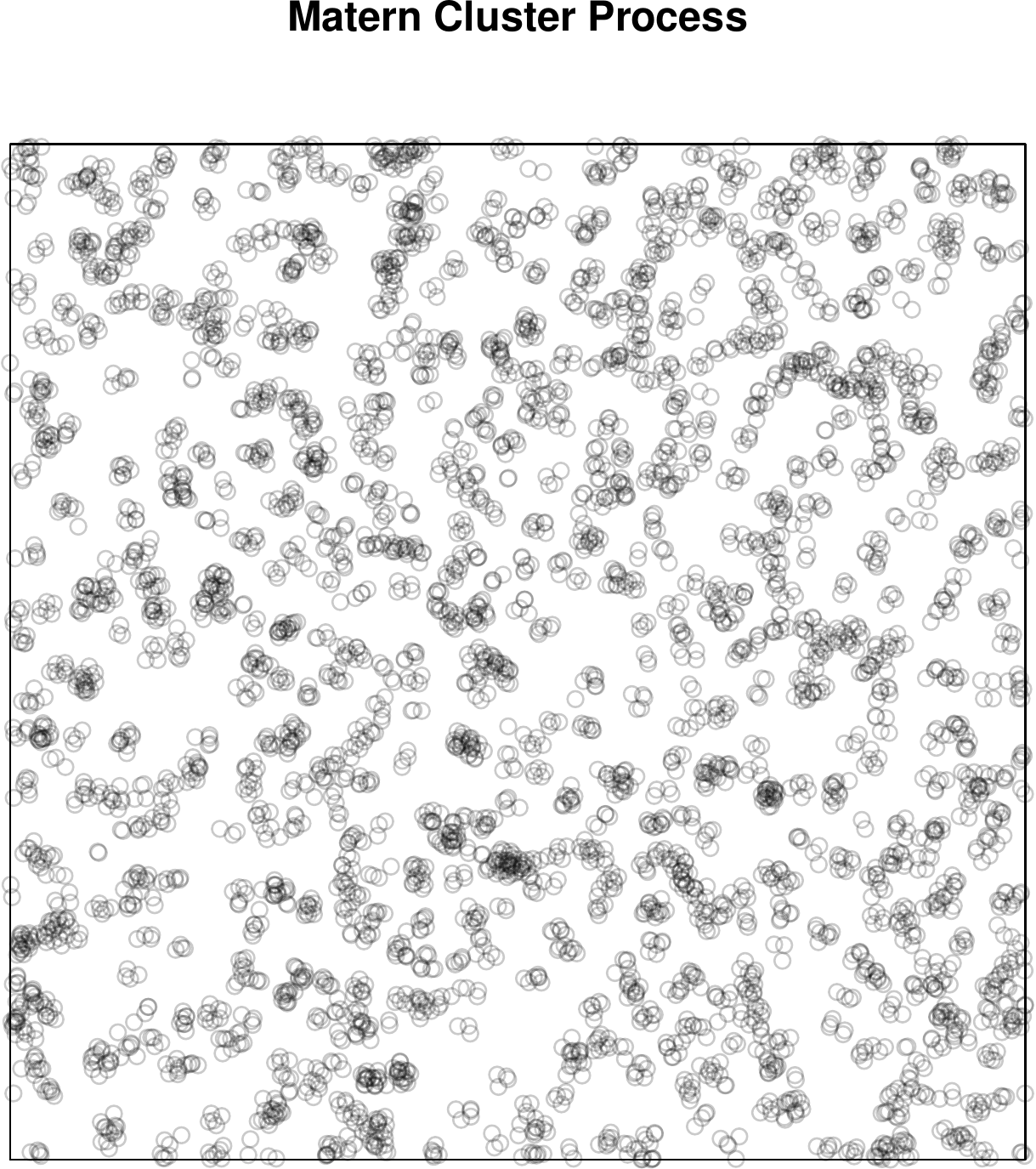}
	\caption{Mat\'{e}rn cluster point process pattern.} 
\label{fig:pp2_plots}
\end{figure}

Figure \ref{fig:pp2} shows the results of our Bayesian method and the classical method for detecting CSR. Both the methods correctly point out that the
underlying point process is not CSR. Here, for the Bayesian method, we set $K=500$ and $\hat C_1=0.25$, the maximum value leading to the conclusion
of not CSR. 

\begin{figure}
\centering
	\subfigure [HPP detection with Bayesian method for Mat\'{e}rn cluster process.]{ \label{fig:hpp_bayesian2}
\includegraphics[width=5.5cm,height=5.5cm]{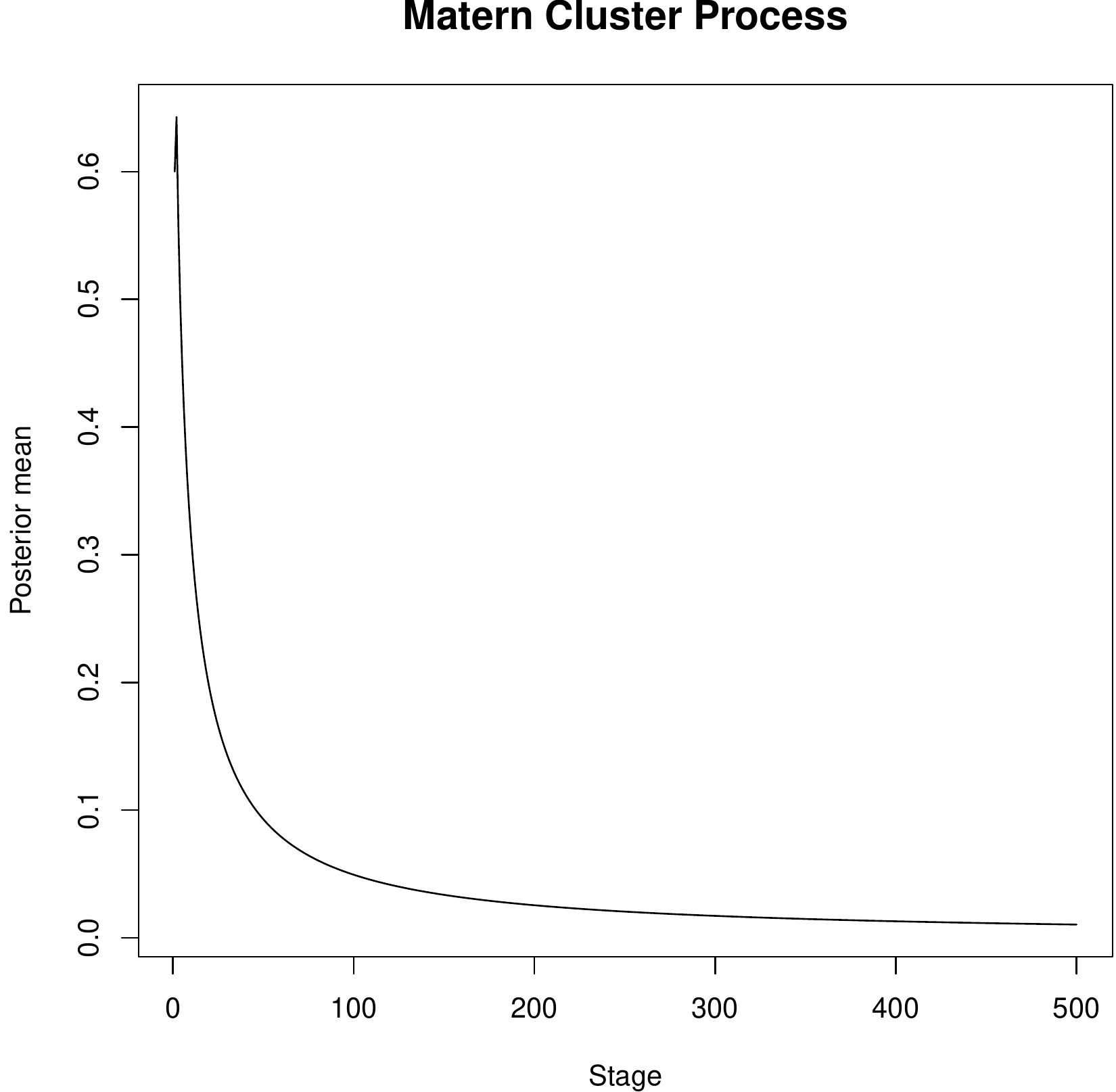}}
\hspace{2mm}
	\subfigure [HPP detection with classical method for Mat\'{e}rn cluster process.]{ \label{fig:hpp_classical2}
\includegraphics[width=5.5cm,height=5.5cm]{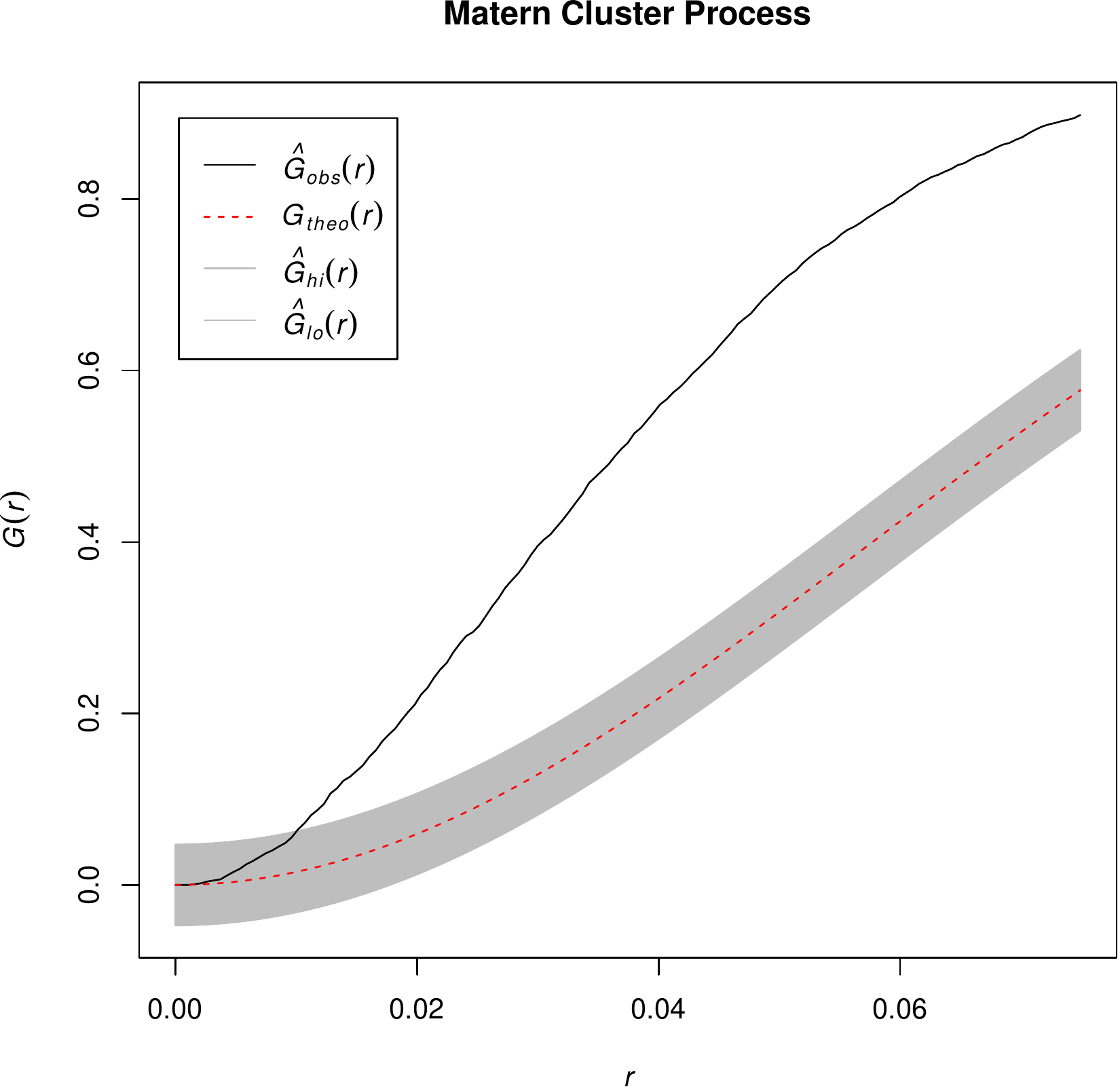}}
	\caption{Detection of CSR with our Bayesian method and traditional classical method for Mat\'{e}rn cluster process. Both the methods correctly
identify that the underlying point process is not CSR.} 
\label{fig:pp2}
\end{figure}

Panel (a) of Figure \ref{fig:pp2_stationarity_indep} shows that stationarity of the point process has been correctly captured by our Bayesian procedure,
with $K=500$ and $\hat C_1=0.06$, the minimum value of $\hat C_1$ leading to stationarity. 

The result of our test for independence is depicted by panel (b) of Figure \ref{fig:pp2_stationarity_indep}, for $K=50$ and $\hat C_1=0.5$ as usual.
Dependence is indicated, correctly leading to the non-Poisson conclusion.
\begin{figure}
\centering
	\subfigure [Stationary point process (Mat\'{e}rn cluster process).]{ \label{fig:matern_bayesian_stationary}
\includegraphics[width=5.5cm,height=5.5cm]{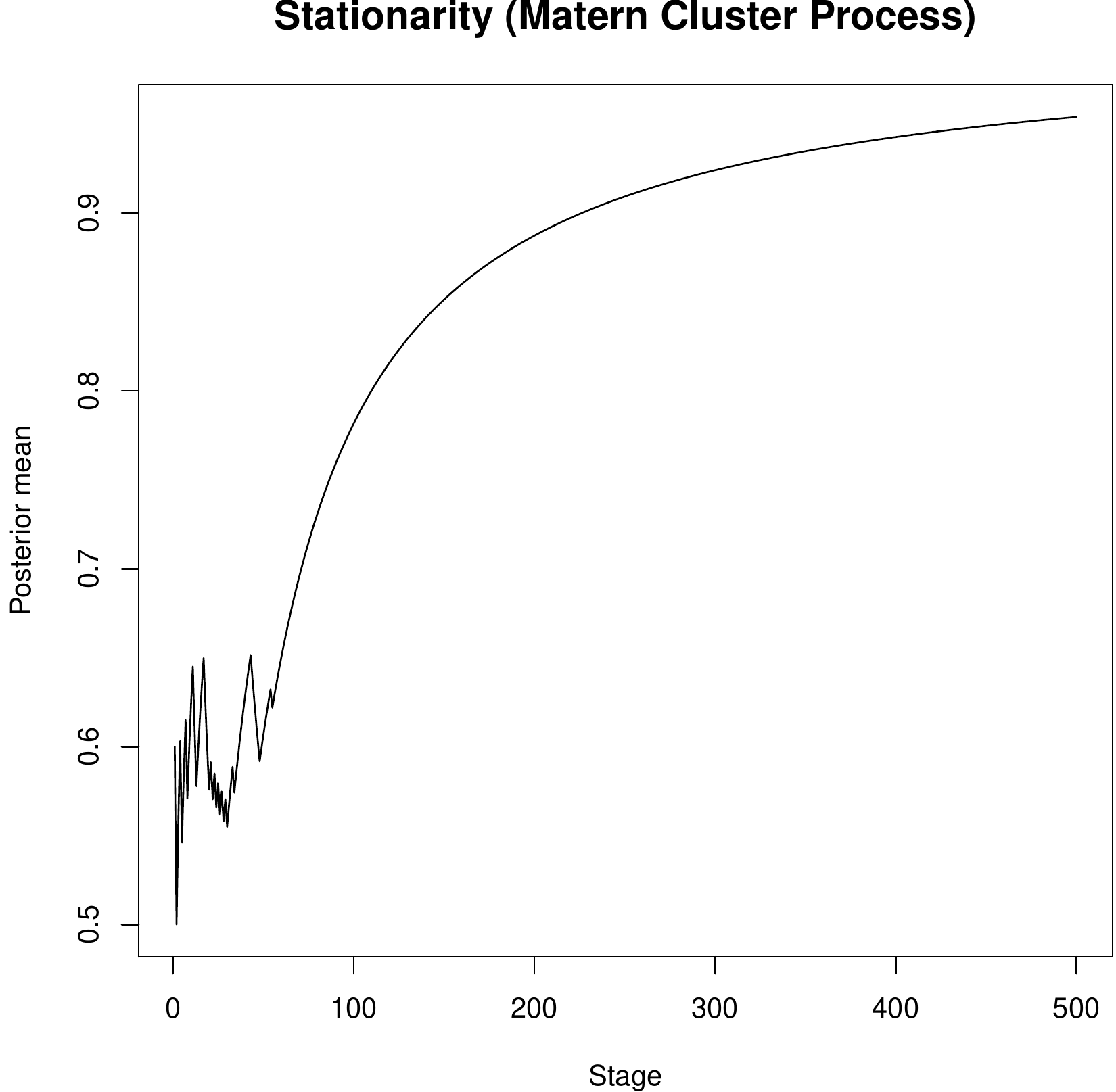}}
\hspace{2mm}
	\subfigure [Dependent point process (Mat\'{e}rn cluster process).]{ \label{fig:matern_bayesian_dependent}
\includegraphics[width=5.5cm,height=5.5cm]{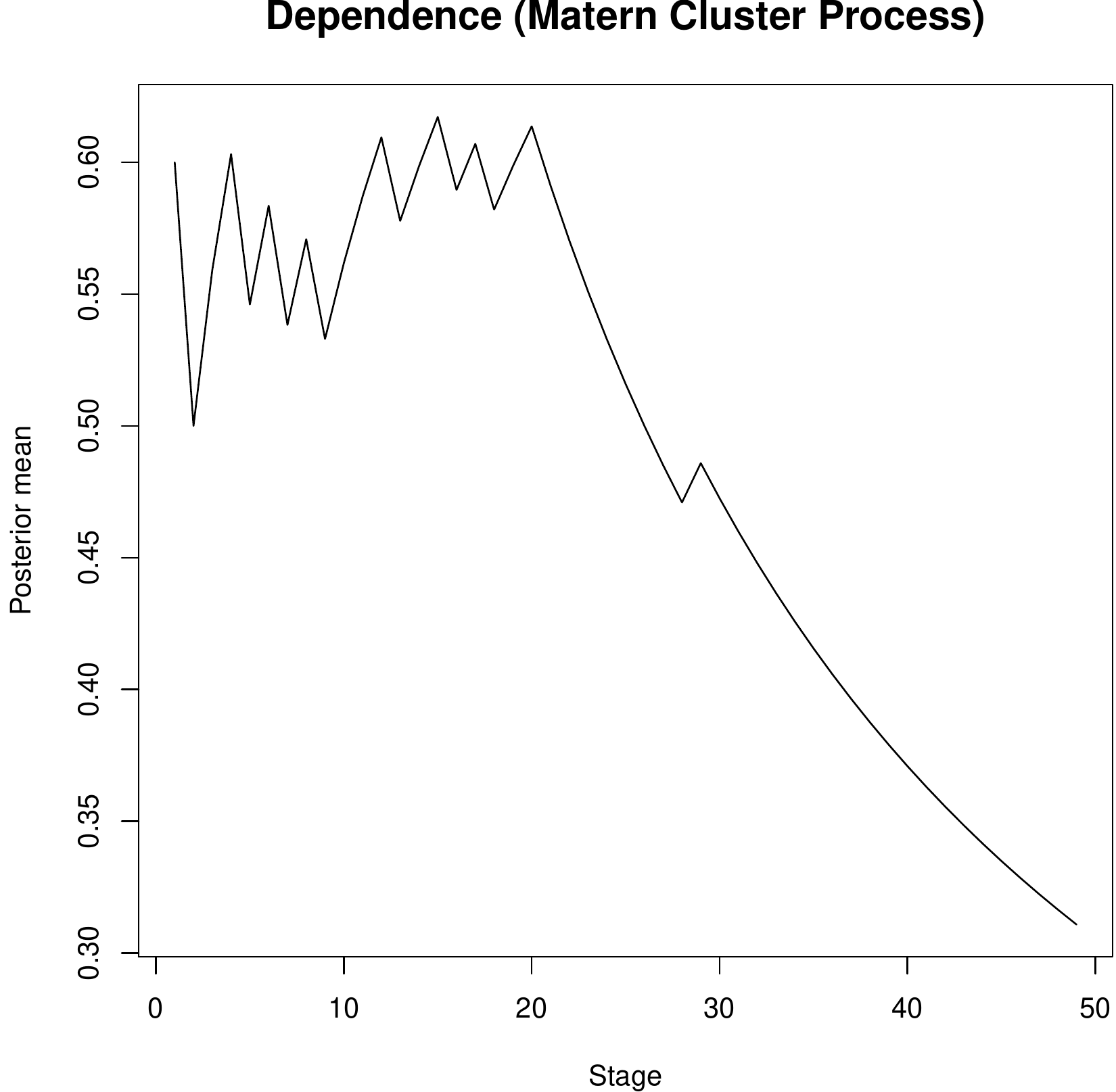}}
	\caption{Detection of stationarity and dependence of Mat\'{e}rn cluster process with our Bayesian method.} 
\label{fig:pp2_stationarity_indep}
\end{figure}

\subsection{Example 6: Inhomogeneous Mat\'{e}rn cluster process with $\mu$ inhomogeneous}
\label{subsec:matern2}

We now consider an inhomogeneous Mat\'{e}rn cluster process with $\kappa=10$, disc radius $r=0.05$, and $\mu(u_1,u_2)=2\exp\left(2|u_1|-1\right)$, an obtain
$8606$ points in $W=[0,3]\times[0,3]$. The points are plotted in Figure \ref{fig:pp2_plots2}.
\begin{figure}
\centering
\includegraphics[width=5.5cm,height=5.5cm]{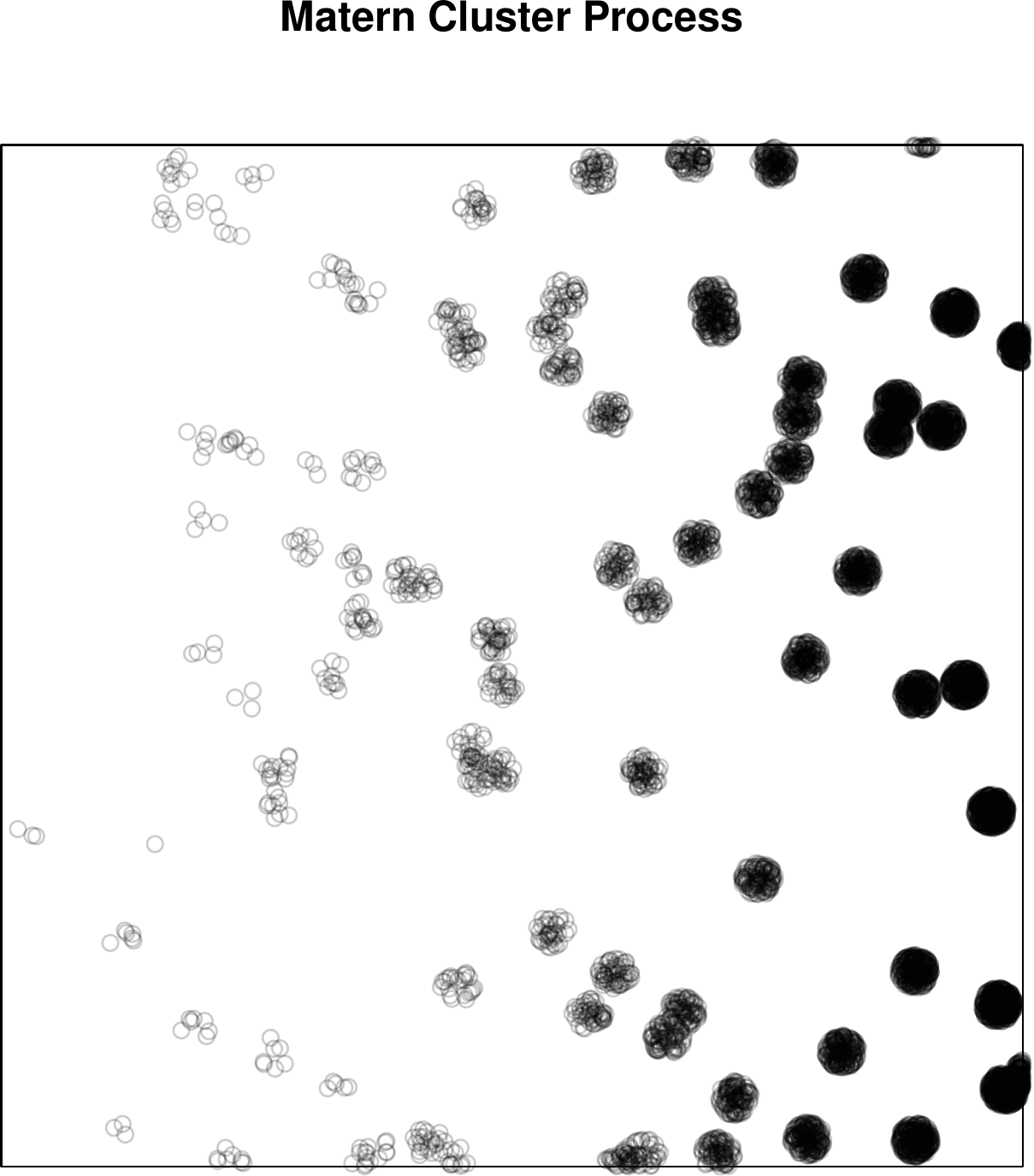}
	\caption{Inhomogeneous Mat\'{e}rn cluster point process pattern.} 
\label{fig:pp2_plots2}
\end{figure}

Figure \ref{fig:pp22} shows that both the methods for detecting CSR correctly detect non-CSR.
For the Bayesian method, we set $K=800$ and $\hat C_1=0.6$, the maximum value leading to the conclusion
of not CSR. 

\begin{figure}
\centering
	\subfigure [HPP detection with Bayesian method for Mat\'{e}rn cluster process.]{ \label{fig:hpp_bayesian3_mat}
\includegraphics[width=5.5cm,height=5.5cm]{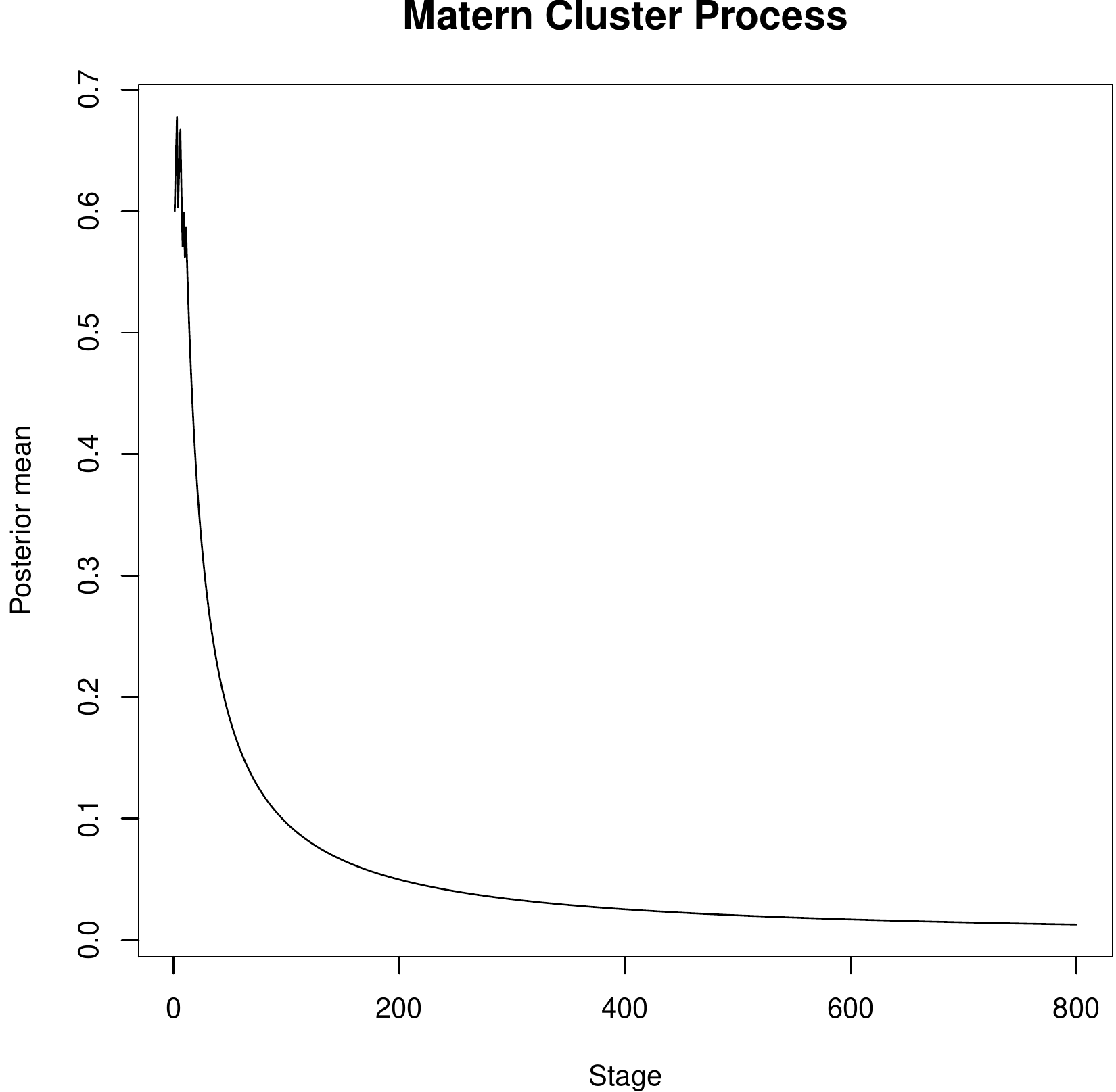}}
\hspace{2mm}
	\subfigure [HPP detection with classical method for Mat\'{e}rn cluster process.]{ \label{fig:hpp_classical3_mat}
\includegraphics[width=5.5cm,height=5.5cm]{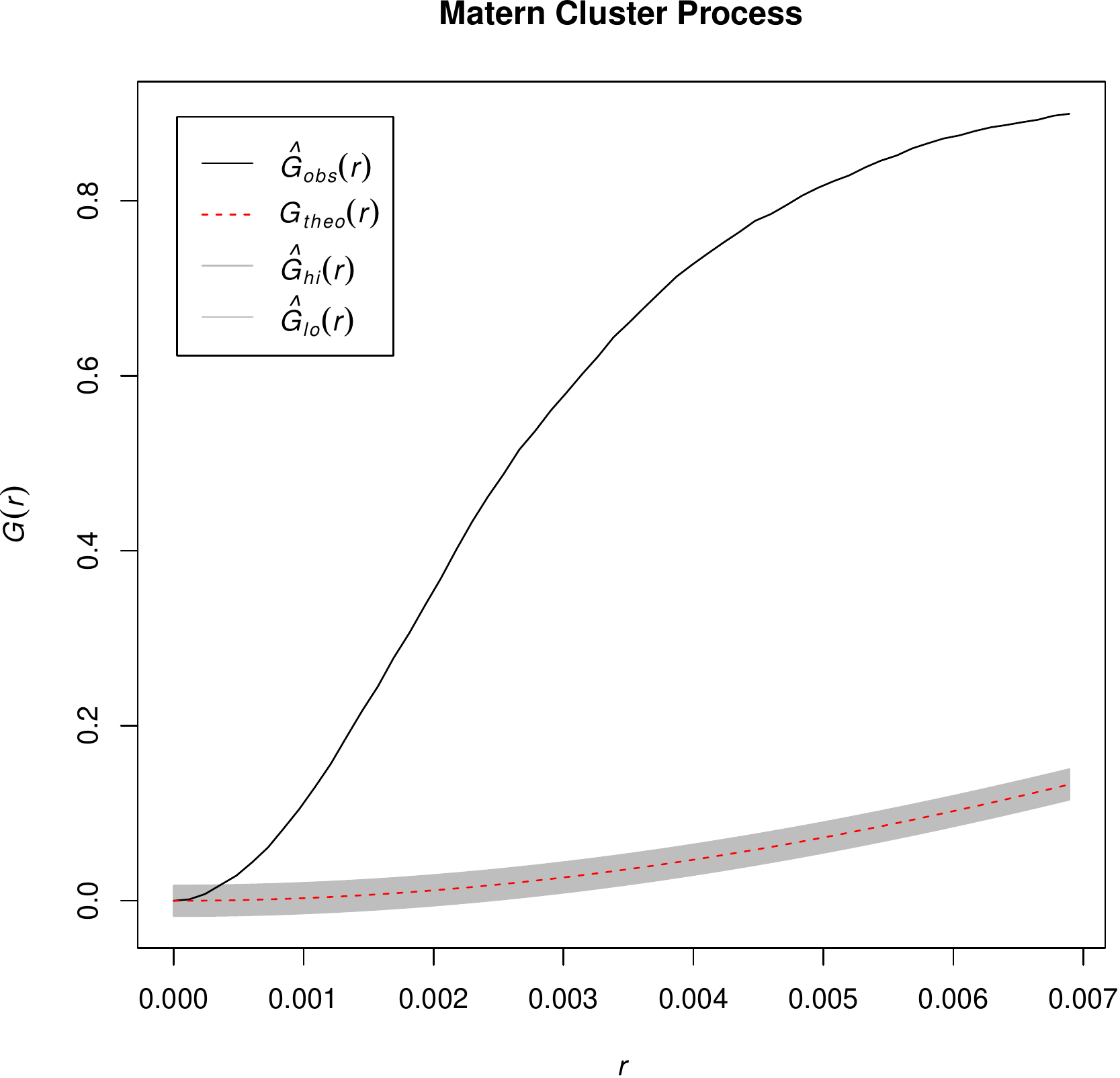}}
	\caption{Detection of CSR with our Bayesian method and traditional classical method for inhomogeneous Mat\'{e}rn cluster process. Both the methods correctly
identify that the underlying point process is not CSR.} 
\label{fig:pp22}
\end{figure}

With $K=800$ and $\hat C_1=0.27$, our Bayesian method correct points out nonstationarity. This value of $\hat C_1$ is the maximum value leading to nonstationarity.
As before, the Bayesian method correctly detects dependence with $K=50$ and $\hat C_1=0.5$. The results are depicted in Figure \ref{fig:pp2_stationarity_indep2}.
\begin{figure}
\centering
	\subfigure [Nonstationary point process (Mat\'{e}rn cluster process).]{ \label{fig:matern_bayesian_stationary3_mat}
\includegraphics[width=5.5cm,height=5.5cm]{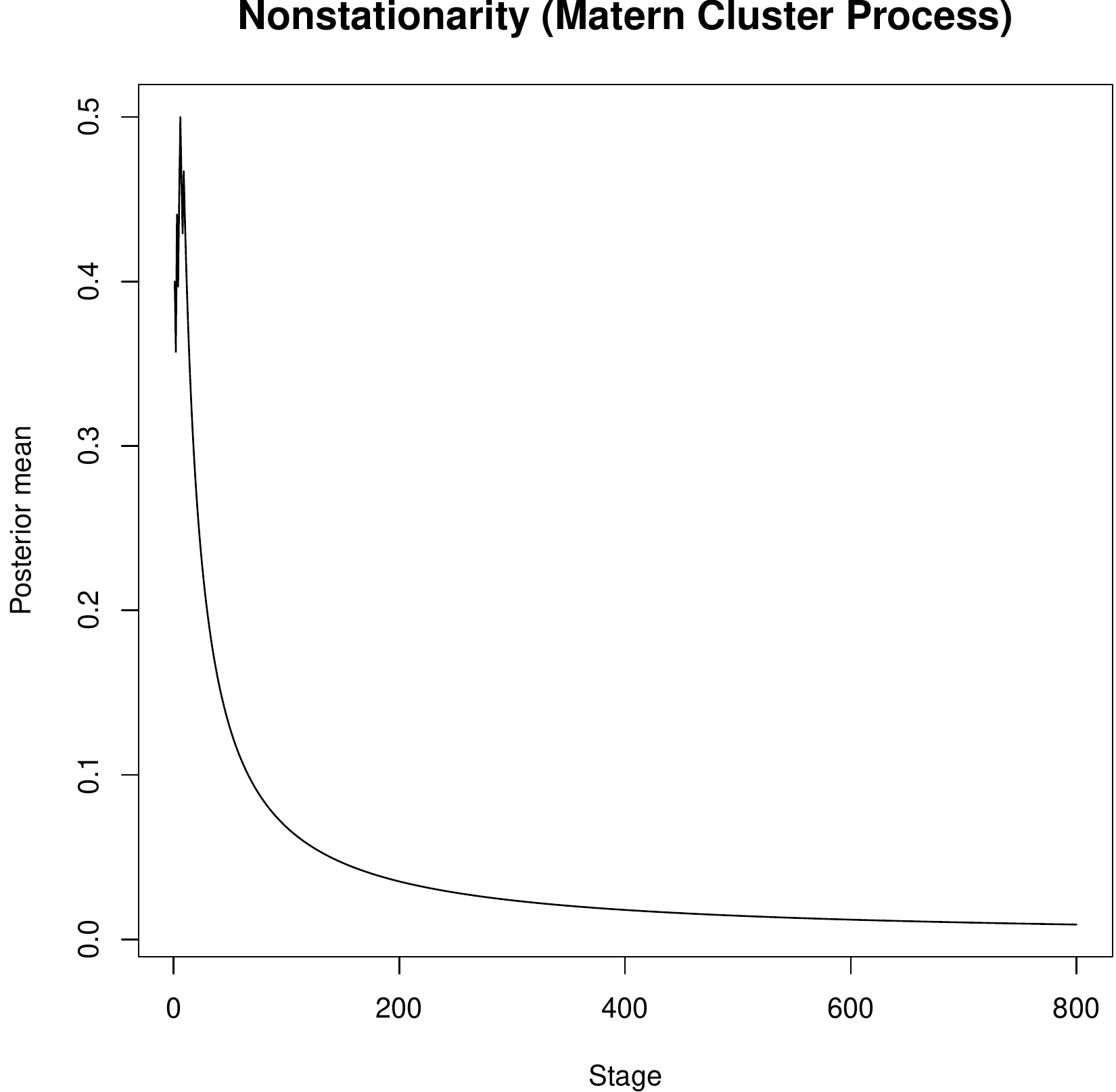}}
\hspace{2mm}
	\subfigure [Dependent point process (Mat\'{e}rn cluster process).]{ \label{fig:matern_bayesian_dependent3_mat}
\includegraphics[width=5.5cm,height=5.5cm]{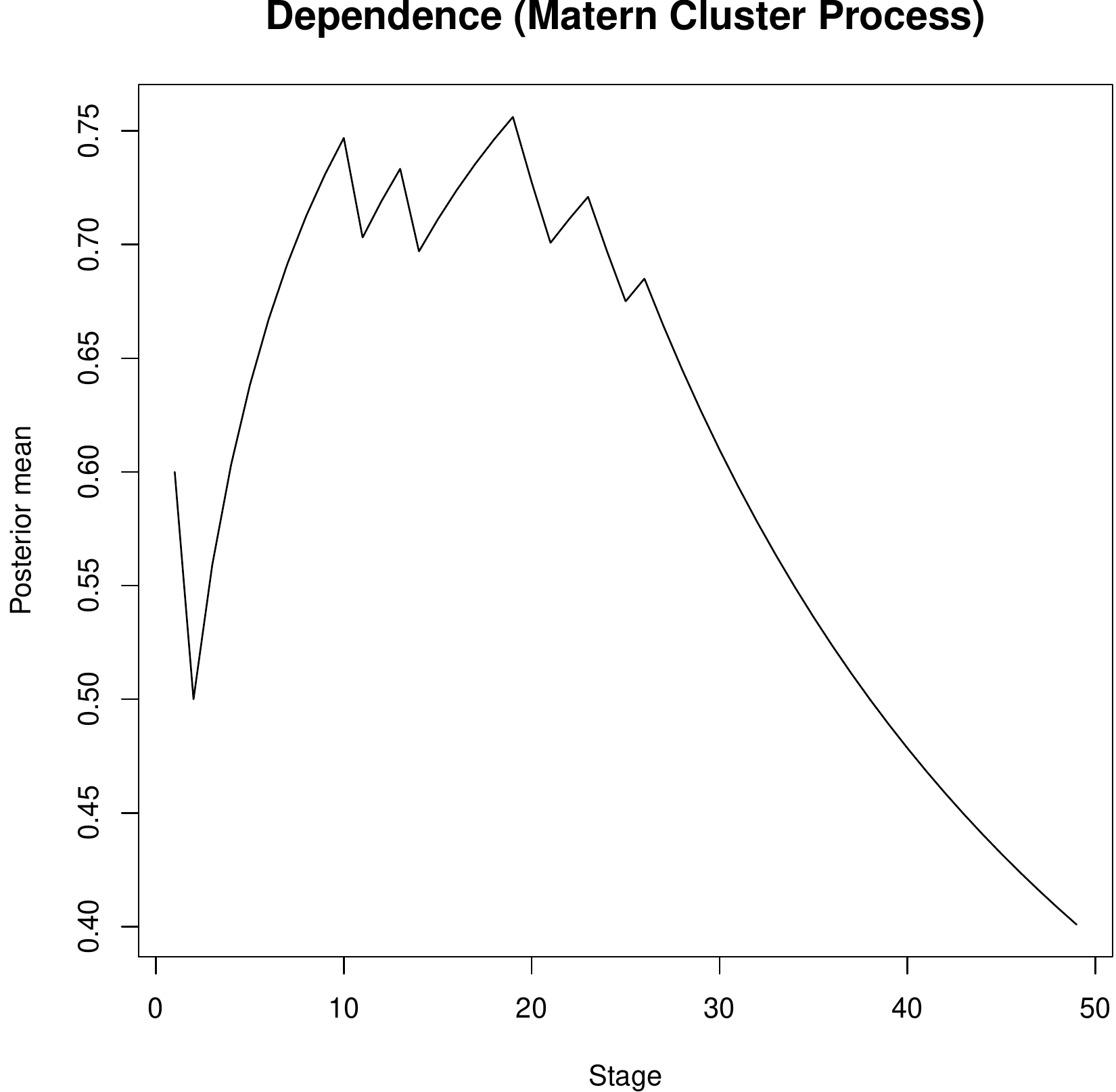}}
	\caption{Detection of nonstationarity and dependence of Mat\'{e}rn cluster process with our Bayesian method.} 
\label{fig:pp2_stationarity_indep2}
\end{figure}

\subsection{Example 7: Mat\'{e}rn cluster process with $\kappa$ Inhomogeneous}
\label{subsec:matern3}

We consider another inhomogeneous Mat\'{e}rn cluster process with $\kappa(u_1,u_2)=2\exp\left(2|u_1|-1\right)$, disc radius $r=0.05$, and $\mu=3$. 
The $2625$ points that we obtained in $W=[0,3]\times[0,3]$ are displayed in Figure \ref{fig:pp2_plots3}.
\begin{figure}
\centering
\includegraphics[width=5.5cm,height=5.5cm]{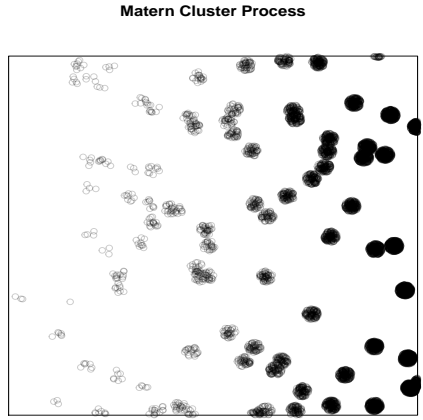}
	\caption{Inhomogeneous Mat\'{e}rn cluster point process pattern.} 
\label{fig:pp2_plots3}
\end{figure}

With $K=300$ and $\hat C_1=0.4$, the Bayesian algorithm correctly detects non-CSR. The classical method also performs adequately.
Figure \ref{fig:pp23} shows that both the methods for detecting CSR correctly detect non-CSR. 

\begin{figure}
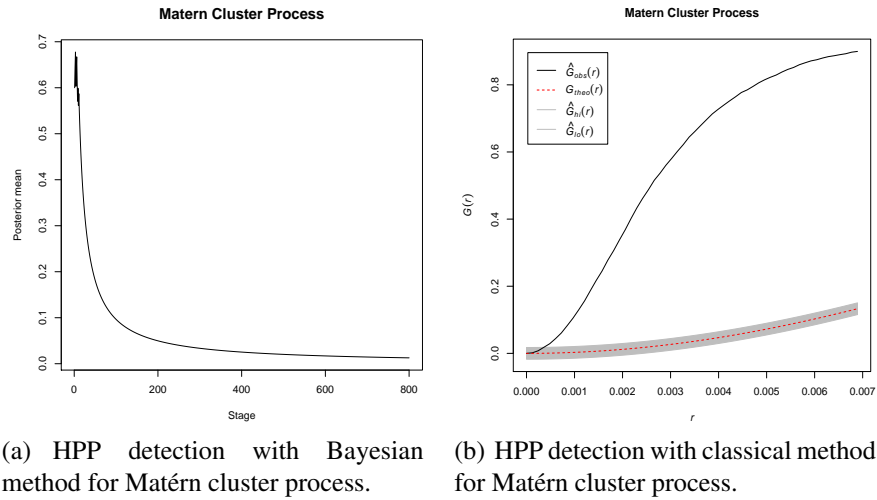

\centering
	\subfigure [HPP detection with Bayesian method for Mat\'{e}rn cluster process.]{ \label{fig:hpp_bayesian4_mat}
\includegraphics[width=5.5cm,height=5.5cm]{figures_matern2/matern-crop.pdf}}
\hspace{2mm}
	\subfigure [HPP detection with classical method for Mat\'{e}rn cluster process.]{ \label{fig:hpp_classical4_mat}
\includegraphics[width=5.5cm,height=5.5cm]{figures_matern2/plot_test-crop.pdf}}
	\caption{Detection of CSR with our Bayesian method and traditional classical method for inhomogeneous Mat\'{e}rn cluster process. Both the methods correctly
identify that the underlying point process is not CSR.} 
\label{fig:pp23}
\end{figure}

Nonstationarity is also correctly detected by the Bayesian method with $K=300$ and $\hat C_1=0.26$, the maximum value leading to nonstationarity. Correct detection
of dependence among $\bX_{C_i}$; $i=1,\ldots,50$, has also been possible with the Bayesian algorithm with $\hat C_1=0.5$.
Figure \ref{fig:pp2_stationarity_indep3} presents the relevant results.
\begin{figure}
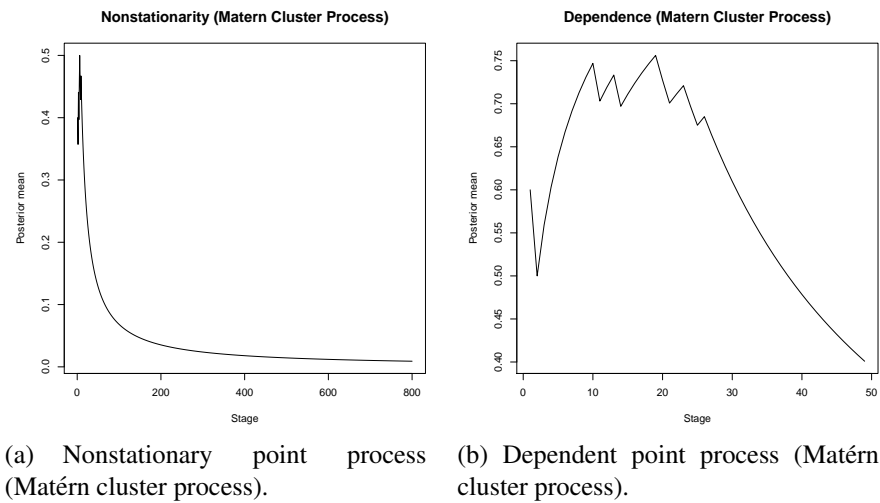

\centering
	\subfigure [Nonstationary point process (Mat\'{e}rn cluster process).]{ \label{fig:matern_bayesian_stationary4_mat}
\includegraphics[width=5.5cm,height=5.5cm]{figures_matern2/stationary-crop.pdf}}
\hspace{2mm}
	\subfigure [Dependent point process (Mat\'{e}rn cluster process).]{ \label{fig:matern_bayesian_dependent4_mat}
\includegraphics[width=5.5cm,height=5.5cm]{figures_matern2/indep-crop.pdf}}
	\caption{Detection of nonstationarity and dependence of Mat\'{e}rn cluster process with our Bayesian method.} 
\label{fig:pp2_stationarity_indep3}
\end{figure}

\subsection{Example 8: Homogeneous Thomas process}
\label{subsec:thomas_hom}

The (modified) Thomas process is a special case of the general shot-noise Cox process in the same way as Mat\'{e}rn cluster process, but 
where $k(c,\cdot)$ is the bivariate normal density with mean $c$ and covariance $\sigma^2I$. From (\ref{eq:shotnoise1}) it is seen that a
stationary process $\bX$ results provided $\kappa$ and $\mu$ are constants. The intensity after integrating out $\Lambda$ is constant in this case,
leading to homogeneous Thomas process.

In this example, we first simulate a Thomas process with $\kappa=10$, $\mu=5$, $\sigma^2=10$, on the window $W=[0,10]\times[0,10]$, and obtained $4858$ points.
The point pattern for this homogeneous Thomas process is displayed in Figure \ref{fig:pp3_plots}.
\begin{figure}
\centering
\includegraphics[width=5.5cm,height=5.5cm]{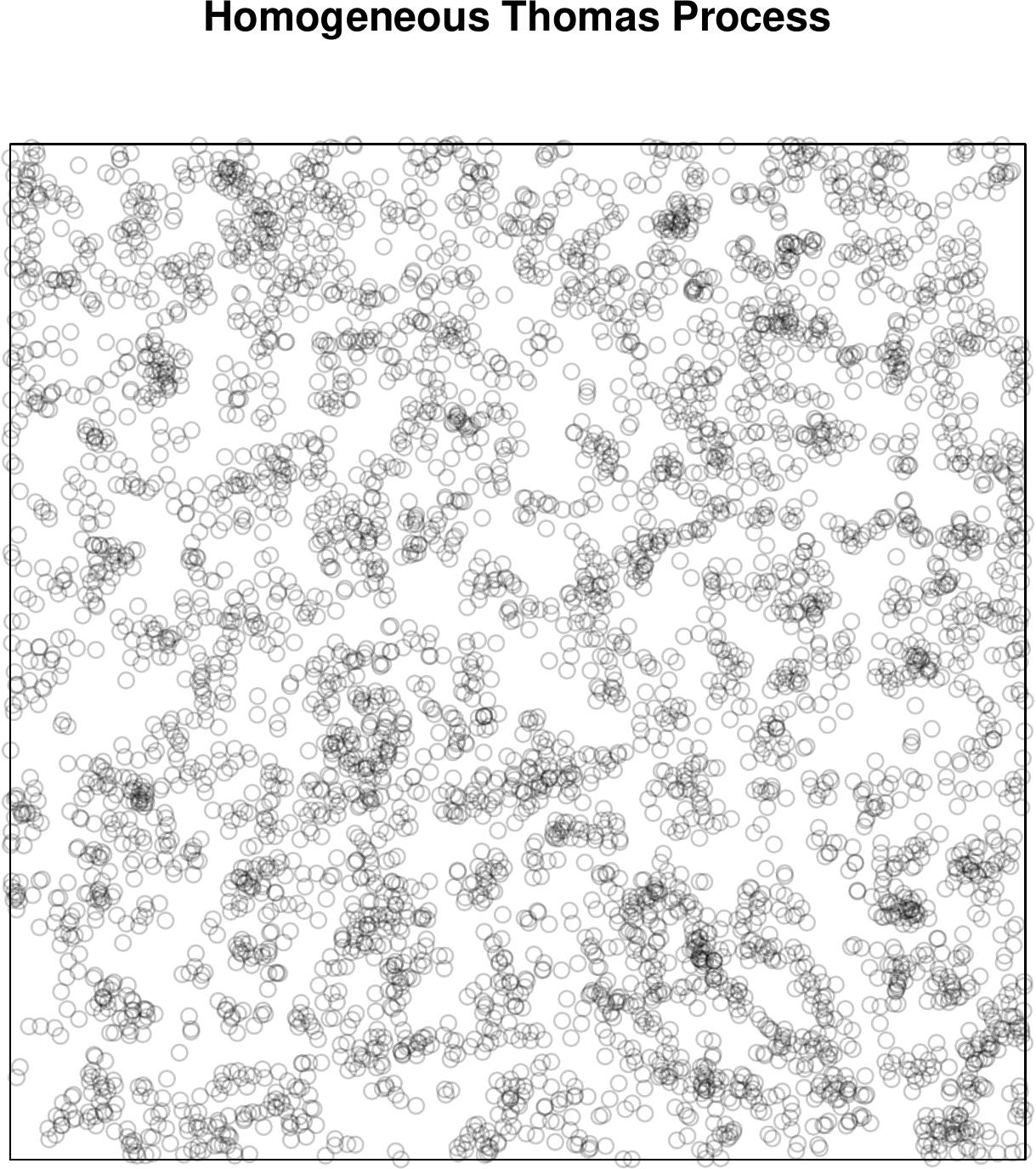}
\caption{Homogeneous Thomas point process pattern.} 
\label{fig:pp3_plots}
\end{figure}

To test CSR, here we set $K=500$ and $\hat C_1=0.23$ for the Bayesian method. The Bayesian method, as well as the classical method, correctly indicate
that the underlying point process is not CSR. The results are displayed in Figure \ref{fig:pp3}.
\begin{figure}
\centering
\subfigure [HPP detection with Bayesian method for homogeneous Thomas point process.]{ \label{fig:hpp_bayesian3}
\includegraphics[width=5.5cm,height=5.5cm]{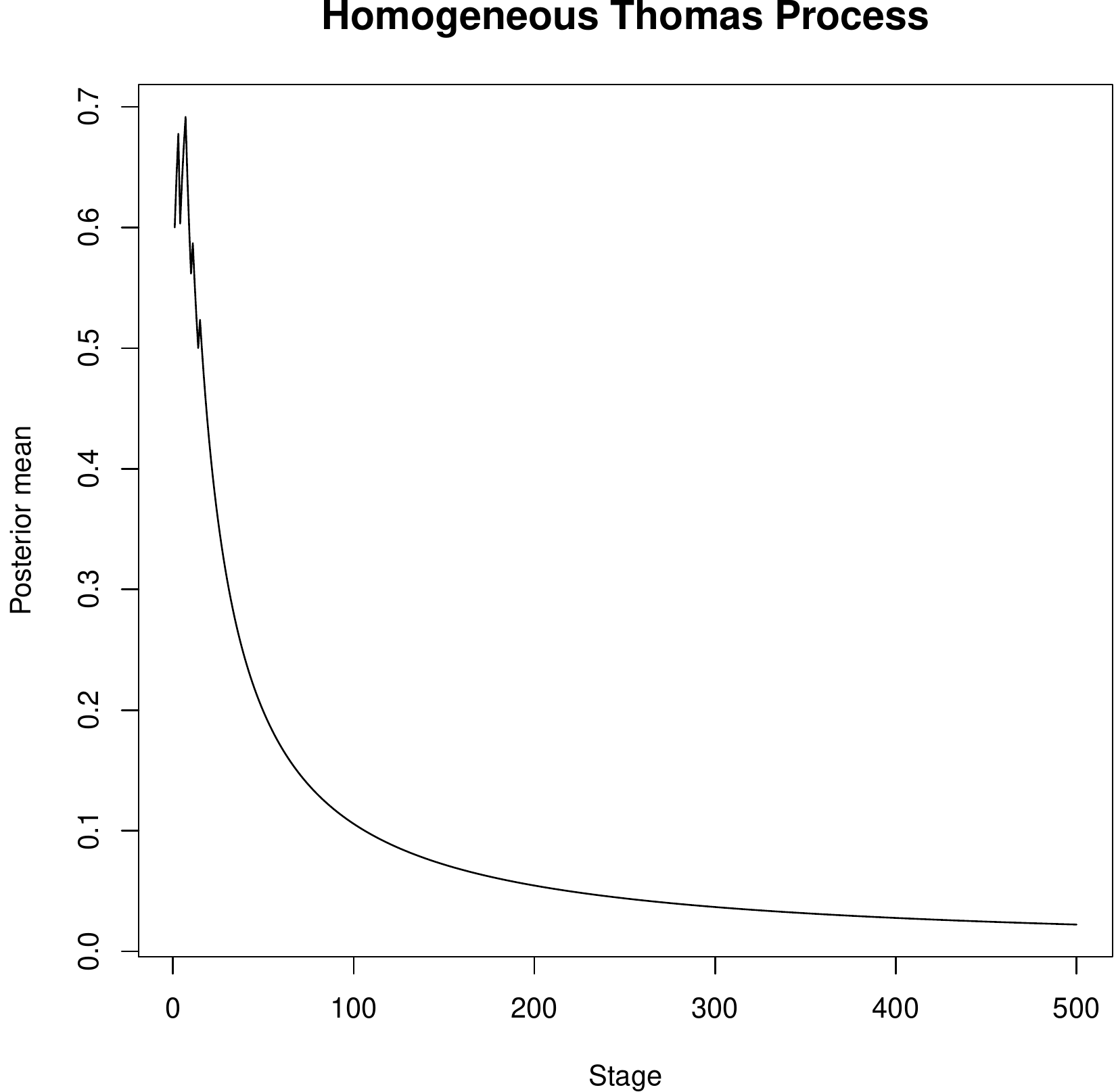}}
\hspace{2mm}
\subfigure [HPP detection with classical method for homogeneous Thomas point process.]{ \label{fig:hpp_classical3}
\includegraphics[width=5.5cm,height=5.5cm]{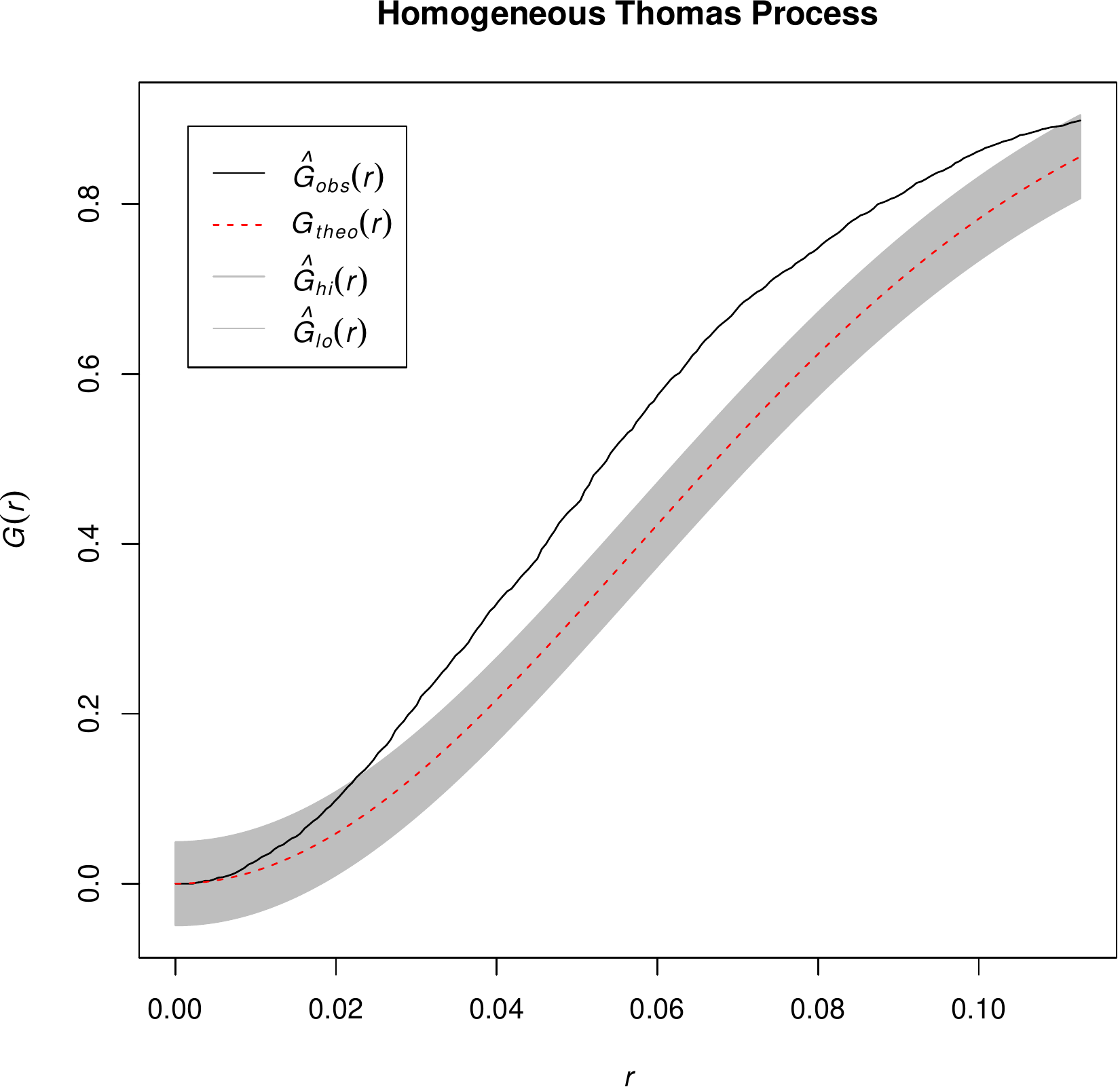}}
\caption{Detection of CSR with our Bayesian method and traditional classical method for homogeneous Thomas point process. Both the methods correctly
identify that the underlying point process is not CSR.} 
\label{fig:pp3}
\end{figure}

With $K=500$ and $\hat C_1=0.18$, we are able to identify stationarity of the underlying homogeneous Thomas point process using our Bayesian method. 
Also, with $K=500$ and $\hat C_1=0.5$, our Bayesian procedure suggests dependence among $\bX_{C_i}$; $i=1,\ldots,50$,leading us to correctly conclude
that the point process is not Poisson. 
\begin{figure}
\centering
\subfigure [Stationary point process (homogeneous Thomas process).]{ \label{fig:thomas_bayesian_stationary}
\includegraphics[width=5.5cm,height=5.5cm]{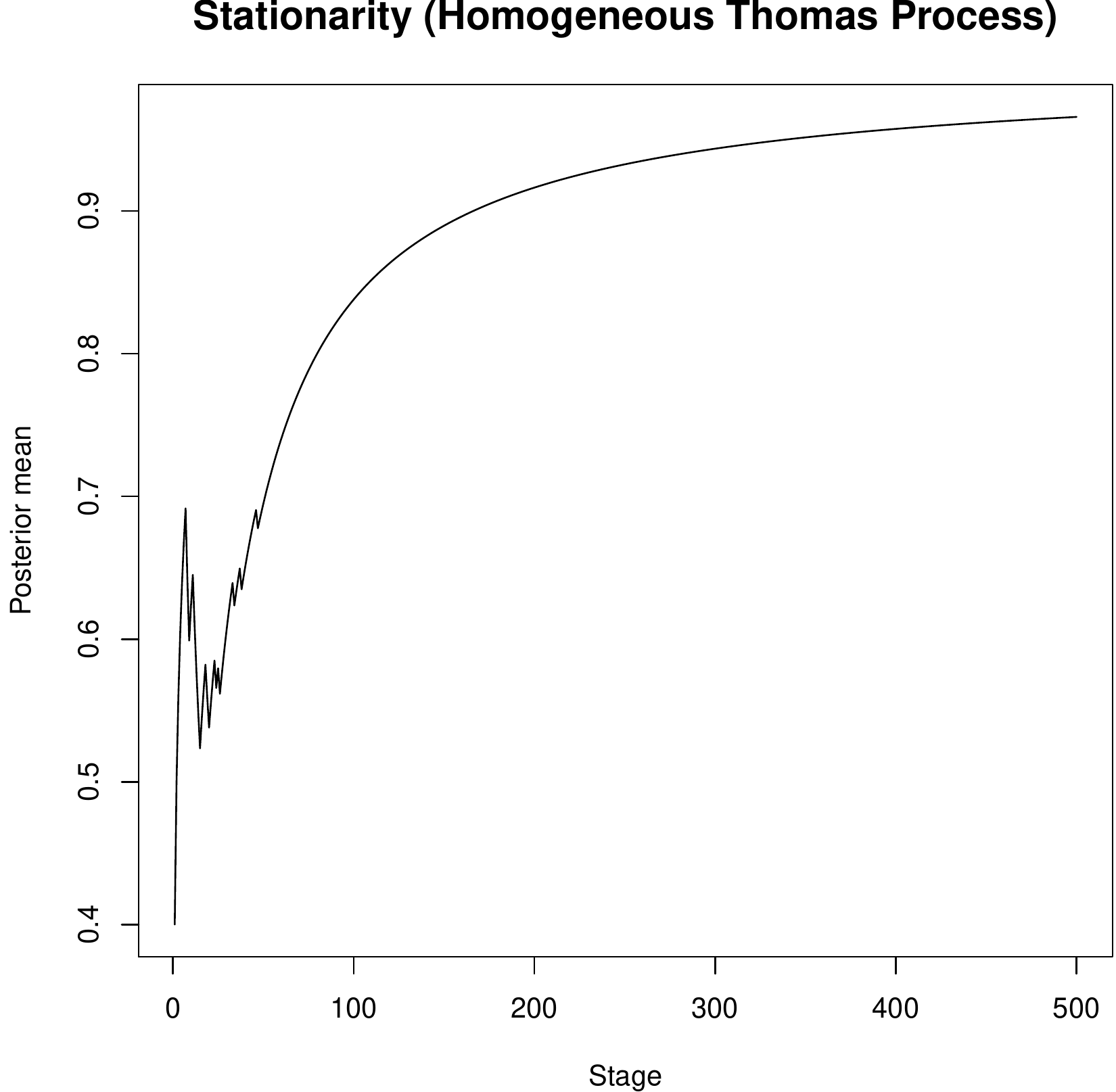}}
\hspace{2mm}
\subfigure [Dependent point process (homogeneous Thomas process).]{ \label{fig:thomas_bayesian_dependent}
\includegraphics[width=5.5cm,height=5.5cm]{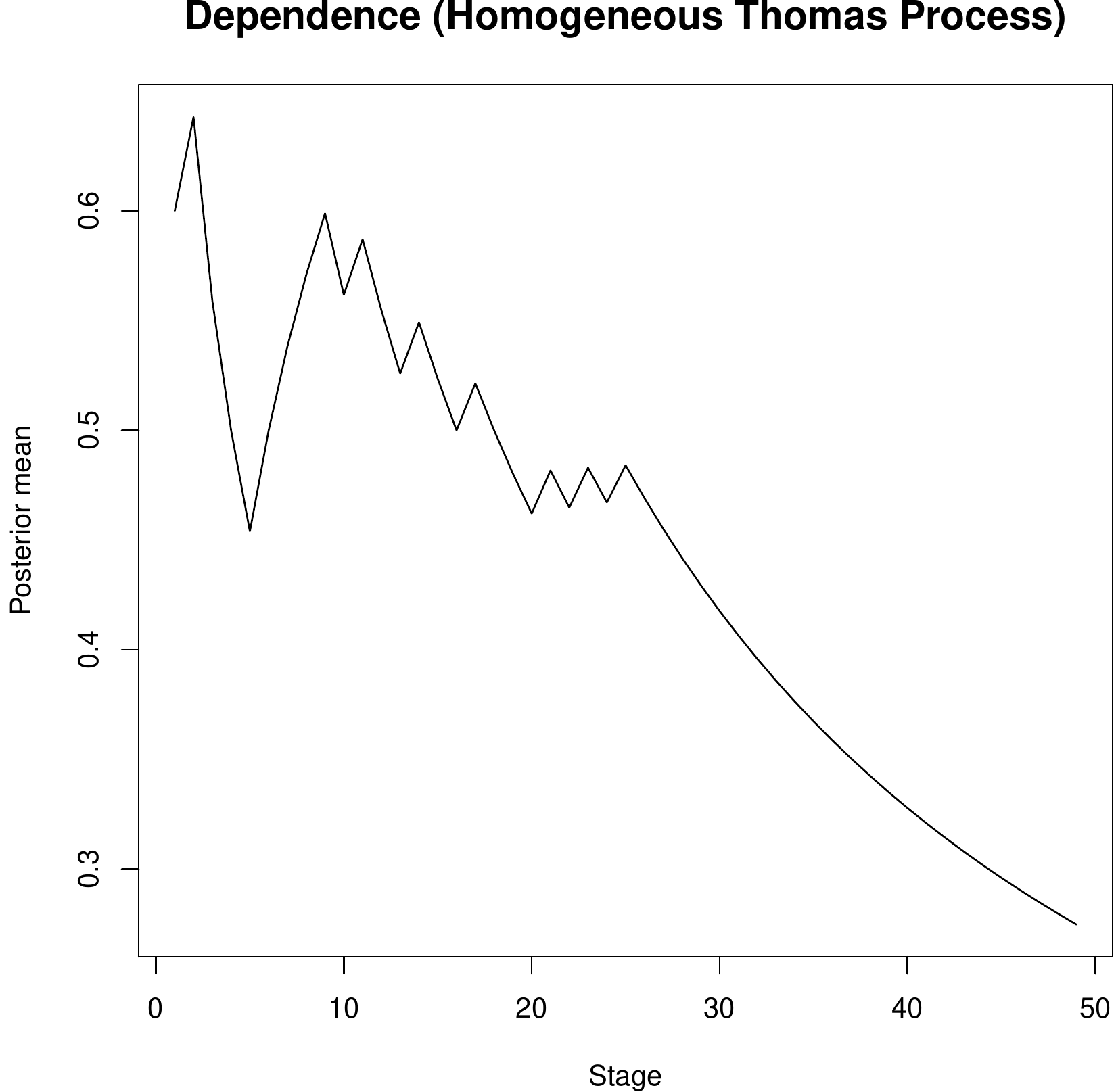}}
\caption{Detection of stationarity and dependence of homogeneous Thomas process with our Bayesian method.} 
\label{fig:pp3_stationarity_indep}
\end{figure}

\subsection{Example 9: Inhomogeneous Thomas process with $\mu$ inhomogeneous}
\label{subsec:thomas12}

We now test our methods on an inhomogeneous Thomas process in $W=[0,3]\times[0,3]$ with $\kappa=10$, $\sigma^2=10$, but $\mu(u_1,u_2)=5\exp\left(2u_1-1\right)$.
That this process is also nonstatioanry follows from (\ref{eq:shotnoise1}), since $\Lambda$ is nonstationary in this case.
The $10735$ points we obtained using spatstat are shown in Figure \ref{fig:pp4_plots}.
\begin{figure}
\centering
\includegraphics[width=5.5cm,height=5.5cm]{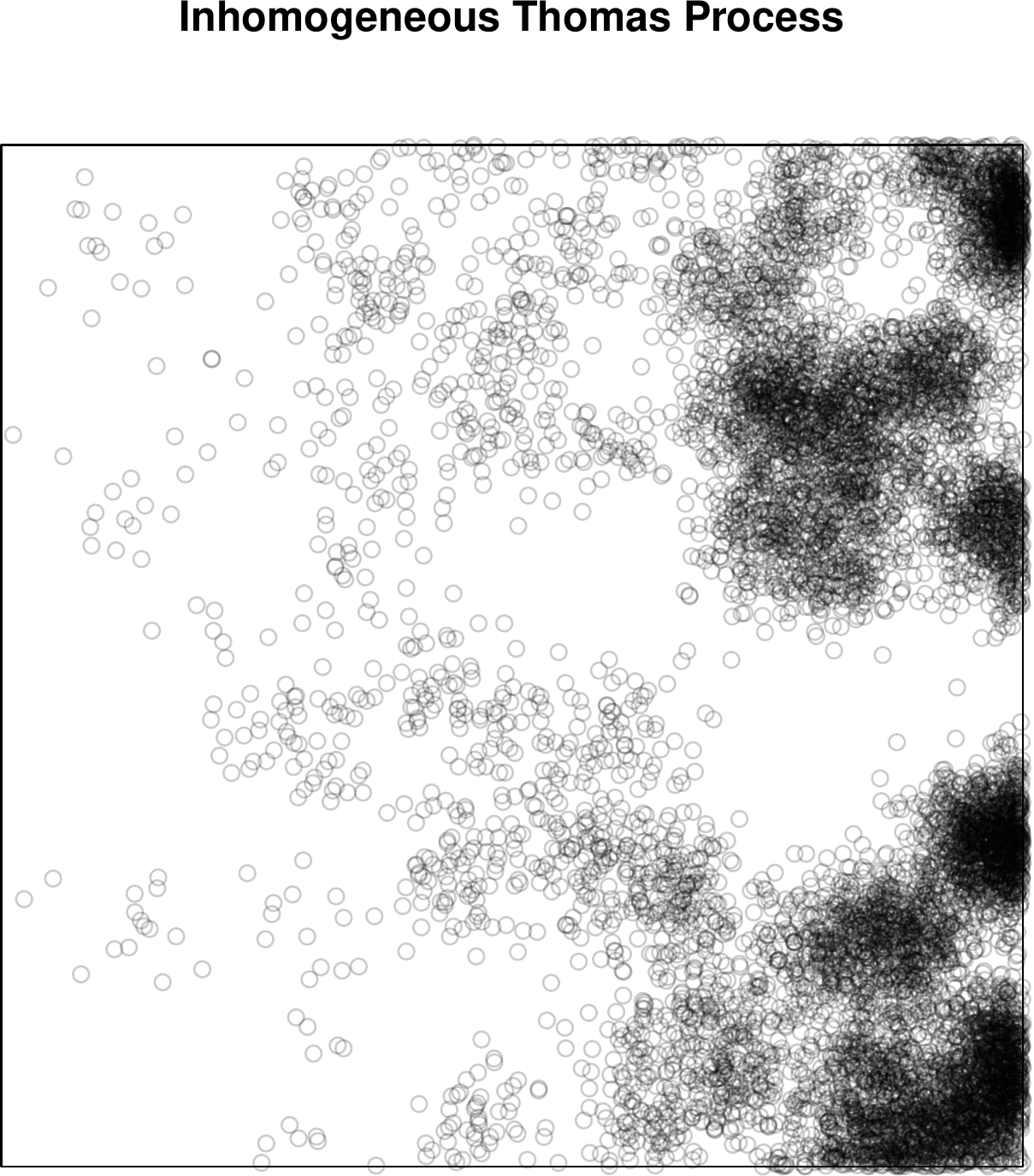}
\caption{Inhomogeneous Thomas point process pattern.} 
\label{fig:pp4_plots}
\end{figure}

With $K=1000$ and $\hat C_1=0.23$, our Bayesian method correctly identifies non-CSR. The classical method also does as well. The results of both these methods
are shown in Figure \ref{fig:pp4}.
\begin{figure}
\centering
\subfigure [HPP detection with Bayesian method for inhomogeneous Thomas point process.]{ \label{fig:hpp_bayesian4}
\includegraphics[width=5.5cm,height=5.5cm]{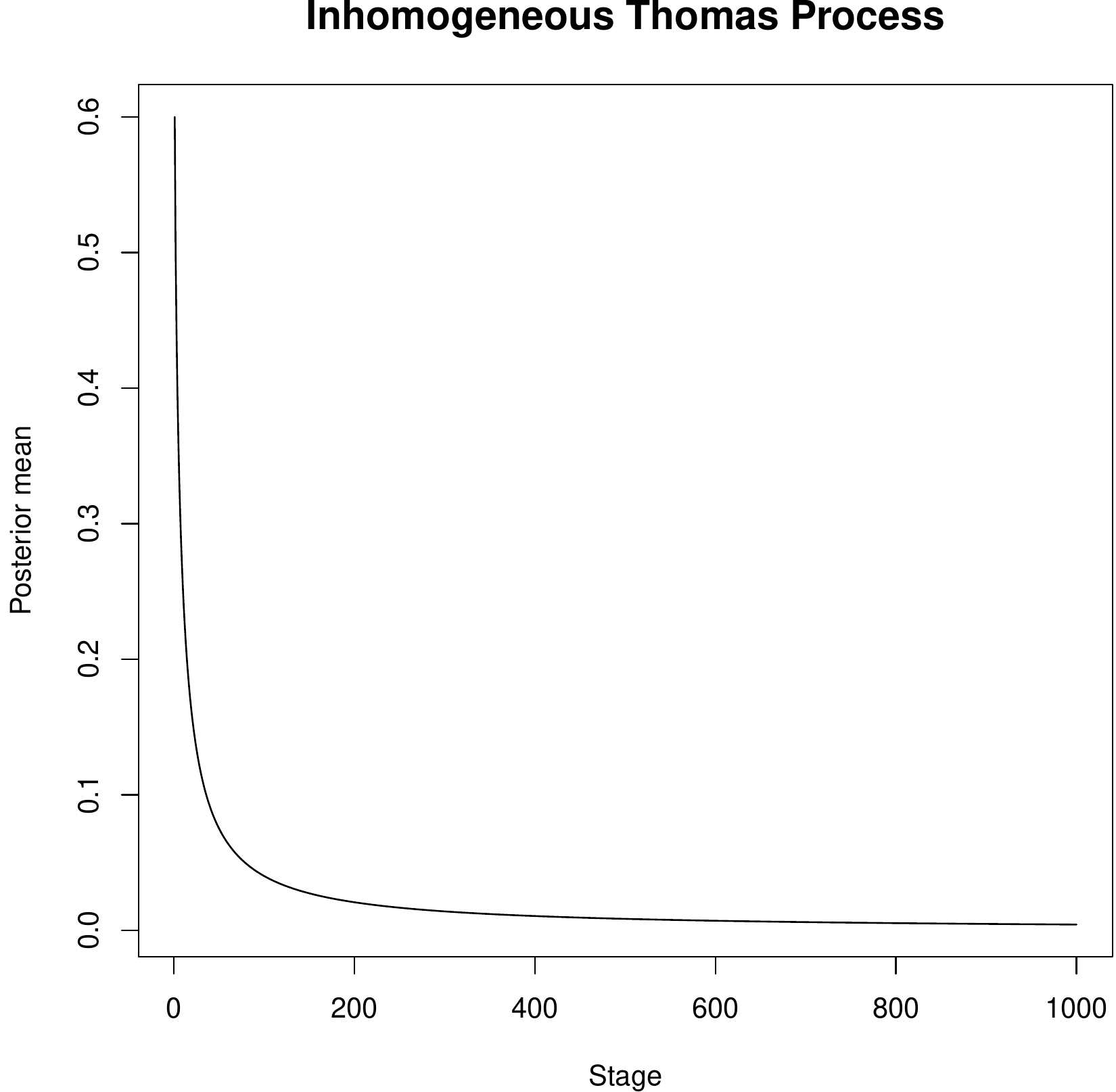}}
\hspace{2mm}
\subfigure [HPP detection with classical method for inhomogeneous Thomas point process.]{ \label{fig:hpp_classical4}
\includegraphics[width=5.5cm,height=5.5cm]{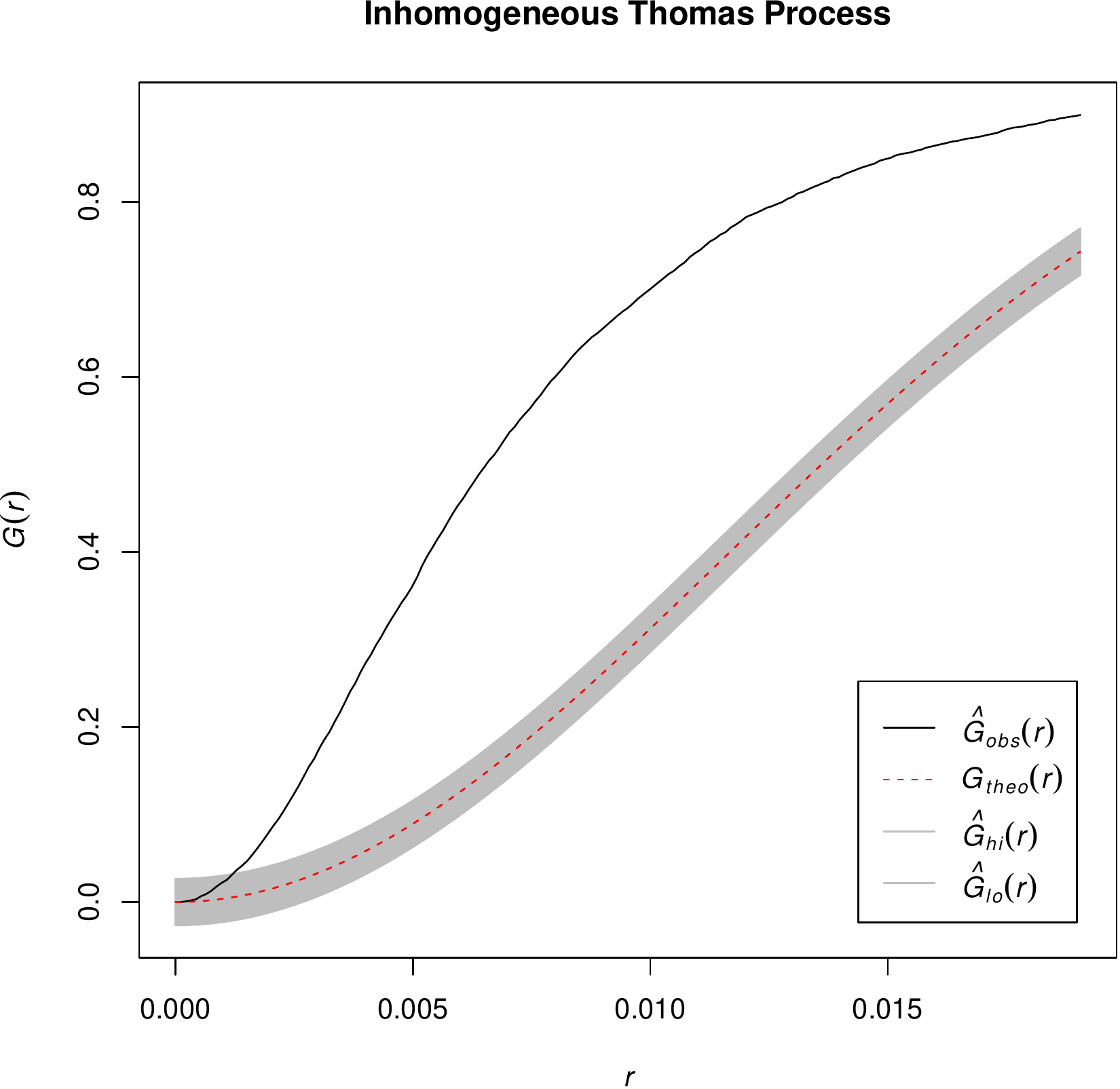}}
\caption{Detection of CSR with our Bayesian method and traditional classical method for Inhomogeneous Thomas point process. Both the methods correctly
identify that the underlying point process is not CSR.} 
\label{fig:pp4}
\end{figure}

Our Bayesian algorithm correctly captures nonstationarity with $K=1000$ and $\hat C_1=0.18$, the maximum value of $\hat C_1$ leading to nonstationarity.
Dependence among $\bX_{C_i}$; $i=1,\ldots,50$ is borne out by our Bayesian strategy with $\hat C_1=0.5$. The results are presented in 
Figure \ref{fig:pp4_stationarity_indep}. 
\begin{figure}
\centering
\subfigure [Nontationary point process (inhomogeneous Thomas Process).]{ \label{fig:thomas_bayesian_nonstationary}
\includegraphics[width=5.5cm,height=5.5cm]{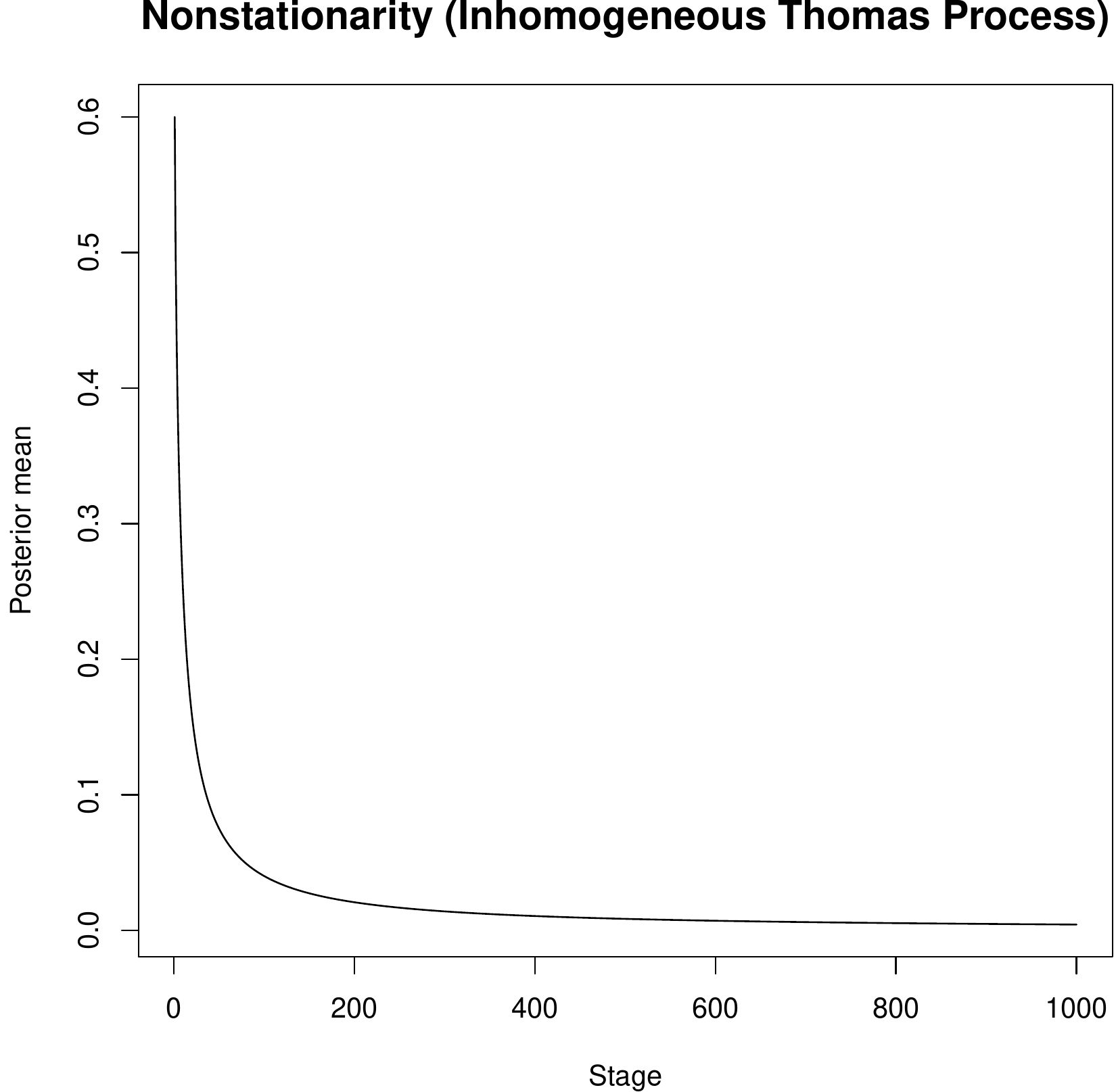}}
\hspace{2mm}
\subfigure [Dependent point process (inhomogeneous Thomas Process).]{ \label{fig:thomas12_bayesian_dependent}
\includegraphics[width=5.5cm,height=5.5cm]{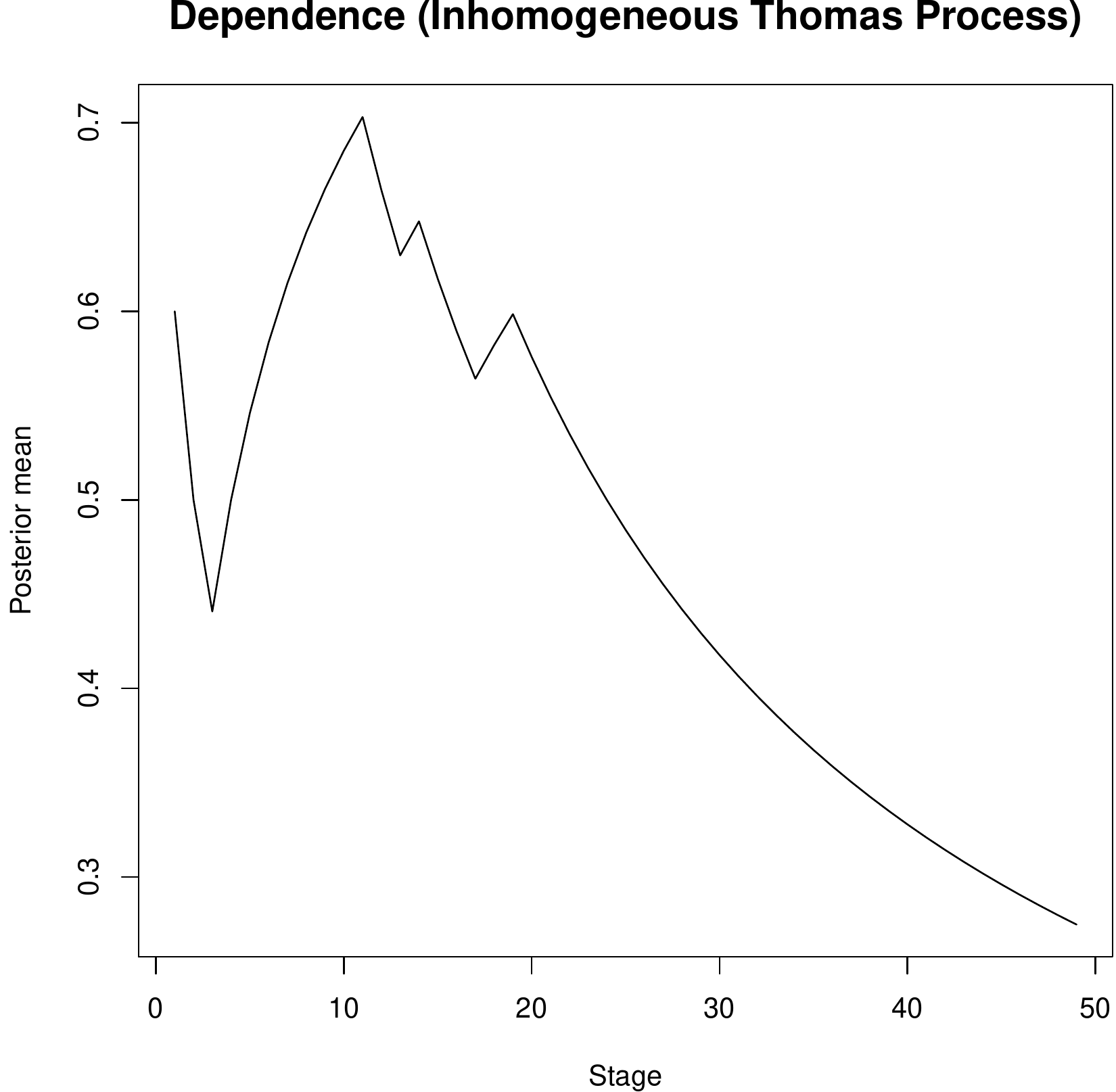}}
\caption{Detection of nonstationarity and dependence of inhomogeneous Thomas process with our Bayesian method.} 
\label{fig:pp4_stationarity_indep}
\end{figure}

\subsection{Example 10: Inhomogeneous Thomas process with $\kappa$ inhomogeneous}
\label{subsec:thomas21}

We now consider another inhomogeneous Thomas process on $W=[0,3]\times[0,3]$ with $\mu=5$, $\sigma^2=10$ but $\kappa(u_1,u_2)=5\exp\left(2x-1\right)$.
This is also a nonstationary, non-Poisson, non-homogeneous point process.
Figure \ref{fig:pp5_plots} displays the $5608$ points that we obtained from this process.
\begin{figure}
\centering
\includegraphics[width=5.5cm,height=5.5cm]{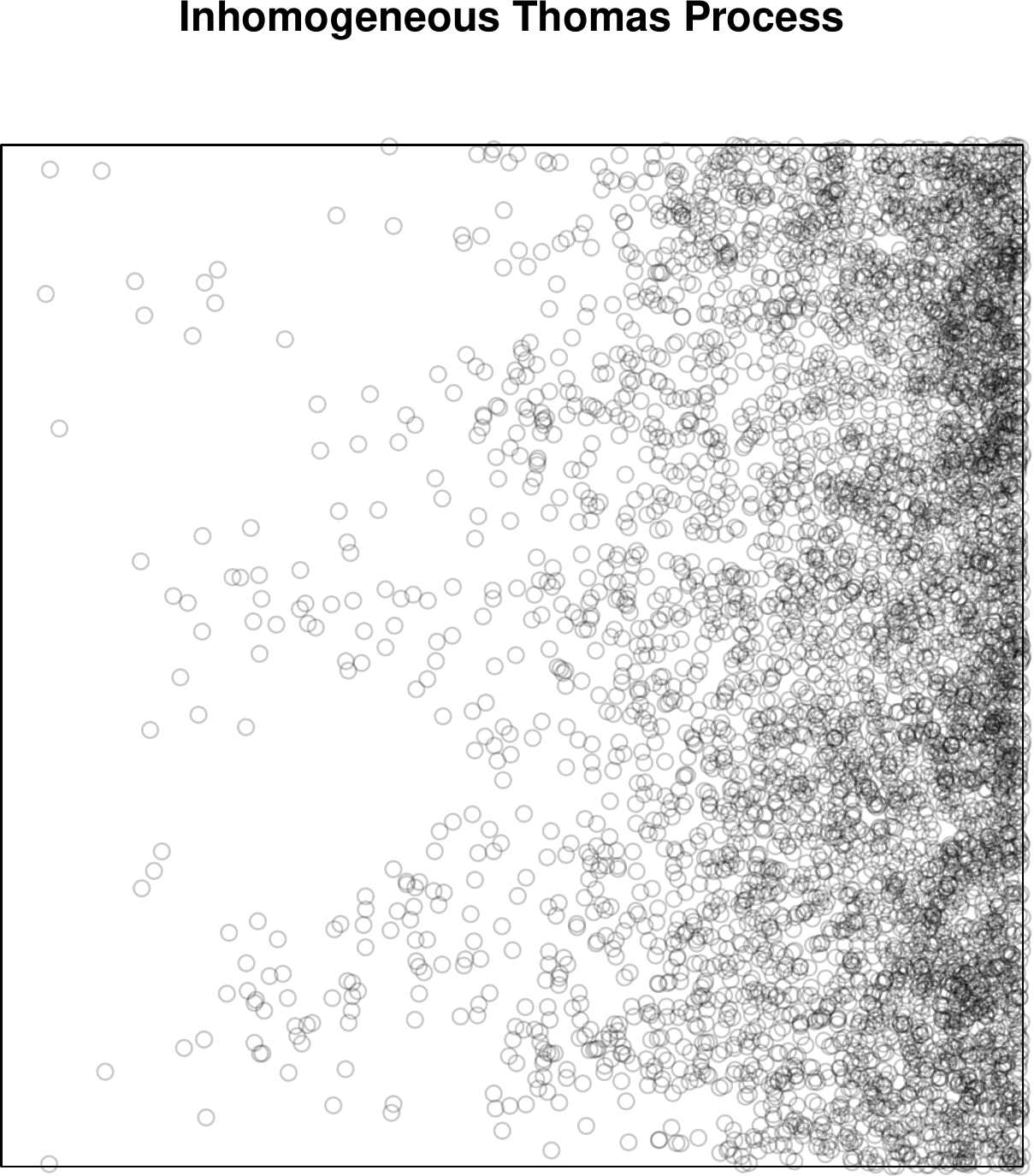}
\caption{Inhomogeneous Thomas point process pattern.} 
\label{fig:pp5_plots}
\end{figure}

With $K=500$ and $\hat C_1=0.23$, our Bayesian correctly detected non-CSR. The classical method also performed adequately in this case.
The results are shown in Figure \ref{fig:pp5}.
\begin{figure}
\centering
\subfigure [HPP detection with Bayesian method for inhomogeneous Thomas point process.]{ \label{fig:hpp_bayesian5}
\includegraphics[width=5.5cm,height=5.5cm]{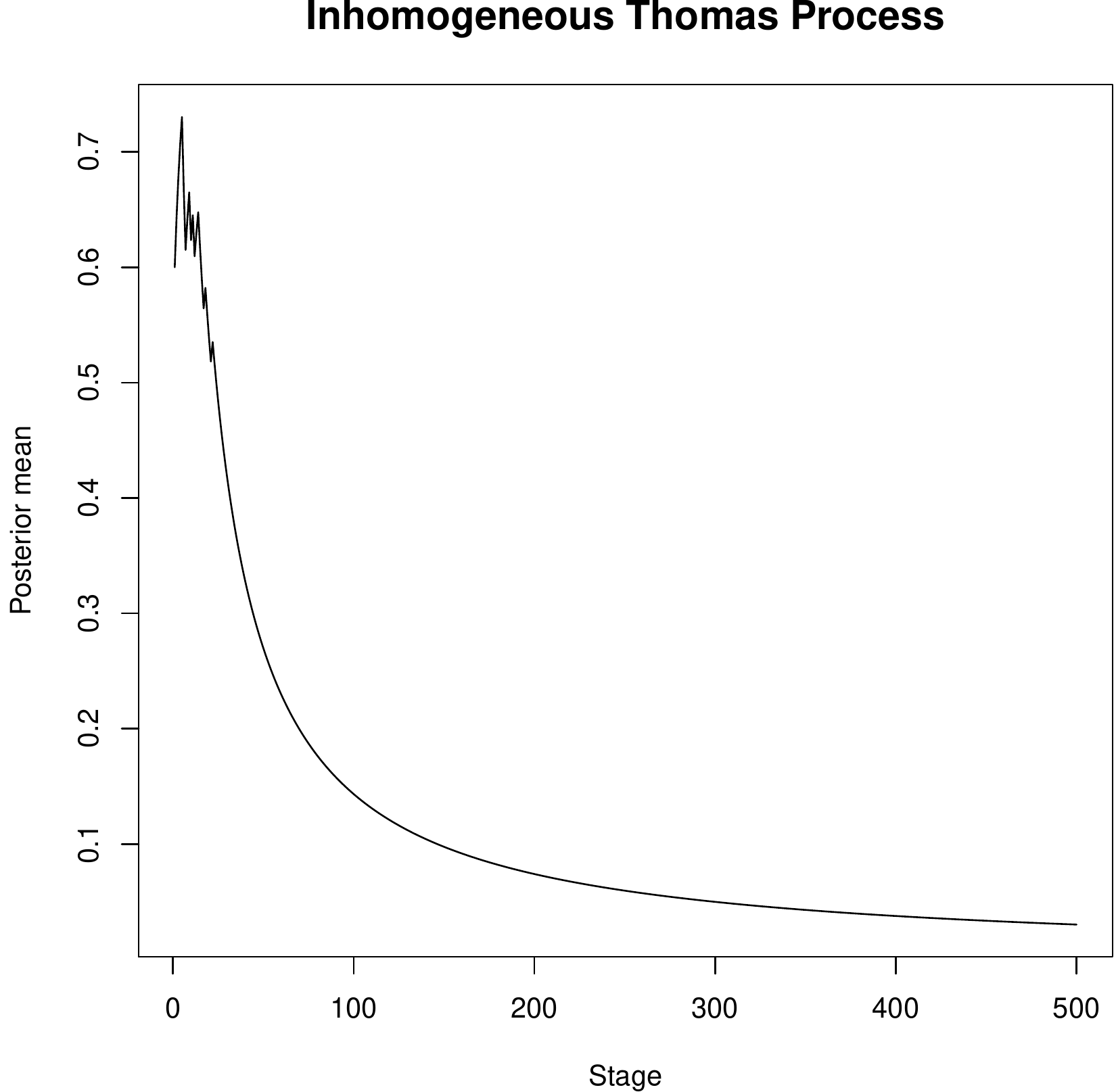}}
\hspace{2mm}
\subfigure [HPP detection with classical method for inhomogeneous Thomas point process.]{ \label{fig:hpp_classical5}
\includegraphics[width=5.5cm,height=5.5cm]{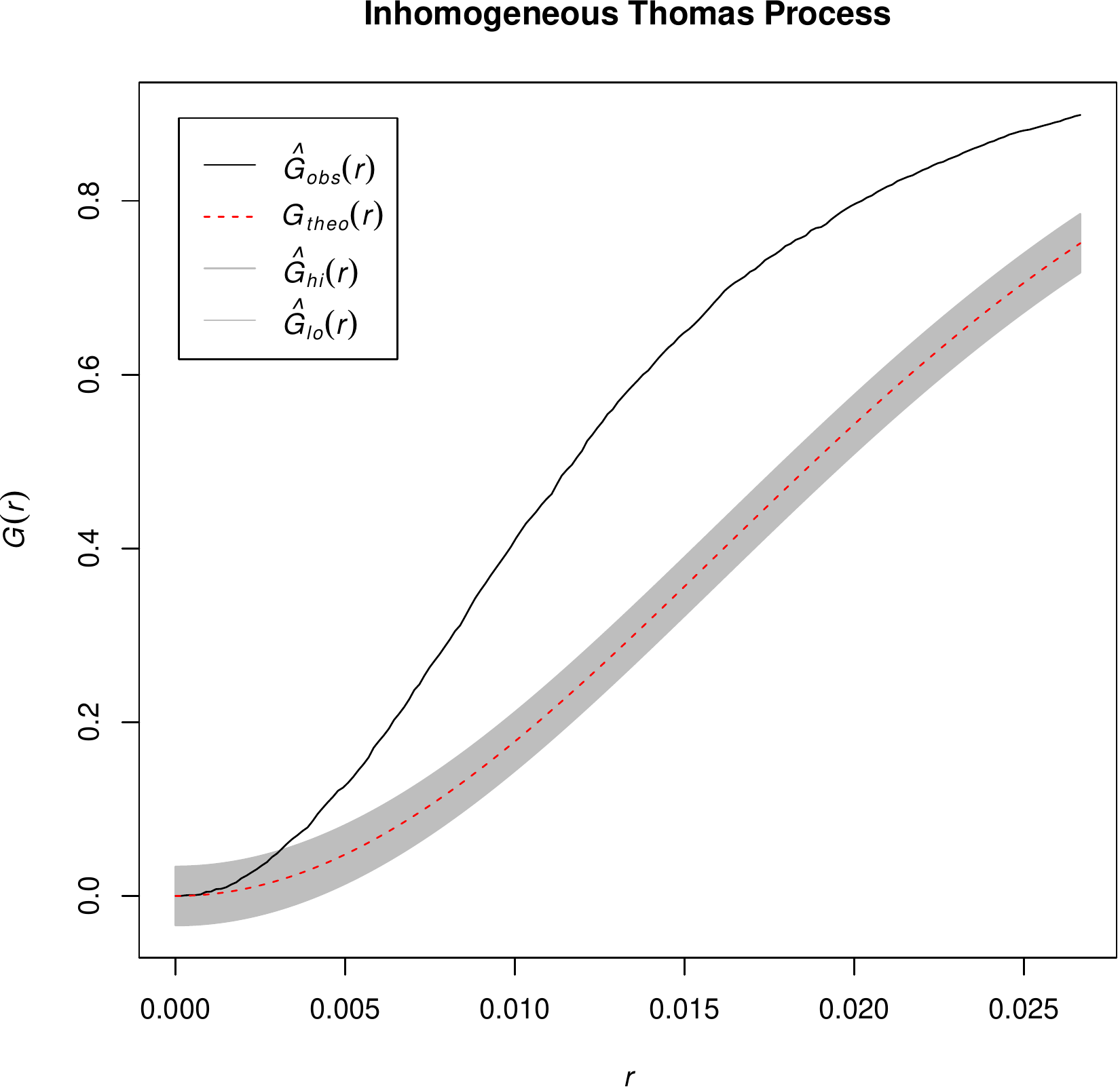}}
\caption{Detection of CSR with our Bayesian method and traditional classical method for inhomogeneous Thomas point process. Both the methods correctly
identify that the underlying point process is not CSR.} 
\label{fig:pp5}
\end{figure}

As before, our Bayesian method correctly detected nonstationarity with $K=500$ and $\hat C_1=0.18$. Also, as before, dependence among
$\bX_{C_i}$; $i=1,\ldots,50$, is correctly indicated by our Bayesian method, with $\hat C_1=0.5$.
\begin{figure}
\centering
\subfigure [Nontationary point process (inhomogeneous Thomas process).]{ \label{fig:thomas21_bayesian_nonstationary}
\includegraphics[width=5.5cm,height=5.5cm]{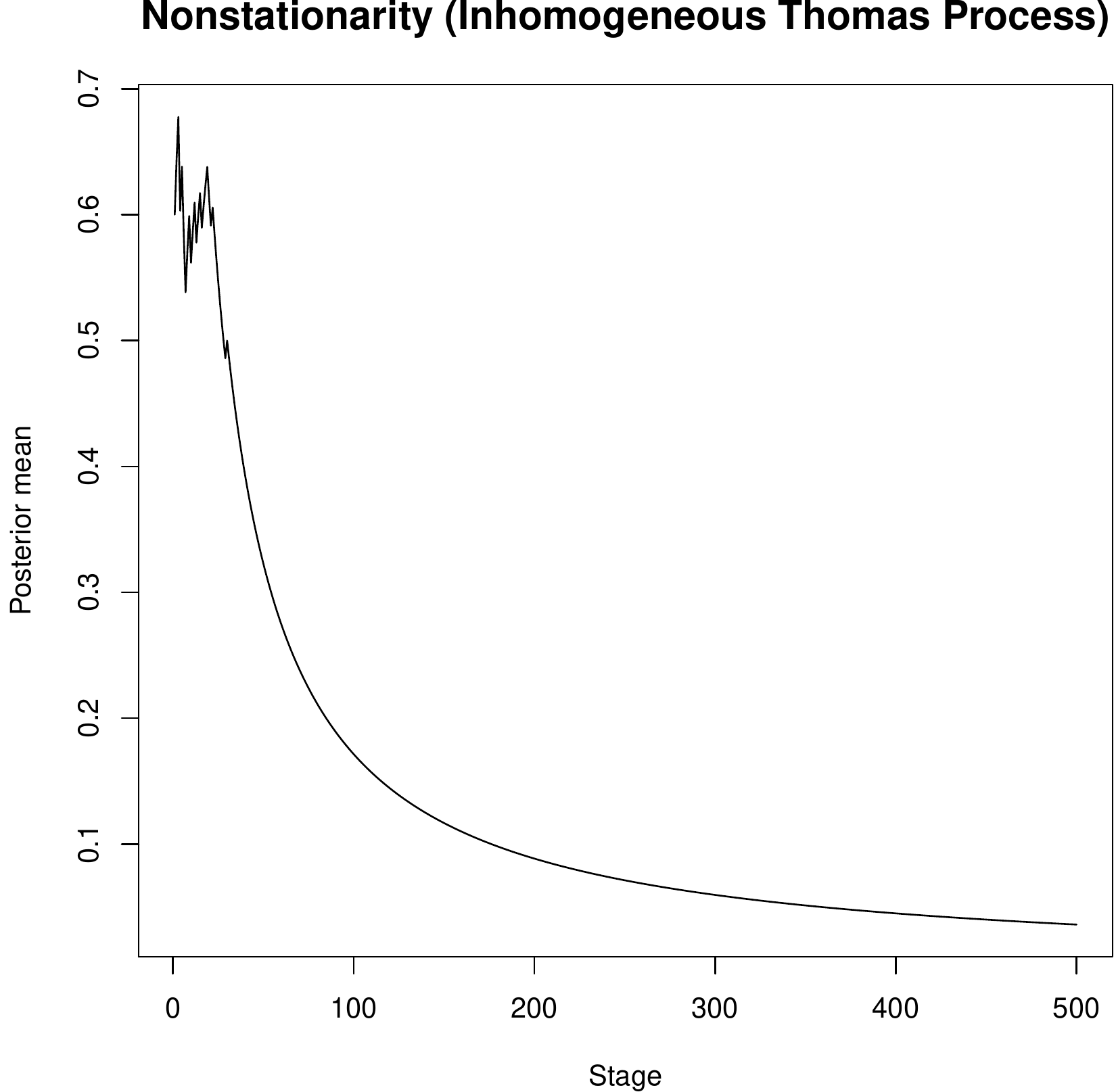}}
\hspace{2mm}
\subfigure [Dependent point process (inhomogeneous Thomas process).]{ \label{fig:thomas21_bayesian_dependent}
\includegraphics[width=5.5cm,height=5.5cm]{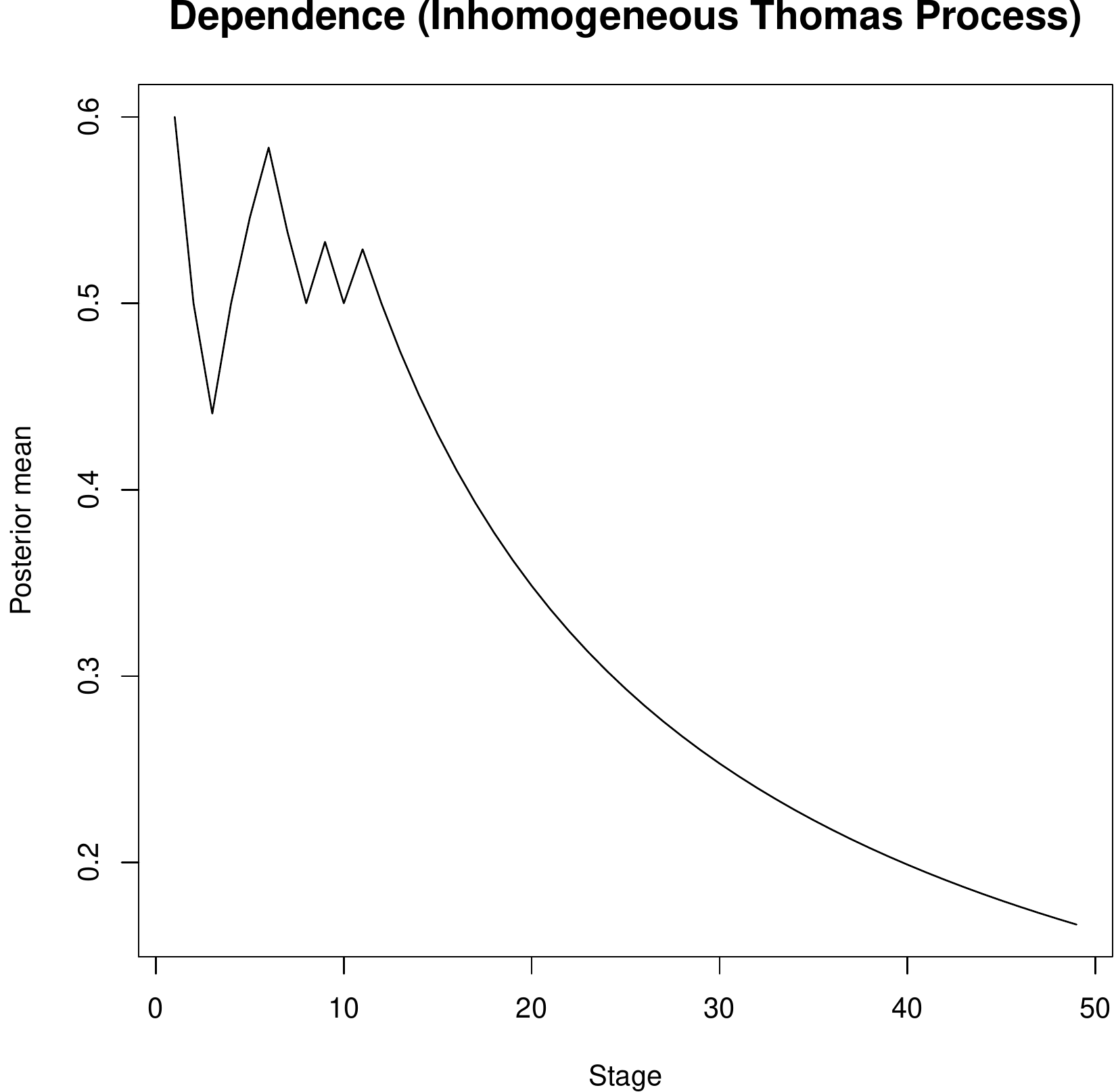}}
\caption{Detection of nonstationarity and dependence of inhomogeneous Thomas process with our Bayesian method.} 
\label{fig:pp5_stationarity_indep}
\end{figure}

\subsection{Example 11: Inhomogeneous Thomas process with $\kappa$ and $\mu$ the same inhomogeneous function}
\label{subsec:thomas22}

Let us consider simulation from another inhomogeneous Thomas process where $\kappa(u_1,u_2)=\mu(u_1,u_2)=5\exp\left(2u_1-1\right)$. With $\sigma^2=10$,
we obtained $5302$ points on the window $W=[0,2]\times[0,2]$, displayed in Figure \ref{fig:pp6_plots}.
\begin{figure}
\centering
\includegraphics[width=5.5cm,height=5.5cm]{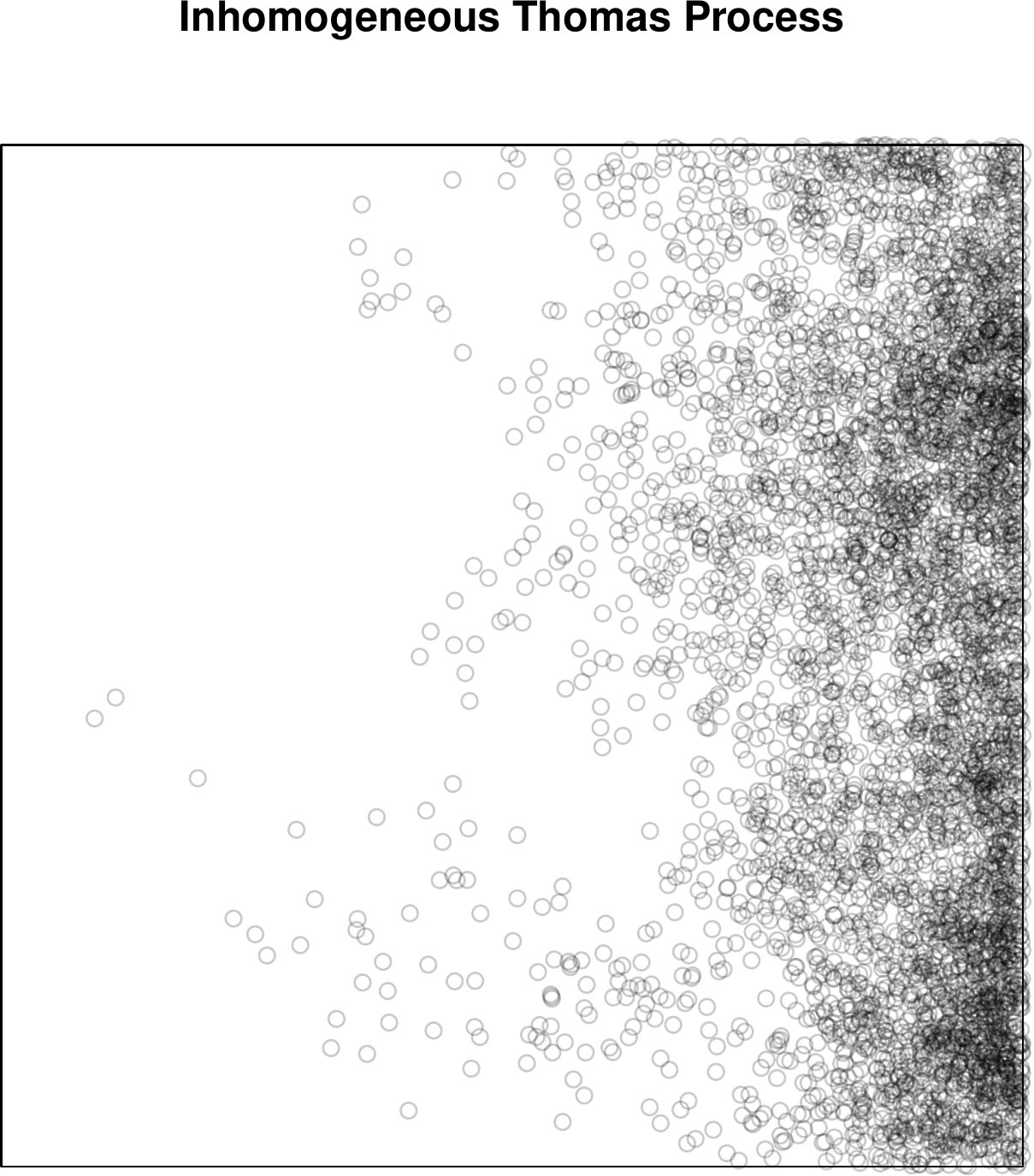}
\caption{Inhomogeneous Thomas point process pattern.} 
\label{fig:pp6_plots}
\end{figure}

Figure \ref{fig:pp6} shows the results of Bayesian and classical CSR detection methods; both the methods performed adequately, correctly identifying non-CSR. 
For the Bayesian method we set $K=500$ and $\hat C_1=0.23$.
\begin{figure}
\centering
\subfigure [HPP detection with Bayesian method for inhomogeneous Thomas point process.]{ \label{fig:hpp_bayesian6}
\includegraphics[width=5.5cm,height=5.5cm]{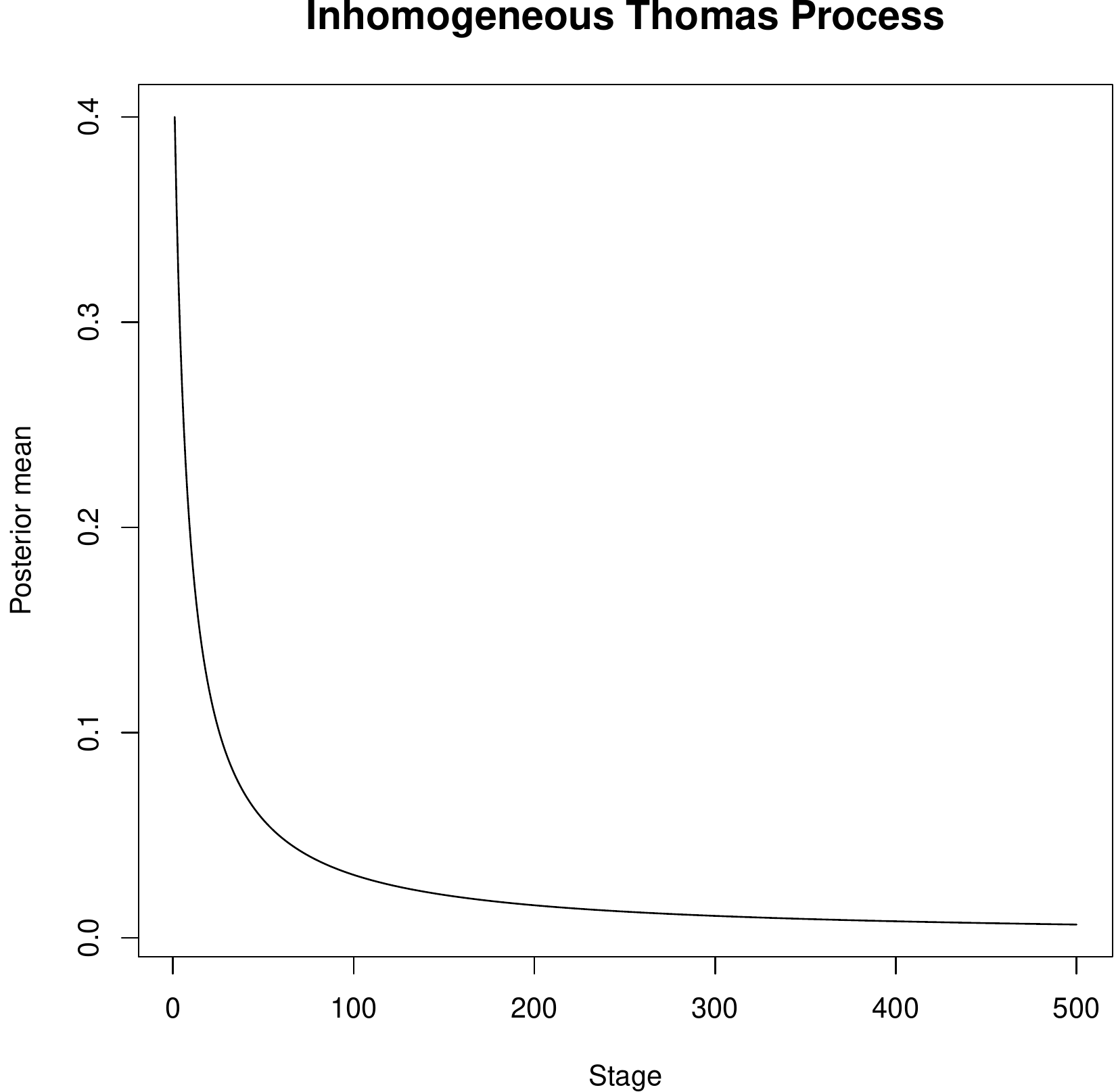}}
\hspace{2mm}
\subfigure [HPP detection with classical method for inhomogeneous Thomas point process.]{ \label{fig:hpp_classical6}
\includegraphics[width=5.5cm,height=5.5cm]{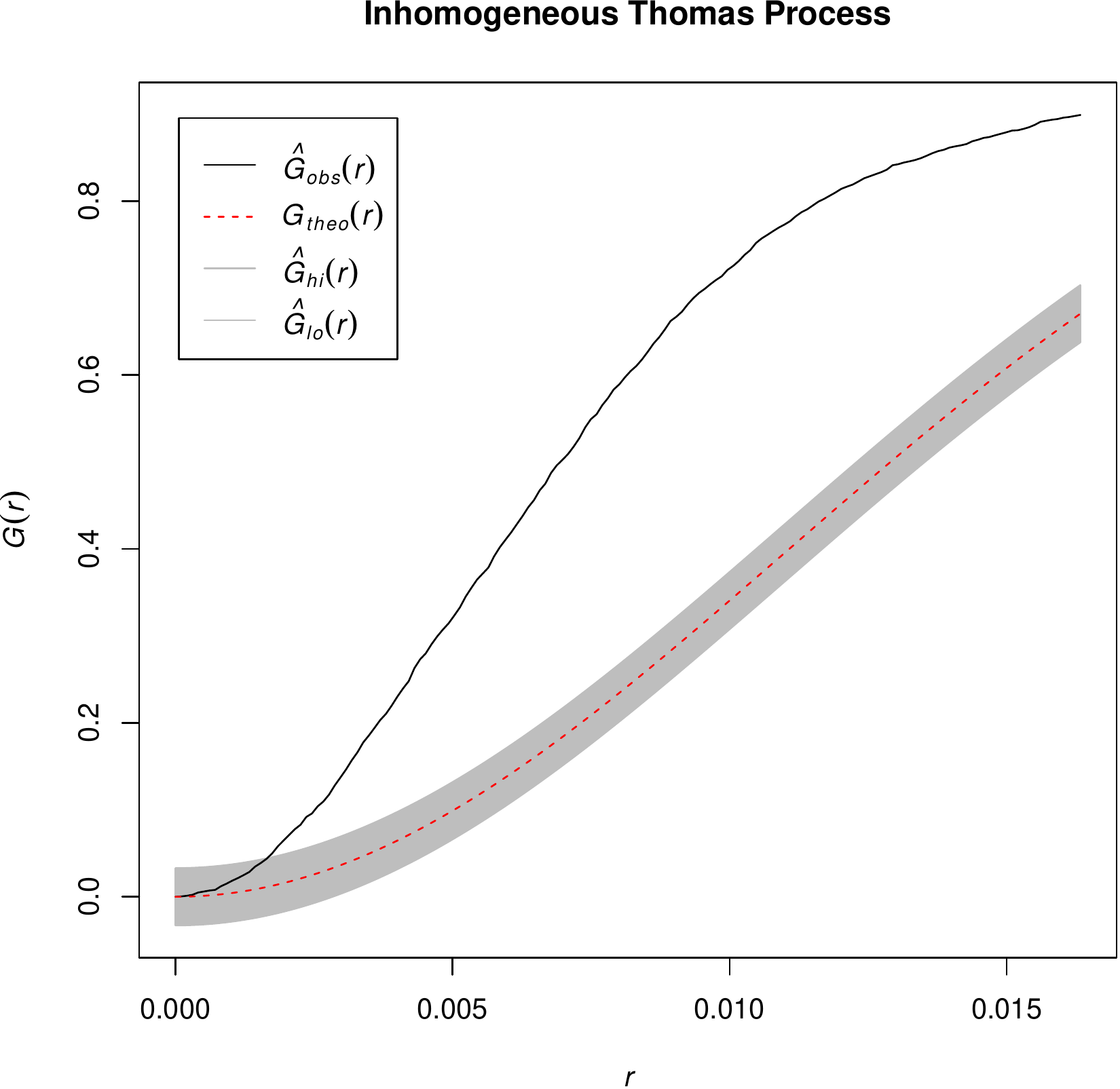}}
\caption{Detection of CSR with our Bayesian method and traditional classical method for inhomogeneous Thomas point process. Both the methods correctly
identify that the underlying point process is not CSR.} 
\label{fig:pp6}
\end{figure}

Nonstationarity of this point process has been correctly detected by our Bayesian method with $K=810$ and $\hat C_1=0.18$. As regards our Bayesian test
for mutual independence, we correctly obtained dependence with $K=50$ and $\hat C_1=0.5$. The results are presented in Figure \ref{fig:pp6_stationarity_indep}.
\begin{figure}
\centering
\subfigure [Nontationary point process (inhomogeneous Thomas process).]{ \label{fig:thomas22_bayesian_nonstationary}
\includegraphics[width=5.5cm,height=5.5cm]{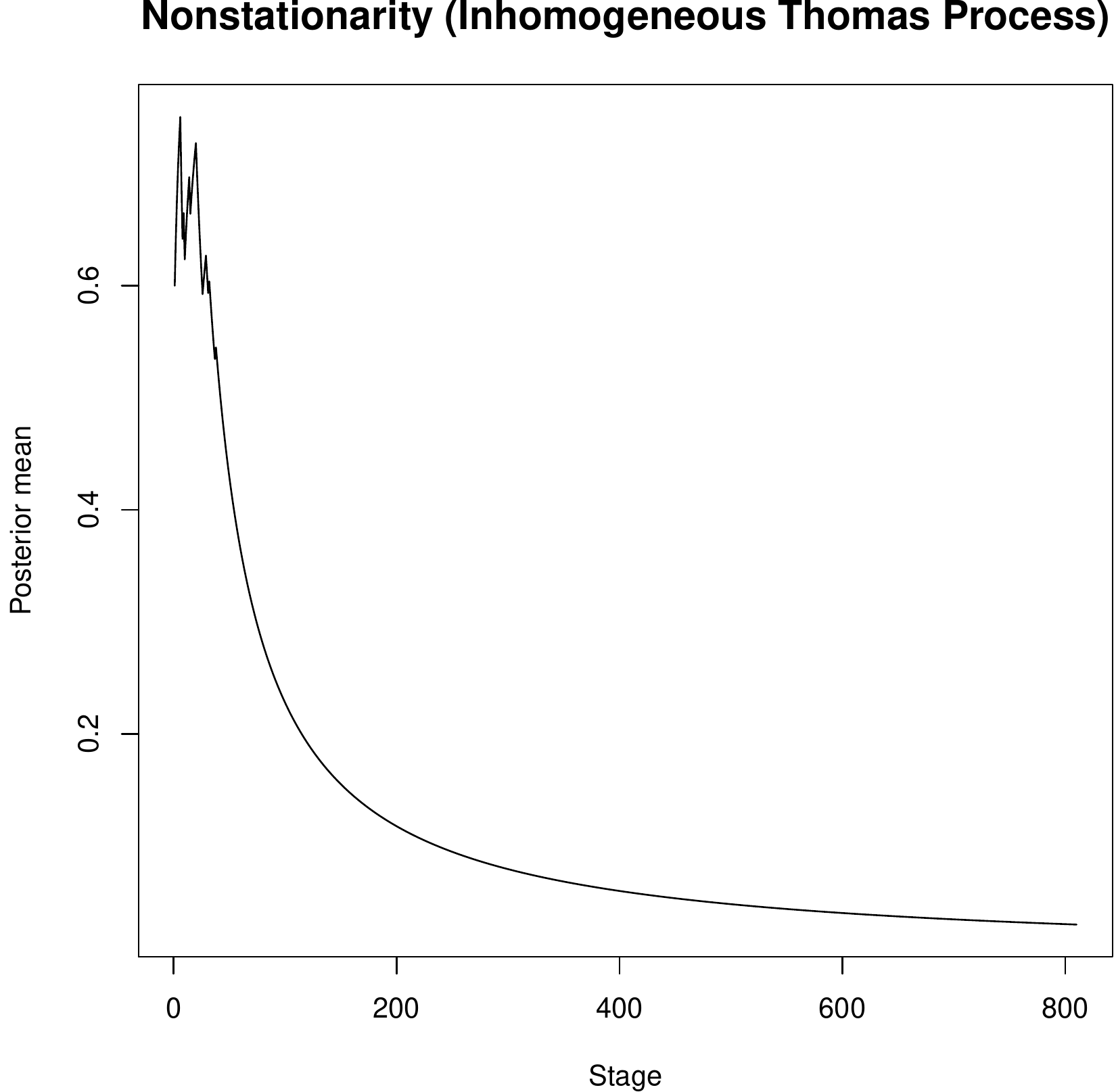}}
\hspace{2mm}
\subfigure [Dependent point process (inhomogeneous Thomas process).]{ \label{fig:thomas22_bayesian_dependent}
\includegraphics[width=5.5cm,height=5.5cm]{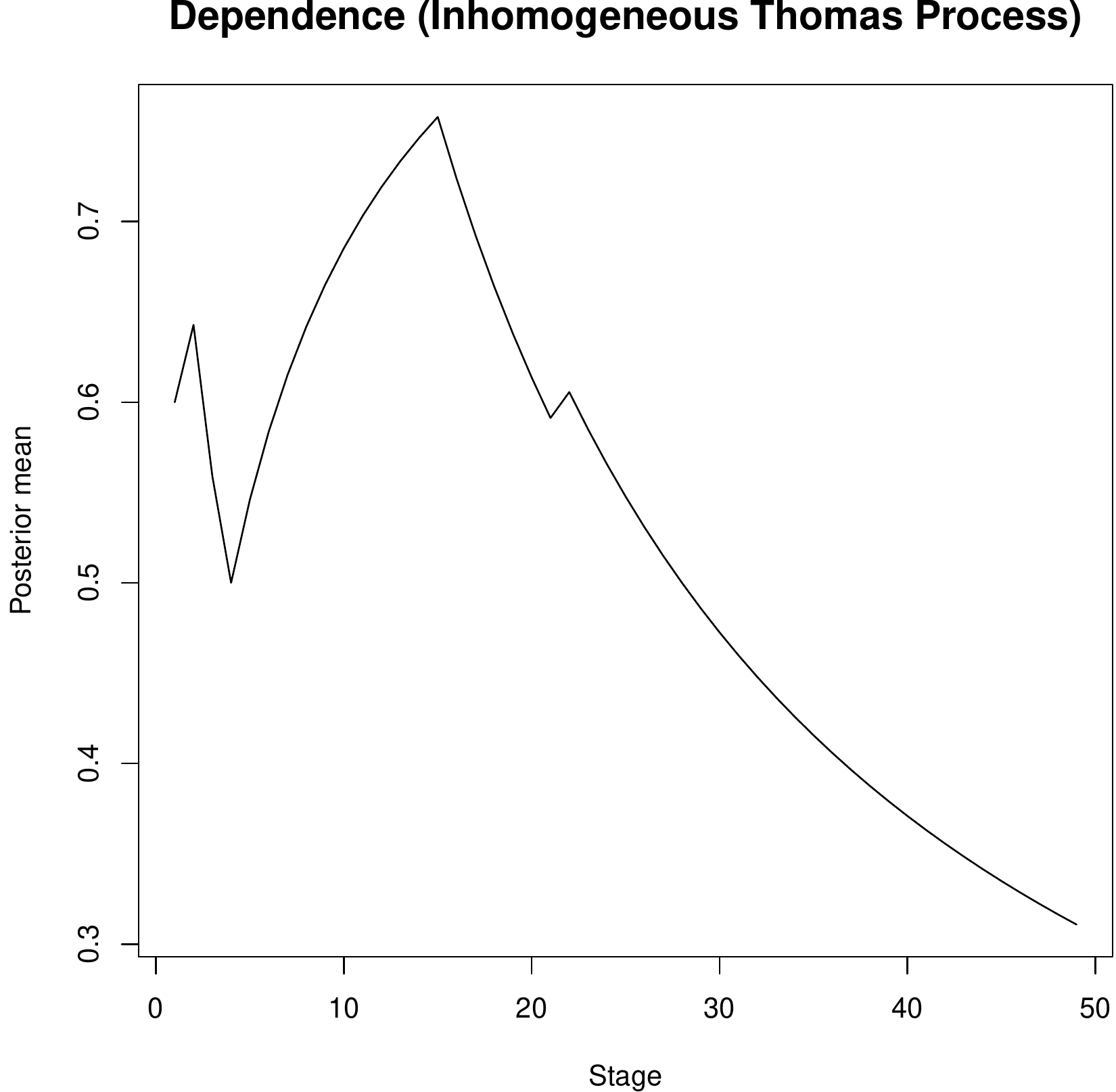}}
\caption{Detection of nonstationarity and dependence of inhomogeneous Thomas process with our Bayesian method.} 
\label{fig:pp6_stationarity_indep}
\end{figure}

\subsection{Example 12: Inhomogeneous Thomas process with $\kappa$ and $\mu$ different inhomogeneous functions}
\label{subsec:thomas31}

Let us now consider another inhomogeneous Thomas process, where $\mu(u_1,u_2)=5\exp\left(2u_1-1\right)$ and $\kappa(u_1,u_2)=10(u^2_1+u^2_2)$. We obtained
$3573$ observations with $\sigma^2=10$ on the window $W=[0,2]\times[0,2]$. The data are displayed in Figure \ref{fig:pp7_plots}.
\begin{figure}
\centering
\includegraphics[width=5.5cm,height=5.5cm]{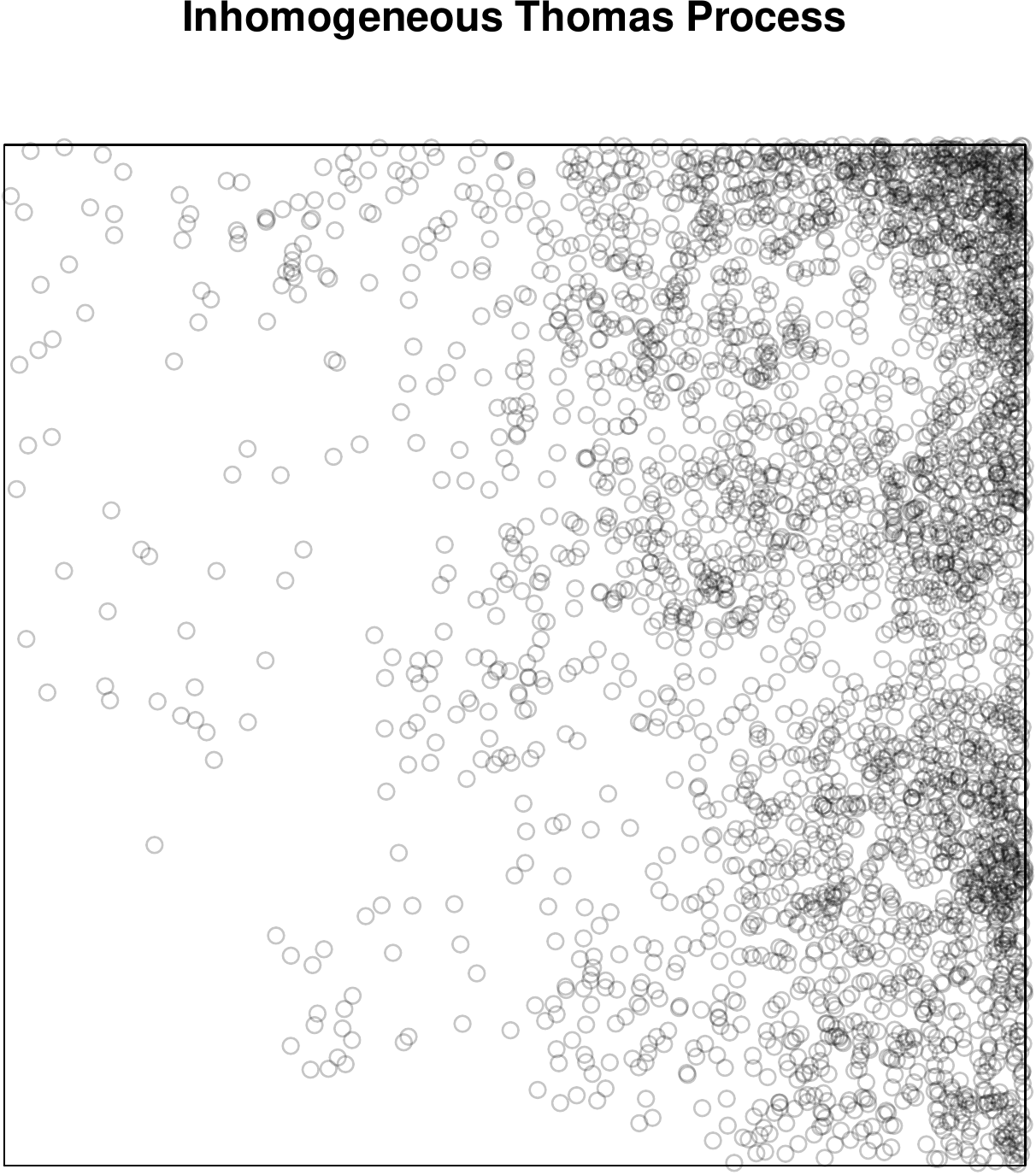}
\caption{Inhomogeneous Thomas point process pattern.} 
\label{fig:pp7_plots}
\end{figure}

With $K=500$ and $\hat C_1=0.23$, we correctly obtained non-CSR with our Bayesian method. The classical method also correctly detected non-CSR.
The results are presented in Figure \ref{fig:pp7}.
\begin{figure}
\centering
\subfigure [HPP detection with Bayesian method for inhomogeneous Thomas point process.]{ \label{fig:hpp_bayesian7}
\includegraphics[width=5.5cm,height=5.5cm]{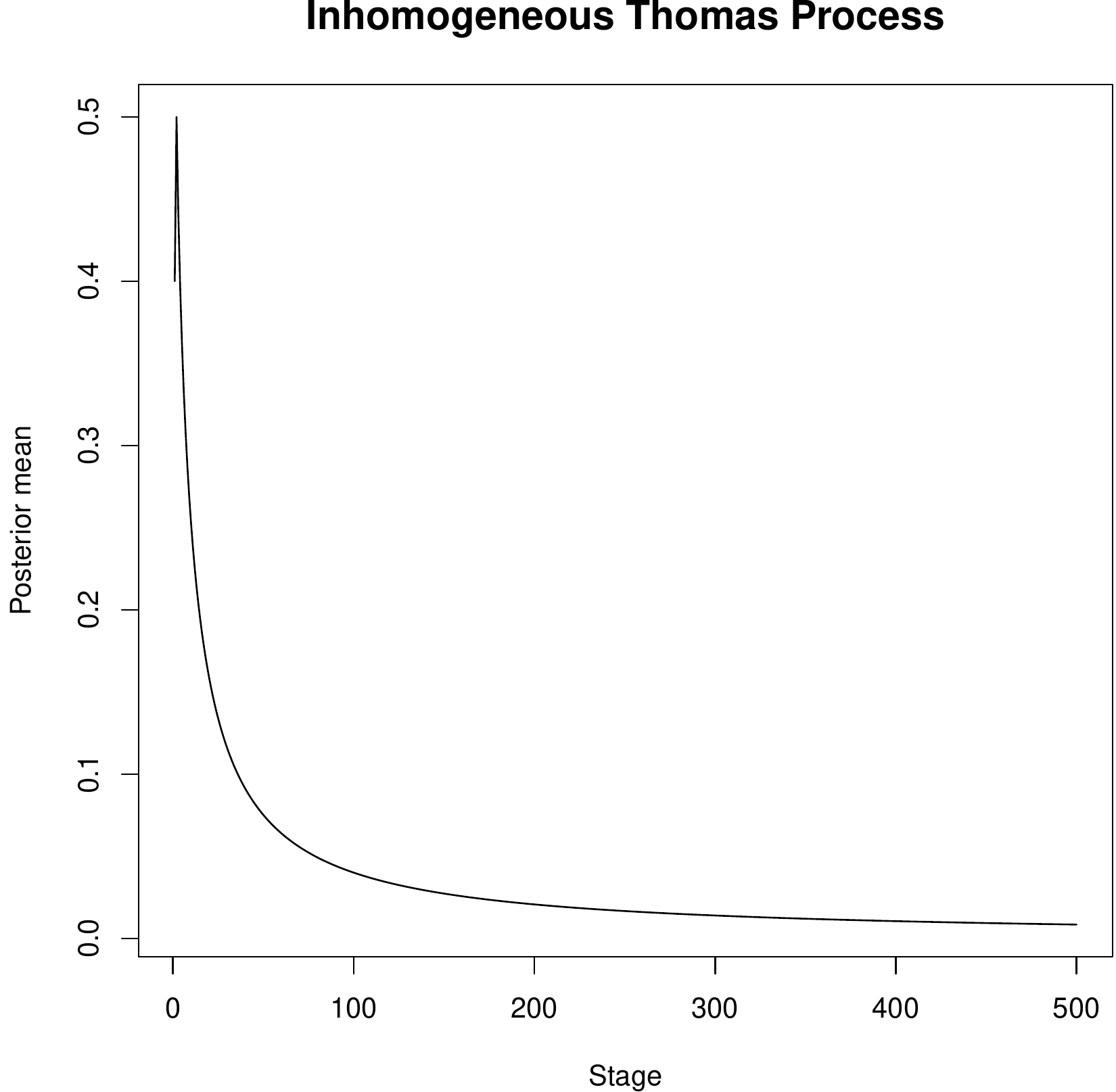}}
\hspace{2mm}
\subfigure [HPP detection with classical method for inhomogeneous Thomas point process.]{ \label{fig:hpp_classical7}
\includegraphics[width=5.5cm,height=5.5cm]{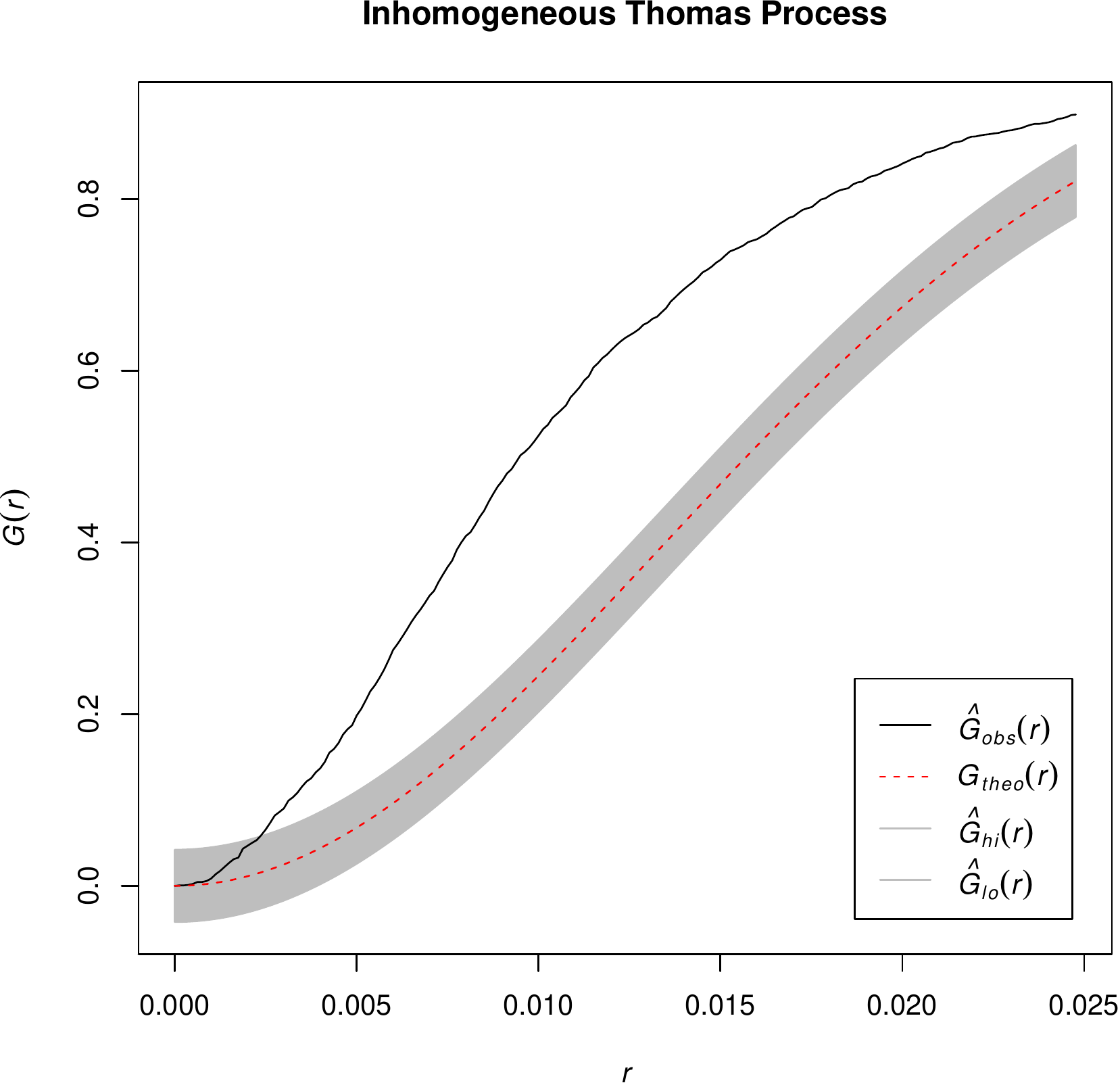}}
\caption{Detection of CSR with our Bayesian method and traditional classical method for inhomogeneous Thomas point process. Both the methods correctly
identify that the underlying point process is not CSR.} 
\label{fig:pp7}
\end{figure}

Our Bayesian algorithm correctly detected nonstationarity with $K=500$ and $\hat C_1=0.18$. The Bayesian test for independence also correctly detected dependence
with $K=50$ and $\hat C_1=0.5$. Both these results are presented in Figure \ref{fig:pp7_stationarity_indep}.
\begin{figure}
\centering
\subfigure [Nontationary point process (inhomogeneous Thomas process).]{ \label{fig:thomas31_bayesian_nonstationary}
\includegraphics[width=5.5cm,height=5.5cm]{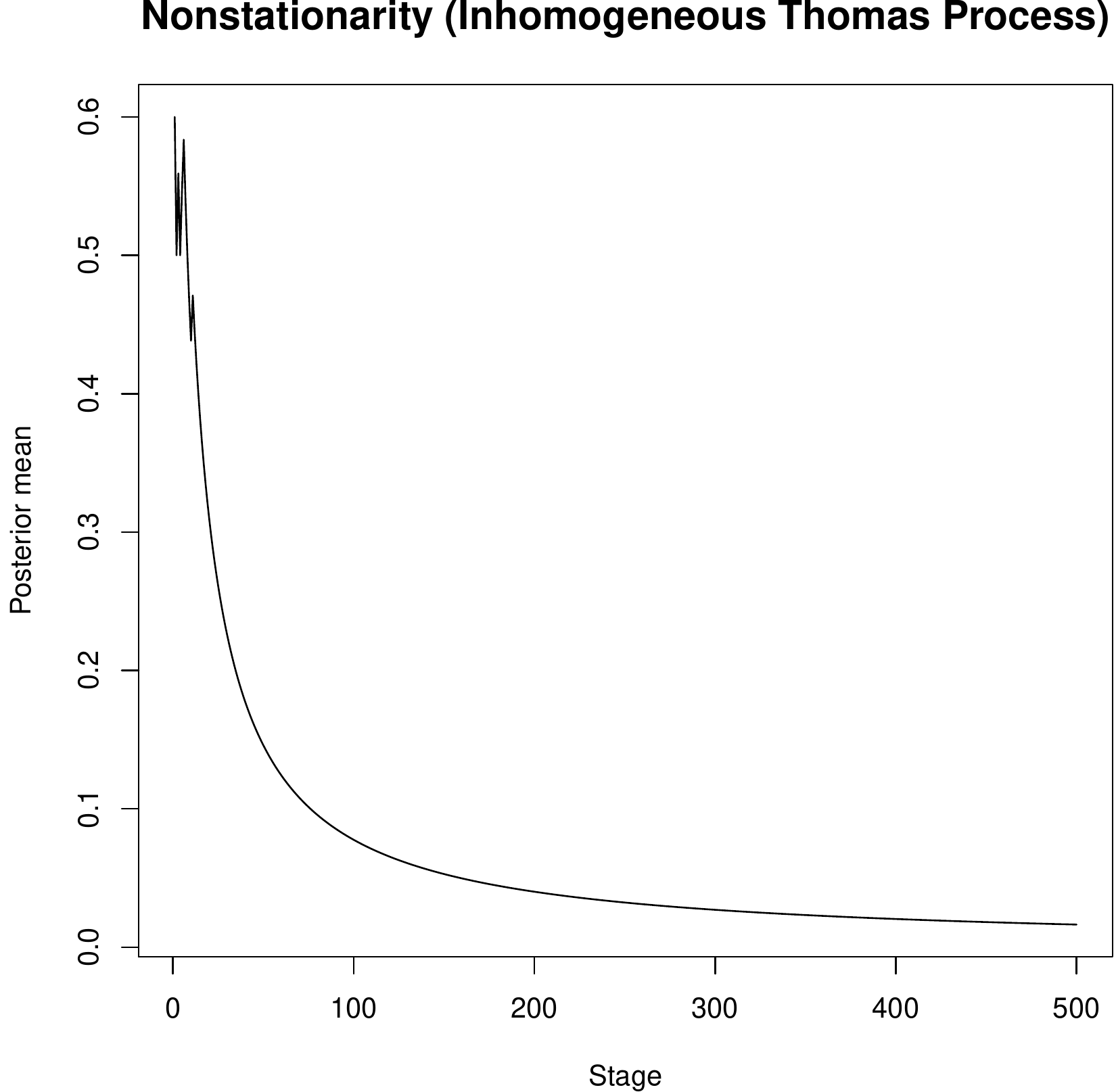}}
\hspace{2mm}
\subfigure [Dependent point process (inhomogeneous Thomas process).]{ \label{fig:thomas31_bayesian_dependent}
\includegraphics[width=5.5cm,height=5.5cm]{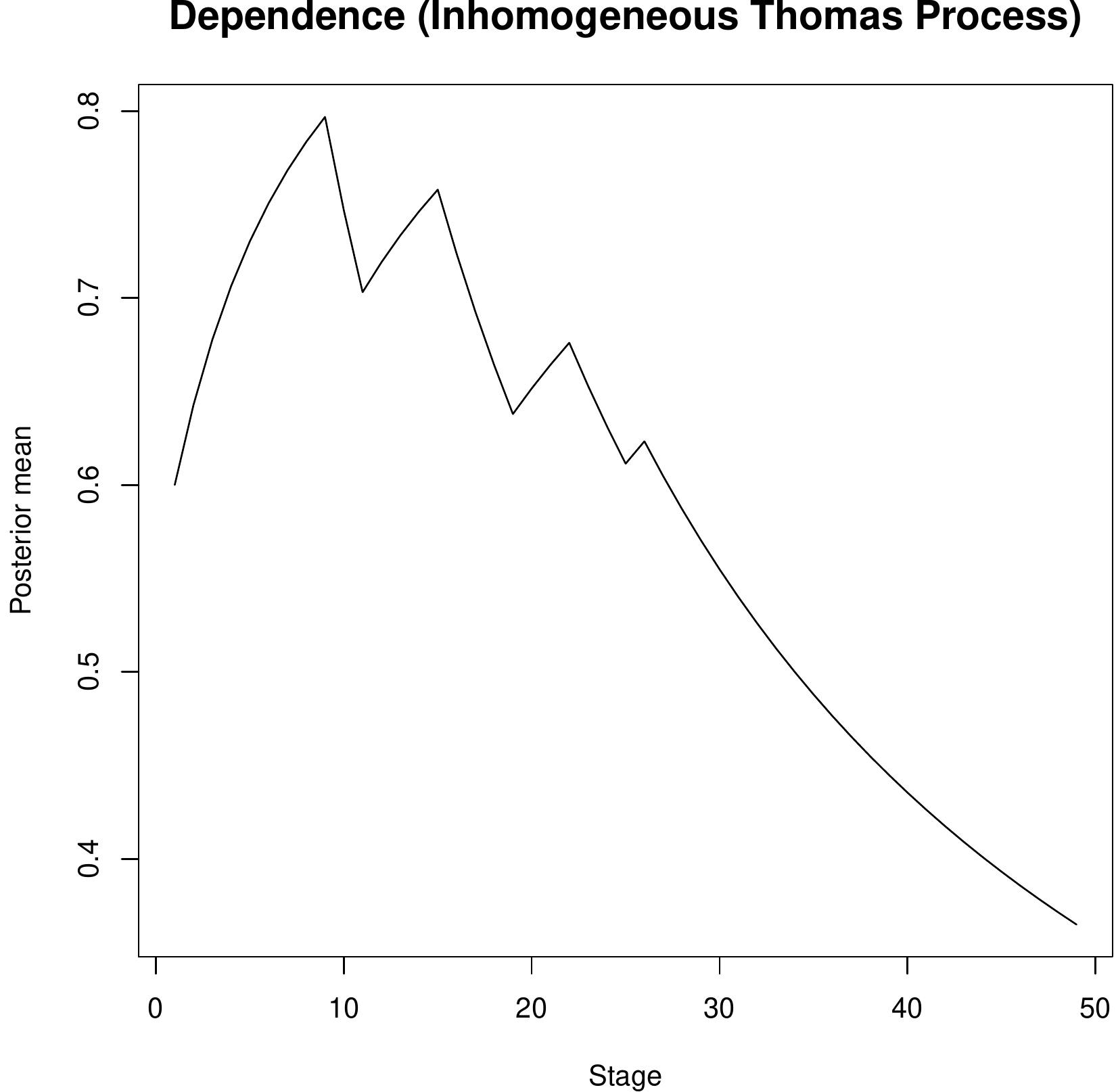}}
\caption{Detection of nonstationarity and dependence of inhomogeneous Thomas process with our Bayesian method.} 
\label{fig:pp7_stationarity_indep}
\end{figure}

\subsection{Example 13: Inhomogeneous Thomas Process with interchanged inhomogeneous $\kappa$ and $\mu$}
\label{subsec:thomas32}

We consider a final inhomogeneous Thomas process with $\mu(u_1,u_2)=10(u^2_1+u^2_2)$ and $\kappa(u_1,u_2)=5\exp\left(2u_1-1\right)$. In this case, we obtained
$4008$ observations on the window $W=[0,2]\times[0,2]$, which we display in Figure \ref{fig:pp8_plots}.
\begin{figure}
\centering
\includegraphics[width=5.5cm,height=5.5cm]{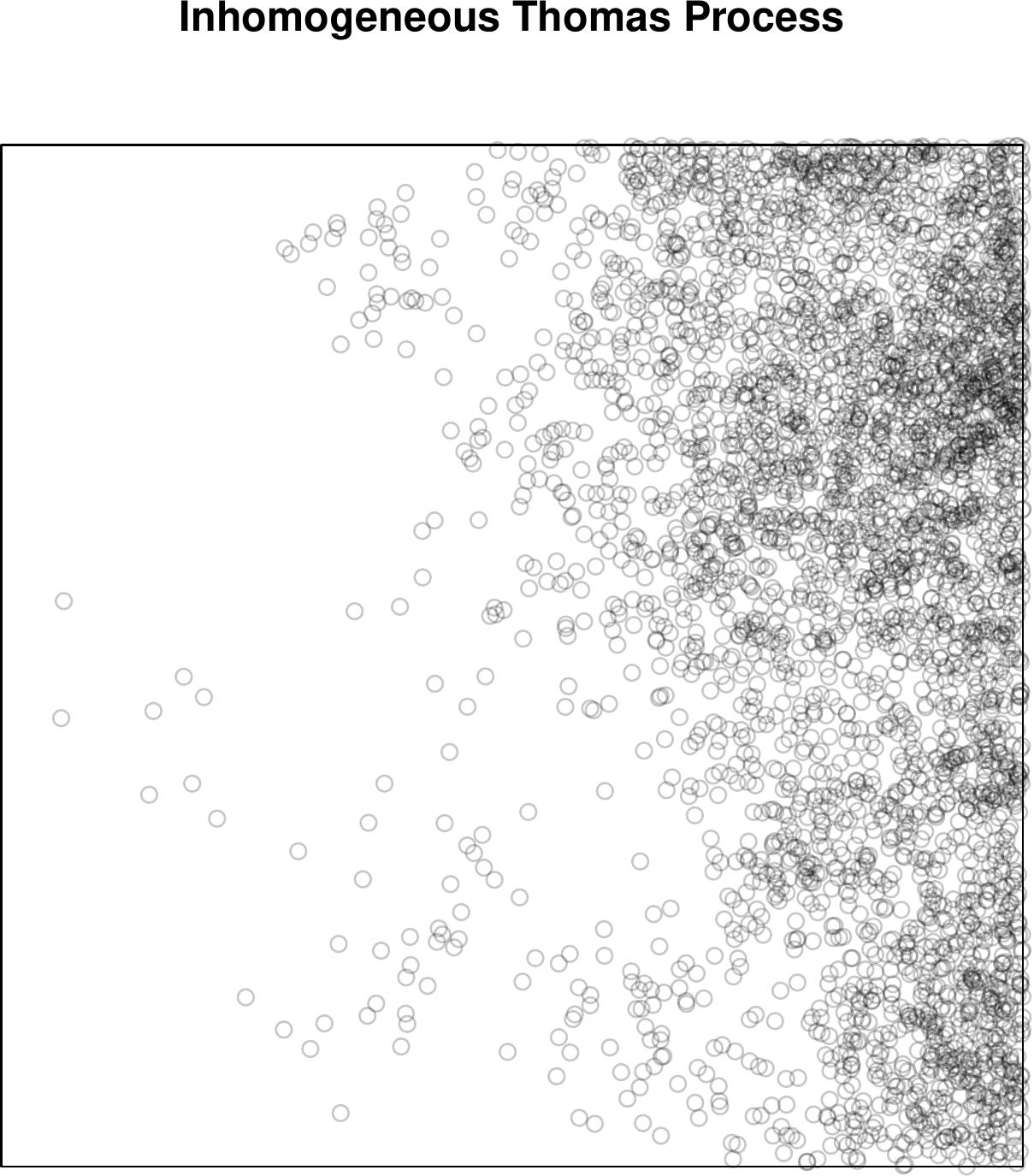}
\caption{Inhomogeneous Thomas point process pattern.} 
\label{fig:pp8_plots}
\end{figure}

For CSR detection, we set $K=500$ and $\hat C_1=0.23$ for the Bayesian method. As shown by Figure \ref{fig:pp8}, both the Bayesian and the classical method successfully
detect non-CSR.
\begin{figure}
\centering
\subfigure [HPP detection with Bayesian method for inhomogeneous Thomas point process.]{ \label{fig:hpp_bayesian8}
\includegraphics[width=5.5cm,height=5.5cm]{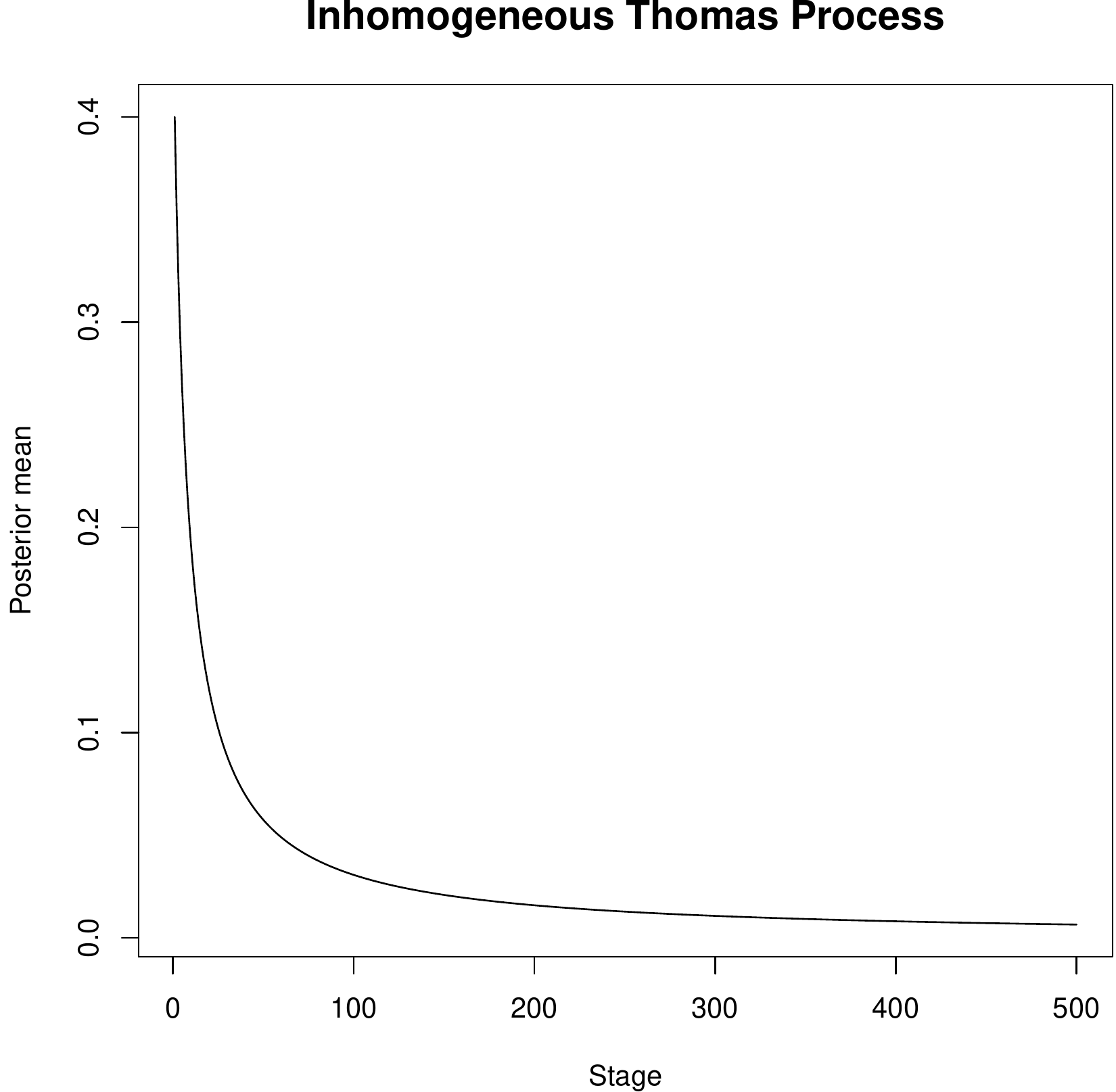}}
\hspace{2mm}
\subfigure [HPP detection with classical method for Inhomogeneous Thomas point process.]{ \label{fig:hpp_classical8}
\includegraphics[width=5.5cm,height=5.5cm]{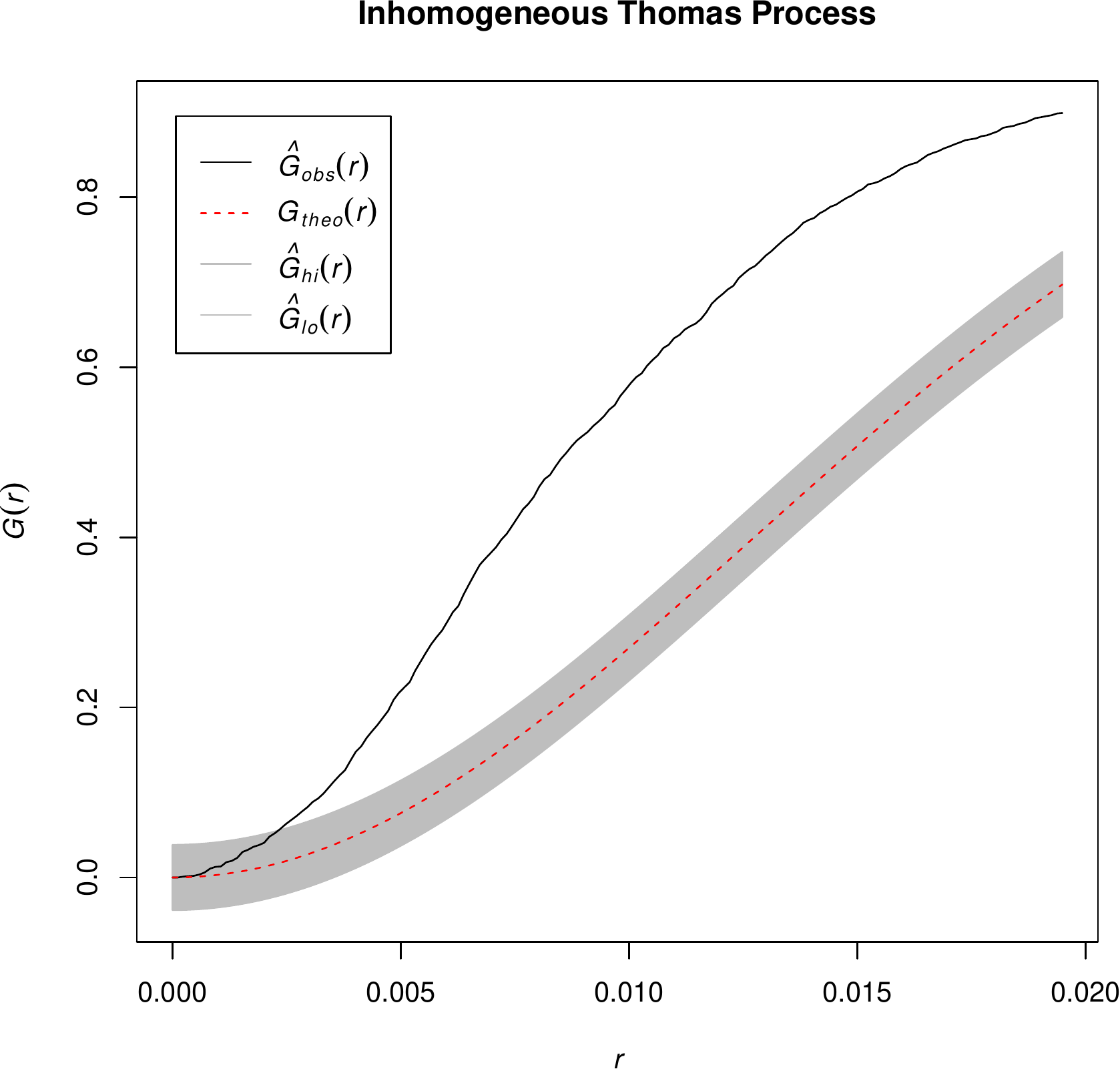}}
\caption{Detection of CSR with our Bayesian method and traditional classical method for inhomogeneous Thomas point process. Both the methods correctly
identify that the underlying point process is not CSR.} 
\label{fig:pp8}
\end{figure}

Our Bayesian method also successfully detected nonstationarity with $K=500$ and $\hat C_1=0.18$, and dependence, with $K=27$ (smaller value chosen to ensure numerical stability) 
and $\hat C_1=0.5$. These results are
depicted in Figure \ref{fig:pp8_stationarity_indep}.
\begin{figure}
\centering
\subfigure [Nonstationary point process (inhomogeneous Thomas process).]{ \label{fig:thomas32_bayesian_nonstationary}
\includegraphics[width=5.5cm,height=5.5cm]{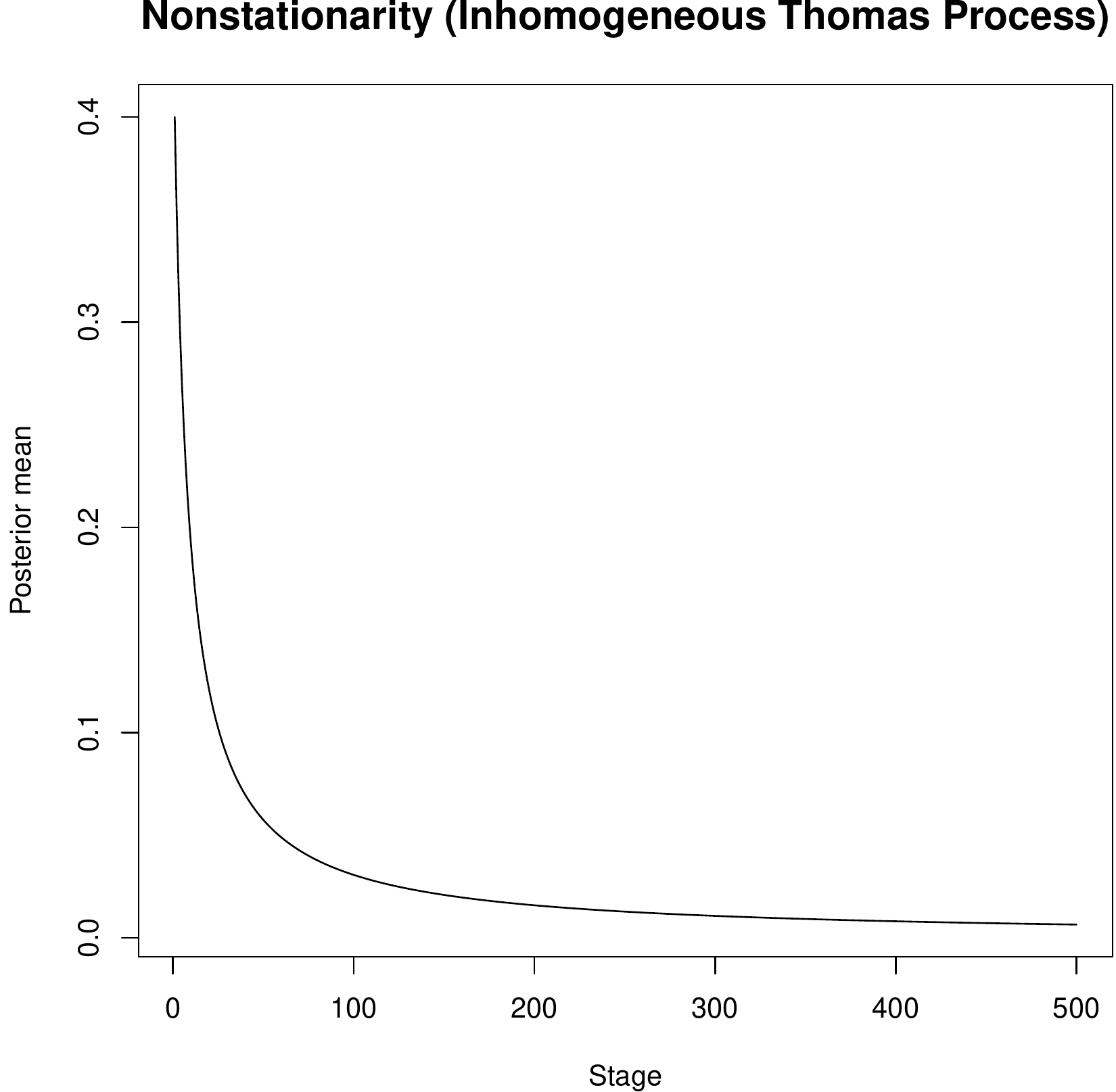}}
\hspace{2mm}
\subfigure [Dependent point process (inhomogeneous Thomas process).]{ \label{fig:thomas32_bayesian_dependent}
\includegraphics[width=5.5cm,height=5.5cm]{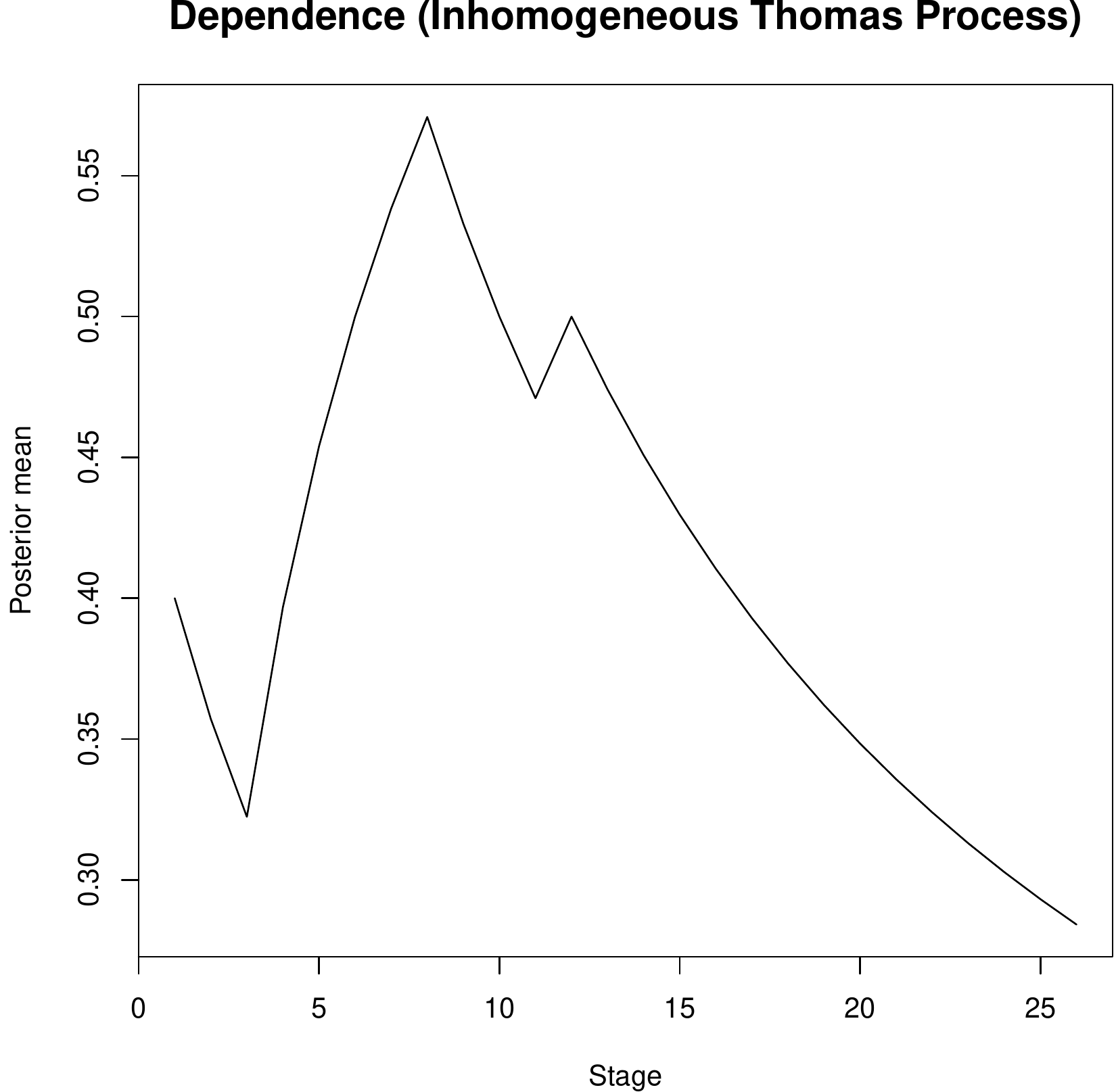}}
\caption{Detection of nonstationarity and dependence of Inhomogeneous Thomas process with our Bayesian method.} 
\label{fig:pp8_stationarity_indep}
\end{figure}

\subsection{Example 14: Homogeneous Neyman-Scott process}
\label{subsec:ns1}

A Neyman-Scott process is a Cox process where the centers $c$ in (\ref{eq:shotnoise1}) arising from a Poisson process with intensity function
and $\kappa$ and $\gamma\equiv \mu$, where $\mu$ is some deterministic function. Note that the Neyman-Scott process is more general than the Thomas process
in the sense that the density function $k(c,\cdot)$ is left unspecified in the Neyman-Scott case, whereas for the Thomas process, this is a specific 
bivariate normal density. 

More generally, the Neyman-Scott process allows a fixed number of offsprings on a disc with the parent point being the center of the disc. Here even though
the centers arise from a Poisson process with intensity $\kappa$, the offsprings no longer follow the Poisson process, since given the parent points,
the number of offsprings given each parent, is non-random.
In such a case, the Neyman-Scott process is no longer a Cox process.

In order to test our methods on Neyman-Scott process, we first consider a homogeneous general Neyman-Scott process with $\kappa=10$, with $5$ points generated uniformly
on each disc of radius $0.2$ around the parent centers. The point pattern, simulated on $W=[0,10]\times[0,10]$, consisting of $4867$
observations, is shown in Figure \ref{fig:pp9_plots}.

\begin{figure}
\centering
\includegraphics[width=5.5cm,height=5.5cm]{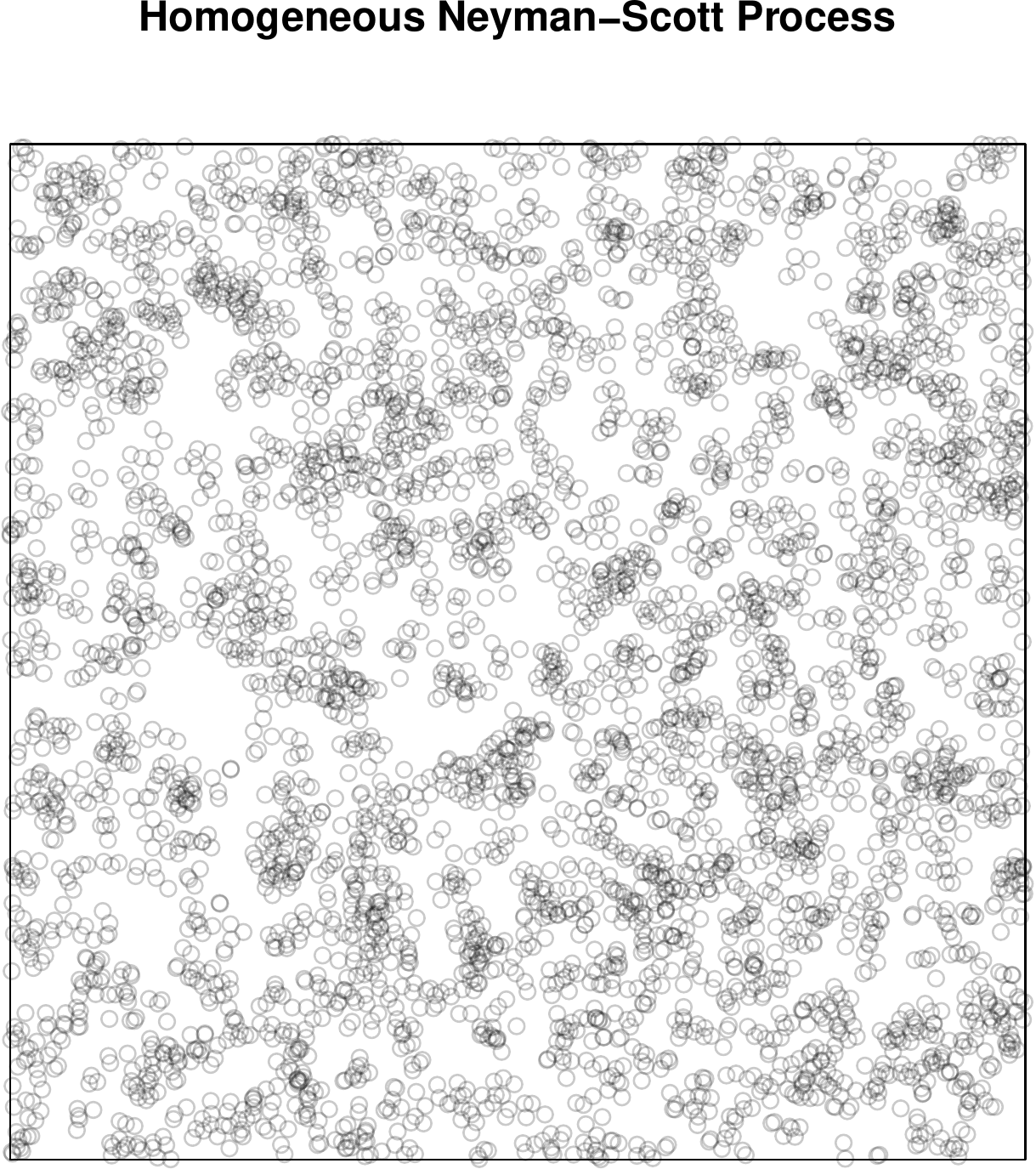}
\caption{Homogeneous Neyman-Scott point process pattern.} 
\label{fig:pp9_plots}
\end{figure}

Both the Bayesian and the traditional method of checking CSR correctly indicate that the underlying process is not CSR. In the Bayesian case, we set $K=500$
and $\hat C_1=0.20$. The results are displayed in Figure \ref{fig:pp9}. 
\begin{figure}
\centering
\subfigure [HPP detection with Bayesian method for homogeneous Neyman-Scott point process.]{ \label{fig:hpp_bayesian9}
\includegraphics[width=5.5cm,height=5.5cm]{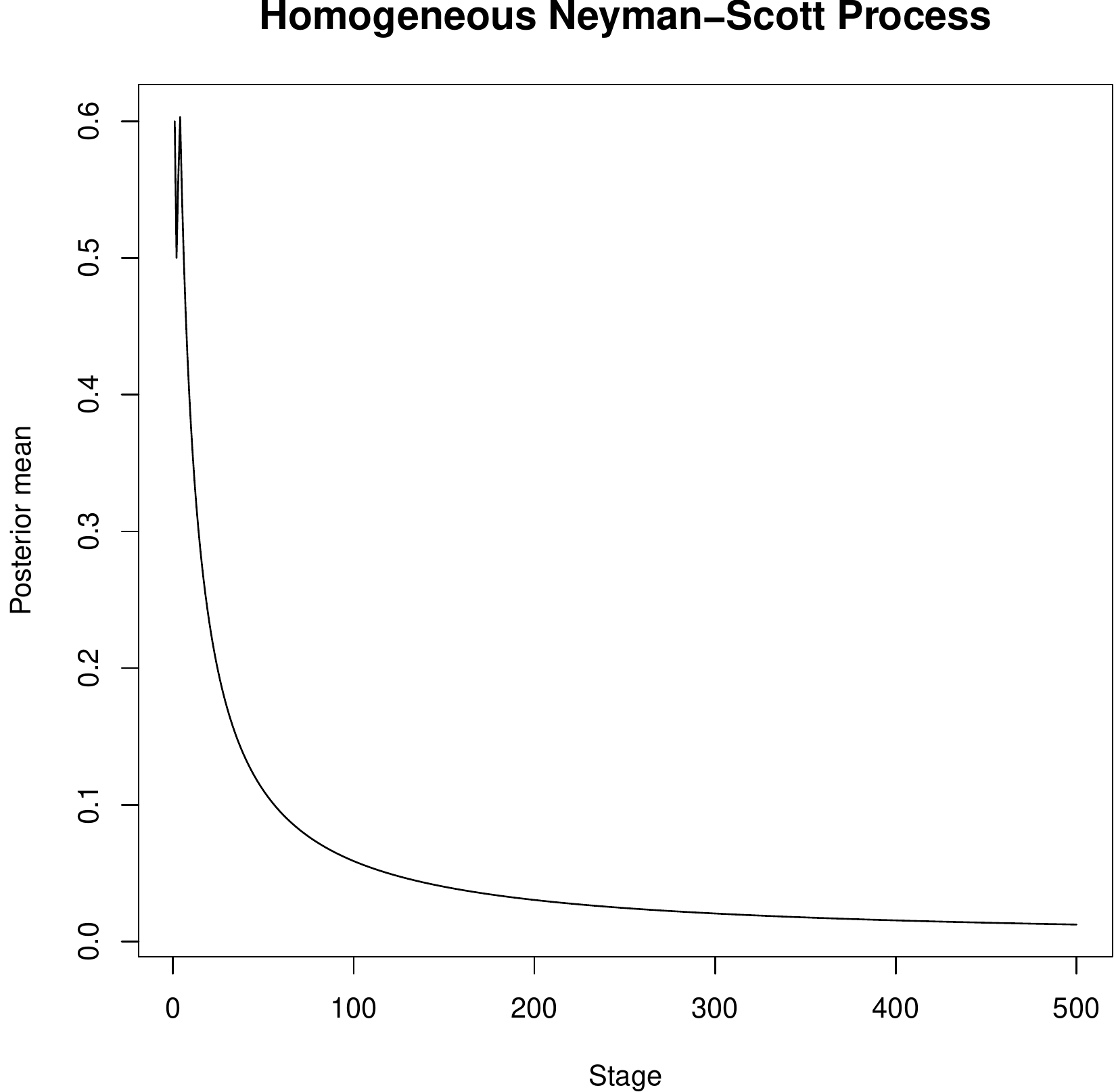}}
\hspace{2mm}
\subfigure [HPP detection with classical method for homogeneous Neyman-Scott point process.]{ \label{fig:hpp_classical9}
\includegraphics[width=5.5cm,height=5.5cm]{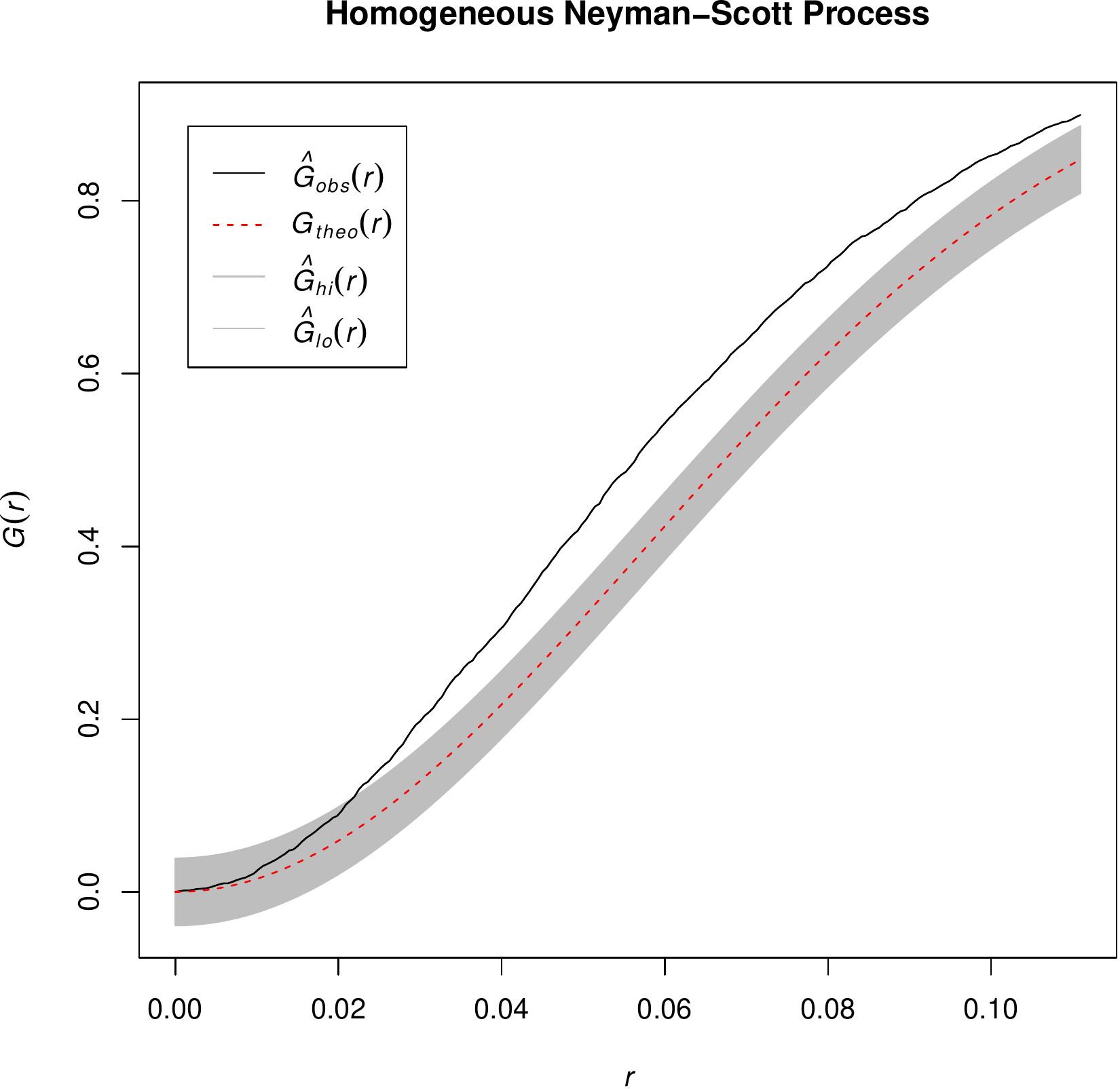}}
\caption{Detection of CSR with our Bayesian method and traditional classical method for homogeneous Neyman-Scott point process. Both the methods correctly
identify that the underlying point process is not CSR.} 
\label{fig:pp9}
\end{figure}

Stationarity is correctly detected by our Bayesian method with $K=500$ and $\hat C_1=0.23$. Also, with $K=50$ and $\hat C_1=0.5$, Poisson process is correctly ruled out.
The results are depicted in Figure \ref{fig:pp9_stationarity_indep}.
\begin{figure}
\centering
\subfigure [Stationary point process (homogeneous Neyman-Scott process).]{ \label{fig:ns1_bayesian_nonstationary}
\includegraphics[width=5.5cm,height=5.5cm]{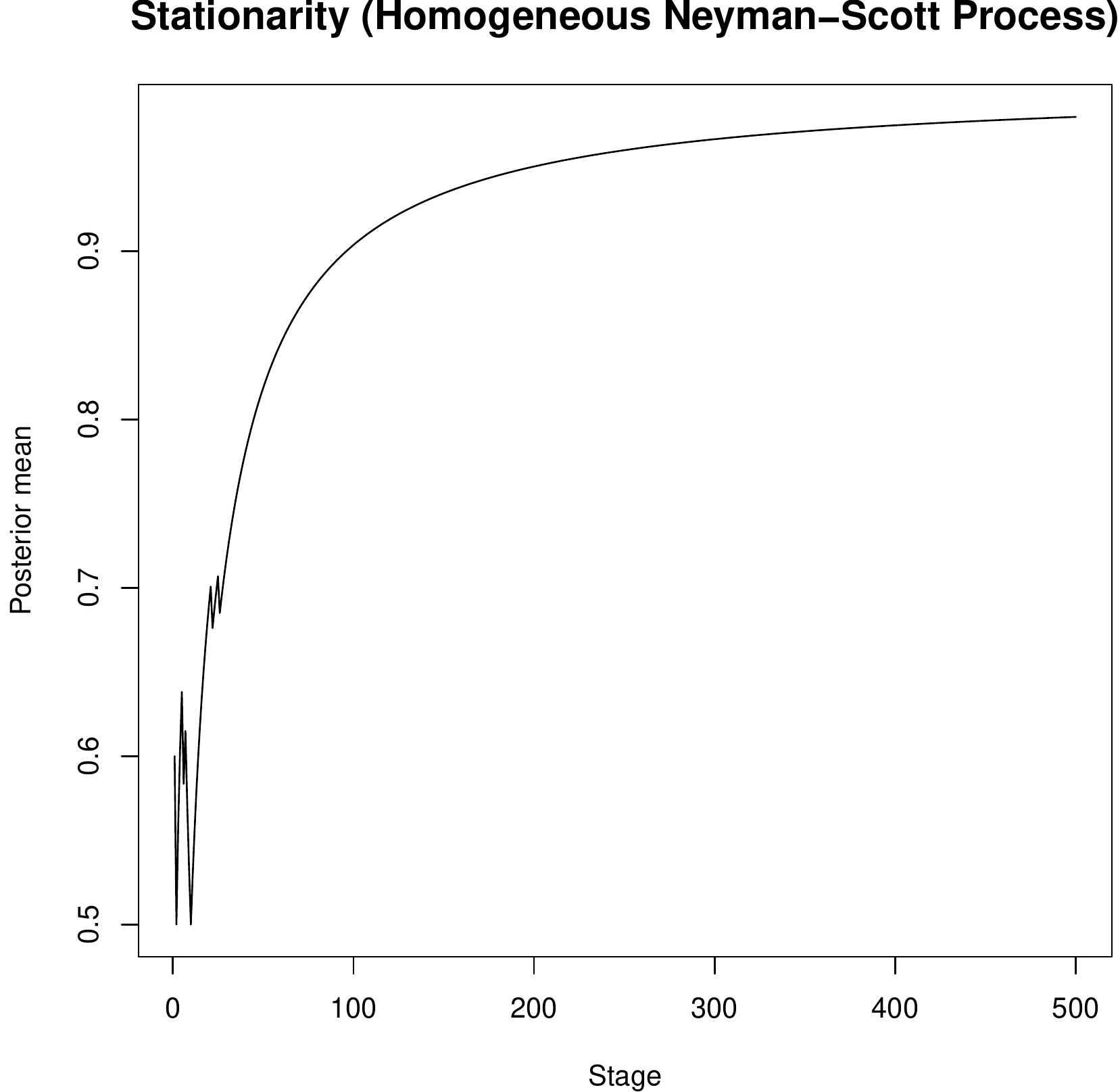}}
\hspace{2mm}
\subfigure [Dependent point process (homogeneous Neyman-Scott process).]{ \label{fig:ns1_bayesian_dependent}
\includegraphics[width=5.5cm,height=5.5cm]{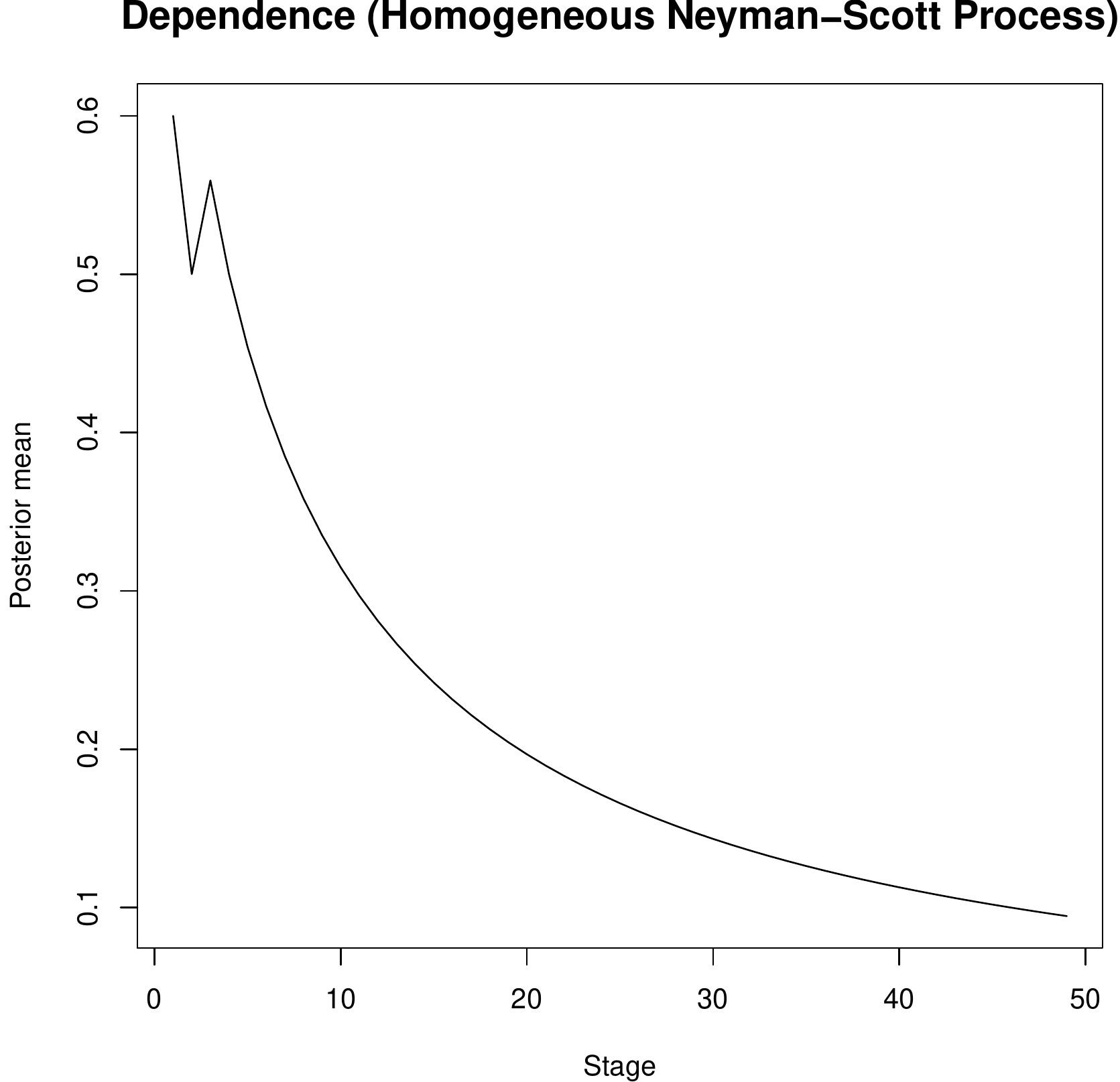}}
\caption{Detection of stationarity and dependence of homogeneous Neyman-Scott process with our Bayesian method.} 
\label{fig:pp9_stationarity_indep}
\end{figure}

\subsection{Example 15: Inhomogeneous Neyman-Scott process}
\label{subsec:ns2}

In this case, we generate a sample of size $8358$ on $W=[0,4]\times[0,4]$ from a Neyman-Scott process with the same setup as above, but with
$\kappa(u_1,u_2)=10(u^2_1+u^2_2)$. The point pattern thus generated from this inhomogeneous Neyman-Scott process is shown in Figure \ref{fig:pp10_plots}.
\begin{figure}
\centering
\includegraphics[width=5.5cm,height=5.5cm]{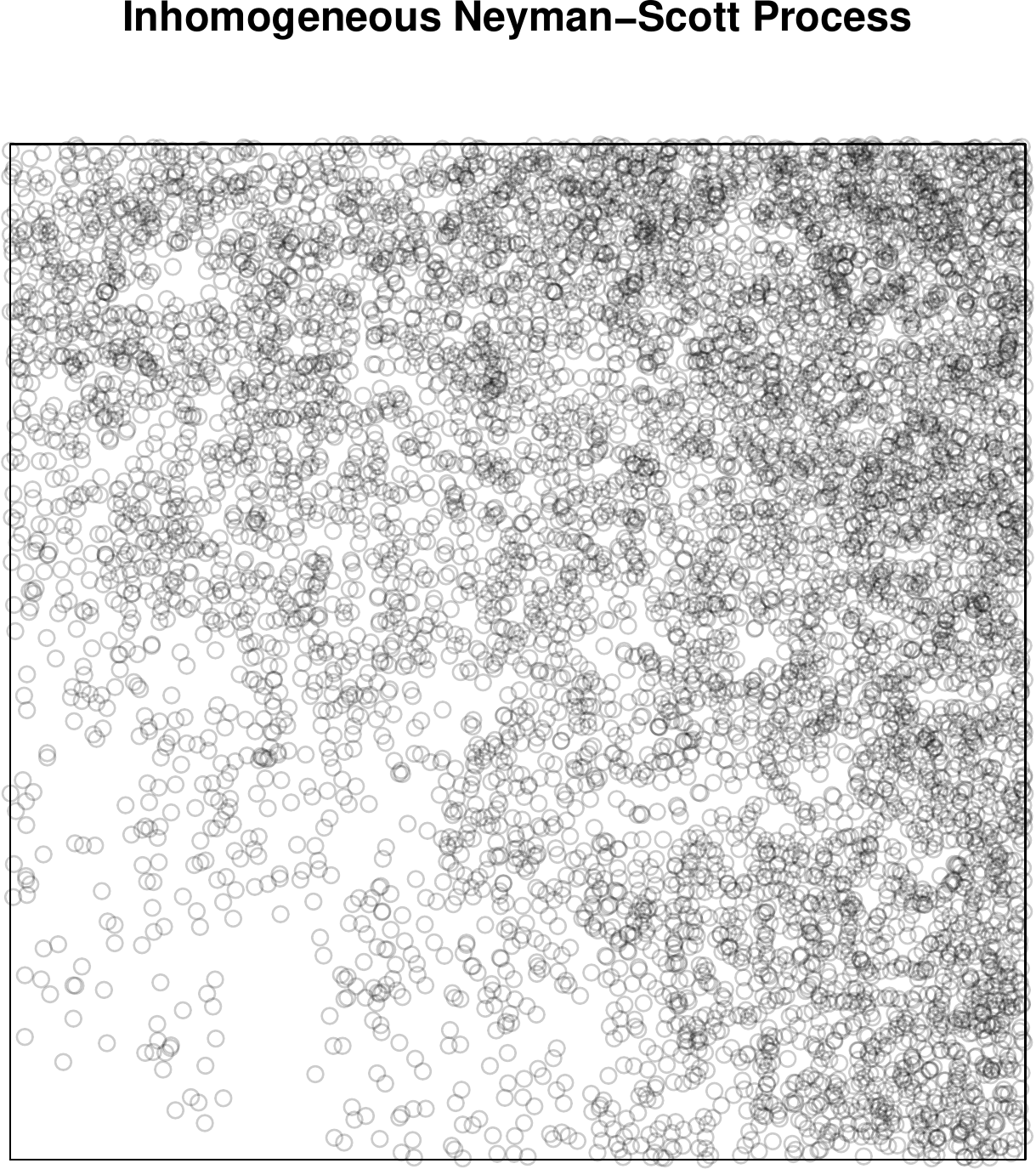}
\caption{Inhomogeneous Neyman-Scott point process pattern.} 
\label{fig:pp10_plots}
\end{figure}

With $K=800$ and $\hat C_1=0.19$, we obtain the correct non-CSR conclusion with the Bayesian method. The correct result is also identified by the classical method.
Both the results are depicted in Figure \ref{fig:pp10}. 
\begin{figure}
\centering
\subfigure [HPP detection with Bayesian method for inhomogeneous Neyman-Scott point process.]{ \label{fig:hpp_bayesian10}
\includegraphics[width=5.5cm,height=5.5cm]{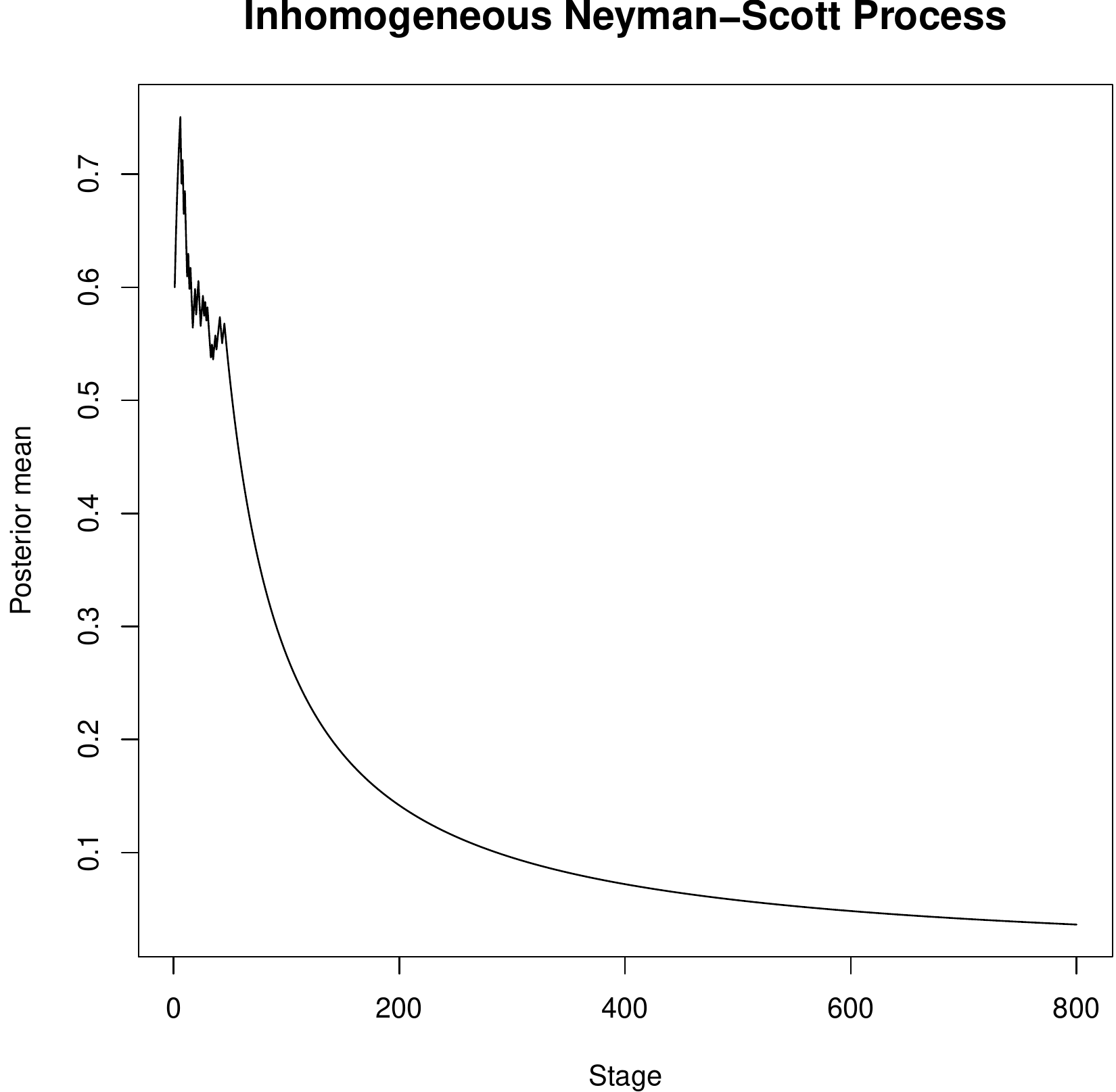}}
\hspace{2mm}
\subfigure [HPP detection with classical method for homogeneous Neyman-Scott point process.]{ \label{fig:hpp_classical10}
\includegraphics[width=5.5cm,height=5.5cm]{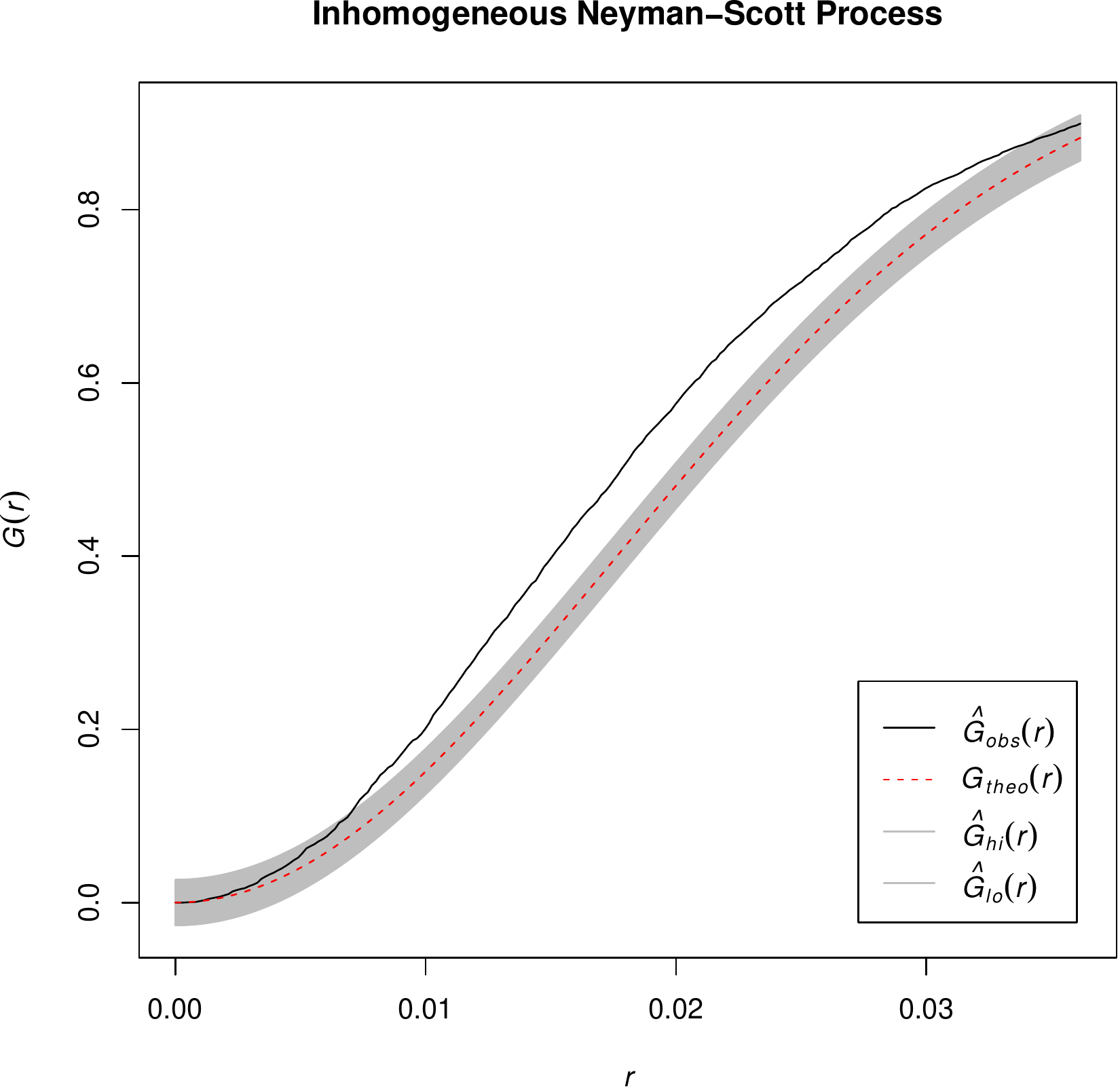}}
\caption{Detection of CSR with our Bayesian method and traditional classical method for homogeneous Neyman-Scott point process. Both the methods correctly
identify that the underlying point process is not CSR.} 
\label{fig:pp10}
\end{figure}

Nonstationarity of this process is correctly detected by the Bayesian method with $K=1000$ and $\hat C_1=0.23$; this is shown in panel (a) of 
Figure \ref{fig:pp10_stationarity_indep}. For $K=50$ and $\hat C_1=0.5$.
panel (b) of Figure \ref{fig:pp10_stationarity_indep} shows steady increase
for about the first $35$ stages, but sharply decreases thenceforward, indicating dependence.
\begin{figure}
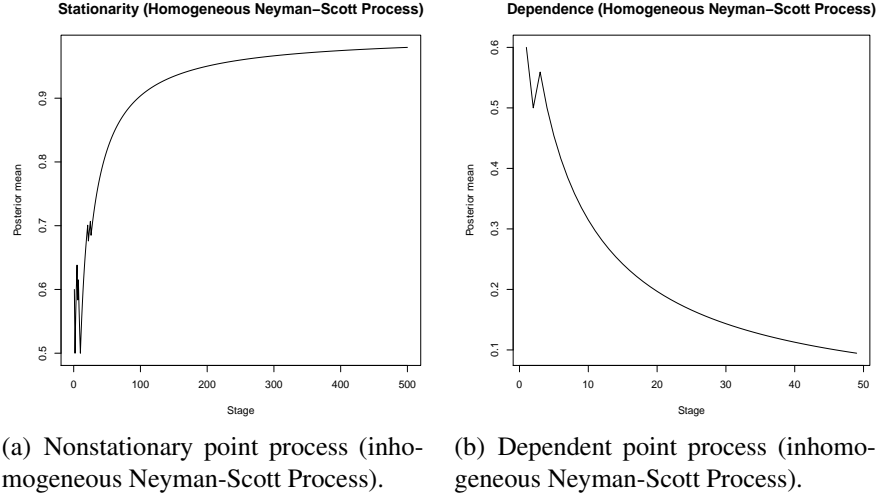

\centering
\subfigure [Nonstationary point process (inhomogeneous Neyman-Scott Process).]{ \label{fig:ns2_bayesian_nonstationary}
\includegraphics[width=5.5cm,height=5.5cm]{figures_neyscott1/stationary-crop.pdf}}
\hspace{2mm}
\subfigure [Dependent point process (inhomogeneous Neyman-Scott Process).]{ \label{fig:ns2_bayesian_dependent}
\includegraphics[width=5.5cm,height=5.5cm]{figures_neyscott1/indep-crop.pdf}}
\caption{Detection of nonstationarity and dependence of homogeneous Neyman-Scott process with our Bayesian method.} 
\label{fig:pp10_stationarity_indep}
\end{figure}

\subsection{Example 16: Strauss process}
\label{subsec:strauss1}

The Strauss process (\ctn{Strauss75}; see also \ctn{Moller04}) is an instance of pairwise interaction point process with density (with respect to unit intensity Poisson process)
\begin{equation}
	f(x)\propto\beta^{n(x)}\gamma^{s_R(x)},
	\label{eq:strauss1}
\end{equation}
where $\beta>0$, $n(x)$ is the number of points in $x$ and $s_R(x)=\sum_{(\xi,\eta)\subseteq x}I\left\{\|\xi-\eta\|\leq R\right\}$ is the number of 
$R$-close pairs of points in $x$.
Note that if $\gamma=1$, we obtain Poisson process on $\bS$ with intensity $\beta$, and if $\gamma<1$, there is repulsion between the $R$-close points pairs of
points in $\bX$.

Using spatstat, we generate $9790$ points from a Strauss process with $\beta=0.05$, $\gamma=0.2$ and $R=1.5$ on $W=[0,500]\times[0,500]$. The points are
displayed in Figure \ref{fig:pp14_strauss}.
\begin{figure}
\centering
\includegraphics[width=5.5cm,height=5.5cm]{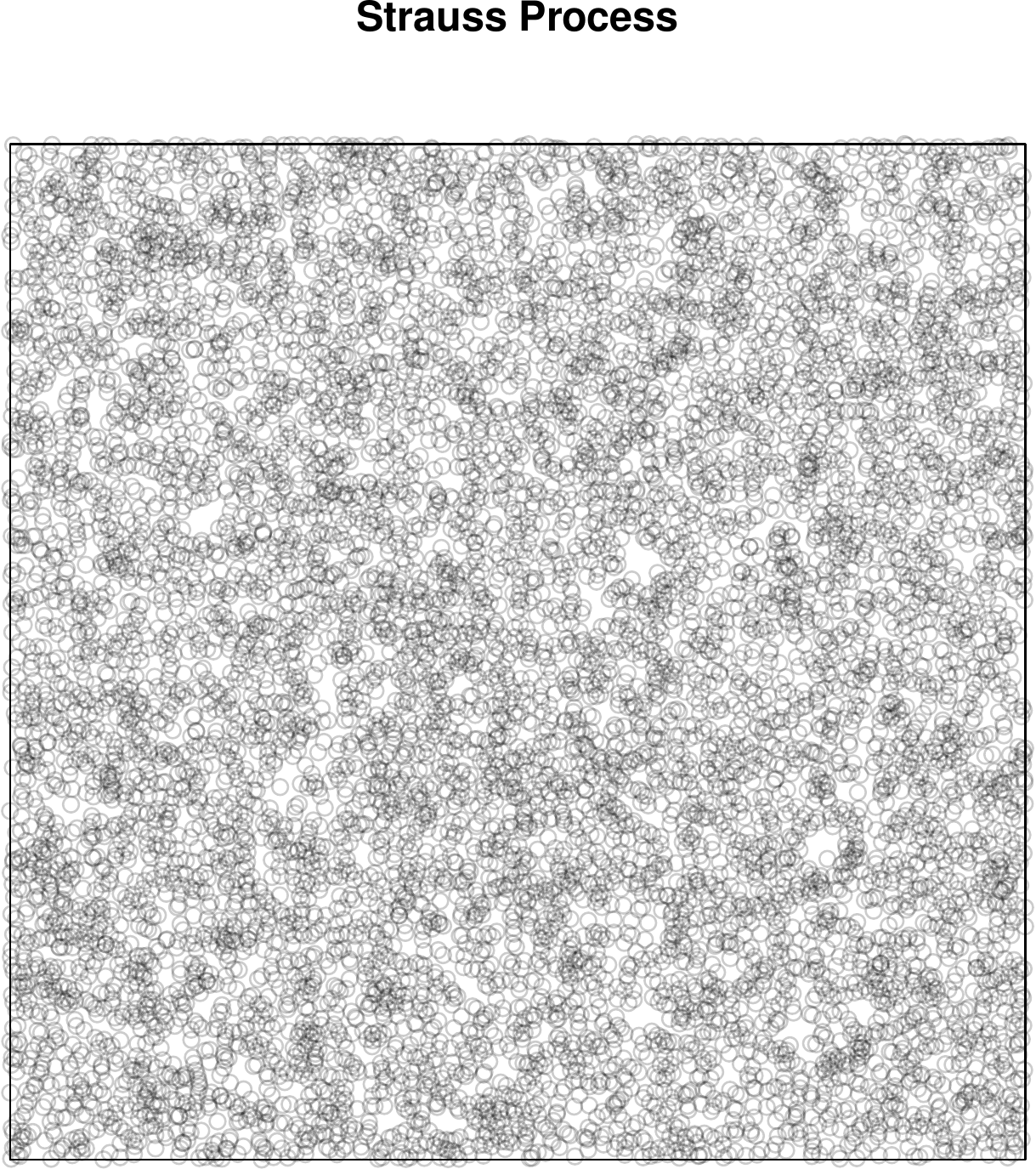}
\caption{Strauss point process pattern.} 
\label{fig:pp14_strauss}
\end{figure}

To detect CSR, we set $K=800$ and $\hat C_1=0.15$ for the Bayesian algorithm. As Figure \ref{fig:pp14} shows, both the classical and the Bayesian methods correctly
identify that the underlying process is not CSR.
\begin{figure}
\centering
\subfigure [HPP detection with Bayesian method for Strauss process.]{ \label{fig:hpp_bayesian14}
\includegraphics[width=5.5cm,height=5.5cm]{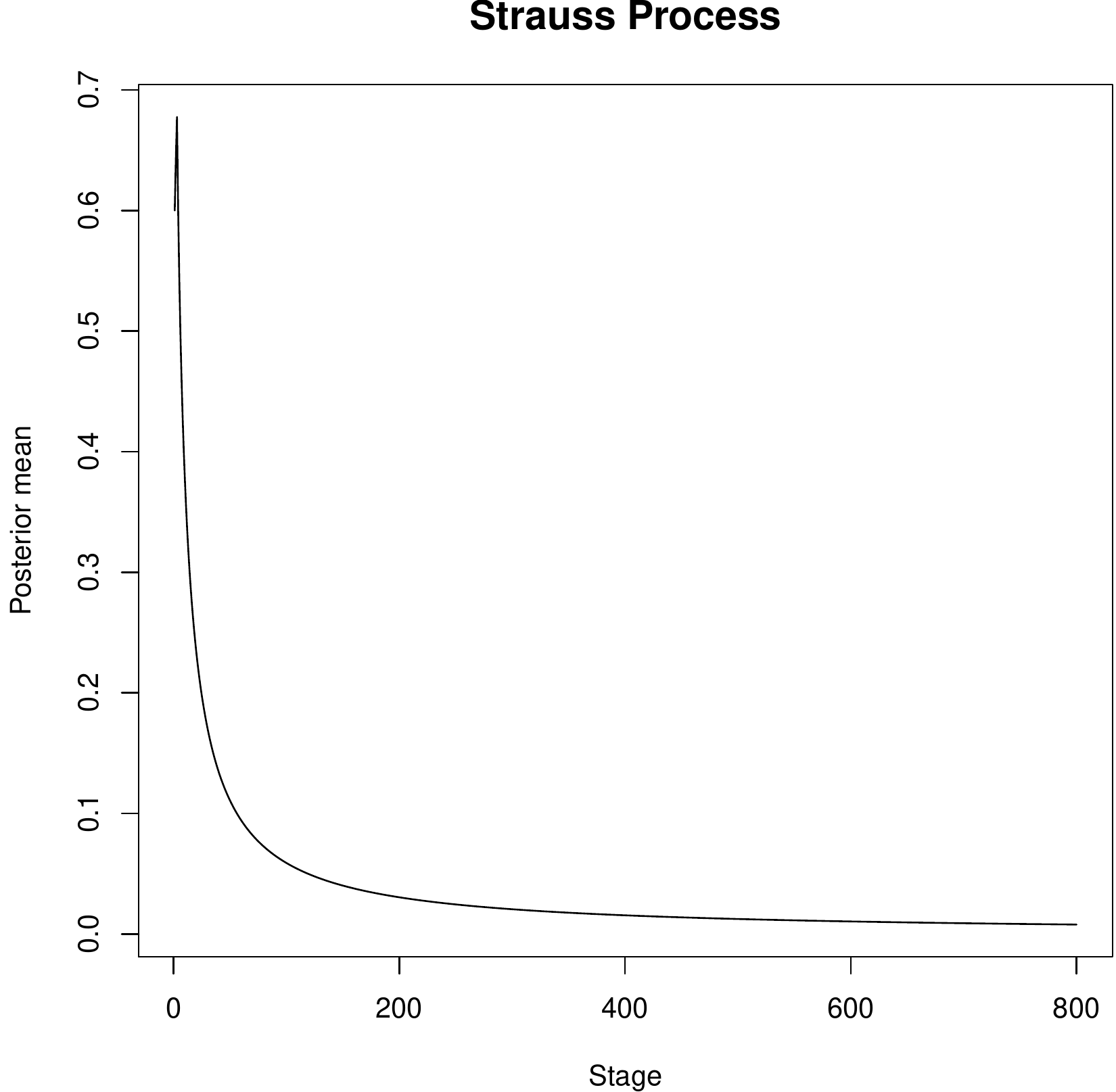}}
\hspace{2mm}
\subfigure [HPP detection with classical method for Strauss process.]{ \label{fig:hpp_classical14}
\includegraphics[width=5.5cm,height=5.5cm]{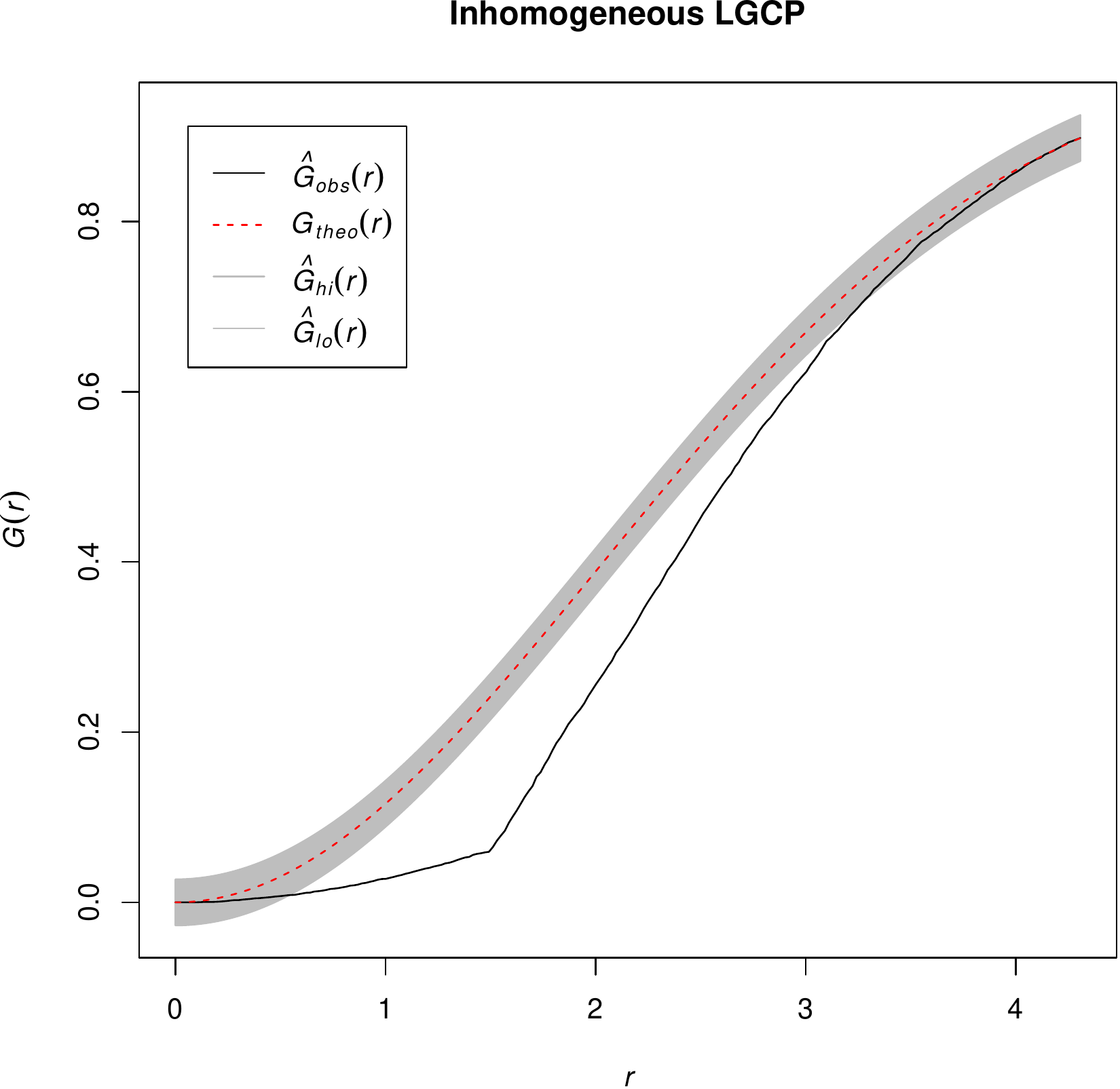}}
\caption{Detection of CSR with our Bayesian method and traditional classical method for Strauss process. Both the methods correctly
identify that the underlying point process is not CSR.} 
\label{fig:pp14}
\end{figure}

The left panel of Figure \ref{fig:pp14_stationarity_indep} captures the stationarity property of the Strauss process with $K=800$ and $\hat C_1=0.15$. 
As before, larger values of $\hat C_1$ also lead to stationarity. The right panel of Figure \ref{fig:pp14_stationarity_indep} correctly indicates dependence among
$\bX_{C_i}$, for $i=1,\ldots,100$, with $\hat C_1=0.5$.
\begin{figure}
\centering
\subfigure [Stationarity (Strauss process).]{ \label{fig:strauss1_bayesian_stationary}
\includegraphics[width=5.5cm,height=5.5cm]{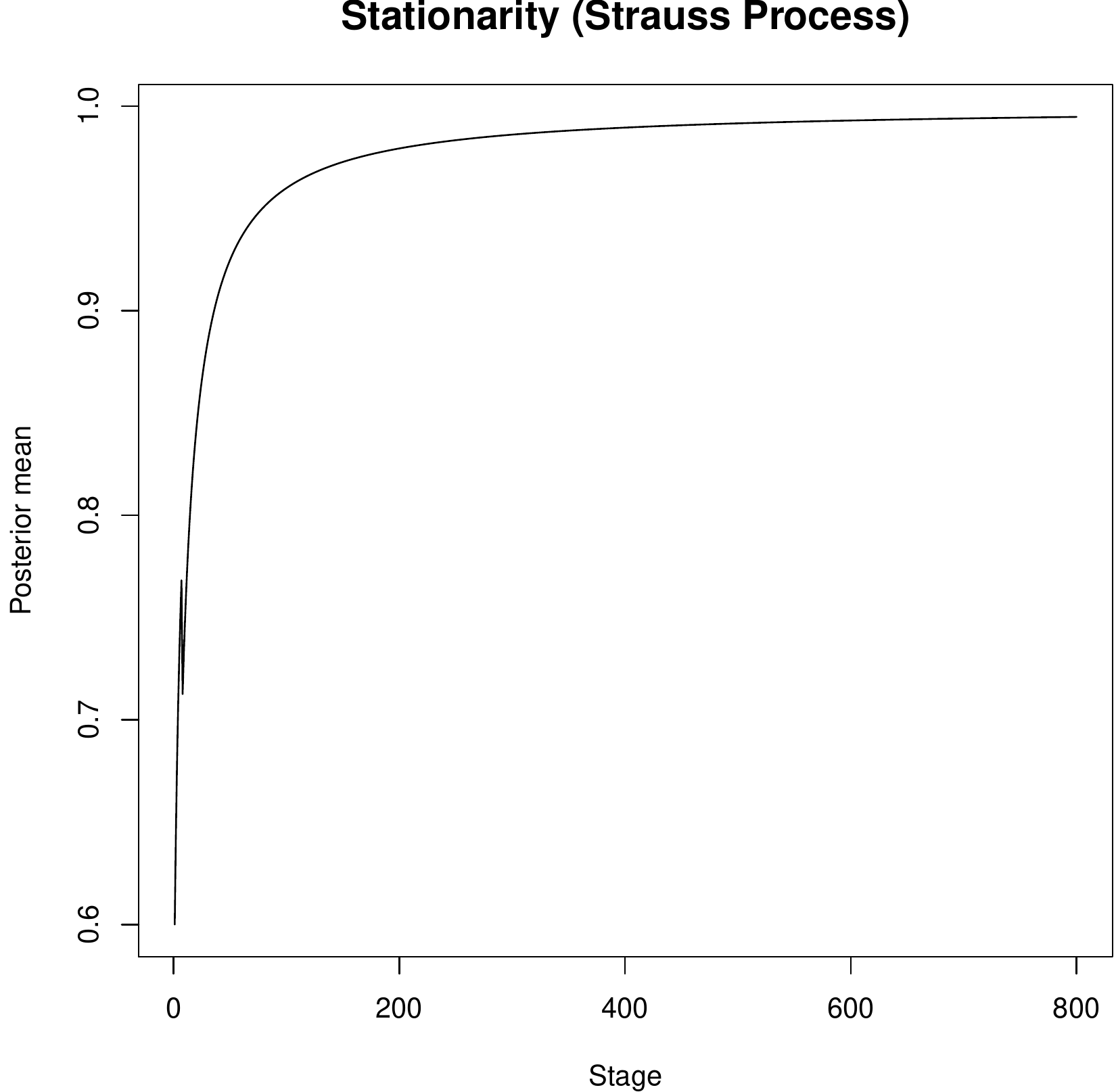}}
\hspace{2mm}
\subfigure [Dependent point process (Strauss process).]{ \label{fig:strauss1_bayesian_dependent}
\includegraphics[width=5.5cm,height=5.5cm]{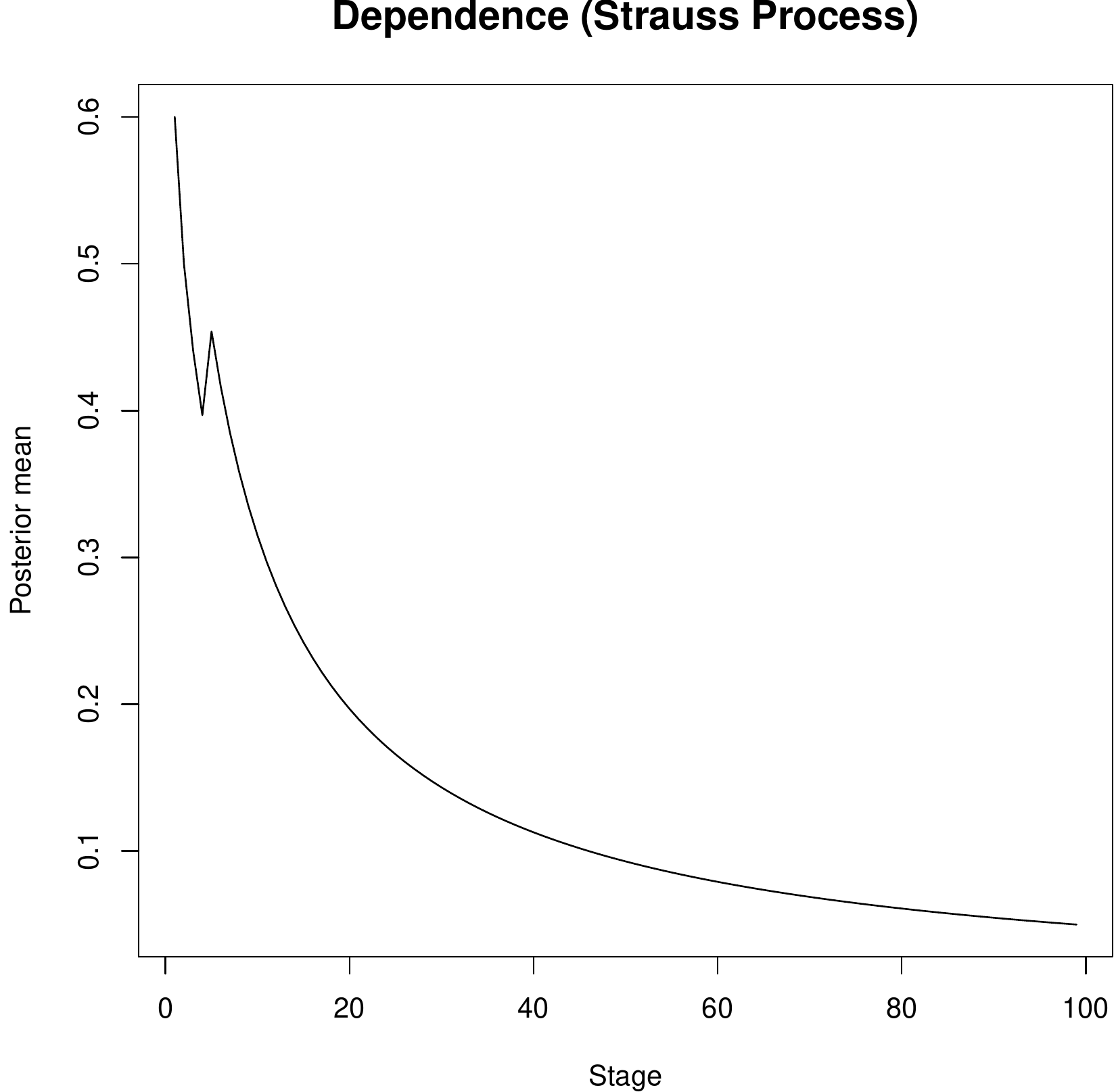}}
\caption{Detection of stationarity and dependence of Strauss process with our Bayesian method.} 
\label{fig:pp14_stationarity_indep}
\end{figure}

\subsection{Example 17: Another Strauss process}
\label{subsec:strauss2}

We now consider simulation from another homogeneous Strauss process with $\beta=100$, $\gamma=0.7$ and $R=0.05$ on $W=[0,8]\times[0,8]$. 
The $5168$ points that we obtained, are plotted in Figure \ref{fig:pp15_strauss}.
\begin{figure}
\centering
\includegraphics[width=5.5cm,height=5.5cm]{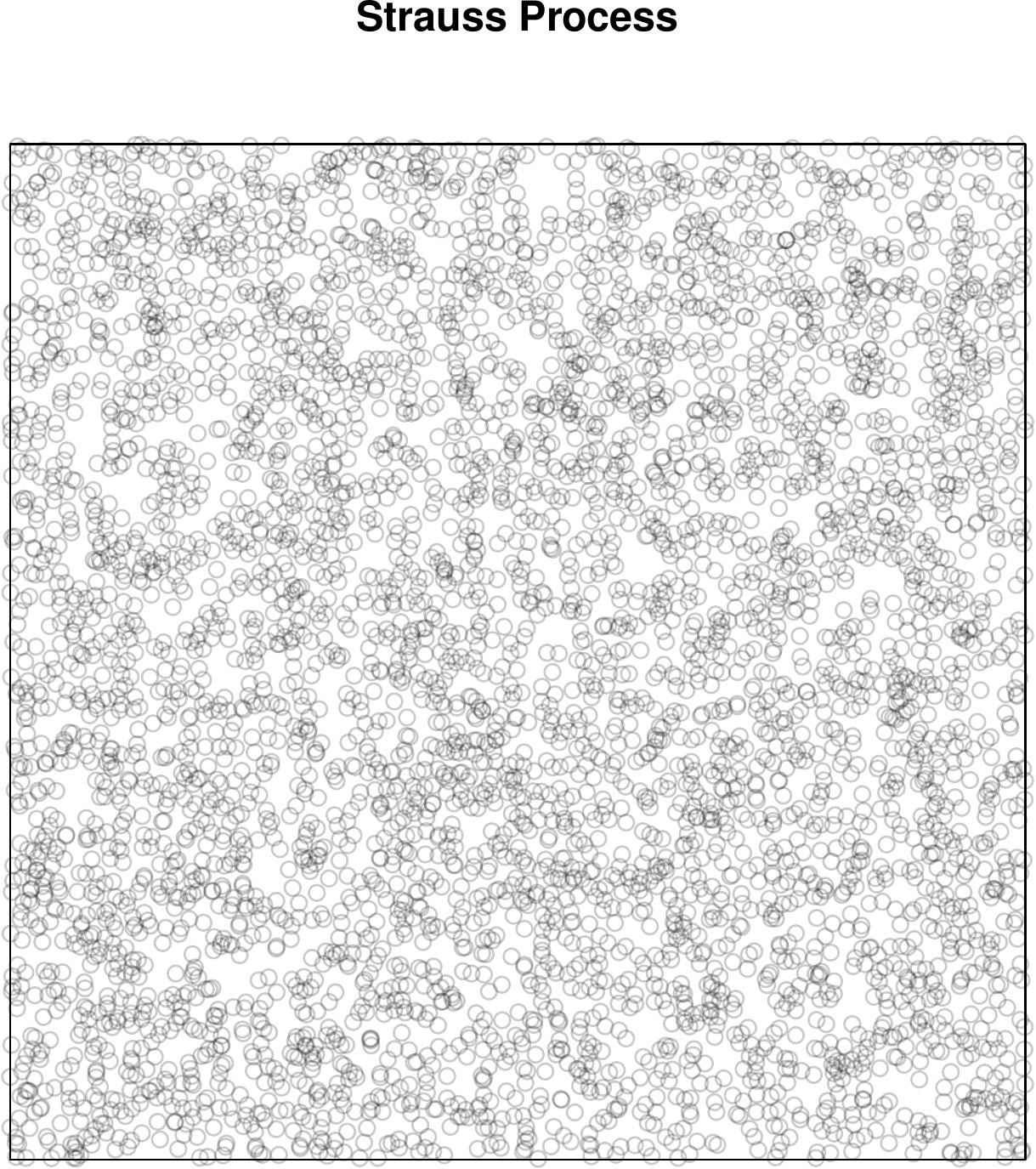}
\caption{Strauss Process.} 
\label{fig:pp15_strauss}
\end{figure}

Again, both the Bayesian and classical method correctly detects non-CSR, as shown by Figure \ref{fig:pp15}. For the Bayesian method, we set $K=500$ and $\hat C_1=0.15$.
\begin{figure}
\centering
\subfigure [HPP detection with Bayesian method for Strauss process.]{ \label{fig:hpp_bayesian15}
\includegraphics[width=5.5cm,height=5.5cm]{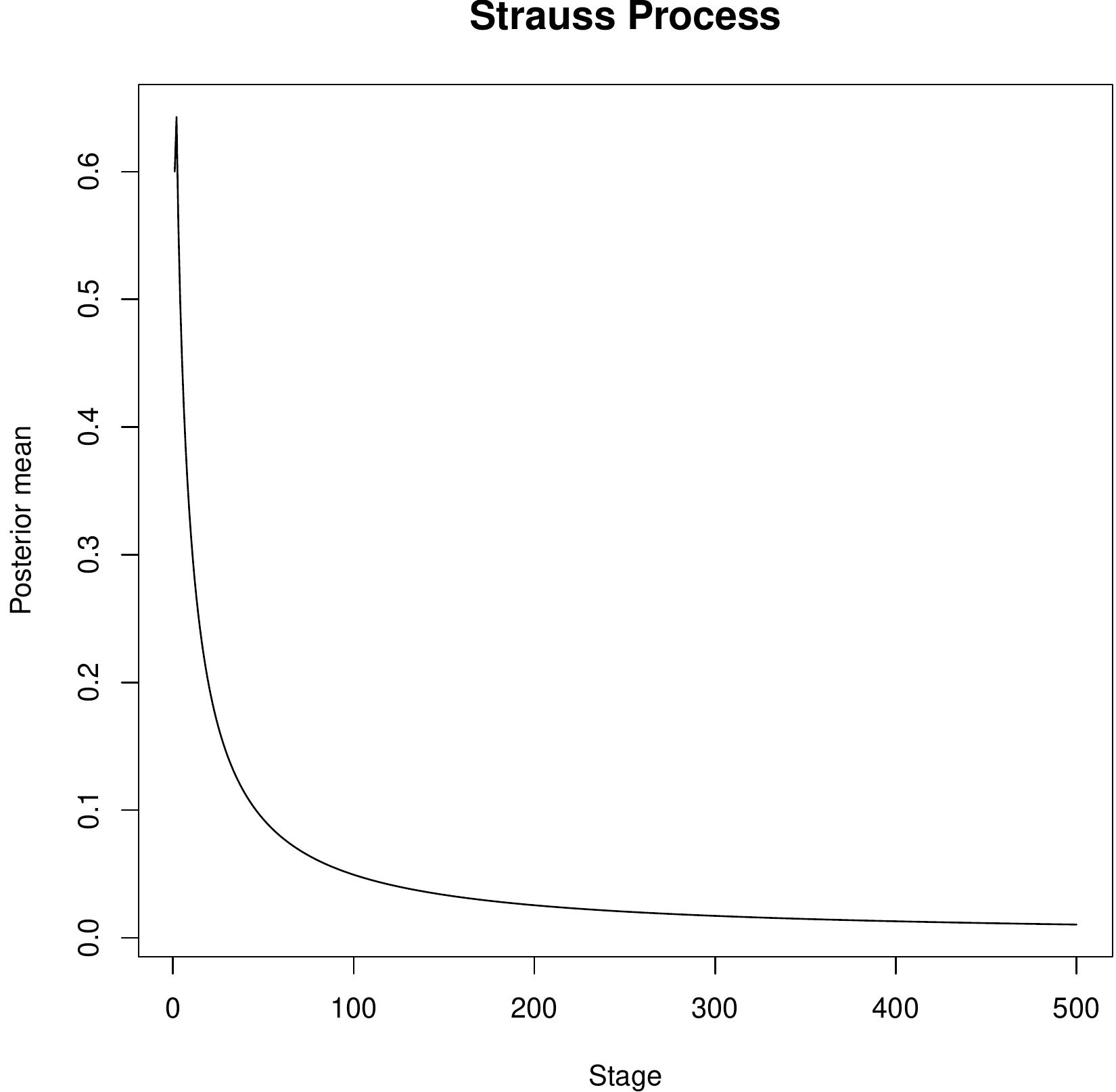}}
\hspace{2mm}
\subfigure [HPP detection with classical method for Strauss process.]{ \label{fig:hpp_classical15}
\includegraphics[width=5.5cm,height=5.5cm]{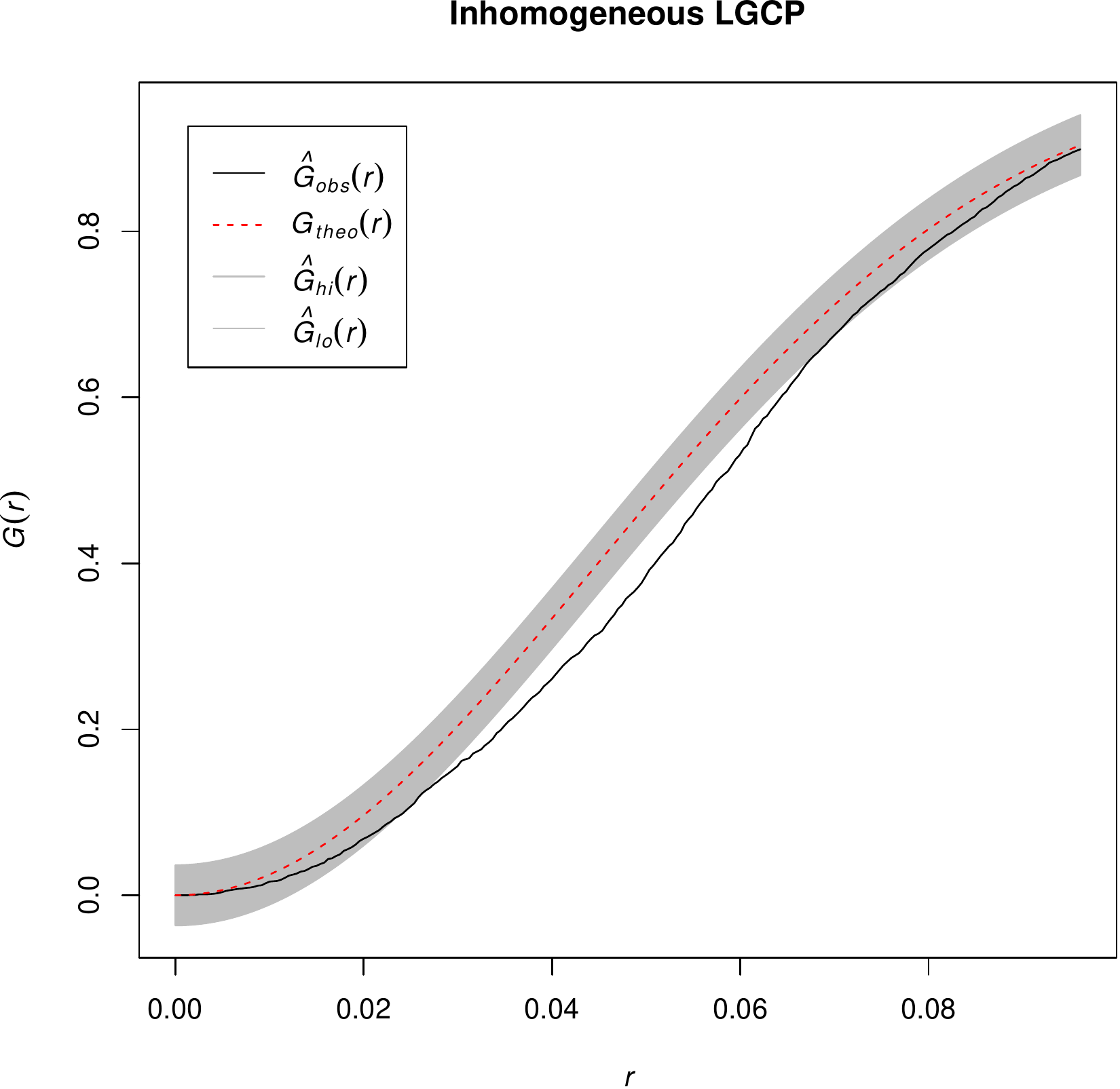}}
\caption{Detection of CSR with our Bayesian method and traditional classical method for Strauss process. Both the methods correctly
identify that the underlying point process is not CSR.} 
\label{fig:pp15}
\end{figure}

Again, stationarity of the process is clearly indicated by panel (a) of Figure \ref{fig:pp15_stationarity_indep}; here $K=500$ and $\hat C_1=0.15$.
Panel (b) shows dependence with $K=50$ and $\hat C_1=0.5$. 
\begin{figure}
\centering
\subfigure [Stationarity (Strauss process).]{ \label{fig:strauss2_bayesian_stationary}
\includegraphics[width=5.5cm,height=5.5cm]{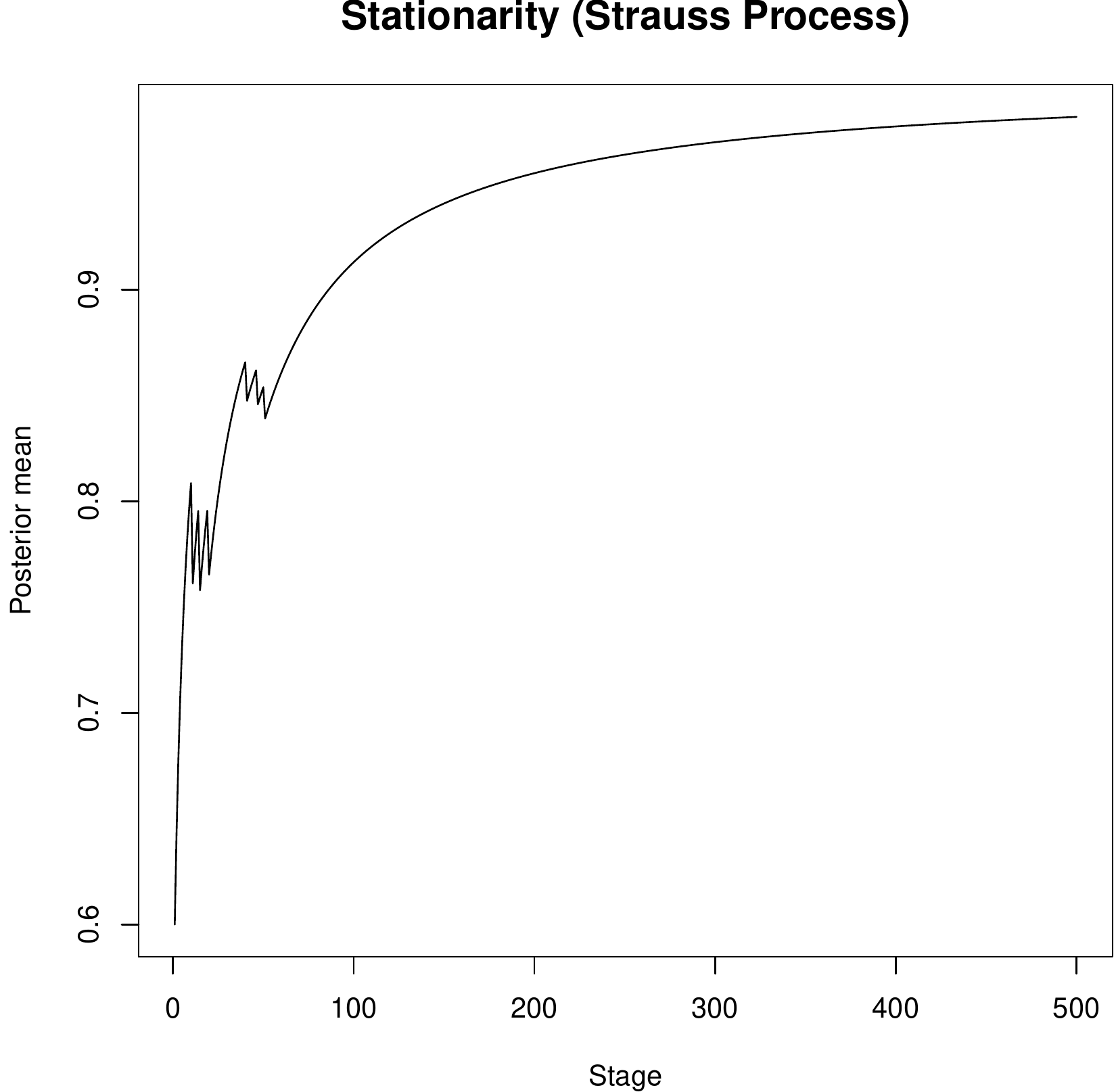}}
\hspace{2mm}
\subfigure [Dependent point process (Strauss process).]{ \label{fig:strauss2_bayesian_dependent}
\includegraphics[width=5.5cm,height=5.5cm]{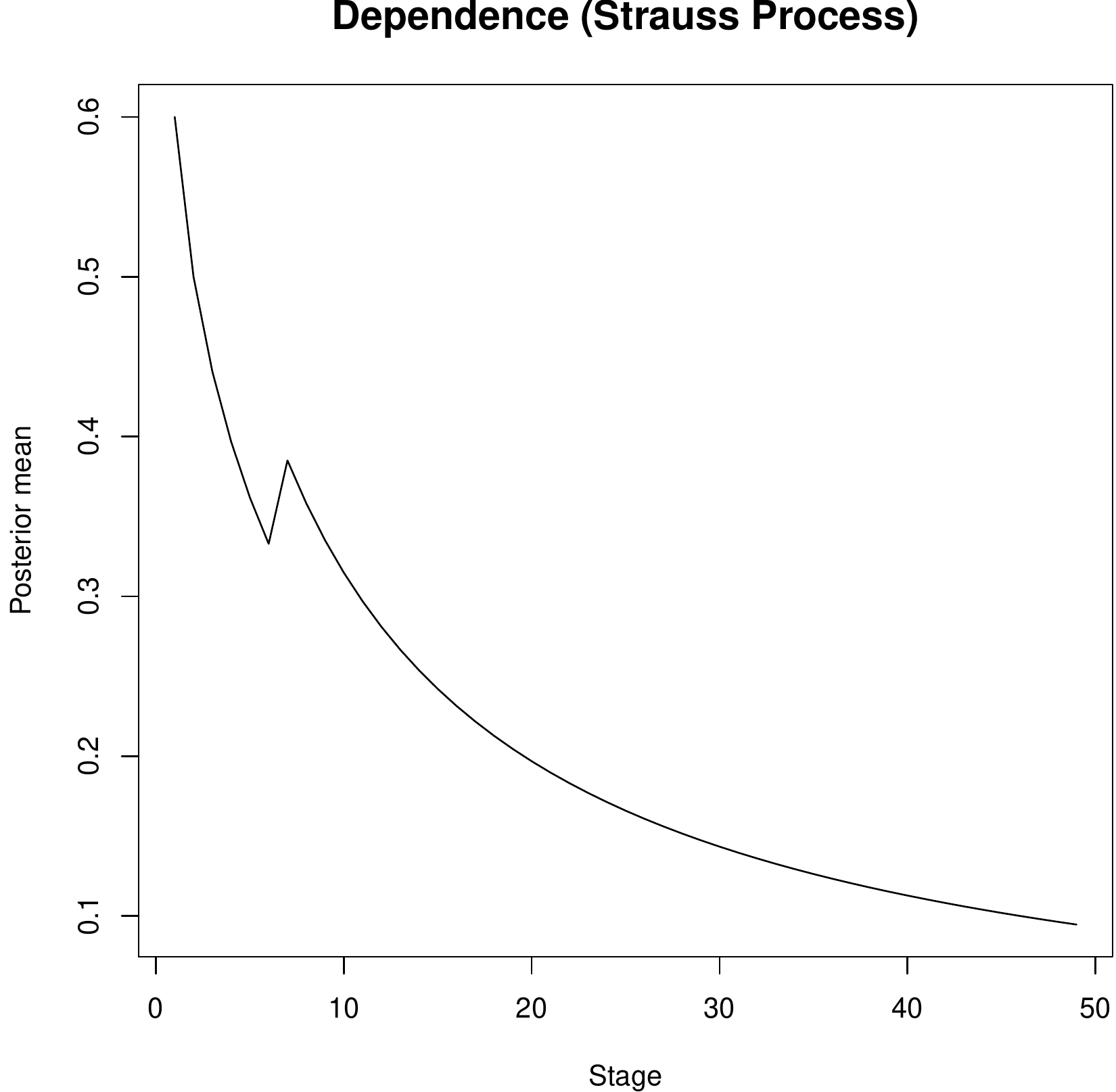}}
\caption{Detection of stationarity and dependence of Strauss process with our Bayesian method.} 
\label{fig:pp15_stationarity_indep}
\end{figure}

\section{Bayesian determination of frequencies of oscillatory stochastic processes}
\label{sec:osc_mult}

In this section we assume that the underlying stochastic process has multiple
frequencies of oscillations almost surely, including the possibility that the number of such frequencies is countably infinite.

\subsection{The key idea for Bayesian frequency determination}
\label{subsec:key_frequency}
Let us assume that there are $N~(\geq 1)$ frequencies of oscillations of the stochastic process $\bX=\left\{X_1,X_2,\ldots\right\}$. 
Here $N$ may even be countably infinite. Consider the transformed process $\bZ=\left\{Z_1,Z_2,\ldots\right\}$,
with $Z_j=\frac{\exp\left(X_j\right)}{1+\exp\left(X_j\right)}$; $j\geq 1$.
Hence, $Z_j\in[0,1]$. Now consider dividing up the interval $[0,1]$ into $\cup_{m=1}^M[\tilde p_{m-1},\tilde p_m]$, for $M>1$, such that 
$\tilde p_0=0$, $\tilde p_m=\tilde p_{m-1}+q_m$, where $\left\{q_m:m=1,\ldots,M\right\}$ is some probability distribution satisfying $0\leq q_m\leq 1$ for $m=1,\ldots,M$,
and $\sum_{m=0}^Mq_m=1$. Here $M$ can be even be infinite. 

For oscillating stochastic process $\bX$, for any $r>0$, $\bZ^r=\left\{Z^r_1,Z^r_2,\ldots\right\}$ is also an oscillating stochastic process 
taking values in $[0,1]$. Crucially, when raised to some sufficiently large positive power $r$, the originally smaller values of 
$\bZ$ tend to be much smaller compared to the originally larger values. These larger values of $\bZ^r$ will be contained in $[\tilde p_{m-1},\tilde p_m]$,
for large values of $m$. In particular, the largest values of $\bZ^r$ are expected to be contained in $(\tilde p_{M-1},1]$, or in $[\tilde p_{m_0-1},\tilde p_{m_0}]$
for $1\leq M_0<m_0<M$. Here $M_0$ is expected to be reasonably close to $M$. In the latter case, intervals of the form $[\tilde p_{m-1},\tilde p_m]$ 
will remain empty for $m>m_0$. The next largest values of $\bZ^r$ will be concentrated in $[\tilde p_{m_1-1},\tilde p_{m_1}]$ for some $1\leq M_1<m_1<m_0$. In this case,
$[\tilde p_{m-1},\tilde p_m]$ will remain empty for $m_1+1<m<m_0-1$, and so on. 

Note that the proportions of the values contained in the intervals constitute the frequencies of oscillations of the original process $\bX$.
We formalize this key idea into a Bayesian theory, treating $M$ as finite as well as infinite.

\subsection{Bayesian theory for finite $M$}
\label{subsec:finite_M}

To fix ideas, let us define
\begin{equation}
	Y_{j}=m~~\mbox{if}~~\tilde p_{m-1}<Z^r_j\leq\tilde p_m;~m=1,2,\ldots,M.
\label{eq:y_finite}
\end{equation}



We assume that
\begin{equation}
\left(\mathbb I(Y_{j}=1),\ldots,\mathbb I(Y_{j}=M)\right)
\sim Multinomial\left(1,p_{1,j},\ldots,p_{M,j}\right),
\label{eq:multinomial}
\end{equation}
where $p_{m,j}$ can be interpreted as the probability that $Z^r_j\in (\tilde p_{m-1},\tilde p_m]$.

Now note that for large $M$, the intervals $(\tilde p_{m-1},\tilde p_m]$ correspond to small regions of the index set of the stochastic process $\bX$,
and hence, the part of the process $\bZ^r$ falling in $(\tilde p_{m-1},\tilde p_m]$ can be safely regarded as stationary.
Further, assuming ergodicity of the process falling in the interval, 
it is expected that 
$p_{m,j}$ will tend to the correct proportion of the process $\bZ^r$ falling in $(\tilde p_{m-1},\tilde p_m]$, as $j\rightarrow\infty$.  
Notationally, we let $\left\{p_{m,0};~m=1,\ldots,M\right\}$ denote the actual proportions
of the process $\bZ^r$ falling in $(\tilde p_{m-1},\tilde p_m]$; $m=1,\ldots,M$.

Following the same principle discussed in Section \ref{sec:recursive1}, 
and extending the Beta prior to the Dirichlet prior, at the $k$-th stage we arrive at the
following posterior of $\left\{p_{m,k}:m=1,\ldots,M\right\}$:
\begin{equation}
\pi\left(p_{1,k},\ldots,p_{M,k}|y_{k}\right)
\equiv Dirichlet\left(\sum_{j=1}^k\frac{1}{j^2}+\sum_{j=1}^k\mathbb I\left(y_{j}=1\right),\ldots,
\sum_{j=1}^k\frac{1}{j^2}+\sum_{j=1}^k\mathbb I\left(y_{j}=M\right)\right).
\label{eq:posterior_dirichlet}
\end{equation}
The posterior mean and posterior variance of $p_{m,k}$, for $m=1,\ldots,M$, are given by:
\begin{align}
E\left(p_{m,k}|y_{k}\right)&=
\frac{\sum_{j=1}^k\frac{1}{j^2}+\sum_{j=1}^k\mathbb I\left(y_{j}=m\right)}
{M\sum_{j=1}^k\frac{1}{j^2}+k};
\label{eq:mean_dirichlet}\\
Var\left(p_{m,k}|y_{k}\right)&=
\frac{\left(\sum_{j=1}^k\frac{1}{j^2}+\sum_{j=1}^k\mathbb I\left(y_{j}=m\right)\right)
\left((M-1)\sum_{j=1}^k\frac{1}{j^2}+k-\sum_{j=1}^k\mathbb I\left(y_{j}=m\right)\right)}
{\left(M\sum_{j=1}^k\frac{1}{j^2}+k\right)^2\left(M\sum_{j=1}^k\frac{1}{j^2}+k+1\right)}.
\label{eq:var_dirichlet}
\end{align}
Since the process $\bZ^r$ falling in $(\tilde p_{m-1},\tilde p_m]$ is stationary and ergodic, it follows 
from (\ref{eq:mean_dirichlet}) and (\ref{eq:var_dirichlet}) it is easily seen, using 
$\frac{\sum_{j=1}^k\mathbb I\left(y_{j}=m\right)}{k}\rightarrow p_{m,0}$, almost surely, as $k\rightarrow\infty$,
that, almost surely,
\begin{align}
E\left(p_{m,k}|y_{k}\right)&\rightarrow p_{m,0},~~\mbox{and}
\label{eq:mean_dirichlet_convergence}\\
Var\left(p_{m,k}|y_{k}\right)&= O\left(\frac{1}{k}\right)\rightarrow 0,
\label{eq:var_dirichlet_convergence}
\end{align}
as $k\rightarrow\infty$.

Theorem \ref{theorem:finite_limit_points} formalizes the above arguments in terms of the limits of the marginal posterior
probabilities of $p_{m,k}$, denoted by $\pi_m\left(\cdot|y_k\right)$, as $k\rightarrow\infty$.
\begin{theorem}
\label{theorem:finite_limit_points}
Assume that $M$ is so large that $\bZ^r$ falling in the intervals $(\tilde p_{m-1},\tilde p_m]$; $m=1,\ldots,M$,
constitute stationary processes, and that such stationary processes are also ergodic.

Let $\mathcal N_{p_{m,0}}$ be any neighborhood of $p_{m,0}$, with $p_{m,0}$ satisfying
$0<p_{m,0}<1$ for $m=1,\ldots,M$ such that $\sum_{m=1}^Mp_{m,0}=1$.
Then
\begin{equation}
\pi_m\left(\mathcal N_{p_{m,0}}|y_{k}\right)\rightarrow 1,
\label{eq:consistency_at_p_m_0}
\end{equation}
almost surely
as $k\rightarrow\infty$. 
\end{theorem}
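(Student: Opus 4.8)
The plan is to mirror the structure of the proof of Theorem~\ref{theorem:convergence}, exploiting the fact that every one-dimensional marginal of a Dirichlet distribution is a Beta distribution. Consequently the marginal posterior $\pi_m(\cdot|y_k)$ of $p_{m,k}$ obtained from (\ref{eq:posterior_dirichlet}) has exactly the mean (\ref{eq:mean_dirichlet}) and variance (\ref{eq:var_dirichlet}), and the whole argument reduces to showing that this marginal mean converges to $p_{m,0}$ and the marginal variance to zero, both almost surely, followed by a Chebyshev concentration step.

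First I would establish the key almost-sure limit of the empirical frequencies. By hypothesis the indicator sequence $\left\{\mathbb I(y_j=m)\right\}_{j\geq 1}=\left\{\mathbb I\left(Z^r_j\in(\tilde p_{m-1},\tilde p_m]\right)\right\}_{j\geq 1}$ is stationary and ergodic, so the ergodic theorem yields
\[
\frac{1}{k}\sum_{j=1}^k\mathbb I(y_j=m)\stackrel{a.s.}{\longrightarrow}P\left(Z^r_1\in(\tilde p_{m-1},\tilde p_m]\right)=p_{m,0},~\mbox{as}~k\rightarrow\infty.
\]
It is worth emphasizing that it is precisely ergodicity that pins the limit down to the deterministic constant $p_{m,0}$, rather than to a possibly random invariant average as would be the case under stationarity alone.

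Next I would substitute this limit into (\ref{eq:mean_dirichlet}) and (\ref{eq:var_dirichlet}). Dividing the numerator and denominator of (\ref{eq:mean_dirichlet}) by $k$ and using $\sum_{j=1}^k\frac{1}{j^2}\rightarrow\frac{\pi^2}{6}$, so that $\frac{1}{k}\sum_{j=1}^k\frac{1}{j^2}\rightarrow 0$, the posterior mean tends to $\frac{0+p_{m,0}}{0+1}=p_{m,0}$, which is (\ref{eq:mean_dirichlet_convergence}). An analogous order-of-magnitude count in (\ref{eq:var_dirichlet}), where the numerator is $O(k^2)$ and the denominator is $O(k^3)$, gives $Var(p_{m,k}|y_k)=O(1/k)\rightarrow 0$, which is (\ref{eq:var_dirichlet_convergence}). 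Finally, given any neighborhood $\mathcal N_{p_{m,0}}$, I would choose $\epsilon>0$ with $(p_{m,0}-\epsilon,p_{m,0}+\epsilon)\subseteq\mathcal N_{p_{m,0}}$; since $E(p_{m,k}|y_k)\rightarrow p_{m,0}$, for all large $k$ the mean lies within $\epsilon/2$ of $p_{m,0}$, so Chebyshev's inequality applied to the marginal posterior gives
\[
\pi_m\left(\mathcal N_{p_{m,0}}|y_k\right)\geq\pi_m\left(\left|p_{m,k}-E(p_{m,k}|y_k)\right|<\epsilon/2~\Big|~y_k\right)\geq 1-\frac{4\,Var(p_{m,k}|y_k)}{\epsilon^2}\rightarrow 1,
\]
which is (\ref{eq:consistency_at_p_m_0}).

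The main obstacle is conceptual rather than computational: one must justify that the ergodic average of the indicators equals the deterministic number $p_{m,0}$, which is exactly what the ergodicity assumption in the hypothesis supplies (stationarity alone would only yield convergence to a random limit, and the subsequent identification of $p_{m,0}$ would fail). Everything else is routine order bookkeeping in the Dirichlet mean and variance together with the Chebyshev argument already deployed in Theorem~\ref{theorem:convergence}; a minor care point is simply recording that the marginal of the Dirichlet posterior is Beta, so that (\ref{eq:mean_dirichlet}) and (\ref{eq:var_dirichlet}) are legitimately its mean and variance.
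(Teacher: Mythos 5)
Your proposal is correct and follows essentially the same route as the paper: the ergodic theorem gives $\tfrac{1}{k}\sum_{j=1}^k\mathbb I(y_j=m)\rightarrow p_{m,0}$ almost surely, which is substituted into (\ref{eq:mean_dirichlet}) and (\ref{eq:var_dirichlet}) to obtain (\ref{eq:mean_dirichlet_convergence}) and (\ref{eq:var_dirichlet_convergence}), and Chebyshev's inequality then yields the posterior concentration, exactly as in the paper's proof. Your write-up is in fact more explicit than the paper's (the Beta-marginal observation, the $\epsilon/2$ bookkeeping, and the role of ergodicity in pinning the limit to a deterministic constant), but there is no substantive difference in approach.
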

\begin{proof}
For any neighborhood of $p_{m,0}$, denoted by $\mathcal N_{p_{m,0}}$, let $\epsilon>0$ be sufficiently small so that 
$\mathcal N_{p_{m,0}}\supseteq\left\{|p_{m,k}-p_{m,0}|<\epsilon\right\}$. Then by Chebychev's inequality,
using (\ref{eq:mean_dirichlet_convergence}) and (\ref{eq:var_dirichlet_convergence}), it is seen that
$\pi_m\left(\mathcal N_{p_{m,0}}|y_k\right)\rightarrow 1$, almost surely, as $k\rightarrow\infty$.
\end{proof}

\begin{corollary}
\label{cor:cor_finite_1}
For adequate choices of $r$ and $M$, the non-zero distinct elements of $\left\{p_{m,0};~m=2,\ldots,M\right\}$ are the desired frequencies of 
the oscillating stochastic process $\bX$. Note that for adequately large $M$, $p_{1,0}$ is associated with the small values of $Z^r$, and hence
does not correspond to any frequency of the original stochastic process.
\end{corollary}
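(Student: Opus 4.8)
The plan is to read the statement off from the posterior concentration already proved in Theorem \ref{theorem:finite_limit_points} together with the separation mechanism of the power transformation introduced in Section \ref{subsec:key_frequency}. First I would recall that Theorem \ref{theorem:finite_limit_points} gives, almost surely, $\pi_m(\mathcal N_{p_{m,0}}|y_k)\rightarrow 1$ as $k\rightarrow\infty$, so that the recursive Bayesian procedure recovers each limiting proportion $p_{m,0}$ exactly. By the ergodic identification used in its proof, $p_{m,0}=\lim_{k\rightarrow\infty}k^{-1}\sum_{j=1}^k\mathbb I(y_j=m)$ is precisely the limiting fraction of the transformed sequence $\bZ^r$ that falls in the interval $(\tilde p_{m-1},\tilde p_m]$. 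Since the frequencies of oscillation of $\bX$ are defined to be exactly these occupation proportions, the corollary reduces to an identifiability statement: for adequate $r$ and $M$, the map from the $N$ distinct oscillation levels of $\bZ^r$ to the occupied intervals $(\tilde p_{m-1},\tilde p_m]$ with $m\geq 2$ must be a bijection, and the first interval must collect only the sub-threshold background.

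The second step is to establish this separation. Because $Z_j=\exp(X_j)/(1+\exp(X_j))\in(0,1)$, for any level $z<1$ one has $z^r\rightarrow 0$ as $r\rightarrow\infty$, while values of $Z$ arbitrarily close to $1$ decay far more slowly. I would use this monotone squashing to argue that, for $r$ sufficiently large relative to the fixed partition $\{\tilde p_m\}$, the $N$ distinct magnitude levels attained by the oscillating process are pushed into $N$ distinct intervals indexed by $m>M_0$ for some $M_0$ close to $M$, while all the remaining (smaller) values collapse into $(0,\tilde p_1]$. Under such a separation, the proportion of the index set on which $\bX$ attains its $\ell$-th oscillation level equals $p_{m,0}$ for the unique corresponding $m$; hence the collection of distinct non-zero values of $\{p_{m,0}:m=2,\ldots,M\}$ is exactly the set of $N$ frequencies.

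Finally I would treat $p_{1,0}$ separately. For large $r$ the overwhelming majority of the squashed values lie near $0$, so the first interval $(0,\tilde p_1]$ aggregates the non-oscillatory background rather than any single oscillation level; consequently $p_{1,0}$ does not correspond to a frequency and must be discarded, as claimed.

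The main obstacle will be making the bijection in the second step quantitative, that is, proving that adequate $r$ and $M$ genuinely exist so that no two distinct oscillation levels share an interval (which would undercount frequencies) and no single level straddles an interval boundary (which would spuriously split one frequency into two). This requires simultaneous control of the images $z^r$ of all the distinct peak levels against the fixed breakpoints $\tilde p_m$, and is where the choice of the probability weights $\{q_m\}$ defining the partition must be balanced against the magnitude of $r$; once identifiability is secured, the remainder follows immediately from Theorem \ref{theorem:finite_limit_points}.
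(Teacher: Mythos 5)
Your proposal is correct and follows essentially the same route as the paper: the corollary is an immediate interpretive consequence of Theorem \ref{theorem:finite_limit_points} combined with the identification, made in Section \ref{subsec:key_frequency}, of the oscillation frequencies of $\bX$ with the limiting occupation proportions of $\bZ^r$ in the intervals $(\tilde p_{m-1},\tilde p_m]$. The paper supplies no separate proof beyond this; the identifiability issue you flag as the main obstacle (distinct oscillation levels landing in distinct intervals, no level straddling a boundary) is exactly what the paper absorbs into the phrase ``adequate choices of $r$ and $M$'' and addresses only heuristically and empirically in Section \ref{subsec:choice_c}, so your honest acknowledgment of that gap matches the paper's own level of rigor.
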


\subsection{Choice of $r$, $M$ and $\{q_1,\ldots,q_M\}$}
\label{subsec:choice_c}

In principle, the probability distribution $\{q_1,\ldots,q_M\}$ should be chosen based on prior information regarding which intervals contain the 
desired frequencies. Given sufficiently large $M$, the values of $q_m$ can then be chosen to shorten or widen any given interval.  
Short intervals are preferable when there is strong prior information of some frequency falling in the vicinity of some point. On the other hand,
larger intervals are appropriate in the case of weak prior information. Such prior knowledge may be obtained, say, by periodogram analysis of the 
underlying time series.

However, in our experiments, the uniform distribution $q_m=1/M$, for $m=1,\ldots,M$, yielded excellent results. For the choice of $r$, we recommend
that value for which the oscillations of $\bZ^r$ as distinctly visible as possible. The choice of $M$ should be such that 
$\left\{(\tilde p_{m-1},\tilde p_m];m=1,\ldots,M\right\}$
covers the range of $\bZ^r$ with adequately fine intervals. We discuss these issues in details with simulation studies and real data examples.

\subsection{Infinite number of frequencies}
\label{subsec:infinite_limit_points}

We now assume that the number of frequencies, $m$, is countably infinite,
and that $\left\{p_{m,0};m=1,2,3,\ldots\right\}$, where $0\leq p_{m,0}\leq 1$ and $\sum_{m=1}^{\infty}p_{m,0}=1$,
are the true proportions of the process $\bZ^r$ falling in the intervals $(\tilde p_{m-1},\tilde p_m]$; $m=1,2,\ldots$.

Now we define
\begin{equation}
Y_{j}=m~~\mbox{if}~~\tilde p_{m-1}<Z^r_j\leq \tilde p_{m};~m=1,2,\ldots,\infty.
\label{eq:y_infinite}
\end{equation}

Let $\mathcal X=\left\{1,2,\ldots\right\}$ and let $\mathcal B\left(\mathcal X\right)$ denote the Borel
$\sigma$-field on $\mathcal X$ (assuming every singleton of $\mathcal X$ is an open set). Let $\mathcal P$
denote the set of probability measures on $\mathcal X$. Then, at the $j$-th stage,
\begin{equation}
[Y_j|P_j]\sim P_j,
\label{eq:Y_DP}
\end{equation}
where $P_j\in\mathcal P$. We assume that
$P_j$ is the following Dirichlet process (see \ctn{Ferguson73}):
\begin{equation}
P_j\sim DP\left(\frac{1}{j^2}G\right),
\label{eq:DP}
\end{equation}
where, the probability measure $G$ is such that, for every $j\geq 1$,
\begin{equation}
G\left(Y_j=m\right)=\frac{1}{2^m}.
\label{eq:G}
\end{equation}
It then follows using the same previous principles that, at the $k$-th stage, 
the posterior of $P_k$ is again a Dirichlet process, given by
\begin{equation}
[P_k|y_k]\sim DP\left(\sum_{j=1}^k\frac{1}{j^2}G+\sum_{j=1}^k\delta_{y_j}\right),
\label{eq:posterior_DP}
\end{equation}
where $\delta_{y_j}$ denotes point mass at $y_j$.
It follows from (\ref{eq:posterior_DP}) that
\begin{align}
E\left(p_{m,k}|y_{k}\right)&=
\frac{\frac{1}{2^m}\sum_{j=1}^k\frac{1}{j^2}+\sum_{j=1}^k\mathbb I\left(y_{j}=m\right)}
{\sum_{j=1}^k\frac{1}{j^2}+k};
\label{eq:mean_DP}\\
Var\left(p_{m,k}|y_{k}\right)&=
\frac{\left(\sum_{j=1}^k\frac{1}{j^2}+\sum_{j=1}^k\mathbb I\left(y_{j}=m\right)\right)
\left((1-\frac{1}{2^m})\sum_{j=1}^k\frac{1}{j^2}+k-\sum_{j=1}^k\mathbb I\left(y_{j}=m\right)\right)}
{\left(\sum_{j=1}^k\frac{1}{j^2}+k\right)^2\left(\sum_{j=1}^k\frac{1}{j^2}+k+1\right)}.
\label{eq:var_DP}
\end{align}
As before, it easily follows from (\ref{eq:mean_DP}) and (\ref{eq:var_DP}) that
for $m=1,2,3,\ldots$, 
\begin{align}
E\left(p_{m,k}|y_{k}\right)&\rightarrow p_{m,0},~~\mbox{and}
\label{eq:mean_DP_convergence}\\
Var\left(p_{m,k}|y_{k}\right)&= O\left(\frac{1}{k}\right)\rightarrow 0,
\label{eq:var_DP_convergence}
\end{align}
almost surely, as $k\rightarrow\infty$.

The theorem below formalizes the above arguments in the infinite number of frequency situation 
in terms of the limit of the marginal posterior
probabilities of $p_{m,k}$, 
as $k\rightarrow\infty$. 
\begin{theorem}
\label{theorem:infinite_limit_points}
Assume that $\bZ^r$ falling in the intervals $(\tilde p_{m-1},\tilde p_m]$; $m=1,2,\ldots$,
constitute stationary processes, and that such stationary processes are also ergodic.
	
Let $\mathcal N_{p_{m,0}}$ be any neighborhood of $p_{m,0}$, with $p_{m,0}$ satisfying		
$0\leq p_{m,0}\leq 1$ for $m=1,2,\ldots$ such that $\sum_{m=1}^{\infty}p_{m,0}=1$, with at most finite number
of $m$ such that $p_{m,0}=0$.
Then with $Y_j$ defined as in (\ref{eq:y_infinite}),
\begin{equation}
\pi_m\left(\mathcal N_{p_{m,0}}|y_{k}\right)\rightarrow 1,
\label{eq:consistency_at_p_m_0_DP}
\end{equation}
almost surely, as $k\rightarrow\infty$. 
\end{theorem}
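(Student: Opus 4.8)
The plan is to follow the template of the proof of Theorem \ref{theorem:finite_limit_points}, replacing the finite Dirichlet posterior with the Dirichlet process posterior (\ref{eq:posterior_DP}) and exploiting the fact that, for a Dirichlet process, the marginal law of the mass assigned to any single atom is a Beta distribution. Concretely, writing $\mu_k=\sum_{j=1}^k\frac{1}{j^2}G+\sum_{j=1}^k\delta_{y_j}$ for the finite base measure of the posterior (\ref{eq:posterior_DP}), the partition $\left\{\{m\},\mathcal X\setminus\{m\}\right\}$ shows that $p_{m,k}=P_k(\{m\})$ is marginally $Beta\left(\mu_k(\{m\}),\mu_k(\mathcal X)-\mu_k(\{m\})\right)$, with $\mu_k(\{m\})=\frac{1}{2^m}\sum_{j=1}^k\frac{1}{j^2}+\sum_{j=1}^k\mathbb I(y_j=m)$ and $\mu_k(\mathcal X)=\sum_{j=1}^k\frac{1}{j^2}+k$. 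This is precisely what produces the posterior mean (\ref{eq:mean_DP}) and variance (\ref{eq:var_DP}), so I would begin by recording this marginalization step explicitly, which is the only structural change from the finite case.

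Next, I would establish the limits (\ref{eq:mean_DP_convergence}) and (\ref{eq:var_DP_convergence}). Since $\sum_{j=1}^k\frac{1}{j^2}\rightarrow\frac{\pi^2}{6}$ stays bounded while $\mu_k(\mathcal X)\sim k$, the denominators in (\ref{eq:mean_DP}) and (\ref{eq:var_DP}) grow like $k$. The only stochastic term is the empirical count $\sum_{j=1}^k\mathbb I(y_j=m)$; by the hypothesis that the part of $\bZ^r$ landing in $(\tilde p_{m-1},\tilde p_m]$ is stationary and ergodic, the ergodic theorem gives $\frac{1}{k}\sum_{j=1}^k\mathbb I(y_j=m)\rightarrow p_{m,0}$ almost surely. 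Substituting into (\ref{eq:mean_DP}) yields $E(p_{m,k}|y_k)\rightarrow p_{m,0}$, and into (\ref{eq:var_DP}) yields $Var(p_{m,k}|y_k)=O(1/k)\rightarrow 0$, both almost surely; a countable union of the exceptional null sets over $m$ lets me realize all these limits on a single almost sure event. The condition that at most finitely many $p_{m,0}$ vanish plays no essential role in this per-coordinate argument, since the ergodic average converges to $p_{m,0}$ whether or not it is zero; it is needed only for the accompanying interpretation that the distinct nonzero $p_{m,0}$ are the genuine oscillation frequencies.

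Finally, I would conclude by a Chebychev argument identical to that of Theorem \ref{theorem:finite_limit_points}. Given a neighborhood $\mathcal N_{p_{m,0}}$, pick $\epsilon>0$ with $\{|p_{m,k}-p_{m,0}|<\epsilon\}\subseteq\mathcal N_{p_{m,0}}$; choose $k$ large enough that $|E(p_{m,k}|y_k)-p_{m,0}|<\epsilon/2$, and apply Chebychev's inequality to bound $\pi_m\left(|p_{m,k}-E(p_{m,k}|y_k)|\geq\epsilon/2\mid y_k\right)$ by $4\,Var(p_{m,k}|y_k)/\epsilon^2\rightarrow 0$. Combining the two estimates gives $\pi_m\left(\mathcal N_{p_{m,0}}|y_k\right)\rightarrow 1$ almost surely.

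I expect the main obstacle to lie in the rigorous justification of the ergodic step rather than in the Bayesian bookkeeping: one must argue that the sequence of labels $\{Y_j\}_{j\geq 1}$ inherits stationarity and ergodicity from $\bZ^r$ so that the ergodic theorem legitimately applies to each indicator process $\{\mathbb I(Y_j=m)\}_{j\geq 1}$, and that the almost sure convergence can be arranged to hold simultaneously for the countably many $m$ of interest. Everything else — the Beta marginal of the Dirichlet process, the boundedness of $\sum 1/j^2$, and the Chebychev concentration — is routine once this ergodic input is secured.
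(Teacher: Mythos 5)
Your proposal is correct and follows essentially the same route as the paper: the paper's own proof is a one-line appeal to the argument of Theorem \ref{theorem:finite_limit_points}, i.e.\ the posterior mean and variance limits (\ref{eq:mean_DP_convergence})--(\ref{eq:var_DP_convergence}) obtained from (\ref{eq:mean_DP})--(\ref{eq:var_DP}) via the ergodic theorem, followed by Chebychev's inequality. Your write-up is in fact more explicit than the paper's (the Beta marginal of the Dirichlet process, the countable union of null sets, and the observation that finitely many vanishing $p_{m,0}$ play no role in the per-coordinate argument), but the underlying argument is the same.
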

\begin{proof}
Follows using the same ideas as the proof of Theorem \ref{theorem:finite_limit_points}.
\end{proof}

\begin{corollary}
\label{cor:cor_infinite_1}
The non-zero distinct elements of $\left\{p_{m,0};~m=1,2,\ldots\right\}$ are the desired frequencies of 
the oscillating stochastic process $\bX$. Again, $p_{1,0}$ does not correspond to any frequency of the original stochastic process.
\end{corollary}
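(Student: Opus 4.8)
The plan is to read the corollary as a direct consequence of Theorem \ref{theorem:infinite_limit_points} together with the separation mechanism described in Section \ref{subsec:key_frequency}, so that no new analytic machinery is needed beyond pinning down what $p_{m,0}$ operationally \emph{is}. First I would recall that the run-up to Theorem \ref{theorem:infinite_limit_points} already establishes, via ergodicity of the stationary pieces of $\bZ^r$, that
\[
k^{-1}\sum_{j=1}^k\mathbb I\left(y_j=m\right)\rightarrow p_{m,0}\quad\mbox{almost surely as}~k\rightarrow\infty,
\]
so that $p_{m,0}$ is unambiguously the long-run proportion of the transformed process $\bZ^r$ whose values fall in $(\tilde p_{m-1},\tilde p_m]$. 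Theorem \ref{theorem:infinite_limit_points} then guarantees, through (\ref{eq:mean_DP_convergence})--(\ref{eq:var_DP_convergence}), that these proportions are exactly the quantities recovered by the recursive posteriors. Thus the corollary reduces to showing that, for adequate $r$ and partition $\{q_m\}$, the non-zero distinct long-run proportions over $m\geq 2$ are in bijection with the oscillation frequencies of $\bX$.

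Next I would formalize an oscillation frequency of $\bX$ as the long-run relative proportion of the realization sitting near a given characteristic oscillation level, which is the notion implicit in the statement that ``the proportions of the values contained in the intervals constitute the frequencies of oscillations'' in Section \ref{subsec:key_frequency}. Under the strictly increasing logistic map $x\mapsto e^x/(1+e^x)$ and the strictly increasing power map $z\mapsto z^r$, the ordering of levels is preserved, while raising to a sufficiently large power $r$ spreads the originally larger values apart and crushes the smaller ones toward $0$. I would argue that, for a fine enough partition (equivalently $M$ large with the relevant $q_m$ small), each distinct characteristic level of $\bZ^r$ is eventually contained in its own interval $(\tilde p_{m-1},\tilde p_m]$ with $m\geq 2$, that no two distinct levels share an interval, and that no single level is split across intervals. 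Consequently the long-run proportion $p_{m,0}$ attached to such an occupied upper interval equals exactly the relative proportion of time $\bX$ spends oscillating at the corresponding level, i.e.\ the associated frequency; enumerating the non-zero distinct values over $m\geq 2$ then lists precisely the distinct frequencies, mirroring the finite case recorded in Corollary \ref{cor:cor_finite_1}.

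Finally I would dispose of the first interval. Because $z\mapsto z^r$ sends every value below $1$ toward $0$ as $r$ grows, the low-amplitude and trough portions contributed by \emph{all} oscillation components accumulate in $(\tilde p_0,\tilde p_1]=(0,\tilde p_1]$; hence $p_{1,0}$ is a pooled proportion aggregating the sub-threshold mass of the entire process rather than the occupancy of any single characteristic level. I would make this precise by showing that, for $r$ large, the $\bZ^r$-mass of the union of the troughs concentrates in $(\tilde p_0,\tilde p_1]$, so that $p_{1,0}$ cannot be identified with any individual frequency, establishing the second assertion of the corollary.

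The hard part will be the second step: pinning down ``oscillation frequency'' precisely enough that the bin-occupancy proportions coincide with it, and verifying that ``adequate choices of $r$ and $M$'' genuinely yield an injective, non-splitting assignment of distinct levels to distinct upper intervals. This separation claim is essentially a limiting statement about the amplitude distribution of $\bZ^r$ as $r\rightarrow\infty$, and it carries the entire substantive content of the corollary; once it is granted, both the identification of the non-zero distinct $p_{m,0}$ with the frequencies and the aggregate character of $p_{1,0}$ follow immediately from Theorem \ref{theorem:infinite_limit_points}.
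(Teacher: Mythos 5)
Your first and third steps do track the paper's (implicit) argument: the paper offers no separate analytic proof of this corollary, but reads it off directly from Theorem \ref{theorem:infinite_limit_points}, with $p_{m,0}$ identified via ergodicity as the long-run occupancy proportion of $\bZ^r$ in $(\tilde p_{m-1},\tilde p_m]$, and with the identification of those proportions with ``frequencies'' taken as the operational definition laid down in Section \ref{subsec:key_frequency}; the exclusion of $p_{1,0}$ is likewise purely interpretive, since the first bin is where the small values of $\bZ^r$ collect.

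Your middle step, however --- the ``hard part'' you correctly flag as carrying all the substantive content --- rests on an argument that fails. You propose to establish an injective, non-splitting assignment of characteristic levels to upper bins as a limiting statement about the amplitude distribution of $\bZ^r$ as $r\rightarrow\infty$. Since $Z_j\in(0,1)$ strictly, $Z^r_j\rightarrow 0$ for every $j$ as $r\rightarrow\infty$: in that limit all mass accumulates in $(\tilde p_0,\tilde p_1]$, so $p_{1,0}\rightarrow 1$ and every $p_{m,0}$ with $m\geq 2$ tends to zero --- the opposite of separation. The paper itself documents the finite-$r$ version of this failure: in the multi-frequency study of Section \ref{subsec:multiple_frequency}, increasing $r$ flattens the peaks lying near but below the maxima, causing the largest frequency $0.4$ to be badly underestimated for $r=50,100$, with $r=1$ the best choice; the ``larger $r$ separates better'' heuristic is valid only in the single-frequency setting of Section \ref{subsec:single_frequency}. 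So no $r\rightarrow\infty$ lemma can exist, and neither the paper nor your proposal supplies a finite-$r$ guarantee; the paper does not attempt one, leaving ``adequate choices of $r$ and $M$'' (the caveat stated explicitly in Corollary \ref{cor:cor_finite_1}) as a tuning matter settled empirically. Note also that in the infinite-$M$ setting the recommended partition is $q_m=2^{-m}$ (Remark \ref{remark:remark_infinite_1}), not a uniform refinement, so your ``fine enough partition'' premise would need restating even if the separation step could be salvaged.
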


\begin{remark}
\label{remark:remark_infinite_1}
As regards the choice of the quantities $q_m$, we suggest setting $q_m=2^{-m}$, for $m\geq 1$, which is the same as the base measure
for the Dirichlet process prior. For countably infinite number of frequencies, the choice of $r$ is difficult to decide. But we hope that
selecting $r$ such that most of the oscillations are visible as much as possible, will work even in this situation.
\end{remark}

\begin{remark}
\label{remark:remark_infinite_2}
It is useful to remark that our theory with countably infinite number of frequencies is readily
applicable to 
situations where the number of frequencies
is finite but unknown. In such cases, only a finite number of the probabilities $\left\{p_{m,j};~m=2,3\ldots\right\}$ 
will have posterior probabilities around positive quantities, while the rest will concentrate around zero.
For known finite number of limit points, it is only required to specify $G$ such that it 
gives positive mass to only a specific finite set.
\end{remark}

We now illustrate our Bayesian theory for detecting frequencies using simulation studies.

\subsection{Simulation study with a single frequency}
\label{subsec:single_frequency}

Following Example 2.8 of \ctn{Shumway06}, we generate $T = 500$ observations from the model
\begin{equation}
x_t = A \cos(2\pi\omega t + \varphi) + \epsilon_t,
\label{eq:single_freq}
\end{equation}
where $\omega = 1/50$, $A = 2$, $\varphi = 0.6\pi$, and $\epsilon_t\stackrel{iid}{\sim}N\left(0,\sigma^2\right)$, with $\sigma = 5$.
Figure \ref{fig:osc_series1} displays the generated time series. Observe that due to the relatively large $\sigma$, the true frequency is blurred
in the observed time series.
Our goal is to recover the frequency $\omega=1/50$ using our Bayesian method, pretending
that the true frequency is unknown.
\begin{figure}
\centering
\includegraphics[width=10cm,height=6cm]{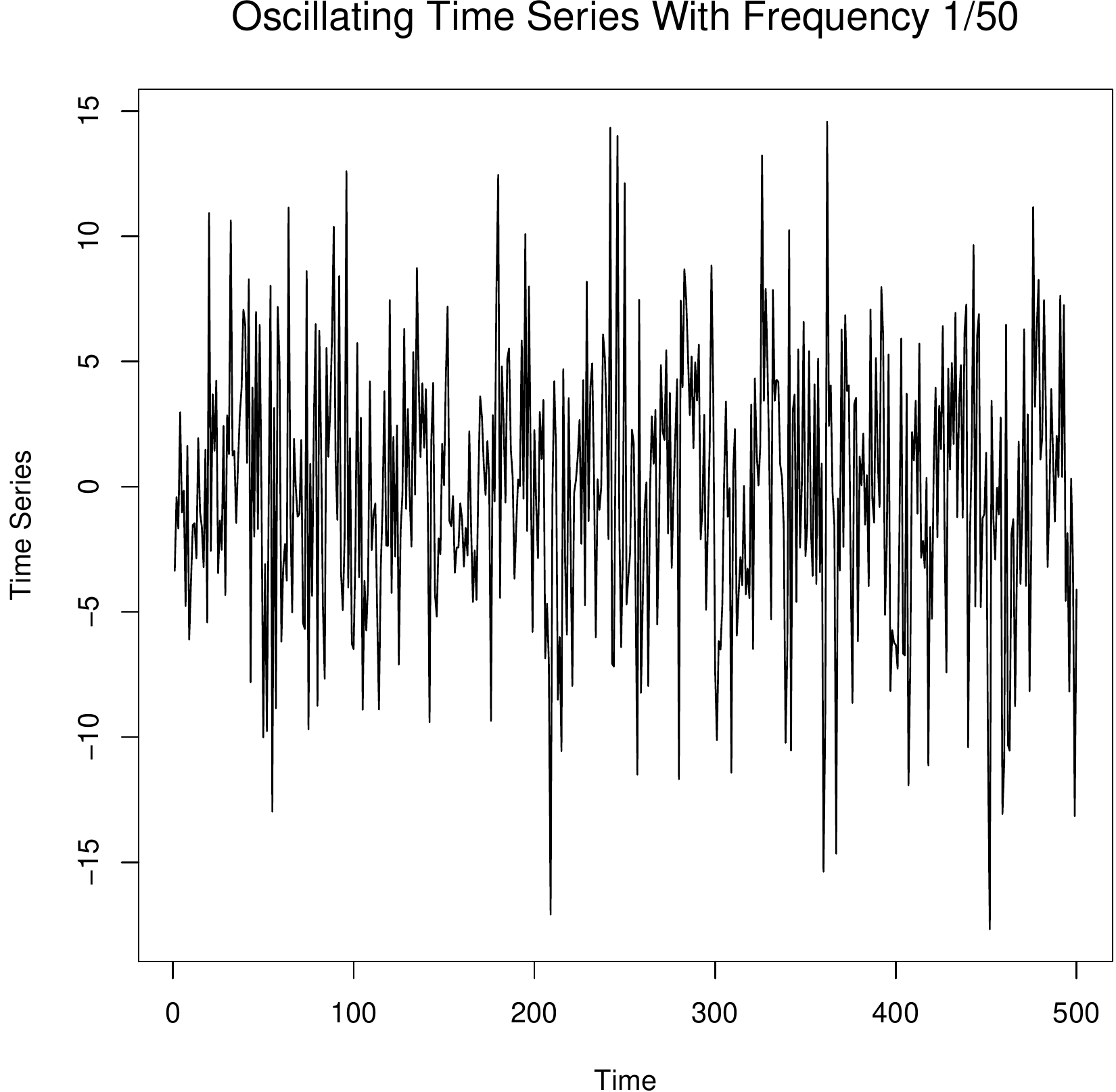}
\caption{Simulated oscillating time series with true frequency $0.02$. }
\label{fig:osc_series1}
\end{figure}

We apply our Bayesian technique based on Dirichlet process, but with the base measure $G_0$ giving probability $1/M$ to each of the values
$1,\ldots,M$. Since our method depends crucially on the choices of $r$ and $M$, it is important to carefully choose these quantities. 
As we had already prescribed, $r$ should be so chosen that the oscillations of $\bZ^r$ are easy to visualize. Figure \ref{fig:osc_example1_r}
shows the transformed time series $\bZ^r$ for different values of $r$. 
In this example we see that as $r$ is increased, the oscillations tend to be more and more explicit.
Thus, it seems that $r=1000$ is the best choice among those experimented with.
\begin{figure}
\centering
\subfigure [Transformed series $\bZ^{10}$.]{ \label{fig:r1}
\includegraphics[width=6.5cm,height=5cm]{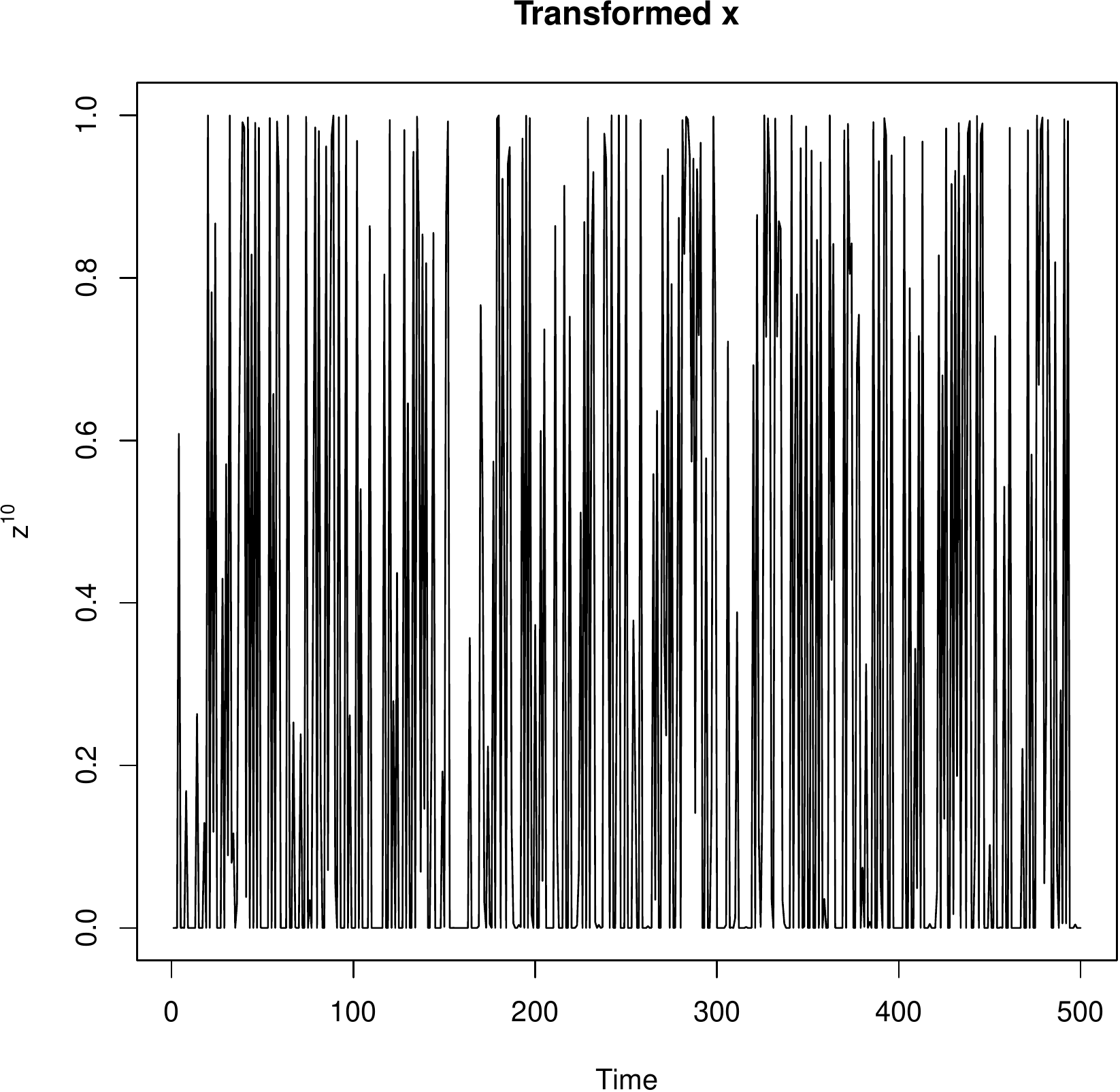}}
\hspace{2mm}
\subfigure [Transformed series $\bZ^{50}$.]{ \label{fig:r2}
\includegraphics[width=6.5cm,height=5cm]{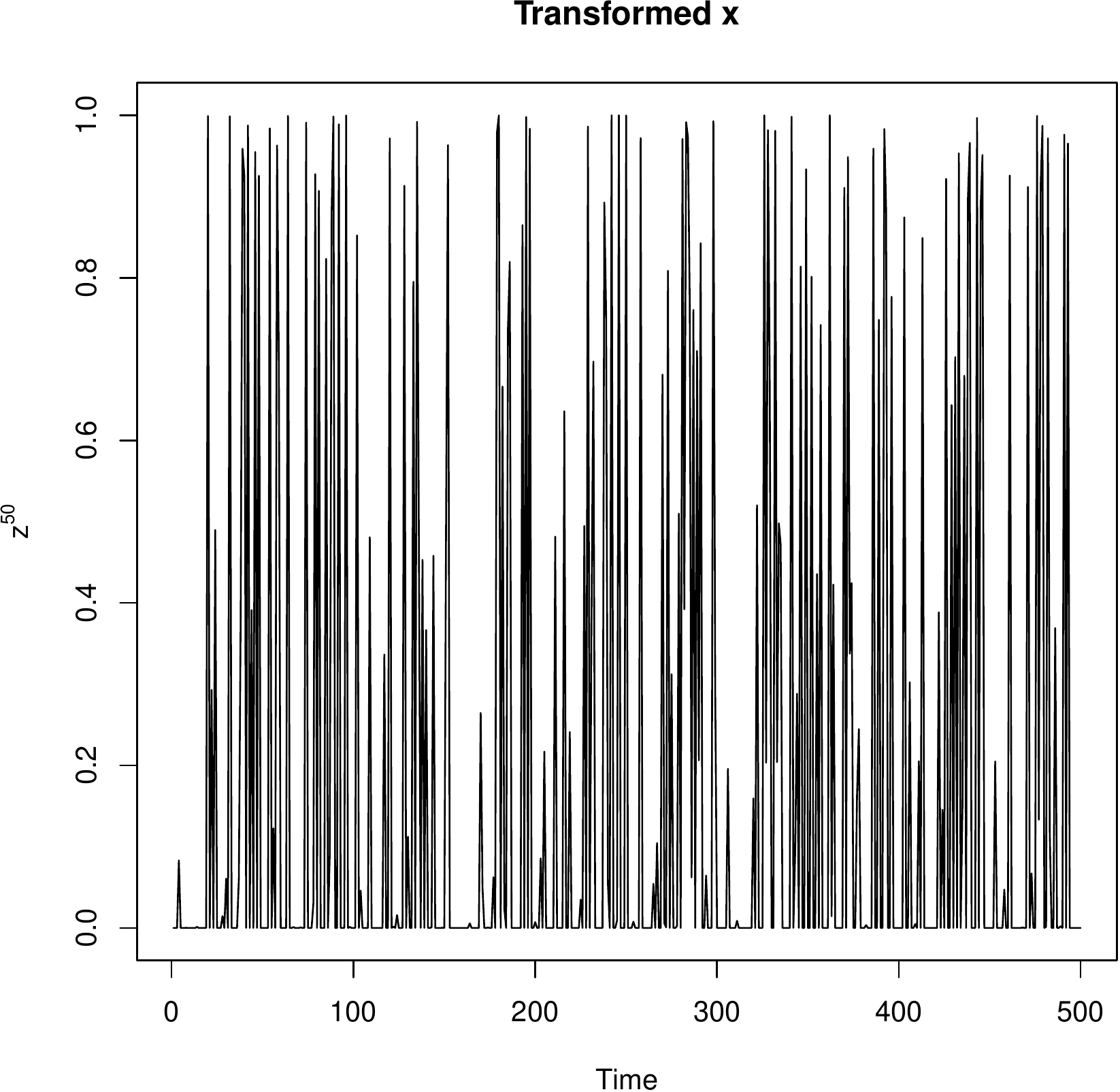}}\\
\vspace{2mm}
\subfigure [Transformed series $\bZ^{100}$.]{ \label{fig:r3}
\includegraphics[width=6.5cm,height=5cm]{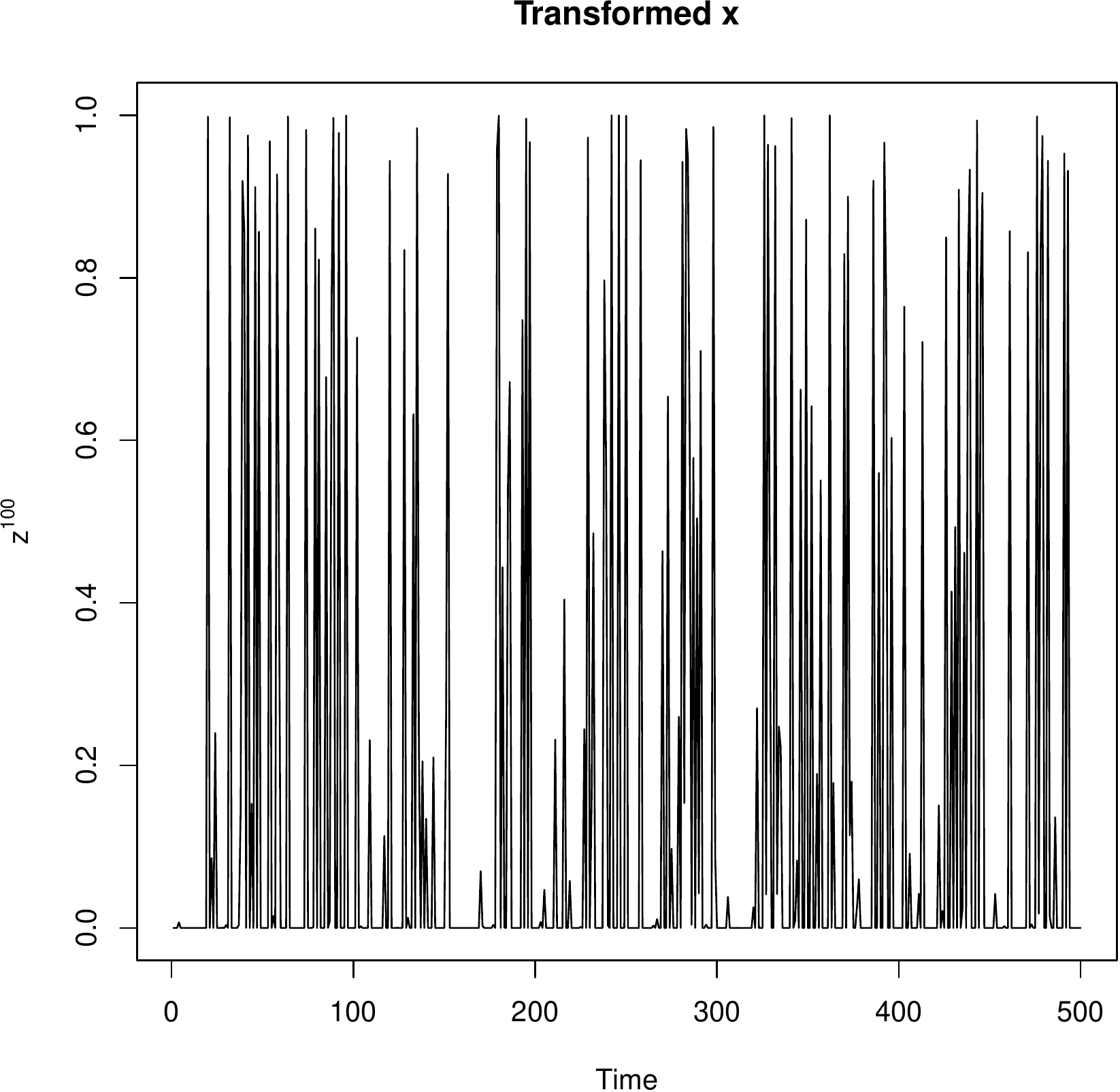}}
\hspace{2mm}
\subfigure [Transformed series $\bZ^{500}$.]{ \label{fig:r4}
\includegraphics[width=6.5cm,height=5cm]{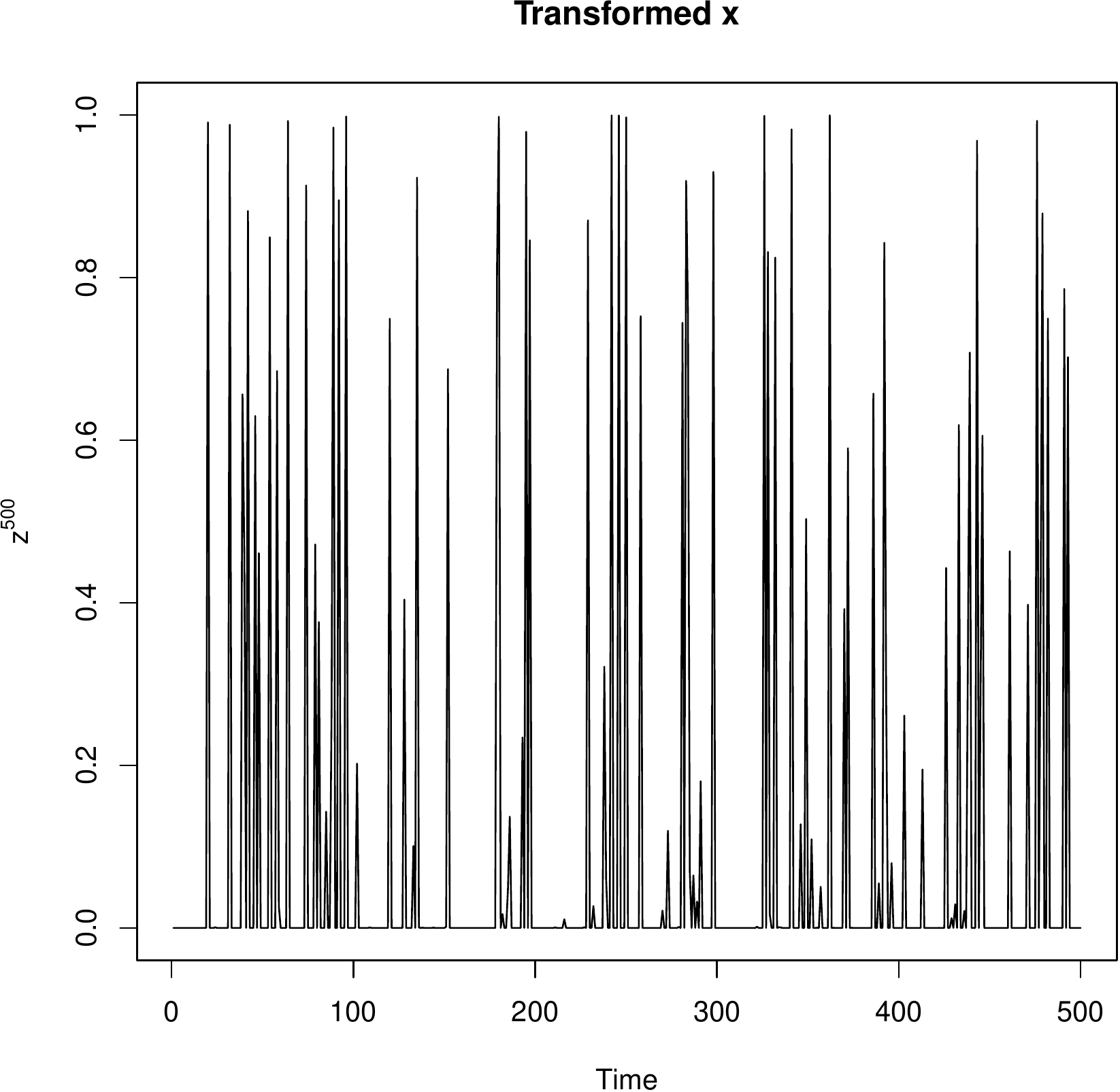}}\\
\vspace{2mm}
\subfigure [Transformed series $\bZ^{1000}$.]{ \label{fig:r5}
\includegraphics[width=6.5cm,height=5cm]{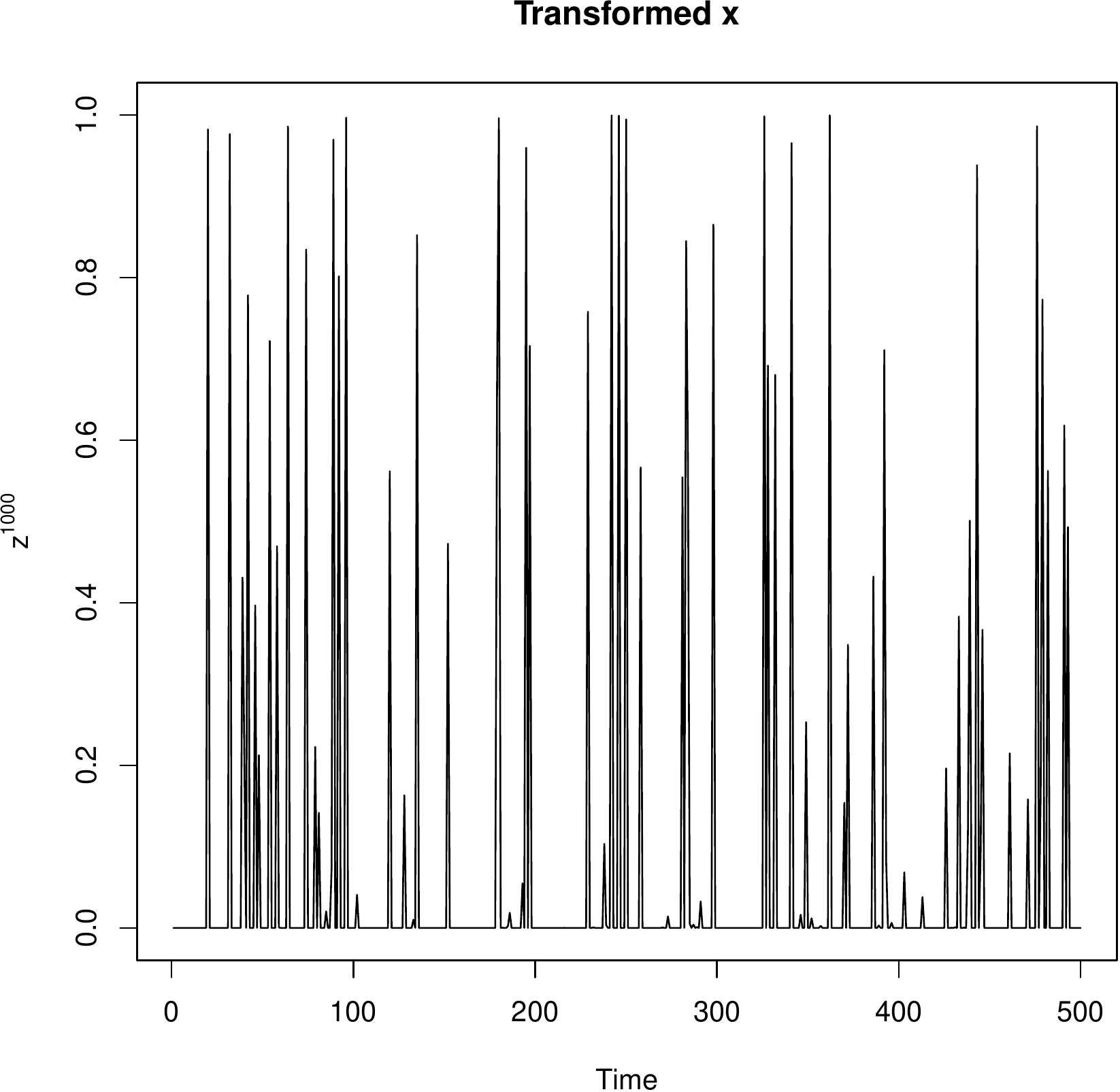}}
\caption{Illustration of effects of $r$ in $\bZ^r$ in determining single frequency in (\ref{eq:single_freq}). Here the true frequency is $0.02$. }
\label{fig:osc_example1_r}
\end{figure}

For the choice of $M$ we need to select a large enough value such that the range of $\bZ^r$ gets adequately partitioned within 
$\left\{(\tilde p_{m-1},\tilde p_m];m=1,\ldots,M\right\}$. In other words, relatively large values of $r$ and $M$ are expected to yield good Bayesian results.
We investigate this by implementing our Bayesian method for different values of $r$ and $M$ and comparing the results.

Figures \ref{fig:osc_example1} and \ref{fig:osc_example2} depict the results of our Bayesian method for various choices of $r$ and $M$.
As shown by the figures, for increasing values of $r=10,50,100,500,1000$, and $M=10,50,100$, the posterior of $p_{M,j}$ 
associated with the interval $(\tilde p_{M-1},\tilde p_M]$, increasingly converges to the true frequency $0.02$. Note that for relatively small values of either
$r$ or $M$, the relevant posteriors fail to converge. 
Thus, the results are in keeping with our expectation of obtaining superior results for large values of $r$ and $M$.
Note that the rate of convergence of the posterior seems to be faster with respect to increasing values of $r$ compared to increasing values of $M$.
Thus, appropriate choice of $r$ seems to be more important than $M$.
\begin{figure}
\centering
\subfigure [$r=10,M=10$.]{ \label{fig:osc_1}
\includegraphics[width=4.5cm,height=4.5cm]{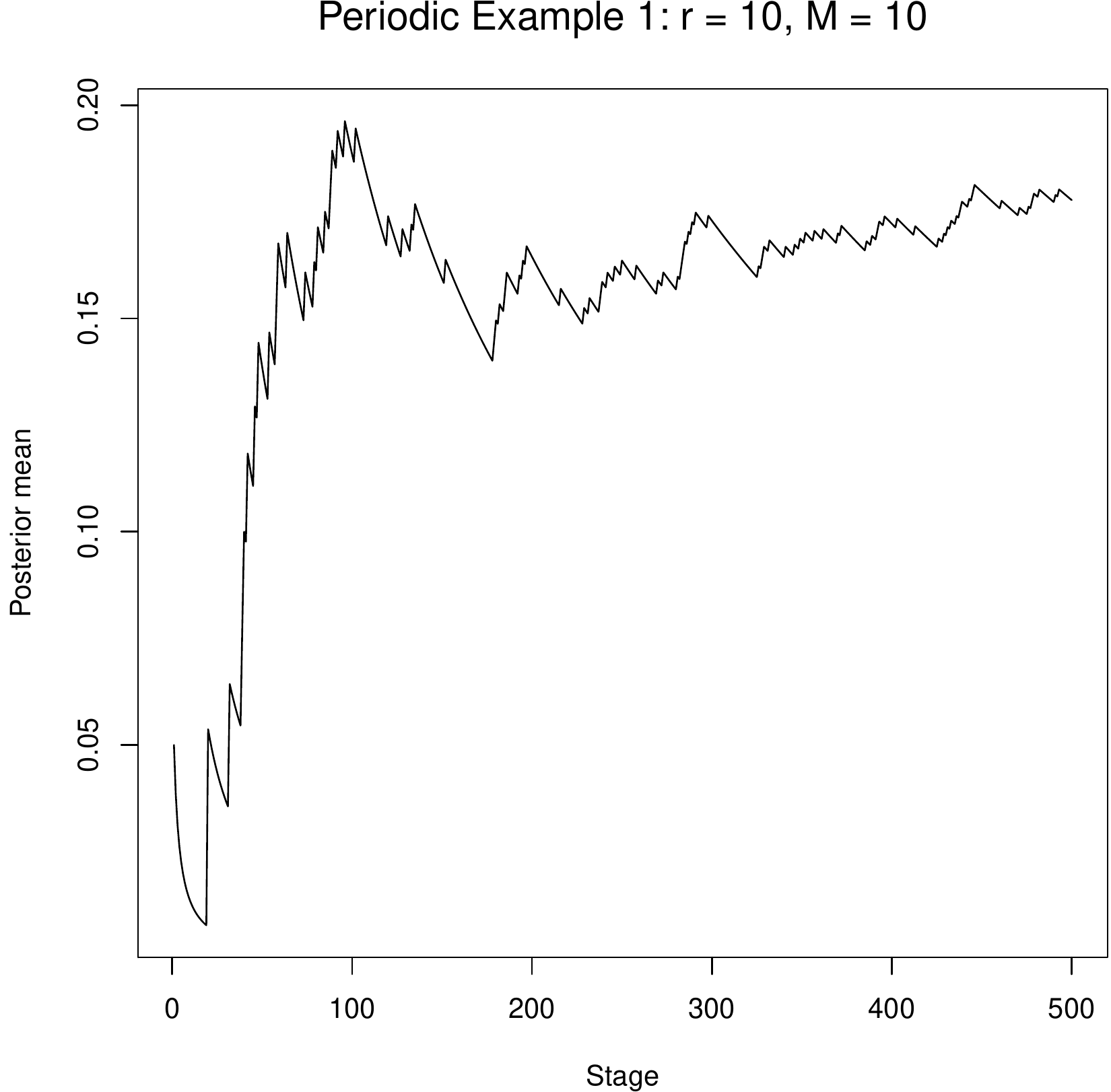}}
\hspace{2mm}
\subfigure [$r=10,M=50$.]{ \label{fig:osc_2}
\includegraphics[width=4.5cm,height=4.5cm]{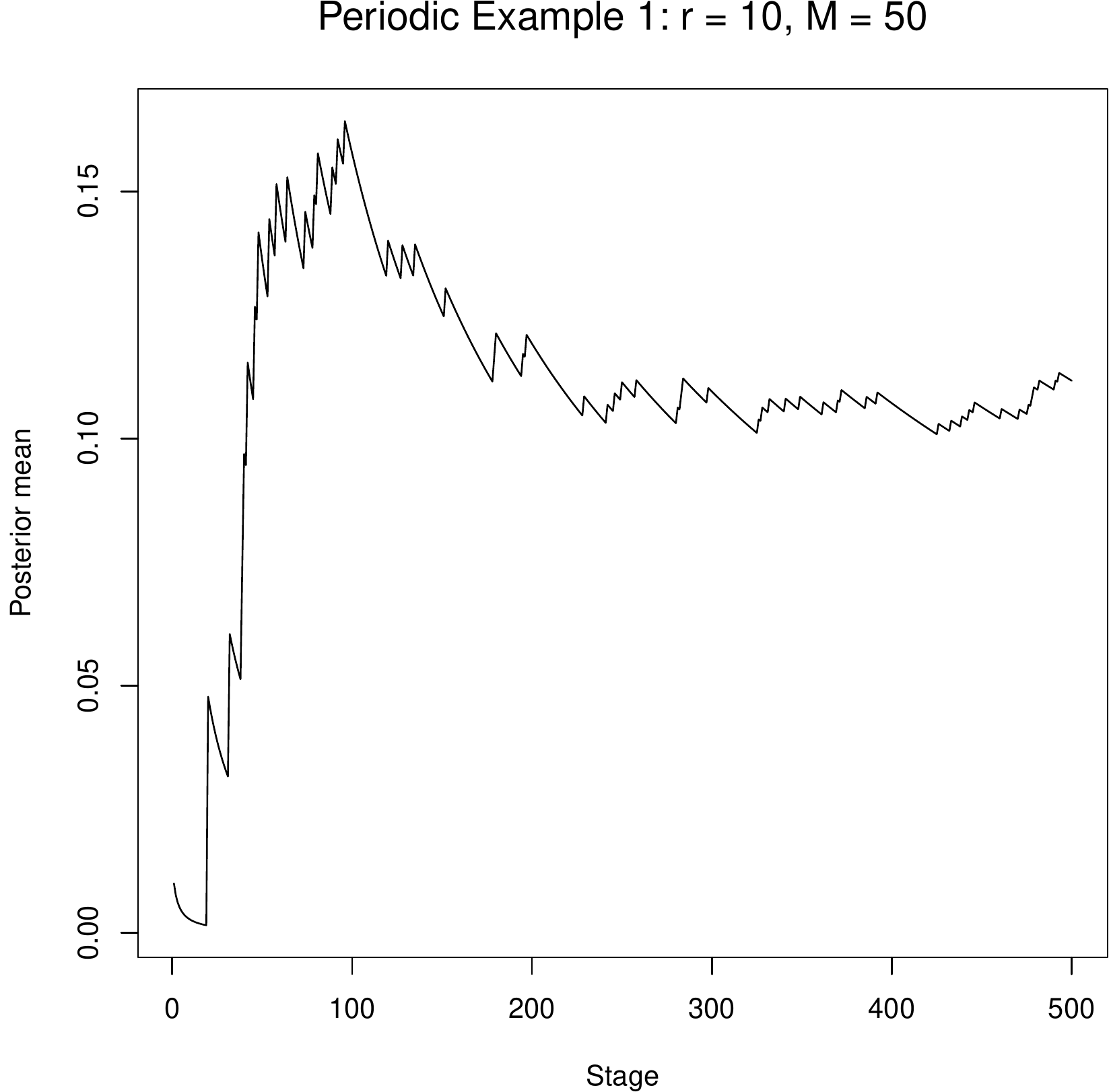}}
\hspace{2mm}
\subfigure [$r=10,M=100$.]{ \label{fig:osc_3}
\includegraphics[width=4.5cm,height=4.5cm]{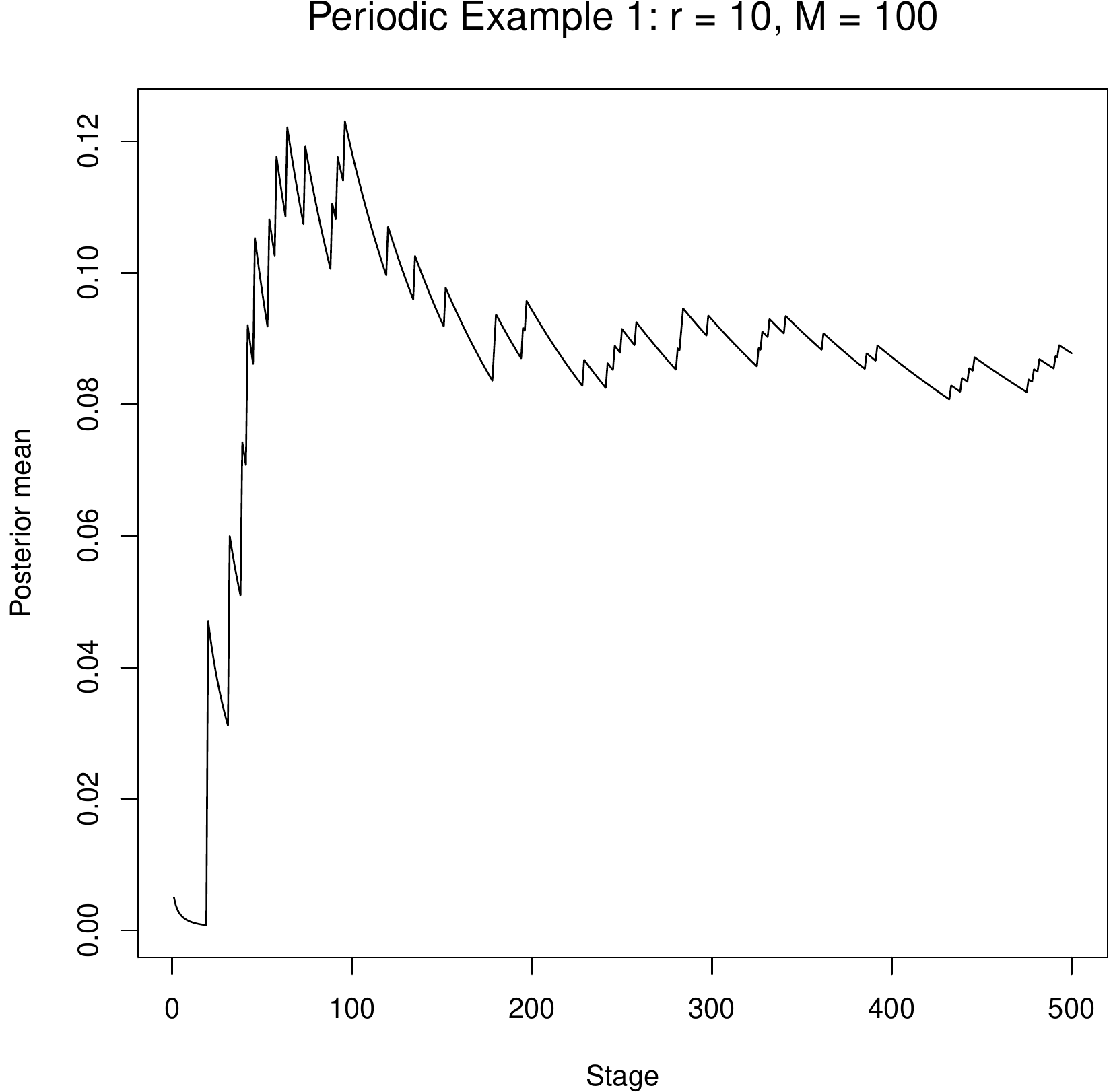}}\\
\vspace{2mm}
\subfigure [$r=50,M=10$.]{ \label{fig:osc_4}
\includegraphics[width=4.5cm,height=4.5cm]{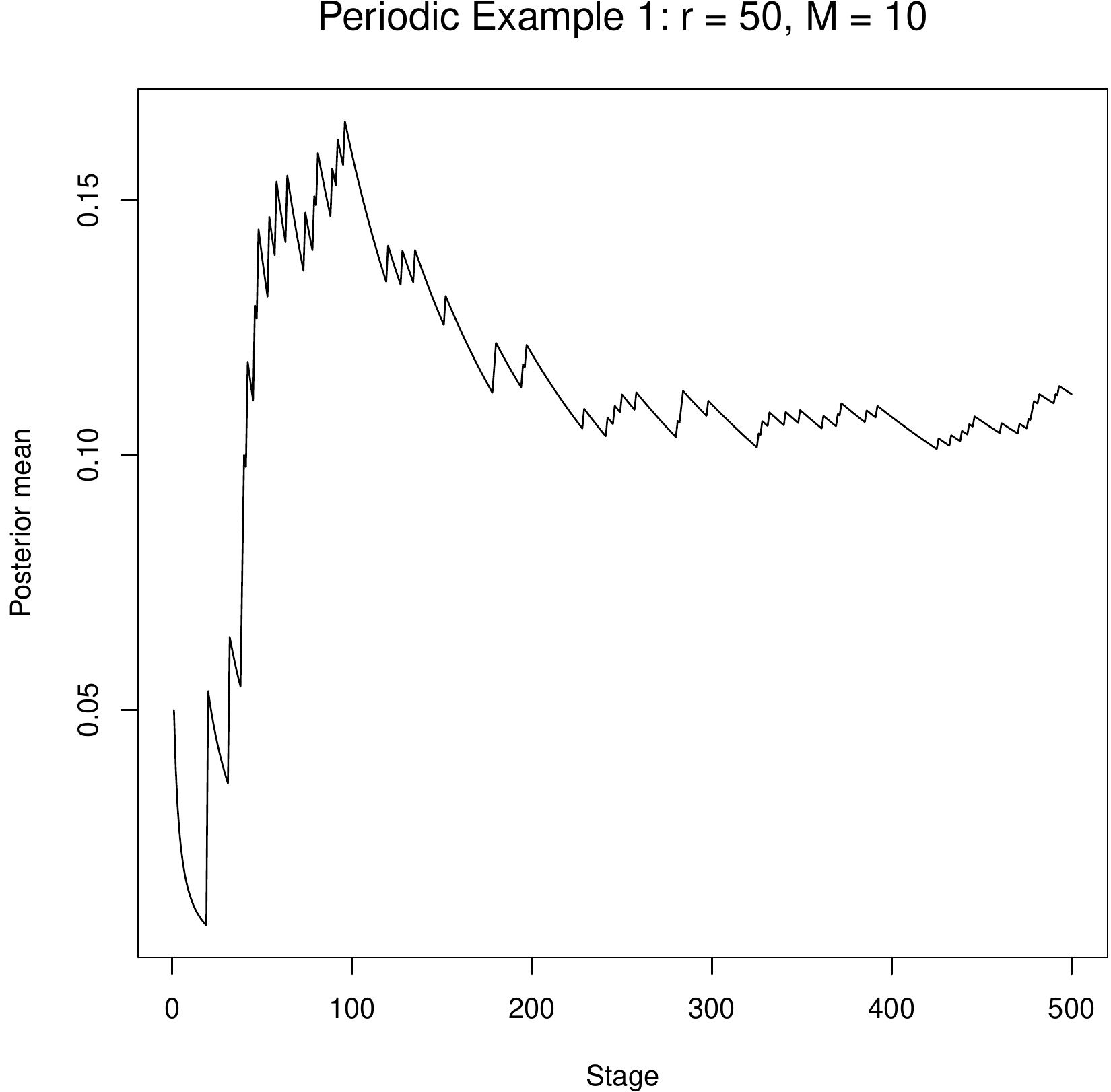}}
\hspace{2mm}
\subfigure [$r=50,M=50$.]{ \label{fig:osc_5}
\includegraphics[width=4.5cm,height=4.5cm]{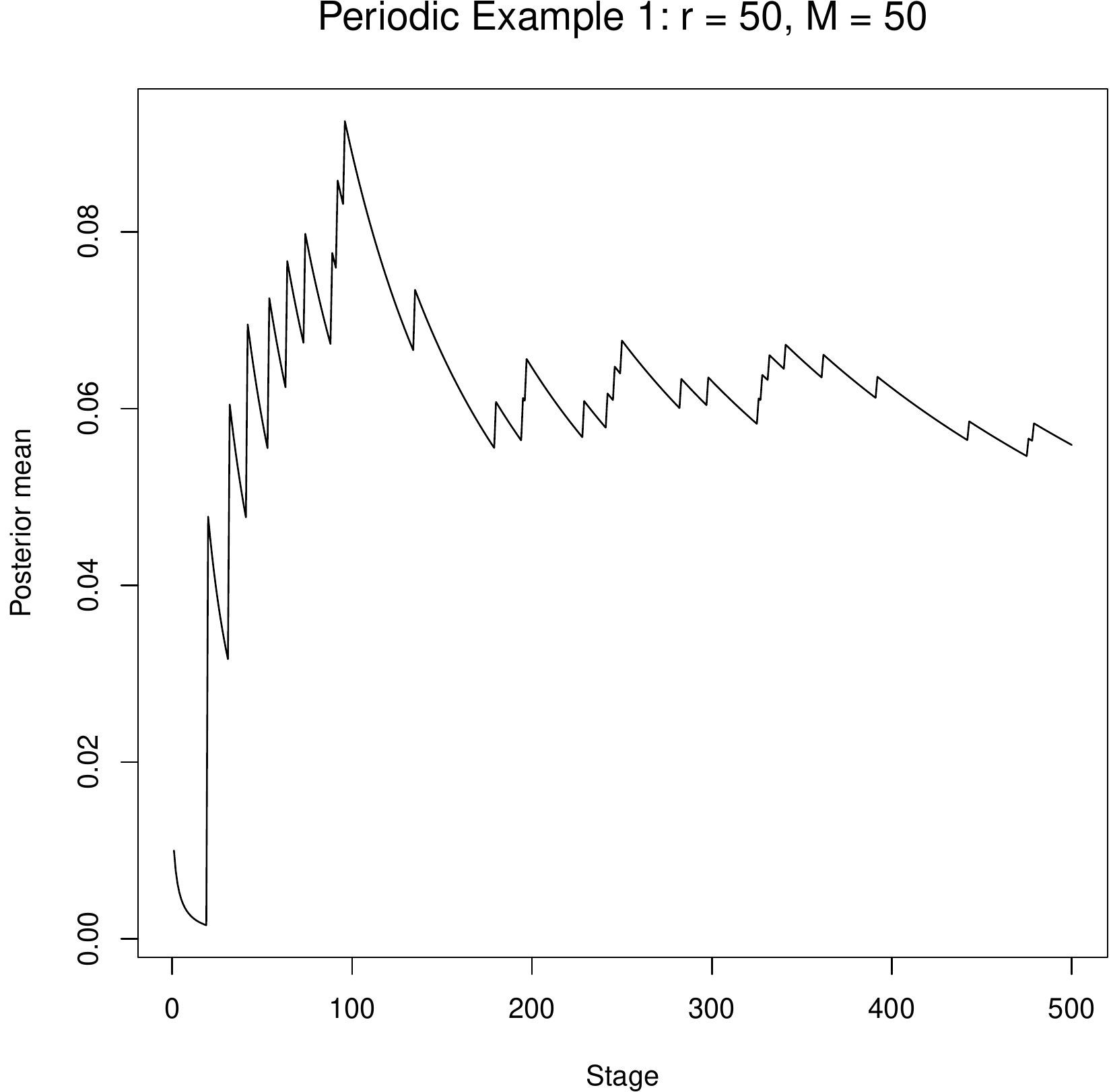}}
\hspace{2mm}
\subfigure [$r=50,M=100$.]{ \label{fig:osc_6}
\includegraphics[width=4.5cm,height=4.5cm]{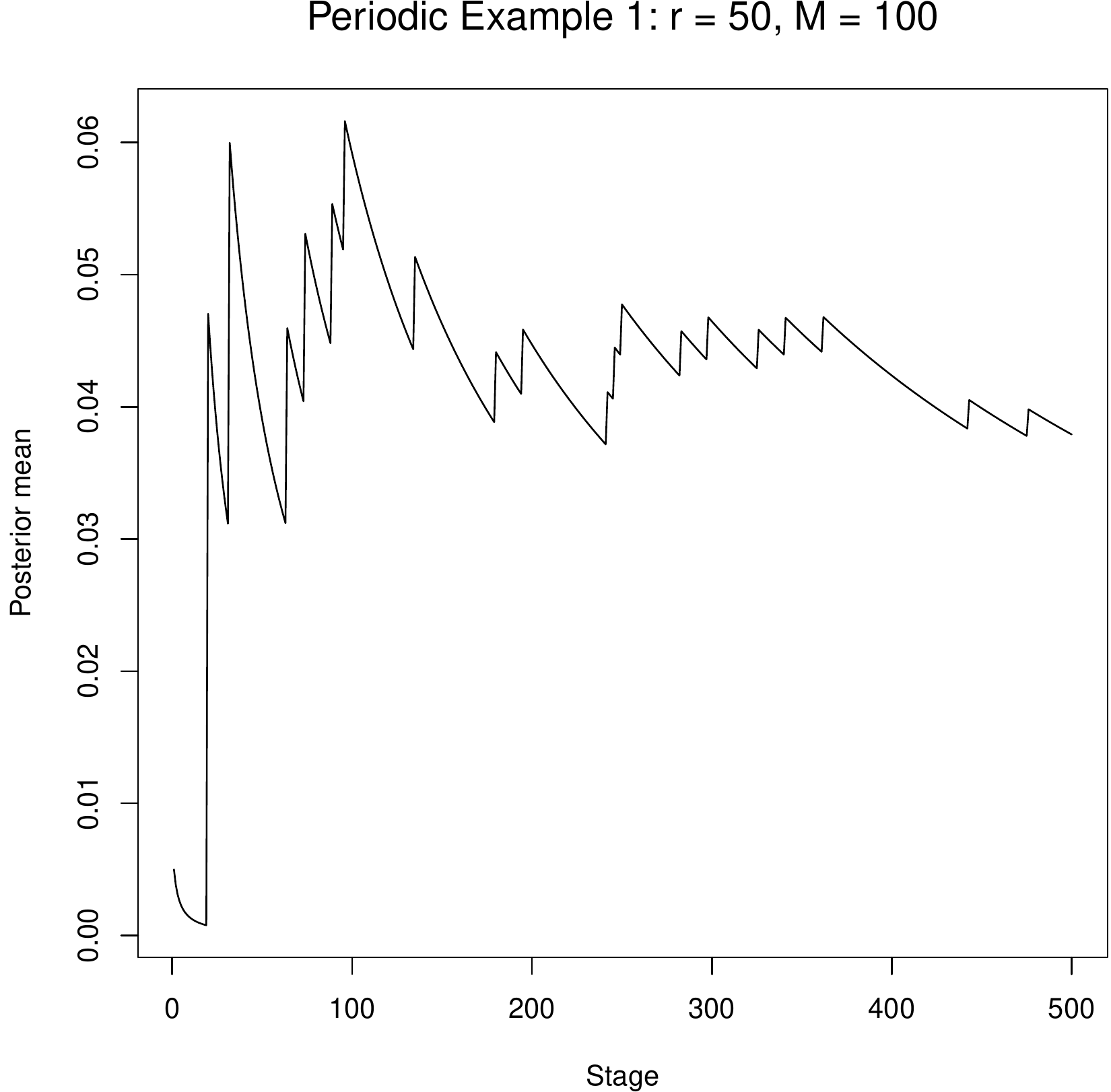}}\\
\vspace{2mm}
\subfigure [$r=100,M=10$.]{ \label{fig:osc_7}
\includegraphics[width=4.5cm,height=4.5cm]{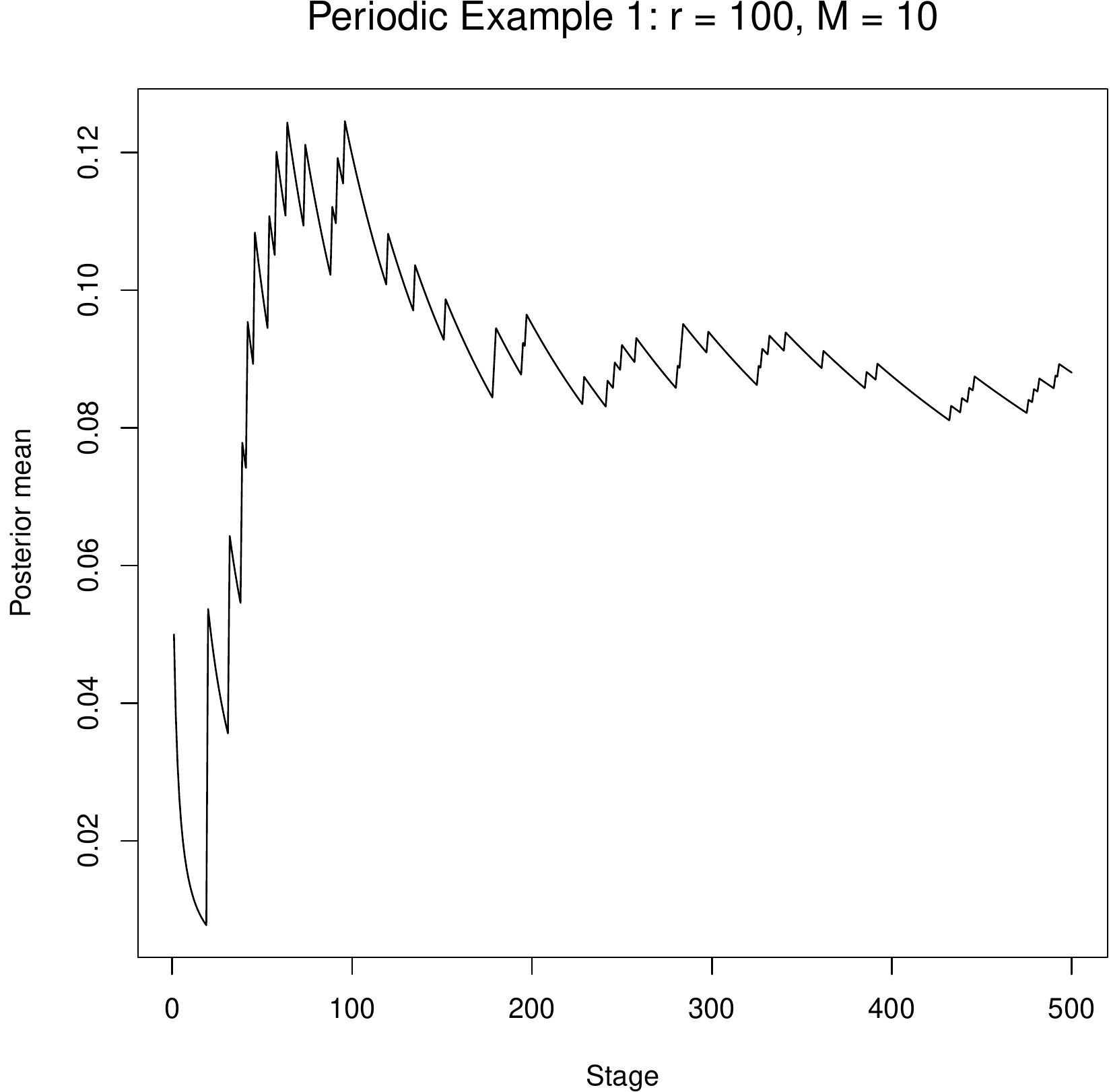}}
\hspace{2mm}
\subfigure [$r=100,M=50$.]{ \label{fig:osc_8}
\includegraphics[width=4.5cm,height=4.5cm]{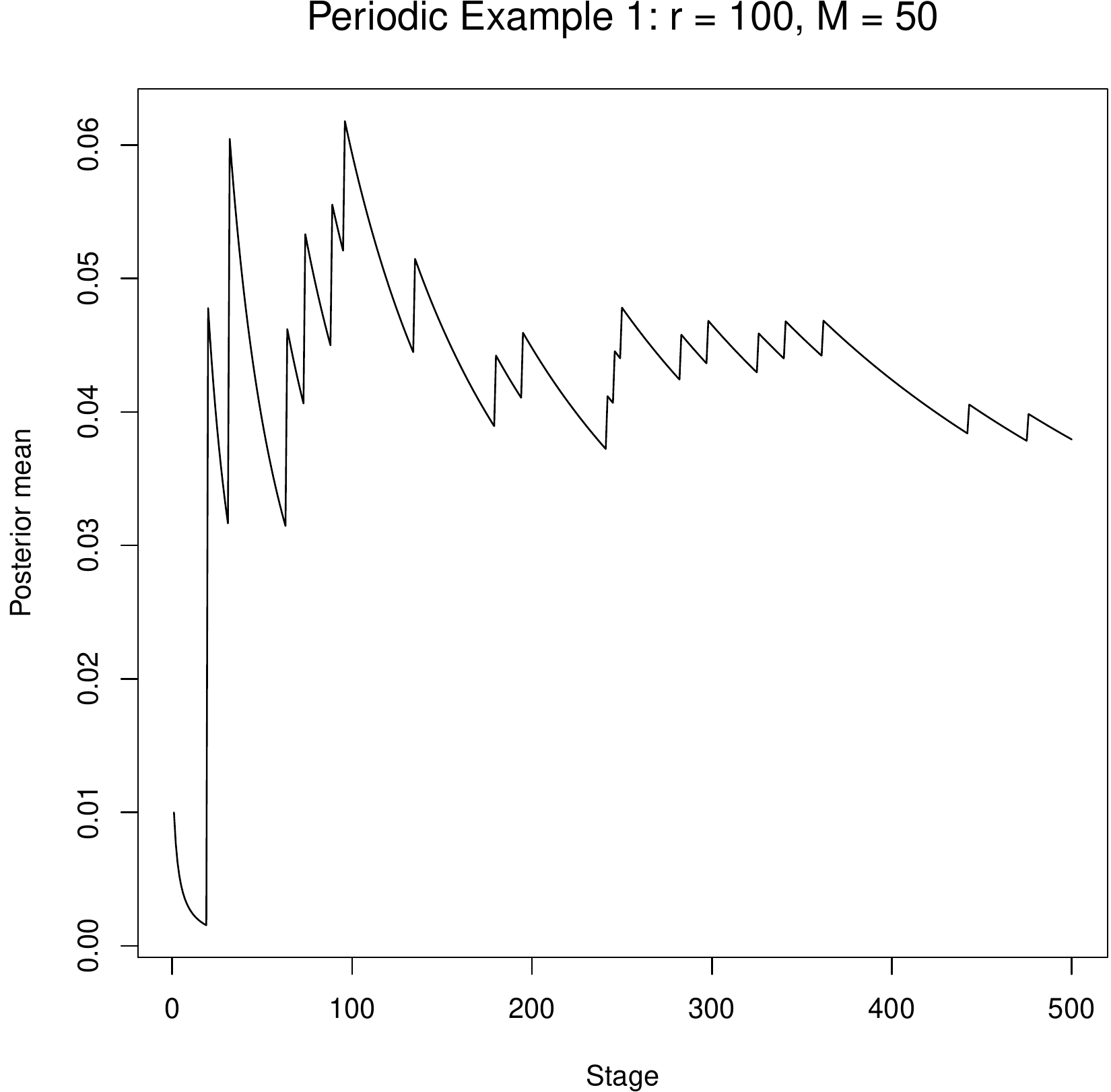}}
\hspace{2mm}
\subfigure [$r=100,M=100$.]{ \label{fig:osc_9}
\includegraphics[width=4.5cm,height=4.5cm]{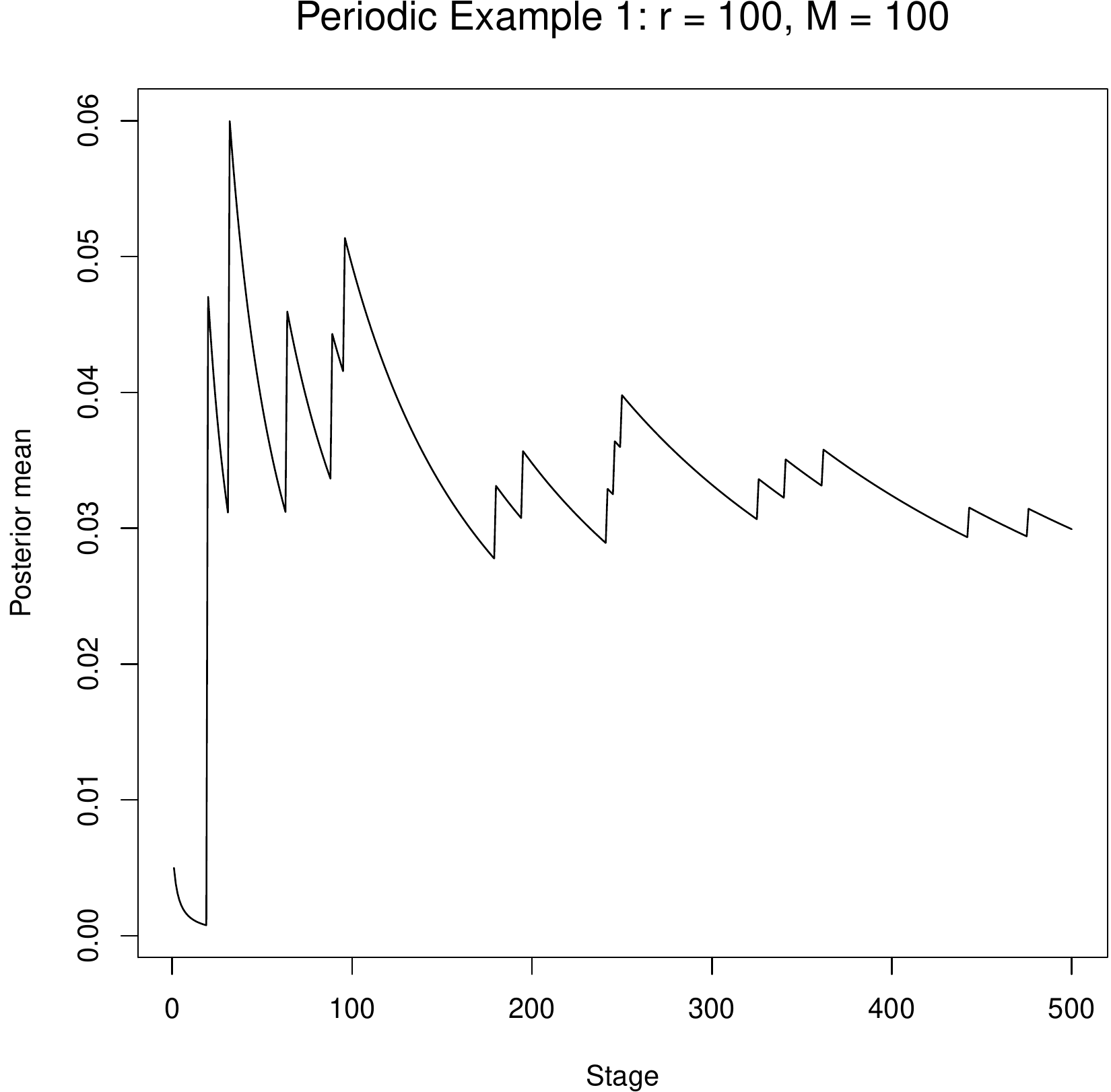}}
\caption{Illustration of our Bayesian method for determining single frequency. Here the true frequency is $0.02$. }
\label{fig:osc_example1}
\end{figure}

\begin{figure}
\centering
\subfigure [$r=500,M=10$.]{ \label{fig:osc_10}
\includegraphics[width=4.5cm,height=4.5cm]{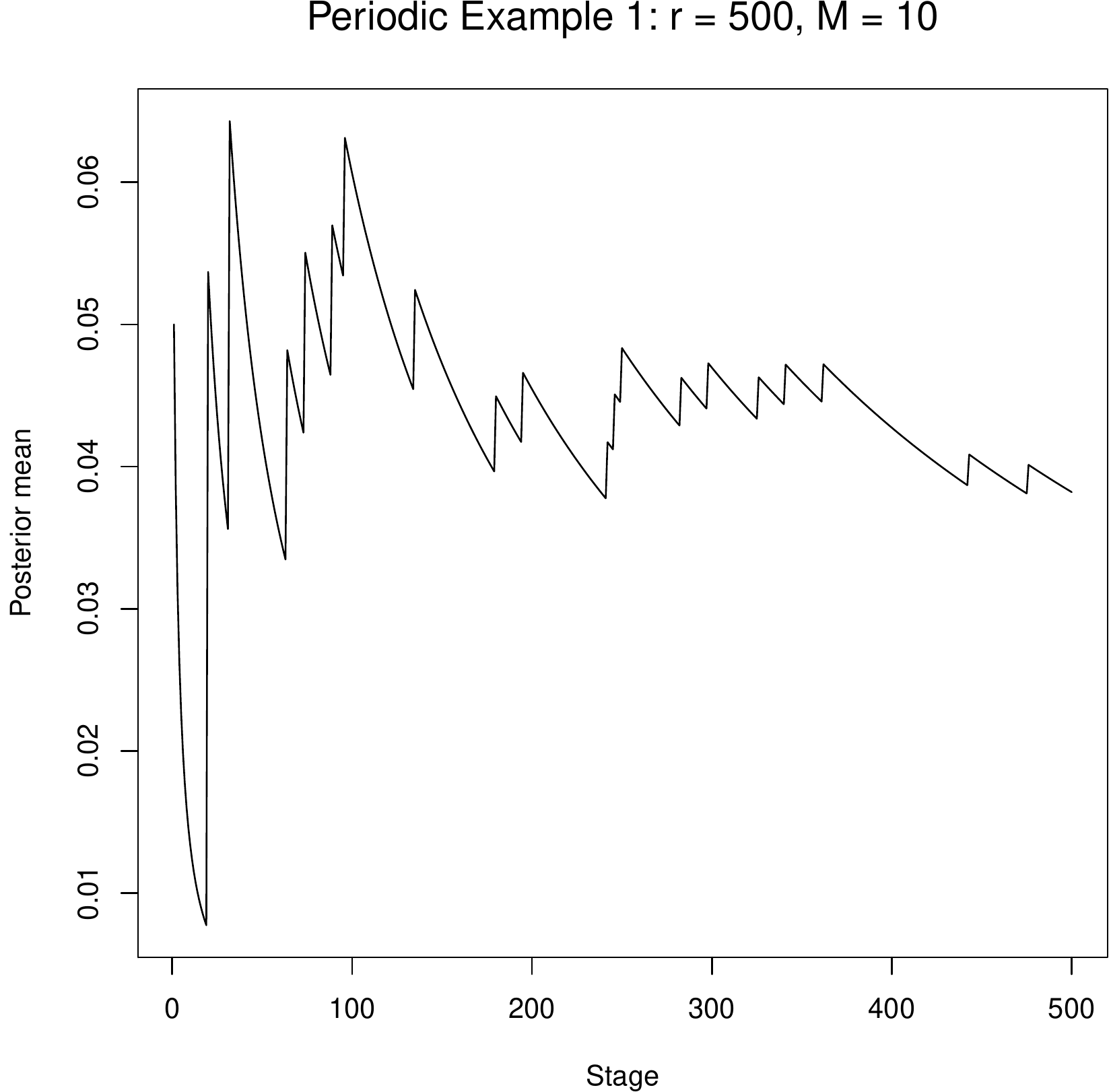}}
\hspace{2mm}
\subfigure [$r=500,M=50$.]{ \label{fig:osc_11}
\includegraphics[width=4.5cm,height=4.5cm]{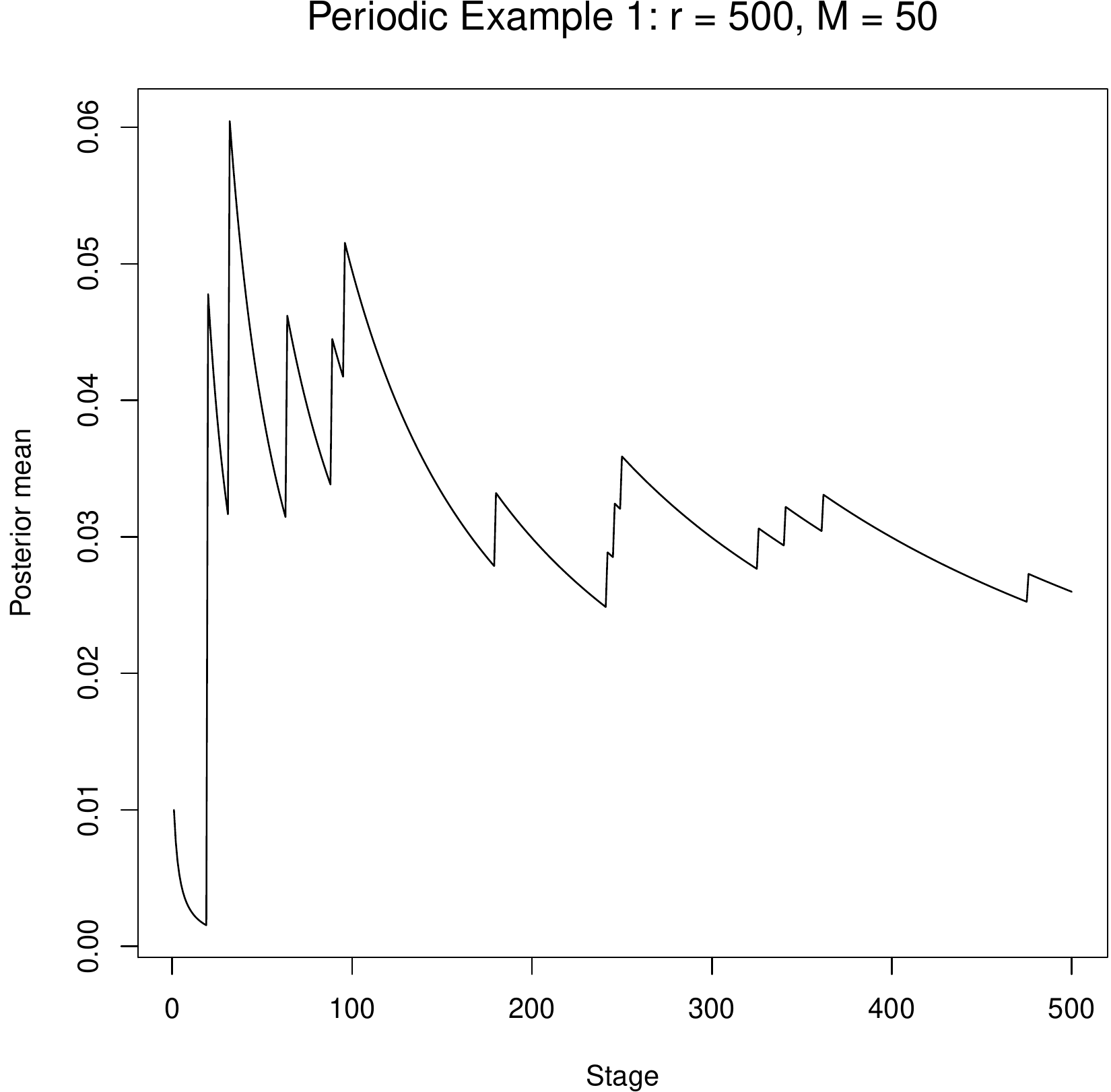}}
\hspace{2mm}
\subfigure [$r=500,M=100$.]{ \label{fig:osc_12}
\includegraphics[width=4.5cm,height=4.5cm]{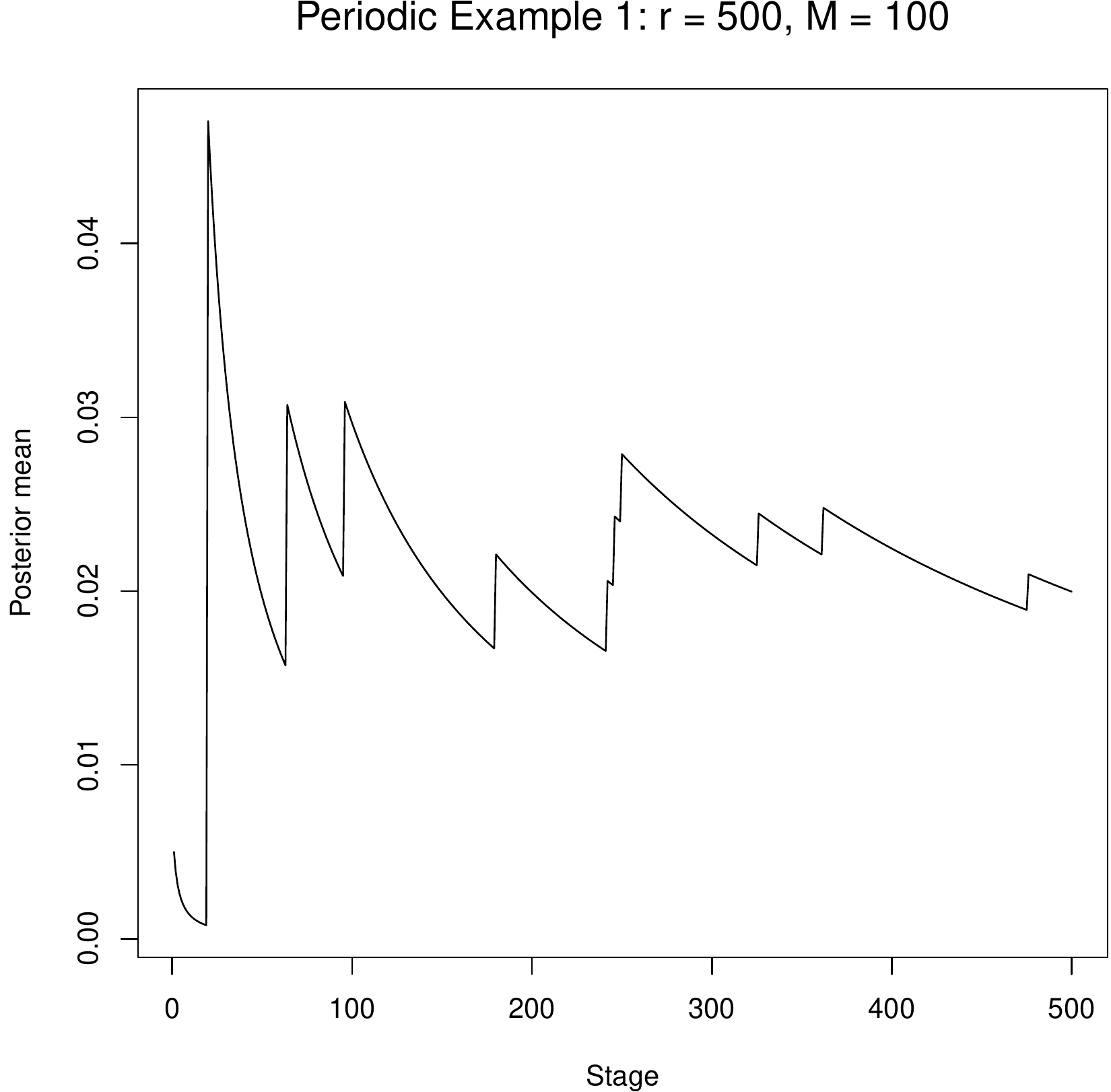}}\\
\vspace{2mm}
\subfigure [$r=1000,M=10$.]{ \label{fig:osc_13}
\includegraphics[width=4.5cm,height=4.5cm]{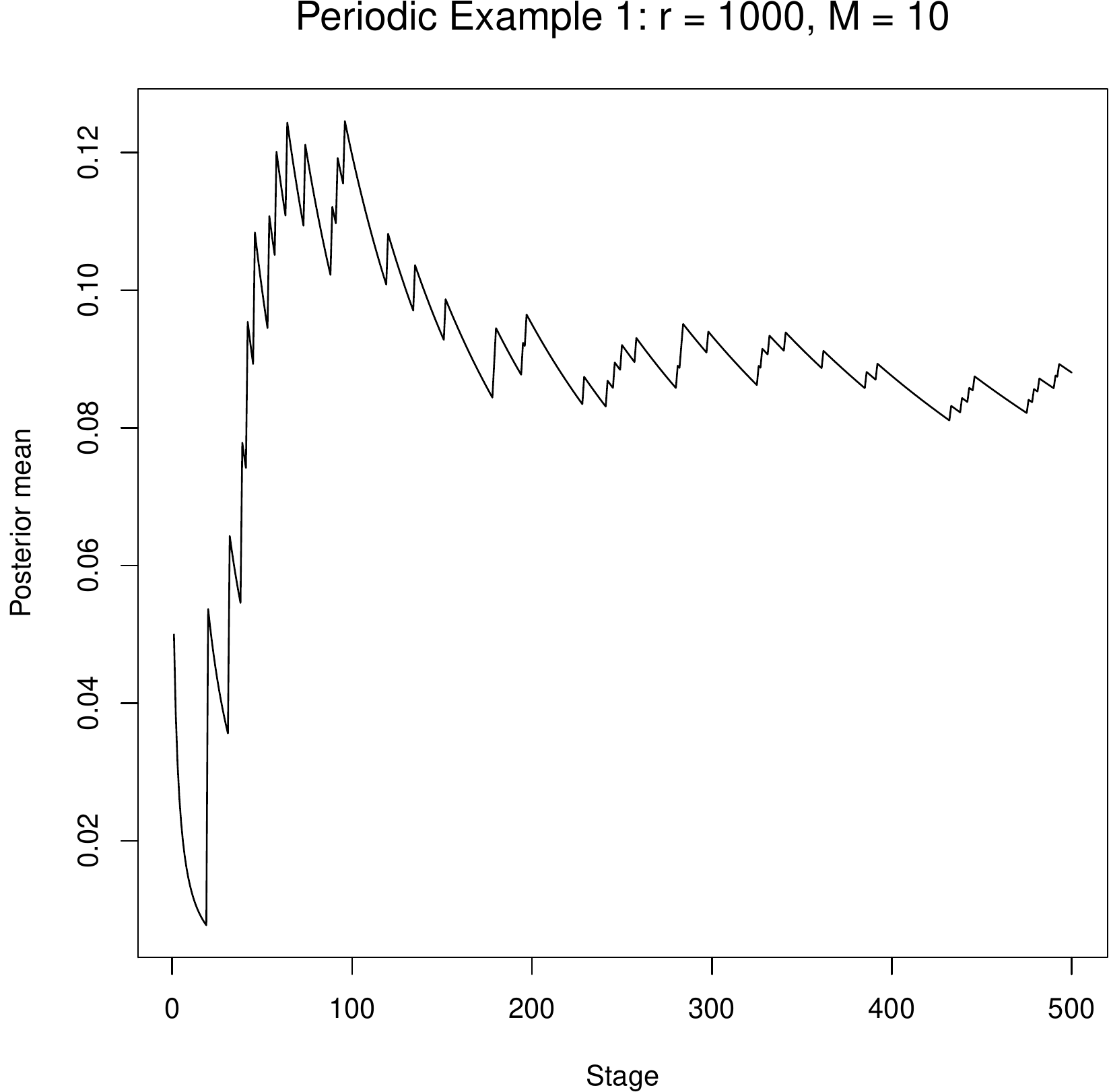}}
\hspace{2mm}
\subfigure [$r=1000,M=50$.]{ \label{fig:osc_14}
\includegraphics[width=4.5cm,height=4.5cm]{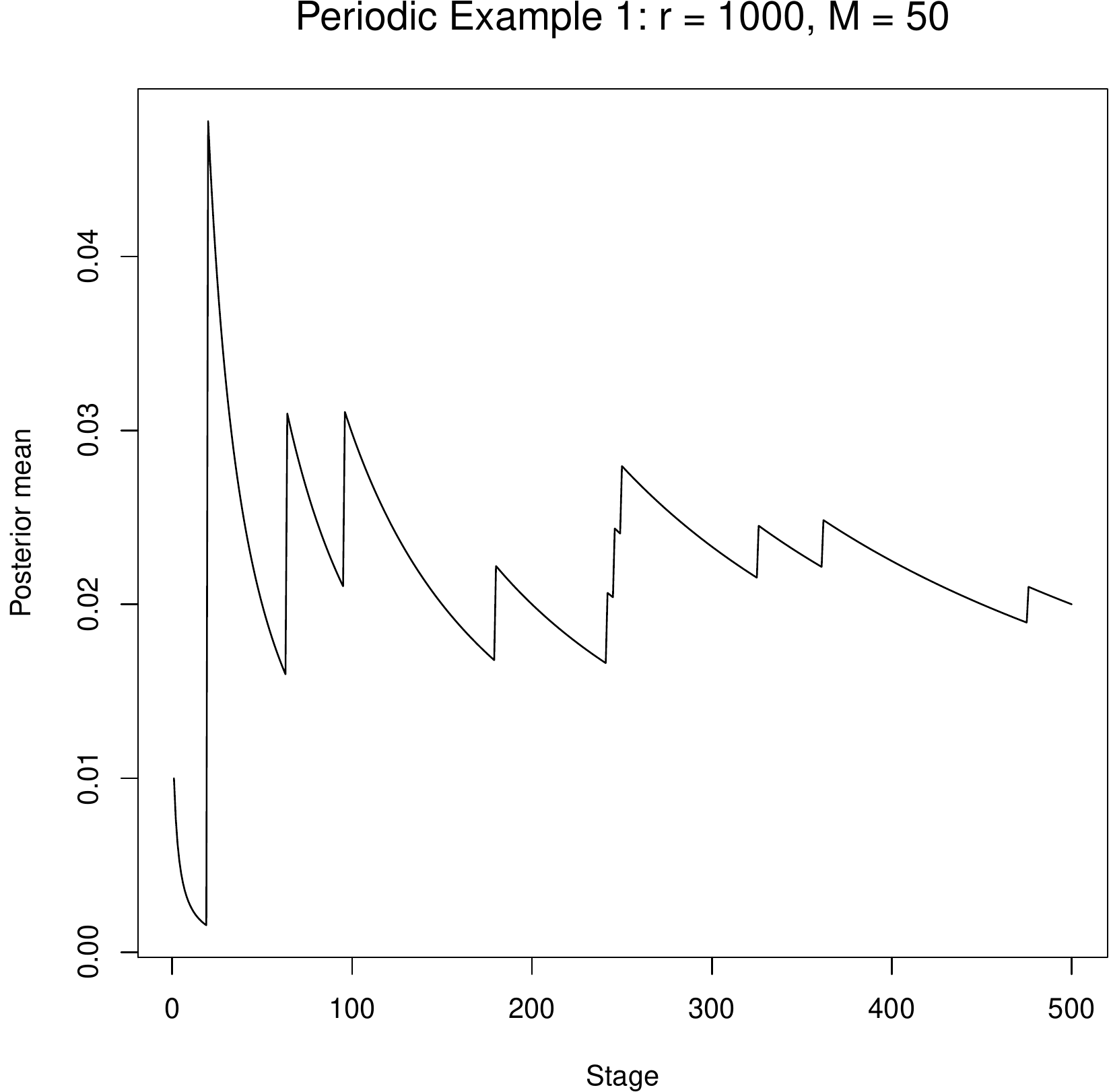}}
\hspace{2mm}
\subfigure [$r=1000,M=100$.]{ \label{fig:osc_15}
\includegraphics[width=4.5cm,height=4.5cm]{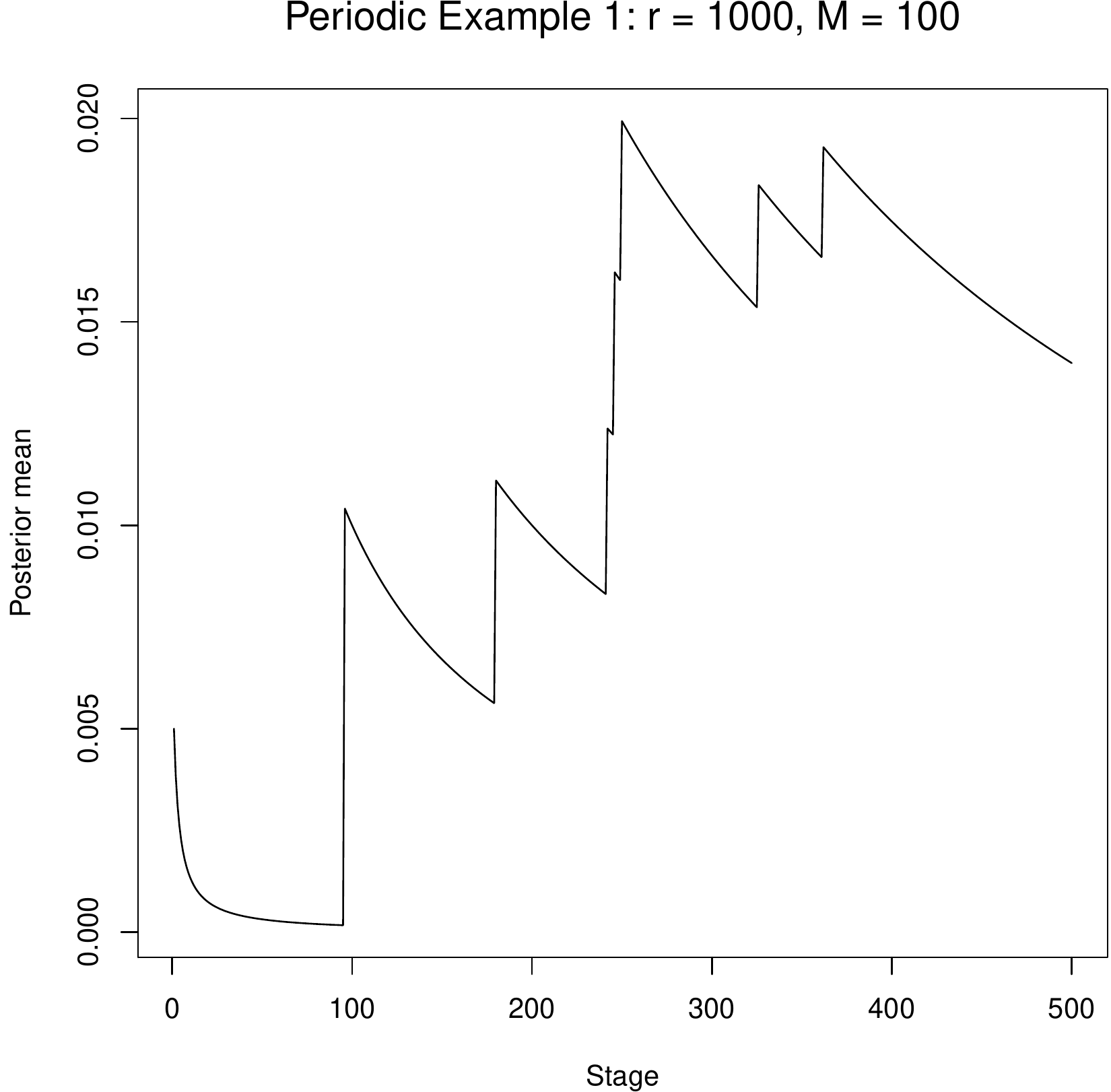}}
\caption{Illustration of our Bayesian method for determining single frequency. Here the true frequency is $0.02$. }
\label{fig:osc_example2}
\end{figure}

Following \ctn{Shumway06} we have generated only $500$ observations from (\ref{eq:single_freq}) for inference, due to reasons of comparability
with the results obtained by \ctn{Shumway06}. If large enough datasets are not available in reality, our Bayesian inference needs to be as accurate as possible
based on the available data, and our analyses indeed provide glimpses of such reliable Bayesian inference. But in the current ``big data"
era large datasets are making their appearances, and it is important to weigh our inference with respect to large datasets, which also provide opportunities
to properly validate our convergence theory, which is usually not viable for small datasets. 

We thus generate a dataset from (\ref{eq:single_freq}) with $T=5\times 10^5$, and apply our Bayesian procedure with $r=1000$ and $M=10,50,100$, in order
to detect the true frequency $0.02$. The results are displayed in Figure \ref{fig:osc_example2_long}. Observe that for $M=10$, the true frequency is overestimated,
as shown in panel (a) associated with convergence of $p_{10,j}$ as $j\rightarrow\infty$, and for $M=100$, underestimation occurs, 
as captured by panel (c) associated with convergence of $p_{100,j}$ as $j\rightarrow\infty$. Panel (b) shows convergence of $p_{50,j}$
as $j\rightarrow\infty$, where convergence occurs around $0.019$,
quite close to the truth. Panel (d) displays the result of convergence of $p_{100,j}+p_{99,j}$, as $j\rightarrow\infty$. This sum converges around $0.019$.
The reason for over and under estimation for $M=10$ and $100$ can be attributed to too coarse and too fine partitions of $[0,1]$ via the choice of $M$, while
for $M=50$, the partitioning seems more reasonable in comparison. Adding up $p_{100,j}$ and $p_{99,j}$ compensates for the too fine partitioning of $[0,1]$ in this case.

The effects of partitioning also points towards another issue -- even $p_{50,j}$ and $p_{100,j}+p_{99,j}$ fail to capture the true frequency as $j\rightarrow\infty$,
since the posterior variance becomes negligibly small as $j\rightarrow\infty$. In principle, it is possible to partition $[0,1]$ appropriately (perhaps, using 
good choices of $q_m$), such that convergence to the exact true frequency is achieved. In this example, setting $M=40$ is enough, as depicted in 
Figure \ref{fig:osc_correct_convergence}. Note that such subtle issues can not be detected or analyzed for sample size as small as $500$. Nevertheless,
our final Bayesian results do convey very reliable analysis even for such small dataset.
\begin{figure}
\centering
\subfigure [$r=1000,M=10$.]{ \label{fig:osc_1_long}
\includegraphics[width=6.0cm,height=6.0cm]{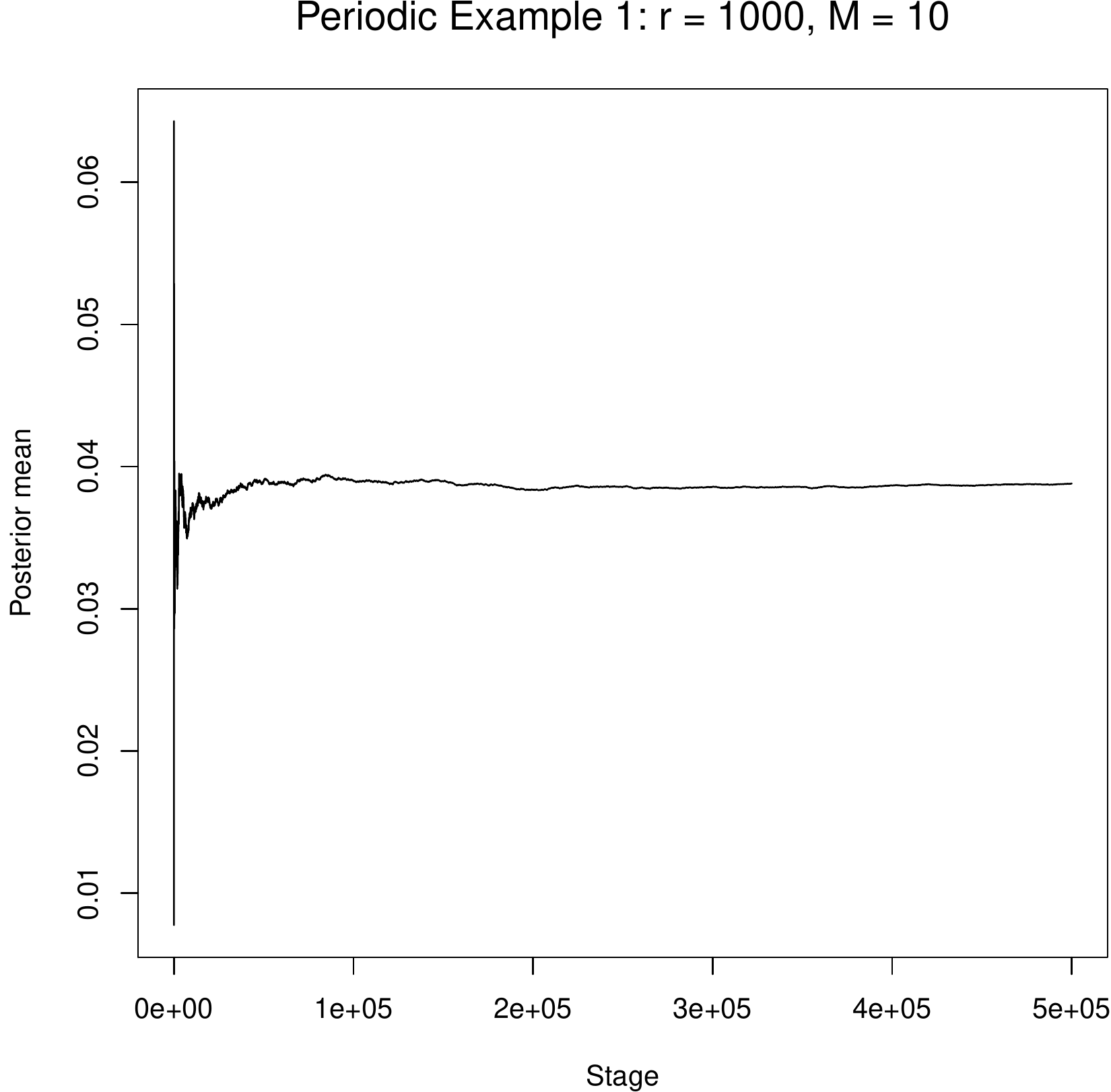}}
\hspace{2mm}
\subfigure [$r=1000,M=50$.]{ \label{fig:osc_2_long}
\includegraphics[width=6.0cm,height=6.0cm]{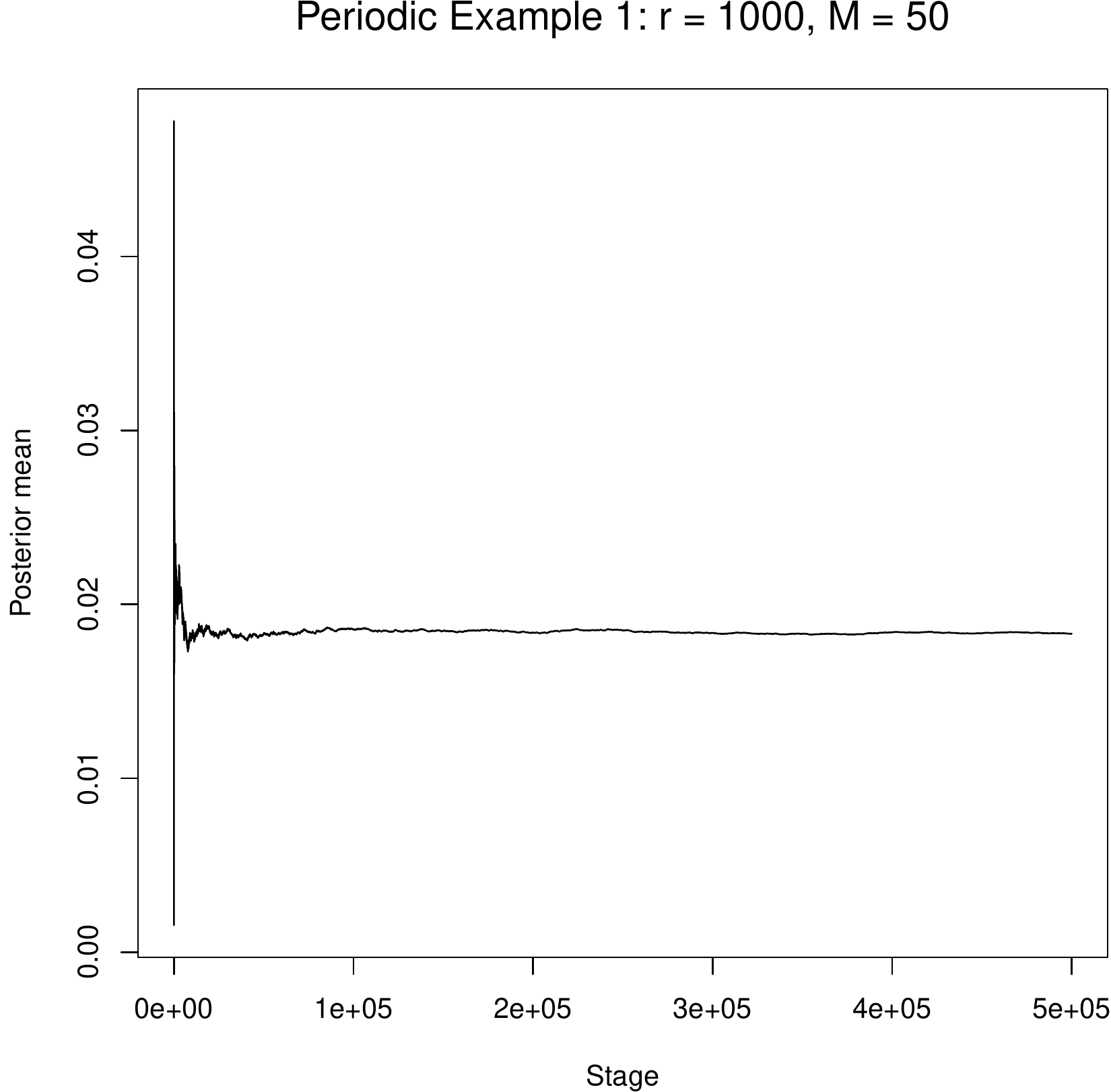}}\\
\vspace{2mm}
\subfigure [$r=1000,M=100$.]{ \label{fig:osc_3_long}
\includegraphics[width=6.0cm,height=6.0cm]{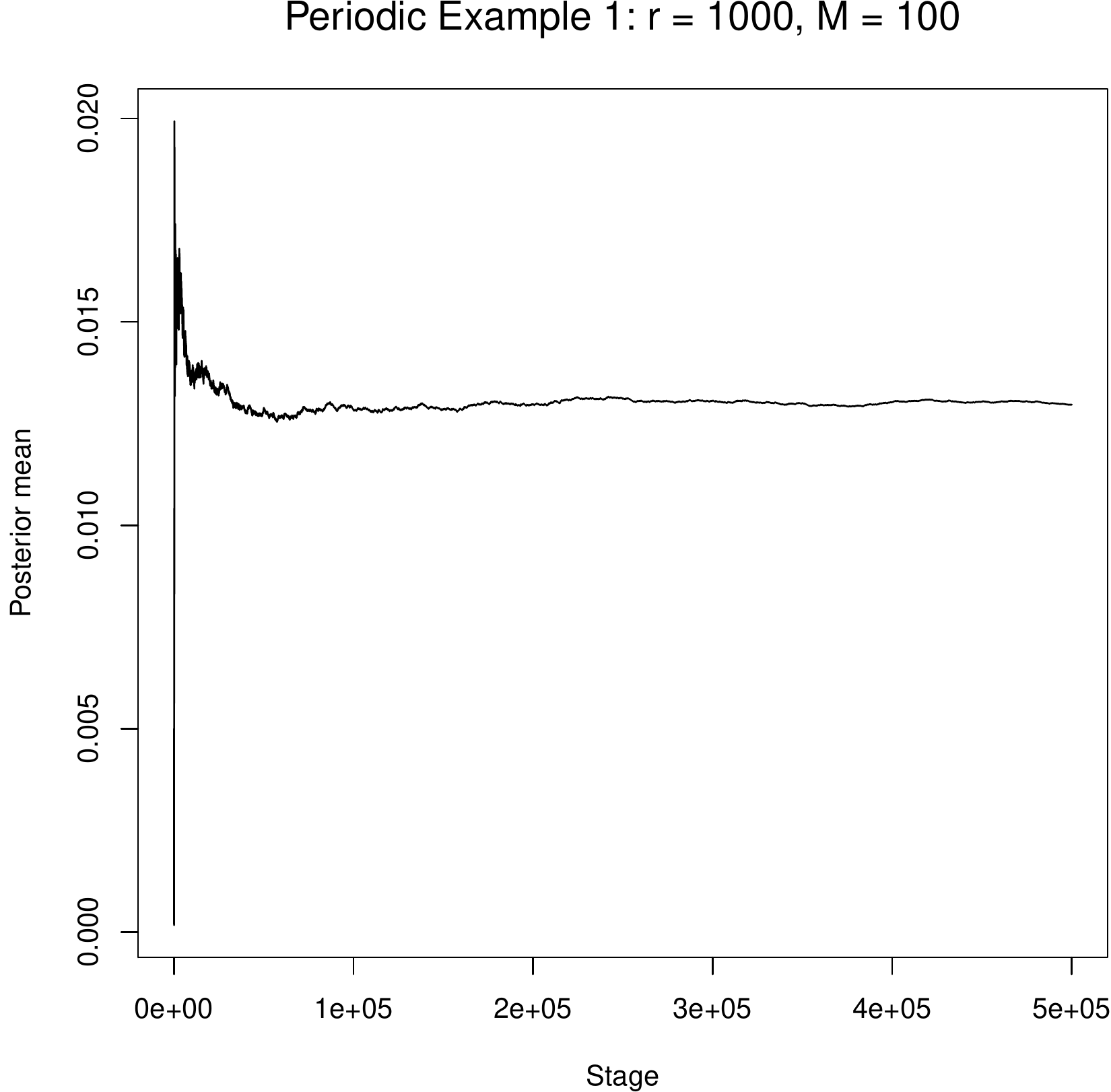}}
\hspace{2mm}
\subfigure [$r=1000,M=100$, with addition of $100$-th and $99$-th co-ordinates.]{ \label{fig:osc_4_long}
\includegraphics[width=6.0cm,height=6.0cm]{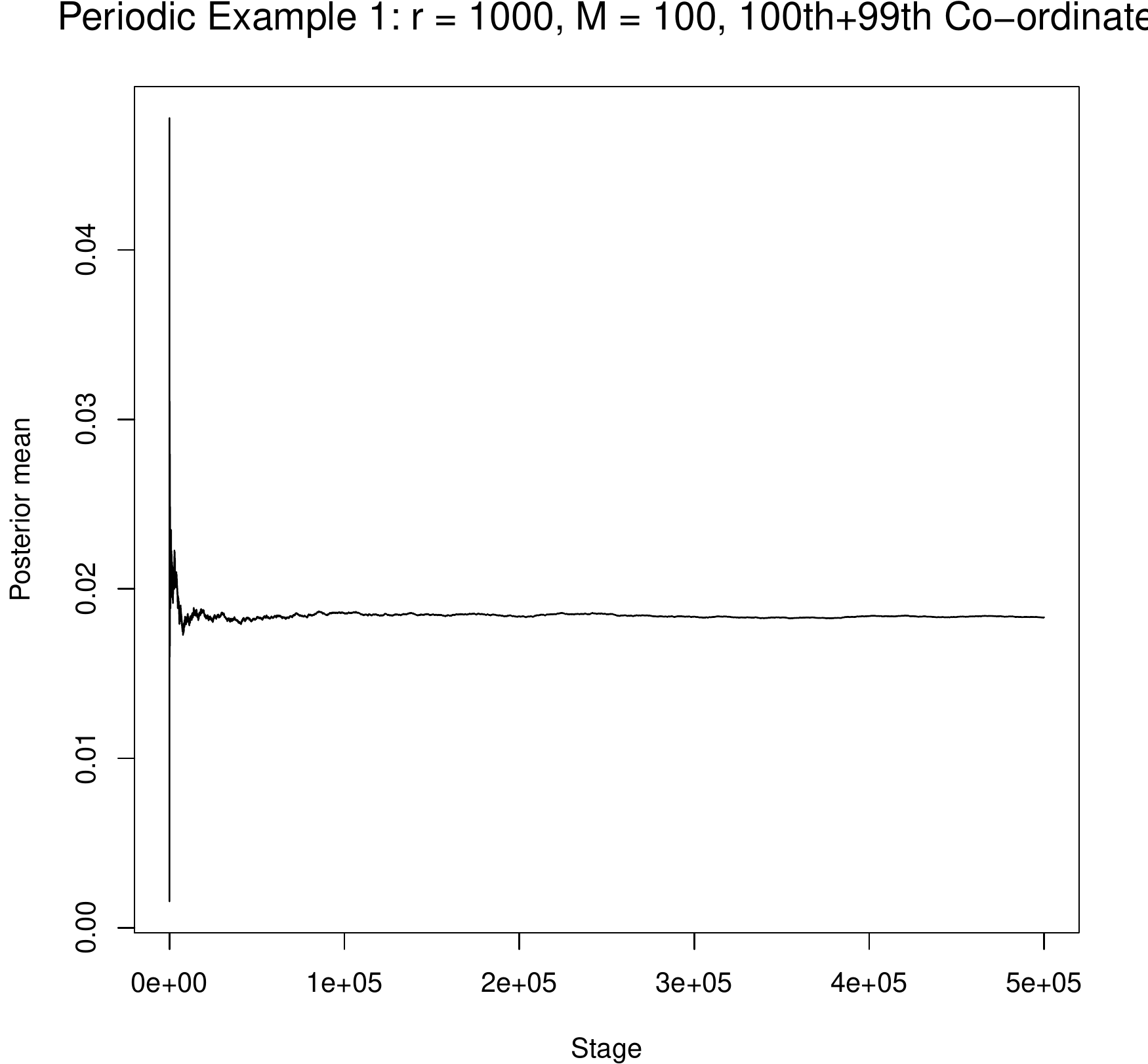}}
\caption{Illustration of our Bayesian method for determining single frequency for long enough time series. Here the true frequency is $0.02$. }
\label{fig:osc_example2_long}
\end{figure}

\begin{figure}
\centering
\includegraphics[width=9.0cm,height=6.0cm]{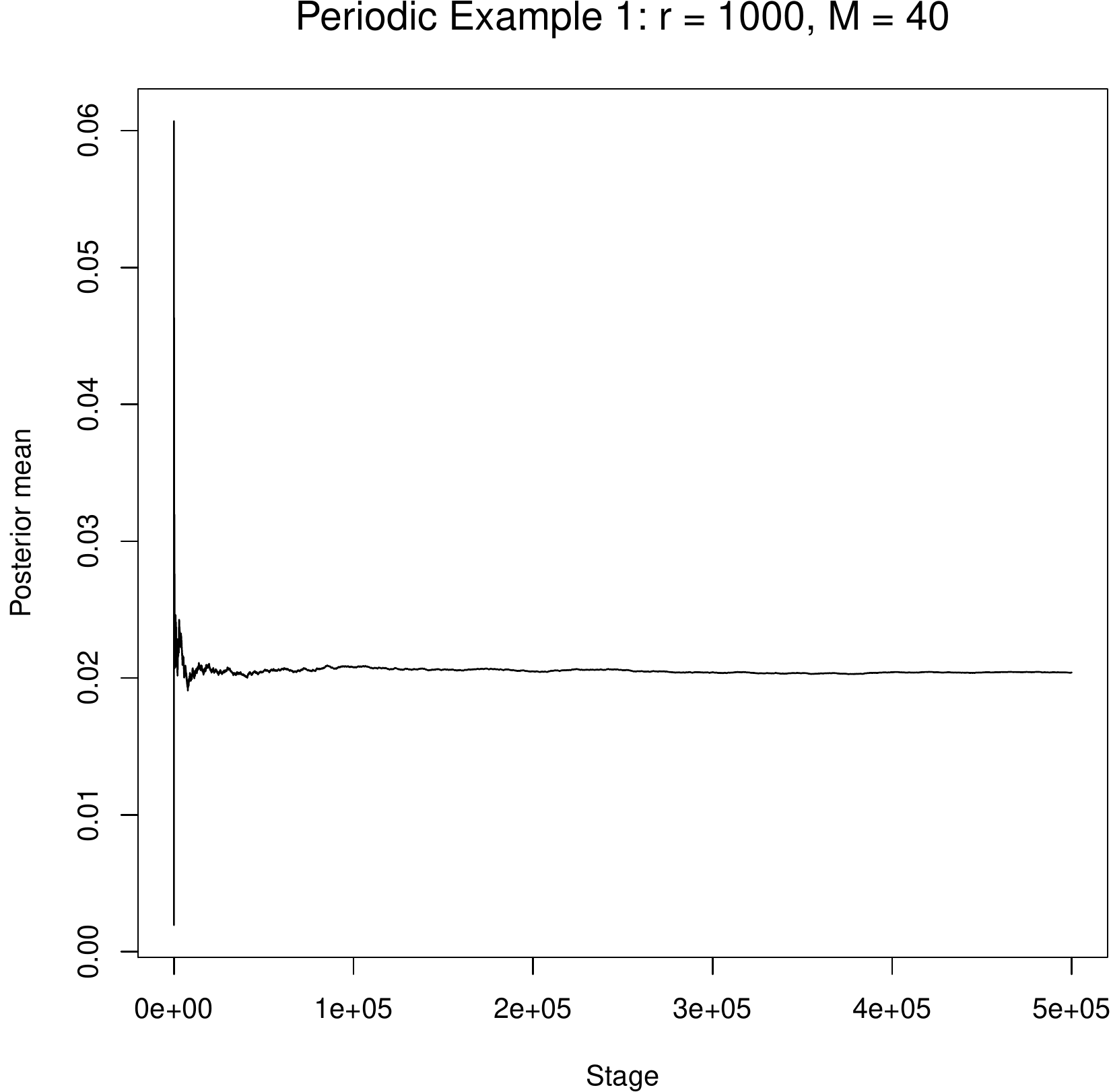}
\caption{Convergence of our Bayesian method to the true frequency $0.02$ for long enough time series with $r=1000$ and $M=40$.}
\label{fig:osc_correct_convergence}
\end{figure}

\subsection{Simulation study with multiple frequencies}
\label{subsec:multiple_frequency}
As in Example 4.1 of \ctn{Shumway06}, for $t=1,\ldots,100$, first we generate the following three series: 
\begin{align}
x_{t_1} & = 2 \cos(2\pi t 6/100) + 3 \sin(2\pi t 6/100);\notag\\
x_{t_2} &= 4 \cos(2\pi t 10/100) + 5 \sin(2\pi t 10/100);\notag\\
x_{t_3} &= 6 \cos(2\pi t 40/100) + 7 \sin(2\pi t 40/100),\notag
\end{align}
and set 
\begin{equation}
x_t=x_{t_1}+x_{t_2}+x_{t_3}. 
\label{eq:mult_freq}	
\end{equation}
The series $x_t$, which consists of the three frequencies $0.4$, $0.1$ and $0.06$, is shown in Figure \ref{fig:osc_series2}.
\begin{figure}
\centering
\includegraphics[width=10cm,height=6cm]{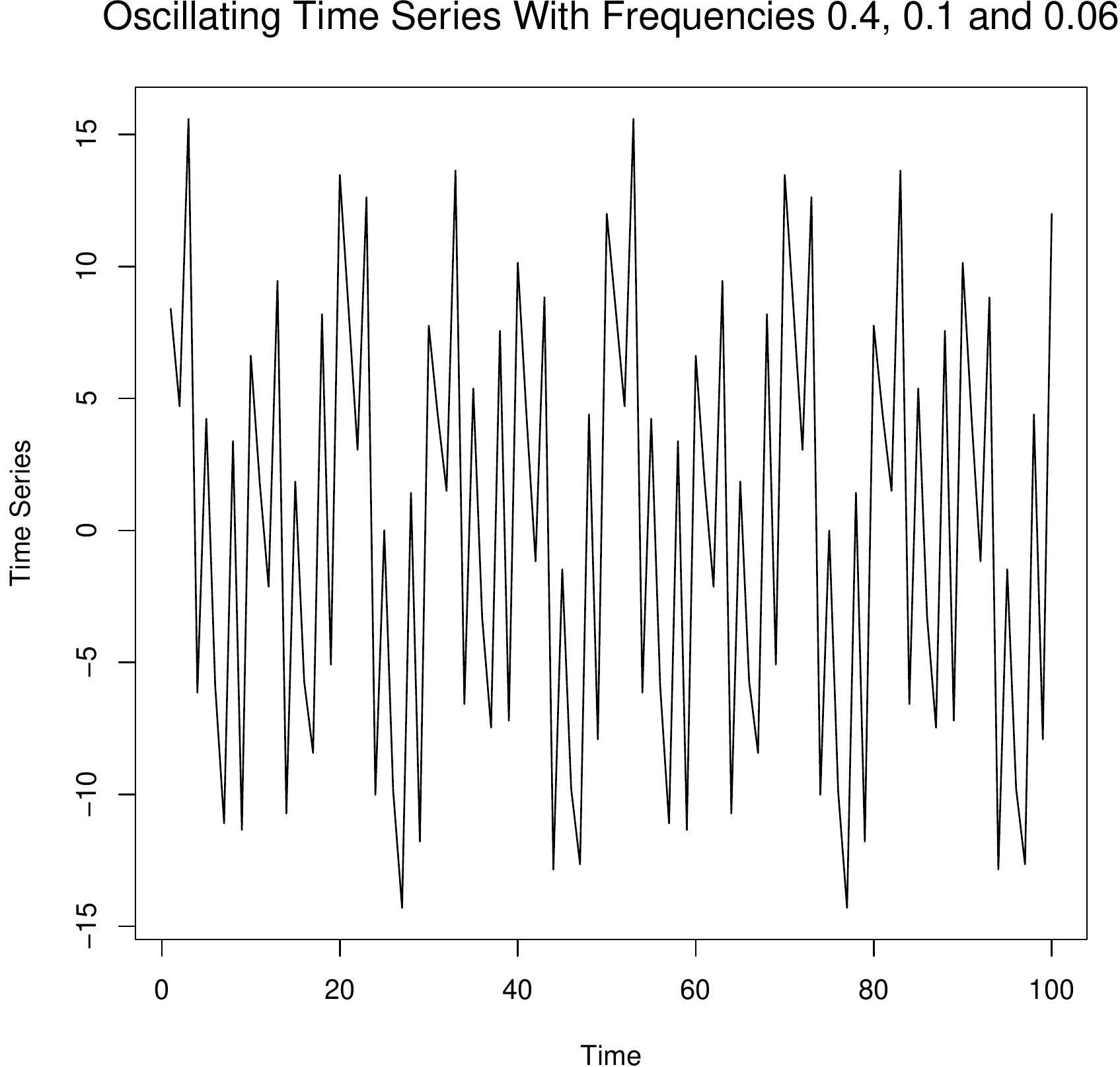}
\caption{Simulated oscillating time series with true frequencies $0.4$, $0.1$ and $0.06$. }
\label{fig:osc_series2}
\end{figure}

Before applying our Bayesian method based on Dirichlet process to this example, we again need to choose $r$ and $M$ properly. Regarding the choice of $r$,
Figure \ref{fig:osc_example2_r} depicts the process $\bZ^r$ for $r=1,5,10,50,100$. Here although it seems at first glance that increasing $r$ leads to increasing isolation
of the oscillations, actually, it is evident from closer look that increasing the power here has the effect of reducing the peaks of many relevant 
oscillations quite close to the highest peaks that are present in panel (a) of the figure, corresponding to $r=1$. 
Thus, in this example, large values of $r$ are inappropriate, unlike in the first example on single frequency. Here $r=1$ seems more appropriate compared to the
other values of $r$.
\begin{figure}
\centering
\subfigure [Transformed series $\bZ^{1}$.]{ \label{fig:r6}
\includegraphics[width=6.5cm,height=5cm]{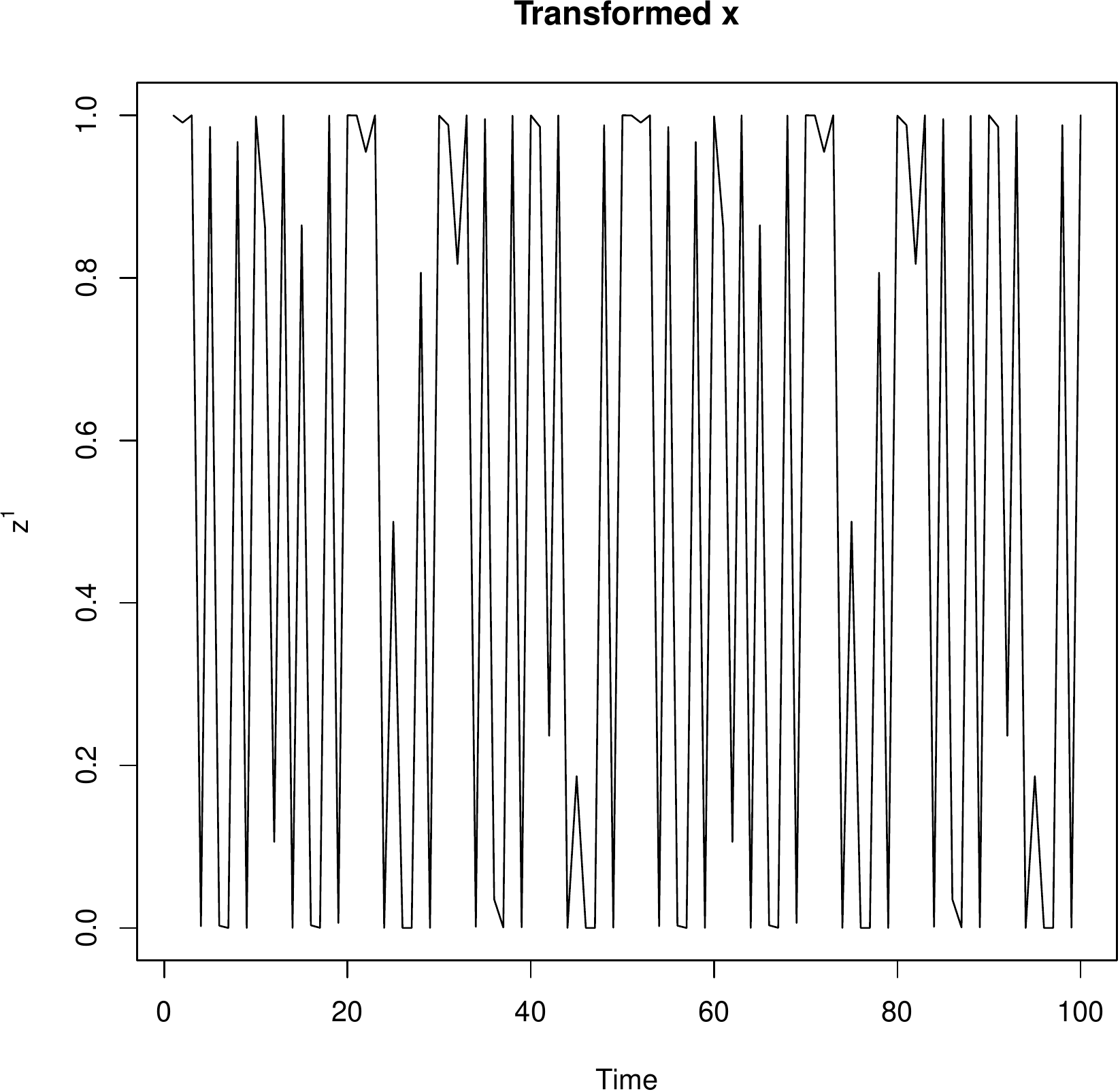}}
\hspace{2mm}
\subfigure [Transformed series $\bZ^{5}$.]{ \label{fig:r7}
\includegraphics[width=6.5cm,height=5cm]{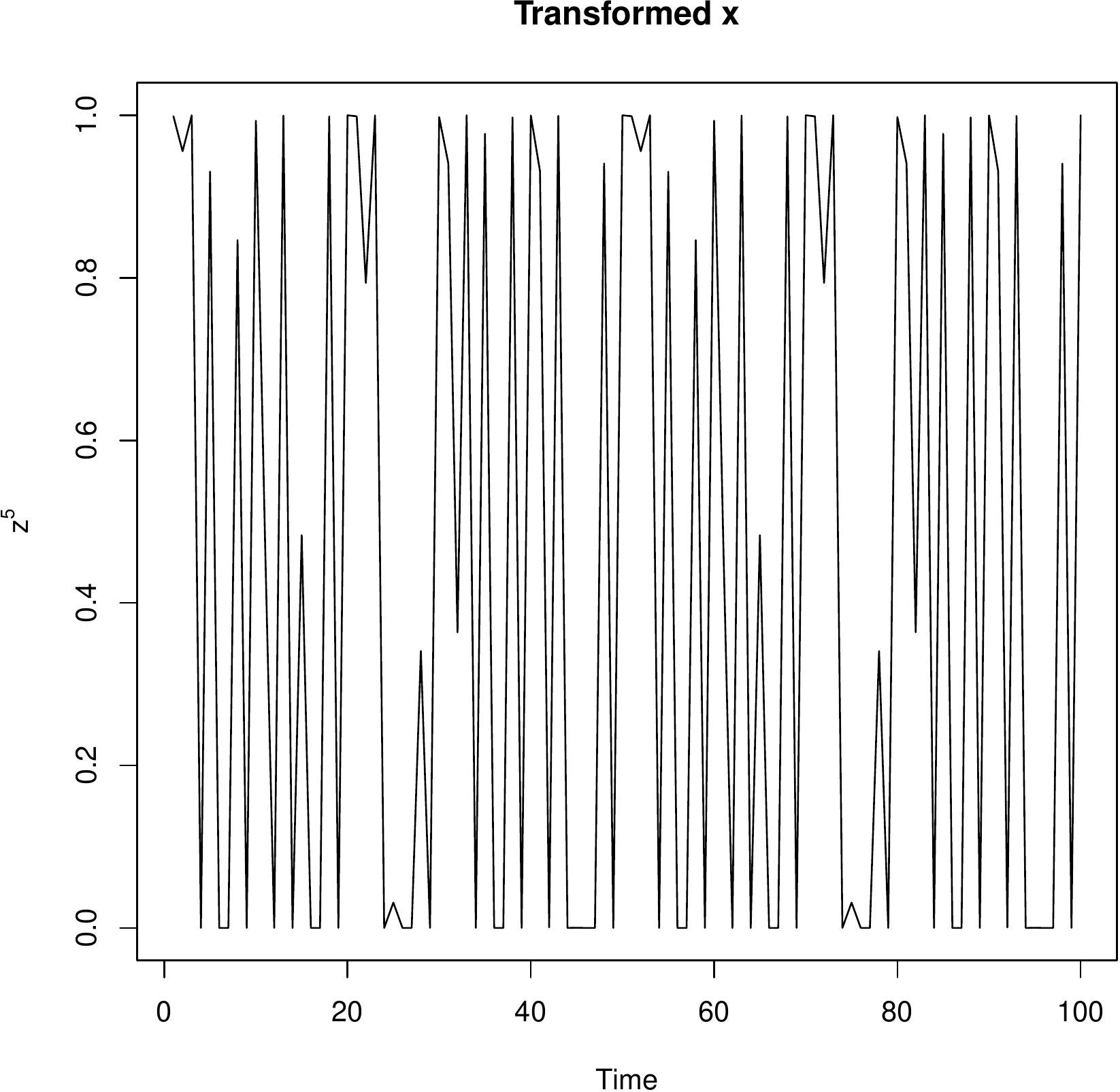}}\\
\vspace{2mm}
\subfigure [Transformed series $\bZ^{10}$.]{ \label{fig:r8}
\includegraphics[width=6.5cm,height=5cm]{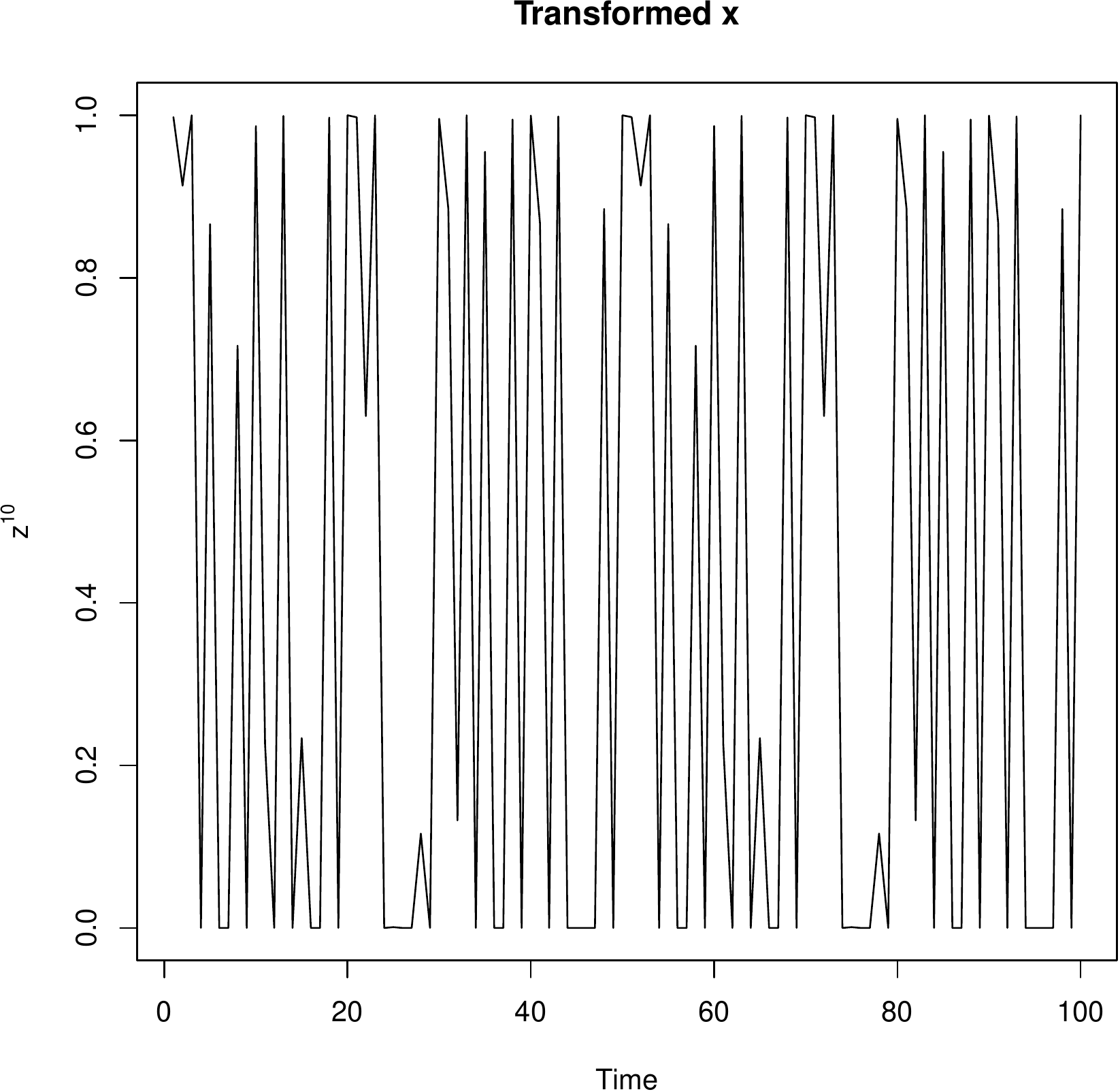}}
\hspace{2mm}
\subfigure [Transformed series $\bZ^{50}$.]{ \label{fig:r9}
\includegraphics[width=6.5cm,height=5cm]{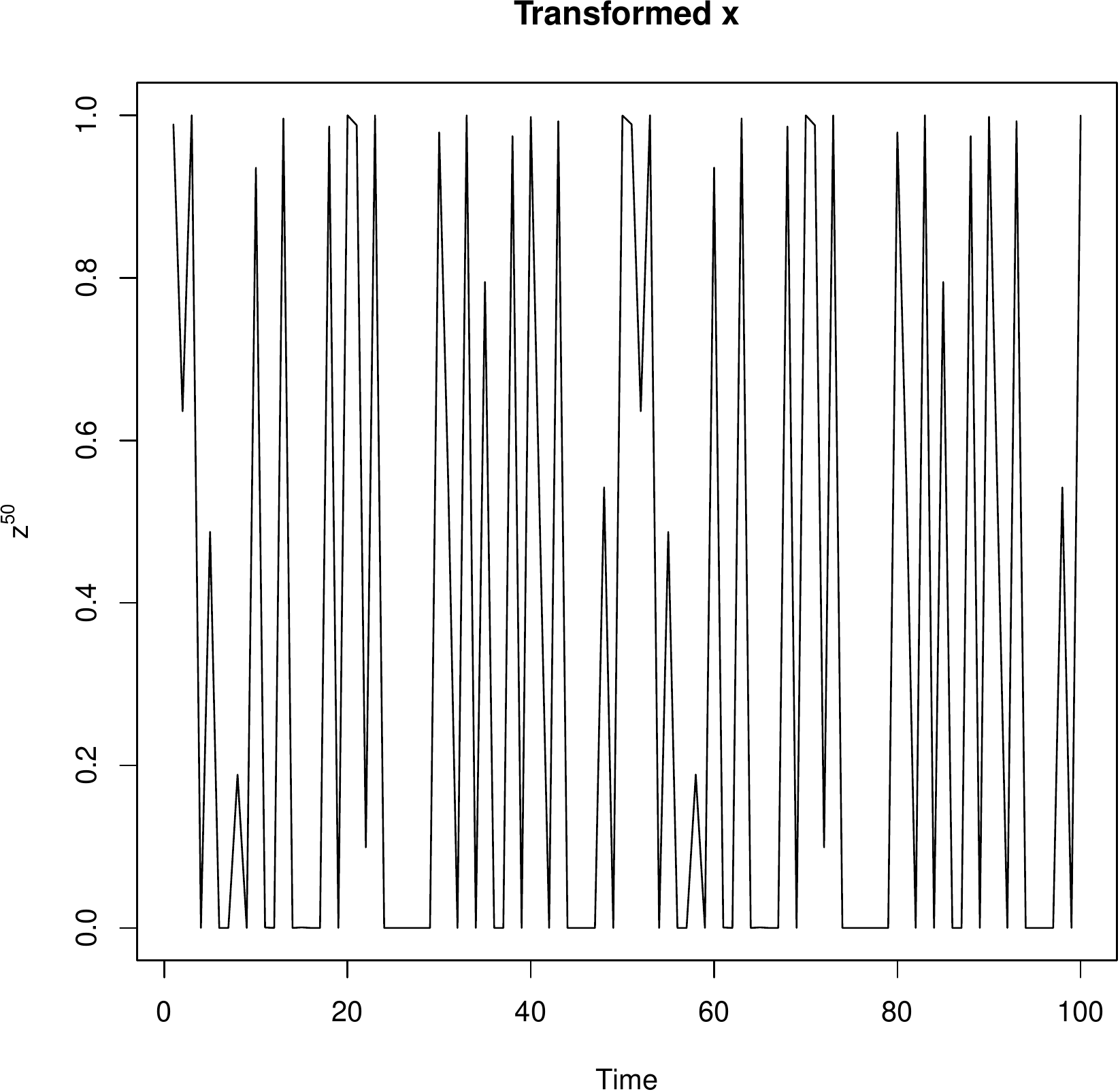}}\\
\vspace{2mm}
\subfigure [Transformed series $\bZ^{100}$.]{ \label{fig:r10}
\includegraphics[width=6.5cm,height=5cm]{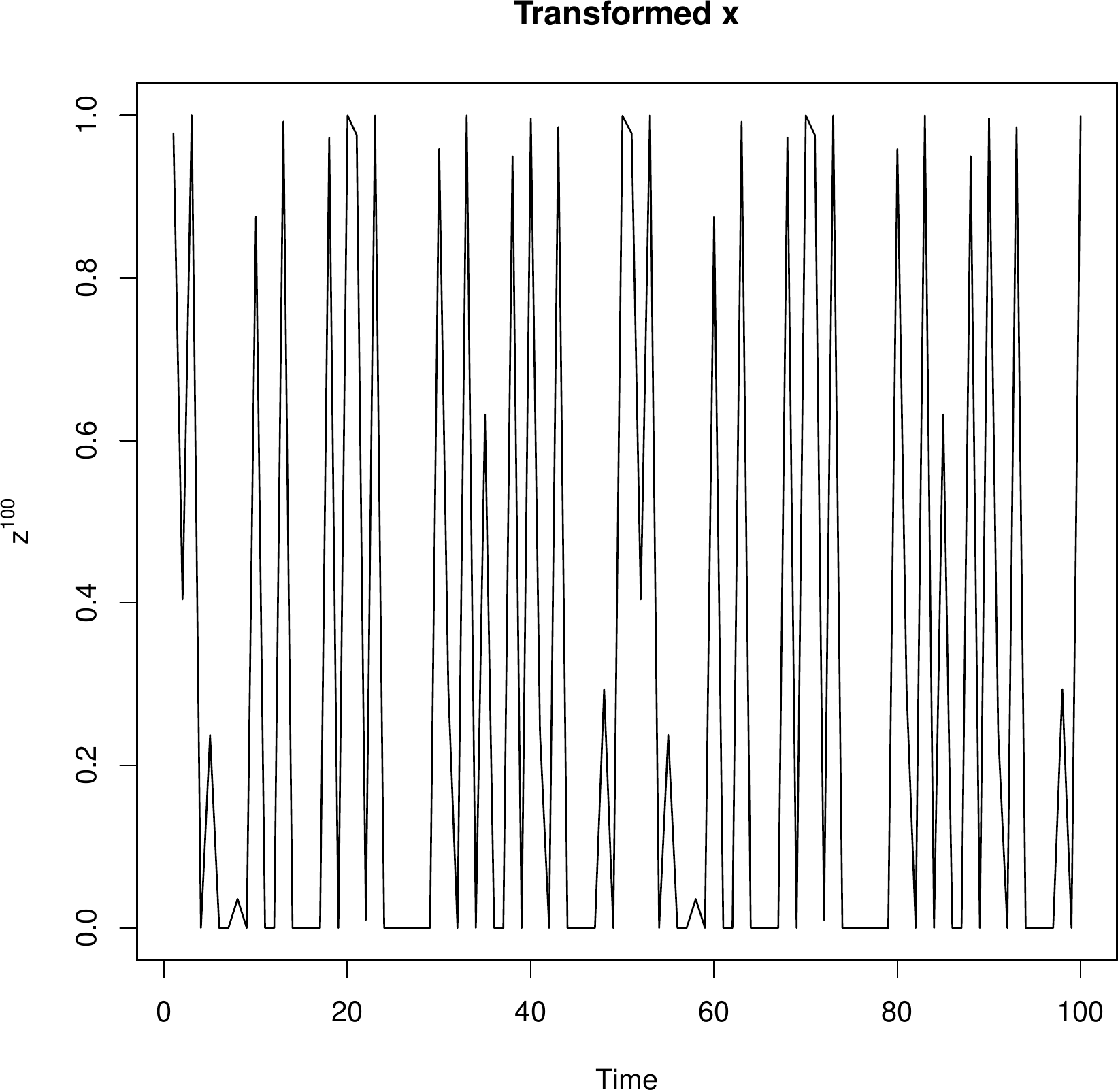}}
\caption{Illustration of effects of $r$ in $\bZ^r$ in determining multiple frequencies in (\ref{eq:mult_freq}). Here the true frequencies are $0.4$, $0.1$ and $0.06$. }
\label{fig:osc_example2_r}
\end{figure}

Regarding adequacy of the choice of $r$ and $M$, a detailed analysis of our Bayesian results for this multiple frequency example 
is provided by Figures \ref{fig:mult_osc_example1}, \ref{fig:mult_osc_example2}, \ref{fig:mult_osc_example3},
\ref{fig:mult_osc_example4} and \ref{fig:mult_osc_example5}.
Most of these diagrams, for given $r$ and $M$, are obtained by summing up the $p_{m,j}$ for nearby values of $m$. 
These yielded the three frequencies associated with our Bayesian technique. 
The values of $m$ that are summed up, are provided on the top
of each panel. Indeed, for relatively larger values of $M$, the frequencies are divided up into several nearby intervals $(\tilde p_{m-1},\tilde p_m]$.

Recall that we do not consider the first interval $(\tilde p_0,\tilde p_1]$ at all as it is a small interval around zero
for relatively large $M$ and hence not associated with any true frequency significantly different from zero. 
The proportions of the intervals that converged to zero, are not considered either.

Figures \ref{fig:mult_osc_example1}, \ref{fig:mult_osc_example2} and \ref{fig:mult_osc_example3} depict the details of our results for $r=1,5,10$
and $M=10,50,100$. Observe that $r=1$ gives the best performance, while the performance deteriorates for $r=5$ is also close. 
But observe that for $r=5,M=10$, the frequency $0.06$ seems to been somewhat underestimated. 
However importantly, for $r=10$, while the frequencies
$0.4$ and $0.1$ are correctly converged to for these values of $r$, the frequency $0.06$ seems to be significantly underestimated, for $M=10,50,100$.

As seen in Figures \ref{fig:mult_osc_example4} and \ref{fig:mult_osc_example5}, for $r=50$ and $100$, 
although the frequency $0.06$ is underestimated in some cases, the most conspicuous is the case of underestimation of the highest frequency $0.4$.
This is due to the fact that for relatively large values of $r$, about half of the peaks of the original process close to the highest peaks, die down.
Since half of these peaks close to the highest peaks contribute half of the total frequency $0.4$ (obvious from direct counting of the highest
and second highest peaks in Figure \ref{fig:osc_series2}, this results in significant underestimation of the highest frequency. 

Hence, consistent from the insight gained from Figure \ref{fig:osc_example2_r}, $r=1$ yields the best performance
The choice of $M$ seems to be less important compared to that of $r$, as in the previous example with single frequency.
\begin{figure}
\centering
\subfigure [$r=1,M=10$. True frequency $=0.4$.]{ \label{fig:mult_osc_1}
\includegraphics[width=4.5cm,height=4.5cm]{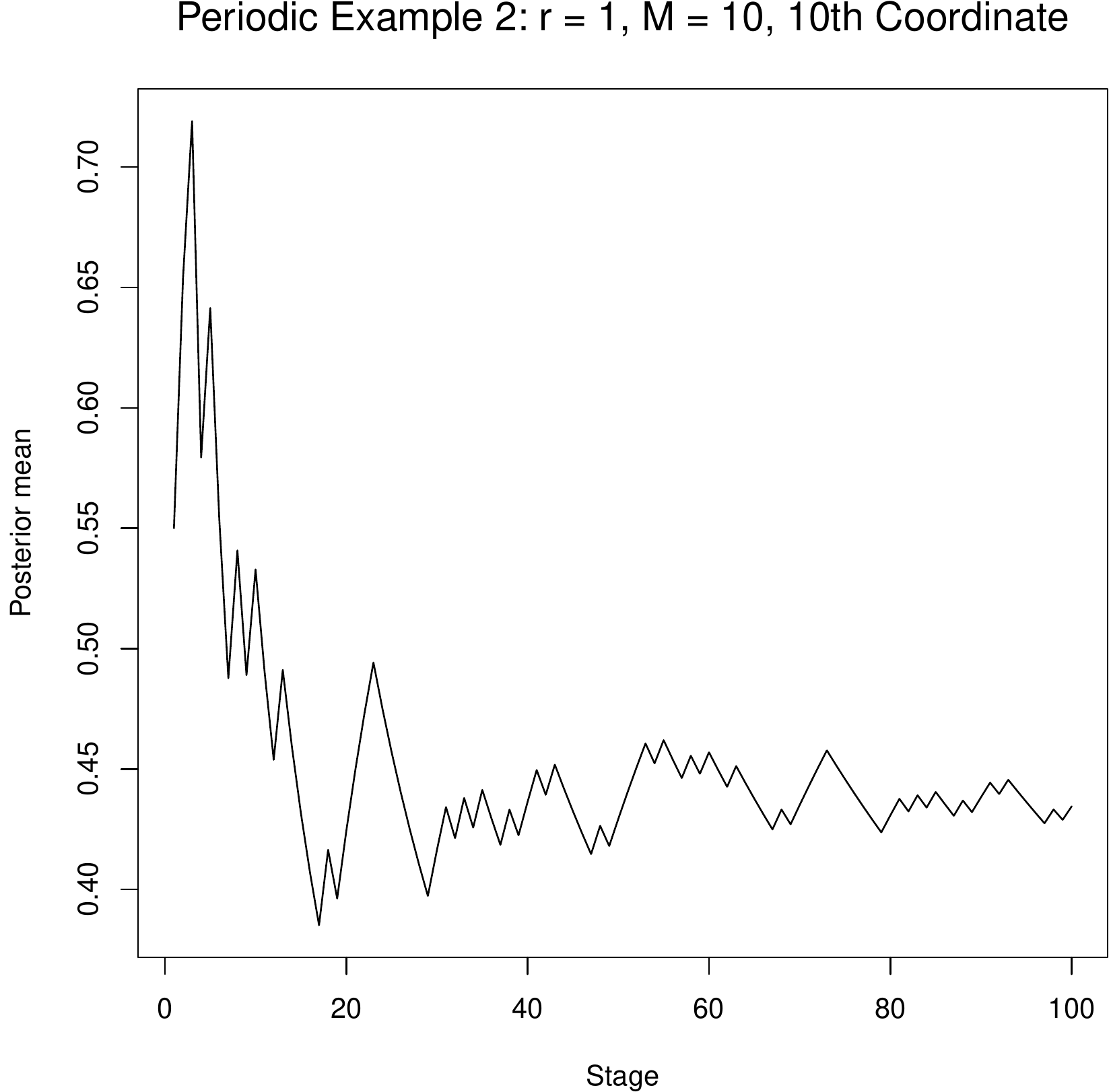}}
\hspace{2mm}
\subfigure [$r=1,M=10$. True frequency $=0.1$.]{ \label{fig:mult_osc_2}
\includegraphics[width=4.5cm,height=4.5cm]{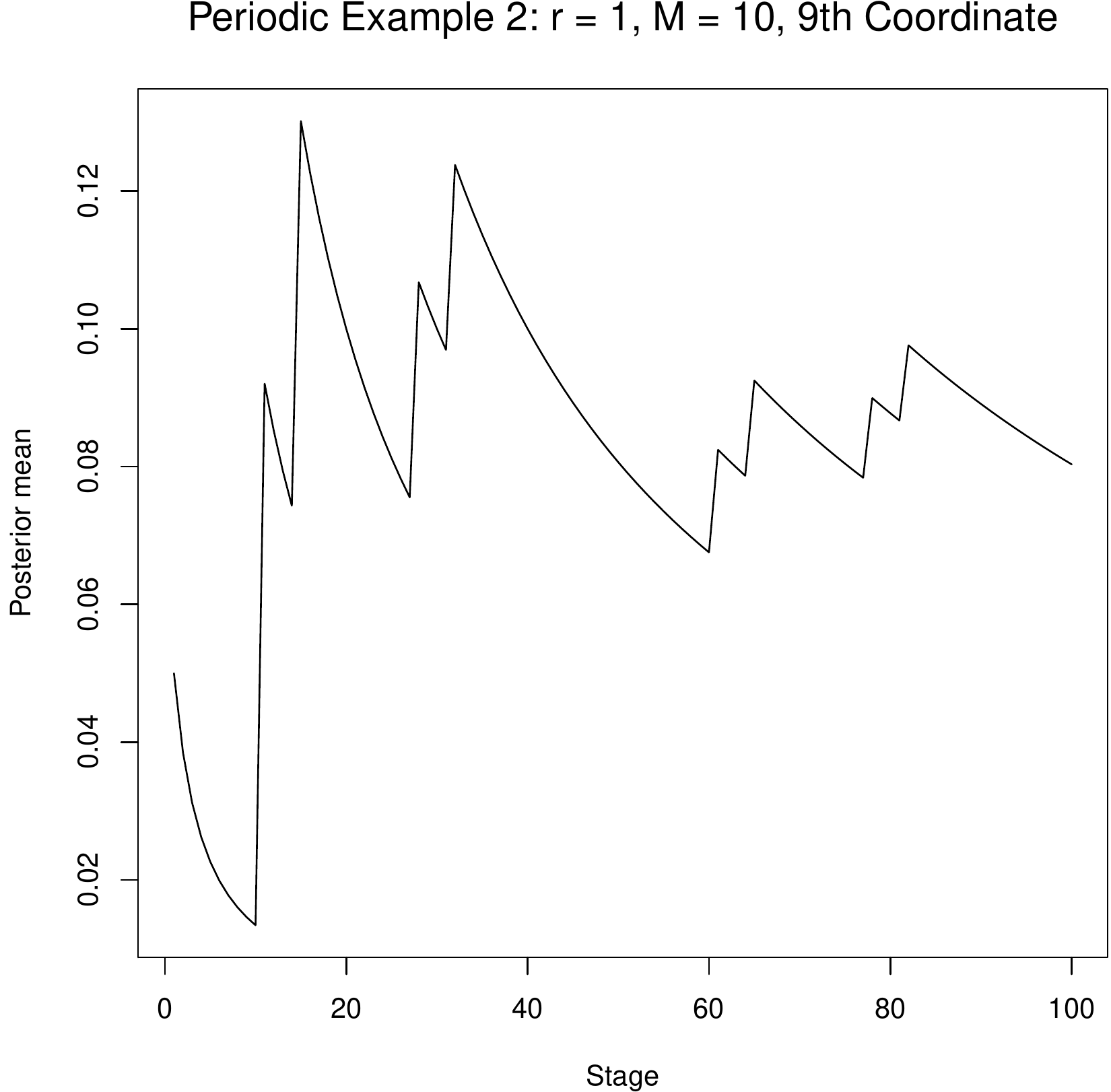}}
\hspace{2mm}
\subfigure [$r=1,M=10$. True frequency $=0.06$.]{ \label{fig:mult_osc_3}
\includegraphics[width=4.5cm,height=4.5cm]{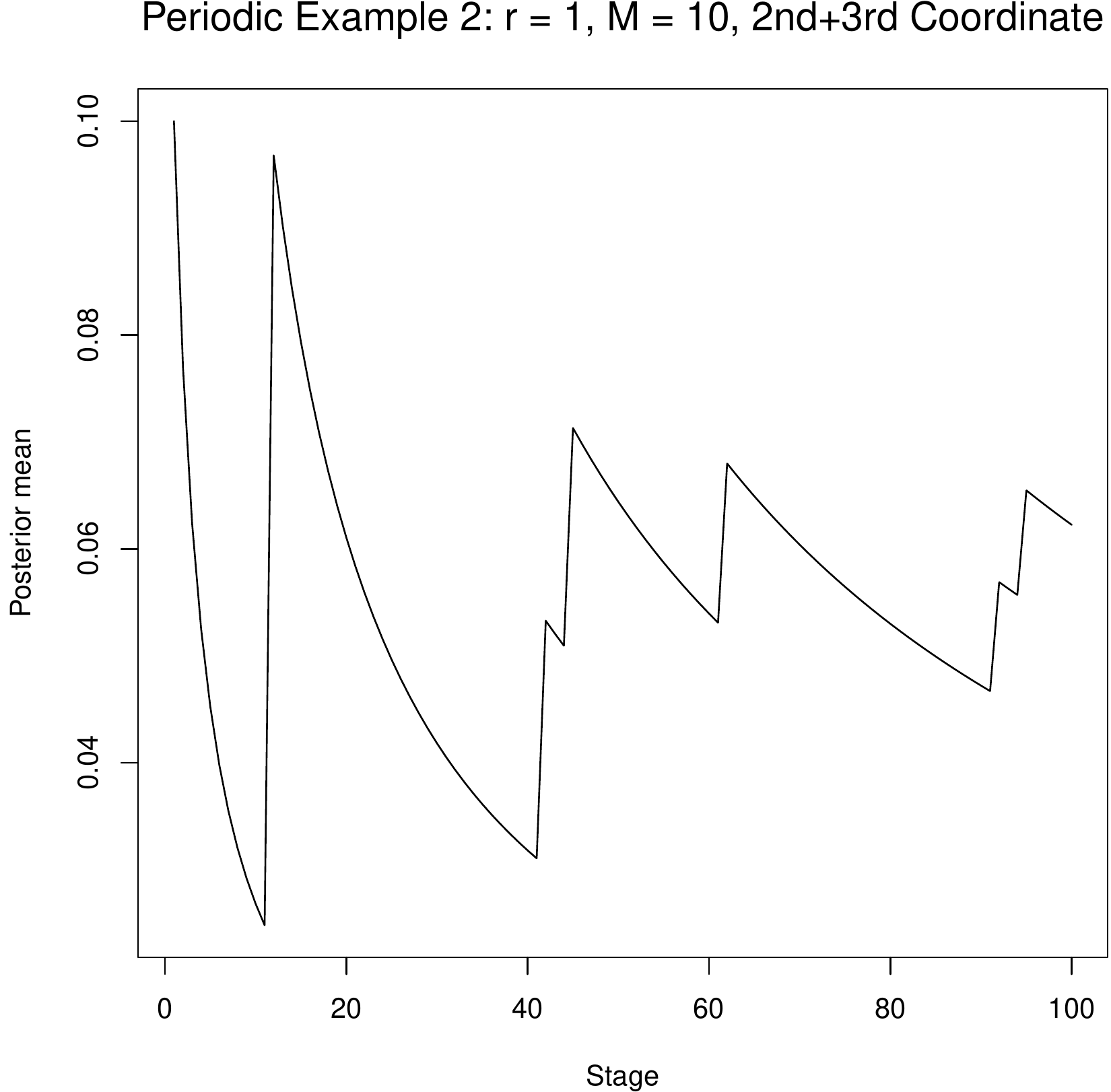}}\\
\vspace{2mm}
\subfigure [$r=1,M=50$. True frequency $=0.4$.]{ \label{fig:mult_osc_4}
\includegraphics[width=4.5cm,height=4.5cm]{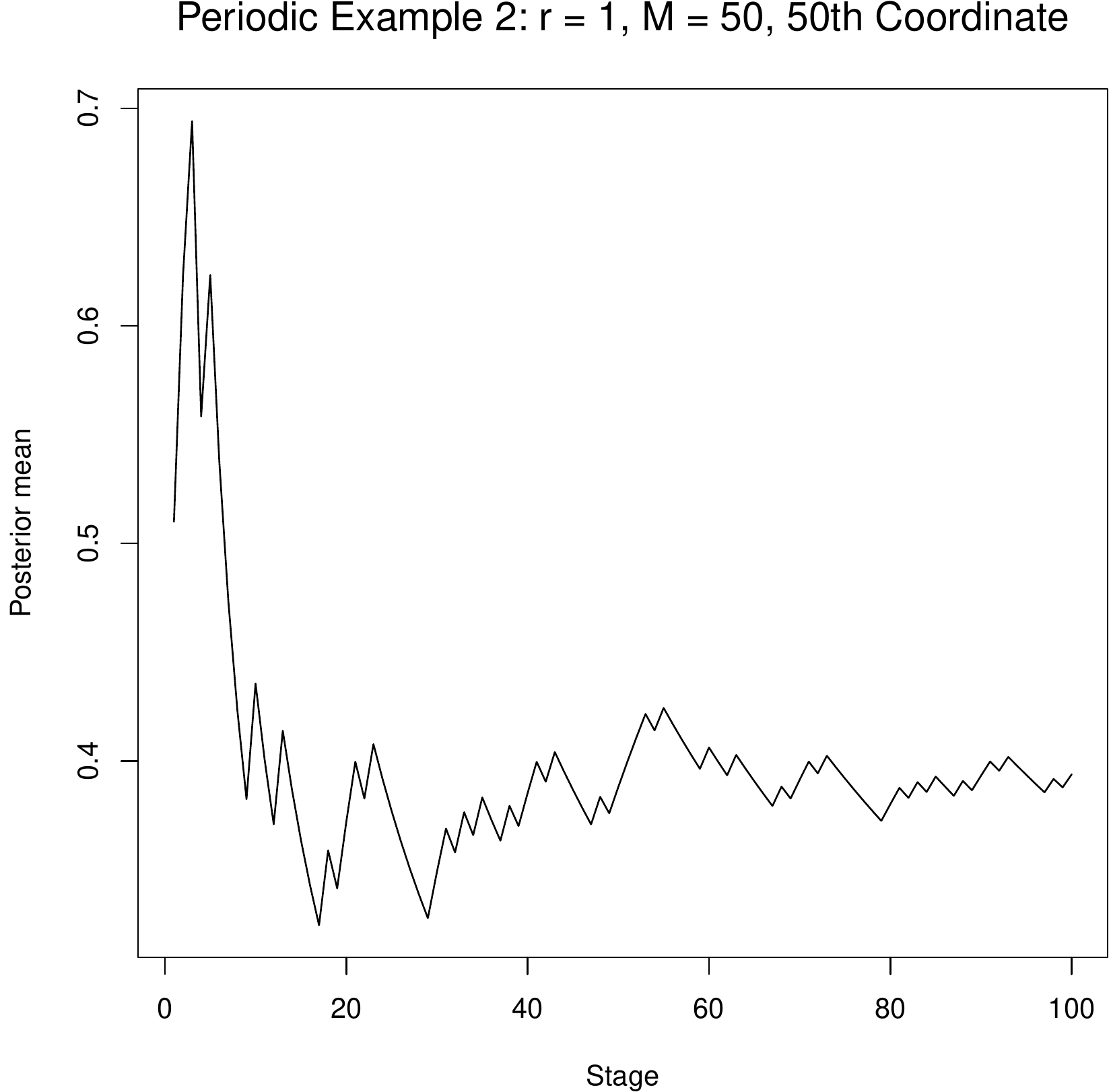}}
\hspace{2mm}
\subfigure [$r=1,M=50$. True frequency $=0.1$.]{ \label{fig:mult_osc_5}
\includegraphics[width=4.5cm,height=4.5cm]{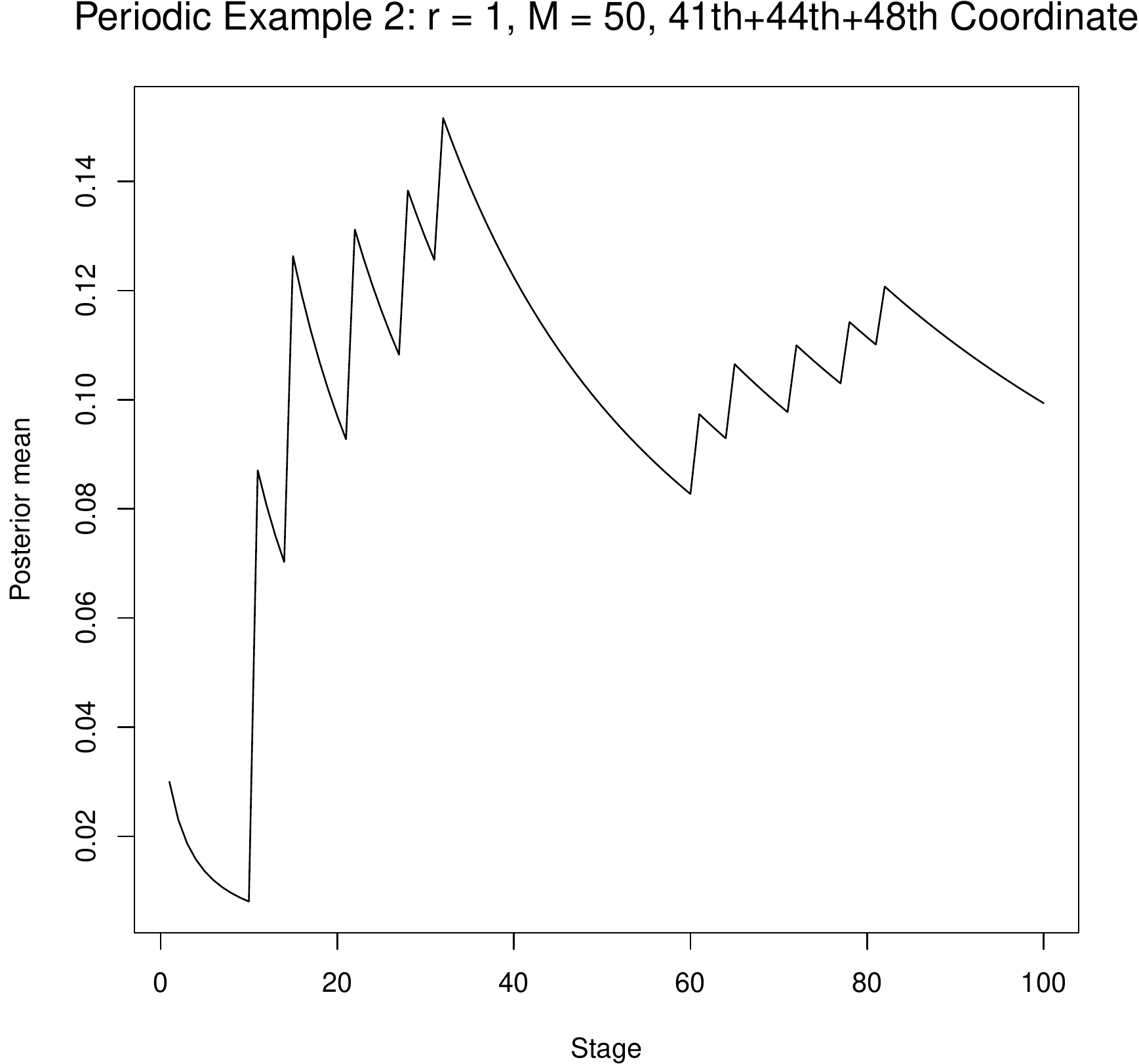}}
\hspace{2mm}
\subfigure [$r=1,M=50$. True frequency $=0.06$.]{ \label{fig:mult_osc_6}
\includegraphics[width=4.5cm,height=4.5cm]{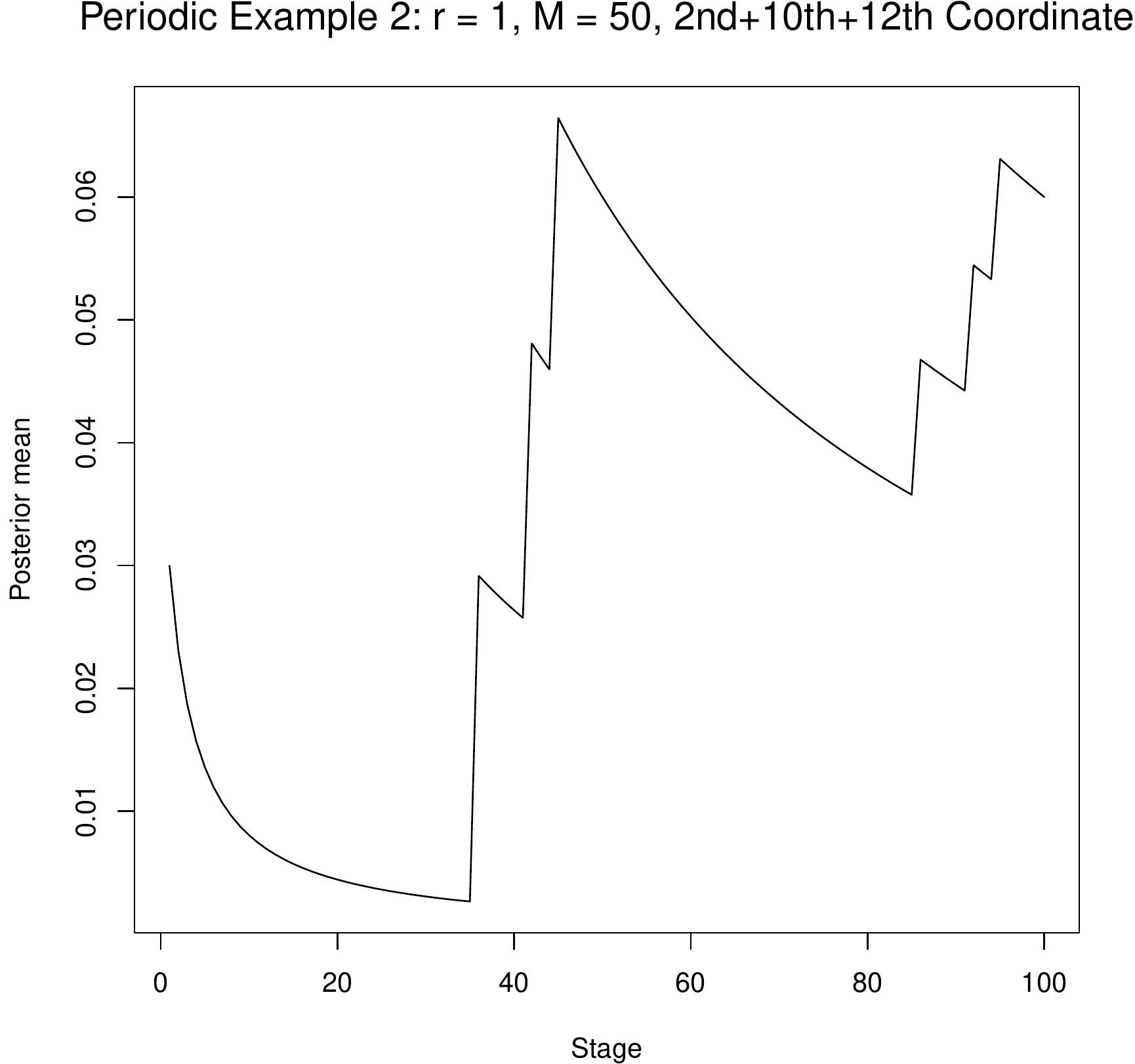}}\\
\vspace{2mm}
\subfigure [$r=1,M=100$. True frequency $=0.4$.]{ \label{fig:mult_osc_7}
\includegraphics[width=4.5cm,height=4.5cm]{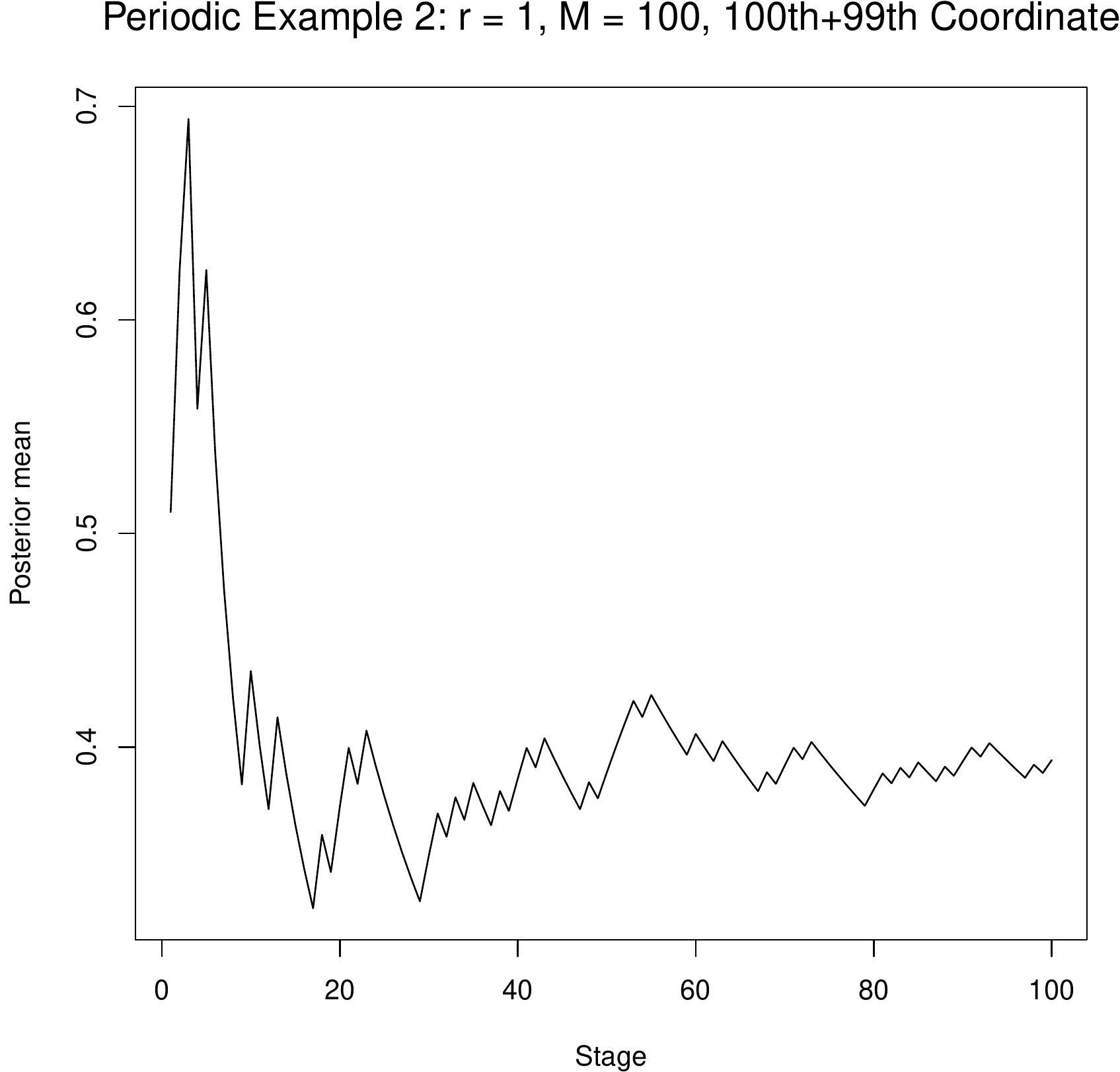}}
\hspace{2mm}
\subfigure [$r=1,M=100$. True frequency $=0.1$.]{ \label{fig:mult_osc_8}
\includegraphics[width=4.5cm,height=4.5cm]{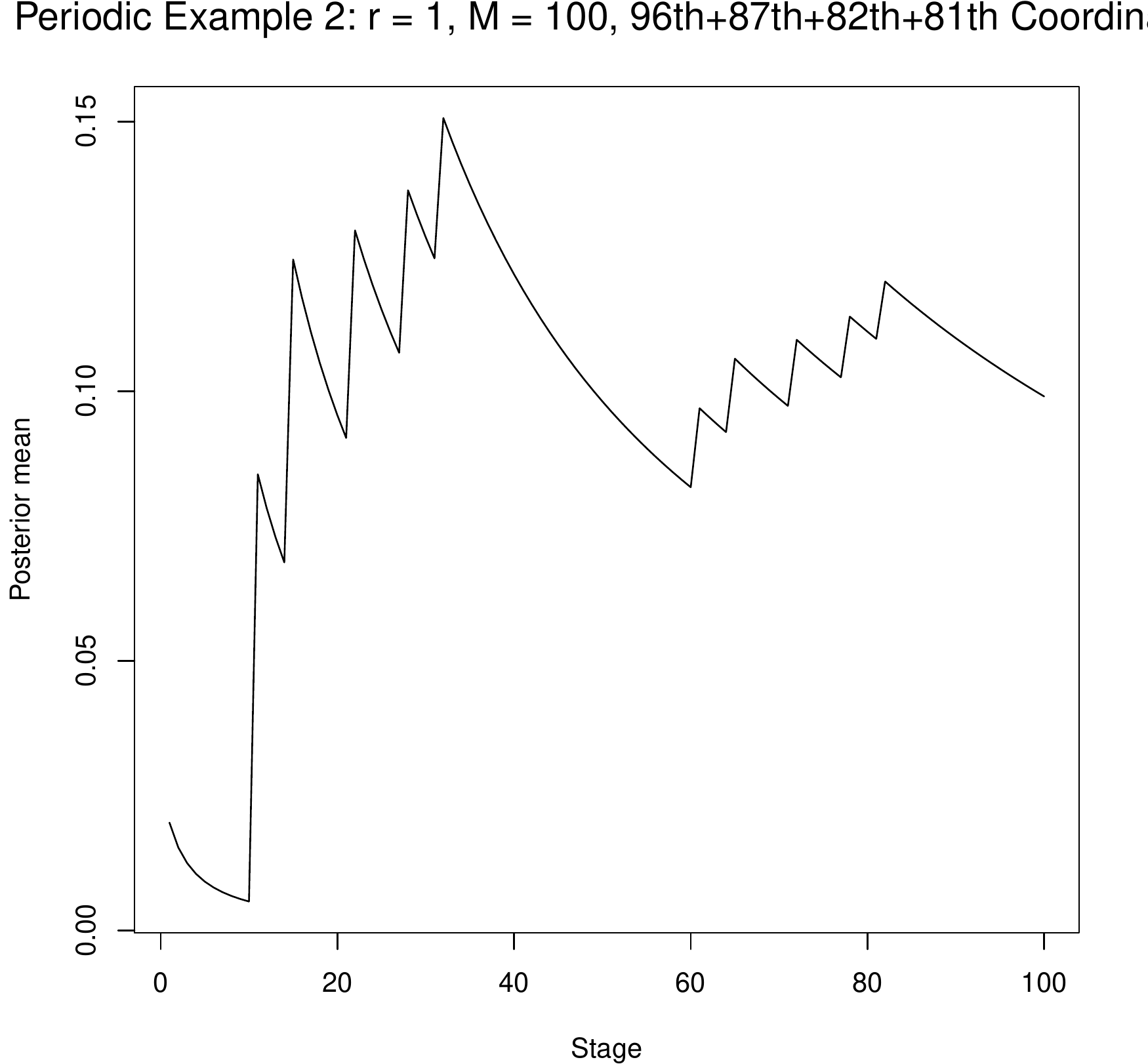}}
\hspace{2mm}
\subfigure [$r=1,M=100$. True frequency $=0.06$.]{ \label{fig:mult_osc_9}
\includegraphics[width=4.5cm,height=4.5cm]{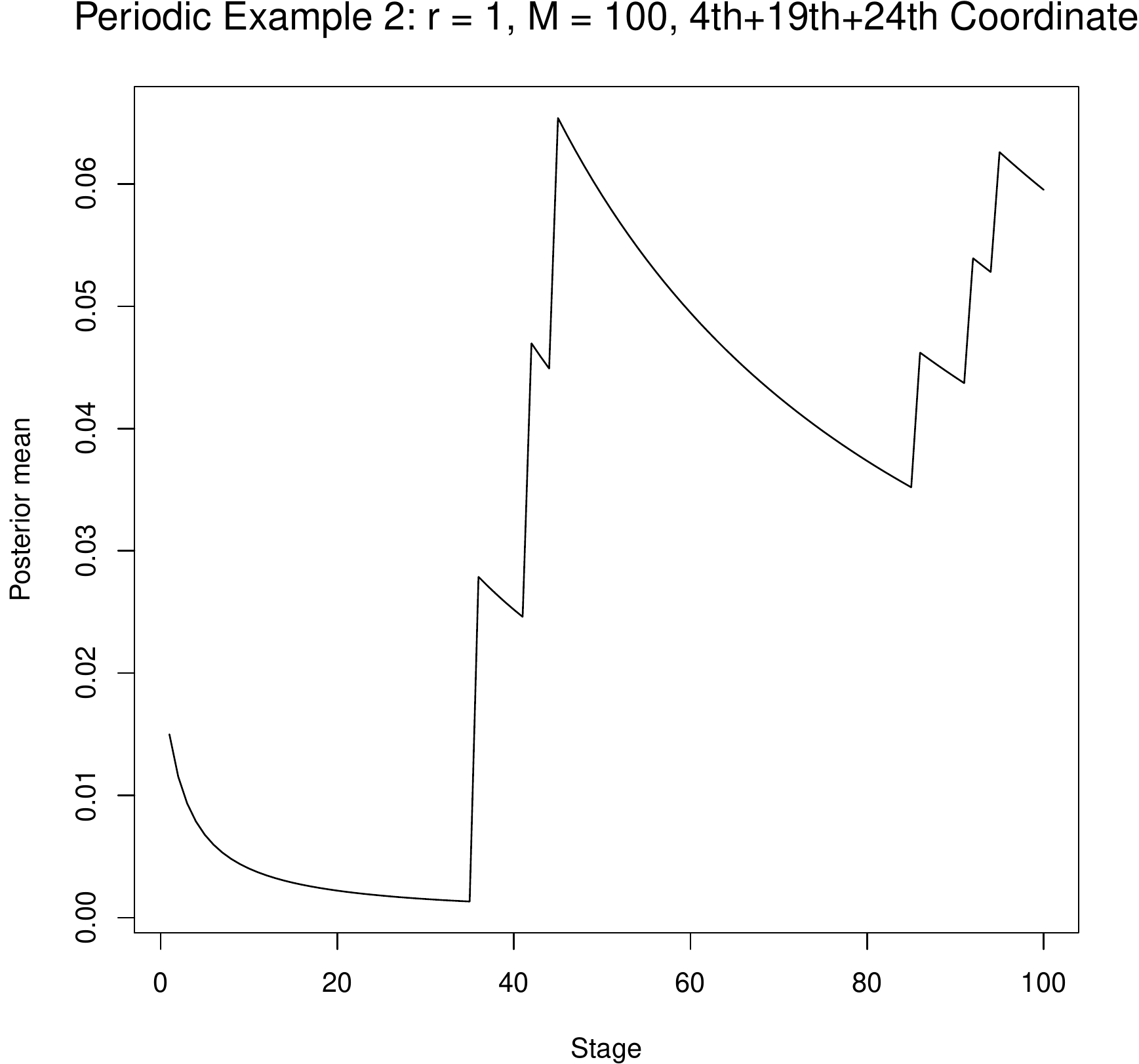}}
\caption{Illustration of our Bayesian method for determining multiple frequencies. Here the true frequencies are $0.4$, $0.1$ and $0.06$. }
\label{fig:mult_osc_example1}
\end{figure}

\begin{figure}
\centering
\subfigure [$r=5,M=10$. True frequency $=0.4$.]{ \label{fig:mult_osc_10}
\includegraphics[width=4.5cm,height=4.5cm]{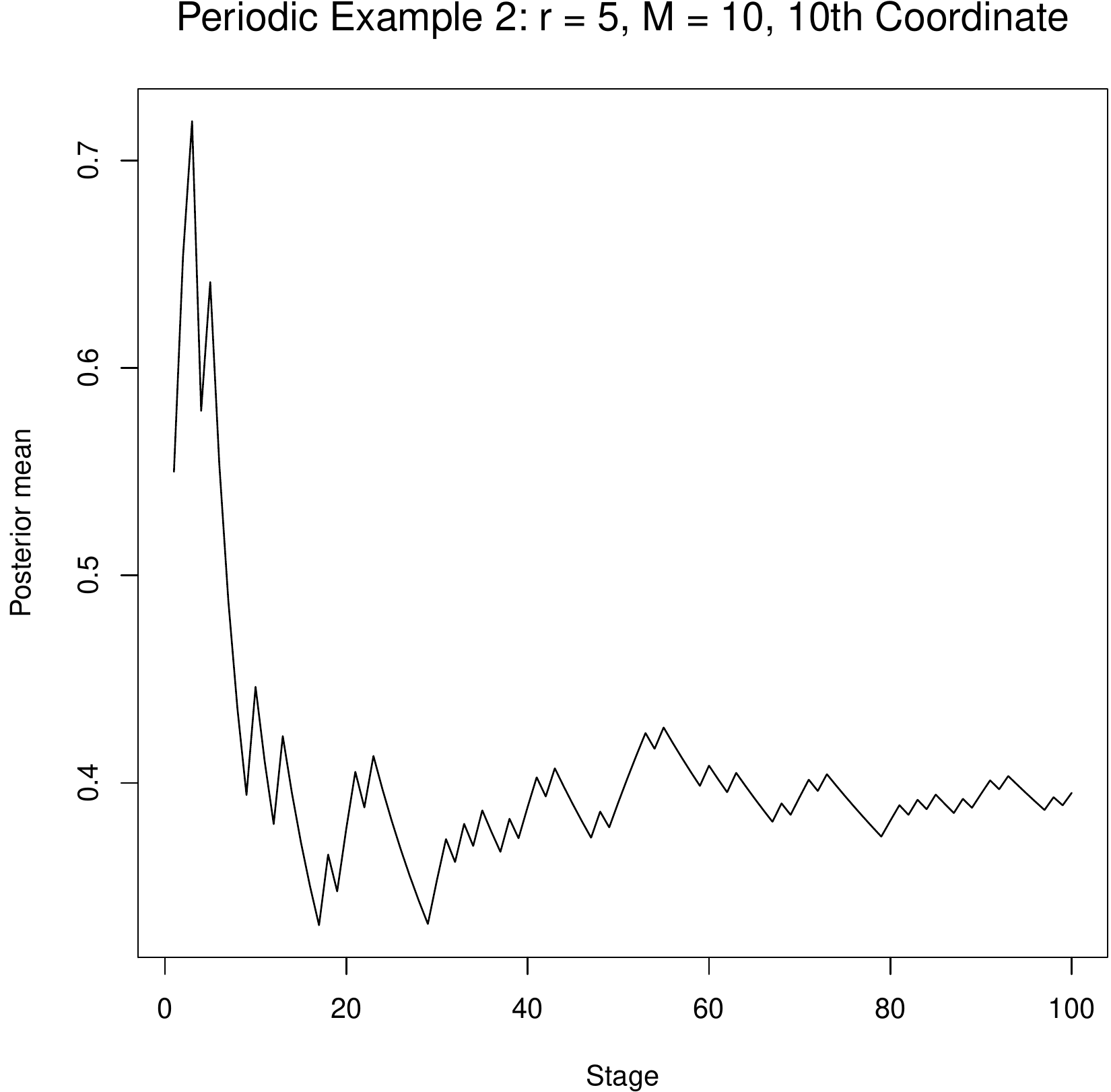}}
\hspace{2mm}
\subfigure [$r=5,M=10$. True frequency $=0.1$.]{ \label{fig:mult_osc_11}
\includegraphics[width=4.5cm,height=4.5cm]{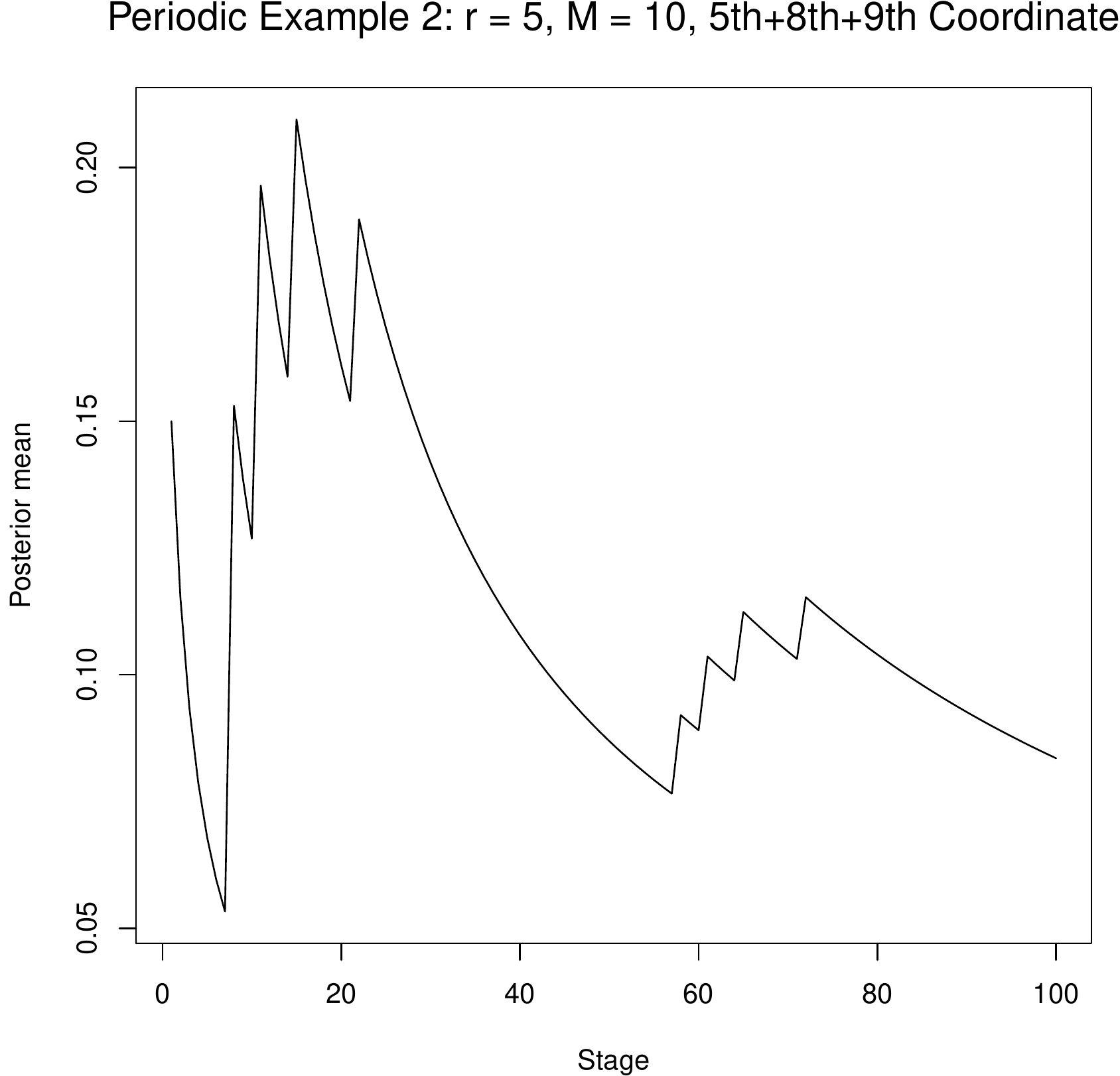}}
\hspace{2mm}
\subfigure [$r=5,M=10$. True frequency $=0.06$.]{ \label{fig:mult_osc_12}
\includegraphics[width=4.5cm,height=4.5cm]{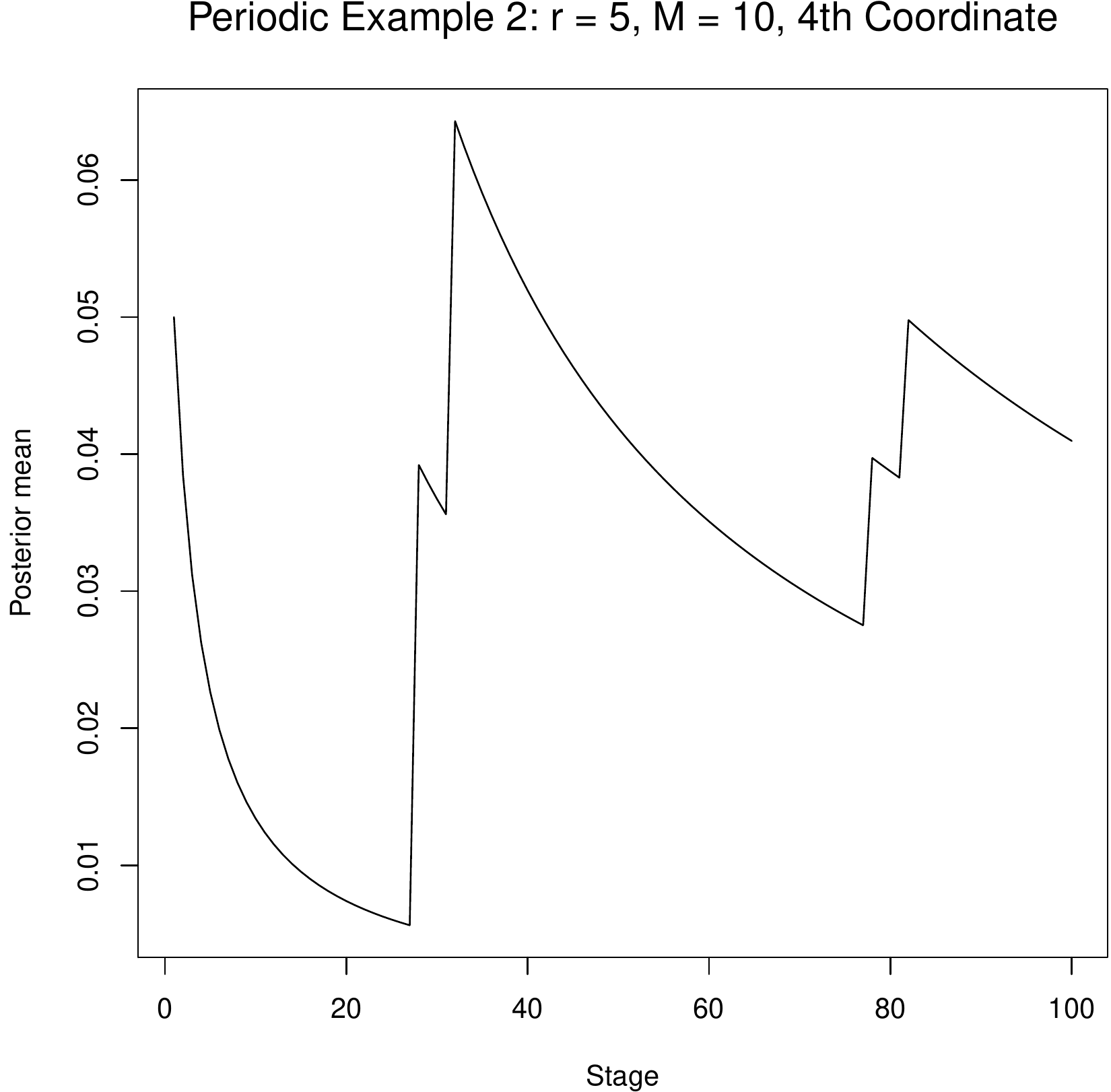}}\\
\vspace{2mm}
\subfigure [$r=5,M=50$. True frequency $=0.4$.]{ \label{fig:mult_osc_13}
\includegraphics[width=4.5cm,height=4.5cm]{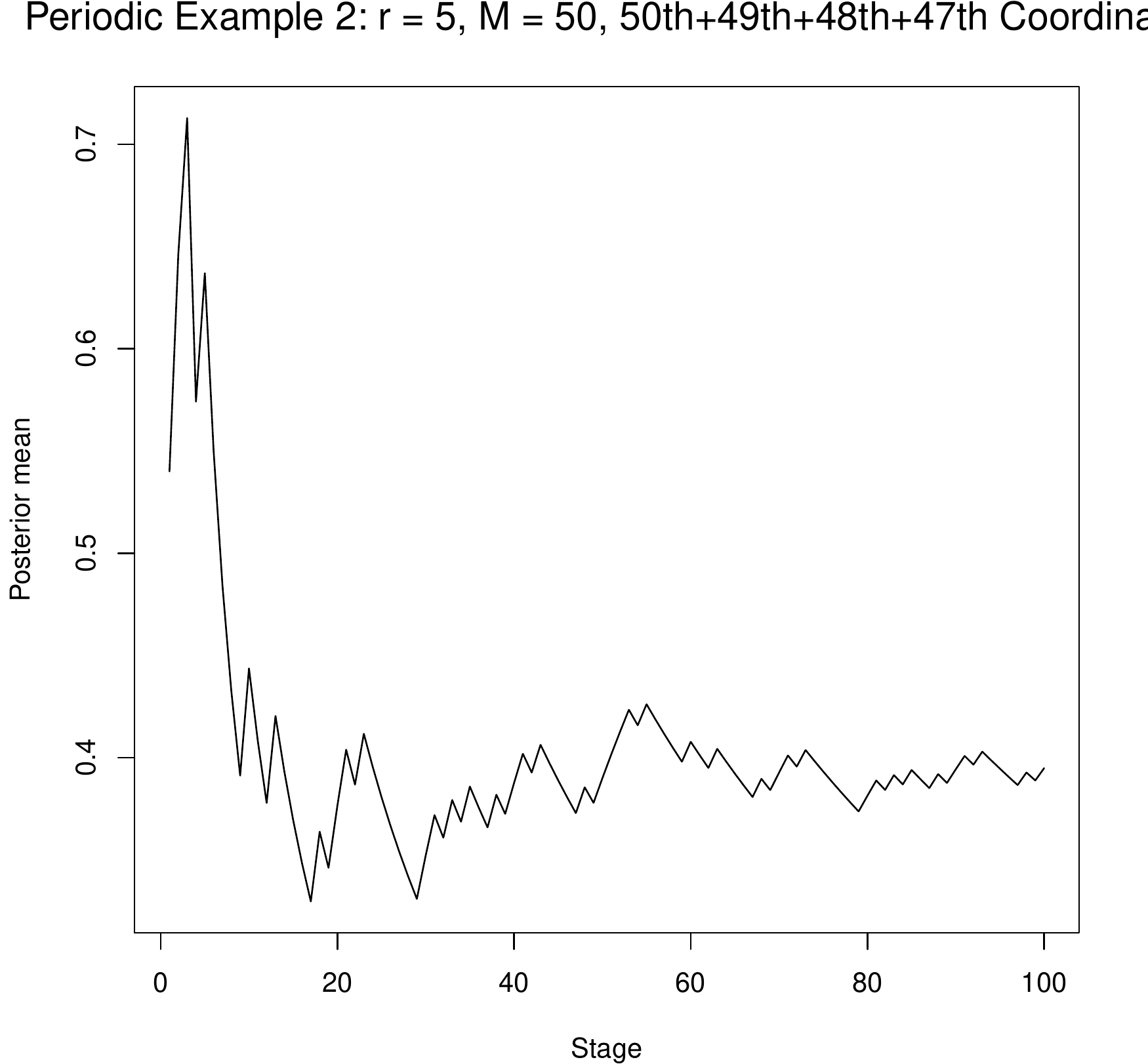}}
\hspace{2mm}
\subfigure [$r=5,M=50$. True frequency $=0.1$.]{ \label{fig:mult_osc_14}
\includegraphics[width=4.5cm,height=4.5cm]{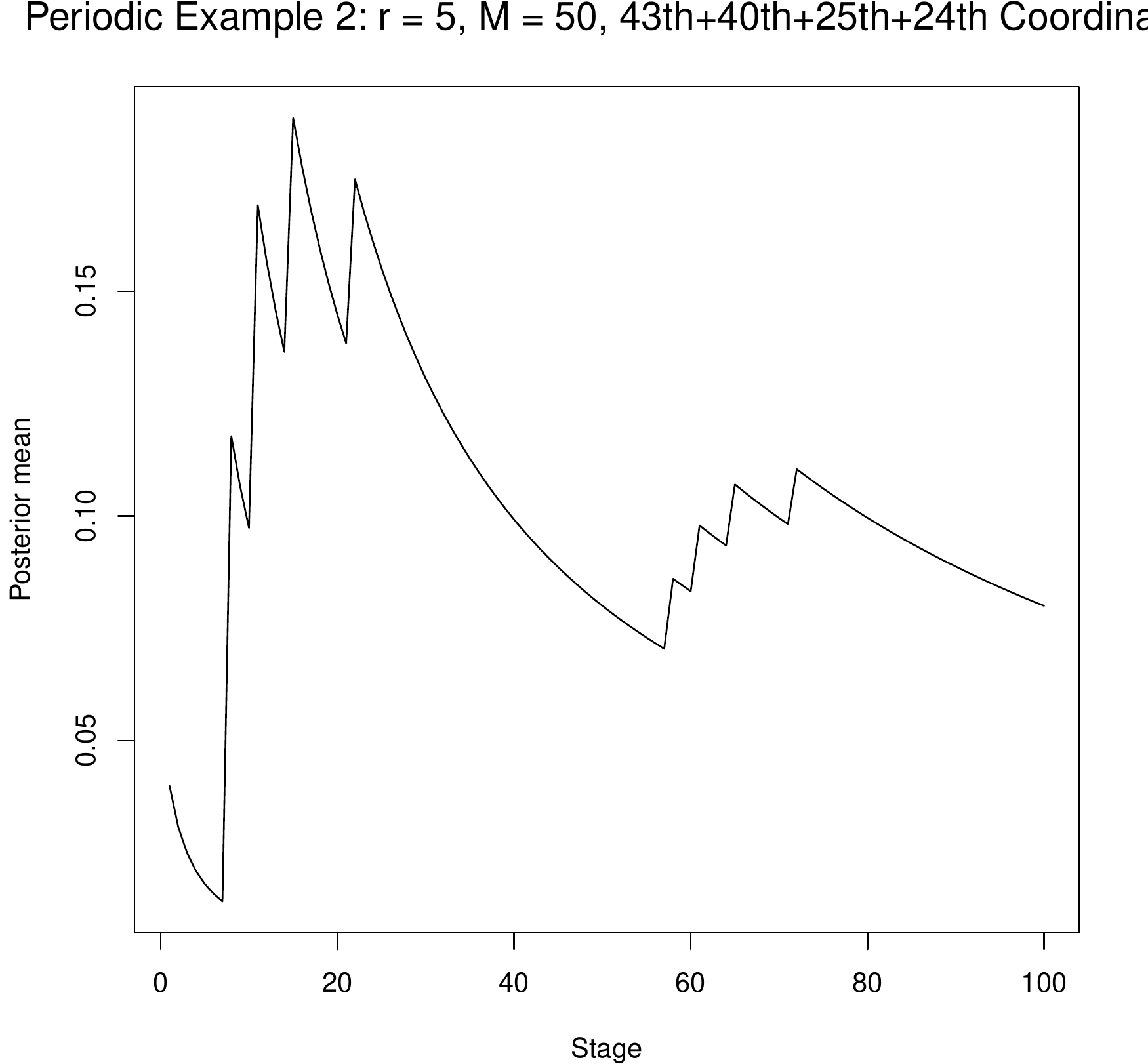}}
\hspace{2mm}
\subfigure [$r=5,M=50$. True frequency $=0.06$.]{ \label{fig:mult_osc_15}
\includegraphics[width=4.5cm,height=4.5cm]{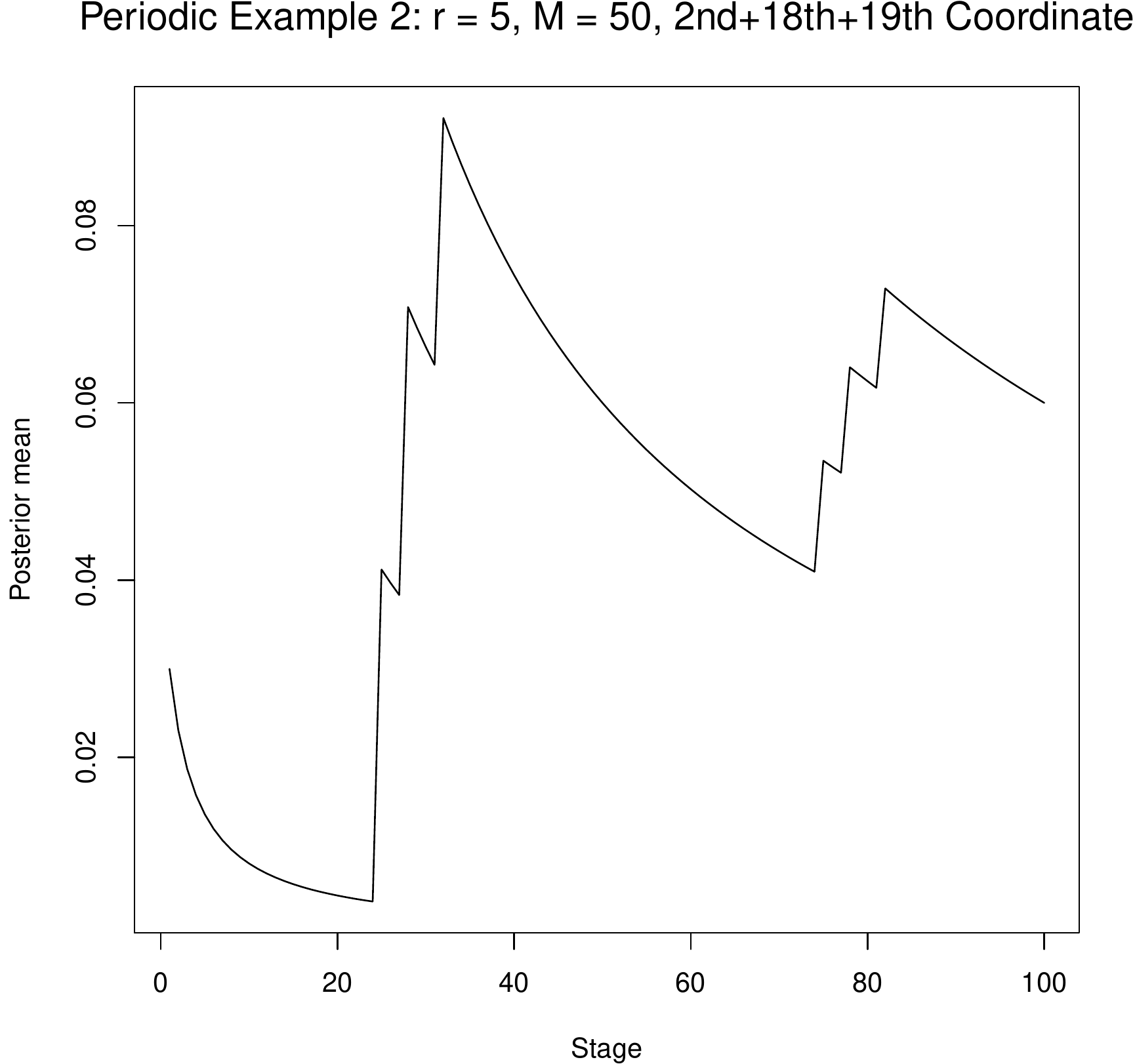}}\\
\vspace{2mm}
\subfigure [$r=5,M=100$. True frequency $=0.4$.]{ \label{fig:mult_osc_16}
\includegraphics[width=4.5cm,height=4.5cm]{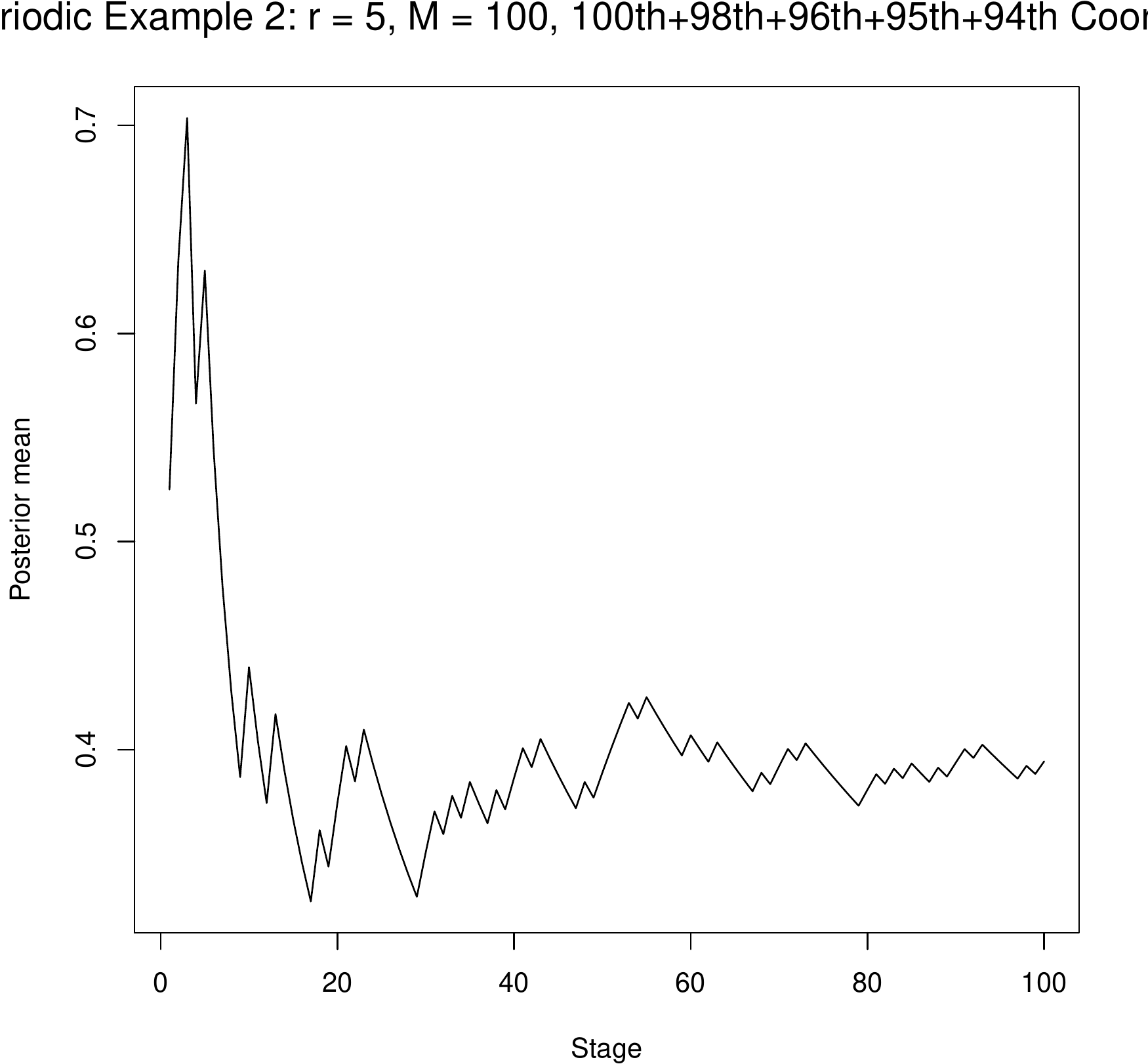}}
\hspace{2mm}
\subfigure [$r=5,M=100$. True frequency $=0.1$.]{ \label{fig:mult_osc_17}
\includegraphics[width=4.5cm,height=4.5cm]{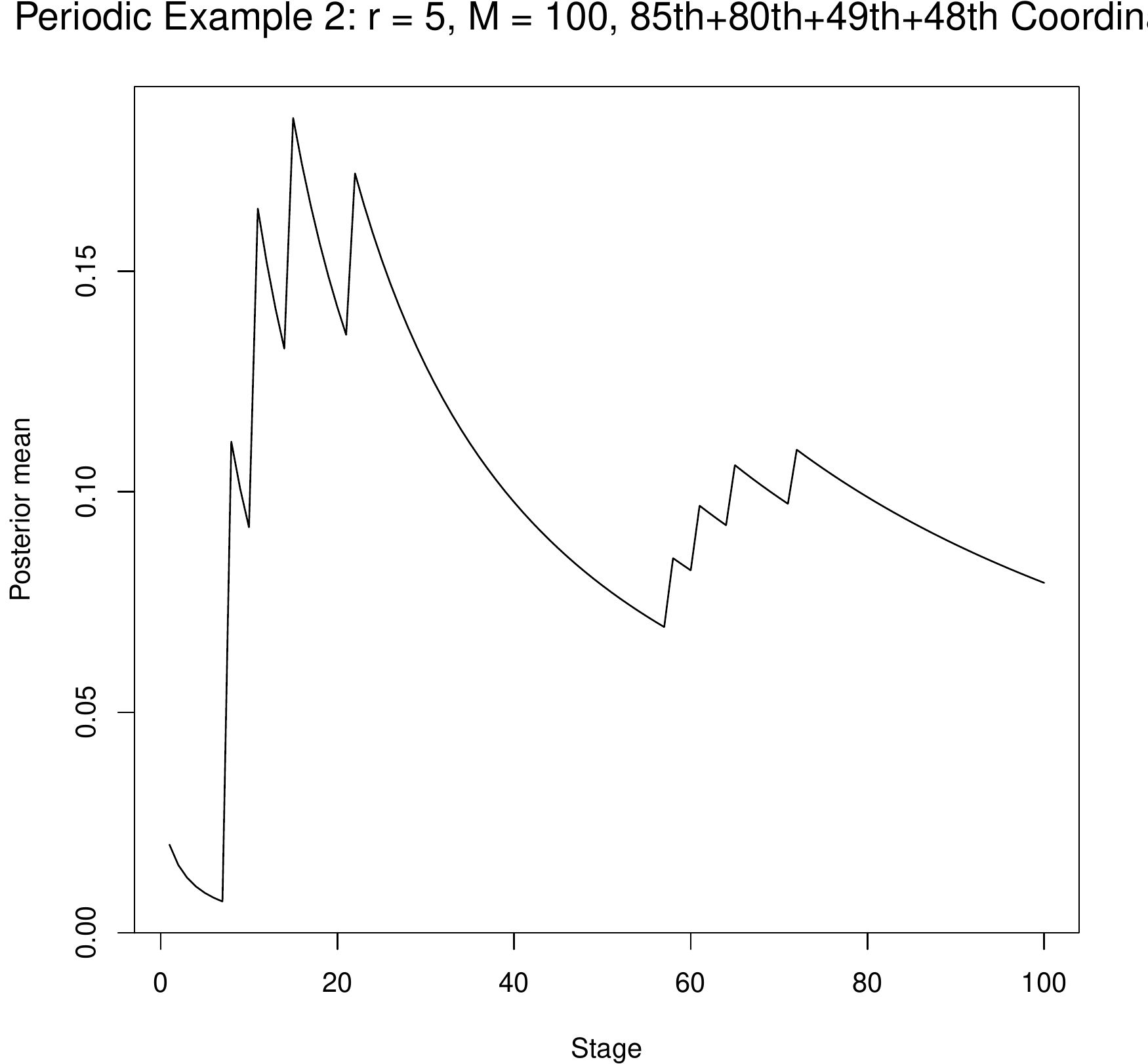}}
\hspace{2mm}
\subfigure [$r=5,M=100$. True frequency $=0.06$.]{ \label{fig:mult_osc_18}
\includegraphics[width=4.5cm,height=4.5cm]{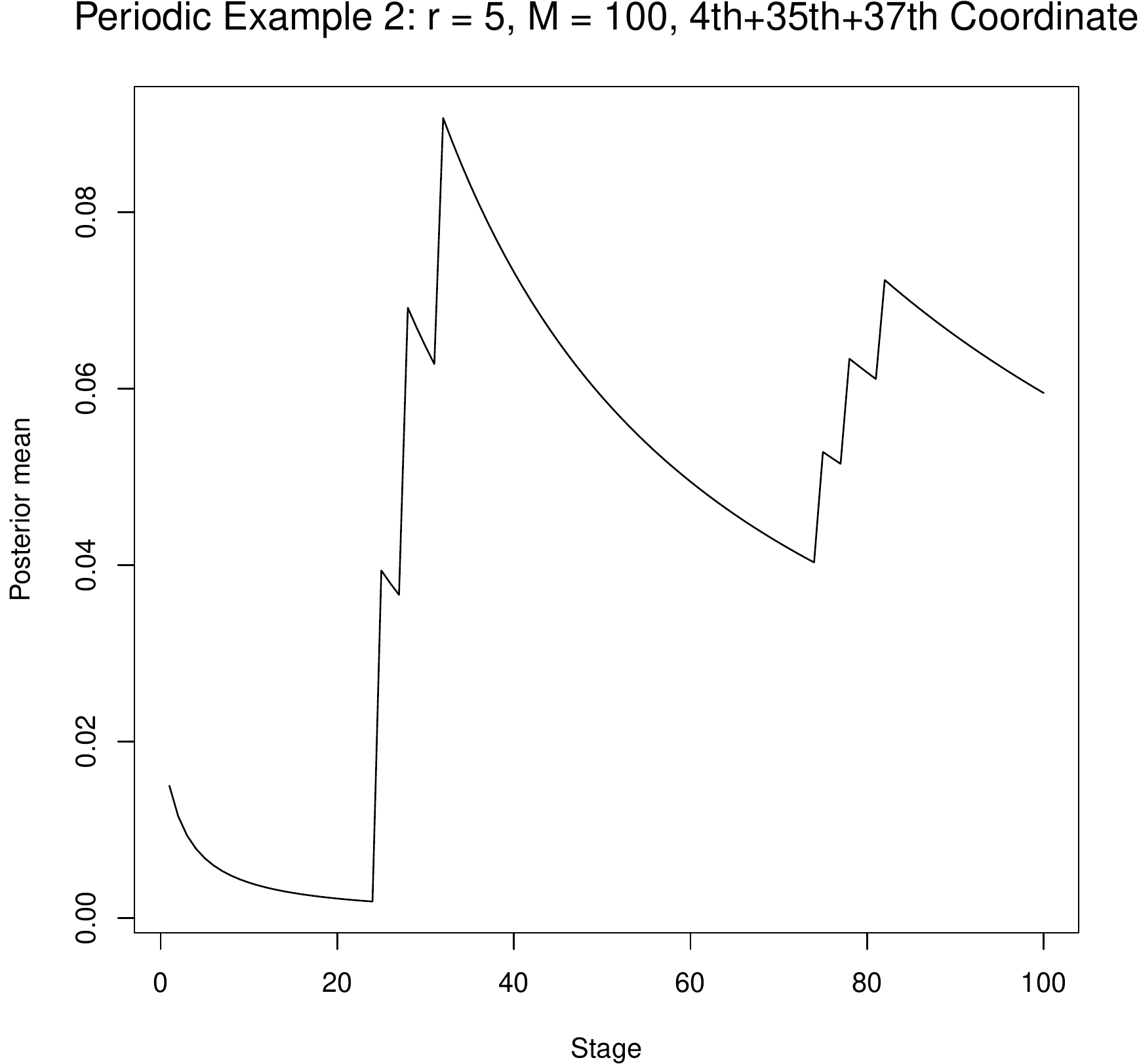}}
\caption{Illustration of our Bayesian method for determining multiple frequencies. Here the true frequencies are $0.4$, $0.1$ and $0.06$. }
\label{fig:mult_osc_example2}
\end{figure}

\begin{figure}
\centering
\subfigure [$r=10,M=10$. True frequency $=0.4$.]{ \label{fig:mult_osc_19}
\includegraphics[width=4.5cm,height=4.5cm]{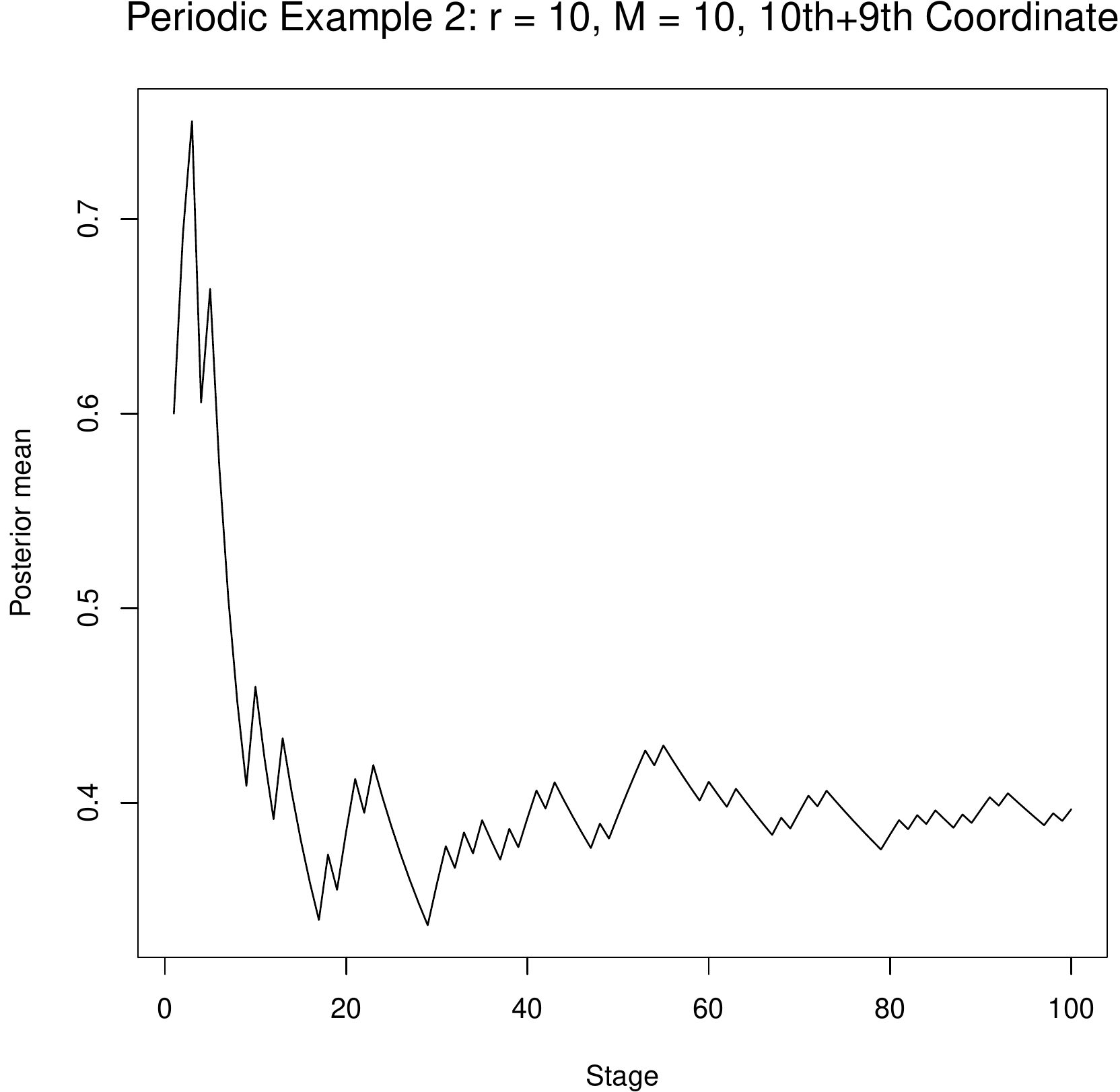}}
\hspace{2mm}
\subfigure [$r=10,M=10$. True frequency $=0.1$.]{ \label{fig:mult_osc_20}
\includegraphics[width=4.5cm,height=4.5cm]{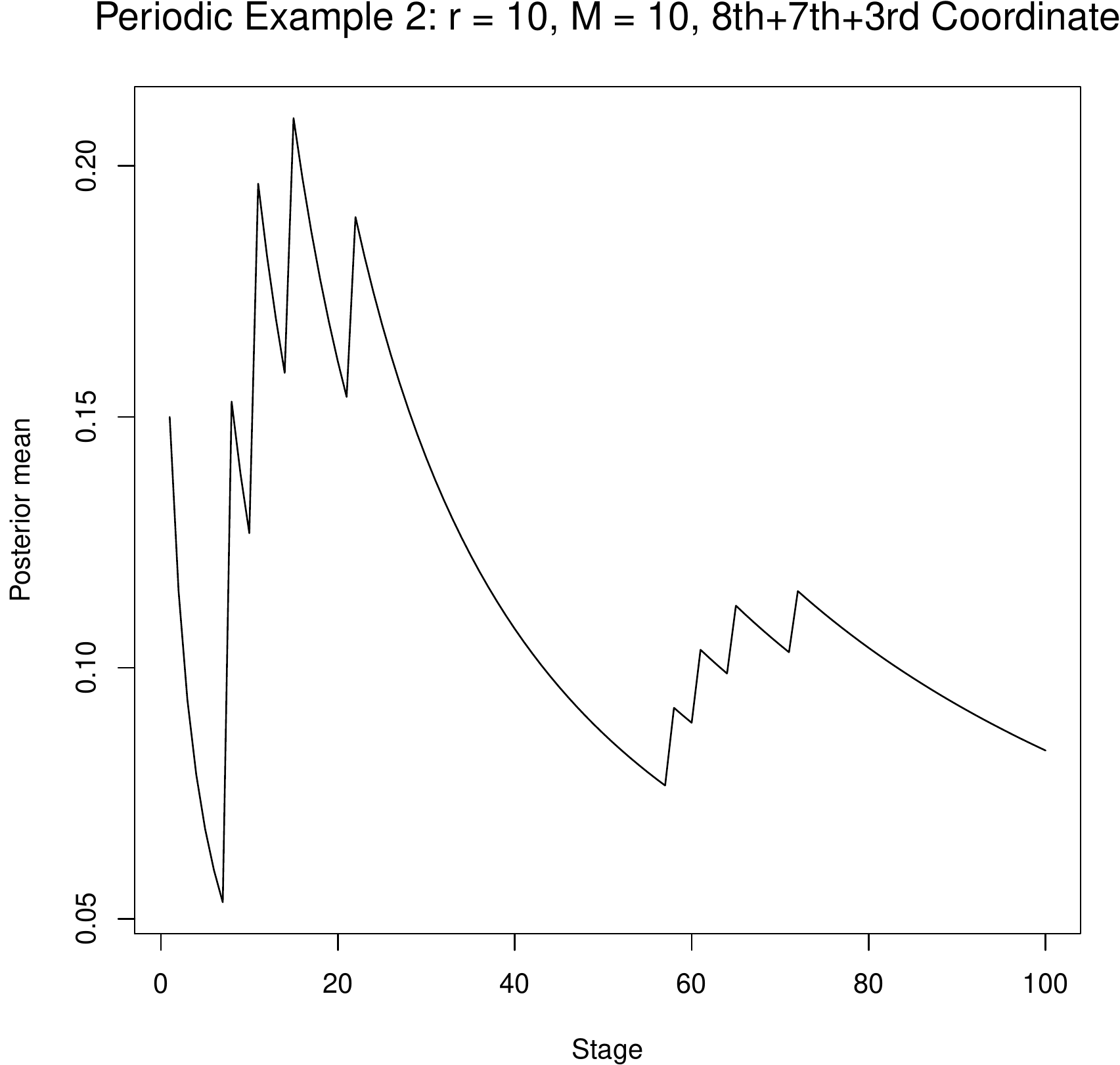}}
\hspace{2mm}
\subfigure [$r=10,M=10$. True frequency $=0.06$.]{ \label{fig:mult_osc_21}
\includegraphics[width=4.5cm,height=4.5cm]{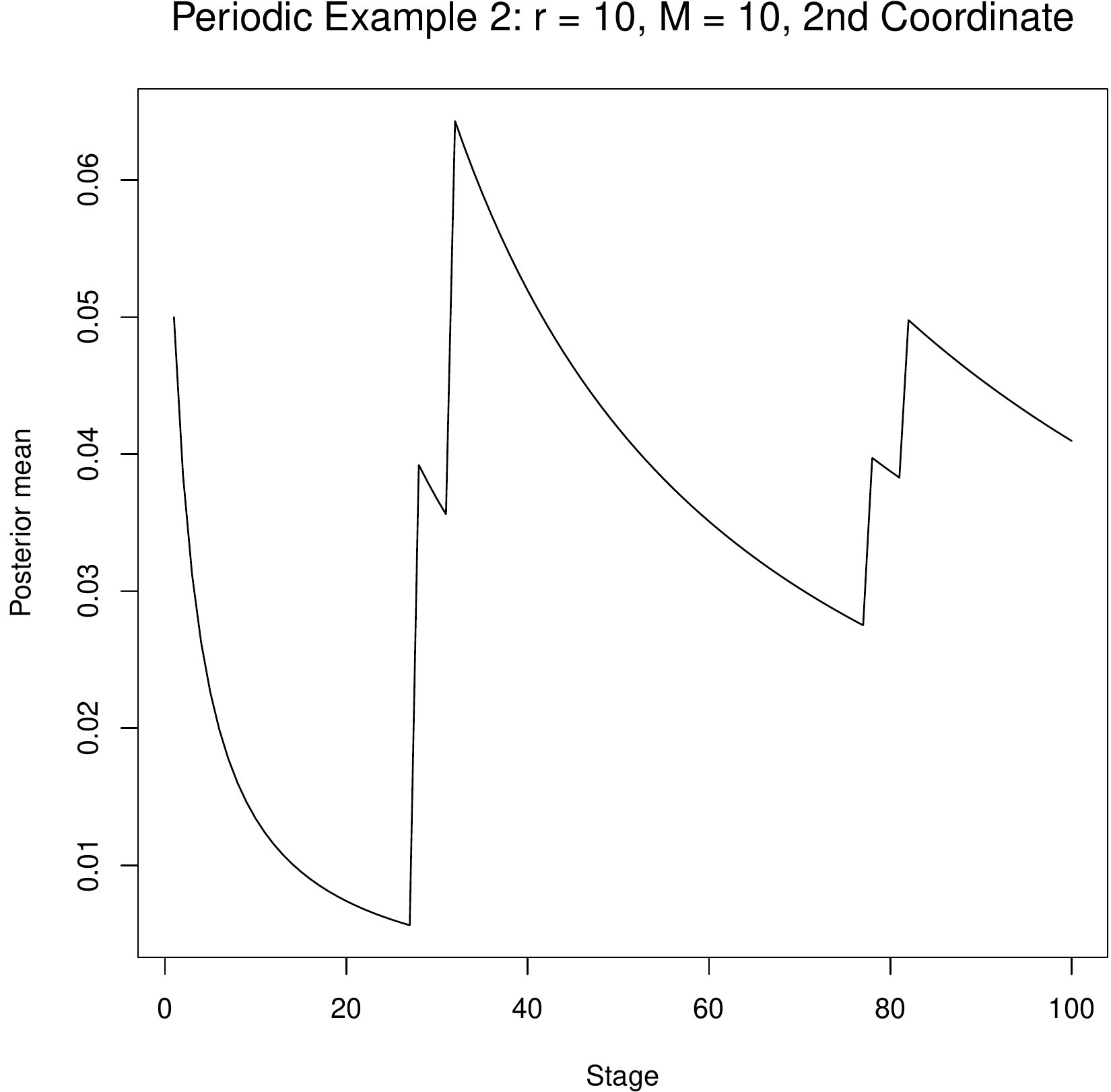}}\\
\vspace{2mm}
\subfigure [$r=10,M=50$. True frequency $=0.4$.]{ \label{fig:mult_osc_22}
\includegraphics[width=4.5cm,height=4.5cm]{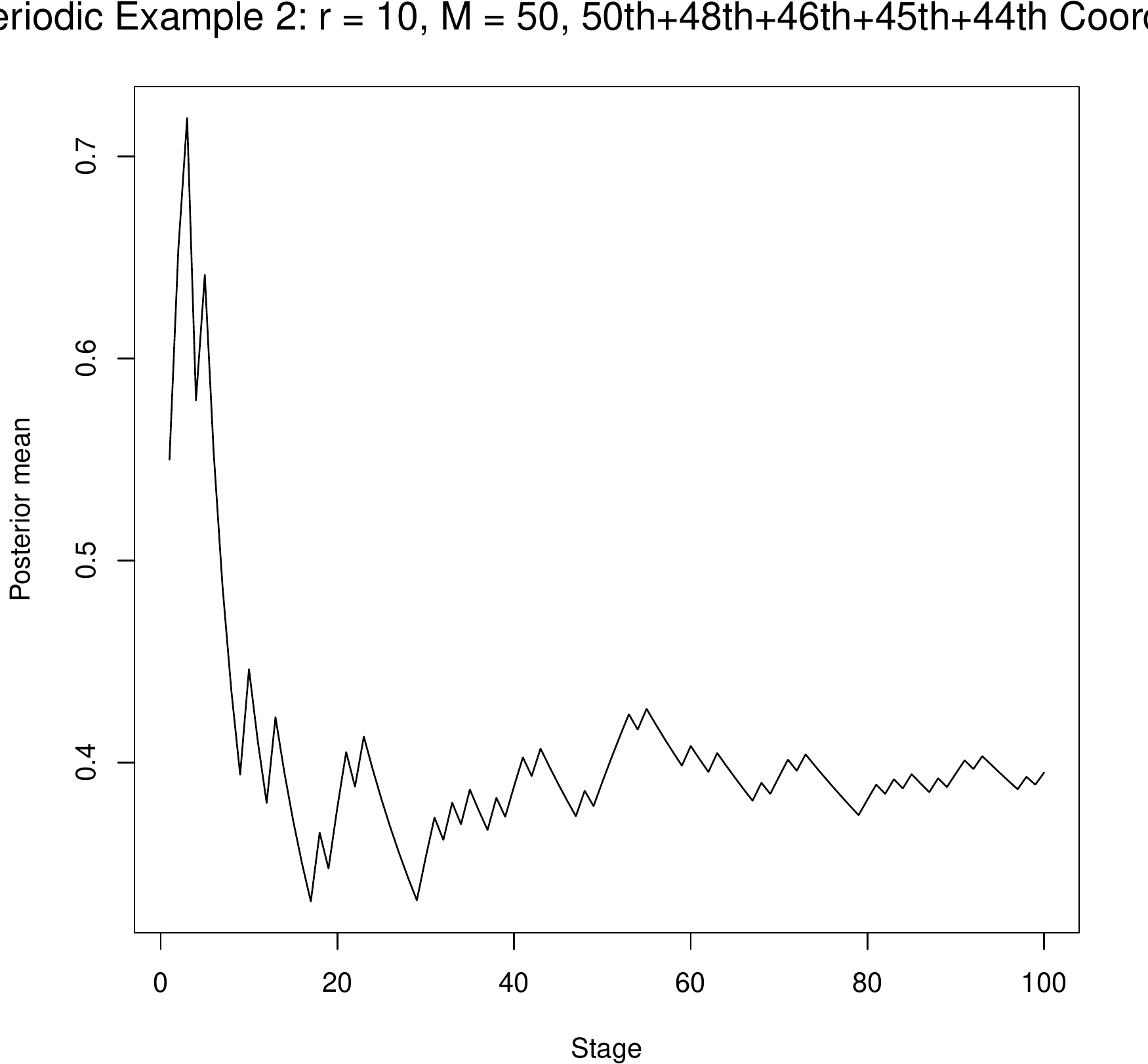}}
\hspace{2mm}
\subfigure [$r=10,M=50$. True frequency $=0.1$.]{ \label{fig:mult_osc_23}
\includegraphics[width=4.5cm,height=4.5cm]{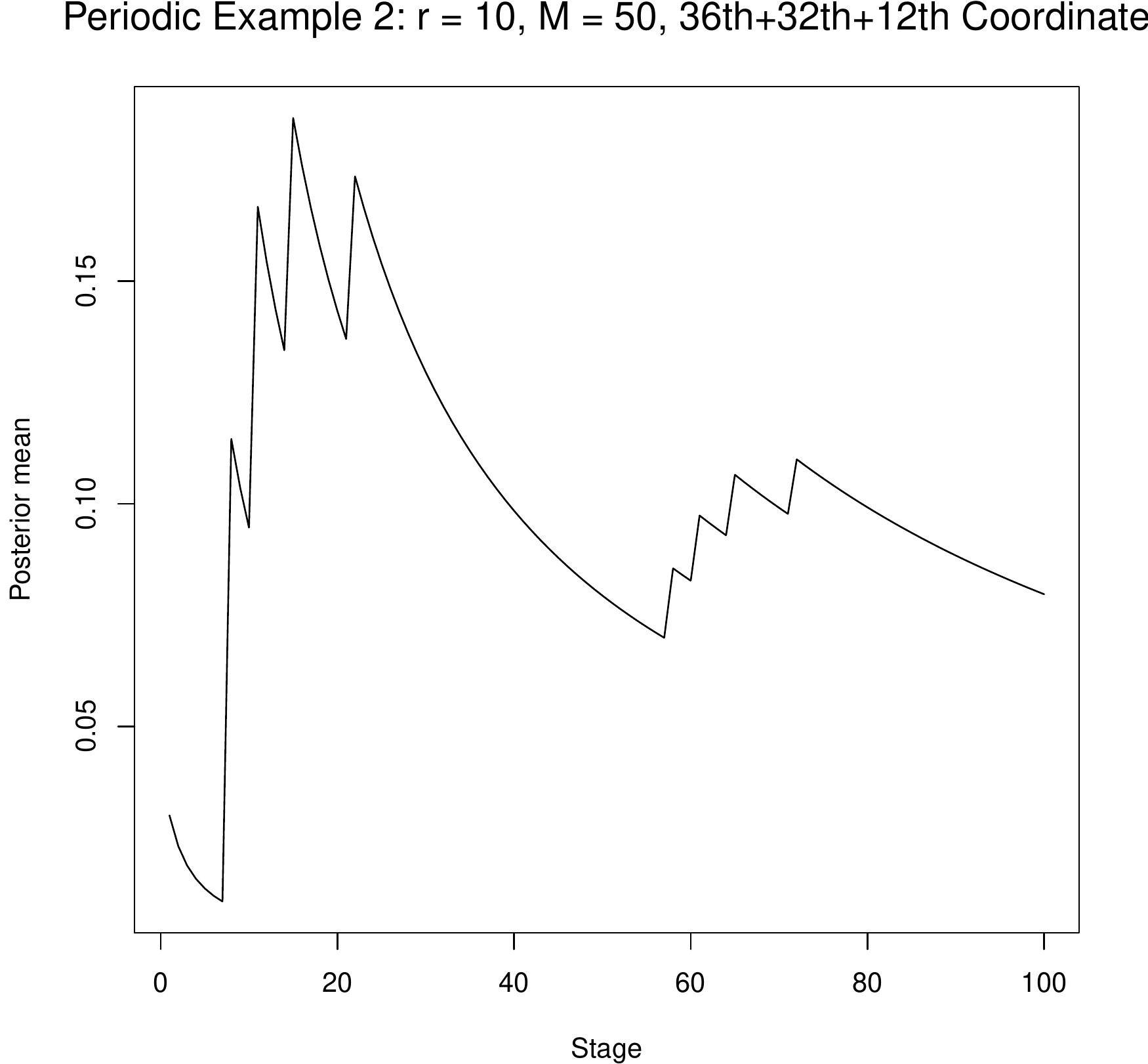}}
\hspace{2mm}
\subfigure [$r=10,M=50$. True frequency $=0.06$.]{ \label{fig:mult_osc_24}
\includegraphics[width=4.5cm,height=4.5cm]{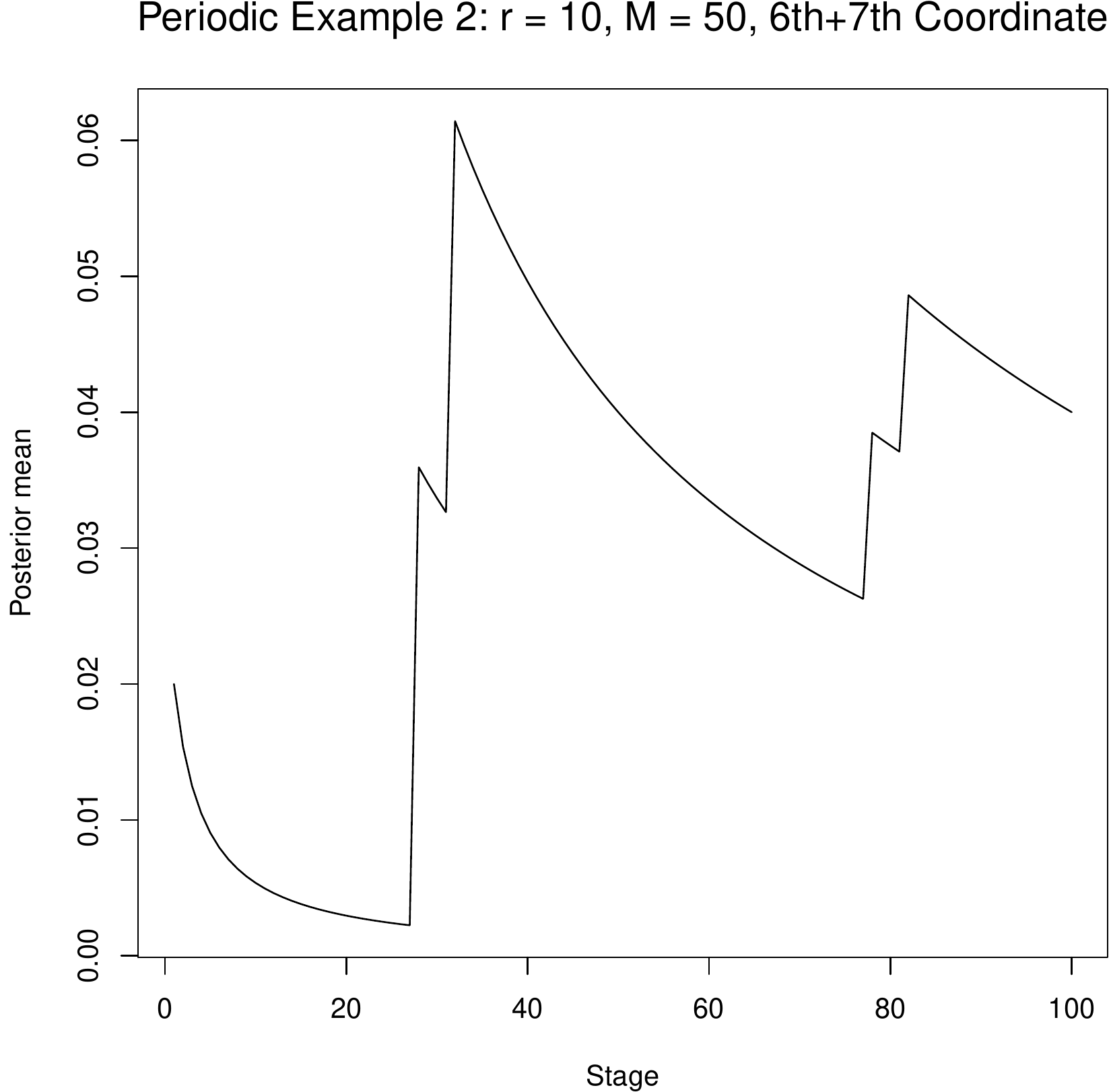}}\\
\vspace{2mm}
\subfigure [$r=10,M=100$. True frequency $=0.4$.]{ \label{fig:mult_osc_25}
\includegraphics[width=4.5cm,height=4.5cm]{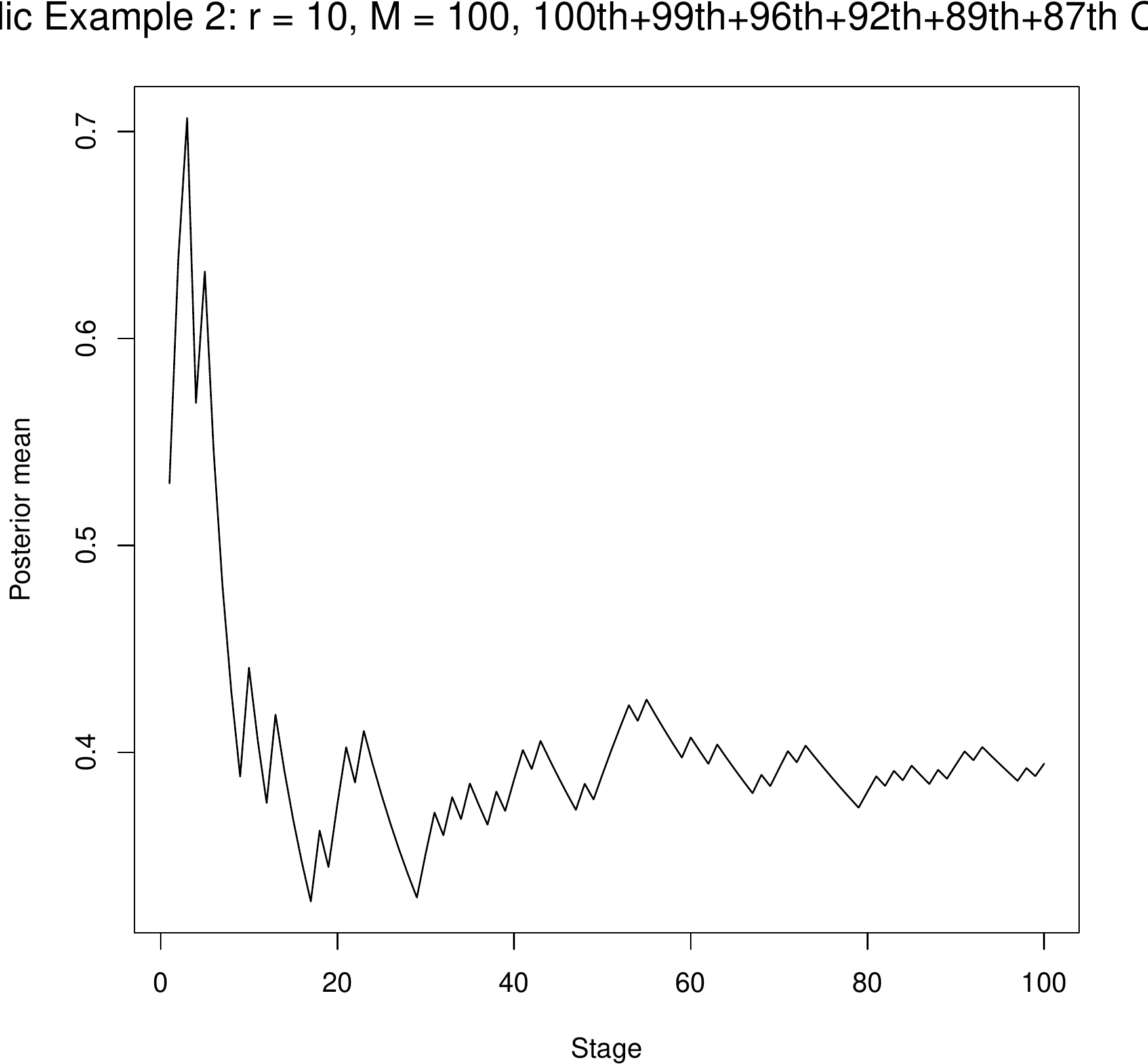}}
\hspace{2mm}
\subfigure [$r=10,M=100$. True frequency $=0.1$.]{ \label{fig:mult_osc_26}
\includegraphics[width=4.5cm,height=4.5cm]{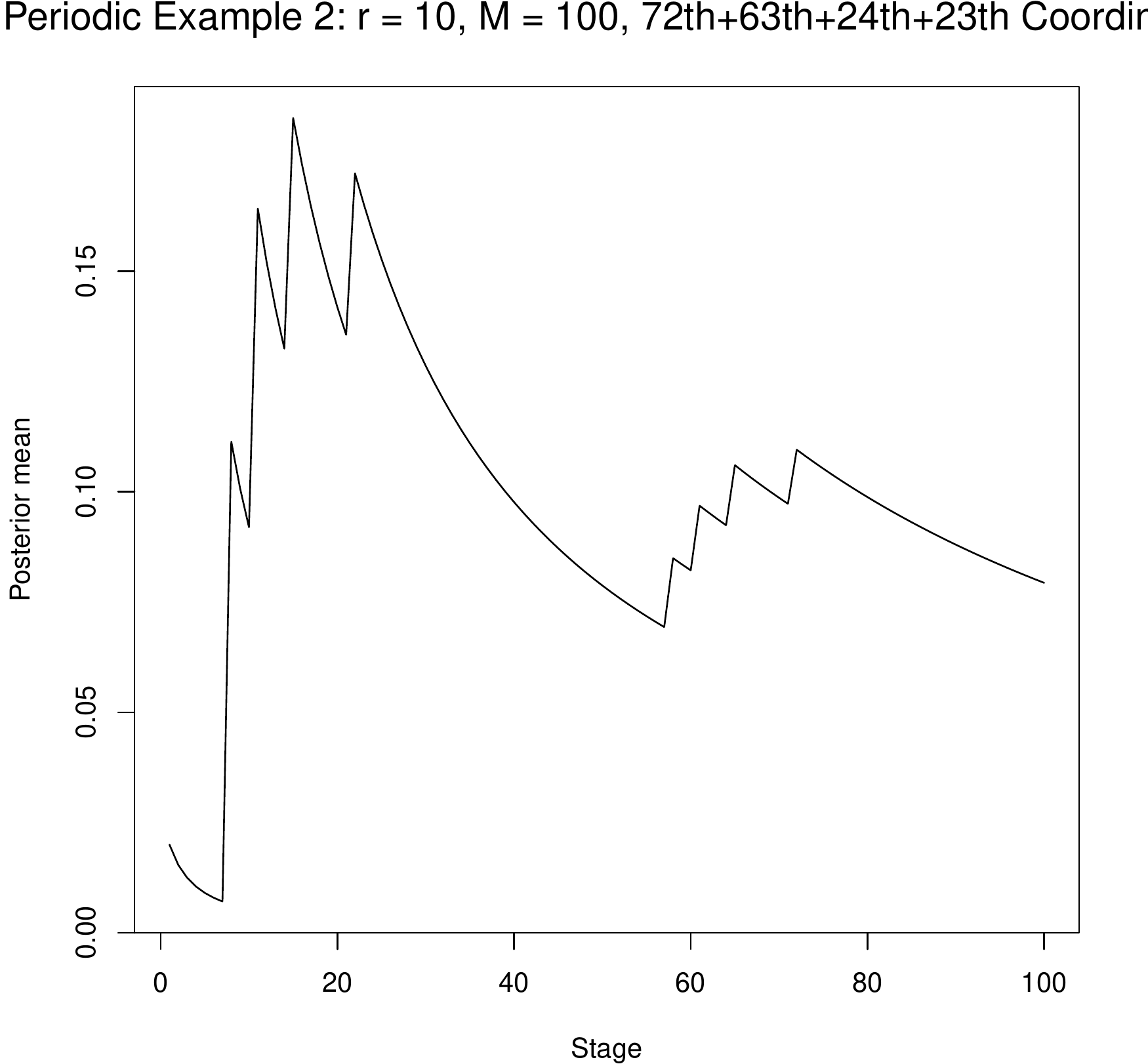}}
\hspace{2mm}
\subfigure [$r=10,M=100$. True frequency $=0.06$.]{ \label{fig:mult_osc_27}
\includegraphics[width=4.5cm,height=4.5cm]{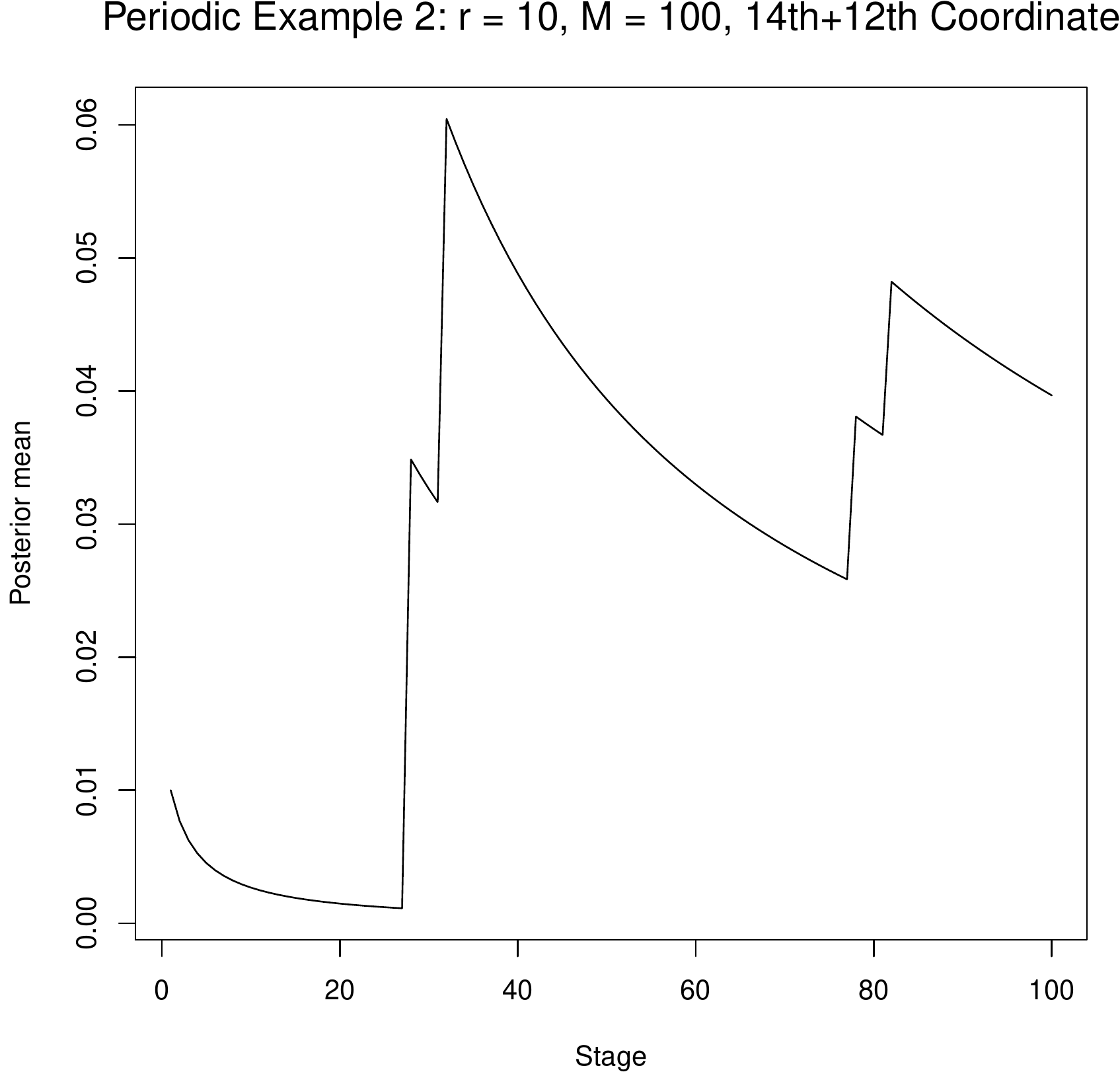}}
\caption{Illustration of our Bayesian method for determining multiple frequencies. Here the true frequencies are $0.4$, $0.1$ and $0.06$. }
\label{fig:mult_osc_example3}
\end{figure}

\begin{figure}
\centering
\subfigure [$r=50,M=10$. True frequency $=0.4$.]{ \label{fig:mult_osc_28}
\includegraphics[width=4.5cm,height=4.5cm]{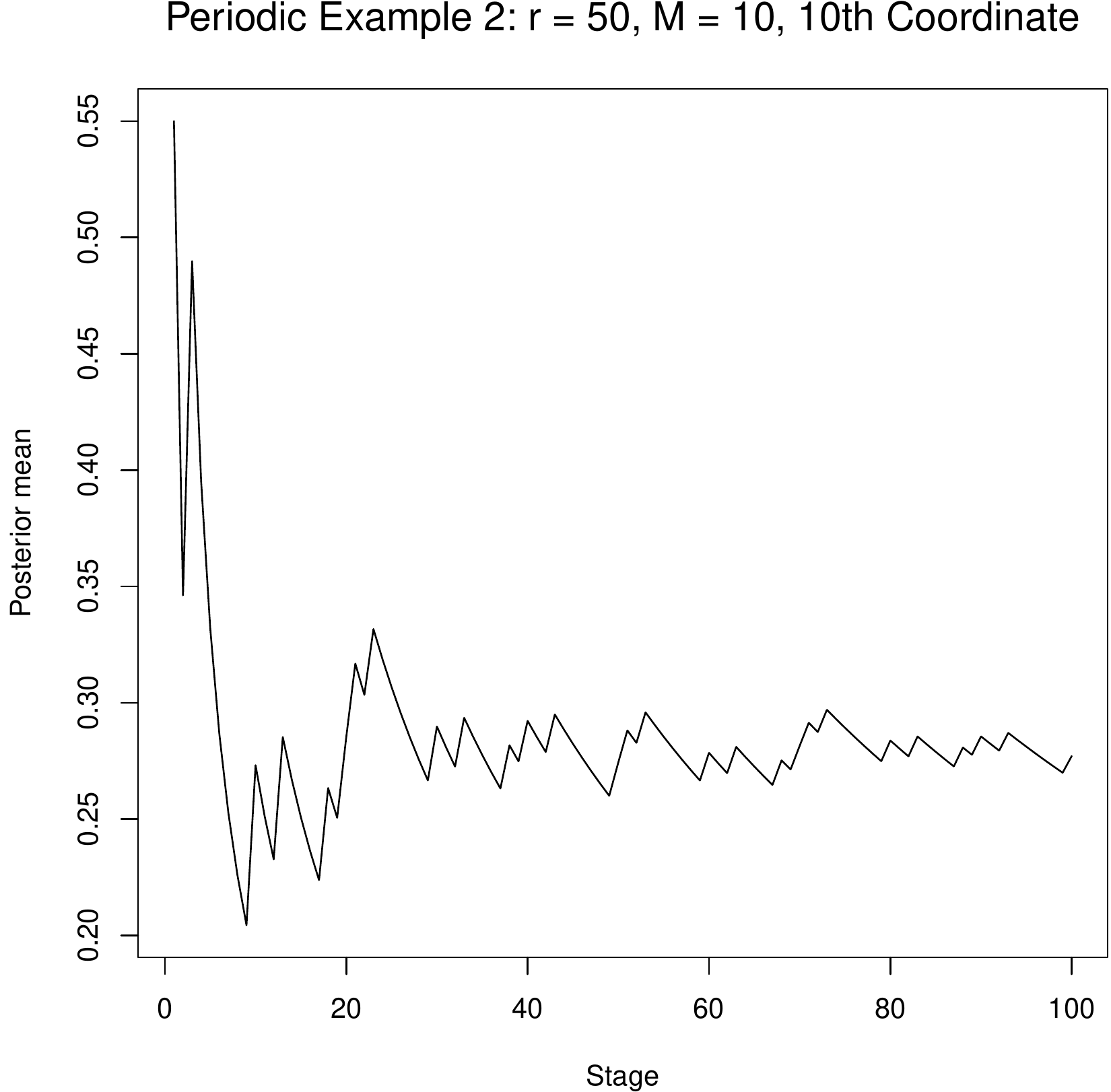}}
\hspace{2mm}
\subfigure [$r=50,M=10$. True frequency $=0.1$.]{ \label{fig:mult_osc_29}
\includegraphics[width=4.5cm,height=4.5cm]{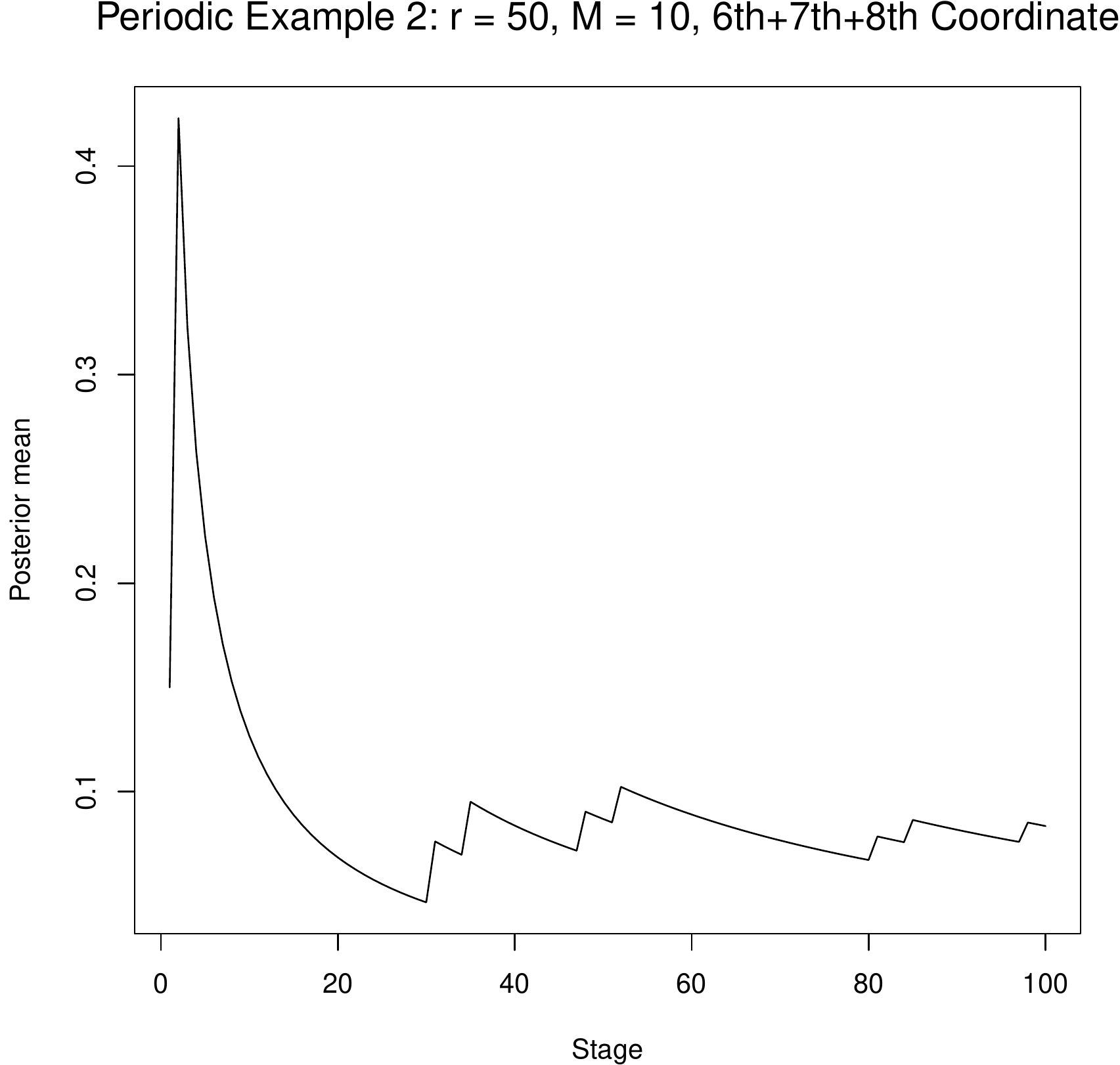}}
\hspace{2mm}
\subfigure [$r=50,M=10$. True frequency $=0.06$.]{ \label{fig:mult_osc_30}
\includegraphics[width=4.5cm,height=4.5cm]{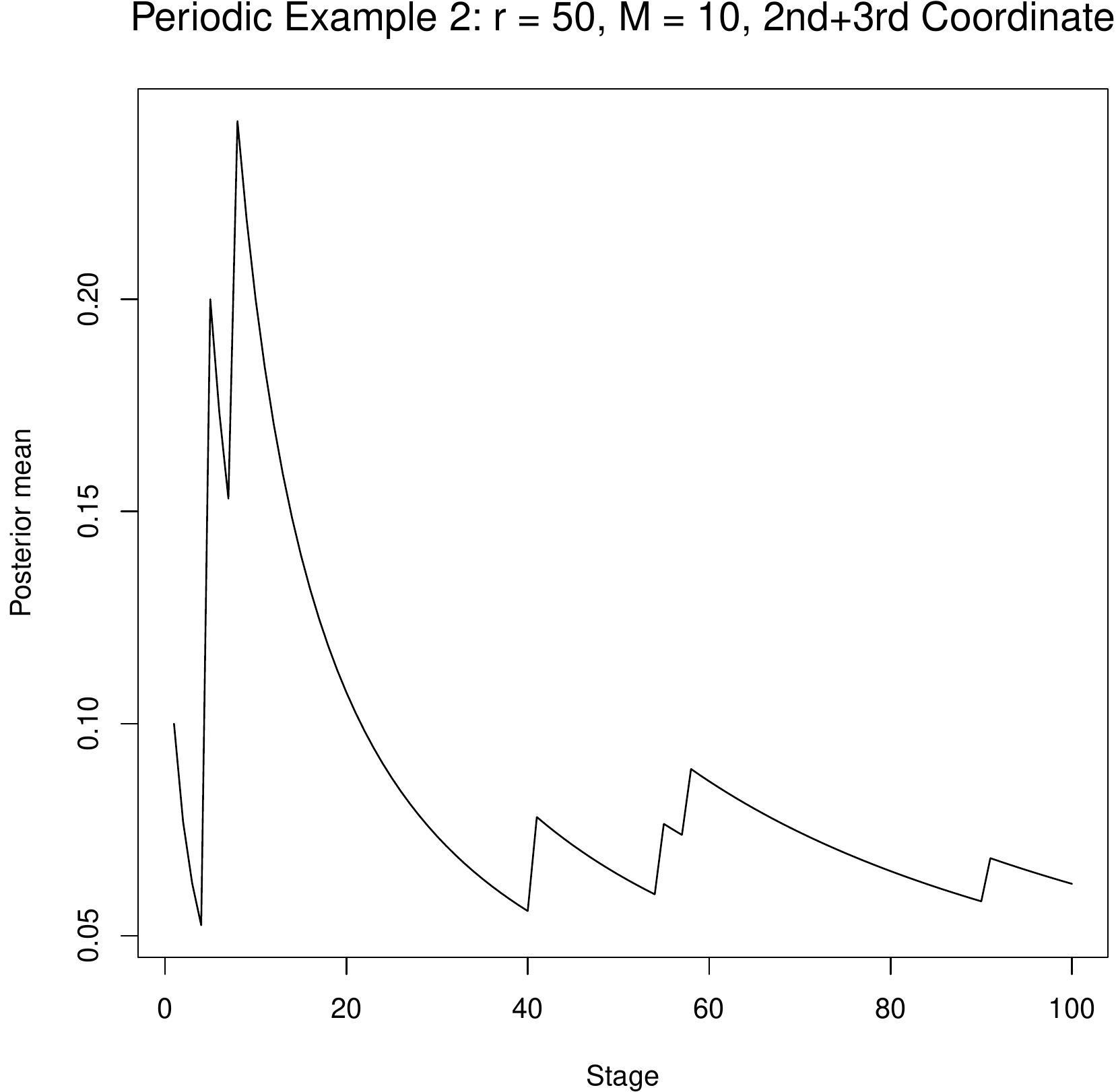}}\\
\vspace{2mm}
\subfigure [$r=50,M=50$. True frequency $=0.4$.]{ \label{fig:mult_osc_31}
\includegraphics[width=4.5cm,height=4.5cm]{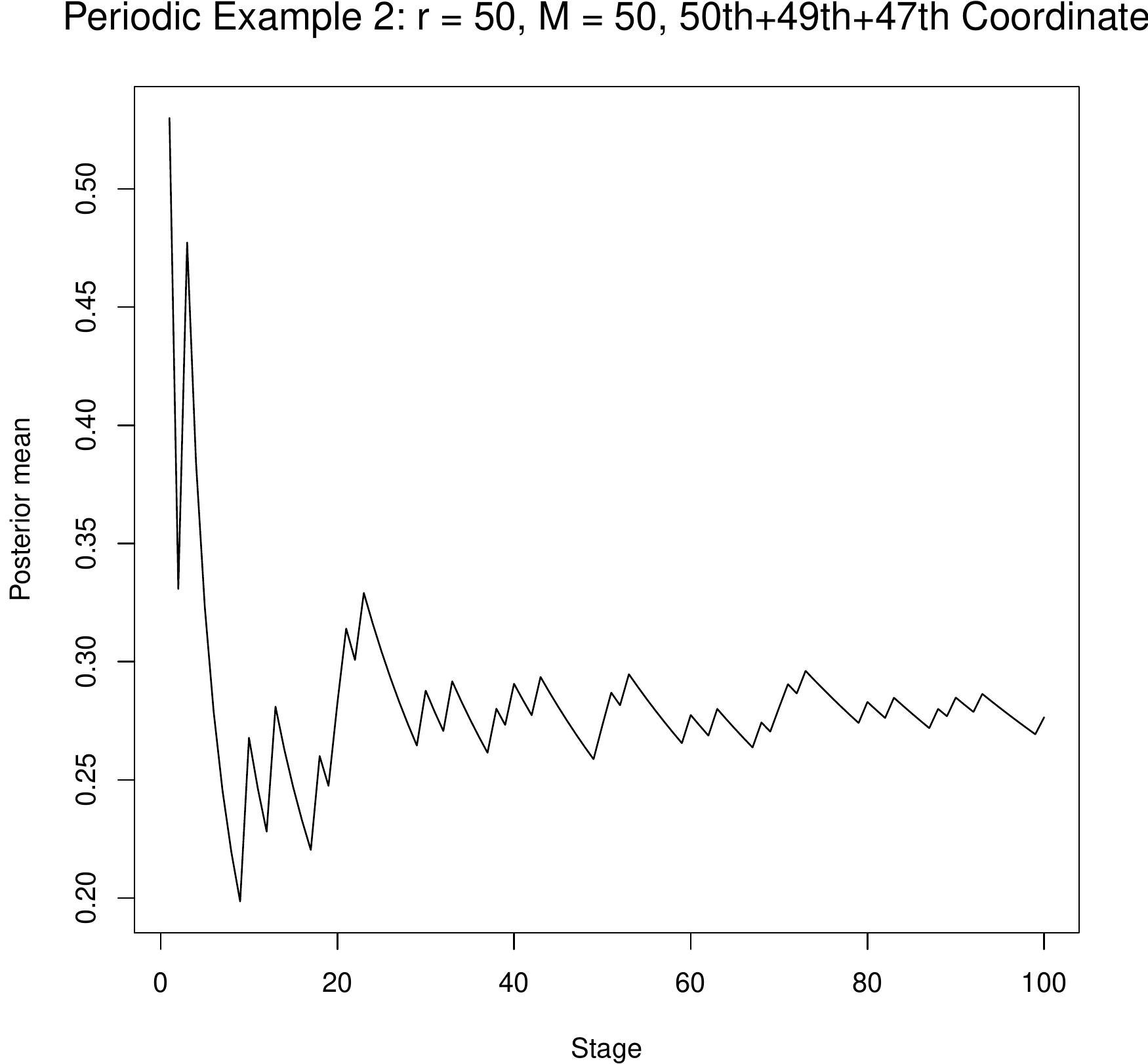}}
\hspace{2mm}
\subfigure [$r=50,M=50$. True frequency $=0.1$.]{ \label{fig:mult_osc_32}
\includegraphics[width=4.5cm,height=4.5cm]{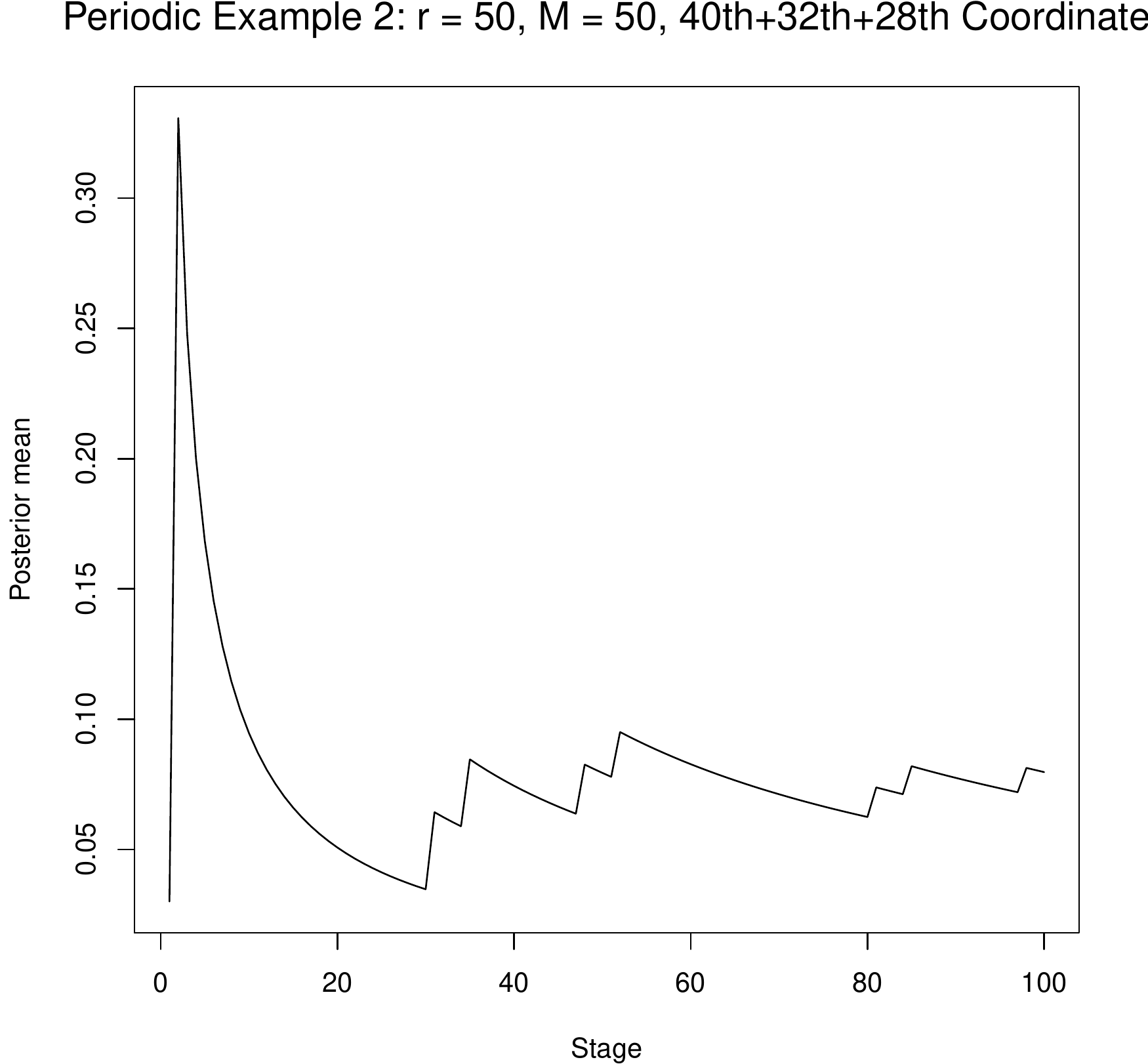}}
\hspace{2mm}
\subfigure [$r=50,M=50$. True frequency $=0.06$.]{ \label{fig:mult_osc_33}
\includegraphics[width=4.5cm,height=4.5cm]{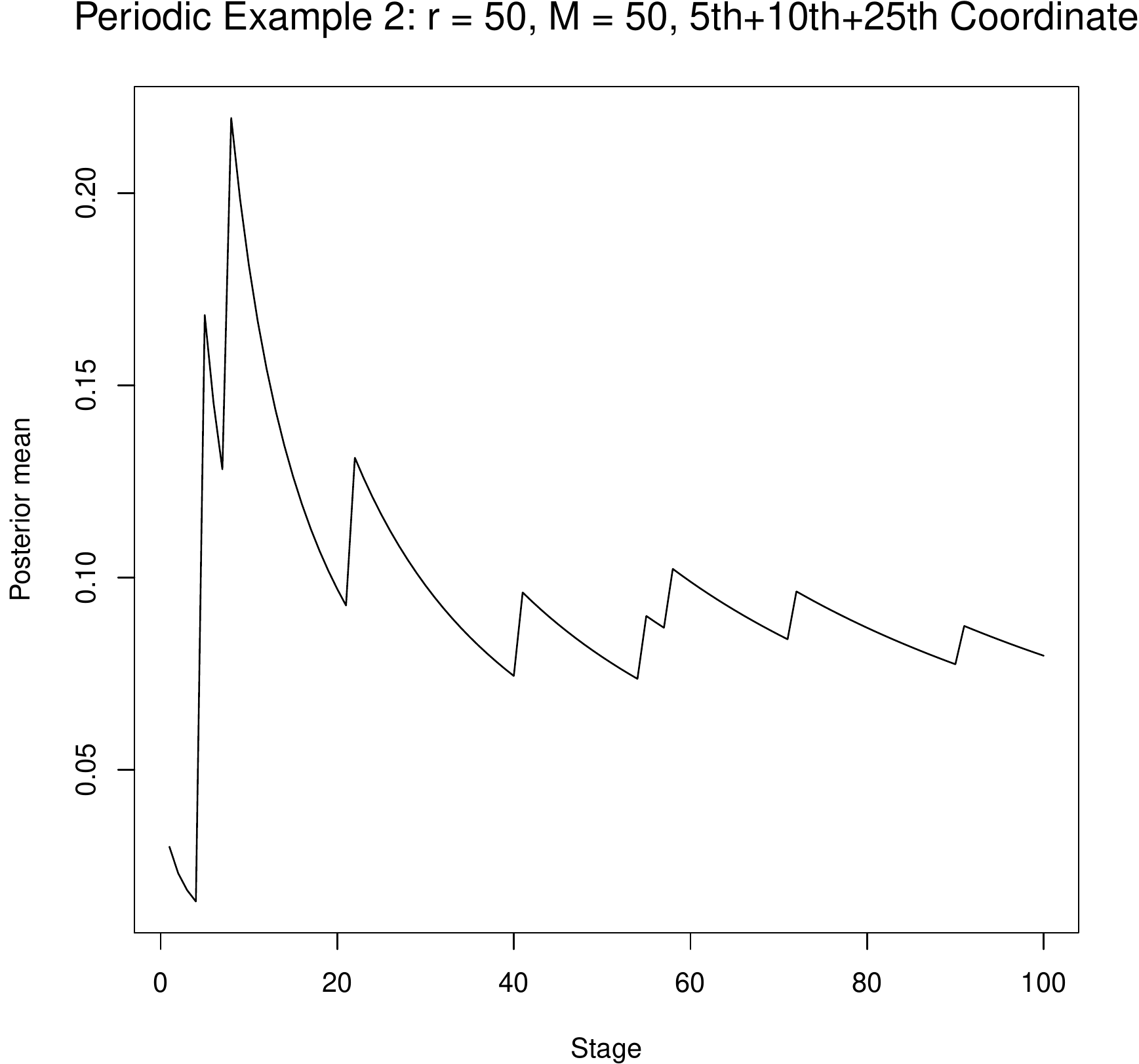}}\\
\vspace{2mm}
\subfigure [$r=50,M=100$. True frequency $=0.4$.]{ \label{fig:mult_osc_34}
\includegraphics[width=4.5cm,height=4.5cm]{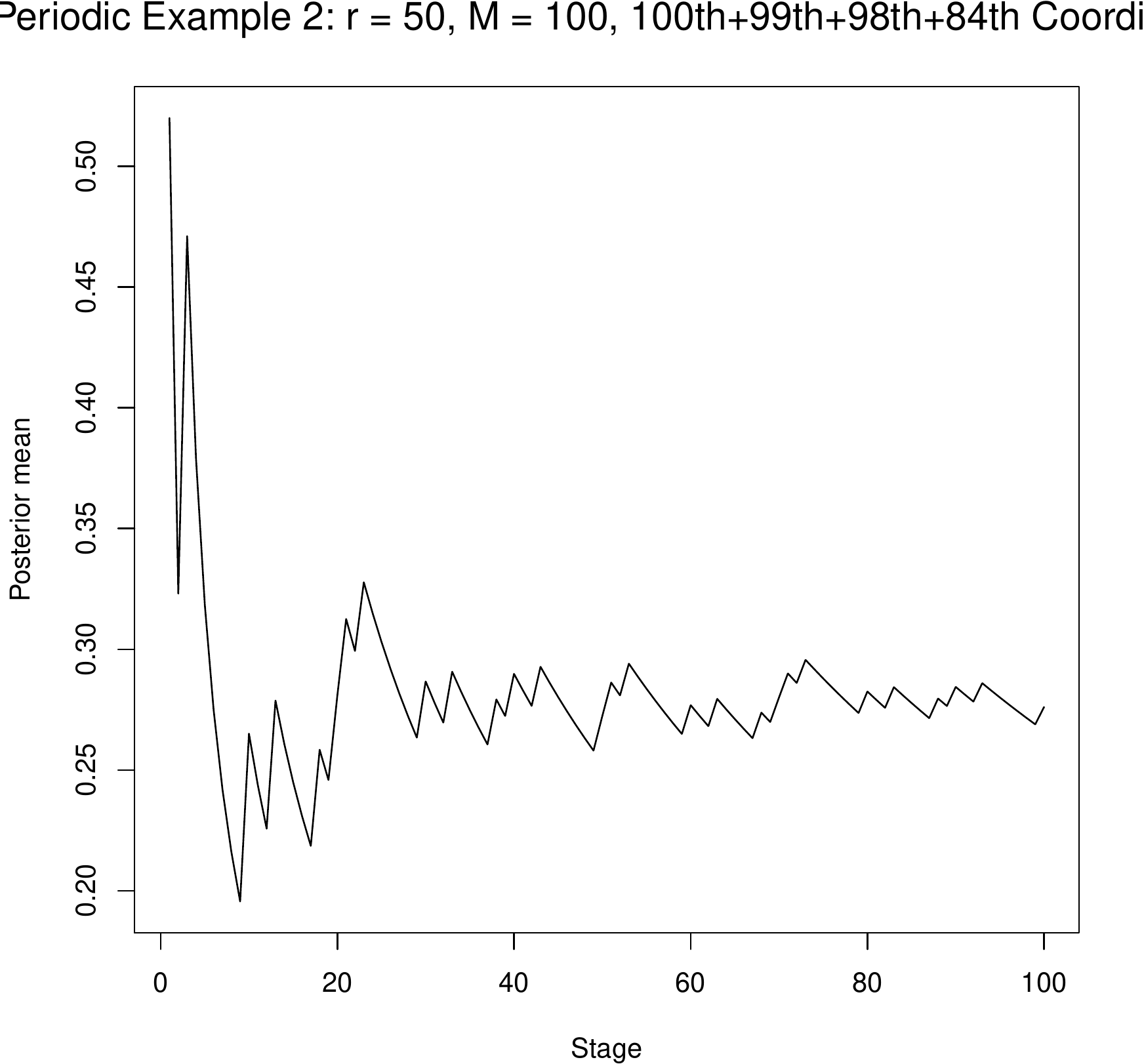}}
\hspace{2mm}
\subfigure [$r=50,M=100$. True frequency $=0.1$.]{ \label{fig:mult_osc_35}
\includegraphics[width=4.5cm,height=4.5cm]{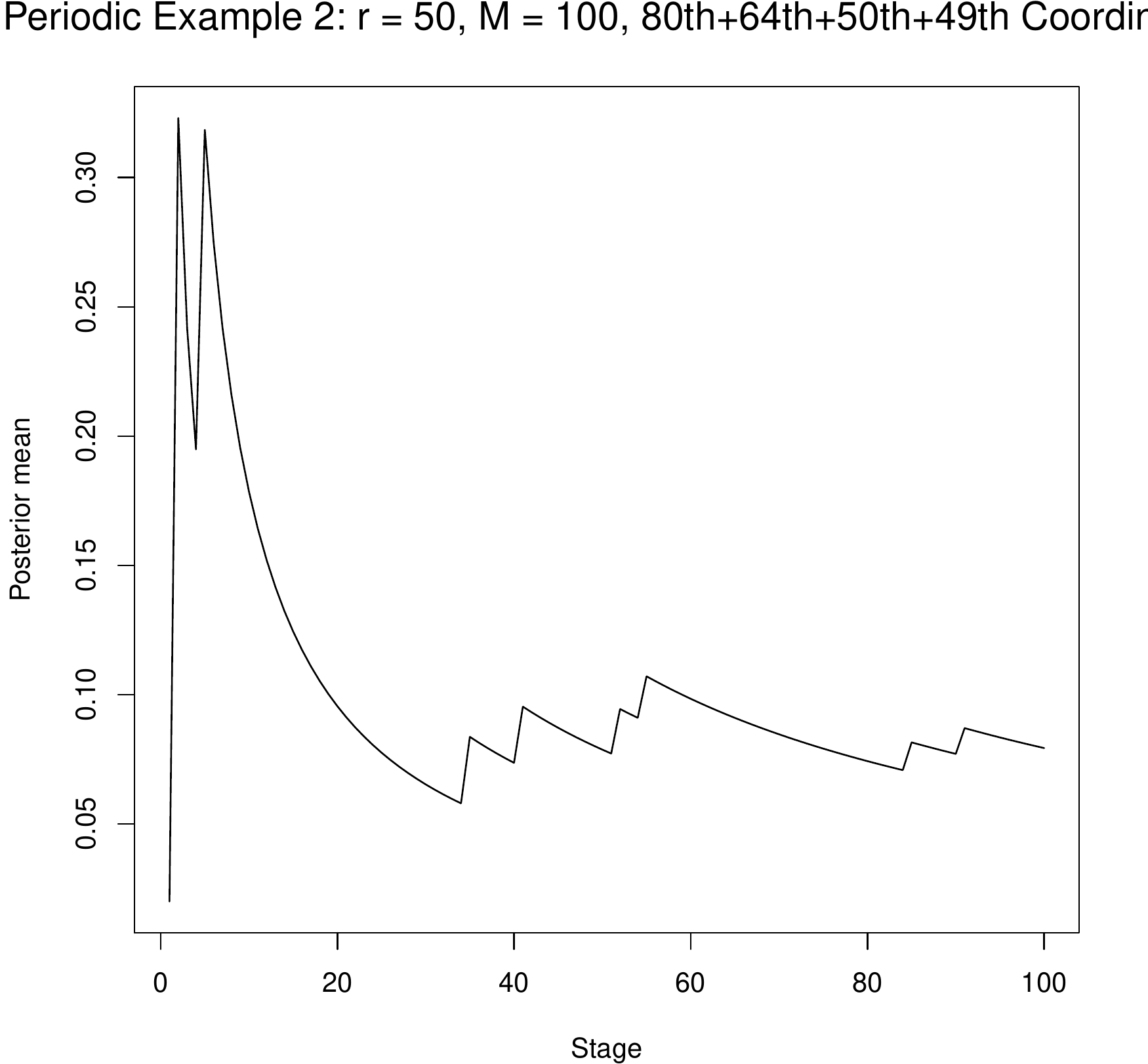}}
\hspace{2mm}
\subfigure [$r=50,M=100$. True frequency $=0.06$.]{ \label{fig:mult_osc_36}
\includegraphics[width=4.5cm,height=4.5cm]{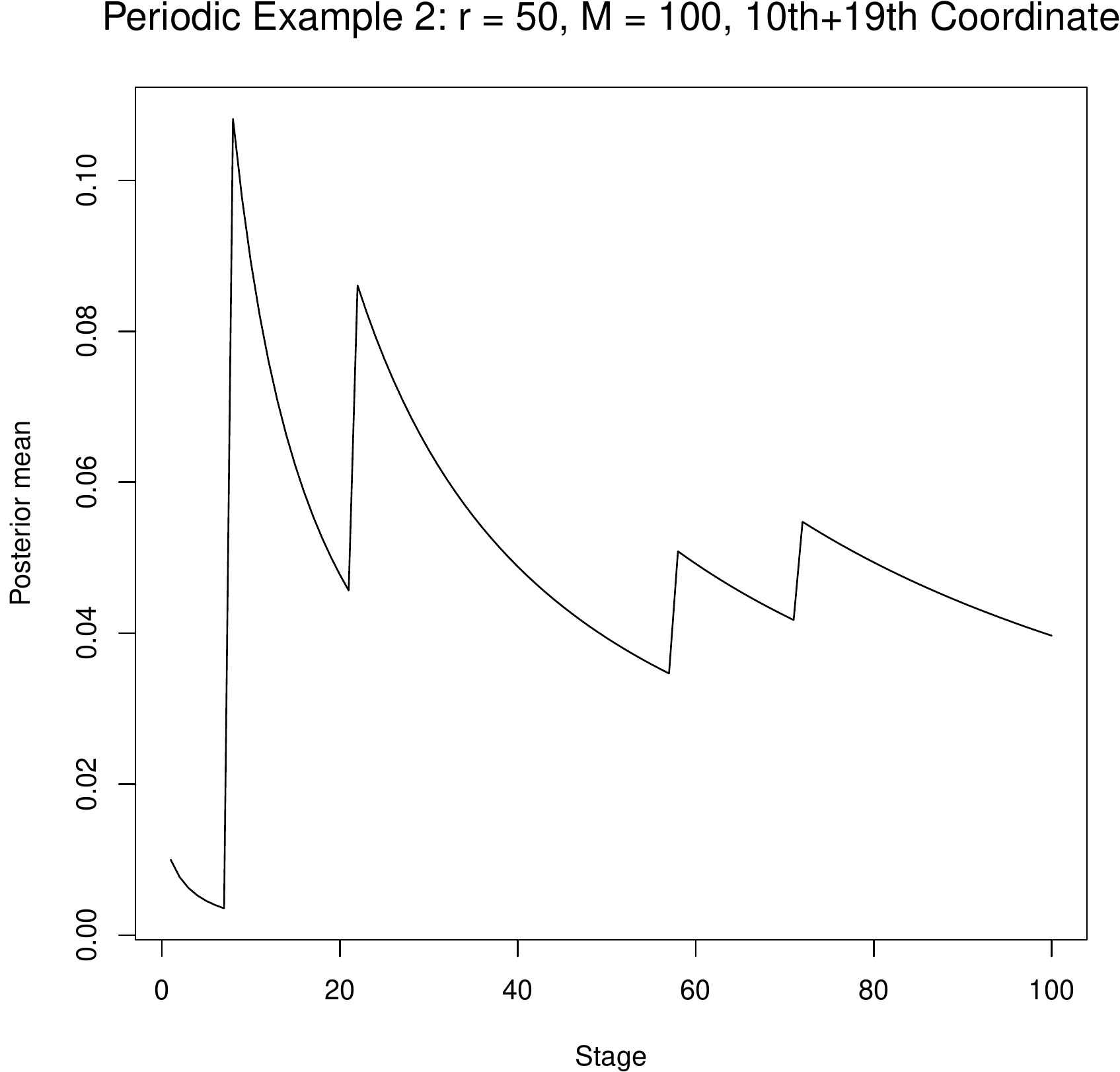}}
\caption{Illustration of our Bayesian method for determining multiple frequencies. Here the true frequencies are $0.4$, $0.1$ and $0.06$. }
\label{fig:mult_osc_example4}
\end{figure}

\begin{figure}
\centering
\subfigure [$r=100,M=10$. True frequency $=0.4$.]{ \label{fig:mult_osc_37}
\includegraphics[width=4.5cm,height=4.5cm]{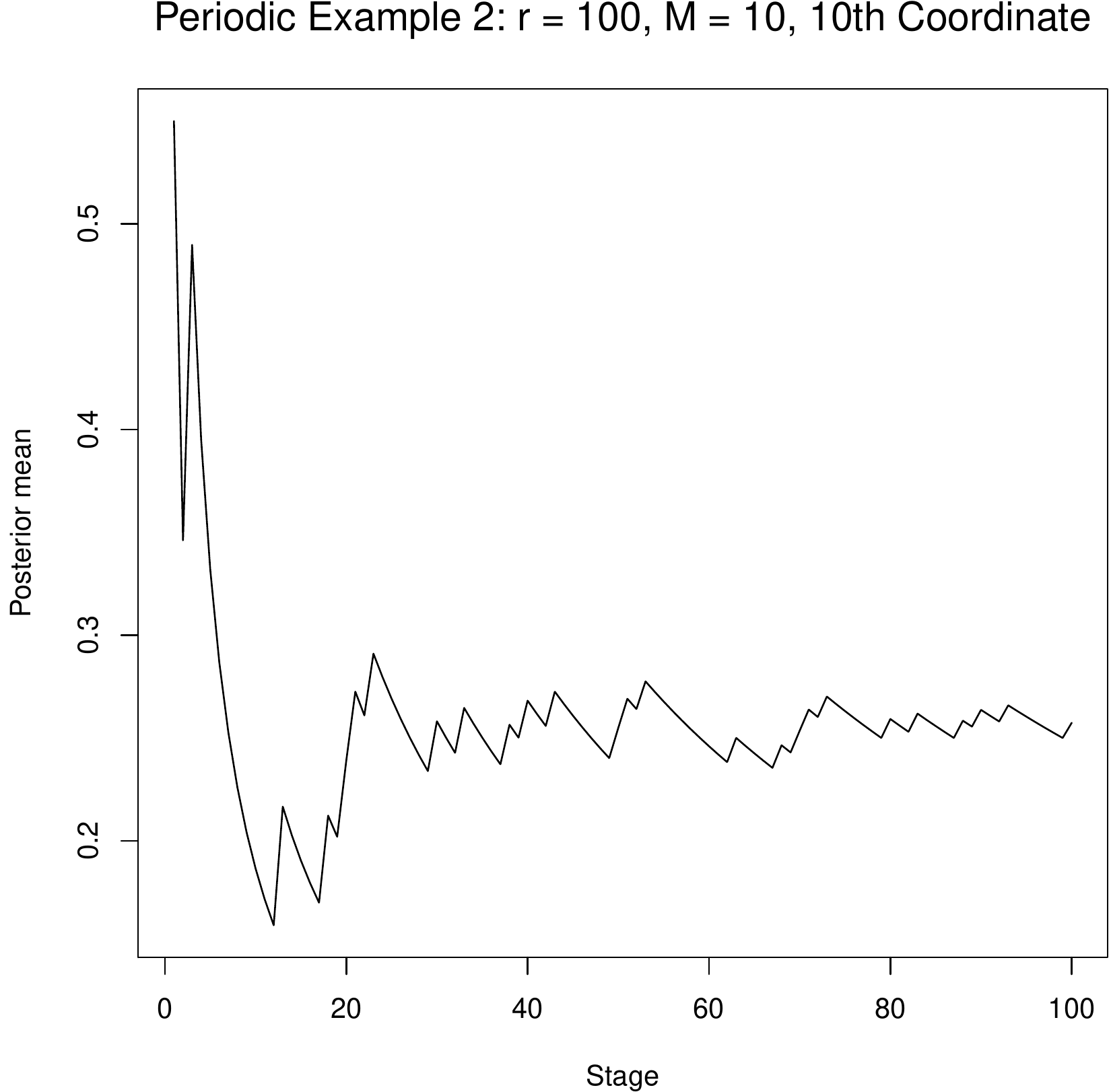}}
\hspace{2mm}
\subfigure [$r=100,M=10$. True frequency $=0.1$.]{ \label{fig:mult_osc_38}
\includegraphics[width=4.5cm,height=4.5cm]{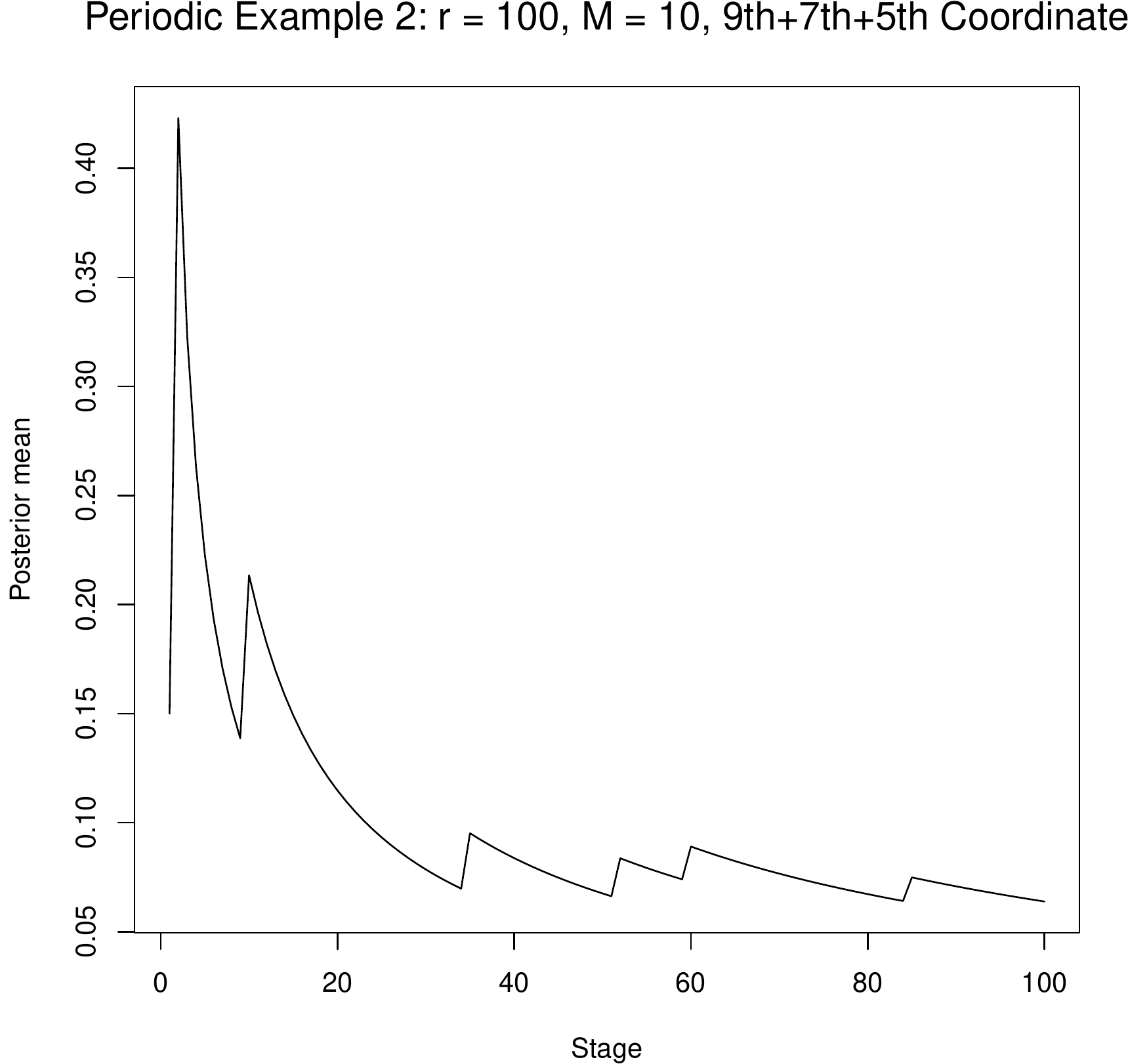}}
\hspace{2mm}
\subfigure [$r=100,M=10$. True frequency $=0.06$.]{ \label{fig:mult_osc_39}
\includegraphics[width=4.5cm,height=4.5cm]{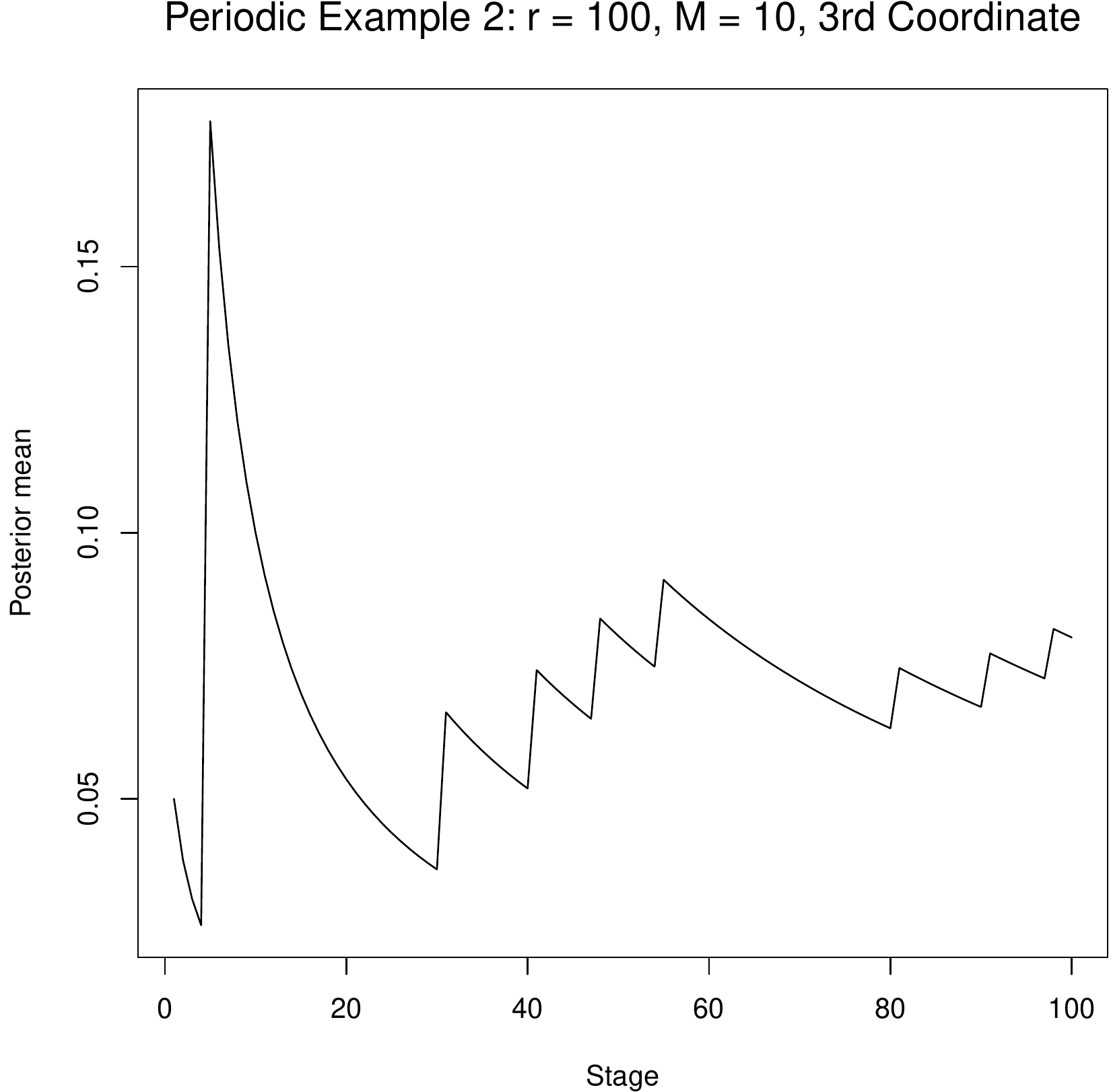}}\\
\vspace{2mm}
\subfigure [$r=100,M=50$. True frequency $=0.4$.]{ \label{fig:mult_osc_40}
\includegraphics[width=4.5cm,height=4.5cm]{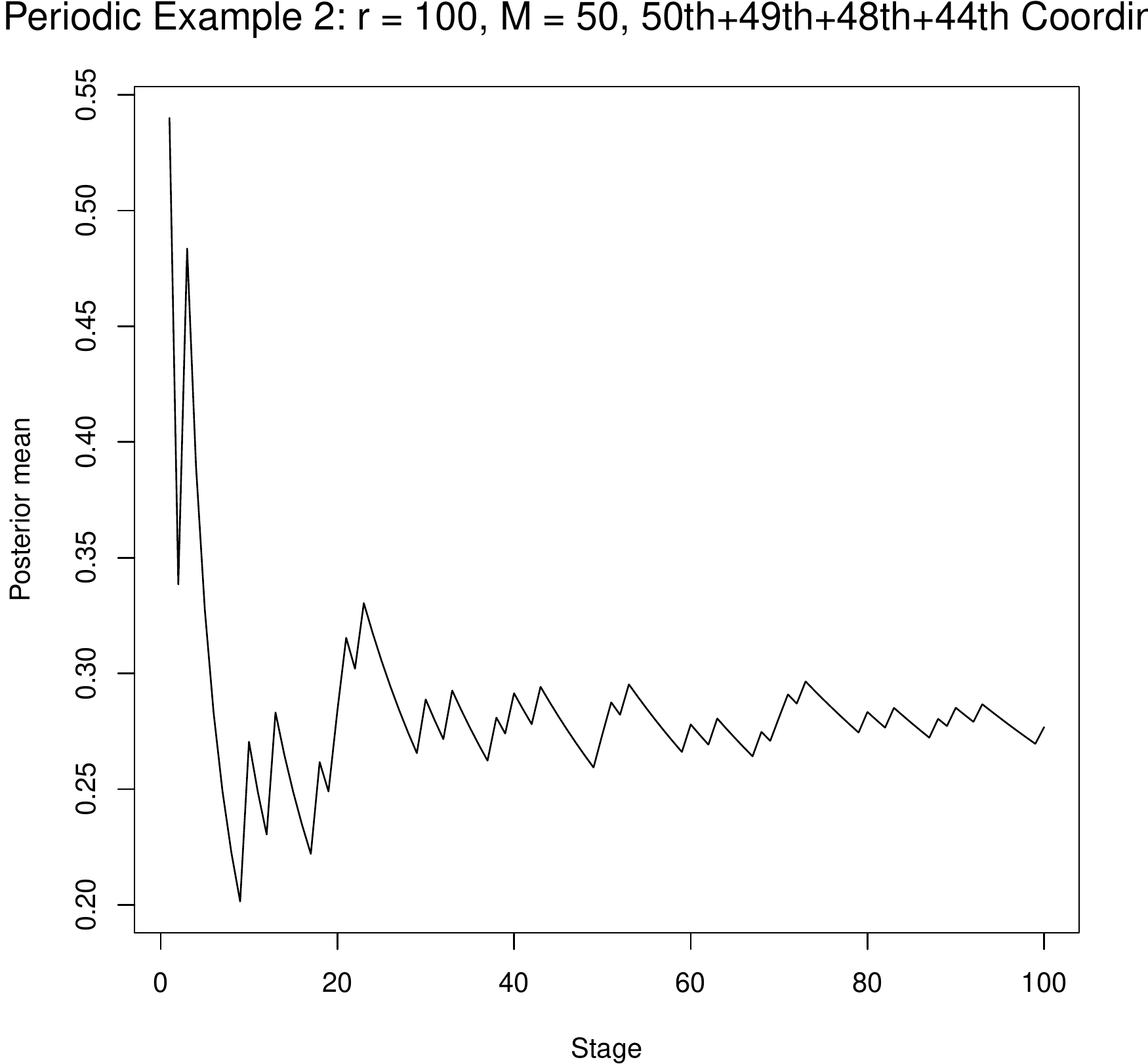}}
\hspace{2mm}
\subfigure [$r=100,M=50$. True frequency $=0.1$.]{ \label{fig:mult_osc_41}
\includegraphics[width=4.5cm,height=4.5cm]{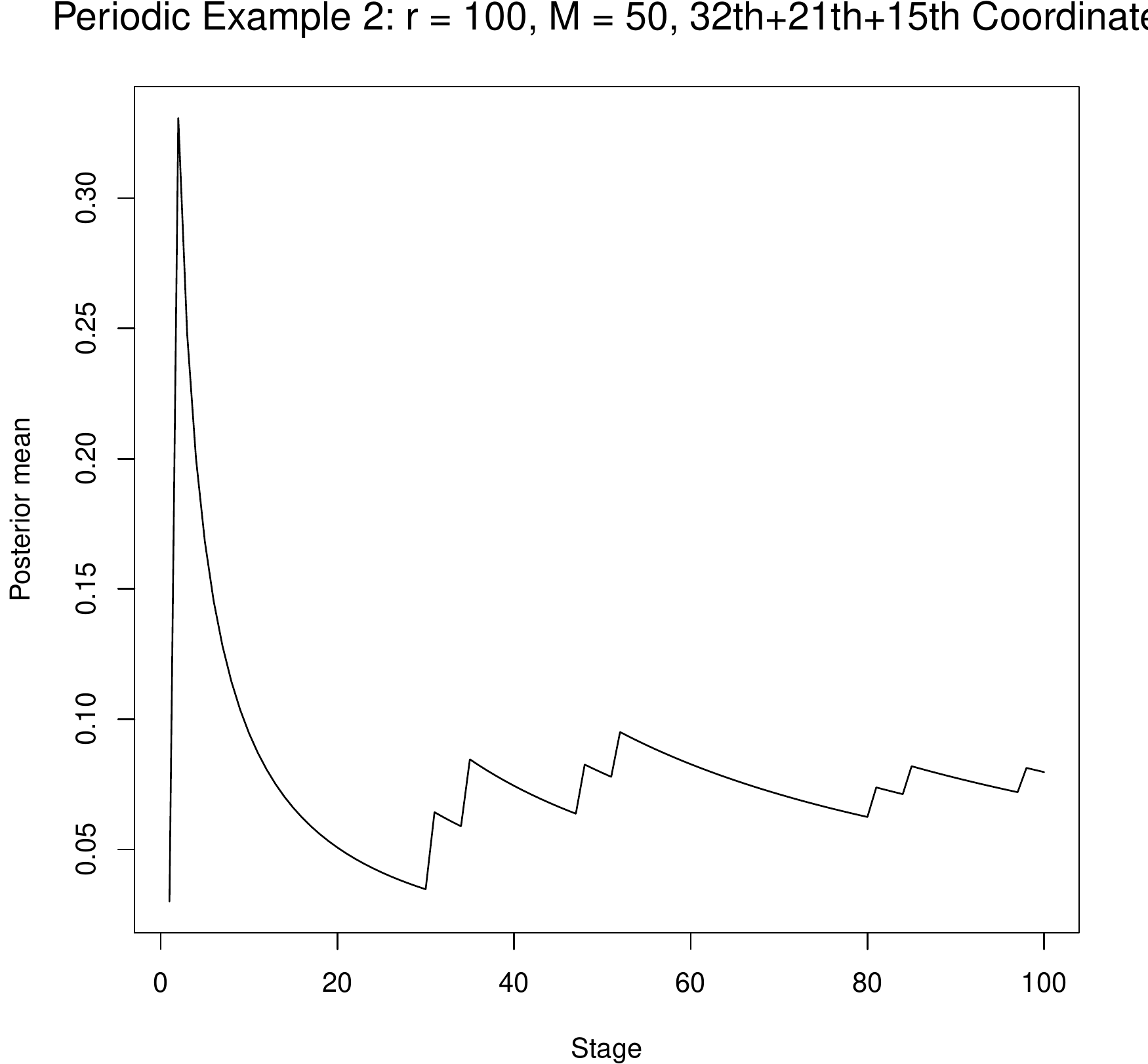}}
\hspace{2mm}
\subfigure [$r=100,M=50$. True frequency $=0.06$.]{ \label{fig:mult_osc_42}
\includegraphics[width=4.5cm,height=4.5cm]{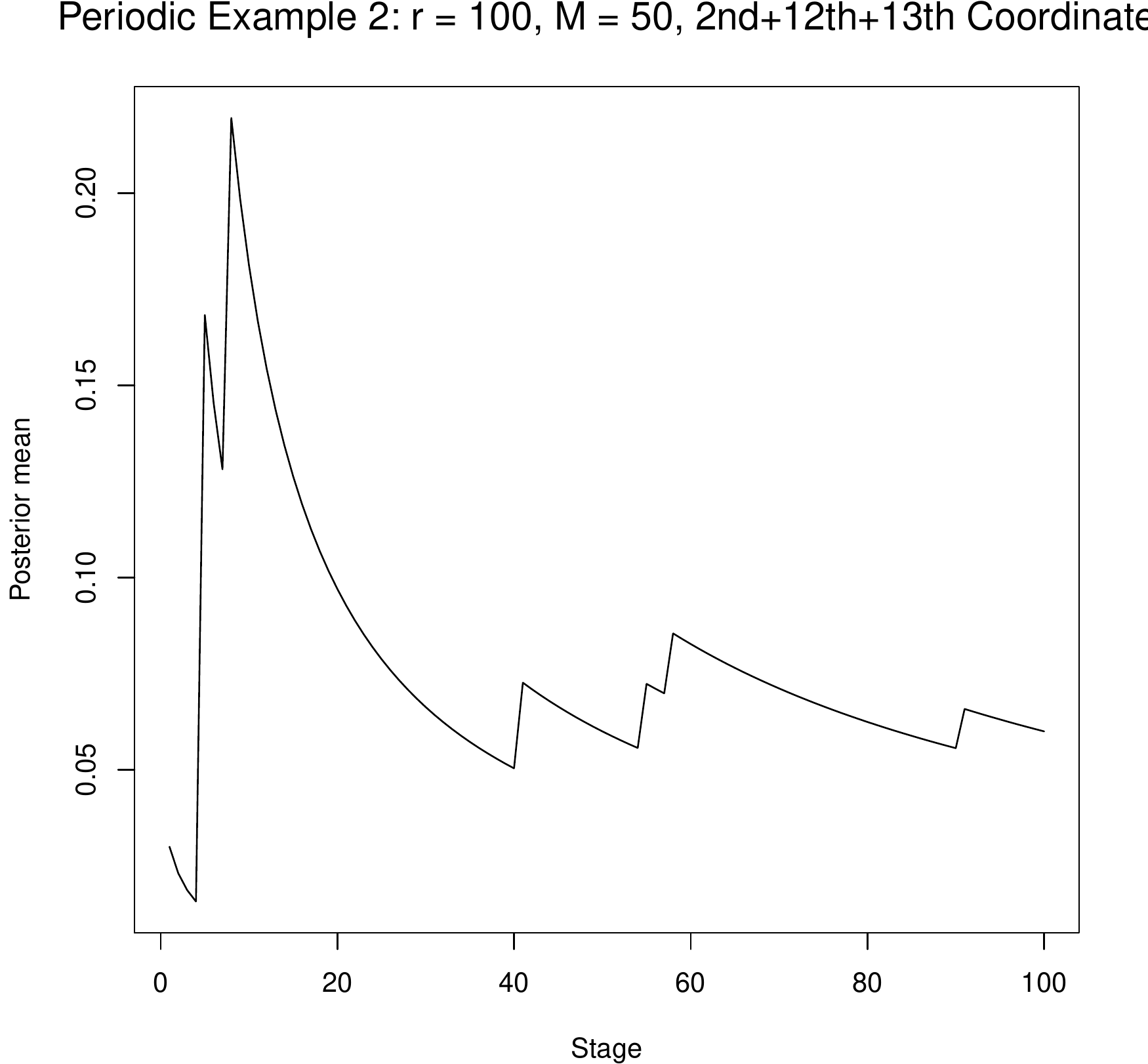}}\\
\vspace{2mm}
\subfigure [$r=100,M=100$. True frequency $=0.4$.]{ \label{fig:mult_osc_43}
\includegraphics[width=4.5cm,height=4.5cm]{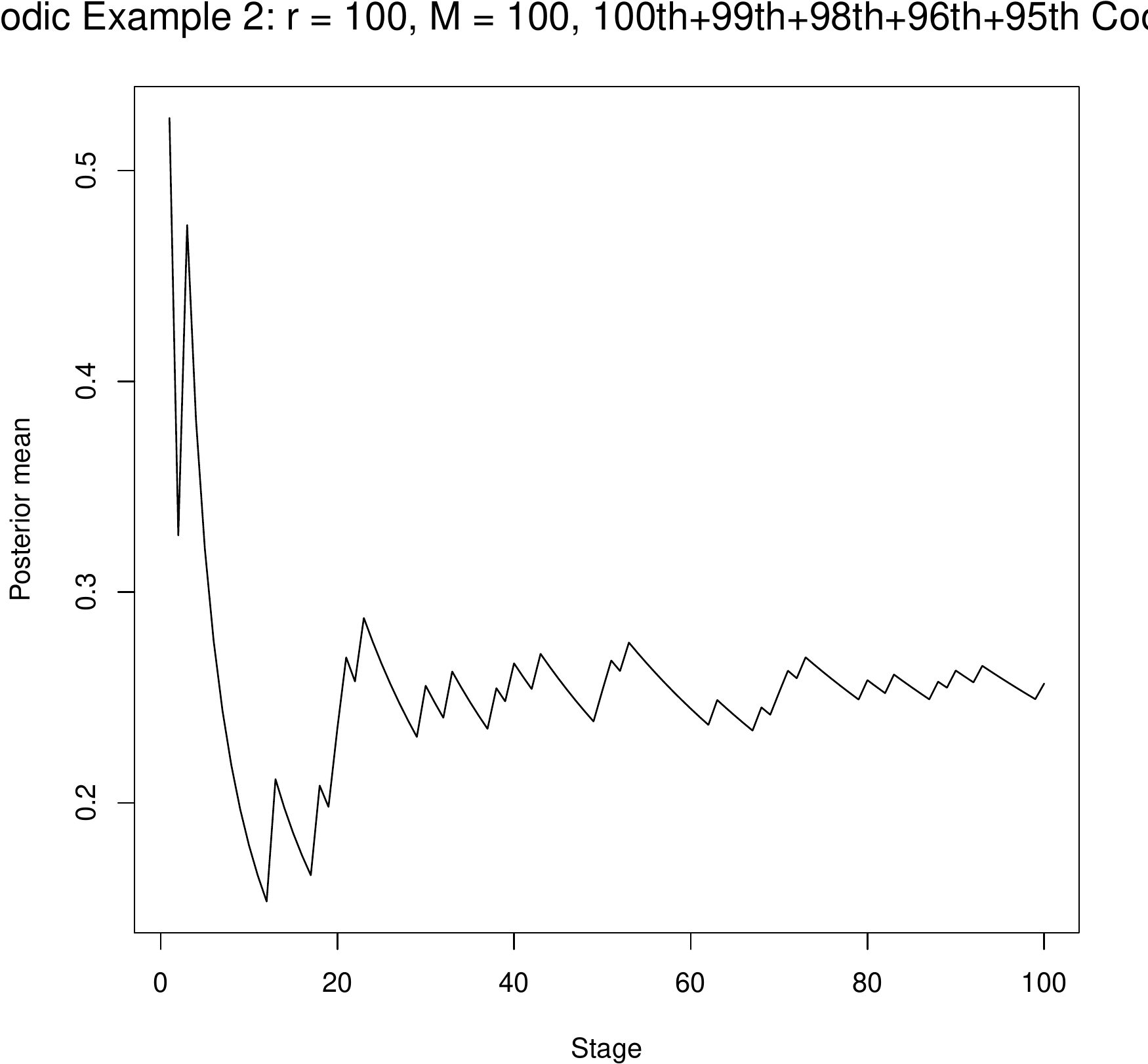}}
\hspace{2mm}
\subfigure [$r=100,M=100$. True frequency $=0.1$.]{ \label{fig:mult_osc_44}
\includegraphics[width=4.5cm,height=4.5cm]{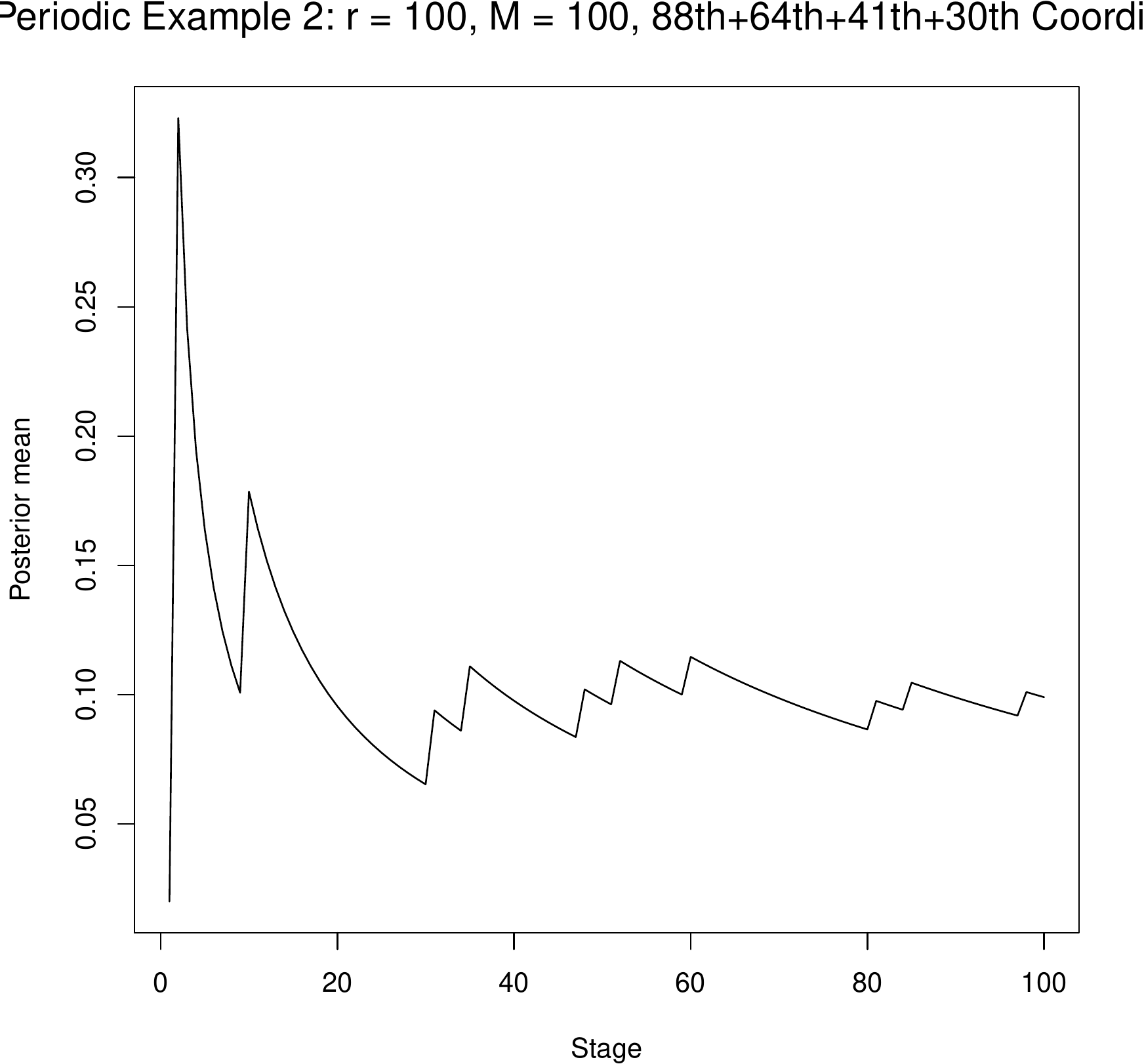}}
\hspace{2mm}
\subfigure [$r=100,M=100$. True frequency $=0.06$.]{ \label{fig:mult_osc_45}
\includegraphics[width=4.5cm,height=4.5cm]{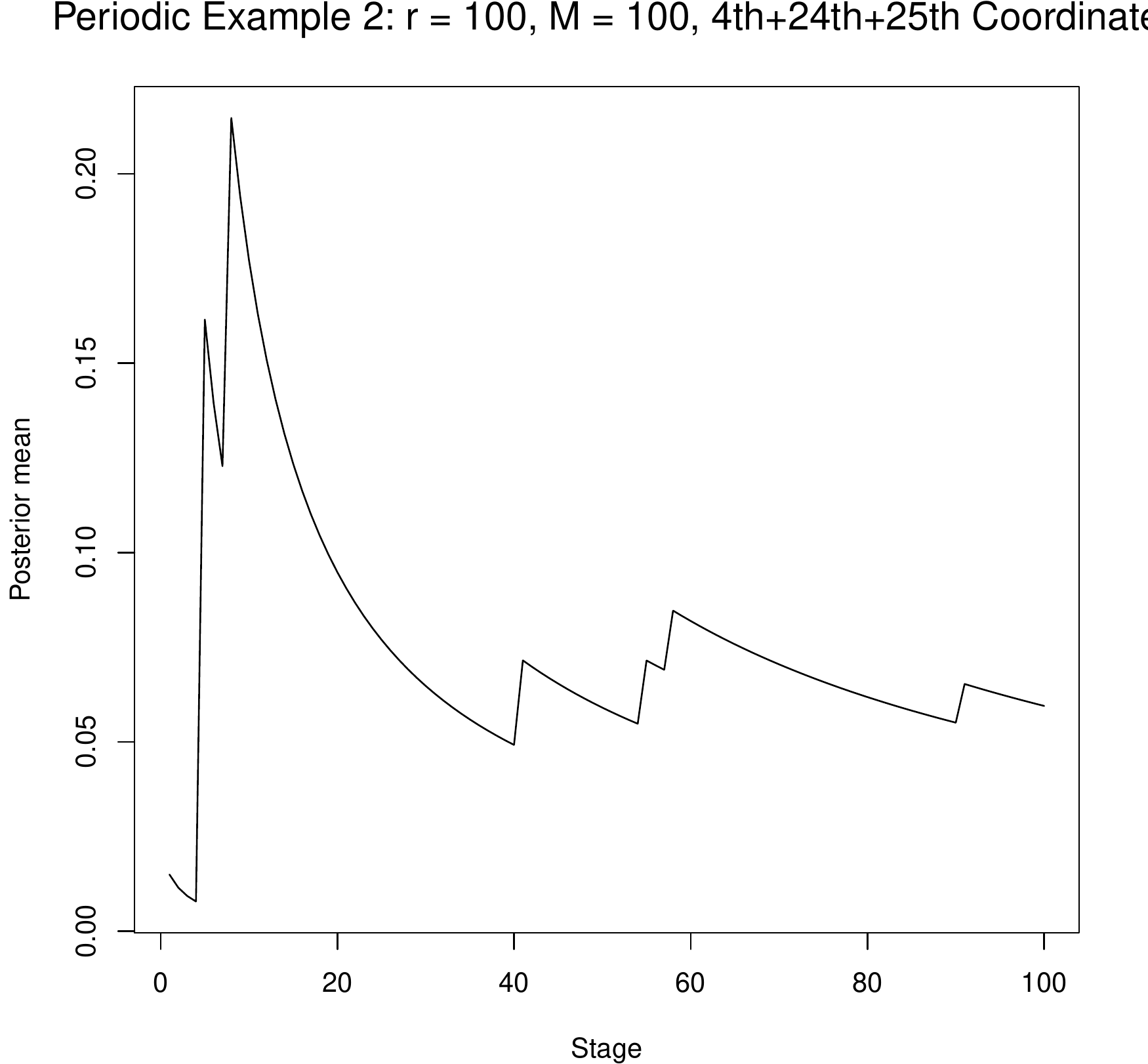}}
\caption{Illustration of our Bayesian method for determining multiple frequencies. Here the true frequencies are $0.4$, $0.1$ and $0.06$. }
\label{fig:mult_osc_example5}
\end{figure}

\subsection{Real data example: El Ni\~{n}o and fish population}
\label{subsec:soi_rec}
Based on data provided by Dr. Roy Mendelssohn of the Pacific Environmental Fisheries Group, \ctn{Shumway06} analyse two oscillating time series on
monthly values of an environmental series called the Southern Oscillation Index (SOI) and associated Recruitment (number of new fish), available
for a period of 453 months, ranging over the years 1950--1987. The plots are provided in \ctn{Shumway06}; see also panel (a) of Figure \ref{fig:soi1}
and panel (a) of Figure \ref{fig:rec1}. The quantity SOI is a measurement of air pressure change associated with
sea surface temperatures in the central Pacific Ocean. The El Ni\~{n}o effect is considered to cause warming of the central Pacific every three to seven years,
which is turn, is presumed to be responsible for causing floods in the midwestern portions of the United States in the year 1997. It is thus important to
identify the frequency of oscillation of the SOI series and the associated dependent Recruitment series, which seem to have slightly slower frequency of
oscillation in comparison to the SOI series. At first glance, both the series seem to have two significant frequencies of oscillations. For instance,
the Recruitment series seems to oscillate once in every 12 months and also once in every 50 months. Slightly faster frequencies can be expected
of the SOI series. The periodogram analyses provided in \ctn{Shumway06} indeed give weight to these frequencies.

We now apply our Bayesian method to investigate the frequencies hidden in the two underlying time series.
Although the two series seem to be dependent, we consider their analyses one by one. In the case of dependence, the frequencies in this situation are
expected to be close. 

\subsubsection{SOI series}
\label{subsubsec:soi}
We first take up the case of the SOI series, centering it first to remove any possible trend. 
Denoting the centered series by $X_t$, for our purpose, we need to consider a transformation
of the series to $Z^r_t$, with $Z^t=\exp\left(X_t\right)/\left(1+\exp\left(X_t\right)\right)$. We choose $r~(>0)$ such that the 
oscillations in the process $\bZ^r=\left\{Z^r_t\right\}$ become as explicit as possible. With $r=10$, this goal seems to be achieved. The original
SOI time series and the transformed time series $\bZ^{10}$ are shown in Figure \ref{fig:soi1}.

\begin{figure}
\centering
\subfigure [The original SOI time series.]{ \label{fig:soi_original1}
\includegraphics[width=10cm,height=6cm]{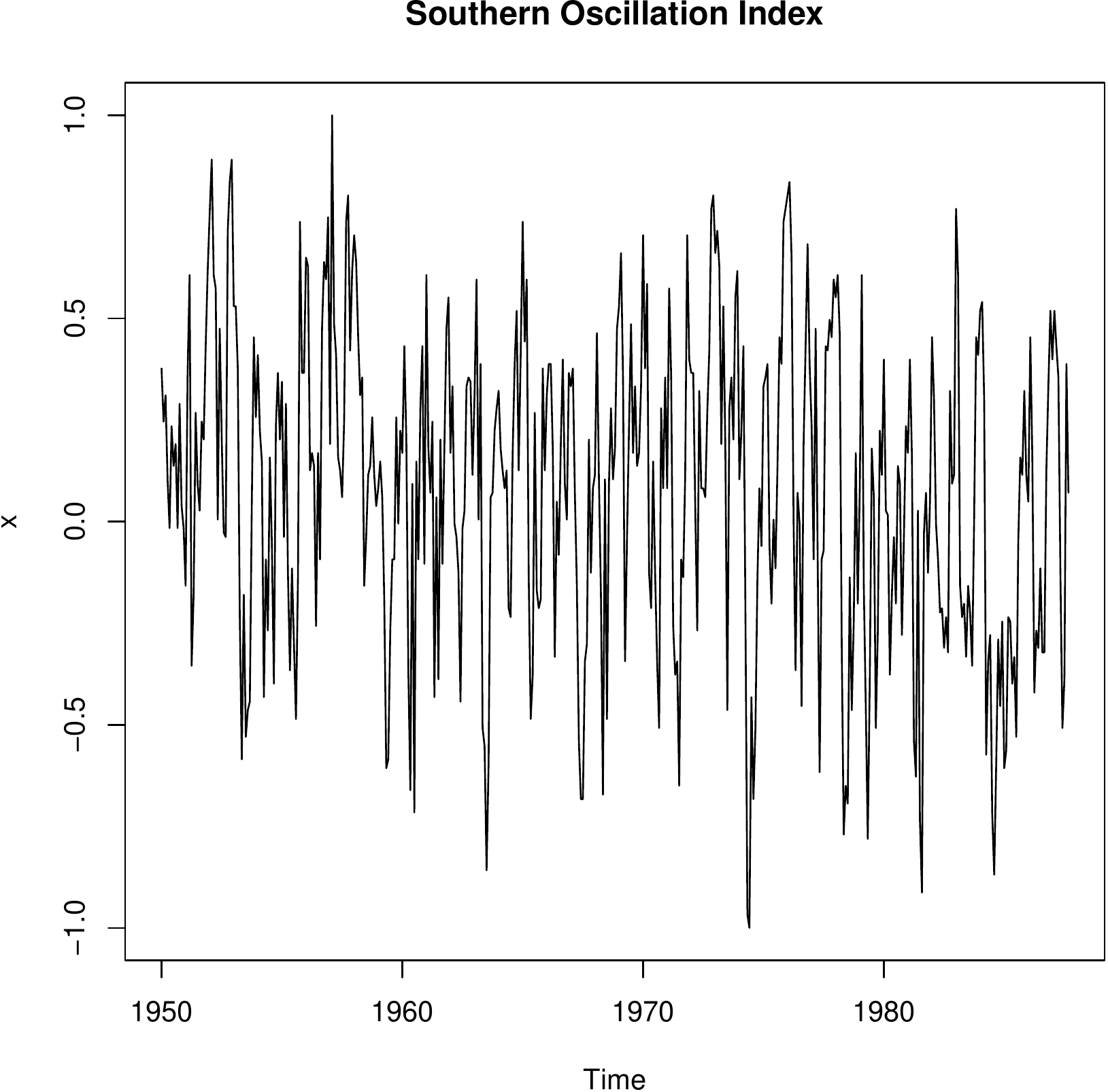}}
\vspace{2mm}
\subfigure [The transformed SOI time series.]{ \label{fig:soi_transf1}
\includegraphics[width=10cm,height=6cm]{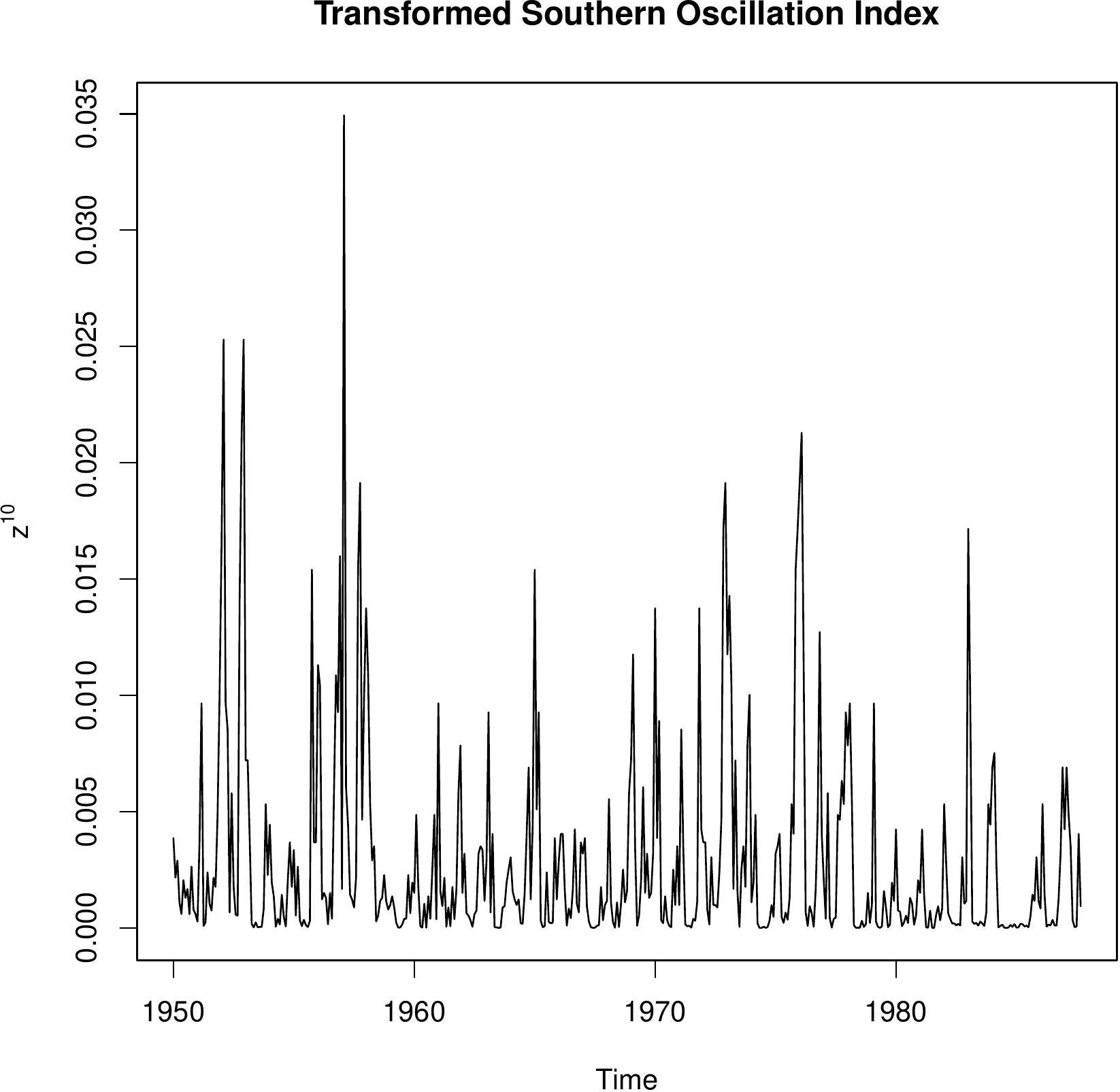}}
\caption{The original and the transformed SOI time series.}
\label{fig:soi1}
\end{figure}

The range of Figure \ref{fig:soi_transf1} reveals that a very fine partition of the interval $[0,1]$ is necessary in order to capture the hidden frequencies.
As such, we set $M=5000$. We then implement our Dirichlet process based Bayesian method with $r=10$ and $M=5000$. 
Figure \ref{fig:soi_freq} shows the results of our implementation.
Panel (a) of the figure shows convergence of the relevant posterior of $p_{25,j}+p_{27,j}$ approximately to the frequency $0.02$, while panel (b) shows convergence 
of $p_{16,j}+p_{18,j}+p_{19,j}+p_{21,j}+p_{22,j}$ approximately to $0.08$. The fine partition of $[0,1]$ is the reason for dissipating of the
proportions to many intervals $(\tilde p_{m-1,j},\tilde p_{m,j}]$. Other than the aforementioned $p_{m,j}$'s contributing to the frequencies, 
the rest of the $p_{m,j}$'s, except $p_{1,j}$, converged to zero. Thus, our results are consistent with the periodogram analysis of \ctn{Shumway06}. 
\begin{figure}
\centering
\subfigure [SOI: converging frequency slightly exceeds $0.02$.]{ \label{fig:soi_freq1}
\includegraphics[width=6cm,height=6cm]{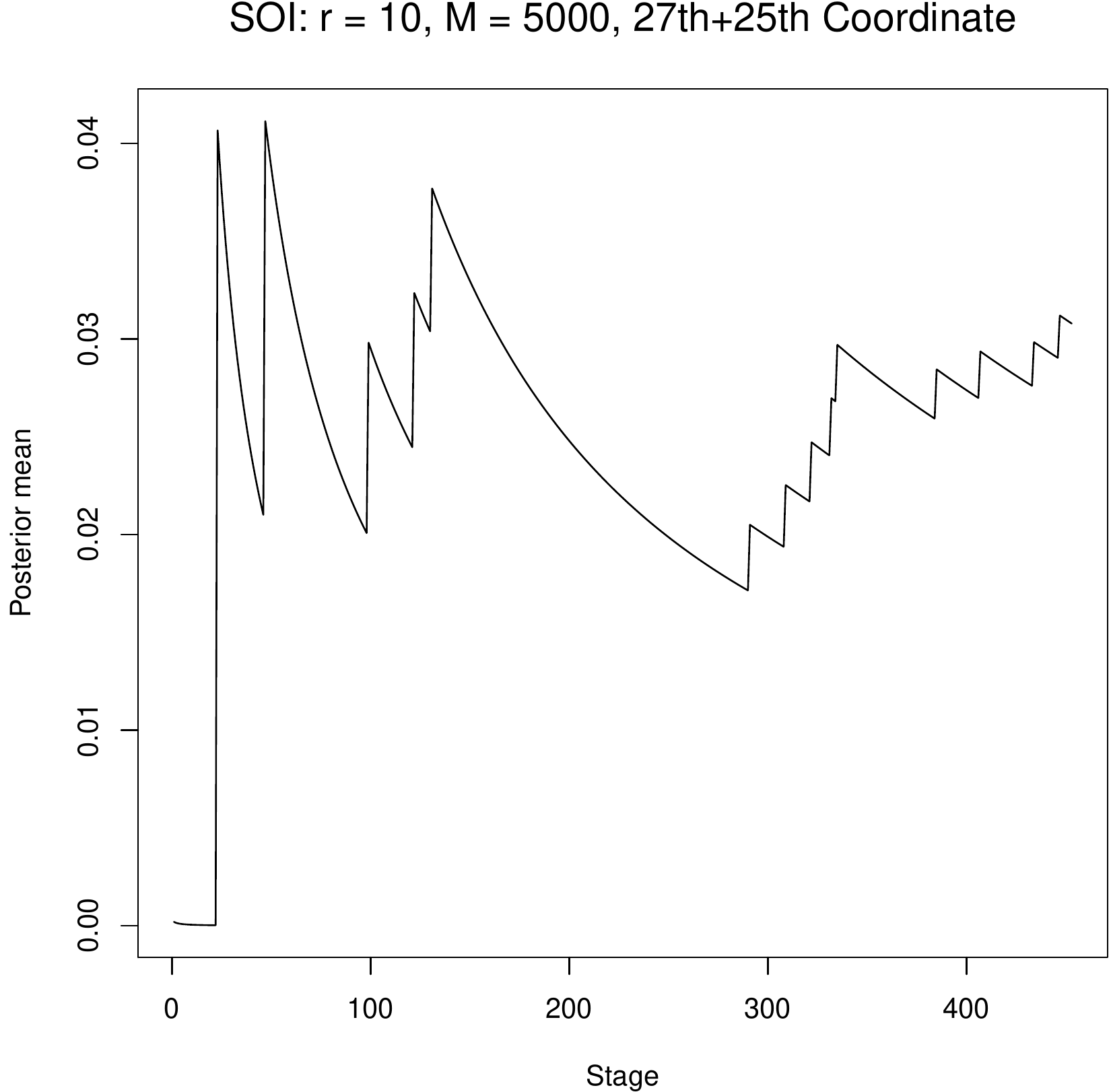}}
\hspace{2mm}
\subfigure [SOI: converging frequency slightly exceeds $0.08$.]{ \label{fig:soi_freq2}
\includegraphics[width=6cm,height=6cm]{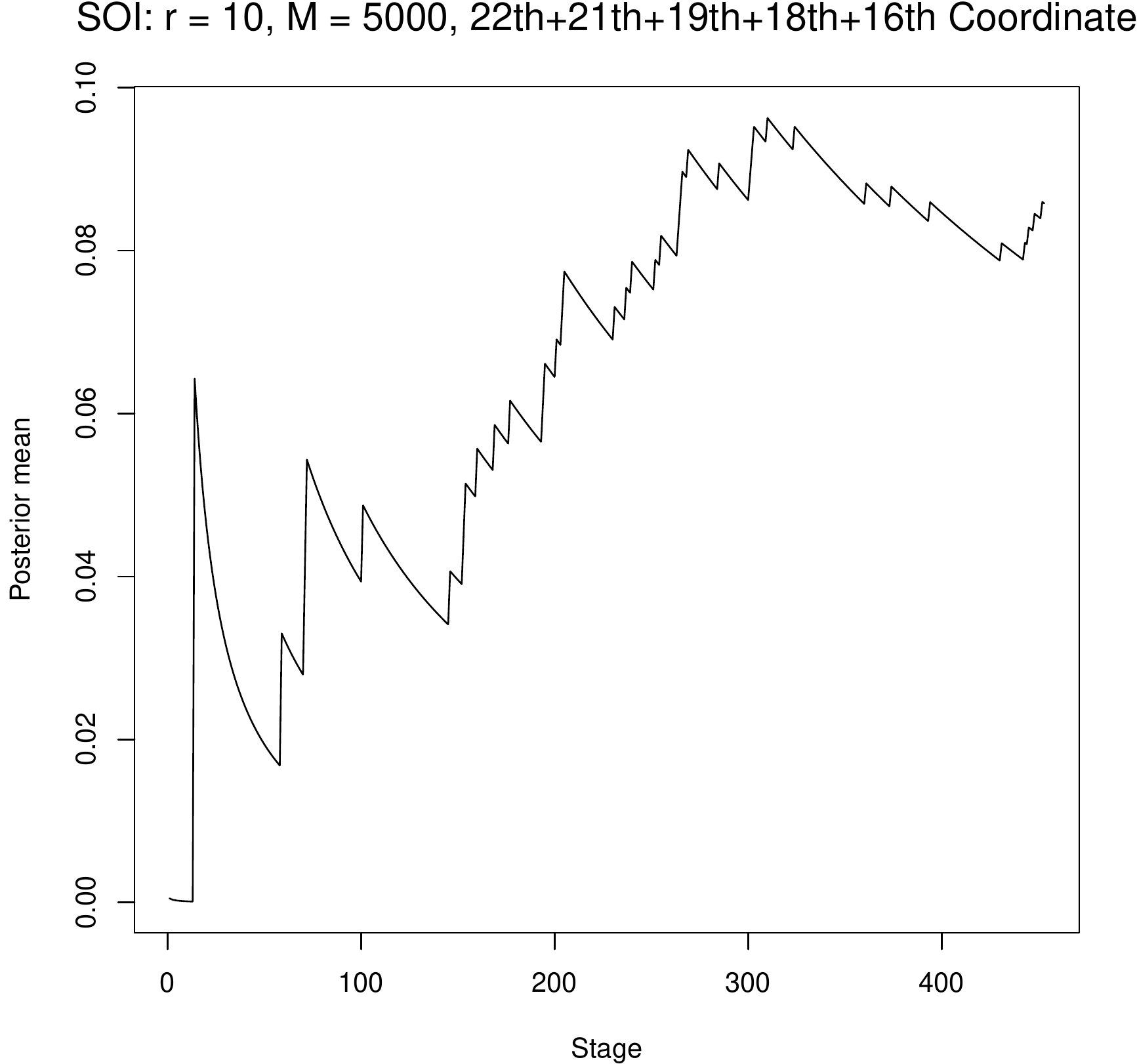}}
\caption{Bayesian results for frequency determination of the SOI time series.}
\label{fig:soi_freq}
\end{figure}

The above analysis requires very fine partition of $[0,1]$, using large values of $M$. This considerably increases the number of $p_{m,j}$s in the Bayesian model,
most of which do not contribute to frequency determination. Apart from being wasteful, this also slows down the implementation of the Bayesian code. Since the small range of the
transformed time series $\bZ^{10}$ is responsible for these issues, it makes sense to consider a transformation that increases the range, while preserving easy
visualization of the oscillations. In this particular example, simply multiplication of $\bZ^{10}$ by $10$ seems to have the desired effect. Figure \ref{fig:soi_transf_2}
shows the series $10\times\bZ^{10}$. Here, considering $M=1000$ turned out to be sufficient. Indeed, Figure \ref{fig:soi_freq_2} shows that the relevant frequencies
to which our Bayesian posteriors converged to, are consistent with those obtained for $\bZ^{10}$ and $M=5000$, and hence again approximately in keeping with
the periodogram analysis of \ctn{Shumway06}. Here we remark that the choice $M=1000$ is still somewhat large, but smaller values such as $100$ and $500$ did not yield
enough (almost) empty intervals $(\tilde p_{m-1},\tilde p_m]$ between the strings of intervals contributing significantly to the frequencies. Hence, these smaller choices
did not enable us to easily identify the different frequencies characterizing the SOI time series.
\begin{figure}
\centering
\includegraphics[width=10cm,height=6cm]{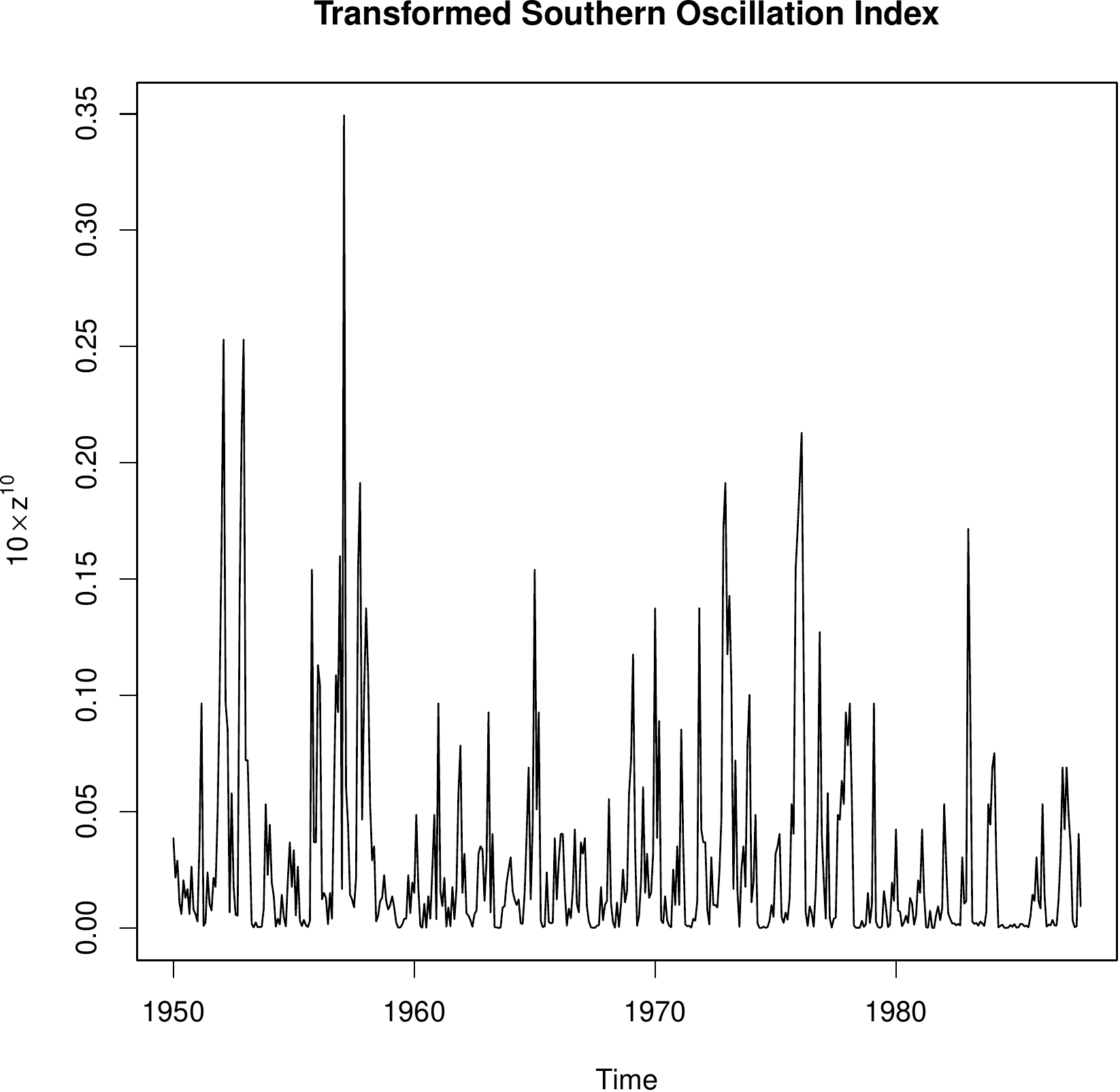}
\caption{The transformed SOI time series $10\times\bZ^{10}$.}
\label{fig:soi_transf_2}
\end{figure}

\begin{figure}
\centering
\subfigure [SOI transformation $10\times\bZ^{10}$: converging frequency slightly exceeds $0.02$.]{ \label{fig:soi_freq1_2}
\includegraphics[width=6cm,height=6cm]{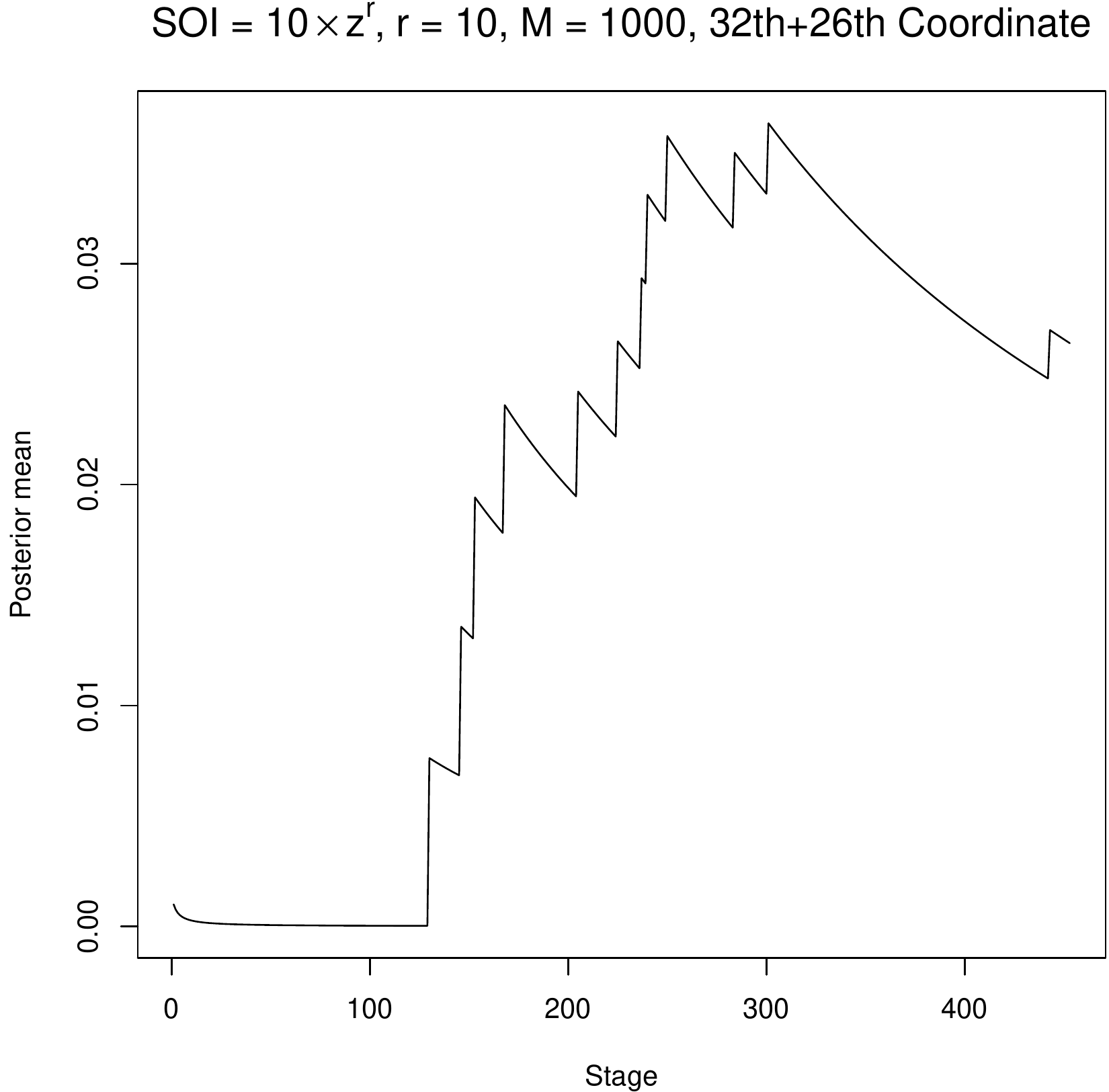}}
\hspace{2mm}
\subfigure [SOI transformation $10\times\bZ^{10}$: converging frequency slightly exceeds $0.08$.]{ \label{fig:soi_freq2_2}
\includegraphics[width=6cm,height=6cm]{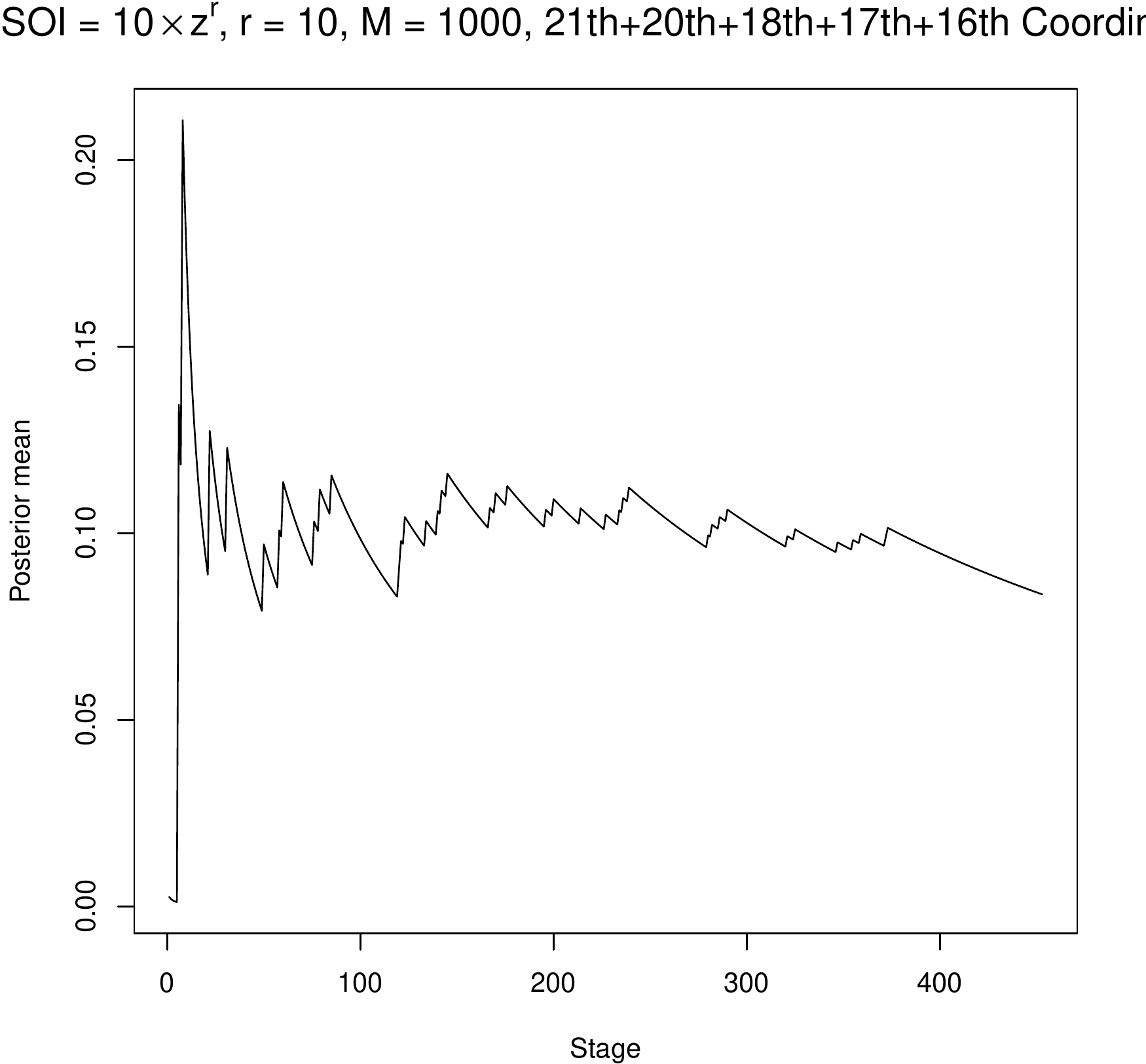}}
\caption{Bayesian results for frequency determination of the SOI time series with transformed time series $10\times\bZ^{10}$.}
\label{fig:soi_freq_2}
\end{figure}

We now turn to the Recruitment time series; as in SOI, we first center the time series. The original Recruitment series and the transformation
$\exp(25)\times\bZ^{50}$ are displayed in Figure \ref{fig:rec1}. This transformation enabled the most explicit visualization of the oscillations, among those
that we experimented with. The multiplicative factor $\exp(25)$ raises the range to a reasonable limit. We consider $M=1000$ for our Bayesian implementation
based on Dirichlet process.
Figure \ref{fig:rec_freq} depicts the posterior convergence path to the relevant frequencies. Note that the convergences in panel (a) occurs 
towards slightly larger than $0.02$, while that in panel (b) occurs around $0.08$. 
In contrast, for the SOI series, the convergences in panels (b) of Figure \ref{fig:soi_freq} and \ref{fig:soi_freq2} seemed to take place at values 
somewhat larger than $0.08$. 
\begin{figure}
\centering
\subfigure [The original Recruitment time series.]{ \label{fig:rec_original1}
\includegraphics[width=10cm,height=6cm]{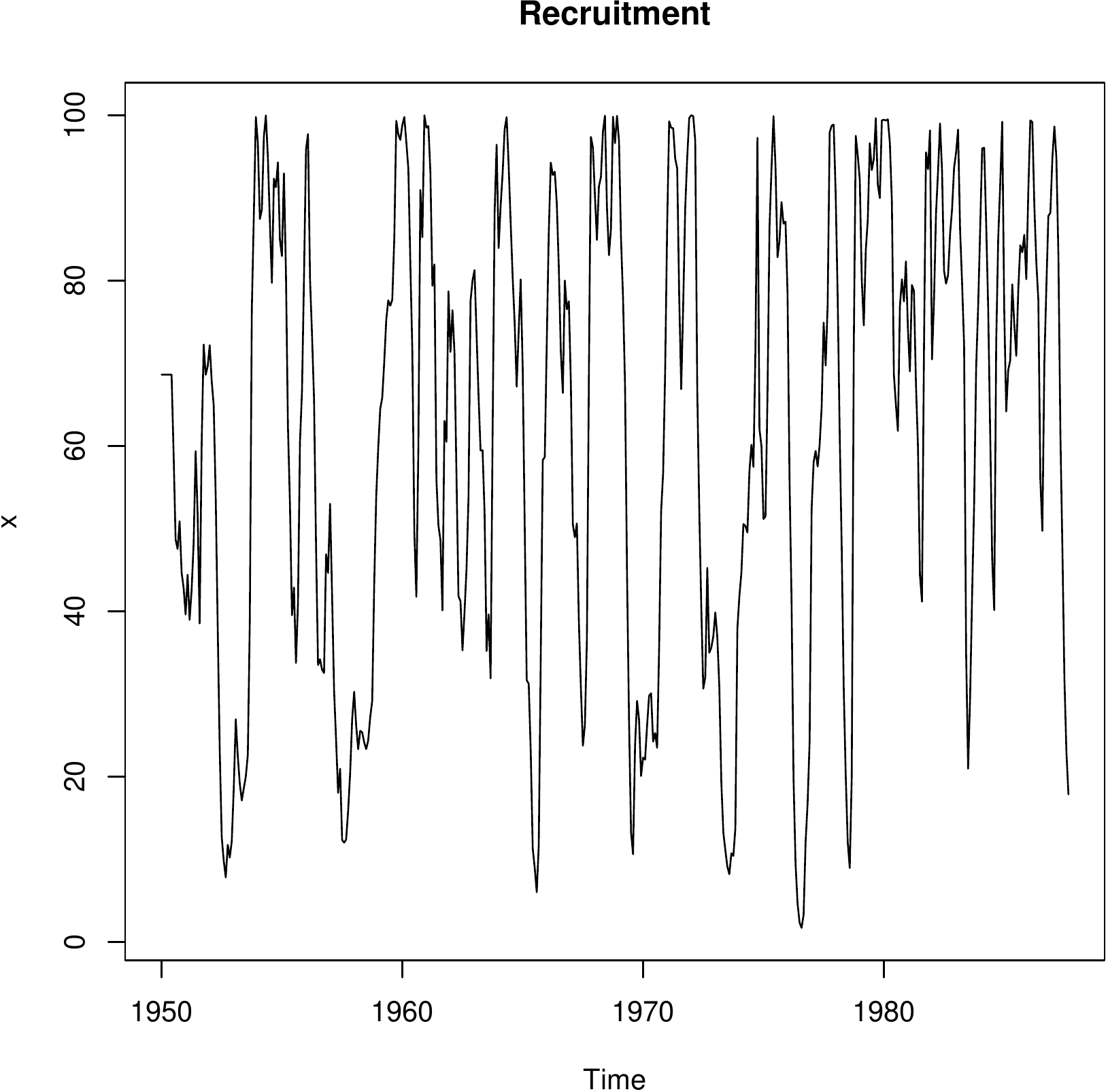}}
\vspace{2mm}
\subfigure [The transformed Recruitment time series.]{ \label{fig:rec_transf1}
\includegraphics[width=10cm,height=6cm]{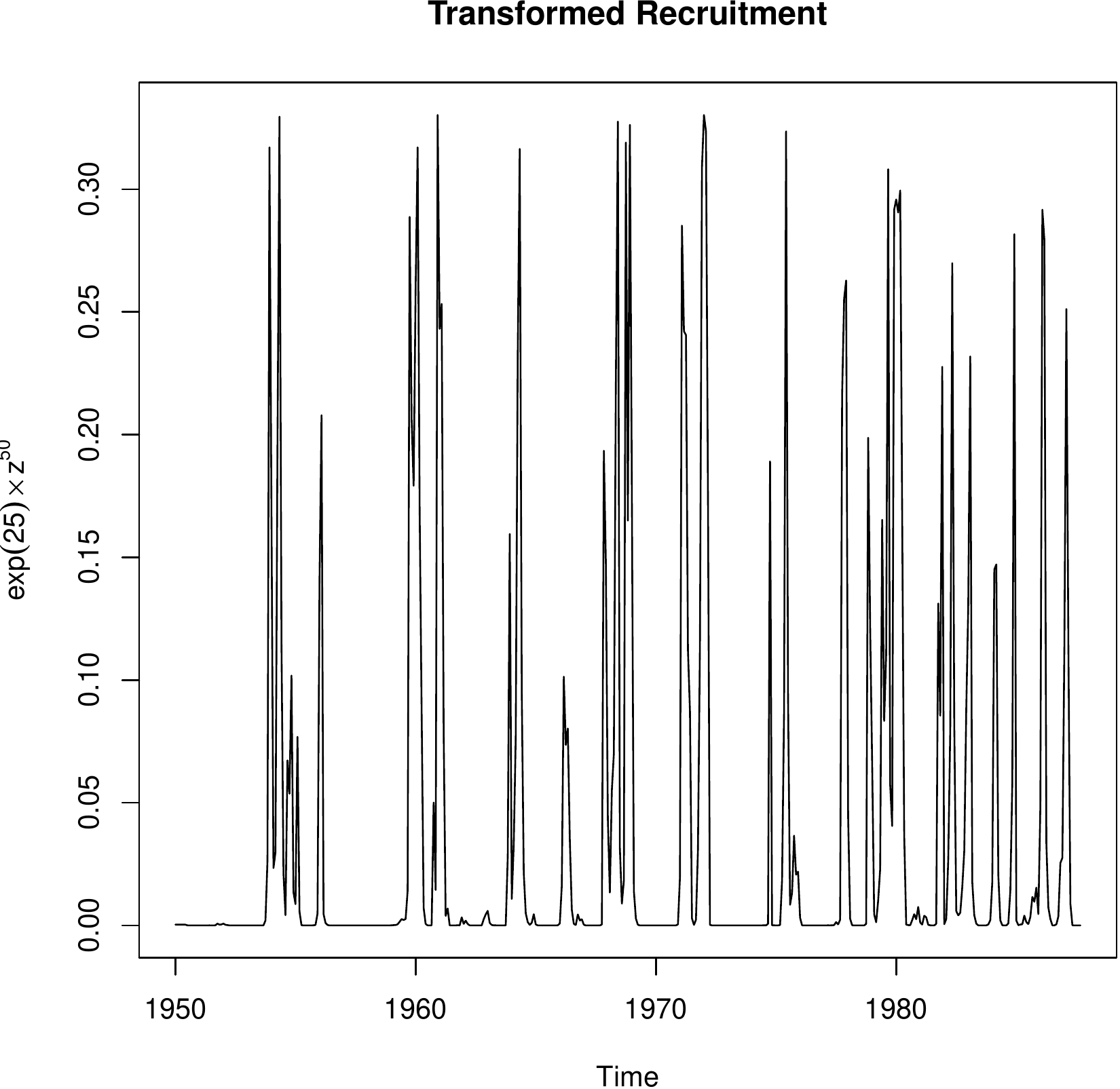}}
\caption{The original and the transformed Recruitment time series.}
\label{fig:rec1}
\end{figure}

\begin{figure}
\centering
\subfigure [Rec transformation $\exp\left(25\right)\times\bZ^{50}$: converging frequency slightly larger than $0.02$.]{ \label{fig:rec_freq1}
\includegraphics[width=6cm,height=6cm]{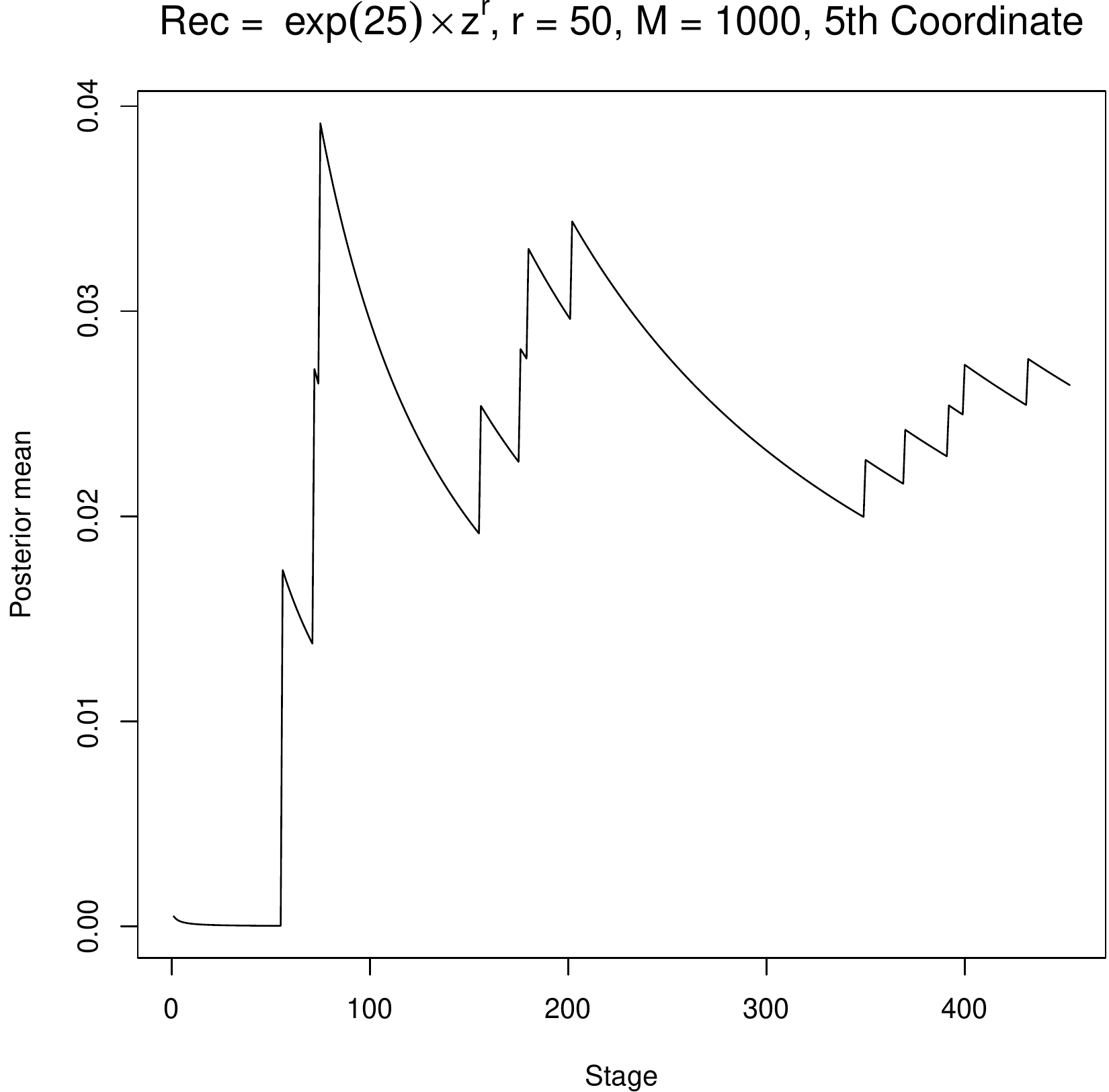}}
\hspace{2mm}
\subfigure [Rec transformation $\exp\left(25\right)\times\bZ^{50}$: converging frequency around $0.08$.]{ \label{fig:rec_freq2}
\includegraphics[width=6cm,height=6cm]{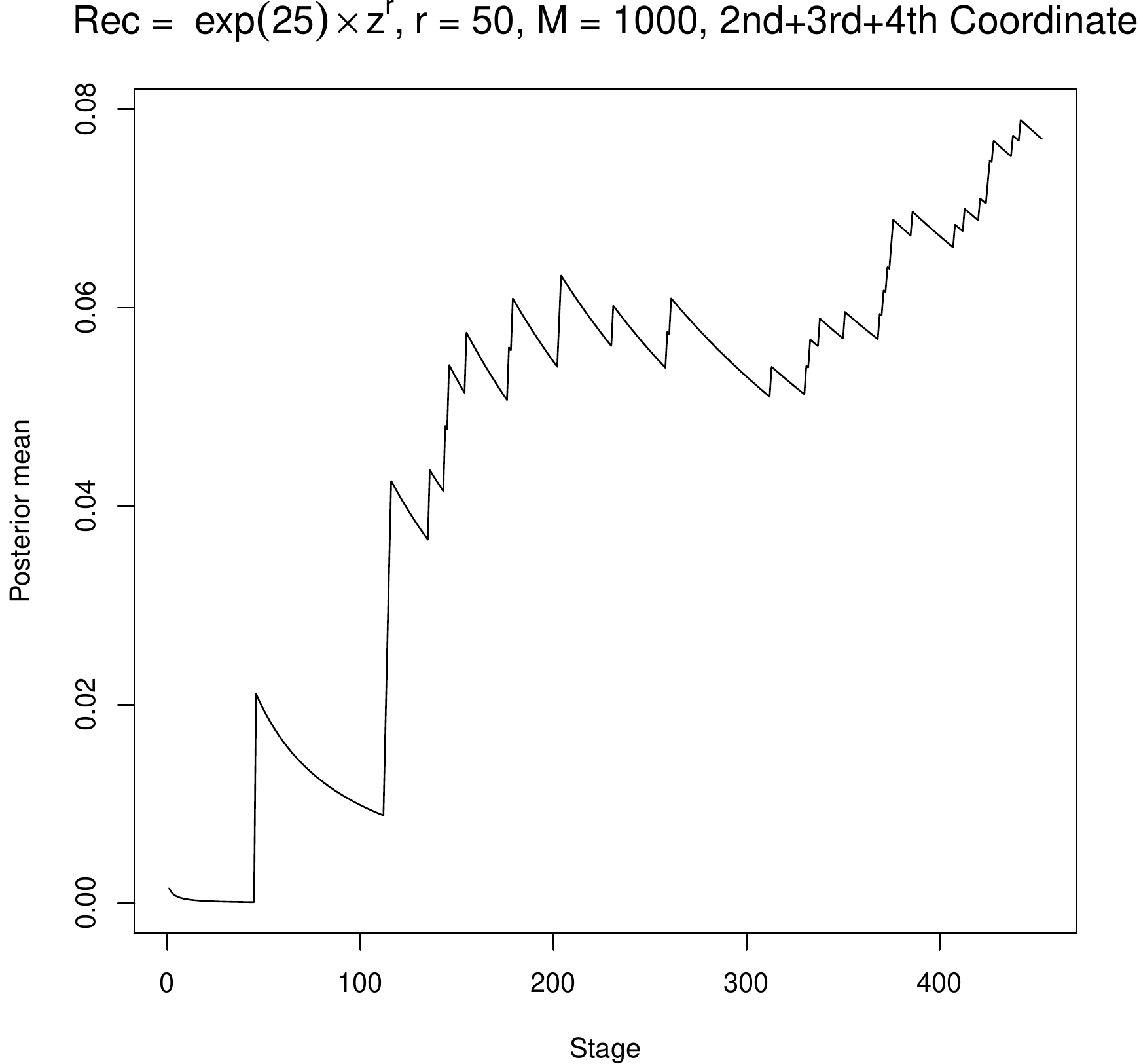}}
\caption{Bayesian results for frequency determination of the Recruitment time series with transformed time series $\exp\left(25\right)\times\bZ^{50}$.}
\label{fig:rec_freq}
\end{figure}

\subsubsection{Harmonics}
\label{subsubsec:harmonics}
Since in reality most signals are not sinusoidal, it is preferable to use harmonics to model such signals. In this respect, we consider Example 4.12 of
\ctn{Shumway06} where a signal is constructed using a sinusoid oscillating at two cycles per unit time, and 5 harmonics obtained from the sinusoid oscillating at
decreasing amplitudes. Specifically, their signal is given by
\begin{equation}
x_t = \sin(2\pi 2t)+0.5\sin(2\pi 4t)+0.4\sin(2\pi 6t)+0.3\sin(2\pi 8t)+0.2\sin(2\pi 10t)+0.1\sin(2\pi 12t),
\label{eq:harmonics}
\end{equation}
for $0\leq t\leq 1$. The original signal $\bX$ and the transformation $\bZ^{2}$ are displayed in Figure \ref{fig:harmonics1}, after considering $201$ equidistant
points in the time interval $[0,1]$. Note that the original signal is not even close to sinusoidal. For the transformation $\bZ^r$, we chose $r=2$ such that 
the structure of $\bX$ is essentially retained, but the gaps between the oscillations are increased to facilitate detection of the frequencies. 
\begin{figure}
\centering
\subfigure [The original signal with 6 harmonics.]{ \label{fig:harmonics_original1}
\includegraphics[width=10cm,height=6cm]{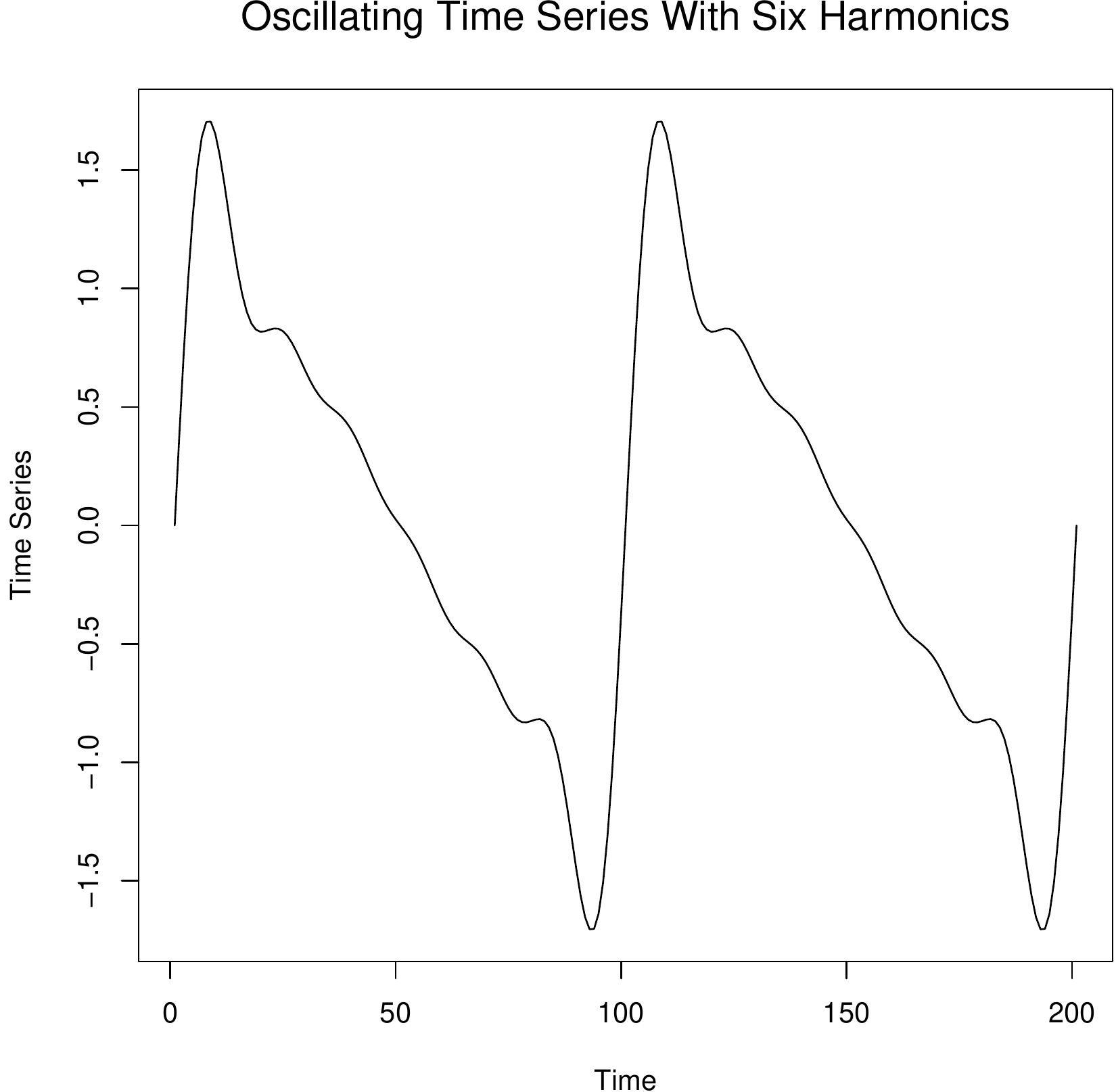}}
\vspace{2mm}
\subfigure [The transformed signal with 6 harmonics.]{ \label{fig:harmonics_transf1}
\includegraphics[width=10cm,height=6cm]{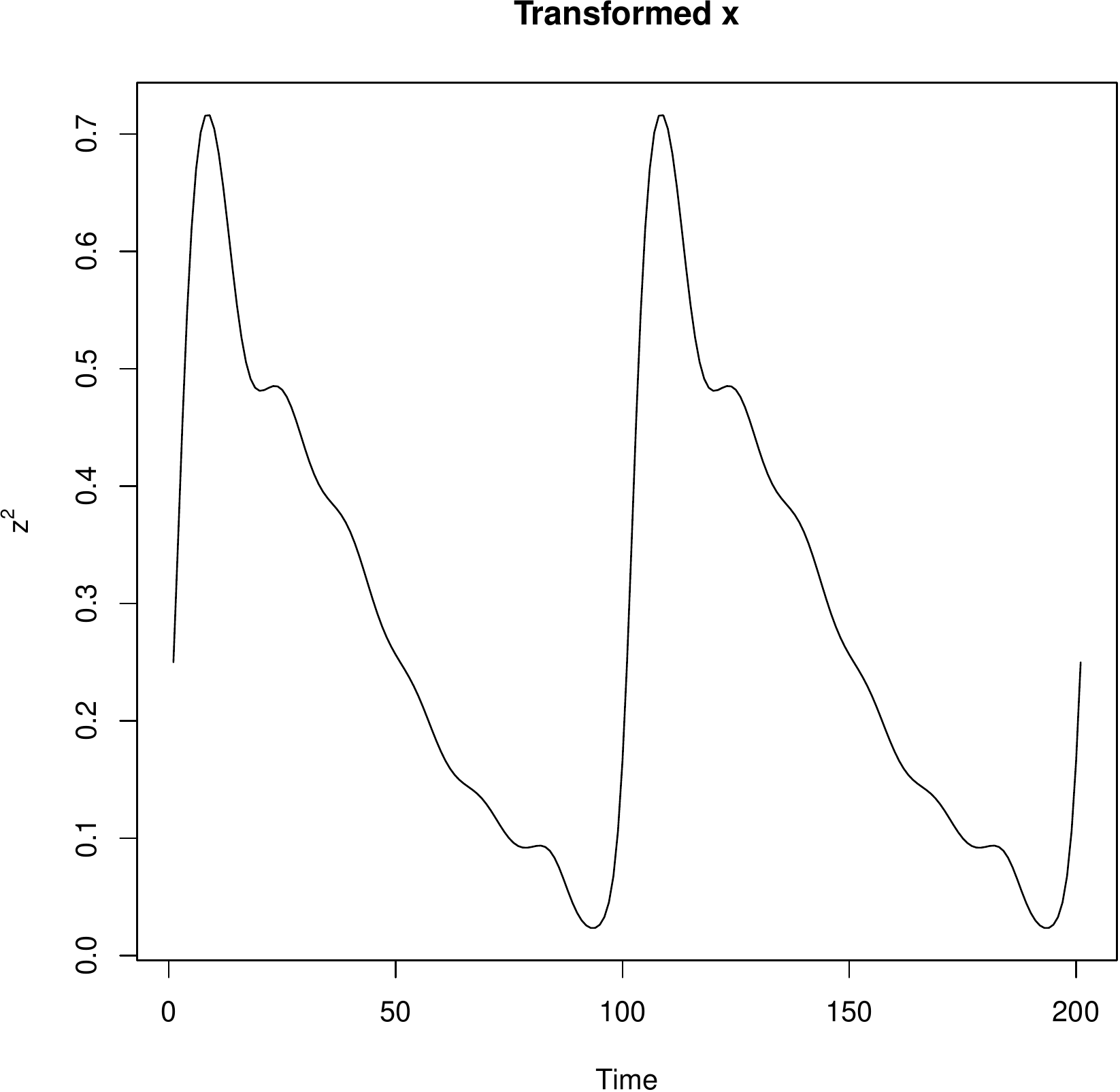}}
\caption{The original and the transformed signal with 6 harmonics.}
\label{fig:harmonics1}
\end{figure}

Since $\bZ^2$ suggests multiple frequencies that are likely to be close to each other, we chose $M=150$ to divide $[0,1]$ into larger number of finer sub-intervals
compared to the previous examples to properly detect the oscillations. Application of our Bayesian procedure revealed 6 distinct values out of $M=150$ at the end of the $201$-th
iteration, while the rest converged to zero. We take the averages of the co-ordinates yielding the same distinct value, and present the results in Figure \ref{fig:harmonics2},
after multiplication by $201$, to yield the Bayesian results on frequencies per unit time. As is evident from the diagrams, the final iterations produced
the frequencies $2,4,6,8,10,14$, obtained after rounding off the values. Except the frequency $14$, which somewhat overestimates the true frequency $12$, the others
are indeed the true frequencies. That so accurate results are obtained by our Bayesian method even for a challenging time series with small length, is really encouraging.
\begin{figure}
\centering
\subfigure [$r=10,M=100$. True frequency $=2$.]{ \label{fig:har1}
\includegraphics[width=4.5cm,height=4.5cm]{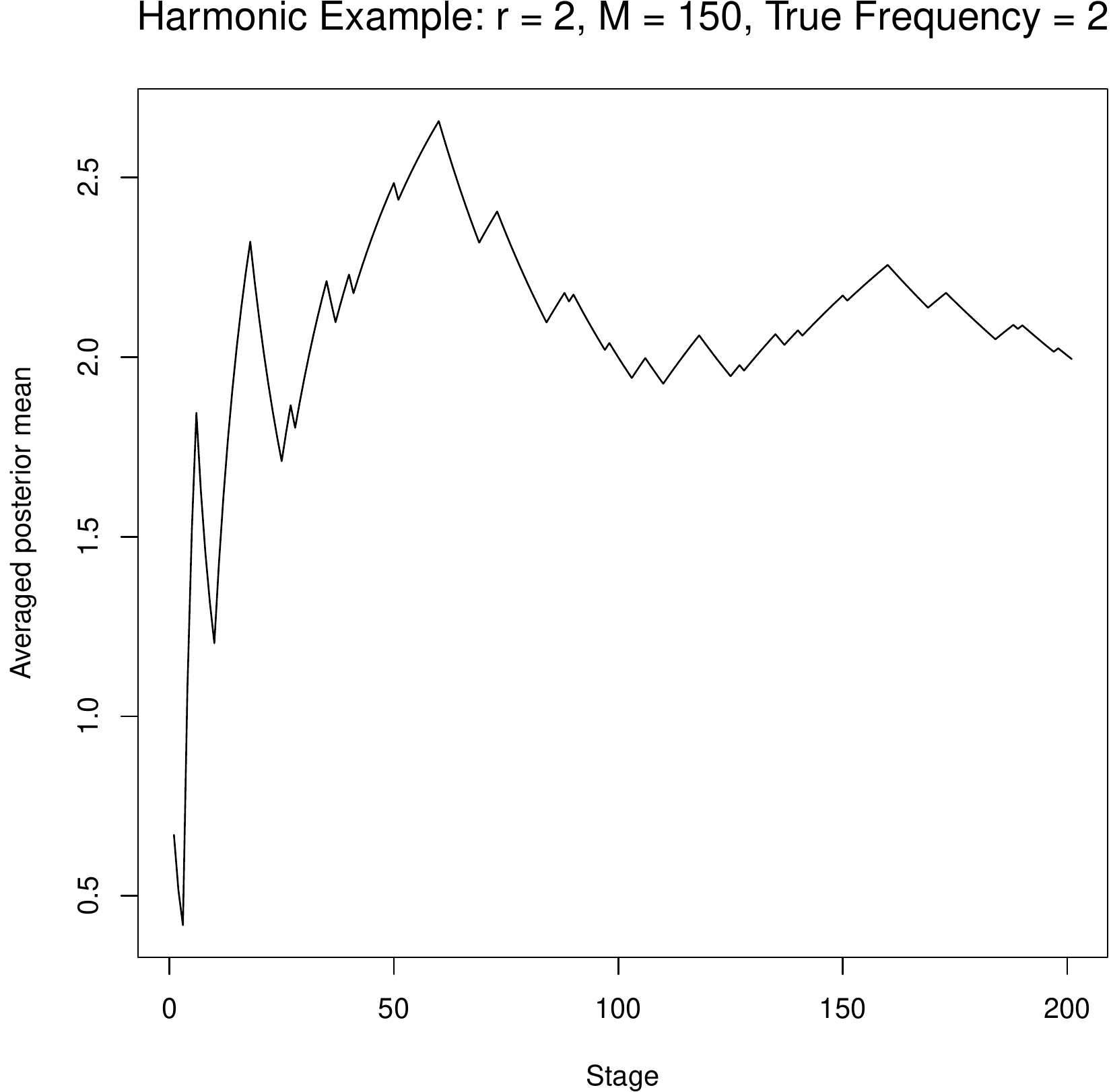}}
\hspace{2mm}
\subfigure [$r=10,M=100$. True frequency $=4$.]{ \label{fig:har2}
\includegraphics[width=4.5cm,height=4.5cm]{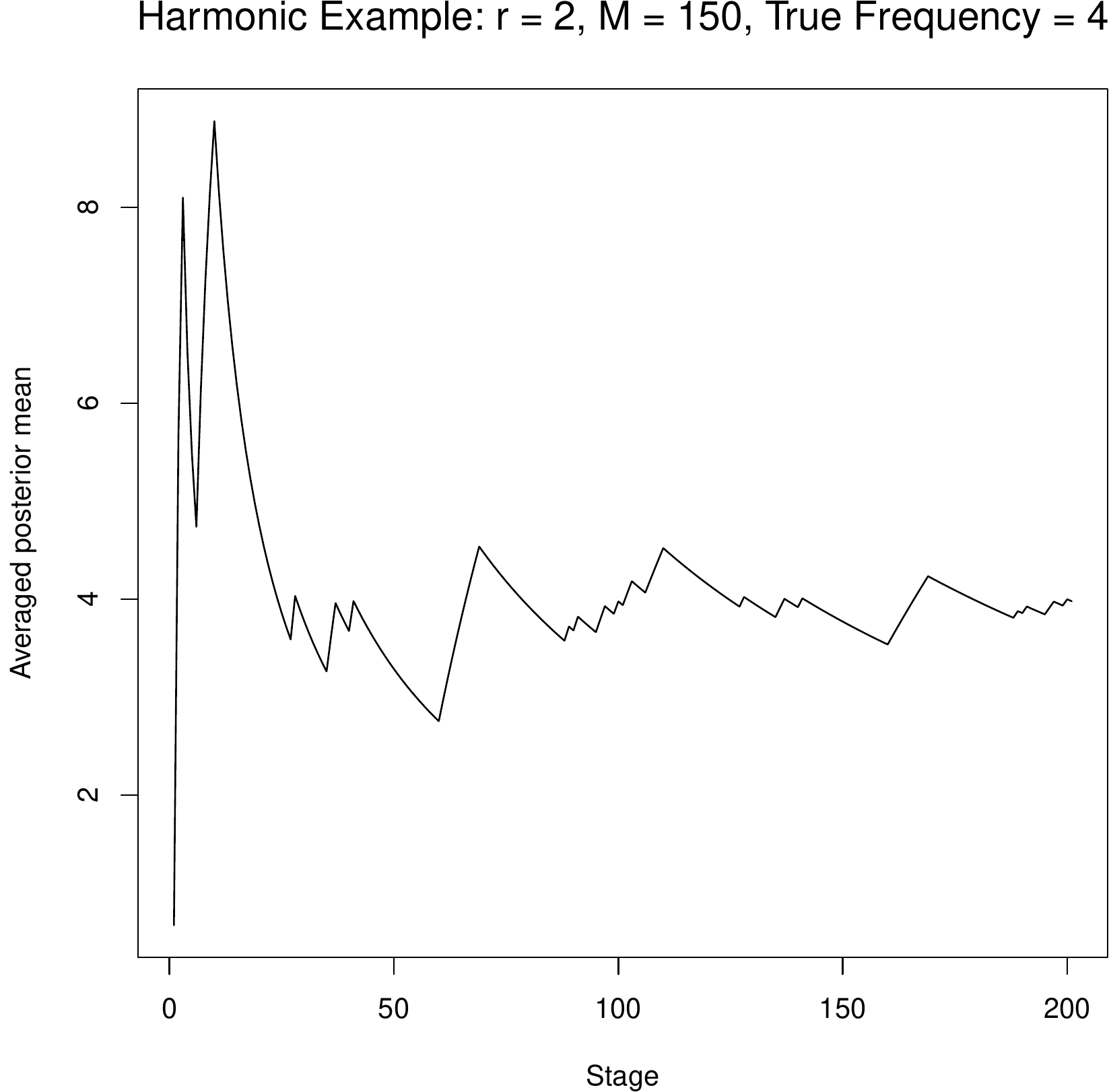}}
\hspace{2mm}
\subfigure [$r=10,M=100$. True frequency $=6$.]{ \label{fig:har3}
\includegraphics[width=4.5cm,height=4.5cm]{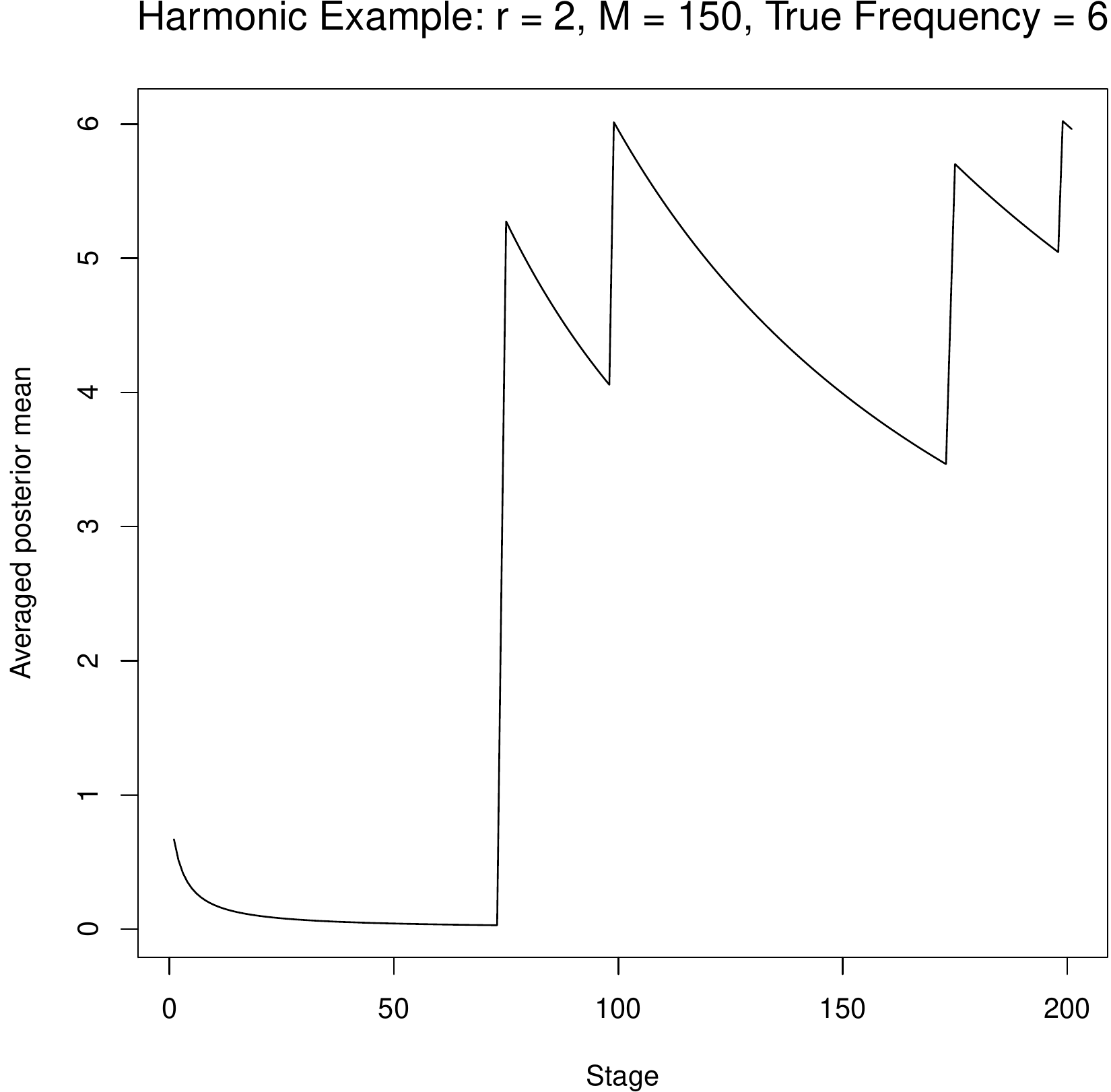}}\\
\vspace{2mm}
\subfigure [$r=10,M=100$. True frequency $=8$.]{ \label{fig:har4}
\includegraphics[width=4.5cm,height=4.5cm]{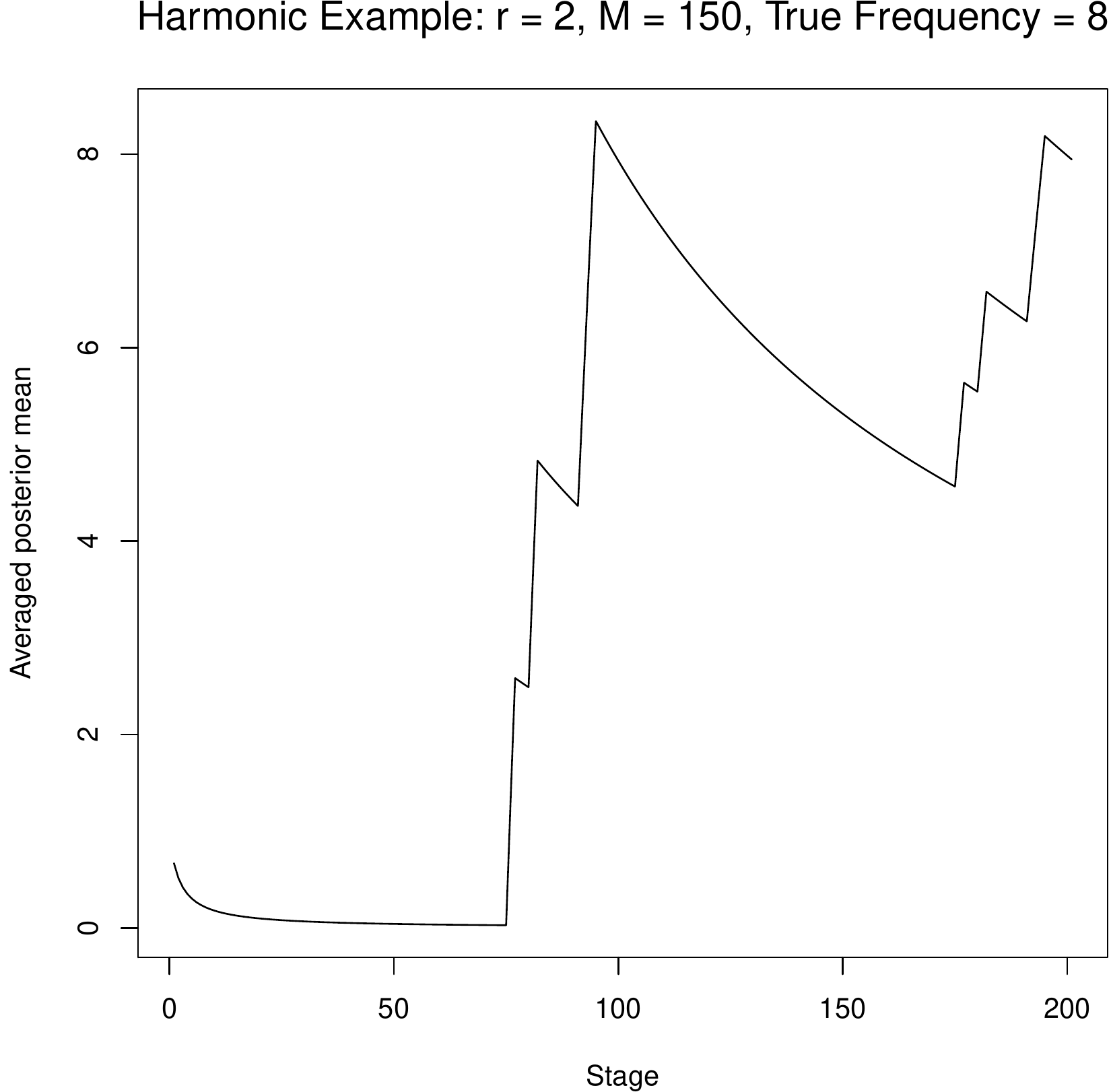}}
\hspace{2mm}
\subfigure [$r=10,M=100$. True frequency $=10$.]{ \label{fig:har5}
\includegraphics[width=4.5cm,height=4.5cm]{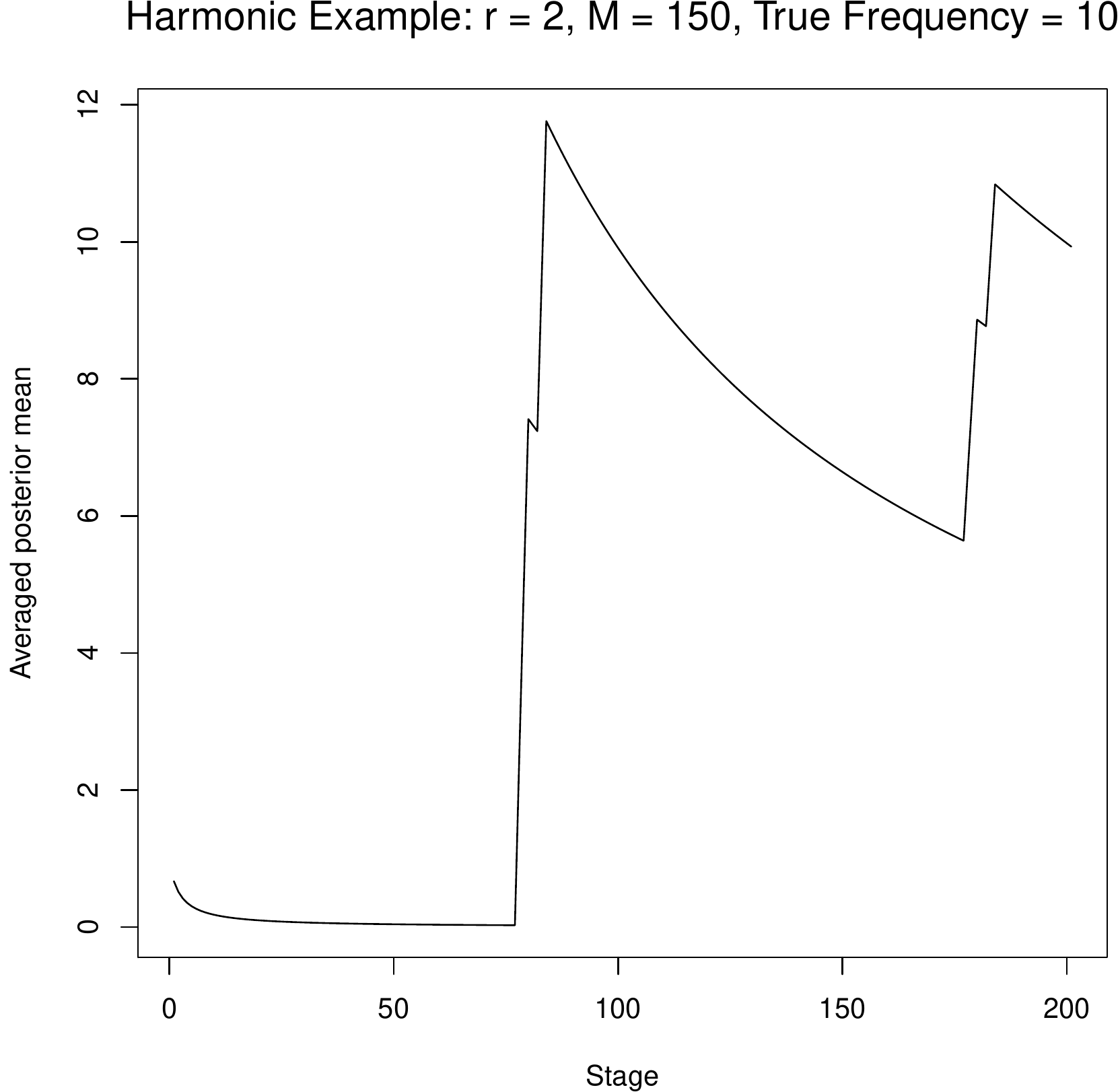}}
\hspace{2mm}
\subfigure [$r=10,M=100$. True frequency $=12$.]{ \label{fig:har6}
\includegraphics[width=4.5cm,height=4.5cm]{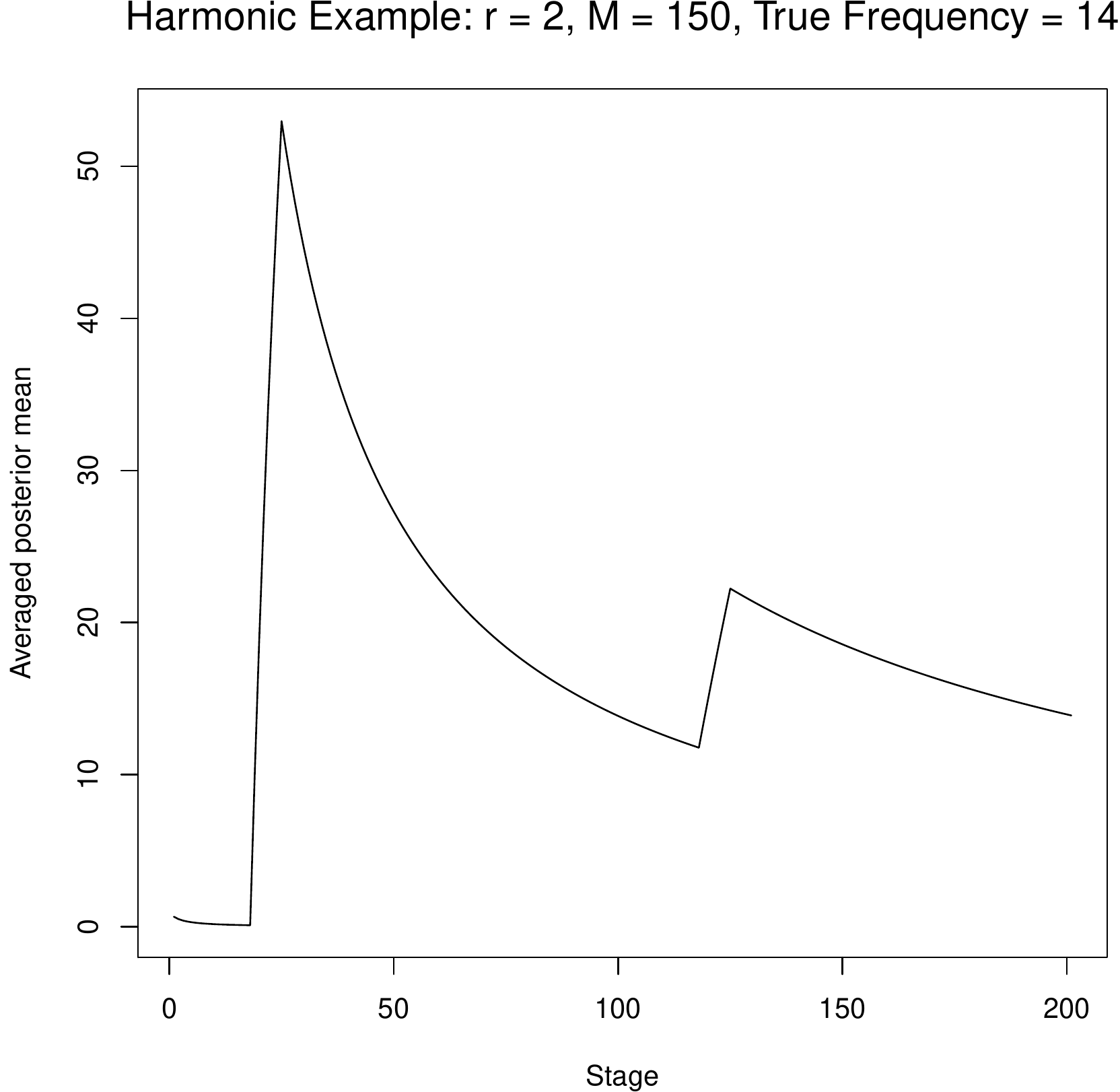}}
\caption{Illustration of our Bayesian method for determining multiple frequencies in non-sinusoidal signals. 
Here the true frequencies are $2$, $4$, $6$, $8$, $10$ and $12$ oscillations per unit time.}
\label{fig:harmonics2}
\end{figure}

\section*{Acknowledgment}
We are grateful to Dr. Satyaki Mazumder for helpful comments.

\newpage

\normalsize
\bibliographystyle{natbib}
\bibliography{irmcmc}

\end{document}